\titleformat{\chapter}[display]{\center\bfseries}{\Large Chapter \thechapter\\ \rule{0.3\textwidth}{0.4pt}}{5pt}{\Huge}
\renewcommand{\marginpar}[2][]{}
\apptocmd{\sloppy}{\hbadness 10000\relax}{}{}
\newtheoremstyle{standard}{10pt}{3pt}{\itshape}{}{\bfseries}{.}{.5em}{}
\theoremstyle{standard}
\newaliascnt{thm}{theorem}  
\newtheorem{thm}[thm]{Theorem}  
\newaliascnt{lemma}{theorem}  
\newtheorem{lemma}[lemma]{Lemma}  
\newaliascnt{prop}{theorem}  
\newtheorem{prop}[prop]{Proposition}  
\newaliascnt{cor}{theorem}  
\newtheorem{cor}[cor]{Corollary}  
\newtheoremstyle{definition}{10pt}{3pt}{}{}{\bfseries}{.}{.5em}{}
\theoremstyle{definition}
\newaliascnt{defi}{theorem}  
\newtheorem{defi}[defi]{Definition}
\newaliascnt{nota}{theorem}  
\newtheorem{nota}[nota]{Notation}
\newaliascnt{ex}{theorem}  
\newtheorem{ex}[ex]{Example}
\newaliascnt{rem}{theorem}  
\newtheorem{rem}[rem]{Remark}
\newtheoremstyle{standard}{10pt}{3pt}{\itshape}{}{\bfseries}{.}{.5em}{}
\theoremstyle{standard}
\newaliascnt{thmi}{theoremi}  
\newtheorem{thmi}[thmi]{Theorem}  
\newaliascnt{propi}{theoremi}  
\newtheorem{propi}[propi]{Proposition}  
\DeclareMathOperator{\Q}{\mathsf{Qcoh}}
\DeclareMathOperator{\M}{\mathsf{Mod}}
\DeclareMathOperator{\tfM}{\mathsf{tfMod}}
\DeclareMathOperator{\CoM}{\mathsf{CoMod}}
\DeclareMathOperator{\grM}{\mathsf{grMod}}
\DeclareMathOperator{\Rep}{\mathsf{Rep}}
\DeclareMathOperator{\gr}{\mathsf{gr}}
\DeclareMathOperator{\Ind}{\mathsf{Ind}}
\DeclareMathOperator{\Desc}{\mathsf{Desc}}
\DeclareMathOperator{\Cat}{\mathsf{Cat}}
\DeclareMathOperator{\Ab}{\mathsf{Ab}}
\DeclareMathOperator{\CAlg}{\mathsf{CAlg}}
\DeclareMathOperator{\Alg}{\mathsf{Alg}}
\DeclareMathOperator{\Gpd}{\mathsf{Gpd}}
\DeclareMathOperator{\Sch}{\mathsf{Sch}}
\DeclareMathOperator{\FinSet}{\mathsf{FinSet}}
\DeclareMathOperator{\Stack}{\mathsf{Stack}}
\DeclareMathOperator{\SPEC}{\mathsf{Spec}}
\DeclareMathOperator{\Tors}{\mathsf{Tors}}
\DeclareMathOperator{\Set}{\mathsf{Set}}
\DeclareMathOperator{\CMon}{\mathsf{CMon}}
\DeclareMathOperator{\sSet}{\mathsf{sSet}}
\DeclareMathOperator{\Top}{\mathsf{Top}}
\DeclareMathOperator{\Spec}{Spec}
\DeclareMathOperator{\End}{End}
\DeclareMathOperator{\Hom}{Hom}
\DeclareMathOperator{\Isom}{Isom}
\DeclareMathOperator{\BiHom}{BiHom}
\DeclareMathOperator{\HOM}{\underline{\Hom}}
\DeclareMathOperator{\Aut}{Aut}
\DeclareMathOperator{\coker}{coker}
\DeclareMathOperator{\colim}{colim}
\DeclareMathOperator{\op}{op}
\DeclareMathOperator{\coeq}{coeq}
\DeclareMathOperator{\eq}{eq}
\DeclareMathOperator{\id}{id}
\DeclareMathOperator{\pr}{pr}
\DeclareMathOperator{\GL}{GL}
\DeclareMathOperator{\im}{im}
\DeclareMathOperator{\sgn}{sgn}
\DeclareMathOperator{\lax}{lax}
\DeclareMathOperator{\rad}{rad}
\DeclareMathOperator{\Sym}{Sym}
\DeclareMathOperator{\ASym}{ASym}
\DeclareMathOperator{\Proj}{Proj}
\DeclareMathOperator{\supp}{supp}
\DeclareMathOperator{\Der}{Der}
\renewcommand{\Im}{\mathrm{Im}}
\DeclareMathOperator{\Tor}{Tor}
\DeclareMathOperator{\Fix}{Fix}
\DeclareMathOperator{\Bl}{Bl}
\DeclareMathOperator{\Pic}{Pic}
\DeclareMathOperator{\epi}{\twoheadrightarrow}
\let\para\S
\renewcommand{\L}{\mathcal{L}}
\newcommand{\K}{\mathcal{K}}
\renewcommand{\P}{\mathds{P}}
\newcommand{\C}{\mathcal{C}}
\newcommand{\A}{\mathcal{A}}
\newcommand{\D}{\mathcal{D}}
\newcommand{\I}{\mathcal{I}}
\newcommand{\Grass}{\mathrm{Grass}}
\renewcommand{\O}{\mathcal{O}}
\newcommand{\E}{\mathcal{E}}
\newcommand{\G}{\mathds{G}}
\newcommand{\B}{\mathcal{B}}
\newcommand{\N}{\mathds{N}}
\newcommand{\Z}{\mathds{Z}}
\newcommand{\QQ}{\mathds{Q}}
\newcommand{\R}{\mathds{R}}
\newcommand{\F}{\mathds{F}}
\renewcommand{\S}{\Sigma}
\newcommand{\m}{\mathfrak{m}}
\newcommand{\e}{\varepsilon}
\newcommand{\fp}{\mathrm{fp}}
\begin{document}


\hypersetup{pageanchor=false}
\pagenumbering{gobble}

\begin{titlepage}
\pagestyle{empty}
\phantom{-} \vspace{1cm}
\begin{center}
\huge{\textbf{Tensor categorical foundations}}\\ \medskip
\huge{\textbf{of algebraic geometry}}

\vspace{16mm}

{\large Martin Brandenburg\\ \medskip
-- 2014 --}

\vspace{2cm}

\end{center}

\begin{center}
\textbf{Abstract}
\end{center}

Tannaka duality and its extensions by Lurie, Schäppi et al. reveal that many schemes as well as algebraic stacks may be identified with their tensor categories of quasi-coherent sheaves. In this thesis we study constructions of cocomplete tensor categories (resp. cocontinuous tensor functors) which usually correspond to constructions of schemes (resp. their morphisms) in the case of quasi-coherent sheaves. This means to \emph{globalize} the usual local-global algebraic geometry. For this we first have to develop basic commutative algebra in an arbitrary cocomplete tensor category. We then discuss tensor categorical globalizations of affine morphisms, projective morphisms, immersions, classical projective embeddings (Segre, Plücker, Veronese), blow-ups, fiber products, classifying stacks and finally tangent bundles. It turns out that the universal properties of several  moduli spaces or stacks translate to the corresponding tensor categories.

This is a slightly expanded version of the author's PhD thesis.

\end{titlepage}



\clearpage
\titlespacing*{\chapter}{0pt}{0pt}{40pt}
\pagestyle{empty}
\tableofcontents

\newpage
\thispagestyle{empty}
\vspace*{\fill}
\begin{quote}
\begin{onehalfspace}
\emph{``According to Peter Freyd: 'Perhaps the purpose of categorical algebra is to show that which is trivial is trivially trivial.' That was written early on, in 1966. I prefer an update of that quote: ``Perhaps the purpose of categorical algebra is to show that which is formal is formally formal''. It is by now abundantly clear that mathematics can be formal without being trivial. Categorical algebra allows one to articulate analogies and to perceive unexpected relationships between concepts in different branches of mathematics.''} \bigskip \\  \mbox{} \hfill Peter May, \cite{May01}
\end{onehalfspace}
\end{quote}
\vspace*{\fill}

\pagestyle{fancy}

\thispagestyle{empty}

\hypersetup{pageanchor=true}
\pagenumbering{arabic}

\chapter{Introduction}



\section{Background}

It is a common principle of mathematics to study geometric objects by considering categories of algebraic objects acting on them. For example in algebraic geometry one tries to understand a scheme $X$ via its category of quasi-coherent sheaves $\Q(X)$. Do we really understand $X$ by looking at $\Q(X)$?

This has been answered in 1962 by Gabriel (\cite{Gab62}) who proved his famous Reconstruction Theorem: A noetherian scheme $X$ can be reconstructed from $\Q(X)$. The idea is to associate to an abelian category $\A$ a ringed space $\Spec(\A)$, its \emph{spectrum}, and then prove $X \cong \Spec\bigl(\Q(X)\bigr)$. Later this paved the way for the vision of \emph{noncommutative algebraic geometry} in which abelian categories are regarded as noncommutative schemes (\cite{Ros95}, \cite{AZ94}). In 1998 A. L. Rosenberg used a different spectrum construction to give a short proof of the Reconstruction Theorem for arbitrary schemes (\cite{Ros96}, \cite{Ros98}), then in 2004 another proof for quasi-separated schemes (\cite{Ros04}). The proof has been corrected and simplified by Gabber (\cite{Bra13}). In 2013 Antieau obtained a generalization of the Reconstruction Theorem to twisted quasi-coherent sheaves on quasi-compact quasi-separated schemes (\cite{Ant13}). With a completely new approach Calabrese and Groechenig have proven a Reconstruction Theorem for quasi-compact separated algebraic spaces (\cite{Cal13}). Although a smooth variety $X$ cannot be reconstructed from its derived category of quasi-coherent sheaves $D^b(\Q(X))$, Bondal and Orlov have shown in 2001 that this works at least if the canonical bundle of $X$ is ample or anti-ample (\cite{Bon01}).

However, $\Q(X)$ is actually a \emph{tensor category}, so it is reasonable to use this additional structure. In 2002 Balmer has reconstructed a noetherian scheme from its tensor triangulated category of perfect complexes (\cite{Bal02}). This lead to the development of \emph{tensor triangular geometry} (\cite{Bal10}). Balmer's result has been generalized to quasi-compact quasi-separated schemes by Buan, Krause and Solberg (\cite{Bua07}).

What about reconstruction for algebraic stacks? Classical Tannaka duality gives an affirmative answer in the case of classifying stacks since it reconstructs an affine group scheme from its tensor category of representations (\cite{Del90}). More generally, Lurie has reconstructed an arbitrary geometric stack from its tensor category of quasi-coherent sheaves (\cite{Lur04}).

In the case of algebraic stacks the tensor structure becomes essential, since for example the discrete scheme with two points and the classifying stack of a group of order $2$ have the same category of quasi-coherent sheaves -- but not the same tensor category of quasi-coherent sheaves.

Obviously very much has been done in order to recover the object of interest $X$ from $\Q(X)$. However, it seems that little has been done towards the reconstruction of \emph{morphisms} of schemes, which is certainly useful for recovering properties of $X$ (such as smoothness) from $\Q(X)$. One might ask which functors
\[\Q(X) \to \Q(Y)\]
are induced by morphisms $Y \to X$ via the pullback construction.

A minimal requirement is that these functors preserve colimits and tensor products (up to specified isomorphisms). Therefore, let us call a scheme (or stack) $X$ \emph{tensorial} if colimit-preserving tensor functors $\Q(X) \to \Q(Y)$ are induced by morphisms $Y \to X$. Then for example affine and projective schemes are tensorial (\cite{Bra11}). More generally, in 2012 Chirvasitu and the author showed that any quasi-compact quasi-separated scheme is tensorial (\cite{BC14}). In the same year, Sch\"appi proved that Adams stacks (i.e. geometric stacks with enough locally free sheaves) are tensorial (\cite{Sch12a}). He even gave a classification of those tensor categories which are equivalent to $\Q(X)$ for some Adams stack $X$ (\cite{Sch12a},\cite{Sch13}).

It seems to be an open problem if every algebraic stack (satisfying mild finiteness conditions) is tensorial and to characterize those tensor categories which are equivalent to $\Q(X)$ for some algebraic stack $X$. \marginpar{I've added the remark about finiteness conditions.}

This thesis aims to expand the relationship between algebraic geometry and cocomplete tensor categories. We will find constructions with cocomplete tensor categories which correspond to constructions with schemes or stacks under $\Q(-)$. This leads to a theory which might be called \emph{tensor categorical algebraic geometry} or \emph{$2$-algebraic geometry} which has been already introduced by Chirvasitu and Johnson-Freyd (\cite{CJF13}). In some sense this is like Rosenberg's noncommutative algebraic geometry with additional tensor products (which makes it commutative) or Balmer's tensor triangular geometry without triangulations. The precise results will be explained in the next section.


\section{Results}
 
\subsection*{Tensoriality}

A cocomplete tensor category is a symmetric monoidal category with all colimits which commute with the tensor product in each variable (\autoref{deftens}). These constitute a $2$-category $\Cat_{c\otimes}$. The Hom-categories of cocontinuous tensor functors are denoted by $\Hom_{c\otimes}(-,-)$. We often restrict to linear categories over some fixed commutative ring. We have the following adjunction (\autoref{adju}) of $2$-categories:
\[\xymatrix@C=40pt{\Stack \ar@/_1pc/[r]_{\Q} \ar@{}[r]|{\top} & \Cat_{c\otimes}^{\op} \ar@/_1pc/[l]_{\SPEC}}\]
Here, the $2$-functor $\Q : \Stack \to \Cat_{c\otimes}^{\op}$ associates to a stack in the fpqc topology its cocomplete tensor category of quasi-coherent modules and to a morphism $f$ its pullback functor $f^*$. Its right adjoint $\SPEC : \Cat_{c\otimes}^{\op} \to \Stack$ is defined by
\[\SPEC(\C)(X) := \Hom_{c\otimes}\bigl(\C,\Q(X)\bigr).\]
We call the fixed points of this adjunction \emph{tensorial stacks} resp. \emph{stacky tensor categories}. Hence, we get an equivalence of $2$-categories
\[\{\text{tensorial stacks}\} \simeq \{\text{stacky tensor categories}\}^{\op}.\]
Specifically, a stack $X$ is tensorial if
\[X \simeq \SPEC\bigl(\Q(X)\bigr),\]
i.e. for every scheme $Y$ the pullback functor
\[\Hom(Y,X) \to \Hom_{c\otimes}\bigl(\Q(X),\Q(Y)\bigr),~ f \mapsto f^*\]
is an equivalence of categories.
 
The following results have already appeared, but shorter proofs will be byproducts of our theory (\autoref{tensoriality}):
 
\begin{thmi} \noindent
\begin{enumerate}
\item \textup{\cite{Lur04}} If $X$ is geometric, then
\[\Hom(Y,X) \to \Hom_{c\otimes \cong}\bigl(\Q(X),\Q(Y)\bigr),~ f \mapsto f^*\]
is fully faithful. The image consists of those cocontinuous tensor functors which preserve faithfully flatness. 
\item \textup{\cite{Sch12a}} Every Adams stack is tensorial.
\item \textup{\cite{BC14}} Every quasi-compact quasi-separated scheme is tensorial.
\end{enumerate}
\end{thmi}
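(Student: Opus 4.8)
The plan is to reduce everything to an \emph{affine base case} and then to propagate it by descent and gluing, exactly mirroring how schemes and stacks are assembled from affines. The cornerstone is the universal property of $\M_R$ as a cocomplete tensor category: a cocontinuous tensor functor $\M_R \to \C$ is the same datum as a ring homomorphism $R \to \End_\C(\mathbf{1})$ into the (commutative) endomorphism ring of the unit, because every $R$-module is a colimit of copies of the tensor unit $R$, and the monoidal structure forces the remaining compatibilities. Applied to $\C = \Q(Y)$, where $\End(\mathbf{1}) = \Gamma(Y,\O_Y)$, this yields the affine tensoriality
\[\Hom(Y,\Spec R) \xrightarrow{\ \sim\ } \Hom_{c\otimes}\bigl(\M_R,\Q(Y)\bigr),\qquad f\mapsto f^*,\]
so that $\Spec R \simeq \SPEC(\M_R)$. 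This is the only step requiring genuinely new tensor-categorical commutative algebra; the remaining work is formal propagation.

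For part (1), I would fix a faithfully flat affine presentation $p\colon\Spec R \to X$ of the geometric stack $X$ and set $A := p_*\O_{\Spec R}$, a faithfully flat commutative algebra object of $\Q(X)$ whose module category recovers $\Q(\Spec R)\simeq\M_R$ and, via flat descent along $p$, gives $\Q(X)\simeq\Desc(A)$. Given a cocontinuous tensor functor $F\colon\Q(X)\to\Q(Y)$ that preserves faithful flatness, $F(A)$ is a faithfully flat algebra object of $\Q(Y)$ and so corresponds to an affine faithfully flat $q\colon Y' \to Y$. Passing to module categories, $F$ induces a cocontinuous tensor functor $\M_R\simeq\Q(X)_A \to \Q(Y)_{F(A)}\simeq\Q(Y')$, which by the affine case is a morphism $Y' \to \Spec R \to X$; the descent datum carried by $A$ and preserved by $F$ forces the two restrictions to $Y'\times_Y Y'$ to agree, so the map descends along the faithfully flat $q$ to the desired $Y\to X$. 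Running the same argument on $2$-morphisms gives full faithfulness, while the faithful flatness of $A$ shows conversely that every $f^*$ preserves faithful flatness, pinning down the image.

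For part (2), an Adams stack is geometric and moreover has enough locally free sheaves. I would exploit that locally free sheaves are precisely the dualizable objects and that any cocontinuous tensor functor automatically preserves dualizable objects and their duals. The resolution property then lets one control the presentation algebra $A$ through dualizable data, so that $F(A)$ is forced to be faithfully flat for \emph{every} cocontinuous tensor functor $F$. In other words, the faithful-flatness hypothesis of part (1) becomes vacuous, and the fully faithful embedding of part (1) upgrades to a genuine equivalence: every Adams stack is tensorial.

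Part (3) is where I expect the real difficulty, since a general quasi-compact quasi-separated scheme need \emph{not} have the resolution property, so the dualizable-object shortcut of part (2) is unavailable. Here I would argue by Zariski gluing along a finite affine cover $X=\bigcup_i U_i$ with quasi-compact intersections, using that a quasi-compact open immersion $U\hookrightarrow X$ is realized tensor-categorically as a flat localization of $\Q(X)$. One shows that a cocontinuous tensor functor $F\colon\Q(X)\to\Q(Y)$ sends these localizations to localizations of $\Q(Y)$, thereby carving out open subschemes $Y_i\subseteq Y$; on each chart the affine case produces a morphism $Y_i\to U_i$, and an induction on the number of charts, of Mayer--Vietoris type, assembles them into $Y\to X$. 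The main obstacle is precisely this reconstruction of the open cover of $Y$ from $F$ --- recognizing which localizations of $\Q(Y)$ arise and verifying that they glue --- together with checking that the locally defined morphisms agree on overlaps; carrying this out without a generating family of dualizable objects is the technical heart of the quasi-compact quasi-separated case.
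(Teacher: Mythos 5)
Your parts (1) and (2) follow essentially the paper's own route: the affine case via the universal property of $\M(R)$, then, given $F$ with $F(A)$ a good algebra for $A = \pi_*\O_P$ the presentation algebra, pass to module categories, use tensoriality upstairs, and descend the resulting morphism (this is the paper's \autoref{tensdesc}). Two caveats, though. First, an ordering issue: in your descent step, turning the isomorphism of pullback functors on $Y'\times_Y Y'$ into an actual $2$-isomorphism of morphisms to the stack $X$ satisfying the cocycle condition \emph{is} full faithfulness, so full faithfulness cannot be obtained afterwards ``by the same argument''; the paper proves it first and independently (\autoref{fullfaith2}), by observing that $\Isom(f,g)$ and $\Isom(f^*,g^*)$ are fpqc sheaves and reducing along the cover $Y\times_{X\times X}(P\times P)\to Y$ to morphisms factoring through the presentation. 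Second, in part (2) your claim that $F(A)$ is ``forced to be faithfully flat for every cocontinuous $F$'' hides the actual difficulty: neither faithful flatness nor the descent-algebra property is known to be preserved by arbitrary cocontinuous tensor functors. What is preserved is the structure the paper calls a \emph{special descent algebra} (\autoref{adamsspec}): a directed colimit of dualizable objects $A_i$ through which the unit factors, with dualizable cokernels $A_i/\O$ and exact dual sequences; this transported structure then yields the descent property in the target. Relatedly, part (1) lands in the core $\Hom_{c\otimes\cong}$ while tensoriality concerns $\Hom_{c\otimes}$; for Adams stacks this gap is closed because dualizables are colimit-dense, so every morphism of cocontinuous tensor functors out of $\Q(X)$ is invertible (\autoref{dual-inv}) --- a point your sketch omits.

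For part (3) there is a genuine gap, and you name it yourself: ``recognizing which localizations of $\Q(Y)$ arise and verifying that they glue'' is not a technical detail to be deferred but the entire content of the theorem, and your outline contains no mechanism for it; your Mayer--Vietoris induction presupposes exactly the open decomposition of $Y$ whose existence is at issue. The paper's resolution has two steps your sketch lacks. (i) Reduce to stalks: for $y\in Y$ compose $F$ with the stalk functor to get $F_y : \Q(X)\to\M(\O_{Y,y})$, so the problem becomes one about tensor functors into modules over a \emph{local} ring. (ii) Settle the local case by an ideal argument (\autoref{local-ring}): choose a finite affine cover $X = U_1\cup\dots\cup U_n$ and quasi-coherent ideals $I_k$ with $I_k|_{U_k}=\O_{U_k}$; since $\sum_k I_k = \O_X$ and $F_y$ preserves the epimorphism $\bigoplus_k I_k\twoheadrightarrow\O_X$, the images of the $F_y(I_k)$ generate the local ring, so not all of them can lie in the maximal ideal; hence some $F_y(I_k\to\O_X)$ is an epimorphism, hence split, hence an isomorphism by the non-unital algebra lemma (\autoref{non-unital}), which makes $F_y$ $U_k$-local and lets the affine case apply. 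This pointwise construction produces the underlying map $f : Y\to X$; its continuity and the local factorization through charts are then checked, and the separately established full faithfulness glues the chartwise morphisms. Note that this argument needs nothing from $Y$ beyond being a locally ringed space, which is why the paper obtains the stronger statement $\Hom(Y,X)\simeq\Hom_{c\otimes}\bigl(\Q(X),\M(Y)\bigr)$.
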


The $2$-category of tensorial stacks embeds fully faithfully into $\Cat_{c\otimes}^{\op}$. If we imagine cocomplete tensor categories as $2$-semirings (not $2$-rings) with addition $\oplus$ and multiplication $\otimes$ (\autoref{categorif}), this says that tensorial stacks become $2$-affine. This leads to the following question, partially answered in \autoref{Cons}:
 
\begin{center}
How is algebraic geometry reflected under this embedding?
\end{center}
 
We can even go further and imagine a given cocomplete tensor category as the category of quasi-coherent modules on some ``imaginary'' scheme or stack. This point of view is quite common for abelian categories in noncommutative algebraic geometry. In particular, we will often write $\O_\C$ for the unit object of a cocomplete tensor category $\C$. This can be combined with some kind of element notation in \autoref{Element} in order to simplify calculations.

\subsection*{Commutative algebra}
 
Many local notions and constructions for modules over some commutative ring, or more generally quasi-coherent modules over some scheme, can actually be carried out in an arbitrary (linear) cocomplete tensor category. In category theory such a  process is called \emph{internalization}. Examples include algebras and modules over them (\autoref{algmod}), ideals, prime ideals and affine schemes (\autoref{affineschemes}), symmetric and exterior algebras (\autoref{symmetric-power}), invertible objects (\autoref{invob}), locally free objects of some given rank (\autoref{localfree}), (faithfully) flat objects (\autoref{flat}), localizations (\autoref{locsec}), torsors (\autoref{rep}), differentials (\autoref{deriv}), descent theory (\autoref{descent-theory}) and even cohomology (\autoref{cohomo}). This is mainly the subject of \autoref{Comm}. For some of these notions this generalization is straightforward since the usual definitions only require the tensor structure. But for example flat as well as locally free quasi-coherent modules are usually defined locally -- actually they have global replacements. Most of the material of \autoref{Comm} is probably known to the experts -- but I do not know of a complete treatment. Besides, \autoref{Comm} develops the language which will be needed throughout in \autoref{Cons}.

Here is a selection of topics we discuss in \autoref{Comm}.

\begin{enumerate}
\item In \autoref{affineschemes} we define a  ringed space $\Spec_\C(A)$ of prime ideals of a commutative algebra object $A$ of a locally presentable tensor category $\C$ (more concretely than \cite{Mar09},\cite{ToVa09}) and prove for noetherian schemes $X$ the reconstruction result $X \cong \Spec_{\Q(X)}(\O_X)$.
\item In \autoref{rham} we construct a de Rham complex
\[B=\Omega^0_{B/A} \to \Omega^1_{B/A} \to \Omega^2_{B/A} \to \dotsc\]
for a homomorphism $A \to B$ of commutative algebras in a  cocomplete $R$-linear tensor category with $2 \in R^*$.
\item We generalize the classical Euler sequence of the tangent bundle of a projective space to arbitrary projective schemes (\autoref{euler}).
\item A \emph{line object} $\L$ is an invertible object which is also \emph{symtrivial} (\autoref{symtrivial}). This seems to be the correct replacement of line bundles for tensor categories. It turns out that if $s : E \to \L$ is a regular epimorphism (``global sections generating $\L$''), then it is automatically the coequalizer of the two evident morphisms $E^{\otimes 2} \otimes \L^{\otimes -1} \rightrightarrows E$ (\autoref{goodepi}). This will be needed for the construction of projective tensor categories (\autoref{projcat}).
\item If $V$ is a quasi-coherent module on a scheme such that $\Lambda^d V$ is invertible, then $V$ is locally free of rank $d$ (\autoref{locally-free-sch}) -- this motivates a global definition of locally free objects of rank $d$ in a cocomplete linear tensor category (\autoref{lokfreedef}). A generalization of Cramer's rule shows that locally free objects are dualizable (\autoref{lokdual}).
\end{enumerate}

\subsection*{Globalization} 
 
Many constructions for schemes or algebraic stacks can be translated to the realm of cocomplete tensor categories. We call this process \emph{globalization}, basically because the usual local constructions and proofs have to be replaced by global ones (\autoref{globalization}). This will be the subject of \autoref{Cons}. The term ``globalization'' was already used by Grothendieck for the generalization of commutative algebra and algebraic geometry to topoi (which may be seen as cartesian tensor categories).
 
For example, let $F : \Sch \to \Sch$ be a functor, which thus constructs a scheme from a given scheme. A \emph{globalization} of $F$ is a $2$-functor $F_{\otimes} : \Cat_{c\otimes} \to \Cat_{c\otimes}$ such that $F_{\otimes}(\Q(X)) \simeq \Q(F(X))$, as in the following diagram:

\[\xymatrix@=40pt{\Sch \ar[d]_{\Q} \ar@{~>}[r]^{F} & \Sch  \ar[d]^{\Q} \\ \Cat_{c\otimes}^{\op} \ar@{~>}[r]^{F_{\otimes}} & \Cat_{c\otimes}^{\op}}\]
 
Besides, if $F$ has some universal property which is definable in the language of cocomplete tensor categories (consisting of objects, morphisms, colimits, tensor products), we require that $F_{\otimes}$ satisfies the corresponding universal property, but within \emph{all} cocomplete tensor categories.

We can also look at globalizations for more general constructions by allowing additional inputs such as
 
\begin{itemize}
\item quasi-coherent modules or algebras on the given scheme,
\item more than one scheme,
\item morphisms of schemes,
\item algebraic stacks instead of schemes.
\end{itemize}

If we have no parameter at all, this means that given a scheme with a universal property or a moduli stack $X$ we want $\Q(X)$ to become a corresponding ``moduli tensor category''. This is quite similar to the notion of a classifying topos (\cite[Chapter VIII]{Mac92}). \marginpar{I've added the classifying-topos remark}

 As a general rule, globalized constructions of schemes preserve tensorial schemes. For the author this was the original motivation for globalization, because it made possible to prove that projective schemes are tensorial (\cite{Bra11}).
 
Let us list a variety of examples.
 
\subsection*{Basic examples}

\begin{propi}[Globalization of the terminal scheme, \autoref{initial}]
If $R$ is a commutative ring, then $\Spec(R)$ is a terminal object in the category of $R$-schemes. Its globalization is the cocomplete tensor category of $R$-modules $\M(R) \simeq \Q(\Spec(R))$ which is in fact a $2$-initial object in $\Cat_{c\otimes/R}$.
\end{propi}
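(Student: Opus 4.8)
The plan is to dispatch the two easy assertions quickly and to concentrate all the work on the $2$-initiality. That $\Spec(R)$ is terminal in the category of $R$-schemes is immediate: by definition an $R$-scheme is a scheme equipped with a structure morphism to $\Spec(R)$, and a morphism of $R$-schemes $X \to \Spec(R)$ must commute with the structure morphisms, hence equals the structure morphism of $X$ and is therefore unique. The identification $\M(R) \simeq \Q(\Spec(R))$ is the classical equivalence between $R$-modules and quasi-coherent sheaves on an affine scheme, which I would cite rather than reprove.

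For the heart of the statement I must show that for every cocomplete $R$-linear tensor category $\C$ the category $\Hom_{c\otimes}(\M(R),\C)$ is equivalent to the terminal category, i.e. that there is a cocontinuous tensor functor $\M(R) \to \C$ which is unique up to unique isomorphism. The conceptual reason is that $\M(R)$ is the free cocomplete $R$-linear tensor category on no generators: every $R$-module is a colimit of copies of the unit $R$ — take any presentation $R^{(J)} \to R^{(I)} \to M \to 0$ — so a cocontinuous functor out of $\M(R)$ is forced by where it sends $R$, and a tensor functor must send the unit $R$ to the unit $\O_\C$.

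Concretely I would first construct the candidate functor $F := (-) \otimes_R \O_\C$, where the $R$-module structure on $\O_\C$ comes from the canonical ring map $R \to \End_\C(\O_\C)$, $r \mapsto r \cdot \id_{\O_\C}$, encoded in the $R$-linearity of $\C$. On free modules $F(R^{(I)}) = \O_\C^{(I)}$, and on a general $M$ the value is computed from a presentation; since the tensor product of $\C$ is cocontinuous in each variable with unit $\O_\C$, the functor $F$ preserves colimits and carries the canonical isomorphisms $R^{(I)} \otimes_R R^{(J)} \cong R^{(I \times J)}$ to the corresponding ones in $\C$, which furnishes the monoidal structure. For uniqueness, given any $G \in \Hom_{c\otimes}(\M(R),\C)$ the unit constraint yields $G(R) \cong \O_\C$; cocontinuity together with $R$-linearity then extends this coherently to an isomorphism $G \cong F$ on free modules, and over a presentation to all of $\M(R)$.

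The step I expect to require the most care is the $2$-categorical uniqueness, i.e. showing that the Hom-category is equivalent to the point and not merely connected. The key observation is that the monoidal structure of $\C$ pins down the component of any monoidal natural transformation $F \Rightarrow F$ at the unit $R$ to be $\id_{\O_\C}$; naturality and cocontinuity then propagate this from $R$ to every $R$-module, so $F$ has no nontrivial automorphisms and the Hom-category is contractible. The other subtle bookkeeping item is to check that a cocontinuous tensor functor automatically respects the $R$-action on the $\Hom$-modules, so that the presentation argument genuinely lands in $\C$; this, however, follows from the $R$-linearity of the associativity and unit constraints.
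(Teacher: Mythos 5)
Your proposal is correct and takes essentially the same route as the paper, which proves this statement as the special case $A=R$ of \autoref{affin-global} (worked out in \autoref{aff-glob}): the essentially unique functor is likewise the external tensor product $- \otimes_R \O_\C$ built from the canonical $R$-algebra map $R \to \End_\C(\O_\C)$, defined on free modules and extended along presentations, with essential uniqueness and contractibility of the Hom-category obtained by the same unit-constraint-plus-cocontinuity propagation (here one uses that the Hom-category is equivalent to the singleton $\Hom_{\Alg(R)}(R,\End_\C(\O_\C))$). The only cosmetic difference is that the paper packages $- \otimes_R \O_\C$ as the left adjoint of $\Gamma = \Hom(\O_\C,-)$, which yields functoriality and cocontinuity for free, whereas your chosen-presentation definition requires checking these by hand.
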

  
\begin{propi}[Globalization of Spec, \autoref{module-categories}]
Let $S$ be a scheme and $A$ be a quasi-coherent algebra on $S$. The spectrum $\Spec_S(A)$ globalizes to $\M(A)$:
\begin{itemize}
\item There is an equivalence of cocomplete tensor categories
\[\Q(\Spec_S(A)) \simeq \M(A).\]
\item For schemes $T$ there is a bijection
\[\begin{array}{c}
\Hom(T,\Spec_S(A))) \medskip \\ 
||\wr \medskip \\ 
\{(f,h) : f \in \Hom(T,S),\, h \in \Hom_{\CAlg(T)}(f^*(A),\O_T)\}.
\end{array}\]
\item If $\C$ is a cocomplete tensor category and $A$ is a commutative algebra in $\C$, then $\M(A)$ enjoys the universal property 
\[\begin{array}{c}
\Hom_{c\otimes}(\M(A),\D) \medskip \\ 
|\wr \medskip \\ 
\{(F,h) : F \in \Hom_{c\otimes}(\C,\D),\, h \in \Hom_{\CAlg(\D)}(F(A),\O_\D)\}.
\end{array}\]
for cocomplete tensor categories $\D$.
\end{itemize}
\end{propi}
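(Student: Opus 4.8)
The three bullets are, respectively, the equivalence, the scheme-level universal property, and the tensor-categorical universal property of the relative spectrum, and the plan is to establish the third bullet first, since it is the purely formal engine that powers the other two. For a commutative algebra $A$ in a cocomplete tensor category $\C$, I would begin with the free-module functor
\[ L \colon \C \to \M(A), \qquad X \mapsto A \otimes X, \]
which is cocontinuous and strong symmetric monoidal and carries $\O_\C$ to the unit $\O_{\M(A)} = A$. Composition with $L$ sends a cocontinuous tensor functor $G \colon \M(A) \to \D$ to $F := G \circ L \colon \C \to \D$, while the multiplication $\mu \colon A \otimes A \to A$, viewed as an algebra morphism $L(A) \to \O_{\M(A)}$ in $\M(A)$, produces $h := G(\mu) \colon F(A) = G(L(A)) \to G(\O_{\M(A)}) = \O_\D$. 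This defines the assignment $G \mapsto (F,h)$, and the content of the statement is that it is an equivalence of Hom-categories.

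To invert it, I would use that every $A$-module $M$, with action $\alpha \colon A \otimes M \to M$, is the reflexive coequalizer of free modules
\[ A \otimes A \otimes M \rightrightarrows A \otimes M \to M \]
with parallel arrows $\mu \otimes \id_M$ and $\id_A \otimes \alpha$. Since any cocontinuous tensor $G$ preserves this coequalizer, and since $\mu \otimes \id_M$ is $\mu \otimes_A \id_{L(M)}$ while $\id_A \otimes \alpha = L(\alpha)$, one obtains
\[ G(M) \cong \coeq\bigl( F(A) \otimes F(M) \rightrightarrows F(M) \bigr), \]
whose two arrows are $h \otimes \id_{F(M)}$ and $F(\alpha)$; this is exactly the base change $\O_\D \otimes_{F(A)} F(M)$ of $F(M)$ along $h$. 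I read this both as a uniqueness statement and as the definition of the candidate inverse $G_{F,h}(M) := \O_\D \otimes_{F(A)} F(M)$. It then remains to check that $G_{F,h}$ is cocontinuous (base change preserves colimits), that $G_{F,h}(A) = \O_\D$ with $G_{F,h} \circ L \cong F$ recovering $h$, and that $G_{F,h}$ is symmetric monoidal. The last point I would reduce to the presentation $M \otimes_A N = \coeq(M \otimes A \otimes N \rightrightarrows M \otimes N)$: applying the cocontinuous tensor functor $F$, then base change, and using that $\otimes$ commutes with colimits in each variable, a reflexive-coequalizer chase yields $G_{F,h}(M \otimes_A N) \cong G_{F,h}(M) \otimes_{\O_\D} G_{F,h}(N)$. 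A parallel argument on monoidal natural transformations promotes the object-level bijection to the asserted equivalence of categories.

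For the first bullet I would specialize to $\C = \Q(S)$ and realize the equivalence geometrically through the affine structure morphism $\pi \colon \Spec_S(A) \to S$. Its pushforward $\pi_*$ is exact and faithful and restricts, over any affine open $\Spec(R) \subseteq S$ on which $A$ corresponds to an $R$-algebra $\mathcal{A}$, to the classical equivalence $\Q(\Spec \mathcal{A}) \simeq \M(\mathcal{A})$; these are compatible with restriction, so they glue by descent to $\Q(\Spec_S(A)) \simeq \M(A)$. Verifying $\pi_* \O \cong A$ and $\pi_*(\mathcal{F} \otimes \mathcal{G}) \cong \pi_*\mathcal{F} \otimes_A \pi_*\mathcal{G}$ on the same cover upgrades this to an equivalence of cocomplete tensor categories. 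For the second bullet one may argue directly: a morphism $T \to \Spec_S(A)$ determines $f \in \Hom(T,S)$ by composing with $\pi$, together with an $\O_S$-algebra map $A \to f_*\O_T$, which by the adjunction $f^* \dashv f_*$ is the same as an $\O_T$-algebra map $f^*(A) \to \O_T$; reducing to affine $S$ and $T$ makes this the classical identity $\Hom_R(\mathcal{A},B) \cong \Hom(\Spec B, \Spec \mathcal{A})$. Alternatively, and more in the spirit of the thesis, the bijection drops out of the first and third bullets together with the tensoriality of quasi-compact quasi-separated schemes: $\Hom(T,\Spec_S(A)) \simeq \Hom_{c\otimes}(\M(A),\Q(T))$, and the third bullet with $\C=\Q(S)$, $\D=\Q(T)$ identifies the right-hand side with pairs $(f^*,h)$.

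I expect the genuine work to sit entirely in the third bullet: verifying that $G_{F,h}$ is symmetric monoidal and cocontinuous and that the two assignments are mutually pseudo-inverse, including on $2$-cells. Both the coherence bookkeeping for the monoidal structure and the interchange of the defining coequalizers with the tensor product are where care is needed; once that formal core is in place, the first bullet is standard relative-Spec descent and the second is either classical or a formal corollary.
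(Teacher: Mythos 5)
Your proposal is correct and follows essentially the same route as the paper: the paper also sends $G \mapsto (F,h)$ with $F = G(-\otimes A)$ and $h = G(\mu)$, defines the inverse by the base change $G_{F,h}(M) = F(M)\otimes_{F(A)}\O_\D$, and verifies mutual inverseness via the canonical presentation of an $A$-module (stated there as the identity $M \cong (M\otimes A)\otimes_{A\otimes A} A$, which is your reflexive coequalizer in disguise), while the first and second bullets are likewise handled by the standard relative-Spec results (EGA I, 9.1--9.2) that your gluing and adjunction arguments reprove.
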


This leads to a globalization of affine schemes (\autoref{aff-glob}) and of closed immersions (\autoref{closed-UE}). It also leads to the notion of an \emph{affine tensor functor} (\autoref{affinefunk}). Affine tensor functors these are closed under composition and cobase changes (\autoref{mod-BW}).

\begin{propi}[Globalization of open subschemes, \autoref{local}]
Let $i : U \hookrightarrow X$ be a quasi-compact open immersion of schemes. Then a morphism $f : Y \to X$ factors through $i$ if and only if $f^* \to f^* i_* i^*$ is an isomorphism. This globalizes as follows: Let $i^* : \C \to \D$ be a cocontinuous tensor functor with a fully faithful right adjoint $i_* : \D \to \C$. Then there is an equivalence of categories between cocontinuous tensor functors $\D \to \E$ and cocontinuous tensor functors $F : \C \to \E$ with the property that $F \to F i_* i^*$ is an isomorphism.
\end{propi}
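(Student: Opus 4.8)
The plan is to treat the two assertions separately: first the concrete statement about schemes, which motivates the abstract one, and then the categorical globalization, where the real work lies.

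For the scheme statement the key input is flat base change together with the quasi-compactness of $i$. Given $f : Y \to X$, form the fibre product $f^{-1}(U) = U \times_X Y$ with its open immersion $j : f^{-1}(U) \hookrightarrow Y$ and projection $g : f^{-1}(U) \to U$. Since $i$ is a quasi-compact immersion (hence quasi-compact quasi-separated, as open immersions are separated), $i_*$ preserves quasi-coherence and satisfies flat base change, so the canonical map gives $f^* i_* \cong j_* g^*$; applying this to $i^*$ and using $g^* i^* = j^* f^*$ (from $i \circ g = f \circ j$) identifies the natural transformation $f^*(\eta) : f^* \to f^* i_* i^*$ with the unit $f^* \to j_* j^* f^*$ of the adjunction $j^* \dashv j_*$. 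Evaluating on $\O_X$ this is the map $\O_Y \to j_* \O_{f^{-1}(U)}$, which is an isomorphism precisely when the open subscheme $f^{-1}(U)$ equals $Y$, i.e. precisely when $f$ factors through $i$.

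For the globalization, write $\eta : \id_\C \to i_* i^*$ and $\epsilon : i^* i_* \to \id_\D$ for the unit and counit. Full faithfulness of $i_*$ makes $\epsilon$ an isomorphism, and the triangle identity then forces $i^* \eta : i^* \to i^* i_* i^*$ to be invertible as well. Let $\mathcal{S} \subseteq \Hom_{c\otimes}(\C, \E)$ be the full subcategory of those $F$ for which $F\eta$ is invertible. Precomposition $\Phi : G \mapsto G i^*$ lands in $\mathcal{S}$, since $(Gi^*)\eta = G(i^*\eta)$ is an isomorphism by the previous remark. The candidate inverse is $\Psi : F \mapsto F i_*$: for $F \in \mathcal{S}$ one gets $\Phi\Psi(F) = F i_* i^* \cong F$ via $(F\eta)^{-1}$, and $\Psi\Phi(G) = G i^* i_* \cong G$ via $G\epsilon$, so both composites are naturally isomorphic to the identities.

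The main obstacle is to verify that $\Psi(F) = F i_*$ really is a cocontinuous tensor functor, since $i_*$ by itself is only lax monoidal and need not preserve colimits. Here I would use that $i^*$ exhibits $\D$ as a reflective localization of $\C$, so colimits and tensor products in $\D$ are computed by reflecting those of $\C$: one has $\colim_j d_j \cong i^*\bigl(\colim_j i_* d_j\bigr)$ because $i^*$ is cocontinuous with $i^* i_* \cong \id$, and $d_1 \otimes_\D d_2 \cong i^*\bigl(i_* d_1 \otimes_\C i_* d_2\bigr)$ because $i^*$ is strong monoidal. Applying $F$ and cancelling the resulting $i_* i^*$ by the hypothesis that $F\eta$ is an isomorphism then yields $F i_*\bigl(\colim_j d_j\bigr) \cong \colim_j F i_* d_j$, $F i_*(d_1 \otimes_\D d_2) \cong F i_* d_1 \otimes_\E F i_* d_2$, and $F i_* \O_\D \cong \O_\E$; the routine part is checking that these isomorphisms coincide with the canonical comparison maps. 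Finally, for the equivalence on $2$-morphisms I would note that since $i^* i_* \cong \id_\D$ every object of $\D$ is, up to isomorphism, of the form $i^* c$, so whiskering with $i^*$ is faithful, while fullness follows by transporting a given $\beta : G_1 i^* \Rightarrow G_2 i^*$ along the counit isomorphisms, setting $\alpha_d := (G_2\epsilon_d) \circ \beta_{i_* d} \circ (G_1\epsilon_d)^{-1}$ and checking $\alpha i^* = \beta$ via the triangle identities.
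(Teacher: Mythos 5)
Your globalization argument is correct and is essentially the paper's own proof of \autoref{local}: the paper likewise uses the pair of functors $G \mapsto G i^*$ and $F \mapsto F i_*$, observes that $i^*\eta$ is invertible because it is right inverse to the isomorphism $\varepsilon i^*$, and then checks that $F i_*$ is a strong cocontinuous tensor functor. The only cosmetic difference is that the paper isolates this as a lemma ($F\eta$ invertible if and only if $F$ inverts every morphism that $i^*$ inverts) and applies it to the lax structure map $i_*M \otimes i_*N \to i_*(M \otimes N)$ and to $\colim_j i_*M_j \to i_*(\colim_j M_j)$, whereas you cancel $i_*i^*$ inside formulas; the computations are identical, and your deferred verification that the abstract isomorphisms agree with the canonical comparison maps is indeed routine.

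The scheme-theoretic half, however, has a genuine gap --- in fact two. First, the identification $f^* i_* \cong j_* g^*$ is flat base change, which requires the morphism $f$ along which one pulls back to be \emph{flat}; here $f$ is arbitrary, and the formula genuinely fails without flatness. Take $i : U = \mathds{A}^2 \setminus \{0\} \hookrightarrow \mathds{A}^2$ and $f : \Spec k \to \mathds{A}^2$ the inclusion of the origin: then $i_*\O_U = \O_{\mathds{A}^2}$ (sections of a normal variety extend over a closed subset of codimension $2$), so $f^* i_* \O_U = k$, while $j_* g^* \O_U = 0$ because $f^{-1}(U) = \emptyset$. Second, and independently, your final step evaluates the natural transformation only at $\O_X$ and claims that invertibility of $\O_Y \to j_*\O_{f^{-1}(U)}$ forces $f^{-1}(U) = Y$. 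This is false: with $X = Y = \mathds{A}^2$, $f = \id$ and $U = X \setminus \{0\}$, the map $\O_Y \to j_*\O_U$ \emph{is} an isomorphism of sheaves (again by codimension-$2$ extension on every open $V$), yet $f$ does not factor through $U$.

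The statement itself survives because the transformation $f^* \to f^* i_* i^*$ fails to be invertible at other sheaves, and that is exactly where the paper tests it: let $I \subseteq \O_X$ be the quasi-coherent vanishing ideal of the closed complement $X \setminus U$, so that $i^*(\O_X/I) = 0$. Invertibility of $f^*\eta$ at $\O_X/I$ gives $f^*(\O_X/I) \cong f^* i_* i^*(\O_X/I) = 0$, whence $f^{-1}(X \setminus U) = f^{-1}\bigl(\supp(\O_X/I)\bigr) = \supp f^*(\O_X/I) = \emptyset$, so $f$ factors set-theoretically, hence scheme-theoretically, through the open subscheme $U$ (this is \autoref{imm} combined with part 1 of \autoref{local}). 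The repair for your argument is therefore to evaluate the transformation at sheaves supported on the complement of $U$, not at $\O_X$.
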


\begin{propi}[Globalization of Coproducts I, \autoref{qprod}]
If $(X_i)$ is a family of schemes, then their disjoint union $\coprod_i X_i$ is a coproduct. If $(\C_i)$ is a family of cocomplete tensor categories, we can construct their product $\prod_i \C_i$ in the obvious way. If $\C_i = \Q(X_i)$ for schemes $X_i$, then there is an equivalence of cocomplete tensor categories $\prod_i \Q(X_i) \simeq \Q(\coprod_i X_i)$.
\end{propi}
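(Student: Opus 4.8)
The plan is to produce the equivalence explicitly as a restriction functor and to identify its inverse via extension by zero followed by a direct sum. Write $X = \coprod_i X_i$ and let $\iota_i : X_i \hookrightarrow X$ denote the canonical morphisms; since $X$ is the disjoint union, each $\iota_i$ is an open (and closed) immersion, and the images $X_i \subseteq X$ form a partition of $X$ into clopen pieces. The pullback functors $\iota_i^* : \Q(X) \to \Q(X_i)$ are cocontinuous tensor functors, and because colimits and the tensor product in $\prod_i \Q(X_i)$ are computed componentwise (this is the ``obvious'' construction referred to in the statement), the combined functor
\[
\Phi := (\iota_i^*)_i : \Q(X) \longrightarrow \prod_i \Q(X_i)
\]
is again a cocontinuous tensor functor. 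It therefore suffices to show that $\Phi$ is an equivalence of the underlying categories, since an equivalence which is simultaneously a tensor functor is automatically an equivalence of tensor categories.

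First I would construct a candidate inverse. For a family $(\mathcal{F}_i)_i$ with $\mathcal{F}_i \in \Q(X_i)$, set
\[
\Psi\bigl((\mathcal{F}_i)_i\bigr) := \bigoplus_i (\iota_i)_* \mathcal{F}_i .
\]
Because $X_i$ is clopen, $(\iota_i)_* \mathcal{F}_i$ is just the extension of $\mathcal{F}_i$ by zero, which is again quasi-coherent, so $\Psi$ lands in $\Q(X)$. Next I would verify the two composites. The essential computation is that the cross terms vanish: for $i \neq j$ the sheaf $(\iota_i)_* \mathcal{F}_i$ is supported on $X_i$, which is disjoint from $X_j$, so $\iota_j^* (\iota_i)_* \mathcal{F}_i = 0$, whereas $\iota_j^* (\iota_j)_* \mathcal{F}_j \cong \mathcal{F}_j$. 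Since $\iota_j^*$ is cocontinuous it commutes with the direct sum, giving $\iota_j^* \Psi((\mathcal{F}_i)_i) \cong \mathcal{F}_j$ and hence $\Phi \Psi \cong \id$. In the other direction, the clopen partition $X = \coprod_i X_i$ yields a canonical decomposition $\mathcal{F} \cong \bigoplus_i (\iota_i)_* \iota_i^* \mathcal{F}$ for every $\mathcal{F} \in \Q(X)$, which is exactly the statement $\Psi \Phi \cong \id$.

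The main obstacle is the infinite case. One must check that $\Psi$ really produces a quasi-coherent sheaf and that the canonical morphism $\bigoplus_i (\iota_i)_* \iota_i^* \mathcal{F} \to \mathcal{F}$ is an isomorphism even when infinitely many $X_i$ are nonempty, so that $X$ need not be quasi-compact. The point is that both assertions may be checked locally on an affine open cover of $X$: any affine open $U = \Spec(A) \subseteq X$ is quasi-compact, hence meets only finitely many of the clopen pieces $X_i$, inducing a finite decomposition $A \cong \prod_k A_{i_k}$ and a corresponding finite splitting of every $A$-module. On such $U$ the infinite direct sum collapses to a finite one and the desired isomorphism is immediate, so the global statement follows. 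This reduction to the affine-local, finite situation is the only place where genuine care is needed; everything else is formal.
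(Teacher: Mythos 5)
Your proof is correct, and it is the ``direct check'' that the paper merely alludes to in \autoref{qprod} without carrying out; the argument the paper actually features is purely formal. There, one invokes the adjunction of \autoref{adjunction}: since $\Q : \Stack \to \Cat_{c\otimes}^{\op}$ is a left $2$-adjoint of $\SPEC$, it preserves all $2$-colimits, so the coproduct $\coprod_i X_i$ of stacks is sent to the coproduct of the $\Q(X_i)$ in $\Cat_{c\otimes}^{\op}$, i.e.\ to the product $\prod_i \Q(X_i)$ in $\Cat_{c\otimes}$. That route is shorter, applies verbatim to algebraic stacks, and yields more in one stroke (preservation of \emph{all} colimits, e.g.\ the gluing statement $\Q(X_1 \cup X_2) \simeq \Q(X_1) \times_{\Q(U)} \Q(X_2)$), but it hides the equivalence behind abstract nonsense and silently uses that the scheme $\coprod_i X_i$ really is the coproduct in the $2$-category of fpqc stacks. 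Your route buys an explicit inverse --- extension by zero followed by the direct sum --- together with a concrete treatment of the only delicate point, the failure of quasi-compactness in the infinite case, which you resolve correctly by observing that an affine open meets only finitely many of the clopen pieces $X_i$, so that the infinite sum collapses locally to a finite one. Your decomposition $\mathcal{F} \cong \bigoplus_i (\iota_i)_* \iota_i^* \mathcal{F}$ is moreover the sheaf-level incarnation of the orthogonal idempotent decompositions the paper later uses to prove the universal property of $\prod_i \C_i$ (\autoref{prodUE}, \autoref{idemp}), so your argument connects naturally with the rest of the text.
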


\begin{propi}[Globalization of Coproducts II, \autoref{prodUE}]
If $(X_i)$ is a family of schemes, their disjoint union $\sqcup_i X_i$ satisfies the following universal property: A morphism $Y \to \sqcup_i X_i$ corresponds to a decomposition $Y = \sqcup_i Y_i$ and a family of morphisms $f_i : Y_i \to X_i$. We globalize this: If $(\C_i)$ is a family of cocomplete tensor categories, then their product $\prod_i \C_i$ has the universal property that the category of cocontinuous tensor functors $\prod_i \C_i \to \D$ is equivalent to the category of decompositions $\D \simeq \prod_i \D_i$ (which may  be classified via systems of orthogonal idempotents) and a family of cocontinuous tensor functors $\C_i \to \D_i$.
\end{propi}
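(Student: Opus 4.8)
The plan is to reduce everything to the existence of a canonical complete system of orthogonal idempotents inside $\prod_i \C_i$, together with the classification of category decompositions by such systems (the parenthetical remark), which I take as already established. For each index $j$ let $e_j \in \prod_i \C_i$ be the object which is $\O_{\C_j}$ in the $j$-th slot and the zero object elsewhere. A direct computation in the componentwise tensor structure gives $e_j \otimes e_j \cong e_j$, $e_j \otimes e_k \cong 0$ for $j \ne k$ (one factor is the zero object, and tensoring with the zero object yields the zero object by cocontinuity), and $\bigoplus_j e_j \cong \O_{\prod_i \C_i}$; so the $e_j$ form a complete orthogonal idempotent system. The canonical map $\O \to e_j$ makes each $e_j$ an idempotent commutative algebra, and the extension-by-zero functor identifies $\C_j$ with the module category $\M(e_j)$ inside $\prod_i \C_i$. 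Moreover every object $M = (M_i)_i$ satisfies $M \cong \bigoplus_j (M \otimes e_j)$, with $M \otimes e_j$ lying in the $j$-th block; this is the decomposition that must be transported along functors.

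Next I build the comparison functor $\Phi$ from $\Hom_{c\otimes}(\prod_i \C_i, \D)$ to the category whose objects are decompositions equipped with component functors. Given a cocontinuous tensor functor $F$, the objects $F(e_j)$ again form a complete orthogonal idempotent system in $\D$: orthogonality and idempotency are preserved because $F$ is a tensor functor, while $\bigoplus_j F(e_j) \cong F(\bigoplus_j e_j) \cong F(\O) \cong \O_\D$ because $F$ is cocontinuous and monoidal. By the cited classification this yields a decomposition $\D \simeq \prod_j \D_j$ with $\D_j := \M(F(e_j))$. Composing the identification $\C_j \simeq \M(e_j)$ with $F$ and with the localization onto $\D_j$ produces cocontinuous tensor functors $F_j : \C_j \to \D_j$, and we set $\Phi(F) := \bigl(\D \simeq \prod_j \D_j,\, (F_j)_j\bigr)$. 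On $2$-cells (monoidal natural transformations) $\Phi$ acts by restriction, which is routine.

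For the inverse $\Psi$, send a decomposition $\D \simeq \prod_j \D_j$ together with functors $G_j : \C_j \to \D_j$ to the composite $\prod_i \C_i \xrightarrow{\prod_j G_j} \prod_j \D_j \simeq \D$, which is visibly a cocontinuous tensor functor. That $\Phi\Psi \cong \id$ is immediate from the block structure. The crux, and the step I expect to be the main obstacle, is $\Psi\Phi \cong \id$, i.e. showing that an arbitrary cocontinuous tensor functor $F$ really is reconstructed from its blocks. Here one uses $M \cong \bigoplus_j (M \otimes e_j)$ and cocontinuity to write $F(M) \cong \bigoplus_j F(M \otimes e_j)$, together with the fact that $F(M \otimes e_j) \cong F(M) \otimes F(e_j)$ lands in $\D_j$; one must then check that these isomorphisms are natural in $M$ and compatible with the monoidal constraints, so that $F$ agrees with $\Psi\Phi(F)$ as a tensor functor. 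The only real delicacy is the infinite index case, where completeness of the idempotent system relies on the infinite coproduct $\bigoplus_j e_j$ being the unit and on $F$ preserving this coproduct; granting cocompleteness and the commutation of $\otimes$ with all colimits, the finite-case argument goes through verbatim. Checking finally that $\Phi$ and $\Psi$ are pseudo-inverse and $2$-natural in $\D$ completes the proof.
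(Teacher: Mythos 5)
Your proposal is correct and takes essentially the same route as the paper: the canonical orthogonal idempotent system $(e_j)$ in $\prod_i \C_i$, its preservation under cocontinuous tensor functors, the classification of product decompositions by such idempotent systems (the paper's \autoref{zerleg}, which you are allowed to cite), and the block-wise reconstruction $F(M) \cong \bigoplus_j F(M \otimes e_j)$. The paper's own proof of \autoref{prodUE} is precisely this observation, stated as a one-line consequence of its preceding idempotent theory, so your more explicit verification of $\Psi\Phi \cong \id$ simply spells out what the paper leaves implicit.
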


This implies that tensorial stacks are closed under arbitrary coproducts (\autoref{coprod-tens}).

\subsection*{Projective geometry}

Let $S$ be a scheme and $A$ be some $\N$-graded quasi-coherent algebra on $S$ such that $A_1$ is of finite presentation over $S$ and $A$ is generated by $A_1$. Then $\Proj_S(A)$ is a scheme with the universal property that $\Hom(T,\Proj_S(A))$ identifies with the set of triples $(f,\L,s)$ consisting of a morphism $f : T \to S$, a line bundle $\L$ on $T$ and a surjective homomorphism of graded algebras $s : f^*(A) \to \bigoplus_n \L^{\otimes n}$ (see \cite[\href{http://stacks.math.columbia.edu/tag/01O4}{Tag 01O4}]{stacks-project}).

\begin{thmi}[Globalization of the Proj construction, \autoref{proj-def}]
Let $\C$ be a cocomplete tensor category, $A$ a commutative $\N$-graded algebra in $\C$ which is generated by $A_1$. We define $\Proj^{\otimes}_\C(A)$ as a cocomplete tensor category with the following universal property: If $\D$ is a cocomplete tensor category, then $\Hom_{c\otimes}(\Proj^{\otimes}_\C(A),\D)$ is naturally equivalent to the category of triples $(F,\L,s)$, where $F: \C \to \D$ is a cocontinuous tensor functor, $\L \in \D$ is a line object and $s : F(A) \to \bigoplus_n \L^{\otimes n}$ is a homomorphism of graded algebras in $\C$ such that $s_1 : F(A_1) \to \L$ is a regular epimorphism in $\D$.
\begin{enumerate}
\item If $\C$ is locally presentable, then $\Proj^{\otimes}_\C(A)$ exists (\autoref{proj-ex}).
\item If $\C=\Q(S)$ for some scheme $S$, then (\autoref{proj-compare})
\[\Proj^{\otimes}_{\Q(S)}(A) \simeq \Q\bigl(\Proj(A)\bigr).\]
\end{enumerate}
\end{thmi}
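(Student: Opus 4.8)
The plan is to establish the universal property of $\Proj^{\otimes}_\C(A)$ by constructing the category directly as a suitable colimit/localization built out of the module constructions already available, and then to compare it with $\Q(\Proj(A))$ by matching universal properties rather than by an explicit equivalence of categories. For part (1), existence, the idea is to imitate the classical Proj construction globally. A line object $\L$ together with a graded homomorphism $s : F(A) \to \bigoplus_n \L^{\otimes n}$ whose degree-one part $s_1$ is a regular epimorphism is exactly the data classified by the Proj; so I would first build, using the globalization of Spec (the $\M(A)$ construction) and the globalization of projective space over the symmetric algebra on $A_1$, a candidate tensor category carrying a \emph{universal} such triple $(F,\L,s)$. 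Concretely, since $A$ is generated by $A_1$, one has a regular epimorphism of graded algebras $\Sym(A_1) \epi A$, and I expect $\Proj^{\otimes}_\C(A)$ to arise as the ``closed subcategory'' of the projective tensor category attached to $A_1$ cut out by the relations in $A$ — this uses the globalization of closed immersions (\autoref{closed-UE}) together with the projective tensor category $\Proj^{\otimes}$ of a generating object (\autoref{projcat}). Local presentability of $\C$ is what guarantees the relevant colimits/localizations exist and that the represented $2$-functor is corepresentable.

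For part (2), the comparison when $\C=\Q(S)$, the cleanest route is \textbf{not} to exhibit an equivalence by hand but to show that both $\Proj^{\otimes}_{\Q(S)}(A)$ and $\Q(\Proj_S(A))$ represent the \emph{same} $2$-functor on $\Cat_{c\otimes}$, so that by Yoneda they are equivalent. On one side, $\Hom_{c\otimes}(\Proj^{\otimes}_{\Q(S)}(A),\D)$ is by definition the category of triples $(F,\L,s)$. On the other side, I would test $\Q(\Proj_S(A))$ against module categories $\D=\Q(T)$ first: the classical universal property of $\Proj_S(A)$ says $\Hom(T,\Proj_S(A))$ is the set of triples $(f,\L,s)$ with $f:T\to S$, $\L$ a line bundle, and $s:f^*(A)\to\bigoplus_n\L^{\otimes n}$ a graded surjection; and by the tensoriality of (quasi-compact quasi-separated) schemes together with the globalization-of-Spec universal property, a cocontinuous tensor functor $F:\Q(S)\to\Q(T)$ corresponds to a morphism $f:T\to S$, a line bundle on $T$ is exactly a line object in $\Q(T)$, and a regular epimorphism $s_1$ corresponds to the surjectivity/global-generation condition. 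Matching these term by term shows the two $2$-functors agree on all $\D$ of the form $\Q(T)$. To upgrade from test categories $\Q(T)$ to \emph{all} cocomplete tensor categories $\D$, I would invoke that $\Proj(A)$ is covered by affine opens $\Spec$ of the degree-zero parts of localizations $A[a^{-1}]$ for $a\in A_1$, realize $\Q(\Proj(A))$ as the corresponding gluing (limit) of the $\Q$ of these opens via descent (\autoref{descent-theory}), and check that $\Proj^{\otimes}$ glues in the same way through the open-subscheme globalization (\autoref{local}).

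\textbf{The hard part will be} the comparison in part (2), specifically passing from the test-category level (where $\D=\Q(T)$ and classical algebraic geometry is directly available) to arbitrary cocomplete tensor categories $\D$, because there $\L$ is only an abstract line object and $s_1$ only a regular epimorphism, with no underlying space to localize over. The key technical point is to show that the affine cover of $\Proj(A)$ by the opens $D_+(a)=\Spec((A[a^{-1}])_0)$ for $a$ ranging over (a generating set of) sections of $A_1$ globalizes: on the tensor side this should correspond to the fact that a regular epimorphism $s_1:F(A_1)\to\L$ yields, for each such $a$, a morphism that becomes invertible after twisting, giving an ``open piece'' $F(A)[s(a)^{-1}]_0$, and that these pieces cover in the sense of the product/coproduct globalization (\autoref{prodUE}) and descent. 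Making this covering argument precise without reference to points — i.e. showing the two descent data genuinely agree and that the glued objects satisfy the asserted universal property internally — is where the real work lies; the affine building blocks and the closed-immersion relations are comparatively routine once the globalization machinery of \autoref{Comm} and the earlier parts of \autoref{Cons} is in place.
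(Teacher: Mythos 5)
For part (1) you are essentially on the paper's track: the paper builds $\Proj^{\otimes}_\C(A)$ by combining the universal property of $\grM(A)$ (\autoref{grMod}), which classifies \emph{arbitrary} graded algebra homomorphisms $s\colon F(A)\to\bigoplus_n \L^{\otimes n}$, with \autoref{goodepi}, which converts the condition that $s_1$ be a regular epimorphism into exactness of an explicit diagram, and then localizes via \autoref{universal-problem}. Your variant --- first form $\P^{\otimes}_\C(A_1)$, then kill the relations of $A$ --- can be pushed through with the same localization theorem, but \autoref{closed-UE} is not quite the right tool: the relations of $A$ live in all positive degrees, so they must first be twisted down by $\O(-n)$ and assembled into an ideal; the paper avoids this bookkeeping by localizing $\grM(A)$ directly.

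The genuine gap is in part (2). Matching the two $2$-functors on test categories $\D=\Q(T)$ can only ever produce the canonical comparison functor $\Proj^{\otimes}_{\Q(S)}(A)\to\Q(\Proj_S(A))$ (apply the universal property of $\Proj^{\otimes}$ to the universal triple $(p^*,\O(1),s)$ on $\Q(\Proj_S(A))$); it cannot produce an inverse, since the Yoneda argument would require evaluating at $\D=\Proj^{\otimes}_{\Q(S)}(A)$ itself, which is not known to be of the form $\Q(T)$ --- that is exactly the statement being proved. (Your matching step also silently uses tensoriality of $S$ to identify $\Hom_{c\otimes}(\Q(S),\Q(T))$ with $\Hom(T,S)$, an assumption absent from the theorem, which is stated for arbitrary $S$; the paper's proof never converts tensor functors into morphisms of schemes.) Your proposed repair --- glue over the opens $D_+(a)$ and run descent inside $\Proj^{\otimes}$ --- is precisely where the whole content of the theorem sits, and the tools you cite do not carry it: \autoref{prodUE} governs coproducts (disjoint unions), not open covers, which are $2$-pullbacks (\autoref{qprod}); and faithfully flat descent for $\bigoplus_a \O_{s(a)}$ inside the abstractly constructed category would require exactness properties (abelian, exact directed colimits; cf.\ the closing remark of \autoref{locsec}) that are not known a priori for a reflective subcategory of $\grM(A)$.

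What the paper actually does is different: it realizes \emph{both} categories as full reflective subcategories of the \emph{same} ambient category $\grM(A)$ --- $\Q(\Proj_S(A))$ via the classical fully faithful functor $\Gamma_*$, and $\Proj^{\otimes}_{\Q(S)}(A)$ via the orthogonality description of \autoref{proj-ex} --- and then proves that these subcategories coincide. After reducing to affine $S$ (a reduction that uses the hypothesis that $A_1$ is of finite presentation, so that $\HOM(A_1,-)$ is computed locally, \autoref{qchom}; this hypothesis appears nowhere in your proposal), the statement becomes the equivalence of ``$M$ satisfies \v{C}ech descent for the cover $\{D_+(f)\}_{f\in B}$'' with ``$M$ is orthogonal to the regular-epimorphism relations'', and this is proved by a genuinely nontrivial induction with homogeneous localizations. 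That computation is the proof; without it, or a substitute for it, your outline does not close.
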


In the special case $A=\Sym(E)$ for some (locally free) $E \in \C$ one gets a globalization of projective bundles $\P^\otimes_\C(E)$ relative to $\C$ (\autoref{projbundle}), in particular a projective $n$-space $\P^n_\C := \P^{\otimes}_\C(\O_\C^{n+1})$. Projective tensor functors are closed under cobase change (\autoref{proj-BW}).

\begin{thmi}[Globalization of blow-ups, \autoref{blow-ex}]
Let $\C$ be a locally presentable tensor category and let $I \subseteq \O_\C$ be an ideal. The blow-up of $\C$ at $I$ is a $2$-initial locally presentable tensor category $\Bl_I(\C)$ equipped with a cocontinuous tensor functor $P : \C \to \Bl_I(\C)$ such that $P(I) \cdot \O_{\Bl_I(\C)}$ is a line object. It exists and is given by
\[\Bl_I(\C) = \Proj^\otimes_\C\bigl(\bigoplus_{n \geq 0} I^n\bigr).\]
If $\C=\Q(X)$ for some scheme $X$ and $I$ is of finite presentation, we have
\[\Q\bigl(\Bl_I(X)\bigr) \simeq \Bl_I \bigl(\Q(X)\bigr).\]
\end{thmi}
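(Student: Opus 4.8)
The plan is to read the displayed formula as the definition and to reduce everything to the universal property of $\Proj^\otimes$ established in \autoref{proj-def}. First I would verify that the Rees algebra $R:=\bigoplus_{n\geq0}I^n$ is a commutative $\N$-graded algebra in $\C$: its $n$-th piece is the ideal power $I^n=\im(I^{\otimes n}\to\O_\C)$, the multiplication $I^m\otimes I^n\to I^{m+n}$ is induced by that of $\O_\C$, and since $I^{\otimes n}\twoheadrightarrow I^n$ is a regular epimorphism, $R$ is generated by $R_1=I$. Hence \autoref{proj-ex} applies: $\Bl_I(\C):=\Proj^\otimes_\C(R)$ exists, is locally presentable, and carries the structural functor $P\colon\C\to\Bl_I(\C)$ together with the tautological line object $\L_0$ and a regular epimorphism $s_0\colon P(I)\twoheadrightarrow\L_0$.

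By \autoref{proj-def}, for every cocomplete tensor category $\D$ the category $\Hom_{c\otimes}(\Bl_I(\C),\D)$ is equivalent to that of triples $(F,\L,s)$ with $s_1\colon F(I)\twoheadrightarrow\L$ a regular epimorphism, a functor $G$ being sent to $(GP,G(\L_0),\dots)$. To identify $\Bl_I(\C)$ with the asserted $2$-initial object it then suffices to show that, for a fixed $F$ for which $J:=F(I)\cdot\O_\D=\im(F(\iota))$ is a line object, the triples extending $F$ (i.e. the functors $G$ with $GP\cong F$) form a contractible groupoid. Existence is the easy direction: taking $\L:=J$ and using that $F$ preserves regular epimorphisms and that $J^{\otimes n}\xrightarrow{\sim}J^n$ for an invertible ideal, the maps $F(I^n)\twoheadrightarrow J^n\cong\L^{\otimes n}$ assemble to a canonical $s$. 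For uniqueness I would use the single relation distinguishing $R$ from $\Sym_\C(I)$: the multiplication $\mu\colon I\otimes I\to I$ obtained by restricting the commutative multiplication of $\O_\C$ is symmetric. Applying $F$ and composing with $s_1$ shows that $F(\iota)$ coequalizes the two evident maps $F(I)^{\otimes2}\otimes\L^{\otimes-1}\rightrightarrows F(I)$; since by \autoref{goodepi} the regular epimorphism $s_1$ is exactly their coequalizer, $F(\iota)$ factors as $h\circ s_1$ with $\im(h)=J$, giving a regular epimorphism $\bar h\colon\L\twoheadrightarrow J$. As $\L$ and $J$ are both line objects, \autoref{goodepi} applied to $\bar h$ together with symtriviality of $\L$ forces the two defining maps of $\bar h$ to coincide, so $\bar h$ is an isomorphism and the triple is canonically $(F,J,s)$.

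The genuinely hard point — and the one I expect to be the main obstacle — is to check that $(\Bl_I(\C),P)$ is itself an object of the indexing $2$-category, i.e. that $P(I)\cdot\O_{\Bl_I(\C)}$ is a line object; here one may not assume the conclusion, so the clean cancellation just used is unavailable. Running the previous argument on the tautological triple yields a canonical map $h_0\colon\L_0\to\O_{\Bl_I(\C)}$ with $h_0\circ s_0=P(\iota)$ and $\im(h_0)=P(I)\cdot\O_{\Bl_I(\C)}$; what must be shown is that $h_0$ is a monomorphism, so that $\L_0\cong P(I)\cdot\O_{\Bl_I(\C)}$ realizes the tautological line as an invertible ideal — the abstract exceptional divisor. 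I would prove this from the explicit construction of $\Proj^\otimes_\C(R)$ in \autoref{proj-ex}, which glues localizations at the degree-one elements $f\in I$. On each such chart $\L_0$ is trivialized by $f$, and the inclusions $I^n\hookrightarrow\O_\C$ — the feature special to the Rees algebra and absent for a general graded algebra — guarantee that $f$ acts as a non-zero-divisor; hence $P(I)\cdot\O$ restricts to the invertible ideal generated by $f$, and since being a line object is local on the cover, $P(I)\cdot\O_{\Bl_I(\C)}$ is a line object.

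Finally, for the comparison with the classical blow-up, let $\C=\Q(X)$ with $I$ of finite presentation. The formula already proved gives $\Bl_I(\Q(X))\simeq\Proj^\otimes_{\Q(X)}(\bigoplus_n I^n)$. The subobject $I\hookrightarrow\O_X$ is an ideal sheaf $\mathcal I$, and since the tensor product of $\Q(X)$ is $\otimes_{\O_X}$ and images are sheaf images, the tensor-categorical powers $I^n$ coincide with the ideal-sheaf powers $\mathcal I^n$; thus $R$ is the usual quasi-coherent Rees algebra $\bigoplus_n\mathcal I^n$, generated in degree one with $\mathcal I$ of finite presentation. The comparison theorem \autoref{proj-compare} then yields $\Proj^\otimes_{\Q(X)}(\bigoplus_n\mathcal I^n)\simeq\Q(\Proj_X(\bigoplus_n\mathcal I^n))$, and since $\Bl_I(X)=\Proj_X(\bigoplus_n\mathcal I^n)$ by the classical description, we conclude $\Bl_I(\Q(X))\simeq\Q(\Bl_I(X))$; the finiteness hypothesis is used precisely to meet the assumption on $A_1$ in \autoref{proj-compare}.
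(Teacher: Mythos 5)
Most of your outline coincides with the paper's own proof. The paper likewise \emph{defines} $\Bl_I(\C):=\Proj^\otimes_\C\bigl(\bigoplus_{n\geq 0}I^n\bigr)$, gets existence and local presentability from \autoref{proj-ex}, proves the existence half of the universal property exactly as you do (for $F$ with $\L:=F(I)\cdot\O_\D$ a line object, the regular epimorphisms $F(I^n)\twoheadrightarrow (F(I)\cdot\O_\D)^n\cong\L^{\otimes n}$ assemble into the required graded algebra map, using that the image of ideals commutes with ideal products and that ideal powers of an invertible ideal are tensor powers), and deduces the scheme comparison from \autoref{proj-compare} precisely as in your last paragraph. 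Your uniqueness step is a genuine variant: the paper applies \autoref{inv-diskret} to the epimorphism $G(\O(1))\twoheadrightarrow\L$ lying over $F(I)$, which presupposes the identification $P(I)\cdot\O_{\Bl_I(\C)}=\O(1)$, whereas you argue directly on triples, using that $I\otimes I\to I$ is symmetric (valid because $I\hookrightarrow\O_\C$ is a monomorphism and $\O_\C$ is commutative) together with \autoref{goodepi} and the fact that a regular epimorphism from a symtrivial object onto a line object is an isomorphism. That argument is correct, and it has the small advantage of not invoking the hard step.

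The genuine gap is in that hard step itself, which you rightly single out: showing that $P(I)\cdot\O_{\Bl_I(\C)}$ is a line object. Your proposed argument cannot work at this level of generality. \autoref{proj-ex} does \emph{not} construct $\Proj^\otimes_\C(A)$ by ``gluing localizations at the degree-one elements $f\in I$''; it constructs it as a reflective localization of $\grM(A)$ (an orthogonality subcategory with a transfinite reflector). In an arbitrary locally presentable tensor category the ideal $I$ need not admit any ``elements'' $\O_\C\to I$ at all, there is no cover of $\Proj^\otimes_\C(A)$ by charts, and ``being a line object is local on the cover'' has no meaning here without a descent formalism (the closest available statement, the remark at the end of \autoref{locsec}, requires finitely many generating sections and an abelian locally finitely presentable category). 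Eliminating exactly this kind of chart-wise reasoning is the point of the globalization program. What the paper uses instead is a global substitute: one computes degreewise that the image of $I\otimes A\to A$ in $\grM(A)$ is $A[1]_{\geq 0}=\bigoplus_{n\geq 0}I^{n+1}$, and then invokes \autoref{TN} --- proved purely from the universal exactness property of $\Proj^\otimes$ for algebras generated in degree one --- which says that the reflector sends the truncation inclusion $A[1]_{\geq 0}\hookrightarrow A[1]$ to an isomorphism; this identifies $P(I)\cdot\O_{\Bl_I(\C)}$ with $\O(1)$. So your map $h_0\colon\L_0\to\O_{\Bl_I(\C)}$ is the right object to study, but the monomorphism/invertibility claim must be extracted from \autoref{TN} (or an equivalent global lemma), not from nonexistent charts; as it stands, your proof of the theorem's first assertion is missing its key ingredient.
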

  
\begin{thmi}[Globalization of the Segre embedding, \autoref{segre-emb}]
Let $\C$ be a cocomplete tensor category and $E_1,E_2 \in \C$. Then there is an equivalence of cocomplete tensor categories (using element notation)
\[\P^\otimes_\C(E_1) \sqcup_\C \P^\otimes_\C(E_2) \simeq \P^\otimes_\C(E_1 \otimes E_2) / ((a \otimes b) \cdot (c \otimes d) = (a \otimes d) \cdot (c \otimes b))_{a,c \in E_1,\, b,d \in E_2}.\]
\end{thmi}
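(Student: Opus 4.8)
The plan is to prove the asserted equivalence by comparing the two sides through their universal properties and invoking the (2-categorical) Yoneda lemma. Thus I would fix an arbitrary cocomplete tensor category $\D$ together with a cocontinuous tensor functor $F : \C \to \D$ and compare the two categories of cocontinuous tensor functors out of each side that lie over $F$, showing them naturally equivalent in $\D$. Using the universal property of the relative coproduct $\sqcup_\C$, a functor out of $\P^\otimes_\C(E_1) \sqcup_\C \P^\otimes_\C(E_2)$ lying over $F$ is a pair of functors out of $\P^\otimes_\C(E_1)$ and $\P^\otimes_\C(E_2)$ agreeing on $\C$; by the universal property of $\P^\otimes_\C$ (\autoref{proj-def}, \autoref{projbundle}) this amounts to a tuple $(\L_1,s_1,\L_2,s_2)$ where $\L_i$ is a line object in $\D$ and $s_i : F(E_i) \to \L_i$ is a regular epimorphism. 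On the other side, by the universal properties of $\P^\otimes_\C(E_1 \otimes E_2)$ and of the quotient by relations, a functor over $F$ is a pair $(\L,s)$ with $\L$ a line object and $s : F(E_1) \otimes F(E_2) \to \L$ a regular epimorphism satisfying the Segre relations.

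First I would treat the easy direction. Given $(\L_1,s_1,\L_2,s_2)$ set $\L := \L_1 \otimes \L_2$ and $s := s_1 \otimes s_2$. Since line objects are closed under $\otimes$ (both invertibility and symtriviality are preserved), $\L$ is again a line object, and $s$ is a regular epimorphism, which I would deduce from the good-epi criterion (\autoref{goodepi}) applied to the two cocontinuously-induced regular epimorphisms whose composite is $s$. The Segre relations then hold automatically: in element notation
\[s(a \otimes b)\, s(c \otimes d) = s_1(a)\, s_1(c)\, s_2(b)\, s_2(d), \qquad s(a \otimes d)\, s(c \otimes b) = s_1(a)\, s_1(c)\, s_2(d)\, s_2(b),\]
and these agree in $\L_1^{\otimes 2} \otimes \L_2^{\otimes 2} = \L^{\otimes 2}$ precisely because $\L_2$ is symtrivial (\autoref{symtrivial}), so that $s_2(b)\, s_2(d) = s_2(d)\, s_2(b)$. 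This defines a functor from the left-hand data to the right-hand data, natural in $\D$.

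The hard part is the reverse direction: reconstructing $(\L_1,s_1,\L_2,s_2)$ from a regular epimorphism $s : F(E_1) \otimes F(E_2) \to \L$ onto a line object satisfying the Segre relations — the tensor-categorical ``un-Segre''. Conceptually the relations express that $s$, viewed as a tensor, has rank one and hence factors; the difficulty is that the classical factorization is obtained only after choosing a ``unit entry'', so it is not manifestly canonical. I would isolate this as a lemma: every such $s$ admits a factorization $\L \cong \L_1 \otimes \L_2$, $s = s_1 \otimes s_2$ with $\L_i$ line objects and $s_i$ regular epimorphisms, unique up to unique isomorphism. I would construct $\L_1,\L_2$ canonically — e.g.\ as the images of the two partial adjoints of $s$, or via the coequalizer presentation of the regular epi $s$ furnished by \autoref{goodepi} — and use the Segre relations to verify that these images are invertible and symtrivial and that their tensor product recovers $(\L,s)$. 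The uniqueness clause is what makes the construction well defined without choices: it lets the (a priori only local) factorizations be glued, so the lemma holds in full generality, in particular without assuming $E_i$ dualizable. I expect this factorization lemma, and especially the verification that the canonically constructed $\L_i$ are genuine line objects, to be the main obstacle.

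Finally, I would check that the two assignments are mutually quasi-inverse and natural in $(\D,F)$: the uniqueness in the factorization lemma ensures that reconstructing and then retensoring returns the original $(\L,s)$, while retensoring the reconstructed data the other way is handled by the easy direction. Since this equivalence of categories of points is natural in $\D$, the (2-categorical) Yoneda lemma yields the claimed equivalence of cocomplete tensor categories.
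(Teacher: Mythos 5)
Your reduction to a statement about ``points'' is exactly the paper's strategy: by \autoref{projbundle}, \autoref{universal-problem} and the universal property of the $2$-pushout, the claim is equivalent to an equivalence between the category of tuples $(\L_1,s_1,\L_2,s_2)$ and the category of pairs $(\L,s)$ satisfying the Segre relations, and your easy direction $s := s_1 \otimes s_2$ is correct (though regularity of $s$ comes from \autoref{coeq-tensor}, not \autoref{goodepi}; also both categories are automatically essentially discrete by \autoref{inv-diskret}, so the uniqueness you labor over comes for free). The genuine gap is that the reverse direction is the entire content of the theorem, and you do not supply it: you state a factorization lemma and gesture at two constructions, neither carried through. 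The first, ``images of the two partial adjoints of $s$'', is not even available at this level of generality: a cocomplete tensor category need not be closed (so there are no partial adjoints) and need not have (regular epi, mono) factorizations (so no images); and even where these exist there is no a priori reason such an image is invertible or symtrivial. Your remark that uniqueness ``lets the (a priori only local) factorizations be glued'' is likewise misplaced: in an abstract $\C$ there is no localization and hence nothing local to glue -- the construction must be global from the outset, which is precisely the difficulty.

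What actually works (and is the paper's proof) is a concrete instance of your second suggestion: define $s_1 : F(E_1) \to \L_1$ as the coequalizer of the pair $F(E_1)^{\otimes 2} \otimes F(E_2) \otimes \L^{\otimes -1} \rightrightarrows F(E_1)$ obtained by transposing the two morphisms $a \otimes c \otimes b \mapsto a \otimes s(c \otimes b)$ resp. $c \otimes s(a \otimes b)$, i.e. impose universally the relation $s_1(a) \otimes s(c \otimes b) = s_1(c) \otimes s(a \otimes b)$ that would hold if $s$ were $s_1 \otimes s_2$; define $s_2$ symmetrically. Three verifications remain, all performed by tensoring with the epimorphism $s$ and cancelling via \autoref{epi-cancel} (the global substitute for local computations): (i) $\L_1,\L_2$ are symtrivial; (ii) there is an isomorphism $\L_1 \otimes \L_2 \cong \L$ over $E_1 \otimes E_2$, obtained by comparing the two coequalizer presentations of $\L \otimes \L$ and $\L_1 \otimes \L_2 \otimes \L$ over $E_1 \otimes E_2 \otimes \L$ furnished by \autoref{coeq-tensor} and \autoref{goodepi}, where the required identities are exactly the Segre relations and the defining relations of $s_1,s_2$; and (iii) invertibility of $\L_1,\L_2$ is then a \emph{consequence} of (ii), namely $\L_1^{\otimes -1} \cong \L_2 \otimes \L^{\otimes -1}$, not a separate check as you anticipate. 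Finally, the roundtrip starting from $(\L_1,s_1,\L_2,s_2)$ requires showing that $s_1$ is itself the coequalizer of the displayed pair, which again follows from \autoref{goodepi} after cancelling the epimorphism $s_2$. Until your factorization lemma is established along these (or equivalent) lines, the proposal is a correct reduction but not a proof.
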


\begin{thmi}[\autoref{veronese}, Globalization of the Veronese embedding]
Let $\C$ be a cocomplete tensor category, $E \in \C$ and $d \in \N^+$. Then there is an equivalence cocomplete tensor categories (using element notation)
\[\P_\C^\otimes(E) \simeq \P_\C^\otimes(\Sym^d(E))/((a_1 a_2 \dotsc a_d) \cdot (b_1 b_2 \dotsc b_d) = (b_1 a_2 \dotsc a_d) (a_1 b_2 \dotsc b_d)).\]
\end{thmi}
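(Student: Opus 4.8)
The plan is to prove the equivalence by constructing mutually quasi-inverse cocontinuous tensor functors and invoking the $2$-Yoneda lemma in $\Cat_{c\otimes}$; equivalently I compare the two sides through their universal properties. Writing $\P_\C^\otimes(E)=\Proj^\otimes_\C(\Sym(E))$ and using \autoref{proj-def} together with $\bigoplus_n\L^{\otimes n}=\Sym(\L)$ for a line object $\L$ and the freeness of $\Sym$, cocontinuous tensor functors $\P_\C^\otimes(E)\to\D$ are the same as triples $(F,\L,s)$ where $F:\C\to\D$ is a cocontinuous tensor functor, $\L\in\D$ a line object and $s:F(E)\to\L$ a regular epimorphism. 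In the same way, cocontinuous tensor functors $\P_\C^\otimes(\Sym^d(E))\to\D$ are triples $(F,\mathcal M,t)$ with $t:F(\Sym^d(E))\to\mathcal M$ a regular epimorphism onto a line object, and passing to the quotient $/(R)$ restricts to those triples for which the two morphisms $F(E)^{\otimes 2d}\rightrightarrows\mathcal M^{\otimes 2}$ coming from the two bracketings in the relation $R$ agree.

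First I would build the Veronese functor $\bar\nu:\P_\C^\otimes(\Sym^d(E))/(R)\to\P_\C^\otimes(E)$. On the universal datum $(P_E,\L_u,s_u)$ of the target I form the triple $(P_E,\L_u^{\otimes d},t_u)$, where, using that cocontinuous tensor functors preserve symmetric powers (\autoref{symmetric-power}) and that a line object is symtrivial so that $\Sym^d(\L_u)\cong\L_u^{\otimes d}$ (\autoref{symtrivial}), I set $t_u:P_E(\Sym^d(E))\xrightarrow{\sim}\Sym^d(P_E(E))\xrightarrow{\Sym^d(s_u)}\Sym^d(\L_u)\xrightarrow{\sim}\L_u^{\otimes d}$; in element notation $t_u(a_1\cdots a_d)=s_u(a_1)\cdots s_u(a_d)$. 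The relation $R$ holds for $t_u$ because this product lives in the strictly commutative algebra $\Sym(\L_u)=\bigoplus_n\L_u^{\otimes n}$, so the leading factors $a_1,b_1$ may be interchanged; hence the triple descends to $/(R)$ and defines $\bar\nu$. That $t_u$ is a regular epimorphism follows since the symmetrization $\L_u^{\otimes d}\to\Sym^d(\L_u)$ is an isomorphism: then $\Sym^d(s_u)\circ\pi=\pi_{\L_u}\circ s_u^{\otimes d}$ is a regular epimorphism (the tensor product of regular epimorphisms is one, $\otimes$ being cocontinuous in each variable), and cancelling the epimorphism $\pi:P_E(E)^{\otimes d}\to\Sym^d(P_E(E))$ shows $\Sym^d(s_u)$, and thus $t_u$, is a regular epimorphism.

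The substance of the proof is the construction of a quasi-inverse $\rho:\P_\C^\otimes(E)\to\P_\C^\otimes(\Sym^d(E))/(R)$, for which I must extract a ``$d$-th root'' from the universal triple $(\bar P,\bar{\mathcal M},\bar t)$ of $\P_\C^\otimes(\Sym^d(E))/(R)$: a line object $\L'$, a regular epimorphism $s':\bar P(E)\to\L'$ and an isomorphism $\bar{\mathcal M}\cong\L'^{\otimes d}$ identifying $\bar t$ with $\Sym^d(s')$. Since $\C$ is locally presentable, so is the quotient, and there I may take $\L'$ to be the image of the morphism $\bar P(E)\to\HOM\bigl(\bar P(\Sym^{d-1}(E)),\bar{\mathcal M}\bigr)$, $a\mapsto(x\mapsto\bar t(a\cdot x))$, with $s'$ the coimage projection. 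The relation $R$ is precisely the vanishing of the ``$2\times 2$ minors'' of the symmetric tensor $\bar t$, and the \emph{main obstacle} is to show that this forces $\L'$ to be invertible and yields the isomorphism $\bar{\mathcal M}\cong\L'^{\otimes d}$ compatible with $\bar t$; this is the global, functorial analogue of the classical fact that the Veronese variety is scheme-theoretically cut out by the quadrics $R$, and it is here that symtriviality of $\bar{\mathcal M}$, regularity of $\bar t$, and \autoref{goodepi} (which controls regular epimorphisms onto line objects) are used. Given $\L',s'$, the triple $(\bar P,\L',s')$ defines $\rho$ via the universal property of $\P_\C^\otimes(E)$.

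It then remains to verify $\bar\nu\circ\rho\simeq\id$ and $\rho\circ\bar\nu\simeq\id$, which by \autoref{proj-def} may be checked on universal data: $\bar\nu\circ\rho$ sends $(\bar P,\L',s')$ to $(\bar P,\L'^{\otimes d},\Sym^d(s'))\cong(\bar P,\bar{\mathcal M},\bar t)$ by construction of the root, and $\rho\circ\bar\nu$ recovers $(P_E,\L_u,s_u)$ because extracting the root of $\L_u^{\otimes d}$ with $t_u=\Sym^d(s_u)$ returns $\L_u$ and $s_u$ up to canonical isomorphism. Full faithfulness on morphisms is then routine: a morphism of triples consists of a monoidal natural transformation of the functors and a compatible morphism of line objects, and the latter is pinned down by the structure regular epimorphisms (cf.\ \autoref{goodepi}), so the correspondence of morphisms follows. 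The whole argument parallels the proof of the Segre embedding (\autoref{segre-emb}), the difference being the $d$-th-root extraction in place of the rank-one factorization.
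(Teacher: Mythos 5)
Your overall architecture is sound and parallels the paper's: translate both sides into universal properties, observe that the forward (Veronese) functor is induced by $(\L,s)\mapsto(\L^{\otimes d},\Sym^d(s))$, and then produce a quasi-inverse by extracting a $d$-th root from the universal datum of the quotient. Your forward direction is essentially correct ($\Sym^d(\L_u)\cong\L_u^{\otimes d}$ by symtriviality, $\Sym^d(s_u)$ regular epi by cancellation against $P_E(E)^{\otimes d}\twoheadrightarrow\Sym^d(P_E(E))$, and the relation $R$ holds because $\L_u$ is symtrivial). But there is a genuine gap exactly where you announce the ``main obstacle'': showing that the Veronese relations force the existence of a line object $\L'$, a regular epimorphism $s':\bar P(E)\to\L'$, and an isomorphism $\bar{\mathcal M}\cong\L'^{\otimes d}$ identifying $\bar t$ with $\Sym^d(s')$, and that this construction is mutually inverse to the forward one. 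You name the tools that ``are used'' (symtriviality, regularity of $\bar t$, \autoref{goodepi}) but give no argument; this is not a routine verification — it is the entire mathematical content of the theorem, and it is what the paper's proof is almost entirely devoted to.

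Moreover, your proposed definition of $\L'$ as the image of $\bar P(E)\to\HOM\bigl(\bar P(\Sym^{d-1}(E)),\bar{\mathcal M}\bigr)$ is not well suited to closing that gap. It presupposes internal homs and (regular epi, mono)-factorizations, and, more importantly, a subobject of an internal Hom carries no usable mapping-out property, which is what every subsequent step needs. The paper instead defines $s:E\to\L$ as the \emph{coequalizer} of the two morphisms $E\otimes E^{\otimes d}\otimes\K^{\otimes -1}\rightrightarrows E$ encoding the relation $s(w_1)\otimes t(v_1\cdots v_d)=s(v_1)\otimes t(w_1\cdots v_d)$; this works in any cocomplete tensor category and yields precisely the universal property needed to (i) prove $\L$ is symtrivial by tensoring with the epimorphism $t$ and cancelling (\autoref{epi-cancel}), (ii) construct the isomorphism $\L^{\otimes d}\cong\K$ over $\Sym^d E$ by comparing two coequalizer presentations via \autoref{coeq-tensor}, \autoref{sym-exakt} and \autoref{goodepi} -- this is the step where the Veronese relation is actually consumed -- and (iii) check that the two constructions are inverse to each other, again via \autoref{goodepi}. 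Until you either carry out these three steps for your image-theoretic $\L'$ or switch to the coequalizer definition and do the corresponding computations, the proposal is a correct plan but not a proof.
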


\begin{thmi}[Globalization of Grassmannians and their Plücker embedding, \autoref{grassex}]
Let $R$ be a $\QQ$-algebra, $\C$ be an $R$-linear cocomplete tensor category and $E \in \C$. Then $\Grass_d^\otimes(E)$ is a cocomplete tensor category with the following universal property: If $\D$ is an $R$-linear cocomplete tensor category, then $\Hom_{c\otimes/R}(\Grass_d^\otimes(E),\D)$ is equivalent to the category of triples $(F,V,t)$, where $F : \C \to \D$ is a cocontinuous tensor functor, $V \in \D$ is locally free of rank $d$ and $t : F(E) \to V$ is a regular epimorphism.
\begin{enumerate}
\item When $\C$ is locally presentable, then $\Grass_d^\otimes(E)$ exists and is given by
\[\P_\C^\otimes(\Lambda^d(E))/(0=\sum_{k=0}^{d} (-1)^k (a_1 \wedge \dotsc \wedge a_{d-1} \wedge b_k) \cdot (b_0 \wedge \dotsc \wedge \widehat{b_k} \wedge \dotsc \wedge b_d)).\]
\item If $\C=\Q(X)$ for some $R$-scheme $X$ and if $E$ is of finite presentation, then \marginpar{I've added R- to the scheme.}
\[\Grass_d^\otimes(E) \simeq \Q\bigl(\Grass_d(E)\bigr).\]
\end{enumerate}
\end{thmi}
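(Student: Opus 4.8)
The plan is to realize $\Grass_d^\otimes(E)$ as the explicit quotient of part~(1) and to show that this quotient corepresents the $2$-functor sending an $R$-linear cocomplete tensor category $\D$ to the category of triples $(F,V,t)$; existence and the comparison with $\Q(\Grass_d(E))$ will then follow formally. The starting point is the universal property of the projective bundle $\P^\otimes_\C(\Lambda^d(E)) = \Proj^\otimes_\C(\Sym(\Lambda^d(E)))$ from \autoref{proj-def} and \autoref{projbundle}: a cocontinuous tensor functor out of it into $\D$ is the same datum as a triple $(F,\L,s)$ with $F:\C\to\D$ cocontinuous tensor, $\L$ a line object, and $s : F(\Lambda^d(E)) \to \L$ a regular epimorphism. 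Since $F$ is $R$-linear and $R$ is a $\QQ$-algebra, $F$ commutes with the antisymmetrizer defining $\Lambda^d$, so $F(\Lambda^d(E))=\Lambda^d(F(E))$, and the relations imposed in the quotient are exactly the Pl\"ucker relations on $\Lambda^d(F(E))$. By the universal property of the quotient tensor category, cocontinuous tensor functors $\Grass_d^\otimes(E)\to\D$ thus correspond to those triples $(F,\L,s)$ that additionally annihilate the Pl\"ucker relations, and the theorem reduces to an equivalence, $2$-natural in $\D$, between such Pl\"ucker data and the Grassmann data $(F,V,t)$.

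One direction is routine. Given $(F,V,t)$ with $V$ locally free of rank $d$, I send it to $(F,\Lambda^d(V),\Lambda^d(t))$. By \autoref{lokfreedef} and \autoref{lokdual} the object $V$ is dualizable, $\Lambda^d(V)$ is a line object and $\Lambda^{d+1}(V)=0$; moreover $\Lambda^d(t)$ is a regular epimorphism since $t$ is one and $\otimes$ is cocontinuous. The Pl\"ucker relations then hold automatically: the displayed expression is the image of a generator under the natural map $\Lambda^{d-1}(V)\otimes\Lambda^{d+1}(V)\to\Lambda^d(V)\otimes\Lambda^d(V)$ assembled from the comultiplication of the exterior algebra, and this map has vanishing source because $\Lambda^{d+1}(V)=0$. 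This is the conceptual reason that these particular quadratic relations are the correct ones.

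The reverse direction is the main obstacle, and is where I would spend most of the effort. Working inside $\mathcal{G}:=\Grass_d^\otimes(E)$, which is locally presentable, hence closed, whenever $\C$ is, I would reconstruct a universal rank-$d$ quotient from the universal datum $s_{\mathrm{univ}}:\Lambda^d(\E)\to\L_{\mathrm{univ}}$, where $\E$ is the image of $E$: contracting $s_{\mathrm{univ}}$ along the wedge product gives $\E\otimes\Lambda^{d-1}(\E)\to\L_{\mathrm{univ}}$, hence by closedness a morphism $\phi:\E\to\HOM(\Lambda^{d-1}(\E),\L_{\mathrm{univ}})$, and I set $V_{\mathrm{univ}}:=\im(\phi)$ with $t_{\mathrm{univ}}$ its coimage. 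The delicate point is to prove, purely internally, that $\Lambda^d(V_{\mathrm{univ}})\cong\L_{\mathrm{univ}}$ with $\Lambda^d(t_{\mathrm{univ}})=s_{\mathrm{univ}}$ and that $\Lambda^{d+1}(V_{\mathrm{univ}})=0$: it is precisely the universally imposed Pl\"ucker relations that force $V_{\mathrm{univ}}$ to have rank exactly $d$ rather than something larger. From $\Lambda^d(V_{\mathrm{univ}})$ invertible and $\Lambda^{d+1}(V_{\mathrm{univ}})=0$ I would deduce that $V_{\mathrm{univ}}$ is locally free of rank $d$ by the intrinsic characterization behind \autoref{locally-free-sch} and \autoref{lokfreedef}, whose proof invokes the generalized Cramer's rule of \autoref{lokdual}. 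The difficulty is genuine: the classical proof trivializes the line bundle and recovers the subspace in local coordinates, whereas here the argument must be carried out globally, and this is exactly where the $\QQ$-algebra hypothesis is indispensable, as it makes $\Lambda^d$ a natural direct summand of the $d$-th tensor power and keeps all exterior-power manipulations valid. To finish the universal property for arbitrary $\D$, I note that a classifying functor $H:\mathcal{G}\to\D$ carries $(V_{\mathrm{univ}},t_{\mathrm{univ}})$ to a rank-$d$ quotient, since cocontinuous tensor functors preserve dualizable objects, regular epimorphisms and exterior powers; that this is inverse to the forward construction rests on the uniqueness statement that a rank-$d$ quotient of $F(E)$ is determined by the induced quotient of its top exterior power, i.e.\ that the Pl\"ucker map is a monomorphism.

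Existence and comparison are then formal. When $\C$ is locally presentable, $\P^\otimes_\C(\Lambda^d(E))$ exists by \autoref{proj-ex} and the quotient by a set of relations exists as well, so the formula of part~(1) defines $\Grass_d^\otimes(E)$ and the previous paragraphs equip it with the stated universal property. For $\C=\Q(X)$ with $E$ of finite presentation, I would combine \autoref{proj-compare} and \autoref{projbundle}, giving $\P^\otimes_{\Q(X)}(\Lambda^d(E))\simeq\Q(\P(\Lambda^d(E)))$, with the classical fact that the Pl\"ucker embedding realizes $\Grass_d(E)$ as the closed subscheme of $\P(\Lambda^d(E))$ cut out by exactly these relations; the globalization of closed immersions (\autoref{closed-UE}) then identifies the quotient with $\Q(\Grass_d(E))$. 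Alternatively, and more in the spirit of the chapter, one compares functors of points: by tensoriality (\autoref{tensoriality}) every cocontinuous tensor functor $\Q(X)\to\Q(Y)$ is a pullback $f^*$, and the universal property of $\Grass_d^\otimes(E)$ evaluated on $\Q(Y)$ then reproduces the functor of points of the relative Grassmannian of rank-$d$ quotients of $E$, with finite presentation of $E$ ensuring that the tensor-categorical and scheme-theoretic notions of a locally free quotient of rank $d$ agree.
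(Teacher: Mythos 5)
Your overall architecture matches the paper's: realize $\Grass_d^\otimes(E)$ as $\P^\otimes_\C(\Lambda^d(E))$ localized at the Pl\"ucker relations (\autoref{proj-ex}, \autoref{universal-problem}), and reduce everything to an equivalence between Pl\"ucker data $(\L,s)$ and rank-$d$ quotient data $(V,t)$; your forward direction $(V,t)\mapsto(\Lambda^d(V),\Lambda^d(t))$, with the relations holding because they factor through $\Lambda^{d+1}(V)=0$, is essentially the paper's (\autoref{rel-frei}, \autoref{det}). The genuine gap is in the reverse direction, which is the actual content of the paper's \autoref{PR}. You define $V_{\mathrm{univ}}$ as the image of $\phi:\E\to\HOM(\Lambda^{d-1}(\E),\L_{\mathrm{univ}})$ and then concede that the verifications $\Lambda^d(V_{\mathrm{univ}})\cong\L_{\mathrm{univ}}$ and $\Lambda^{d+1}(V_{\mathrm{univ}})=0$ are ``the delicate point''; but these verifications \emph{are} the theorem, and your sketch never indicates how the universally imposed Pl\"ucker relations would enter them. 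Worse, the image construction is structurally ill-suited to your own strategy of transporting a universal object along classifying functors: an image is a (regular epi, mono) factorization, and neither monomorphisms nor internal homs are preserved by cocontinuous tensor functors, so even granting the internal verifications you cannot identify $H(V_{\mathrm{univ}})$ with a given quotient $V$ in $\D$ without first re-describing $V_{\mathrm{univ}}$ purely by colimits. Your closing ``uniqueness statement that the Pl\"ucker map is a monomorphism'' is likewise asserted, not proven.

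The paper resolves exactly these points by a colimit-theoretic construction: given $(\L,s)$ satisfying the Pl\"ucker relations, it defines $t:E\to V$ as the \emph{cokernel} of the morphism $\Lambda^{d+1}(E)\otimes\L^{\otimes -1}\to E$ obtained from $\omega:\Lambda^{d+1}(E)\to E\otimes\Lambda^{d}(E)$ (\autoref{hilfs-omega}) composed with $s$. This works in an arbitrary $R$-linear cocomplete tensor category (so \autoref{PR} needs no local presentability or closedness), is manifestly preserved by cocontinuous tensor functors, and enforces in element notation precisely the relation $(\star)$ of \autoref{rel-frei}; from there the Symmetry Lemma (\autoref{symlem}) together with the Pl\"ucker relations yields the isomorphism $\Lambda^d(V)\cong\L$ over $\Lambda^d(E)$ and local freeness of $V$, while the composite $A\to B\to A$ being the identity is the exactness of $\Lambda^{d+1}(E)\otimes\L^{\otimes -1}\to E\to V\to 0$ for any rank-$d$ quotient, proven with \autoref{epi-cancel}. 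To repair your proposal you should replace the image construction by this cokernel and carry out those computations; as written, the heart of the proof is missing. (Your part (2) is fine along the paper's lines, i.e.\ \autoref{proj-compare} plus the classical Pl\"ucker embedding and \autoref{closed-UE}; but note that your ``alternative'' argument via tensoriality only compares scheme-valued points and therefore cannot by itself establish the universal property against arbitrary cocomplete tensor categories $\D$.)
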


\subsection*{Further examples}

\begin{thmi}[Globalization of fiber products I, \autoref{quasiproj}] 
Let $S$ be a quasi-compact quasi-separated base scheme. Let $X \to S$ be a quasi-projective morphism and $Y \to S$ be any quasi-compact quasi-separated morphism. Then
\[\xymatrix{\Q(S) \ar[r] \ar[d] & \Q(Y) \ar[d] \\ \Q(X) \ar[r] & \Q(X \times_S Y)}\]
is a $2$-pushout in $\Cat_{c\otimes}$.
\end{thmi}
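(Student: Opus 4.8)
The plan is to test the $2$-pushout property through its universal description: for every cocomplete tensor category $\D$ one must produce a natural equivalence
\[
\Hom_{c\otimes}\bigl(\Q(X\times_S Y),\D\bigr)\;\simeq\;\Hom_{c\otimes}\bigl(\Q(X),\D\bigr)\times_{\Hom_{c\otimes}(\Q(S),\D)}\Hom_{c\otimes}\bigl(\Q(Y),\D\bigr)
\]
compatible with the structure functors. Rather than handle a general quasi-projective $X\to S$ at once, I would factor it as a quasi-compact open immersion $a\colon X\hookrightarrow\bar X$ followed by a projective morphism $g\colon\bar X\to S$ (possible because $S$ is quasi-compact quasi-separated, so $\bar X$ is qcqs and $a$ is automatically quasi-compact). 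Writing $\bar X_Y:=\bar X\times_S Y$ and $p\colon\bar X_Y\to\bar X$, and noting that base change of a qcqs morphism is again qcqs, I would prove the $2$-pushout property separately for the projective stage and the open stage,
\[
\begin{array}{ccc} \Q(S) & \longrightarrow & \Q(Y)\\ \downarrow & & \downarrow\\ \Q(\bar X) & \longrightarrow & \Q(\bar X_Y) \end{array}
\qquad\text{and}\qquad
\begin{array}{ccc} \Q(\bar X) & \longrightarrow & \Q(\bar X_Y)\\ \downarrow & & \downarrow\\ \Q(X) & \longrightarrow & \Q(X\times_S Y) \end{array}
\]
and then glue them using the pasting lemma for $2$-pushouts: if two vertically stacked squares are $2$-pushouts, so is the outer rectangle. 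Since the composite $\Q(S)\to\Q(\bar X)\to\Q(X)$ is exactly the pullback along $g\circ a=f$, and $X\times_S Y=X\times_{\bar X}\bar X_Y$, this yields the assertion.

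For the projective stage I would use the scheme-level identification $\Q(\bar X)\simeq\Proj^\otimes_{\Q(S)}(A)$ for a suitable $\N$-graded algebra $A$ on $S$ generated in degree one (\autoref{proj-compare}). The square then reduces to the statement that forming $\Proj^\otimes$ commutes with cobase change along $\Q(S)\to\Q(Y)$, which is precisely the cobase-change stability of projective tensor functors (\autoref{proj-BW}): the $2$-pushout of $\Proj^\otimes_{\Q(S)}(A)$ along $\Q(S)\to\Q(Y)$ is $\Proj^\otimes_{\Q(Y)}(A_Y)$, where $A_Y$ denotes the image of $A$ under $\Q(S)\to\Q(Y)$. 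Combined with the compatibility of the classical $\Proj$ with base change, $\Proj_S(A)\times_S Y\simeq\Proj_Y(A_Y)$, and the comparison equivalence applied over $Y$, this identifies the pushout with $\Q(\bar X_Y)$; unwinding the universal property of $\Proj^\otimes$ in terms of triples $(F,\L,s)$ confirms that the comparison functor is the canonical one.

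The open stage is the heart of the matter. Applying the globalization of open subschemes (\autoref{local}) to $a^*\colon\Q(\bar X)\to\Q(X)$, whose right adjoint $a_*$ is fully faithful and cocontinuous since $a$ is a qcqs open immersion, identifies $\Hom_{c\otimes}(\Q(X),\D)$ with the functors $F\colon\Q(\bar X)\to\D$ for which $F\to Fa_*a^*$ is invertible. Substituting this into the right-hand fiber product, an object becomes a cocontinuous tensor functor $G\colon\Q(\bar X_Y)\to\D$ with $Gp^*\to Gp^*a_*a^*$ invertible, whereas applying the same globalization to the base-changed open immersion $a_Y\colon X\times_S Y\hookrightarrow\bar X_Y$ describes $\Hom_{c\otimes}(\Q(X\times_S Y),\D)$ as those $G$ with $G\to G(a_Y)_*(a_Y)^*$ invertible. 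Equating these two conditions is where I expect the main obstacle to lie. The difficulty is that the image of $p^*$ need not generate $\Q(\bar X_Y)$ under colimits, so one cannot naively transport the localization condition from the image of $p^*$ to all objects; moreover, for a non-affine open immersion $a_*a^*$ is not a smashing localization, so it is not simply intertwined with $p^*$ by the projection formula.

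To resolve this I would avoid testing the two conditions object by object and instead compare the two localizations of $\Q(\bar X_Y)$ directly: the pushout inverts the pullbacks under $p^*$ of the $a^*$-equivalences (maps that are isomorphisms away from $Z:=\bar X\setminus X$), while $\Q(X\times_S Y)$ inverts the $a_Y^*$-equivalences (isomorphisms away from $Z_Y=p^{-1}(Z)$). Since $p^*$ carries $Z$-supported objects to $Z_Y$-supported ones, the former localization is coarser, and the task is to show it already inverts every $a_Y^*$-equivalence. Here I would use quasi-compactness of $a$ to cover $X$ by finitely many principal opens, reducing to the affine case handled by the globalization of localizations (\autoref{locsec}), where the localization is smashing and its cobase change along $p^*$ is manifestly the corresponding principal localization of $\Q(\bar X_Y)$; a finite \v{C}ech/Mayer--Vietoris assembly over the cover---available because $\bar X_Y\to\bar X$ is qcqs, so the relevant intersections are quasi-compact---then recovers the full open immersion and shows the two localizations coincide. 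This reduction is exactly where quasi-compactness of $X\to S$ and quasi-separatedness of $Y\to S$ are used. Granting it, the two descriptions of the Hom-categories agree naturally, the open square is a $2$-pushout, and pasting with the projective stage completes the proof.
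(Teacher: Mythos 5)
Your skeleton coincides with the paper's: factor the quasi-projective morphism as a quasi-compact immersion followed by a projective morphism, handle the projective square via \autoref{proj-compare} together with the cobase-change stability \autoref{proj-BW}, and paste the two squares (this is exactly how \autoref{quasiproj} is deduced from \autoref{projtensbw} and \autoref{immtensbw}). You also isolate the genuine crux correctly: for $G\colon \Q(\bar{X}_Y)\to\D$ one must show that invertibility of $G p^*(M\to a_*a^*M)$ for all $M\in\Q(\bar{X})$ forces $G$ to be $a_Y$-local. But your resolution of this crux breaks down, in two places. First, the covering step fails: principal opens $\bar{X}_s$, with $s$ a global section of a line bundle, form a basis only when $\bar{X}$ carries enough line bundles with enough sections (essentially divisoriality), which a projective scheme over a general qc qs base need not have. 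The problem occurs already when the projective part is trivial, i.e.\ for a quasi-compact open immersion $U\subseteq S$: if $S$ is a proper normal variety with trivial Picard group, the only sections of line bundles are constants, so the only principal opens are $\emptyset$ and $S$, and no proper nonempty $U$ is a finite union of them; and for $\bar{X}=\P^n_S$ one pulls the same obstruction back along a section of $\P^n_S\to S$, so enlarging the projective part does not help.

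Second, even where such a finite cover exists, the ``\v{C}ech/Mayer--Vietoris assembly'' is unjustified. Gluing (\autoref{qprod}) gives \emph{descent}: $\Q(X)$ is a $2$-limit of the categories attached to the cover. Your assembly needs the opposite, \emph{codescent}: $\Hom_{c\otimes}(\Q(X),\D)$ expressed as a $2$-limit of the categories $\Hom_{c\otimes}(\Q(\bar{X}_{s_i}),\D)$. Hom out of a $2$-limit is not the $2$-limit of the Homs, and there is not even a canonical comparison functor $\Hom_{c\otimes}(\Q(X),\D)\to\Hom_{c\otimes}(\Q(\bar{X}_{s_i}),\D)$, since the only functor $\Q(\bar{X}_{s_i})\to\Q(X)$ available for precomposition is the pushforward, which is not a tensor functor; a codescent statement of this kind is of essentially the same depth as the theorem itself, so the argument becomes circular. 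The paper closes the hard direction without any covers, by the ideal-theoretic criterion for locality: by \autoref{weak} (this is where the qc qs hypotheses enter), $G$ is $a_Y$-local iff $G$ inverts every quasi-coherent ideal $I\subseteq\O_{\bar{X}_Y}$ with $I|_{X\times_S Y}=\O$; given such an $I$, the ideal $J:=(p^\#)^{-1}(p_*I)\subseteq\O_{\bar{X}}$ is quasi-coherent because $p$ is qc qs and restricts to the unit ideal on $X$, so $Gp^*(J\to\O)$ is invertible, and since $p^*J\to\O_{\bar{X}_Y}$ factors through $I\to\O_{\bar{X}_Y}$, the morphism $G(I\to\O)$ is a split epimorphism, hence an isomorphism by \autoref{split-epi}. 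This direct-image-of-ideals device is precisely what transports locality across $p^*$ without covering or generation hypotheses, and it is the ingredient your proposal is missing.
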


The proof is a combination of the globalization of projective morphisms and immersions. The following result is an attempt to get rid of the quasi-projective assumption.

\begin{thmi}[Globalization of fiber products II, \autoref{prod1}]
Let $X,Y$ be schemes of finite type over a field $K$. If $\C$ is a cocomplete $K$-linear tensor category, then the canonical functor
\[\Hom_{c\otimes/K}(\Q(X \times_K Y),\C) \to \Hom_{c\otimes/K}(\Q(X),\C) \times \Hom_{c\otimes/K}(\Q(Y),\C)\]
is fully faithful. The essential image consists of those pairs $(F,G)$ with the following property: Given an exact sequence of external tensor products
\[A' \boxtimes B' \to A \boxtimes B \to C \boxtimes D \to 0\]
of coherent sheaves, then the induced sequence
\[F(A') \otimes G(B') \to F(A) \otimes G(B) \to F(C) \otimes G(D) \to 0\]
is also exact in $\C$.
\end{thmi}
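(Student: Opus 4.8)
The plan is to study the canonical functor directly: it sends a cocontinuous tensor functor $H : \Q(X \times_K Y) \to \C$ to the pair $(F,G) := (H \circ p_X^*,\, H \circ p_Y^*)$, where $p_X, p_Y$ denote the two projections. Since $A \boxtimes B = p_X^*(A) \otimes p_Y^*(B)$ and $H$ respects the tensor product, we are forced to have $H(A \boxtimes B) \cong F(A) \otimes G(B)$, and this identity governs the whole argument.

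First I would show that the external tensor products $A \boxtimes B$, with $A \in \Q(X)$ and $B \in \Q(Y)$, generate $\Q(X \times_K Y)$ under colimits. On an affine piece $\Spec(R) \times_K \Spec(S) = \Spec(R \otimes_K S)$ this is clear, since the finite free modules $\O_{X \times_K Y}^n \cong \O_X^n \boxtimes \O_Y$ are external tensor products and every quasi-coherent sheaf is a cokernel of a map between free modules; the global statement follows by covering $X \times_K Y$ with such affines. Granting generation, full faithfulness of the canonical functor is formal: a cocontinuous functor is determined by its restriction to a generating family, the value and the action on morphisms of $A \boxtimes B$ are dictated by $(F,G)$ through the monoidal constraint, and conversely a compatible pair of natural transformations $F_1 \Rightarrow F_2$, $G_1 \Rightarrow G_2$ extends uniquely to $H_1 \Rightarrow H_2$ by cocontinuity.

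The necessity of the exactness condition is the easy half of the image description: a cocontinuous tensor functor $H$ preserves cokernels and epimorphisms, so applying it to an exact sequence $A' \boxtimes B' \to A \boxtimes B \to C \boxtimes D \to 0$ and rewriting each term as $F(-) \otimes G(-)$ produces exactly the asserted exact sequence in $\C$. For the converse I must reconstruct $H$ from a pair $(F,G)$ satisfying the condition. Since $X \times_K Y$ is noetherian, $\Q(X \times_K Y) \simeq \Ind(\mathrm{coh}(X \times_K Y))$, so it is enough to define $H$ on coherent sheaves and extend cocontinuously. On the affine model I would present a coherent module by $\O_X^m \boxtimes \O_Y \to \O_X^n \boxtimes \O_Y \to M \to 0$ and set $H(M) := \coker\bigl(F(\O_X)^m \otimes G(\O_Y) \to F(\O_X)^n \otimes G(\O_Y)\bigr)$; the presenting matrix has entries in $R \otimes_K S$, and the crucial use of $K$-linearity is that each such entry is a $K$-linear combination of external tensor products of scalar morphisms, hence has a well-defined image under the rule $r \otimes s \mapsto F(r) \otimes G(s)$. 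The exactness hypothesis on $(F,G)$ is precisely what makes $H(M)$ independent of the chosen presentation and functorial in $M$. Monoidality of $H$ then follows from $(A \boxtimes B) \otimes (A' \boxtimes B') \cong (A \otimes A') \boxtimes (B \otimes B')$ together with the monoidality of $F$ and $G$, and $H p_X^* \cong F$ follows from $p_X^*(A) = A \boxtimes \O_Y$ and $G(\O_Y) \cong \O_\C$ (symmetrically for $G$).

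The main obstacle is this construction of $H$ in the non-affine case. Over an affine patch a coherent sheaf has a genuine presentation whose terms are external tensor products, but a general finite-type scheme need not carry enough global sheaves to present coherent sheaves on the product by external tensor products directly, so the formula above is only available locally. I expect to resolve this by defining $H$ over each $U \times_K V$ for affine opens $U \subseteq X$, $V \subseteq Y$ --- where the module-theoretic argument applies verbatim --- and then gluing these local cocontinuous tensor functors into a global one using the globalization of open immersions (\autoref{local}) and Zariski descent for $\Q$. Verifying that the exactness hypothesis on $(F,G)$ simultaneously guarantees well-definedness on coherent sheaves and compatibility of the local constructions on overlaps is where the genuine work lies; everything else is formal manipulation of colimits and the monoidal structure.
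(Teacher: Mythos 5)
Your necessity direction and your broad plan (define $H$ on coherent sheaves via $\boxtimes$-presentations, then extend cocontinuously using $\Q(X \times_K Y) \simeq \Ind(\Q_{\fp}(X \times_K Y))$) are in the spirit of the paper, but the step you defer to the end --- defining $H$ affine-locally and then gluing ``using the globalization of open immersions and Zariski descent'' --- is not where the work lies; it is a step that fails. Gluing runs in the wrong direction here: by Zariski descent, $\Q(X \times_K Y)$ is a $2$-\emph{limit} of the categories $\Q(U \times_K V)$ over a cover, and $\Hom_{c\otimes}(-,\C)$ does not convert $2$-limits into anything computable from the pieces. What \autoref{local}/\autoref{open-immersion} actually gives is an identification of $\Hom_{c\otimes}(\Q(U \times V),\C)$ with the full subcategory of $(U\times V)$-\emph{local} functors inside $\Hom_{c\otimes}(\Q(X \times Y),\C)$; a family of such local functors indexed by a cover, however compatible, does not assemble into a single functor. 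In geometry, gluing morphisms $W \to Z$ over an open cover of $Z$ uses the induced cover of $W$, i.e.\ a decomposition of $\Q(W)$ as a $2$-limit --- and for an abstract cocomplete tensor category $\C$ no such decomposition is available (producing one is precisely the kind of non-formal problem that the tensoriality proofs in \autoref{tensoriality} have to fight). Your earlier claim that generation by external tensor products ``follows by covering'' hides the same difficulty: a section of $M$ over $U \times V$ only gives a map $\O_U \boxtimes \O_V \to M|_{U \times V}$, and promoting it to a map $A \boxtimes B \to M$ with $A,B$ coherent and \emph{global} requires a genuine extension argument (the paper's \autoref{bifort}, resting on an extension lemma from EGA I); moreover your exactness hypothesis is stated for global coherent sheaves, so it cannot simply be invoked on the patches $U \times V$.

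The paper's solution is to globalize the presentations rather than the functor. It first shows that $\boxtimes : \Q_{\fp}(X) \otimes_K \Q_{\fp}(Y) \to \Q(X \times_K Y)$ is fully faithful (\autoref{hombox}, where the field hypothesis enters) and dense (\autoref{dense}, via \autoref{bifort}); density yields that every coherent sheaf on $X \times_K Y$ has a \emph{global} $\boxtimes$-presentation $A' \boxtimes B' \to A \boxtimes B \to M \to 0$ (\autoref{resol} --- the trick being that a finite epimorphism $\bigoplus_i A_i \boxtimes B_i \twoheadrightarrow M$ is dominated by $\bigl(\bigoplus_i A_i\bigr) \boxtimes \bigl(\bigoplus_i B_i\bigr)$). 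The well-definedness and functoriality problem you correctly flag --- which is the real content --- is then handled by the general Extension Theorem \autoref{extend2}/\autoref{extend}: an $R$-linear functor on a full subcategory of an abelian category satisfying exactly the hypothesis $(\star)$ (note that $(\star)$ requires the \emph{third} term to be an external product too, which is why that formulation fits) extends essentially uniquely to a right exact tensor functor; this is applied to $\C' = \boxtimes\bigl(\Q_{\fp}(X) \otimes_K \Q_{\fp}(Y)\bigr) \subseteq \Q_{\fp}(X \times_K Y)$, and the finite-type (hence noetherian product) hypothesis enters precisely to make $\Q_{\fp}(X \times_K Y)$ abelian. To repair your proposal, replace the local construction and gluing by these two global ingredients: existence of global $\boxtimes$-presentations, and an extension theorem for right exact functors defined via presentations.
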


\begin{propi}[Globalization of $B \G_m$, \autoref{Gm-global}]
The classifying stack $B\G_m$ over some base $S$ has the following universal property: A morphism $T \to B\G_m$ corresponds to a morphism $T \to S$ together with a line bundle on $T$. This globalizes as follows: If $\C$ is a cocomplete tensor category, then the category of $\Z$-graded objects $\gr_{\Z}(\C)$ is a cocomplete tensor category with the following universal property: The category of cocontinuous tensor functors $\gr_{\Z}(\C) \to \D$ is equivalent to the category of cocontinuous tensor functors $F : \C \to \D$ equipped with a line object $\L \in \D$. If $\C = \Q(S)$ for some scheme $S$, then $\gr_{\Z}(\Q(S)) \simeq \Q(B\G_m)$.
\end{propi}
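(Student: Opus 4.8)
The plan is to establish the three assertions in turn, with the universal property of $\gr_\Z(\C)$ as the core and the two geometric statements bracketing it. For the first assertion I would simply recall that, by definition, a morphism $T \to B\G_m$ over $S$ is a $\G_m$-torsor on $T$ together with a structure morphism $T \to S$, and that the groupoid of $\G_m$-torsors on $T$ is equivalent to the groupoid of line bundles on $T$ via the associated-line-bundle construction (equivalently $H^1_{\mathrm{fppf}}(T,\G_m)=\Pic(T)$, functorially in $T$). This is standard descent theory and I would only indicate it.

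The main work lies in the second assertion. First I would equip $\gr_\Z(\C)$ with its tensor structure: colimits are formed degreewise, the tensor product is the convolution $(M \otimes N)_n = \bigoplus_{p+q=n} M_p \otimes N_q$ with unit $\O_\C$ concentrated in degree $0$, and the braiding is the \emph{sign-free} one, whose degree-$n$ component is $\bigoplus_{p+q=n} c_{M_p,N_q}$. Cocontinuity of $\otimes$ in each variable is then immediate from the degreewise cocontinuity of $\otimes$ in $\C$ and the commutation of $\bigoplus$ with colimits. The distinguished object is $\O_\C(1)$, the unit placed in degree $1$; it is invertible with inverse the unit placed in degree $-1$, and a computation of its braiding in degree $2$ gives $c_{\O_\C,\O_\C}=\id$, so that $\O_\C(1)$ is a \emph{line object} in the sense of \autoref{symtrivial}. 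There is also a canonical cocontinuous tensor functor $\iota : \C \to \gr_\Z(\C)$ placing an object in degree $0$.

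Next I would set up the equivalence between $\Hom_{c\otimes}(\gr_\Z(\C),\D)$ and the category of pairs $(F,\L)$. In one direction $G \mapsto (G\iota,\, G(\O_\C(1)))$, using that a cocontinuous tensor functor carries line objects to line objects. In the other direction $(F,\L)$ is sent to $G(M) := \bigoplus_n F(M_n) \otimes \L^{\otimes n}$, which is cocontinuous, with monoidal constraint
\[
G(M\otimes N) = \bigoplus_n F\Bigl(\bigoplus_{p+q=n} M_p \otimes N_q\Bigr)\otimes \L^{\otimes n} \cong \bigoplus_{p,q}\bigl(F(M_p)\otimes \L^{\otimes p}\bigr)\otimes\bigl(F(N_q)\otimes \L^{\otimes q}\bigr)= G(M)\otimes G(N).
\]
The round trip on objects gives $G\iota = F$ and $G(\O_\C(1))=\L$, and I would check naturality in $\D$; this bookkeeping is routine once the object-level maps are fixed. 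I expect the genuine obstacle to be compatibility with the \emph{symmetry}, and this is exactly where the symtrivial (not merely invertible) hypothesis is forced: verifying that $G$ is symmetric monoidal amounts to comparing the sign-free braiding of $\gr_\Z(\C)$ on $M_p \otimes N_q$ with the braiding of $\D$, and after commuting $F$ through, the only discrepancy is the self-braiding of $\L^{\otimes p}\otimes \L^{\otimes q}$, which is trivial precisely because $\L$ is symtrivial. For a merely invertible object one would pick up signs and land in the super-graded world rather than $B\G_m$, so this step both closes the equivalence and explains the appearance of line objects.

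Finally, for the third assertion I would identify $\Q(B\G_m)$ over $S$ with the category of $\G_m$-equivariant quasi-coherent sheaves on $S$ by fpqc descent along the presentation $S \to B\G_m$, and then use that a $\G_m$-action on a quasi-coherent sheaf is the same datum as a $\Z$-grading (the weight decomposition of comodules over $\O_S[t,t^{-1}]$). This equivalence is symmetric monoidal because the tensor product of equivariant sheaves is the convolution product with sign-free braiding, matching the structure on $\gr_\Z(\Q(S))$ above; alternatively it drops out of the universal properties of the first two parts applied to $\C = \Q(S)$.
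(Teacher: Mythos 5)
Your proposal is correct and follows essentially the same route as the paper: the paper obtains the universal property of $\gr_\Z(\C)$ as a special case of the Day-convolution universal property (\autoref{hat}) applied to the discrete tensor category $\Z$ — which is the free tensor category on a line object — with exactly your inverse functor $M \mapsto \bigoplus_n F(M_n)\otimes\L^{\otimes n}$, and it handles the geometric part via the same chain $\Q(B\G_m)\simeq\Rep_S(\G_m)\simeq\CoM(\O_S[\Z])\simeq\gr_\Z(\Q(S))$ (\autoref{gralt}), i.e. the weight-decomposition argument you sketch. The only difference is one of packaging: your hands-on verification of the symmetry constraint makes explicit the role of symtriviality, which the paper hides inside the identification of tensor functors $\Z\to\D$ with line objects.
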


\begin{thmi}[Globalization of classifying stacks, \autoref{BG-global}]
If $G$ is a finite group, then the classifying stack $BG$ has the universal property
\[\Hom(X,BG) = \Tors(G,X) \simeq \Tors(G,\Q(X)).\]
This globalizes to $\Q(BG) \simeq \Rep(G)$ (representations of $G$), in fact we have
\[\Hom_{c\otimes}(\Rep(G),\C) \simeq \Tors(G,\C).\]
for cocomplete tensor categories $\C$.
\end{thmi}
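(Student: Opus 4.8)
The plan is to prove the two assertions separately: the identification $\Q(BG) \simeq \Rep(G)$, and the universal property $\Hom_{c\otimes}(\Rep(G),\C) \simeq \Tors(G,\C)$, the latter being the substantial point. For the identification I would write $BG = [\Spec(R)/G]$ and apply faithfully flat descent along the atlas $\Spec(R) \to BG$: quasi-coherent modules on the quotient stack are $G$-equivariant quasi-coherent modules on $\Spec(R)$, i.e. $R$-modules with a $G$-action, which is exactly $\Rep(G)$. Under this equivalence the universal $G$-torsor $\Spec(R) \to BG$ corresponds to the regular representation $\O(G) = \mathrm{Fun}(G,R)$, regarded as a commutative algebra in $\Rep(G)$ with $G$ acting by translation. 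This $\O(G)$ is the \emph{universal} $G$-torsor object inside $\Rep(G)$, and it drives the whole argument — mirroring the preceding treatment of $B\G_m$ (\autoref{Gm-global}), where line objects play the role that $G$-torsors play here.

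For the universal property I would first construct the forward functor $\Hom_{c\otimes}(\Rep(G),\C) \to \Tors(G,\C)$ by $F \mapsto F(\O(G))$. As $F$ is a tensor functor, $F(\O(G))$ is a commutative algebra in $\C$ with a $G$-action, and it remains to check it is a $G$-torsor in the sense of \autoref{rep}. The torsor shear isomorphism $\O(G) \otimes \O(G) \cong \O(G)^{\oplus|G|}$ in $\Rep(G)$ is carried by $F$ to the corresponding isomorphism for $F(\O(G))$. Flatness is automatic because $\O(G)$ is dualizable in $\Rep(G)$ (its underlying module is finite free, so it is locally free of rank $|G|$, hence dualizable by \autoref{lokdual}) and tensor functors preserve duals; faithfulness follows from the splitting $R \xrightarrow{\eta} \O(G) \xrightarrow{\epsilon} R$ of the unit by the counit (evaluation at $e \in G$), which $F$ turns into a retraction $\O_\C \to F(\O(G)) \to \O_\C$, so every object of $\C$ is a retract of its tensor with $F(\O(G))$, giving conservativity and hence faithful flatness together with flatness.

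Next I would build the backward functor $\Tors(G,\C) \to \Hom_{c\otimes}(\Rep(G),\C)$. Given a $G$-torsor $A$, the unit map $\O_\C \to A$ is faithfully flat, so by descent (\autoref{descent-theory}) $\C$ is equivalent to the category of descent data along $A$; the shear isomorphism $A \otimes A \cong A \otimes \O(G)$ rewrites such a descent datum as a semilinear $G$-action, yielding a tensor equivalence $\C \simeq \C_A^G$ with the category of $G$-equivariant $A$-modules. I then define $F_A$ as the composite $\Rep(G) \xrightarrow{A \otimes -} \C_A^G \xrightarrow{\sim} \C$, where $V \mapsto A \otimes V$ carries the diagonal $G$-action and its evident $A$-module structure. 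This is cocontinuous, and since $A \otimes (V \otimes W) \cong (A \otimes V) \otimes_A (A \otimes W)$ it is a tensor functor. Finally I would check the two constructions are mutually quasi-inverse: $F_A(\O(G)) \cong A$ is precisely the shear isomorphism $A \otimes A \cong A \otimes \O(G)$ descended back to $A$, while for a given $F$ the untwisting $\O(G) \otimes V \cong \O(G) \otimes \underline{V}$ in $\Rep(G)$, pushed through $F$, identifies $A \otimes F(V)$ with $A \otimes V$ equivariantly, so that $F(V)$ is the descent of $A \otimes V$, i.e. $F_{F(\O(G))} \cong F$.

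The main obstacle I anticipate is the backward direction, specifically making the descent equivalence $\C \simeq \C_A^G$ precise as an equivalence of \emph{tensor} categories and verifying that it interacts correctly with the associated-bundle functor $A \otimes -$. The delicate point is that one must \emph{not} take $G$-invariants — a limit that need not commute with colimits and would destroy cocontinuity of $F_A$ whenever $|G|$ is not invertible — but instead use the descent equivalence directly, so that $F_A$ is manifestly cocontinuous. Checking that the shear and untwisting isomorphisms are compatible with the tensor products and the equivariant structures throughout is routine, but that is where essentially all of the bookkeeping lives.
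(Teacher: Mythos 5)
Your route is genuinely different from the paper's: the paper obtains the universal property as a special case of \autoref{torsUE}, where the inverse functor is the coend $V \mapsto \int^{G} V \otimes T$ (a colimit, hence automatically cocontinuous and available in \emph{any} cocomplete tensor category), and it confines the algebra/descent picture to the comparison with geometric torsors over schemes. However, as written your proposal has two genuine gaps. The concrete one is the faithfulness argument in the forward direction: evaluation at $e$, $\mathrm{ev}_e : \O(G) \to R$, is \emph{not} $G$-equivariant when $\O(G)$ carries the translation action and $R$ the trivial one, so it is not a morphism of $\Rep(G)$ and $F$ cannot be applied to it. In fact $\Hom_{\Rep(G)}(\O(G),R) \cong R$ is spanned by the integration map $f \mapsto \sum_{g \in G} f(g)$, and composing this with the unit gives multiplication by $|G|$; hence an equivariant retraction of $R \to \O(G)$ exists if and only if $|G| \in R^*$, and your argument fails exactly in the modular case (e.g.\ $R = \F_p$, $G = \Z/p\Z$). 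The repair is to use the paper's first torsor axiom instead: $G \backslash \O(G) \cong R$ is a \emph{colimit} condition, so it is preserved by $F$ and commutes with $- \otimes M$; then $M \otimes F(\O(G)) = 0$ forces $M \cong M \otimes \bigl(G \backslash F(\O(G))\bigr) \cong G \backslash \bigl(M \otimes F(\O(G))\bigr) = 0$.

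The second gap is structural: what you verify is not the paper's definition of $\Tors(G,\C)$. In \autoref{rep} a torsor is a cocommutative \emph{coalgebra} subject to two colimit conditions, a definition chosen precisely so that torsors are preserved by arbitrary cocontinuous tensor functors; you instead work with faithfully flat commutative algebras with a shear isomorphism. The two notions agree in $\Q(X)$ --- that comparison is the content of the paper's ``Torsors in geometry'' theorem and requires the self-duality of torsors (\autoref{torsdual}) --- but for a general $\C$ you would have to prove this equivalence, or else you have proven a theorem about a different category than the one in the statement. Your backward construction compounds the issue: faithfully flat descent (\autoref{desc-krit}) assumes $\C$ has equalizers, which a cocomplete tensor category need not have, and upgrading the descent equivalence $\C \simeq \C_A^G$ to a tensor equivalence compatible with the associated-bundle functor is substantial unproven work; the paper's coend formula sidesteps descent, equalizers and faithful flatness entirely. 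Your outline is likely repairable --- replace the splitting by the coinvariants argument above, and either prove the algebra/coalgebra comparison in general or work with the coalgebra definition throughout --- but as it stands the forward functor is not well-defined in your framework and the backward functor needs hypotheses the theorem does not grant.
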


The following is probably the most non-trivial result in this thesis.
 
\begin{thmi}[Globalization of tangent bundles, \autoref{tangent-proj}]
If $S$ is a base scheme, the tangent bundle functor $T : \Sch/S \to \Sch/S$ is right adjoint to the thickening functor $\Sch/S \to \Sch/S,~ X \mapsto X[\e]/\e^2$. We globalize this by defining $T^{\otimes} : \Cat_{c\otimes/\C} \to \Cat_{c\otimes/\C}$ for a linear cocomplete tensor category $\C$ by the adjunction
\[\Hom_{c\otimes/\C}(T^{\otimes}(\D/\C),\E) \simeq \Hom_{c\otimes/\C}(\D,\E[\e]/\e^2).\]
If $\D$ is projective over $\C$, say $\D=\P_\C^\otimes(E)$, then $T^{\otimes}(\D/\C)$ exists and is given by $\M(\Sym \Omega^1)$, where $\Omega^1 \in \D$ is defined via the Euler sequence
\[\Lambda^3(E) \otimes \O(-3) \to \Lambda^2(E) \otimes \O(-2) \to \Omega^1 \to 0.\]
Hence, if $f : X \to S$ is a projective morphism, we have
\[T^{\otimes}\bigl(\Q(X)/\Q(S)\bigr) \simeq \Q\bigl(T(X/S) \bigr).\]
\end{thmi}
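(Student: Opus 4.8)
The plan is to treat the three assertions in turn, obtaining the geometric comparison as a formal consequence of the tensor-categorical computation. First I would record the classical input: the relative tangent bundle is $T(X/S)=\Spec_X(\Sym\Omega^1_{X/S})$, and the identity $\Hom_S(Z,T(X/S))\cong\Hom_S(Z[\e]/\e^2,X)$ exhibits $T$ as right adjoint to the thickening functor $Z\mapsto Z[\e]/\e^2=\Spec_Z(\O_Z[\e]/\e^2)$. Dualizing under the contravariant $\Q$ rewrites this adjunction in exactly the shape demanded of $T^{\otimes}$, once one knows that thickening globalizes: since $Z[\e]/\e^2=\Spec_Z(\O_Z[\e]/\e^2)$, the globalization of $\Spec$ gives $\Q(Z[\e]/\e^2)\simeq\M(\O_{\Q(Z)}[\e]/\e^2)$, so the endofunctor $\E\mapsto\M(\O_\E[\e]/\e^2)$, which I abbreviate $\E[\e]/\e^2$, is the correct globalized thickening. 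This lets me \emph{define} $T^{\otimes}(\D/\C)$ by the stated universal property and reduces everything to producing a representing object in the projective case.

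The core is the computation for $\D=\P^\otimes_\C(E)$. Here I would unwind the right-hand category $\Hom_{c\otimes/\C}(\D,\E[\e]/\e^2)$ using the universal property of $\P^\otimes_\C(E)$: such a functor consists of a line object $\widetilde\L$ in the square-zero extension $\E[\e]/\e^2$ together with a regular epimorphism $\widetilde s$ onto $\widetilde\L$ from the image of $E$ under the structure functor $\C\to\E\to\E[\e]/\e^2$. Reducing modulo $\e$ (base change along $\O_\E[\e]/\e^2\to\O_\E$) sends this datum to a line object $\L$ in $\E$ together with a regular epimorphism $s:F(E)\to\L$, i.e. exactly a cocontinuous tensor functor $G:\D\to\E$ over $\C$. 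The content is then the analysis of the fibre of the reduction map, namely the lifts $(\widetilde\L,\widetilde s)$ of a fixed $(\L,s)$. Writing $\widetilde\L=\L\oplus\L\e$ and $\widetilde s=s+\e\,s_1$, the requirement that $\widetilde s$ remain a graded-algebra homomorphism onto a line object forces $s_1$ to annihilate the relations cutting out $\L$ as a quotient, and — via the coequalizer description of regular epimorphisms onto line objects — $s_1$ descends to a morphism out of the object presented by the right end of the Euler sequence $\Lambda^3(E)\otimes\O(-3)\to\Lambda^2(E)\otimes\O(-2)\to\Omega^1\to 0$. Thus the fibre is naturally $\Hom_\E(G(\Omega^1),\O_\E)$, and assembling over all $(\L,s)$ identifies $\Hom_{c\otimes/\C}(\D,\E[\e]/\e^2)$ with the category of pairs $(G,\phi)$, where $G:\D\to\E$ is a cocontinuous tensor functor over $\C$ and $\phi:G(\Omega^1)\to\O_\E$ is arbitrary.

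To finish the projective case I would match this against the module side: by the universal property of $\M(-)$ (the globalization of $\Spec$), a cocontinuous tensor functor $\M(\Sym\Omega^1)\to\E$ over $\C$ is precisely a cocontinuous tensor functor $G:\D\to\E$ together with an algebra homomorphism $\Sym(G(\Omega^1))\to\O_\E$, i.e. with a morphism $\phi:G(\Omega^1)\to\O_\E$. Hence $\Hom_{c\otimes/\C}(\M(\Sym\Omega^1),\E)\simeq\Hom_{c\otimes/\C}(\D,\E[\e]/\e^2)$ naturally in $\E$, which both proves existence and yields $T^{\otimes}(\D/\C)\simeq\M(\Sym\Omega^1)$. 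The geometric comparison is then immediate: when $\C=\Q(S)$ and $\D=\Q(X)$ with $X\to S$ a projective bundle, the Euler-sequence comparison identifies the $\Omega^1$ above with the cotangent sheaf $\Omega^1_{X/S}$, and the globalization of $\Spec$ gives $\M(\Sym\Omega^1_{X/S})\simeq\Q(\Spec_X\Sym\Omega^1_{X/S})=\Q(T(X/S))$; a general projective morphism is handled by combining the projective-bundle case with the globalization of closed immersions and the associated cotangent sequence.

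I expect the main obstacle to be the fibre analysis of the middle paragraph: establishing \emph{globally}, without appealing to locality, that deforming the pair $(\L,s)$ over the dual numbers is controlled by maps out of $\Omega^1$. This requires (i) checking that a lift $\widetilde s$ of a regular epimorphism is again a regular epimorphism onto a line object of $\E[\e]/\e^2$ — a Nakayama-type statement for the square-zero ideal $\e\,\O_\E$ — and (ii) verifying that the relations imposed by the graded-algebra and coequalizer conditions are exactly those encoded by the Euler presentation, so that the deformation parameter $s_1$ factors through $\Omega^1$ and the resulting identification is natural in $\E$ and functorial in the data.
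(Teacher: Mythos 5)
Your first and third paragraphs match the paper's strategy: the definition of $T^{\otimes}$ via the globalized thickening $\E[\e]=\M(\O_\E[\e]/\e^2)$, the identification of the fibre of reduction mod $\e$ with $\Hom_\E(G(\Omega^1),\O_\E)$, and the final appeal to the universal property of $\M(\Sym\Omega^1)$ are exactly how the paper argues (and the reduction of a general projective morphism via closed immersions is \autoref{tangcl} and \autoref{tangsch}). The gap is in your middle step, and it is not the obstacle you flag at the end but a structural one: you write a lift of $(\L,s)$ as $\widetilde\L=\L\oplus\L\e$, $\widetilde s=s+\e\,s_1$, i.e.\ you assume the deformed line object is the \emph{trivial} square-zero extension. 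By \autoref{KL-exact} and \autoref{deform}, a line object $\K$ of $\E[\e]$ reducing to $\L$ is an arbitrary self-extension $0\to\L\to\K\to\L\to 0$ in $\E$, and these need not split; for schemes this is the summand $H^1(Z,\O_Z)$ in $\Pic(Z[\e])\cong\Pic(Z)\oplus H^1(Z,\O_Z)$. Nor does the prescribed regular epimorphism $t:F(E)[\e]\to\K$ force splitness: for an elliptic curve $Z\subseteq\P^2_k$, the Euler sequence $0\to\O_Z\to\O_Z(1)^{\oplus 3}\to f^*T_{\P^2}\to 0$ together with $H^1(Z,\O_Z(1))=0$ shows that the map sending a lift $Z[\e]\to\P^2_k$ to the class of its deformed line bundle surjects onto $H^1(Z,\O_Z)\cong k$, so lifts with non-split $\K$ exist. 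Your ansatz therefore parametrizes only a proper part of the fibre, and the claimed equivalence would fail.

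The same defect appears as a type mismatch: your parameter $s_1$ is a morphism $F(E)\to\L$, whereas a morphism $G(\Omega^1)\to\O_\E$ is, by the presentation $\Lambda^3(F(E))\otimes\L^{\otimes-3}\to\Lambda^2(F(E))\otimes\L^{\otimes-2}\to G(\Omega^1)\to 0$, the same thing as a morphism $\Lambda^2 F(E)\to\L^{\otimes 2}$ satisfying the cocycle identity of \autoref{cocycle}. A morphism $F(E)\to\L$ does not ``descend'' to such a datum; it only \emph{induces} the coboundary cocycle $a\wedge b\mapsto s(a)\otimes s_1(b)-s(b)\otimes s_1(a)$, and in general the coboundaries are strictly fewer than the cocycles (their discrepancy is exactly the $H^1$-term above). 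The missing idea is the paper's \autoref{tangent-main}: pairs $(\K,t)$ over the dual numbers are classified by triples $(\L,s,\lambda)$ with $\lambda$ an \emph{arbitrary} cocycle, and $\K$ is reconstructed not as $\L[\e]$ but as the cokernel of a morphism $(\L^{\otimes-2}\otimes F(E)^{\otimes 3})[\e]\to F(E)[\e]$ built from $s$ and $\lambda$; proving that this cokernel is again invertible and symtrivial is where the bulk of the work (the six steps of that proof) lies. Once that classification is in hand, your assembly of the equivalence via $\M(\Sym\Omega^1)$ and the geometric comparison do go through as you describe.
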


As an application, we define \emph{formally unramified} cocontinuous tensor functors and prove that a projective morphism $f : X \to S$ is formally unramified if and only if $f^* : \Q(S) \to \Q(X)$ is formally unramified (\autoref{fr}).

\subsection*{More on cocomplete tensor categories}

We may even abstract from algebraic geometry and study cocomplete tensor categories in their own right.

In \autoref{free} we discuss basic free constructions of (cocomplete) tensor categories. For example, if $I$ is a tensor category and $\C$ is a cocomplete tensor category, we may endow the category of functors $I^{\op} \to \C$ with the \emph{Day convolution} \marginpar{Added q to the coend.}
\[F \otimes G := \int^{p,q \in I} F(p) \otimes G(q) \otimes \Hom(-, p \otimes q)\]
and get a cocomplete tensor category $\widehat{I}_\C$ satisfying the universal property (\autoref{hat})
\[\Hom_{c\otimes}(\widehat{I}_\C,\D) \simeq \Hom_{c\otimes}(\C,\D) \times \Hom_{\otimes}(I,\D).\]
This includes the free cocompletion of a tensor category (\autoref{freecoco}), but also gradings (\autoref{gradings}). For $I=\N$ we see that $\gr_\N(\M(R))$ is the free cocomplete $R$-linear tensor category containing a symtrivial object -- this surely has no pendant in algebraic geometry in contrast to $\gr_\Z(\M(R))$.
  
According to James Dolan every well-behaved cocomplete tensor category should be made up from the initial one ($\Set$) by (transfinite) iterations of a) free cocomplete tensor categories, b) directed colimits and c) module categories over ``coherent'' symmetric monoidal monads. We study the latter in \autoref{monoidalmonads} on a fixed cocomplete tensor category $\C$ (this has some overlap with \cite{Sea13} which appeared at the same time). The special case $\C=\Set$ has already been considered by Durov in his thesis \cite{Dur07} who defined generalized commutative rings as finitary symmetric monoidal monads on $\Set$. Therefore we might think of coherent symmetric monoidal monads on $\C$ as generalized commutative algebras in $\C$. They produce module categories:
 
\begin{thmi}[\autoref{maschine}]
If $T$ is a coherent symmetric monoidal monad on a cocomplete tensor category $\C$, then $\M(T)$ carries the structure of a cocomplete tensor category and the free functor $F : \C \to \M(T)$ becomes a cocontinuous tensor functor.
\end{thmi}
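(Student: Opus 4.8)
The plan is to reproduce the classical Kock--Seal construction of the monoidal Eilenberg--Moore category, with the coherence hypothesis on $T$ playing the exact role needed to make the relevant colimits exist and commute with $\otimes$. Write $F \dashv U$ for the free--forgetful adjunction, so that $UF = T$ and a free algebra is $F(X) = (TX,\mu_X)$. Being a symmetric monoidal monad, $T$ comes with a natural transformation $\lambda_{X,Y} : TX \otimes TY \to T(X \otimes Y)$ and a unit $\eta_{\O_\C} : \O_\C \to T\O_\C$ for which $\eta$ and $\mu$ are monoidal. First I would define, for $T$-algebras $(A,a)$ and $(B,b)$, their tensor product as the coequalizer formed in $\M(T)$
\[
A \otimes_T B := \coeq\bigl( F(TA \otimes TB) \rightrightarrows F(A \otimes B) \bigr),
\]
where the two maps correspond, under the adjunction $F \dashv U$, to the composites $\eta_{A \otimes B} \circ (a \otimes b)$ and $\lambda_{A,B}$, both of type $TA \otimes TB \to T(A \otimes B)$. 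This pair is reflexive, with common section $F(\eta_A \otimes \eta_B)$ (using the unit laws and the monoidality of $\eta$), and its coequalizer is by design the universal $T$-algebra receiving a morphism out of $A \otimes B$ that is an algebra homomorphism in each variable. The unit object is the free algebra $\O_{\M(T)} := F(\O_\C) = (T\O_\C, \mu_{\O_\C})$.

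Next I would record that the coherence hypothesis guarantees that $\M(T)$ is cocomplete and that $U$ creates the reflexive coequalizers (and filtered colimits) used above, so these colimits are computed in $\C$; together with coproducts of free algebras this yields all colimits in $\M(T)$, and in particular the coequalizer defining $A \otimes_T B$ exists. The associativity, unit and symmetry constraints are then built from those of $\C$ together with the naturality and coherence of $\lambda$. To verify the monoidal and symmetry axioms, I would first check them on free algebras, where the defining coequalizer produces a natural isomorphism
\[
F(X) \otimes_T F(Y) \;\cong\; F(X \otimes Y),
\]
so that $\otimes_T$ restricts to $\otimes$ of $\C$ transported along $F$; there the pentagon, triangle, hexagon and symmetry identities follow from the corresponding identities in $\C$ and the coherence of the monoidal monad structure.

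The coherence axioms on all algebras then follow by a density argument: every algebra is the reflexive coequalizer of its canonical presentation $FUFU(A) \rightrightarrows FU(A) \to A$ by free algebras, and the constraint morphisms are natural, so a coherence diagram that commutes on free algebras commutes everywhere provided $\otimes_T$ preserves these colimits. This is the step I expect to be the main obstacle: showing that $- \otimes_T B$ is cocontinuous. The argument is that $A \otimes_T B$ is manufactured as a reflexive coequalizer of the free algebras $F(TA \otimes TB)$ and $F(A \otimes B)$; since $F$ is a left adjoint it preserves all colimits, $\otimes$ is cocontinuous in each variable in $\C$, and $T$ preserves (by coherence) the colimits in question, so a colimit in the variable $A$ can be pulled successively through $T$, through $\otimes$, through $F$, and past the defining coequalizer (colimits commuting with reflexive coequalizers), yielding the corresponding colimit of $A \otimes_T B$. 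The care needed to interchange these colimits, and to see that the coherence hypothesis on $T$ supplies exactly the preservation properties required, is the technical heart of the proof.

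Finally I would conclude that $F$ is a cocontinuous tensor functor. It is strong symmetric monoidal by the isomorphism $F(X) \otimes_T F(Y) \cong F(X \otimes Y)$ together with $F(\O_\C) = \O_{\M(T)}$, these structure isomorphisms being compatible with the symmetries because $\lambda$ is symmetric; and it preserves all colimits because it is a left adjoint. Hence $\M(T)$ is a cocomplete tensor category and $F : \C \to \M(T)$ is cocontinuous and monoidal, as required.
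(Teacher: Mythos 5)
Your construction of $\otimes_T$ as the reflexive coequalizer of $F(TA \otimes TB) \rightrightarrows F(A \otimes B)$, the unit $F(\O_\C)$, the isomorphism $F(X) \otimes_T F(Y) \cong F(X \otimes Y)$, and the strategy of checking coherence diagrams on free modules and extending along canonical presentations all match the paper. The gap lies in your reading of the hypothesis. In the paper, \emph{coherent} means that $\M(T)$ has reflexive coequalizers and that $\otimes_T$ preserves them in each variable --- a condition on the tensor product of \emph{modules}, not on $T$ or on the forgetful functor $U$. It does \emph{not} imply that $U$ creates (or even preserves) reflexive coequalizers, nor that $T$ preserves any colimits. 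A concrete counterexample is the power set monad $\wp$ on $\Set$: it is coherent (every symmetric monoidal monad on $\Set$ is, by \autoref{setauto}), yet it fails to preserve reflexive coequalizers (\autoref{power}), so by \autoref{create} the reflexive coequalizers in $\M(\wp)$ are not computed in $\Set$. Consequently your two claims --- that ``the coherence hypothesis guarantees \dots that $U$ creates the reflexive coequalizers \dots so these colimits are computed in $\C$'' and that ``$T$ preserves (by coherence) the colimits in question'' --- are both false under the stated hypothesis.

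This breaks exactly the step you yourself single out as the technical heart. To show $(\colim_i A_i) \otimes_T B \cong \colim_i (A_i \otimes_T B)$ you pull the colimit through $T$, through $\otimes$, through $F$, and past the defining coequalizer; but this needs $U(\colim_i A_i) \cong \colim_i U(A_i)$ and $T(\colim_i U A_i) \cong \colim_i T(U A_i)$, neither of which is available. The paper's proof never leaves $\M(T)$ and uses coherence in its actual form twice: first, writing $(A,a)$ as the reflexive coequalizer of its canonical presentation $F(TA) \rightrightarrows F(A)$ (\autoref{canpres}) and using that $- \otimes_T (B,b)$ preserves reflexive coequalizers to reduce cocontinuity of $(A,a) \otimes_T -$ to the case of a free module $F(X)$; second, invoking the criterion of \autoref{modcoco} that a functor $H$ out of $\M(T)$ is cocontinuous as soon as $H$ preserves reflexive coequalizers (coherence again) and $HF$ preserves coproducts, which for $H = F(X) \otimes_T -$ follows from $F(X) \otimes_T F(Y) \cong F(X \otimes Y)$ (\autoref{freiesTP}) together with cocontinuity of $X \otimes -$ in $\C$. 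Your argument as written would go through under the stronger hypothesis that $T$ preserves reflexive coequalizers --- the setting of \autoref{monadUE1}, where coherence is automatic by \autoref{refcoherent} --- but that proves a strictly weaker statement than the theorem at hand.
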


The coherence condition simplifies in the closed case (\autoref{closed-maschine}). It is even automatic when $T$ preserves reflexive coequalizers (\autoref{refcoherent}). In that case the following universal property holds:

\begin{thmi}[\autoref{monadUE1}]
Let $T$ be a symmetric monoidal monad on a cocomplete tensor category $\C$, which preserves reflexive coequalizers. Then $\M(T)$ is a cocomplete tensor category and $F : \C \to \M(T)$ is a cocontinuous tensor functor equipped with a lax symmetric monoidal right action $FT \to F$, inducing for every cocomplete tensor category $\D$ an equivalence of categories between $\Hom_{c\otimes}(\M(T),\D)$ and the category of pairs $(G,\rho)$, where $G \in \Hom_{c\otimes}(\C,\D)$ and $\rho : GT \to G$ is a lax symmetric monoidal right action.
\end{thmi}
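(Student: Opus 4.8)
The plan is to settle the structural claims via the preceding results and then to prove the universal property by constructing an explicit quasi-inverse to the evident comparison functor. That $\M(T)$ is a cocomplete tensor category and that $F : \C \to \M(T)$ is a cocontinuous tensor functor is precisely \autoref{maschine}, whose coherence hypothesis holds automatically here by \autoref{refcoherent}. Writing $U : \M(T) \to \C$ for the forgetful functor, so that $F \dashv U$ and $T = UF$, the action $FT \to F$ is the whiskered counit $\epsilon F$ of this adjunction; its unit and associativity laws $\epsilon F \circ F\eta = \id_F$ and $\epsilon F \circ (\epsilon F)T = \epsilon F \circ F\mu$ are just a triangle identity and the naturality of $\epsilon$, and the monoidal monad structure makes $\epsilon F$ monoidal, so it is a lax symmetric monoidal right action. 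Post-composing with a cocontinuous tensor functor $H : \M(T) \to \D$ then defines the comparison functor sending $H$ to the pair $\bigl(HF,\,H(\epsilon F)\bigr)$; this is well defined since $HF$ is again a cocontinuous tensor functor and $H$, being a tensor functor, transports the lax symmetric monoidal right action $\epsilon F$ to one.

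The quasi-inverse rests on the canonical presentation of $T$-modules. For every $T$-module $(M,a)$ the diagram
\[ FTM \;\overset{Fa}{\underset{\epsilon_{FM}}{\rightrightarrows}}\; FM \xrightarrow{\ \epsilon_{(M,a)}\ } (M,a) \]
is a reflexive coequalizer of free modules, with common section $F\eta_M$. Any cocontinuous $H$ mapping to $(G,\rho)$ must send it to a coequalizer and satisfies $H(FX)=GX$, $H(Fa)=G(a)$ and $H(\epsilon_{FM})=\rho_M$, so its value is forced to be
\[ H(M,a) \;\cong\; \coeq\!\bigl(GTM \;\overset{Ga}{\underset{\rho_M}{\rightrightarrows}}\; GM\bigr). \]
I would take this as the definition of $H$ on objects, extend it to morphisms by functoriality of coequalizers (these exist as $\D$ is cocomplete), and note that $HF \cong G$ because $\rho_X \circ G(\eta_X) = \id_{GX}$ collapses the coequalizer on free modules; by construction $H(\epsilon F) \cong \rho$.

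It then remains to verify that $H$ is a cocontinuous tensor functor. Cocontinuity holds because $U$ creates colimits and, $T$ preserving reflexive coequalizers, the canonical presentation coequalizers are compatible with arbitrary colimits; hence the coequalizer formula for $H$ commutes with colimits. The tensor structure is the crux: by \autoref{maschine} the tensor product of two $T$-modules is itself a reflexive coequalizer assembled from $\otimes$ in $\C$ and the multiplication of $T$, and $F$ is strong monoidal. Since $\otimes$ in $\D$ preserves colimits in each variable, it commutes with the coequalizers defining $H$, so that both $H(M,a) \otimes H(N,b)$ and $H\bigl((M,a)\otimes(N,b)\bigr)$ are presented by the same diagram; the canonical comparison between them is an isomorphism exactly because $\rho$ is a lax symmetric monoidal right action, i.e. compatible with the monoidal structure maps of $G$ and with $\mu$. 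Checking this compatibility --- interchanging the two defining coequalizers with the monoidal coherence of $\rho$ --- is the main obstacle, and the one point where all hypotheses (monoidal monad, monoidal action, preservation of reflexive coequalizers) are used together.

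Finally, the two assignments are mutually quasi-inverse. We have already seen that applying the comparison functor to the constructed $H$ returns $(G,\rho)$. Conversely, for a cocontinuous tensor functor $H$ the pair $\bigl(HF,\,H(\epsilon F)\bigr)$ fed back into the coequalizer formula reproduces $H$, because $H$ is cocontinuous and therefore preserves the canonical presentation coequalizer of each $(M,a)$; this identification is natural in $H$. Both natural isomorphisms are evidently natural in $\D$, so the comparison functor is an equivalence of categories, which is the asserted universal property.
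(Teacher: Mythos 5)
Your proposal is correct and follows essentially the same route as the paper: the structural claims via \autoref{maschine} and \autoref{refcoherent}, the quasi-inverse built by the coequalizer formula on canonical presentations (which is exactly the paper's \autoref{modT}, re-proved inline rather than cited), and the tensor structure on $H$ obtained by tensoring the two reflexive coequalizers (\autoref{bifunktor}) and identifying the result with the presentation of $(A,a)\otimes_T(B,b)$ from \autoref{tensorex}. One correction: your cocontinuity argument asserts that $U$ creates colimits, which is false --- $U$ creates only limits and those colimits that $T$ preserves, here reflexive coequalizers (\autoref{create}); the correct argument, as in the proof of \autoref{modT}, is to invoke \autoref{modcoco}, checking that $HF\cong G$ preserves coproducts and that $H$ preserves reflexive coequalizers via a $3\times 3$ diagram chase (\autoref{reflexdiag}).
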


This universal property offers a simplification if we work with left adjoint tensor functors throughout (\autoref{monadUE2}).

The cocomplete tensor categories of commutative monoids, abelian groups, $R$-modules, pointed sets or spaces, and even complete partial orders, as well as their universal properties, are some examples of how these theorems can be applied.

The case of idempotent monads is connected  to localization theory which we develop in \autoref{localization-section}. It is mainly motivated by the construction of projective tensor categories. A typical example is the construction of the tensor category of sheaves from the tensor category of presheaves: One sheafifies the underlying tensor product. In general the sheafification is replaced by a reflector which is constructed as a transfinite composition of a suitable endofunctor.

\begin{thmi}[Localization, \autoref{localization}]
Let $S = (s_i : M_i \to N_i)_{i \in I}$ be a family of morphisms in a locally presentable tensor category $\C$. Then
\[\C[S^{-1}] := \{A \in \C : \HOM(N_i,A) \xrightarrow{s_i^*} \HOM(M_i,A) \text{ is an isomorphism for all } i\}\]
is reflective in $\C$, say with reflector $R : \C \to \C[S^{-1}]$. Then $\C[S^{-1}]$ becomes a locally presentable tensor category with unit $R(\O_\C)$ and tensor product
\[M \otimes_{\C[S^{-1}]} N := R(M \otimes N).\]
Besides, $R : \C \to \C[S^{-1}]$ is a universal cocontinuous tensor functor with the property that $R(s_i)$ is an isomorphism for all $i \in I$.
\end{thmi}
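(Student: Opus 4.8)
The plan is to treat $\C[S^{-1}]$ as an orthogonality class and to combine the standard reflectivity theorem for locally presentable categories with Day's reflection theorem for the monoidal part. Since $\C$ is a cocomplete tensor category whose tensor product is cocontinuous in each variable, each functor $-\otimes X$ has a right adjoint, so $\C$ is closed symmetric monoidal with internal hom $\HOM$, which is what makes the defining condition meaningful. First I would rewrite the membership condition: by the tensor-hom adjunction and Yoneda, $\HOM(N_i,A)\to\HOM(M_i,A)$ is an isomorphism if and only if $\Hom(X\otimes N_i,A)\to\Hom(X\otimes M_i,A)$ is a bijection for every $X\in\C$, i.e. $A$ is orthogonal (in the usual $\Hom$-set sense) to $\id_X\otimes s_i$ for all $X$ and $i$. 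Fixing a generating set $\mathcal{G}$ of presentable objects, writing an arbitrary $X$ as a colimit of objects of $\mathcal{G}$, and using that $-\otimes N_i$ preserves colimits while $\Hom(-,A)$ sends them to limits, one checks that orthogonality to all $\id_X\otimes s_i$ is equivalent to orthogonality to the \emph{set} $\Sigma:=\{\,\id_G\otimes s_i : G\in\mathcal{G},\ i\in I\,\}$. Thus $\C[S^{-1}]=\Sigma^\perp$, and the localization theorem for locally presentable categories yields that $\C[S^{-1}]$ is reflective and again locally presentable; call the reflector $R$ with unit $\eta:\id\to R$. (Alternatively $R$ may be built explicitly as the transfinite composite of a suitable endofunctor, as announced.)

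Next comes the monoidal structure, whose decisive step is to show that $\C[S^{-1}]$ is an \emph{exponential ideal}: if $A\in\C[S^{-1}]$ and $X\in\C$, then $\HOM(X,A)\in\C[S^{-1}]$. This is a short computation, since by symmetry and adjunction the map $\HOM(N_i,\HOM(X,A))\to\HOM(M_i,\HOM(X,A))$ is identified with $\HOM(X,-)$ applied to the isomorphism $\HOM(N_i,A)\to\HOM(M_i,A)$. Once this is in hand, Day's reflection theorem applies: $\C[S^{-1}]$ inherits a symmetric monoidal closed structure whose internal hom is that of $\C$, whose tensor product is $M\otimes_{\C[S^{-1}]}N=R(M\otimes N)$, and whose unit is $R(\O_\C)$; moreover $R$ becomes a strong symmetric monoidal functor. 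Being a left adjoint, $R$ preserves colimits, and colimits in $\C[S^{-1}]$ are computed by reflecting those of $\C$, so $R$ is a cocontinuous tensor functor and the reflected tensor product is cocontinuous in each variable.

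Finally, the universal property. First, $R(s_i)$ is an isomorphism: for $A\in\C[S^{-1}]$ the adjunction identifies $\Hom(R(N_i),A)\to\Hom(R(M_i),A)$ with $\Hom(N_i,A)\to\Hom(M_i,A)$, which is bijective because $\Hom(-,A)=\Hom(\O_\C,\HOM(-,A))$ and $\HOM(N_i,A)\to\HOM(M_i,A)$ is an isomorphism, so Yoneda in $\C[S^{-1}]$ forces $R(s_i)$ to be invertible. For universality, let $F:\C\to\D$ be any cocontinuous tensor functor with each $F(s_i)$ invertible. Since $F$ is monoidal, $F(\id_X\otimes s_i)=F(X)\otimes F(s_i)$ is invertible too, so $F$ inverts every morphism of $\Sigma$ and hence factors, uniquely up to isomorphism, as $F\simeq\overline{F}\circ R$ with $\overline{F}:\C[S^{-1}]\to\D$ cocontinuous. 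That $\overline{F}$ is again a tensor functor follows from $R$ being strong monoidal and essentially surjective together with $\overline{F}\circ R\simeq F$ monoidal, giving the claimed equivalence between $\Hom_{c\otimes}(\C[S^{-1}],\D)$ and the cocontinuous tensor functors $\C\to\D$ that invert all $s_i$.

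I expect the main obstacle to be the interplay between the localization and the monoidal structure, concentrated in the exponential ideal lemma and the careful application of Day's reflection theorem, in particular verifying that the reflected tensor product is associative and unital with unit $R(\O_\C)$ and that $R$ is genuinely \emph{strong} monoidal rather than merely oplax. A secondary subtlety is upgrading the set-level universal factorization to an equivalence of Hom-categories compatible with tensor, i.e. checking that $\overline{F}$ inherits a coherent tensor structure and that the construction is $2$-natural in $\D$.
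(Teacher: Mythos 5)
Your proposal is correct and follows essentially the same route as the paper: rewrite $\C[S^{-1}]$ as the orthogonality class of the set of morphisms $\id_G \otimes s_i$ with $G$ ranging over the presentable objects, invoke the reflectivity theorem for locally presentable categories to get the reflector $R$, prove the exponential-ideal property ($A \in \C[S^{-1}]$ implies $\HOM(X,A) \in \C[S^{-1}]$, which is exactly the paper's claim $(\star)$), and then transfer unit, associativity and the universal property along $R$ using that every object of $\C[S^{-1}]$ lies in the image of $R$. The only cosmetic difference is that you cite Day's reflection theorem for the monoidal part, whereas the paper verifies the consequences of $(\star)$ (its claim $(\star\star)$ and the unit/associativity constraints) by hand -- the paper itself points to Day's account for this step.
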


Localization allows us to solve many other universal problems for cocomplete tensor categories (\autoref{universal-problem}), for example making two given morphisms equal, making a morphism an epimorphism, making a diagram right exact, making an object zero or making it invertible (\autoref{makeinv}). We discuss many examples in \autoref{examplesloc}. It also shows that every \emph{ideal} of $\C$ (a full subcategory closed under certain operations) is actually the kernel of a cocontinuous tensor functor (\autoref{concrete-is-abstract}).

\clearpage
\section{Acknowledgements}

First of all I would like to thank my advisor Christopher Deninger for his cordial support during my graduate studies. He gave me as much freedom as I needed in order to complete this project.

For various fruitful discussions connected to the topics of this thesis I would like to express my thanks to Leo Alonso, Peter Arndt,  \marginpar{I've added Peter Arndt and David Rydh.} David Ben-Zvi, Alexandru Chirvasitu,  Anton Deitmar, Christopher Deninger, James Dolan, Torsten Ekedahl, Ofer Gabber, Theo Johnson-Freyd, Anders Kock, Zhen Lin Low, Jacob Lurie, Laurent Moret-Bailly, David Rydh, Daniel Sch\"appi,  Jakob Scholbach, Gavin Seal, Tobias Sitte, Jason Starr, Neil Strickland, Georg Tamme, Todd Trimble, Angelo Vistoli, Stefan Vogel and Dimitri Wegner.

For their attentive and meticulous proofreading of this thesis I am much obliged to Oskar Braun, Erik Friese, Eva H\"{o}ning and David Zabka.

I am indebted to Siegfried Bosch for introducing me to algebraic geometry and to Ferit Deniz for introducing me to category theory several years ago.
 
This work was partially supported by the SFB 878 ``Groups, Geometry \& Actions''.


\clearpage
\chapter{Preliminaries} \label{Preliminaries}


\section{Category theory} \label{colimits}

\textbf{1.} We assume that the reader is familiar with some basics of category theory (see \cite{ML98}, \cite{Bor94a}). The Yoneda Lemma will be used all the time. As for the notation, we will denote categories by $\C,\D,\E,\dotsc$, objects by $A,B,C,\dotsc$ (or $M,N,\dotsc$ in case of module-like categories) and functors by $F,G,H,\dotsc$. We will often abbreviate $\id_A$ by $A$.

\textbf{2.} Let us recall some well-known notions which are important for us: A category is \emph{cocomplete} if it admits all small colimits. Usually we omit the word ``small''. Two types of colimits are especially important, since they generate all other ones, namely coproducts, which we will call direct sums and denote by $\oplus$, and coequalizers. A diagram of morphisms of the type
\[A \rightrightarrows B \to C\]
is called \emph{exact}, or a \emph{coequalizer diagram}, if it exhibits $B \to C$ as a coequalizer of the two morphisms $A \rightrightarrows B$. In other words, for every $T \in \C$, which we call a \emph{test object}, the diagram of sets
\[\Hom(C,T) \to \Hom(B,T) \rightrightarrows \Hom(A,T)\]
is exact i.e. an equalizer diagram of sets.

\textbf{3.} If $A$ is an object of a cocomplete category and $I$ is a set, we will often write $I \otimes A$ for the direct sum $\bigoplus_{i \in I} A$.

\textbf{4.} A functor $F : \C \to \D$ is called \emph{cocontinuous} if it preserves all colimits, i.e. for every diagram $\{A_i\}_{i \in I}$ in $\C$ the canonical morphism
\[\colim_{i \in I} F(A_i) \to F(\colim_{i \in I} A_i)\]
is an isomorphism. If $F$ preserves at least finite colimits, we say that $F$ is \emph{finitely cocontinuous} or \emph{right exact}. Observe that a right exact functor preserves epimorphisms, since $A \to B$ is an epimorphism if and only if
\[\xymatrix{A \ar[r] \ar[d] & B \ar[d] \\ B \ar[r] & B}\]
is a pushout square. By duality we obtain the notion of \emph{lext exact} functors. A functor is called \emph{exact} if it is left exact and right exact. A functor is \emph{finitary} when it preserves directed colimits.

\textbf{5.} We will denote any initial object of a category by $0$. Of course a cocontinuous functor preserves initial objects.
 
\textbf{6.} For cocomplete categories $\C,\D$ the category of cocontinuous functors $\C \to \D$ with morphisms of functors is denoted by $\Hom_c(\C,\D)$. 

\textbf{7.} In an equivalence of categories, we do not speak of quasi-inverse functors, but just of inverse functors, since it is understood that interesting functors are almost never inverse to each other on the nose.

\textbf{8.} We will also need some bits of $2$-category theory (see \cite[Section 9]{St96}). We will change the terminology a bit and call bicategories $2$-categories (not assumed to be strict) and pseudo-functors $2$-functors. By adjoints between $2$-functors we mean of course $2$-adjoints. Bilimits (resp. bicolimits) are called $2$-limits (resp. $2$-colimits).

\textbf{9.} A category is called \emph{discrete} if it only contains identity morphisms. A category equivalent to a discrete category is called \emph{essentially discrete}. This means that every morphism is an isomorphism and that every two parallel morphisms agree.

\textbf{10.} Let us recall the \emph{tensor product of functors} (see \cite[2.4]{CJF13} or \cite[IX.6]{ML98}). Given functors $\otimes : \C \times \D \to \E$, $F : I^{\op} \to \C$ and $G : I \to \D$, we may define the tensor product $F \otimes_I G \in \E$ as an object which represents the functor $\E \to \Set$ which maps $T \in \E$ to the set of all families of maps $(F(i) \otimes G(i) \to T)_{i \in I}$ which are \emph{dinatural}: For every morphism $i \to j$ in $I$ the diagram
\[\xymatrix{F(j) \otimes G(i) \ar[r] \ar[d] & F(j) \otimes G(j) \ar[d] \\ F(i) \otimes G(i) \ar[r] & T}\]
commutes. We have an equivalent expression as a \emph{coend}
\[F \otimes_I G := \int^{i \in I} F(i) \otimes G(i).\]
If $I$ is small and $\E$ is cocomplete, we may construct $F \otimes_I G$ by taking the coequalizer of the obvious morphisms
\[\bigoplus_{i \to j} F(j) \otimes G(i) \rightrightarrows \bigoplus_i F(i) \otimes G(i).\]
Todd Trimble and Anton Fetisov have given very enlightening introductions to coends at \url{http://mathoverflow.net/questions/114703}.

   
\textbf{11.} Although we do not need general enriched category theory (\cite{Kel05}), we will consider the following special case: Let $R$ be a commutative ring. An \emph{$R$-linear category} is an $\M(R)$-enriched category, i.e. a category whose hom-sets are actually $R$-modules and for which the composition is $R$-bilinear. An \emph{$R$-linear functor} between $R$-linear categories $F : \C \to \D$ is a functor between the underlying categories such that  the map $\Hom(A,B) \to \Hom(F(A),F(B))$ is $R$-linear for all objects $A,B \in \C$. We obtain the category of $R$-linear functors $\Hom_{/R}(\C,\D)$. Its subcategory of cocontinuous $R$-linear functors is denoted by $\Hom_{c/R}(\C,\D)$.

\textbf{12.} In an $R$-linear category, an object $A$ is initial if and only if $\id_A = 0$. Since the latter property is invariant under dualization, it follows that $A$ is initial if and only if $A$ is terminal. In particular, every cocomplete $R$-linear category has a zero object $0$.

\textbf{13.} If $A,B$ are objects of an $R$-linear category, then their direct sum $A \oplus B$ (if it exists) is actually a \emph{biproduct} and biproducts are preserved by any $\Z$-linear functor. Conversely, since the sum of two morphisms $f,g : A \to B$ is given by
\[A \xrightarrow{\Delta} A \times A \cong A \oplus A \xrightarrow{(f,g)} B,\]
we see that each functor between $\Z$-linear categories with direct sums which preserves direct sums is $\Z$-linear.
 
\textbf{14.} Even if no zero object exists, a sequence of morphisms in a linear category
\[A \xrightarrow{i} B \xrightarrow{p} C \to 0\]
is called \emph{exact} if $p$ is a cokernel of $i$. Dually,
\[0 \to A \xrightarrow{i} B \xrightarrow{p} C\]
is called \emph{exact} if $i$ is a kernel of $p$. Finally, we call a sequence of the form
\[0 \to A \xrightarrow{i} B \xrightarrow{p} C \to 0\]
\emph{exact} if $i$ is a kernel of $p$ and $p$ is a cokernel of $i$. Remark that these definitions make sense in any linear category -- we do not have to restrict to abelian categories (\cite[Definition 4.1.5]{BB04}).

\textbf{15.} Every cocomplete $R$-linear category $\C$ is \emph{tensored over $\M(R)$}, i.e. for every $M \in \M(R)$ and $A \in \C$ there is some $M \otimes_R A \in \C$ satisfying the adjunction
\[\Hom_\C(M \otimes_R A,B) \cong \Hom_{\M(R)}(M,\Hom_\C(A,B)).\]
Since left adjoints may be defined objectwise (\cite[IV.1, Corollary 2]{ML98}) and are cocontinuous, and every $R$-module is a cokernel of a map between free $R$-modules, it suffices to take $R^{\oplus X} \otimes_R A = A^{\oplus X}$ for every set $X$. See also \cite[II.1.5.1]{SR72}.
  
\textbf{16.} Finally let us mention that the \emph{tensor product} $\C \otimes_R \D$ of two $R$-linear categories $\C,\D$ has objects $(A,B)$ with $A \in \C$ and $B \in \C$ and hom modules
\[\Hom_{\C \otimes_R \D}((A,B),(C,D)) := \Hom_\C(A,C) \otimes_R \Hom_\D(B,D).\]
Composition and identity are defined in an evident manner. Notice that $R$-linear functors $\C \otimes_R \D \to \E$ correspond to $R$-bilinear functors $\C \times \D \to \E$, i.e. to functors which are $R$-linear in each variable.

\section{Algebraic geometry}

\textbf{1.} Since this thesis is aimed at translating some foundational aspects of algebraic geometry to cocomplete tensor categories, we will not need much algebraic geometry which goes beyond the basics as developed in the first chapters of EGA (\cite{EGAI},\cite{EGAII}) and descent theory for fpqc morphisms and quasi-coherent modules, for which we refer to \cite{Vis05}. Note that the definition of fqpc coverings there is slightly weaker than faithfully flat and quasi-compact, thereby including arbitrary Zariski coverings. We refer to \cite{Vis05} for the theory of stacks in general and to \cite{LMB00} for algebraic stacks (also known as Artin stacks).

\textbf{2.} Many results for quasi-coherent modules on schemes directly translate to algebraic stacks, because quasi-coherent modules satisfy fpqc descent and an algebraic stack is really just a stack which is (in particular) fpqc-locally a scheme. In fact, one may generalize the notion of an algebraic stack and only require a flat instead of a smooth cover from an affine scheme (\cite[Section 2]{Nau07}). Therefore, we sometimes also include algebraic stacks into our statements. Those readers not interested in algebraic stacks might just read over this. 
 
\textbf{3.} We will use the following abbreviations.
\begin{center}
	\begin{tabular}{ll}
	qc & quasi-compact \\ qs & quasi-separated \\ ss & semi-separated	(\cite[B.7]{TT90})
	\end{tabular}
\end{center}
Note that the class of qc qs schemes is the smallest class of schemes which contains the affine schemes and is closed under finite gluings -- this explains why this finiteness condition works so well for the general theory. For example if $f : X \to Y$ is qc qs, then the induced direct image functor $f_*$ preserves quasi-coherent modules and commutes with directed colimits -- the same is even true for all higher direct images, in particular for cohomology (\cite[Proposition 6.7.1]{EGAI}, \cite[B.6]{TT90}).

\textbf{4.} Following Lurie we also say \emph{geometric} instead of qc ss (\cite{Lur04}). Thus, an algebraic stack $X$ is geometric if there is a smooth (or just flat) surjective \emph{affine} morphism $\pi : P \to X$ such that $P$ is an \emph{affine} scheme. We refer to \cite[Section 3]{Nau07} for an equivalent description via flat groupoids in affine schemes, or equivalently flat Hopf algebroids in commutative rings, in which quasi-coherent sheaves correspond to comodules. See also \cite{Alo13} for the corresponding formalism of pushforward and pullback functors.

\textbf{5.} The category of quasi-coherent modules (we will not say sheaves in order to emphasize the global point of view) on $X$ will be denoted by $\Q(X)$, its full subcategory of quasi-coherent sheaves of finite presentation (\cite[Chapitre 0, 5.2.5]{EGAI}) by $\Q_{\fp}(X)$. Thus for noetherian $X$ this is the category of coherent sheaves.

The following observations are probably well-known.

\begin{lemma}[Quasi-coherence of Hom sheaves] \label{qchom}
Let $X$ be a scheme and $M,N$ be two $\O_X$-modules. If $M$ is of finite presentation and $N$ is quasi-coherent, then the $\O_X$-module $\HOM(M,N)$ (``sheaf hom'') is quasi-coherent.
\end{lemma}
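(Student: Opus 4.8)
The plan is to exploit that quasi-coherence is a local property on $X$, so it suffices to verify the claim on an affine open cover. Fix an affine open $U = \Spec(A) \subseteq X$. Over $U$ the quasi-coherent module $N$ is $\widetilde{N_0}$ for some $A$-module $N_0$, and since $M$ is of finite presentation (hence in particular quasi-coherent), after passing to a smaller affine open if necessary I may assume that $M|_U$ admits a finite presentation, i.e. an exact sequence of $\O_U$-modules
\[\O_U^{\oplus m} \to \O_U^{\oplus n} \to M|_U \to 0.\]

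First I would apply the internal hom functor $\HOM(-,N)$, which is contravariant and left exact, since it turns colimits in its first variable into limits and in particular sends cokernels to kernels. This yields an exact sequence
\[0 \to \HOM(M,N)|_U \to \HOM(\O_U^{\oplus n},N) \to \HOM(\O_U^{\oplus m},N).\]
Next I would use that $\HOM(\O_U^{\oplus k},N) \cong N^{\oplus k}$ for finite $k$, so that the two right-hand terms are finite direct sums of $N|_U$ and are therefore themselves quasi-coherent.

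Finally, $\HOM(M,N)|_U$ is exhibited as the kernel of a morphism between quasi-coherent $\O_U$-modules. Because the kernel of a map $\widetilde{f}$ of quasi-coherent modules on $\Spec(A)$ is again quasi-coherent (being $\widetilde{\ker f}$), this kernel is quasi-coherent. Since $\HOM$ is compatible with restriction to opens and quasi-coherence is local, gluing over the affine cover shows that $\HOM(M,N)$ is quasi-coherent on all of $X$.

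The only points requiring care are the left-exactness of $\HOM(-,N)$ at the level of sheaves and its compatibility with restriction to opens; both are formal consequences of the definition of the sheaf hom as $U \mapsto \Hom_{\O_U}(M|_U,N|_U)$ together with the left-exactness of ordinary module hom. The finite presentation of $M$ is precisely what reduces the computation to finitely many copies of $N$, so I do not expect any genuine obstacle beyond assembling these standard facts.
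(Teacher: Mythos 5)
Your proof is correct and follows essentially the same route as the paper: reduce to a local (affine) situation, use a finite presentation $\O^p \to \O^q \to M \to 0$, apply the left-exact functor $\HOM(-,N)$ to exhibit $\HOM(M,N)$ as the kernel of a map $N^q \to N^p$ between quasi-coherent modules, and conclude since quasi-coherence is closed under kernels and finite direct sums. The extra care you take with compatibility of sheaf hom under restriction and with gluing is implicit in the paper's ``we may work locally'' step, so there is no substantive difference.
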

 
See also \cite[Proposition 9.1.1]{EGAI-alt} and \cite[Corollaire 5.3.7]{EGAI} for a special case.
 
\begin{proof}
We may work locally on $X$ and therefore assume that there is an exact sequence
\[\mathcal{O}_X^p \to \mathcal{O}_X^q \to M \to 0.\]
Applying $\HOM(-,N)$ gives an exact sequence
\[0 \to \HOM(M,N) \to N^q \to N^p.\]
The claim follows since $\Q(X) \subseteq \M(X)$ is closed under kernels and direct sums.
\end{proof}

The following Lemma generalizes \cite[5.3.10]{EGAI}.

\begin{lemma}[Pairings] \label{pairing}
Let $M,N,P$ be three quasi-coherent modules on a scheme $X$ and $\psi : M \otimes N \to P$ a pairing. Assume we are given a quasi-coherent submodule $Q \subseteq P$. Define a submodule $M' \subseteq M$ by
\[\Gamma(U,M') = \bigl\{m \in \Gamma(U,M) : \forall V \subseteq U,\ \forall n \in \Gamma(V,N),\ \psi(m|_V \otimes n) \in \Gamma(V,Q)\bigr\}.\]
In other words, $M'$ is the largest submodule of $M$ whose pairing with $N$ by means of $\psi$ lands inside $Q\subseteq P$. If $N$ is of finite type, then $M'$ is quasi-coherent.
\end{lemma}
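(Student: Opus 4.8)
The plan is to exploit that quasi-coherence is a local property and to reduce the defining condition of $M'$, which quantifies over \emph{all} open subsets $V$ and \emph{all} local sections of $N$, to a finite condition involving only a fixed generating family of $N$. Since $N$ is of finite type, I would first fix a point of $X$ and pass to an affine open $U \subseteq X$ small enough that $N|_U$ is generated by finitely many sections $n_1,\dots,n_k \in \Gamma(U,N)$, i.e. the map $\O_U^{\,k} \to N|_U$ sending the $i$-th basis section to $n_i$ is an epimorphism. As quasi-coherence may be checked on the members of an open cover, it suffices to show that $M'|_U$ is quasi-coherent.

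The heart of the argument is the following claim: for every open $W \subseteq U$ one has
\[\Gamma(W,M') = \{\, m \in \Gamma(W,M) : \psi(m \otimes n_i|_W) \in \Gamma(W,Q) \text{ for } i=1,\dots,k \,\}.\]
The inclusion $\subseteq$ is immediate by taking $V=W$ and $n=n_i|_W$ in the defining condition. For $\supseteq$, suppose $m \in \Gamma(W,M)$ satisfies $\psi(m \otimes n_i|_W) \in \Gamma(W,Q)$ for all $i$, and let $V \subseteq W$ and $n \in \Gamma(V,N)$ be arbitrary; I must show $\psi(m|_V \otimes n) \in \Gamma(V,Q)$. Since $Q \subseteq P$ is a subsheaf, membership in $Q$ is a local condition, so I may check it on a cover of $V$. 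Because the $n_i$ generate $N|_U$, every point of $V$ has a neighborhood $W' \subseteq V$ on which $n|_{W'} = \sum_i a_i\, n_i|_{W'}$ with $a_i \in \Gamma(W',\O_X)$; then
\[\psi(m|_{W'} \otimes n|_{W'}) = \sum_i a_i\,\psi(m\otimes n_i|_W)\big|_{W'} \in \Gamma(W',Q),\]
since each $\psi(m\otimes n_i|_W)|_{W'}$ lies in $Q$ and $Q$ is an $\O_X$-submodule. As $W'$ ranges over a cover of $V$, the sheaf property of $Q$ gives $\psi(m|_V \otimes n) \in \Gamma(V,Q)$, proving the claim.

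With the claim in hand I would exhibit $M'|_U$ as a kernel of a morphism of quasi-coherent modules. For each $i$, fixing $n_i$ makes $m \mapsto \psi(m \otimes n_i)$ an $\O_U$-linear morphism $M|_U \to P|_U$; composing with the projection $\pi : P \to P/Q$ yields $\bar\psi_i : M|_U \to (P/Q)|_U$, and bundling these gives
\[\Phi = (\bar\psi_1,\dots,\bar\psi_k) : M|_U \longrightarrow (P/Q)^{k}\big|_U.\]
The claim says exactly that $M'|_U = \ker \Phi$. Now $P/Q$ is quasi-coherent, being a quotient of quasi-coherent modules, hence so is $(P/Q)^k$; and $\Phi$ is a morphism of quasi-coherent modules. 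Since $\Q(X) \subseteq \M(X)$ is closed under kernels and direct sums (as already used in the proof of \autoref{qchom}), $M'|_U = \ker\Phi$ is quasi-coherent. Letting $U$ range over the cover of $X$ by such opens concludes that $M'$ is quasi-coherent.

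The main obstacle is the second paragraph: the passage from the condition over all $(V,n)$ to the finitely many generators $n_i$. This step is not purely formal, because $N$ is only \emph{locally} generated by the $n_i$ (a section $n$ need not be a global $\O_X$-combination of the $n_i$ over $V$), so one genuinely needs the interplay between the subsheaf property of $Q$ (locality of the membership condition) and the $\O_X$-linearity of $\psi$ in the first variable. Everything else reduces to the standard abelian-category stability properties of $\Q(X)$.
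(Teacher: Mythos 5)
Your proof is correct and follows essentially the same route as the paper: work locally, reduce the defining condition to finitely many generators of $N$, and exhibit $M'$ as the kernel of a morphism $M \to (P/Q)^k$ of quasi-coherent modules. The only difference is presentational --- the paper phrases the reduction as replacing $\psi$ by the composite $M \otimes \O_X^n \to M \otimes N \to P$ via an epimorphism $\O_X^n \twoheadrightarrow N$ and calls this step ``clear'', whereas you spell out precisely why it works (local liftability of sections of $N$ through the generators, plus locality of membership in the subsheaf $Q$), which is the one genuinely non-formal point of the argument.
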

 
\begin{proof}
We may work locally on $X$ and therefore assume that $N$ is a quotient of $\O_X^n$ for some $n \in \N$. But then we may clearly replace $\psi$ by the pairing $M \otimes \O_X^n \to M \otimes N \to P$ and therefore assume $N=\O_X^n$. Then $\psi$ corresponds to a homomorphism $M \to P^n$ and $M'$ is the kernel of $M \to P^n \to P^n / Q^n$. Hence, $M'$ is quasi-coherent.
\end{proof}

\begin{lemma}[Pullbacks of closed subschemes] \label{pullback-closed}
Let $f : X \to Y$ be a morphism of schemes and let $I \subseteq \O_Y$ be a quasi-coherent ideal. Consider the induced ideal $J=f^* I \cdot \O_X \subseteq \O_X$, the image of $f^* I \to f^* \O_Y \cong \O_X$. Then $f^*(\O_Y/I) \cong \O_X/J$ and we have an equality of sets $f^{-1}(V(I))=V(J)$.
\end{lemma}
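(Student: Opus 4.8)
The plan is to treat the two assertions separately, deriving the isomorphism first and then exploiting it (or a stalkwise computation) for the set-theoretic equality.

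For the isomorphism $f^*(\O_Y/I) \cong \O_X/J$, I would start from the short exact sequence of quasi-coherent $\O_Y$-modules
\[0 \to I \to \O_Y \to \O_Y/I \to 0.\]
Since $f^*$ is a left adjoint it is right exact, so applying it yields an exact sequence
\[f^*I \to f^*\O_Y \to f^*(\O_Y/I) \to 0.\]
Using the canonical isomorphism $f^*\O_Y \cong \O_X$, the morphism $f^*I \to \O_X$ has image $J$ by the very definition of $J$, so its cokernel is $\O_X/J$. Right exactness identifies this cokernel with $f^*(\O_Y/I)$, which gives the desired isomorphism.

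For the equality $f^{-1}(V(I)) = V(J)$ I would argue on stalks. Fix $x \in X$ and set $y = f(x)$. Both pullback and the formation of images commute with taking stalks (localization being exact), so $J_x$ is the image of $(f^*I)_x = I_y \otimes_{\O_{Y,y}} \O_{X,x} \to \O_{X,x}$, i.e. the extended ideal $J_x = I_y \cdot \O_{X,x}$. Recall that $x \in V(J)$ means $(\O_X/J)_x \neq 0$, equivalently $J_x \subseteq \m_x$, and likewise $y \in V(I)$ means $I_y \subseteq \m_y$, where $\m_x,\m_y$ are the maximal ideals of the local rings $\O_{X,x},\O_{Y,y}$. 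It therefore suffices to show
\[I_y \cdot \O_{X,x} \subseteq \m_x \iff I_y \subseteq \m_y.\]

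The crucial point — the only place where more than formal manipulation enters — is that the ring homomorphism $\phi : \O_{Y,y} \to \O_{X,x}$ induced by $f$ is a \emph{local} homomorphism, so $\phi(\m_y) \subseteq \m_x$. Granting this, if $I_y \subseteq \m_y$ then $\phi(I_y) \subseteq \m_x$, whence the ideal it generates satisfies $I_y \cdot \O_{X,x} \subseteq \m_x$. Conversely, if $I_y \not\subseteq \m_y$ then $I_y = \O_{Y,y}$ because $\O_{Y,y}$ is local, so $1 \in I_y$ forces $I_y \cdot \O_{X,x} = \O_{X,x} \not\subseteq \m_x$. This establishes the equivalence, and hence $x \in V(J) \iff y \in V(I) \iff x \in f^{-1}(V(I))$, completing the argument. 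Alternatively, the set equality follows from the first part together with the general fact that the support of the pullback of a finite-type module is the preimage of its support; since $\O_Y/I$ is of finite type this gives $V(J)=\supp(\O_X/J)=\supp f^*(\O_Y/I)=f^{-1}(\supp(\O_Y/I))=f^{-1}(V(I))$, though this too rests on the locality of $\phi$.
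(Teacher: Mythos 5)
Your proof is correct, and in substance it is the same as the paper's. The first half coincides exactly: the paper also applies right exactness of $f^*$ to the sequence $I \to \O_Y \to \O_Y/I \to 0$ and identifies the cokernel of $f^*I \to \O_X$ with $\O_X/J$. For the set-theoretic equality, the paper simply cites the formula $\supp(f^*M)=f^{-1}(\supp M)$ for modules $M$ of finite type (EGA I, Chap.\ 0, 5.2.4.1) applied to $M=\O_Y/I$ together with the first part --- which is word for word the ``alternative'' you give in your closing sentence. Your primary stalkwise argument is then just a self-contained proof of that cited fact in the case at hand: since $\O_Y/I$ is cyclic, the support formula reduces to the equivalence $I_y\cdot\O_{X,x}\subseteq\m_x \iff I_y\subseteq\m_y$, which you correctly derive from locality of $\O_{Y,y}\to\O_{X,x}$ (and which, as you note, is the only nonformal input; in the general finite-type case one would additionally need Nakayama). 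So the two routes agree; yours buys independence from the EGA citation at the cost of a short local-ring computation, and it is worth noting that only the direction $I_y\subseteq\m_y \Rightarrow I_y\cdot\O_{X,x}\subseteq\m_x$ actually uses locality, the converse needing only $\phi(1)=1$.
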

 
\begin{proof}
The exact sequence $f^* I \to f^* \O_Y \to f^* (\O_Y/I) \to 0$ yields the first statement. Using \cite[Chap. 0, 5.2.4.1]{EGAI}, it follows
\[f^{-1}(V(I))=f^{-1}(\supp(\O_Y/I))=\supp(f^*(\O_Y/I))=\supp(\O_X/J)=V(J).\]
\end{proof}

\begin{lemma}[Universal property of immersions] \label{imm}
Let $i : Y \to X$ be an immersion of schemes. Then a morphism $f : Z \to X$ factors (uniquely) through $i$ if and only if $f^* : \Q(X) \to \Q(Z)$ maps every homomorphism, which is an isomorphism on $Y$, to an isomorphism.
\end{lemma}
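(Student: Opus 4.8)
The plan is to prove both implications, reducing the ``only if'' direction to the two elementary cases of an open and a closed immersion. For the easy direction, suppose $f = i \circ g$ for some $g : Z \to Y$. Then $f^* = g^* \circ i^*$, so if $\phi$ is a homomorphism in $\Q(X)$ with $i^*\phi$ an isomorphism, then $f^*\phi = g^*(i^*\phi)$ is an isomorphism because $g^*$ is a functor; this settles ``only if''. Uniqueness of the factorization is free, since immersions are monomorphisms. Before tackling the converse I would record that the statement is \emph{local on $Z$}: for an open cover $Z = \bigcup_\alpha Z_\alpha$, the conclusion glues by uniqueness (two factorizations of $f$ through the monomorphism $i$ agree on overlaps), and the hypothesis restricts cleanly because $X$ and $Y$ are fixed and $(f|_{Z_\alpha})^* = (-)|_{Z_\alpha} \circ f^*$ preserves isomorphisms. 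Hence I may assume $Z$ is affine, so that $f(Z)$ is quasi-compact. Finally I factor $i = k \circ j$ with $j : Y \hookrightarrow U$ a closed immersion into an open subscheme $k : U \hookrightarrow X$, and let $I \subseteq \O_U$ be the ideal of $Y$ in $U$, so that $Y = V(I)$.

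\emph{Step A (the open part).} I first show $f(Z) \subseteq U$. Let $\mathcal{J} \subseteq \O_X$ be the quasi-coherent ideal of the reduced induced structure on the closed set $X \setminus U$, and consider the inclusion $\phi : \mathcal{J} \hookrightarrow \O_X$. Since $V(\mathcal{J}) = X \setminus U$ is disjoint from $U$, the restriction $\mathcal{J}|_U = \O_U$ makes $k^*\phi$ an isomorphism, hence $i^*\phi = j^*(k^*\phi)$ is an isomorphism. By hypothesis $f^*\phi : f^*\mathcal{J} \to \O_Z$ is then an isomorphism, in particular an epimorphism, so $f^*\mathcal{J}\cdot\O_Z = \O_Z$. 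Applying \autoref{pullback-closed} gives $f^{-1}(X \setminus U) = f^{-1}(V(\mathcal{J})) = V(f^*\mathcal{J}\cdot\O_Z) = \emptyset$, that is $f(Z) \subseteq U$. Therefore $f$ factors uniquely as $f = k \circ f_1$ with $f_1 : Z \to U$.

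\emph{Step B (the closed part).} It remains to show that $f_1$ factors through $j : V(I) \hookrightarrow U$, which by \autoref{pullback-closed} is equivalent to $f_1^* I \cdot \O_Z = 0$. The natural test morphism is the quotient $q : \O_U \to \O_U/I$, which satisfies $j^* q = \id_{\O_Y}$ and is thus an isomorphism on $Y$; but $q$ lives over $U$, whereas the hypothesis concerns $X$. I would therefore produce a quasi-coherent ideal $\tilde I \subseteq \O_X$ with $\tilde I|_U = I$ and test instead with the global quotient $\tilde q : \O_X \to \O_X/\tilde I$. Then $k^*\tilde q = q$, so $i^*\tilde q = j^* q$ is an isomorphism, and the hypothesis forces $f^*\tilde q$ to be an isomorphism; by \autoref{pullback-closed} this means $f^*\tilde I \cdot \O_Z = 0$, and since $f = k f_1$ with $\tilde I|_U = I$ we get $f_1^* I \cdot \O_Z = f^*\tilde I \cdot \O_Z = 0$. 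Thus $f_1$ factors through $Y$, yielding the desired $g : Z \to Y$ with $i \circ g = f$.

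The main obstacle is exactly the construction of the global quasi-coherent ideal $\tilde I$ extending $I$ (equivalently, the quasi-coherence of $i_*\O_Y$, which is what would let one take $\tilde q$ to be the unit $\O_X \to i_* i^* \O_X$). For a general immersion into a general scheme the open immersion $k$ need be neither quasi-compact nor quasi-separated, so such an extension may fail globally. This is where the reduction to $Z$ affine pays off: since $f(Z)$ is then quasi-compact and contained in $U$, I can restrict attention to a quasi-compact open neighbourhood $U_0$ of $f(Z)$ (over which $f_1$ still factors) and, arranging that $U_0 \hookrightarrow X$ is quasi-compact and quasi-separated, extend the defining ideal of $Y \cap U_0$ to a quasi-coherent ideal on $X$ by the usual formula $\tilde I = (U_0 \hookrightarrow X)_*\bigl(I|_{U_0}\bigr) \cap \O_X$. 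Handling this quasi-compactness bookkeeping carefully — so that the extended ideal restricts correctly to $I$ near the image and remains quasi-coherent — is the only genuinely technical point; everything else is formal and rests on \autoref{pullback-closed}.
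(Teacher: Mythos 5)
Your proof follows the paper's architecture: the easy direction, uniqueness (immersions are monomorphisms), and the reduction of the converse to a closed case and an open case, attacked with the same two test homomorphisms (the quotient by the ideal of $Y$ in $U$, and the reduced ideal of the closed complement of $U$). The genuine added value of your write-up is that you noticed what the paper glosses over: it asserts that ``it suffices to treat two cases,'' but combining them is not formal, because the closed-part test homomorphism $\O_U \to \O_U/I$ lives on $U$, while the hypothesis only concerns homomorphisms in $\Q(X)$. Your Step A and the locality reduction on $Z$ are fine; the extension problem you isolate in Step B is exactly the right worry.

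However, your repair of Step B fails, and not primarily for the quasi-compactness bookkeeping you flag. The decisive problem is that your test homomorphism does not satisfy the hypothesis of the lemma: the ideal $\tilde I = \ker\bigl(\O_X \to (U_0 \hookrightarrow X)_*(\O_{U_0}/I|_{U_0})\bigr)$ is the unit ideal on $X \setminus \overline{U_0}$ (any open $W$ with $W \cap U_0 = \emptyset$ has $\tilde I|_W = \O_W$), so $\O_X/\tilde I$ vanishes on $Y \setminus \overline{U_0}$ while $\O_Y$ does not; hence $\tilde q : \O_X \to \O_X/\tilde I$ is an isomorphism only on $Y \cap U_0$, not on $Y$, and the hypothesis --- which quantifies over homomorphisms inverted on \emph{all} of $Y$ --- cannot be applied to it. Shrinking to a neighbourhood of $f(Z)$ is therefore self-defeating: the extension must restrict into $I$ along the whole of $Y$, i.e.\ one must extend from all of $U$, which is the original problem. (Your bookkeeping point is also not arrangeable in general: in a non-quasi-separated $X$ a quasi-compact, even affine, open $U_0$ need not be retrocompact --- glue two copies of $\Spec k[x_1,x_2,\dotsc]$ along the complement of the closed point --- so $(U_0 \hookrightarrow X)_*$ need not preserve quasi-coherence.) The clean way out, which covers every use of \autoref{imm} in the paper, is to assume $i$ quasi-compact and factor the \emph{other} way: then $i_*\O_Y$ is quasi-coherent, $\mathcal{J} := \ker(\O_X \to i_*\O_Y)$ is a quasi-coherent ideal, and $i$ factors as an open immersion $Y \to \overline{Y} := V(\mathcal{J})$ followed by the closed immersion $c : \overline{Y} \to X$. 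The quotient $\O_X \to \O_X/\mathcal{J}$ is an isomorphism on $Y$, so $f$ factors through $\overline{Y}$, say $f = c f_1$; and the open-case test $\mathcal{I} \hookrightarrow \O_{\overline{Y}}$ (reduced ideal of $\overline{Y}\setminus Y$) transfers to $X$ as $c_*(\mathcal{I} \hookrightarrow \O_{\overline{Y}})$, which is quasi-coherent because pushforward along a \emph{closed} immersion always preserves quasi-coherence, and is an isomorphism on $Y$ since $c^*c_* \cong \id$; inverting it under $f^*$ inverts $f_1^*(\mathcal{I} \hookrightarrow \O_{\overline{Y}})$, and the support argument of the open case (via \autoref{pullback-closed}) makes $f_1$ factor through $Y$. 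For an immersion that is not quasi-compact, neither your argument nor the paper's establishes the statement, and it is unclear whether it holds in that generality.
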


\begin{proof} If $f=ig$ for some $g : Z \to Y$ and $\alpha$ is a homomorphism on $X$ such that $i^* \alpha$ is an isomorphism, then $g^* i^* \alpha \cong f^* \alpha$ also is an isomorphism. For the converse, it suffices to treat two cases:

1. Assume that $i$ is a closed immersion, say $Y=V(I)$ for some quasi-coherent ideal $I \subseteq \O_X$. Then $\O_X \to \O_X/I$ is an isomorphism on $V(I)$, hence $f^*$ maps it to an isomorphism. This implies that $f^* I \to f^* \O_X = \O_Z$ vanishes, or equivalently that the adjoint
\[I \to \O_X \xrightarrow{f^\#} f_* \O_Z\]
vanishes, i.e. $I \subseteq \ker(f^\#)$. But this means that $f$ factors uniquely through $i$.

2. Assume that $i$ is an open immersion. Let $I$ be the vanishing ideal of the closed complement $X \setminus Y \subseteq X$. Then $\O_X / I$ vanishes on $Y$. Hence $f^*(\O_X/I)=0$ vanishes. It follows
\[\emptyset = \supp f^*(\O_X/I) = f^{-1}(\supp(\O_X/I)) = f^{-1}(X \setminus Y),\]
so that $f$ factors through $Y$ as a set map. Since $i$ is an open immersion, this means that $f$ factors through $i$.
\end{proof}

Morphisms of schemes are determined by their pullback functors:

\begin{lemma} \label{volltreufunk}
Let $f,g : X \to Y$ be two morphisms of schemes such that there is an isomorphism of functors  $f^* \cong g^* : \Q(Y) \to \Q(X)$. Then the underlying maps of $f$ and $g$ coincide. If $Y$ is quasi-separated, we even have $f=g$.
\end{lemma}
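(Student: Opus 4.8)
The plan is to treat the two assertions separately: first recover the common underlying continuous map from the supports of pulled-back cyclic modules, and then upgrade equality of underlying maps to equality of morphisms by localizing the given functor isomorphism to an affine open cover of $Y$.

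For the underlying maps, write $\theta : f^* \to g^*$ for the given isomorphism of functors. For any quasi-coherent ideal $I \subseteq \O_Y$ the quotient $\O_Y/I$ is of finite type, so \autoref{pullback-closed} gives $f^{-1}(V(I)) = \supp f^*(\O_Y/I)$ and likewise for $g$. Since $\theta_{\O_Y/I}$ is an isomorphism $f^*(\O_Y/I) \cong g^*(\O_Y/I)$ and isomorphic modules have equal support, we get $f^{-1}(V(I)) = g^{-1}(V(I))$. As every closed subset of $Y$ is $V(I)$ for a (radical) quasi-coherent ideal, the points $f(x)$ and $g(x)$ lie in exactly the same closed subsets of $Y$ for each $x \in X$; since schemes are $T_0$, this forces $f(x) = g(x)$. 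Hence $f$ and $g$ share a common underlying map $\phi$.

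For the second assertion I would reduce to the affine case. Choose an affine open cover $Y = \bigcup_j V_j$ and set $U_j := \phi^{-1}(V_j)$, the common preimage under $f$ and $g$. Because a morphism of schemes is determined by its restrictions to an open cover of the source and the $U_j$ cover $X$, it suffices to show $f|_{U_j} = g|_{U_j}$ as morphisms $U_j \to V_j$. The crucial step is to transport $\theta$ to an isomorphism of the restricted pullback functors $\Q(V_j) \to \Q(U_j)$, and this is where quasi-separatedness enters: the inclusion $\iota : V_j \hookrightarrow Y$ of a quasi-compact open into the quasi-separated scheme $Y$ is quasi-compact quasi-separated, so $\iota_*$ preserves quasi-coherence, while $\iota^* \iota_* \cong \id$ because $\iota$ is an open immersion. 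Writing $k : U_j \hookrightarrow X$ for the inclusion and using $f \circ k = \iota \circ (f|_{U_j})$ (hence $k^* f^* \cong (f|_{U_j})^* \iota^*$) I obtain
\[(f|_{U_j})^* \cong (f|_{U_j})^* \iota^* \iota_* \cong k^* f^* \iota_* \xrightarrow{k^*(\theta_{\iota_*(-)})} k^* g^* \iota_* \cong (g|_{U_j})^* \iota^* \iota_* \cong (g|_{U_j})^*,\]
a natural isomorphism $(f|_{U_j})^* \cong (g|_{U_j})^*$.

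Finally, with $V_j = \Spec A_j$ affine, the morphisms $f|_{U_j}, g|_{U_j}$ correspond to ring homomorphisms $A_j = \Gamma(V_j,\O_{V_j}) \to \Gamma(U_j,\O_{U_j})$, and it remains to check these coincide. The component of the isomorphism at $\O_{V_j}$ is an automorphism of $\O_{U_j}$, hence multiplication by a unit $u \in \Gamma(U_j,\O_{U_j})^\times$; applying naturality to the endomorphism ``multiplication by $a$'' of $\O_{V_j}$ for $a \in A_j$ — whose pullback along $f|_{U_j}$ is multiplication by the image of $a$ — yields $u \cdot (f|_{U_j})^\#(a) = (g|_{U_j})^\#(a) \cdot u$, so cancelling $u$ gives equality of the two ring maps. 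Thus $f|_{U_j} = g|_{U_j}$ for all $j$, whence $f = g$. The main obstacle is precisely that middle step: an abstract isomorphism of the \emph{global} pullback functors yields agreement only on global sections, and quasi-separatedness is exactly what permits localizing it to affine opens so that the multiplication-by-$a$ naturality argument applies to non-global sections $a$.
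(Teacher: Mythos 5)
Your proof is correct. The first half (recovering the underlying map from supports of $f^*(\O_Y/I)$ via \autoref{pullback-closed} and the $T_0$ property) is exactly the paper's argument, and your concluding step — equality on a cover plus the naturality-against-multiplication-by-$a$ argument, which forces the unit $u$ to intertwine $f^\#(a)$ and $g^\#(a)$ — is the same mechanism as the paper's affine case, where evaluation at $R$ produces an isomorphism of $(R,S)$-bimodules ${}_{f^\#}S \cong {}_{g^\#}S$. Where you genuinely diverge is the localization step that quasi-separatedness enables. The paper localizes at \emph{points}: for $x \in X$, $y = f(x) = g(x)$, it pushes quasi-coherent modules forward along $i_y : \Spec(\O_{Y,y}) \to Y$ (quasi-coherence of $(i_y)_*$ being the use of quasi-separatedness), transports the isomorphism to the stalk level, and concludes $f^\#_x = g^\#_x$ for all $x$, hence $f = g$. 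You instead localize to an \emph{affine open cover}: push forward along $\iota : V_j \hookrightarrow Y$ (quasi-compact because $Y$ is quasi-separated, so $\iota_*$ preserves quasi-coherence), use $\iota^*\iota_* \cong \id$ to restrict the functor isomorphism to $(f|_{U_j})^* \cong (g|_{U_j})^*$, and exploit that morphisms into the affine scheme $V_j$ are determined by the ring map on global sections, so the source $U_j$ need not be affine. The two routes cost the same (both hinge on a pushforward preserving quasi-coherence) but buy slightly different things: yours avoids local rings and stalkwise reasoning entirely, working only with open restrictions and the universal property of morphisms into affines; the paper's pointwise reduction gives the statement ``equal maps and equal stalk maps'' directly, which is the standard way to certify equality of morphisms of locally ringed spaces. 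Both are complete proofs.
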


\begin{proof}
For a closed subset $Z  \subseteq Y$ with vanishing ideal $I \subseteq \O_Y$ we observe  $f^{-1}(Z) = \supp(f^*(\O_Y/I)) = \supp(g^*(\O_Y/I)) = g^{-1}(Z)$. Since $Y$ is a T$_0$-space, every singleton $\{y\}$ is an intersection of subsets which are open or closed. Therefore, $f^{-1}(\{y\}) = g^{-1}(\{y\})$. This proves $f=g$.

Now assume that $X,Y$ are affine. Then $f,g$ correspond to ring homomorphisms $f^\#, g^\# : R \to S$ such that the induced functors $\M(R) \to \M(S)$ are isomorphic. Evaluation at the $R$-module $R$ yields an isomorphism of $(R,S)$-bimodules $_{f^\#} S_{\id_S} \cong$ $ _{g^\#} S_{\id_S}$. This implies $f^\#=g^\#$.

Let $Y$ be quasi-separated. Let $x \in X$ and $y:=f(x)=g(x)$. We want to prove that $f^\#_x,g^\#_x : \O_{Y,y} \to \O_{X,x}$ are equal. By the affine case, it suffices to prove this for the pullback functors. Consider the morphism $i_y : \Spec(\O_{Y,y}) \to Y$. Since $Y$ is quasi-separated, $(i_y)_*$ preserves quasi-coherence. Thus, if $M \in \M(\O_{Y,y})$, then we may construct $N:=(i_y)_* M \in \Q(Y)$ with $N_y = M$. It follows
\[(f^\#_x)^* M \cong (f^\#_x)^* N_y \cong (f^* N)_x \cong (g^* N)_x \cong (g^\#_x)^* N_y \cong (f^\#_x)^* M.\]
\end{proof}

\begin{cor} \label{isofunk}
Let $f : X \to Y$ be a morphism of quasi-separated schemes. If $f^*$ is an equivalence of categories, then $f$ is an isomorphism.
\end{cor}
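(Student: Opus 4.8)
The plan is to show that the hypothesis forces $f$ to be both faithfully flat and a monomorphism, and then to invoke the standard fact that a faithfully flat monomorphism of schemes is an isomorphism. The crucial input is \autoref{volltreufunk}: since morphisms into a quasi-separated scheme are determined by their pullback functors, the monomorphism property of $f$ can be read off from the invertibility of $f^*$.

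First I would extract the flatness. An equivalence of categories is exact and faithful, so $f^* : \Q(Y) \to \Q(X)$ is exact, which means $f$ is flat, and $f^*$ is faithful (indeed conservative), which for a flat morphism is equivalent to surjectivity. Hence $f$ is faithfully flat. It is also worth recording that, since $f_*$ is a right adjoint inverse to $f^*$, the unit $\O_Y \to f_* f^* \O_Y = f_* \O_X$, which is nothing but the structural map $f^\#$, is an isomorphism; this will let one identify the structure sheaves once $f$ is known to be a homeomorphism.

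Next comes the monomorphism step, where \autoref{volltreufunk} does the work. Suppose $g_1, g_2 : T \to X$ satisfy $f g_1 = f g_2$. Then $g_1^* f^* \cong (f g_1)^* \cong (f g_2)^* \cong g_2^* f^*$ as functors $\Q(Y) \to \Q(T)$. Because $f^*$ is an equivalence, precomposition with it is an equivalence of functor categories, hence conservative, so $g_1^* \cong g_2^*$ as functors $\Q(X) \to \Q(T)$. Since $X$ is quasi-separated, \autoref{volltreufunk} yields $g_1 = g_2$. Thus $f$ is a monomorphism.

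Finally, a faithfully flat monomorphism is an isomorphism: if $f$ is a monomorphism then the diagonal $\Delta : X \to X \times_Y X$ is an isomorphism, so the projection $\pr_2 : X \times_Y X \to X$ --- which is the base change of $f$ along $f$ --- is an isomorphism, and since $f$ is faithfully flat the property of being an isomorphism descends. I expect this last descent step to be the main obstacle to make fully rigorous, since a single faithfully flat morphism is an fpqc covering only under a quasi-compactness type condition. One clean way around it is to argue topologically: flatness together with \autoref{pullback-closed} shows $Z \mapsto f^{-1}(Z)$ is an order isomorphism on lattices of closed subsets (using surjectivity of $f$ and that $f^*$ induces a bijection of quasi-coherent ideals), so $f$ is a homeomorphism of the sober spaces $X$ and $Y$; combined with the isomorphism $f^\# : \O_Y \to f_* \O_X$ this gives isomorphisms $\O_{Y,f(x)} \cong \O_{X,x}$ on all stalks, whence $f$ is an isomorphism of locally ringed spaces.
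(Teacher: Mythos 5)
Your proposal is correct in substance and takes a genuinely different route from the paper. The paper's proof is two lines: by Rosenberg's Reconstruction Theorem (\cite[Theorem 5.4]{Bra11}) the equivalence $f^*$ is isomorphic to $g^*$ for some \emph{isomorphism} $g : X \to Y$, and then \autoref{volltreufunk} forces $f = g$. You avoid Rosenberg entirely, which is precisely what the remark following the corollary says should be possible. Your decomposition — flatness from exactness of the equivalence (here one should say why exactness on $\Q(Y)$ can be tested on finitely generated ideals of local rings: over a quasi-separated $Y$ such an ideal on an affine open $V$ extends to a quasi-coherent ideal $\ker(\O_Y \to j_*(\O_V/I))$ on all of $Y$); surjectivity from faithfulness plus flatness (true, via stability of the image of a flat morphism under generization and the quasi-coherent skyscraper $i_{y*}\kappa(y)$, which $f^*$ would kill if $y$ had no specialization in the image); the monomorphism property from \autoref{volltreufunk}; and then the topological endgame — trades a hard external theorem for elementary arguments reusing the paper's own lemmas. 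Your suspicion about ``faithfully flat monomorphism $\Rightarrow$ isomorphism'' without quasi-compactness is well placed (the descent proof needs an fpqc cover), and your workaround is the right move: the bijection $I \mapsto f^*I$ on quasi-coherent ideals, \autoref{pullback-closed}, the fact that every closed subset is $V(I)$ for quasi-coherent $I$, and surjectivity of $f$ make $Z \mapsto f^{-1}(Z)$ an order isomorphism on closed subsets, hence $f$ is a homeomorphism since schemes are sober.

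The one step stated too casually is ``since $f_*$ is a right adjoint inverse to $f^*$, the unit $\O_Y \to f_*\O_X$, which is nothing but $f^\#$, is an isomorphism''. At the outset this is not justified: for a morphism not yet known to be quasi-compact, the sheaf-theoretic $f_*$ need not preserve quasi-coherence, so the abstract quasi-inverse of $f^*$ cannot be identified with $f_*$, nor the abstract unit with $f^\#$. The claim becomes correct exactly where you use it, after the homeomorphism step: a homeomorphism is quasi-compact, a morphism with quasi-separated source is quasi-separated, so $f$ is qc qs, $f_*$ preserves quasi-coherence (\cite[Proposition 6.7.1]{EGAI}) and is right adjoint to $f^*$ on quasi-coherent sheaves; then the unit $f^\#$ is an isomorphism and its stalks give $\O_{Y,f(x)} \cong \O_{X,x}$ because $f$ is a homeomorphism. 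So the write-up should establish the homeomorphism first and only then identify $f_*$. Finally, note a redundancy: once you run the lattice argument, the monomorphism step (and this use of \autoref{volltreufunk}) is unnecessary — injectivity of $Z \mapsto f^{-1}(Z)$ needs only surjectivity of $f$, and bijectivity of $f$ on points falls out of the frame isomorphism — so flatness, faithfulness, and the lattice argument already carry the whole proof.
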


\begin{proof}
By \cite[Theorem 5.4]{Bra11} there is an isomorphism $g : X \to Y$ such that $f^* \cong g^*$. But then $f=g$ by \autoref{volltreufunk}.
\end{proof}

Probably one can prove \autoref{isofunk} more directly without using Rosenberg's Reconstruction Theorem. The following is Definition 6.1.1 in \cite{Sch12a}.

\begin{defi}[Strong resolution property]
A scheme $X$ (or more generally, an algebraic stack) is said to have the \emph{strong resolution property} if the locally free $\O_X$-modules constitute a generator of $\Q(X)$.
\end{defi}

Here, \emph{locally free} always means locally free of finite rank. \marginpar{I've added the remark about local freeness.}  Many schemes have the strong resolution property, for example divisorial schemes (\cite{Bor63}) -- including projective schemes and any separated noetherian locally factorial scheme (\cite[Exp. II, Proposition 2.2.7]{SGA6}), as well as any separated algebraic surface (\cite{Gro12}). For further results we refer to \cite{Tot04} and \cite{Gro10}. If $X$ is geometric, then $X$ has the strong resolution property if and only if $X$ is an Adams stack, see \autoref{density}.

\section{Local Presentability}

Let $\lambda$ be a regular cardinal. We refer to \cite{AR94} for the theory of locally $\lambda$-presentable categories \marginpar{I've changed ``presentable'' to ``locally presentable'' everywhere!}. Thus, a \emph{locally $\lambda$-presentable category} is a cocomplete category which has a \emph{set} of $\lambda$-presentable objects such that every object is a $\lambda$-directed colimit of these $\lambda$-presentable objects. Here, an object $A$ is called \emph{$\lambda$-presentable} if $\Hom(A,-)$ preserves $\lambda$-directed colimits. For $\lambda=\aleph_0$ one also speaks of finitely presentable objects and locally finitely presentable categories. In the category of modules over some ring, an object is $\lambda$-presentable if and only if it can be written in terms of $<\lambda$ many generators and $<\lambda$ many relations (\cite[Theorem 3.12]{AR94}). In particular, a module is finitely presentable if and only if it is of finite presentation in the usual sense. More generally, we have the following result:

\begin{prop}[Quasi-coherent modules of finite presentation] \label{Qcohfin}
Let $X$ be a qc qs scheme and $M$ be a quasi-coherent module on $X$. If $M$ is of finite presentation in the usual sense of algebraic geometry, then $M$ is a finitely presentable object. The converse also holds.
\end{prop}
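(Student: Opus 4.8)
The plan is to prove the two implications separately, in each case reducing the question to the affine situation -- where finite presentation coincides with categorical finite presentability, as recorded above via \cite[Theorem 3.12]{AR94} -- together with the two facts about qc qs schemes that are already available: that cohomology commutes with directed colimits, and that every quasi-coherent module is a directed colimit of finitely presented ones.

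For the forward implication, assume $M$ is of finite presentation. The idea is to compute the corepresentable functor $\Hom_{\Q(X)}(M,-)$ cohomologically. For every quasi-coherent $N$ one has $\Hom_{\Q(X)}(M,N) \cong \Gamma(X,\HOM(M,N))$, and $\HOM(M,N)$ is quasi-coherent by \autoref{qchom}. I would then check that each of the two functors in this composite preserves directed colimits. For $\HOM(M,-)$ this is local: since restriction to an open is cocontinuous, it suffices to work on an affine $U$ where $M$ has a presentation $\O^p \to \O^q \to M \to 0$, so that $\HOM(M,N)|_U$ is the kernel of $N^q \to N^p$; directed colimits of modules commute with finite limits and finite direct sums, giving $\HOM(M,\colim_i N_i) \cong \colim_i \HOM(M,N_i)$. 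For $\Gamma(X,-)$ it is exactly the cited fact that the pushforward along the (qc qs) structure morphism commutes with directed colimits on $\Q(X)$. Composing the two, $\Hom_{\Q(X)}(M,-)$ preserves directed colimits, i.e. $M$ is finitely presentable.

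For the converse, assume $M$ is a finitely presentable object of $\Q(X)$. Here I would invoke the structure result that on a qc qs scheme every quasi-coherent module is a directed colimit $M \cong \colim_i M_i$ of quasi-coherent modules of finite presentation. Finite presentability of $M$ means the canonical map $\colim_i \Hom_{\Q(X)}(M,M_i) \to \Hom_{\Q(X)}(M,\colim_i M_i) = \Hom_{\Q(X)}(M,M)$ is bijective, so $\id_M$ is the image of some $s : M \to M_i$; thus the canonical map $M_i \to M$ admits the section $s$, exhibiting $M$ as a retract of the finitely presented module $M_i$. Restricting to any affine open $U = \Spec A$, the module $M|_U$ is a retract (hence a direct summand) of the finitely presented $A$-module $M_i|_U$, and a direct summand of a finitely presented module over a ring is again finitely presented. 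As this holds on an affine cover, $M$ is of finite presentation.

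The forward implication is essentially formal once \autoref{qchom} and the colimit behaviour of cohomology are in hand. The main obstacle is the converse, and within it the one genuinely nontrivial external input: that every quasi-coherent module on a qc qs scheme is a directed colimit of finitely presented ones. This is the only place where both the quasi-compactness and the quasi-separatedness hypotheses are truly used (everything else being either formal or checkable on affines), so I expect it to be the crux of the argument.
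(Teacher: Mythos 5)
Your proof is correct and follows essentially the same route as the paper's: the forward direction factors $\Hom(M,-)$ as $\Gamma(X,-)\circ\HOM(M,-)$, checks the sheaf-level colimit comparison locally on affines, and uses that $\Gamma$ commutes with directed colimits on a qc qs scheme; the converse uses the EGA fact that $M$ is a directed colimit of finitely presented modules to exhibit $M$ as a retract of one of them. The only difference is that you spell out details the paper leaves implicit (the cocontinuity of restriction, and why a retract of a finitely presented module is finitely presented).
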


\begin{proof}
If $M$ is of finite presentation and $\{N_i\}$ is a directed diagram of quasi-coherent modules on $X$, then we have to show that
\[\colim_i \Hom(M,N_i) \to \Hom(M,\colim_i N_i)\]
is an isomorphism. Since this is the map of global sections associated to the homomorphism of sheaves
\[\colim_i \HOM(M,N_i) \to \HOM(M,\colim_i N_i),\]
we may work locally on $X$. But we have already discussed the affine case above. Conversely, let $M$ be a finitely presentable object in $\Q(X)$. Since $X$ is qc qs, we have $M = \colim_i M_i$ for a directed system of quasi-coherent modules of finite presentation (\cite[Corollaire 6.9.12]{EGAI}). It follows that the identity of $M$ factors through some $M_i$, i.e. that $M$ is a direct summand of $M_i$. Clearly then $M$ is also of finite presentation.
\end{proof}

\begin{rem}
If $X$ is any scheme\marginpar{arbitrary scheme -> any scheme.}, then $\Q(X)$ is locally $\lambda$-presentable for some suitable large regular cardinal $\lambda$. This follows from a result by Gabber \cite[\href{http://stacks.math.columbia.edu/tag/077K}{Tag 077K}]{stacks-project} and \cite[Theorem 1.70]{AR94}. If $X$ is a geometric stack, then $\Q(X)$ is locally presentable (which can be seen from the equivalence to comodules). It is a delicate question when $\Q(X)$ is actually locally \emph{finitely} presentable:
\end{rem}

\begin{prop} \label{Qcohfin2}
Let $X$ be a geometric stack. Each of the following conditions implies that every quasi-coherent module on $X$ is a directed colimit of quasi-coherent modules of finite presentation. Also, then $\Q(X)$ is locally finitely presentable and  the finitely presentable objects coincide with the quasi-coherent modules of finite presentation.
\begin{enumerate}
\item $X$ is Deligne-Mumford.
\item $X$ has the strong resolution property.
\item $X$ is noetherian.
\item $X$ is a scheme.
\end{enumerate}
\end{prop}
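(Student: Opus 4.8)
The plan is to establish, in each of the four cases, the single key fact that every quasi-coherent module on $X$ is a directed colimit of quasi-coherent modules of finite presentation; the remaining two assertions (local finite presentability of $\Q(X)$ and the identification of its finitely presentable objects with the modules of finite presentation) then follow formally from the foundational theory and the affine comparison already recorded above. Indeed, once the colimit statement is in hand, the $\lambda$-presentable objects for $\lambda=\aleph_0$ are exactly the retracts of modules of finite presentation, and by the same argument as in \autoref{Qcohfin} such a retract is again of finite presentation; conversely a module of finite presentation is finitely presentable because $\Hom$ out of it commutes with directed colimits, which one checks \emph{fpqc}-locally using a flat affine cover $\pi : P \to X$ and reduces to the affine case via descent. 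So I would open the proof by isolating this reduction and treating the four conditions as four independent verifications of the colimit property.

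For the scheme case (4) there is nothing new to do: this is essentially \autoref{Qcohfin}, whose proof via $\bigl[$EGA I, Corollaire 6.9.12$\bigr]$ applies, and the noetherian case (3) is the classical statement that on a noetherian scheme (or stack) every quasi-coherent module is the union of its coherent submodules. For the Deligne--Mumford case (1) I would use an \'etale affine cover and the fact that \'etale (hence flat affine) pushforward preserves quasi-coherence and commutes with directed colimits for a geometric stack, so that the colimit decomposition descends from the affine cover along the groupoid presentation. The interesting case is (2): if $X$ has the strong resolution property, the locally free modules of finite rank generate $\Q(X)$, and each such generator is of finite presentation; writing an arbitrary $M$ as a colimit of a diagram of finite direct sums of these generators mapping to $M$, one argues that this canonical diagram is directed (after passing to the filtered category of finitely presented approximations) and has colimit $M$. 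The mechanism is that a generating set of finitely presentable objects forces every object to be a directed colimit of finitely presentable ones, which is precisely the hypothesis needed to invoke $\bigl[$AR94, Theorem 1.70$\bigr]$ and conclude local finite presentability.

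The main obstacle I anticipate is the Deligne--Mumford and strong-resolution cases, where one must argue on the stack rather than on a scheme and therefore cannot simply ``work locally on $X$'' in the na\"ive topological sense. The technical heart is controlling the interaction between the flat affine presentation $P \to X$ and directed colimits: one needs that $\pi_*$ preserves quasi-coherence and commutes with directed colimits (true since $\pi$ is affine, hence qc qs in the relevant sense, as recorded in the preliminaries on geometric stacks), and then that the comodule description of $\Q(X)$ converts the affine colimit decomposition into one on $X$. Once this descent bookkeeping is set up uniformly, each case reduces to the already-known affine or scheme-theoretic statement, and the three final conclusions of the proposition follow at once from the colimit property together with the retract argument of \autoref{Qcohfin}.
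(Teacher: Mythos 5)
Your cases (3) and (4) coincide with the paper's proof (noetherian: coherent modules are stable under submodules, citing \cite[Lemma 3.9]{Lur04}; schemes: \autoref{Qcohfin}), and your case (2) is in substance also the paper's route: the locally free modules are dense (\autoref{lokfreicolim}), they are dualizable hence finitely presentable objects, and one concludes via \cite[Theorem 1.70]{AR94}. One caution there: the canonical comma category of locally free modules over $M$ is \emph{not} directed as you first assert --- closure under finite direct sums gives upper bounds for pairs of objects but no coequalizing arrows for parallel pairs (this is exactly why \autoref{lazard} needs flatness to get directedness). So your parenthetical ``after passing to the filtered category of finitely presented approximations'' is doing real work: one must either rewrite the colimit as a directed colimit of finite subdiagram colimits (finite colimits of finitely presentable objects being finitely presentable), or simply invoke \cite[Theorem 1.70]{AR94} to get local finite presentability and then use the general theory. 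Your opening reduction (finite presentation $\Rightarrow$ finitely presentable via a flat affine cover, descent, and commutation of directed colimits with finite limits of sets) is sound and matches the paper's ``follows as in \autoref{Qcohfin}''.

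The genuine gap is case (1), Deligne--Mumford. Your proposed mechanism --- ``the colimit decomposition descends from the affine cover along the groupoid presentation'' --- does not work. If $\pi : P \to X$ is an \'etale (or flat affine) presentation and $\pi^* M \cong \colim_i N_i$ with $N_i$ of finite presentation on $P$, there is no canonical way to descend the $N_i$ to finitely presented modules on $X$ mapping to $M$: the $N_i$ carry no descent data, and pushforward does not help, since $\pi_*$ of a finitely presented module is in general not finitely presented (already for quasi-compact open immersions coherence is destroyed), so the embedding $M \to \pi_*\pi^* M \cong \colim_i \pi_* N_i$ exhibits $M$ inside a directed colimit of typically huge modules. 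The statement that every quasi-coherent module on a qc qs Deligne--Mumford stack is a directed colimit of finitely presented ones (the ``completeness property'') is a substantive theorem; the paper disposes of this case by citing \cite[Theorem A, Proposition 2.9]{Ryd13}, whose proof proceeds by d\'evissage and noetherian approximation rather than by naive descent from a presentation. To repair your proof you should either quote that result or supply an argument of comparable depth.
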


\begin{proof}
1. follows from \cite[Theorem A, Proposition 2.9]{Ryd13}, 2. is a consequence of \autoref{lokfreicolim} below, 3. is easy since coherent modules are stable under submodules (\cite[Lemma 3.9]{Lur04}) and 4. follows from \autoref{Qcohfin}. The equivalence between finitely presentable and of finite presentation follows as in \autoref{Qcohfin}.
\end{proof}

\begin{prop}[Properties of locally presentable categories] \label{saft}
A locally $\lambda$-presentable category $\C$ enjoys the following properties:
\begin{enumerate}
\item $\C$ is complete and cocomplete.
\item $\C$ is wellpowered and co-wellpowered.
\item $\C$ has $($regular epi,mono$)$-factorizations.
\item $\lambda$-directed colimits are exact.
\item Any cocontinuous functor $\C \to \D$ is a left adjoint.
\end{enumerate}
\end{prop}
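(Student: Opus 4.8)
The plan is to reduce all five assertions to the standard structure theory of locally presentable categories developed in \cite{AR94}, exploiting the representation of $\C$ as a full reflective subcategory of a presheaf category that is closed under $\lambda$-directed colimits. Writing $\A$ for a skeleton of the full subcategory of $\lambda$-presentable objects, the functor $\C \to [\A^{\op},\Set]$, $A \mapsto \Hom(-,A)|_\A$, is fully faithful with a left adjoint (the reflector), and identifies $\C$ with a full replete subcategory of $[\A^{\op},\Set]$ closed under $\lambda$-directed colimits. Properties (1)--(4) then follow by transporting properties of $\Set$ through this embedding, while (5) is an adjoint functor theorem.

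First I would treat (1): cocompleteness is part of the definition, and for completeness one uses that presheaf categories are complete (limits being computed pointwise in $\Set$) together with the fact that a full reflective subcategory of a complete category is complete, with limits computed in the ambient category; this is the content of \cite[Corollary 1.28]{AR94}. For (2) and (3), I would again argue by inheritance: $[\A^{\op},\Set]$ is wellpowered and co-wellpowered because $\Set$ is, and local presentability ensures that $\C$ retains both properties (\cite[Section 1.E]{AR94}); moreover $\C$ carries a proper factorization system whose left class consists exactly of the regular (equivalently strong) epimorphisms, which gives (3). For (4), the key observation is that in $\Set$, $\lambda$-directed colimits commute with finite limits; since both are computed pointwise in $[\A^{\op},\Set]$, the same holds there, and because $\C$ is closed under $\lambda$-directed colimits in the presheaf category while finite limits in $\C$ agree with those computed through the fully faithful embedding, exactness of $\lambda$-directed colimits descends to $\C$ (\cite[Proposition 1.59]{AR94}).

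The substantive point is (5), the adjoint functor theorem for locally presentable categories. Here I expect the main work, since this is the only item that genuinely uses the full strength of local presentability rather than being a formal consequence of the reflective embedding. Given a cocontinuous functor $F : \C \to \D$, the idea is that $\C$ possesses a set of $\lambda$-presentable generators and every object is a $\lambda$-directed colimit of them, so the solution set condition is automatically satisfied; combined with the completeness from (1), the general adjoint functor theorem then produces a right adjoint, exhibiting $F$ as a left adjoint. I would cite \cite[Theorem 1.66]{AR94} for this. The only delicate aspect is verifying the solution set condition precisely, but this is exactly where the defining property of $\C$ (a \emph{set}, not a proper class, of $\lambda$-presentable objects generating everything under $\lambda$-directed colimits) is indispensable.
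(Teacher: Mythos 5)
Your treatment of items (1)--(4) is correct and in substance identical to the paper's proof, which simply cites \cite[Remark 1.56, Theorem 1.58]{AR94} for (1) and (2), \cite[Proposition 1.61]{AR94} for (3), and \cite[Proposition 1.59]{AR94} for (4); your unwinding of these facts through the reflective embedding $\C \hookrightarrow [\A^{\op},\Set]$ is a faithful elaboration of what those citations contain.

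Item (5), however, contains a genuine gap. To produce a right adjoint of a cocontinuous $F : \C \to \D$ via the General Adjoint Functor Theorem one needs, for each $d \in \D$, a weakly terminal \emph{set} in the comma category $(F \downarrow d)$: a set of pairs $(c_i, g_i : F(c_i) \to d)$ such that every $(c, f : F(c) \to d)$ admits a morphism $c \to c_i$ \emph{out of} $c$ compatible with the maps to $d$ (and one needs cocompleteness of $\C$, not the completeness from (1) --- a duality slip in your sketch). A set of $\lambda$-presentable generators does not make this automatic: generators supply morphisms \emph{into} arbitrary objects, which is the wrong direction. Concretely, take $\C = \D = \Set$, $F = \id$, and $d$ a set with $|d| \geq \lambda$; then $(d,\id_d)$ admits no morphism to any $(a,g)$ with $a$ $\lambda$-presentable, since $g \circ h = \id_d$ would exhibit $d$ as a retract of a set of cardinality $< \lambda$. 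So the naive solution set fails, and this is exactly why the paper instead invokes Freyd's \emph{Special} Adjoint Functor Theorem \cite[V.8]{ML98}: its hypotheses are cocompleteness, a generating set, \emph{and co-wellpoweredness}, i.e. items (1) and (2) plus the definition, and co-wellpoweredness is what substitutes for the failed solution-set condition (the terminal object of $(F \downarrow d)$ arises as the cointersection of the \emph{set} of quotients of a suitable coproduct of generators). If you prefer an argument intrinsic to local presentability, you can construct the right adjoint directly: letting $\A$ be the $\lambda$-presentable objects, the functor $R(d) := \Hom_\D(F(-),d)|_{\A^{\op}}$ preserves $\lambda$-small limits because $F$ preserves $\lambda$-small colimits, hence defines an object of $\C$ under the representation of $\C$ as the category of functors $\A^{\op} \to \Set$ preserving $\lambda$-small limits (\cite{AR94}), and writing $c = \colim_j a_j$ as a $\lambda$-directed colimit of $\lambda$-presentables gives $\Hom_\C(c,R(d)) \cong \lim_j \Hom_\D(F(a_j),d) \cong \Hom_\D(F(c),d)$. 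Finally, note that the adjoint functor theorem \cite[Theorem 1.66]{AR94} you cite is stated for functors \emph{between} locally presentable categories, so by itself it does not cover an arbitrary cocomplete target $\D$, which is the generality in which the proposition is stated and proved in the paper.
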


\begin{proof}
For 1. and 2. see \cite[Remark 1.56, Theorem 1.58]{AR94}. For 3. see \cite[Proposition 1.61]{AR94}. For 4. see \cite[Proposition 1.59]{AR94}. 5. Since $\C$ is locally presentable, $\C$ has a generating set and is co-wellpowered, so that we may invoke Freyd's Special Adjoint Functor Theorem (\cite[V.8]{ML98}).
\end{proof}
 
\section{Density and Adams stacks} \label{density}

We refer to \cite[Chapter X]{ML98} for the notions of Kan extensions and density for usual categories. Let $\C,\D$ be cocomplete $R$-linear categories, $S \subseteq \C$ be a full subcategory, which we assume to be small. Actually it suffices to demand that $S$ is essentially small, i.e. equivalent to a small category. Let $F : S \to \D$ be an $R$-linear functor. Then its left Kan extension  \emph{as an $R$-linear functor} (\cite[Chapter 4]{Kel05}) is the $R$-linear functor $\tilde{F}: \C \to \D$ given as
\[\tilde{F}(M) = \int^{A \in S} \Hom(A,M) \otimes_R F(A).\]
It satisfies $\tilde{F}|_S \cong F$. In general, it may differ from the left Kan extension of the underlying functor of $F$ (which forgets about the $R$-linear structure), which is given by
\[M \mapsto \colim_{A \to M,\, A \in S} F(A).\]
In fact, the latter functor has no reason to be $R$-linear at all! But in some cases these two left Kan extensions agree, as is shown in the following Lemma.
 
\begin{lemma} \label{denselin}
Let $\C,\D$ be cocomplete $R$-linear categories and $S \subseteq \C$ be a full essentially small subcategory with the following property:
\begin{enumerate}
\item For all $M \in \C$ and all $A,B \in S/M$, there is some $C \in S/M$ with morphisms $A \to C$ and $B \to C$ in $S/M$, as in the following commutative diagram:
\[\xymatrix@C=10pt@R=10pt{ & A \ar[dl] \ar[dr] & \\ C \ar[rr] && M \\ & B \ar[ul] \ar[ur] & }\]
\end{enumerate}
For example, this holds when $S$ is closed under direct sums in $\C$. Then, for every $R$-linear functor $F : S \to \D$ the underlying functor of the left Kan extension to $\C$ of $F$ as an $R$-linear functor coincides with the left Kan extension of the underlying functor of $F$.

In other words, we have an isomorphism 
\[\colim_{A \to M,\, A \in S} F(A) \cong \int^{A \in S} \hom(A,M) \otimes_R F(A).\]  
In particular: $S$ is dense in $\C$ as an $R$-linear subcategory if and only if $S$ is dense in $\C$ as a subcategory.
\end{lemma}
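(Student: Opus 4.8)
The plan is to prove the displayed isomorphism by comparing the functors that the two objects corepresent and then invoking the Yoneda Lemma. Fix a test object $T \in \D$. Since $\D$ is tensored over $\M(R)$ (Preliminaries, 15), a morphism out of the coend $\int^{A \in S}\Hom(A,M)\otimes_R F(A)$ is, via the coequalizer presentation of the coend together with the tensoring adjunction, the same as an $R$-linear \emph{dinatural} family: a family of $R$-linear maps $\alpha_A \colon \Hom(A,M) \to \Hom_\D(F(A),T)$ with $\alpha_A(b\circ f) = \alpha_B(b)\circ F(f)$ for every $f\colon A \to B$ and $b\colon B \to M$. On the other side, a morphism $\colim_{(A,a)\in S/M} F(A) \to T$ is exactly a cocone, i.e. a family $\phi_{(A,a)}\colon F(A) \to T$ indexed by the objects of $S/M$ with $\phi_{(B,b)}\circ F(f) = \phi_{(A,a)}$ whenever $f\colon A \to B$ satisfies $b\circ f = a$.

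The comparison is the assignment $\phi_{(A,a)} := \alpha_A(a)$. One direction is immediate: an $R$-linear dinatural $\alpha$ yields a cocone, since the cocone condition for $f\colon(A,a)\to(B,b)$ is the instance $a = b\circ f$ of dinaturality. The content of the lemma is the converse, that an \emph{arbitrary} cocone $\phi$ defines, via $\alpha_A(a):=\phi_{(A,a)}$, a family that is dinatural \emph{and} $R$-linear in $a$. Dinaturality is again automatic, as $f\colon(A,b\circ f)\to(B,b)$ is a morphism of $S/M$. So everything reduces to showing that $a \mapsto \phi_{(A,a)}$ is $R$-linear.

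Compatibility with scalars needs no hypothesis: for $r \in R$, the arrow $r\cdot\id_A$ is a morphism $(A,ra)\to(A,a)$ in $S/M$ (here I use that $S$ is full so $r\cdot\id_A \in S$, and that composition is $R$-bilinear), and $R$-linearity of $F$ gives $F(r\cdot\id_A)=r\cdot\id_{F(A)}$; the cocone condition then forces $\phi_{(A,ra)} = r\,\phi_{(A,a)}$. \emph{Additivity is the main obstacle, and it is exactly here that hypothesis (1) is used.} Given $a,a'\colon A \to M$, apply (1) to $(A,a),(A,a')\in S/M$ to get $(C,c)\in S/M$ with $u\colon(A,a)\to(C,c)$ and $v\colon(A,a')\to(C,c)$, so $c\circ u = a$ and $c\circ v = a'$. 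Bilinearity of composition gives $c\circ(u+v) = a+a'$, so $u+v$ is an arrow $(A,a+a')\to(C,c)$ in $S/M$; applying the cocone condition to $u$, $v$, $u+v$ and additivity of $F$ yields
\[\phi_{(A,a+a')} = \phi_{(C,c)}\circ F(u+v) = \phi_{(C,c)}\circ F(u) + \phi_{(C,c)}\circ F(v) = \phi_{(A,a)} + \phi_{(A,a')}.\]

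This gives a bijection between the two corepresented sets, natural in $T$ (and in $M$), so the Yoneda Lemma produces the desired natural isomorphism. The concluding ``in particular'' is then the special case $F = (S \hookrightarrow \C)$, which is $R$-linear: both notions of density assert that the respective side of the isomorphism is naturally isomorphic to $M$, and by the isomorphism just proved the two conditions are equivalent.
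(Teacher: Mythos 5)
Your proposal is correct and follows essentially the same route as the paper's proof: both reduce to comparing cocones out of the colimit with $R$-linear dinatural families out of the coend, observe that scalar compatibility is automatic via the morphism $r\cdot\id_A$, and use hypothesis (1) in exactly the same way to establish additivity, with the identical computation $\phi_{(A,a+a')} = \phi_{(C,c)}\circ F(u+v) = \phi_{(A,a)}+\phi_{(A,a')}$. Your write-up merely makes explicit some bookkeeping the paper leaves implicit (the tensoring adjunction, dinaturality, and the final appeal to Yoneda).
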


\begin{proof}
A morphism from the colimit on the left to some test object $T$ corresponds to a family of morphisms $\overline{\sigma} : F(A) \to T$ induced by $\sigma : A \to M$, $A \in S$, with the compatibility condition $\overline{\sigma} \circ F(\alpha) = \overline{\sigma \circ \alpha}$ for morphisms $\alpha : A' \to A$ in $S$. A morphism from the coend to $T$ corresponds to such a family with the additional property that $\sigma \mapsto \overline{\sigma}$ is $R$-linear. But this is automatic: If $r \in R$, the compatibility condition for the corresponding morphism $r : A \to A$ shows that $\overline{r \cdot  \sigma} = r \cdot \overline{\sigma}$. For additivity, choose $\sigma,\,\sigma' : A \to B$. By 1. there is some $\tau : C \to M$ with $C \in S$ and morphisms $\alpha,\,\beta : A \to C$ such that $\tau \circ  \alpha = \sigma$ and $\tau \circ \beta = \sigma'$. It follows
\[\overline{\sigma} + \overline{\sigma'} = \overline{\tau} \circ  F(\alpha) + \overline{\tau} \circ  F(\beta) = \overline{\tau} \circ  F(\alpha + \beta) = \overline{\tau \circ  (\alpha + \beta)} = \overline{\sigma + \sigma'}. \]
\end{proof}

\begin{lemma} \label{qdense}
Let $X$ be a scheme (or more generally an algebraic stack). Let $S \subseteq \Q(X)$ be a full subcategory. Assume that the following holds:
\begin{enumerate}
\item The condition of the previous Lemma.
\item $S$ generates $\Q(X)$: For all $M \in \Q(X)$ there is a family of objects $A_i \in S$ and an epimorphism $\bigoplus_i A_i \to M$.
\end{enumerate}
Then $S$ is dense in $\Q(X)$, both in the usual and the $R$-linear sense.
\end{lemma}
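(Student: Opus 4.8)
The plan is to prove density by showing that for every $M \in \Q(X)$ the canonical comparison map
\[\phi_M \colon \colim_{A \to M,\, A \in S} A \longrightarrow M\]
is an isomorphism; this is exactly density of $S$ in the usual sense, and by \autoref{denselin} (which applies precisely because condition 1 holds) it is equivalent to density in the $R$-linear sense, so it suffices to treat the ordinary colimit. Since $\Q(X)$ is an additive, indeed Grothendieck, category this colimit is a cokernel
\[\colim_{S/M} U = \coker\Bigl(\textstyle\bigoplus_{u \colon (A,a) \to (B,b)} A \xrightarrow{f-g} \bigoplus_{(A,a)} A\Bigr) =: \coker(f-g),\]
where $U \colon S/M \to \Q(X)$ is the forgetful diagram, $f$ is assembled from the source inclusions $\iota_{(A,a)}$, $g$ from the composites $\iota_{(B,b)} \circ U(u)$, and $\phi_M$ is induced by the evaluation $\mathrm{ev} \colon \bigoplus_{(A,a)} A \to M$ with $\mathrm{ev} \circ \iota_{(A,a)} = a$. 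First I would record that $\phi_M$ is an epimorphism: by condition 2 the family of all $a \colon A \to M$ with $A \in S$ is jointly epimorphic onto $M$, and these maps are among the summands of $\mathrm{ev}$, so $\mathrm{ev}$, hence $\phi_M$, is an epimorphism.

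It then remains to prove that $\phi_M$ is a monomorphism, equivalently that $\ker(\mathrm{ev}) = \im(f-g)$ as subobjects of $W := \bigoplus_{(A,a)} A$. The inclusion $\im(f-g) \subseteq \ker(\mathrm{ev})$ is immediate, so the content is the reverse. Here I would first reduce to finite support. Writing $W = \colim_F W_F$ as the filtered colimit of its finite subcoproducts $W_F = \bigoplus_{(A,a)\in F} A$ and using that $\Q(X)$ satisfies AB5, one obtains $\ker(\mathrm{ev}) = \bigcup_F \ker(\mathrm{ev}|_{W_F})$, a directed union of subobjects each of finite support. By condition 2 each $\ker(\mathrm{ev}|_{W_F})$ is a quotient of a direct sum of objects of $S$, so $\ker(\mathrm{ev})$ is covered by a jointly epimorphic family of maps $e \colon C \to W$ with $C \in S$, $\mathrm{ev} \circ e = 0$ and $e$ of finite support. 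It therefore suffices to show each such $e$ lies in $\im(f-g)$.

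For a fixed finite-support $e = \sum_{k=1}^n \iota_{(A_k,a_k)} \circ e_k$ with $e_k \colon C \to A_k$ and $\sum_k a_k e_k = \mathrm{ev}\circ e = 0$, I would invoke condition 1 (applied $n-1$ times) to produce a single $(D,c) \in S/M$ together with morphisms $u_k \colon (A_k,a_k) \to (D,c)$ in $S/M$; setting $w := \sum_k u_k e_k \colon C \to D$ one checks $c \circ w = \sum_k a_k e_k = 0$, so $w$ is a morphism $(C,0) \to (D,c)$ in $S/M$, where of course $(C,0) \in S/M$. Working modulo $\im(f-g)$, the relations attached to the $u_k$ (precomposed with $e_k$ and summed) give $e \equiv \iota_{(D,c)} \circ w$; the relation attached to $w$ gives $\iota_{(C,0)} \equiv \iota_{(D,c)} \circ w$; and the relation attached to the zero endomorphism $0_C \colon (C,0) \to (C,0)$ reads $\iota_{(C,0)} - \iota_{(C,0)} \circ 0_C = \iota_{(C,0)}$, whence $\iota_{(C,0)} \equiv 0$. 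Combining these yields $e \equiv 0$, i.e. $e \in \im(f-g)$. This last step is exactly where the additivity of $\Q(X)$ enters essentially — the colimit is $\coker(f-g)$, not merely a coequalizer — which is also the reason condition 1 is required to match the $R$-linear description through \autoref{denselin}.

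The step I expect to be the main obstacle is precisely this passage to finite support: a priori a map $C \to W$ out of an object of $S$ can meet infinitely many summands, and $S$ is not assumed to consist of finitely presentable (compact) objects, so one cannot simply factor such a map through a finite stage. The device that rescues the argument is the Grothendieck structure of $\Q(X)$, which allows one to write $\ker(\mathrm{ev})$ as a directed union of finite-support subobjects and to regenerate each of these by objects of $S$ via condition 2; everything remaining is the finite amalgamation bookkeeping supplied by condition 1 together with the additivity of the category.
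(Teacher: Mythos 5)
Your proof is correct, but it takes a genuinely different route from the paper's. The paper argues geometrically: after using condition 2 to see that $\theta \colon \colim_{A \to M,\, A \in S} A \to M$ is an epimorphism, it picks an fpqc cover $\{X_i \to X\}$ by affine schemes and checks injectivity on sections over each $X_i$, via two observations: (A) every section of the colimit over $X_i$ is a single $[\sigma,a]$ with $\sigma \colon A \to M$, $A \in S$ (condition 1), and (B) every section over $X_i$ of an arbitrary quasi-coherent module lifts through some $A \to M$ with $A \in S$ (conditions 1 and 2); injectivity then follows by applying (B) to the pullback $A \times_M A'$. You instead stay entirely inside the abelian category: you present the colimit as $\coker(f-g)$ and prove $\ker(\mathrm{ev}) = \im(f-g)$ by combining AB5 (to write $\ker(\mathrm{ev})$ as a directed union of finite-support kernels), condition 2 applied to those kernels (this plays exactly the role of the paper's pullback step), and condition 1 plus the $(C,0)$/zero-endomorphism trick to kill each such map in the cokernel; all steps check out, including the subtraction bookkeeping, which is legitimate since the colimit over $S/M$ in an additive category is indeed $\coker(f-g)$. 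What your approach buys: it is purely categorical, so it proves the stronger statement that conditions 1 and 2 force density of an essentially small full subcategory of \emph{any} cocomplete abelian $R$-linear category satisfying AB5; the only geometric input is that $\Q(X)$ is such a category, which holds for schemes and algebraic stacks since kernels and filtered colimits of quasi-coherent modules are computed in $\M(X)$, respectively on a flat presentation. What the paper's approach buys: it is an elementary element chase requiring no AB5 discussion, and its intermediate observations are load-bearing elsewhere in the thesis --- the proofs of \autoref{lazard} and \autoref{strong-resolution} explicitly reuse ``part B'' of the proof of \autoref{qdense}, which your argument does not furnish directly.
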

 
\begin{proof}
We have to prove for $M \in \Q(X)$ that the canonical homomorphism
\[\theta : \colim_{A \to M,\, A \in S} A \to M\]
is an isomorphism on $X$. By 2. it is an epimorphism. Choose an fpqc covering $\{X_i \to X\}$ with affine schemes $X_i$. It suffices to prove that each $\theta|_{X_i}$ is a monomorphism in $\Q(X_i)$, or equivalently that it is injective on sections on $X_i$. We need two observations first:
   
A. If $\sigma : A \to M$ is some homomorphism with $A \in S$ and $a \in \Gamma(X_i,A)$, we will denote the corresponding section in the colimit by $[\sigma,a]$. Clearly they generate the colimit. But actually, every section of the colimit already has this form. To prove this, it suffices to check that they are closed under sums. So assume that $[\sigma',a']$ is another section, with $\sigma' : B \to M$ and $a' \in \Gamma(X_i,A)$. Because of 1. there is some $\tau : C \to M$ and homomorphisms $\alpha : A \to C$, $\beta : B \to C$ such that $\tau \alpha = \sigma$ and $\tau \beta = \sigma'$. Then, we have
\[ [\sigma,a] + [\sigma',a'] = [\tau,\alpha(a)] + [\tau,\beta(a')] = [\tau,\alpha(a)+\beta(a')].\]

B. If $s \in \Gamma(X_i,M)$, then there is some $A \to M$ with $A \in S$, such that $s$ has a preimage in $\Gamma(X_i,A)$. In fact, since there is an epimorphism $\bigoplus_j A_j \to M$ with $A_j \in S$, we see that $s$ has a preimage in $\Gamma(X_i,\bigoplus_j A_j) = \bigoplus_j \Gamma(X_i,A_j)$. Here, already finitely many $j$ suffice, so let us restrict to these. By applying 1. and induction, there is some $A \to M$ with $A \in S$ such that each $A_j \to M$ factors as $A_j \to A \to M$. It follows that $s$ has a preimage in $\Gamma(X_i,A)$.
 
Now we can show injectivity of $\theta$: If two sections of the colimit, say $[\sigma,a]$ and $[\sigma',a']$ by A., have the same image, i.e. $\sigma(a)=\sigma'(a')$ in $\Gamma(X_i,M)$, then $(a,a')$ is a section of the pullback $A \times_M A'$ on $X_i$. By B. there is some $C \in S$ and a homomorphism $C \to A \times_M A'$, such that $(a,a')$ has a preimage $c$. If we denote the induced homomorpism $C \to M$ by $\tau$, then we get
\[ [\sigma,a] = [\tau,c] = [\sigma',a']. \]
\end{proof}
  
\begin{cor} \label{lokfreicolim}
If $X$ is a scheme (or more generally an algebraic stack), which has the strong resolution property, then locally free $\O_X$-modules constitute a dense subcategory of $\Q(X)$: For every $M \in \Q(X)$ we have
\[M \cong \colim_{V \to M,\, V \text{ locally free}} V.\] \hfill $\square$
\end{cor}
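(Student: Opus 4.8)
The plan is to deduce this directly from \autoref{qdense} by taking $S \subseteq \Q(X)$ to be the full subcategory of locally free $\O_X$-modules (of finite rank). Indeed, the asserted isomorphism $\colim_{V \to M,\, V \text{ locally free}} V \cong M$ is exactly the assertion that this $S$ is dense in $\Q(X)$, so it suffices to verify the two hypotheses of \autoref{qdense}.

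Hypothesis 2 of \autoref{qdense} --- that $S$ generates $\Q(X)$ --- is nothing but the definition of the strong resolution property, hence holds by assumption. For hypothesis 1 (the filteredness condition of \autoref{denselin}) I would exploit that $S$ is closed under \emph{finite} direct sums: if $A,B$ are locally free of finite rank, so is $A \oplus B$. Given two objects $A \to M$ and $B \to M$ of the comma category $S/M$, I would therefore set $C := A \oplus B$, equipped with the coproduct inclusions $A \to C$, $B \to C$ and with the copairing $C = A \oplus B \to M$ induced by the two given maps to $M$. Since this copairing restricts to the original maps along the inclusions, $A \to C$ and $B \to C$ are morphisms in $S/M$, and $C \in S/M$ is again locally free of finite rank. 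This verifies hypothesis 1.

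With both hypotheses in place, \autoref{qdense} applies and yields density of $S$, which is precisely the claimed colimit formula. One should keep in mind that $S$ must be essentially small --- a standing requirement of \autoref{denselin} --- which holds because the locally free $\O_X$-modules of finite rank form a set up to isomorphism.

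I expect no serious obstacle here: the entire content has been front-loaded into \autoref{qdense}, and the only things to recognize are that the strong resolution property is exactly hypothesis 2, and that closure under \emph{finite} direct sums (rather than arbitrary ones) already suffices for hypothesis 1 --- which is what makes the argument applicable to locally free modules of finite rank, a class not closed under infinite coproducts.
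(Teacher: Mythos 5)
Your proof is correct and is exactly the argument the paper intends -- the corollary is stated with an immediate $\square$ because it follows from \autoref{qdense} in just this way: hypothesis 2 is the strong resolution property by definition, and hypothesis 1 holds by taking $C = A \oplus B$ with the copairing to $M$, since locally free modules of finite rank are closed under finite direct sums. Your side remarks (that \emph{finite} direct sums suffice for the condition of \autoref{denselin}, and that the subcategory of locally free modules is essentially small) are precisely the points the paper leaves implicit.
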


Recall that the Theorem of Lazard-Govorov (\cite[Theorem 4.34]{Lam99}) states that a flat module over a ring is a \emph{directed} colimit of finitely generated free modules. The following result is a variant of this Theorem for quasi-coherent modules. The proof is inspired by the proof of \cite[Theorem 1.3.1]{Sch12a}. See \cite{EAO13} for another variant.
   
\begin{prop}[Variant of Lazard-Govorov] \label{lazard}
Let $X$ be a scheme (or more generally an algebraic stack), which has the strong resolution property and let $M \in \Q(X)$ be  flat. Assume that $\Spec(\Sym(M))$ (or just $\Spec(M)$ if $M$ is already a quasi-coherent $\O_X$-algebra) is an affine scheme over $\Z$. Then the category of morphisms $V \to M$ with $V$ locally free, is directed. Hence, $M$ is a directed colimit of locally free modules.
\end{prop}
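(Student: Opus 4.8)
The plan is to prove the only substantial assertion, namely that the category $\A$ of pairs $(V,\psi)$ with $V$ locally free and $\psi : V \to M$ (morphisms being the maps $V \to V'$ over $M$) is filtered (equivalently, directed); the final ``Hence'' then follows at once, since \autoref{lokfreicolim} already exhibits $M \cong \colim_{(V,\psi) \in \A} V$, and a colimit over a filtered category may be rewritten as a directed colimit. To check that $\A$ is filtered I would verify the three usual conditions. It is nonempty because $0 \to M$ is an object. Any two objects $(V_1,\psi_1),(V_2,\psi_2)$ admit the cocone $(V_1 \oplus V_2,(\psi_1,\psi_2))$, whose underlying object is again locally free of finite rank and which receives both $V_i$ by the coprojections, these being morphisms over $M$. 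Thus the only real content is the coequalizer condition: every parallel pair $u,v : (V_1,\psi_1) \to (V_2,\psi_2)$ must be coequalized by some morphism out of $(V_2,\psi_2)$.

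For the coequalizer condition I would set $d := u-v$, so that $\psi_2 d = 0$ and hence $\psi_2$ factors as $V_2 \xrightarrow{q} \coker(d) \xrightarrow{\bar{\psi}_2} M$ with $q$ the cokernel projection. If one can factor $\bar{\psi}_2$ through a locally free object, say $\coker(d) \xrightarrow{h} V_3 \xrightarrow{\chi} M$ with $\chi h = \bar{\psi}_2$, then $w := h q : V_2 \to V_3$ satisfies $\chi w = \psi_2$ and $wd = 0$, i.e.\ $w$ is a morphism in $\A$ with $wu = wv$. So everything reduces to the factorization statement: \emph{every homomorphism $\phi : C \to M$ out of a finitely presented $C \in \Q(X)$ factors through a locally free object}. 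Choosing, by the strong resolution property (together with finite presentation of $C$), a presentation $V_1 \xrightarrow{\alpha} V_0 \xrightarrow{p} C \to 0$ by locally free objects and setting $\psi := \phi p$, this in turn is equivalent to the \emph{Lazard factorization condition}: given locally free $V_0,V_1$, a map $\alpha : V_1 \to V_0$ and $\psi : V_0 \to M$ with $\psi \alpha = 0$, there exist a locally free $V$ and maps $V_0 \xrightarrow{\beta} V \xrightarrow{\chi} M$ with $\chi \beta = \psi$ and $\beta \alpha = 0$; the factorization of $\phi$ is then recovered through $C = \coker(\alpha)$.

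The main obstacle is this last condition, and it is precisely here that flatness of $M$ and the affineness hypothesis enter. Locally on $X$, where $V_0,V_1$ become free, the condition is nothing but the equational criterion for flatness of $M$, i.e.\ the affine Lazard--Govorov theorem (\cite[Theorem 4.34]{Lam99}); the difficulty is to assemble these local solutions into a single global $(V,\beta,\chi)$. My plan is to import the affine result through the total space $\pi : Y = \Spec(\Sym M) \to X$: since $Y$ is an affine scheme the global-section functor on $Y$ is exact, equivalently the higher cohomology on $X$ of the relevant $\Sym(M)$-twisted $\HOM$-sheaves vanishes, and this vanishing is exactly what is needed to glue the locally defined factorizations (dually, to see that the obstruction to a global factorization, a priori living in an $H^{>0}$, is zero). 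Concretely I would realise the local choices as sections of a quasi-coherent sheaf of factorization data pushed forward from $Y$ and use exactness of $\Gamma(Y,-)$ to lift $\psi$ to a global factorization; the parenthetical case where $M$ is itself an algebra is handled identically with $\Spec(M)$ in place of $\Spec(\Sym M)$. This globalization step, rather than the formal reductions above, is where the genuine work lies.
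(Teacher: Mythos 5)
Your formal reductions are sound and in fact run parallel to the paper's own proof: nonemptiness via $0 \to M$, cocones via direct sums, and reduction of the parallel-pair condition to the statement that every morphism from a finitely presented object to $M$ factors through a locally free object. (The paper phrases this last condition dually, passing to the anti-equivalent category of sections $\O_X \to M \otimes V$ and using equalizers rather than cokernels of differences, but that is cosmetic.) The genuine gap is your plan for the key step. Local factorizations cannot be assembled by the cohomological vanishing argument you describe: a factorization $V_0 \to V \to M$ involves a \emph{choice} of the middle locally free object $V$, which varies from patch to patch and is in no way canonical, so the collection of factorizations is not the set of sections of any quasi-coherent sheaf, nor a torsor under one; there is no $H^{>0}$ in which an obstruction to gluing could live, and the vanishing of $H^{>0}(X,\Sym(M)\otimes \HOM(-,-))$ therefore has nothing to act on. Two further points undermine the setup itself: on a general algebraic stack with the strong resolution property there is no Zariski cover trivializing $V_0,V_1$, so the local solutions you start from are not available in the form you assume; and ``flat'' in the statement is the categorical condition that $M \otimes -$ is left exact, which you would first have to convert into affine-local flatness before invoking the affine Lazard--Govorov theorem.

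The paper's mechanism avoids gluing entirely, and this is exactly the missing idea. First it dualizes: locally free objects are dualizable, so $\Hom(V,M) \cong \Hom(\O_X, M \otimes V^*)$ and the whole problem becomes one about \emph{global sections}. Second, flatness is used categorically, once: if $E \hookrightarrow V$ is the equalizer of the relevant pair of maps between locally frees, then $M \otimes E \to M \otimes V$ is again an equalizer, so the given section lands in $\Gamma(X, M\otimes E)$. Third, the embedding $M \hookrightarrow \Sym(M)$ and the adjunction for the affine morphism $p : P=\Spec(\Sym(M)) \to X$ identify this with a single global section of $p^*E$ on the \emph{affine} scheme $P$; choosing by the strong resolution property an epimorphism $\bigoplus_j V_j \twoheadrightarrow E$ with $V_j$ locally free and using that $\Gamma(P,-)$ is exact on quasi-coherent sheaves (this is the only place affineness enters: to lift one section along one epimorphism, as in part B of the proof of \autoref{qdense}), the section factors through $p^*U$ for a finite subsum $U$; retracting along $\Sym(M) \twoheadrightarrow M$ then yields the factorization through $M \otimes U$, and \autoref{lokfreicolim} finishes as you say. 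Your covariant reduction could in fact be completed by the same trick --- use flatness to identify $\HOM(C,M) \cong \HOM(C,\O_X) \otimes M$ for finitely presented $C$, lift the resulting section over $P$, and dualize back using dualizability of the locally free object obtained --- but the gluing argument as you propose it would fail.
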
 
 
\begin{proof}
We have to check three conditions. 1. The category is non-empty, because it contains $0 \to M$. 2. Given two objects $V \to M$ and $W \to M$, these induce an object $V \oplus W \to M$ and the inclusions induce morphisms from $V \to M$ resp. $W \to M$ to $V \oplus W \to M$. 3. Since the category is anti-equivalent to the category of homomorphisms $\O_X \to M \otimes V$, where $V$ is locally free, it suffices to prove: Given two homomorphisms $\sigma : \O_X \to M \otimes V$ and $\tau : \O_X \to M \otimes W$ (with $V,W$ locally free) and two homomorphisms $\alpha,\beta: V \to W$ such that $(M \otimes \alpha) \sigma = \tau = (M \otimes \beta) \sigma$, then there is some homomorphism $\rho : \O_X \to M \otimes U$ (with $U$ locally free) and a homomorphism $\gamma : U \to V$ such that $(M \otimes \gamma) \rho = \sigma$ and $\alpha \gamma = \beta \gamma$.

Let $i : E \to V$ be an equalizer of $\alpha,\beta$. Since $M$ is flat, $M \otimes i : M \otimes E \to M \otimes V$ is an equalizer of $M \otimes \alpha,M \otimes \beta$. Since $\sigma$ equalizes these morphisms, there is a unique morphism $\delta : \O_X \to M \otimes E$ such that $(M \otimes i) \delta  = \sigma$.

Consider the affine morphism $p : P := \Spec(\Sym(M)) \to X$. If $M$ is already an algebra, just take $p : \Spec(M) \to X$. Then the composition
\[\O_X \xrightarrow{\delta} M \otimes E \hookrightarrow \Sym(M) \otimes E \cong p_* p^* E\]
corresponds to a homomorphism $\O_P \to p^* E$. By what we have seen in the proof of part B. in Lemma \ref{qdense} and by using that $P$ is affine, there is some morphism $\epsilon : U \to E$ with $U$ locally free, such that $\O_P \to p^* E$ factors as $\O_P \to p^* U \to p^* E$. This corresponds to a factorization of the morphism $\O_X \to \Sym(M) \otimes E$ as $\O_X \to \Sym(M) \otimes U \to \Sym(M) \otimes E$. Composing with $\Sym(M) \twoheadrightarrow M$ gives a factorization $\O_X \xrightarrow{\rho} M \otimes U \xrightarrow{M \otimes \epsilon} M \otimes E$ of $\delta$. Then $\gamma := i\epsilon : U \to V$ and $\rho$ satisfy the requirements. The conclusion thus follows from \autoref{lokfreicolim}.
\end{proof}

The following result is due to Sch\"appi (\cite[Theorem 1.3.1]{Sch12a}).

\begin{prop}[Characterization of the strong resolution property] \label{strong-resolution}
Let $X$ be a geometric stack. Choose some smooth surjective affine morphism $\pi : P \to X$ for some affine scheme $P$.
The following are equivalent:
\begin{enumerate}
\item $\pi_* \O_P$ is a directed colimit of locally free modules.
\item $X$ has the strong resolution property.
\item If $M \in \Q(X)$ and $s : \O_P \to \pi^* M$ is a homomorphism, then there is some locally free module $V$ and some homomorphism $V \to M$, such that $s$ factors as $\O_P \to \pi^* V \to \pi^* M$.
\end{enumerate}
\end{prop}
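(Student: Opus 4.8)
The plan is to prove the equivalence by showing $(1) \Rightarrow (3) \Rightarrow (2) \Rightarrow (1)$, exploiting throughout that $\pi$ is affine, flat, and faithfully flat (being surjective), so that $\pi_*$ is faithful, exact, and commutes with colimits, while $\pi^*$ is a cocontinuous tensor functor. The affineness of $P$ is crucial: on an affine scheme the global sections functor is exact and every quasi-coherent module is a quotient of a free module, so locally free modules form a generating set that behaves well under the lifting arguments.

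First I would establish $(1) \Rightarrow (3)$. Write $\pi_* \O_P \cong \colim_i V_i$ as a directed colimit of locally free modules on $X$. Given $M \in \Q(X)$ and $s : \O_P \to \pi^* M$, I would pass to the adjoint $\tilde{s} : \O_X \to \pi_* \pi^* M$. Using the projection formula together with flatness of $\pi$, one has $\pi_* \pi^* M \cong (\pi_* \O_P) \otimes M \cong \colim_i (V_i \otimes M)$, and since $\O_X$ is finitely presentable (or more directly, since the relevant colimit is filtered and one can test on sections using that $X$ is covered by affines pulled back from $P$), the map $\tilde s$ factors through some $V_i \otimes M$. This factorization $\O_X \to V_i \otimes M$ corresponds, after dualizing $V_i$, to a homomorphism $V_i^\vee \to M$, and unwinding the adjunction yields the desired factorization $\O_P \to \pi^*(V_i^\vee) \to \pi^* M$ with $V_i^\vee$ locally free. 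This step is essentially the content of part B of the proof of \autoref{qdense}, adapted to the present relative affine situation, and I expect it to be the main obstacle: the delicate point is guaranteeing that the factorization of the section lifts to an honest morphism of locally free modules rather than just a set-theoretic section, which is exactly where affineness of $P$ and the lifting lemma from \autoref{qdense}.B are needed.

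Next, $(3) \Rightarrow (2)$ amounts to showing that locally free modules generate $\Q(X)$. Given $M \in \Q(X)$ and any local section, I would use that $\pi^* M$ lives on the affine scheme $P$, where it is generated by its global sections, hence admits an epimorphism $\bigoplus \O_P \to \pi^* M$; applying condition $(3)$ to each component $\O_P \to \pi^* M$ produces locally free modules $V$ on $X$ with maps $V \to M$ whose pullbacks jointly cover $\pi^* M$. Since $\pi$ is faithfully flat, a family of maps $\{V \to M\}$ that is jointly epic after applying $\pi^*$ is already jointly epic on $X$, giving the generation by locally free modules, i.e. the strong resolution property.

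Finally, $(2) \Rightarrow (1)$ is the cleanest direction and is exactly the content of \autoref{lazard}: the object $\pi_* \O_P$ is flat (as $\pi$ is flat and affine, $\pi_* \O_P$ is a flat $\O_X$-algebra, hence flat as a module), and $\Spec(\pi_* \O_P) = P$ is an affine scheme over $\Z$; under the strong resolution property \autoref{lazard} then yields directly that $\pi_* \O_P$ is a \emph{directed} colimit of locally free modules. I would simply cite \autoref{lazard} here, noting that its hypotheses are met, which closes the cycle.
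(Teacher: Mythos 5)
Your proof is correct, but your cycle $(1)\Rightarrow(3)\Rightarrow(2)\Rightarrow(1)$ decomposes the equivalence differently from the paper, which proves $(1)\Rightarrow(2)$, $(2)\Rightarrow(3)$, $(3)\Rightarrow(2)$ and $(2)\Rightarrow(1)$. The genuinely new step is your direct $(1)\Rightarrow(3)$: the paper never proves this in one stroke. Instead it first shows $(1)\Rightarrow(2)$ by checking that the duals $V_i^*$ form a generating family (if all $V_i^* \to M \to N$ vanish, then passing to the colimit on global sections, using $\pi_*\pi^*M \cong \pi_*\O_P \otimes M$, affineness of $P$ and descent along $\pi$, one gets $M \to N = 0$), and then deduces $(3)$ from $(2)$ by lifting the section $s$ over the affine scheme $P$ through an epimorphism $\bigoplus_j A_j \twoheadrightarrow M$ from locally free modules, as in part B of the proof of \autoref{qdense}. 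Your route compresses these two steps into one structural argument: projection formula, finite presentability of $\O_X$ to factor $\tilde{s}$ through some $V_i \otimes M$, and dualizability of $V_i$ to turn that factorization into an honest morphism $V_i^\vee \to M$. Both routes rest on the same underlying facts (the isomorphism $\pi_*\pi^*M \cong \pi_*\O_P \otimes M$, commutation of $\Gamma(X,-)$ with directed colimits, duality of locally free modules, affineness of $P$, descent), so neither is more general; yours is shorter and makes the role of duality explicit, while the paper's detour through $(2)$ keeps each individual step more elementary. Your $(3)\Rightarrow(2)$ agrees in substance with the paper's, only phrased via joint epimorphisms and descent of epimorphisms rather than joint faithfulness of $\Hom(V,-)$ --- equivalent formulations of ``generator'' in the Grothendieck category $\Q(X)$; your $(2)\Rightarrow(1)$ is the identical citation of \autoref{lazard}.

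Two small points of care. First, the finite presentability of $\O_X$ in $\Q(X)$, which your $(1)\Rightarrow(3)$ needs, deserves its half-line of justification: $\Gamma(X,-)$ is the equalizer of $\Gamma(P,\pi^*(-)) \rightrightarrows \Gamma(P\times_X P,\, \cdot\,)$, the functors involved preserve directed colimits because $P$ and $P \times_X P$ are affine, and finite limits commute with filtered colimits; note that the paper implicitly relies on the same fact in its $(1)\Rightarrow(2)$ step, when it passes ``to the colimit'' on global sections. Second, your worry about a merely ``set-theoretic section'' in $(1)\Rightarrow(3)$ is unfounded: once $\tilde{s}$ factors through $V_i \otimes M$, dualizing $V_i$ immediately produces the morphism $V_i^\vee \to M$, and no lifting lemma of the type \autoref{qdense}.B is needed in your argument --- that lemma is what the paper uses precisely because it goes through $(2)$ instead.
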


\begin{proof}
$1. \Rightarrow 2.$ Let $\pi_* \O_P = \colim_i V_i$ with a directed system of locally free modules $V_i$. Then their duals $V_i^*$ generate $\Q(X)$: Assume that $M \to N$ is a homomorphism such that all $V_i^* \to M \to N$ vanish. This means that all $V_i \otimes M \to V_i \otimes N$ vanish on global sections. In the colimit, we see that $\pi_* \O_P \otimes M \to \pi_* \O_P \otimes N$ vanishes on global sections. But this is isomorphic to $\pi_* \pi^* M \to \pi_* \pi^* N$ (since $\pi$ is affine), so that $\pi^* M \to \pi^* N$ vanishes on all global sections. Since $P$ is affine, it then vanishes. Since $\pi$ is a descent morphism, this means that $M \to N$ vanishes.

We have seen $2. \Rightarrow 3.$ in the proof of part B. in Lemma \ref{qdense}.

$3. \Rightarrow 2.$ Let $f : M \to N$ be a homomorphism which vanishes when composed with every $V \to M$, where $V$ is locally free. In order to show $f=0$, it suffices to prove $\pi^*(f) = 0$ by descent. Since $P$ is affine, we only have to prove $\pi^*(f) s = 0$ for every $s : \O_P \to \pi^* M$. By 3. there is some locally free module $V$ and some homomorphism $\alpha : V \to M$ such that $s$ factors as through $\pi^* \alpha$. But then $\pi^*(f) s$ factors through $\pi^* (f\alpha)=0$.

$2. \Rightarrow 1.$ follows from Proposition \ref{lazard} since $\Spec \pi_* \O_P = P$ is affine.
\end{proof}

\begin{rem}
The condition 1. means that $X$ is an \emph{Adams stack} (\cite[Definition 6.5]{Goe08}). Hence, Adams stacks coincide with geometric stacks with the strong resolution property. By 2. this does not depend on the chosen presentation. This affirmatively answers a question initially posed by Hovey in the language of Hopf algebroids (\cite[Question 1.4.12]{Hov03}). Our proof is similar to Sch\"appi's. The main difference is that we do not use the language of Hopf algebroids. One advantage of the geometric language is that $\pi^*$ is not treated as a forgetful functor and not omitted from the notation, as is often done for the corresponding forgetful functor from $\Gamma$-modules to $A$-modules for a flat Hopf algebroid $(A,\Gamma,\dotsc)$.
\end{rem}

\section{Extension result}

The following general extension result will be needed in \autoref{prodschemes} and might also be of independent interest. We do not know if this is really new but do not have a reference for it.

\begin{prop}[Extension of functors] \label{extend2} \noindent
Let $\C$ be an \emph{abelian} $R$-linear category and $\D$ any finitely cocomplete $R$-linear category. Let $\C'$ be a full replete subcategory of $\C$ with the following properties:
\begin{enumerate}
\item $0 \in \C'$.
\item For every $M \in \C$ there is some $A \in \C'$ and an epimorphism $A \twoheadrightarrow M$.
\item The condition of \autoref{denselin}: For all objects $M \in \C$ and all morphisms $A \to M \leftarrow B$ with $A,B \in \C'$ there is some $C \in \C'$ which fits into a commutative diagram
\[\xymatrix@C=10pt@R=10pt{ & A \ar[dl] \ar[dr] & \\ C \ar[rr] && M. \\ & B \ar[ul] \ar[ur] & }\]
\end{enumerate}
Let $F : \C' \to \D$ be an $R$-linear functor with the property $(\star)$ that for any every exact sequence
\[A \to B \to C \to 0\]
in $\C$ with objects $A,B,C \in \C'$, the induced sequence
\[F(A) \to F(B) \to F(C) \to 0\]
is also exact in $\D$.

Then $F$ extends to a right exact $R$-linear functor $\overline{F} : \mathcal{C} \to \mathcal{D}$. In fact, this construction implements an equivalence of categories between $R$-linear functors $\C' \to \D$ satisfying $(\star)$ and right exact $R$-linear functors $\C \to \D$.
\end{prop}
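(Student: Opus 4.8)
The plan is to construct $\overline{F}$ by choosing, for each $M \in \C$, a two-term presentation and declaring $\overline{F}(M)$ to be a cokernel in $\D$. Conditions 1 and 2 let me pick an epimorphism $p \colon A_0 \twoheadrightarrow M$ with $A_0 \in \C'$, then an epimorphism $A_1 \twoheadrightarrow \ker(p)$ with $A_1 \in \C'$, yielding an exact sequence $A_1 \xrightarrow{d} A_0 \xrightarrow{p} M \to 0$ in $\C$ with $A_0, A_1 \in \C'$. Since $\D$ is finitely cocomplete I may set $\overline{F}(M) := \coker\bigl(F(A_1) \xrightarrow{F(d)} F(A_0)\bigr)$. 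The first sanity check is that $\overline{F}$ restricts to $F$ on $\C'$: when $M \in \C'$ all three terms of the presentation lie in $\C'$, so property $(\star)$ applies verbatim and gives that $F(A_1) \to F(A_0) \to F(M) \to 0$ is exact, i.e. $\overline{F}(M) \cong F(M)$.

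Before functoriality I would record one consequence of $(\star)$ that is used repeatedly: applying it to a sequence $A \twoheadrightarrow B \to 0 \to 0$ (legitimate because $0 \in \C'$ and $F$ preserves the zero object) shows that $F$ carries every epimorphism between objects of $\C'$ to an epimorphism in $\D$. This already makes $\overline{F}(M)$ independent of the chosen presentation: given two presentations, condition 3 (the amalgamation hypothesis of \autoref{denselin}) produces a common $\C'$-cover $C \twoheadrightarrow M$ dominating both, and the epimorphism-preservation of $F$ lets me compare the two cokernels with the cokernel built from $C$, entirely within $\D$.

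The main obstacle is functoriality, and it is here that the absence of projectivity for objects of $\C'$ bites: a morphism $f \colon M \to N$ need not lift to a map of presentations on the nose. My remedy is to build a presentation of $M$ adapted to a fixed presentation $B_1 \to B_0 \to N \to 0$ of $N$. Forming the pullback $B_0 \times_N M$, which surjects onto $M$, and then covering it by an object of $\C'$ via condition 2, produces an epimorphism $A_0 \twoheadrightarrow M$ together with a lift $f_0 \colon A_0 \to B_0$ over $f$; repeating the construction on kernels (and enlarging $A_1$, which does not change the cokernel thanks to epimorphism-preservation) yields $f_1 \colon A_1 \to B_1$ with $f_0 d = e f_1$. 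Applying $F$ and passing to cokernels gives the desired $\overline{F}(f) \colon \overline{F}(M) \to \overline{F}(N)$; independence of the presentation makes it well defined, and compatibility with composition and the $R$-linear structure is then a routine check.

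It remains to verify right exactness and assemble the equivalence. For a right-exact sequence $M' \to M \to M'' \to 0$ I would choose presentations of $M'$ and $M''$ and splice them into a presentation of $M$ of horseshoe type, with $A_0 = A_0' \oplus A_0''$ and $A_1 = A_1' \oplus A_1''$; applying $F$ and the $R$-linear cokernel calculus in $\D$ (note that $\D$ need not be abelian, so I would phrase everything purely in terms of cokernels and biproducts rather than invoking the snake lemma) yields exactness of $\overline{F}(M') \to \overline{F}(M) \to \overline{F}(M'') \to 0$. Finally, restriction $G \mapsto G|_{\C'}$ sends right exact $R$-linear functors to $R$-linear functors satisfying $(\star)$, and is inverse to $F \mapsto \overline{F}$: one direction is the identity $\overline{F}|_{\C'} \cong F$ shown above, while for the other, any right exact $G$ preserves the cokernel presenting $M$, so $\overline{G|_{\C'}}(M) = \coker\bigl(G(A_1) \to G(A_0)\bigr) \cong G(M)$ naturally in $M$. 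Both natural isomorphisms are compatible with natural transformations of functors, which upgrades the construction to the asserted equivalence of categories.
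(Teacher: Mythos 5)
Your overall strategy coincides with the paper's: define $\overline{F}(M)$ as the cokernel of $F$ applied to a chosen presentation, produce presentations adapted to a morphism by covering pullbacks with objects of $\C'$, and pass to the equivalence by restriction. Your derivation of epimorphism-preservation from $(\star)$ (taking $C=0$, legitimate since $0 \in \C'$) is correct and in fact cleaner than the paper's, and it does suffice for independence of the choice of $A_1$ when $A_0$ is fixed: cover $A_1 \times_{\ker p} A_1'$ by some $D \in \C'$, note $F(D) \to F(A_1)$ and $F(D) \to F(A_1')$ are epis, and use $\coker(F(h) \circ F(g)) = \coker(F(h))$ for $F(g)$ epi. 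The genuine gap is independence in the $A_0$-variable, which you dismiss as a comparison ``entirely within $\D$'' via epi-preservation; this is the technical heart of the proposition and epi-preservation does not deliver it. First, condition 3 produces an object $C$ \emph{receiving} maps $A_0 \to C \leftarrow B_0$ over $M$, which points the wrong way; a common cover dominating both presentations must instead be obtained by covering the pullback $A_0 \times_M B_0$ via condition 2, giving epimorphisms $C_0 \twoheadrightarrow A_0$ and $C_0 \twoheadrightarrow B_0$ over $M$. Second, and more seriously, even with such an epimorphism $\pi \colon C_0 \twoheadrightarrow A_0$ over $M$, epi-preservation only yields a canonical epimorphism $\coker(F(C_1) \to F(C_0)) \twoheadrightarrow \coker(F(A_1) \to F(A_0))$, with no inverse. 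To invert it one must show that every $s \colon F(C_0) \to T$ killing $F(C_1)$ \emph{descends} along $F(\pi)$, and for that one needs to know that $F(A_0)$ is the cokernel of $F(K) \to F(C_0)$, where $K \in \C'$ covers $\ker\pi$ --- a further, essential application of $(\star)$ to the sequence $K \to C_0 \to A_0 \to 0$, combined with pullback-and-cover arguments showing that killing $F(C_1)$ forces killing $F(K)$. This is precisely the content of the paper's Steps 1--2 (weak pullbacks, and the fact that $F$ carries pushouts along epimorphisms between objects of $\C'$ to pushouts in $\D$), on which all later steps rest. Nothing in your proposal plays this role, so your $\overline{F}$ is not yet well defined on objects, and consequently the construction on morphisms, the functoriality, and the right-exactness claims are all left hanging on an unproved identification of cokernels.

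A second, smaller error: in the right-exactness step you splice presentations with middle terms $A_0' \oplus A_0''$ and $A_1' \oplus A_1''$, but $\C'$ is not assumed closed under direct sums --- condition 3 is exactly the stated weakening of that closure property --- so these objects may leave $\C'$, and neither $(\star)$ nor $F$ itself applies to them. The paper hits the same issue in its $R$-linearity step and explicitly works around it by covering the direct sum with an object of $\C'$; its right-exactness step avoids direct sums altogether, building a $3 \times 3$ comparison diagram by repeatedly covering pullbacks. This part is repairable by the same cover-the-pullback technique, but only after the descent lemma above is in place.
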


\begin{proof}[Proof.]
Let $F : \C' \to \D$ satisfy $(\star)$. Every object $M \in \C$ fits into a right exact sequence
\[A' \xrightarrow{\alpha} A \to M \xrightarrow{} 0\]
with $A',A \in \C'$. We call this a \emph{presentation} of $M$. We fix one for each $M$. We have no choice but to define $\overline{F}(M)$ to be the cokernel of $F(\alpha)$, so that
\[F(A') \xrightarrow{F(\alpha)} F(A) \to \overline{F}(M) \to 0\]
is exact in $\D$. With this ad hoc definition the proof that $\overline{F}$ actually becomes a functor and is therefore independent of the choices of the presentations will require some effort. On the other hand, abstract definitions of $\overline{F}$ (for example as the left Kan extension of $F$, see \autoref{denselin}) would have the drawback of less computability. We divide the proof into 9 Steps.

\textbf{1. Step: Weak pullbacks.} We call a commutative diagram in $\C$
\[\xymatrix{A' \ar[r] \ar[d] & A \ar[d] \\ B' \ar[r] & B}\] 
a \emph{weak pullback} if the induced morphism $A' \to A \times_B B'$ is an epimorphism. If $B' \to B$ is an epimorphism, it follows that $A' \to A$ is also an epimorphism. Besides, the sequence
\[A' \to A \oplus B' \to B  \to 0\]
is exact, which means that the given square is actually a pushout square.

\textbf{2. Step: $F$ preserves epis and pushouts.} Given any epimorphism $B \to C$ in $\C$ with $B,C \in \C'$, we denote by $A$ the kernel and obtain an exact sequence $A \to B \to C \to 0$ in $\C$. Hence, $F(A) \to F(B) \to F(C) \to 0$ is also exact. In particular, $F(B) \to F(C)$ is an epimorphism.
 
Given a pushout square in $\C$ with objects in $\C'$
\[\xymatrix{A' \ar[r] \ar[d] & A \ar[d] \\ B' \ar[r] & B}\] 
in which $A' \to B'$ (and hence $A \to B$) is an epimorphism in $\C$, then we claim that
\[\xymatrix{F(A') \ar[r] \ar[d] & F(A) \ar[d] \\ F(B') \ar[r] & F(B)}\] 
is also a pushout in $\D$. Notice that for $B'=0$ this is precisely the property $(\star)$.

The pushout property means that $A' \to A \oplus B' \to B \to 0$ is exact. Here, $A' \to A \oplus B' \cong A \times B'$ is induced by $A' \to A$ and $A' \to B'$. Besides, $A \oplus B' \to B$ is induced by $A \to B$ and the additive inverse of $B' \to B$. Find a commutative diagram
\[\xymatrix@C=10pt@R=10pt{ & A \ar[dl] \ar[dr] & \\ C \ar[rr] && A \oplus B' \\ & B' \ar[ul] \ar[ur] & }\]
with $C \in \C'$. Then $C \to A \oplus B'$ is an epimorphism. Consider the pullback $C \times_{A \oplus B'} A'$ and choose an epimorphism from some $D \in \C'$, so that
\[\xymatrix{D \ar[r] \ar[d] & C \ar[d] \\ A' \ar[r] & A \oplus B'}\] 
is a weak pullback, hence a pushout by Step 1. Then $D \to C \to B \to 0$ is also exact in $\C$. By assumption, it follows that $F(D) \to F(C) \to F(B) \to 0$ is also exact in $\D$.

Now, a commutative diagram
\[\xymatrix{F(A') \ar[r] \ar[d] & F(A) \ar[d] \\ F(B') \ar[r] & T}\] 
in $\D$ yields a morphism
\[F(C) \to F(A) \times F(B') \cong F(A) \oplus F(B') \to T\]
which vanishes when composed with $F(D) \to F(C)$, hence extends uniquely to a morphism $F(B) \to T$. It extends $F(B') \to T$ and $F(A) \to T$.

\textbf{3. Step: Action of $F$ on morphisms.} Let $M \to N$ be a morphism in $\C$. We choose presentations
\[\xymatrix{A' \ar[r] & A \ar[r] & M \ar[r] \ar[d] & 0 \\ B' \ar[r] & B \ar[r] & N \ar[r] & 0.}\]
We would like to fill this diagram. It is a bit too optimistic to find a commutative diagram
\[\xymatrix{A' \ar[r] \ar[d]  & A \ar[r] \ar[d]  & M \ar[r] \ar[d] & 0 \\ B' \ar[r] & B \ar[r] & N \ar[r] & 0}\]
which would make the definition of $\overline{F}(M) \to \overline{F}(N)$ easy. It is a bit more complicated than that: Let $P$ be a pullback of $A \to M \to N$ with $B \to N$. Since $B \twoheadrightarrow N$ is an epimorphism, the same is true for $P \twoheadrightarrow A$. Choose some epimorphism $C \twoheadrightarrow P$ with $C \in \C'$. Then $C \to A$ is an epimorphism and the following diagram commutes:
\[\xymatrix@R=20pt{A' \ar[r] & A \ar[r] & M \ar[r] \ar[dd] & 0 \\ & C \ar[d] \ar@{->>}[u] & \\ B' \ar[r] & B \ar[r] & N \ar[r] & 0}\]
The rectangle is a weak pullback diagram by construction. Let's do the same trick again: Let $Q = A' \times_A C \times_B B'$ and choose an epimorphism $C' \to Q$ with $C' \in \C'$. Then a diagram chase shows that $C' \to A' \times_A C$ is an epimorphism, so that
\[\xymatrix{A' \ar[r] & A \\ C' \ar[u] \ar[r] & C \ar[u]}\]
is a weak pullback. Since $C \to A$ is an epimorphism, the same is true for $C' \to A'$ and the square is actually a pushout by Step 1. We arrive at the following commutative diagram in which the two rows are exact and the square on the upper left is a pushout:
\[\xymatrix@R=20pt{A' \ar[r] & A \ar[r] & M \ar[r] \ar[dd] & 0 \\C' \ar[r] \ar@{->>}[u] \ar[d] & C \ar[d] \ar@{->>}[u] & \\ B' \ar[r] & B \ar[r] & N \ar[r] & 0.}\]
Using that $C' \to A' \times_A C$ is an epimorphism, another diagram chase shows that
\[C' \to C \to M \to 0\]
is exact. Notice that this is another presentation of $M$ (not the given one) which actually \emph{has} a morphism to the given presentation of $N$.

Now let us apply $F$ to the diagram above:
\[\xymatrix@R=20pt{F(A') \ar[r] & F(A) \ar[r] & \overline{F}(M) \ar[r]  & 0 \\F(C') \ar[r] \ar@{->>}[u] \ar[d] & F(C) \ar[d] \ar[r] \ar@{->>}[u]  \ar[d] & P \ar[r] \ar[u] \ar[d] & 0\\ F(B') \ar[r] & F(B) \ar[r] & \overline{F}(N) \ar[r] & 0}\]
Here $F(C) \to P$ is any cokernel of $F(C') \to F(C)$. Besides, the morphisms $P \to \overline{F}(M)$ and $P \to \overline{F}(N)$ are defined via the universal property of the cokernel. It follows by Step 2 that the square on the upper left is a pushout. The rows are exact by definition. Using these properties, we observe that $P \to \overline{F}(M)$ is an isomorphism. Now we can define $\overline{F}(M) \to \overline{F}(N)$ via the commutative diagram
\[\xymatrix{ & P \ar[dr] \ar[dl]_{\cong} &  \\ \overline{F}(M) \ar[rr] && \overline{F}(N).}\]
Finally, here is a more concise description of this morphism: Given any commutative diagram
\[\xymatrix@R=10pt{ & A \ar[r] & M \ar[r] \ar[dd] & 0 \\ C \ar[dr] \ar@{->>}[ur] & & \\ & B \ar[r] & N \ar[r] & 0}\]
with the property that $C \in \C'$ and the pentagon is a weak pullback, i.e. the induced morphism $C \to A \times_N B$ is an epimorphism, there is a unique morphism $\overline{F}(M) \to \overline{F}(N)$ which renders the diagram
\[\xymatrix@R=10pt{& F(A) \ar[r] & \overline{F}(M) \ar[r] \ar[dd] & 0 \\ F(C) \ar[dr] \ar@{->>}[ur] & \\ & F(B) \ar[r] & \overline{F}(N) \ar[r] & 0}\]
commutative. We have showed that it exists.

Notice that if $M \to N$ is an epimorphism, then the same is true for $C \to B$ and it follows easily that $\overline{F}(M) \to \overline{F}(N)$ is also an epimorphism. Observe that $\overline{F}(M)$ does not depend on the presentation of $M$: If $A' \to A \to M \to 0$ and $B' \to B \to M \to 0$ are two presentations, we find a commutative diagram
\[\xymatrix@R=20pt{A' \ar[r] & A \ar[r] & M \ar[r] \ar@{=}[dd] & 0 \\C' \ar[r] \ar@{->>}[u] \ar@{->>}[d] & C \ar@{->>}[d] \ar@{->>}[u] & \\ B' \ar[r] & B \ar[r] & M \ar[r] & 0.}\]
It follows as above that the cokernels of $F(A') \to F(A)$ and of $F(B') \to F(B)$ are isomorphic to the cokernel of $F(C') \to F(C)$.

\textbf{4. Step: Action on morphisms is well-defined.} We have to check that the definition above does not depend on the chosen commutative diagram (containing $C$). So let us choose two diagrams
\[\xymatrix@R=10pt{& A  \ar[rr] && M \ar[r] \ar[dd] & 0 \\ C \ar[dr]  \ar[ur] & & D \ar[ul] \ar[dl] & \\ & B \ar[rr] && N \ar[r] & 0}\]
such that the induced morphisms from $C$ and $D$ to $A \times_N B$ are epimorphisms. It follows that $C \times_{A \times_N B} D \to C$ is an epimorphism, too. Choose an epimorphism $E \to C \times_{A \times_N B} D$ with $E \in \C'$. Then we obtain a commutative diagram
\[\xymatrix@R=13pt{& A  \ar[rr] && M \ar[r] \ar[dd] & 0 \\ C \ar[dr]  \ar[ur] & E \ar@{->>}[l] \ar[r] & D \ar[ul] \ar[dl] & \\ & B \ar[rr] && N \ar[r] & 0.}\]
Since $F(E) \to F(C)$ is an epimorphism, it follows readily that the morphisms $\overline{F}(M) \to \overline{F}(N)$ defined either via $F(C)$ or via $F(D)$ are the same.

\textbf{5. Step: $\overline{F}$ is a functor.} Given morphisms $M \xrightarrow{f} N \xrightarrow{g} K$, we claim that $\overline{F}(g \circ f) = \overline{F}(g) \circ \overline{F}(f)$. Choose a commutative diagram of weak pullbacks
\[\xymatrix@R=10pt{& & A \ar[r] & M \ar[r] \ar[dd] & 0 \\ & C \ar[dr] \ar@{->>}[ur] & & \\ G \ar[dr] \ar@{->>}[ur] & & B \ar[r] & N \ar[r] \ar[dd] & 0 \\& D \ar@{->>}[ur] \ar[dr] & & \\ & & E \ar[r] & K \ar[r] & 0}\]
with $A,B,C,D,E,G \in \C'$. Then we may use the commutative diagram
\[\xymatrix@R=10pt{& A \ar[r] & M \ar[r] \ar[dd] & 0 \\ G \ar[dr] \ar[ur] && \\ & E \ar[r] & K \ar[r] & 0}\]
for the composition; in fact $G \to A \times_K E$ is epi by a diagram chase which we leave to the reader. The commutative diagram
\[\xymatrix@R=10pt{& & F(A) \ar[r] & \overline{F}(M) \ar[r] \ar[dd] & 0 \\ & F(C) \ar[dr] \ar[ur] & & \\ F(G) \ar[dr] \ar[ur] & & F(B) \ar[r] & \overline{F}(N) \ar[r] \ar[dd] & 0 \\ & F(D) \ar[ur] \ar[dr] & & \\ & & F(E) \ar[r] & \overline{F}(K) \ar[r] & 0}\]
now finishes the proof of the claim.

The identity of an object $M \in \C$ is mapped by the result above to an idempotent endomorphism of $\overline{F}(M)$. On the other hand, it is an epimorphism (see Step 3), so that it must be the identity.

\textbf{6. Step: $\overline{F}$ extends $F$.} For $M \in \C'$ we have $\overline{F}(M) = F(M)$ by $(\star)$. For a morphism $f : M \to N$ in $\C'$ and $C \twoheadrightarrow A \times_N B$ as in the 3. Step, the commutative diagram
\[\xymatrix@R=10pt{& F(A) \ar[r] & F(M) \ar[r] \ar[dd] & 0 \\ F(C) \ar[dr] \ar[ur] & \\ & F(B) \ar[r] & F(N) \ar[r] & 0}\]
shows that $\overline{F}(f) = F(f)$. This proves $\overline{F}|_{\C'} = F$. To be more precise, in the definition of $\overline{F}$ we have made (canonical) choices of cokernels and on $\C'$ we may choose them in such a way that the equation becomes true. In any case, $\overline{F}|_{\C'}$ is canonically isomorphic to $F$.

\textbf{7. Step: $\overline{F}$ is $R$-linear.} Since $0 \in \C'$ we have $\overline{F}(0)=0$. Hence $\overline{F}$ preserves zero morphisms. Now let $f,g : M \to N$ be two morphisms. In order to show that $\overline{F}(f)+\overline{F}(g) = \overline{F}(f+g)$, we may precompose with $F(A) \twoheadrightarrow \overline{F}(M)$ for any $A \twoheadrightarrow M$ with $A \in \C'$. Thus, we may assume that $M=A$. Given $f,g : A \to N$, we may find a commutative diagram
\[\xymatrix@C=13pt@R=13pt{ & A \ar[dl]_{i} \ar[dr]^{f} & \\ C \ar[rr]^{h} && N. \\ & A \ar[ul]^{g} \ar[ur]_{j} & }\]
with $C \in \C'$. It follows:
\[\overline{F}(f)+\overline{F}(g) = \overline{F}(h) F(i) + \overline{F}(h) F(j) = \overline{F}(h) (F(i)+F(j))\]
Since $F : \C' \to \D$ is linear, we have $F(i)+F(j)=F(i+j)$ and therefore
\[\overline{F}(f)+\overline{F}(g) = \overline{F}(h) F(i+j) = \overline{F}(h(i+j)) = \overline{F}(f+g).\]

Now let $\lambda \in R$ and $M \in \C$ with presentation $A' \xrightarrow{i} A \xrightarrow{p} M \to 0$. We denote by $\lambda : M \to M$ also the morphism which multiplies with $\lambda$. One checks that
\[\xymatrix@R=10pt{ & A \ar[r]^{p} & M \ar[r] \ar[dd]^{\lambda} & 0 \\ A \oplus A' \ar[dr]_{(\lambda,i)} \ar[ur]^{\pr_A} & & \\ & A \ar[r]^{p} & M \ar[r] & 0}\]
is a weak pullback. Although $A \oplus A'$ has no reason to be contained in $\C'$, we may choose an epimorphism from some object in $\C'$ and use the already proven additivity of $\overline{F}$ on morphisms and the $R$-linearity of $F$ to conclude that
\[\xymatrix@R=10pt{ & F(A) \ar[r]^{\overline{F}(p)} & \overline{F}(M) \ar[r] \ar[dd]^{F(\lambda)} & 0 \\ F(A) \oplus F(A') \ar[dr]_{(\lambda,F(i))} \ar[ur]^{\pr_{F(A)}} & & \\ & F(A) \ar[r]^{\overline{F}(p)} & \overline{F}(M) \ar[r] & 0}\]
is commutative. Since this diagram commutes also with $F(\lambda)$ replaced by $\lambda$, we infer that $F(\lambda)=\lambda$. 

\textbf{8. Step: $\overline{F}$ is right exact.} We already know that $\overline{F}$ preserves direct sums from Step 7, so that it suffices to prove that $\overline{F}$ preserves cokernels. We choose an exact sequence $M \to N \to K \to 0$ in $\C$. We will construct the following commutative diagram with exact rows and columns and $A,B,B',C,C' \in \C'$.
\[\xymatrix{0 & 0 & 0 & \\ M \ar[r] \ar[u] & N \ar[r] \ar[u] & K \ar[r] \ar[u] & 0 \\ A \ar[r] \ar[u] & B \ar[r] \ar[u] & C \ar[u] \ar[r] & 0 \\ & B' \ar[r] \ar[u] & C' \ar[r] \ar[u] & 0}\]
Choose a presentation $C' \to C \to K \to 0$. Choose some $B \in \C'$ with an epimorphism $B \twoheadrightarrow N \times_K C$. Choose some $A \in \C'$ with $A \twoheadrightarrow \ker(B \to C) \times_N M$. Then $A \to B \to C \to 0$ is exact by a diagram chase. Another diagram chase shows that $\ker(B \to C) \to \ker(N \to K)$ is an epimorphism, which in turn can be used to see that $A \to M$ is an epimorphism. Finally, the remaining square containing $B'$ is constructed via symmetry.

Now we apply $\overline{F}$ and obtain the following commutative diagram with exact columns and rows -- except perhaps for the first row.
\[\xymatrix{0 & 0 & 0 & \\ \overline{F}(M) \ar[r] \ar[u] &  \overline{F}(N) \ar[r] \ar[u] &  \overline{F}(K) \ar[r] \ar[u] & 0 \\ F(A) \ar[r] \ar[u] & F(B) \ar[r] \ar[u] & F(C) \ar[u] \ar[r] & 0 \\ & F(B') \ar[r] \ar[u] & F(C') \ar[r] \ar[u] & 0}\]
We show exactness of the first row as follows: Let $ \overline{F}(N) \to T$ be a morphism which vanishes on $ \overline{F}(M)$. This corresponds to a morphism $F(B) \to T$ which vanishes on $F(A)$ and $F(B')$, hence to a morphism $F(C) \to T$ which vanishes on $F(B')$. Since $F(B') \to F(C')$ is an epimorphism, it already vanishes on $F(C')$, which means that it corresponds to a morphism $ \overline{F}(K) \to T$.
 
This finally finishes the construction of the right exact extension $\overline{F} : \C \to \D$.

\textbf{9. Step: Equivalence of categories.} As for the conclusion, we have an obvious restriction functor and only have to construct an inverse. Given a functor $F : \C' \to \D$ satisfying $(\star)$, we have constructed above an extension $\overline{F} : \C \to \D$. Given a morphism $\tau : F \to G$ of functors $F,G : \C' \to \D$ satisfying $(\star)$, we define $\overline{\tau} : \overline{F} \to \overline{G}$ as follows: If $M \in \C$ with a presentation $A' \to A \to M \to 0$, then $\overline{\tau}_M : \overline{F}(M) \to \overline{F}(M)$ is the unique morphism which makes the diagram
\[\xymatrix{F(A') \ar[d]^{\tau_{A'}} \ar[r] & F(A) \ar[d]^{\tau_A} \ar[r] & \overline{F}(M) \ar[r] \ar[d]^{\overline{\tau}_M} & 0 \\ F(A') \ar[r] & F(A) \ar[r] & F(M) \ar[r] & 0}\]
commutative. Now the rest is easy to check.
\end{proof}

\begin{ex}
If $X$ is a noetherian Adams stack, then $\C=\Q_{\fp}(X)$ is an abelian category and the full subcategory $\C'$ of locally free sheaves satisfies the assumptions of \autoref{extend2}. Hence, right exact functors on $\Q_{\fp}(X)$ only have to be defined on locally free sheaves. This has been independently proven by Bhatt (\cite[Corollary 3.2]{Bha14}).
\marginpar{I've added a reference to Bhatt's paper.}
\end{ex}

\clearpage
\chapter[Introduction to cocomplete tensor categories]{Introduction to cocomplete\\tensor categories} \label{Intro}

\section{Definitions and examples} \label{deftens}

In this section we define the main objects of our study, namely cocomplete tensor categories, as well as cocontinuous tensor functors between them.
 
\begin{defi}[Definition of cocomplete tensor categories] \noindent
\begin{enumerate}
\item 
For us a \emph{tensor category} is a symmetric monoidal category. It consists of a category $\C$, a functor $\C \times \C \to \C$, $(A,B) \mapsto A \otimes B$ (tensor product), an object $1 \in \C$ (unit) and natural isomorphisms $\alpha_{A,B,C} : A \otimes (B \otimes C) \cong (A \otimes B) \otimes C$ (associativity constraints), $l_A : 1 \otimes A \cong A$, $r_A : A \otimes 1 \cong A$ (unit constraints) and $S_{A,B} : A \otimes B \cong B \otimes A$ (symmetries) for $A,B,C \in \C$ subject to certain coherence conditions (\cite[I.2.4]{SR72}). Often we abuse notation and also write $\C$ for the underlying category.
\item
If $\C,\D$ are tensor categories, a \emph{tensor functor} $F : \C \to \D$ is a strong symmetric monoidal functor, i.e. a functor of the underlying categories together with natural isomorphisms $F(A \otimes B) \cong F(A) \otimes F(B)$ for $A,B \in \C$ and an isomorphism $F(1) \cong 1$ subject to certain compatibility conditions (\cite[I.4.1.1, 4.2.4]{SR72}).
\item
If $F,G : \C \to \D$ are two tensor functors between tensor categories, a morphism of tensor functors $\eta : F \to G$ is a morphism of the underlying functors which is compatible with the unit constraints and the tensor product (\cite[I.4.4.1]{SR72}).
\item A \emph{cocomplete tensor category} is a tensor category whose underlying category is cocomplete such that for every object $A$ the endofunctor $A \otimes -$ (and hence also $- \otimes A$) is cocontinuous. A \emph{cocontinuous tensor functor} between cocomplete tensor categories is a tensor functor between the underlying tensor categories whose underlying functor is cocontinuous. A morphism between cocontinuous tensor functors is just a morphism between the underlying tensor functors.
\item Restricting to finite colimits, we obtain the notions of \emph{finitely cocomplete tensor categories} and \emph{right exact tensor functors}. By an \emph{abelian tensor category} we mean a finitely cocomplete tensor category whose underlying category is abelian.
\item If $\C,\D$ are (cocomplete) tensor categories, we will denote the category of (resp. cocontinuous, resp. finitely cocontinuous) tensor functors from $\C$ to $\D$ by $\Hom_{\otimes}(\C,\D)$ (resp. $\Hom_{c\otimes}(\C,\D)$, resp. $\Hom_{fc\otimes}(\C,\D)$).
\item We obtain the $2$-category of tensor categories $\Cat_\otimes$ and the $2$-category of cocomplete tensor categories $\Cat_{c\otimes}$.
\end{enumerate}
\end{defi}

\begin{rem}[Categorification]
Let $\C$ be a cocomplete tensor category. Since $A \otimes -$ is cocontinuous for each $A \in \C$, this functor preserves in particular direct sums: The canonical morphism
\[\bigoplus_i (A \otimes B_i) \to A \otimes \bigoplus_i B_i\]
is an isomorphism. This is a categorified distributive law. For the empty direct sum it becomes
\[A \otimes 0 = 0.\]
Whereas tensor categories categorify commutative monoids, cocomplete tensor categories categorify commutative rings or more precisely commutative \emph{semirings} (also known as \emph{rigs}), since for objects $A \neq 0$ there is no object $B$ with $A \oplus B = 0$. In fact, Baez and Dolan suggest this definition of $2$-rigs (\cite[2.3]{BD98b}). Chirvasitu and Johnson-Freyd use the terminology of $2$-rings (\cite{CJF13}) under the additional assumption of presentability (\autoref{presentable-tensor}). See \autoref{categorif} for more on categorification. One should admit that rig categories as studied by Laplaza (\cite{Lap72}) provide a more adequate categorification of rigs: Here $\oplus$ is not assumed to be the direct sum, but rather part of another monoidal structure.
\end{rem}

\begin{rem}[Coherence] \label{coherence}
Let $\C$ be a tensor category. The Coherence Theorem for symmetric monoidal categories (\cite[XI.1. Theorem 1]{ML98}) roughly says that between any two words in $\C$ built up inductively out of objects using the tensor structure there is at most one isomorphism which is built up out of the associativity and unit constraints of $\C$. In particular, there is no harm in identifying $A \otimes (B \otimes C)$ with $(A \otimes B) \otimes C$ or for example $B \otimes (C \otimes A)$ and just write $A \otimes B \otimes C$ for this object, since there is a \emph{canonical} isomorphism between these objects. The Coherence Theorem implies that ``all reasonable diagrams'' commute. This saves us diagram chases and justifies that we treat (compositions of) the associativity and unit constraints as identities. Since colimits are defined via universal properties, we obtain a similar Coherence Theorem for cocomplete tensor categories.
\end{rem}

\begin{ex}[Closed categories] \label{closedcat}
Let $\C$ be a tensor category whose underlying category is cocomplete and which is \emph{closed}, i.e. internal homs (\cite[I.5.1.1]{SR72}) exist in $\C$. Then $- \otimes A$ is left adjoint to $\HOM(A,-)$, hence cocontinuous. Therefore $\C$ is a cocomplete tensor category. Most examples ``in nature'' arise this way. In particular, every (cocomplete) cartesian closed category is a (cocomplete) tensor category with $\otimes = \times$. This gives lots of examples.
\end{ex}

\begin{ex}[Sets and spaces] \label{sets}
The most basic example is the cartesian closed category of (small) sets $\Set$. Observe that for every cocomplete tensor category $\C$ there is essentially a unique cocontinuous tensor functor $\Set \to \C$, mapping $X$ to $1^{\oplus X}$. In fact, $\Set$ is a $2$-initial object in $\Cat_{c\otimes}$. Another example is the cartesian closed category of simplicial sets $\sSet$. The category of topological spaces $\Top$ is not cartesian closed, however any \emph{convenient category of spaces} (\cite{St67}) such as the category of compactly generated (weak) Hausdorff spaces is closed and therefore may be regarded as a cocomplete tensor category.
\end{ex}

\begin{ex}[Quantales] \label{quantale-def}
A tensor category whose underlying category is a (small) preorder, for short tensor preorder, is a preorder whose underlying set also carries the structure of a commutative monoid in such a way that the product is increasing in each variable. A tensor functor between tensor preorders is a increasing monoid homomorphism. A morphism between such tensor functors $F,G : \C \to \D$ is unique if it exists and it exists if and only if $F(c) \leq G(c)$ for all $c \in \C$.\\
A cocomplete tensor preorder is a tensor preorder such that the underlying preorder has all suprema and such that multiplication preserves suprema in each variable. These structures are also known as \emph{quantales} (\cite{Mul02}), which we assume here to be unital and commutative. A morphism of quantales is an increasing monoid homomorphism which preserves all suprema. Although quantales do not seem to play a role in algebraic geometry yet, they allow us to view some constructions for cocomplete tensor categories more concretely. Three specific examples of quantales are:
\begin{itemize}
\item The unit interval $[0,1]$ with the usual monoid and order structure.
\item The quantale of ideals of a commutative ring $R$ with the ideal product and the inclusion relation.
\item The quantale of open subsets of a topological space with the intersection and the inclusion relation.
\end{itemize}
For objects $a,b$ of a quantale the internal hom $[b:a] := \HOM(a,b)$ exists; it is given by the largest element with the property that $[b:a] \cdot a \leq b$. It is also known as Heyting implication and written $(a \longrightarrow b)$. For the quantale of ideals of a commutative ring we get the usual ideal quotient, which also explains our notation.
\end{ex}
 
\begin{defi}[Linear tensor categories]
Let $R$ be a commutative ring. An \emph{$R$-linear tensor category} $\C$ is a  tensor category whose underlying category is $R$-linear such that the tensor product functor $\otimes : \C \times \C \to \C$ is $R$-linear in both variables. A tensor functor is called $R$-linear if its underlying functor is $R$-linear. The category of $R$-linear tensor functors $\C \to \D$ is denoted by $\Hom_{\otimes/R}(\C,\D)$. We obtain the $2$-category of $R$-linear tensor categories $\Cat_{\otimes/R}$. Similarly, we obtain the $2$-category of cocomplete $R$-linear tensor categories $\Cat_{c\otimes/R}$ and $\Hom_{c\otimes/R}(\C,\D)$ denotes the category of cocontinuous $R$-linear  tensor functors $\C \to \D$.
\end{defi}

\begin{rem}
For a tensor category $\C$ the monoid $\End_\C(1)$ is commutative by the Eckmann-Hilton argument (\cite[I.1.3.3.1]{SR72}). In case of an $R$-linear tensor category this is even a commutative $R$-algebra. This provides a $2$-functor
\[\Cat_{c\otimes/R} \to \CAlg(R), ~ \C \mapsto \End_\C(1)\]
which turns out to be right adjoint to $\M(-)$ (\autoref{affin-global}).
\end{rem}

Our main example is the following:

\begin{ex}[Modules] \noindent
\begin{enumerate}
\item Let $R$ be a commutative ring. Then the category of right $R$-modules $\M(R)$ is a cocomplete $R$-linear tensor category with the usual tensor product $\otimes_R$ and unit $R$.
\item More generally, let $X$ be a ringed space. Then the category of (sheaves of) $\O_X$-modules $\M(X)$ is a cocomplete tensor category with the usual tensor product $\otimes_{\O_X}$ and unit $\O_X$. If $f : X \to Y$ is a morphism of ringed spaces, then the associated pullback functor $f^* : \M(Y) \to \M(X)$ is a tensor functor. It is cocontinuous because it is left adjoint to $f_*$.
\item If $X$ is a scheme, then its category of quasi-coherent (sheaves of) modules $\Q(X) \subseteq \M(X)$ is closed under colimits, tensor products and pullbacks. It follows that $\Q(X)$ is a cocomplete tensor category and that every morphism of schemes $f : X \to Y$ induces a cocontinuous tensor functor $f^* : \Q(Y) \to \Q(X)$.
\item If $X$ is defined over some commutative ring $R$, then $\Q(X)$ is a cocomplete $R$-linear  tensor category. If $f : X \to Y$ is a morphism of $R$-schemes, then $f^* : \Q(Y) \to \Q(X)$ is an $R$-linear cocontinuous tensor functor.
\item The same statements apply to algebraic stacks, but also to monoid schemes or \emph{schemes over $\mathds{F}_1$} (\cite{Dei05}, \cite{Con10}, \cite{Cor11}, \cite{PL09}), even to generalized schemes in the sense of Durov (\cite{Dur07}). For example, the unique morphism $\Spec(\Z) \to \Spec(\F_1)$ induces via pullback the cocontinuous tensor functor $\Set_* \to \Ab$ which is left adjoint to the forgetful functor, i.e. it maps a pointed set $(X,x_0)$ to the abelian group $\{f \in \Z^{\oplus X} : f(x_0)=0\}$.
\end{enumerate}
\end{ex}

\begin{rem}[Structure sheaf]
Since this is our main example and we will ``model'' algebraic geometry on $\C$, we often write $\O_\C$ or just $\O$ for the unit object of $\C$. We also treat morphisms $\O \to M$ as \emph{global sections} of $M$. It is well understood that an object $M$ is not determined by its global sections, but rather by its Hom functor $\Hom(-,M)$.
\end{rem}

\begin{ex}[Chain complexes] \label{chain}
Let $R$ be a commutative ring. The category of chain complexes of $R$-modules $\mathrm{Ch}(R)$, endowed with the usual total tensor product and the ``twisted'' symmetry defined by $a \otimes b \mapsto (-1)^{|a||b|} b \otimes a$, becomes a cocomplete $R$-linear  tensor category. The unit is $R$ concentrated in degree $0$. Looking at those chain complexes with zero differentials, we get the  cocomplete $R$-linear tensor category of graded $R$-modules $\tilde{\gr}_\Z(R)$.
\end{ex}

\begin{rem}[Internalization]
Apart from  ``atomic'' examples of cocomplete tensor categories such as the ones above, there are various constructions for cocomplete tensor categories which produce new ones. This will be the subject of \autoref{Cons}. For instance, in the previous example, $\M(R)$ may be replaced by any  cocomplete $R$-linear tensor category $\C$. We can still construct a cocomplete $R$-linear  tensor category $\mathrm{Ch}(\C)$ of chain complexes with values in $\C$. This kind of \emph{internalization} is one of the central themes in this thesis. Here is another example:
\end{rem}

\begin{ex}[Representation categories] \label{funcat}
Let $I$ be a small category and let $\C$ be a (cocomplete) tensor category. Then the category of functors $I \to \C$ carries the structure of a (cocomplete) tensor category (the tensor product is simply defined objectwise), denoted by ${}^I \C = \Hom(I,\C)$. If $G$ is a group, considered as a category with one object, we obtain the tensor category ${}^G \C = \Rep_\C(G)$ of left actions or representations of $G$ on $\C$. Its objects are objects $V \in \C$ equipped with a homomorphism of groups $\rho : G \to \Aut(V)$. The tensor product $(V,\rho) \otimes (V',\rho')$ is $(V \otimes V',\rho \otimes \rho')$. Similarly, we have the category of right actions $\C^G = {}^{G^{\op}} \C$. The cases $\C=\Set$ and $\C=\M(R)$ are well-known, in the latter case one also writes $\Rep_R(G)$. Notice that there is a difference between the tensor categories $\Rep_R(G)$ and $\M(R[G])$, only the underlying categories coincide.
\end{ex}


\begin{rem} \label{initial}
If $R$ is a commutative ring, then the module category $\M(R)$ is a $2$-initial object of $\Cat_{c\otimes/R}$. In fact, for every $\C \in \Cat_{c\otimes/R}$ the external tensor product $\M(R) \to \C, ~ M \mapsto M \otimes_R \O_\C$ is a cocontinuous tensor functor which is essentially unique. For details, see \autoref{aff-glob}.
\end{rem}
 
\begin{defi}
If $\C$ is a cocomplete tensor category, then we have the $2$-category $\Cat_{c\otimes/\C}$ of cocomplete tensor categories \emph{over} $\C$. Objects are cocontinuous tensor functors $\C \to \D$. If  $P : \C \to \D$ and $Q : \C \to \E$ are  two objects, we define $\Hom_{c\otimes/\C}(\D,\E)$ to be the category whose objects are cocontinuous tensor functors $F : \D \to \E$ equipped with an isomorphism $FP \cong Q$. For example, we have $\Cat_{c\otimes/\Set} = \Cat_{c\otimes}$ and $\Cat_{c\otimes/\M(R)} = \Cat_{c\otimes/R}$. We have chosen the terminology ``over $\C$'' instead of the more appropriate ``under $\C$'' in order to incorporate the analogy to algebraic geometry. Namely, we think of $\Cat_{c\otimes/\Q(S)}$ as a ``globalization'' of $\Sch/S$.
\end{defi}

\begin{defi}[Locally presentable tensor categories] \label{presentable-tensor}
A \emph{locally presentable tensor category} is a cocomplete tensor category $\C$ whose underlying category is locally presentable.  If there is a regular cardinal $\lambda$ such that $\C$ is locally $\lambda$-presentable, $1 \in \C$ is $\lambda$-presentable and the tensor product of two $\lambda$-presentable objects is $\lambda$-presentable, then $\C$ is called a locally $\lambda$-presentable tensor category. If there is such a $\lambda$, we call $\C$ a strongly locally presentable tensor category.
\end{defi}

\begin{rem}
We remark that any locally presentable tensor category is closed. This is because the functors $A \otimes - : \C \to \C$ are cocontinuous, hence have a right adjoint (\autoref{saft}). We do not know if every locally presentable tensor category is strongly locally presentable. In fact, we do not even know if $\Q(X)$ is strongly locally presentable for an arbitrary scheme $X$. If $X$ is qc qs, then we know that $\Q(X)$ is locally $\aleph_0$-presentable (\autoref{Qcohfin}).
\end{rem}

\begin{ex}
The following example of a cocomplete cartesian quantale which is \emph{not} closed is due to Todd Trimble. Let $V$ be some Grothendieck universe. Endow $V$ with the partial order $\subseteq$. Then $V$ has all (small) colimits and all non-empty limits, but there is no terminal object. So let us just adjoin one and consider $\overline{V} := V \cup \{\infty\}$ such that $A \subseteq \infty$ for $A \in V$. Then the functor $(X \times -) = (X \cap -) : \overline{V} \to \overline{V}$ is cocontinuous for all $X \in V$, but it has no right adjoint. Otherwise, for every $Y \in \overline{V}$ with $X \not\subseteq Y$ there would be a largest set $Z \in \overline{V}$ such that $Z \cap X \subseteq Y$, which is absurd since we can simply adjoin random elements of $V \setminus X$ to $Z$, contradicting maximality.
\end{ex}

\begin{ex}
If $X$ is a scheme, then $\Q_{\fp}(X) \subseteq \Q(X)$ is a finitely cocomplete tensor category. Let us mention at this point that large parts of the theory of cocomplete tensor categories may be also carried out for finitely cocomplete tensor categories. But often infinite colimits will be useful. Besides, quasi-coherent sheaves of finite presentation usually are not stable under direct images.
\end{ex}
 
The following basic result will be used very often:
 
\begin{lemma}[Tensor product of presentations] \label{coeq-tensor}
Let $\C$ be a finitely cocomplete tensor category.
\begin{enumerate}
\item The tensor product of two epimorphisms in $\C$ is again an epimorphism.
\item Assume that $A_i \rightrightarrows B_i \to P_i$ for $i=1,2$ are two coequalizer diagrams. Then we obtain a coequalizer diagram
\[(A_1 \otimes B_2) \oplus (B_1 \otimes A_2) \rightrightarrows B_1 \otimes B_2 \to P_1 \otimes P_2.\]
In particular, the tensor product of two regular epimorphisms is again a regular epimorphism.
\end{enumerate}
\end{lemma}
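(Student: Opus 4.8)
The plan is to reduce everything to the single fact that tensoring with a fixed object is right exact (part of the definition of a finitely cocomplete tensor category, since $A \otimes -$ is required to preserve finite colimits), together with the observation recorded in the preliminaries that a right exact functor preserves epimorphisms. For part 1, let $f : A \to A'$ and $g : B \to B'$ be epimorphisms. I would simply factor their tensor product as
\[f \otimes g = (A' \otimes g) \circ (f \otimes B) : A \otimes B \to A' \otimes B \to A' \otimes B'.\]
Since $- \otimes B$ and $A' \otimes -$ are right exact they preserve epimorphisms, so both factors are epimorphisms, and hence so is their composite $f \otimes g$.

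For part 2, write the two given coequalizers as $A_i \rightrightarrows B_i \xrightarrow{p_i} P_i$, with the parallel pairs named $s,t : A_1 \to B_1$ and $u,v : A_2 \to B_2$. Applying the right exact functor $- \otimes B_2$ to the first diagram produces a coequalizer
\[A_1 \otimes B_2 \rightrightarrows B_1 \otimes B_2 \xrightarrow{p_1 \otimes B_2} P_1 \otimes B_2,\]
and applying $P_1 \otimes -$ to the second produces a coequalizer
\[P_1 \otimes A_2 \rightrightarrows P_1 \otimes B_2 \xrightarrow{P_1 \otimes p_2} P_1 \otimes P_2.\]
I claim the composite $(P_1 \otimes p_2) \circ (p_1 \otimes B_2) = p_1 \otimes p_2$ is the coequalizer asserted in the statement. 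To prove this I would test against an arbitrary object $T$. The two parallel arrows out of $(A_1 \otimes B_2) \oplus (B_1 \otimes A_2)$ are $[\,s \otimes B_2,\, B_1 \otimes u\,]$ and $[\,t \otimes B_2,\, B_1 \otimes v\,]$, so a morphism $h : B_1 \otimes B_2 \to T$ coequalizes the combined pair if and only if it coequalizes \emph{both} component pairs $A_1 \otimes B_2 \rightrightarrows B_1 \otimes B_2$ and $B_1 \otimes A_2 \rightrightarrows B_1 \otimes B_2$.

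The first condition lets me factor $h = \overline{h} \circ (p_1 \otimes B_2)$ uniquely through the first coequalizer, so everything comes down to translating the remaining condition on $h$ into a condition on $\overline{h}$. Concretely, I want: $h$ coequalizes $B_1 \otimes u$ and $B_1 \otimes v$ if and only if $\overline{h}$ coequalizes $P_1 \otimes u$ and $P_1 \otimes v$. This translation is the main obstacle, and it rests on the bifunctoriality identity
\[(p_1 \otimes B_2) \circ (B_1 \otimes u) = (P_1 \otimes u) \circ (p_1 \otimes A_2)\]
(both sides equal $p_1 \otimes u$), together with the fact from part 1 that $p_1 \otimes A_2$ is an epimorphism, being the tensor product of the epimorphism $p_1$ with $\id_{A_2}$. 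Indeed, $h(B_1 \otimes u) = \overline{h}(P_1 \otimes u)(p_1 \otimes A_2)$ and likewise for $v$, so after cancelling the epimorphism $p_1 \otimes A_2$ on the right the two conditions become equivalent.

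Granting this, coequalizing maps $h$ for the combined pair correspond bijectively, via precomposition with $p_1 \otimes B_2$, to maps $\overline{h}$ coequalizing the pair $P_1 \otimes A_2 \rightrightarrows P_1 \otimes B_2$, which in turn correspond bijectively, via precomposition with $P_1 \otimes p_2$, to maps $P_1 \otimes P_2 \to T$; composing these identifications shows that precomposition with $p_1 \otimes p_2$ is a bijection, i.e.\ that $p_1 \otimes p_2$ is the coequalizer of the combined diagram. The final assertion is then immediate: a regular epimorphism is by definition a coequalizer of some pair, so realizing $p_1$ and $p_2$ as the coequalizers above, $p_1 \otimes p_2$ is a coequalizer by what we just proved and hence a regular epimorphism.
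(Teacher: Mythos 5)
Your proof is correct and follows essentially the same route as the paper's: factor the tensor product of epimorphisms through tensoring with identities for part 1, and for part 2 factor a test morphism through one coequalizer tensored with an object, then transfer the remaining condition using the epimorphism $p_1 \otimes A_2$ (the paper uses $A_1 \otimes p_2$, i.e.\ it processes the two factors in the opposite, symmetric order) before factoring through the second. The only difference is this harmless swap of roles and your phrasing via bijections of hom-sets instead of explicit factorizations and uniqueness.
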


\begin{proof}
1. Let $f : A \to A'$ and $g : B \to B'$ be two epimorphisms. Since $A \otimes -$ is cocontinuous, it preserves epimorphisms, hence $A \otimes g : A \otimes B \to A \otimes B'$ is an epimorphism. By the same reason $f \otimes B' : A \otimes B' \to A' \otimes B'$ is an epimorphism. Hence, the composition $f \otimes g = (f \otimes B') (A \otimes g)$ is an epimorphism.
 
2. Let $f_i,g_i : A_i \to B_i$ and $p_i : B_i \to P_i$, so that $p_i$ is a coequalizer of $f_i$ and $g_i$. Define $\tilde{f} : (A_1 \otimes B_2) \oplus (B_1 \otimes A_2) \to B_1 \otimes B_2$ by $\tilde{f}|_{A_1 \otimes B_2} = f_1 \otimes B_2$ and $\tilde{f}|_{B_1 \otimes A_2} = B_1 \otimes f_2$. Similarly we define $\tilde{g}$. We claim that $p_1 \otimes p_2$ is a coequalizer of $\tilde{f}$ and $\tilde{g}$. Clearly it coequalizes these morphisms. Let $h : B_1 \otimes B_2 \to T$ be a test morphism such that $h \tilde{f} = h \tilde{g}$, i.e. $h (f_1 \otimes B_2) = h (g_1 \otimes B_2)$ and $h(B_1 \otimes f_2) = h(B_1 \otimes g_2)$. Since $B_1 \otimes p_2$ is a coequalizer of $B_1 \otimes f_2$ and $B_1 \otimes g_2$, it follows that $h = h' (B_1 \otimes p_2)$ for some $h' : B_1 \otimes P_2 \to T$.

We claim $h'(f_1 \otimes P_2) = h' (g_1 \otimes P_2)$. Since $p_2$ and hence $A_1 \otimes p_2$ is an epimorphism, it suffices to prove $h'(f_1 \otimes p_2) = h'(g_1 \otimes p_2)$. But this follows from the equation $h(f_1 \otimes B_2) = h(g_1 \otimes B_2)$. Since $p_1 \otimes P_2$ is a coequalizer of $f_1 \otimes P_2$ and $g_1 \otimes P_2$, it follows that $h'=h''(p_1 \otimes P_2)$ for some $h'' : P_1 \otimes P_2 \to T$. Hence, $h$ factors through $p_1 \otimes p_2$. The factorization is unique since $p_1 \otimes p_2$ is an epimorphism.
\end{proof}

\begin{prop}[Tensorial extension] \label{extend}
If $\C$ is an $R$-linear abelian tensor category, $\D$ is any finitely cocomplete $R$-linear tensor category and $\C'$ is a full replete tensor subcategory of $\C$ with the properties as in \autoref{extend2}, then there is an equivalence of categories between $R$-linear tensor functors $\C' \to \D$ satisfying the exactness property $(\star)$ and right exact $R$-linear tensor functors $\C \to \D$.
\end{prop}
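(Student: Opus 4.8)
The plan is to invoke the equivalence of \autoref{extend2} for the underlying $R$-linear functors and then check that it is compatible with the tensor structures carried by both sides. By \autoref{extend2} the restriction $G \mapsto G|_{\C'}$ and the extension $F \mapsto \overline{F}$ are mutually inverse equivalences between $R$-linear functors $\C' \to \D$ satisfying $(\star)$ and right exact $R$-linear functors $\C \to \D$. Thus it suffices to show that restriction sends right exact tensor functors to tensor functors satisfying $(\star)$, that extension sends tensor functors satisfying $(\star)$ to tensor functors, and that both operations respect morphisms of tensor functors. The first direction is immediate: if $G : \C \to \D$ is a right exact $R$-linear tensor functor, then since $\C'$ is a tensor subcategory (closed under $\otimes$ and containing the unit), the restriction $G|_{\C'}$ inherits the structure isomorphisms $G(A \otimes B) \cong G(A) \otimes G(B)$ and $G(1) \cong 1$ and is therefore a tensor functor; right exactness gives $(\star)$, and a morphism of tensor functors restricts to one on $\C'$.

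The content is to upgrade the underlying extension $\overline{F}$ of a tensor functor $F : \C' \to \D$ to a tensor functor. For the unit, $1 \in \C'$, so we take $\overline{F}(1) = F(1) \cong 1$. To produce $\overline{F}(M \otimes N) \cong \overline{F}(M) \otimes \overline{F}(N)$, choose presentations $A' \to A \to M \to 0$ and $B' \to B \to N \to 0$ with all terms in $\C'$. Applying \autoref{coeq-tensor} (viewing each presentation $A' \xrightarrow{\alpha} A \to M \to 0$ as the coequalizer of $\alpha$ and $0$) yields an exact sequence $(A' \otimes B) \oplus (A \otimes B') \to A \otimes B \to M \otimes N \to 0$ whose first three terms lie in $\C'$; hence, by the presentation-independence established in \autoref{extend2}, the object $\overline{F}(M \otimes N)$ is the cokernel of $F(A' \otimes B) \oplus F(A \otimes B') \to F(A \otimes B)$. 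Applying \autoref{coeq-tensor} instead inside $\D$ to the cokernel presentations of $\overline{F}(M)$ and $\overline{F}(N)$ exhibits $\overline{F}(M) \otimes \overline{F}(N)$ as the cokernel of $(F(A') \otimes F(B)) \oplus (F(A) \otimes F(B')) \to F(A) \otimes F(B)$. The monoidal structure isomorphisms of $F$ identify the two parallel cokernel diagrams term by term, and their naturality makes the comparison squares commute, so passing to cokernels gives the desired isomorphism.

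It remains to verify that this isomorphism is independent of the chosen presentations, natural in $M$ and $N$, and satisfies the associativity, unit and symmetry coherences. For each of these the key tool is that $\C'$ generates $\C$ by epimorphisms and that, by \autoref{coeq-tensor}(1), a tensor product of epimorphisms is again an epimorphism: every coherence identity between morphisms out of a product $\overline{F}(M) \otimes \overline{F}(N)$ (or a triple product) may be tested after precomposing with the epimorphism coming from a presentation, whereupon it reduces to the corresponding identity among the structure isomorphisms of $F$ on objects of $\C'$, where it holds because $F$ is already a tensor functor. The concise description of $\overline{F}$ on morphisms from Step~3 of \autoref{extend2} is what allows these reductions to be carried out cleanly. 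I expect the associativity pentagon to be the main obstacle, since it forces one to treat the triple tensor product and to compare several presentations simultaneously; the epimorphism-cancellation just described is precisely what controls it. Finally, naturality of the monoidal structure isomorphisms with respect to morphisms of tensor functors shows that the equivalence of \autoref{extend2} restricts to the asserted equivalence between $R$-linear tensor functors $\C' \to \D$ satisfying $(\star)$ and right exact $R$-linear tensor functors $\C \to \D$. (One could alternatively phrase the construction as a monoidal left Kan extension along the inclusion $\C' \hookrightarrow \C$, but the presentation-level argument keeps everything explicit.)
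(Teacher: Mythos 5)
Your proposal is correct, but it takes a genuinely different route from the paper. The paper never returns to presentations after \autoref{extend2}: it argues at the level of whole functors. For fixed $M \in \C'$ it observes that $\overline{F}(M \otimes -)$ and $\overline{F}(M) \otimes \overline{F}(-)$ are both right exact $R$-linear functors $\C \to \D$ whose restrictions to $\C'$ are isomorphic via the tensor structure of $F$; by the equivalence of \autoref{extend2} that isomorphism extends uniquely to all of $\C$, a second application of the same trick in the other variable handles arbitrary $M \in \C$, and all naturality and coherence diagrams (pentagon, unit, symmetry) then commute automatically, since two morphisms of right exact functors agreeing on $\C'$ coincide by fully faithfulness of restriction. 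Your presentation-level construction via \autoref{coeq-tensor} produces the same isomorphisms explicitly, which is what your epimorphism-precomposition strategy for coherence then exploits; it is more computational, and it buys concrete formulas, whereas the paper's argument buys all the coherence verifications for free --- exactly the checks you only sketch and flag as the main obstacle. One small repair to your argument: the sequence $(A' \otimes B) \oplus (A \otimes B') \to A \otimes B \to M \otimes N \to 0$ is not literally a presentation in the sense of \autoref{extend2}, because $\C'$ is not assumed closed under finite direct sums, so presentation-independence does not apply to it directly. Instead, invoke the facts already established in \autoref{extend2} that $\overline{F}$ is right exact and $R$-linear (hence preserves biproducts) and restricts to $F$ on $\C'$: applying $\overline{F}$ to that exact sequence immediately exhibits $\overline{F}(M \otimes N)$ as the cokernel of $F(A' \otimes B) \oplus F(A \otimes B') \to F(A \otimes B)$, which is what you need. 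With that substitution your argument goes through, though you should be aware that the paper's uniqueness trick is available as a cleaner alternative for the coherence and naturality checks.
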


\begin{proof}
Given an $R$-linear tensor functor $F : \C' \to \D$, we endow the $R$-linear right exact extension $\overline{F} : \C \to \D$ (\autoref{extend2}) with the structure of a tensor functor as follows: We have $\overline{F}(1) \cong F(1) \cong 1$. If $M \in \C'$, then we have an isomorphism between the right exact functors $\overline{F}(M \otimes - )$ and $\overline{F}(M) \otimes \overline{F}(-)$ when restricted to $\C'$. By \autoref{extend2} this extends to an isomorphism on $\C$. Similarly, we may generalize to the case that $M \in \C$. Now the rest follows formally from \autoref{extend2}.
\end{proof}

\begin{defi}[Lax and oplax] \label{lax}
A \emph{lax tensor functor} $F : \C \to \D$ between tensor categories $\C,\D$ is a functor between the underlying categories equipped with natural morphisms $1_\D \to F(1_\C)$ and $F(A) \otimes F(B) \to F(A \otimes B)$ subject to certain compatibility conditions (\cite[XI.2]{ML98}). Dually one defines \emph{oplax tensor functors}. There is an obvious notion of morphisms between lax/oplax tensor functors.
\end{defi}

Thus, a tensor functor is the same as a lax tensor functor whose structure morphisms are isomorphisms. Some authors call ``tensor functors'' what we call lax tensor functors and ``strong tensor functors'' what we just call tensor functors.

\begin{ex}[Adjunctions] \label{adj-lax}
If $F : \C \to \D$ is an oplax tensor functor, whose underlying functor has a right adjoint $G : \D \to \C$, then $G$ becomes a lax tensor functor as follows: The morphism
\[F(G(A) \otimes G(B)) \to F(G(A)) \otimes F(G(B)) \to A \otimes B\]
corresponds by adjunction to a morphism
\[G(A) \otimes G(B) \to G(A \otimes B).\]
Similarly, the morphism $F(1_\C) \to 1_\D$ corresponds by adjunction to a morphism $1_\C \to G(1_\D)$. One can check that $F$ is, indeed, a lax tensor functor (\cite[1.4]{Kel74}). If $F$ is a tensor functor, then by construction both the unit $\id_\C \to G F$ and the counit $FG \to \id_\D$ are morphisms of lax tensor functors. For example, if $f : X \to Y$ is a morphism of ringed spaces, then we get a lax tensor functor $f_* : \M(X) \to \M(Y)$ from the tensor functor $f^* : \M(Y) \to \M(X)$.
\end{ex}

\section{Categorification} \label{categorif}

We refer to the wonderful papers \cite{BD98a} and \cite{BD01} for the general principles of \emph{categorification}. We have already remarked that cocomplete tensor categories may be seen as categorified semirings: The tensor product $\otimes$ replaces the multiplication, colimits replace sums and we have a distributive law between them. The initial object $0$ replaces the zero element, the unit object $1$ replaces the unit element.
 
So we may just write $A \cdot B = AB$ and $A+B$ for the tensor product resp. direct sum of $A$ and $B$. More generally, write $\sum_i A_i$ for the direct sum $\bigoplus_i A_i$. For $n \in \N$ let us abbreviate $\bigoplus_{i=1}^{n} A$ by $n \cdot A$ and $A^{\otimes n} = A \otimes \dotsc \otimes A$ ($n$ factors) by $A^n$ (whereas the direct sum resp. product of $n$ copies of $A$ is denoted by $n \cdot A = A^{\oplus n}$ resp. $A^{\times n}$).
 
With these operations we may calculate with objects of a cocomplete tensor category as if we were in a semiring, except of course that equalities have to be replaced by (coherent) isomorphisms. We get an honest semiring (which may be large) by considering isomorphism classes of objects. But this process of \emph{decategorification}, which is common in K-theory, loses a lot of information. For example, the cocomplete cartesian category $\Set$ decategorifies to the semiring of cardinal numbers. Many combinatorial identities are actually ``shadows'' of isomorphisms in $\Set$. This is the idea of the so-called bijective-proof in combinatorics. Let us point out that some ``identities'' actually hold in an arbitrary cocomplete tensor category. We use the Binomial Theorem as a sample.

\begin{lemma}[Binomial Theorem] \label{binomi}
Let $A,B$ be objects of a finitely cocomplete tensor category and $n \in \N$. Then there is a natural isomorphism
\[(A + B)^n \cong \sum\limits_{p=0}^{n} \binom{n}{p} \cdot A^p \cdot B^{n-p}.\]
\end{lemma}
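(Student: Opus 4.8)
The plan is to prove this by induction on $n$, using nothing beyond the categorified distributive law (the fact that $A \otimes -$ preserves finite direct sums, which holds because it is cocontinuous) together with the symmetry and the identifications licensed by coherence (\autoref{coherence}). The base case $n = 0$ is immediate: the empty tensor power $(A+B)^0$ is the unit $1$, while the right-hand side collapses to the single summand $\binom{0}{0} \cdot A^0 B^0 \cong 1 \otimes 1 \cong 1$.

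For the inductive step I would write $(A+B)^{n+1} \cong (A+B)^n \otimes (A+B)$, substitute the inductive hypothesis, and distribute the tensor product over the two summands of $A \oplus B$:
\[
\Bigl(\bigoplus_{p=0}^{n} \binom{n}{p} \cdot A^p B^{n-p}\Bigr) \otimes (A \oplus B) \cong \bigoplus_{p=0}^{n} \binom{n}{p} \cdot \bigl(A^p B^{n-p} \otimes A\bigr) \;\oplus\; \bigoplus_{p=0}^{n} \binom{n}{p} \cdot \bigl(A^p B^{n-p} \otimes B\bigr).
\]
Next I would use the symmetry $S$ to slide the trailing $A$ past the $B^{n-p}$ block, giving $A^p B^{n-p} \otimes A \cong A^{p+1} B^{n-p}$, whereas $A^p B^{n-p} \otimes B \cong A^p B^{n-p+1}$ needs only the associativity and unit identifications. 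Reindexing the first sum by $q = p+1$ and merging the two sums, the multiplicity of $A^q B^{n+1-q}$ becomes $\binom{n}{q-1} + \binom{n}{q} = \binom{n+1}{q}$ by Pascal's rule, which is exactly the asserted decomposition at stage $n+1$. Naturality is then automatic, since distributivity, the associativity and unit constraints, and the symmetry are all natural in $A$ and $B$, and the inductive hypothesis already supplies a natural isomorphism.

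Alternatively, and perhaps more conceptually, one can expand $(A+B)^{n}$ in a single step into the direct sum over all $2^n$ words $\e \in \{A,B\}^n$ of the corresponding tensor monomials, then sort each monomial having exactly $p$ occurrences of $A$ into the normal form $A^p B^{n-p}$ by a fixed permutation built from $S$, and finally collect the $\binom{n}{p}$ words of weight $p$. In either route the only point that genuinely demands care is this reordering step: I would appeal to the Coherence Theorem for symmetric monoidal categories (\autoref{coherence}) to ensure that the chosen sorting isomorphism is well defined and that treating the associativity and unit constraints as identities introduces no inconsistency. I expect this coherence bookkeeping, rather than any substantive mathematical difficulty, to be the sole obstacle; everything else is formal.
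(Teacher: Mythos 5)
Your proposal is correct, and in fact it contains two proofs. The alternative you sketch at the end --- expand $(A+B)^n$ by distributivity into the direct sum over all $2^n$ words in $\{A,B\}$, sort each word by a symmetry isomorphism into the normal form $A^p \cdot B^{n-p}$, and count the $\binom{n}{p}$ words of weight $p$ --- is precisely the paper's proof of \autoref{binomi}. Your primary route, induction on $n$ with Pascal's rule $\binom{n}{q-1}+\binom{n}{q}=\binom{n+1}{q}$, is a genuine (if mild) variant: it trades the global counting argument for a local one, so that the only reordering ever needed is sliding a single factor $A$ past a block $B^{n-p}$, which keeps the coherence bookkeeping to one application of the symmetry per summand and makes naturality visibly inherited through each inductive step. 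What the paper's one-shot expansion buys is brevity and a direct categorification of the bijective counting proof of the binomial theorem (fitting the theme of \autoref{categorif}); what your induction buys is that the combinatorics is isolated in Pascal's rule and the appeal to the Coherence Theorem (\autoref{coherence}) is reduced to its most innocuous instance. Both arguments are sound, and your closing remark identifies the only delicate point correctly: one must know that any two sorting isomorphisms built from associators, unitors and symmetries realizing the same permutation of tensor factors agree, which is exactly what coherence for symmetric monoidal categories provides.
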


\begin{proof}
Because of the distributive law, we may expand $(A+B)^n$ inductively. It is the sum of all possible products $C_1 \cdot \dotsc \cdot C_n$ with $C_i \in \{A,B\}$. Using the symmetry isomorphisms, we may move the $A$'s to the front. If $p$ is the number of $A$'s, we arrive at $A^p \cdot B^{n-p}$. It appears $\binom{n}{p}$ times.
\end{proof}

For example, in the category of $\Z$-graded $R$-modules $\tilde{\gr}_\Z(R)$ with twisted symmetry (\autoref{chain}), the isomorphism $(A+B)^2 \cong A^2 + 2 AB + B^2$ is given by $(a+b)(a'+b') \mapsto (aa',ab',(-1)^{|a'||b|} a'b,bb')$.
 
What we have said so far actually applies to tensor categories with finite direct sums. But cocomplete tensor categories $\C$ also admit infinite direct sums. This allows us to use categorified formal power series: If $X$ is an object of $\C$, then for every sequence of objects $A_0,A_1,\dotsc$ we may look at the power series
\[\sum_{n=0}^{\infty} A_n \cdot X^n.\]
Observe that the usual product formula
\[\sum_{n=0}^{\infty} A_n \cdot X^n ~ \cdot ~ \sum_{n=0}^{\infty} B_n \cdot X^n ~ \cong ~ \sum_{n=0}^{\infty} \left(\sum_{p=0}^{n} A_p \cdot B_{n-p}\right) \cdot X^n\]
holds. But colimits also include quotients by group actions. For example, the symmetric group $\S_n$ acts on $X^n := X^{\otimes n}$ (\autoref{symtrivial}) and the quotient may be denoted by $X^n / n!$. We have the exponential power series
\[\exp(X) := \sum_{n=0}^{\infty} X^n / n!\]
with the usual properties
\[\exp(0) \cong 1,~ \exp(X+Y) \cong \exp(X) \cdot \exp(Y).\]
Actually $\exp(X)$ is nothing else than the symmetric algebra over $X$ (\autoref{symmetric-power}). For $\C=\M(R)$ it is the usual symmetric algebra over some $R$-module $X$ and for $\C=\Set$ it is the set of finite \emph{multisets} whose elements come from $X$. Unfortunately, we cannot define logarithms in this context since their usual power series expressions contain negative numbers.

If $\C$ is an $R$-linear cocomplete tensor category with $2 \in R^*$, we will define exterior powers in $\C$ (\autoref{exterior-power}). Motiviated by the fact that $\dim(\Lambda^n(V))=\binom{\dim(V)}{n}$ for finite-dimensional vector spaces $V$, we define binomial coefficients in $\C$ by $\binom{X}{n} := \Lambda^n(X)$ for $n \in \N$ and $X \in \C$. The Vandermonde identity for binomial coefficients categorifies to the isomorphism (see \autoref{sym-exakt})
\[\binom{X+Y}{n} \cong \sum_{p=0}^{n} \binom{X}{p} \cdot \binom{Y}{n-p}.\]
Motivated by the complex power series expansion $(1+z)^r = \sum_{p=0}^{\infty} \binom{r}{p} \cdot z^p$ for $z \in \mathds{C}$ with $|z|<1$ and $r \in \R$, we define a new power operation on $\C$ by
\[(1+X)^Y := \sum_{p=0}^{\infty} \binom{Y}{p} \cdot X^p\]
for objects $X,Y \in \C$. Of course we do not have to worry about convergence. Clearly $(1+X)^0  \cong 1$ and $(1+X)^1=1+X$. The Vandermonde isomorphism above yields
\[(1+X)^{Y} \cdot (1+X)^{Z} \cong (1+X)^{Y+Z}.\]
In particular, $(1+X)^{n \cdot 1}$  agrees with the usual power $(1+X)^n$ for $n \in \N$. We do not know if this power operation has been already studied for modules. Unfortunately, the isomorphism $(1+X)^{Y \cdot Y'} \cong ((1+X)^Y)^{Y'}$ (the right hand side is well-defined because of $(1+X)^Y = 1+\sum_{p=1}^{\infty} \binom{Y}{p} \cdot X^p$) does \emph{not} hold for modules. Note that
\[2^X = \sum_{p=0}^{\infty} \binom{X}{p} = \Lambda(X)\]
is the usual exterior algebra over $X$. Observe that although $\Set$ is not linear, we can still define $\binom{X}{p}$ to be the set of subsets of $X$ with $p$ elements and the isomorphisms above still hold. We do not know any conceptual explanation which includes $\Set$ and $\M(R)$ simultaneously.

Although we have defined natural numbers inside $\C$ via $n \cdot 1_C$, we cannot define rational numbers: For example, if $1 \cong 2 \cdot X$, then $0 = \binom{1}{2} = \binom{X}{2} + \binom{X}{1} \cdot \binom{X}{1} +\binom{X}{2}$, hence $X^2=0$ and then $X \cong X(X+X)\cong 0$, hence $1 \cong 0$ and $\C = 0$.

We even have an analogue of integrals, namely coends or weighted colimits (\autoref{colimits}). We have the following reformulation of the Yoneda Lemma:

\begin{lemma} \label{yoneda}
Let $I$ be a small category and let $\C$ be a cocomplete category. Then every functor $F : I^{\op} \to \C$ may be expressed as a coend:
\[F \cong \int^{p \in I} \Hom(-,p) \otimes F(p).\]
\end{lemma}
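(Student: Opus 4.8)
The plan is to prove this co-Yoneda (density) formula by reducing to a pointwise statement and then invoking the universal property of the coend recalled in \autoref{colimits} together with the copower adjunction. First I would observe that since $\C$ is cocomplete, colimits --- and in particular coends --- in the functor category are computed objectwise. Hence for each $q \in I$ it suffices to exhibit a natural isomorphism
\[\int^{p \in I} \Hom(q,p) \otimes F(p) \cong F(q),\]
where $\Hom(q,p) \otimes F(p) = \bigoplus_{f : q \to p} F(p)$ is the $\Set$-copower introduced in paragraph 3 of \autoref{colimits}.

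Next I would verify this isomorphism by comparing the functors $\Hom_\C(-,T)$ corepresented by both sides. By the universal property of the coend, a morphism $\int^{p} \Hom(q,p) \otimes F(p) \to T$ is the same as a dinatural family of morphisms $\Hom(q,p) \otimes F(p) \to T$. Applying the copower adjunction $\Hom_\C(\Hom(q,p) \otimes F(p),T) \cong \Hom_{\Set}(\Hom(q,p),\Hom_\C(F(p),T))$, such data amounts to an assignment $f \mapsto \phi_f$ sending each $f : q \to p$ to a morphism $\phi_f : F(p) \to T$ in $\C$. Unwinding the dinaturality square for a morphism $g : p \to p'$ in $I$ --- and keeping track of the contravariance of $F$, so that $F(g) : F(p') \to F(p)$, while post-composition $\Hom(q,g)$ sends $f$ to $g f$ --- shows that dinaturality is precisely the compatibility condition $\phi_{g f} = \phi_f \circ F(g)$ for all composable $f,g$.

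Finally I would observe that such a compatible family is completely determined by $\psi := \phi_{\id_q} : F(q) \to T$: taking $g = f$ and the identity in place of the first index forces $\phi_f = \psi \circ F(f)$, and conversely any $\psi : F(q) \to T$ defines a compatible family by this formula, since $F(g f) = F(f) F(g)$ for the contravariant $F$. This gives a bijection, natural in $T$, between morphisms out of the coend and elements of $\Hom_\C(F(q),T)$, so by the Yoneda Lemma in $\C$ the coend is canonically isomorphic to $F(q)$; concretely, the universal dinatural cocone is $\bigl(F(f) : F(p) \to F(q)\bigr)_{f : q \to p}$. Naturality in $q$ is then a routine check. The only delicate point is the careful bookkeeping of the variances in the dinaturality condition; once that is pinned down, the identification of compatible families with morphisms out of $F(q)$ is immediate, so I expect no serious obstacle beyond this.
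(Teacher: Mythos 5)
Your proof is correct and complete. Note that the paper offers no proof of this lemma at all: it is stated as a ``reformulation of the Yoneda Lemma'' and immediately followed by the measure-theoretic analogy, so there is no argument to compare against, and your write-up supplies exactly the standard justification that the paper leaves implicit. Your argument is also consistent with the paper's conventions: the pointwise reduction is legitimate because the coend is a coequalizer of coproducts (paragraph 10 of \autoref{colimits}), hence a colimit, and colimits of functors valued in the cocomplete category $\C$ are computed objectwise; the copower $\Hom(q,p) \otimes F(p) = \bigoplus_{f : q \to p} F(p)$ is the one from paragraph 3 of \autoref{colimits}. The one delicate point is the variance bookkeeping, and you get it right: dinaturality for $g : p \to p'$ unwinds to $\phi_{gf} = \phi_f \circ F(g)$, such a family is determined by $\psi = \phi_{\id_q}$ via $\phi_f = \psi \circ F(f)$, and conversely any $\psi$ yields a dinatural family because $F(gf) = F(f)F(g)$ for the contravariant $F$. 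Your explicit universal cocone $\bigl(F(f) : F(p) \to F(q)\bigr)_{f : q \to p}$ also makes the remaining naturality in $q$ immediate: for $h : q' \to q$, both composites on the summand indexed by $f$ equal $F(fh)$.
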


This may be seen as a categorification of the formula
\[\mu(A) = \int_{p \in I} \chi_A(p) \cdot d \mu(p)\]
for a measure $\mu$ on a measurable space $I$ and measurable subsets $A \subseteq I$.
\section{Element notation} \label{Element}

In this work we will often encounter situations where it is too complicated or even confusing to write down large commutative diagrams or equations of morphisms in a tensor category. Instead, we will use some kind of \emph{element notation} which mimics the usual one from the case of sets or modules over a commutative ring. Each equation of ``elements'' written down in element notation is actually an equation of morphisms. A similar notation is already quite common for homomorphisms of sheaves on a space, where ``elements'' are actually sections of the involved sheaves (not just global ones). It also has been widely used in the cartesian case (where it is justified by the Yoneda embedding).

\begin{ex}[Equality]
Let $f,g : A \otimes B \to C$ be two morphisms. When we write $f(a \otimes b) = g(a \otimes b)$ for all ``elements'' $a \in A$, $b \in B$, we actually mean $f = g$.
\end{ex}

\begin{ex}[Symmetrizers]
We know how to turn a bilinear form into a symmetric bilinear form. This can be generalized as follows: Let $\C$ be an $R$-linear tensor category with $2 \in R^*$. Let $f : A \otimes A \to T$ be an arbitrary morphism in $\C$. Define $s : A \otimes A \to T$ by
\[s(a \otimes b) = \frac{f(a \otimes b) + f(b \otimes a)}{2}.\]
in element notation. This just means $s = \frac{1}{2}(f + f \circ S_{A,A})$. In element notation we immediately see that $s$ is symmetric, i.e. $s(a \otimes b) = s(b \otimes a)$. This actually means $f = f \circ S_{A,A}$. We get a rigorous proof directly from $S_{A,A}^2=\id_{A \otimes A}$. If we define $t : A \otimes A \to T$ by
\[t(a \otimes b) = \frac{f(a \otimes b) - f(b \otimes a)}{2},\]
then $t$ is anti-symmetric in the sense that $t(a \otimes b) = - t(b \otimes a)$. This actually means $t = \frac{1}{2}(f - f \circ S_{A,A})$ and $t = - t \circ S_{A,A}$. Note that $f = s + t$.
\end{ex}

\begin{rem}[Caution]
One has to be a bit careful when one wants to translate element notation to the usual notation in specific examples: Consider the tensor category of graded $R$-modules, equipped with the twisted symmetry (\autoref{chain}). In the setting of the previous example, for homogeneous elements $a,b \in A$ -- in the usual sense -- of the graded module $A$ we have
\[s(a \otimes b) = \frac{f(a \otimes b) + (-1)^{|a||b|} f(b \otimes a)}{2}.\]
That $s$ is symmetric means $s(a \otimes b) = (-1)^{|a||b|} s(b \otimes a)$. We would like to emphasize that this does not need any extra reasoning, since the previous example already covered arbitrary tensor categories. This general point of view enables us to treat graded modules and modules simultaneously; of course even a lot more types of modules, for example quasi-coherent sheaves on a scheme. Nevertheless we only need a single element notation which covers all cases.
\end{rem}

\begin{ex}[Epimorphisms]
Let $f,g : A \to B$ be morphisms. Assume that $p : E \to A$ is an epimorphism. If $f(p(e))=g(p(e))$ for all ``elements'' $e \in E$, then we have $f(a) = g(a)$ for all ``elements'' $a \in A$, i.e. $f = g$. This is just a reformulation of the definition of an epimorphism. We would like to point out that -- using element notation -- we may forget about the abstract definition of an epimorphism and pretend in our proofs that this means that every ``element'' of $A$ has the form $p(e)$ for some ``element'' $e$ of $E$.
\end{ex}
 
\begin{rem}[Soundness]
In general, the translation of a calculation in element notation to a rigorous proof can be done by rote. It works in each case: Every equation is actually a commutative diagram. Either commutativity is a special instance of the Coherence Theorem  (\autoref{coherence}), or it follows from the assumptions. A chain of equations pastes together commutative diagrams.

Let us illustrate this:
\end{rem}

\begin{ex}
Let $f : A \otimes A \otimes A \to B$ be a morphism satisfying
\[f(a \otimes b \otimes c) = f(b \otimes a \otimes c) = f(a \otimes c \otimes b)\]
in element notation, i.e. we can interchange the first two and the last two variables. Then it is clear that we can also interchange the first with the last variable:
\[f(a \otimes b \otimes c) = f(b \otimes a \otimes c) = f(b \otimes c \otimes a) = f(c \otimes b \otimes a).\]
Formally this can be derived from an equation of symmetry automorphisms of $A^{\otimes 3}$ and the two assumptions
\[f =f \circ (S_{A,A} \otimes \id_A) = f \circ (\id_A \otimes S_{A,A}).\]
See also \autoref{symtrivial}.
\end{ex}
 
Here is a more complicated example (needed in \autoref{localfunc}):
 
\begin{lemma} \label{non-unital}
Let $\C$ be a tensor category, $s : A \to 1$ be a split epimorphism in $\C$. Assume that $A$ is a non-unital commutative algebra in $\C$ (\autoref{algebradef}), say with multiplication $m : A \otimes A \to A$, such that the following diagram commutes:
\[\xymatrix@C=60pt@R=50pt{A \otimes A \ar[r]^{m} \ar[d]_{A \otimes s} & A \\ A \otimes 1 \ar[ur]^{\cong}_{r_A} & }\]
Then $s$ is an isomorphism.
\end{lemma}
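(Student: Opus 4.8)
The plan is to exhibit an explicit two‑sided inverse of $s$. Since $s$ is a split epimorphism, fix a section $u : 1 \to A$ with $s \circ u = \id_1$, and write $e := u(1)$ for the corresponding ``element'', so that $s(e)=1$ in element notation. It then suffices to prove $u \circ s = \id_A$: together with $s \circ u = \id_1$ this exhibits $u$ as a two‑sided inverse of $s$, so that $s$ is an isomorphism.

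The hypothesis on the triangle reads, in element notation, $m(a \otimes b) = a \cdot s(b)$, where $a \cdot s(b)$ denotes the scalar $s(b) \in 1$ acting on $a$ through $r_A$ (rigorously, $m = r_A \circ (\id_A \otimes s)$). First I would record that $e$ is a unit of $A$: setting $b = e$ gives $m(a \otimes e) = a \cdot s(e) = a \cdot 1 = a$, which at the level of morphisms is $m \circ (\id_A \otimes u) = r_A \circ (\id_A \otimes s) \circ (\id_A \otimes u) = r_A \circ (\id_A \otimes su) = r_A$, using $su = \id_1$. Commutativity of $A$ (that is, $m \circ S_{A,A} = m$) then yields $m(e \otimes a) = m(a \otimes e) = a$ as well.

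The key computation, for an arbitrary ``element'' $a \in A$, is the chain
\[a = m(a \otimes e) = m(e \otimes a) = e \cdot s(a) = u(s(a)) = (u \circ s)(a).\]
Here the second equality is commutativity, the third is the defining relation applied to $m(e \otimes a)$, and the fourth is the identity $u(s(a)) = e \cdot s(a) = u(1)\cdot s(a)$, expressing that applying the morphism $u : 1 \to A$ to the scalar $s(a)$ scales its value $e$ — formally, naturality of the unit constraint $r$ together with $su = \id_1$. This proves $u \circ s = \id_A$, and hence $s$ is an isomorphism.

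I expect the only delicate point to be the bookkeeping with the unit and symmetry constraints hidden inside the element notation — in particular the fourth equality $u(s(a)) = e \cdot s(a)$, and the coherence identity $r_A \circ S_{1,A} = l_A$ used when transporting the relation across the symmetry. None of this requires $\C$ to be $R$‑linear; it is pure coherence, so by the soundness of element notation (\autoref{Element}) the displayed chain is legitimate. If a reader prefers a constraint‑level argument, the same content can be phrased as deriving the identity $r_A \circ (\id_A \otimes s) = l_A \circ (s \otimes \id_A)$ from commutativity and the relation, precomposing with $\id_A \otimes u$, and cancelling the isomorphism $r_A$ to obtain $\id_A = u \circ s$.
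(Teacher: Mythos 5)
Your proof is correct and is essentially identical to the paper's: both fix a section $u$ (the paper calls it $i$), show $e := u(1)$ is a right unit via the hypothesis and $su = \id_1$, upgrade it to a left unit by commutativity, and then compute $a = e \cdot a = e\,s(a) = u(s(a))$ to conclude $u s = \id_A$. Even the delicate coherence points you flag (naturality of the unit constraint and $r_A \circ S_{1,A} = l_A$) are precisely the ones appearing in the paper's rigorous diagram-level version.
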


\begin{proof}[Proof in element notation]
We have a product $A \otimes A \to A$, $a \otimes b \mapsto a \cdot b$ and a map $s : A \to 1$ such that $a \cdot b = a \, s(b)$. Besides, there is some map $i : 1 \to A$ such that $s(i(r))=r$. Then $e:=i(1)$ is a right unit for the product, because $a \cdot e = a \, s(i(1))=a \, 1 = a$. Since $\cdot$ is commutative, it follows that $e$ is also a left unit. This implies $a = e \cdot a = e \, s(a) = i(s(a))$. Hence, $i$ is inverse to $s$.
\end{proof}

\begin{proof}[Rigorous proof] Choose some $i : 1 \to A$ with $s \circ i=\id_{1}$. Then $i$ is a right unit for $A$, since
\[m \circ (A \otimes i) = r_A \circ (A \otimes s) \circ (A \otimes i) = r_A \circ (A \otimes si) = r_A.\]
Since $A$ is commutative, it is also a left unit:
\[m \circ (i \otimes A) = m \circ S_{A,A} \circ (i \otimes A) = m \circ (A \otimes i) \circ S_{1,A} = r_A \circ S_{1,A} = l_A\]
Hence, in the following diagram, every part commutes:
\[\xymatrix@C=40pt@R=40pt{A \ar[d]_{s} & 1 \otimes A \ar[l]_{l_A} \ar[d]^{1 \otimes s} \ar@{=}[r] & 1 \otimes A \ar[dd]_{i \otimes s} \ar@{=}[r] & 1 \otimes A \ar[dr]^{l_A} \ar[d]_{i \otimes A} \\
1 \ar[d]_{i} & 1 \otimes 1 \ar[d]^{i \otimes 1} \ar[l]^{l_1=r_1} & & A \otimes A \ar[r]^{m} \ar[d]_{A \otimes s} & A \\
A & A \otimes 1 \ar[l]^{r_A} \ar@{=}[r] & A \otimes 1 \ar@{=}[r] & A \otimes 1 \ar[ur]_{r_A} & }\]
The outer shape implies that $is=\id_{A}$.
\end{proof}
\section{Adjunction between stacks and cocomplete tensor categories} \label{adju}

Every morphism $f : Y \to X$ of schemes (or algebraic stacks) induces a pullback functor $f^* : \Q(X) \to \Q(Y)$, which is a cocontinuous tensor functor. Now we may ask if $f$ can be reconstructed from $f^*$ and if any cocontinuous tensor functor $\Q(Y) \to \Q(X)$ is induced by a morphism. This motivates the definition of \emph{tensorial schemes}, or more generally \emph{tensorial stacks}. We will give a quite conceptual definition of them with the help of an adjunction between stacks and cocomplete tensor categories.

In this section, we assume that all schemes and stacks are defined over some fixed commutative ring $R$ and we work with $R$-linear cocomplete tensor categories. This assumption is not really necessary and in fact one should omit it when one is interested in algebraic geometry over $\F_1$.
 
\begin{defi}[$\SPEC$ and $\Q$] \label{stacksdef}
\noindent
\begin{enumerate}
\item For us a \textit{stack} is a pseudo-functor $\Sch^{\op} \to \Cat$ which satisfies effective descent with respect to the fpqc topology. Together with natural transformations and modifications, we obtain the $2$-category of stacks, denoted by $\Stack$. We do not require that stacks factor through groupoids $\Gpd$.
\item Let $\C$ be a cocomplete tensor category. Its \emph{spectrum} $\SPEC(\C)$ is a stack defined by
\[\SPEC(\C)(X) = \Hom_{c\otimes}\bigl(\C,\Q(X)\bigr)\]
for schemes $X$. If $X \to Y$ is a morphism of schemes, its pullback functor $\Q(Y) \to \Q(X)$ induces a functor $\SPEC(\C)(Y) \to \SPEC(\C)(X)$. Then $\SPEC(\C)$ is a stack basically because $\Q(-)$ is a stack by descent theory for quasi-coherent modules (\cite[4.23]{Vis05}). This construction provides us with a $2$-functor
\[\SPEC : \Cat_{c\otimes}^{\op} \to \Stack.\]
\item Let $F$ be a stack. The cocomplete tensor category of quasi-coherent modules $\Q(F)$ is defined as $\Hom_{\Stack}(F,\Q(-))$ with pointwise defined tensor products and colimits. This means that a quasi-coherent module $M$ on $F$ is given by functors $M_X : F(X) \to \Q(X)$ for every scheme $X$, which are compatible with base change, which means that there are compatible isomorphisms $f^* \circ M_X \cong M_Y \circ F(f)$ for morphisms $f : Y \to X$. These isomorphisms also belong to the data. We obtain a $2$-functor
\[\Q : \Stack \to \Cat_{c\otimes}^{\op}.\]
\end{enumerate}
\end{defi}

\clearpage

\begin{rem} \noindent
\begin{enumerate}
\item If $F$ is an algebraic stack, then our definition of quasi-coherent modules on $F$ coincides with the usual one (\cite[7.18]{Vis89}). 
\item The definition of $\SPEC$ has also appeared in the independent work \cite[Section 2]{Liu12} (including a very detailed proof that $\SPEC(\C)$ is, indeed, a stack). The current proof of \cite[Theorem 3.4]{Liu12} has a serious gap.
\end{enumerate}
\end{rem}
 
I have learned the following adjunction from David Ben-Zvi. It is a categorification of the well-known adjunction between commutative rings and schemes \cite[Proposition 1.6.3]{EGAI}.
  
\begin{prop}[Adjunction between $\SPEC$ and $\Q$] \label{adjunction}
If $F$ is a stack and $\C$ is a cocomplete tensor category, then there is a natural equivalence
\[\Hom_{\Stack}(F,\SPEC(C)) \simeq \Hom_{c\otimes}(\C,\Q(F)).\]
Thus, $\SPEC : \Cat_{c\otimes}^{\op} \to \Stack $ is right adjoint to $\Q : \Stack \to \Cat_{c\otimes}^{\op}$.
\end{prop}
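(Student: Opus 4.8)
The plan is to exhibit both $\Hom_{\Stack}(F,\SPEC(\C))$ and $\Hom_{c\otimes}(\C,\Q(F))$ as equivalent to one and the same auxiliary category $\mathcal{W}$, naturally in $F$ and $\C$, and then conclude. I would let $\mathcal{W}$ be the category whose objects assign to every scheme $X$ a functor $\Theta_X : F(X) \to \Hom_{c\otimes}(\C,\Q(X))$ together with coherent base-change isomorphisms $f^* \circ \Theta_X(\xi) \cong \Theta_Y\bigl(F(f)(\xi)\bigr)$ for every morphism $f : Y \to X$ and every $\xi \in F(X)$, and whose morphisms are the evident modifications. This is exactly the ``two-variable'' datum of a cocontinuous tensor functor $\C \to \Q(X)$ depending pseudo-naturally on the pair $(X,\xi)$.

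First I would identify $\Hom_{\Stack}(F,\SPEC(\C)) \simeq \mathcal{W}$ by unwinding the definitions of \autoref{stacksdef}. A morphism of stacks $F \to \SPEC(\C)$ is a pseudo-natural transformation, hence consists of a functor $F(X) \to \SPEC(\C)(X) = \Hom_{c\otimes}(\C,\Q(X))$ for each $X$ together with the coherence isomorphisms expressing pseudo-naturality in $X$; these are precisely the data of an object of $\mathcal{W}$, and modifications of stack morphisms correspond to morphisms in $\mathcal{W}$.

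The substantial step is the equivalence $\Hom_{c\otimes}(\C,\Q(F)) \simeq \mathcal{W}$, which is where the definition of $\Q(F)$ as $\Hom_{\Stack}(F,\Q(-))$ with \emph{pointwise} colimits and tensor products does the work. Given a cocontinuous tensor functor $G : \C \to \Q(F)$, each $G(c) \in \Q(F)$ is a quasi-coherent module on $F$, i.e.\ a family of functors $G(c)_X : F(X) \to \Q(X)$ equipped with base-change isomorphisms. Currying, that is, exchanging the roles of the argument $c \in \C$ and the pair $(X,\xi)$, produces for each $X$ a functor $\Theta_X(\xi) := \bigl(c \mapsto G(c)_X(\xi)\bigr) : \C \to \Q(X)$. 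Because colimits and the tensor product in $\Q(F)$ are computed objectwise, the evaluation functor $\Q(F) \to \Q(X)$, $M \mapsto M_X(\xi)$, is cocontinuous and strong symmetric monoidal, so each $\Theta_X(\xi)$ is again a cocontinuous tensor functor, and the base-change data of the $G(c)$ assemble into the coherence isomorphisms required for an object of $\mathcal{W}$. Conversely, given $\Theta \in \mathcal{W}$, I set $G(c)_X(\xi) := \Theta_X(\xi)(c)$, which by the same pointwise reasoning defines a quasi-coherent module $G(c)$ on $F$ and a cocontinuous tensor functor $G : \C \to \Q(F)$. These assignments are mutually inverse up to canonical isomorphism and carry morphisms to morphisms. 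Finally I would check that both equivalences are pseudo-natural in $F$ and in $\C$, so that their composite is a natural equivalence $\Hom_{\Stack}(F,\SPEC(\C)) \simeq \Hom_{c\otimes}(\C,\Q(F))$, yielding the claimed $2$-adjunction.

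The main obstacle is not the underlying correspondence of objects, which is the formal Fubini-style currying exchange, but the $2$-categorical bookkeeping. One must verify that the pseudo-naturality squares of a stack morphism, the base-change isomorphisms built into $\Q(F)$, and the coherence data of $\mathcal{W}$ all match up, and that each equivalence is compatible with the $2$-cells on its side (modifications versus morphisms of cocontinuous tensor functors). Keeping these coherences straight, together with the verification that colimits and tensor products in $\Q(F)$ really are computed pointwise, is the part that requires genuine care rather than formal manipulation.
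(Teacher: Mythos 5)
Your proposal is correct and is essentially the paper's argument made explicit: the paper's proof declares the adjunction ``entirely formal'' and simply defines the unit $F \to \SPEC(\Q(F))$ and counit $\C \to \Q(\SPEC(\C))$ componentwise by evaluation, which is exactly the currying exchange you spell out through your auxiliary category $\mathcal{W}$. Your version unpacks the same mechanism as a direct equivalence of hom-categories (relying, as the paper implicitly does, on colimits and tensor products in $\Q(F)$ being pointwise so that evaluation at $(X,\xi)$ is a cocontinuous tensor functor), so there is no substantive difference in route, only in how much of the $2$-categorical bookkeeping is written out.
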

 
\begin{proof}
This is entirely formal. For a stack $F$ and a scheme $X$, the component functor
\[F(X) \to \Hom_{c\otimes}(\Q(F),\Q(X))\]
of the unit $\eta_F : F \to \SPEC(\Q(F))$ is defined to be the obvious evaluation which comes from the definition of $\Q(F)$.

The counit $\varepsilon_\C : \C \to \Q(\SPEC(\C))$ is also given by evaluation, i.e.
\[\varepsilon_\C(T)_X : \Hom_{c\otimes}(\C,\Q(X)) \to \Q(X)\]
evaluates at $T \in \C$.
\end{proof}

\begin{defi}[Tensorial stacks and stacky tensor categories] \label{tensorial-stacky}
A stack $F$ is called \emph{tensorial} if the unit
\[\eta_F : F \to \SPEC(\Q(F))\]
is an equivalence, i.e. for every scheme $Y$ we have an equivalence
\[F(Y) \simeq \Hom_{c\otimes}(\Q(F),\Q(Y)).\]
A cocomplete tensor category $\C$ is called \textit{stacky} if the counit
\[\varepsilon_\C : \C \to \Q(\SPEC(\C))\]
is an equivalence.
\end{defi}
 
For example, a scheme $X$ is tensorial if for every scheme $Y$ the functor
\[\Hom(Y,X) \to \Hom_{c\otimes}\bigl(\Q(X),\Q(Y)\bigr),~ f \mapsto f^*\]
is an equivalence of categories. This forces $\Hom_{c\otimes}(\Q(X),\Q(Y))$ to be essentially discrete. We will say more about tensoriality in \autoref{tensoriality}.
  
Every adjunction restricts to an equivalence between its fixed points. Thus:

\begin{prop}
The $2$-functors $\SPEC$ and $\Q$ yield  an anti-equivalence of $2$-categories between tensorial stacks and stacky cocomplete tensor categories.
\end{prop}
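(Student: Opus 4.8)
The plan is to invoke the purely formal principle that any $2$-adjunction restricts to a $2$-equivalence between the full sub-$2$-categories on which the unit, respectively the counit, is an equivalence. Concretely, write $L = \Q$ and $R = \SPEC$, so that $\eta_F : F \to \SPEC(\Q(F))$ is the unit and $\varepsilon_\C : \C \to \Q(\SPEC(\C))$ is the counit of the adjunction established in \autoref{adjunction}. By definition, tensorial stacks are exactly the objects $F$ with $\eta_F$ an equivalence, and stacky tensor categories are exactly the objects $\C$ with $\varepsilon_\C$ an equivalence. The two triangle identities, which in the bicategorical setting hold up to invertible modification, read $\varepsilon_{\Q(F)} \circ \Q(\eta_F) \simeq \id_{\Q(F)}$ and $\SPEC(\varepsilon_\C) \circ \eta_{\SPEC(\C)} \simeq \id_{\SPEC(\C)}$.

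First I would show that $\Q$ and $\SPEC$ restrict to the claimed subcategories. Suppose $F$ is tensorial, so $\eta_F$ is an equivalence; since $\Q$ is a $2$-functor it preserves equivalences, hence $\Q(\eta_F)$ is an equivalence. The first triangle identity then exhibits $\varepsilon_{\Q(F)}$ as a $1$-cell which becomes isomorphic to an identity after precomposition with an equivalence, so by the $2$-out-of-$3$ property for equivalences $\varepsilon_{\Q(F)}$ is itself an equivalence; thus $\Q(F)$ is stacky. Symmetrically, if $\C$ is stacky then $\SPEC(\varepsilon_\C)$ is an equivalence, and the second triangle identity forces $\eta_{\SPEC(\C)}$ to be an equivalence, so $\SPEC(\C)$ is tensorial. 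Hence $\Q$ maps tensorial stacks to stacky tensor categories and $\SPEC$ maps back.

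It then remains to assemble the equivalence. On the sub-$2$-category of tensorial stacks the restricted unit $\eta$ is, by definition, a pseudonatural transformation all of whose components are equivalences, i.e. a pseudonatural equivalence $\id \simeq \SPEC \circ \Q$; likewise on the sub-$2$-category of stacky tensor categories the restricted counit $\varepsilon$ is a pseudonatural equivalence between $\id$ and $\Q \circ \SPEC$ (read in $\Cat_{c\otimes}^{\op}$). These two pseudonatural equivalences exhibit the restricted $\Q$ and $\SPEC$ as mutually inverse $2$-functors, and the $\op$ in the target turns this into an anti-equivalence of $2$-categories, as claimed.

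The only real work is bicategorical bookkeeping rather than any genuine mathematical obstacle: one must make sure that $\Q$ and $\SPEC$ really do preserve equivalences (true for any $2$-functor), that restricting a pseudonatural transformation to a full sub-$2$-category again yields a pseudonatural transformation, and that the triangle identities — available only up to invertible modification — still support the $2$-out-of-$3$ argument. Since all of this is formal and the relevant structure is supplied by \autoref{adjunction}, the statement follows.
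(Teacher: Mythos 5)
Your proposal is correct and is exactly the paper's approach: the paper disposes of this proposition in one line by citing the formal principle that every adjunction restricts to an equivalence between its fixed points, and your argument is precisely the bicategorical working-out of that principle (restriction via the triangle identities and two-out-of-three, then assembling the restricted unit and counit into the equivalence).
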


\begin{rem}[Analogies]
This can be seen as a categorification of the anti-equivalence between affine schemes and commutative rings. It connects algebraic geometry, which is commutative algebra locally, with categorified or global commutative algebra. We will explain later what this means in detail. Notice that there is a striking analogy to functional analysis: We have a functor $C(-,\mathds{C})$ from topological spaces to the dual category of (involutive) $\mathds{C}$-algebras which is left adjoint to the Gelfand spectrum $\Hom(-,\mathds{C})$. The Theorem of Gelfand-Naimark states that this adjunction restricts to an equivalence of categories between compact Hausdorff spaces and the dual category of unital commutative $C^*$-algebras. Back to our setting, we may imagine stacks as $2$-geometric objects whose $2$-regular functions are precisely quasi-coherent modules. Tensorial stacks are those stacks which are determined by their $2$-semiring of $2$-regular functions. 
\end{rem}

\begin{rem}[Size issues]
The reader may notice that our definitions of $\Q(F)$ for a general stack $F$ and of $\SPEC(\C)$ for a general cocomplete tensor category $\C$ might cause set-theoretic problems, since they are outside of some Grothendieck universe fixed beforehand. However, the definitions of tensorial stacks and stacky cocomplete tensor categories including their anti-equivalence still make sense.
\end{rem}

\begin{rem}[Stacky tensor categories]
There are lots of examples of cocomplete tensor categories which are not stacky, since there are quite a few properties of quasi-coherent modules which do not hold in all tensor categories. An example is that every epimorphism $\O \to \O$ is already an isomorphism (see \autoref{jim}). Unfortunately we do not know any classification of stacky tensor categories. At least Sch\"appi (\cite{Sch12a}) has proven that the anti-equivalence above restricts to an anti-equivalence between Adams stacks and so-called weakly Tannakian categories. The latter have an intrinsic characterization in characteristic zero (\cite{Sch13}). This is a generalization of the classical Tannaka duality (\cite{Del90}).
\end{rem}

\begin{rem}[Artin's criteria]
The spectrum $\SPEC(\C)$ of a cocomplete tensor category $\C$ should only be considered to be a geometric object when it is an algebraic stack. Artin has found criteria for a stack to be algebraic (\cite[Theorem 5.3]{Art74}). Recently Hall and Rydh have found a refinement of these criteria (\cite[Main Theorem]{Hall13}). Let us write down what they mean for our stack $X=\SPEC(\C)$  defined by $X(Y) = \Hom_{c\otimes/R}(\C,\Q(Y))$. For brevity we will just say algebras when we mean commutative algebras.

Let $R$ be an excellent ring and let $\C$ be a cocomplete $R$-linear tensor category. Then the stack $\SPEC(\C)$ is an algebraic stack, locally of finite presentation over $R$, if and only if the following conditions are satisfied:
\begin{itemize}
\item For every $R$-algebra $A$ the category $\Hom_{c\otimes/R}(\C,\M(A))$ is actually a groupoid.
\item For every directed diagram of $R$-algebras $\{A_i\}$ the canonical functor
\[\colim_i \Hom_{c\otimes/R}(\C,\M(A_i)) \to \Hom_{c\otimes/R}(\C,\M(\colim_i A_i))\]
is an equivalence of groupoids.
\item For homomorphisms $A' \to A \leftarrow B$ of local artinian $R$-algebras of finite type, where $A' \to A$ is surjective and $B \to A$ is an isomorphism on residue fields, the natural functor from $\Hom_{c\otimes}(\C,\M(A' \times_A B))$ to
\[\Hom_{c\otimes}(\C,\M(A')) \times_{\Hom_{c\otimes}(\C,\M(A))} \Hom_{c\otimes}(\C,\M(B))\]
is an equivalence of groupoids. 
\item For every complete local noetherian $R$-algebra $(A,\mathfrak{m})$ the canonical functor
\[\Hom_{c\otimes}(\C,\M(A)) \to \lim_n \Hom_{c\otimes}(\C,\M(A/\mathfrak{m}^n))\]
is an equivalence of groupoids.
\item Automorphisms, deformations and obstructions for $\SPEC(\C)$ are bounded, constructible and Zariski-local.
\end{itemize}
It would take too long to elaborate here what the last condition means in terms of $\C$. Let us explain for example what boundedness of deformations and automorphisms means. Let $A$ be a finitely generated $R$-algebra without zero divisors, let $s : \C \to \M(A)$ be an $R$-linear cocontinuous tensor functor. Define a category $\mathsf{Def}_\C(s)$ whose objects are $R$-linear cocontinuous tensor functors $\overline{s} : \C \to \M(A[\e]/\e^2)$ with an isomorphism $\overline{s} \bmod \e \cong s$. The set of isomorphism classes of this category is actually an $A$-module. This is required to be coherent. Besides, the $A$-module of automorphisms in $\mathsf{Def}_\C(s)$ of the trivial extension $s[\e]$ is required to be coherent.
\end{rem}



\clearpage
\chapter{Commutative algebra in a cocomplete tensor category} \label{Comm}

\section{Algebras and modules} \label{algmod}
 
We can develope commutative algebra internal to an arbitrary (cocomplete) tensor category. Let us start with algebras and modules over them. The following definitions are well-known (\cite[I.6]{SR72}):

\begin{defi}[Algebras] \label{algebradef}
Let $\C$ be a tensor category.
\begin{enumerate}
\item Recall that an \emph{algebra object} in $\C$ is an object $A \in \C$ together with two morphisms $e : 1 \to A$ (unit) and $m : A \otimes A \to A$ (multiplication) such that $m$ is associative (i.e. $m \circ (A \otimes m) = m \circ (m \otimes A)$ as morphisms $A \otimes A \otimes A \to A$) and $e$ is a unit for $m$ (i.e. $m \circ (A \otimes e) = m \circ (e \otimes A) = \id_A$). Formally it is a triple $(A,e,m)$, but we will often abuse notation and just write $A$.
\item We call $A$ \emph{commutative} if $m \circ S_{A,A} = m$.
\item There is an obvious notion of homomorphisms of algebras. We obtain the category of algebras $\Alg(\C)$ and the full subcategory $\CAlg(\C)$ of commutative algebras.
\item If we do not require the existence of a unit $e$, we speak of non-unital (commutative) algebras.
\end{enumerate}
\end{defi}

\begin{rem}
Many texts use the terminology of monoid objects. We have not adopted this usage since we turn our attention mainly to linear tensor categories, where one is more used to the terminology of algebras.
\end{rem}

\begin{ex}
An algebra object in $\Set$ is a monoid in the usual sense. Algebras in $\M(R)$ are $R$-algebras in the usual sense. More generally, if $X$ is a scheme, then commutative algebras in $\Q(X)$ are quasi-coherent algebras on $X$. Algebras in the tensor category of chain complexes (\autoref{chain}) are also known as differential-graded algebras.
\end{ex}

\begin{ex}[Tensor algebra] \label{tensoralgebra}
Let $\C$ be a cocomplete tensor category. Then the forgetful functor $\Alg(\C) \to \C$ has a left adjoint: It maps an object $M \in \C$ to the \emph{tensor algebra} $T(M)$ over $M$ (\cite[VII.3. Theorem 2]{ML98}). The underlying object is
\[T(M) = \bigoplus_{n=0}^{\infty} M^{\otimes n}.\]
The unit is the inclusion of $M^{\otimes 0} = 1$. The multiplication is induced by the isomorphisms $M^{\otimes p} \otimes M^{\otimes q} \to M^{\otimes p+q}$.
For $\C=\M(R)$ this is the usual tensor algebra. For $\C=\Set$ we recover the free monoid of words over an alphabet $M$. In a quantale (see \autoref{quantale-def}) we have $T(M) = \sup_{n \to \infty} M^n$.
\end{ex}

\begin{rem}[Constructions with algebras] \label{algebra-cons} \noindent
\begin{itemize}
\item The initial algebra is given by $1=T(0)$.
\item If $(A,e,m)$ is an algebra in $\C$, then its opposite algebra is defined by $(A,e,m)^{\op} := (A,e,m \circ S_{A,A})$.
\item If $(A,e,m),(B,e',m')$ are (commutative) algebras in $\C$, then their tensor product $(A \otimes B,e \otimes e', (m \otimes m') \circ (A \otimes S_{B,A} \otimes B))$ is also a (commutative) algebra in $\C$.
\item This tensor product gives the coproduct in $\CAlg(\C)$. In fact, $\CAlg(\C)$ is cocomplete (\cite[Proposition 1.2.14]{Mar09}).
\end{itemize}
\end{rem}

\begin{defi}[Center]
Let $(A,e,m)$ be an algebra in a closed tensor category $\C$ with equalizers. Then we have two morphisms $A \to \HOM(A,A)$, one corresponding to the multiplication of $A$ and one corresponding to the multiplication of $A^{\op}$. We define $Z(A)$ to be the equalizer of these two morphisms. This turns out to be a subalgebra of $A$, the \emph{center} of $A$.
\end{defi}
 
\begin{defi}[Modules and bimodules] \label{moduldef}
Let $(A,e,m)$ be an algebra in a tensor category $\C$.
\begin{enumerate}
\item We define a \emph{right $A$-module} to be an object $M \in \C$ equipped with a morphism $r : M \otimes A \to M$ such that $r \circ (M \otimes e)=\id_M$ as well as $r \circ (M \otimes m) = r \circ (r \otimes A)$ as morphisms $M \otimes A \otimes A \to M$. There is an obvious notion of homomorphisms of right $A$-modules. The category of right $A$-modules is denoted by $\M(A)$. For example, $A$ is a right $A$-module with $r=m$.
\item If $\C$ is closed, $r$ may be identified with its dual $\check{r} : M \to \HOM(A,M)$ and we may also define modules in terms of this datum.
\item It is clear how to define the category of \emph{left $A$-modules}. If $\C$ is closed, a left module action of $A$ on $M$ may be also described as a homomorphism of algebras $A \to \HOM(M,M)$ in $\C$. If $A$ is commutative, then right modules can be made into left modules and vice versa, so we just call them \emph{modules}.
\item More generally, if $(A,B)$ is a pair of algebras in $\C$, we define an \emph{$(A,B)$-bimodule} in $\C$ as an object $M$ with a morphism $s : A \otimes M \otimes B \to M$ with the property that $M$ becomes both a left $A$-module via $s \circ (A \otimes M \otimes e_B) : A \otimes M \to M$ and a right $B$-module via $s \circ (e_A \otimes M \otimes B) : M \otimes B \to M$. We obtain the category of $(A,B)$-bimodules. It is isomorphic to the category of right $A^{\op} \otimes B$-modules. If $A$ is commutative, we usually consider every $A$-module also as an $(A,A)$-bimodule in a canonical way.
\end{enumerate}
\end{defi}

\begin{ex}[Free modules]
The forgetful functor $\M(A) \to \C$ has a left adjoint which maps $X \in \C$ to the right $A$-module $(X \otimes A, X \otimes m)$, the \emph{free $A$-module} on $X$ (\cite[Proposition 1.3.3]{Mar09}).
\end{ex}

\begin{defi}[Tensor product of bimodules]
Let $\C$ be a cocomplete tensor category and let $A,B,C$ be algebras in $\C$. If $M$ is an $(A,B)$-bimodule and $N$ is a $(B,C)$-bimodule in $\C$, we define $M \otimes_B N$ to be the following $(A,C)$-bimodule:\\
The underlying object of $M \otimes_B N$ is the coequalizer of the two obvious morphisms $M \otimes B \otimes N \rightrightarrows M \otimes N$. Roughly, $M \otimes_B N$ results from $M \otimes N$ by identifying the two $B$-actions. The actions of $A$ on $M$ and of $C$ on $N$ induce a  morphism $A \otimes M \otimes N \otimes C \to M \otimes N \twoheadrightarrow M \otimes_B N$ which coequalizes the two morphisms from $A \otimes M \otimes B \otimes N \otimes C$, hence it lifts to a morphism $A \otimes (M \otimes_B N) \otimes C \to M \otimes_B N$. This defines the $(A,C)$-bimodule structure on $M \otimes_B N$. In particular, if $A$ is commutative and $M,N$ are two right $A$-modules, we obtain a right $A$-module $M \otimes_A N$. In fact, we have the following well-known result:
\end{defi}

\begin{prop}[The tensor category of modules] \label{modA}
Let $\C$ be a cocomplete tensor category and $A$ be a commutative algebra in $\C$, then $\M(A)$ becomes a cocomplete tensor category with unit $A$ and tensor product $\otimes_A$. Colimits are created by the forgetful functor $\M(A) \to \C$. Moreover, its left adjoint $- \otimes A : \C \to \M(A)$ becomes a cocontinuous tensor functor. 
\end{prop}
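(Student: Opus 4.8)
The plan is to build the structure of $\M(A)$ in three stages: first the colimits, then the tensor product $\otimes_A$ together with its cocontinuity, and finally the symmetric monoidal coherence and the tensor structure on the free functor.

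First I would show that the forgetful functor $U : \M(A) \to \C$ creates all colimits. Given a diagram $(M_i)$ in $\M(A)$, form $M := \colim_i M_i$ in $\C$. Because $- \otimes A$ is cocontinuous, the canonical morphism $\colim_i (M_i \otimes A) \to M \otimes A$ is an isomorphism, so the family of actions $r_i : M_i \otimes A \to M_i$ induces a unique morphism $r : M \otimes A \to M$. The two module axioms $r \circ (M \otimes e) = \id_M$ and $r \circ (M \otimes m) = r \circ (r \otimes A)$ hold because both sides of each agree after precomposition with every colimit injection --- using that $- \otimes A$ and $- \otimes A \otimes A$ preserve the colimit --- and hence hold on the nose by uniqueness of morphisms out of a colimit. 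A routine check of the universal property then shows that $(M,r)$ is the colimit in $\M(A)$; in particular $\M(A)$ is cocomplete.

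Next, $\otimes_A$ was already defined above as the coequalizer of $M \otimes A \otimes N \rightrightarrows M \otimes N$, which exists since $\C$ is cocomplete, and it carries a canonical $A$-module structure. For cocontinuity I would fix $M$ and observe that $M \otimes_A -$ is assembled entirely from the cocontinuous functor $M \otimes - : \C \to \C$ and a coequalizer; since colimits commute with colimits and $U$ creates them, $M \otimes_A -$ preserves all colimits, and symmetrically in the first variable. Thus, once the monoidal structure is in place, $\M(A)$ will indeed be a cocomplete tensor category.

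For the monoidal structure I would construct every constraint as the unique factorization of the corresponding constraint of $\C$ through the defining coequalizers. The unit isomorphism $A \otimes_A M \cong M$ is the map induced by the action $A \otimes M \to M$ (which coequalizes the two arrows precisely by module associativity), with inverse induced by $e : 1 \to A$; the two are mutually inverse by the unit axiom. The associativity constraint is induced from $\alpha$ in $\C$, and the symmetry $M \otimes_A N \cong N \otimes_A M$ from $S$ in $\C$ --- here commutativity of $A$ is exactly what allows the pair of arrows coequalized in the source to be matched with those in the target. Since every constraint descends from a constraint of $\C$ along the coequalizer projections, which are regular epimorphisms and tensor to epimorphisms by \autoref{coeq-tensor}, the pentagon, triangle and hexagon identities for $\M(A)$ follow from those in $\C$; in element notation one simply writes $m \otimes_A n \mapsto n \otimes_A m$ and reads off all coherence from the Coherence Theorem (\autoref{coherence}). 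Finally, the free functor $F = - \otimes A$ is left adjoint to $U$, hence cocontinuous, and is a tensor functor because $F(1) = 1 \otimes A \cong A$ is the unit of $\M(A)$ and $(X \otimes A) \otimes_A (Y \otimes A) \cong (X \otimes Y) \otimes A$ collapses the redundant copy of $A$, giving a natural isomorphism $F(X) \otimes_A F(Y) \cong F(X \otimes Y)$ whose compatibility with the constraints is immediate from naturality. The step I expect to cost the most is the monoidal one: although every constraint is forced and ultimately inherited from $\C$, verifying that the coequalizer-induced maps genuinely satisfy the coherence axioms requires care, and this is where element notation and the stability of regular epimorphisms under tensoring (\autoref{coeq-tensor}) do the bulk of the work.
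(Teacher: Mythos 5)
Your proposal is correct and follows essentially the same route as the paper's proof: colimits created by the forgetful functor via cocontinuity of $- \otimes A$, cocontinuity of $\otimes_A$ from the coequalizer description plus colimits commuting with colimits, and the tensor structure on $- \otimes A$ via the collapse $(S \otimes A) \otimes_A (T \otimes A) \cong (S \otimes T) \otimes A$. The only difference is that the paper delegates the monoidal coherence verification to a citation (\cite{Mar09}) as a ``straightforward generalization of the case $\C = \Ab$,'' whereas you sketch how the constraints descend along the defining coequalizers --- a detail the paper omits but which matches what the reference does.
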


\begin{proof}
The proof that $\M(A)$ becomes a tensor category is a straightforward generalization of the known case $\C=\Ab$ (\cite[Proposition 1.2.15]{Mar09}). For example, in order to prove $M \otimes_A A \cong M$, one checks that the right module action $M \otimes A \to M$ lifts to a morphism $M \otimes_A A \to M$, conversely that the unit of $A$ induces a morphism $M \to M \otimes A \twoheadrightarrow M \otimes_A A$ and that these morphisms are inverse homomorphisms of $A$-modules.

If $(M_i)_{i}$ is a diagram of $A$-modules, then the colimit of the underlying objects $\colim_i M_i$ in $\C$ carries the structure of an $A$-module via
\[(\colim_i M_i) \otimes A \cong \colim_i (M_i \otimes A) \to \colim_i M_i.\]
It is the unique one which makes the colimit inclusions $A$-module homomorphisms. It is easy to verify that $\colim_i M_i$ is then also the colimit in $\M(A)$ (\cite[Proposition 1.2.14]{Mar09}).

Let us show that the tensor product commutes with colimits in $\M(A)$. If $M$ is some $A$-module and $(N_i)_{i}$ is a diagram of $A$-modules, then their colimit $\colim_i N_i$ is constructed as above. It follows that $M \otimes_A (\colim_i N_i)$ is the coequalizer of the two evident morphisms
\[M \otimes A \otimes (\colim_i N_i) \rightrightarrows M \otimes \colim_i N_i\]
which (since $\C$ is a cocomplete tensor category) identify with the colimit of the evident morphisms
\[M \otimes A \otimes N_i \rightrightarrows M \otimes N_i.\]
Since colimits commute with colimits, we see that, in fact,
\[M \otimes_A (\colim_i N_i) \cong \colim_i (M \otimes_A N_i).\]
Thus, $\M(A)$ is a cocomplete tensor category. Finally, for $S,T \in \C$ there is a canonical isomorphism
\[(S \otimes A) \otimes_A (T \otimes A) \cong (S \otimes T) \otimes A\]
of $A$-modules, basically because on the left hand side the two factors of $A$ are identified with each other. We can write down two inverse isomorphisms explicitly, given in element notation by $(s \otimes a) \otimes (t \otimes b) \mapsto (s \otimes t) \otimes (a \cdot b)$ resp. $(s \otimes a) \otimes (t \otimes 1) = (s \otimes 1) \otimes (t \otimes a) \mapsfrom (s \otimes t) \otimes a$.
\end{proof}

\begin{rem}[Restriction and extension of scalars]
Given a homomorphism of algebras $A \to B$ we obtain a forgetful functor $\M(B) \to \M(A)$, $M \mapsto M|_A$ (restriction of scalars). It is left adjoint to $\M(A) \to \M(B)$, $N \mapsto N \otimes_A B$ (extension of scalars).
\end{rem}

\begin{rem}[Amitsur complex] \label{Amitsur}
If $A$ is an algebra in a linear tensor category $\C$, then we have the \emph{Amitsur complex}
\[0 \to 1 \to A \to A^{\otimes 2} \to A^{\otimes 3} \to \dotsc\]
where the differentials are alternating sums of insertions of the unit of $A$. That is, $A^{\otimes n} \to A^{\otimes n+1}$ maps $a_1 \otimes \dotsc \otimes a_n$ in element notation to
\[\sum_{k=0}^{n} (-1)^k a_1 \otimes \dotsc \otimes a_k \otimes 1 \otimes a_{k+1} \otimes \dotsc \otimes a_n.\]
More generally, we may tensor the Amitsur complex with any object $M \in \C$:
\[0 \to M \to M \otimes A \to M \otimes A^{\otimes 2} \to M \otimes A^{\otimes 3} \to \dotsc\]
\end{rem}

\begin{prop}[Rank is unique]
Let $\C$ be a cocomplete $R$-linear tensor category. Let $A$ be a commutative algebra in $\C$ with $A \neq 0$ such that $A^{\oplus n} \cong A^{\oplus m}$ as $A$-modules for some $n,m \in \N$. Then $n=m$. If $\C$ is locally finitely presentable, the same holds for all cardinals $n,m$.
\end{prop}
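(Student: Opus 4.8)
The plan is to reduce the statement to the classical invariant basis number property of a nonzero commutative ring. The natural candidate is $S := \End_{\M(A)}(A)$, the endomorphism ring of the unit object of the cocomplete tensor category $\M(A)$ (\autoref{modA}). By the Eckmann--Hilton argument this is a commutative $R$-algebra, and it is nonzero: the forgetful functor $\M(A) \to \C$ is faithful and sends $A$ to $A \neq 0$, so $\id_A \neq 0$ in $\M(A)$, whence the unit $\id_A$ of $S$ is nonzero.

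First I would introduce the additive functor
\[ H := \Hom_{\M(A)}(A,-) : \M(A) \to \M(S), \]
where the $S$-module structure on $H(M)$ is given by precomposition; this is $S$-linear since pre- and post-composition commute. As $\Hom(A,-)$ always preserves finite biproducts, $H(A^{\oplus k}) \cong S^{\oplus k}$ as $S$-modules for every finite $k$. Applying $H$ to the given isomorphism $A^{\oplus n} \cong A^{\oplus m}$ of $A$-modules therefore yields an isomorphism $S^{\oplus n} \cong S^{\oplus m}$ of $S$-modules.

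It then remains to invoke that a nonzero commutative ring has invariant basis number. For this I would pick a maximal ideal $\mathfrak{m} \subseteq S$ and base change along $S \to k := S/\mathfrak{m}$: the isomorphism $S^{\oplus n} \cong S^{\oplus m}$ becomes $k^{\oplus n} \cong k^{\oplus m}$ over the field $k$, and since the cardinality of a basis is a well-defined invariant of a vector space, we conclude $n = m$. Note that this argument is uniform in the cardinals $n,m$, so the only thing separating the finite from the infinite case is the behaviour of $H$ on infinite direct sums.

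This is exactly where the main obstacle lies. For infinite $n,m$ I must know that $H$ preserves arbitrary coproducts, i.e. that $H(A^{\oplus \kappa}) \cong S^{(\kappa)}$; writing $A^{\oplus \kappa}$ as the directed colimit of its finite subsums, this holds precisely when $A$ is a finitely presentable object of $\M(A)$. Here the hypothesis enters: colimits in $\M(A)$ are created by the forgetful functor to $\C$ (\autoref{modA}), and $A \cong \O_\C \otimes A$ is the free $A$-module on $\O_\C$, so that $\Hom_{\M(A)}(A,-) \cong \Hom_\C(\O_\C,-)$ on underlying objects. Hence finite presentability of $A$ in $\M(A)$ follows from finite presentability of the unit $\O_\C$ in $\C$, which is part of $\C$ being a locally finitely presentable tensor category. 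With $H$ preserving all coproducts, the reasoning of the previous paragraph applies verbatim to arbitrary cardinals $n,m$.
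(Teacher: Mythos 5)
Your proposal is correct and follows essentially the same route as the paper: apply $\Hom$ out of the unit object of $\M(A)$ to land in modules over the commutative ring $S=\End_{\M(A)}(A)\neq 0$, turning $A^{\oplus n}\cong A^{\oplus m}$ into $S^{\oplus n}\cong S^{\oplus m}$, and then invoke invariant basis number for nonzero commutative rings. The only difference is that you spell out what the paper leaves implicit -- the nonvanishing of $S$, the maximal-ideal reduction to a field, and the fact that finite presentability of $\O_\C$ transfers to the unit $A$ of $\M(A)$ so that $\Hom_{\M(A)}(A,-)$ preserves infinite coproducts -- which is a welcome clarification rather than a divergence.
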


\begin{proof}
By considering $\M(A)$ instead of $\C$ we may assume that $A=\O$, the unit of $\C$. We know that $S:=\End(\O)$ is a commutative $R$-algebra such that $S \neq 0$. Applying $\Hom(\O,-) : \C \to \M(S)$ to the given isomorphism $\O^{\oplus n} \cong \O^{\oplus m}$, we obtain an isomorphism of $S$-modules $S^{\oplus n} \cong S^{\oplus m}$. It is known that this implies $n=m$.
\end{proof}

\begin{rem}
If $\C$ is not linear, the rank doesn't have to be unique. For example, in any quantale we have $1 \cong \sup(1,1) = 1 \oplus 1$. \marginpar{I've added this remark.}
\end{rem}

\begin{prop}[Lagrange's Theorem] \label{lagrange}
Let $\C$ be a tensor category. Let $A \to B$ be a homomorphism of algebras in $\C$. Let $M$ be a free right $B$-module. Assume that $B|_A$ is a free right $A$-module. Then the right $A$-module $M|_A$ is also free. Specifically, if $M \cong T \otimes B$ in $\M(B)$ and $B|_A \cong S \otimes A$ in $\M(A)$, then $M|_A \cong (T \otimes S) \otimes A$.
\end{prop}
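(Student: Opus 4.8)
The plan is to reduce everything to two purely formal observations, since no colimits are involved here: the statement only concerns free modules and restriction of scalars (in the sense of the remark on restriction and extension of scalars), so it lives in an arbitrary tensor category without any need for $\otimes_A$.

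First I would record the following bookkeeping fact. For any object $T \in \C$ and any right $A$-module $(N,r)$, the object $T \otimes N$ carries a canonical right $A$-module structure with action
\[ (T \otimes N) \otimes A \cong T \otimes (N \otimes A) \xrightarrow{T \otimes r} T \otimes N, \]
and $N \mapsto T \otimes N$ is a functor $\M(A) \to \M(A)$ sending an $A$-linear map $\phi$ to the $A$-linear map $T \otimes \phi$; in particular it preserves isomorphisms. Next I unwind the hypotheses. The free $B$-module $M \cong T \otimes B$ has right $B$-action $T \otimes m_B$, where $m_B$ is the multiplication of $B$. Writing $f : A \to B$ for the given homomorphism, restriction of scalars replaces this action by its precomposition with $\id_{T \otimes B} \otimes f$, yielding the action $T \otimes (m_B \circ (B \otimes f))$. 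But $m_B \circ (B \otimes f)$ is exactly the right $A$-action on $B|_A$, so as right $A$-modules we have $M|_A \cong (T \otimes B)|_A = T \otimes (B|_A)$, the right-hand side carrying the module structure from the observation above.

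Then I would chain the isomorphisms of right $A$-modules
\[ M|_A \cong T \otimes (B|_A) \cong T \otimes (S \otimes A) \cong (T \otimes S) \otimes A. \]
The middle isomorphism is $T \otimes (-)$ applied to the given $A$-linear isomorphism $B|_A \cong S \otimes A$, which is again $A$-linear by functoriality. The last isomorphism is the associativity constraint of $\C$, and one checks that it intertwines the action $T \otimes (S \otimes m_A)$ on the left with the free-module action $(T \otimes S) \otimes m_A$ on the right, so that $(T \otimes S) \otimes A$ is precisely the free $A$-module on $T \otimes S$. This yields $M|_A \cong (T \otimes S) \otimes A$, as claimed. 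The only point requiring care — the main, though mild, obstacle — is the $A$-linearity of these canonical comparison maps: that $T \otimes \phi$ is a module homomorphism and that the associativity constraint is compatible with the two module structures. Both reduce to the naturality of the structure isomorphisms of $\C$ together with the Coherence Theorem (\autoref{coherence}), which lets us treat associativity and unit constraints as identities and thereby dispenses with the diagram chases.
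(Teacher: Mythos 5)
Your proof is correct and is essentially identical to the paper's: the paper's entire proof is the chain $M|_A \cong (T \otimes B)|_A \cong T \otimes B|_A \cong T \otimes (S \otimes A) \cong (T \otimes S) \otimes A$, which is exactly your argument with the $A$-linearity verifications left implicit. Your added care in checking that $T \otimes (-)$ is a functor on $\M(A)$ and that the associativity constraint is $A$-linear is sound but not a departure from the paper's route.
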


\begin{proof}
We have $M|_A \cong (T \otimes B)|_A  \cong T \otimes B|_A \cong T \otimes (S \otimes A) \cong (T \otimes S) \otimes A$.
\end{proof}

\begin{ex}
If we apply \autoref{lagrange} to $\C=\Set$, we obtain (a generalization of) Lagrange's Theorem in group theory. If we apply \autoref{lagrange} to $\C=\M(R)$, we obtain (a generalization of) the multiplicativity formula for degrees of field extensions. The language of tensor categories unifies both results.
\end{ex}

\begin{defi}[$A$-algebras]
If $\C$ is a cocomplete tensor category and $A$ is a commutative algebra in $\C$, then an \emph{$A$-algebra} is an algebra in the cocomplete tensor category $\M(A)$. Equivalently, an $A$-algebra is an algebra $B$ in $\C$ equipped with a morphism $A \to Z(B)$ of algebras (if the center exists), i.e. a morphism of algebras $f : A \to B$ with the property that $m_B \circ (f \otimes B) = m_B \circ S_{B,B} \circ (f \otimes B)$. Thus, a commutative $A$-algebra is equivalently a commutative algebra $B$ in $\C$ equipped with a homomorphism $A \to B$ (\cite[Proposition 1.2.15]{Mar09}).
\end{defi}
  
\begin{rem}[Homomorphism modules]
With the notations above, if $\C$ is closed and has equalizers, then $\M(A)$ is also closed and has equalizers. Specifically, for $A$-modules $M,N$ we have that their internal hom $\HOM_A(M,N)$ is the equalizer of the morphism
\[\HOM(M,N) \to \HOM(M,\HOM(A,N))\]
which is induced by the (dualized) action of $A$ on $N$ and the morphism
\[\HOM(M,N) \to \HOM(M \otimes A,N) \cong \HOM(M,\HOM(A,N))\]
which is induced by the action of $A$ on $M$. Again this is a straightforward generalization of the known case $\C=\Ab$ (\cite[Proposition 1.2.17]{Mar09}).
\end{rem}

Other types of algebras can also be defined internal to a tensor category:

\begin{defi}[Hopf algebras] \label{hopf}
If $\C$ is a tensor category, we may define \emph{coalgebras} in $\C$ as algebras in $\C^{\op}$, \emph{bialgebras} as coalgebras in the tensor category of algebras in $\C$ (\cite{Por08a}) and \emph{Hopf algebras} as bialgebras $A$ equipped with a morphism $S : A \to A$ in $\C$ such that the usual diagram commutes (\cite{Por08b}):
\[\xymatrix{& A \otimes A \ar[rr]^{S \otimes A} && A \otimes A \ar[dr] & \\ A \ar[ur] \ar[rr] \ar[dr] &&  1 \ar[rr] && A \\ & A \otimes A \ar[rr]^{A \otimes S} && A \otimes A \ar[ur] & }\]
\end{defi}

\begin{ex}
Hopf algebras in $\M(R)$ are Hopf algebras over $R$ in the usual sense, whereas Hopf algebras in $\Set$ are just groups. Therefore, from the tensor categorical point of view these two concepts are really the same. An easy but useful observation is that Hopf algebras are preserved by tensor functors. If we apply this to the unique cocontinuous tensor functor $\Set \to \C$ and some group $G$ (in $\Set$), we obtain a Hopf algebra $\O_\C[G]$ in $\C$, a generalization of the usual group algebra. The underlying object is a $G$-indexed direct sum of copies of $\O_\C$. The multiplication is induced by the one of $G$ and likewise the comultiplication extends the one of $G$ itself, i.e. $G \to G \times G$, $g \mapsto (g,g)$.
\end{ex}

\begin{rem}[Lie algebras]
In a linear tensor category $\C$, we may define \emph{Lie algebras} in $\C$ as objects $L$ equipped with a morphism $[-,-] : L \otimes L \to L$ (Lie bracket) such that, in element notation, $[a,b]=-[b,a]$ (antisymmetry) and $[a,[b,c]]+[c,[a,b]]+[b,[c,a]]=0$ (Jacobi identity). For more details we refer to \cite{GV13}. We can also define modules over (or representations of) $L$ as objects $V$ together with a morphism $\omega : L \otimes V \to V$ such that, in element notation, $\omega([x,y],v) = \omega(x,\omega(y,v)) - \omega(y,\omega(x,v))$. More formally, it is required that $\omega \circ ([-,-] \otimes V) = \omega \circ (L \otimes \omega) \circ (\id - S_{\L,\L} \otimes V)$ as morphisms $L \otimes L \otimes V \to V$.
\end{rem}

\section{Ideals and affine schemes} \label{affineschemes}

In this section let $\C$ be a locally presentable tensor category. We develope some basic ideal theory inside of $\C$ and then define (affine) schemes relative to $\C$. See \cite{Mar09} and \cite{ToVa09} for a similar but more abstract account. See \cite{Dei05} for the case $\C=\Set$, which leads to monoid schemes. For $\C=\Ab$ we retrieve schemes (\cite{EGAI}). For technical reasons we assume that $\C$ is \emph{balanced}, i.e. that every epimorphism, which is a monomorphism, is already an isomorphism.
 
\begin{defi}[Ideals]
Let $A$ be a commutative algebra in $\C$. An \emph{ideal} is a subobject of $A$ in $\M(A)$. Equivalently, it is an object $I$ in $\C$ equipped with a monomorphism $I \hookrightarrow A$ such that $I \otimes A \to A \otimes A \to A$ factors through $I \hookrightarrow A$. We obtain the partial order of ideals of $A$. It has a least element $0$ (zero ideal) and a greatest element $A$ (unit ideal).
\end{defi}

This generalizes various specific notions of ideals. 

\begin{ex}
A commutative algebra in the cartesian tensor category $\Set$ is just a commutative monoid $A$ and an ideal $I \subseteq A$ is a subset such that $I \cdot A \subseteq A$. The zero ideal is here $\emptyset$. A commutative algebra in $\Ab$ is just a commutative ring $A$ and an ideal $I \subseteq A$ is an additive subgroup such that $I \cdot A \subseteq A$. More generally, let $X$ be a scheme. Then a commutative algebra in $\Q(X)$ is just a quasi-coherent algebra $A$ on $X$ and an ideal $I \subseteq A$ is a quasi-coherent submodule such that $I \cdot A \subseteq A$, i.e. what is usually called a quasi-coherent ideal.
\end{ex}

\begin{defi}[Ideal sum]
For a family of ideals $(I_i)$ the induced morphism $\bigoplus_i I_i \to A$ factors as $\bigoplus_i I_i \twoheadrightarrow \sum_i I_i \hookrightarrow A$, where the first arrow is a regular epimorphism and the second is a monomorphism. We claim that $\sum_i I_i$ is an ideal of $A$, called the \emph{sum} of the ideals $I_i$.
\end{defi}

In fact, let us choose a pullback square
\[\xymatrix{P \ar[r] \ar[d] & (\sum_i I_i) \otimes A \ar[d]  \\ \sum_i I_i \ar[r] & A.}\]
Since $\sum_i I_i \to A$ is a monomorphism, the same is true for $P \to (\sum_i I_i) \otimes A$. We claim that it is an epimorphism, hence an isomorphism by our assumption, so that $(\sum_i I_i) \otimes A \to A$ factors through $\sum_i I_i$ as desired. The claim follows from the following commutative diagram:
\[\xymatrix{\bigoplus_i (I_i \otimes A) \ar[rr]^{\cong} \ar[dd] \ar@{..>}[dr] && (\bigoplus_i I_i) \otimes A \ar@{->>}[d] \\ &  \ar[r] \ar[d] P & (\sum_i I_i) \otimes A \ar[d]  \\ \bigoplus_i I_i \ar@{->>}[r] & \sum_i I_i \ar[r] & A.}\]
One checks that $\sum_i I_i$ is indeed the smallest ideal of $A$ which contains all $I_i$.

\begin{defi}[Ideal product]
If $I,J$ are ideals of $A$, the induced morphism $I \otimes J \to A \otimes A \to A$ factors as $I \otimes J \twoheadrightarrow I \cdot J \hookrightarrow A$, where $I \otimes J \twoheadrightarrow I \cdot J$ is a regular epimorphism and $I \cdot J \hookrightarrow A$ is a monomorphism. As above one checks that $I \cdot J$ is an ideal of $A$, called the \emph{product} of $I$ and $J$, using the following commutative diagram:
\end{defi}

\[\xymatrix{(I \otimes J) \otimes A \ar@{->>}@/^2pc/[drr] \ar[dd]^{\cong} \ar@{..>}[dr] \ar[dr] &&  \\ &  \ar[r] \ar[d] P & (I \cdot J) \otimes A  \ar[d] \\ I \otimes (J \otimes A) \ar[r] & I \otimes J \ar[r] & A.}\]

With the same trick one checks that $I \cdot J$ is the smallest ideal $K \subseteq A$ with the property that $I \otimes J \to A \otimes A \to A$ factors through $K$.
 
\begin{lemma}
Let $A$ be a commutative algebra in $\C$.
\begin{enumerate}
\item The ideal product is associative and commutative. It has $A$ as a neutral element.
\item The ideal product distributes over arbitrary sums: For a family of ideals $I_i \subseteq A$ and another ideal $J \subseteq A$ the canonical morphism
\[\sum_i (I_i \cdot J) \to \left(\sum_i I_i\right) \cdot J\]
is an isomorphism.
\item Hence, the ideals of $A$ constitute a quantale.
\end{enumerate}
\end{lemma}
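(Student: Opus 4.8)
The plan is to express every ideal product as an \emph{image} in the sense of the (regular epi, mono)-factorizations supplied by \autoref{saft}, and to reduce all three parts to one elementary observation about such images. Write $\iota_I : I \hookrightarrow A$ for the inclusion of an ideal; by construction $I \cdot J$ is the image of $m \circ (\iota_I \otimes \iota_J) : I \otimes J \to A$, i.e. the mono part of its (regular epi, mono)-factorization, equivalently the smallest subobject of $A$ through which this morphism factors. The workhorse is: \emph{if $q : X \twoheadrightarrow Y$ is a regular (hence strong) epimorphism and $g : Y \to A$ is any morphism, then $g$ and $g \circ q$ have the same image}. One inclusion is automatic; for the reverse, any factorization $g \circ q = m \circ h$ through a mono $m$ yields, by orthogonality of the strong epi $q$ to $m$, a diagonal $d$ with $m \circ d = g$, so $g$ factors through $m$ as well. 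I will combine this with two facts: $- \otimes K$ preserves regular epimorphisms (it is cocontinuous, cf. \autoref{coeq-tensor}), and a direct sum of regular epimorphisms is again a regular epimorphism.

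For Part 1, commutativity follows from naturality of $S$ together with $m \circ S_{A,A} = m$, which give $m \circ (\iota_J \otimes \iota_I) \circ S_{I,J} = m \circ (\iota_I \otimes \iota_J)$; the two morphisms $I \otimes J \to A$ and $J \otimes I \to A$ thus differ by the isomorphism $S_{I,J}$ on the source, so they have the same image and $I \cdot J = J \cdot I$. For the neutral element, $I$ is itself an ideal through which $m \circ (\iota_I \otimes \id_A)$ factors, whence $I \cdot A \leq I$; conversely $I \cong I \otimes 1 \xrightarrow{\id_I \otimes e} I \otimes A \twoheadrightarrow I \cdot A \hookrightarrow A$ equals $\iota_I$ (using $m \circ (\id_A \otimes e) = r_A$), so $I \leq I \cdot A$ and $I \cdot A = I$. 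For associativity I introduce the triple-product ideal $IJK$, the image of $m^{(3)} \circ (\iota_I \otimes \iota_J \otimes \iota_K) : I \otimes J \otimes K \to A$, which is well defined since $m \circ (m \otimes \id) = m \circ (\id \otimes m)$. Tensoring the defining regular epi $I \otimes J \twoheadrightarrow I \cdot J$ with $K$ gives a regular epi $(I \otimes J) \otimes K \twoheadrightarrow (I \cdot J) \otimes K$, and the workhorse identifies the image of $(I \cdot J) \otimes K \to A$, which is by definition $(I \cdot J) \cdot K$, with $IJK$. Symmetrically $I \cdot (J \cdot K) = IJK$, so the two coincide.

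Part 2 runs the same argument over the whole family. Using $\bigoplus_i (I_i \otimes J) \cong (\bigoplus_i I_i) \otimes J$, tensoring the regular epi $\bigoplus_i I_i \twoheadrightarrow \sum_i I_i$ with $J$ shows via the workhorse that $(\sum_i I_i) \cdot J$ is the image of $\phi : \bigoplus_i (I_i \otimes J) \to A$ with components $m \circ (\iota_{I_i} \otimes \iota_J)$. On the other hand the direct sum of the defining regular epis $q_i : I_i \otimes J \twoheadrightarrow I_i \cdot J$ is a regular epi through which $\phi$ factors as $\psi \circ (\bigoplus_i q_i)$, where $\psi$ has components $\iota_{I_i \cdot J}$ and image exactly $\sum_i (I_i \cdot J)$. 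By the workhorse the two images coincide, so the canonical monomorphism $\sum_i (I_i \cdot J) \to (\sum_i I_i) \cdot J$ is an isomorphism.

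Part 3 is then assembly: the ideals of $A$ form a poset (a set, since $\C$ is wellpowered by \autoref{saft}) with all suprema given by the sums $\sum_i I_i$ and least element $0$; Part 1 makes $\cdot$ a unital commutative associative multiplication, and Part 2 together with commutativity shows it preserves these suprema in each variable. These are precisely the axioms of a unital commutative quantale from \autoref{quantale-def}. The main obstacle I expect is the bookkeeping in the associativity step, namely checking that the relevant composites into $A$ really are the single triple product $m^{(3)} \circ (\iota_I \otimes \iota_J \otimes \iota_K)$, so that both $(I \cdot J) \cdot K$ and $I \cdot (J \cdot K)$ are forced to equal the same image $IJK$; everything else is a mechanical application of the image workhorse.
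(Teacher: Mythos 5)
Your proof is correct and follows essentially the same route as the paper, whose own argument is a terse sketch invoking exactly the isomorphisms you use ($I \otimes (J \otimes K) \cong (I \otimes J) \otimes K$, $I \otimes J \cong J \otimes I$, the unit, and $\bigoplus_i (I_i \otimes J) \cong (\bigoplus_i I_i) \otimes J$) and leaving the transfer to images implicit. Your ``workhorse'' lemma --- invariance of the image under precomposition with a regular (strong) epimorphism --- is precisely the detail that makes the paper's ``follows easily'' rigorous.
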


\begin{proof}
1. This follows easily from the isomorphisms $I \otimes (J \otimes K) \cong (I \otimes J) \otimes K$, $I \otimes J \cong J \otimes I$ and $I \otimes_A A \cong I$ for ideals $I,J,K \subseteq A$. 2. This follows easily from $\bigoplus_i (I_i \otimes J) \cong (\bigoplus_i I_i) \otimes J$. 3. This follows from 1. and 2.
\end{proof}

\begin{defi}[Prime ideals]
Let $A$ be a commutative algebra in $\C$. An ideal $\mathfrak{p} \subseteq A$ is called a \emph{prime ideal} if $\mathfrak{p} \neq A$ and if for all ideals $I,J \subseteq A$ we have
\[I \cdot J \subseteq \mathfrak{p} ~\Longrightarrow ~ I \subseteq \mathfrak{p} ~ \vee ~ J \subseteq \mathfrak{p}.\]
We obtain the set $\Spec(A)$ of prime ideals of $A$.
\end{defi}

\begin{prop}[Existence of prime ideals]
If $A \neq 0$ and $A$ is finitely presentable, then $\Spec(A) \neq \emptyset$.
\end{prop}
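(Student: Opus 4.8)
The plan is to produce a prime ideal by first exhibiting a \emph{maximal} proper ideal and then checking, purely formally in the quantale of ideals from the previous Lemma, that every maximal proper ideal is prime. So the two ingredients are: existence of a maximal proper ideal (this is where the hypotheses $A \neq 0$ and "$A$ finitely presentable" enter), and the implication maximal $\Rightarrow$ prime (which uses only the quantale structure).

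For existence I would apply Zorn's lemma to the poset of \emph{proper} ideals of $A$ (those $\neq A$), ordered by inclusion. This is a genuine set, since $\C$ is well-powered (\autoref{saft}), and it is nonempty because the zero ideal $0 \hookrightarrow A$ is proper precisely when $A \neq 0$. The step needing work is that every chain $(I_\alpha)$ of proper ideals has a proper upper bound, for which the natural candidate is $\sum_\alpha I_\alpha$; equivalently, that $A$ is a \emph{compact} element of the lattice of ideals. The key lemma I would prove is: if $(I_\alpha)_\alpha$ is a directed family of ideals with $\sum_\alpha I_\alpha = A$, then $I_\alpha = A$ for some $\alpha$. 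Since the $I_\alpha$ are subobjects of $A$ and the family is directed, the connecting morphisms $I_\alpha \hookrightarrow I_\beta$ are monomorphisms; as a directed colimit of monomorphisms is again a monomorphism in a locally presentable category, the canonical morphism $\phi \colon \colim_\alpha I_\alpha \to A$ is a monomorphism whose image is $\sum_\alpha I_\alpha = A$. Hence $\phi$ is simultaneously a monomorphism and a regular epimorphism, so it is an isomorphism. Now finite presentability of $A$ gives $\Hom(A, \colim_\alpha I_\alpha) \cong \colim_\alpha \Hom(A, I_\alpha)$, so $\phi^{-1} \colon A \to \colim_\alpha I_\alpha$ factors through some colimit inclusion $I_\alpha \to \colim_\beta I_\beta$. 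Composing with $\phi$ exhibits the inclusion $I_\alpha \hookrightarrow A$ as a split epimorphism; being also a monomorphism, and $\C$ being balanced, it is an isomorphism, i.e. $I_\alpha = A$, contradicting properness. Thus $\sum_\alpha I_\alpha$ is proper, and Zorn yields a maximal proper ideal $\mathfrak{m}$.

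I expect this compactness step to be the main obstacle, since it is the only place where a genuine categorical input (directed colimits of monomorphisms are monomorphisms in a locally presentable category) has to be combined with the finite-presentability hypothesis and balancedness; the remaining arguments are formal.

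Finally I would show $\mathfrak{m}$ is prime, working entirely in the quantale of ideals. By properness $\mathfrak{m} \neq A$. Let $I, J$ be ideals with $I \cdot J \subseteq \mathfrak{m}$ and suppose $I \not\subseteq \mathfrak{m}$. Then $\mathfrak{m} \subsetneq I + \mathfrak{m}$, so by maximality $I + \mathfrak{m} = A$. Using that $A$ is neutral for the ideal product and that the product distributes over sums (both from the previous Lemma), together with monotonicity of the product in a quantale, I get $J = A \cdot J = (I + \mathfrak{m}) \cdot J = (I \cdot J) + (\mathfrak{m} \cdot J) \subseteq \mathfrak{m} + \mathfrak{m} = \mathfrak{m}$, where $I \cdot J \subseteq \mathfrak{m}$ holds by hypothesis and $\mathfrak{m} \cdot J \subseteq \mathfrak{m} \cdot A = \mathfrak{m}$ by monotonicity. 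Hence $J \subseteq \mathfrak{m}$, so $\mathfrak{m}$ is prime and $\mathfrak{m} \in \Spec(A)$, proving $\Spec(A) \neq \emptyset$.
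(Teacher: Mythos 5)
Your proof is correct and takes essentially the same route as the paper: Zorn's lemma applied to the poset of proper ideals, where finite presentability of $A$ (together with the identification of the sum of a chain with a directed colimit of subobjects and the splitting/balancedness argument) guarantees that chains of proper ideals have proper sums, followed by the classical quantale computation showing that a maximal proper ideal is prime. The differences are cosmetic: you spell out the compactness step that the paper compresses into one sentence about finite subsums, and you use the one-sided computation $J = (I+\mathfrak{m})\cdot J \subseteq \mathfrak{m}$ where the paper symmetrically expands $(I+\mathfrak{m})\cdot(J+\mathfrak{m}) \subseteq \mathfrak{m}$ to reach the same contradiction.
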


\begin{proof}
We first observe that $A$ has a maximal proper ideal by Zorn's Lemma: In fact, if $\{I_i\}_{i \in I}$ is a non-empty chain of proper ideals of $A$, then $\sum_{i \in I} I_i$ is also a proper ideal of $A$. Otherwise, since $A$ is finitely presentable, we would have $\sum_{i \in E} I_i = A$ for some finite subset $E \subseteq I$ and then $I_i = A$ for some $i \in E$, a contradiction. Hence, there is a maximal proper ideal $\m \subseteq A$. We claim that $\m$ is a prime ideal. The usual calculation works: If $I \not\subseteq \m$ and $J \not\subseteq \m$, but $I \cdot J \subseteq \m$, then $I+\m$ and $J+\m$ are larger than $\m$, hence must be equal to the unit ideal $A$.  But then
\[A=A \cdot A = (I+\m) \cdot (J+\m) \subseteq I \cdot J + I \cdot \m + \m \cdot J + \m \cdot \m \subseteq \m\]
is a contradiction.
\end{proof}

\begin{defi}[Zariski topology]
For an ideal $I \subseteq A$ we define
\[V(I) = \{\mathfrak{p} \in \Spec(A) : I \subseteq \mathfrak{p}\}.\]
Because of the evident properties
\begin{itemize}
\item $V(0)=\Spec(A)$, $V(A)=\emptyset$
\item $V(I \cdot J) = V(I) \cap V(J)$
\item $V(\sum_i I_i) = \cap_i V(I_i)$
\end{itemize}
these sets constitute the closed sets of a topology on $\Spec(A)$, which we call the \emph{Zariski topology}.
\end{defi}

Next, we would like to define the structure sheaf. The functorial approach in \cite{ToVa09} is very abstract. Although \cite{Mar09} is more concrete, there it is assumed that $1 \in \C$ is a projective object (\cite[Definition 1.4.18]{Mar09}), which excludes global examples such as $\C=\Q(X)$ for non-affine $X$. But we would like to include these into our theory and therefore choose the following alternative:

If $X$ is a \emph{noetherian} scheme, then we have Deligne's formula \cite[Proposition 6.9.17]{EGAI}: If $M$ is a quasi-coherent module on $X$ and $U \subseteq X$ is an open subset, then the canonical homomorphism
\[\colim_{V(I) \cap U = \emptyset} \Hom_{\O_X}(I,M)  \to \Gamma(U,M)\]
is an isomorphism. Here, $I$ runs over all quasi-coherent ideals of $\O_X$ such that $V(I) \cap U = \emptyset$, or equivalently $I|_U = \O_U$. In particular, this gives an explicit description of the sections of the sheaf $\widetilde{M}$ associated to an $A$-module, where $A$ is a noetherian commutative ring. This motivates the following definition:

\begin{defi}[Associated sheaves]
If $M \in \M(A)$, we define the sheaf $\widetilde{M}$ on $\Spec(A)$ to be the one associated to the presheaf
\[U \mapsto \colim_{V(I) \cap U = \emptyset} \Hom_A(I,M).\]
In particular, we have $\O_{\Spec(A)} := \widetilde{A}$. This is a sheaf of commutative monoids on $\Spec(A)$ (in fact of commutative rings if $\C$ is linear).
\end{defi}
 
\begin{defi}[Schemes]
The space $\Spec(A)$ togethter with its structure sheaf $\O_{\Spec(A)}$ is called the \emph{spectrum} of $A$. It is called an \emph{affine scheme}. A \emph{scheme} with respect to $\C$ is a space with a sheaf of commutative monoids (resp. commutative rings if $\C$ is linear) which is locally isomorphic to an affine scheme. We obtain the category ${\Sch}/\C$ of \emph{schemes relative to $\C$}.
\end{defi}

\begin{ex}
In case of $\C=\Ab$ we obtain the category of usual schemes, for $\C=\M(R)$ the category of $R$-schemes, for $\C=\Set$ the category of monoid schemes and for $\C=\Set_*$ the category of pointed monoid schemes. \marginpar{I've changed the wording so that there is no line break in the formula.}
\end{ex}

As an application, we can prove the following reconstruction result. There we consider $\O_X$ as a commutative algebra in $\Q(X)$.
 
\begin{thm}[Reconstruction] \label{recon}
Let $X$ be a noetherian scheme. Then there is an isomorphism of ringed spaces
\[X \cong \Spec(\O_X).\]
\end{thm}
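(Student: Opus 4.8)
The plan is to unwind the abstract definitions of this section for $\C = \Q(X)$ with $A = \O_X$, and then match the resulting ringed space with $X$ in three stages: a bijection of points, an identification of topologies, and an identification of structure sheaves. Since $\O_X$ is the unit object, the category $\M(\O_X)$ of modules is just $\Q(X)$ itself, so an \emph{ideal} of $\O_X$ in the sense of this section is precisely a quasi-coherent ideal sheaf $I \subseteq \O_X$, the ideal product $I \cdot J$ is the usual product of quasi-coherent ideals, and $\O_X \neq 0$ (the statement being vacuous if $X = \emptyset$). Everything thus reduces to understanding the tensor-categorical prime ideals of $\O_X$.

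First I would prove the key lemma: the prime ideals of $\O_X$ are exactly the ideal sheaves $\mathcal{I}_Z$ of integral closed subschemes $Z \subseteq X$, and $Z \mapsto$ its generic point is a bijection onto $X$. For the inclusion that $\mathcal{I}_Z$ is prime, suppose $I \cdot J \subseteq \mathcal{I}_Z$ and let $\eta$ be the generic point of $Z$. Passing to the stalk at $\eta$, where $\O_{X,\eta}/(\mathcal{I}_Z)_\eta = K(Z)$ is a field, one of $I_\eta, J_\eta$ lands in $(\mathcal{I}_Z)_\eta$, say $I_\eta$. Then the image of $I$ in the sheaf of domains $\O_Z = \O_X/\mathcal{I}_Z$ is a quasi-coherent ideal vanishing at the generic point, hence zero on each affine chart of the integral scheme $Z$ (an ideal of a domain that dies in the fraction field is $0$); thus $I \subseteq \mathcal{I}_Z$. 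Conversely, if $\mathfrak{p} \subsetneq \O_X$ is prime, then $\mathfrak{p}$ is radical (from $(\sqrt{\mathfrak{p}})^n \subseteq \mathfrak{p}$ for $n \gg 0$ by noetherianity, and repeated use of primality) and $V(\mathfrak{p})$ is irreducible (if $V(\mathfrak{p}) = Z_1 \cup Z_2$ properly, then $\mathcal{I}_{Z_1} \cdot \mathcal{I}_{Z_2} \subseteq \mathcal{I}_{Z_1} \cap \mathcal{I}_{Z_2} = \mathfrak{p}$ forces $\mathfrak{p} = \mathcal{I}_{Z_i}$ for some $i$, a contradiction). Hence $\mathfrak{p} = \mathcal{I}_Z$ for $Z = V(\mathfrak{p})^{\mathrm{red}}$ integral, and $Z \leftrightarrow$ generic point gives the bijection of underlying sets $\Spec(\O_X) \cong X$.

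Next I would check this bijection is a homeomorphism. Under $x \mapsto \mathcal{I}_{\overline{\{x\}}}$, the abstract closed set $V(I) = \{\mathfrak{p} : I \subseteq \mathfrak{p}\}$ corresponds to $\{x : \overline{\{x\}} \subseteq \supp(\O_X/I)\} = \supp(\O_X/I)$, since $I \subseteq \mathcal{I}_{\overline{\{x\}}}$ says exactly that $I$ vanishes on $\overline{\{x\}}$; as $I$ ranges over quasi-coherent ideals these exhaust all closed subsets of $X$, so closed sets match both ways. Finally, for the structure sheaf, under this identification an open $U \subseteq X$ satisfies $V(I) \cap U = \emptyset$ iff $I|_U = \O_U$, so the defining presheaf value $\colim_{V(I) \cap U = \emptyset} \Hom_{\O_X}(I, \O_X)$ is precisely Deligne's colimit, which by the formula quoted just before the definition of associated sheaves (Proposition 6.9.17 of EGA) computes $\Gamma(U, \O_X)$. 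Thus the presheaf is already the sheaf $\O_X$, the two structure sheaves agree compatibly with restriction, and we obtain the isomorphism of ringed spaces $X \cong \Spec(\O_X)$.

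The main obstacle is the globalization step inside the key lemma: primality of $\mathcal{I}_Z$ is a global condition, whereas the stalkwise/affine argument only yields local containment, so one genuinely needs integrality of $Z$ — the generic point together with the ``ideal dying in the fraction field is zero'' observation — to upgrade $I_\eta \subseteq (\mathcal{I}_Z)_\eta$ to the global $I \subseteq \mathcal{I}_Z$. Everything else is bookkeeping with radicals, irreducibility, and Deligne's formula.
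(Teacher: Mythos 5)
Your proof is correct and follows essentially the same route as the paper's: both identify the prime ideals of $\O_X$ with vanishing ideals of point closures (primality checked at the stalk of the generic point, the converse via noetherian nilpotency of the radical plus the product-of-vanishing-ideals argument for irreducibility), match $V(I)$ with $\supp(\O_X/I)$ for the topology, and invoke Deligne's formula for the structure sheaf. The only cosmetic differences are that the paper reduces surjectivity to the case $I=0$ by passing to the closed subscheme cut out by $I$, and globalizes the stalk condition via the definitional equivalence $I \subseteq \mathfrak{p}_x \Leftrightarrow I_x \subseteq \m_x$ rather than your ``ideal of a domain dying in the fraction field'' observation.
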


\begin{proof}
We define a map $i : X \to \Spec(\O_X)$ as follows: If $x \in X$, then $\mathfrak{p}_x \subseteq \O_X$ is the vanishing ideal of $\overline{\{x\}}$. Explicitly, we have for $U \subseteq X$ open
\[\mathfrak{p}_x(U) = \{s \in \O_X(U) : s_x \in \m_x \text{ if } x \in U\}.\]
If $I \subseteq \O_X$ is a quasi-coherent ideal, we have $I \subseteq \mathfrak{p}_x \Leftrightarrow I_x \subseteq \m_x$. Since $\m_x$ is a prime ideal of $\O_{X,x}$, it follows easily that $\mathfrak{p}_x$ is a prime ideal of $\O_X$. Hence, $i$ is well-defined. Since $x$ is uniquely determined by $\overline{\{x\}} = \supp(\O_X/\mathfrak{p}_x)$, we see that $i$ is injective.

For surjectivity, let $I \subseteq \O_X$ be a prime ideal. By considering the closed subscheme of $X$ which is cut out by $I$, we may assume that $I=0$ is a prime ideal (in particular $X \neq \emptyset$) and we want to show that $0=\mathfrak{p}_x$ for some $x \in X$, which means that $X$ is integral with generic point $x$. Since $X$ is noetherian, there is some $n \in \N$ such that $\rad(\O_X)^n = 0$. Since $0$ is a prime ideal, this implies $\rad(\O_X)=0$, i.e. that $X$ is reduced. In order to show that $X$ is irreducible, let $A,B \subseteq X$ be closed subsets with $A \cup B = X$. If $J_1,J_2$ are the corresponding vanishing ideals, it follows $J_1 \cap J_2 = 0$ and hence also $J_1 \cdot J_2 = 0$. Since $0$ is prime, this implies $J_1=0$ or $J_2=0$ and hence $A=X$ or $B=X$. Thus, $i$ is bijective.

Finally, $i$ is a homeomorphism, since for a quasi-coherent ideal $I \subseteq \O_X$ we have
\[V(I) = \{x \in X : I_x \subseteq \m_x\} = \{x \in X : I \subseteq \mathfrak{p}_x\} = i^{-1}(V(I)).\]
That $i$ extends to an isomorphism of the structure sheaves comes out of our construction and Deligne's formula.
\end{proof}

In particular, we can reconstruct every noetherian scheme $X$ from the tensor category $\Q(X)$. Besides, $X$ is actually an \emph{affine} scheme with respect to the ``base'' $\Q(X)$.

\section{Symtrivial objects} \label{symtrivial}

Let $M$ be an object of a tensor category $\C$ and $n \in \N$. Then the symmetric group $\S_n$ acts as follows on $M^{\otimes n}$. For a transposition $\sigma_i = (i~~ i+1)$ we take the symmetry $S_{M,M} : M \otimes M \to M \otimes M$, tensored with $M^{\otimes i-1}$ on the left and $M^{\otimes n-i-1}$ on the right. The group $\S_n$ is generated by $\sigma_1,\dotsc,\sigma_{n-1}$ modulo the relations $\sigma_i^2 = 1$, $\sigma_i \sigma_j = \sigma_j \sigma_i$ ($j \neq i \pm 1$) and $\sigma_i \sigma_{i+1} \sigma_i = \sigma_{i+1} \sigma_i \sigma_{i+1}$ (\cite{CM80}). The Coherence Theorem (\autoref{coherence}) implies that these relations are satisfied by the corresponding symmetries, so that we get, in fact, a homomorphism
\[\S_n \to \Aut(M^{\otimes n}).\]
In the case $\C=\M(R)$ for some commutative ring $R$ a permutation $\sigma$ acts on $M^{\otimes n}$ by
\[m_{\sigma(1)} \otimes \dotsc \otimes m_{\sigma(n)} \mapsto m_1 \otimes \dotsc \otimes m_n.\]
For general $\C$ we will use the same element notation.

The following terminology might be new.
 
\begin{defi}[Symtrivial objects]
Let $\C$ be a tensor category. An object $M \in \C$ is called \emph{symtrivial} if for all $n \in \N$ the action of $\S_n$ on $M^{\otimes n}$ is trivial. Equivalently, $S_{M,M} : M \otimes M \to M \otimes M$ is required to be the identity, i.e. in element notation $a \otimes b = b \otimes a$ for $a,b \in M$.
\end{defi}

\begin{rem}[Closure properties] \label{symtrivial-properties}
Let us list some easy properties of symtrivial objects:
\begin{enumerate}
\item The unit $1$ and the initial object $0$ are always symtrivial.
\item Any tensor functor preserves symtrivial objects.
\item In a finitely cocomplete tensor category, a direct sum $M \oplus N$ is symtrivial if and only if $M \otimes N = 0$ and $M,N$ are symtrivial. In a cocomplete tensor category, the same holds for infinite direct sums.
\item If $M$ is a symtrivial object and $N$ is any object, then we have in element notation $c \otimes a \otimes b = c \otimes b \otimes a$ for $a,b \in M$ and $c \in N$. After twisting with the symmetry $N \otimes M \cong M \otimes N$ we also get $a \otimes c \otimes b = b \otimes c \otimes a$.
\item If $M,N$ are symtrivial, then their tensor product $M \otimes N$ is also symtrivial: Using element notation we calculate in $(M \otimes N) \otimes (M \otimes N)$ that
\[a \otimes b \otimes a' \otimes b' = a' \otimes b \otimes a \otimes b' = a' \otimes b' \otimes a \otimes b.\]
The rigorous proof uses the commutative diagram
\[\xymatrix@C=40pt{(M \otimes N) \otimes (M \otimes N) \ar[r]^{S_{M \otimes N}} \ar[d]^{h} & (M \otimes N) \otimes (M \otimes N) \ar[d]^{h} \\ (M \otimes M) \otimes (N \otimes N) \ar[r]^{S_M \otimes S_N} & (M \otimes M) \otimes (N \otimes N),}\]
where $h$ is the canonical isomorphism which is induced by the symmetry $S_{N,M} : N \otimes M \cong M \otimes N$ and the associativity isomorphisms.
\item The same diagram shows: If $M \otimes N \cong 1$ and $M$ is symtrivial, then  $N$ is also symtrivial.
\item Let $s : M \to M'$ be an epimorphism in a finitely cocomplete tensor category. Then $M'$ is symtrivial if and only if $s(a) \otimes s(b) = s(b) \otimes s(a)$ for $a,b \in M$ in element notation. If $M$ is symtrivial, then $M'$ is symtrivial.
\item In a cocomplete tensor category symtrivial objects are closed under directed colimits.
\end{enumerate}
\end{rem} 
 
\begin{lemma}
Let $X$ be a locally ringed space and $M$ an $\O_X$-module. If every stalk $M_x$ is a cyclic $\O_{X,x}$-module, then $M$ is symtrivial. The converse holds when $M$ is of finite type. For example, invertible modules are symtrivial.
\end{lemma}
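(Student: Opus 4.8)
The plan is to reduce everything to a statement about stalks and then to linear algebra over the local rings $\O_{X,x}$. Recall from the definition that $M$ is symtrivial if and only if the symmetry $S_{M,M} : M \otimes M \to M \otimes M$ equals the identity. Since a morphism of $\O_X$-modules is the identity precisely when it is so on every stalk, and since $(M \otimes M)_x \cong M_x \otimes_{\O_{X,x}} M_x$ compatibly with the symmetry (filtered colimits commute with the tensor product, and the symmetry is natural), the stalk of $S_{M,M}$ at $x$ is the symmetry $S_{M_x,M_x}$ of the $\O_{X,x}$-module $M_x$. Hence $M$ is symtrivial if and only if every stalk $M_x$ is symtrivial as a module over the local ring $\O_{X,x}$.

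For the forward implication I would argue purely algebraically. If $N$ is a cyclic module over a commutative ring $R$, say $N = R \cdot a$, then every elementary tensor in $N \otimes_R N$ can be written as $(ra) \otimes (sa) = rs\,(a \otimes a)$ by bilinearity, so $N \otimes_R N$ is generated by $a \otimes a$ as an $R$-module. As the symmetry fixes $a \otimes a$, and it is $R$-linear, it must be the identity on $N \otimes_R N$. Applying this to each stalk $N = M_x$ shows that cyclic stalks force $M$ to be symtrivial.

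For the converse, assume $M$ is symtrivial and of finite type, so that each $M_x$ is a finitely generated, symtrivial $\O_{X,x}$-module. Base change along the quotient $\O_{X,x} \to k(x) := \O_{X,x}/\m_x$ is a tensor functor, and tensor functors preserve symtrivial objects (\autoref{symtrivial-properties}); hence the fiber $M_x \otimes_{\O_{X,x}} k(x) = M_x/\m_x M_x$ is a symtrivial $k(x)$-vector space. But a vector space $V$ with basis $(e_i)$ satisfies $S_{V,V}(e_i \otimes e_j) = e_j \otimes e_i$, and distinct basis tensors of $V \otimes V$ cannot coincide, so a symtrivial vector space has dimension at most $1$. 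Thus $\dim_{k(x)} M_x/\m_x M_x \le 1$, and Nakayama's Lemma (using finite generation) shows that $M_x$ is generated by a single element, i.e. cyclic.

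Finally, an invertible module is locally free of rank one, so each of its stalks is free of rank one and in particular cyclic; symtriviality then follows from the forward implication. The only point requiring genuine care is the compatibility of the symmetry with the passage to stalks and to residue-field fibers, but this is exactly the naturality and the monoidality already encoded in \autoref{symtrivial-properties}, so no real obstacle remains.
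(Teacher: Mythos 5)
Your proof is correct and follows essentially the same route as the paper: reduce to stalks, note that a cyclic module is symtrivial (the paper phrases this as ``a quotient of the symtrivial module $R$'', which is your generator computation in disguise), and for the converse pass to the residue field and apply Nakayama's Lemma exactly as the paper does. Your only additions are to spell out the stalkwise compatibility of the symmetry and the field case (a symtrivial vector space has dimension at most $1$), both of which the paper leaves as ``clear''.
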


\begin{proof}
Clearly $M$ is symtrivial if and only if each $M_x$ is symtrivial. So it suffices to consider the case that $M$ is a module over a local ring $R$. If $M$ is cyclic, then $M$ is a quotient of the symtrivial module $R$, hence symtrivial. If $M$ is symtrivial and of finite type, we claim that $M$ is cyclic. This is clear when $R$ is a field. In the general case, let $k$ be the residue field. Then $M \otimes_R k$ is symtrivial over $k$, hence cyclic. By Nakayama's Lemma $M$ is cyclic.
\end{proof}

The following result gives more examples of symtrivial modules.

\begin{lemma}
Let $\C$ be a tensor category and let $A$ be a commutative algebra in $\C$. The following are equivalent:
\begin{enumerate}
\item $A$ is a symtrivial object of $\C$.
\item We have $a \otimes 1 = 1 \otimes a$ in $A \otimes A$ for all $a \in A$, i.e. the two coproduct inclusions from $A$ to $A \otimes A$ coincide.
\item The unit $1 \to A$ is an epimorphism in $\CAlg(\C)$.
\end{enumerate}
\end{lemma}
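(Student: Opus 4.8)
The plan is to prove the two equivalences $(1)\Leftrightarrow(2)$ and $(2)\Leftrightarrow(3)$. Throughout I would use the fact, recorded in \autoref{algebra-cons}, that the tensor product $A \otimes A$ equipped with the componentwise multiplication is the coproduct of $A$ with itself in $\CAlg(\C)$, and that its two coproduct inclusions are $i_1 = (\id_A \otimes e)\circ r_A^{-1}$ and $i_2 = (e \otimes \id_A)\circ l_A^{-1}$, i.e.\ $i_1(a) = a \otimes 1$ and $i_2(a) = 1 \otimes a$ in element notation. With this description, condition (2) is literally the assertion $i_1 = i_2$. Note that these inclusions exist in any tensor category, so no cocompleteness of $\C$ is needed.

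For $(1)\Rightarrow(2)$ I would simply observe that $i_2 = S_{A,A}\circ i_1$, since the symmetry sends $a \otimes 1$ to $1 \otimes a$; hence if $A$ is symtrivial, so that $S_{A,A} = \id_{A\otimes A}$, we get $i_1 = i_2$. For the converse $(2)\Rightarrow(1)$ the task is to upgrade the equality of the two inclusions to the full equality $S_{A,A} = \id_{A\otimes A}$. Here I would use that $A$ is commutative, so that both $\id_{A\otimes A}$ and $S_{A,A}$ are endomorphisms of $A \otimes A$ \emph{inside} $\CAlg(\C)$. Precomposing with the inclusions yields $\id\circ i_1 = i_1$ and $\id\circ i_2 = i_2$, whereas $S_{A,A}\circ i_1 = i_2$ and $S_{A,A}\circ i_2 = i_1$; under (2) all four composites coincide. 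By the universal property of the coproduct $A \otimes A$ in $\CAlg(\C)$, two algebra endomorphisms with equal composites against both inclusions are equal, so $\id_{A\otimes A} = S_{A,A}$, which is exactly symtriviality of $A$. (Equivalently, in element notation $a \otimes b = (a\otimes 1)(1\otimes b) = (a\otimes 1)(b\otimes 1) = (ab)\otimes 1 = (b\otimes 1)(a\otimes 1) = b \otimes a$, using (2) and commutativity.)

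For $(2)\Leftrightarrow(3)$ I would invoke the standard categorical fact that a morphism $f\colon B \to C$ is an epimorphism if and only if the two coprojections $C \rightrightarrows C \sqcup_B C$ into the pushout of $f$ along itself coincide. Indeed, the two coprojections always agree after precomposition with $f$, so $f$ being epi forces them equal; conversely, if they are equal, then any pair $g,h$ with $gf = hf$ factors through a single morphism out of the pushout, forcing $g = h$. Applying this to the unit $e\colon 1 \to A$ in $\CAlg(\C)$, and using that $1$ is the initial algebra so that the pushout $A \sqcup_1 A$ is just the coproduct $A \otimes A$ with coprojections $i_1, i_2$, gives that $e$ is an epimorphism in $\CAlg(\C)$ if and only if $i_1 = i_2$, which is condition (2). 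The pushout needed here exists as $A \otimes A$ regardless of whether $\C$ is cocomplete.

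The only delicate step is $(2)\Rightarrow(1)$: the hypothesis supplies merely the equality of the two inclusions, and one must exploit the commutativity of $A$ (so that $S_{A,A}$ is an algebra homomorphism) together with the coproduct universal property to deduce the stronger identity $S_{A,A} = \id_{A\otimes A}$; once $S_{A,A}=\id$ is known, triviality of the full $\S_n$-action follows from the definition of symtriviality. Everything else is formal.
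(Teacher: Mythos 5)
Your proof is correct. The paper settles the key implication $(2)\Rightarrow(1)$ with exactly the element computation you relegate to a parenthesis: $a \otimes b = (a\otimes 1)(1\otimes b) = (1\otimes a)(1\otimes b) = 1 \otimes ab$, which is symmetric in $a,b$ by commutativity of $A$; and it dismisses $(2)\Leftrightarrow(3)$ as ``formal'' without detail. Your primary route for $(2)\Rightarrow(1)$ is genuinely different: you note that both $\id_{A\otimes A}$ and $S_{A,A}$ are morphisms in $\CAlg(\C)$ (here commutativity of $A$ enters, since it is needed for $A\otimes A$ to be a commutative algebra and the coproduct of $A$ with itself, as in \autoref{algebra-cons}), that under (2) they agree after precomposition with both coproduct inclusions, and you conclude $S_{A,A}=\id$ from the uniqueness clause of the coproduct's universal property. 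This buys a computation-free argument that makes transparent how all three conditions are statements about the single diagram $A \rightrightarrows A \otimes A$, at the cost of invoking the identification of $A \otimes A$ as the coproduct in $\CAlg(\C)$; the paper's computation is shorter and uses only the algebra structure on $A \otimes A$. Your spelled-out $(2)\Leftrightarrow(3)$ — the pushout characterization of epimorphisms, applied to the pushout of $1 \to A$ along itself, which exists as the coproduct $A \otimes A$ because $1$ is initial in $\CAlg(\C)$ — is precisely the formal argument the paper leaves implicit, and you are right that no cocompleteness of $\C$ is needed anywhere.
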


\begin{proof}
2. $\Rightarrow$ 1. follows since $a \otimes b = (a \otimes 1)b = (1 \otimes a)b = 1 \otimes ab$ is symmetric in $a,b$. 1. $\Rightarrow$ 2. is trivial and 2. $\Leftrightarrow$ 3. is formal.
\end{proof}

See \cite{Sam68} for examples and the theory of epimorphisms of commutative rings. Will Sawin has classified all symtrivial $R$-modules when $R$ is a Dedekind domain. See \url{http://mathoverflow.net/questions/119689}.
\section{Symmetric and exterior powers} \label{symmetric-power}

We fix a finitely cocomplete tensor category $\C$.

\begin{defi}[Symmetric powers]
Let $f : M^{\otimes n} \to N$ be a morphism in $\C$.
\begin{enumerate}
\item We call $f$ \emph{symmetric} if $f \circ \sigma = f$ for all $\sigma \in \Sigma_n$, i.e. in element notation $f(a_1 \otimes \dotsc \otimes a_n) = f(a_{\sigma(1)} \otimes \dotsc \otimes a_{\sigma(n)})$ for all $a_i \in M$.
\item Assume that $\C$ is linear. We call $f$ is \emph{antisymmetric} if, instead, we have $f \circ \sigma = \sgn(\sigma) f$ for all $\sigma \in \Sigma_n$. In element notation we may write this as $f(a_1 \otimes \dotsc \otimes a_n) = \sgn(\sigma) f(a_{\sigma(1)} \otimes \dotsc \otimes a_{\sigma(n)})$.
\item We obtain two functors $\C \to \Set$ of symmetric resp. antisymmetric morphisms on $M^{\otimes n}$. A representing object is called an $n$-th \emph{symmetric power} resp. $n$-th \emph{antisymmetric power} of $M$ and is denoted by $\Sym^n(M)$ resp. $\ASym^n(M)$.
\end{enumerate}
\end{defi}

\begin{rem}
More generally we may start with a homomorphism of groups $\chi : \S_n \to \Aut_\C(1)$, define $\chi$-symmetric maps and a classifying object $\Sym_\chi^n(M)$, so that $\Sym_1^n(M)= \Sym(M)$ and $\Sym_{\sgn }^n(M)=\ASym^n(M)$.
\end{rem}

\begin{ex}
There is an epimorphism $M^{\otimes n} \to \Sym^n(M)$ which we may write as
\[x_1 \otimes \dotsc \otimes x_n \mapsto x_1 \cdot \dotsc \cdot x_n\]
in element notation, which is symmetric and has the universal property that every symmetric morphism $M^{\otimes n} \to T$ extends uniquely to a morphism $\Sym^n(M) \to T$. This generalizes the usual notion in the case $\C=\M(A)$ for some commutative ring $A$, or more generally $\C=\Q(X)$ for some scheme or algebraic stack $X$. We may apply this definition also to the cartesian category of sets $\C=\Set$ where $\Sym^n(M)$ is the set of \emph{multisets} of $n$ elements in $M$.
\end{ex}
  
\begin{prop}[Properties of symmetric powers] \label{sym-exakt} \noindent
\begin{enumerate}
\item The symmetric and antisymmetric powers exist and provide functors
\[\Sym^n,\,\ASym^n : \C \to \C.\]
\item There is a unique epimorphism
\[\Sym^p(M) \otimes \Sym^q(M) \to \Sym^{p+q}(M)\]
lying over $M^{\otimes p} \otimes M^{\otimes q} \cong M^{\otimes p+q}$, i.e. given in element notation by
\[a_1 \cdot \dotsc \cdot a_p \otimes b_1 \cdot \dotsc \cdot b_q \mapsto a_1 \cdot \dotsc a_p \cdot b_1 \cdot \dotsc \cdot b_q.\]
\item The induced morphism
\[\bigoplus\limits_{p+q=n} \Sym^p(M) \otimes \Sym^q(N) \to \Sym^n(M \oplus N)\]
is an isomorphism.
\item Let $p : N \to P$ be the coequalizer of two morphisms $f,g : M \to N$. Let $n \geq 1$. There are two morphisms $M \otimes \Sym^{n-1}(N) \rightrightarrows \Sym^n(N)$ defined as the composition
\[M \otimes \Sym^{n-1}(N) \rightrightarrows N \otimes \Sym^{n-1}(N) \twoheadrightarrow \Sym^n(N).\]
Then
\[M \otimes \Sym^{n-1}(N) \rightrightarrows \Sym^n(N) \xrightarrow{\Sym^n(p)} \Sym^n(P)\]
is exact.
\item All these statements hold verbatin for $\ASym$ instead of $\Sym$.
\end{enumerate}
\end{prop}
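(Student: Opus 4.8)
The statement has five parts. Let me sketch how I would prove each.

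For part 1 (existence), I would construct $\Sym^n(M)$ explicitly as a coequalizer. Since $\C$ is finitely cocomplete and each endofunctor $A\otimes -$ is right exact, I can take the coequalizer of the family of automorphisms $\sigma\colon M^{\otimes n}\to M^{\otimes n}$ for $\sigma\in\Sigma_n$ against the identity; concretely, form the coequalizer of $\bigoplus_{\sigma\in\Sigma_n} M^{\otimes n}\rightrightarrows M^{\otimes n}$ where one arrow is induced by the $\sigma$-actions and the other by identities. This represents symmetric morphisms by construction: a map $M^{\otimes n}\to T$ factors through the coequalizer iff it equalizes each $\sigma$ with $\id$, which is precisely symmetry. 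Functoriality is immediate since a morphism $M\to M'$ induces $M^{\otimes n}\to M'^{\otimes n}$ commuting with the $\Sigma_n$-actions. For $\ASym^n$ one replaces the second arrow by the signed action $\sgn(\sigma)\cdot\sigma$, which requires linearity.

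For part 2, the multiplication map is constructed via the universal property: the composite $M^{\otimes p}\otimes M^{\otimes q}\cong M^{\otimes p+q}\twoheadrightarrow\Sym^{p+q}(M)$ is symmetric separately in the first $p$ and last $q$ variables, hence factors through $\Sym^p(M)\otimes M^{\otimes q}$ and then through $\Sym^p(M)\otimes\Sym^q(M)$; here I would use \autoref{coeq-tensor} to know that the tensor product of the two defining quotient maps is again a regular epimorphism, so the factorization is an epimorphism and is unique. Part 3 is the ``binomial'' decomposition: I would verify the universal property directly. A symmetric morphism out of $(M\oplus N)^{\otimes n}$ decomposes, by the distributive law, into a sum over the $2^n$ words in $\{M,N\}$; symmetry identifies words with the same number of $M$'s, leaving one summand $\Sym^p(M)\otimes\Sym^q(N)$ for each $p+q=n$. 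This is essentially the Binomial Theorem (\autoref{binomi}) upgraded to symmetric powers, and I would phrase it as a natural bijection of representable functors and then invoke Yoneda.

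Part 4 is the step I expect to be the main obstacle, since it asserts \emph{right exactness} of $\Sym^n$ in a precise coequalizer form. The plan is to show that applying $\Sym^n(-)$ to the coequalizer $M\rightrightarrows N\to P$ yields the claimed exact sequence. I would argue by the universal property: a test morphism $\Sym^n(N)\to T$ coequalizing the two maps $M\otimes\Sym^{n-1}(N)\rightrightarrows\Sym^n(N)$ corresponds to a symmetric map $N^{\otimes n}\to T$ whose restriction is insensitive to replacing one factor along $f$ versus $g$; I must check this is exactly the data of a symmetric map $P^{\otimes n}\to T$, i.e. a morphism $\Sym^n(P)\to T$. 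The delicate point is that $P=\coeq(f,g)$ is built from a single coequalizer but $P^{\otimes n}$ involves identifying the relation in \emph{each} of the $n$ tensor slots simultaneously; I would reduce the multi-slot identification to the single-slot one using symmetry (any slot can be moved to the front via the $\Sigma_n$-action) together with the fact, from \autoref{coeq-tensor}, that tensoring coequalizers yields coequalizers, so that $P^{\otimes n}$ is the iterated coequalizer and its symmetric quotient is computed slot-by-slot. Finally, part 5 follows by the same arguments verbatim in the linear case, replacing each occurrence of the trivial $\Sigma_n$-action by the sign action and noting that all the constructions above respected the group action and hence respect the twist by $\sgn$.
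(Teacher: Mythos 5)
Your proposal is correct and follows essentially the same route as the paper's proof: the coequalizer construction over the $\Sigma_n$-action for existence, the universal property plus \autoref{coeq-tensor} for the multiplication, the decomposition via \autoref{binomi} for the direct-sum formula, and for part 4 the presentation of $P^{\otimes n}$ by slot-wise coequalizers reduced to a single slot by symmetry (with the epimorphism $N^{\otimes n-1} \twoheadrightarrow \Sym^{n-1}(N)$ completing the descent to $M \otimes \Sym^{n-1}(N)$).
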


\begin{proof}
1. Define $\Sym^n(M)$ resp. $\ASym^n(M)$ to be the coequalizer of all the endomorphisms $\sigma$ resp. $\sgn(\sigma) \sigma$ of $M^{\otimes n}$, where $\sigma \in \Sigma_n$. The universal property is easily verified.

2. This is because $M^{\otimes n} \otimes M^{\otimes m} \cong M^{\otimes n+m} \twoheadrightarrow \Sym^{n+m}(M)$ is symmetric both on $M^{\otimes n}$ and on $M^{\otimes m}$, so that \autoref{coeq-tensor} may be applied.

3. Using 2. we get for all $p+q=n$ a morphism
\[\Sym^p(M) \otimes \Sym^q(N) \to \Sym^p(M \oplus N) \otimes \Sym^q(M \oplus N) \to \Sym^n(M \oplus N).\]
These induce a morphism
\[\bigoplus_{p+q=n} \Sym^p(M) \otimes \Sym^q(M) \to \Sym^n(M \oplus N).\]
We claim that it is an isomorphism, i.e. that the morphisms to any test object coincide. A morphism on $\Sym^n(M \oplus N)$ corresponds to a symmetric morphism on $(M \oplus N)^{\otimes n}$. This tensor product decomposes by \autoref{binomi} as $\bigoplus_{p+q=n} \binom{n}{p} M^{\otimes p} \otimes N^{\otimes q}$. Thus the morphism corresponds to a family of morphisms on copies of $M^{\otimes p} \otimes N^{\otimes q}$. The symmetry condition means exactly that these morphisms agree on the same copies and that they are symmetric on $M^{\otimes p}$ and $N^{\otimes q}$ separately. Hence, by \autoref{coeq-tensor} they correspond to a morphism on $\bigoplus_{p+q=n} \Sym^p(M) \otimes \Sym^q(N)$. For $\ASym$ a similar argument works.

4. A morphism on $\Sym^n(P)$ corresponds to a symmetric morphism on $P^{\otimes n}$. By \autoref{coeq-tensor} $N^{\otimes n} \to P^{\otimes m}$ is the coequalizer of the $n$ pairs of morphisms $N^{\otimes i} \otimes M \otimes N^{n-i-1} \rightrightarrows N^{\otimes n}$ ($i=0,\dotsc,n-1$), hence a symmetric morphism on $P^{\otimes n}$ corresponds to a symmetric morphism on $N^{\otimes n}$ which coequalizes all the pairs on $N^{\otimes i} \otimes M \otimes N^{n-i-1}$. But the symmetry allows us to reduce this condition to the case $i=0$. Finally, we use that $N^{\otimes n-1} \to \Sym^{n-1}(N)$ is an epimorphism. This proof also works for $\ASym$.
\end{proof}

\begin{lemma}[Symmetric algebra] \label{symalg}
Let $\C$ be a cocomplete tensor category and $M \in \C$. Then $\Sym^0(M)=1$ and the natural epimorphisms (\autoref{sym-exakt}) $\Sym^p(M) \otimes \Sym^q(M) \twoheadrightarrow \Sym^{p+q}(M)$  endow
\[\Sym(M) := \bigoplus_{p=0}^{\infty} \Sym^p(M)\]
with the structure of a commutative algebra in $\C$, which we call the \emph{symmetric algebra} on $M$. In fact, $\Sym : \C \to \CAlg(\C)$ is left adjoint to the forgetful functor.
\end{lemma}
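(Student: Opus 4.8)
The plan is first to equip $\Sym(M) = \bigoplus_{p \geq 0} \Sym^p(M)$ with its algebra structure and then to deduce the adjunction. Since $\C$ is a cocomplete tensor category, the categorified distributive law furnishes a canonical isomorphism
\[\Sym(M) \otimes \Sym(M) \cong \bigoplus_{p,q \geq 0} \Sym^p(M) \otimes \Sym^q(M),\]
so by the universal property of the direct sum a multiplication $m : \Sym(M) \otimes \Sym(M) \to \Sym(M)$ is determined by specifying it on each summand; I take the morphism $\Sym^p(M) \otimes \Sym^q(M) \to \Sym^{p+q}(M) \hookrightarrow \Sym(M)$ supplied by \autoref{sym-exakt}. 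The unit $e : 1 \to \Sym(M)$ is the inclusion of the summand $\Sym^0(M) = 1$.

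The verification of the algebra axioms is where the (modest) work lies, and the uniform device is to reduce every identity to the level of tensor powers. By \autoref{sym-exakt} each structure morphism $\Sym^p(M) \otimes \Sym^q(M) \to \Sym^{p+q}(M)$ lies over the coherence isomorphism $M^{\otimes p} \otimes M^{\otimes q} \cong M^{\otimes p+q}$, meaning it fits into a commuting square with the canonical epimorphisms $M^{\otimes n} \twoheadrightarrow \Sym^n(M)$ along the vertical edges. By \autoref{coeq-tensor} the tensor product of these regular epimorphisms is again epi, so the triple tensor $M^{\otimes p} \otimes M^{\otimes q} \otimes M^{\otimes r} \twoheadrightarrow \Sym^p(M) \otimes \Sym^q(M) \otimes \Sym^r(M)$ is an epimorphism; hence associativity of $m$, the unit law, and commutativity may each be checked after precomposing with this epimorphism on the relevant triple (resp. pair) of summands. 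Once one does so, associativity and the unit law become instances of the Coherence Theorem (\autoref{coherence}) for the canonical isomorphisms between reassociated tensor powers, while commutativity reduces to the statement that the symmetry on $M^{\otimes p} \otimes M^{\otimes q}$, transported through the coherence isomorphisms to a permutation of $M^{\otimes p+q}$, is absorbed by the $\S_{p+q}$-invariant projection $M^{\otimes p+q} \twoheadrightarrow \Sym^{p+q}(M)$. This makes $\Sym(M)$ a commutative algebra. I expect this reduction step to be the only real obstacle, in that it must be set up cleanly enough that no genuine diagram chase remains.

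For the adjunction I exhibit the inclusion $\iota : M = \Sym^1(M) \hookrightarrow \Sym(M)$ as the unit and show that $g \mapsto g \circ \iota$ is a bijection $\Hom_{\CAlg(\C)}(\Sym(M),A) \to \Hom_\C(M,A)$ for every commutative algebra $A$. Given $f : M \to A$, I build $\overline{f}$ summand-wise: on $\Sym^p(M)$ I use the iterated product $M^{\otimes p} \xrightarrow{f^{\otimes p}} A^{\otimes p} \to A$, which is a symmetric morphism precisely because $A$ is commutative, and therefore descends uniquely along $M^{\otimes p} \twoheadrightarrow \Sym^p(M)$ to $\overline{f}_p : \Sym^p(M) \to A$; set $\overline{f} := \bigoplus_p \overline{f}_p$. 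That $\overline{f}$ is an algebra homomorphism (compatibility with units from the $p=0$ case, multiplicativity from associativity of $m_A$) is checked, as before, after precomposing with the epimorphisms $M^{\otimes p} \otimes M^{\otimes q} \twoheadrightarrow \Sym^p(M) \otimes \Sym^q(M)$, where it reduces to the evident equality of two iterated products in $A$. Finally $\overline{f} \circ \iota = f$ is the $p=1$ case, and uniqueness holds because any algebra homomorphism $g$ with $g \circ \iota = f$ must send the $p$-fold product of elements of $\Sym^1(M)$ to the $p$-fold product of their $f$-images, so $g$ and $\overline{f}$ agree after the epimorphism $M^{\otimes p} \twoheadrightarrow \Sym^p(M)$, hence on each summand and thus on all of $\Sym(M)$.
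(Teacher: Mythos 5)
Your proof is correct and follows essentially the same route as the paper: the paper declares the algebra structure "obvious" (your epimorphism-reduction checks are exactly the implicit verification), and its extension $\tilde{f}$ is defined as $\Sym^p(M) \xrightarrow{\Sym^p(f)} \Sym^p(A) \to A$, which is the same morphism as your direct descent of $M^{\otimes p} \xrightarrow{f^{\otimes p}} A^{\otimes p} \to A$ along the projection, the symmetric lift $\Sym^p(A) \to A$ existing because $A$ is commutative. Your uniqueness argument via the epimorphisms $M^{\otimes p} \twoheadrightarrow \Sym^p(M)$ likewise matches the paper's (unspelled) reasoning.
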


\begin{proof}
The first part is obvious. For the second part, let $A$ be a commutative algebra in $\C$ and $f : M \to A$ be a morphism in $\C$. For every $p \geq 0$, the multiplication of $A$ induces a morphism $A^{\otimes p} \to A$ in $\C$ (resp. for $p=0$ it is the unit). Since $A$ is commutative, it lifts to a morphism $\Sym^p(A) \to A$. Hence, $f$ induces a morphism $\Sym^p(M) \to \Sym^p(A) \to A$ for every $p$ and hence a morphism $\tilde{f} : \Sym(M) \to A$. One checks that $\tilde{f}$ is the unique homomorphism of commutative algebras such that $\tilde{f}|_M = f$. See \cite[Proposition 1.3.1]{Mar09}.
\end{proof}

\begin{ex}[Polynomial algebras]
Let $A$ be a commutative algebra in $\C$ and $M$ some $A$-module. Then we define the $A$-algebra $\Sym_A(M)$ to be the symmetric algebra of $M$ as an object in $\M(A)$. For example, we can construct the polynomial algebra $A[T_1,\dotsc,T_n] := \Sym_A(A^{\oplus n})$. It satisfies the usual universal property: If $B$ is a commutative $A$-algebra and $(e_i : \O \to B)_{1 \leq i \leq n}$ is an $n$-tuple of ``global sections'' of $B$, then there is a unique homomorphism of $A$-algebras $A[T_1,\dotsc,T_n] \to B$ which maps the global sections $T_i : \O \to A[T_1,\dotsc,T_n]$ to $e_i$. If $\C$ is assumed to be abelian, then Hilbert's Basis Theorem holds: If $A$ is noetherian, then $A[T_1,\dotsc,T_n]$ is also noetherian (\cite{Ye90}).
\end{ex}

\begin{rem}[Exterior powers?]
Let $A$ be a commutative ring and $M,N$ be two $A$-modules. Recall that a homomorphism $f : M^{\otimes n} \to N$ is called \emph{alternating} if $f(m_1 \otimes \dotsc \otimes m_n)=0$ for all $m \in M^n$ such that $m_i = m_j$ for some $i \neq j$. A representing object of the alternating homomorphisms on $M^{\otimes n}$ is called the $n$th \emph{exterior power} of $M$ and is denoted by $\Lambda^n M$. The universal alternating homomorphism $M^{\otimes n} \to \Lambda^n(M)$ is denoted by $m_1 \otimes \dotsc \otimes m_n \mapsto m_1 \wedge \dotsc \wedge m_n$.

This construction commutes with localization. Hence, by gluing, we also have an exterior power $\Lambda^n M$ for quasi-coherent modules $M$ on a scheme. Since any alternating homomorphism is antisymmetric, there is a canonical epimorphism $\ASym^n(M) \to \Lambda^n(M)$ over $M^{\otimes n}$.
 
Now, the exterior powers satisfy some properties which are fundamental for commutative algebra and algebraic geometry, but which fail for antisymmetric powers: If $M$ is locally free of rank $n$, then $\Lambda^d M$ is locally free of rank $\binom{n}{d}$. In particular, $\Lambda^d M=0$ for $d>n$ and $\det(M) := \Lambda^n(M)$ is invertible. However, we have $\ASym^n(\O)=\O/2\O$ for $n \geq 2$.

Therefore, it is tempting to define exterior powers in arbitrary cocomplete tensor categories. Unfortunately, to the best knowledge of the author, this is not possible in general. On the other hand, if $2 \in A^*$, then alternating homomorphisms $M^{\otimes n} \to N$ coincide with antisymmetric homomorphisms, so that $\ASym^n(M) \cong \Lambda^n(M)$. This motivates:
\end{rem}
 
\begin{defi}[Exterior powers] \label{exterior-power}
Let $\C$ be a finitely cocomplete $R$-linear tensor category with $2 \in R^*$. For $M \in \C$ and $n \in \N$ we define the $n$th \emph{exterior power} by $\Lambda^n(M) := \ASym^n(M)$. We denote the universal antisymmetric morphism by $\wedge : M^{\otimes n} \to \Lambda^n(M)$ and write it in element notation as
\[x_1 \otimes \dotsc \otimes x_n \mapsto x_1 \wedge \dotsc \wedge x_n\]
\end{defi}
 
For module categories, there seems to be some extra structure or language which makes it possible to define alternating homomorphisms. The following Proposition gives an idea what that might be.

\begin{prop}[Extra structure]
Let $A$ be a commutative ring such that $a^2-a \in 2A$ for all $a \in A$ (for example $A=\Z$ or $A=\F_2$). Let $M \in \M(A)$. For every $n \in \N$ the map of sets
\[f_n : M^n \to \ASym^{n+1}(M),\, (m_1,\dotsc,m_n) \mapsto m_1 \wedge m_1 \wedge \dotsc \wedge m_n\]
is multilinear, hence extends to a homomorphism $\widetilde{f_n} : M^{\otimes n} \to \ASym^{n+1}(M)$. Besides, there is an exact sequence
\[M^{\otimes n} \stackrel{\widetilde{f_n}}{\longrightarrow} \ASym^{n+1}(M) \longrightarrow  \Lambda^{n+1}(M) \longrightarrow 0.\]
\end{prop}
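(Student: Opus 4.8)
The plan is to prove the two assertions separately: first that $f_n$ is $A$-multilinear, so that $\widetilde{f_n}$ exists by the universal property of $M^{\otimes n}$, and then that the canonical surjection $\ASym^{n+1}(M) \twoheadrightarrow \Lambda^{n+1}(M)$ is a cokernel of $\widetilde{f_n}$, which by definition is what the exact sequence asserts. Throughout I would work in element notation, using that the universal map $M^{\otimes n+1} \to \ASym^{n+1}(M)$, written $x_1 \otimes \dots \otimes x_{n+1} \mapsto x_1 \wedge \dots \wedge x_{n+1}$, is multilinear and antisymmetric; in particular $2\,(m \wedge m \wedge x_3 \wedge \dots \wedge x_{n+1}) = 0$, since swapping the first two equal entries both fixes the element and introduces a sign.

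For multilinearity of $f_n$, linearity in the slots $m_2,\dots,m_n$ is immediate, as these enter the wedge linearly. The only delicate slot is $m_1$, which occurs twice. For additivity I would expand $(m_1 + m_1') \wedge (m_1 + m_1') \wedge m_2 \wedge \dots$ by bilinearity into four terms; the two mixed terms $m_1 \wedge m_1' \wedge \dots$ and $m_1' \wedge m_1 \wedge \dots$ cancel by antisymmetry, leaving $f_n(m_1,\dots) + f_n(m_1',\dots)$. For homogeneity one has $f_n(a m_1, m_2,\dots) = a^2\,(m_1 \wedge m_1 \wedge \dots)$, and here the ring hypothesis enters: writing $a^2 - a = 2b$ gives $(a^2 - a)(m_1 \wedge m_1 \wedge \dots) = b \cdot 2(m_1 \wedge m_1 \wedge \dots) = 0$, so $a^2$ may be replaced by $a$. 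Thus $f_n$ is $A$-multilinear and factors through a homomorphism $\widetilde{f_n} : M^{\otimes n} \to \ASym^{n+1}(M)$ with $\widetilde{f_n}(m_1 \otimes \dots \otimes m_n) = m_1 \wedge m_1 \wedge m_2 \wedge \dots \wedge m_n$.

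For the exact sequence, by definition I must exhibit the canonical surjection $p : \ASym^{n+1}(M) \twoheadrightarrow \Lambda^{n+1}(M)$ (which exists because alternating morphisms are antisymmetric) as a cokernel of $\widetilde{f_n}$. I would do this by comparing universal properties (Yoneda). A morphism $\coker(\widetilde{f_n}) \to T$ is a morphism $\ASym^{n+1}(M) \to T$ killing $\im(\widetilde{f_n})$, i.e. an antisymmetric morphism $\phi : M^{\otimes n+1} \to T$ with $\phi(m \otimes m \otimes x_3 \otimes \dots \otimes x_{n+1}) = 0$ for all arguments. The key point is that for an antisymmetric $\phi$ this condition is equivalent to $\phi$ being alternating: given any tensor with $x_i = x_j$ for some $i < j$, choosing a permutation $\sigma$ with $\sigma(1) = i$, $\sigma(2) = j$ and using $\phi(x_1 \otimes \dots) = \sgn(\sigma)\,\phi(x_{\sigma(1)} \otimes \dots)$ reduces the value to the adjacent-repeat case, which vanishes; the converse is clear. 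Hence morphisms out of $\coker(\widetilde{f_n})$ correspond naturally to alternating morphisms, i.e. to morphisms out of $\Lambda^{n+1}(M)$, so $p$ is a cokernel of $\widetilde{f_n}$, which is exactly the exactness of $M^{\otimes n} \xrightarrow{\widetilde{f_n}} \ASym^{n+1}(M) \to \Lambda^{n+1}(M) \to 0$.

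I expect the genuinely nontrivial point to be the homogeneity step, where one must recognize that the hypothesis $a^2 - a \in 2A$ is precisely what converts the naive scalar $a^2$ into $a$, by exploiting that $2$ annihilates a repeated wedge; everything else is formal manipulation with antisymmetry and universal properties. A secondary care point is the reduction of an arbitrary diagonal tensor to the adjacent-repeat case via a sign, which must be carried out at the level of antisymmetric test morphisms $\phi$ rather than inside $\ASym^{n+1}(M)$ itself, so as to avoid circularity in identifying the cokernel.
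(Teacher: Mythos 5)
Your proposal is correct and follows essentially the same route as the paper: multilinearity is established exactly as there (mixed terms cancel by antisymmetry for additivity, and $a^2-a\in 2A$ together with $2\,(m\wedge m\wedge\dotsc)=0$ handles homogeneity), and exactness is identified with the statement that an antisymmetric map is alternating if and only if it kills the repeated-first-entry tensors. The only difference is that you spell out the permutation/sign reduction to the adjacent-repeat case, which the paper dismisses as obvious.
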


\begin{proof}
Clearly $f_n$ is linear in $m_2,\dotsc,m_n$. For the linearity in $m_1$, we first observe that
\[m_1 \wedge m_1 \wedge \dotsc \wedge m_n = - m_1 \wedge m_1 \wedge \dotsc \wedge m_n\]
by interchanging the two copies of $m_1$. It follows $2A(m_1 \wedge m_1 \wedge \dotsc \wedge m_n)=0$, hence for all $a \in A$:
\begin{eqnarray*}
a m_1 \wedge a m_1 \wedge \dotsc \wedge m_n &=& a^2 (m_1 \wedge m_1 \wedge \dotsc \wedge m_n) \\
&=& a (m_1 \wedge m_1 \wedge \dotsc \wedge m_n)\\
&& + (a^2-a) (m_1 \wedge m_1 \wedge \dotsc \wedge m_n)\\
&=& a (m_1 \wedge m_1 \wedge \dotsc \wedge m_n)
\end{eqnarray*}
We also have for another $m'_1 \in M$:
\begin{eqnarray*}
(m_1+m'_1) \wedge (m_1+m'_1) \wedge \dotsc &=& ~m_1 \wedge m'_1 \wedge \dotsc + m'_1 \wedge m_1 \wedge \dotsc  \\
&& + m_1 \wedge m_1 \wedge \dotsc + m'_1 \wedge m'_1 \wedge \dotsc \\
&=& m_1 \wedge m_1 \wedge \dotsc + m'_1 \wedge m'_1 \wedge \dotsc 
\end{eqnarray*}
This shows that $f_n$ is multilinear. The exactness of the sequence means that an antisymmetric map $g : M^{n+1} \to N$ is alternating if and only if it satisfies $g(m_1,m_1,m_2,\dotsc,m_n)=0$ for all $(m_1,m_2,\dotsc,m_n) \in M^n$, which is obvious.
\end{proof}

\begin{lemma}[Exterior algebra] \label{exterior-algebra}
Let $\C$ be a cocomplete $R$-linear tensor category with $2 \in R^*$ and $M \in \C$. Then $\Lambda^0(M)=1$ and the natural epimorphisms $\wedge : \Lambda^p(M) \otimes \Lambda^q(M) \twoheadrightarrow \Lambda^{p+q}(M)$ (\autoref{sym-exakt}) endow
\[\Lambda(M) := \bigoplus_{p=0}^{\infty} \Lambda^p(M)\]
with the structure of an algebra in $\C$, which we call the \emph{exterior algebra} of $M$. It enjoys the following universal property: There exists a natural morphism
\[i : M \cong \Lambda^1(M) \hookrightarrow \Lambda(M)\]
in $\C$ with the property that $i(m) \wedge i(n)=-i(n) \wedge i(m)$ in element notation. If conversely $A$ is an algebra in $\C$ and $f : M \to A$ is a morphism in $\C$ satisfying $f(m) f(n) = - f(n) f(m)$, then there is a unique homomorphism  $\tilde{f} : \Lambda(M) \to A$ of algebras such that $\tilde{f} \circ i = f$.
\end{lemma}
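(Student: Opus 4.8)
The plan is to mirror the proof of the symmetric algebra (\autoref{symalg}), inserting signs wherever antisymmetry is needed. First I would check that $\Lambda(M)$ is an algebra. The unit is the inclusion $1 = \Lambda^0(M) \hookrightarrow \Lambda(M)$ and the multiplication is $\bigoplus_{p,q} \wedge_{p,q}$, where $\wedge_{p,q} : \Lambda^p(M) \otimes \Lambda^q(M) \to \Lambda^{p+q}(M)$ are the epimorphisms from \autoref{sym-exakt}. To verify associativity I would note that by part 1 of \autoref{coeq-tensor} the canonical map $M^{\otimes p} \otimes M^{\otimes q} \otimes M^{\otimes r} \to \Lambda^p(M) \otimes \Lambda^q(M) \otimes \Lambda^r(M)$ is an epimorphism, so it suffices to compare the two associativity composites after precomposition with it; both then agree with $M^{\otimes p+q+r} \twoheadrightarrow \Lambda^{p+q+r}(M)$, since each $\wedge_{p,q}$ lies over $M^{\otimes p} \otimes M^{\otimes q} \cong M^{\otimes p+q}$. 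The unit axiom is checked the same way. This makes $\Lambda(M)$ an algebra, with $i : M \cong \Lambda^1(M) \hookrightarrow \Lambda(M)$ natural in $M$.

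Next I would record the anticommutativity of $i$. The product $i(m) \wedge i(n)$ is by definition the image of $m \otimes n$ under $M^{\otimes 2} \twoheadrightarrow \Lambda^2(M) = \ASym^2(M)$, and this map is antisymmetric by the very definition of $\ASym^2$; hence $m \wedge n = -n \wedge m$, i.e. $i(m) \wedge i(n) = -i(n) \wedge i(m)$ in element notation.

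For the universal property, given an algebra $A$ and $f : M \to A$ with $f(m)f(n) = -f(n)f(m)$, I would for each $p$ form the composite $g_p : M^{\otimes p} \xrightarrow{f^{\otimes p}} A^{\otimes p} \to A$, the second map being iterated multiplication; in element notation $g_p(m_1 \otimes \dotsc \otimes m_p) = f(m_1) \dotsc f(m_p)$. The key step is to show $g_p$ is antisymmetric, so that it factors uniquely as $\tilde{f}_p : \Lambda^p(M) = \ASym^p(M) \to A$. Since $\S_p$ is generated by adjacent transpositions and $\sgn$ is multiplicative, it is enough to treat a swap of two neighbouring factors, where the relation $f(m_i)f(m_{i+1}) = -f(m_{i+1})f(m_i)$ (valid inside any product by associativity of $A$) supplies exactly the sign $\sgn = -1$; the soundness of element notation (\autoref{Element}) turns this into the identity $g_p \circ \sigma = \sgn(\sigma)\,g_p$. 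Assembling the $\tilde{f}_p$ gives $\tilde{f} : \Lambda(M) \to A$, and that it is an algebra homomorphism follows by comparing $\tilde{f}_{p+q} \circ \wedge_{p,q}$ with $m_A \circ (\tilde{f}_p \otimes \tilde{f}_q)$ after precomposing with the epimorphism $M^{\otimes p} \otimes M^{\otimes q} \twoheadrightarrow \Lambda^p(M) \otimes \Lambda^q(M)$, where both reduce to $g_{p+q}$. Uniqueness holds because $M^{\otimes p} \twoheadrightarrow \Lambda^p(M)$ factors through the iterated multiplication $(\Lambda^1 M)^{\otimes p} \to \Lambda^p(M)$, so these are epimorphisms and $\Lambda(M)$ is generated as an algebra by $i(M)$; any algebra map is therefore determined by its restriction to $\Lambda^1(M) \cong M$. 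The only real obstacle is the antisymmetry check for $g_p$: everything else is a formal diagram chase identical to the symmetric case, but here one must be careful that $A$ is neither commutative nor anticommutative, so the sign can only be produced locally, one adjacent transposition at a time, using associativity in $A$.
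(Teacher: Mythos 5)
Your proposal is correct and follows essentially the same route as the paper: the paper also defines $\tilde{f}$ by lifting the iterated products $M^{\otimes p} \to A^{\otimes p} \to A$ through $\Lambda^p(M)$, noting that the hypothesis $f(m)f(n) = -f(n)f(m)$ gives antisymmetry for $p=2$ and hence for all $p$, exactly the adjacent-transposition argument you spell out. The only difference is one of detail: the paper leaves the algebra structure, the homomorphism property, and uniqueness as "obvious"/"easy to check," whereas you supply the epimorphism-precomposition and generation arguments explicitly.
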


\begin{proof}
Again the first part is obvious. If $A$ is an algebra in $\C$ and $f : M \to A$ is a morphism in $\C$ with $f(m)  f(n) = - f(n)  f(m)$, this precisely means that the morphism $M^{\otimes p} \to A^{\otimes p} \to A$ is antisymmetric for $p=2$ and then also for every $p \geq 0$, i.e. it lifts to a morphism $\Lambda^p(M) \to A$ and then to a morphism $\tilde{f} : \Lambda(M) \to A$. It is easy to check that it is a homomorphism of algebras, the unique one extending $f$ in degree $1$.
\end{proof}

\begin{rem}[Clifford algebras]
More generally, with the above notations, if $\beta : M \otimes M \to 1$ is a symmetric morphism, there is an algebra $Cl(M,\beta)$ in $\C$ equipped with a morphism $i : M \to Cl(M,\beta)$ in $\C$ such that
\[i(m) \cdot i(n) + i(n) \cdot  i(m) = 2 \beta(m \otimes n) \cdot 1\]
in element notation, and satisfying the corresponding universal property. We call $Cl(M,\beta)$ the \emph{Clifford algebra} of $M$ with respect to $\beta$. It is a suitable quotient of the tensor algebra $T(M)$ (\autoref{tensoralgebra}).\\
If $\C=\M(R)$ for some commutative ring $R$, this coincides with the usual notion. Observe that $Cl(M,0)=\Lambda(M)$.
\end{rem}

\begin{lemma} \label{antipaar}
Let $\C$ be a cocomplete $R$-linear tensor category and consider two antisymmetric morphisms $\omega : V^{\otimes p} \to W_1$, $\eta : V^{\otimes q} \to W_2$. Denote by $\Sigma_{p,q} \subseteq \Sigma_{p+q}$ be the set of $(p,q)$-shuffles. Then, the morphism
\[\omega \wedge \eta : V^{\otimes p+q} \to W_1 \otimes W_2\]
defined by
\[(\omega \wedge \eta)(v_1 \otimes \dotsc \otimes v_{p+q})\]
\[ := \sum_{\sigma \in \Sigma_{p,q}} \sgn(\sigma) \cdot \omega(v_{\sigma(1)} \otimes \dotsc \otimes v_{\sigma(p)}) \otimes \eta(v_{\sigma(p+1)} \otimes \dotsc \otimes v_{\sigma(p+q)})\]
in element notation, is also antisymmetric. 
\end{lemma}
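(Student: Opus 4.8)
The plan is to verify the defining condition of antisymmetry directly, namely that $(\omega \wedge \eta) \circ \tau = \sgn(\tau)\,(\omega \wedge \eta)$ for every $\tau \in \Sigma_{p+q}$. The set of $\tau$ satisfying this identity contains the identity and is closed under composition (because $\sgn$ is multiplicative), hence is a subgroup of the finite group $\Sigma_{p+q}$. Since the adjacent transpositions $\sigma_k = (k\ \ k+1)$, $1 \le k \le p+q-1$, generate $\Sigma_{p+q}$, it suffices to prove $(\omega \wedge \eta) \circ \sigma_k = -(\omega \wedge \eta)$ for each $k$. I would carry this out in element notation, which is legitimate here by the soundness principle of \autoref{Element}; precomposition with $\sigma_k$ simply interchanges the inputs $v_k$ and $v_{k+1}$.

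The core is then a term-by-term analysis of the shuffle sum for fixed $k$. I would partition $\Sigma_{p,q}$ according to where the values $k$ and $k+1$ appear among the first block $\sigma(1) < \dots < \sigma(p)$ (the arguments of $\omega$) and the second block $\sigma(p+1) < \dots < \sigma(p+q)$ (the arguments of $\eta$), giving three cases. (A) If both $k,k+1$ lie in the first block, then, being consecutive integers in a strictly increasing sequence, they occupy adjacent slots; swapping $v_k \leftrightarrow v_{k+1}$ transposes two adjacent arguments of $\omega$, so by antisymmetry of $\omega$ the summand is negated while $\sgn(\sigma)$ is unchanged. (B) The case where both lie in the second block is symmetric, using antisymmetry of $\eta$. (C) If $k$ and $k+1$ lie in different blocks, one checks that the map $\sigma \mapsto s_k \sigma$ (with $s_k=(k\ \ k+1)$ acting on the \emph{values}) keeps both blocks strictly increasing, hence is an involution of the case-(C) shuffles with $\sgn(s_k\sigma) = -\sgn(\sigma)$; reindexing shows the swapped summand for $\sigma$ equals minus the original summand for $s_k\sigma$. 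Summing over the bijection of $\Sigma_{p,q}$ which is the identity on cases (A),(B) and $s_k$ on case (C), the swapped sum is the negative of the original, i.e. $(\omega \wedge \eta) \circ \sigma_k = -(\omega \wedge \eta)$, as required.

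The main obstacle is that the slick route is unavailable. One is tempted to realize $\omega \wedge \eta$ as a scalar multiple of the full antisymmetrization $\sum_{\tau \in \Sigma_{p+q}} \sgn(\tau)\,(\omega \otimes \eta)\circ \tau^{-1}$, which is manifestly antisymmetric because $(\omega \otimes \eta)$ is $(\Sigma_p \times \Sigma_q)$-antisymmetric and the shuffles are coset representatives; but this only yields $p!\,q!\,\bigl((\omega \wedge \eta)\circ \sigma_k + (\omega \wedge \eta)\bigr) = 0$, and since $\C$ is an arbitrary $R$-linear tensor category with no hypothesis such as $2 \in R^{*}$ or torsion-freeness, the factor $p!\,q!$ cannot be cancelled. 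Thus the genuinely delicate point is to keep the argument integral, which forces the explicit bijection above; the one place needing care is the verification in case (C) that $s_k\sigma$ is still a $(p,q)$-shuffle. The remaining steps are routine sign bookkeeping.
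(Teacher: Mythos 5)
Your proof is correct and is essentially the same as the paper's: the paper also reduces to swapping $v_i \leftrightarrow v_{i+1}$ and splits the shuffle sum into the same cases, using antisymmetry of $\omega$ (resp.\ $\eta$) when both indices lie in the same block, and pairing the two mixed cases via composition with the transposition $(i\ \ i+1)$, exactly your involution $\sigma \mapsto s_k\sigma$ on case (C). Your explicit remarks on the subgroup reduction to adjacent transpositions and on why full antisymmetrization fails integrally are sound but not needed beyond what the paper's argument already contains.
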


\begin{proof}
Let $1 \leq i<p+q$. If we interchange $v_i$ and $v_{i+1}$ in the expression $(\omega \wedge \eta)(v_1 \otimes \dotsc \otimes v_{p+q})$, we get
\[\sum_{\substack{\sigma \in \Sigma_{p,q} \\ i,i+1 \notin \{\sigma(1),\dotsc,\sigma(p)\}}} \sgn(\sigma) \cdot \omega(v_{\sigma(1)} \otimes \dotsc \otimes v_{\sigma(p)}) \otimes \eta(\underbrace{v_{\sigma(p+1)} \otimes \dotsc \otimes v_{\sigma(p+q)}}_{\text{ with } v_i,v_{i+1} \text{ interchanged}})\]
\[+\sum_{\substack{\sigma \in \Sigma_{p,q} \\ i,i+1 \in \{\sigma(1),\dotsc,\sigma(p)\}}} \sgn(\sigma) \cdot \omega(\underbrace{v_{\sigma(1)} \otimes \dotsc \otimes v_{\sigma(p)}}_{\text{ with } v_i,v_{i+1} \text{ interchanged}}) \otimes \eta(v_{\sigma(p+1)} \otimes \dotsc \otimes v_{\sigma(p+q)})\]
\[+\sum_{\substack{\sigma \in \Sigma_{p,q} \\ i \in \{\sigma(1),\dotsc,\sigma(p)\} \not\ni i+1}} \sgn(\sigma) \cdot \omega(\underbrace{v_{\sigma(1)} \otimes \dotsc \otimes v_{\sigma(p)}}_{v_i \text{ exchanged by } v_{i+1}}) \otimes \eta(\underbrace{v_{\sigma(p+1)} \otimes \dotsc \otimes v_{\sigma(p+q)}}_{v_{i+1} \text{ exchanged by } v_{i}})\]
\[+\sum_{\substack{\sigma \in \Sigma_{p,q} \\ i+1 \in \{\sigma(1),\dotsc,\sigma(p)\} \not\ni i}} \sgn(\sigma) \cdot \omega(\underbrace{v_{\sigma(1)} \otimes \dotsc \otimes v_{\sigma(p)}}_{v_{i+1} \text{ exchanged by } v_i}) \otimes \eta(\underbrace{v_{\sigma(p+1)} \otimes \dotsc \otimes v_{\sigma(p+q)}}_{v_i \text{ exchanged by } v_{i+1}})\]
By changing back $v_i$ and $v_{i+1}$ in the first and the second sum and by interchanging the third with the fourth sum and thereby composing $\sigma$ with the transposition $(i ~ i+1)$, we arrive at the corresponding decomposition of $-(\omega \wedge \eta)(v_1 \otimes \dotsc \otimes v_{p+q})$.

This shows that $\omega \wedge \eta$ is antisymmetric.
\end{proof}

\begin{cor} \label{hilfs-omega}
Let $\C$ be a cocomplete $R$-linear tensor category such that $2 \in R^*$, $V \in \C$ and $d \in \N$. Then, there is a canonical morphism
\[\omega : \Lambda^{d+1}(V) \to V \otimes \Lambda^d(V)\]
which is given in element notation by
\[\omega(v_0 \wedge \dotsc \wedge v_d) = \sum_{k=0}^{d} (-1)^k \, v_k \otimes (v_1 \wedge \dotsc \wedge \widehat{v_k} \wedge \dotsc \wedge v_d).\]
\end{cor}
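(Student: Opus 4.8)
The plan is to construct $\omega$ by exhibiting a suitable antisymmetric morphism out of $V^{\otimes(d+1)}$ and then invoking the universal property of the exterior power. Recall from \autoref{exterior-power} that $\Lambda^{d+1}(V) = \ASym^{d+1}(V)$ is by definition the representing object for antisymmetric morphisms on $V^{\otimes(d+1)}$, with universal antisymmetric morphism $\wedge : V^{\otimes(d+1)} \to \Lambda^{d+1}(V)$. So it suffices to produce an antisymmetric morphism $\phi : V^{\otimes(d+1)} \to V \otimes \Lambda^d(V)$ given in element notation by the stated formula; it will then factor uniquely as $\phi = \omega \circ \wedge$, which defines $\omega$.

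The cleanest way to get such a $\phi$ is to feed it directly into \autoref{antipaar}, which already does the hard work of checking antisymmetry of a shuffle-type pairing. I would apply that lemma with $p=1$ and $q=d$, taking $W_1 = V$ and $\omega = \id_V : V^{\otimes 1} \to V$ (this is trivially antisymmetric, since $\Sigma_1$ is the trivial group and the condition $f \circ \sigma = \sgn(\sigma) f$ is vacuous), together with $W_2 = \Lambda^d(V)$ and $\eta = \wedge : V^{\otimes d} \to \Lambda^d(V)$, which is antisymmetric by definition. The lemma then yields an antisymmetric morphism $\phi := \id_V \wedge \wedge : V^{\otimes(d+1)} \to V \otimes \Lambda^d(V)$.

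It remains to check that $\phi$ is given by the claimed formula, which is the only bookkeeping step. The $(1,d)$-shuffles $\sigma \in \Sigma_{d+1}$ are exactly those permutations with a prescribed value $\sigma(1) = k$ and $\sigma(2) < \dots < \sigma(d+1)$ equal to the remaining indices in increasing order; there is one for each $k$, and its sign is $(-1)^{k-1}$ (moving the $k$-th entry to the front past $k-1$ predecessors). Substituting $\omega = \id_V$ into the defining sum of \autoref{antipaar} gives
\[
\phi(v_0 \otimes \dotsc \otimes v_d) = \sum_{k=0}^{d} (-1)^k \, v_k \otimes (v_0 \wedge \dotsc \wedge \widehat{v_k} \wedge \dotsc \wedge v_d)
\]
in element notation, after the evident reindexing from $\{1,\dots,d+1\}$ to $\{0,\dots,d\}$. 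Since the antisymmetry of $\phi$ is guaranteed by \autoref{antipaar}, the factorization through $\wedge$ exists and is unique, producing the desired $\omega$. There is essentially no serious obstacle here: the nontrivial verification (antisymmetry of the shuffle pairing) has already been carried out in \autoref{antipaar}, so the only thing to monitor is that the sign conventions for $(1,d)$-shuffles reproduce the $(-1)^k$ coefficients exactly.
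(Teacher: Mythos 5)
Your proposal is correct and is exactly the derivation the paper intends: \autoref{hilfs-omega} is stated as an immediate corollary of \autoref{antipaar}, obtained by applying that lemma with $p=1$, $q=d$, $\omega=\id_V$, $\eta=\wedge$, and then factoring the resulting antisymmetric morphism through the universal antisymmetric morphism $V^{\otimes(d+1)} \to \Lambda^{d+1}(V)$. Your sign bookkeeping for the $(1,d)$-shuffles is also right, so there is nothing to add.
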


\begin{rem}[Hopf algebra structure]
We can put a \emph{graded} Hopf algebra structure (\autoref{hopf}) on $\Lambda(V)$ as follows: Applying \autoref{antipaar} to the universal antisymmetric morphisms $V^{\otimes p} \to \Lambda^p(V)$ and $V^{\otimes q} \to \Lambda^q(V)$, we get a morphism $\Lambda^{p+q}(V) \to \Lambda^p(V) \otimes \Lambda^q(V)$. This induces a morphism
\[\Delta : \Lambda(V) \to \Lambda(V) \otimes \Lambda(V),\]
the comultiplication, which is explicitly given by
\[\Delta(v_1 \wedge \dotsc \wedge v_n) = \sum_{\substack{n=p+q \\ \sigma \in \Sigma_{p,q}}} \sgn(\sigma) \cdot v_{\sigma(1)} \wedge \dotsc \wedge v_{\sigma(p)} \otimes v_{\sigma(p+1)} \wedge \dotsc \wedge v_{\sigma(n)}.\]
If we define the counit $\e : \Lambda(V) \to 1$ by $\e|_{\Lambda^0(V)}=\id$ and $\e|_{\Lambda^p(V)} = 0$ for $p>0$, one easily checks that $\Delta$ is coassociative and that $\e$ is coneutral for $\Delta$. It is clear that $\e$ is an algebra homomorphism. But $\Delta$ is \emph{not} an algebra homomorphism. In fact, we have
\begin{eqnarray*}
\Delta(v_1) \wedge \Delta(v_2) & = & (v_1 \otimes 1 + 1 \otimes v_1) \wedge (v_2 \otimes 1 + 1 \otimes v_2) \\
&=&v_1 \wedge v_2 \otimes 1 + v_1 \otimes v_2 + v_2 \otimes v_1 + 1 \otimes v_1 \wedge v_2 \\
& \neq &v_1 \wedge v_2 \otimes 1 + v_1 \otimes v_2 - v_2 \otimes v_1 + 1 \otimes v_1 \wedge v_2 \\
& =& \Delta(v_1 \wedge v_2).
\end{eqnarray*}
We can remedy this as follows (which I have learned from Mariano Su\'{a}rez-Alvarez): Do not view $\Lambda(V)$ as an algebra in $\C$, but rather as an $\N$-graded-commutative algebra (\autoref{graded-comm}) in $\C$, i.e. a commutative algebra in the cocomplete tensor category $\tilde{\gr}_\N(\C)$ of $\N$-graded objects of $\C$ equipped with the \emph{twisted} symmetry. It follows that the multiplication on the tensor product $\Lambda(V) \otimes \Lambda(V)$ (\autoref{algebra-cons}) is actually given -- in element notation -- by $(a \otimes b) \cdot (c \otimes d) = (-1)^{|b| \cdot |c|} (a \cdot c) \otimes (b \cdot d)$. Then a calculation shows that $\Delta$ is a homomorphism of $\N$-graded-commutative algebras; alternatively one can construct $\Delta$ using the universal property of $\Lambda(V)$ as the universal graded-commutative algebra equipped with a morphism from $V$ in degree $1$. Hence, $\Lambda(V)$ is an $\N$-graded-commutative bialgebra, i.e. a bialgebra in $\tilde{\gr}_\N(\C)$. Actually it is an involutive Hopf algebra with antipode given by $S(x)=(-1)^{|x|} x$.
\end{rem}

\begin{lemma}[Symmetry Lemma] \label{symlem}
Let $\C$ be a finitely cocomplete $R$-linear tensor category. Assume that $p,d \in \mathds{N}$ such that $d! \in R^*$ and $1 \leq p \leq d$. Let $V,W \in \C$ be objects and let
\[f : \Lambda^p(V) \otimes \Lambda^d(V) \to W\]
be a morphism in $\C$. We assume that the following relation holds (in element notation):
\[\sum_{k=0}^{d} (-1)^k f(v_1 \wedge \dotsc \wedge v_{p-1} \wedge w_k \otimes w_0 \wedge \dotsc \wedge \widehat{w_k} \wedge \dotsc \wedge w_d) = 0\]
In other words, we have (via $w_0=v_p$)
\[(\star)~~~~ f(v_1 \wedge \dotsc \wedge v_{p-1} \wedge v_p \otimes w_1 \wedge \dotsc \wedge w_d)\]
\[ = \sum_{k=1}^{d} (-1)^{k+1} f(v_1 \wedge \dotsc \wedge v_{p-1} \wedge w_k \otimes v_p \wedge w_1 \wedge \dotsc \wedge \widehat{w_k} \wedge \dotsc \wedge w_d)\]
Then, we have
\[(\star)_p ~~~~ f(v_1 \wedge \dotsc \wedge v_p \otimes w_1 \wedge \dotsc \wedge w_d)\]
\[ = \sum_{\sigma \in \Sigma_{p,d-p}} \sgn(\sigma) \cdot f(w_{\sigma_1} \wedge \dotsc \wedge w_{\sigma_p} \otimes v_1 \wedge \dotsc \wedge v_p \wedge w_{\sigma_{p+1}} \wedge \dotsc \wedge w_{\sigma_d}).\]

For $p=d$ this means that $f : \Lambda^d(V) \otimes \Lambda^d(V) \to W$ is symmetric, i.e.
\[f(v_1 \wedge \dotsc \wedge v_d \otimes w_1 \wedge \dotsc \wedge w_d) = f(w_1 \wedge \dotsc \wedge w_d \otimes v_1 \wedge \dotsc \wedge v_d).\]
\end{lemma}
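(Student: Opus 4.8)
The plan is to prove the identity $(\star)_p$ by induction on $p$, taking the hypothesis $(\star)$ as the case $p=1$. Indeed, for $p=1$ the right-hand side of $(\star)_p$ is a sum over $(1,d-1)$-shuffles $\sigma$, each determined by its first value $\sigma_1=k$, and such a shuffle has sign $(-1)^{k-1}$; since $(-1)^{k+1}=(-1)^{k-1}$, the statement $(\star)_1$ is literally the hypothesis $(\star)$. So the base case is free, and the entire content lies in the inductive step: assuming $(\star)_{p-1}$ holds for every morphism $\Lambda^{p-1}(V)\otimes\Lambda^d(V)\to W'$ satisfying the corresponding hypothesis, deduce $(\star)_p$ for $f$.

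For the inductive step I would first move $v_2,\dotsc,v_p$ to the right slot and leave $v_1$ behind. Holding $v_1$ fixed, the assignment $u_1\wedge\dotsc\wedge u_{p-1}\otimes w_{\bullet}\mapsto f(v_1\wedge u_1\wedge\dotsc\wedge u_{p-1}\otimes w_{\bullet})$ defines a morphism $g_{v_1}:\Lambda^{p-1}(V)\otimes\Lambda^d(V)\to W$ (rigorously, precomposition of $f$ with the wedge $V\otimes\Lambda^{p-1}(V)\to\Lambda^p(V)$, so that the induction is applied after tensoring the identity of morphisms with $\id_V$). The hypothesis $(\star)$ for $f$, specialized to the $p$-tuple $(v_1,u_1,\dotsc,u_{p-1})$, is exactly the level-$(p-1)$ hypothesis for $g_{v_1}$, so $(\star)_{p-1}$ applies and yields
\[
f(v_1\wedge\dotsc\wedge v_p\otimes w_{\bullet})=\sum_{\tau\in\Sigma_{p-1,d-p+1}}\sgn(\tau)\,f\bigl(v_1\wedge w_{\tau_1}\wedge\dotsc\wedge w_{\tau_{p-1}}\otimes v_2\wedge\dotsc\wedge v_p\wedge w_{\tau_p}\wedge\dotsc\wedge w_{\tau_d}\bigr).
\]
In each summand I would then reorder the left wedge, $v_1\wedge w_{\tau_1}\wedge\dotsc\wedge w_{\tau_{p-1}}=(-1)^{p-1}\,w_{\tau_1}\wedge\dotsc\wedge w_{\tau_{p-1}}\wedge v_1$, and apply $(\star)$ once more to move the last left entry $v_1$ across the $d$-element right wedge $v_2\wedge\dotsc\wedge v_p\wedge w_{\tau_p}\wedge\dotsc\wedge w_{\tau_d}$.

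The resulting terms split into two families according to which entry $v_1$ is exchanged with. In the \emph{type-W} terms $v_1$ is swapped with one of the $w$'s, so the left slot becomes all $w$'s; after normalizing signs by antisymmetry within each slot, these assemble into an integer multiple of the desired shuffle sum, i.e. of the right-hand side of $(\star)_p$. In the \emph{type-V} terms $v_1$ is swapped with some $v_i$ ($i\geq 2$), so a $v_i$ remains in the left slot. The crucial—and, as the $p=2,d=3$ computation already shows, somewhat surprising—point is that these type-V terms do \emph{not} cancel to zero; instead, re-indexing and using antisymmetry within the slots lets one recognize them (via the intermediate expansion above with the roles of $v_1$ and $v_i$ interchanged) as an integer multiple of the \emph{original} left-hand side $f(v_1\wedge\dotsc\wedge v_p\otimes w_{\bullet})$.

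Collecting both families therefore produces a linear relation of the shape $N\cdot f(v_1\wedge\dotsc\wedge v_p\otimes w_{\bullet})=M\cdot(\text{RHS of }(\star)_p)$ with explicit nonzero integer coefficients $N,M$ (in the case $p=2$ one finds precisely $2f=2\cdot\mathrm{RHS}$). Since the integers occurring are products of factors bounded by $d$, they are units once $d!\in R^{*}$, so one may solve for $f(v_1\wedge\dotsc\wedge v_p\otimes w_{\bullet})$ and obtain $(\star)_p$; this is exactly where the hypothesis $d!\in R^{*}$ is used. The main obstacle is the bookkeeping of the third paragraph: verifying that the type-V terms reassemble \emph{exactly} into a multiple of the left-hand side and pinning down the two integer coefficients $N$ and $M$ precisely enough to know the combination is invertible. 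Once $(\star)_p$ is established, the final assertion is immediate: for $p=d$ the index set $\Sigma_{d,0}$ is the singleton $\{\id\}$ with sign $+1$, so $(\star)_d$ reduces to $f(v_1\wedge\dotsc\wedge v_d\otimes w_1\wedge\dotsc\wedge w_d)=f(w_1\wedge\dotsc\wedge w_d\otimes v_1\wedge\dotsc\wedge v_d)$, i.e. the symmetry of $f$.
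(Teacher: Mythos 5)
Your proposal is correct and follows essentially the same route as the paper's proof: induction on $p$ (with the same device of adding a tensor factor on the left so that $v_1$ may be held fixed), one application of the inductive hypothesis, one application of $(\star)_1$ to move $v_1$ across, and the same split into your type-W and type-V families. The bookkeeping you defer works out exactly as you anticipate: in the paper's computation the type-W terms reassemble, after a re-indexing in which the signs of two $m$-cycles cancel, into $p$ copies of the shuffle sum, while the type-V terms are recognized via the inductive hypothesis read backwards as $-(p-1)$ copies of the left-hand side, so the relation is $p\cdot f = p\cdot(\text{RHS of }(\star)_p)$ (your $N=M$, equal to $p+1$ in the paper's shifted indexing), and one divides by $p$, which is invertible since $d!\in R^*$.
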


\begin{proof}
We make an induction on $p$. The case $p=1$ is trivial. Although given a morphism $f : \Lambda^{p+1}(V) \otimes \Lambda^d(V) \to W$, we can only construct a morphism $\tilde{f} : \Lambda^p(V) \otimes \Lambda^d(V) \to \HOM(V,W)$ and apply the induction hypothesis if $\C$ is closed, in general we may simply strengthen the statement of the lemma by adding an additional tensor factor on the left. In order to simplify the element notation further, we abbreviate $f(v_1 \wedge \dotsc \wedge v_p \otimes w_1 \wedge \dotsc \wedge w_d)$ by $v_1 \dotsc v_p | w_1 \dotsc w_d$. If in a calculation some of these entries are not involved on both sides, we just omit them from the notation. Thus, $(\star)$ is just
\[v_p | w_1 \dotsc w_d = \sum_{k=1}^{d} (-1)^{k+1} w_k | v_p w_1 \dotsc w_{k-1} \widehat{w_k} w_{k+1} \dotsc w_d\]
Following a very helpful suggestion by Stefan Vogel, we can rewrite this without any sign as
\[v_p | w_1 \dotsc w_d = \sum_{k=1}^{d} w_k | w_1 \dotsc w_{k-1} v_p w_{k+1} \dotsc w_d,\]
which we abbreviate further as
\[(\star)_1 ~~~~ v_p | w_1 \dotsc w_d = \sum_{k=1}^{d} w_k | w(w_k=v_p).\]
We may apply this not just for the last $v_p$, but also for example to $v_1$ because of antisymmetry.
Our goal is to prove the relation
\[(\star)_p ~~~~ v_1 \dotsc v_p | w_1 \dotsc w_d = \sum_{1 \leq i_1 < \dotsc < i_p \leq d} w_{i_1} \dotsc w_{i_p} | w(w_{i_k} = v_k)_{1 \leq k \leq p}.\] 
Assuming this is true for some $p<d$, we prove $(\star)_{p+1}$ as follows:
\[v_0 v_1 \dotsc v_p | w \stackrel{(\star)_p}{=} \sum_{i_1<\dotsc<i_p} v_0 w_{i_1} \dotsc w_{i_d} | w(w_{i_k} = v_k)_{1 \leq k \leq p}\]
\[ \stackrel{(\star)_1}{=} \sum_{i_1<\dotsc<i_p} \left(\sum_{j \notin \{i_1,\dotsc,i_p\}} w_j w_{i_1} \dotsc w_{i_p} | w((w_{i_k}=v_k)_{1 \leq k \leq p}, w_{j}=v_0) \right.\]
\[ ~~~~~~~~~~~~~~~~~~~~~~~+\left. \sum_{n=1}^{p} v_n w_{i_1} \dotsc w_{i_p} | w((w_{i_k}=v_k)_{k \neq n},w_{i_n}=v_0)\right) ~~~~~(\dagger)\]
The expression $(\dagger)$ expands into two double sums. Using the convention $i_0:=0$, $i_{p+1}:=d+1$, the first one equals  
\[ \sum_{m=1}^{p+1} \, \sum_{i_1<\dotsc<i_p} \, \sum_{i_{m-1}<j<i_m} w_j w_{i_1} \dotsc w_{i_p} | w((w_{i_k}=v_k)_{1 \leq k \leq p}, w_{j}=v_0).\]
In order to align the indices left from the bar, we have to apply an $m$-cycle. Also on the right we have apply the $m$-cycle $v_0 \mapsto v_{m-1} \mapsto \dotsc \mapsto v_1 \mapsto v_0$ on $v_1,\dotsc,v_{m-1},v_0,v_m,\dotsc,v_p$ in order to obtain $v_0,v_1,\dotsc,v_p$. Therefore the signs resulting from left and right cancel. After the renaming
\[\{j_0<\dotsc<j_p\} = \{i_1<\dotsc<i_{m-1}<j<i_m<\dotsc<i_p\}\]
this yields the sum
\[ \sum_{m=1}^{p+1} \, \sum_{j_0<\dotsc<j_p} w_{j_0} \dotsc w_{j_p} | w((w_{j_k}=v_k)_{0 \leq k \leq p})\]
\[  = (p+1) \cdot  \sum_{j_0<\dotsc<j_p} w_{j_0} \dotsc w_{j_p} | w((w_{j_k}=v_k)_{0 \leq k \leq p}).\]
This is $p+1$ times the right hand side of $(\star)_{p+1}$. Now let us look at the second sum in $(\dagger)$:
\begin{eqnarray*}
&& \sum_{n=1}^{p} \, \sum_{i_1<\dotsc<i_p}  v_n w_{i_1} \dotsc w_{i_p} | w((w_{i_k}=v_k)_{k \neq n},w_{i_n}=v_0) \\
& \stackrel{(\star)_p}{=} & \sum_{n=1}^{p} v_n v_1 \dotsc v_{n-1} v_0 v_{n+1} \dotsc v_p | w \\
&=& - p \cdot v_0 \dotsc v_p | w
\end{eqnarray*}
Altogether we arrive at
\[(p+1) \cdot v_0 \dotsc v_p | w = (p+1) \cdot  \sum_{j_0<\dotsc<j_p} w_{j_0} \dotsc w_{j_p} | w((w_{j_k}=v_k)_{0 \leq k \leq p})\]
Since $p+1 \in R^*$, we may divide by it and obtain $(\star)_{p+1}$.
\end{proof}

\section{Derivations} \label{deriv}

Let $R$ be a commutative ring and let $\C$ be a finitely cocomplete $R$-linear tensor category.

\begin{defi}[Derivations and $\Omega^1$] \label{diffdef}
Let $A$ be a commutative algebra in $\C$ and $M$ some $A$-module. We define a \emph{derivation} from $A$ to $M$ to be a morphism $d : A \to M$ in $\C$ such that the morphism $A \otimes A \to A \xrightarrow{d} M$ is equal to the sum of the two morphisms $A \otimes A \xrightarrow{A \otimes d} A \otimes M \to M$ and $A \otimes A \xrightarrow{d \otimes A} M \otimes A \to M$. In element notation, this is the usual Leibniz rule
\[d(a \cdot b) = a \cdot d(b) + d(a) \cdot b.\]
We obtain a functor
\[\Der(A,-) : \M(A) \to \Set\]
which maps $M \in \M(A)$ to the set of all derivations $A \to M$ (actually it is an abelian group). A representing object of this functor is denoted by $\Omega^1_A$ and is called the \emph{module of differentials} of $A$.
\end{defi}

\begin{defi}[Relative module of differentials]
If $A \to B$ is a homomorphism of commutative algebras in $\C$ (equivalently, $B$ is a commutative $A$-algebra) and $M$ is some $B$-module, an \emph{$A$-derivation} from $B$ to $M$ is a derivation from $B$ to $M$ in $\M(A)$, i.e. a homomorphism of $A$-modules $d : B \to M$ satisfying the Leibniz rule. We obtain a functor
\[\Der_A(B,-) : \M(B) \to \Set.\]
A representing object of this functor is denoted by $\Omega^1_{B/A}$ and is called the \emph{module of differentials} of $B$ over $A$.

Of course $\Omega^1_{B/A}$ is just $\Omega^1_{B}$ when $B$ is considered as a commutative algebra in $\M(A)$. Note that $\Der_A(B,-) = \{d \in \Der(B,-) : d|_A = 0\}$.
\end{defi}

As always these definitions coincide with the usual ones when $\C=\M(R)$ for some commutative ring $R$ (\cite[Chapter III, Paragraph 10]{Bou98}). But even in this special case the following simple construction of $\Omega^1$ does not seem to be well-known.
 
\begin{prop} \label{omega-ex}
If $A \to B$ is a homomorphism of commutative algebras in $\C$, then $\Omega^1_{B/A}$ exists.
\end{prop}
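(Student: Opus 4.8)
The plan is to represent the functor $\Der_A(B,-) \colon \M(B) \to \Set$ by an explicit cokernel in $\M(B)$, exploiting the free–forgetful adjunction between $\C$ and $\M(B)$. Write $F \colon \C \to \M(B)$, $F(X) = X \otimes B$, for the free $B$-module functor (the left adjoint to the forgetful functor); its defining adjunction gives $\Hom_{\M(B)}(B \otimes B, M) \cong \Hom_\C(B, M)$, under which a $B$-linear map $g \colon B \otimes B \to M$ corresponds to $\bar g = g \circ \eta_B \colon B \to M$, where $\eta_B \colon B \to B \otimes B$ is the unit $b \mapsto b \otimes 1$. In this way morphisms $B \to M$ in $\C$ — the raw data of a would-be derivation — are exactly the $B$-module maps out of the free module $B \otimes B$, and it remains only to cut out those whose underlying morphism satisfies the Leibniz rule and kills $A$.

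First I would encode the Leibniz rule as a single morphism in $\C$. Set $\ell := u \circ m - \id_{B \otimes B} - S_{B,B} \colon B \otimes B \to B \otimes B$, where $m$ is the multiplication, $u \colon B \to B \otimes B$ is $b \mapsto b \otimes 1$, and $S_{B,B}$ is the symmetry; in element notation $\ell(a \otimes b) = ab \otimes 1 - a \otimes b - b \otimes a$. Extending $\ell$ to the $B$-linear map $\tilde\ell \colon (B \otimes B) \otimes B \to B \otimes B$ (the action composed with $F(\ell)$), a short element-notation computation gives $g(\ell(a\otimes b)) = \bar g(ab) - a\,\bar g(b) - b\,\bar g(a)$, so $g \circ \tilde\ell = 0$ holds precisely when $\bar g$ is a derivation. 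Similarly, the condition $\bar g|_A = 0$ is imposed by $\tilde\kappa \colon A \otimes B \to B \otimes B$, the $B$-linear extension of $a \mapsto \phi(a) \otimes 1$, where $\phi \colon A \to B$ is the structure map. I then define
$$\Omega^1_{B/A} := \coker\Big( (B \otimes B)\otimes B \;\oplus\; A \otimes B \xrightarrow{\ (\tilde\ell,\,\tilde\kappa)\ } B \otimes B \Big)$$
in $\M(B)$, which exists because $\M(B)$ has cokernels, created by the forgetful functor $\M(B) \to \C$ from the finite colimits of $\C$ as in \autoref{modA}, together with the universal derivation $d_{B/A} := \pi \circ \eta_B$, $\pi$ the canonical projection.

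The verification of the universal property is then formal: a $B$-module map $\Omega^1_{B/A} \to M$ is a $B$-module map $g \colon B \otimes B \to M$ annihilating the images of $\tilde\ell$ and $\tilde\kappa$, which by the two computations above amounts to a derivation $\bar g \in \Der_A(B,M)$, and $\bar g \mapsto g$ is inverse to $g \mapsto \bar g$ by the adjunction; taking $g = \id$ shows $d_{B/A}$ is the universal derivation. A conceptually cleaner equivalent route is to use the Remark that $\Omega^1_{B/A}$ is the absolute $\Omega^1_B$ computed inside $\M(A)$: replacing $\C$ by the finitely cocomplete $R$-linear tensor category $\M(A)$ of \autoref{modA} makes the $A$-linearity relation automatic and replaces $B \otimes B$ by $B \otimes_A B$, so that only the Leibniz relation $\tilde\ell$ survives.

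I expect the main obstacle to be the bookkeeping in passing between morphisms in $\C$ and $B$-module morphisms: the Leibniz condition is not itself $B$-linear, so one must check that killing the $B$-linear extension $\tilde\ell$ of the non-linear relation $\ell$ cuts out exactly the derivations and nothing more. This rests on the soundness of element notation, so that the equivalence $g\circ\tilde\ell = 0 \Leftrightarrow \bar g \text{ is Leibniz}$ can be read off reliably, and on the fact that the forgetful functor $\M(B) \to \C$ creates the relevant cokernels, so that the representing object genuinely lives in $\M(B)$.
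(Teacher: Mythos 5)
Your construction is correct and is essentially the paper's own proof: the paper realizes $\Omega^1_{B/A}$ as the cokernel of the $B$-linear extension $h_B \colon (B \otimes_A B) \otimes_A B \to B \otimes_A B$ of $h(x \otimes y) = xy \otimes 1 - y \otimes x - x \otimes y$, i.e.\ it takes exactly your ``conceptually cleaner route'' through $\M(A)$, where your second relation $\tilde{\kappa}$ becomes redundant. Your primary variant over $\C$, with the extra generator $A \otimes B$ imposing $d|_A = 0$, is a harmless reformulation of the same cokernel construction, justified by the paper's own observation that $\Der_A(B,-) = \{d \in \Der(B,-) : d|_A = 0\}$.
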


\begin{proof}
The $A$-linear map $h : B \otimes_A B \to B \otimes_A B$ defined by
\[h(x \otimes y) := xy \otimes 1 - y \otimes x - x \otimes y\]
in element notation extends to a $B$-linear map $h_B : (B \otimes_A B) \otimes_A B \to B \otimes_A B$, when $B$ acts on the right. Let $\Omega^1_{B/A}$ be its cokernel and define the $A$-linear map $d : B \to \Omega^1_{B/A}$ to be the composition of $B \to B \otimes_A B,\, b \mapsto b \otimes 1$ with $B \otimes_A B \twoheadrightarrow \Omega^1_{B/A}$. Then $d$ is an $A$-derivation by construction and it satisfies the desired universal property: If $M$ is some $B$-module and $B \to M$ is some $A$-derivation, then it lifts to a $B$-linear map $B \otimes_A B \to M$ which kills $h_B$, hence lifts to a $B$-linear map on the cokernel $\Omega^1_{B/A}$.
\end{proof}

\begin{rem}
Actually we do not need that $\C$ is enriched in $\M(R)$ in order to make sense of the notion of derivations and of modules of differentials. We only need that $\C$ is enriched over the tensor category of commutative monoids $\CMon$. This leads to the notion of \emph{absolute derivations} (\cite{Kur02}).
\end{rem}

\begin{cor} \label{omegaH}
If $H : \C \to \D$ is a finitely cocontinuous tensor functor and $A \to B$ is a homomorphism of commutative algebras in $\C$, then there is an isomorphism of $H(B)$-modules
\[H(\Omega^1_{B/A}) \cong \Omega^1_{H(B)/H(A)}.\]
In particular, if $C$ is another commutative algebra in $\C$, we have an isomorphism of $B \otimes C$-modules
\[\Omega^1_{B/A} \otimes C \cong \Omega^1_{B \otimes C / A \otimes C}.\]
\end{cor}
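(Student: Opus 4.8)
The plan is to exploit the explicit presentation of $\Omega^1_{B/A}$ given in \autoref{omega-ex} together with the fact that a finitely cocontinuous tensor functor preserves every ingredient of that presentation. First I would record that, since $H$ is a strong symmetric monoidal functor, it carries commutative algebra objects to commutative algebra objects and algebra homomorphisms to algebra homomorphisms; hence $H(A) \to H(B)$ is a homomorphism of commutative algebras in $\D$ and $\Omega^1_{H(B)/H(A)}$ is defined. Likewise $H$ transports the relevant module structures, so it suffices to produce an isomorphism of underlying objects that is compatible with the $H(B)$-action.

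The key preliminary step is that $H$ commutes with the relative tensor product: $H(B \otimes_A B) \cong H(B) \otimes_{H(A)} H(B)$, and more generally $H(M \otimes_A N) \cong H(M) \otimes_{H(A)} H(N)$. This holds because $M \otimes_A N$ is by definition the coequalizer of the two maps $M \otimes A \otimes N \rightrightarrows M \otimes N$, a finite colimit built solely from $\otimes$; since $H$ preserves $\otimes$ (it is a tensor functor) and finite colimits (it is finitely cocontinuous), it preserves this coequalizer.

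Now recall from \autoref{omega-ex} that $\Omega^1_{B/A} = \coker(h_B)$, where $h_B : (B \otimes_A B) \otimes_A B \to B \otimes_A B$ is the $B$-linear extension of the $A$-linear map $h(x \otimes y) = xy \otimes 1 - y \otimes x - x \otimes y$. I would apply $H$ to this coequalizer. As $H$ preserves finite colimits, $H(\coker(h_B)) \cong \coker(H(h_B))$, and under the identifications of the previous step the objects $H\bigl((B \otimes_A B) \otimes_A B\bigr)$ and $H(B \otimes_A B)$ become $\bigl(H(B) \otimes_{H(A)} H(B)\bigr) \otimes_{H(A)} H(B)$ and $H(B) \otimes_{H(A)} H(B)$. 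It then remains to check that $H(h_B)$ is, under these identifications, exactly the map $h'$ used to construct $\Omega^1_{H(B)/H(A)}$. This is where the strong monoidal structure enters: $h$ is assembled from the multiplication $m_B$, the unit $e_B$ and the symmetry $S$, and the very definition of a tensor functor requires $H$ to be compatible with each of these. Working in the element notation of \autoref{Element}, $H$ sends $xy \otimes 1 - y \otimes x - x \otimes y$ to $H(x)H(y) \otimes 1 - H(y) \otimes H(x) - H(x) \otimes H(y)$, which is precisely the defining map for $H(B)$ over $H(A)$. Thus $\coker(H(h_B)) \cong \Omega^1_{H(B)/H(A)}$, and tracking the canonical derivation shows the isomorphism respects the $H(B)$-module structure.

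Finally, for the ``in particular'' clause I would specialise $H$ to the functor $- \otimes C : \C \to \M(C)$, which is a cocontinuous (hence finitely cocontinuous) tensor functor by \autoref{modA}. Then $H(\Omega^1_{B/A}) = \Omega^1_{B/A} \otimes C$, $H(A) = A \otimes C$, $H(B) = B \otimes C$, and $\Omega^1$ of a $C$-algebra is the same whether formed in $\M(C)$ or in $\C$ (both represent the same functor of $A \otimes C$-linear derivations into $B \otimes C$-modules), so the general isomorphism specialises to $\Omega^1_{B/A} \otimes C \cong \Omega^1_{B \otimes C / A \otimes C}$. I expect the main obstacle to be the verification in the third step that $H(h_B) = h'$: everything else is the formal preservation of colimits and tensor products, but that identification requires carefully unwinding the coherence isomorphisms of $H$, a step that the element notation renders routine.
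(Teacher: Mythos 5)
Your proposal is correct and follows essentially the same route as the paper: the paper's proof simply says "this follows from the construction in \autoref{omega-ex}" (i.e. $\Omega^1_{B/A}$ is a cokernel of $h_B$, built from finite colimits and tensor products, all of which $H$ preserves) and, for the second claim, "take $H : \C \to \M(C)$, $M \mapsto M \otimes C$". You have merely spelled out the details — preservation of $\otimes_A$, identification of $H(h_B)$ with $h'$, and compatibility of $\Omega^1$ formed in $\M(C)$ versus $\C$ — that the paper leaves implicit.
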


\begin{proof}
This follows from the construction in \autoref{omega-ex}. For the second part, take $H : \C \to \M(C)$, $M \mapsto M \otimes C$.
\end{proof}

\begin{prop}[Properties of modules of differentials] \label{deriv-prop}
Let $A$ be a fixed commutative algebra in $\C$.
\begin{enumerate}
\item Let $B_1,B_2$ be two commutative $A$-algebras. Then there is an isomorphism of $B_1 \otimes_A B_2$-modules
\[\Omega^1_{B_1 \otimes_A B_2 / A} \cong  \Omega^1_{B_1/A} \otimes_A B_2 ~\oplus ~ B_1 \otimes_A \Omega^1_{B_2/A}.\]
\item Let $B \to C$ be a homomorphism of commutative $A$-algebras. Then there is an exact sequence of $C$-modules
\[\Omega^1_{B/A} \otimes_B C \to \Omega^1_{C/A} \to \Omega^1_{C/B} \to 0.\]
\item Let $B \to C$ be a homomorphism of commutative $A$-algebras. Assume that it is a regular epimorphism of $B$-modules, choose an exact sequence of $B$-modules $I \to B \to C \to 0$. Then there is an exact sequence of $C$-modules
\[I \otimes_B C \to \Omega^1_{B/A} \otimes_B C \to \Omega^1_{C/A} \to 0.\]
\end{enumerate}
\end{prop}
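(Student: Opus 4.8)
The plan is to prove all three parts uniformly by testing against an arbitrary module and invoking the Yoneda Lemma, so that every claim reduces to an elementary statement about derivations. Recall from \autoref{diffdef} that $\Omega^1_{D/A}$ represents the functor $M \mapsto \Der_A(D,M)$, that the extension/restriction of scalars adjunction yields for a $B$-module $N$ and a $C$-module $M$ a natural isomorphism $\Hom_C(N \otimes_B C, M) \cong \Hom_B(N,M)$ (with $M$ viewed over $B$ through $B \to C$), and that $\Der_B(C,-) = \{d \in \Der_A(C,-) : d|_B = 0\}$. To show that a sequence $X \to Y \to Z \to 0$ of $C$-modules is exact, i.e.\ that $Y \to Z$ is a cokernel of $X \to Y$, it suffices by the test-object characterisation of cokernels to check that $0 \to \Hom_C(Z,M) \to \Hom_C(Y,M) \to \Hom_C(X,M)$ is exact for every $C$-module $M$. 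Each part then follows by applying $\Hom_C(-,M)$ and the identifications above.

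\textbf{Part 2.} This converts the claimed sequence into
\[0 \to \Der_B(C,M) \to \Der_A(C,M) \to \Der_A(B,M),\]
where the first arrow is the inclusion of $B$-linear derivations and the second is restriction along $B \to C$. The first map is injective by definition, and its image $\Der_B(C,M)$ is precisely the set of $A$-derivations of $C$ killing $B$, which is the kernel of the second map; exactness, hence Part 2, follows at once. \textbf{Part 3.} Here $\Hom_C(-,M)$ yields
\[0 \to \Der_A(C,M) \to \Der_A(B,M) \to \Hom_B(I,M),\]
the first map being restriction along the regular epimorphism $\pi : B \to C$ and the second sending $d$ to $d \circ (I \to B)$. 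I would check: (a) $d\circ(I \to B)$ is $B$-linear, because $M$ is a $C$-module so $I$ annihilates $M$, whence the Leibniz rule collapses to $d(bx) = b\,d(x)$ for $x \in I$; (b) the composite of the two maps vanishes since $\pi|_I = 0$; (c) injectivity of the first map, since $\pi$ is an epimorphism. For exactness in the middle, an $A$-derivation $d : B \to M$ with $d|_I = 0$ kills $I \to B$, hence factors through $C = \coker(I \to B)$ as a unique $A$-linear $\bar d : C \to M$; that $\bar d$ is again a derivation is verified by testing its Leibniz identity after precomposing with the epimorphism $\pi \otimes \pi$ (a tensor product of epimorphisms by \autoref{coeq-tensor}), where it reduces to the Leibniz identity for $d$.

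\textbf{Part 1.} By Yoneda it suffices to give, for every $B_1 \otimes_A B_2$-module $M$, a natural bijection $\Der_A(B_1 \otimes_A B_2, M) \cong \Der_A(B_1,M) \times \Der_A(B_2,M)$; indeed $\Hom_{B_1\otimes_A B_2}(-,M)$ applied to the right-hand side of the asserted isomorphism gives exactly this product, via extension of scalars along the two coprojections $B_i \to B_1 \otimes_A B_2$. The bijection sends a derivation to its restrictions along the coprojections, with inverse $(d_1,d_2) \mapsto \bigl(b_1 \otimes b_2 \mapsto b_2\cdot d_1(b_1) + b_1 \cdot d_2(b_2)\bigr)$; checking that this is a well-defined $A$-derivation and that the two assignments are mutually inverse is a direct Leibniz computation, which as in \autoref{omega-ex} one makes rigorous by precomposing with the relevant epimorphisms.

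The main obstacle lies in Part 3: confirming that the factorisation $\bar d$ through the cokernel $C$ remains a derivation — which is exactly where one needs $\pi \otimes \pi$ to be epic so that the identity can be tested ``upstairs'' — together with the $B$-linearity of $d|_I$, which hinges on the vanishing of the $I$-action on the $C$-module $M$. Everything else is the bookkeeping of making the element-notation manipulations rigorous and of verifying naturality in $M$ so that Yoneda applies, which is mechanical given the representing universal properties and the base-change isomorphism \autoref{omegaH}.
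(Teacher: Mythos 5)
Your proof is correct and takes essentially the same route as the paper's: each part is reduced, via test modules and the Yoneda Lemma, to the corresponding statement about derivations, with the same formulas and the same key verifications ($B$-linearity of $d|_I$ via the Leibniz rule and the vanishing $I$-action on $C$-modules, injectivity from $B \to C$ being an epimorphism, and checking the Leibniz identity for the factored derivation against the epimorphism $\pi \otimes \pi$). The one step you use silently --- that $C$ is also the cokernel of $I \to B$ as $A$-modules, so that $d$ factors --- is exactly the point the paper makes explicit (``$I \to B \to C \to 0$ is also exact as a sequence of $A$-modules''), and it holds because restriction of scalars preserves colimits.
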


\begin{proof}
1. If $T$ is some $B_1 \otimes_A B_2$-module, we have natural bijections
\[\Hom_{B_1 \otimes_A B_2}(\Omega^1_{B_1/A} \otimes_A B_2 \oplus B_1 \otimes_A \Omega^1_{B_2/A},T)\]
\[\cong \Hom_{B_1}(\Omega^1_{B_1/A},T|_{B_1}) \times \Hom_{B_2}(\Omega^1_{B_2/A},T|_{B_2})\]
\[\cong  \Der_A(B_1,T|_{B_1}) \times \Der_A(B_2,T|_{B_2})\]
Therefore, it suffices to construct a natural bijection
\[\Der_A(B_1,T|_{B_1}) \times \Der_A(B_2,T|_{B_2}) \cong \Der_A(B_1 \otimes_A B_2,T).\]
If $(d_1,d_2)$ is contained in the left hand side, then $d : B_1 \otimes_A B_2 \to T$ defined by
\[d(b_1 \otimes b_2) = b_1 \cdot d_2(b_2) + d_1(b_1) \cdot  b_2\]
in element notation (as usual it is easy to write this down formally) is an $A$-derivation by an easy calculation. If conversely, $d : B_1 \otimes_A B_2 \to T$ is an $A$-derivation, then $d_1(b_1):=d(b_1 \otimes 1)$ is an $A$-derivation $B_1 \to T|_{B_1}$. Similarly we define $d_2$. These constructions are easily seen to be inverse to each other.

2. By the definition of a right exact sequence and of $\Omega^1$ we have to show that for every $C$-module $T$ there is a natural exact sequence
\[0 \to \Der_B(C,T) \to \Der_A(C,T) \to \Der_A(B,T|_B).\]
The first map is the obvious inclusion and the second one is given by composition with $B \to C$. Exactness is obvious.

3. By a similar reasoning as above we have to prove that for every $C$-module $T$ there is a natural exact sequence
\[0 \to \Der_A(C,T) \to \Der_A(B,T|_B) \to \Hom_B(I,T|_B).\]
The first map is given by composition with $B \to C$. The second map is given by composition with $I \to B$. It is well-defined: For an $A$-derivation $d : A \to T$ the restriction $d|_I : I \to T$ is $B$-linear because of the Leibniz rule. The map $\Der_A(C,T) \to \Der_A(B,T)$ is injective because $B \to C$ is an epimorphism of $A$-modules. To show exactness, let $d : B \to T$ be an $A$-derivation which vanishes on $I$. Since $I \to B \to C \to 0$ is also exact as a sequence of $A$-modules, it follows that $d$ lifts to an $A$-linear map $\tilde{d} : C \to T$. It is a derivation, as can be easily deduced from our assumptions that $B \to C$ is an epimorphism of $A$-modules and that $d$ is a derivation.
\end{proof}

\begin{prop} \label{deriv-sym}
Let $A$ be a commutative algebra in $\C$ and $E$ some $A$-module. There is an isomorphism of $\Sym(E)$-modules
\[\Omega^1_{\Sym(E)/A} \cong E \otimes_A \Sym(E).\]
The universal differential is given by $\Sym(E) \to E \otimes_A \Sym(E)$ mapping
\[e_1 \cdot \dotsc \cdot e_n \mapsto \sum_{k=1}^{n} e_k \otimes (e_1 \cdot \dotsc \cdot \widehat{e_k} \cdot \dotsc \cdot e_n)\]
in element notation. In particular, $\Omega^1_{A[T_1,\dotsc,T_n]/A}$ is a free $A[T_1,\dotsc,T_n]$-module with basis $\{d(T_i) : 1 \leq i \leq n\}$.
\end{prop}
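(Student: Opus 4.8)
The plan is to verify that $E \otimes_A \Sym(E)$, together with the stated morphism $d$, represents the functor $\Der_A(\Sym(E),-)$ on $\M(\Sym(E))$; then the isomorphism $\Omega^1_{\Sym(E)/A} \cong E \otimes_A \Sym(E)$ follows from \autoref{omega-ex} and the Yoneda Lemma. Since $A \to \Sym(E)$ is the structure map, the object $E \otimes_A \Sym(E)$ is the extension of scalars of $E$, so the restriction--extension adjunction gives, for every $\Sym(E)$-module $M$, a natural isomorphism
\[\Hom_{\Sym(E)}(E \otimes_A \Sym(E), M) \cong \Hom_A(E, M|_A).\]
Thus it suffices to produce a natural isomorphism $\Der_A(\Sym(E), M) \cong \Hom_A(E, M|_A)$.

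The cleanest route to this identification is the square-zero extension trick. First I would form the commutative $A$-algebra $\Sym(E) \oplus M$ with unit in the first summand and multiplication $(b,m)(b',m') = (bb', bm'+b'm)$ (using the $\Sym(E)$-action on $M$ and discarding the $mm'$ term). A morphism of $A$-modules $D : \Sym(E) \to M$ is a derivation precisely when $b \mapsto (b, D(b))$ is an $A$-algebra homomorphism, so $\Der_A(\Sym(E), M)$ is naturally identified with the set of $A$-algebra sections of the projection $\Sym(E) \oplus M \to \Sym(E)$. By \autoref{symalg} applied inside $\M(A)$, the functor $\Sym$ (relative to $A$) is left adjoint to the forgetful functor $\CAlg(\M(A)) \to \M(A)$, so an $A$-algebra homomorphism $\Sym(E) \to \Sym(E) \oplus M$ corresponds to an $A$-linear map $E \to \Sym(E) \oplus M$; the section condition forces its first component to be the canonical inclusion $E \hookrightarrow \Sym(E)$, leaving exactly the second component $\phi \in \Hom_A(E, M|_A)$ as free data. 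This yields the desired natural bijection.

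It remains to read off the universal differential. Under the composite isomorphism the identity of $E \otimes_A \Sym(E)$ corresponds to $\phi : E \to E \otimes_A \Sym(E)$, $e \mapsto e \otimes 1$, and the associated derivation $d$ is the unique $A$-derivation with $d|_E = \phi$. Expanding $d(e_1 \cdot \dotsc \cdot e_n)$ by the Leibniz rule and using that $e_1 \cdot \dotsc \cdot \widehat{e_k} \cdot \dotsc \cdot e_n \in \Sym(E)$ acts on $e_k \otimes 1$ to give $e_k \otimes (e_1 \cdot \dotsc \cdot \widehat{e_k} \cdot \dotsc \cdot e_n)$ recovers the stated formula. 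Taking $E = A^{\oplus n}$ gives $\Sym(A^{\oplus n}) = A[T_1,\dotsc,T_n]$ and $E \otimes_A \Sym(E) \cong \Sym(E)^{\oplus n}$, which is free with basis $\{d(T_i) : 1 \leq i \leq n\}$. I expect the main obstacle to be purely bookkeeping: checking that $\Sym(E) \oplus M$ is genuinely a commutative $A$-algebra in the abstract tensor category $\C$ and that the section--derivation correspondence is natural in $M$; both are formal but must be spelled out carefully, e.g.\ via element notation as justified in \autoref{Element}.
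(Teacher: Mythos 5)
Your proof is correct, but it takes a different route from the paper's. The paper establishes the key bijection $\Der_A(\Sym(E),T) \cong \Hom_A(E,T)$ by brute force: given $f : E \to T$ it writes down the candidate extension explicitly on tensor powers, $e_1 \otimes \dotsc \otimes e_n \mapsto \sum_k f(e_k) \cdot (e_1 \cdot \dotsc \cdot \widehat{e_k} \cdot \dotsc \cdot e_n)$, checks that this morphism is symmetric so that it descends to $\Sym^n(E)$, assembles the pieces, and then verifies the Leibniz rule. You instead obtain the same bijection formally, identifying derivations $\Sym(E) \to M$ with $A$-algebra sections of the square-zero extension $\Sym(E) \oplus M \to \Sym(E)$ and then invoking the adjunction $\Sym \dashv U$ of \autoref{symalg} inside $\M(A)$; after that, both arguments finish identically (extension-of-scalars adjunction plus reading off the universal differential via Leibniz). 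Interestingly, the paper itself flags your strategy: the remark following \autoref{eder} states that \autoref{eder} "offers an alternative proof of \autoref{deriv-sym} which simply uses the universal property of the symmetric algebra" --- your argument is exactly that alternative, correctly generalized from the algebra-coefficient case $C[\e]/\e^2$ of \autoref{eder} to arbitrary module coefficients $M$, which is what representability of $\Der_A(\Sym(E),-)$ on all of $\M(\Sym(E))$ actually requires. The trade-off: the paper's computation needs no auxiliary algebra but must verify symmetry and the Leibniz rule by hand, while your route replaces those verifications by adjunction formalities at the cost of checking (as you acknowledge) that $\Sym(E) \oplus M$ really is a commutative algebra object in $\C$ and that the section--derivation dictionary is natural in $M$; both checks are routine in element notation. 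One cosmetic point: you do not actually need \autoref{omega-ex} at the end, since exhibiting $E \otimes_A \Sym(E)$ as a representing object already proves existence of $\Omega^1_{\Sym(E)/A}$, and Yoneda alone gives the isomorphism.
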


\begin{proof}
If $T$ is some $\Sym(E)$-module, we have to find a natural bijection
\[\Der_A(\Sym(E),T) \cong \Hom_A(E,T).\]
If $d : \Sym(E) \to T$ is an $A$-derivation, we may restrict it to $\Sym^1(E) = E$ to get an $A$-linear map $A \to T$. If conversely $f : E \to T$ is some $A$-linear map, then we first define the $A$-linear map
$E^{\otimes n} \to T$ by mapping
\[e_1 \otimes \dotsc \otimes e_n \mapsto \sum_{k=1}^{n} f(e_k) \cdot (e_1 \cdot \dotsc \cdot \widehat{e_k} \cdot \dotsc \cdot e_n)\]
in element notation. It is obviously symmetric, hence extends to an $A$-linear map $\Sym^n(E) \to T$. All these induce an $A$-linear map $\Sym(E) \to T$. One checks that this is an $A$-derivation, namely the unique one extending $f$ in degree $1$. See \cite[Chapter III, Paragraph 10.11, Example]{Bou98} for the special case of modules.
\end{proof}

\begin{lemma} \label{eder}
Let $A \to B$ be a homomorphism of commutative algebras in $\C$ and $C$ be some commutative $B$-algebra. Then there is a bijection between $A$-derivations $B \to C$ and homomorphisms of $A$-algebras $B \to C[\e]/\e^2$ such that $B \to C[\e]/\e^2 \xrightarrow{\e \mapsto 0} C$ is the given map.
\end{lemma}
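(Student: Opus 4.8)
The plan is to realize $C[\e]/\e^2$ explicitly as a square-zero extension and then read off the correspondence from a component decomposition. Write $\phi : B \to C$ for the structure homomorphism exhibiting $C$ as a commutative $B$-algebra; since $A \to B \to C$, the map $\phi$ is automatically $A$-linear. As an object of $\C$ the algebra $C[\e]/\e^2$ is the biproduct $C \oplus C$, whose elements I write $c + c'\e$, with unit $1 + 0\cdot\e$ and multiplication $(c_1 + c_1'\e)(c_2 + c_2'\e) = c_1 c_2 + (c_1 c_2' + c_1' c_2)\e$ in element notation; it is a commutative $A$-algebra via $A \to C \hookrightarrow C[\e]/\e^2$, with $A$ acting diagonally, and the map $\e \mapsto 0$ is the projection onto the first summand. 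Because $\C$ is $R$-linear, every morphism $\psi : B \to C[\e]/\e^2$ in $\C$ splits uniquely as $\psi = \psi_0 + \psi_1\e$ with $\psi_0,\psi_1 : B \to C$, and the hypothesis that $\e \mapsto 0$ recovers the given map is precisely $\psi_0 = \phi$.

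Next I would translate the three conditions defining an $A$-algebra homomorphism lifting $\phi$ into conditions on $d := \psi_1$. First, $A$-linearity of $\psi$ is equivalent to $A$-linearity of $d$, since $\phi$ is already $A$-linear and the $A$-action on $C[\e]/\e^2$ is diagonal. Second, expanding $\psi(xy) = \psi(x)\psi(y)$ componentwise and using that $\phi$ is multiplicative, the first component is automatic while the second reads $d(xy) = \phi(x)\,d(y) + d(x)\,\phi(y)$, which is exactly the Leibniz rule of \autoref{diffdef} for $d : B \to C$, where $C$ carries its $B$-module structure through $\phi$. Third, unitality $\psi(1) = 1 + 0\cdot\e$ amounts to $d(1) = 0$, which is automatic once $d$ satisfies Leibniz, as $d(1) = d(1\cdot 1) = 2\,d(1)$. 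Hence $\psi$ is an $A$-algebra homomorphism lifting $\phi$ if and only if $d$ is an $A$-derivation $B \to C$.

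Conversely, given an $A$-derivation $d$, I would set $\psi(b) := \phi(b) + d(b)\e$ and verify, by running the same component computations backwards, that $\psi$ is unital, multiplicative and $A$-linear, hence a homomorphism of $A$-algebras, and that it manifestly lifts $\phi$. The assignments $\psi \mapsto d$ and $d \mapsto \psi$ are visibly inverse to one another, which yields the asserted bijection. Each element-notation identity here corresponds to a genuine commutative diagram by the Soundness principle of \autoref{Element}; the only point deserving a moment of care is the splitting $\psi = \psi_0 + \psi_1\e$, which is legitimate precisely because $C[\e]/\e^2 \cong C \oplus C$ is a biproduct in the $R$-linear category $\C$. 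I anticipate no genuine obstacle: the entire content is the classical dual-numbers description of derivations, transported verbatim through element notation.
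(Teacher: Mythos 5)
Your proposal is correct and follows essentially the same route as the paper's proof: decompose $C[\e]/\e^2$ as the biproduct $C \oplus C$ in the $R$-linear category, fix the first component to be the given map $\phi$, and observe that multiplicativity of $b \mapsto \phi(b) + d(b)\e$ is exactly the Leibniz rule for $d$. The only addition is your explicit check that unitality ($d(1)=0$) is automatic, a detail the paper subsumes under ``multiplicative, which comes down to the Leibniz rule.''
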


\begin{proof}
Since $C[\e]/\e^2 = C \oplus C$ as $A$-modules, an $A$-linear map $B \to C[\e]/\e^2$ corresponds to a pair of $A$-linear maps $B \to C$. We specify the first one to be the given one and then the second one is an $A$-linear map $d : B \to C$ such that $B \to C[\e]/\e^2$ defined by $b \mapsto b + d(b) \e$ in element notation is multiplicative, which comes down to the Leibniz rule of $d$.
\end{proof}

\begin{rem}
\autoref{eder} offers an alternative proof of \autoref{deriv-sym} which simply uses the universal property of the symmetric algebra.
\end{rem}

\begin{thm}[De Rham complex] \label{rham}
Assume that $\C$ is an $R$-linear cocomplete tensor category with $2 \in R^*$. Let $A \to B$ be a homomorphism of commutative algebras in $\C$. For $n \in \N$ we define the $B$-module of differentials of order $n$ by
\[\Omega^n_{B/A} := \Lambda^n_B(\Omega^1_{B/A}).\]
There is a unique sequence of homomorphisms of $A$-modules
\[d^n : \Omega^n_{B/A} \to \Omega^{n+1}_{B/A}\]
with the following properties:
\begin{enumerate}
\item $d^0=d : B \to \Omega^1_{B/A}$ is the universal differential.
\item We have $d^{n+1} \circ d^n = 0$.
\item If $n=p+q$, then we have $d^n(f \wedge g) = d^p(f) \wedge g + (-1)^p f \wedge d^q(g)$ in element notation for $f \in \Omega^p_{B/A}$ and $g \in \Omega^q_{B/A}$.
\end{enumerate}
We call $(\Omega^*_{B/A},d^*)$ the (algebraic) de Rham complex associated to $B/A$.
\end{thm}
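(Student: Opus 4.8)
The plan is to build each $d^n$ as an honest morphism by descent from a tensor power of $B$, and to read off both uniqueness and the three properties from a single presentation. The starting point is the observation, implicit in \autoref{omega-ex}, that the $B$-linear morphism $B \otimes_A B \to \Omega^1_{B/A}$ sending $b_0 \otimes y$ to $b_0\,dy$ in element notation is a regular epimorphism; up to the symmetry of $B \otimes_A B$ it is the defining cokernel projection of \autoref{omega-ex}. In other words $\Omega^1_{B/A}$ is generated over $B$ by exact differentials. Tensoring this epimorphism with itself over $B$ and composing with the canonical epimorphism $(\Omega^1_{B/A})^{\otimes_B n} \twoheadrightarrow \Lambda^n_B(\Omega^1_{B/A}) = \Omega^n_{B/A}$ (\autoref{exterior-power}), while using that tensor products of regular epimorphisms are again such (\autoref{coeq-tensor}), one obtains a regular epimorphism
\[ e_n : B \otimes_A B^{\otimes_A n} \to \Omega^n_{B/A}, \qquad b_0 \otimes y_1 \otimes \dotsc \otimes y_n \mapsto b_0\, dy_1 \wedge \dotsc \wedge dy_n. \]

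With $e_n$ in hand, uniqueness is immediate: if $d^n$ satisfies (1)--(3), then the graded Leibniz rule (3) together with $d^1(dy_i) = d^1 d^0(y_i) = 0$ from (2) force $d^n(b_0\, dy_1 \wedge \dotsc \wedge dy_n) = db_0 \wedge dy_1 \wedge \dotsc \wedge dy_n$, so $d^n \circ e_n$ is determined, and since $e_n$ is an epimorphism $d^n$ itself is determined.

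For existence I would define the candidate $\widehat{d}^n : B \otimes_A B^{\otimes_A n} \to \Omega^{n+1}_{B/A}$ by $b_0 \otimes y_1 \otimes \dotsc \otimes y_n \mapsto db_0 \wedge dy_1 \wedge \dotsc \wedge dy_n$ and show it factors through $e_n$, thereby defining $d^n$. Since $e_n$ is a coequalizer, this amounts to checking that $\widehat{d}^n$ coequalizes the three families of relations exhibiting $e_n$ as a regular epimorphism: the $A$-balancing (each slot is $A$-linear and kills the image of $A$, which holds because $d$ is an $A$-derivation, so $d(a)=0$), the antisymmetry in the slots $y_1,\dotsc,y_n$ (immediate, as $db_0 \wedge dy_1 \wedge \dotsc \wedge dy_n$ is already alternating in the $dy_i$), and the Leibniz relation of \autoref{omega-ex}, obtained by replacing a slot $y_i$ by a product $uv$ and comparing with $u\,dv + v\,du$. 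These computations are carried out in element notation and made rigorous by the soundness principle for element notation. The Leibniz check is the only substantial one and is the main obstacle: after applying $\widehat{d}^n$ to both sides all terms cancel in pairs except two, of the shape $b_0\, du \wedge \dotsc \wedge dv \wedge \dotsc$ and $b_0\, dv \wedge \dotsc \wedge du \wedge \dotsc$, which differ by a transposition of two wedge factors and hence cancel by alternation. (The hypothesis $2 \in R^*$ is exactly what makes $\Omega^n_{B/A} = \Lambda^n_B(\Omega^1_{B/A}) = \ASym^n_B(\Omega^1_{B/A})$ available in the first place, via \autoref{exterior-power}.)

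Finally I would verify the three properties on the generators $b_0\, dy_1 \wedge \dotsc \wedge dy_n$, which suffices because $e_n$ and its tensor powers are epimorphisms. Property (1) holds by construction, since $\widehat{d}^0(y_0) = dy_0$. For (2), applying $d^{n+1}$ to $d^n(b_0\, dy_1 \wedge \dotsc \wedge dy_n) = db_0 \wedge dy_1 \wedge \dotsc \wedge dy_n$ produces $d(1) \wedge db_0 \wedge dy_1 \wedge \dotsc \wedge dy_n = 0$, since the differential of the unit vanishes. For (3), a short element-notation computation on generators $a\,dx_1 \wedge \dotsc \wedge dx_p$ and $b\,dy_1 \wedge \dotsc \wedge dy_q$ reproduces the graded Leibniz rule, the sign $(-1)^p$ arising precisely from moving $db$ past the $p$ factors $dx_i$. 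As a conceptual cross-check, one may note that $d$ is the unique degree-one $A$-linear graded derivation of the graded-commutative algebra $\Omega^*_{B/A}$ extending $d^0$, equivalently the graded $A$-algebra homomorphism $\Omega^*_{B/A} \to \Omega^*_{B/A}[\e]$ into the square-zero extension with $\e$ odd, determined by the universal property of the exterior algebra (\autoref{exterior-algebra}) together with \autoref{eder}.
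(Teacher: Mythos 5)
Your proposal is correct and takes essentially the same route as the paper's proof: there too the differentials are constructed by presenting $\Omega^n_{B/A}$ as a quotient of a tensor power of $B$ (the paper uses $B^{\otimes 2n}$, with one coefficient per factor instead of your single coefficient $b_0$), checking that the candidate map $a_1 \otimes b_1 \otimes \dotsc \otimes a_n \otimes b_n \mapsto d(b_1 \cdots b_n) \wedge d(a_1) \wedge \dotsc \wedge d(a_n)$ kills the Leibniz and antisymmetry relations via \autoref{coeq-tensor} and \autoref{sym-exakt}, and then verifying properties (1)--(3) on generators in element notation. The differences (folding the coefficients into one slot, phrasing uniqueness via the epimorphism $e_n$ rather than induction, and the closing cross-check via \autoref{eder}) are cosmetic.
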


\begin{proof}
Uniqueness follows by induction from 1. and 3. In order to construct the differentials, we cannot only rely on the defining universal property of $\Omega^1_{B/A}$, because that only deals with homomorphisms of \emph{$B$-modules} on $\Omega^1_{B/A}$. Instead, we use the construction given in the proof of (\autoref{omega-ex}), namely that $\Omega^1_{B/A}$ is the quotient of $B \otimes_A B$ by the relation
\[ab \otimes c - b \otimes ac - a \otimes bc = 0\]
in element notation -- as a module over $B$ as well as over $A$. Here, $a \otimes b$ represents $d(a) \cdot b = b \cdot d(a)$. This approach is similar to \cite[16.6]{EGAIV}.
 
We define $d^0:=d$. In order to construct $d^1$, look at the homomorphism of $A$-modules defined by
\[B \otimes_A B \to \Omega^2_{B/A},~ a \otimes b \mapsto d(b) \wedge d(a).\]
It maps $a  b \otimes c - b \otimes a c - a \otimes bc$ to
\begin{eqnarray*}
&& d(c) \wedge d(a \cdot b) - d(a \cdot c) \wedge d(b) - d(b \cdot c) \wedge d(a) \\
&=& a \cdot d(c) \wedge d(b) + b \cdot d(c) \wedge d(a)  \\
&& - a \cdot d(c) \wedge d(b) - c \cdot d(a) \wedge d(b) \\
&& - b \cdot d(c) \wedge d(a) - c \cdot d(b) \wedge d(a) \\
&=& 0.
\end{eqnarray*}
Hence, it lifts to a homomorphism of $A$-modules $d^1 : \Omega^1_{B/A} \to \Omega^2_{B/A}$ which is characterized by $d^1(b \cdot d(a)) = d(b) \wedge d(a)$ for ``elements'' $a,b \in B$. In particular, $d^1 \circ d^0=0$.
 
More generally, consider $\Omega^n_{B/A}$ as a quotient of $B^{\otimes 2n}$ and consider the homomorphism of $A$-modules $B^{\otimes 2n} \to \Omega^{n+1}_{B/A}$ defined by
\[a_1 \otimes b_1 \otimes \dotsc \otimes a_n \otimes b_n \mapsto d(b_1 \cdot \dotsc \cdot b_n) \wedge d(a_1) \wedge \dotsc \wedge d(a_n).\]
A similar calculation as above, using \autoref{coeq-tensor} and \autoref{sym-exakt}, shows that this extends to a homomorphism of $A$-modules $d^n : \Omega^n_{B/A} \to \Omega^{n+1}_{B/A}$ characterized by
\[d^n(b \cdot d(a_1) \wedge \dotsc \wedge d(a_n)) = d(b) \wedge d(a_1) \wedge \dotsc \wedge d(a_n).\]
In particular, $d^n(d(a_1) \wedge \dotsc \wedge d(a_n))=0$, from which we easily deduce 2.. In order to prove the graded Leibniz rule 3., we write
\[f = b \cdot d(a_1) \wedge \dotsc \wedge d(a_p),~ g = c \cdot d(a_{p+1}) \wedge \dotsc \wedge d(a_n)\]
and compute
\begin{eqnarray*}
&& d^p(f) \wedge g + (-1)^p f \wedge d^q(g) \\
&=& d(b) \wedge d(a_1) \wedge \dotsc \wedge d(a_p) \wedge c \cdot d(a_{p+1}) \wedge \dotsc \wedge d(a_n) \\
&& + (-1)^p \, b \cdot d(a_1) \wedge \dotsc \wedge d(a_p) \wedge d(c) \wedge d(a_{p+1}) \wedge \dotsc \wedge d(a_n)\\
&=& (d(b) \cdot c + b \cdot d(c)) \wedge d(a_1) \wedge \dotsc \wedge d(a_n)\\
&=& d(b \cdot c) \wedge d(a_1) \wedge \dotsc \wedge d(a_n) = d(f \wedge g). 
\end{eqnarray*}
\end{proof}

\begin{rem}
If $\C$ is even abelian, we can define the (algebraic) \emph{de Rham cohomology} by $H^n_{dR}(B/A) := H^n(\Omega^*_{B/A},d^*)$.
\end{rem}

\begin{rem}[Quasi-coherent module of differentials] \label{defo}
If $X \to S$ is a morphism of schemes, we have the associated quasi-coherent module of differentials $\Omega^1_{X/S}$. It is an object of $\Q(X)$ defined to satisfy the universal property $\Hom(\Omega^1_{X/S},M) \cong \Der_{\O_S}(\O_X,M)$, where derivations on the right hand side are certain homomorphisms of abelian sheaves on the underlying space of $X$. Thus we leave our tensor category $\Q(X)$! There does not seem to be any general definition or characterization of $\Omega^1_{X/S}$ which stays inside $\Q(X)$ or uses just the pullback functor $\Q(S) \to \Q(X)$. At least if $X$ is affine over $S$, we have found a description above. The Euler sequence below gives a description when $X$ is projective over $S$. See \autoref{tangentt} for related questions.
\end{rem}

\begin{thm}[Euler sequence] \label{euler}
Let $E$ be a quasi-coherent module  on a scheme $S$. Let $p : \P(E) \to S$ be the associated projective scheme.
Then there is an exact sequence of quasi-coherent modules on $\P(S)$
\[0 \to \Omega^1_{\P(E)/S} \to p^*(E)(-1) \to \O \to 0.\]
The epimorphism is dual to the canonical one $p^*(E) \to \O_{\P(E)}(1)$.
\end{thm}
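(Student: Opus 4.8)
The plan is to exhibit the asserted epimorphism $\psi : p^*(E)(-1) \to \O$ as the $\O(-1)$-twist of the tautological surjection $q : p^*(E) \to \O(1)$, and then to identify its kernel with $\Omega^1_{\P(E)/S}$. Since forming $\Omega^1_{\P(E)/S}$, the scheme $\P(E) = \Proj_S(\Sym(E))$, the invertible sheaf $\O(1)$ and the map $q$ are all compatible with base change on $S$ (for the affine differentials appearing below this is \autoref{omegaH} resp. \autoref{deriv-prop}, and the statement is local on $S$), I would first reduce to the case $S = \Spec(R)$ affine with $E = \widetilde{M}$ for an $R$-module $M$. Then $\P(E) = \Proj(\Sym_R(M))$ is covered by the standard affine opens $D_+(s)$, $s \in M = \Sym^1(M)$, which suffice because $\Sym_R(M)$ is generated in degree $1$; on $D_+(s)$ the coordinate ring is the dehomogenization $B_s = (\Sym_R(M)_s)_0 \cong \Sym_R(M)/(s-1)$, and $\O(1)$ is trivialized by $s$.

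On each chart I would compute both sides explicitly. Trivializing $\O(1)$ and $\O(-1)$ by $s$, the map $\psi$ becomes the $R$-linear map $M \otimes_R B_s \to B_s$ sending $m \otimes 1 \mapsto m/s$; it is a split epimorphism, a splitting being $\sigma : B_s \to M \otimes_R B_s$, $1 \mapsto s \otimes 1$, since $\psi(s \otimes 1) = s/s = 1$. On the other hand, applying the conormal sequence \autoref{deriv-prop}(3) to the quotient $\Sym_R(M) \twoheadrightarrow B_s$ with kernel $I = (s-1)$, together with $\Omega^1_{\Sym_R(M)/R} \cong M \otimes_R \Sym_R(M)$ and its universal differential $d(m) = m \otimes 1$ (\autoref{deriv-sym}), yields $\Omega^1_{B_s/R} \cong \coker\bigl(B_s \xrightarrow{\,\cdot (s \otimes 1)\,} M \otimes_R B_s\bigr)$, the cokernel of exactly the splitting $\sigma$. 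As in any split short exact sequence the kernel is canonically isomorphic to the cokernel of the splitting, the projection $M \otimes_R B_s \to \coker(\sigma) = \Omega^1_{B_s/R}$ restricts to an isomorphism $\ker(\psi) \xrightarrow{\ \sim\ } \Omega^1_{B_s/R}$ compatible with $d$ (it sends $m \otimes 1 - (m/s)(s \otimes 1)$ to $d(m/s)$). So on each chart the Euler sequence is the split sequence $0 \to \Omega^1_{B_s/R} \cong \ker(\psi) \to M \otimes_R B_s \xrightarrow{\psi} B_s \to 0$.

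It remains to globalize, and this is where the main obstacle lies. The subsheaf $\ker(\psi) \subseteq p^*(E)(-1)$ and the sheaf $\Omega^1_{\P(E)/S}$ are both defined globally, but the chart-wise identifications above are a priori built from the different splittings $s$ and $s'$, so one must check they agree on the overlaps $D_+(s) \cap D_+(s')$. Rather than comparing splittings directly, the cleanest organization is to work with graded modules before sheafifying: set $K := \ker\bigl(\Omega^1_{\Sym_R(M)/R} \xrightarrow{\mathrm{mult}} \Sym_R(M)\bigr)$, where $m \otimes a \mapsto m \cdot a$ is the Euler map in graded form. Applying the exact associated-sheaf functor $N \mapsto \widetilde{N}$ on $\Proj$, and using $\widetilde{\Omega^1_{\Sym_R(M)/R}} \cong p^*(E)(-1)$ and $\widetilde{\Sym_R(M)} \cong \O$, turns this graded short exact sequence into the desired Euler sequence globally; the chart computation of the previous paragraph is then precisely the verification that $\widetilde{K} \cong \Omega^1_{\P(E)/S}$. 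Thus the single genuinely nontrivial point is this sheafified identification $\widetilde{K} \cong \Omega^1_{\P(E)/S}$, everything else reducing to the split-exactness observed on each $D_+(s)$ and to formal manipulation of $\widetilde{(-)}$.
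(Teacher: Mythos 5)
Your chart computation is correct and is, up to notation, the same as the paper's: on $D_+(s)$ the paper likewise identifies $\Sym(E)_{(s)}$ with $\Sym(E)/(s-1)$, computes $\Omega^1$ there via \autoref{deriv-sym} and \autoref{deriv-prop}, and obtains exactly your split exact sequence. The gap is in your last step, where you claim that packaging the kernel as the sheafification $\widetilde{K}$ of the graded module $K = \ker\bigl(\Omega^1_{\Sym_R(M)/R} \to \Sym_R(M)\bigr)$ makes the identification $\widetilde{K} \cong \Omega^1_{\P(E)/S}$ follow from the chart computation alone. It does not. The graded-module packaging only globalizes the side that was already global: $\widetilde{K}$ is the same subsheaf of $p^*(E)(-1)$ as $\ker(\psi)$, the kernel of the globally defined epimorphism $\psi$. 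What is not yet global is the isomorphism with $\Omega^1_{\P(E)/S}$: on each chart you produce an isomorphism $\ker(\psi)|_{D_+(s)} \cong \Omega^1_{B_s/R}$ using the splitting $\sigma_s(1) = s \otimes 1$, and this splitting, hence the induced isomorphism $\ker(\psi) \cong \coker(\sigma_s)$, genuinely depends on $s$. Locally isomorphic sheaves need not be globally isomorphic (compare $\O$ and $\O(1)$ on $\P^1$), so you must verify that your chart isomorphisms agree on $D_+(s) \cap D_+(s')$; only then do they glue.

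That verification is true, but it is precisely the computation your reorganization was meant to avoid, and it is the core of the paper's proof. Writing $\iota_s$ for the inverse of your chart isomorphism, i.e.\ $\iota_s(d(m/s)) = m \otimes e_s^* - s \otimes \tfrac{m}{s}\, e_s^*$, the paper checks by hand, using the product rule $d(m/s) = \tfrac{s'}{s}\, d(m/s') + \tfrac{m}{s'}\, d(s'/s)$ together with the transition rule $e_s^* = \tfrac{s'}{s}\, e_{s'}^*$, that $\iota_s = \iota_{s'}$ on the overlap; equivalently, for $\sum_i m_i \otimes b_i e_s^* \in \ker(\psi)$ one has $\sum_i b_i\, d(m_i/s) = \sum_i b_i \tfrac{s'}{s}\, d(m_i/s')$ because the error term is $d(s'/s) \cdot \sum_i b_i \tfrac{m_i}{s'} = 0$. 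Inserting this check makes your proof complete and essentially identical to the paper's. Two smaller points: your ``graded short exact sequence'' is not right exact as graded modules --- the cokernel of the Euler map $\Omega^1_{\Sym_R(M)/R} \to \Sym_R(M)$ is $R$ concentrated in degree $0$ --- so you should note that this cokernel sheafifies to zero on $\Proj$; and your opening reduction to affine $S$ also silently uses that the identification $\ker(\psi) \cong \Omega^1_{\P(E)/S}$ is canonical, i.e.\ compatible with restriction along open subsets of $S$, which again rests on the same overlap compatibility (the paper handles this by using the charts $D_+(\lambda)$ for \emph{all} global sections $\lambda$, so that the glued map $\iota$ is choice-free and restricts correctly).
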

 
We do not know if this general form of the Euler sequence has already appeared in the literature. Usually $E$ is assumed to be (locally) free of finite rank (\cite[21.4.9]{Vak13}), but this is not necessary to make the proof work. Later we will even replace $\Q(S)$ by an arbitrary locally presentable linear tensor category (\autoref{tangent-proj}).
 
\begin{proof}
First, let us assume that $E$ is generated by global sections. Then the scheme $\P(E)=\Proj(\Sym(E))$ is covered by the open subsets
\[D_+(\lambda)=\Spec(\Sym(E)_{(\lambda)}),\]
where $\lambda$ runs through the global sections of $E$. There is an isomorphism between the homogeneous localization
$\Sym(E)_{(\lambda)}$ and the algebra $\Sym(E)/(\lambda-1)$ given by $\frac{a}{\lambda} \mapsto \overline{a}$. Using \autoref{deriv-sym} and \autoref{deriv-prop}, we obtain
\[\Omega^1_{\Sym(E)_{(\lambda)}/\O_S} \cong E/\lambda \otimes \Sym(E)_{(\lambda)}.\]
The universal differential $d_{\lambda}$ maps $\tfrac{a}{\lambda}$ to $\overline{a} \otimes 1$.
 
The dual of the Serre twist $\O_{\P(E)}(-1)$ is free on $D_+(\lambda)$ with some generator $e_{\lambda}^*$.
On the overlaps $D_+(\lambda) \cap D_+(\mu)$ they satisfy $e_\lambda^* = \frac{\mu}{\lambda} e_{\mu}^*$.
The desired exact sequence when restricted to $D_+(\lambda)$ is
\[\xymatrix{0 \ar[r] &  E/(\lambda) \otimes \Sym(E)_{(\lambda)} \ar[r]^-{\iota} &  E \otimes \Sym(E)_{(\lambda)} \cdot e_{\lambda}^* \ar[r]^-{p} &  \Sym(E)_{(\lambda)} \ar[r] &  0,}\]
where $p(a \otimes e_{\lambda}^*) = \frac{a}{\lambda}$ and $\iota(\overline{a} \otimes 1) := a \otimes e_{\lambda}^* - \lambda \otimes \frac{a}{\lambda} e_{\lambda}^*$.
Observe that $\iota$ is well-defined and satisfies $p \circ \iota = 0$. We claim that the sequence is exact, in fact contractible, i.e. that there are homomorphisms
of $\Sym(E)_{(\lambda)}$-modules
\[\xymatrix{\Sym(E)_{(\lambda)} \ar[r]^-{t} & E \otimes \Sym(E)_{(\lambda)} \cdot e_{\lambda}^* \ar[r]^-{s} & E/(\lambda) \otimes \Sym(E)_{(\lambda)}  }\]
such that $s \circ \iota = \id$, $p \circ t = \id$ and $t \circ p + \iota \circ s = \id$. Define $t(1)=\lambda \otimes e_{\lambda}^*$ and $s(a \otimes e_{\lambda}^*) = \overline{a} \otimes 1$.
Then $p \circ t = \id$ is obvious, $s \circ \iota = \id$ follows from $\overline{\lambda}=0$ and for the third equation we calculate
\[(t \circ p + \iota \circ s)(a \otimes e_{\lambda}^*)=t(\tfrac{a}{\lambda})+\iota(\overline{a} \otimes 1)=\tfrac{a}{\lambda} \cdot \lambda \otimes e_{\lambda}^* + a \otimes e_{\lambda}^* - \lambda \otimes \tfrac{a}{\lambda} e_{\lambda}^* = a \otimes e_{\lambda}^*.\]
Let us check that these homomorphisms $\iota_{\lambda} : \Omega^1_{D_+(\lambda)/S} \to p^*(E)(-1)|_{D_+(\lambda)}$ are compatible on the overlaps
$D_+(\lambda) \cap D_+(\mu)$. The canonical isomorphism
\[\Omega^1_{D_+(\lambda)} |_{D_+(\lambda) \cap D_+(\mu)} \cong \Omega^1_{D_+(\mu)}|_{D_+(\mu) \cap D_+(\lambda)}\]
corresponding to
\[(E/(\lambda) \otimes \Sym(E)_{(\lambda)})_{\frac{\mu}{\lambda}} \cong (E/(\mu) \otimes \Sym(E)_{(\lambda)})_{\frac{\lambda}{\mu}}\]
maps $\overline{a} \otimes 1 = d_{\lambda}(\tfrac{a}{\lambda})$ to
\begin{eqnarray*}
d_{\mu}(\tfrac{a}{\lambda}) & =& \frac{d_{\mu}(\tfrac{a}{\mu}) \cdot \tfrac{\lambda}{\mu} - d_{\mu}(\tfrac{\lambda}{\mu}) \cdot \frac{a}{\mu}}{(\frac{\lambda}{\mu})^2}\\
&=& d_{\mu}(\tfrac{a}{\mu}) \cdot \tfrac{\mu}{\lambda} - d(\tfrac{\lambda}{\mu}) \cdot \tfrac{a \mu}{\lambda^2}\\
&=& \overline{a} \otimes \tfrac{\mu}{\lambda} - \overline{\lambda} \otimes  \tfrac{a \mu}{\lambda^2}.
\end{eqnarray*}
The diagram
\[\xymatrix{(E/(\lambda) \otimes \Sym(E)_{(\lambda)})_{\frac{\mu}{\lambda}} \ar[r]^{\iota_{\lambda}} \ar[d]^{\cong} &   E \otimes (\Sym(E)_{(\lambda)})_{\frac{\mu}{\lambda}} \cdot e_{\lambda}^* \ar[d]^{\cong}  \\
 (E/(\mu) \otimes \Sym(E)_{(\lambda)})_{\frac{\lambda}{\mu}} \ar[r]^{\iota_{\mu}} &  E \otimes (\Sym(E)_{(\mu)})_{\frac{\lambda}{\mu}} \cdot e_{\mu}^* }\]
commutes: The image of $\overline{a} \otimes 1$ under $\iota_{\lambda}$ is $a \otimes e_{\lambda}^* - \lambda \otimes \frac{a}{\lambda} e_{\lambda}^*$, which gets mapped to
$a \otimes \tfrac{\mu}{\lambda} e_{\mu}^* - \lambda \otimes \frac{a}{\lambda} \cdot \frac{\mu}{\lambda} e_{\mu}^*=:x$ by the isomorphism. The other way round we first get $\overline{a} \otimes \tfrac{\mu}{\lambda} - \overline{\lambda} \otimes  \frac{a \mu}{\lambda^2}$, which gets mapped by $\iota_{\mu}$ to
\[\frac{\mu}{\lambda} \cdot  \left(a \otimes e_{\mu}^* - \mu \otimes \frac{a}{\mu} e_{\mu}^*\right) -  \frac{a \mu}{\lambda^2} \cdot \left(\lambda \otimes e_{\mu}^* - \mu \otimes \frac{\lambda}{\mu} e_{\mu}^*\right),\]
in which the second and the fourth summand cancel, whereas the rest simplifies to $x$.

Thus, the $\iota_{\lambda}$ glue to a homomorphism $\iota : \Omega^1_{\P(E)/S} \to p^*(E)(-1)$. The sequence
\[0 \to \Omega^1_{\P(E)/S} \xrightarrow{\iota} p^*(E)(-1) \xrightarrow{p} \O \to 0\]
is exact, because this holds on each $D_+(\lambda)$. Note that the definition of $\iota$ does not depend on a system of global generators of $E$,
since we have just used all global sections of $E$. In particular, $\iota$ restricts to the same $\iota$ when we preform a base change to some open subscheme of $S$.
This makes it possible to finish the proof when $E$ is not necessarily generated by global sections, because $E|_T$ is generated by global sections for every
open affine $T \subseteq S$.
\end{proof}

\section{Flat objects} \label{flat}

\begin{defi}[Flat objects]
Let $\C$ be a tensor category. An object $M \in \C$ is called \emph{flat} if $M \otimes - : \C \to \C$ is left exact, i.e. preserves finite limits. If $\C$ is an abelian tensor category, this functor is right exact anyway, therefore flatness means that $M \otimes -$ preserves monomorphisms.
\end{defi}

Of course this coincides with the usual definition when $\C=\M(R)$ for some commutative ring $R$. However, if $X$ is a scheme, one usually calls a quasi-coherent module $M \in \Q(X)$ flat if for every $x \in X$ the $\O_{X,x}$-module $M_x$ is flat. Equivalently, for every morphism from an affine scheme $\Spec(A) \to X$, the pullback of $M$ is associated to a flat $A$-module. The latter definition also works for algebraic stacks $X$. Following Lurie (\cite[Example 5.8]{Lur04}), we call $M$ \emph{locally flat} if this is satisfied.
 
\begin{lemma}
Every locally flat quasi-coherent module is also flat. The converse holds over quasi-separated schemes as well as over geometric stacks.
\end{lemma}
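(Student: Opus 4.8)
The plan is to prove the two implications separately, and within the converse to split into the scheme case and the stacky case, since each relies on a different auxiliary device.

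For the forward implication I would argue that flatness of $M \otimes -$ can be tested after a faithfully flat base change. If $X$ is a scheme, a homomorphism $N' \to N$ in $\Q(X)$ is a monomorphism precisely when it is injective on every stalk; since the stalk of a tensor product is the tensor product of stalks, the map $(M \otimes N')_x \to (M \otimes N)_x$ is just $M_x \otimes_{\O_{X,x}} N'_x \to M_x \otimes_{\O_{X,x}} N_x$, which is injective because $M_x$ is flat over $\O_{X,x}$ by local flatness. For a geometric (or algebraic) stack $X$ I would instead pick a faithfully flat affine cover $\pi : P \to X$; then $\pi^*$ is exact and conservative by fpqc descent and commutes with $\otimes$, while $\pi^* M$ is flat over the affine $P$ by hypothesis, so the same computation applied to $\pi^*(M \otimes N' \to M \otimes N) \cong \pi^* M \otimes \pi^*(N' \to N)$ shows the original map is a monomorphism. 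This direction requires no separatedness hypothesis.

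For the converse over a quasi-separated scheme $X$, the goal is to show that each stalk $M_x$ is flat over $\O_{X,x}$, i.e. that $M_x \otimes_{\O_{X,x}} -$ preserves injections of $\O_{X,x}$-modules. The key device, exactly as in the proof of \autoref{volltreufunk}, is the canonical morphism $i_x : \Spec(\O_{X,x}) \to X$: because $X$ is quasi-separated, $(i_x)_*$ preserves quasi-coherence, and one has $((i_x)_* N)_x \cong N$ for any $\O_{X,x}$-module $N$. Given an injection $N' \hookrightarrow N$ of $\O_{X,x}$-modules I would push it forward to a monomorphism $(i_x)_* N' \hookrightarrow (i_x)_* N$ in $\Q(X)$ (pushforward is left exact), tensor with $M$ to get again a monomorphism by the assumed flatness of $M$, and then take the stalk at $x$. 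Using $((i_x)_* N)_x \cong N$ together with the compatibility of stalks with $\otimes$, this yields the injectivity of $M_x \otimes N' \to M_x \otimes N$, as desired.

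For the converse over a geometric stack I would first reduce to a faithfully flat affine presentation $\pi : P = \Spec(A') \to X$, which is affine since $X$ is semi-separated. By faithfully flat descent together with flat base change, $M$ is locally flat as soon as $\pi^* M$ is flat over $A'$: an arbitrary $g : \Spec(A) \to X$ can be compared with $\Spec(A) \times_X P$, which is affine and faithfully flat over $\Spec(A)$, and flatness of $g^* M$ over $A$ descends from that of its base change, which in turn is the flat base change of $\pi^* M$. To prove $\pi^* M$ flat I would exploit that for the affine morphism $\pi$ the category $\Q(P)$ is the module category $\M(\pi_*\O_P)$ with $\pi_*$ the faithful exact forgetful functor, and that the projection formula gives $M \otimes \pi_* \widetilde{N} \cong \pi_*(\pi^* M \otimes \widetilde{N})$; starting from an injection $N' \hookrightarrow N$ of $A'$-modules, the same push--tensor--reflect argument as in the scheme case then shows $\pi^* M \otimes_{A'} N' \to \pi^* M \otimes_{A'} N$ is injective. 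The hard part throughout is the converse, and specifically the step of realizing an arbitrary injection of local (resp. $A'$-) modules as a monomorphism of genuinely quasi-coherent modules on $X$; this is exactly where quasi-separatedness (keeping $(i_x)_*$ quasi-coherent) resp. the geometric hypothesis (providing an affine presentation together with the projection formula) must be invoked, whereas the forward implication and the stalk-- and base-change compatibilities are routine.
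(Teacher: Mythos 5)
Your proof is correct, and for the scheme case it is at its core the same push--tensor--restrict argument as in the paper, just with a different choice of local model: the paper pushes a monomorphism of $\Q(U)$ forward along a quasi-compact open immersion $j : U \hookrightarrow X$ of an affine open (quasi-separatedness ensuring $j_*$ preserves quasi-coherence), tensors with $M$, and applies $j^*$ using $j^* j_* \cong \id$; you instead push forward along $i_x : \Spec(\O_{X,x}) \to X$ (precisely the device of \autoref{volltreufunk}) and recover the original modules by taking stalks at $x$, via $((i_x)_* N)_x \cong N$. Both hinge on the same use of quasi-separatedness, and the two local models are interchangeable. Where you genuinely add something is the geometric stack case: the paper simply cites Lurie, whereas you give a self-contained argument --- first reducing local flatness of $M$ to flatness of $\pi^* M$ over the affine presentation $\pi : P \to X$ by faithfully flat descent of flatness along $\Spec(A) \times_X P \to \Spec(A)$, and then proving $\pi^* M$ flat by the push--tensor--reflect argument, using that $\pi_*$ is faithful and exact for the affine morphism $\pi$ together with the projection formula $M \otimes \pi_* \mathcal{N} \cong \pi_*(\pi^* M \otimes \mathcal{N})$. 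This is exactly what the word ``similar'' in the paper's proof gestures at, and your version has the merit of making the lemma independent of the external reference; your stalkwise treatment of the (unproved, ``clear'') forward direction is also the intended routine argument.
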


\begin{proof}
The first part is clear. Now let $X$ be a quasi-separated scheme and let $M \in \Q(X)$ be flat. It suffices to prove that $M|_U \in \Q(U)$ is flat for every open affine subscheme $j : U \hookrightarrow X$. Let $N \to N'$ be a monomorphism in $\Q(U)$. Then $j_* N \to j_* N'$ is a monomorphism in $\Q(X)$. By assumption $M \otimes j_* N \to M \otimes j_* N'$ is a monomorphism in $\Q(X)$. Applying $j^*$, we see that $M|_U \otimes N \to M|_U \otimes N'$ is a monomorphism, as desired. The case of geometric stacks is similar and has been proven by Lurie (\cite[Example 5.8]{Lur04}).
\end{proof}

\begin{lemma}[Closure properties of flat objects] \label{flatclosure}
Let $\C$ be a cocomplete linear tensor category.
\begin{enumerate}
\item The zero object $0$ and the unit $\O_\C$ are flat in $\C$.
\item Flat objects are closed under finite direct sums. In particular, for every $d \in \N$, the free $\O_\C$-module $\O_\C^{\oplus d}$ is flat.
\item Assume that in $\C$ directed colimits are exact. If $\{M_i\}$ is a directed system of flat objects in $\C$, then $\varinjlim_i M_i$ is also a flat object in $\C$.
\end{enumerate}
\end{lemma}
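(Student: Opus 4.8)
The plan is to verify each of the three clauses by reducing everything to the behaviour of the functor $M \otimes -$ on finite limits, exploiting the distributive law and the cocontinuity of the tensor product that are built into the notion of a cocomplete tensor category.

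For the first clause I would observe that the unit constraint gives a natural isomorphism $\O_\C \otimes - \cong \id_\C$, and the identity functor preserves all limits, so $\O_\C$ is flat. For the zero object, the empty–direct-sum case of the distributive law (recorded in the Categorification remark) gives $0 \otimes A \cong 0$ for every $A$, so $0 \otimes -$ is the constant functor at $0$; since in a linear category the zero object is terminal, this constant functor preserves all limits, and in particular $0$ is flat.

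For the second clause I would use that $M \otimes -$ and $N \otimes -$ are left exact by hypothesis together with the natural isomorphism $(M \oplus N) \otimes - \cong (M \otimes -) \oplus (N \otimes -)$ coming from the distributive law. Because $\C$ is linear, a finite direct sum is a biproduct and hence also a finite \emph{product}, which is itself a limit; as limits commute with limits, the pointwise biproduct of two finite-limit-preserving functors again preserves finite limits. Concretely, for a finite diagram with limit $L = \lim_j A_j$ one has
\[(M \oplus N) \otimes L \cong (M \otimes L) \oplus (N \otimes L) \cong \lim_j(M \otimes A_j) \oplus \lim_j (N \otimes A_j) \cong \lim_j\bigl((M \otimes A_j) \oplus (N \otimes A_j)\bigr) \cong \lim_j\bigl((M \oplus N) \otimes A_j\bigr),\]
and a routine check shows this composite is the canonical comparison map. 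Thus $M \oplus N$ is flat, and the ``in particular'' statement follows by induction from the first clause, with the base case $d=0$ handled by the flatness of $0$.

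For the third clause I would use that $- \otimes A$ is cocontinuous, so $M \otimes A \cong \varinjlim_i (M_i \otimes A)$ when $M = \varinjlim_i M_i$. Given a finite limit $L = \lim_j A_j$, I would then compute
\[M \otimes L \cong \varinjlim_i(M_i \otimes L) \cong \varinjlim_i \lim_j (M_i \otimes A_j) \cong \lim_j \varinjlim_i (M_i \otimes A_j) \cong \lim_j (M \otimes A_j),\]
where the second isomorphism is the flatness of each $M_i$ and the third is exactly the hypothesis that directed colimits commute with finite limits (``directed colimits are exact''). The only point needing care, and the main obstacle, is the bookkeeping: one must check that every step is natural in the diagram and that the resulting isomorphism is \emph{precisely} the canonical comparison morphism $M \otimes \lim_j A_j \to \lim_j(M \otimes A_j)$, rather than merely some abstract isomorphism. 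Tracking this comparison map through the cocontinuity isomorphism, the pointwise flatness isomorphisms, and the colimit–limit interchange is the only genuinely fiddly part; everything else is formal.
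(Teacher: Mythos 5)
Your proof is correct and follows essentially the same route as the paper: clause 2 via the identification of $(M \oplus N) \otimes -$ with the product $(M \otimes -) \times (N \otimes -)$ of two left exact functors, and clause 3 via $\varinjlim_i M_i \otimes - \cong \varinjlim_i (M_i \otimes -)$ being a directed colimit of left exact functors. The bookkeeping you flag as the only fiddly part is in fact a non-issue: preservation of finite limits is invariant under natural isomorphism of functors, so once you have the natural isomorphism of functors $M \otimes - \cong \varinjlim_i (M_i \otimes -)$ and know that the right-hand side preserves finite limits, you are done, with no need to trace the canonical comparison morphism through the chain of identifications.
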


\begin{proof}
$1.$ is trivial. For $2.$, assume that $M_1,M_2$ are flat. Then the functor $(M_1 \oplus M_2) \otimes - $ identifies with $(M_1 \otimes - ) \times (M_2 \otimes -)$, which is a product of two left exact functors, hence it is also left exact. For $3.$ we observe that the functor $\varinjlim_i M_i \otimes - \cong \varinjlim_i (M_i \otimes -)$ is exact, since it is a directed colimit of exact functors.
\end{proof}

These closure properties actually capture all flat modules when $\C=\M(R)$ for some ring $R$ -- by the Theorem of Lazard-Govorov (\cite[Theorem 4.34]{Lam99}).

\begin{defi}[Faithfully flatness]
An object $M \in \C$ is called \emph{faithfully flat} if $M \otimes -$ preserves \emph{and} reflects finite limits. In particular $M$ is flat and a given diagram $A \to B \rightrightarrows C$ is exact as soon as $A \otimes M \to B \otimes M \rightrightarrows C \otimes M$ is exact. If $\C$ is abelian, this comes down to the property $A \otimes M = 0 \Rightarrow A = 0$ for all $A \in \C$.
\end{defi}

\begin{rem}
If $A$ is a faithfully flat algebra in a linear tensor category $\C$ and $M \in \C$ is arbitrary, then the beginning of the Amitsur complex (\autoref{Amitsur})
\[0 \to M \to M \otimes A \to M \otimes A \otimes A\]
is exact because it becomes split exact after tensoring with $A$. This implies that $A$ satisfies descent, as we will see in \autoref{descent-theory}. If $\C$ is abelian, even the whole Amitsur complex $(M \otimes A^{\otimes n})_{n \geq 0}$ is exact.
\end{rem}
\section{Dualizable objects}

\begin{defi}[Dualizable objects]
Let $\C$ be a tensor category with unit $\O_\C$ and let $M \in \C$ be an object. Recall (\cite[Definition 4.2.11]{KS06}) that a \emph{dual} of $M$ is an object $M^*$ together with morphisms
\[e : M^* \otimes M \to \O_\C,~ c : \O_\C \to M \otimes M^*\]
such that the two diagrams
\[\xymatrix@C=40pt@R=40pt{M \otimes \O_\C \ar[dr]_{\cong} \ar[r]^-{c \otimes \id_M} & M \otimes M^* \otimes M \ar[d]^{\id_M \otimes e} \\ & M } ~~~~~~ \xymatrix@C=40pt@R=40pt{M^* \otimes \O_\C \ar[dr]_{\cong} \ar[r]^-{\id_{M^*} \otimes c} & M^* \otimes M \otimes M^* \ar[d]^{e \otimes \id_{M^*}} \\ & M }\]
commute (``triangle identities''). We call $(M,M^*,c,e)$ a \emph{duality} in $\C$. If $M$ has a dual, we call $M$ \emph{dualizable}. We think of $e$ as an evaluation and write it as $\omega \otimes m \mapsto \omega(m)$ in element notation.
\end{defi}

\begin{rem}[Equivalent characterizations]
By the Yoneda Lemma, the following are equivalent: 
\begin{enumerate}
\item $M$ is dualizable.
\item There is an object $M^*$ and a morphism $e : M^* \otimes M \to \O_\C$ such that for every $A,B \in \C$ the induced map $\Hom(A,B \otimes M^*) \to \Hom(A \otimes M,B)$ is an isomorphism.
\item There is an object $M^*$ and a morphism $c : \O_\C \to M \otimes M^*$ such that for every $A,B \in \C$ the induced map $\Hom(A \otimes M,B) \to \Hom(A,B \otimes M^*)$ is an isomorphism.
\item There is an object $M^*$ such that $- \otimes M$ is left adjoint to $- \otimes M^*$.
\end{enumerate}
Since $M$ is dual to $M^*$ if and only if $M^*$ is dual to $M$, we can interchange the roles of $M$ and $M^*$ in each of these characterizations.
\end{rem}

\begin{rem}[Perfect pairings] \label{perfpair}
We may call a morphism $e : A \otimes B \to \L$ a \emph{perfect pairing} if there is a morphism $c : \L \to B \otimes A$ such that
\[\xymatrix{A \otimes \L \ar[r]^-{A \otimes c} \ar[dr]_{S_{A,\L}} & A \otimes B \otimes A \ar[d]^{e \otimes A} \\ & \L \otimes A} ~~~~~~~ \xymatrix{\L \otimes B \ar[r]^-{c \otimes B} \ar[dr]_{S_{\L,B}} & B \otimes A \otimes B \ar[d]^{B \otimes e} \\ & B \otimes \L}\]
commute. If $\L$ is invertible (\autoref{invob}), this means that we have a duality between $\L^{\otimes -1} \otimes A$ and $B$. A typical example of a perfect pairing is Serre-duality $H^k(F) \otimes H^{n-k}(F^* \otimes \omega^{\circ}) \to H^n(\omega)$ for a coherent sheaf $F$ on a smooth projective variety of dimension $n$. Another example is given by the exterior power $\wedge : \Lambda^k(V) \otimes \Lambda^{n-k}(V) \to \Lambda^n(V)$ for an $n$-dimensional vector space $V$. See \autoref{lokdual} for a generalization. We do not know if perfect pairings have already been studied for non-invertible $\L$.
\end{rem}

\begin{lemma} \label{dual-flat}
Let $M$ be a dualizable object in a cocomplete tensor category $\C$. Then $M$ is flat. If $\O_\C$ is finitely presentable, then $M$ is also finitely presentable.
\end{lemma}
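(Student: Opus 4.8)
The plan is to deduce both statements directly from the adjunctions that dualizability provides, as recorded in the equivalent characterizations above. First I would prove flatness. By characterization 4, applied with the roles of $M$ and $M^*$ interchanged (which is legitimate, since $M$ is dual to $M^*$ exactly when $M^*$ is dual to $M$), the functor $M \otimes -$ is right adjoint to $M^* \otimes -$. A right adjoint preserves all limits, in particular finite limits, so $M \otimes -$ is left exact. By definition this means that $M$ is flat. No case distinction between the abelian and the general setting is needed, since we argue with limits directly rather than with monomorphisms.

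For the second assertion I would use the duality adjunction in the form of characterization 2 with $A = \O_\C$. Since $\O_\C \otimes M \cong M$, it yields a natural isomorphism $\Hom(M,-) \cong \Hom\bigl(\O_\C, (-) \otimes M^*\bigr)$. Now suppose $\O_\C$ is finitely presentable, that is, $\Hom(\O_\C,-)$ preserves directed colimits. The functor $(-) \otimes M^*$ is cocontinuous, because $\C$ is a cocomplete tensor category, so in particular it preserves directed colimits. Hence $\Hom(M,-)$, being the composite of these two functors, preserves directed colimits as well: for a directed system $\{N_i\}$ one has $\Hom(M, \colim_i N_i) \cong \Hom\bigl(\O_\C, \colim_i (N_i \otimes M^*)\bigr) \cong \colim_i \Hom(\O_\C, N_i \otimes M^*) \cong \colim_i \Hom(M, N_i)$, where the first isomorphism uses the natural isomorphism together with cocontinuity of $(-)\otimes M^*$, and the second uses finite presentability of $\O_\C$. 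Therefore $M$ is finitely presentable.

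The argument is essentially formal, so there is no serious obstacle; the only point requiring care is to invoke each adjunction in the correct direction. For flatness one must read the duality as making $M \otimes -$ a \emph{right} adjoint, whereas for finite presentability one exploits that the same duality turns $\Hom(M,-)$ into $\Hom(\O_\C,-)$ precomposed with the cocontinuous functor $(-)\otimes M^*$. Both uses rely only on the defining adjunctions of a dualizable object and on the standing hypothesis that tensoring with a fixed object is cocontinuous.
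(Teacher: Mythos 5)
Your proposal is correct and follows essentially the same route as the paper's proof: flatness from the fact that $M \otimes -$ is a right adjoint (hence preserves all limits), and finite presentability from the natural isomorphism $\Hom(M,-) \cong \Hom(\O_\C, - \otimes M^*)$ together with cocontinuity of $- \otimes M^*$. The only difference is that you spell out the adjunction bookkeeping in more detail than the paper does.
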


\begin{proof}
Since $M \otimes -$ is a right adjoint, it preserves all limits. In particular $M$ is flat. If $\O_\C$ is finitely presentable, then $\Hom(M,-) \cong \Hom(\O_\C,- \otimes M^*)$ preserves directed colimits, i.e. $M$ is finitely presentable.
\end{proof}

\begin{prop}[The case of schemes] \label{dual-char}
Let $X$ be a scheme, or even algebraic stack, and $M \in \Q(X)$. Then the following are equivalent:
\begin{enumerate}
\item $M$ is dualizable in the sense above.
\item $M$ is locally free of finite rank.
\item $M$ is flat and of finite presentation.
\item If $U$ is an affine scheme which maps to $X$, then the pullback $M|_U$ is associated to a finitely generated projective module.
\end{enumerate}
\end{prop}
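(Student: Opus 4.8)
The plan is to prove the affine case first and then globalize by locality. Over a commutative ring $A$, the equivalence of the four conditions for an $A$-module $N$ is classical: $N$ is dualizable in $\M(A)$ if and only if it is finitely generated projective, if and only if it is flat and finitely presented (by the theorem that finitely presented flat modules are projective), if and only if it is locally free of finite rank. I would record this module-theoretic equivalence as the base case and lift it to $X$ via an affine cover (a smooth or flat affine presentation in the stacky case), using fpqc descent for quasi-coherent modules.

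For the implication $1 \Rightarrow 4$, I would use that any tensor functor preserves dualizable objects: applying a strong monoidal functor to the duality data $(M^*,c,e)$ yields a duality, since the triangle identities are preserved by coherence. For an affine $U = \Spec(A)$ mapping to $X$, the pullback $\Q(X) \to \M(A)$ is a cocontinuous tensor functor, so $M|_U$ is dualizable, hence finitely generated projective by the affine case. That is exactly condition $4$.

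The equivalences $2 \Leftrightarrow 3 \Leftrightarrow 4$ I would obtain by observing that each of these three properties is local and checking them on the cover, where they reduce to the affine case. Here condition $3$ forces $M$ to be of finite presentation, as do conditions $2$ and $4$; for such $M$ flatness may be tested on the cover (cf. \autoref{flat}, local flatness being exactly the condition on affine pullbacks), and \autoref{Qcohfin} matches the scheme-theoretic notion of finite presentation with the categorical one. Stability of all three conditions under pullback to an arbitrary affine follows because an affine cover pulls back to an affine cover.

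The main work is $4 \Rightarrow 1$, the gluing of local duals into a global one, and this is the step I expect to require the most care. Assuming condition $4$, $M$ is locally free of finite rank, in particular of finite presentation, so $M^* := \HOM(M,\O_X)$ is quasi-coherent by \autoref{qchom}. There is always a canonical evaluation $e : M^* \otimes M \to \O_X$, and the natural comparison $M \otimes M^* \to \HOM(M,M)$ is an isomorphism whenever $M$ is locally free of finite rank (a local check); composing its inverse with the global identity section $\O_X \to \HOM(M,M)$ produces a coevaluation $c : \O_X \to M \otimes M^*$. It then remains to verify the two triangle identities, but these are equalities of morphisms of quasi-coherent modules and may be checked on the cover, where they hold because $M|_U$ is dualizable with dual $M^*|_U$ by the affine case. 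Hence $(M,M^*,c,e)$ is a duality and $M$ is dualizable; note that \autoref{dual-flat} already records the converse facts that a dual must be flat and (when $\O$ is finitely presentable) of finite presentation. The only delicate point is the globalization itself: one must ensure the candidate dual and its structure maps glue, equivalently descend along the chosen cover, which is guaranteed precisely because $M^*$, $e$ and $c$ are defined by universal constructions rather than by local choices.
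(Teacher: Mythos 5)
Your proof is correct and takes essentially the same route as the paper: quote the classical affine case, take $M^* := \HOM(M,\O_X)$ (quasi-coherent by \autoref{qchom}) with the evaluation map, and verify dualizability by descent along an affine cover, where $M$ becomes finitely generated projective. The only cosmetic differences are that you build the coevaluation explicitly via $M \otimes M^* \cong \HOM(M,M)$ and check the triangle identities after pullback, whereas the paper instead checks the characterizing bijection $\Hom(A,B\otimes M^*) \cong \Hom(A\otimes M,B)$ locally by reducing to $M = \O_X^{\oplus d}$, and you obtain $1 \Rightarrow 4$ from preservation of duals under tensor functors rather than the paper's appeal to \autoref{dual-flat}.
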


\begin{proof}
If $X$ is affine, then $2. \Leftrightarrow 3. \Leftrightarrow 4.$ are well-known (\cite[Chapter II, \para 5.2]{Bou72}). By descent this suffices for the general case. $1. \Rightarrow 3.$ follows from \autoref{dual-flat}. For $2. \Rightarrow 1.$, consider $M^* := \underline{\Hom}(M,\O_X)$ (this is quasi-coherent by \autoref{qchom}) with the evaluation homomorphism $e : M^* \otimes M \to \O_X$. For  $A,B \in \Q(X)$ we claim that $\Hom(A,B \otimes M^*) \to \Hom(A \otimes M,B)$ is bijective. Descent allows us to reduce to the case $M=\O_X^{\oplus d}$ for some $d$, where both sides identify with $\Hom(A,B)^d$.
\end{proof}

\begin{cor} \label{lokalfrei-bleibt}
Let $X,Y$ be algebraic stacks. If $F : \Q(X) \to \Q(Y)$ is a tensor functor and $M \in \Q(X)$ is locally free of finite rank, then the same is true for $F(M) \in \Q(Y)$.
\end{cor}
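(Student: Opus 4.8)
The plan is to deduce this immediately from \autoref{dual-char}, which provides the crucial translation: for a quasi-coherent module on an algebraic stack, being locally free of finite rank is equivalent to being \emph{dualizable} in the purely tensor-categorical sense. Since dualizability is an intrinsic notion phrased entirely in terms of objects, morphisms, the tensor product and the unit, it is manifestly preserved by any tensor functor. Thus the whole argument reduces to the general monoidal-categorical fact that tensor functors carry dualities to dualities.

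Concretely, I would first invoke the equivalence $1. \Leftrightarrow 2.$ of \autoref{dual-char} to fix a duality $(M, M^*, e, c)$ in $\Q(X)$, where $e : M^* \otimes M \to \O_X$ and $c : \O_X \to M \otimes M^*$ satisfy the two triangle identities. The candidate dual for $F(M)$ is $F(M^*)$, with evaluation and coevaluation obtained by pre- and post-composing $F(e)$ and $F(c)$ with the structure isomorphisms of the tensor functor $F$: the evaluation is the composite
\[
F(M^*) \otimes F(M) \xrightarrow{\cong} F(M^* \otimes M) \xrightarrow{F(e)} F(\O_X) \xrightarrow{\cong} \O_Y,
\]
and the coevaluation is $\O_Y \xrightarrow{\cong} F(\O_X) \xrightarrow{F(c)} F(M \otimes M^*) \xrightarrow{\cong} F(M) \otimes F(M^*)$, using that $F$ preserves tensor products and the unit up to coherent isomorphism.

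The main verification is that these data satisfy the triangle identities in $\Q(Y)$. This follows by applying the functor $F$ to the triangle-identity diagrams for $M$ and splicing in the naturality squares of the monoidal structure isomorphisms of $F$; by the Coherence Theorem (\autoref{coherence}) all the resulting associativity and unit constraints match up, so the required diagrams commute. Hence $F(M)$ is dualizable in $\Q(Y)$. This is the only step requiring any care, but it is entirely formal: no geometry or descent is needed, only functoriality and coherence.

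Finally, I would apply the equivalence $1. \Leftrightarrow 2.$ of \autoref{dual-char} once more, now on $Y$: since $F(M)$ is dualizable in $\Q(Y)$, it is locally free of finite rank, which is exactly the assertion. I expect no genuine obstacle here; the content of the corollary is precisely that the geometric condition "locally free of finite rank" has been recast, via \autoref{dual-char}, into a condition that is transparently tensor-functorial.
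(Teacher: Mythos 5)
Your proposal is correct and is exactly the paper's argument: \autoref{dual-char} converts ``locally free of finite rank'' into the tensor-categorical notion of dualizability, which any tensor functor preserves (by applying $F$ to the duality data and using its monoidal structure isomorphisms), and then \autoref{dual-char} is applied again on $Y$. The paper leaves this verification implicit, and its remark following the corollary makes the same point you do, namely that the local notion is replaced by the global notion of dualizability, which is ``clearly preserved by any tensor functor.''
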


\begin{rem}
This corollary will be useful for showing tensoriality of stacks. If $F = f^*$ is a pullback functor, then it is clear anyway that $F$ preserves locally free modules of finite rank. However, for general $F$ this is not clear at all, since $F$ is only defined \emph{globally}. For this, one replaces the \emph{local} notion of locally freeness by the \emph{global} notion of dualizability which is clearly preserved by any tensor functor. This kind of globalization is one of the main themes in this thesis.
\end{rem}

\begin{defi}[Dual morphisms]
If $V,W$ are dualizable objects, we have an isomorphism $\Hom(V,W) \cong \Hom(W^* \otimes V,\O_\C) \cong \Hom(W^*,V^*)$. For a morphism $f : V \to W$ the induced morphism $f^* : W^* \to V^*$ is usually called the \emph{mate} or \emph{dual} of $f$. It is characterized by the commutativity of each one of the following diagrams.
\[\xymatrix@C=40pt{W^* \otimes V \ar[r]^-{f^* \otimes \id_V} \ar[d]_{\id_{W^*} \otimes f} & V^* \otimes V \ar[d]^{e} \\ W^* \otimes W \ar[r]^-{e} & \O_\C} ~~~~ \xymatrix@C=40pt{\O_\C \ar[r]^-{c} \ar[d]_{c} & W \otimes W^* \ar[d]^{\id_W \otimes f^*} \\ V \otimes V^* \ar[r]^-{f \otimes \id_{V^*}} & W \otimes V^*}\]
In element notation, the diagram on the left reads as $f^*(\omega)(v)=\omega(f(v))$.
\end{defi}

\begin{lemma}[Duals invert] \label{dual-inv}
Let $\eta : F \to G$ be a morphism of tensor functors $F,G : \C \to \D$.  If $V \in \C$ is dualizable, then $\eta(V) : F(V) \to G(V)$ is an isomorphism. In particular, if $\C$ is a cocomplete tensor category in which the dualizable objects are colimit-dense, then $\Hom_{c\otimes}(\C,\D)$ is a groupoid.
\end{lemma}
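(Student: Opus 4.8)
The plan is to produce an explicit two-sided inverse of $\eta(V)$ from the duality data, and then to pass from dualizable objects to all of $\C$ by density. First I would record that a strong symmetric monoidal functor preserves dualities: if $(V,V^*,c,e)$ is a duality in $\C$, then $F(V)$ is dualizable with dual $F(V^*)$, the evaluation $e_F \colon F(V^*)\otimes F(V) \to \O_\D$ and coevaluation $c_F \colon \O_\D \to F(V)\otimes F(V^*)$ being obtained by transporting $F(e)$ and $F(c)$ across the structure isomorphisms $F(A)\otimes F(B)\cong F(A\otimes B)$ and $F(\O_\C)\cong\O_\D$. The two triangle identities for $(F(V),F(V^*),c_F,e_F)$ follow from those for $(V,V^*,c,e)$ together with coherence (\autoref{coherence}), and the same construction yields a duality $(G(V),G(V^*),c_G,e_G)$.

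Next I would extract the two compatibility identities that encode that $\eta$ is a morphism of tensor functors. Naturality of $\eta$ applied to $e$ and to $c$, together with the facts that $\eta$ commutes with the monoidal structure isomorphisms and that $\eta(\O_\C)=\id_{\O_\D}$, yield
\[e_G \circ (\eta(V^*) \otimes \eta(V)) = e_F, \qquad (\eta(V) \otimes \eta(V^*)) \circ c_F = c_G.\]

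With these in hand I would define the candidate inverse $\psi \colon G(V) \to F(V)$ as the composite
\[G(V) \cong \O_\D \otimes G(V) \xrightarrow{c_F \otimes \id} F(V) \otimes F(V^*) \otimes G(V) \xrightarrow{\id \otimes \eta(V^*) \otimes \id} F(V) \otimes G(V^*) \otimes G(V) \xrightarrow{\id \otimes e_G} F(V),\]
equivalently the mate of $\eta(V^*)$ under the two dualities. The verification that $\psi$ is a two-sided inverse is then two short computations. For $\psi \circ \eta(V)$, sliding $\eta(V)$ through the tensor factors and applying the first compatibility identity turns the middle map into $\id_{F(V)} \otimes e_F$, so the composite collapses to $(\id_{F(V)} \otimes e_F)\circ(c_F \otimes \id_{F(V)}) = \id_{F(V)}$ by the triangle identity for $F$. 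For $\eta(V)\circ\psi$, pushing $\eta(V)$ to the left and applying the second identity produces $c_G \otimes \id_{G(V)}$, whence the triangle identity for $G$ gives $\id_{G(V)}$. Hence $\eta(V)$ is an isomorphism.

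For the final clause, colimit-density of the dualizable objects means (\autoref{denselin}) that every $M \in \C$ is the canonical colimit of the dualizable objects mapping to it; since $F$ and $G$ are cocontinuous, $\eta(M)$ is the colimit of the morphisms $\eta(V)$ over this diagram, each of which is an isomorphism by the first part, so $\eta(M)$ is an isomorphism. Thus every morphism in $\Hom_{c\otimes}(\C,\D)$ is invertible, i.e. the category is a groupoid. The only delicate point, and the one I would be most careful about, is getting the two compatibility identities correctly oriented and invoking the triangle identity of the correct functor ($F$ for one composite, $G$ for the other); everything else is routine coherence bookkeeping.
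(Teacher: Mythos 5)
Your proof is correct and follows essentially the same route as the paper: the paper also defines the inverse of $\eta(V)$ as the mate of $\eta(V^*)$ transported through the dualities $G(V)\cong G(V^*)^*$ and $F(V^*)^*\cong F(V)$, and handles the groupoid claim by colimit-density and cocontinuity. The only difference is that the paper outsources the triangle-identity verification to a citation (\cite[Theorem 3.2]{LFSW11}), whereas you carry out those computations explicitly.
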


\begin{proof}
One checks that $G(V) \cong G(V^*)^* \xrightarrow{\eta(V^*)^*} F(V^*)^* \cong F(V)$ is inverse to $\eta(V)$. Details can be found in \cite[Theorem 3.2]{LFSW11}.
\end{proof}

\begin{lemma}[Pure exactness] \label{pure}
Let $\C$ be a linear cocomplete tensor category and let
\[U \to V \to W \to 0\]
be an exact sequence of dualizable objects in $\C$. Assume that the dual sequence
\[W^* \to V^* \to U^* \to 0\]
is also exact. Then, the sequence
\[0 \to U \to V \to W \to 0\]
is pure exact, i.e. if $M \in \C$ is an arbitrary object, then the induced sequence
\[0 \to U \otimes M \to V \otimes M \to W \otimes M \to 0\]
is also exact.
\end{lemma}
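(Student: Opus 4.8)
The plan is to prove the ``i.e.'' part directly: for an arbitrary $M \in \C$ the sequence $0 \to U \otimes M \to V \otimes M \to W \otimes M \to 0$ is exact. Taking $M = \O_\C$ recovers exactness of the original sequence, so establishing this for all $M$ is exactly the asserted pure exactness. Write $a : U \to V$ and $b : V \to W$ for the two maps. The right-hand end is essentially free: since $\C$ is a cocomplete tensor category, $- \otimes M$ is cocontinuous, hence preserves cokernels, so the hypothesis $W = \coker(a)$ gives $W \otimes M = \coker(a \otimes M)$. Thus $U \otimes M \to V \otimes M \to W \otimes M \to 0$ is already exact, and the only remaining point is that $a \otimes M$ is a \emph{kernel} of $b \otimes M$. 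By the test-object description of kernels, this amounts to showing that for every $T \in \C$ the sequence of abelian groups
\[0 \to \Hom(T, U \otimes M) \to \Hom(T, V \otimes M) \to \Hom(T, W \otimes M)\]
is exact.

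The idea is to transport this along duality. For each dualizable $X$ the adjunction $- \otimes X^* \dashv - \otimes X$ (characterization 4 of dualizability, applied with $X^*$ as the dualizable object, whose dual is $X$) gives a natural isomorphism $\Hom(T \otimes X^*, M) \cong \Hom(T, M \otimes X) \cong \Hom(T, X \otimes M)$, which I apply for $X \in \{U, V, W\}$. Since $T \otimes -$ is cocontinuous, tensoring the hypothesised exact sequence
\[W^* \xrightarrow{b^*} V^* \xrightarrow{a^*} U^* \to 0\]
with $T$ keeps it right exact, i.e.\ $T \otimes U^* = \coker(T \otimes b^*)$. Applying the contravariant functor $\Hom(-, M)$ to this cokernel presentation yields a left-exact sequence $0 \to \Hom(T \otimes U^*, M) \to \Hom(T \otimes V^*, M) \to \Hom(T \otimes W^*, M)$: injectivity comes from $T \otimes V^* \to T \otimes U^*$ being an epimorphism, and exactness in the middle from the universal property of the cokernel. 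Transporting this sequence back along the natural isomorphism above produces precisely the sequence that needs to be tested.

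The step demanding care is the \emph{naturality} of $\Hom(T, X \otimes M) \cong \Hom(T \otimes X^*, M)$ in the variable $X$: I must check that the map induced by $a : U \to V$ (namely postcomposition with $a \otimes M$) corresponds under the isomorphism to the map induced by the dual morphism $a^* : V^* \to U^*$ (namely precomposition with $T \otimes a^*$), and similarly for $b$. This is exactly the compatibility of the adjunction with mates, and it follows from the triangle identities of the duality together with the characterization $f^*(\omega)(v) = \omega(f(v))$ of dual morphisms. I would verify it by unwinding the isomorphism as $g \mapsto \bigl(T \otimes X^* \xrightarrow{g \otimes \id_{X^*}} X \otimes M \otimes X^* \xrightarrow{\cong} M \otimes X \otimes X^* \xrightarrow{\id_M \otimes e} M\bigr)$, with $e : X \otimes X^* \to \O_\C$ the evaluation of $X^*$ against its dual $X$, and chasing the evident diagram. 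Granting this bookkeeping, the two displayed sequences agree for every $T$, so $a \otimes M$ is a kernel of $b \otimes M$; combined with the right-exactness from the first paragraph this gives exactness of $0 \to U \otimes M \to V \otimes M \to W \otimes M \to 0$. I expect this naturality check, rather than any conceptual difficulty, to be the main obstacle.
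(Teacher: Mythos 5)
Your proposal is correct and follows essentially the same route as the paper's own proof: right-exactness is free from cocontinuity of $- \otimes M$, and the kernel property is transported through the duality adjunction $\Hom(T, X \otimes M) \cong \Hom(T \otimes X^*, M)$ to the cokernel presentation $T \otimes W^* \to T \otimes V^* \to T \otimes U^* \to 0$ obtained from the hypothesis. The only difference is presentational: the paper chases a single test morphism and leaves the naturality of the duality isomorphism (compatibility with mates) implicit, whereas you isolate it as the step to verify.
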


\begin{proof}
It is clear that $U \otimes M \to V \otimes M \to W \otimes M \to 0$ is exact. So we are left to prove that $0 \to U \otimes M \to V \otimes M \to W \otimes M$ is exact. Let $T \to V \otimes M$ be a morphism which vanishes when composed with $V \otimes M \to W \otimes M$. By duality this corresponds to a morphism $V^* \otimes T \to M$ which vanishes when precomposed with $W^* \otimes T \to M$. Since by assumption
\[W^* \otimes T \to V^* \otimes T \to U^* \otimes T \to 0\]
is exact, the morphism corresponds to a morphism $U^* \otimes T \to M$, which by duality corresponds to a morphism $T \to U \otimes M$.
\end{proof}

\section{Invertible objects} \label{invob}

We recall some general facts about invertible objects (\cite[Chapitre I, 2.5]{SR72}), introduce line objects (following James Dolan) and as usual see what happens for schemes.
 
\begin{defi}[Invertible objects]
Let $\C$ be a tensor category. Recall that an object $\L \in \C$ is called \emph{invertible} if there is an object $\K \in \C$ such that $\L \otimes \K \cong \O_\C$.
\end{defi}
 
Note that $\O_\C$ is invertible and that invertible objects are closed under tensor products, thus we get a tensor category $\Pic(\C)$ of invertible objects in $\C$. Usually $\Pic(\C)$ denotes the monoid (in fact, group) of isomorphism classes of invertible objects (\cite{May01}), but we would like to retain the morphisms and write $\Pic(\C)/{\cong}$ for the monoid of isomorphism classes.
 
If $\L \otimes \K \cong \O_\C$, then $\K$ is unique up to isomorphism, called the inverse of $\L$ and denoted by $\L^{\otimes -1}$. We can then extend the definition of tensor powers to negative powers. Observe that $\L \otimes - : \C \to \C$ is an equivalence of categories (with inverse $\L^{\otimes -1} \otimes -$). For every isomorphism $c : \O_\C \to \L \otimes \L^{\otimes -1}$ there is a unique isomorphism $e : \L^{\otimes -1} \otimes \L \to \O_\C$ such that the triangle identities are satisfied, i.e. $(\L,\L^{\otimes -1},c,e)$ is a duality in $\C$. We refer to \cite{Du13} for coherence results about invertible objects.
 
\begin{ex}[The case of schemes]
Let $X$ be a locally ringed space and $M \in \M(X)$. Then $M$ is invertible (as defined above) if and only if $M$ is locally free of rank $1$. In this case, $M^{\otimes -1} = \underline{\Hom}(M,\O_X)$. This is proven in \cite[Chapitre 0, Proposition 5.5.9]{EGAI} under the assumption that $M$ is of finite type. But this is automatic (I learned this from Tom Goodwillie):

If $M \otimes N \cong \O_X$, then $X$ is covered by open subsets $U$ such that $1 \in \Gamma(U,\O_X)$ has a preimage in $\Gamma(U,M) \otimes \Gamma(U,N)$. Only finitely many sections of $M$ appear. Let $T \subseteq M|_U$ be the submodule generated by these sections. By construction $T \otimes N|_U \to M|_U \otimes N|_U \cong \O_U$ is an epimorphism. Since $N|_U$ is invertible, it follows that $T \to M|_U$ is an epimorphism, i.e. $M|_U = T$. Hence, $M$ is of finite type.
 
The same equivalence follows for quasi-coherent modules on schemes. Using descent it is easy to generalize this to the case of algebraic stacks.
\end{ex}

\begin{defi}[Signature]
If $\L \in \C$ is invertible, then  the canonical morphism $\End(\O_\C) \to \End(\L)$ is an isomorphism (since $\L \otimes -$ is an equivalence). This makes it possible to define the \emph{signature} of an invertible object $\L$ (\cite[Chapitre I, 2.5.3.1]{SR72}), which is the endomorphism $\epsilon(\L) : \O_\C \to \O_\C$ corresponding to the symmetry $S_{\L,\L} : \L \otimes \L \cong \L \otimes \L$ under the canonical isomorphism $\End(\O_\C) \cong \End(\L \otimes \L)$. Since the symmetry is an involution, the same is true for $\epsilon(\L)$. It is easy to check that $\epsilon(L_1 \otimes L_2) \cong \epsilon(L_1) \circ \epsilon(\L_2)$. If $(\L,\L^{\otimes -1},c,e)$ is a duality as above, then the signature equals the composition
\[\O_\C \xrightarrow{c} \L \otimes \L^{\otimes -1} \xrightarrow{S_{\L,\L^{\otimes -1}}} \L^{\otimes -1} \otimes \L \xrightarrow{e} \O_\C.\]
The typical examples deserve a special name:
\end{defi} 
 
\begin{defi}[Line objects]
A \emph{line object} in $\C$ is an invertible object with signature $\id$, i.e. an invertible object which is symtrivial. If $\C$ is linear, then an \emph{anti-line object} in $\C$ is an invertible object with signature $-\id$. It follows from the multiplicativity of the signature above that line (resp. anti-line) objects are closed under tensor products and inverses (which can also be checked directly, see \autoref{symtrivial-properties}), thus they constitute a tensor category $\Pic_+(\C)$ (resp. $\Pic_-(\C)$).
\end{defi}

\begin{ex}
For example, if $X$ is a scheme (or algebraic stack), then we have seen above that in fact every invertible object in $\Q(X)$ is a line object. Outside of algebraic geometry there are natural examples of anti-line objects. In fact, we will see in \autoref{grZ-twist} that the universal example over $R$ is $R$ itself considered as a $\Z$-graded $R$-module concentrated in degree $+1$, where $\Z$-graded modules are endowed with a twisted symmetry.
\end{ex}
 
\begin{rem}
Let $\C$ be a cocomplete $R$-linear tensor category. Assume $2 \in R^*$. If $\L \in \C$ is invertible, then there is a decomposition $\C \cong \C_1 \times \C_2$ under which $\L$ gets mapped to $(\L_+,\L_-)$, where $\L_+$ is a line object and $\L_-$ is an anti-line object. We will not prove this here and only indicate that it follows from \autoref{zerleg} applied to the idempotent $\frac{1+\epsilon(\L)}{2}$.
\end{rem}

The rest of this section is devoted to ``presentations'' of line objects. This will be needed later for projective tensor categories.

\begin{lemma}[Epis cancel] \label{epi-cancel}
Let $s : E \to \L$ be an epimorphism, where $\L$ is invertibe. If $h,h' : A \to B$ are two morphisms with $h \otimes s = h' \otimes s$, then $h=h'$.
\end{lemma}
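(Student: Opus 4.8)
The plan is to split the tensor product $h \otimes s$ into the composite of a morphism that only involves $s$ and one that only involves $h$, so that the hypothesis that $s$ is an epimorphism and the invertibility of $\L$ can be exploited separately. Concretely, I would use the interchange law for $\otimes$ (functoriality in each variable) to write
\[ h \otimes s = (h \otimes \id_\L) \circ (\id_A \otimes s), \qquad h' \otimes s = (h' \otimes \id_\L) \circ (\id_A \otimes s). \]
With this the assumption $h \otimes s = h' \otimes s$ becomes $(h \otimes \id_\L) \circ (\id_A \otimes s) = (h' \otimes \id_\L) \circ (\id_A \otimes s)$, and the goal is reduced to cancelling $\id_A \otimes s$ on the right and then stripping off the $\L$-factor.

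First I would argue that $\id_A \otimes s = A \otimes s$ is again an epimorphism. This is where the tensor structure enters: the endofunctor $A \otimes -$ is cocontinuous, and a cocontinuous (in fact merely right exact) functor preserves epimorphisms, as recalled in the preliminaries. Cancelling this epimorphism from the right then yields $h \otimes \id_\L = h' \otimes \id_\L$, that is, the two morphisms agree after applying the functor $- \otimes \L$. Finally, since $\L$ is invertible, $- \otimes \L$ is an equivalence of categories, with inverse $- \otimes \L^{\otimes -1}$, and in particular it is faithful; hence $h \otimes \id_\L = h' \otimes \id_\L$ forces $h = h'$, which is the assertion.

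The only genuine obstacle is the middle step, namely that $\id_A \otimes s$ stays an epimorphism. In an arbitrary symmetric monoidal category this can fail, so the argument really does use that we work in a cocomplete tensor category, where $A \otimes -$ preserves all colimits and therefore epimorphisms. Everything else is formal: the coherence isomorphisms (unit and associativity constraints) are treated as identities by the Coherence Theorem, and faithfulness of an equivalence is standard. I expect no computational difficulty beyond the bookkeeping of these constraints.
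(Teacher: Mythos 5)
Your proof is correct and uses essentially the same ingredients as the paper's: that tensoring with a fixed object preserves epimorphisms (cocontinuity of $A \otimes -$) together with invertibility of $\L$. The only difference is the order of the two steps — the paper first tensors the hypothesis with $\L^{\otimes -1}$, so that $h$ and $h'$ become equal after precomposition with the epimorphism $s \otimes \L^{\otimes -1} \otimes A \colon E \otimes \L^{\otimes -1} \otimes A \to A$, and then cancels that epimorphism, whereas you cancel $\id_A \otimes s$ first and then remove the $\L$-factor via faithfulness of the equivalence $- \otimes \L$.
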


\begin{proof}
By construction the epimorphism
\[s \otimes \L^{\otimes -1} \otimes A : E \otimes \L^{\otimes -1} \otimes A \to A\]
equalizes $h$ and $h'$. This implies $h=h'$.
\end{proof}

\begin{rem}
In the following, this Lemma will be often used with element notation: If $h(a) \otimes s(e) = h'(a) \otimes s(e)$ for all $e \in E$, then we already have $h(a)=h'(a)$. Intuitively, equality of morphisms may be checked after tensoring with generators of an invertible object. This replaces some of the usual local calculations for invertible sheaves.
\end{rem}
 
Now let $\L \in \C$ be a line object. Assume that $s : E \to \L$ is an epimorphism. Since $\L$ is symtrivial, we have $s(a) \otimes s(b) = s(b) \otimes s(a)$ for $a,b \in E$ in element notation. We may see this as a ``relation between the generators'' $s$ of $\L$. More precisely, consider the two morphisms
\[E \otimes s,\, (E \otimes s) \circ S_{E,E} : E^{\otimes 2} \to E \otimes \L.\]
Since $\L$ is invertible, they correspond to two morphisms 
\[E^{\otimes 2} \otimes \L^{\otimes -1} \rightrightarrows E.\]
That $\L$ is symtrivial means exactly that $s$ coequalizes these morphisms. Notice that if $\C$ is $R$-linear with $2 \in R^*$, then the difference of the two morphisms $E^{\otimes 2} \to E \otimes \L$ mapping $a \otimes b \mapsto a \otimes s(b) - b \otimes s(a)$ in element notation is alternating, hence gives rise to a morphism on the exterior power $\Lambda^2 E$.
 
\begin{lemma}[Good epimorphisms] \label{goodepi}
With the above notations, the following are equivalent:
\begin{enumerate}
\item The sequence $E^{\otimes 2} \otimes \L^{\otimes -1} \rightrightarrows E \xrightarrow{s} \L$ from above is exact.
\item The epimorphism $s$ is regular, i.e. the coequalizer of \emph{some} pair of morphisms.
\end{enumerate}
If $\C$ is $R$-linear with $2 \in R^*$, then 1. is also equivalent to the exactness of $\Lambda^2 E \otimes \L^{\otimes -1} \to E \xrightarrow{s} \L \to 0$.
\end{lemma}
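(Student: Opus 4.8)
The implication $1 \Rightarrow 2$ is immediate, since a coequalizer is by definition a regular epimorphism, so the entire content lies in the converse. The plan is to prove $2 \Rightarrow 1$ by verifying the universal property of the coequalizer directly, rather than by first forming the coequalizer of $u,v$ and then trying to identify it with $\L$.

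So I would assume $s$ is a regular epimorphism, say $s = \coeq(f,g)$ for some pair $f,g : D \rightrightarrows E$; in particular $s f = s g$. Write $u,v : E^{\otimes 2} \otimes \L^{\otimes -1} \to E$ for the two morphisms in the statement. Unwinding the definitions (and using that $- \otimes \L^{\otimes -1}$ is an equivalence, so that morphisms $E^{\otimes 2} \otimes \L^{\otimes -1} \to T$ correspond to morphisms $E^{\otimes 2} \to T \otimes \L$), the condition $hu = hv$ for a test morphism $h : E \to T$ becomes exactly the symmetry relation $h(a) \otimes s(b) = h(b) \otimes s(a)$ in $T \otimes \L$, in element notation. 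Given such an $h$, I want it to factor through $s$; since $s = \coeq(f,g)$ and $s$ is epic (so the factorization is automatically unique), it suffices to prove $hf = hg$.

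The key step is a cancellation. By \autoref{epi-cancel}, to prove $hf = hg$ it is enough to prove $hf \otimes s = hg \otimes s$ as morphisms $D \otimes E \to T \otimes \L$. Applying the symmetry relation twice, together with $sf = sg$, I compute in element notation
\[ h(f(d)) \otimes s(e) = h(e) \otimes s(f(d)) = h(e) \otimes s(g(d)) = h(g(d)) \otimes s(e), \]
so that $hf \otimes s = hg \otimes s$, whence $hf = hg$ by \autoref{epi-cancel}. This exhibits $s$ as the coequalizer of $u$ and $v$, which is statement 1. I expect this cancellation trick — recognizing that one should tensor by $s$, invoke \autoref{epi-cancel}, and then feed in the identity $sf = sg$ coming from regularity — to be the one genuinely clever point; everything else is routine bookkeeping licensed by the soundness of element notation (\autoref{coherence}).

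For the linear addendum, where $2 \in R^*$, I would argue that the two descriptions produce the same cokernel. In a linear category the coequalizer of $u,v$ is the cokernel of $u - v$, and by the discussion preceding the lemma $u - v$ factors as $E^{\otimes 2} \otimes \L^{\otimes -1} \to \Lambda^2 E \otimes \L^{\otimes -1} \xrightarrow{w} E$, where the first arrow is the universal antisymmetric morphism $E^{\otimes 2} \twoheadrightarrow \ASym^2 E = \Lambda^2 E$ tensored with $\L^{\otimes -1}$, hence an epimorphism. Precomposition with an epimorphism does not change the cokernel (for any $h$ one has $h(u-v) = 0$ iff $hw = 0$), so $s$ is the cokernel of $u - v$ if and only if it is the cokernel of $w$; the latter is precisely exactness of $\Lambda^2 E \otimes \L^{\otimes -1} \to E \xrightarrow{s} \L \to 0$. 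This yields the third equivalent condition and completes the proof.
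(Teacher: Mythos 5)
Your proof of the equivalence of 1.\ and 2.\ is exactly the paper's argument: the same reformulation of $hu = hv$ as the symmetry relation $h(a)\otimes s(b) = h(b)\otimes s(a)$, the same chain $h(f(d)) \otimes s(e) = h(e) \otimes s(f(d)) = h(e) \otimes s(g(d)) = h(g(d)) \otimes s(e)$ feeding in $sf = sg$, and the same appeal to \autoref{epi-cancel} to conclude $hf = hg$ and hence the unique factorization through $s$. Your argument for the linear addendum (coequalizer equals cokernel of the difference $u-v$, which factors through the epimorphism onto $\Lambda^2 E \otimes \L^{\otimes -1}$, and precomposition with an epimorphism does not change the cokernel) is also correct; the paper leaves that step implicit, relying only on the remark preceding the lemma.
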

 
Intuitively, in that case, $s(a) \otimes s(b) = s(b) \otimes s(a)$ is the ``only relation'' between the generators $s$ of $\L$.

\begin{proof}
Let $\alpha,\beta : K \to E$ be two morphisms such that $s$ is the coequalizer of $\alpha$ and $\beta$. Let $h : E \to T$ be a morphism which coequalizes $E^{\otimes 2} \otimes \L^{\otimes -1} \rightrightarrows E$, i.e. $h \otimes \L$ coequalizes $E \otimes s$ and $(E \otimes s) \circ S_{E,E} : E^{\otimes 2} \to E \otimes \L$. It follows
\[h \alpha \otimes s = (h \otimes \L) \circ (E \otimes s) \circ (\alpha \otimes E) = (h \otimes \L) \circ (E \otimes s) \circ S_{E,E} \circ (\alpha \otimes E)\]
\[= (h \otimes \L) \circ (E \otimes s) \circ (E \otimes \alpha) \circ S_{K,E} = (h \otimes s \alpha) \circ S_{K,E} = (h \otimes s \beta) \circ S_{K,E}.\]
By the same calulcation backwards this equals $h \beta \otimes s$. By \autoref{epi-cancel} we get $h \alpha = h \beta$. Since $s$ is the coequalizer of $\alpha$ and $\beta$, this means that $h$ factors uniquely through $s$, as desired.

In element notation, the calculation above simplifies as follows: For $a,b \in E$ we have $h(a) \otimes s(b)=h(b) \otimes s(a)$, hence
\[h(\alpha(c)) \otimes s(b) = h(b) \otimes s(\alpha(c))=h(b) \otimes s(\beta(c)) = h(\beta(c)) \otimes s(b).\]
\end{proof}

\begin{cor} \label{abelian-good}
Let $\C$ be an abelian tensor category. If $\L \in \C$ is a line object and $s : E \to \L$ is an epimorphism, then the sequence
\[E^{\otimes 2} \otimes \L^{\otimes -1} \to E \xrightarrow{s} \L \to 0\]
is exact.
\end{cor}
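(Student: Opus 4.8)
The plan is to obtain this as an immediate consequence of \autoref{goodepi}, so that essentially no new computation is required. Recall that \autoref{goodepi} asserts the equivalence, for an epimorphism $s : E \to \L$ onto a line object, of condition 1 (exactness of the coequalizer diagram $E^{\otimes 2} \otimes \L^{\otimes -1} \rightrightarrows E \xrightarrow{s} \L$) and condition 2 ($s$ is a regular epimorphism). Thus it suffices to verify that $s$ is a regular epimorphism and then to match the coequalizer formulation of \autoref{goodepi} with the linear right-exact sequence displayed in the present statement.

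First I would observe that in an \emph{abelian} category every epimorphism is automatically regular: if $k : \ker(s) \to E$ denotes the kernel of $s$, then $s$ is the cokernel of $k$, and a cokernel is in particular a coequalizer (of $k$ and $0$), so $s$ is a regular epimorphism. Hence condition 2 of \autoref{goodepi} holds for free, and therefore condition 1 holds as well: the sequence $E^{\otimes 2} \otimes \L^{\otimes -1} \rightrightarrows E \xrightarrow{s} \L$ is exact, i.e.\ $s$ exhibits $\L$ as the coequalizer of the two morphisms $E^{\otimes 2} \otimes \L^{\otimes -1} \rightrightarrows E$ described before \autoref{goodepi} (namely those corresponding to $E \otimes s$ and $(E \otimes s) \circ S_{E,E}$).

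It then remains to translate this into the asserted form. Here I would use that $\C$ is linear: in any linear category the coequalizer of a parallel pair $f,g$ coincides with the cokernel of their difference $f-g$. Applying this to the two morphisms $E^{\otimes 2} \otimes \L^{\otimes -1} \rightrightarrows E$ yields a single morphism $E^{\otimes 2} \otimes \L^{\otimes -1} \to E$ (in element notation $a \otimes b \otimes \lambda^{-1} \mapsto a \cdot s(b)\lambda^{-1} - b \cdot s(a)\lambda^{-1}$), whose cokernel is $s : E \to \L$. This is precisely the exactness of
\[E^{\otimes 2} \otimes \L^{\otimes -1} \to E \xrightarrow{s} \L \to 0,\]
as claimed. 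There is no genuine obstacle here; the only point requiring a moment's care is this last bookkeeping step, reconciling the coequalizer-diagram language of \autoref{goodepi} with the right-exact-sequence language used in the corollary, which is routine given linearity of $\C$.
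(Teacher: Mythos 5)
Your proof is correct and is exactly the argument the paper leaves implicit: in an abelian category $s$ is the cokernel of its kernel, hence a regular epimorphism, so condition 2 of \autoref{goodepi} holds, and the coequalizer of the pair $E^{\otimes 2} \otimes \L^{\otimes -1} \rightrightarrows E$ is the cokernel of their difference by linearity, which is precisely the stated right-exact sequence. No gaps; the bookkeeping step at the end is the only content beyond \autoref{goodepi}, and you handle it correctly.
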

 
\begin{cor} \label{epi-ist-iso}
If $\L$ is a line object in a tensor category, then every regular epimorphism $\O \to \L$ is an isomorphism.
\end{cor}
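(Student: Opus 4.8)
The plan is to deduce this directly from the characterisation of regular epimorphisms onto a line object given in \autoref{goodepi}, specialised to the generating object $E = \O$. Since $s : \O \to \L$ is assumed to be a regular epimorphism, it is in particular an epimorphism, so \autoref{goodepi} applies and tells us that the sequence
\[\O^{\otimes 2} \otimes \L^{\otimes -1} \rightrightarrows \O \xrightarrow{s} \L\]
is exact, that is, $s$ is the coequalizer of the two displayed parallel morphisms.

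First I would identify these two morphisms with each other. By construction they are obtained from $\O \otimes s$ and $(\O \otimes s) \circ S_{\O,\O} : \O^{\otimes 2} \to \O \otimes \L$ by transposing along the duality between $\L$ and $\L^{\otimes -1}$. Because the unit object is symtrivial (\autoref{symtrivial-properties}), the symmetry $S_{\O,\O}$ is the identity of $\O \otimes \O$; hence $(\O \otimes s) \circ S_{\O,\O} = \O \otimes s$, and after transposing, the two morphisms $\O^{\otimes 2} \otimes \L^{\otimes -1} \rightrightarrows \O$ coincide.

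It then remains to observe that a coequalizer of a pair of \emph{equal} morphisms $f = g : A \to B$ is necessarily an isomorphism: the identity $\id_B$ vacuously coequalizes $f$ and $g$, so it is itself such a coequalizer, and any two coequalizers of the same pair differ by a unique isomorphism. Applying this with $B = \O$ shows that $s$, being a coequalizer of two equal morphisms into $\O$, is an isomorphism.

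There is essentially no hard step here; the only point requiring care is the bookkeeping of the second paragraph, namely verifying that $S_{\O,\O} = \id$ so that the two parallel arrows genuinely collapse to a single one, after which exactness of the sequence forces $s$ to be invertible. (Alternatively, one could argue that $s$ is a monomorphism and invoke the fact that a regular epimorphism which is also monic is an isomorphism, but the route through \autoref{goodepi} is the most economical.)
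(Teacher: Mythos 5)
Your proof is correct and is essentially the paper's own argument: the corollary is stated as an immediate consequence of \autoref{goodepi} specialised to $E = \O$, where symtriviality of the unit ($S_{\O,\O} = \id$) collapses the two parallel morphisms, so $s$ is a coequalizer of an equal pair and hence an isomorphism. No gaps.
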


This property does not hold for all epimorphisms, I have learned this from James Dolan:
 
\begin{ex} \label{jim}
Let $A$ be a commutative ring and $a \in A$. Consider the category $\C$ of $A$-modules $M$ on which $a$ is regular, i.e. $a : M \to M$ is injective. It is a full reflective subcategory of $\M(A)$ with reflector $R : \M(A) \to \C$ given by $R(M) = M/\cup_{n \in \N} \ker(a^n : M \to M)$. One can show that $\C$ becomes a (cocomplete) $A$-linear tensor category with unit $\O := R(A)$ and tensor product $(M,N) \mapsto R(M \otimes_A N)$, see \autoref{M0}. Besides, multiplication with $a$ defines an \emph{epimorphism} $a : \O \to \O$. When $a$ is regular on $A$, we have $\O=A$ and $a : \O \to \O$ is an isomorphism if and only if $a$ is a unit. In particular, $\C$ is not stacky when $a$ is not a unit.
\end{ex}

\begin{lemma}[Discreteness] \label{inv-diskret}
Let $\C$ be a tensor category, let $\L,\L' \in \C$ be two line objects objects and assume that $s : E \to \L$ and $s' : E \to \L'$ are regular epimorphisms. If there is a morphism $\phi : \L \to \L'$ with $\phi \circ s = s'$, this is unique and $\phi$ is an isomorphism. In other words, the category of regular invertible quotients of $E$ is essentially discrete.
\end{lemma}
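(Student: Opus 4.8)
The plan is to dispose of uniqueness immediately and then to show $\phi$ is an isomorphism by producing an explicit inverse, exploiting that the hypotheses are symmetric in $(\L,s)$ and $(\L',s')$. Uniqueness is trivial: a regular epimorphism is in particular an epimorphism, so if $\phi\circ s = s' = \phi'\circ s$ then $\phi=\phi'$ because $s$ is epi. So the entire content lies in the second assertion, and for this I would construct a morphism $\psi:\L'\to\L$ with $\psi\circ s' = s$, i.e.\ show that $s$ also factors through $s'$.

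To obtain $\psi$, I would apply \autoref{goodepi} to the line object $\L'$ and its regular epimorphism $s'$: the sequence
\[ E^{\otimes 2} \otimes \L'^{\otimes -1} \rightrightarrows E \xrightarrow{s'} \L' \]
is exact, so $s'$ is the coequalizer of the two morphisms $\alpha,\beta : E^{\otimes 2}\otimes \L'^{\otimes -1} \to E$ which correspond, via the invertibility of $\L'$, to $E\otimes s'$ and $(E\otimes s')\circ S_{E,E} : E^{\otimes 2} \to E\otimes\L'$. Thus to factor $s$ through $s'$ it suffices to check that $s$ coequalizes $\alpha$ and $\beta$. Since $-\otimes\L'$ is an equivalence (hence faithful), the equation $s\alpha = s\beta$ is equivalent to the identity $(s\otimes\L')\circ(E\otimes s') = (s\otimes\L')\circ(E\otimes s')\circ S_{E,E}$ of morphisms $E^{\otimes 2}\to \L\otimes\L'$.

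In element notation this last identity reads $s(a)\otimes s'(b) = s(b)\otimes s'(a)$ for $a,b\in E$. Using $s' = \phi\circ s$, the two sides are $(\L\otimes\phi)(s(a)\otimes s(b))$ and $(\L\otimes\phi)(s(b)\otimes s(a))$, so the identity follows at once from the symtriviality of $\L$, which gives $s(a)\otimes s(b) = s(b)\otimes s(a)$ in $\L\otimes\L$. Hence $s$ coequalizes $\alpha,\beta$ and factors (uniquely) as $s = \psi\circ s'$. Finally I would conclude: from $\psi\circ\phi\circ s = \psi\circ s' = s$ and $s$ epi we get $\psi\circ\phi = \id_\L$, and from $\phi\circ\psi\circ s' = \phi\circ s = s'$ and $s'$ epi we get $\phi\circ\psi = \id_{\L'}$, so $\phi$ is an isomorphism with inverse $\psi$.

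The main obstacle I anticipate is purely the symmetry relation $s(a)\otimes s'(b) = s(b)\otimes s'(a)$: one must correctly transport the symtriviality of $\L$ across $\phi$ and keep careful track of the passage between the morphisms $E^{\otimes 2}\rightrightarrows E\otimes\L'$ and their mates $E^{\otimes 2}\otimes\L'^{\otimes -1}\rightrightarrows E$ under tensoring with the invertible $\L'$. Everything else reduces to epimorphisms cancelling, exactly in the spirit of \autoref{epi-cancel}, and is formal.
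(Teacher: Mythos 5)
Your proof is correct and takes essentially the same route as the paper's: both reduce the existence of the inverse $\psi$ to the single relation $s \otimes s' = (s \otimes s') \circ S_{E,E}$ (in element notation $s(a) \otimes s'(b) = s(b) \otimes s'(a)$) and then apply \autoref{goodepi} to $(\L',s')$ followed by epi-cancellation to get $\psi\phi = \id_\L$ and $\phi\psi = \id_{\L'}$. The only cosmetic difference is how that relation is obtained — the paper transposes the coequalizing condition for $(\L,s)$ with the symmetry $S_{\L,\L'}$ to see it is symmetric in $s,s'$, while you push the symtriviality of $\L$ through $\L \otimes \phi$ — which is the same computation in transposed form.
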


\begin{proof}
Uniqueness of $\phi$ is clear ($s$ is an epi). According to \autoref{goodepi}, $\phi$ exists if and only if $E^{\otimes 2} \otimes \L^{\otimes -1} \rightrightarrows E \xrightarrow{s'} \L'$ commutes, which in turn means that $s \otimes s' = (s \otimes s') \circ S_{E,E}$ as morphisms $E \otimes E \to \L \otimes \L'$. Composing with the symmetry $S_{\L,\L'}$, we get $(s' \otimes s) \otimes S_{E,E} = (s' \otimes s)$, which is the same equation with $s,s'$ interchanged. Thus, there is a morphism $\L \to \L'$ over $E$ if and only if there is a morphism $\L' \to \L$ over $E$. By uniqueness, they have to be inverse to each other.
\end{proof}

Actually we can improve \autoref{abelian-good} for schemes as follows:
 
\begin{lemma}[Koszul resolution] \label{Koszul}
Let $X$ be a scheme, $\L$ be an invertible sheaf on $X$ and let $s : E \to \L$ be an epimorphism in $\Q(X)$. Then there is a long exact sequence
\[\xymatrix{\dotsc \ar[r] & \Lambda^3 E \otimes \L^{\otimes -3} \ar[r] & \Lambda^2 E \otimes \L^{\otimes -2} \ar[r] & E \otimes \L^{\otimes -1} \ar[r] & \O_X \ar[r] & 0.}\]
The differential $d_n : \Lambda^n E \otimes \L^{\otimes -n} \to \Lambda^{n-1} E \otimes \L^{\otimes -n+1}$ is dual to the morphism $\Lambda^n E \to \Lambda^{n-1} E \otimes \L$ defined by
\[e_1 \wedge \dotsc \wedge e_n \mapsto \sum_{k=1}^{n} (-1)^k (e_1 \wedge \dotsc \wedge \widehat{e_k} \wedge \dotsc \wedge e_n) \otimes s(e_k).\]
\end{lemma}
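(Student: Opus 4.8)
The plan is to exploit the fact that the claim is \emph{local} on $X$: exactness of a complex of quasi-coherent modules may be checked on an affine open cover, and moreover we may pass to stalks. So first I would reduce to the situation where $X = \Spec(A)$ is affine, $\L \cong \O_X$ is trivial, and the epimorphism $s : E \to \L = \O_X$ is simply a surjection $E \to A$ of $A$-modules. Under the trivialization the twists $\L^{\otimes -n}$ disappear, and the complex becomes the ordinary \emph{Koszul-type complex}
\[
\dotsc \to \Lambda^3 E \to \Lambda^2 E \to E \xrightarrow{s} A \to 0,
\]
with differential $d_n(e_1 \wedge \dotsc \wedge e_n) = \sum_{k=1}^n (-1)^k s(e_k)\, (e_1 \wedge \dotsc \wedge \widehat{e_k} \wedge \dotsc \wedge e_n)$, i.e.\ contraction of the exterior algebra $\Lambda^\bullet E$ against the linear functional $s \in \Hom_A(E,A)$. (The dualization in the statement is just the passage from the ``wedge with $s(e_k)$'' description to the contraction description; since $\L$ is invertible these correspond under $\Hom(-,\L)$, and I would record this identification at the start so the sign bookkeeping matches.)

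Next I would check that this contraction complex is exact whenever $s$ is surjective. The standard argument is to build a contracting homotopy: choose, locally, an element $x \in E$ with $s(x) = 1$ (possible after shrinking to a smaller affine open, since $s$ is an epimorphism of quasi-coherent sheaves, so on stalks $s_{\mathfrak p} : E_{\mathfrak p} \to \O_{X,\mathfrak p}$ is surjective and $1$ lifts). Then $h(\omega) := x \wedge \omega$ defines a degree-raising map $\Lambda^{n-1} E \to \Lambda^n E$, and the Leibniz rule for contraction against $s$ gives
\[
d \circ h + h \circ d = s(x) \cdot \id = \id,
\]
so the complex is contractible, hence exact, at every stalk on that open set. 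Since exactness of a complex of $\O_X$-modules can be tested stalkwise, and every point has such a neighborhood, the complex is exact globally. The fact that the homotopy $h$ depends on the local choice of $x$ is harmless: we only use it to verify exactness, which is a local property, not to construct a global splitting.

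The main obstacle I anticipate is not the local contracting-homotopy argument, which is routine, but rather making the \emph{globalization} clean: the differentials $d_n$ are defined intrinsically (the formula in the statement is manifestly $\O_X$-linear and independent of trivializations), so the complex exists globally, but the contracting homotopy exists only locally and non-canonically. I would therefore organize the proof as: (i) define $d_n$ globally and verify $d_{n-1} \circ d_n = 0$ by the element-notation computation (each term appears twice with opposite signs, exactly as in the classical Koszul complex); (ii) observe that exactness is local; (iii) on each affine open trivializing $\L$ and admitting a section $x$ with $s(x)=1$, produce the homotopy $h = x \wedge (-)$ and compute $dh + hd = \id$; (iv) conclude stalkwise exactness, hence exactness of the global complex. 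The sign computations in (i) and (iii) are the only genuinely calculational parts, and I would carry them out in the element notation of Section~\ref{Element}, which legitimizes treating $s$ as if it sent ``elements'' of $E$ to sections of $\L$ and reduces the verification of $d^2 = 0$ to a two-line antisymmetrization.
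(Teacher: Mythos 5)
Your proposal is correct and takes essentially the same approach as the paper's proof: exactness is checked stalkwise (where $\L$ trivializes and, by surjectivity, one finds $e \in E$ with $s(e)=1$ after rescaling a unit), and the complex is contracted by the homotopy $\omega \mapsto e \wedge \omega$ satisfying $dh + hd = \id$, with $d^2=0$ left to the standard double-sum cancellation in element notation. The only cosmetic difference is that the paper passes directly to stalks over a local ring instead of first shrinking to trivializing affine opens.
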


Remark that if $E$ is locally free of finite rank, this is locally just a Koszul resolution of a regular sequence (\cite[4.5.5]{Wei94}).

\begin{proof}
We omit the calculation that $d_n \circ d_{n+1}=0$, because it is quite standard: It only consists of rearranging two double sums and uses that $\L$ is symtrivial. In order to show exactness, we may look at the stalks and thereby assume that $X$ is the spectrum of a local ring $A$. Then $E$ is just an $A$-module and we may assume $\L=A$. There is some $e \in E$ such that $s(e) \in A^*$. By replacing $s$ with $s(e)^{-1} s$, we may even assume $s(e)=1$. Now define the linear map $t_n : \Lambda^n E \to \Lambda^{n+1} E$ by $a_1 \wedge \dotsc \wedge a_n \mapsto e \wedge a_1 \wedge \dotsc \wedge a_n$ and compute:
\begin{eqnarray*}
&& d_{n+1}(t_n(a_1 \wedge \dotsc \wedge a_n)) \\
&=& (a_1 \wedge \dotsc \wedge a_n) s(e) + \sum_{k=1}^{n} (-1)^k (e \wedge a_1 \wedge \dotsc \wedge \widehat{a_k} \wedge \dotsc \wedge a_n) s(e_k)\\
&=& \id(a_1 \wedge \dotsc \wedge a_n) + t_{n-1}(d_n(a_1 \wedge \dotsc \wedge a_n))
\end{eqnarray*}
Thus, the $t_n$ provide a contraction of the complex. In particular, it is exact.
\end{proof}

\section{Locally free objects} \label{localfree}

In order to globalize locally free sheaves of a given rank, we first have to find an alternative \emph{global} description:

\begin{prop}[Locally free modules] \label{locally-free-sch}
Let $X$ be a scheme (or more generally an algebraic stack) and $V \in \Q(X)$. Let $d \geq 1$. Then the following are equivalent:
\begin{enumerate}
 \item $V$ is locally free of rank $d$.
 \item $\Lambda^d V$ is invertible and the morphism $\omega : \Lambda^{d+1}(V) \to V \otimes \Lambda^d(V)$ constructed in \autoref{hilfs-omega} vanishes.
 \item $\Lambda^d V$ is invertible.
\end{enumerate}
In that case, $V$ is dualizable with $V^* \cong \Lambda^{d-1}(V) \otimes (\Lambda^d(V))^{\otimes -1}$.
\end{prop}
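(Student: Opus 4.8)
The plan is to prove the cycle of implications $1 \Rightarrow 2 \Rightarrow 3 \Rightarrow 1$ and then to treat the dualizability claim separately. Being locally free of rank $d$, invertibility of $\Lambda^d V$, and the vanishing of $\omega$ are all \emph{local} conditions on the quasi-coherent module $V$; since quasi-coherent modules satisfy fpqc descent, I would first reduce to the case $X = \Spec(A)$ and then, by localizing at a point, to the case where $A$ is a local ring with maximal ideal $\mathfrak{m}$ and residue field $k$, with $V$ corresponding to an $A$-module $M$. (For an algebraic stack one passes to an affine presentation in the same way.) The implication $1 \Rightarrow 2$ is then immediate: if $M \cong A^{d}$ locally, then $\Lambda^d V$ is invertible and $\Lambda^{d+1} V = 0$ (it vanishes on a cover), so $\omega$ is a morphism out of the zero object and hence vanishes; and $2 \Rightarrow 3$ is trivial.

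The essential step is $3 \Rightarrow 1$. Working over the local ring $A$, invertibility of $\Lambda^d M$ means $\Lambda^d M \cong A$. Since exterior powers commute with base change, $\Lambda^d(M \otimes_A k) \cong \Lambda^d M \otimes_A k \cong k$, and for a $k$-vector space this forces $\dim_k(M \otimes_A k) = d$. I would choose $m_1,\dots,m_d \in M$ lifting a $k$-basis of $M \otimes_A k$ and set $\theta := m_1 \wedge \dots \wedge m_d$. The image of $\theta$ in $\Lambda^d M \otimes_A k$ is a basis, so by Nakayama (applicable since $\Lambda^d M \cong A$ is finitely generated) $\theta$ generates $\Lambda^d M$; as $\Lambda^d M \cong A$, this makes $\theta$ a free generator. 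A Cramer-type construction then gives $A$-linear maps $c_i : M \to A$, determined by $m_1 \wedge \dots \wedge m_{i-1} \wedge m \wedge m_{i+1} \wedge \dots \wedge m_d = c_i(m)\,\theta$, satisfying $c_i(m_j) = \delta_{ij}$. Hence $\psi := (c_1,\dots,c_d) : M \to A^d$ splits $\phi : A^d \to M,\ e_i \mapsto m_i$, giving a decomposition $M \cong A^d \oplus M_0$ with $M_0 = \bigcap_i \ker c_i$.

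To finish I would decompose the exterior power of this direct sum (\autoref{sym-exakt}) as $\Lambda^d M \cong A\theta \oplus C$, where $C = \bigoplus_{j=1}^{d} \Lambda^{d-j}(A^d) \otimes \Lambda^j M_0$. Since $\Lambda^d M \cong A$ is free of rank one and therefore \emph{indecomposable} over the local ring $A$, and $A\theta \neq 0$, the complement $C$ must vanish. In particular its $j = 1$ summand $\Lambda^{d-1}(A^d) \otimes M_0 \cong M_0^{\oplus d}$ is zero, so $M_0 = 0$ and $M \cong A^d$. This indecomposability argument is exactly what lets me avoid assuming a priori that $M$ is finitely generated, which is the main obstacle in this direction.

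For the final assertion, once $V$ is known to be locally free of rank $d$ it is dualizable by \autoref{dual-char}. To identify the dual, I would exhibit the wedge pairing $e : \bigl(\Lambda^{d-1}V \otimes (\Lambda^d V)^{\otimes -1}\bigr) \otimes V \to \O_\C$ sending $\eta \otimes \lambda^{-1} \otimes v \mapsto (v \wedge \eta)\otimes \lambda^{-1}$, built from $V \otimes \Lambda^{d-1}V \to \Lambda^d V$ together with the identification $\Lambda^d V \otimes (\Lambda^d V)^{\otimes -1} \cong \O_\C$. By the equivalent characterization of dualizability it suffices to check that $e$ induces an isomorphism $\Hom(A,\, B \otimes \Lambda^{d-1}V \otimes (\Lambda^d V)^{\otimes -1}) \to \Hom(A \otimes V, B)$ for all $A,B$; this is local, and over a local ring where $V \cong A^d$ it reduces to the standard perfect pairing $\Lambda^{d-1}(A^d) \otimes A^d \to \Lambda^d(A^d)$. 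Uniqueness of duals then yields $V^* \cong \Lambda^{d-1}(V) \otimes (\Lambda^d V)^{\otimes -1}$.
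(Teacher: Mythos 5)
Your core computation over a local ring is correct, and it is in substance a Cramer-type splitting very close to the paper's own (Strickland's) argument: from a pure-wedge generator $\theta$ of $\Lambda^d M$ you build $c_i$ with $c_i(m_j)=\delta_{ij}$, split off a free summand $A^d$, and kill the complement using the decomposition of $\Lambda^d$ of a direct sum. The duality argument at the end also matches the paper's. However, your opening reduction contains a genuine gap: while invertibility of $\Lambda^d V$ and vanishing of $\omega$ do pass to stalks, the conclusion ``$V$ is locally free of rank $d$'' does \emph{not} follow from ``$V_{\mathfrak{p}}$ is free of rank $d$ at every point''. Local freeness means trivializability on an open cover, and for quasi-coherent modules not known to be of finite presentation -- and here finite presentation is a \emph{consequence} of the proposition, not a hypothesis -- stalkwise freeness is strictly weaker. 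Concretely, the $\Z$-submodule $M=\sum_p \Z\cdot\tfrac1p\subseteq\QQ$ has $M_{\mathfrak{p}}$ free of rank $1$ at every prime and at the generic point, yet $M$ is not finitely generated over $\Z[1/N]$ for any $N$, hence not free on any nonempty open subset of $\Spec(\Z)$; equivalently, $M$ is not invertible. So as written, your proof of $3.\Rightarrow 1.$ establishes only stalkwise freeness, which is not statement 1.

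The gap is repairable, and the repair is precisely the paper's maneuver: localize at \emph{sections}, never at points. Since $\Lambda^d V$ is invertible, it is of finite presentation; so after shrinking to a neighborhood where your $m_1,\dotsc,m_d$ are defined, the fact that $\theta=m_1\wedge\dotsc\wedge m_d$ generates the stalk $(\Lambda^d V)_{\mathfrak{p}}$ implies (cokernel of finite type, closed support) that $\theta$ generates $\Lambda^d V$ on an open neighborhood $U$. A generating section of an invertible module is automatically a free generator, so over $U$ your maps $c_i$, the splitting $V|_U\cong \O_U^{\oplus d}\oplus V_0$, and the decomposition $\Lambda^d V|_U\cong \O_U\theta\oplus C$ all make sense; since $\theta$ already generates, $C=0$, hence $V_0^{\oplus d}=0$ and $V|_U$ is free of rank $d$ -- now a statement on an honest open cover. (This also makes your indecomposability detour unnecessary.) This is exactly what the paper does when it writes ``after localizing at the images of $w_i$ in $\Lambda^d V\cong\O_X$'': it passes to opens of the form $X_s$ for sections $s$, which is where the argument genuinely yields condition 1.
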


\begin{proof}

$1. \Rightarrow 2.$ This is well-known. We may assume that $V$ is free of rank $d$. But then $\Lambda^d V$ is free of rank $1$ and $\Lambda^{d+1}(V)=0$. $2. \Rightarrow 3.$ is trivial. So we only have to prove $3. \Rightarrow 1.$ I have learnt the following argument from Neil Strickland.

We may work locally on $X$ and thereby assume that $\Lambda^d V$ is free of rank $1$. Let $\sum_i w_i$ be a generator, where $w_i$ are pure wedges. After localizing at the images of $w_i$ in $\Lambda^d V \cong \O_X$, we may as well assume that $\Lambda^d V$ is generated by a pure wedge $v_1 \wedge \dotsc \wedge v_d$. Consider the linear map
\[\phi : V \to (\Lambda^d V)^d,\, v \mapsto (v_1 \wedge \dotsc \wedge v_{i-1} \wedge v \wedge v_{i+1} \wedge \dotsc \wedge v_d)_{i=1,\dotsc,d}.\]
It satisfies $\phi(v_i)=e_i (v_1 \wedge \dotsc \wedge v_d)$. It follows that $v_1,\dotsc,v_d$ generate a free submodule $U:=\langle v_1,\dotsc,v_d \rangle$ of $V$ with $V = U \oplus \ker(\phi)$.
By construction $U \hookrightarrow V$ induces an isomorphism $\Lambda^d U \cong \Lambda^d V$. Using \autoref{sym-exakt} it follows that $\Lambda^{d-1}(U) \otimes \ker(\phi)=0$. Since $\Lambda^{d-1}(U)$ is free of rank $\binom{d}{d-1}=d > 0$ and therefore faithfully flat, this shows $\ker(\phi)=0$, i.e. $V=U$ is free. 

Now let $V$ be locally free of rank $d$. By \autoref{dual-char}, $V$ is dualizable with $V^* = \HOM(V,\O_X)$. The homomorphism from \autoref{hilfs-omega} (for schemes we do not need that $2$ is invertible here) 
\[\Lambda^d V \to V \otimes \Lambda^{d-1} V\]
dualizes to the homomorphism
\[\delta : V^* \otimes \Lambda^d V \to \Lambda^{d-1} V\]
which maps $\phi \otimes (v_1 \wedge \dotsc \wedge v_d)$ to $\sum_k \pm \phi(v_k) \cdot (v_1 \wedge \dotsc \wedge \widehat{v_k} \wedge \dotsc \wedge v_d)$.

In order to show that it is an isomorphism, we may work locally on $X$. So let us assume that $V$ is free with basis $e_1,\dotsc,e_d$. Let $e_1^*,\dotsc,e_d^*$ be the dual basis of $V^*$. Then $\delta$ maps the basis element $e_i^* \otimes (e_1 \wedge \dotsc \wedge e_d)$ to $\pm e_1 \wedge \dotsc \wedge \widehat{e_i} \wedge \dotsc \wedge e_d$. But the latter elements form a basis of $\Lambda^{d-1} V$.
\end{proof}

This motivates the following definition:

\begin{defi}[Locally free objects] \label{lokfreedef}
Let $R$ be a commutative ring with $2 \in R^*$. Let $\C$ be a finitely cocomplete $R$-linear tensor category, $V \in \C$ and $d \geq 1$. We call $V$ \emph{locally free of rank $d$} if $\Lambda^d(V)$ is invertible and the morphism $\omega : \Lambda^{d+1}(V) \to V \otimes \Lambda^d(V)$ vanishes.
\end{defi}

\begin{rem} \label{rel-frei}
In element notation, the vanishing of $\omega$ means that for all $v_0,\dotsc,v_d \in V$ we have \marginpar{I've replaced v_1 by v_0 here.}
\[(\star) ~ ~ \sum_{k=0}^{d} (-1)^k v_k \otimes (v_0 \wedge \dotsc \wedge \widehat{v_k} \wedge \dotsc \wedge v_d)=0.\] 
For $d=1$ this means $v_0 \otimes v_1 = v_1 \otimes v_0$, i.e. that $V$ is symtrivial. It follows that locally free objects of rank $1$ are precisely the line objects.

Notice that for $\C=\M(R)$ and $V=R^{\oplus d}$, by Laplace expansion of the first row the relation $(\star)$ becomes the vanishing of the determinant
\[0 = \det \begin{pmatrix} v_{i0} & \dotsc & v_{id} \\ v_{10} & \dotsc & v_{1d} \\ \vdots & & \vdots \\ v_{d0} & \cdots & v_{dd} \end{pmatrix}\]
for all $1 \leq i \leq d$. Thus, in some sense, our abstract definition of locally free objects includes that this determinant equation still holds.

Also notice that $(\star)$ implies $(d+1) \cdot (v_0 \wedge \dotsc \wedge v_d)=0$. \marginpar{I've added this paragraph.} This implies $\Lambda^{d+1} V = 0$ at least if $d+1 \in R^*$. Therefore, $V$ is locally free of rank $d$ if and only if $\Lambda^d(V)$ is invertible and $\Lambda^{d+1}(V)=0$. This could serve as an alternative definition.
\end{rem}

\begin{ex}
From \autoref{sym-exakt} we conclude $\Lambda^d(\O^d) \cong \O$. The isomorphism maps $a_1 \wedge \dotsc \wedge a_d$ to the determinant $\sum_{\sigma \in \Sigma_d} \sgn(\sigma) \prod_{i=1}^{d} a_{i,\sigma(i)}$. Hence, $\O^d$ is (locally) free of rank $d$.
\end{ex}
 
In the following, let us fix $\C$ and $d \geq 1$ as above such that $d! \in R^*$ (for example when $R$ is a $\QQ$-algebra).

\begin{prop} \label{det}
Let $V \in \C$ be locally free of rank $d$. Then $\Lambda^d(V)$ is a line object. We call it the \emph{determinant} of $V$.
\end{prop}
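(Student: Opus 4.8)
The plan is to reduce everything to the Symmetry Lemma \autoref{symlem}. Since $V$ is locally free of rank $d$, the object $\Lambda^d(V)$ is invertible by \autoref{lokfreedef}, so the only thing left to verify is that $\Lambda^d(V)$ is \emph{symtrivial}; together with invertibility this is exactly the definition of a line object. Recall that symtriviality of an object $M$ means that $S_{M,M} : M \otimes M \to M \otimes M$ is the identity. Thus I would apply \autoref{symlem} with $p = d$, test object $W = \Lambda^d(V) \otimes \Lambda^d(V)$, and $f = \id_{\Lambda^d(V) \otimes \Lambda^d(V)}$: its conclusion for $p = d$ says precisely that $f$ is symmetric, i.e. $f \circ S_{\Lambda^d(V),\Lambda^d(V)} = f$, which for $f = \id$ forces $S_{\Lambda^d(V),\Lambda^d(V)} = \id$.

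To invoke \autoref{symlem} I must check its hypothesis $(\star)$ for this $f$. The key observation is that $(\star)$, read as an equation of morphisms, is nothing but the assertion that the composite
\[\Lambda^{d-1}(V) \otimes \Lambda^{d+1}(V) \xrightarrow{\id \otimes \omega} \Lambda^{d-1}(V) \otimes V \otimes \Lambda^d(V) \xrightarrow{\wedge \otimes \id} \Lambda^d(V) \otimes \Lambda^d(V) \xrightarrow{f} W\]
vanishes, where the second arrow is the wedge multiplication $\Lambda^{d-1}(V) \otimes V \to \Lambda^d(V)$ tensored with $\id_{\Lambda^d(V)}$ and $\omega : \Lambda^{d+1}(V) \to V \otimes \Lambda^d(V)$ is the morphism of \autoref{hilfs-omega}. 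Indeed, expanding $\omega(w_0 \wedge \dotsc \wedge w_d)$ by its defining formula and then applying the two subsequent maps reproduces exactly the alternating sum appearing in $(\star)$.

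Now comes the decisive point: by the very definition of locally free of rank $d$ (\autoref{lokfreedef}, cf.\ \autoref{rel-frei}) the morphism $\omega$ \emph{vanishes}. Hence $\id_{\Lambda^{d-1}(V)} \otimes \omega = 0$ in the linear tensor category $\C$, so the entire composite above is zero and $(\star)$ holds for \emph{any} choice of $f$, in particular for $f = \id$. Since moreover $d! \in R^*$ by our standing assumption, \autoref{symlem} applies and yields $S_{\Lambda^d(V),\Lambda^d(V)} = \id$. Therefore $\Lambda^d(V)$ is symtrivial, and being invertible it is a line object.

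I do not expect a serious obstacle here: the real content is already packaged inside \autoref{symlem}, and my proof simply feeds it the input $\omega = 0$. The single point demanding care is the rigorous reading of the element-notation relation $(\star)$ as an honest equation of morphisms on $\Lambda^{d-1}(V) \otimes \Lambda^{d+1}(V)$, which is precisely what legitimizes the factorization through $\omega$ and hence the automatic vanishing; the soundness of element notation (\autoref{Element}) guarantees this translation.
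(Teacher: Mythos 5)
Your proof is correct and is essentially the paper's own argument: the paper likewise reduces to symtriviality, applies the Symmetry Lemma (\autoref{symlem}) with $p=d$, and observes that its hypothesis is exactly the relation $(\star)$ guaranteed by $\omega = 0$ in \autoref{lokfreedef}. The only difference is that you spell out the implicit details (the choice $f = \id_{\Lambda^d(V) \otimes \Lambda^d(V)}$ and the factorization of the hypothesis through $\id \otimes \omega$), which the paper compresses into a single sentence.
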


\begin{proof}
We only have to prove that $\Lambda^d(V)$ is symtrivial. This follows from \autoref{symlem} with $p=d$. The assumption of this Lemma is precisely $(\star)$.
\end{proof}

\begin{prop}[Locally free $\Rightarrow$ dualizable] \label{lokdual}
Let $V \in \C$ be locally free of rank $d$. If $1 \leq p \leq d$, then $\Lambda^p(V)$ is dualizable with
\[\Lambda^p(V)^* \cong \Lambda^{d-p}(V) \otimes \Lambda^d(V)^{\otimes -1}.\]
In particular, $V$ itself is dualizable with $V^* \cong \Lambda^{d-1}(V) \otimes \Lambda^d(V)^{\otimes -1}$.
\end{prop}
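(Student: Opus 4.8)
The plan is to exhibit the wedge multiplication as a \emph{perfect pairing} (\autoref{perfpair}) and then read off the duality. Write $L := \Lambda^d(V)$, which by \autoref{det} is a line object, in particular invertible. With $A := \Lambda^p(V)$ and $B := \Lambda^{d-p}(V)$ in the roles of the objects of \autoref{perfpair} and $L$ in the role of $\L$, I would show that the wedge morphism
\[e := \wedge : \Lambda^p(V) \otimes \Lambda^{d-p}(V) \to \Lambda^d(V) = L\]
from \autoref{sym-exakt} is a perfect pairing. By \autoref{perfpair} this produces a duality between $L^{\otimes -1} \otimes \Lambda^p(V)$ and $\Lambda^{d-p}(V)$; tensoring with the invertible (hence dualizable) object $L$ then exhibits $\Lambda^p(V) \cong L \otimes (L^{\otimes -1} \otimes \Lambda^p(V))$ as dualizable with dual $L^{\otimes -1} \otimes \Lambda^{d-p}(V) = \Lambda^{d-p}(V) \otimes \Lambda^d(V)^{\otimes -1}$, which is exactly the claim; the case $p = 1$ yields the ``in particular'' statement.

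For the copairing I would use the comultiplication of the exterior algebra. Applying \autoref{antipaar} to the universal antisymmetric morphisms $V^{\otimes(d-p)} \to \Lambda^{d-p}(V)$ and $V^{\otimes p} \to \Lambda^p(V)$ yields an antisymmetric morphism $V^{\otimes d} \to \Lambda^{d-p}(V) \otimes \Lambda^p(V)$, which factors through $\Lambda^d(V)$ and thus defines
\[c := \Delta : L = \Lambda^d(V) \to \Lambda^{d-p}(V) \otimes \Lambda^p(V), \quad \Delta(w_1 \wedge \dots \wedge w_d) = \sum_{\sigma \in \Sigma_{d-p,p}} \sgn(\sigma)\, (w_{\sigma(1)} \wedge \dots \wedge w_{\sigma(d-p)}) \otimes (w_{\sigma(d-p+1)} \wedge \dots \wedge w_{\sigma(d)}),\]
up to an overall sign fixed at the end. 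It then remains to verify the two triangle diagrams of \autoref{perfpair}.

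The crux is the first triangle, i.e.\ that $(e \otimes \id_A) \circ (\id_A \otimes c) = S_{A,L}$, and this is where local freeness enters. The key observation will be that the symmetry $S : \Lambda^p(V) \otimes \Lambda^d(V) \to \Lambda^d(V) \otimes \Lambda^p(V)$ satisfies the hypothesis $(\star)$ of the Symmetry Lemma \autoref{symlem}: the alternating sum appearing in $(\star)$, evaluated on $S$, is the image under $\id_{\Lambda^d(V)} \otimes (v_1 \wedge \dots \wedge v_{p-1} \wedge -)$ of the element $\sum_{k=0}^d (-1)^k (w_0 \wedge \dots \wedge \widehat{w_k} \wedge \dots \wedge w_d) \otimes w_k$, which up to symmetry is the morphism $\omega$ of \autoref{hilfs-omega} applied to $w_0 \wedge \dots \wedge w_d$, and this vanishes because $V$ is locally free of rank $d$ (relation $(\star)$ of \autoref{rel-frei}). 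Since $d! \in R^*$, \autoref{symlem} then applies and gives $(\star)_p$ for $S$, namely
\[(w_1 \wedge \dots \wedge w_d) \otimes a = \sum_{\sigma \in \Sigma_{p,d-p}} \sgn(\sigma)\, (a \wedge w_{\sigma(p+1)} \wedge \dots \wedge w_{\sigma(d)}) \otimes (w_{\sigma(1)} \wedge \dots \wedge w_{\sigma(p)})\]
for $a = v_1 \wedge \dots \wedge v_p$. The right-hand side is precisely $(e \otimes \id_A)(\id_A \otimes c)\bigl(a \otimes (w_1 \wedge \dots \wedge w_d)\bigr)$ after reindexing the shuffles, while the left-hand side is $S_{A,L}$; this is the first triangle and simultaneously pins down the sign of $c$. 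The second triangle $(B \otimes e)(c \otimes B) = S_{L,B}$ follows in the same manner by applying \autoref{symlem} with $p$ replaced by $d-p$ (legitimate when $1 \le d-p$), the degenerate case $p = d$ being immediate since then $B = \O_\C$ and $\Lambda^d(V) = L$ is already invertible.

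I expect the main obstacle to be the sign and reindexing bookkeeping that identifies the shuffle sum coming from the comultiplication $c$ with the shuffle sum produced by $(\star)_p$, together with carrying out the verification that $S$ satisfies $(\star)$ rigorously in the element-free formulation rather than only heuristically. Once \autoref{symlem} is shown to apply, the duality is essentially forced, and $d! \in R^*$ is used solely to make the Symmetry Lemma available.
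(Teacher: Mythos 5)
Your proposal is correct and takes essentially the same route as the paper: both exhibit the wedge multiplication as a perfect pairing in the sense of \autoref{perfpair}, take the shuffle comultiplication from \autoref{antipaar} as the copairing, and verify the triangle identities by applying the Symmetry Lemma \autoref{symlem}, whose hypothesis $(\star)$ is exactly the local-freeness relation of \autoref{rel-frei}. The only difference is one of exposition — the paper compresses the verification into the remark that $(\dagger)$ \emph{is} the statement of \autoref{symlem} and that $(\ddagger)$ follows by swapping $p$ and $d-p$, whereas you spell out the shuffle bookkeeping and the degenerate case $p=d$ explicitly.
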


\begin{proof}
Since $\Lambda^d(V)$ is invertible, we only have to construct morphisms
\begin{eqnarray*}
e :& \Lambda^{d-p}(V) \otimes \Lambda^p(V) \to \Lambda^d(V)\\
c :& \Lambda^d(V) \to \Lambda^p(V) \otimes \Lambda^{d-p}(V) 
\end{eqnarray*}
such that the two diagrams
\[(\dagger) ~~~ \xymatrix@R=50pt@C=35pt{\Lambda^d(V) \otimes \Lambda^p(V) \ar[r]^-{c \otimes V} \ar[dr]_{\cong} & \Lambda^p(V) \otimes \Lambda^{d-p}(V) \otimes \Lambda^p(V) \ar[d]^{\Lambda^p(V) \otimes e} \\ & \Lambda^p(V) \otimes \Lambda^d(V) }\]
\[(\ddagger) ~~~ \xymatrix@R=50pt@C=35pt{\Lambda^{d-p}(V) \otimes \Lambda^d(V) \ar[r]^-{\Lambda^{d-p}(V) \otimes c} \ar[dr]_{\cong} & \Lambda^{d-p}(V) \otimes \Lambda^{p}(V) \otimes \Lambda^{d-p}(V) \ar[d]^{e \otimes \Lambda^{d-p}(V)} \\ & \Lambda^d(V) \otimes \Lambda^{d-p}(V)} \]
commute (see \autoref{perfpair}). We choose $e$ to be the multiplication from the exterior algebra (\autoref{exterior-algebra}). The morphism $c$ comes from the comultiplication, see \autoref{antipaar}. Then it is readily checked that $(\dagger)$ is precisely the statement of \autoref{symlem} and $(\ddagger)$ is simply $(\dagger)$ with $p$ replaced by $d-p$.
\end{proof}

\begin{prop}[Cramer's rule] \label{cramer}
Let $f : V \to W$ be a morphism between locally free objects of rank $d$. If $\Lambda^d f : \Lambda^d V \to \Lambda^d W$ is an isomorphism, then $f$ is an isomorphism.
\end{prop}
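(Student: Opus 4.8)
The plan is to globalize Cramer's rule by building an abstract adjugate morphism out of the exterior-power data and then showing that $f$ is simultaneously a split monomorphism and a split epimorphism, which forces it to be an isomorphism.

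First I would record the setup supplied by the preceding results. By \autoref{det} and \autoref{lokdual} the objects $V,W$ are dualizable, the determinants $L_V:=\Lambda^d(V)$ and $L_W:=\Lambda^d(W)$ are line objects (in particular invertible), and the hypothesis is exactly that $\delta:=\Lambda^d(f)\colon L_V\to L_W$ is an isomorphism. Since $-\otimes L_V$ and $-\otimes L_W$ are equivalences of categories, it suffices to prove that $f\otimes L_V$ is a split mono and $f\otimes L_W$ is a split epi; transporting along these equivalences then makes $f$ itself both split mono and split epi, and the standard argument ($r=r(fs)=(rf)s=s$ whenever $rf=\id$ and $fs=\id$) shows $f$ is invertible.

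Next I would construct the adjugate. Using the coevaluation $c_V\colon L_V\to V\otimes\Lambda^{d-1}(V)$ underlying the duality of \autoref{lokdual} (the $p=1$ case), the morphism $\Lambda^{d-1}(f)$, the symmetry, and the wedge multiplication $W\otimes\Lambda^{d-1}(W)\to\Lambda^d(W)=L_W$, I define
\[
\tilde g\colon W\otimes L_V \xrightarrow{W\otimes c_V} W\otimes V\otimes\Lambda^{d-1}(V) \xrightarrow{W\otimes V\otimes\Lambda^{d-1}(f)} W\otimes V\otimes\Lambda^{d-1}(W) \to V\otimes L_W,
\]
where the last arrow interchanges $W$ and $V$ and applies the wedge multiplication. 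By the soundness principle of \autoref{Element} this is a genuine morphism, given in element notation by
\[
\tilde g\bigl(w\otimes(v_1\wedge\dots\wedge v_d)\bigr)=\sum_{i=1}^{d} v_i\otimes\bigl(f(v_1)\wedge\dots\wedge f(v_{i-1})\wedge w\wedge f(v_{i+1})\wedge\dots\wedge f(v_d)\bigr).
\]

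Finally I would verify the two Cramer identities, the key observation being that both are nothing but the defining relation $(\star)$ of \autoref{rel-frei} pushed forward along $\Lambda^d(f)$. Applying $V\otimes\Lambda^d(f)$ to $(\star)$ for the $d+1$ vectors $v,v_1,\dots,v_d$ of $V$, and moving $f(v)$ into the $i$-th slot (which produces matching signs $(-1)^{i-1}(-1)^{i-1}=1$), gives $\tilde g\circ(f\otimes L_V)=\id_V\otimes\delta$, so $f\otimes L_V$ is a split mono. Symmetrically, applying $(\star)$ to the $d+1$ vectors $w,f(v_1),\dots,f(v_d)$ of $W$ yields $(f\otimes L_W)\circ\tilde g=\id_W\otimes\delta$; as $\delta$ is invertible this exhibits $f\otimes L_W$ as a split epi. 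With the first paragraph this finishes the proof. The main obstacle is purely bookkeeping: checking that $\tilde g$ is well defined and that the signs in the two identities line up — once one recognizes that these identities are precisely $(\star)$ transported through $\Lambda^d(f)$, no further computation of "determinants" is required.
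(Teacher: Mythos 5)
Your proof is correct, and at heart it is the same ``adjugate'' idea as the paper's: both build a candidate inverse out of $\Lambda^{d-1}f$ and the inverse of $\Lambda^d f$. The packaging, however, is genuinely different. The paper transports everything to duals: via \autoref{lokdual} it identifies $V^* \cong \Lambda^{d-1}(V) \otimes (\Lambda^d V)^{\otimes -1}$, writes down a composite $V^* \to W^*$ using $(\Lambda^d f)^{*-1}$ and $\Lambda^{d-1}f$, asserts (it is only a sketch) that this is inverse to $f^*$, and concludes by bidualization $f = f^{**}$. You never dualize $V$ or $W$ at all: you twist by the determinant line objects, prove the two identities $\tilde g \circ (f \otimes \Lambda^d V) = \id_V \otimes \Lambda^d f$ and $(f \otimes \Lambda^d W) \circ \tilde g = \id_W \otimes \Lambda^d f$, and conclude that $f$ is split mono and split epi, hence an isomorphism. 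This buys two things. First, lighter dependencies: you need only the comultiplication of \autoref{antipaar}, the wedge multiplication, invertibility of the determinants, and the defining relation $(\star)$ of \autoref{rel-frei}, whereas the paper's route leans on \autoref{lokdual} (hence on \autoref{symlem}) and on dualizability of $V$ and $W$ themselves. Second, your observation that the two Cramer identities are precisely $(\star)$ --- for $V$ pushed forward along $\id_V \otimes \Lambda^d(f)$, and for $W$ restricted along $\id_W \otimes f^{\otimes d}$ --- supplies exactly the verification that the paper's sketch omits; the sign bookkeeping you describe does cancel as claimed. What the paper's formulation buys in exchange is a statement directly about $f^*$, consistent with its surrounding use of dual morphisms, and a shorter write-up given that \autoref{lokdual} is already available.
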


\begin{proof}[Sketch of proof.]
We may invert and then dualize $\Lambda^d f$ to obtain an isomorphism $(\Lambda^d V)^* \cong (\Lambda^d W)^*$. The composition
\[V^* \cong \Lambda^{d-1} V \otimes  (\Lambda^d V) ^* \cong \Lambda^{d-1} V \otimes (\Lambda^d W)^* \xrightarrow{\Lambda^{d-1} f} \Lambda^{d-1} W \otimes (\Lambda^d W)^* \cong W^*\]
is inverse to $f^*$. Hence, $f^*$ is an isomorphism, but then also $f=f^{**}$.
\end{proof}

Next, we give a more ``geometric'' characterization of locally free objects, which is motivated by the fact that a quasi-coherent module is locally free of rank $d$ if and only if it becomes free of rank $d$ after tensoring with a faithfully flat algebra. This has been a collaboration with Daniel Sch\"appi. We begin with the case $d=1$.

\begin{defi} \label{SymZ}
Let $\L \in \C$ be a line object. Define $\Sym^{\Z}(\L)$ to be the $\Z$-graded commutative algebra $\bigoplus_{n \in \Z} \L^{\otimes n}$ whose unit is the inclusion from $\L^{\otimes 0} \cong \O$ and whose multiplication is  induced by the isomorphisms $\L^{\otimes n } \otimes \L^{\otimes m} \cong \L^{\otimes n+m}$ for $n,m \in \Z$. Then clearly $\Sym^{\Z}(\L) \otimes \L \cong \Sym^\Z(\L)$ as modules over $\Sym^{\Z}(\L)$. In fact, this algebra is universal with this property:
\end{defi}

\begin{prop} \label{lintriv}
Let $\L \in \C$ be a line object. If $A$ is a commutative algebra in $\C$, then homomorphisms of algebras $\Sym^\Z(\L) \to A$ correspond to isomorphisms of $A$-modules $\L \otimes A \cong A$.
\end{prop}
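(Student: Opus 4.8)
Write $B:=\Sym^\Z(\L)=\bigoplus_{n\in\Z}\L^{\otimes n}$. The plan is to produce a bijection between $\Hom_{\CAlg(\C)}(B,A)$ and the set of $A$-module isomorphisms $\L\otimes A\cong A$ by restricting a homomorphism to its degree $\pm 1$ components. The guiding observation is that, since $\L$ is a line object, it is symtrivial, so the $\Sigma_n$-action on $\L^{\otimes n}$ is trivial and $\Sym^n(\L)\cong\L^{\otimes n}$; hence the nonnegatively graded subalgebra $\bigoplus_{n\ge 0}\L^{\otimes n}\subseteq B$ is the free commutative algebra $\Sym(\L)$ (\autoref{symalg}). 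By the free-module adjunction of \autoref{modA}, homomorphisms $\Sym(\L)\to A$ correspond to morphisms $s:\L\to A$ in $\C$, equivalently to $A$-module morphisms $\sigma:=m_A\circ(s\otimes A):\L\otimes A\to A$. The real content is that such a homomorphism extends (uniquely) to all of $B$ exactly when $\sigma$ is an isomorphism. I will work in element notation throughout (\autoref{Element}), fixing the duality $(\L,\L^{\otimes -1},c,e)$ of \autoref{invob}.

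\textbf{Forward map.} Given an algebra homomorphism $\phi:B\to A$, I would set $s:=\phi|_{\L^{\otimes 1}}$ and $t:=\phi|_{\L^{\otimes -1}}$, form $\sigma(x\otimes a)=s(x)\,a$, and build $\tau:A\to\L\otimes A$ from $t$ and the coevaluation, namely $\tau(a)=\sum_i x_i\otimes t(\xi_i)\,a$ where $c(1)=\sum_i x_i\otimes\xi_i$. Both are $A$-module morphisms. Since $\phi$ preserves the unit and the multiplication $\L^{\otimes 1}\otimes\L^{\otimes -1}\to\L^{\otimes 0}=\O$ (which is the canonical evaluation $\mathrm{ev}$), one gets $s(x)\,t(\xi)=\mathrm{ev}(x\otimes\xi)\cdot 1_A$. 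Combining this with the triangle identities for $(\L,\L^{\otimes -1},c,e)$ yields $\sigma\circ\tau=\id_A$ and $\tau\circ\sigma=\id_{\L\otimes A}$, so $\sigma:\L\otimes A\cong A$ is an isomorphism.

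\textbf{Backward map.} Conversely, given an $A$-module isomorphism $\psi:\L\otimes A\cong A$, I would define $A$-module isomorphisms $\theta_n:\L^{\otimes n}\otimes A\cong A$ for all $n\in\Z$ by iterating $\psi$ for $n\ge 0$ and iterating $\psi^{-1}$ together with the evaluation $e$ for $n<0$, then set $\phi_n:=\theta_n\circ(\L^{\otimes n}\otimes e_A):\L^{\otimes n}\to A$ and $\phi:=\bigoplus_n\phi_n:B\to A$. Checking that $\phi$ is a unital algebra homomorphism reduces to the cocycle identity $\theta_{n+m}=\theta_n\circ(\L^{\otimes n}\otimes\theta_m)$ (up to the canonical associativity isomorphisms), which holds by construction, the mixed-sign cases again requiring the triangle identities. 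Finally I would verify that the two constructions are mutually inverse: from $\phi$ the degree-$1$ data recover $\psi=\sigma$, and from $\psi$ the extracted $s$ reproduces $\psi$; uniqueness follows since the nonnegative part of $\phi$ is pinned down by $s$ via freeness of $\Sym(\L)$ and the negative part is then forced by invertibility of $\sigma$.

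\textbf{Main obstacle.} The difficulty is organizational rather than conceptual: it lies in controlling the negative-degree components and verifying multiplicativity of $\phi$ across degrees of opposite sign, where one genuinely needs the triangle identities of the duality $(\L,\L^{\otimes -1},c,e)$ and not merely the symtriviality of $\L$. Element notation keeps this manageable, since every such verification becomes a short formal manipulation with $s(x)\,t(\xi)=\mathrm{ev}(x\otimes\xi)\cdot 1_A$ that translates back to commutative diagrams by rote.
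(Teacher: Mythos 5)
Your proposal is correct and takes essentially the same route as the paper's (sketched) proof: both reduce an algebra homomorphism $\Sym^\Z(\L) \to A$ to its degree $\pm 1$ components $s : \L \to A$ and $t : \L^{\otimes -1} \to A$ subject to the relation that their products yield the unit, and then identify such pairs with mutually inverse $A$-module morphisms $\L \otimes A \to A$ and $A \to \L \otimes A$. Your write-up merely fills in the details (the explicit extension $\phi_n = \theta_n \circ (\L^{\otimes n} \otimes e_A)$ to all degrees and the uniqueness of the negative part) that the paper's sketch leaves implicit in the phrase ``is determined by''.
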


\begin{proof}[Sketch of proof.]
A homomorphism of algebras $\Sym^\Z(\L) \to A$ is determined by two morphisms $\L \to A$ and $\L^{\otimes -1} \to A$ in $\C$ such that tensoring them (in both orders) results in the unit of $A \otimes A$. These correspond to morphisms of $A$-modules $\L \otimes A \to A$ and $A \to \L \otimes A$ which are inverse to each other.
\end{proof}

\begin{defi}
Let $V \in \C$ be locally free of rank $d$ with determinant $\L$. Define $\delta : \L \to \Sym^d(V^d)$ (``determinant'') by
\[\delta(v_1 \wedge \dotsc \wedge v_d) = \sum_{\sigma \in \Sigma_d} \sgn(\sigma)  \prod_{i=1}^{d} \iota_i(v_{\sigma(i)})\]
in element notation. Here, $\iota_i : V \hookrightarrow V^d$ denotes the inclusion of the $i$th summand. Then $\delta$ induces a homomorphism $\Sym(\L) \to \Sym(V^d)$ of commutative algebras. We define the commutative algebra
\[H(V) := \Sym(V^d) \otimes_{\Sym(\L)} \Sym^\Z(\L).\]
\end{defi}

\begin{prop}[Universal trivializing algebra]
Let $V \in \C$ be locally free of rank $d$ and let $A$ be a commutative algebra in $\C$. Then, homomorphisms of commutative algebras $H(V^*) \to A$ correspond to isomorphisms of $A$-modules $V \otimes A \cong A^d$. Thus, $H(V^*)$ is the universal commutative algebra which makes $V$ free of rank $d$.
\end{prop}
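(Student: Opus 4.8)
The plan is to unwind $H(V^*)$ as a pushout of commutative algebras, translate each of the three algebras through its universal property, and finish by converting an invertible determinant into an isomorphism via Cramer's rule (\autoref{cramer}). As a preliminary, note that $V^*$ is again locally free of rank $d$, with determinant $\L' := \Lambda^d(V^*) \cong \Lambda^d(V)^{\otimes -1} = \L^{\otimes -1}$: since $d! \in R^*$ the antisymmetrizer $\tfrac{1}{d!}\sum_\sigma \sgn(\sigma)\sigma$ is idempotent, so $\Lambda^p$ splits off $V^{\otimes p}$ and commutes with the duality on the dualizable object $V$ (\autoref{lokdual}); in particular $\Lambda^d(V^*)$ is invertible and $\Lambda^{d+1}(V^*) \cong \Lambda^{d+1}(V)^* = 0$, so $H(V^*)$ is defined. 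By \autoref{algebra-cons}, $H(V^*) = \Sym((V^*)^d) \otimes_{\Sym(\L')} \Sym^\Z(\L')$ is a pushout in $\CAlg(\C)$ of $\Sym((V^*)^d) \leftarrow \Sym(\L') \to \Sym^\Z(\L')$, the left arrow induced by $\delta'$. Hence for a commutative algebra $A$, a homomorphism $H(V^*) \to A$ is the same as a pair $(\phi,\psi)$ with $\phi : \Sym((V^*)^d) \to A$ and $\psi : \Sym^\Z(\L') \to A$ agreeing after restriction along $\Sym(\L')$.

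First I would translate the two outer factors. By the adjunction $\Sym \dashv \mathrm{forget}$ (\autoref{symalg}), $\phi$ is a morphism $(V^*)^d \to A$ in $\C$, i.e.\ a $d$-tuple of morphisms $V^* \to A$; since $V$ is dualizable (\autoref{lokdual}) one has $\Hom_\C(V^*,A) \cong \Hom_\C(\O,V \otimes A) \cong \Hom_A(A, V \otimes A)$, so $\phi$ is precisely a morphism $g : A^d \to V \otimes A$ of $A$-modules. By \autoref{lintriv}, $\psi$ corresponds to an isomorphism $\L' \otimes A \cong A$ of $A$-modules, equivalently to an invertible morphism $\L' \to A$ in $\C$ (its degree-$1$ component), which already determines $\psi$ because $\Sym^\Z(\L')$ is generated in degrees $\pm 1$.

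Next I would interpret the gluing condition. As $\Sym(\L')$ is the free commutative algebra on $\L'$, the agreement of $(\phi,\psi)$ along $\Sym(\L')$ reduces to equality of the two induced morphisms $\L' \to A$. The one coming from $\psi$ is the invertible morphism above; the one coming from $\phi$ is the composite $\L' \xrightarrow{\delta'} \Sym^d((V^*)^d) \xrightarrow{\phi} A$. By the very definition of $\delta'$ (the determinant formula $\delta'(\xi_1 \wedge \dotsc \wedge \xi_d) = \sum_\sigma \sgn(\sigma)\prod_i \iota_i(\xi_{\sigma(i)})$), this composite is, under the identifications $\Lambda^d_A(A^d) \cong A$ and $\Lambda^d_A(V \otimes A) \cong \L \otimes A$, exactly the determinant $\Lambda^d_A(g)$ of $g$. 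Thus a homomorphism $H(V^*) \to A$ is the same datum as a morphism $g : A^d \to V \otimes A$ of $A$-modules whose determinant $\Lambda^d_A(g)$ is invertible.

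Finally I would conclude with Cramer's rule applied inside the cocomplete $R$-linear tensor category $\M(A)$ (\autoref{modA}), where $d! \in R^*$ still holds. The free module $A^d$ is locally free of rank $d$ (since $\Lambda^d(\O^d) \cong \O$, as computed above), and $V \otimes A$ is locally free of rank $d$ because the extension-of-scalars functor $- \otimes A : \C \to \M(A)$ is a cocontinuous tensor functor (\autoref{modA}) and hence preserves $\Lambda^d$, invertibility, and the vanishing of $\omega$. By \autoref{cramer}, $g$ is an isomorphism if and only if $\Lambda^d_A(g)$ is; conversely an isomorphism plainly has invertible determinant. Therefore homomorphisms $H(V^*) \to A$ correspond naturally and bijectively to isomorphisms $g : A^d \cong V \otimes A$, i.e.\ to isomorphisms $V \otimes A \cong A^d$, which is the claim. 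The main obstacle is the identification in the third paragraph, namely checking that $\delta'$ genuinely computes $\Lambda^d_A(g)$; this is a direct element-notation expansion matching the determinant polynomial built into $\delta'$ against the exterior power of $g$.
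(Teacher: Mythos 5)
Your proof follows essentially the same route as the paper's (sketched) proof: decompose $H(V^*)$ as a pushout of commutative algebras, translate the two factors via the universal properties of $\Sym$ (\autoref{symalg}) and $\Sym^{\Z}$ (\autoref{lintriv}), identify the gluing condition along $\Sym(\L')$ with invertibility of the determinant of $g : A^d \to V \otimes A$, and conclude with Cramer's rule (\autoref{cramer}) in $\M(A)$ — you simply fill in the details the paper leaves implicit. One caveat: your preliminary claim $\Lambda^{d+1}(V^*) \cong \Lambda^{d+1}(V)^* = 0$ is not justified under the standing hypothesis $d! \in R^*$ alone (by \autoref{rel-frei} the vanishing of $\Lambda^{d+1}(V)$ needs $d+1 \in R^*$, and the antisymmetrizer splitting for $\Lambda^{d+1}$ needs $(d+1)! \in R^*$), but this step is also unnecessary — all that the definition of $H(V^*)$ and your argument actually use is that $\Lambda^d(V^*) \cong \L^{\otimes -1}$ is a line object, which your duality argument does establish.
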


\begin{proof}[Sketch of proof.]
A homomorphism $H(V^*) \to A$ of algebras corresponds to homomorphisms of algebras $\Sym((V^*)^d) \to A$ and $\Sym^\Z(\L^*) \to A$ which agree on $\Sym(\L^*)$. Using the universal property of symmetric algebras (\autoref{symalg}) as well as the case of line objects (\autoref{lintriv}), this corresponds to a morphism $(V^*)^d \to A$ and an isomorphism $A \otimes \L \cong A$ of $A$-modules such that a certain diagram commutes. This corresponds to a homomorphism of $A$-modules $A^d \to V \otimes A$ which induces an isomorphism on $\Lambda_A^d$, i.e. which is an isomorphism by \autoref{cramer}.
\end{proof}

\begin{rem}[Open questions]
We do not know if $H(V)$ is faithfully flat. At least for $d=1$ this is easy to check as soon as countable direct sums are exact in $\C$. It would be interesting to investigate if further well-known properties of locally free sheaves generalize to locally free objects of tensor categories. A very basic question is if they are closed under finite direct sums and tensor products. It would be also interesting to find a connection to Deligne's notion of rank in Tannakian categories (\cite{Del90}). Finally, it would be desirable to get rid of the assumption $d! \in R^*$.
\end{rem}

\section{Descent theory} \label{descent-theory}

Let $\C$ be a cocomplete tensor category (not assumed to be linear). We develope descent theory internal to $\C$, generalizing the well-known case of $R$-modules for a commutative ring $R$ (\cite{Vis05}). This will be needed in \autoref{tensoriality}, in particular the notion of a (special) descent algebra.

\begin{defi}[Descent data]
Let $A$ be a commutative algebra in $\C$.
\begin{enumerate}
\item 
The category of \emph{descent-data} $\Desc(A)$ has as objects pairs $(N,\phi)$, where $N \in \M(A)$ and $\phi : A \otimes N \cong N \otimes A$ is an isomorphism of $A \otimes A$-modules, such that the cocycle condition is satisfied: Define the three isomorphisms
\[\begin{array}{lll}
\phi_1 & := &  A \otimes N \otimes A \xrightarrow{\phi \otimes A} N \otimes A \otimes A,\bigskip \\
\phi_2 & := & A \otimes A \otimes N \hspace{10mm} N \otimes A \otimes A,\\
&& {\scriptstyle A \otimes S_{A,N}} \downarrow  \hspace{25mm} \uparrow {\scriptstyle N \otimes S_{A,A}}\\
&& A \otimes N \otimes A \xrightarrow{\phi \otimes A} N \otimes A \otimes A  \bigskip \\
\phi_3 & := & A \otimes A \otimes N \xrightarrow{A \otimes \phi} A \otimes N \otimes A.
\end{array}\]
Then it is required that the diagram
\[\xymatrix@R=40pt@C=20pt{A \otimes A \otimes N \ar[rr]^{\phi_3} \ar[dr]_{\phi_2} && A \otimes N \otimes A \ar[dl]^{\phi_1} \\ & N \otimes A \otimes A & }\]
commutes. A morphism $(M,\phi) \to (N,\psi)$ of descent-data is defined to be a morphism $f : M \to N$ in $\M(A)$ such that the diagram
\[\xymatrix{A \otimes M \ar[r]^{\phi} \ar[d]_{A \otimes f} & M \otimes A \ar[d]^{f \otimes A} \\ A \otimes N \ar[r]^{\psi} & N \otimes A}\]
commutes.
\item There is a functor $F : \C \to \Desc(A)$ mapping $M \mapsto (M \otimes A, \psi)$, where $\psi$ is the composition
\[A \otimes (M \otimes A) \cong (A \otimes M) \otimes A \cong (M \otimes A) \otimes A,\]
given by $\psi(a \otimes m \otimes b) = m \otimes a \otimes b$ in element notation. We call $A$ a \emph{descent algebra} if this functor is an equivalence of categories
\[\C \simeq \Desc(A).\]
\end{enumerate}
\end{defi}
 
In element notation one easily checks that $\psi : A \otimes M \otimes A \cong M \otimes A \otimes A$ in 2. above satisfies the cocycle condition. In fact we have
\begin{eqnarray*}
\psi_1(a \otimes m \otimes b \otimes c) &= &m \otimes a \otimes b \otimes c,\\
\psi_2(a \otimes b \otimes m \otimes c) &=& m \otimes a \otimes b \otimes c,\\
\psi_3(a \otimes b \otimes m \otimes c) &=& a \otimes m \otimes b \otimes c.
\end{eqnarray*}

\begin{rem} \label{descfaith}
In general, the forgetful functor $\Desc(A) \to \M(A)$ is faithful and conservative (i.e. reflects isomorphisms). Hence, if $A$ is a descent algebra, then $\C \to \M(A)$, $M \mapsto M \otimes A$ is faithful and conservative.
\end{rem}

\begin{prop}[Descent of properties]
Let $A$ be a descent algebra in $\C$. If $M \in \C$ is an object such that $M \otimes A \in \M(A)$ is either
\begin{enumerate}
\item symtrivial,
\item dualizable,
\item invertible,
\item a line object,
\item or locally free of rank $d$ (if $\C$ is $R$-linear with $2 \in R^*$)
\end{enumerate}
then the same is true for $M \in \C$.
\end{prop}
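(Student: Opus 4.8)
The plan is to reflect each of the five properties along the base change functor $\Phi := - \otimes A : \C \to \M(A)$. By \autoref{modA} this is a cocontinuous tensor functor, and since $A$ is a descent algebra it is faithful and conservative by \autoref{descfaith}; the hypotheses say exactly that $\Phi(M)$ has the property in question in $\M(A)$. Because all five notions are phrased purely in terms of the tensor product, the symmetries and finite colimits, $\Phi$ will commute with every construction that enters their definitions. The genuine content is therefore to transport the \emph{existence} statements (of a dual, respectively an inverse) back through $\Phi$, and for that I would use the descent equivalence $F : \C \xrightarrow{\sim} \Desc(A)$, $M \mapsto (M \otimes A, \psi)$.

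First I would dispatch the formal cases. For (1), since $\Phi$ is a tensor functor we have $\Phi(S_{M,M}) = S_{\Phi(M),\Phi(M)}$ and $\Phi(\id_{M \otimes M}) = \id$; symtriviality of $\Phi(M)$ makes these equal, so faithfulness of $\Phi$ forces $S_{M,M} = \id_{M \otimes M}$. Granting (2), the cases (3), (4), (5) reduce to it. If $\Phi(M)$ is invertible it is in particular dualizable, so by (2) the object $M$ is dualizable, say with dual $M^\vee$ and evaluation $e$; as $\Phi$ preserves duals, $\Phi(e)$ is, under the identification $\Phi(M^\vee) \cong \Phi(M)^*$, the evaluation of the invertible object $\Phi(M)$ and hence an isomorphism, so $e$ is an isomorphism by conservativity and $M$ is invertible. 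Case (4) is then (1) together with (3), a line object being an invertible symtrivial object. For (5) I would note that $\Phi$ is $R$-linear and commutes with the finite colimits defining $\Lambda^d$ and $\Lambda^{d+1}$, so $\Phi(\Lambda^d M) \cong \Lambda^d_A \Phi(M)$ is invertible and hence, by (3) applied to $\Lambda^d M$, so is $\Lambda^d M$; naturality of $\omega$ from \autoref{hilfs-omega} gives $\Phi(\omega_M) = \omega_{\Phi(M)} = 0$, and faithfulness yields $\omega_M = 0$, so $M$ is locally free of rank $d$ in the sense of \autoref{lokfreedef}.

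The hard part will be (2). Writing $N := \Phi(M)^*$ with evaluation $e$ and coevaluation $c$ in $\M(A)$, I would first equip $N$ with a descent datum: the datum $\psi$ on $M \otimes A$ is an isomorphism between the two base changes of $M \otimes A$ along the two maps $A \to A \otimes A$, and since these extension-of-scalars functors are strong monoidal they preserve duals, so dualizing $\psi$ produces an isomorphism $\psi^\vee$ between the two base changes of $N$, whose cocycle condition is dual to that of $\psi$. This gives $(N, \psi^\vee) \in \Desc(A)$, hence by the equivalence an object $M^\vee \in \C$ with $M^\vee \otimes A \cong N$ compatibly with descent data. The crux, and the step I expect to cost the most work, is to check that $e$ and $c$ are \emph{morphisms of descent data}; this should follow from the naturality of evaluation and coevaluation under the base change tensor functors and the very definition of $\psi^\vee$, but it is a diagram chase that must be done carefully. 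Once it is established, fullness of $F$ identifies $e$ and $c$ with $\Phi(\tilde e)$ and $\Phi(\tilde c)$ for unique $\tilde e : M^\vee \otimes M \to \O_\C$ and $\tilde c : \O_\C \to M \otimes M^\vee$ in $\C$, and the triangle identities for $(M, M^\vee, \tilde c, \tilde e)$ hold because they hold after applying the faithful functor $\Phi$. Thus $M$ is dualizable, completing the plan.
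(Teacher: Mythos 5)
Your proposal is correct and follows essentially the same route as the paper's proof: faithfulness/conservativity of $- \otimes A$ (via \autoref{descfaith}) handles the symtrivial, invertible, line-object and locally-free cases, while dualizability is descended by dualizing the descent datum, checking that evaluation and coevaluation are morphisms of descent data, and verifying the triangle identities after tensoring with $A$. The only difference is presentational: you spell out case (5) (invertibility of $\Lambda^d M$ via case (3) plus $\omega_M = 0$ by faithfulness) in more detail than the paper, which simply cites \autoref{lokfreedef} and \autoref{descfaith}.
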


\begin{proof}
1. The symmetry $S_{M \otimes A,M \otimes A}$ of the $A$-module $M \otimes A$ equals the symmetry $S_{M,M}$ tensored with $A$. Since it is the identity, it follows that $S_{M,M}$ is also the identity by \autoref{descfaith}.

2. Assume that $M \otimes A$ is dualizable with dual $N \in \M(A)$. We define an isomorphism of $A \otimes A$-modules $N \otimes A \cong A \otimes N$ to be dual to the canonical isomorphism of $A \otimes A$-modules $M \otimes A \otimes A \cong A \otimes M \otimes A$. The cocycle condition is satisfied, so that there is some $M^* \in \C$ with an isomorphism of $A$-modules $M^* \otimes A \cong N = (M \otimes A)^*$ which is compatible with the descent data. The evaluation $(M^* \otimes M) \otimes A \cong (M^* \otimes A) \otimes_A (M \otimes A) \to A$ is compatible with the descent data, so that it is induced by a morphism $M^* \otimes M \to \O$. Likewise one constructs a morphism $\O \to M \otimes M^*$. The triangle identities hold: By \autoref{descfaith} it suffices to check this after tensoring with $A$, but then the identities hold by construction.

3. If $M \otimes A$ is an invertible object of $\M(A)$, this means that it is dualizable such that the evaluation $(M \otimes A)^* \otimes_A (M \otimes A) \to A$ is an isomorphism. By 2. and \autoref{descfaith} this means that $M$ is dualizable such that the evaluation $M^* \otimes M \to \O$ is an isomorphism, i.e. that $M$ is invertible.

4. This follows from 1. and 3.

5. Keeping in mind \autoref{lokfreedef}, this follows from 3. and \autoref{descfaith}.
\end{proof}

\begin{prop}[Adjunction] \label{desc-adjoint}
Assume that $\C$ has equalizers. Let $A$ be a commutative algebra in $\C$ with unit $u : \O \to A$. Then the canonical functor $F : \C \to \Desc(A)$ has a right adjoint $G$ which maps $(N,\phi)$ to the equalizer of the two morphisms
\[N \xrightarrow{u \otimes N} A \otimes N \xrightarrow{\phi} N \otimes A,~N \xrightarrow{u \otimes N} A \otimes N \xrightarrow{S_{A,N}} N \otimes A.\]
\end{prop}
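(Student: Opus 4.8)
The plan is to verify the stated hom-set bijection directly, reducing the descent-compatibility condition to the equalizer condition by means of the element notation of \autoref{Element}. First I would record the free-module adjunction: since $M \otimes A$ is the free $A$-module on $M$, restriction along the unit $M \cong M \otimes \O \xrightarrow{M \otimes u} M \otimes A$ gives a natural bijection
\[\Hom_{\M(A)}(M \otimes A, N) \cong \Hom_\C(M, N),\]
whose inverse sends $g : M \to N$ to the $A$-linear map $f_g$ determined in element notation by $f_g(m \otimes a) = g(m) \cdot a$. I would also abbreviate the two morphisms defining $G(N,\phi)$ as $\alpha := \phi \circ (u \otimes N)$ and $\beta := S_{A,N} \circ (u \otimes N) = N \otimes u$, so that $\alpha(n) = \phi(1 \otimes n)$, $\beta(n) = n \otimes 1$, and $G(N,\phi) = \eq(\alpha,\beta)$, which exists by hypothesis.

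The crux is to show that $f_g$ underlies a morphism $F(M) \to (N,\phi)$ in $\Desc(A)$ if and only if $g$ equalizes $\alpha$ and $\beta$. Tracing the compatibility square of descent data for $f_g$ in element notation, the top composite $(f_g \otimes A) \circ \psi$ sends $a \otimes m \otimes b \mapsto (g(m)\, a) \otimes b$, while the bottom composite $\phi \circ (A \otimes f_g)$ sends $a \otimes m \otimes b \mapsto \phi(a \otimes g(m)\, b)$. Setting $a = b = 1$ shows that the square forces $g(m) \otimes 1 = \phi(1 \otimes g(m))$, i.e. $\beta g = \alpha g$. Conversely, if $\alpha g = \beta g$, then using that $\phi$ is a morphism of $A \otimes A$-modules I would compute
\[\phi(a \otimes g(m)\, b) = (a \otimes b) \cdot \phi(1 \otimes g(m)) = (a \otimes b) \cdot (g(m) \otimes 1) = (g(m)\, a) \otimes b,\]
recovering the full square; the soundness of element notation (\autoref{non-unital} and the surrounding discussion) then turns this into the genuine equality of morphisms.

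Finally I would assemble the adjunction: morphisms $F(M) \to (N,\phi)$ in $\Desc(A)$ correspond to those $g : M \to N$ with $\alpha g = \beta g$, which by the universal property of the equalizer correspond to morphisms $M \to G(N,\phi)$. This chain of bijections is natural in $M$ (since both the free-module adjunction and the equalizer are) and in $(N,\phi)$: a morphism of descent data commutes with the relevant $\alpha,\beta$, hence induces a map of equalizers, making $G$ a functor and the bijection natural. The main obstacle is the back-and-forth of the previous paragraph — in particular checking that the single descent-compatibility square is equivalent to the equalizer condition. This works because $A \otimes M \otimes A$ is freely generated over $A \otimes A$ by $M$, so the $A \otimes A$-linear square may be tested on these generators, which is exactly the $a = b = 1$ reduction used above.
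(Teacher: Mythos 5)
Your proof is correct. Its mathematical core is identical to the paper's: everything turns on the observation that, since $\phi$ is $A \otimes A$-linear and $A \otimes M \otimes A$ is the free $A \otimes A$-module on $M$, the descent-compatibility square for the $A$-linear extension $f_g$ of a morphism $g : M \to N$ collapses to the single condition $\phi(1 \otimes g(m)) = g(m) \otimes 1$, and the converse direction is the same computation $\phi(a \otimes g(m)\, b) = (a \otimes b)\cdot\phi(1 \otimes g(m)) = (g(m)\, a) \otimes b$ that appears verbatim in the paper. The packaging differs: the paper constructs the unit $M \to G(F(M))$ and the counit $F(G(N,\phi)) \to (N,\phi)$ explicitly (its key step being that the counit, i.e.\ your $f_i$ for the equalizer inclusion $i$, is a morphism of descent data) and then leaves the triangle identities as a routine check, whereas you establish the natural bijection $\Hom_{\Desc(A)}(F(M),(N,\phi)) \cong \Hom_\C(M,G(N,\phi))$ directly, which dispenses with the triangle identities at the cost of the (easy) naturality checks in both variables, which you address. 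One thing the paper's formulation buys is that its explicit unit and counit are precisely the morphisms later shown to be isomorphisms in the proof of faithfully flat descent (\autoref{desc-krit}); under your bijection these arise as the mates of $\id_{F(M)}$ and $\id_{G(N,\phi)}$, so that downstream argument goes through unchanged, just one translation step less directly.
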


\begin{proof}
Clearly $G : \Desc(A) \to \C$ defines a functor. In element notation, we may write $G(N,\phi) = \{n \in N : \phi(1 \otimes n) = n \otimes 1\}$.

If $M \in \C$, then $G(F(M)) = G(M \otimes A,\phi)$ is the equalizer of the two canonical morphisms $M \otimes u \otimes A, \, M \otimes A \otimes u : M \otimes A \rightrightarrows M \otimes A \otimes A$ given by (in element notation) $m \otimes a \mapsto m \otimes 1 \otimes a$ resp. $m \otimes a \otimes 1$. Since $M \otimes u : M \to M \otimes A$ equalizes both morphisms (the composite being $M \otimes u \otimes u$ in each case), it induces a canonical morphism $M \to G(F(M))$.
 
Now let $(N,\phi) \in \Desc(A)$ and $M = G(N,\phi)$. This equalizer comes equipped with a monomorphism $i : M \to N$ in $\C$. It induces a morphism of $A$-modules $f : M \otimes A \to N$ described by $f(m \otimes a)=i(m) \cdot a$ in element notation. We claim that $F(G(N,\phi))=(M \otimes A,\psi) \to (N,\phi)$ is a morphism in $\Desc(A)$, i.e. that
\[\xymatrix@C=40pt{A \otimes M \otimes A \ar[r]^-{A \otimes f} \ar[d]_{S_{A,M} \otimes A} & A \otimes N \ar[d]^{\phi} \\ M \otimes A \otimes A \ar[r]^-{f \otimes A} & N \otimes A}\]
commutes. Let us use element notation. Then $a \otimes m \otimes b \in A \otimes M \otimes A$ goes to $a \otimes (i(m) \cdot b) \in A \otimes N$ and then to $\phi(a \otimes (i(m) \cdot b)) \in N \otimes A$ and on the other way it goes to $m \otimes a \otimes b \in M \otimes A \otimes A$ and then to $(i(m) \cdot a) \otimes b \in N \otimes A$. But these agree since
\[\phi(a \otimes (i(m) \cdot b)) = a \cdot \phi(1 \otimes i(m)) \cdot b = a \cdot (i(m) \otimes 1) \cdot b = (i(m) \cdot a) \otimes b.\]
As always this element calculation may be also replaced by a suitable large commutative diagram. It is straightforward to check the triangular identities, so that in fact $F$ is left adjoint to $G$ with unit $M \to G(F(M))$ and counit $F(G(N,\phi)) \to (N,\psi)$ constructed as above.
\end{proof}

\begin{prop}[Faithfully flat descent] \label{desc-krit}
Assume that $\C$ has equalizers. Let $A$ be a commutative algebra in $\C$ such that
\begin{enumerate}
\item $A$ is a flat object of $\C$.
\item For every $M \in \C$ the canonical sequence
\[M \to M \otimes A \rightrightarrows M \otimes A \otimes A\]
is exact.
\end{enumerate}
Then $A$ is a descent algebra.

In particular, every faithfully flat commutative algebra is a descent algebra.
\end{prop}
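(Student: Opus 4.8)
The plan is to show that the adjunction $F \dashv G$ of \autoref{desc-adjoint} is an adjoint equivalence, i.e. that both its unit $\eta$ and its counit $\varepsilon$ are isomorphisms. It is worth recording at the outset that conditions (1) and (2) make $-\otimes A : \C \to \C$ conservative: by (2) every object $X$ sits functorially in the equalizer diagram $X \to X \otimes A \rightrightarrows X \otimes A \otimes A$, so a morphism $g$ for which $g \otimes A$ (hence also $g \otimes A \otimes A = (g \otimes A)\otimes A$) is invertible induces an isomorphism on equalizers and is therefore itself invertible. Together with the faithfulness and conservativity of the forgetful functor $\Desc(A) \to \M(A)$ from \autoref{descfaith}, this lets one test the two comparison morphisms after passing to underlying objects.

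For the unit I would simply unwind $G(F(M))$. Writing $F(M) = (M \otimes A, \psi)$ and substituting into the description of $G$ in \autoref{desc-adjoint}, the two structure morphisms $N \xrightarrow{u \otimes N} A \otimes N \rightrightarrows N \otimes A$ computed with $\psi$ become, in element notation, $m \otimes a \mapsto m \otimes 1 \otimes a$ and $m \otimes a \mapsto m \otimes a \otimes 1$; that is, they are exactly the two parallel maps $M \otimes u \otimes A$ and $M \otimes A \otimes u$ of the canonical sequence in condition (2). Thus $G(F(M))$ is by definition the equalizer of that pair, and condition (2) says precisely that the comparison $\eta_M : M \to G(F(M))$ is an isomorphism.

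The real work is the counit. Fix a descent datum $(N,\phi)$, put $M = G(N,\phi)$ with its inclusion $i : M \to N$, and recall that $\varepsilon_{(N,\phi)}$ is the $A$-linear map $f : M \otimes A \to N$, $f(m \otimes a) = i(m)\cdot a$. By \autoref{descfaith} it suffices to prove $f$ is an isomorphism in $\C$. Since $A$ is flat, tensoring the defining equalizer $M \xrightarrow{i} N \rightrightarrows N \otimes A$ with $A$ exhibits $M \otimes A$ as the equalizer of the base-changed pair $N \otimes A \rightrightarrows N \otimes A \otimes A$. On the other hand $N$ is the equalizer of the \emph{canonical} pair $N \otimes A \rightrightarrows N \otimes A \otimes A$ given by insertion of the unit in the two inner slots, and this second fork is a \emph{split} equalizer, the contracting homotopy being supplied by the unit and the module action of $A$; in particular it is absolute. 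The plan is to build an isomorphism of these two forks, the identity (or a $\phi$-twist) on the middle term $N \otimes A$ and an isomorphism assembled from $\phi$ on $N \otimes A \otimes A$, whose two commuting squares encode precisely the cocycle condition satisfied by $\phi$. Comparing equalizers then produces an isomorphism $M \otimes A \cong N$, and a short diagram chase identifies it with $f$ (indeed, once the middle comparison is the identity, the split with $t = r_N$ forces the comparison on equalizers to equal $r_N \circ (i \otimes A) = f$). I expect this fork comparison, that is, translating the classical faithfully flat descent bookkeeping into the non-linear equalizer language and pinning down exactly where the cocycle identity enters, to be the main obstacle; the element notation of \autoref{Element} should keep it manageable.

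Finally, for the closing ``in particular'' I would verify conditions (1) and (2) for a faithfully flat commutative algebra $A$. Condition (1) is flatness. For (2), apply $A \otimes -$ to the canonical fork $M \to M \otimes A \rightrightarrows M \otimes A \otimes A$: the extra tensor factor furnishes a contracting homotopy (an extra degeneracy coming from the unit and multiplication of $A$), so the result is a split, hence genuine, equalizer diagram. Since $A$ is faithfully flat, $A \otimes -$ both preserves and reflects finite limits, so the original fork is already an equalizer; this is exactly condition (2), and the proposition follows.
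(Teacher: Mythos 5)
Your skeleton is the paper's own: the unit of the adjunction of \autoref{desc-adjoint} is an isomorphism precisely by condition (2) (the paper disposes of this in one sentence), the closing ``in particular'' is the standard splitting-plus-reflection argument (which appears in the paper as a remark in the section on flat objects), and for the counit the paper likewise compares the flat base change of the defining equalizer of $M = G(N,\phi)$ with a second exact fork on $N$ by means of isomorphisms assembled from $\phi$, the cocycle condition entering when one checks the squares. Your use of the split fork instead of the paper's fork $N \to A \otimes N \rightrightarrows A \otimes A \otimes N$ (exact there by condition (2)) is only a cosmetic difference: the two coincide up to symmetry.

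The gap is in the variant you actually argue with. A fork isomorphism that is the \emph{identity} on the middle term $N \otimes A$ cannot exist in general: it would force the image of $i \otimes A : M \otimes A \to N \otimes A$ to coincide, as a subobject of $N \otimes A$, with the image of $n \mapsto n \otimes 1$. Take $\C = \M(k)$, $A = k[x]$ and $N = A$ with the canonical descent datum; then $M = k$, and the two images are $1 \otimes A$ and $A \otimes 1$, which are distinct subspaces of $A \otimes A$. So the middle comparison must be the $\phi$-twist, namely $S_{A,N} \circ \phi^{-1}$ in your right-handed set-up (this is the paper's $\phi^{-1}$, its second fork being written with the $A$-factors on the left). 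But your identification of the induced map on equalizers with the counit $f$ is explicitly predicated on the middle comparison being the identity, so it collapses at exactly this point. With the $\phi$-twist in the middle, identifying the induced map with $f$ requires the computation $\phi^{-1}(i(m) \otimes a) = 1 \otimes (i(m) \cdot a)$, which uses the $A \otimes A$-linearity of $\phi$ together with the defining property $\phi(1 \otimes i(m)) = i(m) \otimes 1$ of the equalizer $M$; this is precisely the paper's verification of the left-hand square of its comparison diagram, and it, together with the cocycle-condition check for the right-hand squares that you defer as ``the main obstacle'', is the actual content of the proof.
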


We recall that the morphisms in the canonical sequence are defined by $m \mapsto m \otimes 1$ and $m \otimes a \mapsto m \otimes 1 \otimes a$ resp. $m \otimes a \otimes 1$ in element notation. The result follows from Beck's monadicity Theorem (\cite[VI.7]{ML98}), but we will give a more direct proof below.

\begin{proof}
We use the proof of \autoref{desc-adjoint} as well as its notation. The second condition says exactly that the unit $M \to G(F(M))$ is an isomorphism for every $M \in \C$. We claim that the counit $f : F(M) \to (N,\phi)$ is also an isomorphism for all $(N,\phi) \in \Desc(A)$, where $M:=G(N,\psi)$. Consider the following diagram $(\star)$:

\[\xymatrix@C=60pt@R=60pt{M \otimes A  \ar[r]^{i \otimes A} \ar[d]^{f} & N \otimes A \ar@/_1pc/[r]_{(S_{A,N} \circ (u \otimes N)) \otimes A} \ar@/^1pc/[r]^{(\phi \circ (u \otimes N)) \otimes A} \ar[d]^{\phi^{-1}} & N \otimes A \otimes A \ar[d]_{\phi_2^{-1}} \\
N \ar[r]^{u \otimes N} & A \otimes N \ar@/_1pc/[r]_{A \otimes u \otimes N} \ar@/^1pc/[r]^{u \otimes A \otimes N} & A \otimes A \otimes N}\]

Assume for the moment that it is commutative. The row below is exact by assumption, the row above is exact by definition of the equalizer $i : M \to N$ and since $A$ is flat by assumption. The two vertical morphisms $\phi^{-1}$ and $\phi_2^{-1}$ are isomorphisms. Thus, $f$ is an isomorphism and we are done.

Now let us check the commutativity of the three squares involved in $(\star)$. For $m \otimes a \in M \otimes A$ in element notation have
\[\phi^{-1}(i(m) \otimes a) = \phi^{-1}((i(m) \otimes 1)) \cdot a=(1 \otimes m) \cdot a = 1 \otimes m \cdot a = (u \otimes N)(f(m \otimes a)).\]
This proves that the square on the left commutes. The square on the right in the front commutes because its inverse can be decomposed as
\[\xymatrix@C=50pt@C=50pt{A \otimes N \ar[r]^-{A \otimes M \otimes u} \ar[d]^{\phi} & A \otimes N \otimes A \ar[d]^{\phi \otimes A} & A \otimes A \otimes N \ar[l]_-{A \otimes S_{A,N}} \ar[d]^{\phi_2} \\ N \otimes A \ar[r]^-{M \otimes A \otimes u} & N \otimes A \otimes A \ar[r]^-{N \otimes S_{A,A}} & N \otimes A \otimes A.}\]
The inverse of the square on the right in the back of $(\star)$ can be decomposed as follows:
\[\xymatrix@C=60pt@R=50pt{A \otimes N \ar[rr]^{u \otimes A \otimes N}  \ar[d]^{\phi}  & & A \otimes A \otimes N \ar[d]^{\phi_2} \ar[dl]_{A \otimes \phi} \\
N \otimes A \ar[r]^{u \otimes N \otimes A} & A \otimes N \otimes  A \ar[r]^{\phi \otimes A} & N \otimes A \otimes A}\]
The commutativity of the region on the right is precisely the cocycle condition, whereas the region on the left commutes because for trivial reasons.
\end{proof}

One drawback of descent algebras and faithfully flat algebras is that it remains unclear if they are \emph{universal} -- in the sense that they are preserved by arbitrary cocontinuous tensor functors. The following notion of a \emph{special descent algebra} is universal by construction and is motivated by the proof of \cite[Theorem 1.3.2]{Sch12a}. 

\begin{defi}[Special descent algebras]
Let $\C$ be a cocomplete linear tensor category. A commutative algebra $A$ in $\C$ is called a \emph{special descent algebra} if the underlying object of $A$ may be written as a directed colimit of dualizable objects $A_i$ such that for all $i$
\begin{itemize}
\item the unit $\O \to A$ factors naturally as $\O \to A_i \to A$,
\item the object $A_i / \O := \coker(\O \to A_i)$ is dualizable
\item and the sequence $(A_i/\O)^* \to A_i^* \to \O^* \to 0$ dual to the exact sequence $\O \to A_i \to A_i/\O \to 0$ is also exact.
\end{itemize}
\end{defi}

\begin{rem} \label{specstab}
Notice that if $F : \C \to \D$ is any cocontinuous linear tensor functor and $A$ is a special descent algebra in $\C$, then $F(A)$ is also a special descent algebra in $\D$.
\end{rem}
 
\begin{prop} \label{specdes}
Assume that $\C$ is a cocomplete linear tensor category with equalizers such that directed colimits are exact in $\C$ (for example, if $\C$ is locally finitely presentable). Then every special descent algebra in $\C$ is a descent algebra in $\C$.
\end{prop}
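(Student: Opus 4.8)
The plan is to verify the two hypotheses of the faithfully flat descent criterion (\autoref{desc-krit}) for a special descent algebra $A$, since that proposition reduces the problem to showing that $A$ is flat and that for every $M \in \C$ the canonical sequence $M \to M \otimes A \rightrightarrows M \otimes A \otimes A$ is exact. Write $A = \colim_i A_i$ as the given directed colimit of dualizable objects. The flatness of $A$ will follow immediately from the closure properties in \autoref{flatclosure}: each $A_i$ is dualizable, hence flat by \autoref{dual-flat}, and a directed colimit of flat objects is flat since directed colimits are assumed exact. So the real content is the exactness of the canonical sequence.

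For the exactness, since directed colimits commute with tensor products and are exact, it suffices to prove exactness ``at the level $A_i$''. First I would tensor the short exact sequences $\O \to A_i \to A_i/\O \to 0$ and their assumed-exact duals $(A_i/\O)^* \to A_i^* \to \O^* \to 0$ with the object $M \otimes A$ (or $M$), and invoke \autoref{pure}: because the dual sequence is exact, the sequence $0 \to \O \to A_i \to A_i/\O \to 0$ is \emph{pure exact}, so it stays exact after tensoring with any object. This is exactly the mechanism that produces, in the colimit, an exact augmented complex. The idea, following the cited proof of \cite[Theorem 1.3.2]{Sch12a}, is that purity of the $\O \to A_i$ gives a ``splitting up to colimit'' that upgrades the split-exactness of the Amitsur-type sequence after base change along $A$ into genuine exactness of the unbased sequence.

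More concretely, I would argue as follows. The sequence $M \to M \otimes A \rightrightarrows M \otimes A \otimes A$ becomes split exact after tensoring with $A$ (this is the standard Amitsur splitting from the unit of $A$, as recorded in the remark following the definition of faithfully flatness). Since each $A_i$ is pure in $A$ (its inclusion $\O \to A_i$ being pure by \autoref{pure}, and purity is inherited in the directed colimit defining $A$), tensoring with $M$ and passing through the $A_i$ lets one descend the splitting: a putative element of the kernel/difference that dies after tensoring with $A$ already dies at some finite stage $A_i$, where purity forces it to come from $M$. Taking the directed colimit over $i$ and using exactness of directed colimits then yields exactness of the original sequence on the nose. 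Having checked conditions (1) and (2) of \autoref{desc-krit}, I conclude that $A$ is a descent algebra.

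The hard part will be making the purity argument interact correctly with the cosimplicial structure of the Amitsur complex, rather than with a single short exact sequence. One must be careful that the two parallel maps $M \otimes A \rightrightarrows M \otimes A \otimes A$ and the splitting maps from the unit $u : \O \to A$ are compatible with the factorizations $\O \to A_i \to A$ and with the duals $(A_i/\O)^* \to A_i^* \to \O^*$; the coherence needed to assemble the stagewise splittings into a coherent contraction in the colimit is the only genuinely delicate point, and is precisely where the three-part definition of a special descent algebra (the natural factorization of the unit, dualizability of $A_i/\O$, and exactness of the dual sequence) is used in concert. Everything else—flatness, commutation of colimits with tensor, and the final appeal to \autoref{desc-krit}—is formal.
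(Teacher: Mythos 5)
Your skeleton matches the paper's: verify the two conditions of \autoref{desc-krit}, get flatness of $A$ from \autoref{dual-flat} and \autoref{flatclosure} (dualizable $\Rightarrow$ flat, directed colimits of flats are flat), use \autoref{pure} to get pure exactness of $0 \to \O \to A_i \to A_i/\O \to 0$, and note that the Amitsur sequence splits after tensoring with $A$. But the step that actually closes the argument --- descending exactness of $0 \to M \to M \otimes A \to M \otimes A \otimes A$ from its base change along $A$ --- is a genuine gap in your proposal, for two reasons. First, ``a putative element of the kernel that dies after tensoring with $A$ already dies at some finite stage $A_i$'' has no justification: a test morphism $T \to M \otimes A$ with $T$ arbitrary need not factor through any $M \otimes A_i$, since $T$ is not finitely presentable and local finite presentability of $\C$ is only an example, not a hypothesis; exactness of directed colimits gives no such factorization. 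Second, there are no stagewise splittings to assemble: the contraction of the base-changed Amitsur sequence is built from the multiplication $A \otimes A \to A$, and the objects $A_i$ carry no algebra structure, so the ``coherent contraction in the colimit'' that you defer to your last paragraph does not exist at the finite stages --- as you yourself concede, this is precisely the unproven point.

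The paper avoids descending a splitting altogether. From the pure exact sequences it takes the directed colimit $0 \to M \to M \otimes A \to M \otimes A/\O \to 0$ (here exactness of directed colimits is used); this exhibits $M \to M \otimes A$ as a kernel, hence a \emph{regular monomorphism}, and in particular shows that $f \otimes A = 0$ implies $f = 0$, i.e. $- \otimes A$ is faithful. It then proves a clean abstract claim: a complex $0 \to M_1 \to M_2 \to M_3$ in which $M_1 \to M_2$ is a regular monomorphism and which becomes exact after $- \otimes A$ is itself exact. Indeed, given $T \to M_2$ killing the composite to $M_3$, write $M_1 = \ker(M_2 \to M_4)$ (possible since $M_1 \to M_2$ is a regular mono in a linear category); then $T \otimes A \to M_2 \otimes A$ factors through $M_1 \otimes A$ by the assumed exactness after base change, so $T \otimes A \to M_4 \otimes A$ vanishes, and faithfulness gives that $T \to M_4$ is zero, i.e. $T \to M_2$ factors through $M_1$. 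Applying this with $M_1 = M$, $M_2 = M \otimes A$, $M_3 = M \otimes A \otimes A$ (the regular mono coming from the colimit pure sequence, exactness after $\otimes A$ from the Amitsur splitting) finishes the proof. To repair your write-up, replace your third paragraph by this regular-monomorphism/faithfulness argument; the rest of what you wrote stands.
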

 
\begin{proof}
Let $A$ be a special descent algebra in $\C$. We use \autoref{desc-krit} to show that $A$ is a descent algebra. By \autoref{dual-flat} and \autoref{flatclosure}, $A$ is a flat object of $\C$. Let $M \in \C$, we want to prove that
\[0 \to M \to M \otimes A \to M \otimes A \otimes A\]
is exact. By \autoref{pure} we know that for every $i$  the sequence
\[0 \to M \to M \otimes A_i \to M \otimes A_i / \O \to 0\]
is exact (in particular $\O \to A_i$ is a monomorphism, which justifies the notation $A_i/\O$). In the colimit we get an exact sequence
\[0 \to M \to M \otimes A \to M \otimes A/\O \to 0.\]
In particular, we see that $f \otimes A = 0 \Rightarrow f = 0$ for every morphism $f$ in $\C$.

Clearly $0 \to M \to M \otimes A \to M \otimes A \otimes A$ is exact after tensoring with $A$ (in fact, it even becomes split exact). Thus, it suffices to prove the following more general claim: If $0 \to M_1 \to M_2 \to M_3$ is a complex in $\C$, which becomes exact after tensoring with $A$ and such that $M_1 \to M_2$ is a regular monomorphism, then $0 \to M_1 \to M_2 \to M_3$ is also exact. For the proof, take a morphism $T \to M_2$ such that $T \to M_2 \to M_3$ vanishes. Choose an exact sequence $0 \to M_1 \to M_2 \to M_4$. We want to prove that $T \to M_2 \to M_4$ vanishes. It suffices to prove that $T \otimes A \to M_2 \otimes A \to M_4 \otimes A$ vanishes. In fact $T \otimes A \to M_2 \otimes A$ factors through $M_1 \otimes A \to M_2 \otimes A$. This is because $T \otimes A \to M_2 \otimes A \to M_3 \otimes A$ vanishes and $0 \to M_1 \otimes A \to M_2 \otimes A \to M_3 \otimes A$ is exact.
\end{proof}

The following example may be extracted from the proof of \cite[Theorem 1.3.2]{Sch12a}.

\begin{prop} \label{adamsspec}
Let $X$ be an Adams stack (see \autoref{density}). Choose a presentation $\pi : P \to X$ such that $\pi$ and $P$ are affine. Then $A=\pi_* \O_P$ is a special descent algebra in $\Q(X)$.
\end{prop}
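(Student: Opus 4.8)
The plan is to exhibit the directed system of dualizable objects required by the definition of a special descent algebra directly from the strong resolution property, and to control the unit $\O \to A$ after pulling back to $P$ by means of the adjunction $\pi^* \dashv \pi_*$.

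First I would record the two inputs coming from $X$ being an Adams stack. By \autoref{strong-resolution} (the implication $2 \Rightarrow 1$, which rests on \autoref{lazard} together with the fact that $\Spec \pi_* \O_P = P$ is affine), the underlying object $A = \pi_* \O_P$ is a directed colimit $A \cong \colim_i V_i$ of locally free modules of finite rank; by \autoref{dual-char} each $V_i$ is dualizable. Moreover, since $X$ has the strong resolution property, \autoref{Qcohfin2} shows that $\Q(X)$ is locally finitely presentable and that its finitely presentable objects are exactly the modules of finite presentation; in particular $\O = \O_X$ is finitely presentable. Consequently the unit $u : \O \to A$, viewed as a morphism into the directed colimit, factors through some $V_{i_0}$, say $\phi_{i_0} u_{i_0} = u$ where $\phi_i : V_i \to A$ are the colimit maps. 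Transporting $u_{i_0}$ along the transition maps $t_{i_0 i} : V_{i_0} \to V_i$ yields factorizations $u_i := t_{i_0 i} u_{i_0} : \O \to V_i$ for $i \geq i_0$ which are compatible with the transitions and all satisfy $\phi_i u_i = u$. Thus $A_i := V_i$ (for $i \geq i_0$) is a directed system of dualizable objects with $\colim_i A_i \cong A$ through which the unit factors naturally.

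The crux is to verify the cokernel conditions, and here I would use the adjunction $\pi^* \dashv \pi_*$. Since $u$ is the unit of this adjunction at $\O_X$, the triangle identity gives $\varepsilon_{\O_P} \circ \pi^*(u) = \id_{\O_P}$, so $\pi^*(u)$ is a split monomorphism in $\Q(P)$ with retraction the counit $\varepsilon_{\O_P} : \pi^* \pi_* \O_P \to \O_P$. Composing, each $\pi^*(u_i) : \O_P \to \pi^* V_i$ is then split by $\varepsilon_{\O_P} \circ \pi^*(\phi_i)$. Applying the right exact functor $\pi^*$ to $\O \to A_i \to A_i/\O \to 0$ therefore produces a split short exact sequence $0 \to \O_P \to \pi^* V_i \to \pi^*(A_i/\O) \to 0$ on the affine scheme $P$. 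Hence $\pi^*(A_i/\O)$ is a direct summand of the locally free module $\pi^* V_i$, so it is itself locally free of finite rank, whence $A_i/\O$ is dualizable by \autoref{dual-char}, local freeness being testable after the faithfully flat pullback $\pi$. Dualizing the split sequence over $P$ and using that $\pi^*$ is a tensor functor commuting with duals, the pullback of $(A_i/\O)^* \to A_i^* \to \O^* \to 0$ is again split exact; since $\pi$ is faithfully flat, $\pi^*$ reflects exactness, so this dual sequence is already exact in $\Q(X)$.

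Putting these together, the system $(A_i)_{i \geq i_0}$ witnesses that $A = \pi_* \O_P$ is a special descent algebra. As a consistency check, \autoref{specdes} then recovers that $A$ is an honest descent algebra, in agreement with $\pi$ being faithfully flat. I expect the main obstacle to be precisely the cokernel conditions of the third paragraph: a naive attempt to split off $\O$ inside each $V_i$ over $X$ fails, because the unit $\O \to A$ need not split over $X$, and the remedy is to split it after pulling back along $\pi$ via the triangle identity and then to descend both the dualizability of $A_i/\O$ and the exactness of the dual sequence through the faithfully flat cover.
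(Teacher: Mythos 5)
Your proposal is correct and follows essentially the same route as the paper's own proof: extract the directed system of locally free (hence dualizable) modules from \autoref{strong-resolution}, use finite presentability of $\O_X$ to pass to a cofinal subsystem through which the unit factors, split $\pi^*(\O_X \to A_i)$ via the counit of $\pi^* \dashv \pi_*$, and then obtain dualizability of $A_i/\O_X$ and exactness of the dual sequence by descent along the faithfully flat affine cover $\pi$. The only cosmetic differences are your explicit citation of \autoref{Qcohfin2} for the finite presentability of $\O_X$ (which the paper simply asserts) and the closing consistency check via \autoref{specdes}, neither of which changes the argument.
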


\begin{proof}
By \autoref{strong-resolution} we may choose some directed system of locally free quasi-coherent modules $A_i \in \Q(X)$ with colimit $A$. By \autoref{dual-char} each $A_i$ is dualizable. Since $\O_X$ is finitely presentable, the unit $\O_X \to A$ factors through some $A_j$. For $i \geq j$ it factors as $\O_X \to A_j \to A_i \to A$. In the following we may restrict to the cofinal subset of all $i \geq j$. Notice that $\pi^* \O_X \to \pi^* A$ is a split monomorphism (with splitting given by $\pi^* A = \pi^* \pi_* \O_P \to \O_P$), the same is true for $\pi^* \O_X \to \pi^* A_i$. It follows by descent that $\O_X \to A_i$ is a monomorphism. The cokernel $A_i / \O_X$ is locally free, because $\pi^* (A_i / \O_X)$ is a direct summand of $\pi^* A_i$ and therefore locally free. We have seen that $0 \to \O_X \to A_i \to A_i / \O_X \to 0$ is an exact sequence of locally free modules which splits locally, so that its dual sequence $0 \to (A_i/\O_X)^* \to A_i^* \to \O_X^* \to 0$ is also exact. This shows that $A$ is a special descent algebra.
\end{proof}





\section{Cohomology} \label{cohomo}

\begin{defi}[Cohomology]
Let $\C$ be a cocomplete $R$-linear tensor category with unit $\O_\C$. Then we have the \emph{global section} functor
\[\Gamma : \C \to \M(R),~ A \mapsto \Gamma(A) := \Hom(\O_\C,A).\]
It is left exact. Assume that the underlying category of $\C$ is abelian and has enough injectives (for example, when $\C$ is a Grothendieck category). Then we may consider the derived functors
\[H^p := R^p \Gamma : \C \to \M(R).\]
The abelian group $H^p(M)$ is the \emph{$p$-th cohomology} of $M$ (with respect to $\C$).
\end{defi}

\begin{ex}
When $\C=\M(X)$ for some ringed space $X$, this is the usual definition of sheaf cohomology. If $\C=\Q(X)$ for some scheme which is either noetherian or quasi-compact and semi-separated, then the cohomology with respect to $\Q(X)$ agrees with the one with respect to $\M(X)$ (\cite[Proposition B.8]{TT90}).
\end{ex}

\begin{defi}[Higher direct images]
Let $F : \D \to \C$ be a cocontinuous $R$-linear tensor functor between abelian cocomplete $R$-linear tensor categories with enough injectives. If $F$ has a right adjoint $F_* : \C \to \D$, then we may define the \emph{higher direct image} of $F$ as the derived functor $R^p F_* : \C \to \D$.
\end{defi}

\begin{ex}
If $\D=\M(R)$, then by (\autoref{initial}) $F$ is essentially unique and $F_* = \Gamma$, so that $R^p F_* = H^p$. If $f : X \to Y$ is a morphism of ringed spaces and $F := f^* : \M(Y) \to \M(X)$, then $F_* = f_*$ and we obtain the usual higher direct image functor $R^p f_*$.
\end{ex}

\begin{rem}
One can define cup products in the general setting of abelian tensor categories (\cite[Section 2]{Loe09}).
\end{rem}

\begin{rem}
In \cite{Sch13} $F : \D \to \C$ is called \emph{cohomological affine} if $F_*$ is faithful and right exact, i.e. $R^p F_* = 0$ for $p>0$. Every affine $F$ (\autoref{affinefunk}) is cohomological affine and the converse is true if $\C$ is locally presentable and is generated by dualizable objects (\cite[Proposition 3.8]{Sch13}). This can be seen as a tensor categorical version of Serre's criterion for affineness.
\end{rem}

\begin{rem}
Artin and Zhang (\cite[Theorem 4.5]{AZ94}) have found a cohomological characterization of those abelian categories which arise from projective schemes (subject to a certain technical condition called $\chi_1$). It would be interesting to find an analogous characterization for abelian tensor categories. The special object $\mathcal{A}$ should be the tensor unit $\mathcal{O}$ so that $H^0$ becomes $\Gamma$ and the shift operator $s$ should be tensoring with an ample line object.
\end{rem}

\clearpage
\chapter{Constructions with cocomplete tensor categories} \label{Cons}
\section{Basic free constructions} \label{free}

\subsection{Free tensor categories}
 
\begin{rem}[Free monoidal categories]
We construct a left adjoint of the  forgetful $2$-functor from monoidal categories (with strong monoidal functors) to categories as follows. Given a category $\C$, we construct the free monoidal category $M(\C)$ on $\C$ as follows: Objects are tuples $(X_1,\dotsc,X_n)$ of objects in $\C$. We allow $n=0$ which yields the unit object. The tensor product is simply
\[(X_1,\dotsc,X_n) \otimes (Y_1,\dotsc,Y_m) = (X_1,\dotsc,X_n,Y_1,\dotsc,Y_m).\]
Morphisms only exist between tuples of the same length and then they are just ``diagonal'', i.e.
\[\Hom((X_1,\dotsc,X_n),(Y_1,\dotsc,Y_n)) := \Hom(X_1,Y_1) \times \dotsc \times \Hom(X_n,Y_n).\]
Given a functor $F : \C \to \D$ into (the underlying category of) a monoidal category $\D$, we may extend it to a strong monoidal functor $\overline{F} : M(\C) \to \D$ via $(X_1,\dotsc,X_n) \mapsto F(X_1) \otimes \dotsc \otimes F(X_n)$ on objects (and it is clear what happens on morphisms). The monoidal structure of $\overline{F}$ is induced by the canonical isomorphisms
\[(F(X_1) \otimes \dotsc \otimes F(X_n)) \otimes (F(Y_1) \otimes \dotsc \otimes F(Y_m)) \cong F(X_1) \otimes \dotsc \otimes F(Y_m).\]
This establishes an equivalence of categories between strong monoidal functors $M(\C) \to \D$ and all functors $\C \to \D$. This is just another version of Mac Lane's Coherence Theorem for \emph{monoidal} categories.
\end{rem}

\begin{rem}[Free symmetric monoidal categories] \label{freetensor}
The forgetful $2$-functor from symmetric monoidal categories (i.e. tensor categories) to categories has a left adjoint, constructed as follows: Given a category $\C$, the free symmetric monoidal category $S(\C)$ on $\C$ has objects tuples $(X_1,\dotsc,X_n)$ as above, but morphisms $(X_1,\dotsc,X_n) \to (Y_1,\dotsc,Y_n)$  are now given by pairs $(\sigma,f)$, where $\sigma \in \S_n$ is a permutation and $f=(f_i : X_i \to Y_{\sigma(i)})_{1 \leq i \leq n}$ is a tuple of morphisms in $\C$. These morphisms may be visualized via string diagrams. For example, the following depicts a typical morphism with $\sigma = (1 \,2 \, 3)$.
\[\xymatrix{X_1 \ar@{-}[dr]|{f_1} & X_2 \ar@{-}[dr]|{f_2} & X_3  \ar@{-}[dll]|{f_3} \\ Y_1 & Y_2 & Y_3}\]
The tensor product does not change on objects and the action on morphisms is defined using the canonical embeddings $\Sigma_n \times \Sigma_m \hookrightarrow \Sigma_{n+m}$. If $\D$ is a symmetric monoidal category and $F : \C \to \D$ is a functor into the underlying category, we extend it to a symmetric monoidal functor $\overline{F} : S(\C) \to \D$ by $\overline{F}(X_1,\dotsc,X_n) := F(X_1) \otimes \dotsc \otimes F(X_n)$ on objects and on morphisms using the Coxeter presentation of the symmetric group (\cite{CM80}). This works out because of Mac Lane's Coherence Theorem for symmetric monoidal categories.
\end{rem}

\begin{rem}
The forgetful $2$-functor from symmetric monoidal categories to monoidal categories also has a left adjoint.
\end{rem}

\begin{ex}[Free tensor category on one object] \label{oneobject}
The free category on one object $X$ is of course the one with exactly one object, also denoted by $X$, and $\End(X)=\{\id\}$. It follows that the free monoidal category on $X$ is discrete with objects $\{X^{\otimes n} : n \in \N\}$. The free symmetric monoidal category on $X$ has the same objects, but it is not discrete. Instead, we have $\Hom(X^{\otimes n},X^{\otimes m})=\emptyset$ for $n \neq m$ and $\Hom(X^{\otimes n},X^{\otimes n})=\S_n$ otherwise. This tensor category is also known as the \emph{permutation groupoid} $\P$, because it simplifies to $\coprod_{n \geq 0} \S_n$ with tensor product induced by $\Sigma_n \times \Sigma_m \hookrightarrow \Sigma_{n+m}$.

Notice that, however, the free tensor category on a \emph{symtrivial} object is discrete and may be realized as $\N \cong \{X^{\otimes n} : n \in \N\}$.

Similarly, the free tensor category on a line object $\L$ is discrete and may be realized as $\Z \cong \{\L^{\otimes z} : z \in \Z\}$.
\end{ex}

\subsection{Cocompletions} \label{cocompletions}

Let us discuss free cocompletions of tensor categories ``relative'' to a given cocomplete tensor category -- we refer to \cite{ImK86} for the corresponding enriched version. We will frequently use the coend interpretation of the Yoneda Lemma (\autoref{yoneda}).
 
\begin{defi}[Day convolution]
Let $I$ be a small tensor category and let $\C$ be a cocomplete tensor category. For functors $F,G : I^{\op} \to \C$ between the underlying categories we define a functor $F \otimes G : I^{\op} \to \C$ by
\begin{eqnarray*}
(F \otimes G)(n) & := & \int^{p,q \in I} F(p) \otimes G(q) \otimes \Hom(n, p \otimes q) \\
& =&  \colim_{n \to p \otimes q} F(p) \otimes G(q).
\end{eqnarray*}
on objects. It is clear how to define the action on morphisms.
\end{defi}

\begin{prop} 
The category of functors $I^{\op} \to \C$ endowed with the Day convolution becomes a cocomplete tensor category $\widehat{I}_\C$. Besides, there is a tensor functor
\[Y : I \to \widehat{I}_\C,~ n \mapsto \Hom(-,n) \otimes 1_\C\]
and a cocontinuous tensor functor
\[\iota : \C \to \widehat{I}_\C,~ M \mapsto \Hom(-,1_I) \otimes M.\]
\end{prop}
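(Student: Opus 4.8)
The plan is to establish the three assertions in turn, using the coend calculus throughout and, in particular, the reformulation of the Yoneda Lemma in \autoref{yoneda} in its two guises (in $\C$ and in $\Set$). Cocompleteness of $\widehat{I}_\C$ is immediate, since colimits of functors $I^{\op}\to\C$ are computed pointwise and $\C$ is cocomplete. To see that the Day convolution is cocontinuous in each variable I would read off from the colimit description
\[(F \otimes G)(n) = \int^{p,q \in I} F(p) \otimes G(q) \otimes \Hom(n, p \otimes q),\]
whose right-hand side exists because $I$ is small and $\C$ is cocomplete (the copowers $\Hom(n,p\otimes q)\otimes(-)$ being direct sums), that for fixed $G$ the assignment $F \mapsto F \otimes G$ is assembled from pointwise colimits and the functors $F(p)\otimes -$ on $\C$. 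Since colimits in $\widehat{I}_\C$ are pointwise, coends are colimits, the tensor product of $\C$ preserves colimits in each variable, and colimits commute with colimits, it follows that $-\otimes G$ and symmetrically $F \otimes -$ preserve all colimits.

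Next I would produce the coherence data. For associativity I would apply Fubini for coends to both $(F\otimes G)\otimes H$ and $F\otimes(G\otimes H)$; after integrating out the auxiliary variable by the $\Set$-version of \autoref{yoneda}, each reduces to
\[n \mapsto \int^{p,q,r \in I} F(p)\otimes G(q)\otimes H(r)\otimes \Hom(n, p\otimes q\otimes r),\]
and the two results are identified by the associativity constraints of $I$ (inside the Hom-set) and of $\C$. The symmetry constraint is obtained in the same way, transporting the symmetry of $\otimes$ in $I$ and in $\C$ through the coend. For the unit I would take $Y(1_I)=\Hom(-,1_I)\otimes 1_\C$ and compute, integrating out $q$ by co-Yoneda and using the right unit constraint $p\otimes 1_I\cong p$ of $I$,
\[(F\otimes Y(1_I))(n) \cong \int^{p\in I} F(p)\otimes\Hom(n,p) \cong F(n).\]

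Verifying the pentagon, the two hexagons and the triangle axioms then comes down to checking that these Fubini and co-Yoneda isomorphisms are natural and compatible with the constraints of $I$ and $\C$; this is precisely the content of Day's convolution theorem in the form valid for $\C$-valued functors (\cite{ImK86}), and I expect this coherence bookkeeping to be the only genuinely laborious step, everything else being dispatched by the Coherence Theorem (\autoref{coherence}) for $I$ and $\C$.

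Finally, for the two functors, the structural isomorphisms $Y(m)\otimes Y(n)\cong Y(m\otimes n)$ and $\iota(M)\otimes\iota(N)\cong\iota(M\otimes N)$ are each verified by the same integrate-out-by-co-Yoneda computation; for instance
\[(Y(m)\otimes Y(n))(k) \cong \int^{q\in I}\Hom(k, m\otimes q)\otimes\Hom(q,n)\otimes 1_\C \cong \Hom(k, m\otimes n)\otimes 1_\C,\]
and an analogous two-step reduction gives $\iota(M)\otimes\iota(N)\cong\iota(M\otimes N)$. Since $Y(1_I)$ is the unit and $\iota(1_\C)=Y(1_I)$ is again the unit, both functors are strong symmetric monoidal once compatibility of these isomorphisms with the associativity, unit and symmetry constraints is checked (again by naturality of the co-Yoneda identifications). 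That $\iota$ is moreover cocontinuous follows because colimits in $\widehat{I}_\C$ are pointwise and each copower functor $\Hom(m,1_I)\otimes -:\C\to\C$ preserves colimits, whereas $Y$ is only claimed to be a tensor functor, as $I$ need not be cocomplete.
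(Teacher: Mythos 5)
Your proposal is correct and follows essentially the same route as the paper's proof: pointwise colimits for cocompleteness, cocontinuity of $\otimes$ in each variable read off from the coend formula, Fubini plus co-Yoneda to integrate out the auxiliary variable for associativity and the unit constraint, and the identical integrate-out computations establishing the strong monoidal structure on $Y$ and $\iota$. The only difference is cosmetic — you spell out slightly more of the coherence bookkeeping (deferring to Day/Im--Kelly) where the paper simply declares the axioms straightforward.
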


\begin{proof}
It is clear that  Day convolution provides a functor which is cocontinuous in each variable. To construct the associator, we compute for $F,G,H : I^{\op} \to \C$:
\begin{eqnarray*}
&& F \otimes (G \otimes H) \\
& \cong & \int^{p,q,r,s} F(p) \otimes G(r) \otimes H(s) \otimes \Hom(-,p \otimes q) \otimes \Hom(q,r \otimes s) \\
& \cong & \int^{p,r,s} F(p) \otimes G(r) \otimes H(s) \otimes \Hom(-, p \otimes r \otimes s).
\end{eqnarray*}
Now it is clear that this is also isomorphic to $(F \otimes G) \otimes H$. The symmetry $F \otimes G \cong G \otimes F$ is obvious.

We define the unit by $1 := \Hom(-,1_I) \otimes 1_\C$. We have
\[F \otimes 1 = \int^{p,q} F(p) \otimes \Hom(q,1_I) \otimes \Hom(-, p \otimes q) \cong \int^p F(p) \otimes \Hom(-,p) \cong F.\]
The axioms of a tensor category are straightforward to check.

Both $Y$ and $\iota$ preserve the unit by definition. For $n,m \in I$ we compute
\begin{eqnarray*}
Y(n) \otimes Y(m) &\cong&  \int^{p,q} \Hom(p,n) \otimes \Hom(q,m) \otimes \Hom(-, p \otimes q) \otimes 1_\C \\
&\cong & \Hom(-,n \otimes m) \otimes 1_\C = Y(n \otimes m).
\end{eqnarray*}
For $M,N \in \C$ we compute
\begin{eqnarray*}
&& \iota(M) \otimes \iota(N) \\
 & \cong & \int^{p,q} \Hom(p,1_I) \otimes \Hom(q,1_I) \otimes \Hom(-,p \otimes q) \otimes (M \otimes N) \\
& \cong & \Hom(-,1_I) \otimes (M \otimes N) = \iota(M \otimes N).
\end{eqnarray*}
It is straightforward to check that $Y$ and $\iota$ become tensor functors that way. Finally, it is clear that $\iota$ is cocontinuous.
\end{proof}

\begin{ex}[Tensor category of sequences]
We consider the partially ordered monoid $(\N,\geq,+)$ as a tensor category $I$. Then $\widehat{I}_\C$ consists of sequences
\[M_0 \to M_1 \to M_2 \to \dotsc \]
in $\C$. The unit object is $0 \to 1 \to 1 \to \dotsc$. The tensor product is given by
\[(M \otimes N)_n = \colim_{p+q \leq n} M_p \otimes N_q.\]
\end{ex}

\begin{prop}[Universal property of $\widehat{I}_\C$] \label{hat}
Let $I$ be a small tensor category and let $\C$ be a cocomplete tensor category. Then $Y$ and $\iota$ induce, for every cocomplete tensor category $\D$, an equivalence of categories
\[\Hom_{c\otimes}(\widehat{I}_\C,\D) \simeq \Hom_{c\otimes}(\C,\D) \times \Hom_{\otimes}(I,\D)\]
which maps $L : \widehat{I}_\C \to \D$ to $(L \circ \iota,L \circ Y)$. The inverse functor maps $G : \C \to \D$ and $H : I \to \D$ to $L : \widehat{I}_\C \to \D$ defined by
\[L(F) := \int^{p \in I} G(F(p)) \otimes H(p).\]
\end{prop}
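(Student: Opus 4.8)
The plan is to show that the assignment $(G,H) \mapsto L$, with $L(F) := \int^{p\in I} G(F(p)) \otimes H(p)$, is a quasi-inverse to the restriction functor $L \mapsto (L\circ\iota, L\circ Y)$. The whole argument rests on one structural fact: by the coend form of the Yoneda Lemma (\autoref{yoneda}), every object $F \in \widehat{I}_\C$ decomposes as $F \cong \int^{p} \Hom(-,p) \otimes F(p)$, and since a short computation (in the spirit of the ones establishing $Y(n)\otimes Y(m)\cong Y(n\otimes m)$ and $\iota(M)\otimes\iota(N)\cong\iota(M\otimes N)$) gives $Y(p)\otimes\iota(M) \cong \Hom(-,p)\otimes M$, this reads $F \cong \int^{p} Y(p) \otimes \iota(F(p))$. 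Thus $Y$ and $\iota$ generate $\widehat{I}_\C$ under coends in a way compatible with $\otimes$, which is exactly what lets us reconstruct any $L$ from its restrictions.

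First I would check that $L := \int^p G(F(p))\otimes H(p)$ is a well-defined cocontinuous tensor functor. Cocontinuity is immediate: colimits in $\widehat{I}_\C$ are pointwise, $G$ is cocontinuous, $\otimes$ in $\D$ preserves colimits in each variable, and coends commute with colimits, so these can be interchanged. For the monoidal structure I would expand, using that $G$ is a cocontinuous tensor functor (hence commutes with coends, copowers and $\otimes$),
\[L(F\otimes F') = \int^{n} G\!\left(\int^{p,q} F(p)\otimes F'(q)\otimes\Hom(n,p\otimes q)\right)\otimes H(n) \cong \int^{n,p,q} G(F(p))\otimes G(F'(q))\otimes\Hom(n,p\otimes q)\otimes H(n).\]
Applying the Fubini theorem for coends to integrate over $n$ first, the factor $\int^{n}\Hom(n,p\otimes q)\otimes H(n)$ collapses to $H(p\otimes q)$ by the density form of Yoneda (\autoref{yoneda} applied in $I^{\op}$); then $H(p\otimes q)\cong H(p)\otimes H(q)$ because $H$ is a tensor functor, and distributing $\otimes$ over the remaining coends yields $L(F)\otimes L(F')$. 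The same collapse shows $L(1)\cong\int^p\Hom(p,1_I)\otimes H(p)\cong H(1_I)\cong 1_\D$, using that $1_{\widehat{I}_\C}=\Hom(-,1_I)\otimes 1_\C$ and that $G,H$ preserve units.

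Next I would verify the two composites. For $L\circ\iota\cong G$ and $L\circ Y\cong H$: plugging $\iota(M)(p)=\Hom(p,1_I)\otimes M$ and $Y(n)(p)=\Hom(p,n)\otimes 1_\C$ into the formula for $L$ and collapsing the resulting coends by density gives $L(\iota(M))\cong G(M)\otimes H(1_I)\cong G(M)$ and $L(Y(n))\cong H(n)$. Conversely, starting from an arbitrary cocontinuous tensor functor $L$ and setting $G:=L\circ\iota$, $H:=L\circ Y$, the reconstructed functor sends $F$ to $\int^p L(\iota(F(p)))\otimes L(Y(p))$; since $L$ is a tensor functor this equals $\int^p L(Y(p)\otimes\iota(F(p)))\cong\int^p L(\Hom(-,p)\otimes F(p))$, and since $L$ is cocontinuous it pulls the coend outside to give $L\!\left(\int^p \Hom(-,p)\otimes F(p)\right)\cong L(F)$ by the decomposition above. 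Hence both composites are naturally isomorphic to the identity, and the stated equivalence follows.

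The genuinely routine-but-lengthy part — and the main obstacle to a fully rigorous write-up — is bookkeeping the coherence: one must check that all the displayed isomorphisms are natural in $F$ (and jointly in $F,F'$), that they respect the associativity, unit and symmetry constraints so that $L$ is really a tensor functor and the reconstruction isomorphisms are isomorphisms of tensor functors, and that the assignments are functorial at the level of morphisms, so as to yield an equivalence of categories rather than a mere bijection on objects. All of this reduces to repeated use of the Fubini theorem for coends and the density isomorphism $\int^{p}\Hom(p,X)\otimes H(p)\cong H(X)$, together with the Coherence Theorem for tensor categories (\autoref{coherence}), which licenses treating the structural isomorphisms as identities throughout the computation.
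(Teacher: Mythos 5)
Your proposal is correct and takes essentially the same route as the paper's proof: the same formula for $L$, the same Fubini/co-Yoneda coend collapses to establish the tensor structure of $L$ and the isomorphisms $L \circ \iota \cong G$, $L \circ Y \cong H$. The only difference is that you spell out, via the decomposition $F \cong \int^{p} Y(p) \otimes \iota(F(p))$, the verification that reconstructing from $(L\circ\iota, L\circ Y)$ recovers $L$ — a step the paper leaves as "straightforward to check."
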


\begin{proof}
It is clear that $L$ defines a cocontinuous functor. We endow it with the structure of a tensor functor as follows:
\begin{eqnarray*}
L(1) &= & \int^{p \in I} G(\Hom(p,1_I) \otimes 1_\C) \otimes H(p) \\
& \cong & \int^{p \in I} \Hom(p,1_I) \otimes G(1_\C) \otimes H(p) \\
&\cong & G(1_\C) \otimes H(1_I) \cong 1_\D.
\end{eqnarray*}
For $F,F' \in \widehat{I}_\C$ we have
\begin{eqnarray*}
L(F \otimes F') & = & \int^{p,q,n \in I} G(F(p) \otimes F'(q)) \otimes H(n) \otimes \Hom(n,p \otimes q) \\
& \cong & \int^{p,q \in I} G(F(p)) \otimes G(F'(q)) \otimes H(p) \otimes H(q) \\
& \cong & L(F) \otimes L(F').
\end{eqnarray*}
For $M \in \C$, $n \in I$ we have
\[(L \iota)(M)  =  \int^{p \in I} \Hom(p,1_I)  \otimes G(M) \otimes H(p)  \cong   G(M) \otimes H(1_I) \cong G(M). \]
\[(L Y)(n)  =  \int^{p \in I} \Hom(p,n) \otimes G(1_\C) \otimes H(p)  \cong G(1_\C) \otimes H(n) \cong H(n).\]
These are isomorphisms of tensor functors $L \iota \cong G$, $L Y \cong H$. It is straightforward to check that $(G,H) \mapsto L$ is inverse to $L \mapsto (L \iota,L Y)$.
\end{proof}

\begin{rem}
The same holds in the $R$-linear case: If $I$ is a small $R$-linear tensor category and $\C$ is a cocomplete tensor category, then we may endow the category of $R$-linear functors $I^{\op} \to \C$ between the underlying $R$-linear categories with the Day convolution (notice that in the $R$-linear case coends cannot be reduced to colimits) and obtain a cocomplete $R$-linear tensor category $\widehat{I}_\C$ satisfying the universal property
\[\Hom_{c\otimes/R}(\widehat{I}_\C,\D) \simeq \Hom_{c\otimes/R}(\C,\D) \times \Hom_{\otimes/R}(I,\D)\]
for cocomplete $R$-linear tensor categories $\D$.
\end{rem}

\begin{cor}[Free cocompletion] \label{freecoco}
Given a small tensor category $I$, the category $\widehat{I}$ of presheaves on $I$ equipped with Day convolution is a cocomplete tensor category. The Yoneda embedding $Y : I \to \widehat{I}$ becomes a cocontinuous tensor functor, inducing for every cocomplete tensor category $\C$ an equivalence of categories
\[\Hom_{c\otimes}(\widehat{I},\C) \cong \Hom_{\otimes}(I,C).\]
A corresponding construction works for $R$-linear tensor categories.
\end{cor}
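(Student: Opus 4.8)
The plan is to obtain this corollary as the special case $\C = \M(R)$ (or rather the $2$-initial category $\Set$-linearly, namely $\C$ the unit of $\Cat_{c\otimes}$) of the universal property of $\widehat{I}_\C$ established in \autoref{hat}. Recall that \autoref{hat} provides, for a small tensor category $I$ and any cocomplete tensor category $\C$, the equivalence
\[\Hom_{c\otimes}(\widehat{I}_\C,\D) \simeq \Hom_{c\otimes}(\C,\D) \times \Hom_{\otimes}(I,\D).\]
First I would take $\C = \Set$, which is the $2$-initial object of $\Cat_{c\otimes}$ (\autoref{sets}). The key observation is that for $\C = \Set$ the presheaf category $\widehat{I}_{\Set}$ is exactly the ordinary presheaf category $\widehat{I}$ with Day convolution, since the functor $\iota : \Set \to \widehat{I}_{\Set}$ sends the one-point set to the unit and hence contributes nothing beyond the initial structure.

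Concretely, I would argue as follows. Since $\Set$ is $2$-initial in $\Cat_{c\otimes}$, for any cocomplete tensor category $\D$ the category $\Hom_{c\otimes}(\Set,\D)$ is equivalent to the terminal category: there is (up to unique isomorphism) exactly one cocontinuous tensor functor $\Set \to \D$, namely $X \mapsto 1_\D^{\oplus X}$. Plugging $\C = \Set$ into the equivalence of \autoref{hat} therefore yields
\[\Hom_{c\otimes}(\widehat{I}_{\Set},\D) \simeq \Hom_{c\otimes}(\Set,\D) \times \Hom_{\otimes}(I,\D) \simeq \Hom_{\otimes}(I,\D),\]
the last equivalence holding because the first factor is contractible (equivalent to the point). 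It remains to identify $\widehat{I}_{\Set}$ with the presheaf category $\widehat{I}$ equipped with Day convolution, and the functor $Y$ with the Yoneda embedding. Both identifications are immediate from the definitions: a functor $I^{\op} \to \Set$ is precisely a presheaf, the Day convolution formula
\[(F \otimes G)(n) = \int^{p,q \in I} F(p) \times G(q) \times \Hom(n,p \otimes q)\]
specializes the general definition with $\otimes = \times$ and $1_\C = \{*\}$, and $Y(n) = \Hom(-,n) \times \{*\} \cong \Hom(-,n)$ is the representable presheaf.

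The main obstacle, such as it is, lies in making precise that the first factor $\Hom_{c\otimes}(\Set,\D)$ is genuinely equivalent to the terminal category and that the projection in \autoref{hat} sending $L$ to $(L\iota, LY)$ reduces, under this identification, to $L \mapsto LY$; one must check that $L\iota$ carries no information since it is forced to be the essentially unique cocontinuous tensor functor out of $\Set$. This is routine given \autoref{sets} and \autoref{initial}, but it is the only point requiring care. For the $R$-linear variant I would instead take $\C = \M(R)$, which is $2$-initial in $\Cat_{c\otimes/R}$ (\autoref{initial}), and repeat the identical argument using the $R$-linear form of \autoref{hat} recorded in the preceding remark; here $\widehat{I}_{\M(R)}$ is the category of $R$-linear presheaves $I^{\op} \to \M(R)$ with Day convolution, and the factor $\Hom_{c\otimes/R}(\M(R),\D)$ is again contractible.
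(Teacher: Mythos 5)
Your proposal is correct and is exactly the paper's own argument: the paper proves this corollary by applying \autoref{hat} to $\C=\Set$ (resp.\ $\C=\M(R)$), using that these are $2$-initial so that the factor $\Hom_{c\otimes}(\C,\D)$ is contractible. Your write-up merely spells out the routine identifications ($\widehat{I}_{\Set}\simeq\widehat{I}$, $Y$ the Yoneda embedding) that the paper leaves implicit.
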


\begin{proof}
Apply \autoref{hat} to $\C=\Set$ (resp. $\C=\M(R)$).
\end{proof}

\begin{rem}
If $I$ is a tensor category which is not small, we define $\widehat{I}$ as the tensor category of \emph{small} presheaves on $I$, i.e. of those sheaves which can be written as a small colimit of representable presheaves on $I$. Then the universal property still holds. Therefore, we have found a left adjoint to the forgetful $2$-functor $\Cat_{c \otimes} \to \Cat_{\otimes}$.
\end{rem}
 
\begin{ex}[Simplicial sets and joins]
Consider the augmented simplex category $\Delta$ (objects are $[n] := \{0,1,\dotsc,n\}$, $n \in \N$ -- morphisms are monotonic maps). It becomes a monoidal category with respect to the addition of natural numbers (but it is not symmetric monoidal!). It is well-known (\cite[VII.5]{ML98}) that $\Delta$ is the universal monoidal category equipped with a monoid object. The free cocompletion $\widehat{\Delta}$ identifies with the category of simplicial sets $\sSet$, whose monoidal structure is given by the \emph{join} operation (not the cartesian product). It follows by \autoref{freecoco} that this is the universal cocomplete monoidal category equipped with a monoid object. For a symmetric analogue, we consider instead of $\Delta$ the cocartesian monoidal category (i.e. $\otimes:=\coprod$) of finite sets $\FinSet$. This is the universal tensor category equipped with a \emph{commutative} monoid object (\cite{Gra01}) and hence its free cocompletion $\widehat{\FinSet}$ is the universal cocomplete tensor category equipped with a commutative monoid object. It identifies with the category of so-called symmetric simplicial sets.
\end{ex}

\begin{rem}[Free cocomplete tensor categories]
By \autoref{freecoco} and \autoref{freetensor} we see that the forgetful $2$-functor $\Cat_{c\otimes} \to \Cat$ has a left adjoint, i.e. every category $\C$ has a free cocomplete tensor category on $\C$. In general it is quite hard to describe concretely. However:
\end{rem}
 
\begin{rem}[Free cocomplete tensor category on one object]
Let $\C$ be a cocomplete tensor category. We already know the free tensor category $\P$ on one object $X$ (\autoref{oneobject}). It follows by \autoref{hat} that $\C[X] := \widehat{\P}_\C$ is the free cocomplete tensor category equipped with an object $X$ and a cocontinuous tensor functor $\C \to \C[X]$ (``over $\C$''). Its objects are families $M=(M_n)_{n \in \N}$ of objects $M_n \in \C$ with a right $\Sigma_n$-action. The universal object $X$ is here $1_\C$ concentrated in degree $1$. Observe that
\[M = \bigoplus_{n \in \N} M_n \otimes_{\S_n} X^{\otimes n}.\]
The tensor product is given by the following formula:
\[M \otimes N = \bigoplus_{n \in \N} \left(\bigoplus_{p+q=n} (M_p \times M_q) \otimes_{\S_p \times \S_q} \S_n\right) \otimes_{\S_n} X^{\otimes n}.\]
We have also the free cocomplete tensor category over $\C$ containing a symtrivial object $\widehat{\N}_\C$, whose objects are just $\N$-graded objects of $\C$, as well as the free cocomplete tensor category over $\C$ containing a line object $\widehat{\Z}_\C$, whose objects are $\Z$-graded objects of $\C$. See \autoref{gradings} for more on gradings.
\end{rem}

\begin{rem}[Free cocomplete tensor category on a morphism]
Let $\C$ be a cocomplete tensor category and $A,B \in \C$ be two objects. We would like to construct $\C[A \to B]$, the free cocomplete tensor category over $\C$ on a morphism $A \to B$. We will encounter some special cases later, for example  \autoref{mod-UE} implies $\C[A \to 1]=\M(\Sym A)$. However, the general case seems to be quite complicated. Let us assume that $\C$ is locally presentable. In private communication James Dolan has suggested to realize $\C[A \to B]$ as the category of pairs $(X,\alpha)$, where $X \in \C$ and $\alpha : \HOM(B,X) \to \HOM(A,X)$ is a morphism in $\C$ such that the diagram
\[\xymatrix{\HOM(B \otimes B,X) \ar[r] \ar[d] & \HOM(B \otimes A,X) \ar[d] \\ \HOM(A \otimes B,X) \ar[r] & \HOM(A \otimes A,X)}\]
commutes. The morphisms in that diagram are defined as follows: The vertical morphism above is defined by
\[\xymatrix@C=60pt{\HOM(B \otimes B,X)  \ar[d]_{\cong}  & \HOM(B \otimes A,X). \\  \HOM(B,\HOM(B,X)) \ar[r]^-{\HOM(B,\alpha)} & \HOM(B,\HOM(A,X)) \ar[u]_{\cong}}\]
The definition of the vertical morphism below is analoguous. The horizontal morphisms are defined similarly but they are also twisted with the symmetry.

The idea behind this definition is that ``$X$ believes that $\alpha$ pulls back a morphism $A \to B$''. However, it is not clear a priori how to define colimits and tensor products in this category. One probably has to construct a free coherent monoidal monad $T$ on $\C$ (\autoref{monoidalmonads}) on a Kleisli morphism $A \to B$ (via transfinite compositions  and alike) and define $\C[A \to B] := \M(T)$. The universal property should follow from \autoref{monadUE2}. This has not been worked out yet.
\end{rem}

\subsection{Indization}
 
We refer to \cite[Chapter 6]{KS06} for the general theory of indization of categories. Given a category $\C$ its indization $\Ind(\C)$ is the full subcategory of the free cocompletion $\widehat{\C}$ which consists of \emph{directed} colimits of representable presheaves. Then $\Ind(\C)$ has directed colimits, created by the inclusion to $\widehat{\C}$. In fact for every category $\D$ with directed colimits we have an equivalence between the category of functors $\C \to \D$ and the category of finitary functors $\Ind(\C) \to \D$. This means that $\Ind(-)$ is left adjoint to the forgetful $2$-functor from categories with directed colimits and finitary functors to the ``category'' of all categories and functors.

If $\C$ is finitely cocomplete, then $\Ind(\C)$ is actually cocomplete (however, colimits are \emph{not} preserved by the inclusion to $\widehat{\C}$) in such a way that the Yoneda embedding $Y : \C \to \Ind(\C)$ becomes finitely cocontinuous. Moreover, using \cite[Theorem 6.4.3]{KS06} one can show that if $\D$ is a cocomplete category, then $Y$ induces an equivalence of categories $\Hom_c(\Ind(\C),\D) \cong \Hom_{fc}(\C,\D)$. Hence, $\Ind(-)$ also serves as a left adjoint of the forgetful $2$-functor from cocomplete categories (and cocontinuous functors) to finitely cocomplete categories (and finitely cocontinuous functors). We have the same result for tensor categories:

\begin{prop}[Indization of tensor categories]
Let $\C$ be a finitely cocomplete tensor category. Then $\Ind(\C)$ is actually a cocomplete tensor category, $Y : \C \to \Ind(\C)$ is a finitely cocontinuous tensor functor. If $\D$ is a cocomplete tensor category, then $Y$ induces an equivalence of categories
\[\Hom_{c\otimes}(\Ind(\C),\D) \simeq \Hom_{fc\otimes}(\C,\D).\]
Hence, $\Ind(-)$ is left adjoint to the forgetful $2$-functor $\Cat_{c\otimes} \to \Cat_{fc\otimes}$.
\end{prop}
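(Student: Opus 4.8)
The plan is to reduce everything to a single \emph{rigidity principle}: for a cocomplete tensor category $\D$, restriction along the Yoneda embeddings should identify functors $(\Ind(\C))^n \to \D$ that are cocontinuous in each variable separately with functors $\C^n \to \D$ that are finitely cocontinuous in each variable separately, this identification being an equivalence of categories for every $n \geq 0$. For $n=1$ this is exactly the non-monoidal universal property $\Hom_c(\Ind(\C),\D)\simeq\Hom_{fc}(\C,\D)$ recalled just before the proposition. I would prove the general case by induction via currying: a functor $(\Ind(\C))^{n+1}\to\D$ cocontinuous in each variable corresponds to a cocontinuous functor $\Ind(\C)\to \mathcal{E}$, where $\mathcal{E}$ denotes the category of functors $(\Ind(\C))^n\to\D$ cocontinuous in each variable. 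The point is that $\mathcal{E}$ is cocomplete with colimits computed pointwise (a pointwise colimit of cocontinuous functors is cocontinuous, since colimits commute with colimits), so the case $n=1$ applies with target $\mathcal{E}$; and by the inductive hypothesis $\mathcal{E}$ is equivalent, through colimit-preserving restriction, to the category of functors $\C^n\to\D$ finitely cocontinuous in each variable. Feeding this into the $n=1$ equivalence and uncurrying yields the case $n+1$. Being an equivalence of categories, the rigidity principle is fully faithful on $2$-cells, so a natural transformation between such multivariable functors is determined by, and may be freely prescribed on, representable objects; moreover such a transformation is an isomorphism as soon as its components at representables are (each component at a directed colimit of representables being a colimit of isomorphisms).

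With this in hand I would construct the tensor structure by transport. Define $\otimes$ on $\Ind(\C)$ as the extension to $\Ind(\C)\times\Ind(\C)$ of $Y\circ\otimes\colon \C\times\C\to\Ind(\C)$ (finitely cocontinuous in each variable because $\otimes$ on $\C$ is and $Y$ is finitely cocontinuous); by construction it is cocontinuous in each variable and satisfies $Y(A)\otimes Y(B)\cong Y(A\otimes B)$. Put $1_{\Ind(\C)} := Y(1_\C)$. The associativity, unit and symmetry constraints of $\C$ are natural isomorphisms between functors finitely cocontinuous in each variable, so the rigidity principle (for $n=2,3$) extends each of them uniquely to a natural isomorphism between the corresponding cocontinuous functors on $\Ind(\C)$. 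Every coherence axiom (pentagon, triangle, hexagons, $S^2=\id$) is an equality of natural transformations between functors cocontinuous in several variables; both sides restrict along Yoneda to the corresponding composite of constraints on $\C$, which agree by coherence in $\C$, so by the rigidity principle (now for $n$ up to $4$) they agree on $\Ind(\C)$. This makes $\Ind(\C)$ a cocomplete tensor category.

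The claim that $Y$ is a finitely cocontinuous tensor functor is then immediate: $Y$ is finitely cocontinuous (already known), the structural isomorphisms $Y(A)\otimes Y(B)\cong Y(A\otimes B)$ and $1_{\Ind(\C)}=Y(1_\C)$ are built into the construction, and the compatibility conditions making $Y$ a tensor functor hold because they are images under the extension of commuting diagrams in $\C$. For the universal property I would restrict the equivalence $\Phi\colon\Hom_c(\Ind(\C),\D)\xrightarrow{\simeq}\Hom_{fc}(\C,\D)$, $L\mapsto L\circ Y$, to tensor functors. Given a tensor structure on $F=L\circ Y$, both $(M,N)\mapsto L(M)\otimes L(N)$ and $(M,N)\mapsto L(M\otimes N)$ are cocontinuous in each variable and are canonically identified on representables through the constraints of $F$; by rigidity this identification extends uniquely to a tensor constraint for $L$, whose coherence, together with the compatibility of $1_\D \cong F(1_\C)=L(1_{\Ind(\C)})$, is again checked on representables. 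Conversely a tensor structure on $L$ restricts to one on $F$, and these assignments are mutually inverse; the same reasoning on $2$-cells shows $\Phi$ is fully faithful for morphisms of tensor functors. Hence $\Phi$ restricts to an equivalence $\Hom_{c\otimes}(\Ind(\C),\D)\simeq\Hom_{fc\otimes}(\C,\D)$. Finally, this equivalence is natural in $\D$ (restriction along the fixed $Y$ commutes with postcomposition), which is precisely the assertion that $\Ind(-)$ is left adjoint to the forgetful $2$-functor $\Cat_{c\otimes}\to\Cat_{fc\otimes}$, with unit $Y$.

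I expect the main obstacle to be organizational rather than conceptual: setting up the rigidity principle so that the intermediate functor categories $\mathcal{E}$ are genuinely cocomplete with pointwise colimits and the restriction equivalences are colimit-preserving, so that the $n=1$ universal property may legitimately be iterated (and in particular so that I never accidentally compute colimits in the ambient $\widehat{\C}$, where the inclusion of $\Ind(\C)$ fails to preserve them). Once that is secured, the construction of the constraints and the verification of all coherence and tensor-functor axioms become uniform instances of ``agree on representables, hence agree everywhere,'' with no further input.
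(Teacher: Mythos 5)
Your proposal is correct, but it takes a genuinely different route from the paper's. The paper's (sketched) proof never transports coherence: it defines the tensor product on $\Ind(\C)$ as the composite $\Ind(\C)\times\Ind(\C)\simeq\Ind(\C\times\C)\to\Ind(\C)$, i.e. as the restriction of the Day convolution of $\widehat{\C}$, so the associativity, unit and symmetry constraints and all coherence diagrams are simply inherited from the ambient tensor category $\widehat{\C}$. The real work there is (a) cocontinuity of $N\otimes -$ for every $N\in\Ind(\C)$, obtained by writing $N$ as a directed colimit of representables and using that the inclusion $\Ind(\C)\hookrightarrow\widehat{\C}$ preserves directed (though not general) colimits, and (b) equipping the cocontinuous extension $\overline{F}$ of a finitely cocontinuous tensor functor $F$ with a tensor structure, by the same agree-on-representables-plus-finitariness trick. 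Your argument deliberately avoids $\widehat{\C}$ and Day convolution altogether: you bootstrap the one-variable universal property into an $n$-variable rigidity principle by currying (your intermediate categories of separately cocontinuous functors are indeed cocomplete with pointwise colimits, and restriction preserves them), and then transport the constraints, the coherence axioms, and the tensor structures on extended functors through full faithfulness of restriction; your multivariable equivalence plays exactly the role that $\Ind(\C)\times\Ind(\C)\simeq\Ind(\C\times\C)$ plays in the paper. The trade-off: your route costs more bookkeeping (every coherence diagram must be re-checked on representables, with the canonical isomorphisms $Y(A)\otimes Y(B)\cong Y(A\otimes B)$ inserted correctly), but it is self-contained relative to the cited one-variable equivalence $\Hom_c(\Ind(\C),\D)\simeq\Hom_{fc}(\C,\D)$, it renders the paper's caveat about colimits in $\Ind(\C)$ versus $\widehat{\C}$ moot, and it yields as a by-product a multivariable universal property that the paper never states. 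Both proofs rest on the same cited input and are at the same level of rigor; modulo the size caveats the paper also ignores (e.g. $\C$ essentially small so the relevant functor categories are legitimate), your plan goes through.
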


\begin{proof}[Sketch of proof.]
This is very similar to the proof of \autoref{freecoco}. The tensor product is defined by $\Ind(\C) \times \Ind(\C) \simeq \Ind(\C \times \C) \to \Ind(\C)$, which is just the restriction of the tensor product of $\widehat{\C}$. For $M \in \C$ we have a cocontinuous functor $Y(M) \otimes - : \Ind(\C) \to \Ind(\C)$, namely the unique cocontinuous functor extending the finitely cocontinuous functor $M \otimes - : \C \to \C$. The tensor product is finitary in each variable (since directed colimits are preserved by the inclusion to $\widehat{C}$), so that we easily deduce that $N \otimes - : \Ind(\C) \to \Ind(\C)$ is cocontinuous for every $N \in \Ind(\C)$. Finally, if $F : \C \to \D$ is a finitely cocontinuous tensor functor, we may extend it to a cocontinuous functor $\overline{F} : \Ind(\C) \to \D$ and only have to show that it carries a canonical structure of a tensor functor, for which we may use the same trick as above.
\end{proof}

\begin{prop}
Let $\C$ be a locally finitely presentable (tensor) category and let $\C_{\fp}$ be the full (tensor) subcategory of finitely presentable objects. Then there is an equivalence of (tensor) categories $\Ind(\C_{\fp}) \simeq \C$.
\end{prop}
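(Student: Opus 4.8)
The plan is to build a canonical comparison functor $E : \Ind(\C_{\fp}) \to \C$ and then verify directly that it is essentially surjective and fully faithful, upgrading to a tensor equivalence at the end. First I would record the two standard facts about locally finitely presentable categories that make the statement meaningful: the full subcategory $\C_{\fp}$ of finitely presentable objects is essentially small and is closed under finite colimits in $\C$, hence finitely cocomplete (see \cite{AR94}). In the tensor case, the definition of a locally finitely presentable tensor category (\autoref{presentable-tensor}) guarantees that $1 \in \C$ is finitely presentable and that the tensor product of two finitely presentable objects is again finitely presentable; therefore $\C_{\fp}$ is a finitely cocomplete tensor subcategory of $\C$, and the inclusion $\iota : \C_{\fp} \hookrightarrow \C$ is a finitely cocontinuous (tensor) functor. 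By the indization proposition just proven, $\Ind(\C_{\fp})$ is a cocomplete (tensor) category and $\iota$ extends along the Yoneda embedding $Y : \C_{\fp} \to \Ind(\C_{\fp})$ to a cocontinuous (tensor) functor $E : \Ind(\C_{\fp}) \to \C$ with $E \circ Y \cong \iota$; concretely, $E$ sends a directed colimit of representables $\colim_i A_i$ to the genuine colimit $\colim_i A_i$ formed in $\C$.

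Essential surjectivity is immediate from the defining property of local finite presentability: every $M \in \C$ is a directed colimit of finitely presentable objects, and such a colimit is exactly $E$ applied to the corresponding object of $\Ind(\C_{\fp})$.

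The heart of the argument is full faithfulness, and this is the step I expect to be the main obstacle. Writing $X = \colim_i A_i$ and $Z = \colim_j B_j$ as directed colimits of objects of $\C_{\fp}$, the defining formula for morphisms between ind-objects (\cite[Chapter 6]{KS06}) gives $\Hom_{\Ind(\C_{\fp})}(X,Z) \cong \lim_i \colim_j \Hom_{\C_{\fp}}(A_i,B_j)$. On the other side I would compute $\Hom_{\C}(E(X),E(Z)) = \Hom_{\C}(\colim_i A_i, \colim_j B_j)$ by first turning the colimit in the first variable into a limit, then using that each $A_i$ is finitely presentable to commute $\Hom_{\C}(A_i,-)$ with the directed colimit $\colim_j B_j$, and finally using that $\C_{\fp} \subseteq \C$ is full; this yields the same expression $\lim_i \colim_j \Hom_{\C_{\fp}}(A_i,B_j)$. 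The care needed here is to keep track of the fact that the colimit realizing $E(X)$ in $\C$ is directed (so that finite presentability applies) and that the comparison maps on the two sides agree with the canonical one induced by $E$. Once this bookkeeping is done, $E$ is fully faithful.

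Finally, since $E$ is a cocontinuous tensor functor which is an equivalence of the underlying categories, its inverse canonically inherits the structure of a tensor functor and $E$ becomes an equivalence of cocomplete tensor categories. This establishes $\Ind(\C_{\fp}) \simeq \C$ in both the plain and the tensor settings.
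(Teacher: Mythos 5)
Your proposal is correct and takes essentially the same route as the paper: the paper's proof simply cites \cite[Corollary 6.3.5]{KS06} for the underlying-category equivalence and remarks that the tensor case follows easily, and your argument is exactly the standard proof of that cited result (the comparison functor $E$ from the universal property of $\Ind$, essential surjectivity from the definition of local finite presentability, full faithfulness via the ind-hom formula $\lim_i \colim_j \Hom(A_i,B_j)$ and finite presentability of the $A_i$), followed by the same upgrade of the tensor structure along the equivalence.
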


\begin{proof}[Sketch of proof.]
For the case of categories we refer to \cite[Corollary 6.3.5]{KS06}. The case of tensor categories now follows easily.
\end{proof}

\begin{cor} \label{qcohind}
Let $X$ be a scheme resp. algebraic stack as in \autoref{Qcohfin2}. Then for every cocomplete tensor category $\C$ we have an equivalence of categories
\[\Hom_{c\otimes}(\Q(X),\C) \cong \Hom_{fc\otimes}(\Q_{\fp}(X),\C).\]
\end{cor}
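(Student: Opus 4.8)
The goal is to establish \autoref{qcohind}, the equivalence
\[\Hom_{c\otimes}(\Q(X),\C) \simeq \Hom_{fc\otimes}(\Q_{\fp}(X),\C)\]
for a scheme or algebraic stack $X$ as in \autoref{Qcohfin2} and an arbitrary cocomplete tensor category $\C$. The plan is to assemble this from two earlier results: the indization universal property for tensor categories (the \autoref{Proposition} immediately preceding this corollary, call it the Indization proposition) and the identification of a locally finitely presentable tensor category with the indization of its finitely presentable part (the proposition stating $\Ind(\C_{\fp}) \simeq \C$ for locally finitely presentable $\C$).

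First I would invoke \autoref{Qcohfin2}: under the hypotheses on $X$ (Deligne-Mumford, strong resolution property, noetherian, or a scheme, with $X$ geometric), the category $\Q(X)$ is locally finitely presentable, and moreover its finitely presentable objects coincide precisely with the quasi-coherent modules of finite presentation, i.e.\ $\Q(X)_{\fp} \simeq \Q_{\fp}(X)$ as tensor categories. This is the crucial input that connects the abstract notion of finitely presentable object to the geometric notion of finite presentation. Second, I would apply the proposition $\Ind(\C_{\fp}) \simeq \C$ with $\C := \Q(X)$, which is legitimate because $\Q(X)$ is locally finitely presentable. Combining these two identifications yields an equivalence of tensor categories
\[\Ind(\Q_{\fp}(X)) \simeq \Q(X).\]

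Finally I would feed this into the Indization proposition. Since $\Q_{\fp}(X)$ is a finitely cocomplete tensor category (indeed, for a scheme $\Q_{\fp}(X)$ is finitely cocomplete as noted earlier in the excerpt, and this persists for the stacks covered by \autoref{Qcohfin2}), that proposition gives, for any cocomplete tensor category $\C$, an equivalence
\[\Hom_{c\otimes}(\Ind(\Q_{\fp}(X)),\C) \simeq \Hom_{fc\otimes}(\Q_{\fp}(X),\C).\]
Transporting along the equivalence $\Ind(\Q_{\fp}(X)) \simeq \Q(X)$ on the left-hand side produces exactly the claimed equivalence. The argument is thus a short two-step diagram chase once the ingredients are in place.

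The main obstacle is not a difficult calculation but rather checking that the hypotheses on $X$ genuinely guarantee that the finitely presentable objects of $\Q(X)$ are the quasi-coherent modules of finite presentation, and that $\Q_{\fp}(X)$ is finitely cocomplete and closed under the tensor product so that it is a finitely cocomplete tensor subcategory. Both points are supplied by \autoref{Qcohfin2} together with the discussion of $\Q_{\fp}(X)$ earlier in the excerpt, so the remaining verification that the identifications respect the tensor structures (which the cited propositions already handle, since they are stated for tensor categories) is essentially formal. I would therefore keep the proof to the sketch level, citing the indization propositions and \autoref{Qcohfin2} rather than reconstructing their proofs.
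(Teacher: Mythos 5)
Your proof is correct and follows exactly the route the paper intends: the corollary is stated immediately after the two indization propositions precisely because it follows from combining $\Ind(\Q_{\fp}(X)) \simeq \Q(X)$ (via \autoref{Qcohfin2}, which identifies finitely presentable objects with modules of finite presentation and gives local finite presentability) with the universal property $\Hom_{c\otimes}(\Ind(\C'),\C) \simeq \Hom_{fc\otimes}(\C',\C)$. Your attention to the side condition that $\Q_{\fp}(X)$ is a finitely cocomplete tensor subcategory is exactly the right point to check, and it holds under the hypotheses of \autoref{Qcohfin2}.
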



\subsection{Limits and colimits}

\begin{rem}[Limits]
The $2$-category $\Cat_{c\otimes}$ of cocomplete tensor categories is $2$-complete; limits are simply created by the forgetful $2$-functor to $\Cat$. For example, the product $\prod_{i \in I} \C_i$ of a family $(\C_i)_{i \in I}$ of cocomplete tensor categories has  tuples $(X_i)_{i \in I}$ with $X_i \in \C_i$ as objects and
\[\Hom((X_i),(Y_i)) = \prod_i \Hom_{\C_i}(X_i,Y_i)\]
as morphisms. The tensor product is $(X_i)_i \otimes (Y_i)_i := (X_i \otimes Y_i)_i$. Likewise, colimits are computed in each entry. Then $\prod_{i \in I} \C_i$ is a cocomplete tensor category with the universal property
\[\Hom_{c\otimes}(\D,\prod_{i \in I} \C_i) \simeq \prod_{i \in I} \Hom_{c\otimes}(\D,\C_i).\]
Another example are $2$-pullbacks. Given cocontinuous tensor functors
\[\xymatrix{\C_1 \ar[r]^{F_1} & \E & \ar[l]_{F_2} \C_2,}\]
the $2$-pullback $\C_1 \times_{\E} \C_2$ (or more precisely $\C_1 \times_{F_1,F_2} \C_2$) has  triples $(X_1,X_2,\sigma)$ as objects, where $X_i \in \C_i$ and $\sigma : F_1(X_1) \to F_2(X_2)$ is an isomorphism in $\E$. Morphisms are defined in an evident manner and tensor products are given by $(X_1,X_2,\sigma) \otimes (Y_1,Y_2,\tau) = (X_1 \otimes Y_1,X_2 \otimes Y_2,\sigma \otimes \tau)$. Colimits are created by the forgetful functor to the product $\C_1 \times \C_2$. It follows easily that $\C_1 \times_{\D} \C_2$ is a cocomplete tensor category. It satisfies the universal property
\[\Hom_{c\otimes}(\D,\C_1 \times_{\E} \C_2) \simeq \Hom_{c\otimes}(\D,\C_1) \times_{\Hom_{c\otimes}(\D,\E)} \Hom_{c\otimes}(\D,\C_2).\]
\end{rem}

\begin{ex} \label{qprod}
Let us compare these constructions to corresponding constructions from algebraic geometry. If $(X_i)_{i \in I}$ is a family of schemes (or even algebraic stacks), then we have an equivalence of cocomplete tensor categories
\[\Q\bigl(\coprod_{i \in I} X_i\bigr) \simeq \prod_{i \in I} \Q(X_i).\]
This can either be checked directly or deduced from the adjunction in \autoref{adjunction}, which implies that, in fact, $\Q : \Stack \to \Cat_{c\otimes}^{\op}$ preserves \emph{all} colimits. In particular, if $X = X_1 \cup X_2$ with two open subschemes $X_1,X_2$ and intersection $U$, we have
\[\Q(X) \simeq \Q(X_1) \times_{\Q(U)} \Q(X_2).\]
Gluing of schemes corresponds to pullbacks of tensor categories -- a fact which was already heavily used in Gabriel's thesis \cite{Gab62}.
\end{ex}

\begin{rem}[Colimits] 
We already know that $\Set$ (resp. $\M(R)$) is the $2$-initial cocomplete ($R$-linear) tensor category. It is not clear to the author if $\Cat_{c\otimes}$ is $2$-cocomplete. Colimits of cocomplete tensor categories, or just coproducts, seem to be quite hard to ``write down'' explicitly (in terms of objects, morphisms and tensor products). By categorification of the well-known construction of coproducts of commutative rings using tensor products of abelian groups, we expect that $\Cat_c$ is a symmetric monoidal $2$-category, which allows us to construct the coproduct of two cocomplete tensor categories by endowing the tensor product of the underlying cocomplete categories with a tensor structure. At least, such a procedure is possible in the context of \emph{locally presentable} categories (\cite[Corollary 2.2.5, Remark 2.3.8]{CJF13}). It is also possible for categories with $\kappa$-small colimits for a fixed cardinal number $\kappa$ -- this is a consequence of $2$-dimensional monad theory (\cite{Bla89}). In any case, an explicit description seems to be out of reach. Still, it is a natural question how colimits of tensor categories compare to limits of schemes and stacks. We will attack this question without using any explicit constructions in \autoref{prodschemes}. Besides, we will see some natural examples of $2$-pushouts of cocomplete tensor categories which globalize pullbacks in algebraic geometry (e.g. \autoref{mod-BW} and \autoref{proj-BW}). Notice that for example $\widehat{I}_\C$ is a $2$-coproduct of $\widehat{I}$ and $\C$ by \autoref{hat}.
\end{rem}

\section{Global schemes and stacks} \label{globalization}

Before we give the definition of global schemes, let us motivate it by some specific detailed example. Some of the following remarks are paradigms for other global schemes which will be studied later.

\begin{ex}[Globalization of affine schemes] \label{aff-glob}
Let $R$ be a fixed commutative base ring. Let $A$ be a commutative $R$-algebra. Then $\Spec(A)$ is an $R$-scheme with the following universal property: If $X$ is an $R$-scheme, then $R$-morphisms $X \to \Spec(R)$ correspond naturally to homomorphisms of $R$-algebras $R \to \Gamma(X,\O_X)$. The bijection is given by $f \mapsto \Gamma(f)$. Actually $\Gamma(X,\O_X)$ may be reconstructed from the cocomplete $R$-linear tensor category $\Q(X)$: It is isomorphic to $\End(\O_X)$, where $\O_X$ is the unit of $\Q(X)$. Also, the morphism $X \to \Spec(R)$ may be identified with its pullback functor $F : \M(R) \to \Q(X)$. The action $\End(R) \to \End(F(R))$ induced by $F$ is precisely the homomorphism $R \to \Gamma(X,\O_X)$.

This leads to the following more general observation: If $\C$ is an arbitrary cocomplete $R$-linear tensor category (with unit $\O_\C$), then we have a functor
\[\alpha : \Hom_{c\otimes/R}(\M(A),\C) \to \Hom_{\Alg(R)}(A,\End_\C(\O_\C)).\]
It maps $F : \M(A) \to \C$ to $A \cong \End_{\M(A)}(A) \to \End_{\C}(\O_\C)$. If $F \to G$ is a morphism of tensor functors, then the two resulting homomorphisms $A \to \End_\C(\O_\C)$ are equal. Since $\Hom_{\Alg(R)}(A,\End_\C(\O_\C))$ is a set, regarded as a discrete category, this describes the action on morphisms.
 
If $\C=\Q(X)$ for some $R$-scheme $X$, then we can write down a functor $\beta$ in the other direction: It maps a homomorphism of $R$-algebras $\sigma : A \to \Gamma(X)$ first to the correponding morphism $f : X \to \Spec(A)$ and then takes its associated pullback functor $f^* : \M(A) \to \Q(X)$. Again we can write this down in terms of tensor categories: We have $f^*(M) = M \otimes_A \O_X$, which is the sheaf of modules associated to the presheaf $U \mapsto M \otimes_A \Gamma(U,\O_X)$. But it may also be seen globally as an external tensor product, defined by the adjunction
\[\Hom(M \otimes_A \O_X,N) \cong \Hom_{\M(A)}(M,\Hom(\O_X,N)) \cong \Hom_{\M(A)}(M,\Gamma(N)).\]
Now we are able to write down $\beta$ for arbitrary $\C$. Given a homomorphism of $R$-algebras $\sigma : A \to \End_\C(\O_\C)$, we can endow $\Gamma(N) := \Hom(\O_\C,N)$, for every $N \in \C$, with the structure of a right $A$-module. Then $\Gamma : \C \to \M(A)$ is a continuous functor, which in fact has a left adjoint, which will be denoted by $M \mapsto M \otimes_A \O_\C$. One can construct it explicitly as follows: If $M$ is free, say $M = A^{\oplus I}$, then $M \otimes_A \O_\C = \O_\C^{\oplus I}$. In general, write $M$ as the cokernel of some map $A^{\oplus I} \to A^{\oplus J}$. Regard this as a matrix over $A$ and transport it via $\sigma$ to a matrix over $\End_\C(\O_\C)$. We optain a morphism $\O_\C^{\oplus I} \to \O_\C^{\oplus J}$, whose cokernel is precisely $M \otimes_A \O_\C$. Since the functor $? \otimes_A \O_\C$ is a left adjoint, it is cocontinuous.
 
Since $\Gamma$ is a lax tensor functor, i.e. there are canonical homomorphisms
\[\sigma : A \to \Gamma(\O_\C),~\Gamma(M) \otimes \Gamma(N) \to \Gamma(M \otimes N),\]
where $M,N \in \C$, the adjunction tells us that $? \otimes_A \O_\C$ is an oplax tensor functor, i.e. there are canonical morphisms
\[A \otimes_A \O_\C \to \O_\C,~ (M \otimes_A N) \otimes_A \O_\C \to (M \otimes_A \O_\C) \otimes_\C (N \otimes_A \O_\C),\]
where $M,N$ are $A$-modules. The first map is an isomorphism by construction and the second one is so in the special case $M=N=A$. The general case then follows from a colimit argument. Thus, $? \otimes_A \O_\C$ is a cocontinuous $R$-linear tensor functor. This defines a functor
\[\beta : \Hom_{\Alg(R)}(A,\End_\C(\O_\C)) \to \Hom_{c\otimes/R}(\M(A),\C).\]
It follows easily from the constructions that $\alpha \beta = \id$. In the other direction, we first spot a natural transformation $\eta : \beta \alpha \to \id$ defined as follows:

Let $F : \M(A) \to \C$ be a cocontinuous $R$-linear tensor functor. We define $\sigma = \alpha(F) : A \to \End_\C(\O_\C)$. Then $\eta_F(M) : M \otimes_A \O_\C \to F(M)$ corresponds to a homomorphism of $A$-modules $M \to \Gamma(F(M))$, defined as follows: For every $m \in M$ the homomorphism $A \to M$ mapping $1 \mapsto m$ induces a morphism $\O_\C = F(A) \to F(M)$, i.e. an element of $\Gamma(F(M))$. It is easy to check that $\eta_F$ is, indeed, a morphism of tensor functors $\M(A) \to \C$, i.e. a morphism in $\Hom_{c\otimes/R}(\M(A),\C)$ and that $\eta : \beta \alpha \to \id$ is, indeed, a morphism of endofunctors of $\Hom_{c\otimes/R}(\M(A),\C)$.

Clearly $\eta_F(M)$ is an isomorphism if $M=A$. By a colimit argument, this follows for arbitrary $M$. Thus, $\eta_F$ is an isomorphism for every $F$, i.e. $\eta$ is an isomorphism. We have shown that $\alpha$ and $\beta$ are quasi-inverse to each other, thus implementing an equivalence of categories (where the right hand side is discrete)
\[\xymatrix{\Hom_{c\otimes/R}(\M(A),\C) \ar@/^2pc/[r]^{\alpha} & \ar@/^2pc/[l]^{\beta} \Hom_{\Alg(R)}(A,\End_C(\O_\C)).}\]
This means that the tensor category $\M(A)=\Q(\Spec(A))$ has \textit{the same universal property} as the affine scheme $\Spec(A)$. Besides, this universal property of $\Q(\Spec(A))$ is induced via pullback functors from the universal property of $\Spec(A)$ if we apply it to $\C=\Q(X)$ for some scheme $X$. In other words, the well-known universal property of affine schemes is not restricted to schemes, where one typically uses local arguments and constructions, but may be extended to all cocomplete tensor categories, where we only have global arguments and constructions. This example, as well as many others which we will study later, motivates the notion of a \textit{global scheme}. We have just seen that affine schemes are global. Let us summarize:
\end{ex}
 
\begin{prop} \label{affin-global}
If $\C$ is a cocomplete $R$-linear tensor category and $A$ is a commutative $R$-algebra, then there is an equivalence of categories
\[\Hom_{c\otimes/R}(\M(A),\C) \simeq \Hom_{\Alg(R)}(A,\End_\C(\O_\C)).\]
Hence, affine schemes are global.
\end{prop}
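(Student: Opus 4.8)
The plan is to produce the two functors realizing the equivalence and to check that they are mutually quasi-inverse; since the right-hand side $\Hom_{\Alg(R)}(A,\End_\C(\O_\C))$ is a set regarded as a discrete category, most of the coherence data collapses and the statement is essentially a refined Yoneda-type computation. First I would define the functor $\alpha$. Given a cocontinuous $R$-linear tensor functor $F : \M(A) \to \C$, its tensor structure supplies an isomorphism $F(A) \cong \O_\C$, since $A$ is the unit of $\M(A)$ by \autoref{modA}, and $F$ induces a ring homomorphism $\End_{\M(A)}(A) \to \End_\C(F(A)) \cong \End_\C(\O_\C)$. As $\End_{\M(A)}(A) \cong A$ via multiplication, this yields an $R$-algebra homomorphism $\alpha(F) : A \to \End_\C(\O_\C)$. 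Because a morphism $F \to G$ of tensor functors is compatible with the unit isomorphisms, the two induced homomorphisms agree; hence $\alpha$ is well defined into the discrete category.

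Next I would build $\beta$ in the other direction. Given $\sigma : A \to \End_\C(\O_\C)$, I would use $\sigma$ to upgrade the global section functor $\Gamma = \Hom_\C(\O_\C,-) : \C \to \M(R)$ to a functor $\Gamma : \C \to \M(A)$, letting $A$ act on each $\Gamma(N)$ through $\sigma$. This $\Gamma$ is continuous, and I would exhibit its left adjoint $- \otimes_A \O_\C : \M(A) \to \C$ explicitly: send $A^{\oplus I}$ to $\O_\C^{\oplus I}$, and for general $M$ presented as the cokernel of a matrix $A^{\oplus I} \to A^{\oplus J}$, transport that matrix through $\sigma$ to a morphism $\O_\C^{\oplus I} \to \O_\C^{\oplus J}$ and take its cokernel. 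Being a left adjoint, $- \otimes_A \O_\C$ is cocontinuous. To equip it with a tensor structure I would invoke \autoref{adj-lax}: $\Gamma$ is canonically lax monoidal, so its left adjoint is oplax, with structure morphisms $A \otimes_A \O_\C \to \O_\C$ and $(M \otimes_A N) \otimes_A \O_\C \to (M \otimes_A \O_\C) \otimes (N \otimes_A \O_\C)$, which are visibly isomorphisms when $M = N = A$.

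The main obstacle is precisely promoting this oplax functor to an honest tensor functor, i.e. showing the oplax structure maps are isomorphisms for all modules, not only free ones. I would handle this by a colimit argument: every $A$-module is a cokernel, hence a colimit, of free modules, and both sides of the comparison map are cocontinuous in each variable, since the tensor product on $\C$ preserves colimits by hypothesis and $- \otimes_A \O_\C$ is cocontinuous, so the isomorphism on free modules propagates. This gives $\beta(\sigma) = (- \otimes_A \O_\C)$ in $\Hom_{c\otimes/R}(\M(A),\C)$. Finally I would check $\alpha\beta = \id$ directly from the construction, and produce a natural isomorphism $\eta : \beta\alpha \to \id$ whose component $\eta_F(M) : M \otimes_A \O_\C \to F(M)$ is adjoint to the $A$-linear map $M \to \Gamma(F(M))$ carrying a morphism $m : A \to M$ to the composite $\O_\C \cong F(A) \xrightarrow{F(m)} F(M)$; this is an isomorphism for $M = A$ and hence for all $M$ by the same colimit argument. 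Assembling these verifications yields the stated equivalence, and applying it to $\C = \Q(X)$ for a scheme $X$ recovers the universal property of $\Spec(A)$, so affine schemes are global.
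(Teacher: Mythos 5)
Your proposal is correct and follows essentially the same route as the paper's own proof in the discussion preceding the proposition: the same functor $\alpha$ via $\End_{\M(A)}(A) \cong A$, the same explicit construction of $\beta$ through the $A$-module structure on $\Gamma$ and its left adjoint $- \otimes_A \O_\C$ built from matrix presentations, the same lax/oplax adjunction step with the colimit argument upgrading the oplax structure to a strong one, and the same unit $\eta_F(M) : M \otimes_A \O_\C \to F(M)$ checked on $M = A$ and extended by cocontinuity. There is nothing to add.
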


In particular we see that $\M(R)$ is a $2$-initial object of  $\Cat_{c\otimes/R}$, which is a globalization of the fact that $\Spec(R)$ is a terminal object of $\Sch/R$.
 
\begin{cor} \label{affin-tensorial}
Affine schemes are tensorial: If $X$ is an affine scheme and $Y$ is an arbitrary scheme, then $f \mapsto f^*$ implements an equivalence of categories
\[\Hom(Y,X) \simeq \Hom_{c\otimes}\bigl(\Q(X),\Q(Y)\bigr).\]
\end{cor}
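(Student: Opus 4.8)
The plan is to deduce this directly from \autoref{affin-global} together with the classical universal property of affine schemes. Since $X$ is affine, write $X = \Spec(A)$ for a commutative ring $A$, so that $\Q(X) \simeq \M(A)$ (quasi-coherent modules on an affine scheme are just $A$-modules). Both $\Q(X)$ and $\Q(Y)$ are abelian, hence $\Z$-linear, and any cocontinuous tensor functor between $\Z$-linear categories is automatically $\Z$-linear (it preserves direct sums; see item 13 of the preliminaries on category theory). Therefore $\Hom_{c\otimes}(\Q(X),\Q(Y)) = \Hom_{c\otimes/\Z}(\M(A),\Q(Y))$, which lets us apply the earlier proposition with $R=\Z$ and $\C = \Q(Y)$.

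First I would apply \autoref{affin-global} to obtain an equivalence
\[\Hom_{c\otimes/\Z}(\M(A),\Q(Y)) \simeq \Hom_{\Alg(\Z)}\bigl(A,\End_{\Q(Y)}(\O_Y)\bigr).\]
Next, I would identify $\End_{\Q(Y)}(\O_Y) \cong \Gamma(Y,\O_Y)$ as commutative rings, since a morphism $\O_Y \to \O_Y$ in $\Q(Y)$ is the same as a global section of $\HOM(\O_Y,\O_Y) \cong \O_Y$. Thus the right-hand side is the discrete category of ring homomorphisms $A \to \Gamma(Y,\O_Y)$.

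On the geometric side, the universal property of affine schemes \cite[Proposition 1.6.3]{EGAI} provides a bijection
\[\Hom(Y,\Spec(A)) \cong \Hom_{\Ring}\bigl(A,\Gamma(Y,\O_Y)\bigr),\]
where $\Hom(Y,X)$ is viewed as a discrete category. Combining the two identifications, both $\Hom(Y,X)$ and $\Hom_{c\otimes}(\Q(X),\Q(Y))$ are equivalent to the discrete category $\Hom_{\Ring}(A,\Gamma(Y,\O_Y))$, which in particular reconfirms that $\Hom_{c\otimes}(\Q(X),\Q(Y))$ is essentially discrete.

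Finally I would check that the composite equivalence is realized by $f \mapsto f^*$. This is exactly the content of the inverse functor $\beta$ constructed in \autoref{aff-glob}: $\beta$ sends a ring homomorphism $\sigma : A \to \Gamma(Y,\O_Y)$ first to the associated scheme morphism $f : Y \to \Spec(A)$ and then to its pullback $f^* : \M(A) \to \Q(Y)$. Since $\beta$ implements the equivalence of \autoref{affin-global}, tracing a morphism $f$ through the identifications above returns $f^*$, as claimed. There is no real obstacle here, as the substantive work was already done in \autoref{aff-glob}; the only points needing care are the routine ring identification $\End_{\Q(Y)}(\O_Y) \cong \Gamma(Y,\O_Y)$ and the observation that both Hom-sets are genuinely discrete, so that a bijection of sets upgrades to an equivalence of categories.
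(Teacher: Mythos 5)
Your proposal is correct and takes essentially the same route as the paper: the paper's proof is exactly the one-line application of \autoref{affin-global} with $R=\Z$, $\C=\Q(Y)$ and $X=\Spec(A)$, relying on the construction in \autoref{aff-glob} where the inverse functor $\beta$ is explicitly given by $\sigma \mapsto f \mapsto f^*$. The details you spell out --- automatic $\Z$-linearity of cocontinuous tensor functors between $\Z$-linear categories, the identification $\End_{\Q(Y)}(\O_Y) \cong \Gamma(Y,\O_Y)$, and the classical universal property of affine schemes --- are precisely what the paper leaves implicit.
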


\begin{proof}
We apply \autoref{affin-global} to $R=\Z$ and $\C=\Q(Y)$ and $A$ with $X=\Spec(A)$.
\end{proof}

Let us sketch the general notion of global schemes and stacks.
 
Many constructions for schemes (or algebraic stacks) can be translated to the realm of cocomplete tensor categories. We call this process \emph{globalization}, basically because local constructions and proofs have to be replaced by global ones (see also \autoref{Comm}, in particular \autoref{localfree}).

\begin{defi}[Globalization]
Let $F : \Sch \to \Sch$ be a functor, which thus constructs a scheme from a given scheme. A \emph{globalization} of $F$ is a functor $F^{\otimes} : \Cat_{c\otimes} \to \Cat_{c\otimes}$ with natural equivalences
\[F^{\otimes}(\Q(X)) \simeq \Q(F(X)),\]
as in the following diagram:

\[\xymatrix@=40pt{\Sch \ar[d]_{\Q} \ar@{~>}[r]^{F} & \Sch  \ar[d]^{\Q} \\ \Cat_{c\otimes}^{\op} \ar@{~>}[r]^{F^{\otimes}} & \Cat_{c\otimes}^{\op}}\]
 
Besides, if $F$ has some universal property which is definable in the language of cocomplete tensor categories (consisting of objects, morphisms, colimits, tensor products), we require that $F^{\otimes}$ satisfies the corresponding universal property, but within all cocomplete tensor categories. This translation should be compatible with $F^{\otimes}(\Q(X)) \simeq \Q(F(X))$.
 
We can also look at globalizations for more general constructions by allowing additional inputs such as
 
\begin{itemize}
\item quasi-coherent modules or algebras on the given scheme,
\item more than one scheme (i.e. functors $\Sch^d \to \Sch$ for some $d \in \N$),
\item morphisms of schemes,
\item algebraic stacks instead of schemes.
\end{itemize}
\end{defi}

The above definition is a bit vague, but we will make the meaning of globalization very precise in each specific example. In \autoref{aff-glob} we have seen that an affine scheme $\Spec(A)$ (seen as a functor without any input scheme, but with a parameter $A$) globalizes to $\M(A)=\Q(\Spec(A))$ since the universal property
\[\Hom_R(X,\Spec(A)) \cong \Hom_{\Alg(R)}(A,\End(\O_X))\]
generalizes to the universal property
\[\Hom_{c\otimes/R}(\M(A),\C) \simeq \Hom_{\Alg(R)}(A,\End(\O_\C)).\]
We may therefore write $\Spec^{\otimes}(A) = \M(A)$.
 
\begin{rem}
As a general rule, globalized constructions of schemes preserve tensorial schemes (likewise for stacks). For the author this was the original motivation for globalization, because it made possible to prove that projective schemes are tensorial (\cite{Bra11}).
\end{rem} 
\section{Module categories over algebras} \label{module-categories}

Let $S$ be a scheme (or even an algebraic stack) and $A$ be a quasi-coherent algebra on $S$. Consider the relative spectrum $\Spec_S(A)$, it is an $S$-scheme with the following universal property (\cite[Definition 9.1.8]{EGAI}): If $f : T \to S$ is another $S$-scheme, then there is a bijection (equivalence of categories) between $\Hom_S(T,\Spec_S(A))$ and the set of homomorphisms of quasi-coherent algebras $f^*(A) \to \O_T$. In other words, $\Spec_S(A)$ is the universal $S$-scheme equipped with a section of (the pullback of) $A$.

We can also drop the base $S$ from the structure and obtain the following universal property of $\Spec_S(A)$ as an absolute scheme: If $T$ is an arbitrary scheme, then $\Hom(T,\Spec_S(A))$ can be identified with the set (or discrete category) of pairs $(f,h)$, where $f : T \to S$ is a morphism and $h$ is as above. We will globalize this to cocomplete tensor categories as follows (see \autoref{aff-tensor} for the original motivation):
 
\begin{prop}[Universal property of module categories] \label{mod-UE}
Let $\C$ be a cocomplete tensor category and $A$ be a commutative algebra in $\C$. Then the cocomplete tensor category $\M(A)$ enjoys the following universal property: For every cocomplete tensor category $\D$ there is a natural equivalence of categories
\[\Hom_{c\otimes}(\M(A),\D) \simeq \{(F,h) : F \in \Hom_{c\otimes}(\C,\D),\, h \in \Hom_{\CAlg(\D)}(F(A),\O_\D)\}.\]
\end{prop}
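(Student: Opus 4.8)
The plan is to construct the equivalence by exhibiting mutually inverse functors in the spirit of \autoref{aff-glob}, but now over the base $\C$ rather than over $\M(R)$. The key observation is that the free-module functor $F_0 := - \otimes A : \C \to \M(A)$ is a cocontinuous tensor functor (\autoref{modA}), so that any cocontinuous tensor functor $G : \M(A) \to \D$ restricts to $G \circ F_0 : \C \to \D$, a cocontinuous tensor functor. Moreover, the multiplication $m : A \otimes A \to A$ of $A$, viewed as a morphism of $A$-modules $A \otimes A \to A$ (where $A$ is the unit of $\M(A)$), witnesses a canonical $\C$-algebra structure, and applying $G$ to the $A$-module morphism $A \to \O_{\M(A)}$... more precisely, the relevant datum is that $G(A)$, the image of the unit of $\M(A)$, is $\O_\D$, and $G$ applied to the free module $A \otimes A = F_0(A)$ together with the action produces a homomorphism $h : (G F_0)(A) = G(F_0(A)) \to G(A) = \O_\D$ in $\CAlg(\D)$. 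So the forward functor sends $G \mapsto (G \circ F_0, h_G)$, where $h_G$ comes from applying $G$ to the regular-epimorphism $A \otimes A \to A$ in $\M(A)$.

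For the inverse direction, I would start from a pair $(F,h)$ with $F : \C \to \D$ cocontinuous tensor and $h : F(A) \to \O_\D$ a homomorphism of commutative algebras in $\D$. The homomorphism $h$ endows $\O_\D$ with the structure of an $F(A)$-algebra, hence every object of $\D$ becomes, via $F$, an $F(A)$-module in a way compatible with the tensor structure. The idea is to define $G : \M(A) \to \D$ on a module $M$ by the relative tensor product (extension of scalars)
\[
G(M) := F(M) \otimes_{F(A)} \O_\D,
\]
where $F(M)$ is an $F(A)$-module because $F$ is a lax (indeed strong) tensor functor and $M$ carries an $A$-action, and $\O_\D$ is an $F(A)$-algebra via $h$. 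This relative tensor product is formed as a coequalizer in $\D$ (\autoref{modA}), so $G$ is cocontinuous because $F$ is cocontinuous and tensoring commutes with colimits in a cocomplete tensor category. One checks $G$ is a tensor functor using the canonical isomorphism $(S \otimes_{F(A)} \O_\D) \otimes_\D (T \otimes_{F(A)} \O_\D) \cong (S \otimes_{F(A)} T) \otimes_{F(A)} \O_\D$ together with the fact that $F$ preserves tensor products; the base-change formula here is exactly the mechanism from the final isomorphism in the proof of \autoref{modA}. By construction $G(A \otimes A') = G(F_0(A')) \cong F(A') \otimes_{F(A)} \O_\D$, which for the free module $F_0(\O_\C)$ recovers $\O_\D$, so that $G \circ F_0 \cong F$ and the recovered homomorphism agrees with $h$.

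To finish I would verify that the two constructions are mutually quasi-inverse. One direction, recovering $(F,h)$ from the $G$ built out of it, is essentially the computation just sketched, reducing to the case of free modules and then invoking a colimit argument since every $A$-module is a coequalizer of free $A$-modules (the free functor $F_0$ is a left adjoint whose image is colimit-dense in $\M(A)$). The other direction, showing $G \mapsto (GF_0, h_G) \mapsto G'$ returns a functor naturally isomorphic to $G$, again uses that $\M(A)$ is generated under colimits by the free modules $F_0(X) = X \otimes A$, on which both $G$ and $G'$ are forced to agree by their common restriction along $F_0$ and the extension-of-scalars formula. The natural comparison morphism $\eta_G : G' \to G$ is defined on free modules as the evident isomorphism and extended by cocontinuity, exactly as the transformation $\eta_F$ was handled in \autoref{aff-glob}.

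The main obstacle I expect is the careful bookkeeping for the tensor (monoidal) coherence of $G$: verifying that the structural isomorphisms $G(M) \otimes_\D G(N) \cong G(M \otimes_A N)$ assemble coherently and naturally, and that $G(A) \cong \O_\D$ respects the unit. The underlying calculation is the relative version of the isomorphism $(S \otimes A) \otimes_A (T \otimes A) \cong (S \otimes T) \otimes A$ already established in \autoref{modA}, but one must track the $F(A)$-actions through $F$ and ensure the coequalizers defining the relative tensor products interact correctly with the symmetry. Everything else is formal and follows the template of \autoref{aff-glob}; the genuinely new content is that the base is now an arbitrary cocomplete tensor category $\C$ instead of $\M(R)$, so extension of scalars along $F(A) \to \O_\D$ in $\D$ plays the role that $- \otimes_A \O_\C$ played there.
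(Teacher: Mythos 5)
Your proposal is correct and takes essentially the same route as the paper: the forward functor $G \mapsto (G(- \otimes A),\, G(m))$ and the inverse $(F,h) \mapsto F(-) \otimes_{F(A)} \O_\D$ are exactly the paper's constructions, with the same coherence checks left as routine verification. The only cosmetic difference is in the round-trip check, where the paper condenses your colimit-density/canonical-presentation argument into the single natural isomorphism of $A$-modules $M \cong (M \otimes A) \otimes_{A \otimes A} A$.
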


Here, a morphism $(F,h) \to (F',h')$ is a morphism $\alpha : F \to F'$ with the property $h' \circ \alpha(A)=h$.

\begin{proof}
The functor maps $G : \M(A) \to \D$ to $F : \C \to \D$ which is defined by $F(N) = G(N \otimes A)$ and $h = G(m)$, where $m : A \otimes A \to A$ is the multiplication. The inverse functor maps $(F,h)$ to $G : \M(A) \to \D$ defined by
\[G(M)=F(M) \otimes_{F(A)} \O_\D.\]
It is a routine verification to check that these functors are well-defined.

That they are inverse to each other follows from the following observation: For $M \in \M(A)$ we have an isomorphism of $A$-modules
\[M \cong (M \otimes A) \otimes_{A \otimes A} A. \qedhere\]
\end{proof}

\begin{rem}
In other words, $\M(A)$ is the universal cocomplete tensor category over $\C$ equipped with a section of (the image of) the algebra $A$, and we may write
\[\M(A) = \C[A \to \O \text{ algebra homomorphism}].\]
I would like to thank Jacob Lurie for suggesting the definition of the inverse functor in \autoref{mod-UE}.
\end{rem}

\begin{cor}[Globalization of affine morphisms] \label{aff-global}
Let $S$ be a scheme (or even algebraic stack) and $A$ be a quasi-coherent algebra on $S$, i.e. a commutative algebra in $\Q(S)$. Then there is an equivalence of tensor categories
\[\Q(\Spec_S(A)) \simeq \M(A).\]
Hence, if $\C$ is a cocomplete tensor category, there is an equivalence of categories
\[\begin{array}{c}
\Hom_{c\otimes}(\Q(\Spec_S(A)),\C) \medskip \\ 
|\wr \medskip \\ 
\{(F,h) : F \in \Hom_{c\otimes}(\Q(S),\C),~ h \in \Hom_{\CAlg(\C)}(F(A),\O_\C)\}.
\end{array}\]
\end{cor}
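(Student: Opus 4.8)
The plan is to first establish the tensor equivalence $\Q(\Spec_S(A)) \simeq \M(A)$ and then to read off the $\Hom$-description as a formal consequence of \autoref{mod-UE}. For the equivalence, write $\pi : P := \Spec_S(A) \to S$ for the canonical morphism, which is affine, and recall that by the very construction of the relative spectrum there is a canonical isomorphism of quasi-coherent $\O_S$-algebras $\pi_* \O_P \cong A$. First I would invoke the classical fact (\cite[Chapitre II, 1.4]{EGAII}) that for an affine morphism the direct image functor
\[\pi_* : \Q(P) \to \M(\pi_* \O_P) = \M(A)\]
is an equivalence of categories, its quasi-inverse sending an $A$-module $N$ to the quasi-coherent $\O_P$-module obtained by the relative tilde-construction, which over an affine open $\Spec(R) \subseteq S$ with $A|_{\Spec R} \cong \widetilde{B}$ and $P|_{\Spec R} = \Spec(B)$ is simply $N|_{\Spec R}$ regarded as a quasi-coherent module on $\Spec(B)$. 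Since $\pi_*$ is an equivalence it automatically preserves all colimits, so it remains only to upgrade it to a tensor equivalence.

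To produce the monoidal structure I would use that $\pi^* : \Q(S) \to \Q(P)$ is a cocontinuous tensor functor, so that its right adjoint $\pi_*$ is lax symmetric monoidal (\autoref{adj-lax}). A lax monoidal functor carries the unit algebra $\O_P$ to the algebra $\pi_* \O_P \cong A$ and makes each $\pi_* M$ into an $A$-module, so $\pi_*$ refines canonically to a functor $\Q(P) \to \M(A)$ sending the unit $\O_P$ to the unit $A$ of $\M(A)$ (cf. \autoref{modA}); moreover the lax structure maps descend to canonical morphisms
\[\pi_* M \otimes_A \pi_* N \to \pi_*(M \otimes_{\O_P} N).\]
The only point to verify is that these are isomorphisms, i.e. that the refined $\pi_*$ is a \emph{strong} tensor functor. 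This is local on $S$: over an affine open $\Spec(R) \subseteq S$ with $A \cong \widetilde{B}$ and $P = \Spec(B)$ it reduces to the tautological identity $M \otimes_B N \cong M \otimes_B N$ for $B$-modules (the product $\otimes_A$ of $\widetilde{B}$-modules being computed by $\otimes_B$). Combined with the previous paragraph, $\pi_*$ is thus an equivalence of cocomplete tensor categories $\Q(\Spec_S(A)) \simeq \M(A)$.

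For the $\Hom$-description I would then apply \autoref{mod-UE} to the commutative algebra $A$ in the cocomplete tensor category $\Q(S)$, taking as test category the given $\C$. This yields
\[\Hom_{c\otimes}(\M(A),\C) \simeq \{(F,h) : F \in \Hom_{c\otimes}(\Q(S),\C),~ h \in \Hom_{\CAlg(\C)}(F(A),\O_\C)\},\]
and substituting the equivalence $\M(A) \simeq \Q(\Spec_S(A))$ just obtained gives the asserted equivalence of categories. In the stacky case one runs the same argument, establishing the affine-morphism equivalence and the strong monoidality of $\pi_*$ after pulling back along a flat (smooth) affine presentation of $S$ and invoking fpqc descent for quasi-coherent modules.

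The only genuine content is the classical equivalence $\pi_* : \Q(P) \simeq \M(A)$ for affine $\pi$ together with the local verification that its lax monoidal structure is strong; everything else is formal manipulation of adjunctions and the already-proved universal property \autoref{mod-UE}. I expect the main obstacle to be purely organizational: keeping the identifications $\pi_* \O_P \cong A$, the quasi-inverse, and the comparison of $\otimes_{\O_P}$ with $\otimes_A$ compatible on overlaps — and, in the stack case, tracking these compatibilities through descent — so that one genuinely concludes an equivalence of \emph{tensor} categories rather than merely of the underlying categories.
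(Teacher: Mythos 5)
Your proposal is correct and takes essentially the same route as the paper: the paper likewise obtains $\Q(\Spec_S(A)) \simeq \M(A)$ from the classical classification of quasi-coherent modules over a relative spectrum (citing \cite[9.2]{EGAI}) and then deduces the $\Hom$-description by applying \autoref{mod-UE}. Your explicit check that the lax monoidal structure on $\pi_*$ is strong (locally reducing to $\otimes_B$ on an affine cover) is precisely the ``straightforward generalization'' the paper leaves implicit in that citation.
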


\begin{proof}
The first part is essentially \cite[9.2]{EGAI}, which is just a straightforward generalization of the classification of quasi-coherent modules on affine schemes to relative affine schemes. Then, the second part follows from \autoref{mod-UE}.
\end{proof}

\begin{cor} \label{aff-tensor}
If $S$ is a tensorial scheme (or even algebraic stack) and if $X$ is affine over $S$, then $X$ is also tensorial.
\end{cor}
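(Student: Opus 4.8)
The plan is to combine the globalization of affine morphisms (\autoref{aff-global}) with the universal-property formulation of tensoriality. Recall that $X$ affine over $S$ means $X = \Spec_S(A)$ for some quasi-coherent algebra $A$ on $S$, so by \autoref{aff-global} we have an equivalence of cocomplete tensor categories $\Q(X) \simeq \M(A)$, where $A$ is a commutative algebra in $\Q(S)$. Tensoriality of $X$ amounts to showing that for every scheme $Y$, the functor $\Hom(Y,X) \to \Hom_{c\otimes}(\Q(X),\Q(Y))$, $f \mapsto f^*$, is an equivalence of categories. The idea is to express both sides as fibered objects over the corresponding data for $S$ and then invoke the assumed tensoriality of $S$.

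First I would unwind the left-hand side using the universal property of the relative spectrum recalled just before \autoref{mod-UE}: a morphism $Y \to X = \Spec_S(A)$ is the same datum as a pair $(g,h)$, where $g : Y \to S$ is a morphism of schemes and $h : g^*(A) \to \O_Y$ is a homomorphism of quasi-coherent algebras on $Y$. Thus $\Hom(Y,X)$ is equivalent to the category of such pairs. Next I would unwind the right-hand side using \autoref{mod-UE} applied with $\C = \Q(S)$ and $\D = \Q(Y)$ (together with the identification $\Q(X) \simeq \M(A)$): cocontinuous tensor functors $\M(A) \to \Q(Y)$ correspond to pairs $(F,\tilde h)$, where $F \in \Hom_{c\otimes}(\Q(S),\Q(Y))$ and $\tilde h : F(A) \to \O_Y$ is an algebra homomorphism in $\Q(Y)$. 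This is exactly the content of the displayed equivalence in \autoref{aff-global}.

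Now the two descriptions match up in the obvious way once we use that $S$ is tensorial. Tensoriality of $S$ gives, for the chosen $Y$, an equivalence $\Hom(Y,S) \simeq \Hom_{c\otimes}(\Q(S),\Q(Y))$ via $g \mapsto g^*$. Under this equivalence the algebra-homomorphism data correspond: for $g : Y \to S$ with $F := g^*$, a homomorphism of quasi-coherent algebras $g^*(A) \to \O_Y$ is literally the same as an algebra homomorphism $F(A) \to \O_Y$ in $\Q(Y)$, since $g^*$ is the tensor functor $F$ and $g^*(A) = F(A)$. Assembling these matching fibers yields the desired equivalence
\[
\Hom(Y,X) \simeq \Hom_{c\otimes}\bigl(\Q(X),\Q(Y)\bigr),
\]
and tracing through the constructions shows it is implemented by $f \mapsto f^*$, which is what tensoriality of $X$ requires.

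The main obstacle, and the step I would spend the most care on, is the compatibility of the two equivalences: I must verify that the equivalence produced by gluing the base equivalence $\Hom(Y,S) \simeq \Hom_{c\otimes}(\Q(S),\Q(Y))$ with the fiberwise identification of algebra-homomorphism data genuinely coincides with the pullback functor $f \mapsto f^*$, rather than merely being \emph{some} equivalence between the two categories. Concretely, one must check that for a morphism $f : Y \to X$ corresponding to the pair $(g,h)$, the pullback $f^*$ corresponds under \autoref{aff-global} to the pair $(g^*, \tilde h)$ with $\tilde h$ the image of $h$; this is a naturality/coherence check that the equivalence of \autoref{aff-global} is compatible with the universal property recalled before \autoref{mod-UE}. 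Since both equivalences are built from the same underlying universal properties (the relative spectrum on the geometric side, \autoref{mod-UE} on the tensor-categorical side), this compatibility is essentially formal, but it is the point where one genuinely uses that the comparison $\Q(\Spec_S(A)) \simeq \M(A)$ is natural in the base. The remark about algebraic stacks is handled identically, since \autoref{aff-global} and \autoref{mod-UE} are already stated in that generality.
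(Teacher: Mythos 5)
Your proposal is correct and takes essentially the same route as the paper's proof: both use \autoref{aff-global} to identify $\Hom_{c\otimes}(\Q(X),\Q(Y))$ with pairs $(F,h)$ consisting of a cocontinuous tensor functor $F : \Q(S) \to \Q(Y)$ and an algebra homomorphism $h : F(A) \to \O_Y$, then invoke tensoriality of $S$ to replace $F$ by $g^*$ and match the resulting data with the universal property of $\Spec_S(A)$. The compatibility check you flag (that the assembled equivalence really is $f \mapsto f^*$) is a point the paper's terser proof passes over silently, and your attention to it is warranted.
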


\begin{proof}
Write $X=\Spec_S(A)$ for some quasi-coherent algebra $A$ on $S$. If $Y$ is another scheme, by \autoref{aff-global} $\Hom_{c\otimes}(\Q(X),\Q(Y))$ identifies with the category of pairs $(F,h)$, where $F : \Q(S) \to \Q(Y)$ is a cocontinuous tensor functor  and $h : F(A) \to \O_Y$ is a homomorphism of quasi-coherent algebras on $Y$. Since $S$ is tensorial, $F$ corresponds to a morphism $f : Y \to S$ and $h$ corresponds to a homomorphism $f^*(A) \to \O_Y$. Then $(f,h)$ corresponds to a morphism $Y \to \Spec_S(A) = X$.
\end{proof}

The following results have some overlap with the independent work \cite[Section 3]{Sch13}.

\begin{defi}[Affine tensor functors] \label{affinefunk}
Cocontinuous tensor functors $\C \to \D$ which are isomorphic to $\C \to \M(A) =: \Spec^{\otimes}_\C(A)$ for some commutative algebra $A$ in $\C$ are called \emph{affine}. With this notation, \autoref{aff-global} becomes
\[\Q(\Spec_S(A)) \simeq \Spec^{\otimes}_{\Q(S)}(A).\]
\end{defi}

\begin{cor}
If $\C$ is a cocomplete tensor category and $A,B$ are commutative algebras in $\C$, then there is an equivalence of categories
\[\Hom_{c\otimes/\C}(\Spec^\otimes_\C(A),\Spec^\otimes_\C(B)) \simeq \Hom_{\CAlg(\C)}(A,B).\]
\end{cor}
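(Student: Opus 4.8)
The plan is to deduce this directly from the universal property of module categories, \autoref{mod-UE}, together with the base-change adjunction for commutative algebras. Recall that $\Spec^\otimes_\C(A) = \M(A)$ comes equipped with its structure functor $P_A : \C \to \M(A)$, $N \mapsto N \otimes A$, and likewise $P_B : \C \to \M(B)$; by definition an object of $\Hom_{c\otimes/\C}(\M(A),\M(B))$ is a cocontinuous tensor functor $G : \M(A) \to \M(B)$ together with an isomorphism $G \circ P_A \cong P_B$.

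First I would apply \autoref{mod-UE} with $\D = \M(B)$, obtaining an equivalence between $\Hom_{c\otimes}(\M(A),\M(B))$ and the category of pairs $(F,h)$ with $F \in \Hom_{c\otimes}(\C,\M(B))$ and $h \in \Hom_{\CAlg(\M(B))}(F(A),\O_{\M(B)})$, where $\O_{\M(B)} = B$. The crucial observation is that under this equivalence the functor attached to $G$ is exactly $F = G \circ P_A$, since $F(N) = G(N \otimes A) = G(P_A(N))$. Consequently the assignment $G \mapsto G \circ P_A$ into $\Hom_{c\otimes}(\C,\M(B))$ corresponds to the projection $(F,h) \mapsto F$. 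Forming the slice over $\C$ therefore amounts to passing to the $2$-fiber of this projection over the object $P_B$: an object becomes a pair $(F,h)$ equipped with an isomorphism $F \cong P_B$, along which I transport $h$ to an algebra homomorphism $P_B(A) \to B$ in $\M(B)$. Here I must check the bookkeeping that the resulting category is discrete: because the structure isomorphisms to the fixed functor $P_B$ rigidify each object, a morphism in the slice reduces to a $2$-cell whose whiskering with $P_A$ is forced to be the identity, and the compatibility condition $h' \circ \alpha(A) = h$ from \autoref{mod-UE} then forces the two transported homomorphisms to coincide. Thus the slice is equivalent to the discrete category $\Hom_{\CAlg(\M(B))}(P_B(A),B)$, where $P_B(A) = A \otimes B$.

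It remains to identify $\Hom_{\CAlg(\M(B))}(A \otimes B, B)$ with $\Hom_{\CAlg(\C)}(A,B)$. A commutative algebra in $\M(B)$ is the same as a commutative $B$-algebra, i.e. a commutative algebra $C$ in $\C$ equipped with a homomorphism $B \to C$, and the unit object of $\M(B)$ is $B$ itself. Since the tensor product of algebras is the coproduct in $\CAlg(\C)$ (\autoref{algebra-cons}), a homomorphism of $B$-algebras $A \otimes B \to B$ — that is, a homomorphism out of the coproduct $A \otimes B$ restricting to $\id_B$ on the $B$-factor — is exactly the datum of a homomorphism $A \to B$ in $\CAlg(\C)$. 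Explicitly one sends $h$ to $A \cong A \otimes \O \to A \otimes B \xrightarrow{h} B$ and $g : A \to B$ to $A \otimes B \xrightarrow{g \otimes B} B \otimes B \xrightarrow{m} B$, and these assignments are mutually inverse.

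I expect the main obstacle to be the $2$-categorical step in the middle: verifying precisely that forming the slice over $\C$ corresponds to the fiber of the projection $(F,h) \mapsto F$ over $P_B$, and that this fiber collapses to the \emph{discrete} category of algebra homomorphisms rather than a larger groupoid. Everything else — the two algebra maps being mutually inverse, and naturality — is a routine verification of the kind already carried out in \autoref{aff-glob} and \autoref{mod-UE}.
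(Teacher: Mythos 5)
Your proposal is correct and follows essentially the same route as the paper: the paper's proof is exactly to apply \autoref{mod-UE} with $\D = \Spec^\otimes_\C(B)$, observe that the slice condition identifies the underlying functor with $P_B$ so that the datum reduces to a homomorphism of commutative $B$-algebras $A \otimes B \to B$, and then note that such a homomorphism is the same as a homomorphism $A \to B$ in $\CAlg(\C)$. You merely spell out the $2$-categorical fiber bookkeeping, the discreteness, and the explicit mutually inverse maps that the paper leaves implicit.
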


\begin{proof}
We apply \autoref{mod-UE} to $\D = \Spec^{\otimes}_\C(B)$. It follows that the left hand side is equivalent to the (discrete) category of homomorphism of commutative $B$-algebras $A \otimes B \to B$, which reduces to a homomorphism of commutative algebras $A \to B$.
\end{proof}

\begin{cor}[Base change for affine tensor functors] \label{mod-BW}
Let $F : \C \to \D$ be a cocontinuous tensor functor. Let $A$ be a commutative algebra in $\C$. Then $B:=F(A)$ is a commutative algebra in $\D$ and $F$ induces a cocontinuous tensor functor $F' : \M(A) \to \M(B)$. In fact, the following is a $2$-pushout diagram in $\Cat_{c\otimes}$:
\[\xymatrix{\C \ar[r]^{F} \ar[d] & \D \ar[d] \\ \M(A) \ar[r]^{F'} & \M(B)}\]
\end{cor}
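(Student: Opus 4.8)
The plan is to verify the universal property of a $2$-pushout directly, using the universal property of module categories established in \autoref{mod-UE}. Recall that the claim is that the square
\[\xymatrix{\C \ar[r]^{F} \ar[d] & \D \ar[d] \\ \M(A) \ar[r]^{F'} & \M(B)}\]
is a $2$-pushout, where the vertical arrows are the free functors $-\otimes A$ and $-\otimes B$, and $F'$ is the induced functor $M \mapsto F(M) \otimes_B B \cong$ (more concretely) the cocontinuous tensor functor sending an $A$-module to the $B$-module obtained by applying $F$ and using the $B$-action. The $2$-pushout property means that for every cocomplete tensor category $\E$, the canonical functor
\[\Hom_{c\otimes}(\M(B),\E) \to \Hom_{c\otimes}(\D,\E) \times_{\Hom_{c\otimes}(\C,\E)} \Hom_{c\otimes}(\M(A),\E)\]
is an equivalence of categories.

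First I would unwind both sides using \autoref{mod-UE}. Applied to $\M(B)$, that proposition identifies $\Hom_{c\otimes}(\M(B),\E)$ with the category of pairs $(G,k)$, where $G \in \Hom_{c\otimes}(\D,\E)$ and $k : G(B) \to \O_\E$ is an algebra homomorphism; here I use that $B = F(A)$ and that the structural functor $\C \to \M(B)$ factors through $F$, so the relevant ambient tensor category for the second coordinate is $\D$. Similarly, $\Hom_{c\otimes}(\M(A),\E)$ is the category of pairs $(H,h)$ with $H \in \Hom_{c\otimes}(\C,\E)$ and $h : H(A) \to \O_\E$. Then I would describe the right-hand fiber product explicitly: its objects are triples consisting of $G \in \Hom_{c\otimes}(\D,\E)$, a pair $(H,h)$ as above, and an isomorphism $G \circ F \cong H$ of tensor functors $\C \to \E$. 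Transporting $h$ along this isomorphism, the data amount precisely to $G \in \Hom_{c\otimes}(\D,\E)$ together with an algebra homomorphism $GF(A) = G(B) \to \O_\E$ — which is exactly the data classifying $\Hom_{c\otimes}(\M(B),\E)$.

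The key step is thus to check that this matching of data is natural and is realized by the canonical comparison functor induced by the square, rather than being merely a bijection on objects. Concretely I would show that the comparison functor is essentially surjective (given $(G, k : G(B) \to \O_\E)$, the pair $G$ and $(GF, k)$ with the identity isomorphism $GF \cong GF$ lies in the fiber product and maps to it) and fully faithful (a morphism in the fiber product is a pair of compatible morphisms of tensor functors, and compatibility over $\C$ forces it to come from a unique morphism of the $(G,k)$ data). Both verifications reduce to the already-established equivalence in \autoref{mod-UE} and the defining compatibility condition on morphisms stated just after it, namely $k' \circ \alpha(B) = k$.

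The main obstacle I expect is purely bookkeeping: correctly tracking the coherence isomorphisms $GF \cong H$ and verifying that the algebra homomorphism $h : H(A) \to \O_\E$ corresponds, under such an isomorphism, to $k : G(B) = GF(A) \to \O_\E$ compatibly with composition of morphisms of tensor functors. There is no genuine geometric or homological difficulty here; the content lies entirely in the universal property of \autoref{mod-UE}, and the proof is essentially formal once both Hom-categories are rewritten as categories of pairs. I would therefore present it as a short formal deduction: apply \autoref{mod-UE} to both $\M(A)$ and $\M(B)$, identify the fiber product of the resulting pair-categories, and observe that the canonical functor is an equivalence because it induces an equivalence on the classifying data in each coordinate.
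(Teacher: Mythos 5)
Your proposal is correct and follows essentially the same route as the paper's own proof: apply \autoref{mod-UE} to both $\M(A)$ and $\M(B)$, unwind the fiber product into tuples $(G,h,H,\sigma)$, and transport the algebra homomorphism along $\sigma : HF \cong G$ to reduce the data to a pair $(H, h : H(B) \to \O_\E)$. If anything, you are slightly more careful than the paper in insisting that the resulting equivalence is realized by the canonical comparison functor of the square rather than being merely an abstract chain of equivalences, which is a point the paper leaves implicit.
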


\begin{proof}
Let $\E$ be a cocomplete tensor category. Using \autoref{mod-UE} twice, we have equivalences of categories
\begin{eqnarray*}
&& \Hom_{c\otimes}(\M(A),\E) \times_{\Hom_{c\otimes}(\C,\E)} \Hom_{c\otimes}(\D,\E) \\
& \simeq & \{G \in \Hom_{c\otimes}(\C,\E),~ h \in \Hom_{\Alg(\E)}(G(A),\O_\E), \\
 && ~ H \in \Hom_{c\otimes}(\D,\E),~ \sigma : H F \cong G\} \\
& \simeq & \{H \in \Hom_{c\otimes}(\D,\E),~ h \in \Hom_{\Alg(\E)}(H(B),\O_\E)\} \\
& \simeq & \Hom_{c\otimes}(\M(B),\E).
\end{eqnarray*}
This establishes $\M(B)$ as a $2$-pushout of $\C \to \M(A)$ and $F : \C \to \D$. It also shows the existence of $F'$. Explicitly, if $M \in \C$ is equipped with a module action $M \otimes A \to M$, then $F'(M):=F(M)$ is equipped with the module action $F(M) \otimes B \cong F(M \otimes A) \to F(M)$.
\end{proof}

\begin{ex}[Affine spaces] \label{affsp}
Let us define the  \emph{$n$-dimensional tensorial affine space} over $\C$ by $\mathds{A}^n_\C := \Spec_\C^\otimes(\O_\C[T_1,\dotsc,T_n])$. The objects of $\mathds{A}^n_\C$ are objects of $\C$ equipped with $n$ commuting endomorphisms. By \autoref{mod-UE} we have the universal property
\[\Hom_{c\otimes/\C}(\mathds{A}^n_\C,\D) \simeq \Gamma(\O_\D)^n\]
for all $\C \to \D$. Therefore, like in algebraic geometry, it classifies $n$ global sections. The universal property (or \autoref{mod-BW}) implies
\[\mathds{A}^n_{\D} = \mathds{A}^n_{\C} \sqcup_\C \D.\]
for every $\C \to \D$.
\end{ex}

Next, we globalize closed immersions. If $S$ is a scheme, $I \subseteq \O_S$ is a quasi-coherent ideal, then the corresponding closed subscheme $V(I)$ of $S$ has the following universal property: A morphism $T \to V(I)$ corresponds to a morphism $f: T \to S$ such that the ``sheaf part'' $f^\# : \O_S \to f_* \O_T$ vanishes on $I$, equivalently that $f^* I \to f^* \O_S = \O_T$ is the trivial homomorphism. This can be globalized as follows:
 
\begin{cor}[Closed immersions] \label{closed-UE}
Let $\C$ be a cocomplete $R$-linear tensor category and $I \subseteq \O_\C$ be an ideal (or more generally any morphism $I \to \O_\C$). Then the cocomplete $R$-linear tensor category
\[V^{\otimes}(I):=\M(\O_\C/I) \simeq \{M \in \C : (I \otimes M \to M)= 0 \}\]
enjoys the following universal property: If $\D$ is a cocomplete $R$-linear tensor category, then there is an equivalence between $\Hom_{c\otimes/R}(V^{\otimes}(I),\D)$ and the category of those $F \in \Hom_{c\otimes/R}(\C,\D)$ such that $F(I) \to F(\O_\C)$ vanishes.
\end{cor}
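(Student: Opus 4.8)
The plan is to deduce the statement directly from the universal property of module categories in \autoref{mod-UE}, applied to the algebra $A = \O_\C/I$. First I would recall that $\O_\C/I$ is the cokernel of $I \to \O_\C$, which carries a canonical commutative algebra structure (as a quotient of the unit $\O_\C$, whose multiplication descends); indeed $V^\otimes(I) := \M(\O_\C/I)$ is a cocomplete $R$-linear tensor category by \autoref{modA}. Applying \autoref{mod-UE} to the algebra $A = \O_\C/I$ gives, for any cocomplete $R$-linear tensor category $\D$, an equivalence
\[
\Hom_{c\otimes/R}(V^\otimes(I),\D) \simeq \{(F,h) : F \in \Hom_{c\otimes/R}(\C,\D),\ h \in \Hom_{\CAlg(\D)}(F(\O_\C/I),\O_\D)\}.
\]
(The $R$-linear refinement of \autoref{mod-UE} is routine, since all the functors involved respect the $R$-linear structure.)

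The key step is then to show that the datum of an algebra homomorphism $h : F(\O_\C/I) \to \O_\D$ is \emph{redundant}: it exists and is unique precisely when the composite $F(I) \to F(\O_\C) \cong \O_\D$ vanishes. Since $F$ is cocontinuous, it is right exact, so applying $F$ to the exact sequence $I \to \O_\C \to \O_\C/I \to 0$ yields an exact sequence
\[
F(I) \to \O_\D \to F(\O_\C/I) \to 0,
\]
exhibiting $F(\O_\C/I)$ as the cokernel of $F(I) \to \O_\D$. Now an algebra homomorphism $h : F(\O_\C/I) \to \O_\D$ is in particular a morphism out of this cokernel; the unit $\O_\D \to F(\O_\C/I)$ (the image of the unit of $\O_\C/I$) is itself a regular epimorphism of algebras, so there is \emph{at most one} candidate for $h$, namely the one induced by $\id_{\O_\D}$. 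Such an $h$ exists if and only if $\id_{\O_\D} : \O_\D \to \O_\D$ factors through the cokernel, i.e. if and only if the composite $F(I) \to \O_\D \xrightarrow{\id} \O_\D$ is zero; and when it exists it is automatically an algebra homomorphism because $\O_\D \twoheadrightarrow F(\O_\C/I)$ is an algebra epimorphism. Thus the category of pairs $(F,h)$ is equivalent (in fact isomorphic) to the full subcategory of $\Hom_{c\otimes/R}(\C,\D)$ on those $F$ with $F(I) \to F(\O_\C) = \O_\D$ zero, the equivalence being compatible with morphisms since the condition $h' \circ \alpha(A) = h$ is vacuous once $h,h'$ are the unique maps.

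The main obstacle — more bookkeeping than genuine difficulty — is to verify carefully that the unique $h$ is indeed an algebra homomorphism and that morphisms match up. The cleanest way I would argue this is to observe that $\O_\D \to F(\O_\C/I)$ is a regular epimorphism in $\CAlg(\D)$ (it is the image under the right-exact functor $F$ of the algebra epimorphism $\O_\C \to \O_\C/I$), so by \autoref{epi-cancel}-type uniqueness any retraction is unique and automatically multiplicative; the second description of $V^\otimes(I)$ as $\{M \in \C : (I \otimes M \to M) = 0\}$ then follows from the fact that $\M(\O_\C/I)$ is the full subcategory of $\O_\C/I$-modules, which are exactly the objects $M \in \C$ annihilated by $I$, i.e. those for which the action $I \otimes M \to \O_\C \otimes M \cong M$ vanishes. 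Chaining the two equivalences gives the desired equivalence between $\Hom_{c\otimes/R}(V^\otimes(I),\D)$ and the category of $F$ with $F(I) \to F(\O_\C)$ vanishing.
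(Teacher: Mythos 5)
Your proposal is correct and follows exactly the paper's route: the paper's entire proof of \autoref{closed-UE} is ``this follows easily from \autoref{mod-UE} applied to $A=\O_\C/I$'', and your argument simply spells out the details left implicit there (that $F(\O_\C/I)$ is the cokernel of $F(I)\to\O_\D$ by right exactness, and that the algebra map $h$ is a retraction of the regular epimorphism $\O_\D\twoheadrightarrow F(\O_\C/I)$, hence unique, automatically multiplicative, and existent precisely when $F(I)\to\O_\D$ vanishes). The verification that the morphism condition $h'\circ\alpha(A)=h$ is automatic, and the identification of $\M(\O_\C/I)$ with the objects annihilated by $I$, are likewise correct.
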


\begin{proof}
This follows easily from \autoref{mod-UE} (which holds also in the $R$-linear case) applied to $A=\O_\C/I$.
\end{proof}

Recall from \cite[\href{http://stacks.math.columbia.edu/tag/01R5}{Tag 01R5}]{stacks-project} that the \emph{scheme-theoretic image} of a morphism $f : Y \to X$ of schemes is the smallest closed subscheme $Z \hookrightarrow X$ through which $f$ factors. \marginpar{I've added this remark.} If $f$ is qc qs, the ideal $I:= \ker(f^\# : \O_X \to f_* \O_Y)$ is quasi-coherent and the scheme-theoretic image is given by $V(I) \hookrightarrow X$. If $f$ is arbitrary, we replace $I$ by the largest quasi-coherent ideal contained in $\ker(f^\#)$, i.e. such that $f^* I \to f^* \O_X = \O_Y$ vanishes. We globalize this construction as follows:

\begin{lemma}[Closed Image] \label{closedimage}
Let $F : \C \to \D$ be a cocontinuous linear tensor functor between cocomplete linear tensor categories\marginpar{Added 'cocomplete'.}. We assume that $\C$ is locally presentable and abelian. There is a largest ideal $I_F \subseteq \O_\C$ such that $F(I_F \to \O_\C)$ vanishes. Moreover, there is a factorization of $F$ as  \marginpar{I've added this Lemma.}
\[\C \to V^{\otimes}(I_F) \xrightarrow{G} \D,\]
where $I_G = 0$.
\end{lemma}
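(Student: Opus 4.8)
I want to show two things: that the largest ideal $I_F \subseteq \O_\C$ with $F(I_F \to \O_\C) = 0$ exists, and that $F$ factors through $V^{\otimes}(I_F) = \M(\O_\C/I_F)$ via a functor $G$ whose own closed image is trivial. First I would establish existence of $I_F$. Consider the collection of all ideals $I \subseteq \O_\C$ for which the composite $F(I) \to F(\O_\C) = \O_\D$ vanishes. This collection is nonempty (it contains $0$) and I claim it is closed under arbitrary sums. Indeed, if $(I_j)$ is a family of such ideals, then by cocontinuity of $F$ and the construction of the ideal sum in \autoref{affineschemes}, the morphism $F(\sum_j I_j) \to \O_\D$ factors through the regular epimorphism $F(\bigoplus_j I_j) \to F(\sum_j I_j)$, and $F(\bigoplus_j I_j) = \bigoplus_j F(I_j) \to \O_\D$ is zero on each summand, hence zero. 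Because $\C$ is locally presentable, it is wellpowered (\autoref{saft}), so the subobjects of $\O_\C$ form a set; therefore the sum $I_F := \sum I$ over all such ideals is a legitimate ideal, it again lies in the collection by the closure property just shown, and it is by construction the largest such ideal.

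\textbf{The factorization.} Since $F(I_F \to \O_\C)$ vanishes, \autoref{closed-UE} gives precisely a factorization of $F$ through the canonical functor $P : \C \to V^{\otimes}(I_F) = \M(\O_\C/I_F)$: namely $F \cong G \circ P$ for an essentially unique cocontinuous linear tensor functor $G : V^{\otimes}(I_F) \to \D$. Concretely $G(M) = F(M) \otimes_{F(\O_\C/I_F)} \O_\D$, but the key point I only need is the universal property. It remains to verify that $I_G = 0$. Here $G$ is a functor out of $\M(\O_\C/I_F)$, whose unit object is $\O_\C/I_F$, so $I_G$ is an ideal of $\O_\C/I_F$, i.e. a subobject of $\O_\C/I_F$ in $\M(\O_\C/I_F)$ on which $G$ vanishes.

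\textbf{Triviality of $I_G$ and the main obstacle.} To see $I_G = 0$, I would use the correspondence between ideals of $\O_\C/I_F$ and ideals of $\O_\C$ containing $I_F$. Because $\C$ is abelian and the reflection $P$ is exact enough, an ideal $J \subseteq \O_\C/I_F$ pulls back to an ideal $\widetilde{J} \subseteq \O_\C$ with $I_F \subseteq \widetilde{J}$, and $G(J \to \O_\C/I_F)$ vanishes if and only if $F(\widetilde J \to \O_\C)$ vanishes (using $F \cong G P$ and that $P$ sends $\widetilde J \hookrightarrow \O_\C$ to $J \hookrightarrow \O_\C/I_F$). By maximality of $I_F$ this forces $\widetilde J = I_F$, hence $J = 0$. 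The main obstacle I anticipate is making this pullback/pushforward correspondence between ideals precise and exact: I must confirm that applying $P$ to the inclusion $\widetilde{J} \hookrightarrow \O_\C$ really yields the inclusion $J \hookrightarrow \O_\C/I_F$ (so that monomorphisms are preserved and the vanishing conditions match up), which is where abelianness and the concrete description $V^{\otimes}(I_F) = \{M : (I_F \otimes M \to M) = 0\}$ from \autoref{closed-UE} are used. Once the correspondence is set up, the maximality argument closes the proof immediately.
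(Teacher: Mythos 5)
Your proof is correct and follows essentially the same route as the paper's: $I_F$ is constructed as the sum of all ideals with vanishing image (the paper likewise notes $F(\bigoplus_I I) \twoheadrightarrow F(I_F) \to F(\O_\C)$ vanishes, with wellpoweredness implicit), the factorization is exactly \autoref{closed-UE}, and $I_G = 0$ follows from the correspondence $J = \widetilde{J}/I_F$ together with maximality of $I_F$. One small correction to your "main obstacle" step: $P$ does \emph{not} send $\widetilde{J} \hookrightarrow \O_\C$ to the inclusion $J \hookrightarrow \O_\C/I_F$ (rather to an epimorphism $P(\widetilde{J}) \twoheadrightarrow J$ followed by that inclusion), but this is harmless since cocontinuous functors preserve epimorphisms, so the two vanishing conditions still agree — the paper sidesteps the issue entirely by observing that $G$ is simply the restriction of $F$ to the full subcategory $V^{\otimes}(I_F) \subseteq \C$.
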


\begin{proof}
Consider the set of all ideals $I \subseteq \O_\C$ such that $F(I \to \O_\C)$ vanishes. Let $I_F \subseteq \O_\C$ be the sum of all these ideals. Then $F(\bigoplus_I I) \twoheadrightarrow F(I_F) \to F(\O_\C)$ vanishes, so that also $F(I_F) \to F(\O_C)$ vanishes. By construction $I_F$ is the largest ideal with this property. By \autoref{closed-UE} $F$ factors as $\C \to V^{\otimes}(I_F)$ followed by $G : V^{\otimes}(I_F) \to \D$, where the underlying functor of $G$ is just the restriction of $F$ to the full subcategory $V^{\otimes}(I_F) \subseteq \C$. If $J \subseteq \O_{V^{\otimes}(I_F)} = \O_\C/I_F$ is an ideal such that $G(J) \to G(\O_\C/I_F)$ vanishes, we have $J=I/I_F$ for some ideal $I_F \subseteq I \subseteq \O_\C$ such that $F(I_F \to \O_\C)$ vanishes. Hence, $I \subseteq I_F$ and therefore $J=0$.
\end{proof}

\begin{defi} \label{ringchange}
Let $\C$ be a cocomplete $R$-linear tensor category. If $A$ is a commutative $R$-algebra, then $A \otimes_R \O_\C$ is a commutative algebra in $\C$. The corresponding module category is denoted by $\C_A$ or $\C \otimes_R A$ and is isomorphic to the category of $A$-modules in $\C$ (\cite[II.1.5.2]{SR72}), i.e. pairs $(M,\alpha)$, where $M \in \C$ and $\alpha : A \to \End_\C(M)$ is a homomorphism of $R$-algebras. This category is actually $A$-linear and enjoys the following universal property:
\end{defi}

\begin{cor}[Change of rings] \label{changering}
If $A$ is a commutative $R$-algebra, then the construction $\C \mapsto \C_A$ is a $2$-functor $\Cat_{c\otimes/R} \to \Cat_{c\otimes/A}$ which is left adjoint to the forgetful $2$-functor.
\end{cor}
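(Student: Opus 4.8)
The plan is to establish the adjunction between the base-change $2$-functor $\C \mapsto \C_A$ and the forgetful $2$-functor by exhibiting a natural equivalence of categories
\[
\Hom_{c\otimes/A}(\C_A, \D) \simeq \Hom_{c\otimes/R}(\C, \D_R),
\]
where on the right $\D_R$ denotes $\D$ regarded as a cocomplete $R$-linear tensor category via restriction of scalars along $R \to A$. First I would identify the natural candidate for the unit $\C \to (\C_A)_R$, which is simply the functor $M \mapsto A \otimes_R M$ sending an object of $\C$ to the free $A$-module on it in $\C$, together with the evident $R$-linear tensor structure; this is cocontinuous because $\M(B)$-style module categories have their colimits created by the forgetful functor (\autoref{modA}) and the free functor is a left adjoint. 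The bulk of the argument reduces to recognizing that $\C_A$ is exactly the module category $\M(A \otimes_R \O_\C)$ of \autoref{modA} over the commutative algebra $A \otimes_R \O_\C$ in $\C$, as already noted in \autoref{ringchange}.

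The key observation is that the desired adjunction is essentially a reformulation of the universal property of module categories (\autoref{mod-UE}), suitably relativized to the $R$-linear and $A$-linear settings. Concretely, I would first recall that an $A$-linear cocomplete tensor category is the same as a cocomplete tensor category $\E$ together with an algebra homomorphism $A \to \End_\E(\O_\E)$ landing in the center, i.e. an object of $\Cat_{c\otimes/\M(A)}$; this is exactly the content of \autoref{affin-global} applied to the unit. Given a cocontinuous $R$-linear tensor functor $F : \C \to \D_R$, the image $F(A \otimes_R \O_\C) = A \otimes_R \O_\D$ carries the canonical algebra structure, and the $A$-action on $\O_\D$ coming from the $A$-linearity of $\D$ provides a homomorphism $F(A \otimes_R \O_\C) \to \O_\D$ of commutative algebras in $\D$. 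By \autoref{mod-UE} this pair $(F, h)$ corresponds precisely to a cocontinuous tensor functor $\M(A \otimes_R \O_\C) = \C_A \to \D$, and one checks this functor is automatically $A$-linear because the $A$-action factors through the unit of $\C_A$. Running this correspondence in both directions, and verifying compatibility on morphisms (which amounts to the condition $h' \circ \alpha = h$ appearing in \autoref{mod-UE}), yields the claimed equivalence.

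Thus the strategy is: (1) identify $\C_A$ with $\M(A \otimes_R \O_\C)$ as in \autoref{ringchange}; (2) unwind the definition of an $A$-linear cocomplete tensor category as a cocomplete tensor category equipped with a central copy of $A$, so that functors into $\D$ being $A$-linear corresponds to compatibility with the algebra map $A \to \End(\O_\D)$; (3) apply the universal property \autoref{mod-UE} to translate this into the desired hom-category equivalence; and (4) check $2$-functoriality and naturality, which are routine. The main obstacle I anticipate is purely bookkeeping rather than conceptual: one must carefully track how the $R$-linear structure interacts with the newly imposed $A$-linear structure, and verify that the equivalence of \autoref{mod-UE} restricts correctly to the $A$-linear and $R$-linear subcategories of functors, rather than merely to the underlying tensor functors. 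In particular, ensuring that the counit and unit of the adjunction are genuinely morphisms in the $2$-categories $\Cat_{c\otimes/R}$ and $\Cat_{c\otimes/A}$ (and not just in $\Cat_{c\otimes}$) requires a small but careful verification that all structure maps respect the linear enrichment, which follows from the fact that extension and restriction of scalars along $R \to A$ are mutually adjoint and that the tensor structures are defined objectwise.
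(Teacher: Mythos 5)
Your proposal is correct and follows essentially the same route as the paper: identify $\C_A$ with $\M(A \otimes_R \O_\C)$, apply the universal property of module categories (\autoref{mod-UE}), and use the observation $F(A \otimes_R \O_\C) = A \otimes_R \O_\D$ to translate the algebra homomorphism $h$ into an $A$-linear enrichment of $\D$ (resp. compatibility with the given one). The paper's proof is just a terser version of your steps (1)--(3), with the bookkeeping in your step (4) left implicit.
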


\begin{proof}
If $\D$ is an $R$-linear cocomplete tensor category, then \autoref{mod-UE} tells us that $\Hom_{c\otimes/R}(\C_A,\D)$ is equivalent to the category of pairs $(F,h)$, where $F : \C \to \D$ is an $R$-linear cocontinuous tensor functor and $h$ is a  homomorphism of algebras $F(A \otimes_R \O_\C) \to \O_\D$. Since $F(A \otimes_R \O_\C) = A \otimes_R \O_\D$, we see that $h$ corresponds to a homomorphism of $R$-algebras $A \to \End(\O_\D)$, i.e. to an enrichment of $\D$ to an $A$-linear cocomplete tensor category.
If $\D$ is $A$-linear, it is clear that $\Hom_{c\otimes/A}(\C_S,\D) \subseteq \Hom_{c\otimes/R}(\C_S,\D)$ corresponds to the subcategory of those $(F,h)$ for which $h$ is the given $A$-enrichment of $\D$, i.e. we end up with the claim
\[\Hom_{c\otimes/A}(\C_S,\D) \simeq \Hom_{c\otimes/R}(\C,\D). \qedhere\]
\end{proof}

\begin{rem}
The connection to algebraic geometry is given as follows: If $\C=\Q(S)$ for some $R$-scheme $S$ and $A$ is a commutative $R$-algebra, then $\C_A \simeq \Q(S \otimes_R A)$, where $S \otimes_R A := S \times_{\Spec(R)} \Spec(A)$ is the scalar extension of $S$. The functor $S \mapsto S \otimes_R A$ from $R$-schemes to $A$-schemes is right adjoint to the forgetful functor.
\end{rem}

\section{Gradings} \label{gradings}


\begin{defi}
Let $\C$ be a cocomplete tensor category and $\Gamma$ be a commutative monoid (written additively). Then we can form the cocomplete tensor category of $\Gamma$-graded objects $\gr_{\Gamma}(\C)$ of $\C$. As a category, this is just $\prod_{n \in \Gamma} \C$. The tensor product is given by
\[(M \otimes N)_n = \bigoplus_{p+q=n} M_p \otimes N_q.\]
We define the symmetry using the symmetries of $\C$. Notice that $\gr_\Gamma(\C)$ is nothing else than $\widehat{\Gamma}_\C$ (see \autoref{cocompletions}) when we consider $\Gamma$ as a discrete tensor category.

We say that $M \in \gr_{\Gamma}(\C)$ is concentrated in degree $d \in \Gamma$ if $M_n = 0$ (the initial object) for all $n \neq d$. There is a canonical embedding $\iota : \C \to \gr_{\Gamma}(\C)$ which regards an object as a graded object concentrated in degree $0$. Notice that $\iota$ is left adjoint to $M \mapsto M_0$.
\end{defi}

Observe that if $X^n$ denotes the $\Gamma$-graded object which is concentrated in degree $n$ and is given by $\O_\C$ there, we can write every $M \in \gr_{\Gamma}(\C)$ as a kind of power series
\[M \cong \bigoplus_{n \in \Gamma} \iota(M_n) \otimes X^n.\]
The tensor product then looks like the usual multiplication of power series. In particular, we have $X^n \otimes X^m \cong X^{n+m}$. This suggests to think of $\gr_\Gamma(\C)$ as a sort of categorified group semiring and maybe even write
\[\gr_\Gamma(\C) = \C[\Gamma].\]
This is made precise by the following universal property:

\begin{prop}[Universal property of $\gr_\Gamma(\C)$] \label{grG}
Let $\Gamma$ be a commutative monoid and let $\C,\D$ be cocomplete tensor categories. Then there is an equivalence of categories
\[\Hom_{c\otimes}(\gr_\Gamma(\C),\D) \simeq \{(F,\L) : F \in \Hom_{c\otimes}(\C,\D),~ \L \in \Hom_{\otimes}(\Gamma,\D)\}\]
Explicitly, $\L$ associates to each object $n \in \Gamma$ a symtrivial object $\L_n \in \D$ and comes equipped with compatible isomorphisms $\L_0 \cong \O_\D$ and $\L_{p+q} \cong \L_p \otimes \L_q$. If $\Gamma$ is an abelian group, then all these $\L_i$ are line objects.
\end{prop}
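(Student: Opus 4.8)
The plan is to recognize this statement as essentially a special case of the universal property of the Day convolution already proved in \autoref{hat}. Since $\gr_\Gamma(\C) = \widehat{\Gamma}_\C$ when $\Gamma$ is regarded as a discrete tensor category (i.e.\ a category with only identity morphisms, whose monoidal structure comes from the monoid operation $+$), \autoref{hat} immediately gives an equivalence
\[
\Hom_{c\otimes}(\gr_\Gamma(\C),\D) \simeq \Hom_{c\otimes}(\C,\D) \times \Hom_{\otimes}(\Gamma,\D).
\]
So the only genuine work is to unwind what a tensor functor $\L \in \Hom_{\otimes}(\Gamma,\D)$ is when $\Gamma$ is a discrete monoidal category, and to verify the final claim about line objects when $\Gamma$ is a group.

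First I would observe that a (strong symmetric monoidal) tensor functor $\L : \Gamma \to \D$ out of the discrete tensor category $\Gamma$ consists of: an object $\L_n \in \D$ for each $n \in \Gamma$; structure isomorphisms $\L_p \otimes \L_q \cong \L_{p+q}$; and a unit isomorphism $\O_\D \cong \L_0$, all subject to the associativity, unit, and symmetry coherence conditions of a tensor functor. Because $\Gamma$ is discrete there are no nonidentity morphisms to track, so the data is exactly an object $\L_n$ in each degree together with these compatible multiplicativity isomorphisms. The key point to extract from the symmetry axiom of a tensor functor is that each $\L_n$ must be symtrivial: the symmetry $S_{\L_n,\L_n} : \L_n \otimes \L_n \to \L_n \otimes \L_n$ corresponds under the structure isomorphism $\L_n \otimes \L_n \cong \L_{n+n}$ to the image of the symmetry automorphism of the object $n \otimes n = n+n$ in $\Gamma$; but in the discrete category $\Gamma$ this symmetry is the identity (there is only one morphism $n+n \to n+n$, namely the identity), so a tensor functor must send it to the identity, forcing $S_{\L_n,\L_n} = \id$, i.e.\ $\L_n$ is symtrivial. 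This is precisely the description asserted in the statement.

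For the final sentence, suppose $\Gamma$ is an abelian group. Then each $n \in \Gamma$ has an inverse $-n$, and the structure isomorphisms give $\L_n \otimes \L_{-n} \cong \L_{0} \cong \O_\D$, so each $\L_n$ is invertible. Combined with the symtriviality established above, $\L_n$ is an invertible symtrivial object, which is exactly the definition of a line object (\autoref{invob}). Hence the plan concludes by invoking \autoref{hat} for the equivalence and recording these two elementary unwindings.

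The main obstacle, if any, is purely bookkeeping: one must check that the coherence conditions for a tensor functor $\Gamma \to \D$ really do reduce, on the discrete category $\Gamma$, to exactly ``a symtrivial object in each degree plus compatible multiplicativity isomorphisms'', with no hidden extra data. This is routine — the symmetry axiom is the only one that imposes a nontrivial constraint (symtriviality), and the rest is the associativity and unitality of the monoid $\Gamma$ being transported to $\D$ — so there is no serious difficulty, only the need to state the correspondence carefully. Everything substantive has already been done in \autoref{hat}.
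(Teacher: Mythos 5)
Your proposal is correct and follows exactly the paper's route: the paper's proof likewise reduces the statement to the universal property of the Day convolution (\autoref{hat}), using that $\gr_\Gamma(\C) = \widehat{\Gamma}_\C$ for $\Gamma$ viewed as a discrete tensor category, and records the explicit inverse $(F,\L) \mapsto \bigl(M \mapsto \bigoplus_{n \in \Gamma} F(M_n) \otimes \L_n\bigr)$. Your unwinding of what a tensor functor out of a discrete $\Gamma$ amounts to — symtriviality forced by the symmetry axiom, and invertibility (hence line objects) when $\Gamma$ is a group — is exactly the content the paper asserts in the statement but leaves implicit in its one-line proof, so you have simply supplied details the paper omits.
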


\begin{proof}
This is a special case of from \autoref{hat}. Explicitly, to $(F,\L)$ one associates
\[G : \gr_\Gamma(\C) \to \D,~ M \mapsto \bigoplus_{n \in \Gamma} F(M_n) \otimes \L_n. \qedhere\]
\end{proof}
 
The special case $\Gamma=\Z$ will be important for us. Here $\gr_\Z(\C)$ has a line object $X$ (concentrated in degree $1$) and every object $M \in \gr_\Z(\C)$ may be written as a ``Laurent series''
\[M \cong \bigoplus_{n \in \Z} \iota(M_n) \otimes X^{\otimes n}.\]
 
\begin{cor}[Universal property of $\gr_\Z(\C)$] \label{grZ}
If $\C,\D$ are cocomplete tensor categories, then there is an equivalence of categories
\[\Hom_{c\otimes}(\gr_{\Z}(\C),\D) \simeq \{(F,\L) : F \in \Hom_{c\otimes}(\C,\D),~ \L \in \D \text{ line object}\}.\]
Here, a morphism $(F,\L) \to (F',\L')$ consists of a morphism $F \to F'$ and an isomorphism $\L \cong \L'$.
\end{cor}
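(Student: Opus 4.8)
The plan is to obtain \autoref{grZ} as a direct specialization of the general universal property of the Day convolution category $\widehat{I}_\C$ established in \autoref{hat}, exactly as \autoref{grG} was derived. Concretely, I would take $\Gamma = \Z$ regarded as a discrete tensor category (the group operation giving the monoidal structure, with only identity morphisms between objects), so that by definition $\gr_\Z(\C) = \widehat{\Z}_\C$. Then \autoref{hat} yields immediately
\[\Hom_{c\otimes}(\gr_\Z(\C),\D) \simeq \Hom_{c\otimes}(\C,\D) \times \Hom_{\otimes}(\Z,\D),\]
so the entire content of the proof reduces to identifying the second factor $\Hom_{\otimes}(\Z,\D)$ with the groupoid of line objects in $\D$.

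First I would unwind what a tensor functor $\L : \Z \to \D$ is. Since $\Z$ is discrete, such a functor is just an assignment $n \mapsto \L_n \in \D$ together with the monoidal coherence data: an isomorphism $\L_0 \cong \O_\D$ and compatible isomorphisms $\L_{p+q} \cong \L_p \otimes \L_q$, subject to associativity and unit constraints. The key observation is that all this data is determined up to canonical isomorphism by the single object $\L := \L_1$, because $\L_n \cong \L_1^{\otimes n}$ for $n \geq 0$ and $\L_{-1}$ must be an inverse of $\L_1$ (from $\L_1 \otimes \L_{-1} \cong \L_0 \cong \O_\D$), forcing $\L_1$ to be invertible. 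Moreover, any tensor functor preserves symtrivial objects (\autoref{symtrivial-properties}), and since every object of the discrete tensor category $\Z$ is trivially symtrivial (the symmetry on $\Z$ is the identity, as $\Z$ is commutative and discrete), each $\L_n$, in particular $\L_1$, must be symtrivial. An invertible symtrivial object is precisely a line object by \autoref{invob}'s definition. Conversely, given any line object $\L \in \D$, the free tensor category on a line object is $\Z$ (\autoref{oneobject}), which tells us that $\L$ extends to a genuine tensor functor $\Z \to \D$, unique up to isomorphism.

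Thus I would argue that evaluation at $1 \in \Z$ gives an equivalence $\Hom_{\otimes}(\Z,\D) \simeq \{\L \in \D \text{ line object}\}$, where the right-hand side is the groupoid whose morphisms are isomorphisms of line objects. On morphisms, a morphism of tensor functors $\L \to \L'$ is compatible with the tensor and unit structure, hence is determined by its component $\L_1 \to \L'_1$, which must be an isomorphism since $\L_1, \L'_1$ are invertible and the component at $-1$ provides an inverse; this matches the description of morphisms $(F,\L) \to (F',\L')$ in the statement as a morphism $F \to F'$ together with an isomorphism $\L \cong \L'$. Combining this identification with \autoref{hat} gives the desired equivalence.

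I do not expect a serious obstacle here, since the statement is genuinely a corollary; the only point requiring a little care is the bookkeeping that the coherence data of a tensor functor out of the discrete category $\Z$ collapses to the single invertible symtrivial object $\L_1$, and that morphisms of such functors are exactly isomorphisms of line objects rather than arbitrary morphisms. The cleanest way to handle this rigorously is to invoke the description of $\Z$ as the free tensor category on a line object (\autoref{oneobject}), which packages precisely the statement that tensor functors $\Z \to \D$ correspond to line objects in $\D$, with morphisms corresponding to isomorphisms; feeding this into \autoref{hat} (or equivalently specializing \autoref{grG} to the abelian group $\Gamma = \Z$, where the remark there already notes that the $\L_i$ become line objects) then finishes the argument.
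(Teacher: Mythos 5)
Your proposal is correct and follows essentially the same route as the paper: the paper derives \autoref{grZ} by specializing \autoref{grG} (itself a special case of \autoref{hat}) to $\Gamma = \Z$, using that for an abelian group the objects $\L_n$ are forced to be line objects, so that $\Hom_{\otimes}(\Z,\D)$ is the groupoid of line objects in $\D$. The only cosmetic difference is in why morphisms $(F,\L)\to(F',\L')$ involve an \emph{isomorphism} $\L \cong \L'$: you argue via the component at $-1$ providing an inverse, while the paper cites \autoref{dual-inv}; both are fine.
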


Note that $\L \to \L'$ has to be an isomorphism because of \autoref{dual-inv}.
 
Recall the construction of the commutative and cocommutative bialgebra $\O_\C[\Gamma]$ (\autoref{hopf}), whose underlying object is written as $\bigoplus_{n \in \Gamma} \O_\C \cdot n$.

\begin{prop}[Alternative description] \label{gralt}
Let $\C$ be a cocomplete linear tensor category satisfying the following two conditions:
\begin{itemize}
\item $\C$ has equalizers, which are preserved by coproducts.
\item The canonical map $\Hom(T,\bigoplus_i M_i) \to \prod_i \Hom(T,M_i)$ is injective for all families of objects $T,M_i \in \C$.
\end{itemize}
For example, this holds when $\C$ is locally finitely presentable. Then, for every commutative monoid $\Gamma$, there is an equivalence of tensor categories
\[\gr_\Gamma(\C) \simeq \CoM(\O_\C[\Gamma]).\]
\end{prop}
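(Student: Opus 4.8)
The plan is to exhibit the equivalence directly, by constructing a pair of mutually quasi-inverse tensor functors whose whole content is the translation of a $\Gamma$-grading into a coaction of the bialgebra $\O_\C[\Gamma]$ and back. Recall (\autoref{hopf}) that $\O_\C[\Gamma] = \bigoplus_{n \in \Gamma} \O_\C \cdot n$ carries the comultiplication $\Delta(n) = n \otimes n$ and counit $\e(n) = 1$, so that an object of $\CoM(\O_\C[\Gamma])$ is an object $M \in \C$ together with a coassociative, counital coaction $\rho : M \to M \otimes \O_\C[\Gamma]$.

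First I would define $\Phi : \gr_\Gamma(\C) \to \CoM(\O_\C[\Gamma])$ sending $(M_n)_{n \in \Gamma}$ to $M = \bigoplus_n M_n$ with the coaction whose restriction to $M_n$ is the composite $M_n \hookrightarrow M = M \cdot n \hookrightarrow \bigoplus_m M \cdot m = M \otimes \O_\C[\Gamma]$; in element notation, $\rho(m) = m \otimes n$ for $m \in M_n$. Coassociativity and counitality are immediate from $\Delta(n) = n \otimes n$ and $\e(n) = 1$. Since the comultiplication of $\O_\C[\Gamma]$ is induced by the diagonal and its multiplication by the monoid law of $\Gamma$, a routine check shows that the comodule tensor product of $\Phi(M)$ and $\Phi(N)$ has degree-$n$ part $\bigoplus_{p+q=n} M_p \otimes N_q$ and that $\Phi(\iota(\O_\C))$ is the unit comodule concentrated in degree $0$; thus $\Phi$ is a tensor functor.

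The converse functor $\Psi$ is where the two hypotheses on $\C$ enter. Given a comodule $(M,\rho)$, I would use the canonical linear projections $\pr_n : M \otimes \O_\C[\Gamma] = \bigoplus_m M \cdot m \to M$ (available because $\C$ is linear) to form the endomorphisms $\rho_n := \pr_n \circ \rho$. Coassociativity forces the relations $\rho_m \rho_n = \delta_{mn}\rho_n$, so the $\rho_n$ are orthogonal idempotents, and counitality forces $\sum_n \rho_n = \id_M$ in the precise sense that the fold map $\bigoplus_n M \cdot n \to M$ precomposed with $\rho$ equals $\id_M$. Splitting each idempotent $\rho_n$ — possible since $\C$ has equalizers — yields subobjects with structure maps $e_n : M_n \to M$ and $r_n : M \to M_n$, and I set $\Psi(M,\rho) := (M_n)_n$. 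Combining $\rho$ with the $r_n$ produces a morphism $M \to \bigoplus_n M_n$ which the idempotent relations show is two-sided inverse to $\sum_n e_n : \bigoplus_n M_n \to M$, giving $M \cong \bigoplus_n M_n$. This last step is exactly where the preservation of equalizers by coproducts is used (to realize the $M_n$ as compatible summands) and where the separation condition, i.e. injectivity of $\Hom(T,\bigoplus_i M_i) \to \prod_i \Hom(T,M_i)$, is used (to verify equalities of morphisms into the coproduct componentwise).

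Finally I would check that $\Phi$ and $\Psi$ are mutually quasi-inverse and that the unit and counit of the resulting adjunction are isomorphisms of comodules resp. graded objects; both verifications reduce, through the separation hypothesis, to a comparison of components. The main obstacle is genuinely the infinite case: when $\Gamma$ is infinite, the statement ``the sum of the idempotents is the identity'' has no naive meaning, and the reconstruction of the grading from $\rho$ must be carried out entirely through the coaction morphism and its splittings. The two technical hypotheses are precisely what supply the control over infinite coproducts — idempotent splitting and componentwise detection of morphisms — that the finite-rank or module-theoretic argument takes for granted; isolating and using them correctly is the crux of the proof.
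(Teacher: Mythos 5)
Your proposal is correct, and its forward functor is essentially identical to the paper's: the paper also sends $(M_n)_n$ to $\bigoplus_n M_n$ with coaction given on $M_n$ by the inclusion into the $n$-th summand. The difference lies in the reverse direction. The paper does not construct a quasi-inverse functor; it shows the functor is fully faithful (the separation hypothesis makes a comodule map $\bigoplus_n M_n \to \bigoplus_p N_p$ diagonal) and then essentially surjective, defining $M_n \to M$ as the \emph{equalizer} of $\delta$ and of $M \cong M \cdot n \hookrightarrow M \otimes \O_\C[\Gamma]$, and then factoring $\delta$ itself through $\bigoplus_n M_n \hookrightarrow \bigoplus_n M$: coassociativity says exactly that $\delta$ equalizes the two maps $\bigoplus_n \delta$ and $\bigoplus_n \iota_n$, and the first hypothesis (coproducts preserve equalizers) identifies the equalizer of that pair with $\bigoplus_n M_n$. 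You instead split the idempotents $\rho_n = \pr_n \circ \delta$; this yields the same subobjects, because the map-level consequence $\delta \circ \rho_n = \iota_n \circ \rho_n$ of coassociativity (obtained by applying $\id \otimes \pr_n$ to the coassociativity square, with no separation needed) identifies the splitting of $\rho_n$ with the paper's equalizer. What your route buys is an explicit inverse functor; what it costs is that each identity you need --- $\delta \circ e_n = \iota_n \circ e_n$, and then $E \circ (\bigoplus_n r_n)\circ \delta = \id_M$ via counitality --- must first be verified componentwise and then upgraded to a map-level equality by the separation hypothesis, which is genuinely more than ``the idempotent relations show'' but is indeed covered by the ingredients you flag. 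One bookkeeping correction: in your argument the first hypothesis enters only through the bare existence of equalizers (to split the idempotents); its actual content, preservation of equalizers by coproducts, is precisely what the paper needs for its factorization step and plays no role in your route, so your remark that it serves ``to realize the $M_n$ as compatible summands'' misattributes where the two hypotheses do their work --- in your version essentially all the weight is carried by the separation condition.
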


\begin{proof}[Sketch of proof.]
If $\C=\M(R)$ for some commutative ring $R$, this result is well-known (\cite[II, \para 2, 2.5]{DG70}). We can use the same argument for those $\C$ satisfying the mentioned conditions -- as usual in categorical algebra we just have to avoid elements. Given a $\Gamma$-graded object $(M_n)_{n \in \Gamma}$, we endow $M:=\bigoplus_{n \in \Gamma} M_n$ with a comodule structure in such a way that on $M_n$ the coaction is given by $T^n$. This means that we define $\delta : M \to M[\Gamma]$ by $\delta|_{M_n} : M_n \cong M_n \cdot n \hookrightarrow M[\Gamma]$ for $n \in \Gamma$. The comodule axioms are readily checked. This defines a functor
\[\gr_\Gamma(\C) \to \CoM(\O_\C[\Gamma])\]
which is easily seen to be a cocontinuous tensor functor. It is fully faithful: For two graded objects $(M_n),(N_p)$, let $f : \bigoplus_n M_n \to \bigoplus_p N_p$ be a morphism which is compatible with the comodule actions. By the second condition on $\C$, it is determined by morphisms $f_{np} : M_n \to N_p$. The compatibility implies $f_{np}=0$ for $n \neq p$, so that $f$ is ``diagonal'' and therefore is induced by a unique morphism in $\gr_\Gamma(\C)$.

It remains to check essential surjectivity. Let $\delta : M \to M[\Gamma]$ be a coaction. For $n \in \Gamma$ we define a morphism $M_n \to M$ as the equalizer of $\delta$ and the morphism $M \cong M \cdot n \hookrightarrow M[\Gamma]$. We get a morphism $\bigoplus_n M_n \to M$. Using coassociativity of $\delta$, one checks that $M \to M[\Gamma] \xrightarrow{\pr_n} M$ factors through $M_n \to M$. This readily implies that $M \to M[\Gamma] = \bigoplus_n M$ factors through $\bigoplus_n M_n \to \bigoplus_n M$, using the first condition on $\C$. One can easily check now that the constructed morphisms between $M$ and $\bigoplus_n M_n$ are inverse to each other. We get the identity on $\bigoplus_n M_n$ by construction of $M_n$ and on $M$ because of counitality of $\delta$. Furthermore, $M \cong \bigoplus_n M_n$ is compatible with the comodule actions by construction.
\end{proof}

If $S$ is some base scheme and $\G_{m} := \G_{m,S}$ is the multiplicative group over $S$, we may consider its classifying stack $B \G_m$. For every $S$-scheme $T$ the groupoid $\Hom_S(T,B \G_m)$ is equivalent to the groupoid of invertible sheaves on $T$.
 
\begin{cor}[Globalization of $B \G_m$] \label{Gm-global}
If $S$ is some base scheme, then there is an equivalence of tensor categories
\[\Q(B \G_m) \simeq \gr_{\Z}(\Q(S)).\]
Hence, If $\C$ is a cocomplete tensor category and $F : \Q(S) \to \C$ is a cocontinuous tensor functor, then $\Hom_{c\otimes/\Q(S)}(\Q(B \G_m),\C)$ is equivalent to the groupoid of line objects of $\C$.
\end{cor}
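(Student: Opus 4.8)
The plan is to establish the equivalence $\Q(B\G_m) \simeq \gr_\Z(\Q(S))$ and then deduce the statement about line objects by combining it with the universal property already recorded in \autoref{grZ}. The first task is identifying quasi-coherent modules on the classifying stack $B\G_m$ over $S$ with $\Z$-graded quasi-coherent modules on $S$. This is the classical observation that a $\G_m$-equivariant sheaf is the same thing as a $\Z$-graded sheaf, since representations of $\G_m$ decompose into weight spaces indexed by the character group $\Z$. Concretely, $B\G_m = [S/\G_m]$ with $\G_m$ acting trivially on $S$, and by descent a quasi-coherent module on $[S/\G_m]$ is a quasi-coherent module $M$ on $S$ together with a coaction of the Hopf algebra $\O_S[\G_m] = \O_S[\Z]$ (the group algebra of $\Z$); that is, an object of $\CoM(\O_S[\Z])$. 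By \autoref{gralt} applied to $\C = \Q(S)$ and $\Gamma = \Z$, this comodule category is equivalent to $\gr_\Z(\Q(S))$ as a tensor category, since $\Q(S)$ for a scheme $S$ is locally finitely presentable (or at least satisfies the two hypotheses of \autoref{gralt}: it has coproduct-preserving equalizers and the required injectivity of the canonical map into the product of hom-sets). I would check that the tensor structure on $\CoM(\O_S[\Z])$ induced from $B\G_m$ matches the one used in \autoref{gralt}, so that the equivalence is monoidal.

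Once $\Q(B\G_m) \simeq \gr_\Z(\Q(S))$ is in hand as an equivalence of cocomplete tensor categories over $\Q(S)$ (the structure functor $\Q(S) \to \Q(B\G_m)$ being pullback along $B\G_m \to S$, which under the equivalence becomes the degree-zero inclusion $\iota : \Q(S) \to \gr_\Z(\Q(S))$), the second assertion is immediate. For a cocontinuous tensor functor $F : \Q(S) \to \C$, we compute
\[
\Hom_{c\otimes/\Q(S)}(\Q(B\G_m),\C) \simeq \Hom_{c\otimes/\Q(S)}(\gr_\Z(\Q(S)),\C).
\]
By \autoref{grZ}, cocontinuous tensor functors $\gr_\Z(\Q(S)) \to \C$ correspond to pairs $(F,\L)$ with $F \in \Hom_{c\otimes}(\Q(S),\C)$ and $\L \in \C$ a line object, with morphisms given by a morphism of functors together with an isomorphism of the line objects. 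Fixing the structure functor over $\Q(S)$ to be $F$ pins down the first component, so the slice category reduces to the groupoid whose objects are line objects $\L \in \C$ and whose morphisms are isomorphisms $\L \cong \L'$. (This is automatically a groupoid because, by \autoref{dual-inv}, any morphism between the invertible objects $\L, \L'$ arising as a map of such functors is an isomorphism.) This matches the groupoid of line objects of $\C$, which is exactly the asserted description.

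The main obstacle I anticipate is the first step: verifying carefully that quasi-coherent modules on $B\G_m$ are genuinely $\Z$-graded quasi-coherent modules on $S$ as \emph{tensor} categories, rather than merely as abelian categories. The descent description via $\CoM(\O_S[\Z])$ requires knowing that $\Q(-)$ sends the groupoid presentation $\G_m \times S \rightrightarrows S$ of $B\G_m$ to the cosimplicial object whose comodules recover $\Q(B\G_m)$, which is standard fpqc descent for quasi-coherent modules (\cite{Vis05}), but one must track the monoidal structure through this identification and confirm compatibility with the symmetry, using that $\O_S[\Z]$ is a \emph{cocommutative} bialgebra so that $\CoM(\O_S[\Z])$ inherits a symmetric monoidal structure. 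A secondary point requiring care is checking that $\Q(S)$ satisfies the hypotheses of \autoref{gralt}; for an arbitrary scheme $S$ this follows since $\Q(S)$ is locally $\lambda$-presentable for suitable $\lambda$ and Grothendieck, giving exactness of filtered colimits and the needed injectivity, though I would state this cleanly to avoid gaps when $S$ is not qc qs. Everything after the monoidal equivalence is formal manipulation of the universal property from \autoref{grZ}.
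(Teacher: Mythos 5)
Your proposal is correct and follows essentially the same route as the paper: the paper also passes through the well-known identification $\Q(B\G_m) \simeq \Rep_S(\G_m) \simeq \CoM(\O_S[\Z])$ (which you justify via descent), then applies \autoref{gralt} to identify this with $\gr_\Z(\Q(S))$, and concludes with the universal property of gradings (\autoref{grG}, of which your \autoref{grZ} is the $\Gamma=\Z$ case). Your added checks — that the equivalence is monoidal and that $\Q(S)$ satisfies the hypotheses of \autoref{gralt} for an arbitrary scheme — are exactly the details the paper leaves implicit.
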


\begin{proof}
Using the well-known equivalences of tensor categories
\[\Q(B \G_m) \simeq \Rep_S(\G_m) \simeq \CoM(\O_S[\Z]),\]
the claim follows from \autoref{gralt} and \autoref{grG}.
\end{proof}

As a general rule, global operations preserve tensorial stacks. In our case, we obtain the following result:

\begin{cor} \label{Gm-tens}
If $S$ is a tensorial scheme and $\G_m$ is the multiplicative group over $S$, then $B \G_m$ is a tensorial stack.
\end{cor}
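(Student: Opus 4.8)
The plan is to unwind the definition of tensoriality (\autoref{tensorial-stacky}) and show that for every scheme $Y$ the pullback functor
\[\Hom(Y, B\G_m) \to \Hom_{c\otimes}\bigl(\Q(B\G_m), \Q(Y)\bigr), \quad f \mapsto f^*,\]
is an equivalence of groupoids, by identifying both sides with the groupoid of pairs $(g, \L)$ consisting of a morphism $g : Y \to S$ and a line bundle $\L$ on $Y$. First I would rewrite the right-hand side: by \autoref{Gm-global} there is an equivalence $\Q(B\G_m) \simeq \gr_\Z(\Q(S))$, so \autoref{grZ} supplies a natural equivalence between $\Hom_{c\otimes}(\Q(B\G_m), \Q(Y))$ and the category of pairs $(F, \L)$, where $F \in \Hom_{c\otimes}(\Q(S), \Q(Y))$ and $\L \in \Q(Y)$ is a line object, the morphisms being a morphism $F \to F'$ together with an isomorphism $\L \cong \L'$.

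Next I would simplify each component of this pair. Since $S$ is tensorial, $g \mapsto g^*$ is an equivalence $\Hom(Y, S) \simeq \Hom_{c\otimes}(\Q(S), \Q(Y))$; as $\Hom(Y,S)$ is a discrete set this forces the functor part to be essentially discrete, so the datum of $F$ is the same as a morphism $g : Y \to S$. For the line-object part I would invoke the fact recorded in \autoref{invob} that the line objects of $\Q(Y)$ are exactly the invertible sheaves, i.e. the line bundles on $Y$, with isomorphisms corresponding. Hence the right-hand side is the groupoid of pairs $(g, \L)$. On the other hand, the defining universal property of $B\G_m$ stated in \autoref{Gm-global} identifies $\Hom(Y, B\G_m)$ with the very same groupoid of pairs $(g : Y \to S, \L)$ with $\L$ a line bundle on $Y$. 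Thus both sides are abstractly equivalent to $\{(g, \L)\}$.

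The step requiring genuine care --- and the main obstacle --- is to verify that the composite of these equivalences is the pullback functor $f \mapsto f^*$, so that it is truly the unit $\eta_{B\G_m}$ that is being shown to be an equivalence, rather than an unrelated equivalence of groupoids. For this I would trace the pullback explicitly: a quasi-coherent module on $B\G_m$ corresponds, under $\Q(B\G_m) \simeq \gr_\Z(\Q(S))$, to a $\Z$-graded module $M = \bigoplus_n M_n$ with $M_n \in \Q(S)$, and I would check that for $f$ corresponding to $(g, \L)$ one has
\[f^*(M) \cong \bigoplus_{n \in \Z} g^*(M_n) \otimes \L^{\otimes n}.\]
This is precisely the functor associated to the pair $(g^*, \L)$ by the inverse equivalence of \autoref{grG} (specialized in \autoref{grZ}), which closes the loop. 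Once this identification is in place, naturality in $Y$ is formal, and the resulting equivalence $\Hom(Y, B\G_m) \simeq \Hom_{c\otimes}(\Q(B\G_m), \Q(Y))$ is exactly $\eta_{B\G_m, Y}$, proving that $B\G_m$ is tensorial.
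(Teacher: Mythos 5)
Your proof is correct and takes essentially the same route as the paper: identify $\Hom_{c\otimes}(\Q(B\G_m),\Q(Y))$ with the groupoid of pairs $(F,\L)$ via \autoref{Gm-global} and \autoref{grZ}, then use tensoriality of $S$ to replace $F$ by a morphism $g : Y \to S$, and note that line objects in $\Q(Y)$ are exactly line bundles. The only difference is that you explicitly check that the composite equivalence is the pullback functor $f \mapsto f^*$ (via the formula $f^*(M) \cong \bigoplus_{n} g^*(M_n) \otimes \L^{\otimes n}$) — a compatibility the paper's two-line proof leaves implicit — which is a welcome extra precision rather than a different argument.
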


\begin{proof}
If $Y$ is a scheme, then $\Hom_{c\otimes}(\Q(B \G_m),\Q(Y))$ is equivalent to
\[\{(F,\L) : F \in \Hom_{c\otimes}(\Q(S),\Q(Y)), \L \in \Q(Y) \text{ invertible}\}.\]
Since $S$ is tensorial, this simplifies to $\Hom(Y,B \G_m)$.
\end{proof}

\begin{defi}
If $\C$ is $R$-linear, we can endow the category of $\Z$-graded objects with another symmetry, induced by
\[M_p \otimes N_q \xrightarrow{(-1)^{pq}} M_p \otimes N_q \xrightarrow{S_{M_p,M_q}} N_q \otimes M_p.\]
This tensor category is called $\tilde{\gr}_{\Z}(\C)$. If we define $X$ as above, notice that $X$ is an anti-line object. In fact we have:
\end{defi}
 
\begin{prop}[Universal property of $\tilde{\gr}_\Z(\C)$] \label{grZ-twist}
Let $\C$ be a cocomplete $R$-linear tensor category. Then $\tilde{\gr}_{\Z}(\C)$ is a cocomplete $R$-linear tensor category with the following universal property: If $\D$ is a cocomplete $R$-linear tensor category, then there is an equivalence of categories
\[\Hom_{c\otimes/R}(\tilde{\gr}_{\Z}(\C),\D) \simeq \{(F,\L) : F \in \Hom_{c\otimes/R}(\C,\D),\, \L \in \D \text{ anti-line object}\}.\]
Here, a morphism $(F,\L) \to (F',\L')$ is a pair $(\alpha,f)$ consisting of a morphism $F \to F'$ and an isomorphism $f : \L \to \L'$.
\end{prop}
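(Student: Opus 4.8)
The plan is to run the argument for \autoref{grZ} essentially verbatim, isolating the one place where the twisted symmetry intervenes, and to show that this place is governed exactly by the signature of an anti-line object. First I would set up the comparison functor $\Phi \colon \Hom_{c\otimes/R}(\tilde{\gr}_{\Z}(\C),\D) \to \{(F,\L)\}$ sending a cocontinuous $R$-linear tensor functor $G$ to the pair $(G \circ \iota,\, G(X))$, where $\iota \colon \C \to \tilde{\gr}_{\Z}(\C)$ is the degree-$0$ embedding and $X$ is $\O_\C$ placed in degree $1$. The key preliminary observation is that $X$ is an \emph{anti-line} object of $\tilde{\gr}_{\Z}(\C)$: it is invertible with inverse $X^{\otimes -1}$ concentrated in degree $-1$, and its signature is computed from the symmetry on the $(1,1)$-component, which by the twist equals $(-1)^{1 \cdot 1}\,\id = -\id$. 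Since an $R$-linear tensor functor preserves the symmetry and the scalar $-1 \in R$, it preserves anti-line objects, so $G(X)$ is again an anti-line object and $\Phi$ is well defined.

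Next I would construct the inverse $\Psi$. Given $(F,\L)$ with $\L$ an anti-line object, I set
\[
G(M) := \bigoplus_{n \in \Z} F(M_n) \otimes \L^{\otimes n},
\]
which is cocontinuous as a direct sum of cocontinuous functors, and I would equip it with a monoidal structure by exactly the isomorphisms used in \autoref{grG}: the unit comparison $G(\iota(1)) \cong F(\O_\C) \otimes \L^{\otimes 0} \cong \O_\D$, and the product comparison obtained by reindexing $\bigoplus_{p,q} F(M_p) \otimes \L^{\otimes p} \otimes F(N_q) \otimes \L^{\otimes q}$ to $\bigoplus_{p,q} F(M_p) \otimes F(N_q) \otimes \L^{\otimes p+q}$, straightening $\L^{\otimes p}$ past $F(N_q)$ using the braiding of $\D$. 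Associativity and unit coherence for $G$ are checked as in the untwisted case and are insensitive to the symmetry.

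The only genuinely new verification — and the step I expect to be the main obstacle — is that $G$ is compatible with the \emph{symmetries}. On $\tilde{\gr}_{\Z}(\C)$ the symmetry restricted to the $(p,q)$-component is the underlying symmetry of $\C$ scaled by $(-1)^{pq}$. When one transports this through the structure isomorphisms above, the comparison of $G(S_{M,N})$ with $S_{G(M),G(N)}$ reduces, on the degree-$(p,q)$ summand, to the braiding $S_{\L^{\otimes p},\L^{\otimes q}} \colon \L^{\otimes p} \otimes \L^{\otimes q} \to \L^{\otimes q} \otimes \L^{\otimes p}$. By multiplicativity of the signature (\autoref{invob}) this braiding acts by the scalar $\epsilon(\L)^{pq} = (-\id)^{pq} = (-1)^{pq}$, which matches precisely the twist $(-1)^{pq}$ on the source. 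Thus the two signs cancel and $G$ is symmetric monoidal; had $\L$ been an ordinary line object ($\epsilon(\L) = \id$), the target sign would be $+1$ and the diagram would fail for odd $pq$. This is exactly why the anti-line hypothesis is both necessary and sufficient, and managing this sign bookkeeping cleanly (rather than by a large explicit diagram) is the delicate part.

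Finally I would check $\Phi \Psi \cong \id$ and $\Psi \Phi \cong \id$ on objects by the same colimit/evaluation argument as in \autoref{grG} (the values agree on $\iota(M)$ and on $X$, hence on all of $\tilde{\gr}_{\Z}(\C)$ since those objects generate under the tensor product and colimits), and I would describe the action on morphisms as in \autoref{hat}. A morphism $(F,\L) \to (F',\L')$ consists of a morphism $F \to F'$ together with a morphism $\L \to \L'$; since $\L,\L'$ are dualizable, \autoref{dual-inv} forces this morphism to be an isomorphism, giving the stated description of the morphisms and completing the equivalence.
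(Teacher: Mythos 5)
Your proposal is correct, but it takes a genuinely different route from the paper. The paper's proof is two lines: it identifies the free $R$-linear tensor category on an anti-line object as $\{X^{\otimes p} : p \in \Z\}$ with symmetry $(-1)^{pq} : X^{\otimes p} \otimes X^{\otimes q} \cong X^{\otimes q} \otimes X^{\otimes p}$, and then invokes the Day convolution universal property (\autoref{hat}, in its $R$-linear form), since $\tilde{\gr}_\Z(\C)$ is exactly $\widehat{I}_\C$ for that twisted $I$. You instead construct both functors of the equivalence by hand — $\Phi : G \mapsto (G\iota, G(X))$ and $\Psi : (F,\L) \mapsto \bigoplus_n F(-_n) \otimes \L^{\otimes n}$ — and verify the symmetric monoidal compatibility directly, which amounts to re-proving the relevant instance of \autoref{hat}. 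The trade-off: the paper's argument is short because all the sign content is compressed into the ``notice that'' identification of the free category (which secretly requires the same computation you perform, namely that tensor functors out of the twisted $\Z$ correspond exactly to anti-line objects), whereas your argument is self-contained and makes explicit \emph{where} the anti-line hypothesis enters — the cancellation of $\epsilon(\L)^{pq} = (-1)^{pq}$ against the twist — and why an ordinary line object would fail. One small imprecision: the fact you need is that $S_{\L^{\otimes p},\L^{\otimes q}}$ acts by the scalar $\epsilon(\L)^{pq}$ for all $p,q \in \Z$; this does not literally follow from the stated multiplicativity $\epsilon(\L_1 \otimes \L_2) = \epsilon(\L_1) \circ \epsilon(\L_2)$ in \autoref{invob}, but rather from decomposing $S_{\L^{\otimes p},\L^{\otimes q}}$ into $|pq|$ elementary transpositions $S_{\L,\L}$ (together with $\epsilon(\L^{\otimes -1}) = \epsilon(\L)$, which follows since $\epsilon$ is a multiplicative involution); you should either cite that coherence argument or carry it out, but this is a one-line repair, not a gap.
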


\begin{proof}
Notice that the free $R$-linear tensor category on an anti-line object is given by $\{X^{\otimes p} : p \in \Z\}$ with symmetry $(-1)^{pq} : X^p \otimes X^q \cong X^q \otimes X^p$. Now use \autoref{hat}.
\end{proof}

\begin{rem}
We may  similarly define $\tilde{\gr}_{\Z/2\Z}(\C)$. If $\C$ is the tensor category of vector spaces over a field, the objects of $\tilde{\gr}_{\Z/2\Z}(\C)$ are also known as \emph{super vector spaces}.
\end{rem}

\begin{defi}
If $M \in \gr_\Z(\C)$ and $d \in \Z$, the shift $M[d] \in \gr_\Z(\C)$ is defined by $M[d]_n = M_{n+d}$. Note that $M[d] = M \otimes X^{\otimes -d}$.
\end{defi}
  
\begin{defi}[Graded-commutative algebras and modules] \label{graded-comm} \noindent
\begin{enumerate}
 \item A (commutative) graded algebra in $\C$ is a (commutative) algebra in $\gr_{\Z}(\C)$. Such an algebra $A$ consists of a family of objects $A_n$ as well as morphisms $\O_\C \to A_0$ and $A_p \otimes A_q \to A_{p+q}$ in $\C$ subject to the usual conditions. Notice that we get a morphism $A \otimes \iota(A_q) \to A[q]$ in $\gr_{\Z}(\C)$.
 \item A graded-commutative algebra in $\C$ is a commutative algebra in $\tilde{\gr}_{\Z}(\C)$.
 \item If $A$ is one of these types of algebras, then $A$-modules (in the general sense defined in \autoref{module-categories} applied to $\gr_{\Z}(\C)$ resp. $\tilde{\gr}_{\Z}(\C)$) are also called graded $A$-modules (in order to stress that we do not mean modules over the underlying algebra in $\C$). If $A$ is commutative, we will also write $\grM(A)$ instead of $\M(A)$.
\end{enumerate}
\end{defi}

\begin{rem}  \noindent
\begin{enumerate}
\item This coincides with the usual notions when $\C= \M(R)$ for some commutative ring $R$. Using the theory of tensor categories, we see that actually the two notions of commutative graded algebras and graded-commutative algebras can be unified and both types of algebras are commutative in a unified sense. Quite a few ad hoc sign conventions for basics concerning graded-commutative algebras are actually part of natural generalizations to commutative algebras in arbitrary cocomplete linear tensor categories.
\item Strictly speaking, these extra notations are not necessary. Unfortunately, it is a quite common practice to apply forgetful functors secretly, so that $A$-modules, where $A$ is a graded algebra, usually refer to modules over the underlying algebra. When we do not want to throw away the graded structure, we say graded $A$-modules. In my opinion it would be much better just to say which tensor category one works in and to explicitly state each time which forgetful functors are applied.
\end{enumerate}
\end{rem}
 
\begin{prop}[Universal property of $\grM(A)$] \label{grMod}
Let $\C$ be a cocomplete tensor category and let $A$ be a commutative graded algebra in $\C$. Then $\grM(A)$ enjoys the following universal property: For every cocomplete tensor category $\D$, we have an equivalence of categories
\begin{eqnarray*}
 \Hom_{c\otimes}(\grM(A),\D) &  \simeq  &  \{(F,\L,\tau) : F \in \Hom_{c\otimes}(\C,\D),\, \L \in \D \text{ line object},\\
 && ~~\tau : (F(A_n))_n \to (\L^{\otimes n})_n \text{ graded algebra hom.}\}
\end{eqnarray*}
It maps $G : \grM(A) \to \D$ to  the triple $(F,\L,\tau)$, where $F := G(A \otimes \iota(-))$, $\L := G(A[1])$ and $\tau_n := G(A \otimes \iota(A_n) \to A[n])$.
\end{prop}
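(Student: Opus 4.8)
The plan is to realize $\grM(A)$ as an iterated module construction and to deduce its universal property by composing two universal properties already established: that of $\gr_\Z(\C)$ as the free cocomplete tensor category over $\C$ on a line object (\autoref{grZ}), and that of module categories (\autoref{mod-UE}). By definition (\autoref{graded-comm}) a commutative graded algebra $A$ in $\C$ is precisely a commutative algebra object in the cocomplete tensor category $\gr_\Z(\C)$, and $\grM(A)=\M(A)$ is its category of modules, which is again a cocomplete tensor category by \autoref{modA} applied to $\gr_\Z(\C)$. This is the structural observation that makes the whole argument formal.

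Applying \autoref{mod-UE} with base category $\gr_\Z(\C)$ in place of $\C$, I would first obtain a natural equivalence between $\Hom_{c\otimes}(\grM(A),\D)$ and the category of pairs $(H,h)$, where $H \in \Hom_{c\otimes}(\gr_\Z(\C),\D)$ and $h \in \Hom_{\CAlg(\D)}(H(A),\O_\D)$; here $H(N)=G(A \otimes N)$ and $h=G(m)$ for $m$ the multiplication of $A$. Next I would feed this into \autoref{grZ}: a cocontinuous tensor functor $H : \gr_\Z(\C) \to \D$ is the same datum as a pair $(F,\K)$ with $F = H \circ \iota \in \Hom_{c\otimes}(\C,\D)$ and a line object $\K = H(X) \in \D$, with $H(N) \cong \bigoplus_n F(N_n) \otimes \K^{\otimes n}$. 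In particular $F = G(A \otimes \iota(-))$, matching the asserted formula, and writing $A \cong \bigoplus_n \iota(A_n) \otimes X^{\otimes n}$ in $\gr_\Z(\C)$ yields $H(A) \cong \bigoplus_n F(A_n) \otimes \K^{\otimes n}$.

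The heart of the argument, and the step I expect to require the most care, is the translation of the single algebra homomorphism $h : H(A) \to \O_\D$ into the graded algebra homomorphism $\tau : (F(A_n))_n \to (\L^{\otimes n})_n$. Since $\K$ is invertible, giving $h$ is equivalent to giving a family $\tau_n : F(A_n) \to \K^{\otimes -n}$ obtained by twisting the components $F(A_n) \otimes \K^{\otimes n} \to \O_\D$ by $\K^{\otimes -n}$; setting $\L := \K^{\otimes -1}$ the target becomes $\L^{\otimes n}$, and one checks $\L = H(X)^{\otimes -1} = G(A \otimes X^{\otimes -1}) = G(A[1])$, in agreement with the stated formula for $\L$. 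I would then verify that multiplicativity and unitality of $h$, transported through the tensor structure of $H$ and the isomorphisms $\L^{\otimes p} \otimes \L^{\otimes q} \cong \L^{\otimes p+q}$, say exactly that $(\tau_n)$ is a homomorphism of graded algebras $(F(A_n))_n \to (\L^{\otimes n})_n = \Sym^\Z(\L)$ (\autoref{SymZ}). Finally, unravelling $h = G(m)$ together with $\L^{\otimes n} \cong G(A[n])$ identifies $\tau_n$ with $G(A \otimes \iota(A_n) \to A[n])$, which is precisely where the shift bookkeeping must be handled carefully.

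Since both \autoref{mod-UE} and \autoref{grZ} are equivalences of categories and the reinterpretation of $h$ as $\tau$ is manifestly functorial, composing them produces an equivalence of categories rather than merely a bijection on objects. A morphism $(F,\L,\tau) \to (F',\L',\tau')$ then comes out as a pair consisting of a morphism $\alpha : F \to F'$ and an isomorphism $\L \cong \L'$ (necessarily an isomorphism by \autoref{dual-inv}) compatible with $\tau$ and $\tau'$, matching the morphisms inherited from the two universal properties. This completes the verification of the universal property of $\grM(A)$.
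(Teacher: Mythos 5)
Your proposal is correct and follows essentially the same route as the paper's own proof: both combine \autoref{mod-UE} (applied to $A$ as a commutative algebra in $\gr_\Z(\C)$) with \autoref{grZ}, and then use invertibility of the line object to untwist the single algebra homomorphism $\bigoplus_n F(A_n) \otimes \K^{\otimes n} \to \O_\D$ into the graded algebra homomorphism $\tau : (F(A_n))_n \to (\L^{\otimes n})_n$ with $\L = \K^{\otimes -1}$. Your additional verification of the explicit formulas $F = G(A \otimes \iota(-))$ and $\L = G(A[1])$ via the shift convention $M[d] = M \otimes X^{\otimes -d}$ is bookkeeping the paper leaves implicit, but it is consistent with its conventions.
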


\begin{proof}
A combination of \autoref{mod-UE} and \autoref{grZ} yields that the category $\Hom_{c\otimes}(\grM(A),\D)$ is equivalent to the category of triples $(F,\L,\sigma)$, where $F \in \Hom_{c\otimes}(\C,\D)$, $\L \in \D$ is a line object and $\sigma : \bigoplus_n F(A_n) \otimes \L^{\otimes n} \to \O_\D$ is a morphism of algebras in $\O_\D$. If $\K$ denotes the inverse of $\L$, this corresponds to a family of morphisms $F(A_n) \to \K^{\otimes n}$ in $\C$, inducing a morphism of graded algebras. The analoguous statement for graded-commutative algebras follows similarly from a combination of \autoref{mod-UE} and \autoref{grZ-twist}.
\end{proof}

\begin{rem} \noindent
\begin{enumerate}
\item  If $\C$ is linear and $A$ is a graded-commutative algebra, the analogous universal property for $\grM(A)$ holds, where $\L$ becomes an anti-line object.
\item If $\C$ is locally presentable, then the same is true for $\grM(A)$.
\end{enumerate}
\end{rem}

\begin{cor} \label{grModSym}
Let $\C$ be a cocomplete tensor category and $E \in \C$. Then $\grM(\Sym(E))$ enjoys the following universal property: For every cocomplete tensor category $\D$ we have an equivalence of categories
\begin{eqnarray*}
\Hom_{c\otimes}(\grM(\Sym(E)),\D)&   \simeq &  \{(F,\L,\tau) : F \in \Hom_{c\otimes}(\C,\D),\\
 & & ~ \L \in \D \text{ line object},\, \tau \in \Hom_\D(F(E),\L)\}.
\end{eqnarray*}
In particular, for $d \in \N$, we have
\begin{eqnarray*}
\Hom_{c\otimes}(\grM(\O_\C[T_1,\dotsc,T_d])),\D)&   \simeq &  \{(F,\L,\tau) : F \in \Hom_{c\otimes}(\C,\D),\\
 & & ~~ \L \in \D \text{ line object},\, \tau : \O_\D^{\oplus d} \to \L\}.
\end{eqnarray*}
\end{cor}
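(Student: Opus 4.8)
The plan is to derive this directly from the universal property of $\grM(A)$ established in \autoref{grMod}, applied to the commutative graded algebra $A = \Sym(E)$, graded so that $A_n = \Sym^n(E)$ sits in degree $n$. By \autoref{grMod}, the category $\Hom_{c\otimes}(\grM(\Sym(E)),\D)$ is equivalent to the category of triples $(F,\L,\tau)$, where $F \in \Hom_{c\otimes}(\C,\D)$, $\L \in \D$ is a line object, and $\tau : (F(\Sym^n(E)))_n \to (\L^{\otimes n})_n$ is a homomorphism of graded algebras. So the whole task reduces to identifying such a $\tau$ with a single morphism $F(E) \to \L$ in $\D$.

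First I would compute the source graded algebra. Since $F$ is a cocontinuous tensor functor, it preserves tensor powers, the $\Sigma_n$-actions on them, and coequalizers, hence it commutes with the formation of symmetric powers (\autoref{sym-exakt}), giving natural isomorphisms $F(\Sym^n(E)) \cong \Sym^n(F(E))$. Moreover $F$ carries the structure maps $\Sym^p(E) \otimes \Sym^q(E) \to \Sym^{p+q}(E)$ to the corresponding multiplication maps of $\Sym(F(E))$, so these isomorphisms assemble into an isomorphism of graded algebras $(F(\Sym^n(E)))_n \cong (\Sym^n(F(E)))_n = \Sym(F(E))$ in $\D$. Next I would invoke the universal property of the symmetric algebra (\autoref{symalg}) in its graded form: viewing $F(E)$ as an object placed in degree $1$, the (non-twisted) symmetry of $\gr_\Z(\D)$ makes $\Sym^n$ of it equal to $\Sym^n(F(E))$ concentrated in degree $n$, so that $\Sym(F(E))$, regarded with $F(E)$ in degree $1$, is the free commutative graded algebra on $F(E)$ placed in degree $1$. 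Consequently graded algebra homomorphisms $\Sym(F(E)) \to \bigoplus_n \L^{\otimes n}$ are in natural bijection with their degree-$1$ components, i.e. with morphisms $F(E) \to \L^{\otimes 1} = \L$ in $\D$. Composing this bijection with the equivalence from \autoref{grMod} yields the asserted equivalence with triples $(F,\L,\tau)$, $\tau \in \Hom_\D(F(E),\L)$. For the special case I would take $E = \O_\C^{\oplus d}$, use $\Sym(\O_\C^{\oplus d}) = \O_\C[T_1,\dotsc,T_d]$, and observe that a cocontinuous tensor functor preserves $\O$ and finite direct sums, so $F(\O_\C^{\oplus d}) \cong \O_\D^{\oplus d}$ and $\tau$ becomes a morphism $\O_\D^{\oplus d} \to \L$.

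The main obstacle is the careful bookkeeping in the two graded-algebra identifications: one must check that $F$ respects not merely the objects $\Sym^n(E)$ but the entire graded-algebra structure (unit and all multiplication morphisms), and that the graded universal property of $\Sym$ genuinely forces a homomorphism into $\bigoplus_n \L^{\otimes n}$ to be determined by its degree-$1$ component alone. Both points reduce to the universal properties already recorded in \autoref{sym-exakt} and \autoref{symalg}, together with the fact that a homomorphism of graded algebras necessarily preserves degrees; so no genuinely new argument is needed beyond making these compatibilities precise.
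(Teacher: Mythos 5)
Your proposal is correct and follows exactly the route the paper intends: the corollary is stated without a separate proof precisely because it is \autoref{grMod} applied to $A=\Sym(E)$, combined with the observation that a cocontinuous tensor functor preserves symmetric powers (\autoref{sym-exakt}) and that the graded universal property of the symmetric algebra (\autoref{symalg}) identifies graded algebra homomorphisms $\Sym(F(E)) \to \bigoplus_n \L^{\otimes n}$ with their degree-$1$ components $F(E) \to \L$. Your bookkeeping of these two identifications, including the specialization $E=\O_\C^{\oplus d}$, fills in the details the paper leaves implicit.
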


Thus, $\grM(\O_\C[T_1,\dotsc,T_d])$ is the universal cocomplete tensor category over $\C$ containing a line object $\L$ with $d$ global sections of $\L$.

\begin{ex}
Let us look more closely at $\grM(\O_\C[T])$. Objects are sequences
\[\dotsc \xrightarrow{T} M_{-2} \xrightarrow{T} M_{-1} \xrightarrow{T} M_0 \xrightarrow{T} M_1 \xrightarrow{T} \dotsc\]
in $\C$. The unit is the sequence
\[\dotsc \to 0 \to 0 \to \O_\C \to \O_\C \to \dotsc\]
concentrated in degrees $\geq 0$. The tensor product of two sequences $M,N$ is the sequence $M \otimes N$ with $(M \otimes N)_n = \bigoplus_{p+q=n} M_p \otimes N_q / (T m \otimes n \sim m \otimes T n)$ in element notation. The universal invertible object $\L$ is given by the sequence
\[\dotsc \to 0 \to \O_\C \to \O_\C \to \O_\C \to \dotsc\]
concentrated in degrees $\geq -1$, its inverse being the concentrated in degrees $\geq +1$. The universal global section is given by
\[\xymatrix{\dotsc \ar[r] & 0 \ar[r] \ar[d] & 0 \ar[r] \ar[d]  & \O_\C \ar[r] \ar[d]  & \O_\C \ar[r] \ar[d]  & \dotsc \\ \dotsc \ar[r] & 0 \ar[r] & \O_\C \ar[r] & \O_\C \ar[r] & \O_\C \ar[r] & \dotsc }\]
\end{ex}
\section{Representations} \label{rep}


Let $I$ be a small category and let $\C$ be a cocomplete tensor category. Recall the cocomplete tensor category $\C^I$ of functors $I^{\op} \to \C$ from \autoref{funcat} whose tensor product and colimits are computed ``pointwise'' (as opposed to the Day convolution which we used in case of a tensor category $I$ in \autoref{cocompletions}). Similarly, ${}^I \C$ consists of functors $I \to \C$.

The following notion of a torsor generalizes \cite[Section 3.2]{CJF13}.

\begin{defi}[Torsors]
An $I$-torsor in $\C$ is a cocommutative coalgebra object $(T,\Delta,\e)$ in ${}^I \C$ such that
\begin{enumerate}
\item The counits $(\e_i : T(i) \to 1_\C)_{i \in I}$ form a colimit cocone, so that
\[\colim_{i \in I} T(i) \cong 1_\C.\]
\item For all $i,j \in I$, the canonical morphism
\[\colim_{k \to i, k \to j} T(k) \to T(i) \otimes T(j),\]
induced by $T(k) \xrightarrow{\Delta_k} T(k) \otimes T(k) \xrightarrow{T(g) \otimes T(h)} T(i) \otimes T(j)$ for all morphisms $g : k \to i$ and $h : k \to j$, is an isomorphism.
\end{enumerate}
We obtain a category $\Tors(I,\C)$, even a $2$-functor $\Tors(I,-) : \Cat_{c\otimes} \to \Cat$.
\end{defi}

\begin{rem}
Actually we have defined a left $I$-torsor. Working with $\C^I$, we arrive at the notion of a right $I$-torsor (which was also chosen in \cite{CJF13}). Of course it really makes no substantial difference since we may just  replace $I$ by $I^{\op}$. Notice that the two torsor axioms may be written as coend formulas:
\[\int^{i \in I} T(i) \cong 1_\C\]
\[\int^{k \in I} \Hom(k,i) \otimes \Hom(k,j) \otimes T(k) \cong T(i) \otimes T(j).\]
\end{rem}

\begin{ex}[Torsors over groups]
Let $G$ be a group, considered as a category with one object. Then a $G$-torsor in $\C$ is an object $T \in \C$ with a left $G$-action $G \otimes T \to T$ and a $G$-equivariant coalgebra structure $\Delta : T \to T \otimes T$, $\e : T \to 1$ such that $G \backslash T \cong 1$ and $G \otimes T \to T \otimes T$ defined by $(g,t) \mapsto (gt,t)$ in element notation is an isomorphism. More precisely, we define $G \otimes T  \to T \otimes T$ on the summand indexed by $g \in G$ as the composite $T \xrightarrow{\Delta} T \otimes T \xrightarrow{g \otimes T} T \otimes T$. If $\C$ is cartesian, then coalgebra structures exist and are unique. We therefore arrive at the usual generalized notion of a torsor which is common for example in topos theory (\cite[VIII.2, Definition 6]{Mac92}). In particular, for $\C=\Set$ we get the usual notion of a non-empty, free and transitive left $G$-set. An example is $G$ with its left regular action. Every other $G$-torsor in $\Set$ is isomorphic to $G$, but the isomorphism is not canonical. Notice $G$ can be even considered as a $G$-torsor in $\Set^G$, the category of right $G$-sets. This will turn out to be the universal $G$-torsor.
\end{ex}

\begin{ex}[Group algebras]
Since $\Set$ is the initial cocomplete tensor category, the $G$-torsor $G$ itself in $\Set$ produces a $G$-torsor in any cocomplete tensor category $\C$, namely the \emph{group algebra} $\O_\C[G]$ given by $\bigoplus_{g \in G} \O_\C$ with the obvious $G$-action, counit $\e \circ \iota_g = \id_{\O_\C}$ and comultiplication $\Delta \circ \iota_g = \iota_g \otimes \iota_g$, where $\iota_g$ is the inclusion of the summand indexed by $g \in G$.
\end{ex}

\begin{ex}[The universal torsor] \label{univtor}
Let $I$ be a small category. We define an $I$-torsor in the cartesian tensor category $\C:=\Set^I$ by
\[T(i) := \Hom(-,i) \in \Set^I\]
i.e. for short $T := \Hom(-,-) \in {}^I (\Set^I)$. The coalgebra structure exists and is unique because we are in the cartesian case. The two torsor axioms state:
\begin{enumerate}
\item The unique morphism $\colim_{i \in I} \Hom(-,i) \to 1$ is an isomorphism.
\item The canonical morphism
\[\colim_{k \to i, k \to j} \Hom(-,k) \to \Hom(-,i) \times \Hom(-,j)\]
is an isomorphism.
\end{enumerate}
Both follow immediately from the Yoneda Lemma.
\end{ex}

The following universal property is a slight generalization of \cite[Proposition 3.2.5, Lemma 2.3.12]{CJF13}, since we do not need any presentability assumptions and accordingly our proof is less abstract. See \cite[VIII.2, Theorem 7]{Mac92} for the case of topoi.

\begin{prop}[Universal property of $\C^I$] \label{torsUE}
Let $I$ be a small category and let $\C$ be a cocomplete tensor category. Then $\C^I$ is a cocomplete tensor category with the following universal property: If $\D$ is a cocomplete tensor category, there is an equivalence of categories
\[\Hom_{c\otimes}(\C^I,\D) \simeq \Hom_{c\otimes}(\C,\D) \times \Tors(I,\D).\]
A corresponding statement holds in the $R$-linear case.
\end{prop}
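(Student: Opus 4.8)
The plan is to exhibit the equivalence by constructing explicit functors in both directions, using the coend form of the Yoneda Lemma (\autoref{yoneda}) as the main tool, and to read off both the tensor structure and its invertibility directly from the two torsor axioms. Throughout I regard $\C^I$ as the pointwise cocomplete tensor category of \autoref{funcat}, so that its unit is the constant functor $\Delta(\O_\C)$ and its colimits and tensor products are computed degreewise.

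First I would build the comparison functor $\Phi:\Hom_{c\otimes}(\C^I,\D)\to\Hom_{c\otimes}(\C,\D)\times\Tors(I,\D)$. The constant-diagram functor $\Delta:\C\to\C^I$ is a cocontinuous tensor functor: its monoidal structure is strict because $\otimes$ in $\C^I$ is pointwise, and it preserves colimits because colimits in $\C^I$ are pointwise. There is also the canonical cocontinuous tensor functor $u_*:\Set^I\to\C^I$ obtained by applying the unique $u:\Set\to\C$ degreewise, under which the universal $I$-torsor $T=\Hom(-,-)$ of \autoref{univtor} becomes an $I$-torsor $u_*T$ in $\C^I$ (torsors are preserved by cocontinuous tensor functors, since the coalgebra structure is preserved by any tensor functor and the two defining coend conditions by any cocontinuous one). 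Then $\Phi$ sends $L\mapsto(L\circ\Delta,\ {}^{I}L(u_*T))$.

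Next I would give the inverse $\Psi$, sending $(F,S)$ to $L_{F,S}(M):=\int^{i\in I}F(M(i))\otimes S(i)$; this is cocontinuous since coends commute with colimits and $\otimes$ is cocontinuous in each variable in $\D$. The crucial point is the tensor structure, which I would obtain from the torsor axioms. The counit $\e$ and comultiplication $\Delta_k:S(k)\to S(k)\otimes S(k)$ of the coalgebra $S$ make $L_{F,S}$ an \emph{oplax} tensor functor, with unit map $L_{F,S}(\Delta(\O_\C))=\colim_i S(i)\to\O_\D$ and with $L_{F,S}(M\otimes N)\to L_{F,S}(M)\otimes L_{F,S}(N)$ induced by the $\Delta_k$. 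The two torsor axioms say precisely that these oplax maps are isomorphisms: the first gives $\colim_i S(i)\cong\O_\D$, hence $L_{F,S}(\Delta(\O_\C))\cong\O_\D$; the second rewrites $S(i)\otimes S(j)\cong\int^{k}\Hom(k,i)\otimes\Hom(k,j)\otimes S(k)$, and substituting this and then collapsing the $i,j$ coends by the density formula (\autoref{yoneda}) gives
\[
\int^{i,j}F(M(i))\otimes F(N(j))\otimes S(i)\otimes S(j)\ \cong\ \int^{k}F(M(k))\otimes F(N(k))\otimes S(k),
\]
which is exactly $L_{F,S}(M)\otimes L_{F,S}(N)\cong L_{F,S}(M\otimes N)$. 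For the mutual inverseness, $\Phi\Psi\cong\id$ follows by computing $L_{F,S}\Delta(A)\cong F(A)\otimes\colim_iS(i)\cong F(A)$ and ${}^{I}L_{F,S}(u_*T)(i)=\int^{j}\Hom(j,i)\otimes S(j)\cong S(i)$ by co-Yoneda; and $\Psi\Phi\cong\id$ follows from the coend presentation $M\cong\int^{i}(u_*T)(i)\otimes\Delta(M(i))$ in $\C^I$ (which degreewise is $j\mapsto\Hom(j,i)\otimes M(i)$), applied to a cocontinuous tensor functor $L$. On morphisms everything is formal, since a transformation of cocontinuous tensor functors corresponds to a compatible pair consisting of a transformation $L\Delta\to L'\Delta$ and a morphism of torsors.

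The main obstacle is coherence rather than the object-level isomorphisms, which fall out of the coend calculus immediately. I expect the bulk of the work to be verifying that the unit and tensor structure maps on $L_{F,S}$ satisfy the associativity, unitality and symmetry axioms of a strong tensor functor, and that they are natural in $M$ and $N$; this requires unwinding the coassociativity, counitality and cocommutativity of the coalgebra $S$ and matching them against the coherence constraints of $\D$, together with checking that $\Phi$ and $\Psi$ respect the extra structure carried by morphisms on both sides. The $R$-linear case is handled by the same argument, with all coends taken in the $R$-linear sense (they can no longer be reduced to ordinary colimits, but the formal manipulations are unchanged).
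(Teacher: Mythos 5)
Your proposal is correct and follows essentially the same route as the paper's own proof: the same comparison functor $(L\Delta,\,{}^IL(u_*T))$ built from the diagonal and the universal torsor of $\Set^I$, the same coend formula $\int^{i}F(M(i))\otimes S(i)$ for the inverse, the same use of the two torsor axioms plus co-Yoneda to upgrade the oplax structure to a strong one, and the same co-Yoneda presentation of objects of $\C^I$ for the uniqueness direction. The paper likewise leaves the coherence verifications implicit and treats the $R$-linear case by the same formal manipulations with $R$-linear coends.
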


\begin{proof}
The diagonal $\Delta : \C \to \C^I$ is a cocontinuous tensor functor. The unique cocontinuous tensor functor $\Set \to \C$ induces $\Set^I \to \C^I$ and therefore maps the universal torsor $\Hom(-,-)$ of $\Set^I$ (\autoref{univtor}) to a torsor in $\C^I$, namely $\Hom(-,-) \otimes 1_\C$. Therefore, we get a functor
\[\Hom_{c\otimes}(\C^I,\D) \to \Hom_{c\otimes}(\C,\D) \times \Tors(I,\D)\]
which maps $H : \C^I \to \D$ to $(H \circ \Delta,\Hom(-,-) \otimes 1_\D)$. We construct an inverse functor: Given a torsor $T$ in $\D$ and a cocontinuous tensor functor $A : \C \to \D$, we define
\[H : \C^I \to \D, ~ F \mapsto \int^{i \in I} A(F(i)) \otimes T(i).\]
Then $H$ is a cocontinuous functor. The cocommutative coalgebra structure on $T$ allows us to make $H$ an oplax tensor functor and the torsor conditions imply that it is actually a strong tensor functor -- let us explain this in more details: We have
\[H(1) \cong \int^{i \in I} A(1) \otimes T(i) \cong \int^{i \in I} T(i) \cong 1_\D.\]
For $F,G \in \C^I$ we have
\begin{eqnarray*}
&& H(F) \otimes H(G) \\
& \cong & \int^{i,j \in I} A(F(i)) \otimes A(G(j)) \otimes T(i) \otimes T(j) \\
& \cong & \int^{i,j,k \in I} \Hom(k,i) \otimes \Hom(k,j) \otimes A(F(i)) \otimes A(G(j)) \otimes T(k) \\
& \stackrel{\text{Yoneda}}{\cong} & \int^{k \in I} A(F(k)) \otimes A(G(k)) \otimes T(k) \\
& \cong & H(F \otimes G).
\end{eqnarray*}
For $M \in \C$ we have
\[H(\Delta(M))=\int^{i \in I} A(M) \otimes T(i) \cong A(M) \otimes \int^{i \in I} T(i) \cong A(M).\]
This provides an isomorphism $H \circ \Delta \cong A$. Besides, we have (by Yoneda)
\[H(\Hom(-,-) \otimes 1_\C) = \int^{i \in I} \Hom(-,i) \otimes T(i) \cong T\]
If $H : \C^I \to \D$ is any cocontinuous tensor functor with $H \circ \Delta \cong A$ and $H(\Hom(-,-) \otimes 1_\C) \cong T$, we have for every $F \in \C^I$
\begin{eqnarray*}
H(F) & \cong & H\left(\int^{i \in I} \Delta(F(i)) \otimes \Hom(-,i)\right) \\
& \cong & \int^{i \in I}  H(\Delta(F(i))) \otimes  H(\Hom(-,i) \otimes 1_\C)  \\
& \cong & \int^{i \in I} A(F(i)) \otimes T(i).
\end{eqnarray*}
This finishes the proof.
\end{proof}

\begin{cor}
With the notation above we have that $\C^I$ is a $2$-coproduct of $\C$ and $\Set^I$ in $\Cat_{c\otimes}$.
\end{cor}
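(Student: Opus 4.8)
The plan is to deduce the statement directly from the universal property established in \autoref{torsUE}, together with the fact (\autoref{sets}) that $\Set$ is a $2$-initial object of $\Cat_{c\otimes}$. Recall that a $2$-coproduct of $\C$ and $\Set^I$ in $\Cat_{c\otimes}$ is, by definition, a cocomplete tensor category $X$ equipped with cocontinuous tensor functors from $\C$ and $\Set^I$ inducing, for every cocomplete tensor category $\D$, a natural equivalence
\[\Hom_{c\otimes}(X,\D) \simeq \Hom_{c\otimes}(\C,\D) \times \Hom_{c\otimes}(\Set^I,\D).\]
So it suffices to identify the right-hand factor $\Hom_{c\otimes}(\Set^I,\D)$ with $\Tors(I,\D)$ and then invoke \autoref{torsUE} once more.

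First I would apply \autoref{torsUE} in the special case $\C = \Set$. Since $\Set$ is $2$-initial in $\Cat_{c\otimes}$ (\autoref{sets}), the category $\Hom_{c\otimes}(\Set,\D)$ is equivalent to the terminal category for every $\D$. Hence the equivalence of \autoref{torsUE} specializes to
\[\Hom_{c\otimes}(\Set^I,\D) \simeq \Hom_{c\otimes}(\Set,\D) \times \Tors(I,\D) \simeq \Tors(I,\D),\]
naturally in $\D$.

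Next I would apply \autoref{torsUE} for the given $\C$ and substitute the identification just obtained:
\[\Hom_{c\otimes}(\C^I,\D) \simeq \Hom_{c\otimes}(\C,\D) \times \Tors(I,\D) \simeq \Hom_{c\otimes}(\C,\D) \times \Hom_{c\otimes}(\Set^I,\D).\]
This is precisely the defining property of a $2$-coproduct. To complete the argument one should check that these equivalences are compatible with the correct structure maps, namely that the first projection corresponds to precomposition with the diagonal $\Delta : \C \to \C^I$ and the second to precomposition with the functor $\Set^I \to \C^I$ induced by the unique cocontinuous tensor functor $\Set \to \C$; both facts are read off directly from the construction of the equivalence in the proof of \autoref{torsUE}.

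The main (though mild) obstacle is verifying this compatibility with the cocone inclusions and the naturality in $\D$, rather than merely matching up the underlying categories. Since all the equivalences involved were constructed explicitly and functorially in \autoref{torsUE} and in \autoref{sets}, this verification is routine bookkeeping and requires no new ideas.
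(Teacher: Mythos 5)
Your proof is correct and takes essentially the same route the paper intends: the statement is an immediate consequence of \autoref{torsUE}, and the remark directly following the corollary in the paper records precisely your key step, namely that \autoref{torsUE} applied to $\C = \Set$ (which is $2$-initial by \autoref{sets}) gives $\Hom_{c\otimes}(\Set^I,\D) \simeq \Tors(I,\D)$. Your final compatibility check (that the two projections correspond to precomposition with $\Delta : \C \to \C^I$ and with the functor $\Set^I \to \C^I$ sending the universal torsor $\Hom(-,-)$ to $\Hom(-,-) \otimes 1_\C$) matches the explicit construction in the proof of \autoref{torsUE}.
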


\begin{rem}
\autoref{torsUE} implies
\[\Hom_{c\otimes}(\Set^I,\D) \simeq \Tors(I,\D),\]
so that $\Hom(-,-) \in \Set^I$ is in fact the universal $I$-torsor. If $G$ is a group, the universal $G$-torsor is therefore the right $G$-set $G$ (together with its left $G$-action). In the setting of $R$-linear cocomplete tensor categories, it follows that the group algebra $R[G] \in \M(R)^G \simeq \Rep_R(G)$ is the universal $G$-torsor.
\end{rem}

\begin{cor} \label{torsdual}
Let $\C$ be $R$-linear. Let $G$ be a finite group. Then any $G$-torsor $T$ in $\C$ is a dualizable object of $\C$. In fact, we have $T^* = T$.
\end{cor}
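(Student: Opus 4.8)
The claim is that any $G$-torsor $T$ in an $R$-linear cocomplete tensor category $\C$, for a finite group $G$, is dualizable with $T^* = T$. The plan is to exhibit explicit evaluation and coevaluation morphisms built from the torsor structure and verify the triangle identities. Since $G$ is finite, for a $G$-torsor the counit $\e : T \to \O_\C$ and the comultiplication $\Delta : T \to T \otimes T$ are available, and the second torsor axiom specializes (as in the group case recorded in \autoref{univtor}'s preceding example) to an isomorphism $G \otimes T \cong T \otimes T$ given in element notation by $(g,t) \mapsto (gt,t)$, while the first axiom gives $G \backslash T \cong \O_\C$, i.e. the coinvariants of the $G$-action recover the unit.

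First I would write down the candidate duality data. Take the evaluation $e : T \otimes T \to \O_\C$ to be the composite that uses the isomorphism $T \otimes T \cong G \otimes T$ (inverse of the second axiom) followed by the action-then-counit map; concretely, in element notation $e(s \otimes t)$ should pick out, via the free-transitive structure, the unique $g$ with $s = gt$ and land in $\O_\C$ after summing against the counit. Dually, take the coevaluation $c : \O_\C \to T \otimes T$ to be the composite $\O_\C \cong \colim_i T(i)$ (the colimit cocone of the first axiom, which for a group is $G \backslash T \cong \O_\C$) followed by the diagonal-like map $T \to T \otimes T$ coming from $\Delta$. The guiding heuristic is that in $\Set$ with $T = G$ these are the standard maps making a finite set self-dual: $c(1) = \sum_{g} g \otimes g$ and $e(s \otimes t) = [s=t]$, and the whole point is that the torsor axioms are exactly what is needed to rewrite these coend/colimit expressions so the triangle identities collapse.

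The verification of the two triangle identities is where the computation lives. I would carry it out in element notation (justified by \autoref{Element} and the soundness remark), checking that $(\id_T \otimes e) \circ (c \otimes \id_T) = \id_T$ and $(e \otimes \id_T) \circ (\id_T \otimes c) = \id_T$. Each identity reduces, after inserting the coend description of $c$ and the isomorphism defining $e$, to an application of the Yoneda-style cancellation already used in the proof of \autoref{torsUE}: the colimit $\colim_{k \to i, k \to j} T(k) \cong T(i) \otimes T(j)$ together with $\colim_i T(i) \cong \O_\C$ makes the composite telescope to the identity. The key structural input is that $G$ is finite, which guarantees the relevant direct sums are finite so that $e$ and $c$ are genuine morphisms in $\C$ (flatness/preservation of the needed colimits is automatic, but finiteness is what makes the pairing $e$ well-defined without any completeness hypothesis) and that the self-duality $T^* = T$ holds on the nose rather than up to a twist.

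The main obstacle will be writing down $e$ intrinsically and checking it is well-defined: unlike $c$, which flows naturally from the coalgebra structure, the evaluation must invert the torsor isomorphism $G \otimes T \cong T \otimes T$ and then contract, and one has to confirm that this composite really coequalizes the data so that it descends to a morphism $T \otimes T \to \O_\C$ compatible with the $G$-action. Once $e$ is correctly identified, the triangle identities should follow formally from the two torsor axioms by the same coend manipulation as in \autoref{torsUE}, so I expect the genuinely delicate part to be bookkeeping the $G$-equivariance of $e$ rather than any deep new idea.
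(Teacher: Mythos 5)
Your heuristic targets --- coevaluation $1 \mapsto \sum_{g \in G} g \otimes g$ and evaluation $g \otimes h \mapsto \delta_{g,h}$ --- are exactly the morphisms the paper uses, but your intrinsic constructions of them inside $\C$ do not work as stated. The coevaluation is the clearest failure: you propose to obtain $c$ by following $\O_\C \cong G\backslash T$ with "the diagonal-like map coming from $\Delta$", but $\Delta$ is $G$-\emph{equivariant}, not $G$-\emph{invariant} --- in element notation $\Delta(gt) = gt \otimes gt \neq t \otimes t$ --- so $\Delta$ does not coequalize the action morphisms and hence does not factor through the quotient $T \twoheadrightarrow G\backslash T$; the composite you describe simply does not exist. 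What does descend is the norm $\sum_{g \in G} (g \otimes g)\circ \Delta$, since precomposing with the action of any $h \in G$ merely reindexes the finite sum; this is where finiteness of $G$ and $R$-linearity genuinely enter (to form a finite sum of morphisms) --- not, as you claim, in making $e$ well-defined. Your evaluation is also wrong if read literally: on the summand of $G \otimes T \cong \bigoplus_{g \in G} T$ indexed by $g$, "action-then-counit" sends $s \otimes t$ to $\e(gt) = \e(s)$, i.e. the constant pairing, not $\delta_{s,t}$; the correct recipe is to follow the inverse of the torsor isomorphism by the morphism which is $\e$ on the summand indexed by the identity element of $G$ and $0$ on all other summands. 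Finally, the triangle identities are nowhere verified: "the same coend manipulation as in \autoref{torsUE}" is not an argument, and in a direct approach this step is a genuine diagram chase combining coassociativity, counitality, equivariance and both torsor axioms.

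The paper sidesteps every one of these difficulties. By \autoref{torsUE} and the remark following it, $R[G] \in \M(R)^G \simeq \Rep_R(G)$ is the \emph{universal} $G$-torsor, so any $G$-torsor $T$ in an $R$-linear $\C$ is the image of $R[G]$ under a cocontinuous tensor functor $F : \Rep_R(G) \to \C$; since tensor functors carry duality data to duality data, it suffices to check that $R[G]$ is self-dual in $\Rep_R(G)$, where your two formulas are literal $R$-linear maps whose $G$-equivariance and triangle identities are immediate finite computations. If you want to salvage your direct route (which the paper remarks "should be possible"), you must first repair $c$ and $e$ as above and then actually carry out the verification; the efficient proof is the reduction to the universal torsor.
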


\begin{proof}
We use \autoref{torsUE} (although it should be possible to give a direct proof). It suffices to prove that $R[G] \in \M(R)^G$ is self-dual. We define the coevaluation by
\[c : R \to R[G] \otimes R[G], ~ 1 \mapsto \sum_{g \in G} g \otimes g.\]
We define the evaluation by
\[e : R[G] \otimes R[G] \to R, ~ g \otimes h \mapsto \left\{\begin{array}{ll} 1 & g = h \\ 0 & \text{else.}\end{array}\right.\]
It is straightforward to check that both $c$ and $e$ are $G$-equivariant and satisfy the two triangle identities.
\end{proof}

Now let us connect all this to algebraic geometry.

\begin{thm}[Torsors in geometry]
Let $G$ be a finite group and $X$ be a scheme. Then there is an equivalence between the category $\Tors(G,\Q(X))$ and the category $\Tors(G,X)$ of $G$-torsors over $X$ in the sense of algebraic geometry, i.e. $X$-schemes $Y$ with a right action of $G$, such that \'{e}tale-locally on $X$ we have $Y \cong X \times G$.
\end{thm}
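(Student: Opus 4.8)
The plan is to establish the equivalence by matching the two descriptions of a $G$-torsor and showing that the categorical notion of a torsor in $\Q(X)$ recovers the geometric one via the relative spectrum construction. First I would recall that for a finite group $G$, a $G$-torsor $T$ in $\Q(X)$ is a cocommutative coalgebra in ${}^G \Q(X)$, i.e.\ a quasi-coherent $\O_X$-module with a $G$-action, a $G$-equivariant comultiplication $\Delta : T \to T \otimes_{\O_X} T$ and counit $\e : T \to \O_X$, satisfying $G \backslash T \cong \O_X$ and the isomorphism condition $G \otimes T \cong T \otimes T$. The counit makes $T$ into a (cocommutative) coalgebra, so dually, by passing to $\Spec_X$, the object $\Spec_X(T)$ would be an $X$-scheme equipped with a $G$-action; the torsor axioms translate precisely into the geometric torsor conditions. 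The bridge in both directions is \autoref{aff-global}, which globalizes the relative spectrum: quasi-coherent algebras on $X$ correspond to affine morphisms to $X$.

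The key steps, in order, would be as follows. I would first check that a $G$-torsor $T$ in $\Q(X)$ is automatically a quasi-coherent \emph{algebra} on $X$: the comultiplication $\Delta$ and counit $\e$ of the coalgebra structure in ${}^G \Q(X)$, together with the isomorphism $G \otimes T \cong T \otimes T$ from the second torsor axiom, let me dualize the coalgebra structure to an algebra structure, using that $G$ is finite so that $T$ is dualizable with $T^* = T$ by \autoref{torsdual}. Concretely, the coalgebra structure on $T$ produces an algebra structure on $T^* \cong T$, so $Y := \Spec_X(T)$ is an affine $X$-scheme. Next I would transport the remaining data: the $G$-action on $T$ gives a right $G$-action on $Y$ (contravariantly), and the first torsor axiom $G \backslash T \cong \O_X$, i.e.\ $\colim_{g} T \cong \O_X$, says that the coinvariants are trivial, which geometrically encodes that $Y \to X$ is a torsor. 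Then I would verify the local-triviality condition: \'etale-locally on $X$ the module $T$ becomes isomorphic to the trivial torsor $\O_X[G]$ (the group algebra of \autoref{univtor} base-changed to $X$), which corresponds geometrically to $Y \cong X \times G$. Finally, in the reverse direction, a geometric $G$-torsor $Y \to X$ is finite \'etale (hence affine), so $Y = \Spec_X(A)$ with $A = \pi_* \O_Y$ a quasi-coherent $\O_X$-algebra with $G$-action; dualizing its algebra structure to a coalgebra structure and checking the two torsor axioms recovers a $G$-torsor in $\Q(X)$. The two constructions are mutually inverse, functorially, giving the equivalence.

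The main obstacle I expect is handling the local-triviality condition precisely and showing that the abstract torsor axioms really do cut out \emph{\'etale}-local triviality rather than some weaker or stronger fpqc-local condition. For a finite group $G$, the key point is that the isomorphism $G \otimes T \cong T \otimes T$ forces $T$ to be, locally, a free $\O_X$-module of rank $|G|$ permuted regularly by $G$, which is exactly the condition for $\Spec_X(T) \to X$ to be a finite \'etale $G$-torsor; I would need to argue that the formal isomorphism translates into faithful flatness of $T$ over $\O_X$ together with the diagonal being an isomorphism after base change, so that fpqc descent plus the \'etaleness of $G$-torsors (coming from $G$ being finite and the characteristic being coprime, or more robustly from the standard fact that a torsor under a finite constant group scheme is finite \'etale) yields \'etale local triviality. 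The coalgebra-to-algebra dualization via \autoref{torsdual} is routine once dualizability is in hand, and the matching of universal properties is formal through \autoref{aff-global}, so the genuine content is confirming that the categorical torsor conditions are equivalent to the geometric ones at the level of local models.
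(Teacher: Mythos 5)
Your proposal tracks the paper's own proof essentially step for step: use the self-duality $T^* \cong T$ of \autoref{torsdual} (hence local freeness of finite rank, by \autoref{dual-char}) to pass between the cocommutative coalgebra $T$ and the commutative algebra $A = T^*$, apply $\Spec_X$, and translate the two torsor axioms into $\Fix_G(A) = \O_X$ and $A \otimes A \cong A^G$. For the step you single out as the main obstacle, the paper uses exactly your ``more robust'' alternative: since $A \otimes A \cong A^G$ is \'{e}tale over $A$ and $A$ is faithfully flat over $\O_X$ (being locally free of finite rank and containing $\O_X$ as a subalgebra), descent shows that $A$ is \'{e}tale over $\O_X$, so the torsor is trivialized by its own \'{e}tale total space --- whereas your parenthetical appeal to the characteristic being coprime is unnecessary and misleading, since a constant finite group scheme is \'{e}tale over any base.
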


\begin{proof}
I have learned the following argument from Alexandru Chirvasitu.\\
The category $\Tors(G,X)$ is anti-equivalent (via $\Spec$) to the category of commutative quasi-coherent algebras $A$ on $X$ equipped with a right action of $G$, such that \'{e}tale-locally on $X$ we have $A \cong \O_X^G$ (where $\O_X^G$ carries the obvious $G$-action). In particular, $A$ is a locally free $\O_X$-module. Given such an algebra, we can associate a cocommutative coalgebra $A^*$ in $\Q(X)$ given by $\HOM_{\Q(X)}(A,\O_X)$ (which is quasi-coherent by \autoref{qchom}) with counit induced by the unit of $A$ and comultiplication induced by the multiplication of $A$. This works because we have $(A \otimes A)^* \cong A^* \otimes A^*$ since $A$ is of finite presentation. The right action of $G$ on $A$ induces a left action on $A^*$.

Let us check that $A^*$ is a $G$-torsor in $\Q(X)$. Denoting by $\Fix_G(A)$ the subalgebra of $G$-fixed points, we have $G \backslash A^* = (\Fix_G(A))^* = \O_X^* = \O_X$. The fact that $G \otimes A^* \to A^* \otimes A^*$ is an isomorphism may be reduced to the case that $A=\O_X^G$, where it identifies with the isomorphism $\O_X^{G \times G} \to \O_X^{G \times G}$ mapping $(g,h) \mapsto (gh,h)$ for $g,h \in G$. Thus, $A^*$ is a $G$-torsor in $\Q(X)$. This describes a functor (covariant, since it is a composition of two contravariant ones) from $\Tors(G,X)$ to $\Tors(G,\Q(X))$. It is easily seen to be fully faithful by restricting to the case of trivial torsors.

Conversely, given $T \in \Tors(G,\Q(X))$, then $T$ is dualizable as a quasi-coherent module (\autoref{torsdual}), hence locally free of finite rank (\autoref{dual-char}), in particular of finite presentation. This implies that $\HOM$ interchanges nicely with tensor products as above, so that the cocommutative coalgebra structure on $T$ yields a commutative algebra structure on $A := \HOM_{\Q(X)}(T,\O_X)$ and the left action of $G$ on $T$ yields a right action on $A$. The torsor conditions translate to $\O_X = \Fix_G(A)$ and $A \otimes A \cong A^G$, $a \otimes b \mapsto (a \otimes bg)_{g \in G}$. Since the underlying module of $A$ is locally free of finite rank and $\O_X$ is a subalgebra of $A$, we deduce that $A$ is a faithfully flat algebra. Since $ A \otimes A \cong A^G$ is \'{e}tale over $A$, it follows that $A$ is \'{e}tale (over $\O_X$). Thus, $\Spec(A) \to X$ is an \'{e}tale cover which trivializes $\Spec(A) \to X$. Hence, $\Spec(A) \to X$ is a $G$-torsor in the sense of algebraic geometry. The corresponding $G$-torsor in $\Q(X)$ is isomorphic to $T$, basically because the underlying module of $T$ is reflexive.
\end{proof}

\begin{cor}[Globalization of classifying stacks] \label{BG-global}
If $G$ is a finite group, then the classifying stack $BG$ globalizes to $(BG)_{\otimes} := \Rep(G)$. This means:
\begin{itemize}
\item $\Rep(G) \simeq \Q(BG)$ enjoys the universal property
\[\Hom_{c\otimes}(\Rep(G),\C) \simeq \Tors(G,\C).\]
\item For schemes $X$ we have the universal property
\[\Hom(X,BG) = \Tors(G,X) \simeq \Tors(G,\Q(X)).\]
\end{itemize}
In particular, $BG$ is tensorial.
\end{cor}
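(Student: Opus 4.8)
The plan is to assemble the corollary from the three main ingredients already established in the excerpt: the equivalence $\Rep(G) \simeq \Q(BG)$, the tensor-categorical torsor classification, and the geometric torsor theorem. First I would record that $\Rep(G) = {}^G \M(R)$ (in the linear case; $\Set^G$ in the cartesian case) is, by \autoref{torsUE} applied with $I=G$ and $\C=\M(R)$, the universal cocomplete tensor category carrying a $G$-torsor, so that for any cocomplete tensor category $\C$ there is an equivalence
\[\Hom_{c\otimes}(\Rep(G),\C) \simeq \Tors(G,\C).\]
This is the first displayed universal property; it is essentially a restatement of \autoref{torsUE} together with the remark identifying $\Hom_{c\otimes}(\Set^I,\D) \simeq \Tors(I,\D)$.

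Next I would establish the equivalence $\Rep(G) \simeq \Q(BG)$. The cleanest route is to use the fact (recalled in the excerpt, via the proof of \autoref{Gm-global}) that for a group scheme $G$ over the base, $\Q(BG) \simeq \Rep(G)$ as tensor categories, since quasi-coherent modules on the classifying stack are exactly $G$-equivariant modules, i.e. representations of $G$. For a finite constant group this is the statement that descent along the smooth surjection $\ast \to BG$ identifies $\Q(BG)$ with objects of $\Q(\ast)$ equipped with $G$-action, which is precisely $\Rep(G)$. Combining this with the first step yields
\[\Hom_{c\otimes}(\Q(BG),\C) \simeq \Hom_{c\otimes}(\Rep(G),\C) \simeq \Tors(G,\C),\]
giving the first bullet.

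For the second bullet I would invoke the already-proven \autoref{BG-global}'s preceding theorem on torsors in geometry, which furnishes an equivalence $\Tors(G,X) \simeq \Tors(G,\Q(X))$ for a scheme $X$, together with the standard identification $\Hom(X,BG) = \Tors(G,X)$ from the definition of the classifying stack. Chaining these gives the second displayed line. Finally, tensoriality of $BG$ is read off by specializing $\C = \Q(Y)$ for a test scheme $Y$: the chain
\[\Hom(Y,BG) = \Tors(G,Y) \simeq \Tors(G,\Q(Y)) \simeq \Hom_{c\otimes}(\Rep(G),\Q(Y)) \simeq \Hom_{c\otimes}(\Q(BG),\Q(Y))\]
exhibits the unit $\eta_{BG}$ of the $\SPEC$–$\Q$ adjunction as an equivalence, which by \autoref{tensorial-stacky} is exactly the assertion that $BG$ is tensorial.

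The main obstacle I anticipate is checking that the various equivalences are not merely equivalences of abstract categories but are \emph{compatible}: one must verify that the equivalence $\Hom(Y,BG)\simeq \Hom_{c\otimes}(\Q(BG),\Q(Y))$ produced by chaining is genuinely induced by the pullback functor $f \mapsto f^*$, as required by the definition of tensoriality, rather than by some other identification. Concretely, I would need to trace a morphism $f : Y \to BG$ through each equivalence and confirm that the torsor it classifies, transported to $\Tors(G,\Q(Y))$ and then to a cocontinuous tensor functor, agrees up to natural isomorphism with $f^*$. This amounts to a naturality check: the geometric torsor theorem's functor $\Tors(G,Y)\to\Tors(G,\Q(Y))$ sends a $G$-torsor $P \to Y$ to the dual of $\pi_*\O_P$ with its coalgebra structure, and I must match this with the restriction of $f^*$ along $\ast \to BG$. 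The remaining steps are then routine bookkeeping within the $2$-categorical adjunction of \autoref{adjunction}.
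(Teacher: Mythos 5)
Your proposal is correct and matches the paper's (implicit) argument exactly: the paper states this corollary without a separate proof, as an immediate assembly of the preceding theorem $\Tors(G,X)\simeq\Tors(G,\Q(X))$, the universal property $\Hom_{c\otimes}(\Rep(G),\C)\simeq\Tors(G,\C)$ coming from \autoref{torsUE} and its remark (with the identification $\Q(BG)\simeq\Rep(G)$), which is precisely what you do. Your closing remark about verifying that the chained equivalence is really induced by $f\mapsto f^*$ is the right thing to flag, since an abstract equivalence of the two Hom-categories would not by itself give tensoriality; this compatibility is built into how the geometric torsor theorem's functor is constructed.
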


\begin{rem}[Further research]
It is probably possible to globalize $BG$ also for every (nice enough) algebraic group $G$, by finding a universal property of the category of comodules $\CoM(H)$ for (nice enough) Hopf algebra objects $H$. We have already seen the case of the multiplicative group $\G_m$ (\autoref{Gm-global}). If $R$ is a $\QQ$-algebra we hope that $\Rep_R(\GL_n)$ is the universal $R$-linear cocomplete tensor category equipped with a locally free object of rank $n$ (see \autoref{localfree}). See \cite[Theorem 1.6]{Iwa} for the setting of stable presentable $\infty$-categories. \marginpar{I've added Iwanari's paper.}
\end{rem}
\section{Local functors} \label{localfunc}


Most of the results of this section have already been appeared in \cite{BC14}. Our treatment is a little bit more general.

\begin{defi}[Local functors] \label{localdef}
Let $\C,\D,\E$ be tensor categories. Consider a tensor functor $i^* : \C \to \D$. Assume that its underlying functor has a right adjoint $i_* : \D \to \C$, so that $i_*$ becomes a lax tensor functor (\autoref{adj-lax}) and the unit $\eta : \id_\C \to i_* i^*$ as well as the counit $\e : i^* i_* \to \id_\D$ are morphisms of lax tensor functors. Assume that $i_*$ is fully faithful, i.e. the counit $\e : i^* i_* \to \id_\D$ is an isomorphism. Then a functor $F : \C \to \E$ is called \emph{$i$-local} if $F \eta : F \to F i_* i^*$ is an isomorphism.
\end{defi}

\begin{ex}
If $i : Y \to X$ is a quasi-compact immersion of schemes, then the pushforward $i_* : \Q(Y) \to \Q(X)$ is well-defined (\cite[Proposition 6.7.1]{EGAI}), fully faithful and is right adjoint to $i^* : \Q(X) \to \Q(Y)$, which is a tensor functor. In that case we simply say $Y$-local instead of $i$-local.
\end{ex}
  
\begin{prop} \label{local}
In the situation of \autoref{localdef} we have: 
\begin{enumerate}
 \item $F$ is $i$-local if and only if $F$ maps every morphism $\phi$, such that $i^* \phi$ is an isomorphism, to an isomorphism.  
 \item For every given lax tensor functor $G : \D \to \E$, the lax tensor functor $G i^* : \C \to \E$ is $i$-local. Conversely, to every $i$-local lax tensor functor $F : \C \to \E$ we may associate the lax tensor functor $F i_* : \D \to \E$. This establishes an equivalence of categories
\[\Hom_{\lax}(\D,\E) \simeq \{F \in \Hom_{\lax}(\C,\E)  \text{ $i$-local}\}.\]
\item This restricts to an equivalence of categories
\[\Hom_{\otimes}(\D,\E) \simeq \{F \in \Hom_{\otimes}(\C,\E)  \text{ $i$-local}\}.\]
\item If $\C,\D,\E$ are cocomplete tensor categories, then it even restricts to an equivalence of categories
\[\Hom_{c\otimes}(\D,\E) \simeq \{F \in \Hom_{c\otimes}(\C,\E)  \text{ $i$-local}\}.\]
\end{enumerate}
\end{prop}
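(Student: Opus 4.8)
The plan is to prove the four parts in order, so that the formal equivalences of Parts 2--3 and the characterization of Part 1 can be upgraded to the cocontinuous statement of Part 4. The single fact that makes everything run is that $i^*\eta_A$ is an isomorphism for every $A \in \C$: the triangle identity $\e_{i^*A}\circ i^*\eta_A = \id_{i^*A}$ together with the hypothesis that $\e$ is an isomorphism forces $i^*\eta_A = \e_{i^*A}^{-1}$.

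\textbf{Part 1.} For the direction ``$\Leftarrow$'', if $F$ inverts every $\phi$ with $i^*\phi$ invertible, then by the fact above it inverts each $\eta_A$, i.e.\ $F$ is $i$-local. For ``$\Rightarrow$'', given $\phi : A \to B$ with $i^*\phi$ invertible, I would apply $F$ to the naturality square of $\eta$ at $\phi$; since $i_*i^*\phi$ is then invertible and $F\eta_A, F\eta_B$ are invertible by $i$-locality, solving the resulting commuting square shows $F\phi$ is invertible.

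\textbf{Parts 2 and 3.} The two constructions are whiskering, $\Phi(G) := G i^*$ and $\Psi(F) := F i_*$ (acting on morphisms of lax tensor functors by whiskering as well). A composite of lax tensor functors is lax, so $\Phi$ lands in $i$-local lax functors $\C \to \E$ (it is $i$-local because $G(i^*\eta)$ is invertible by the fact above) and $\Psi$ lands in $\Hom_{\lax}(\D,\E)$. The counit gives a natural isomorphism $G\e : G i^* i_* \xrightarrow{\;\sim\;} G$ and the unit gives $F\eta : F \xrightarrow{\;\sim\;} F i_* i^*$, the latter invertible precisely by $i$-locality; both are morphisms of lax tensor functors since $\e,\eta$ are, which yields Part 2. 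For Part 3 the only issue is that $Fi_*$ is a priori only lax. By \autoref{adj-lax} the lax structure map $\mu_{M,N} : i_*M \otimes i_*N \to i_*(M\otimes N)$ of $i_*$ transposes to $(\e_M \otimes \e_N)\circ \theta^{-1}$, where $\theta$ is the invertible structure isomorphism of the strong functor $i^*$; applying $i^*$ and composing with the counit recovers this transpose, so $i^*\mu_{M,N}$ is a composite of isomorphisms, and the unit structure map is handled identically. By Part 1 the $i$-local $F$ inverts these maps, and since $F$ is strong the composites defining the structure maps of $Fi_*$ are invertible. Hence $Fi_*$ is strong and the equivalence restricts.

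\textbf{Part 4.} Whiskering by $i^*$ preserves cocontinuity, since $i^*$ is a left adjoint, so $\Phi$ still lands in $\Hom_{c\otimes}$. The main obstacle is that $i_*$ is a right adjoint and need not preserve colimits, so I must show $Fi_*$ is cocontinuous. Here I would use that $i_*$ exhibits $\D$ as a reflective subcategory of $\C$ with reflector $i^*$, so colimits in $\D$ are computed as $\colim^\D_j M_j \cong i^*\bigl(\colim^\C_j i_*M_j\bigr)$. Then
\[
\colim_j (Fi_*)(M_j) = \colim_j F(i_*M_j) \cong F\bigl(\colim^\C_j i_*M_j\bigr) \xrightarrow{\;\sim\;} F\bigl(i_*i^*\colim^\C_j i_*M_j\bigr) \cong (Fi_*)\bigl(\colim^\D_j M_j\bigr),
\]
where the middle arrow is $F\eta$ at $\colim^\C_j i_*M_j$ (invertible by $i$-locality) and the second isomorphism is cocontinuity of $F$. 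A routine check that this chain agrees with the canonical comparison map shows $Fi_*$ is cocontinuous. Finally, since the categories of cocontinuous tensor functors are full subcategories of the lax ones and $\Phi,\Psi$ carry them to one another, the strong equivalence of Part 3 restricts to the asserted equivalence $\Hom_{c\otimes}(\D,\E) \simeq \{F \in \Hom_{c\otimes}(\C,\E) \text{ $i$-local}\}$.
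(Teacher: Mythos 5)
Your proof is correct and follows essentially the same route as the paper: whiskering with $i^*$ and $i_*$, with the unit and counit supplying the natural isomorphisms of the equivalence, and Part 1 serving as the workhorse that lets $F$ invert the structure maps of $F i_*$. The only cosmetic differences are that you spell out why $i^*$ applied to the lax structure map of $i_*$ is invertible (the paper merely asserts this) and that in Part 4 you invoke the reflective-subcategory colimit formula where the paper factors the canonical comparison morphism and applies Part 1 directly -- both rest on the same underlying facts.
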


\begin{proof}
$1.$ First note that $i^* \eta : i^* \to i^* i_* i^*$ is an isomorphism, since it is right inverse to the isomorphism $\varepsilon i^*$. This already shows one direction. Now assume that $F$ is $i$-local and consider a morphism $\phi : M \to N$ in $\C$ such that $i^* \phi$ is an isomorphism. The naturality of $\eta$ with respect to $\phi$ yields the commutative diagram
\[\xymatrix@C=48pt{F(M) \ar[r]^{F(\phi)} \ar[d]_{F(\eta_M)} & F(N) \ar[d]^{F(\eta_N)} \\ F(i_* i^* M) \ar[r]_{F (i_* i^* \phi)} & F(i_* i^* N).}\]
Since $F$ is $i$-local, the vertical arrows are isomorphisms. The bottom arrow is an isomorphism since $i^* \phi$ is one. Thus also the top arrow $F(\phi)$ is an isomorphism.

$2.$ Let $G : \D \to \E$ be a lax tensor functor. Since $i^* \eta$ is an isomorphism, we see that $G i^* : \C \to \E$ is $i$-local. Conversely, if $F : \C \to \E$ is an $i$-local lax tensor functor, then $F i_* : \D \to \E$ is a lax tensor functor. Of course, the same works for morphisms of lax tensor functors. We obtain functors
\[\xymatrix@C=40pt{\Hom_{\lax}(\D,\E) \ar@<1.2ex>[r]^-{- \circ i^*}  &  \ar@<1.2ex>[l]^-{- \circ i_*} \{F \in \Hom_{\lax}(\C,\E)  \text{ $i$-local}\}.}\]
Let us show that they are inverse to each other. Given a lax tensor functor $G : \D \to \E$ we have a natural isomorphism of functors $G \varepsilon : G i^* i_* \to G$, which is even a morphism of tensor functors by  \autoref{adj-lax}. Similarly, for an $i$-local lax tensor functor $F : \C \to \E$ the natural isomorphism $F \eta : F \to F i_* i^*$ is actually an isomorphism of tensor functors.
  
$3.$ Since $i^*$ is a tensor functor, we see that $G i^*$ is a tensor functor provided that $G$ is a tensor functor. Now assume that $F$ is an $i$-local tensor functor. We have to show that $F i_*$ is a tensor functor. The morphism $1_\E \to (F i_*)(1_\D)$ is defined as the composition $1_\E \to F(1_\C) \to F(i_* 1_\D)$. In that composition the first morphism is an isomorphism since $F$ is a tensor functor. The second one is $F$ applied to $\eta_{1_\C} : 1_\C \to i_* 1_\D$, hence also an isomorphism. Now let $M,N  \in \D$. The morphism $(F i_*)(M) \otimes (F i_*)(N) \to (F i_*)(M \otimes N)$ is defined as the composition $F(i_*(M)) \otimes F(i_*(N)) \to F(i_* M \otimes i_* N) \to F(i_* (M \otimes N))$, where the first morphism is an isomorphism since $F$ is a tensor functor and the second one is $c : i_* M \otimes i_* N \to i_* (M \otimes N)$ mapped by $F$. But $i^* c$ is an isomorphism. By $1.$ above, $F$ maps $c$ to an isomorphism.

$4.$ Since $i^*$ has a right adjoint, it is cocontinuous. We see that $G i^*$ is a cocontinuous tensor functor provided that $G$ is a cocontinuous tensor functor. Now assume that $F$ is an $i$-local cocontinuous tensor functor. We have to show that $F i_*$ is cocontinuous. This works as before: For a diagram $\{M_j\}$ in $\D$, the canonical morphism $\colim_j (F i_*)(M_j) \to (F i_*)(\colim_j M_j)$ factors as the isomorphism $\colim_j F(i_* M_j) \cong F(\colim_j i_* M_j)$ followed by $F$ applied to the canonical morphism $\colim_i i_* M_j \to i_* (\colim_i M_j)$, which is clearly an isomorphism after appyling $i^*$. Thus $F$ maps it to an isomorphism.
\end{proof}
 
\begin{cor}[Globalization of immersions] \label{open-immersion}
If $i : Y \to X$ is a quasi-compact immersion of schemes and $\C$ is a cocomplete tensor category, then the functors $i_*$ and $i^*$ induce an equivalence of categories
\[\Hom_{c\otimes}\bigl(\Q(Y),\C\bigr) \simeq \{F \in \Hom_{c\otimes}\bigl(\Q(X),\C\bigr) \text{ $Y$-local}\}.\]
\end{cor}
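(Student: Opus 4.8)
The plan is to obtain this as a direct specialization of \autoref{local}(4); all that is needed is to package the geometric data of a quasi-compact immersion into the abstract hypotheses of \autoref{localdef}. Concretely I must exhibit a cocontinuous tensor functor $i^* : \Q(X) \to \Q(Y)$ whose underlying functor admits a fully faithful right adjoint $i_*$, with $\C$ (the ambient category of the statement) playing the role of the target $\E$ in \autoref{local}.

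First I would record that the pullback $i^* : \Q(X) \to \Q(Y)$ is a cocontinuous tensor functor, as noted in \autoref{deftens}, and that it has a right adjoint given by the pushforward $i_*$. For $i_*$ to be well-defined on quasi-coherent modules I would invoke that $i$ is qc qs: an immersion is automatically quasi-separated, and $i$ is quasi-compact by hypothesis, so \cite[Proposition 6.7.1]{EGAI} applies. By \autoref{adj-lax}, $i_*$ then carries a canonical lax tensor structure for which the unit $\eta : \id_{\Q(X)} \to i_* i^*$ and counit $\e : i^* i_* \to \id_{\Q(Y)}$ are morphisms of lax tensor functors, precisely as demanded in \autoref{localdef}.

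The substantive point, and the main obstacle, is the full faithfulness of $i_*$, i.e. that the counit $\e : i^* i_* \to \id_{\Q(Y)}$ is an isomorphism. I would prove this by factoring the immersion $i$ as a closed immersion followed by an open immersion and treating the two cases separately, using that $i_*$ and $i^*$ compose contravariantly/covariantly along such a factorization. For an open immersion $i^*$ is restriction, so $(i_* M)|_Y \cong M$ is immediate; for a closed immersion with quasi-coherent ideal $I \subseteq \O_X$ one reduces to the affine situation and checks that an $\O_X/I$-module $M$ satisfies $i^* i_* M \cong M$, which is the classical identification of $\Q(Y)$ with the full subcategory of $\Q(X)$ of modules annihilated by $I$. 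This is exactly the full faithfulness already recorded (with the same EGA reference) in the Example preceding \autoref{local}.

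With these hypotheses verified, I would apply \autoref{local}(4) under the substitution $\C \mapsto \Q(X)$, $\D \mapsto \Q(Y)$, $\E \mapsto \C$. Its conclusion is exactly the asserted equivalence
\[\Hom_{c\otimes}\bigl(\Q(Y),\C\bigr) \simeq \{F \in \Hom_{c\otimes}\bigl(\Q(X),\C\bigr) \text{ $Y$-local}\},\]
where "$Y$-local" is the abbreviation for "$i$-local" fixed in that Example. Once the geometric input on $i_*$ is in hand, the remainder is formal and supplied verbatim by \autoref{local}.
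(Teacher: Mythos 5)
Your proposal is correct and follows exactly the paper's route: the paper records in the Example after \autoref{localdef} that for a quasi-compact immersion the pushforward $i_*$ is well-defined on quasi-coherent modules (via \cite[Proposition 6.7.1]{EGAI}), fully faithful, and right adjoint to $i^*$, and then the Corollary is the specialization of \autoref{local}(4) with $\C \mapsto \Q(X)$, $\D \mapsto \Q(Y)$, $\E \mapsto \C$, which is precisely your argument. The only difference is that you additionally spell out the full faithfulness of $i_*$ (factoring $i$ as a closed immersion followed by an open immersion), a detail the paper asserts without proof in that Example; this is a welcome elaboration, not a different approach.
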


This is compatible with \autoref{imm}, so that tensorial schemes are closed under quasi-compact subschemes. For closed immersions we gain back \autoref{closed-UE}.

\begin{prop} \label{reduction}
Let $F : \Q(X) \to \Q(Y)$ be a cocontinuous tensor functor, where $X,Y$ are schemes. Let $i : U \to X$ be a quasi-compact open immersion. If $F$ is $U$-local and $U$ is tensorial, then $F \cong f^*$ for some morphism $f : Y \to X$ (which factors through $U$).
\end{prop}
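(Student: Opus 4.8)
The plan is to prove this by chaining together two results already established in the excerpt: the globalization of immersions (\autoref{open-immersion}) and the assumed tensoriality of $U$. No genuinely new construction is required; the content is entirely in matching up the relevant equivalences.

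First I would apply \autoref{open-immersion} with $\C = \Q(Y)$. Since $i : U \to X$ is a quasi-compact open immersion, the functors $i_*$ and $i^*$ set up an equivalence between $\Hom_{c\otimes}(\Q(U),\Q(Y))$ and the full subcategory of $U$-local cocontinuous tensor functors $\Q(X) \to \Q(Y)$, the two directions being $G \mapsto G i^*$ and $F \mapsto F i_*$. By hypothesis $F$ lies in this latter subcategory, so putting $G := F i_* : \Q(U) \to \Q(Y)$ yields a cocontinuous tensor functor together with a natural isomorphism $F \cong G i^*$; this last isomorphism is precisely the statement of $U$-locality, namely $F \cong F i_* i^*$.

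Next I would invoke that $U$ is tensorial. By definition this says the pullback functor $\Hom(Y,U) \to \Hom_{c\otimes}(\Q(U),\Q(Y))$, $g \mapsto g^*$, is an equivalence of categories; in particular it is essentially surjective, so the $G$ constructed above satisfies $G \cong g^*$ for some morphism $g : Y \to U$. Setting $f := i \circ g : Y \to X$ and using functoriality of $(-)^*$, we obtain
\[F \cong G i^* \cong g^* i^* \cong (i \circ g)^* = f^*.\]
By construction $f$ factors through $i$, which is exactly the asserted conclusion.

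The argument is formal once the two cited equivalences are available, so there is no deep obstacle. The only point deserving care—and where I would slow down in a full write-up—is the compatibility of the isomorphisms: one must check that the isomorphism $F \cong G i^*$ produced by \autoref{open-immersion} threads correctly through the chain above to give an honest isomorphism of functors $F \cong f^*$, rather than a merely abstract identification. This is routine, since both equivalences are realized by genuine (pre)composition functors and $(-)^*$ is a (pseudo)functor on schemes, so the canonical isomorphism $(i \circ g)^* \cong g^* \circ i^*$ supplies the needed glue.
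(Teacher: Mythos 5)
Your proof is correct and follows exactly the paper's own argument: apply \autoref{open-immersion} to write $F \cong G i^*$ with $G = F i_*$, use tensoriality of $U$ to get $G \cong g^*$, and conclude $F \cong (i \circ g)^*$. Your extra remark on checking the compatibility of the isomorphisms is a sound (and routine) refinement of what the paper leaves implicit.
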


\begin{proof}
Since $F$ is $U$-local, we have $F \cong G i^*$ for some cocontinuous tensor functor $G : \Q(U) \to \Q(Y)$ (\autoref{open-immersion}). Since $U$ is tensorial, we have $G \cong g^*$ for some morphism $g : Y \to U$. Then $F \cong (if)^*$.
\end{proof}

Motivated by this, we would like to characterize the condition of being $U$-local.

\begin{nota}
Let us fix a scheme $X$ and a cocontinuous tensor functor
\[F : \Q(X) \to \C\]
into a cocomplete tensor category $\C$. We also fix a quasi-compact open immersion $i : U \hookrightarrow X$.
\end{nota}

\begin{lemma} \label{mono-reicht}
The following are equivalent:
\begin{itemize}
\item $F$ is $U$-local.
\item For every submodule $M \subseteq N$ in $\Q(X)$ with $M|_U = N|_U$ we have that $F$ maps $M \hookrightarrow N$ to an isomorphism.
\end{itemize}
\end{lemma}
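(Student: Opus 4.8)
The plan is to reduce everything to the characterization of $U$-locality furnished by \autoref{local}: $F$ is $U$-local if and only if $F$ sends every morphism $\phi$ in $\Q(X)$ with $i^*\phi$ an isomorphism to an isomorphism in $\C$. This gives one implication at once. If $F$ is $U$-local and $M \subseteq N$ is a submodule with $M|_U = N|_U$, then the inclusion $j : M \hookrightarrow N$ restricts on $U$ to the identity $M|_U = N|_U$, so $i^* j$ is an isomorphism, whence $F(j)$ is one as well.

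For the converse the key observation I would isolate first is the following: if $K \in \Q(X)$ satisfies $K|_U = 0$, then $F(K) = 0$. This follows by applying the assumed condition to the pair $0 \subseteq K$, whose two members both restrict to the zero module on $U$; the conclusion is that $F(0 \hookrightarrow K)$, that is the morphism $F(0) = 0 \to F(K)$, is an isomorphism, forcing $F(K) = 0$. Here I only use that $F$, being cocontinuous, preserves the initial (hence zero) object.

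With this in hand I would prove the full statement by image factorization. Let $\phi : M \to N$ be an arbitrary morphism with $i^*\phi$ an isomorphism; I want $F(\phi)$ to be an isomorphism. Since $i^*$ is restriction along the open immersion $i$, it is exact, so passing to $U$ commutes with kernels and images. Writing $K = \ker \phi$ and $I = \im(\phi) \subseteq N$, this yields $K|_U = \ker(\phi|_U) = 0$ and $I|_U = \im(\phi|_U) = N|_U$. By the observation above $F(K) = 0$, and by the hypothesis applied to $I \subseteq N$ (which satisfies $I|_U = N|_U$) the inclusion $\iota : I \hookrightarrow N$ is sent to an isomorphism. It remains to treat the canonical epimorphism $p : M \twoheadrightarrow I$, fitting into the exact sequence $K \to M \xrightarrow{p} I \to 0$. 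As $F$ is right exact (being cocontinuous), $F(p)$ exhibits $F(I)$ as the cokernel of $F(K) \to F(M)$; since $F(K) = 0$ this cokernel is simply $F(M)$, so $F(p)$ is an isomorphism. Composing, $F(\phi) = F(\iota) \circ F(p)$ is an isomorphism, and \autoref{local} then shows that $F$ is $U$-local.

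The only genuine subtlety is the main observation $K|_U = 0 \Rightarrow F(K) = 0$: once one thinks to feed the pair $0 \subseteq K$ into the hypothesis, the remaining argument is the routine image factorization combined with exactness of $i^*$ and right-exactness of $F$, and I expect no further obstacle.
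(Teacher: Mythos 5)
Your proof is correct, but it handles the epimorphism step by a genuinely different device than the paper, and the comparison is worth recording. Both arguments share the same frame: the forward direction is immediate from \autoref{local}, and for the converse one factors an arbitrary $\phi$ with $i^*\phi$ invertible as an epimorphism onto its image followed by a submodule inclusion that is an isomorphism over $U$, the latter being exactly what the hypothesis inverts. For the epimorphism $\phi : M \twoheadrightarrow N$ the paper writes $\phi$ as the coequalizer of its kernel pair $p_1, p_2 : M \times_N M \rightrightarrows M$ and feeds the diagonal $\Delta : M \to M \times_N M$ --- a split monomorphism which is an isomorphism over $U$ --- into the hypothesis, obtaining $F(p_1) = F(\Delta)^{-1} = F(p_2)$, so that $F(\phi)$, being a coequalizer of an equal pair, is invertible. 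You instead isolate the observation that $K|_U = 0$ forces $F(K) = 0$ (by feeding $0 \subseteq K$ into the hypothesis), apply it to $K = \ker\phi$, and finish by right-exactness. Your auxiliary observation is a clean, reusable fact, close in spirit to the support arguments (\autoref{ideal-lemma}, \autoref{split-epi}) appearing later in that section, and it makes the proof slightly shorter. The paper's kernel-pair argument buys one technical advantage: it only ever invokes coequalizers and retractions in $\C$, whereas $\C$ is merely a cocomplete tensor category --- not assumed linear --- so the phrase ``the cokernel of $F(K) \to F(M)$'' is not literally defined in $\C$. This is easily repaired in your write-up: $I$ is the coequalizer in $\Q(X)$ of the inclusion and the zero morphism $K \rightrightarrows M$; $F$ preserves this coequalizer; and since $F(K) \cong F(0)$ is initial in $\C$, the two resulting morphisms $F(K) \rightrightarrows F(M)$ coincide, so their coequalizer is the identity of $F(M)$. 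With that rephrasing nothing is missing.
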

 
\begin{proof}
``$\Rightarrow$" is trivial. ``$\Leftarrow$": Let $\phi : M \to N$ be a homomorphism in $\Q(X)$ such that $\phi|_U$ is an isomorphism. Since $\phi$ factors as an epimorphism followed by a monomorphism, both of which are isomorphisms on $U$, it suffices to treat the case that $\phi$ is an epimorphism. Let
\[K = M \times_N M = \{(m,m') \in M \times M' : \phi(m)=\phi(m')\}\]
be the difference kernel of $\phi$. Since $\phi$ is an epimorphism, it is the coequalizer of the two projections $p_1,p_2 : K \rightrightarrows M$. Then, $F(\phi)$ is the coequalizer of $F(p_1)$ and $F(p_2)$, so that it suffices to show $F(p_1)=F(p_2)$. If $i : M \to K$ is the diagonal homomorphism defined by $m \mapsto (m,m)$, we have $p_1 i = p_2 i = \id_M$. Since $\phi$ is an isomorphism on $U$, the same must be true for $i$. Now, $i$ is a split monomorphism, so by assumption $F(i)$ is an isomorphism. But then we have $F(p_1)= F(i)^{-1} = F(p_2)$.
\end{proof}
 
\begin{lemma} \label{ideal-lemma}
Let $M \hookrightarrow N$ be a submodule in $\Q(X)$ and $I \subseteq \O_X$ be a quasi-coherent ideal such that $IN \subseteq M$ and $F$ maps $I \hookrightarrow \O_X$ to an isomorphism. Then $F$ maps $M \hookrightarrow N$ to an isomorphism.
\end{lemma}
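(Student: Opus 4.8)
The plan is to reduce everything to the statement that $F$ sends the multiplication-by-$I$ maps to isomorphisms, and then to play split monomorphisms against split epimorphisms; crucially this never uses left exactness of $F$, only that $F$ is a tensor functor.

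Write $j : M \hookrightarrow N$ for the given inclusion and let $\iota : I \hookrightarrow \O_X$ be the inclusion of the ideal. First I would record the basic fact that for any $P \in \Q(X)$ the action morphism $\mu_P : I \otimes P \to P$ equals the composite $I \otimes P \xrightarrow{\iota \otimes P} \O_X \otimes P \xrightarrow{\sim} P$, where the second map is the unit isomorphism. Since $F$ is a tensor functor it sends the unit isomorphism to an isomorphism and sends $\iota \otimes P$ to $F(\iota) \otimes \id_{F(P)}$ (up to the structure constraints of $F$); as $F(\iota)$ is an isomorphism by hypothesis, $F(\iota) \otimes \id_{F(P)}$ is an isomorphism. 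Hence $F(\mu_P)$ is an isomorphism for every $P$, and I will use this for $P = N$ and $P = M$.

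Next I would exploit the hypothesis $IN \subseteq M$. By the $($regular epi, mono$)$-factorization in the abelian category $\Q(X)$, the action $\mu_N : I \otimes N \to N$ factors through the submodule $M$, say $\mu_N = j \circ a$ with $a : I \otimes N \to M$. Applying $F$ gives $F(j) \circ F(a) = F(\mu_N)$, an isomorphism, so $F(j)$ is a split epimorphism and $F(a)$ is a split monomorphism. To obtain the reverse splitting for $F(a)$, I would observe that the $I$-action on the submodule $M$ is $\mu_M = a \circ (\id_I \otimes j)$: indeed $j \circ a \circ (\id_I \otimes j) = \mu_N \circ (\id_I \otimes j) = j \circ \mu_M$ because $j$ is $\O_X$-linear, and $j$ is monic. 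Since any functor preserves split epimorphisms, $\id_{F(I)} \otimes F(j)$ is a split epimorphism; combined with $F(\mu_M)$ being an isomorphism, the factorization $F(\mu_M) = F(a) \circ (\id_{F(I)} \otimes F(j))$ shows that $F(a)$ is also a split epimorphism.

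Finally, a morphism that is simultaneously a split monomorphism and a split epimorphism is an isomorphism, so $F(a)$ is an isomorphism; then $F(j) = F(\mu_N) \circ F(a)^{-1}$ is an isomorphism, which is exactly the claim. I expect the only genuinely delicate point to be the bookkeeping showing $\mu_M = a \circ (\id_I \otimes j)$ and the compatibility of the tensor constraints of $F$ with the identifications $\mu_P = (\O_X \otimes P \xrightarrow{\sim} P) \circ (\iota \otimes P)$; these are routine but should be handled through the coherence isomorphisms (\autoref{coherence}) rather than on the nose. Note that cocontinuity of $F$ is not actually needed here — the argument goes through for any tensor functor — which is why no control over kernels, and hence no left exactness, is required.
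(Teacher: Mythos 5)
Your proof is correct and is essentially the paper's own argument: both rest on the same two commutative squares, namely the factorization $a : I \otimes N \to M$ of the action map (using $IN \subseteq M$) together with the identifications $I \otimes P \to \O_X \otimes P \cong P$ for $P = M, N$, whose images under $F$ are isomorphisms because $F(I \hookrightarrow \O_X)$ is one. The only cosmetic difference is that the paper reads off directly that $F(M \hookrightarrow N)$ is both a split monomorphism and a split epimorphism, whereas you first show $F(a)$ is an isomorphism and then write $F(M \hookrightarrow N) = F(\mu_N) \circ F(a)^{-1}$; the content, including the observation that neither cocontinuity nor left exactness of $F$ is used, is identical.
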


\begin{proof}
Consider the following commutative diagram
\[\xymatrix{I \otimes M \ar[r] \ar[d] & I \otimes N \ar[r] \ar[d] & \O_X \otimes N \ar[d]^{\cong} \\ \O_X \otimes M \ar[r]^-{\cong} & M  \ar[r] & N}\]
and apply $F$. The left part of the diagram shows that $F$ maps $I \otimes M \to I \otimes N$ to a split monomorphism. Then the same must be true for $M \to N$. The right part shows that $F$ maps $M \to N$ to a split epimorphism. Hence, it gets mapped to an isomorphism.
\end{proof}

\begin{defi}
Let us call a quasi-coherent ideal $I \subseteq \O_X$ \emph{nice} if $F$ maps $I \hookrightarrow \O_X$ to an isomorphism.
\end{defi}

\begin{ex}
If $f : Y \to X$ is a morphism of schemes and we consider the associated pullback functor $F = f^* : \Q(X) \to \Q(Y)$, then $I$ is nice for $F$ if and only if $f$ factors through the open subscheme $X \setminus V(I)$. This follows easily from \autoref{pullback-closed}.
\end{ex}

\begin{prop} \label{weak}
Let us assume that $X$ is qc qs. Then $F$ is $U$-local if and only if every quasi-coherent ideal $I \subseteq \O_X$ with $I|_U = \O_U$ is nice.
\end{prop}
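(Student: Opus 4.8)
The plan is to deduce everything from the two preparatory lemmas \autoref{mono-reicht} and \autoref{ideal-lemma} together with the characterization of $U$-locality in \autoref{local}. The direction ``$\Rightarrow$'' is immediate: if $F$ is $U$-local and $I \subseteq \O_X$ is a quasi-coherent ideal with $I|_U = \O_U$, then the inclusion $I \hookrightarrow \O_X$ restricts to an isomorphism on $U$, so by \autoref{local}(1) its image under $F$ is an isomorphism; that is exactly the statement that $I$ is nice.

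For ``$\Leftarrow$'' I would first invoke \autoref{mono-reicht}, which reduces the assertion that $F$ is $U$-local to showing that $F$ maps every inclusion of quasi-coherent modules $M \subseteq N$ with $M|_U = N|_U$ to an isomorphism. Writing $Q := N/M$, the hypothesis becomes $Q|_U = 0$. The strategy is to produce a nice ideal annihilating $Q$ and then quote \autoref{ideal-lemma}: if I can find a quasi-coherent ideal $I \subseteq \O_X$ with $I|_U = \O_U$ (hence nice, by assumption) and $IN \subseteq M$, i.e. $IQ = 0$, then \autoref{ideal-lemma} finishes the job. The natural candidate is the annihilator $I := \mathrm{Ann}(Q)$, but this is only guaranteed to be quasi-coherent when $Q$ is of finite type, which is where both the qc qs hypothesis and the cocontinuity of $F$ enter.

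So the first step of this direction is a reduction to finite-type $Q$. On a qc qs scheme every quasi-coherent module is the directed union of its finite-type quasi-coherent submodules (this is where qc qs is used, cf. the argument cited in \autoref{Qcohfin}). Writing $Q = \colim_j Q_j$ over these submodules and letting $N_j \subseteq N$ be the preimage of $Q_j$, I obtain a directed system $M \subseteq N_j \subseteq N$ with $N = \colim_j N_j$ and with each $N_j/M \cong Q_j$ of finite type and zero on $U$. Since $F$ is cocontinuous, $F(N) = \colim_j F(N_j)$; once each $F(M \hookrightarrow N_j)$ is an isomorphism, the transition maps $F(N_j) \to F(N_{j'})$ are forced to be isomorphisms as well, so $F(M \hookrightarrow N)$ is an isomorphism. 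Thus it suffices to treat the case where $Q = N/M$ is of finite type.

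For finite-type $Q$, the formation of $\mathrm{Ann}(Q)$ commutes with localization, so it defines a quasi-coherent ideal (when $Q$ is even of finite presentation this is also the kernel of $\O_X \to \HOM(Q,Q)$, quasi-coherent by \autoref{qchom}); its vanishing locus is $\supp(Q) \subseteq X \setminus U$, whence $\mathrm{Ann}(Q)|_U = \O_U$ and $\mathrm{Ann}(Q)\cdot Q = 0$. Taking $I = \mathrm{Ann}(Q)$ and applying \autoref{ideal-lemma} completes the proof. The main obstacle I anticipate is exactly the quasi-coherence of the annihilator together with the accompanying reduction to finite type; the remaining colimit bookkeeping and the easy direction are formal.
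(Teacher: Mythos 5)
Your proof is correct and essentially identical to the paper's: the paper likewise reduces via \autoref{mono-reicht} to inclusions $M \subseteq N$ with $M|_U = N|_U$, exhausts by finite-type pieces (there: finite-type quasi-coherent submodules $N_i \subseteq N$, replaced by $M + N_i$), forms the quasi-coherent ideal $I_i = \{s \in \O_X : s \cdot N_i \subseteq M\}$ --- which is exactly your $\mathrm{Ann}(N_j/M)$ --- and concludes with \autoref{ideal-lemma} and a colimit argument using cocontinuity of $F$. The only cosmetic difference is the justification for quasi-coherence of this ideal: the paper invokes its Pairing lemma (\autoref{pairing}), whereas you appeal to the standard fact that annihilators of finite-type quasi-coherent modules are quasi-coherent; these amount to the same localization argument.
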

 
The proof reminds us of other proofs in homological algebra where some condition is reduced to inclusion of ideals, for example injectivity or flatness of modules.
  
\begin{proof}
``$\Rightarrow$" is trivial. ``$\Leftarrow$": We apply \autoref{mono-reicht}. So let $M \subseteq N$ be a submodule with $M|_U = N|_U$. Write $N$ as a directed colimit of quasi-coherent submodules $\{N_i\}$ of finite type (\cite[Corollaire 6.9.9]{EGAI}). Applying \autoref{pairing} to the pairing $\O_X \otimes M \to N$, we see that the ideal
\[I_i = \{s \in \O_X : s \cdot N_i \subseteq M\}\]
is quasi-coherent. We may replace $N_i$ by $M+N_i$ without changing $I_i$ and therefore assume that $M \subseteq N_i$. The ideal satisfies $I_i N_i \subseteq M$ and $I_i|_U = \O_U$. But then our assumption in conjunction with \autoref{ideal-lemma} implies that $F$ maps $N_i \hookrightarrow M$ to an isomorphism. In the colimit, we see that $F$ maps $N \hookrightarrow M$ to an isomorphism. 
\end{proof}
 
\begin{lemma} \label{split-epi}
If $I \subseteq \O_X$ is a quasi-coherent ideal and the inclusion gets mapped to a split epimorphism by $F$, then $I$ is nice.
\end{lemma}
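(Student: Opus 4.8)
The plan is to reduce the statement to \autoref{non-unital}, which asserts exactly that a split epimorphism $s : A \to 1$ onto the unit object, whose domain $A$ is a non-unital commutative algebra with multiplication factoring as $m = r_A \circ (A \otimes s)$, is automatically an isomorphism. So the entire task is to exhibit $F(I)$ as such an algebra in $\C$, with $F$ of the inclusion playing the role of $s$.

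First I would observe that the ideal $I \subseteq \O_X$ carries a canonical structure of non-unital commutative algebra in $\Q(X)$: since $I$ is an ideal, the multiplication $\O_X \otimes \O_X \to \O_X$ restricts to a morphism $m : I \otimes I \to I$ which is associative and commutative (inherited from $\O_X$). Writing $s : I \hookrightarrow \O_X$ for the inclusion and recalling that $\O_X$ is the unit of $\Q(X)$, this multiplication is precisely $m = r_I \circ (I \otimes s)$; in element notation $a \otimes b \mapsto a \cdot s(b) = a \cdot b$, which is just the $\O_X$-module action of $\O_X$ on $I$. Thus, in $\Q(X)$, the triple $(I, m, s)$ already satisfies the hypotheses of \autoref{non-unital}, except that $s$ is there a monomorphism rather than a split epimorphism.

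Next I would apply $F$. Being a cocontinuous tensor functor, $F$ carries the non-unital commutative algebra $I$ to a non-unital commutative algebra $F(I)$ in $\C$ with multiplication $F(m)$, and it carries $s : I \to \O_X$ to $F(s) : F(I) \to F(\O_X) \cong \O_\C$. Using the structure isomorphisms of $F$ together with the compatibility of a (strong symmetric monoidal) tensor functor with the unit constraints, the identity $m = r_I \circ (I \otimes s)$ transports to $F(m) = r_{F(I)} \circ (F(I) \otimes F(s))$. By hypothesis $F(s)$ is a split epimorphism, so $F(I)$, $F(m)$, $F(s)$ now satisfy every hypothesis of \autoref{non-unital} in the tensor category $\C$.

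Applying \autoref{non-unital} then yields that $F(s)$ is an isomorphism, i.e.\ $F$ maps the inclusion $I \hookrightarrow \O_X$ to an isomorphism, which is exactly the statement that $I$ is nice. I expect no serious obstacle: the argument is essentially an immediate corollary of \autoref{non-unital} once the non-unital algebra structure on $I$ is recognized. The only point requiring a little care is confirming that the defining diagram of \autoref{non-unital} survives application of $F$, but this is handled by the coherence of tensor functors (\autoref{coherence}) and is entirely routine.
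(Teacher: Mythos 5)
Your proposal is correct and is exactly the paper's own argument: the paper's proof of this lemma is the one-line statement that it ``follows from \autoref{non-unital} applied to $F(I) \in \C$'', and your write-up simply spells out the details left implicit there (the non-unital commutative algebra structure on $I$ with $m = r_I \circ (I \otimes s)$, and its transport along the tensor functor $F$).
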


\begin{proof}
This follows from \autoref{non-unital} applied to $F(I) \in \C$.
\end{proof}

\begin{lemma} \label{nice-closure}
Nice ideals constitute a ``filter":
\begin{enumerate}
\item $\O_X$ is nice.
\item If $I,J$ are ideals such that $I \subseteq J$ and $I$ is nice, then $J$ is nice.
\item If $I_1,I_2$ are nice, then the same is true for $I_1 \cdot I_2$.
\item If $X$ is noetherian and $\sqrt{I}$ is nice, then $I$ is nice.
\end{enumerate}

\end{lemma}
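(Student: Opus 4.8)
The plan is to prove the four statements in turn, using that $F$ is a cocontinuous tensor functor (hence preserves the unit, tensor products, and regular epimorphisms) together with \autoref{split-epi}, which lets us upgrade ``mapped to a split epimorphism'' to niceness. Statement 1 is immediate: the inclusion $\O_X \hookrightarrow \O_X$ is the identity, which $F$ sends to an isomorphism. For statement 2, I would factor the inclusion as $I \hookrightarrow J \hookrightarrow \O_X$ and apply $F$ to obtain $F(I) \to F(J) \to F(\O_X)$ whose composite is an isomorphism because $I$ is nice. Hence the second map $F(J) \to F(\O_X)$ admits a right inverse, i.e.\ it is a split epimorphism, and \autoref{split-epi} then yields that $J$ is nice.

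For statement 3 I would exploit the tensor structure. Recall from the construction of the ideal product that $I_1 \otimes I_2 \twoheadrightarrow I_1 \cdot I_2 \hookrightarrow \O_X$ factors the multiplication $I_1 \otimes I_2 \to \O_X \otimes \O_X \cong \O_X$. Applying $F$ and using $F(I_1 \otimes I_2) \cong F(I_1) \otimes F(I_2)$ together with $F(\O_X) \otimes F(\O_X) \cong F(\O_X)$, the composite
\[F(I_1) \otimes F(I_2) \to F(\O_X) \otimes F(\O_X) \cong F(\O_X)\]
is an isomorphism, since the tensor product of the two isomorphisms $F(I_j) \to F(\O_X)$ is again an isomorphism. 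Now $F$ sends the regular epimorphism $I_1 \otimes I_2 \twoheadrightarrow I_1 \cdot I_2$ to an epimorphism, and the composite through $F(I_1 \cdot I_2)$ is the isomorphism just described; in particular $F(I_1 \cdot I_2) \to F(\O_X)$ has a right inverse and is a split epimorphism. Another appeal to \autoref{split-epi} shows that $I_1 \cdot I_2$ is nice.

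For statement 4, I would use that on a noetherian scheme the radical $\sqrt{I}$ is a quasi-coherent (coherent) ideal and, by quasi-compactness together with the affine case, there is some $n \in \N$ with $(\sqrt{I})^{\,n} \subseteq I$. Iterating statement 3 gives that $(\sqrt{I})^{\,n}$ is nice, and then statement 2 applied to the inclusion $(\sqrt{I})^{\,n} \subseteq I$ shows that $I$ is nice.

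The only genuinely structural step is statement 3, where one must combine the tensor-functoriality of $F$ with its preservation of the regular epimorphism $I_1 \otimes I_2 \twoheadrightarrow I_1 \cdot I_2$; the remaining steps are formal once \autoref{split-epi} is in hand. I expect the main technical point to record carefully is the elementary category-theoretic fact that a morphism whose composite with a preceding map is an isomorphism is automatically a split epimorphism, which is exactly what feeds into \autoref{split-epi}. In statement 4 the only nontrivial input is the existence of a uniform power $(\sqrt{I})^{\,n} \subseteq I$, which is where the noetherian hypothesis is used.
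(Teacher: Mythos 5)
Your proof is correct and follows essentially the same route as the paper: the identity for 1., factoring $I \hookrightarrow J \hookrightarrow \O_X$ for 2., tensoring the two isomorphisms and factoring through $I_1 \cdot I_2 \to \O_X$ for 3., and the noetherian power bound $(\sqrt{I})^{\,n} \subseteq I$ combined with 2. and 3. for 4., each time feeding a split epimorphism into \autoref{split-epi}. The only cosmetic difference is your extra remark that $F$ preserves the regular epimorphism $I_1 \otimes I_2 \twoheadrightarrow I_1 \cdot I_2$, which is not actually needed since the split-epimorphism conclusion already follows from the factorization alone.
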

 
\begin{proof}
1. is trivial. 2. $F$ maps $I \to J \to \O_X$ to an isomorphism. Hence, $F$ maps $J \to \O_X$ to a split epimorphism. Now use \autoref{split-epi}. 3. Since $I_1 \to \O$ and $I_2 \to \O$ are mapped to an isomorphism, the same is true for their tensor product $I_1 \otimes I_2 \to \O$. But this factors through $I_1 I_2 \to \O$. Now use \autoref{split-epi} again. 4. There is some $d \in \N$ such that $\sqrt{I}^d \subseteq I$. Now apply 2. and 3..
\end{proof}

\begin{cor} \label{radical}
Let us assume that $X$ is noetherian. Then $F$ is $U$-local if and only if every radical ideal $I \subseteq \O_X$ with $I|_U = \O_U$ is nice.
\end{cor}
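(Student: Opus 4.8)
The plan is to derive this corollary directly from \autoref{weak} together with part 4 of \autoref{nice-closure}, so that it amounts essentially to passing from arbitrary quasi-coherent ideals to their radicals. The ``only if'' direction is immediate: if $F$ is $U$-local, then by \autoref{weak} (a noetherian scheme is in particular qc qs) \emph{every} quasi-coherent ideal $I \subseteq \O_X$ with $I|_U = \O_U$ is nice, and radical ideals are a special case.

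For the ``if'' direction I would again invoke \autoref{weak}: since $X$ is noetherian, hence qc qs, it suffices to show that every quasi-coherent ideal $I \subseteq \O_X$ with $I|_U = \O_U$ is nice, given only that this holds for radical ideals. So fix such an $I$ and consider its radical $\sqrt{I}$. Because $X$ is noetherian, $\sqrt{I}$ is again a quasi-coherent ideal, and it is radical by construction. The key small observation is that the formation of the radical commutes with restriction to the open subscheme $U$, so that
\[
\sqrt{I}\big|_U = \sqrt{\,I|_U\,} = \sqrt{\O_U} = \O_U.
\]
Thus $\sqrt{I}$ is a radical quasi-coherent ideal which restricts to $\O_U$ on $U$, and therefore $\sqrt{I}$ is nice by hypothesis.

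Finally I would apply \autoref{nice-closure}(4): since $X$ is noetherian and $\sqrt{I}$ is nice, the ideal $I$ itself is nice. This establishes that all quasi-coherent ideals trivial on $U$ are nice, which by \autoref{weak} means $F$ is $U$-local, completing the equivalence. There is no genuine obstacle here—the argument is a short reduction—so the only points that need care are the two routine facts about the noetherian hypothesis that I used above, namely that the radical of a quasi-coherent ideal is quasi-coherent and that radicals commute with passage to an open subscheme; both are standard and will be stated rather than belabored.
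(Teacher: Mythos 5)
Your proof is correct and is exactly the paper's argument: the paper's proof of this corollary is simply ``combine \autoref{weak} and \autoref{nice-closure}'', and your write-up fills in precisely those details (passing to $\sqrt{I}$, noting $\sqrt{I}\big|_U=\O_U$, and invoking part 4 of \autoref{nice-closure}). No gaps.
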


\begin{proof}
Combine \autoref{weak} and \autoref{nice-closure}.
\end{proof}
\section{Tensoriality} \label{tensoriality}

Recall (\autoref{tensorial-stacky}) that an algebraic stack $X$ is called \emph{tensorial} if for every scheme $Y$ (and then in fact for every algebraic stack) the pullback construction implements an equivalence of categories
\[X(Y) \simeq \Hom_{c\otimes}\bigl(\Q(X),\Q(Y)\bigr).\]
We already know that affine schemes are tensorial (\autoref{affin-tensorial}), more generally that tensorial stacks are closed under affine morphisms (\autoref{aff-tensor}). We will see later that they are closed under projective morphisms (\autoref{projcat}) and coproducts (\autoref{coprod-tens}).

In this section we prove that qc qs schemes and Adams stacks are tensorial.

\subsection{The case of qc qs schemes}

After having developed the theory of local functors in \autoref{localfunc} it is easy to prove that qc qs schemes are tensorial. This is the main result of a joint work with Alexandru Chirvasitu (\cite{BC14}).
 
\begin{thm}[Tensoriality of qc qs schemes] \label{qcqs-tensorial}
Let $X$ be a qc qs scheme and $Y$ be an arbitrary locally ringed space. Then $f \mapsto f^*$ implements an equivalence of categories
\[\Hom(Y,X) \simeq \Hom_{c\otimes}\bigl(\Q(X),\M(Y)\bigr)\]
In particular, $X$ is tensorial.
\end{thm}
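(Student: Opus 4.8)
The plan is to reduce the general statement to a local problem and then to affine pieces, using the theory of local functors from the previous section together with the already-established tensoriality of affine schemes (\autoref{affin-tensorial}). The strategy rests on two facts: that any qc qs scheme is built from affine schemes by finitely many gluings, and that gluing on the geometric side corresponds to the local-functor machinery of \autoref{local} and \autoref{open-immersion} on the tensor-categorical side.

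First I would establish full faithfulness and essential surjectivity separately, and I would set up an induction on the number of affine opens needed to cover $X$. For the base case, $X$ is affine and the claim is exactly \autoref{affin-tensorial} (applied with $\C = \M(Y)$, which is legitimate since the proof of \autoref{affin-global} only used that the target is a cocomplete tensor category). For the inductive step, I would write $X = U \cup V$ where $U$ is a quasi-compact open subscheme covered by fewer affines, $V$ is affine, and the intersection $U \cap V$ is again qc qs (here quasi-separatedness of $X$ is what guarantees $U \cap V$ is quasi-compact). By \autoref{qprod} we have $\Q(X) \simeq \Q(U) \times_{\Q(U \cap V)} \Q(V)$, and the universal property of this $2$-pullback gives
\[
\Hom_{c\otimes}(\Q(X),\M(Y)) \simeq \Hom_{c\otimes}(\Q(U),\M(Y)) \times_{\Hom_{c\otimes}(\Q(U \cap V),\M(Y))} \Hom_{c\otimes}(\Q(V),\M(Y)).
\]
By the inductive hypothesis each of the three corners is equivalent to the corresponding mapping space of locally ringed spaces, and since $\Hom(Y,-)$ sends the Zariski gluing square $X = U \cup_{U \cap V} V$ to a $2$-pullback of groupoids (a morphism $Y \to X$ is precisely a compatible pair of morphisms into $U$ and $V$ agreeing on the preimage of $U \cap V$), the two $2$-pullbacks are identified. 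This yields the equivalence for $X$.

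The main obstacle I anticipate is ensuring that the $2$-pullback on the tensor-categorical side matches the one on the geometric side compatibly, i.e. that the covering $\{U, V\}$ really is detected by the tensor categories. This is where the local-functor theory does the work: the key point is that a cocontinuous tensor functor $F : \Q(X) \to \M(Y)$ which factors (up to the gluing data) through $\Q(U)$ and $\Q(V)$ must be $U$-local and $V$-local in the sense of \autoref{localdef}, and \autoref{open-immersion} then supplies the factorizations $F \simeq G\, i_U^*$ and $F \simeq H\, i_V^*$ canonically. One must check that the descent/gluing isomorphism over $U \cap V$ produced by the $2$-pullback is exactly the one carried by the structure of $\M(Y)$ as quasi-coherent modules over $Y$; this compatibility is essentially formal but requires carefully tracking the unit and counit morphisms $\eta, \varepsilon$ of the adjunctions $i_U^* \dashv (i_U)_*$ and $i_V^* \dashv (i_V)_*$. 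Once that bookkeeping is done, full faithfulness is immediate from the affine case and \autoref{volltreufunk}, and essential surjectivity follows from \autoref{reduction}, which guarantees that a $U$-local functor out of $\Q(X)$ with $U$ tensorial is induced by an honest morphism.
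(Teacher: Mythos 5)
Your base case is fine, but the inductive step contains a genuine error of variance that the whole strategy rests on. \autoref{qprod} exhibits $\Q(X) \simeq \Q(U) \times_{\Q(U \cap V)} \Q(V)$ as a $2$-\emph{pullback} in $\Cat_{c\otimes}$, and the universal property of a $2$-pullback governs cocontinuous tensor functors \emph{into} $\Q(X)$, i.e. it computes $\Hom_{c\otimes}(\D,\Q(X))$, not $\Hom_{c\otimes}(\Q(X),\D)$. To decompose functors \emph{out} of $\Q(X)$ you would need $\Q(X)$ to be a $2$-pushout of a diagram mapping into $\Q(U)$ and $\Q(V)$, and the restriction functors point the wrong way for that. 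In fact the fiber product you write down is not even well-formed: precomposition with the restriction $\Q(U) \to \Q(U \cap V)$ gives a functor $\Hom_{c\otimes}(\Q(U \cap V),\M(Y)) \to \Hom_{c\otimes}(\Q(U),\M(Y))$, so there are no natural structure maps $\Hom_{c\otimes}(\Q(U),\M(Y)) \to \Hom_{c\otimes}(\Q(U \cap V),\M(Y))$ over which to take the fiber product. The geometric claim backing this up is equally false: $\Hom(Y,-)$ is covariant and does not send the gluing pushout $X = U \cup_{U \cap V} V$ to a pullback of groupoids. A morphism $f : Y \to X$ need not factor through $U$ or through $V$; rather, it decomposes $Y$ into the opens $f^{-1}(U)$ and $f^{-1}(V)$, and this decomposition \emph{depends on} $f$, so $\Hom(Y,X)$ is in no sense $\Hom(Y,U) \times_{\Hom(Y,U\cap V)} \Hom(Y,V)$. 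Your final appeal to \autoref{reduction} inherits the same problem: it requires $F$ to be $U$-local for a single affine $U \subseteq X$, which holds only after restricting to suitable opens of $Y$, never globally.

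This is precisely why the paper's proof inducts on nothing and instead localizes on $Y$, which is the ingredient your proposal is missing. Full faithfulness (\autoref{fullfaith}) is proved first, by a specialization/support argument that reduces to the affine case; it then permits gluing morphisms over an open cover of $Y$ (\autoref{work-locally}). Essential surjectivity is attacked stalkwise: when $Y$ is a local ring, a pigeonhole argument with quasi-coherent ideals $I_k$ satisfying $\sum_k I_k = \O_X$ shows that $F$ must be $U_k$-local for some member of a finite affine cover (\autoref{local-ring}); this produces, for each $y \in Y$, a point $f(y) \in X$ and a local homomorphism $\O_{X,f(y)} \to \O_{Y,y}$, and one checks the resulting map $f$ is continuous (\autoref{map}). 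Only then, after shrinking $Y$ so that $f$ lands in an affine open $U$, does one verify $U$-locality of $F$ (stalk by stalk) and invoke \autoref{reduction}. The decomposition of $Y$ that your fiber-product ansatz tries to avoid is exactly the content that cannot be extracted from the tensor-categorical gluing of $\Q(X)$ alone; it requires using the specific structure of $\M(Y)$ (stalks and local rings), and no induction on the size of an affine cover of $X$ can substitute for it.
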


\begin{rem}
Surprisingly, it follows that any cocontinuous tensor functor $\Q(X) \to \M(Y)$  automatically takes its values in $\Q(Y)$, and that the category of cocontinuous tensor functors $\Q(X) \to \Q(Y)$ is actually equivalent to a discrete category. For the case of affine $X$, see \autoref{affin-tensorial}.
\end{rem}
 
First, we treat fully faithfullness.
  
\begin{prop}[Fully faithfullness] \label{fullfaith}
Let $Y$ be a locally ringed space, $X$ a quasi-separated scheme and let $f,g : Y \to X$ be two morphisms. Let $\alpha:f^*\Rightarrow g^*$ be a morphism of tensor functors. Then, $f=g$ and $\alpha=\id$. 
\end{prop}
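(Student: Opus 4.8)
The plan is to reconstruct the morphism of schemes from the morphism of tensor functors by evaluating $\alpha$ on structure sheaves and skyscraper-type data, then to exploit the quasi-separatedness hypothesis to glue the local comparisons into a global identity. First I would settle the statement on underlying continuous maps. Since $\alpha : f^* \Rightarrow g^*$ is a morphism of tensor functors and quasi-coherent modules of finite presentation are built from dualizable (locally free) pieces on which morphisms of tensor functors are forced to be isomorphisms by \autoref{dual-inv}, I expect $\alpha$ to already be an isomorphism $f^* \cong g^*$. Given an isomorphism of pullback functors, \autoref{volltreufunk} applies: the underlying maps of $f$ and $g$ coincide, and because $X$ is quasi-separated (here $X$ plays the role of the target $Y$ in that lemma) we in fact get $f = g$ as morphisms of schemes.

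Once $f = g$ is established, the remaining task is to show $\alpha = \id$. Here I would reduce to the affine situation by working locally on $X$. For an affine open $\Spec(A) \subseteq X$ with preimage an open $V \subseteq Y$, the functors $f^*$ and $g^*$ restricted to $\Q(\Spec(A)) = \M(A)$ are classified by their action on the unit and the induced ring homomorphism $A \to \Gamma(V,\O_Y)$, exactly as in the analysis of \autoref{affin-global} and the bimodule argument inside \autoref{volltreufunk}. A morphism of tensor functors $\alpha$ must be compatible with the tensor unit constraint, so $\alpha_{\O_X}$ is an automorphism of $\O_Y|_V = \O_Y|_V$ respecting the algebra structure; tensor-functoriality forces $\alpha_{\O_X} = \id$. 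Evaluating $\alpha$ on the free modules $A^{\oplus I}$ and using naturality together with the fact that every quasi-coherent module is a cokernel of a map of free modules, I would propagate $\alpha = \id$ from the unit to all objects, first on each affine piece and then globally.

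The main obstacle I anticipate is the globalization step: patching the local conclusions $\alpha|_{\M(A)} = \id$ over an open affine cover of $X$ into the single statement $\alpha = \id$ on all of $\Q(X)$. This is precisely where quasi-separatedness does real work, because it guarantees (via \cite[Proposition 6.7.1]{EGAI}) that pushforwards along the quasi-compact quasi-separated inclusions of affine opens preserve quasi-coherence, so that a general $M \in \Q(X)$ can be probed by the restriction/extension data coming from the cover and its (quasi-compact) intersections. I would use the density of finitely presented modules and the compatibility of $\alpha$ with the restriction maps to check that the two components of $\alpha$ agree after restricting to each $\M(A)$, and then invoke that the cover together with its pairwise intersections detects equality of morphisms in $\Q(X)$. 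I would expect the cleanest route to avoid any explicit cocycle bookkeeping by instead checking $\alpha = \id$ on a generating set of objects whose restrictions to the affine opens are controlled, so that the identity on each piece forces the identity globally.
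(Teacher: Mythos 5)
Your first step contains the decisive gap. \autoref{dual-inv} makes $\alpha$ invertible only on dualizable objects, and your assertion that quasi-coherent modules of finite presentation are ``built from dualizable (locally free) pieces'' is precisely the strong resolution property (\autoref{density}), which a general quasi-separated scheme need not have: there are qc qs schemes carrying essentially no nontrivial locally free modules of finite rank. So you cannot conclude a priori that $\alpha$ is an isomorphism, and without that \autoref{volltreufunk} is unavailable; note moreover that \autoref{volltreufunk} is stated for morphisms of \emph{schemes}, whereas your source $Y$ is only a locally ringed space, so even with an isomorphism in hand you would need to re-prove that lemma in the required generality. The paper instead extracts one-sided information from the mere morphism $\alpha$: applied to the quotient algebra $\O_X/I$, where $I$ is the vanishing ideal of $Z=\overline{\{g(y)\}}$, it gives an $\O_Y$-algebra homomorphism $f^*(\O_X/I) \to g^*(\O_X/I)$, whence $\supp g^*(\O_X/I) \subseteq \supp f^*(\O_X/I)$, i.e. $g^{-1}(Z) \subseteq f^{-1}(Z)$. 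This shows only that $f(y)$ is a specialization of $g(y)$ --- but that already places $f(y)$ and $g(y)$ in a common affine open, which is all the pointwise comparison that is actually needed.

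Your second part localizes on the wrong space and, as you concede, never closes the patching step. The working proof localizes on $Y$: once $f(y)$ and $g(y)$ lie in a common affine open $i : U \hookrightarrow X$, one shrinks $Y$ so that $f = i f'$ and $g = i g'$; then $f^*$ and $g^*$ are $U$-local, and \autoref{local} transports $\alpha$ to a morphism $\alpha' : f'^* \Rightarrow g'^*$ of tensor functors out of $\Q(U) \simeq \M(A)$. Now \autoref{affin-global} --- which is proved for an arbitrary cocomplete tensor target, in particular $\M(Y)$ for a locally ringed space $Y$, and this is exactly why the statement holds in that generality --- says this Hom-category is equivalent to a discrete set, so $f' = g'$ and $\alpha' = \id$ fall out \emph{simultaneously}; there is no separate ``propagate $\alpha = \id$ from the unit'' step and no cover-of-$X$ bookkeeping. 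Quasi-separatedness of $X$ enters precisely to make the inclusion $i$ quasi-compact, so that $i_*$ preserves quasi-coherence and the local-functor equivalence (\autoref{open-immersion}, \autoref{local}) applies --- not, as in your sketch, to control pushforwards from a cover of $X$ used to probe a general module. Indeed your plan of ``restricting $\alpha$ to $\M(A)$'' has no direct meaning: $\alpha$ is a transformation between functors on $\Q(X)$ with values in $\M(Y)$, and it only induces something on $\Q(U)$ through the $i^* \dashv i_*$ formalism after the factorization through $U$ has been arranged locally on $Y$, which in turn requires the specialization argument your first step skipped. Finally, your affine propagation tacitly assumes $f = g$ before identifying $f^*(A^{\oplus I})$ with $g^*(A^{\oplus I})$, so the two halves of your argument cannot be decoupled in the order you propose.
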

 
\begin{proof}
Let $y \in Y$. We claim that $f(y)$ is a specialization of $g(y)$. Indeed, let $I \subseteq \O_X$ be the vanishing ideal sheaf of the closed subset $Z:=\overline{\{g(y)\}}$ of $X$. Since $\alpha$ induces a map of $\O_Y$-algebras $f^*(\O_X/I) \to g^*(\O_X/I)$, it follows that $\supp g^*(\O_X/I) \subseteq \supp f^*(\O_X/I)$. By \cite[Chapitre 0, 5.2.4.1]{EGAI} this means that $g^{-1}(Z) \subseteq f^{-1}(Z)$. Now $y \in g^{-1}(Z)$ implies $f(y) \in Z = \overline{\{g(y)\}}$, which proves our claim.

If $i : U \hookrightarrow X$ is an affine open neighborhood of $f(y)$, then it must contain $g(y)$, too. Therefore we have $y \in f^{-1}(U) \cap g^{-1}(U)$. Since we may work locally on $Y$ in order to prove $f=g$ and $\alpha=\id$, we may even assume that $f$ and $g$ both factor through $U$, say $f=if'$ and $g=ig'$ for morphisms $f',g' : Y \to U$. Then $f^* \cong f'^* i^*$ is $U$-local, similarly $g^*$. Now \autoref{local} gives us a morphism of tensor functors $\alpha' : f'^* \Rightarrow g'^*$  with $i^* \alpha'=\alpha$. But now we are in the affine case and \autoref{affin-tensorial} yields $f'=g'$ and $\alpha'=\id$. This implies $f=g$ and $\alpha=\id$.
\end{proof}
  
\begin{rem} \label{work-locally}
In order to prove that the functor
\[\Hom(Y,X) \to \Hom_{c\otimes}\bigl(\Q(X),\M(Y)\bigr),\]
which we have just seen to be fully faithful, is an equivalence, we may work locally on $Y$. Namely, if $F : \Q(X) \to \M(Y)$ is a cocontinuous tensor functor and $Y$ is covered by open subspaces $Y_i$ such that $F|_{Y_i}$ is induced by a morphism $f_i : Y_i \to X$, then \autoref{fullfaith} guarantees the compatibility $f_i |_{Y_i \cap Y_j} = f_j |_{Y_i \cap Y_j}$, and hence that the $f_i$ glue to a morphism $f : Y \to X$ which induces $F$.
\end{rem}
 
\begin{prop} \label{local-ring}
\autoref{qcqs-tensorial} holds when $Y$ is a local ring (considered as a locally ringed space with one element).
\end{prop}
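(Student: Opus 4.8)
The plan is to reduce the case of a general local ringed space with one point to the affine case already proved in \autoref{affin-tensorial}, exploiting that a local ring is ``as small as possible'' as a locally ringed space. Let me write $Y = \Spec(\text{pt})$ where the underlying space is a single point with stalk a local ring $(R,\mathfrak{m})$, so that $\M(Y) = \M(R)$. I want to show that every cocontinuous tensor functor $F : \Q(X) \to \M(R)$ is induced by a unique morphism $f : Y \to X$, i.e.\ by a point $x \in X$ together with a local homomorphism $\O_{X,x} \to R$.

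First I would produce the point $x \in X$. The key observation is that since $X$ is qc qs, I can work with the reconstruction philosophy: the supports of the images $F(\O_X/I)$ for quasi-coherent ideals $I$ should single out a point. Concretely, I would consider the collection of quasi-coherent ideals $I \subseteq \O_X$ such that $F(I \to \O_X)$ is an isomorphism — the \emph{nice} ideals of \autoref{localfunc} — and use their closure properties (\autoref{nice-closure}): they form a filter, closed under products and (over noetherian $X$, or after reducing to finite type via the directed-colimit presentation of quasi-coherent ideals) under passing to larger ideals. Because $\M(R)$ has no nontrivial idempotent decompositions — $R$ being local means $\Spec(R)$ is connected, indeed has a unique closed point — the functor $F$ cannot ``spread out'' over a disconnected locus, so the complement of the nice ideals should cut out a single irreducible closed set, giving a well-defined point $x$. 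The affine open neighborhoods $U \ni x$ are exactly those for which the vanishing ideal of $X \setminus U$ is nice, so $F$ is $U$-local in the sense of \autoref{localdef}.

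Once I have an affine open $U = \Spec(A) \ni x$ through which $F$ factors, \autoref{local} gives a factorization $F \cong G \circ i^*$ for a cocontinuous tensor functor $G : \Q(U) \to \M(R)$, and $U$ is affine hence tensorial by \autoref{affin-tensorial} (via \autoref{affin-global}). Therefore $G$ corresponds to a ring homomorphism $A \to \End_{\M(R)}(R) = R$, i.e.\ a morphism $Y \to U \hookrightarrow X$. I must then check this morphism is local — that the induced $\O_{X,x} \to R$ sends the maximal ideal into $\mathfrak{m}$ — but this is forced by the fact that $Y$ has the single point and by the compatibility of the point $x$ with the support computation above; any section of $\O_X$ vanishing at $x$ must map into $\mathfrak{m}$ since otherwise it would be nice-invertible and contradict $x \in V(I)$. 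Uniqueness follows from \autoref{fullfaith}.

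The hard part will be the support argument that pins down a \emph{single} point $x$ and shows $F$ is $U$-local for a suitable affine $U$. The cleanliness of this step hinges on the local ring having a connected, indeed one-point-after-reduction spectrum, so that the filter of nice ideals has an irreducible complement; the delicate issue is controlling the non-noetherian qc qs case, where I would pass to the presentation of every quasi-coherent ideal as a directed colimit of finite-type ideals (\cite[Corollaire 6.9.9]{EGAI}) and invoke \autoref{weak} to reduce $U$-locality to a statement about finitely generated ideals $I$ with $I|_U = \O_U$. Verifying that exactly one affine chart survives — rather than zero or several — is where I expect the argument to require the most care, and it is essentially the place where the hypothesis that $Y$ is a \emph{local} ring (not an arbitrary ring) does the real work.
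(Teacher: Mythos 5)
Your skeleton after the chart is found --- factor $F \cong G \circ i^*$ through an affine open $U$ using \autoref{local}, apply \autoref{affin-tensorial}, observe that locality of the resulting ring map is automatic, and get uniqueness from \autoref{fullfaith} --- is exactly the paper's route (it is \autoref{reduction}). The genuine gap is the step you yourself flag as the hard part: producing an affine $U$ such that $F$ is $U$-local. The mechanism you propose for it --- that $\M(R)$ has no nontrivial idempotent decompositions since $\Spec(R)$ is connected, so the filter of nice ideals should have an irreducible complement cutting out a single point --- cannot work, because connectedness is not the property doing the work. Concretely: let $X$ be the affine line with doubled origin over a field $k$ (a qc qs scheme), let $A$ be the semilocal ring of $\mathds{A}^1_k$ at the two points $t=0$ and $t=1$, and let $f : \Spec(A) \to X$ be the morphism sending the two closed points to the two origins (glue the localizations $\Spec(k[t]_{(t)}) \to U_1$ and $\Spec(k[t]_{(t-1)}) \to U_2$ over the generic point). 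Then $\Spec(A)$ is connected, so $\M(A)$ has no nontrivial idempotents, yet $F = f^*$ is not $U$-local for \emph{any} affine open $U$: by \autoref{imm} (or \autoref{reduction}) that would force $f$ to factor through $U$, but every open subset of $X$ containing both origins is non-separated, hence non-affine. So an argument resting only on the absence of idempotents is doomed; the hypothesis that $R$ is \emph{local} (not merely of connected spectrum) must enter in a different way, and your sketch never says how.

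The paper's proof uses locality through a short pigeonhole argument instead. Choose a finite affine cover $X = U_1 \cup \dotsb \cup U_n$ (quasi-compactness). If $F$ were not $U_k$-local for every $k$, then \autoref{weak} provides quasi-coherent ideals $I_k \subseteq \O_X$ with $I_k|_{U_k} = \O_{U_k}$ which are not nice, and \autoref{split-epi} upgrades this to: $F(I_k \hookrightarrow \O_X)$ is not even an epimorphism (in $\M(R)$ a map onto the free module $R$ is an epimorphism if and only if it is a split epimorphism). Now locality strikes: the image of each $F(I_k) \to R$ is a proper ideal of $R$, hence contained in $\mathfrak{m}$. Since $F$ preserves epimorphisms and $\bigoplus_k I_k \twoheadrightarrow \sum_k I_k$, the map $F\bigl(\sum_k I_k\bigr) \to R$ also has image in $\mathfrak{m}$. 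But $\sum_k I_k = \O_X$ because the $U_k$ cover $X$, so this map is $\id_R$ --- a contradiction. Note that this is precisely the point that fails for the semilocal ring $A$ above: finitely many proper ideals of a non-local ring can generate the unit ideal. This argument also needs no noetherian reduction, no passage to finite-type or radical ideals, and no discussion of supports; all of that machinery in your sketch can be discarded once the maximal-ideal pigeonhole is in place.
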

 
\begin{proof}
Let $F : \Q(X) \to \M(A)$ be a cocontinuous tensor functor, where $A$ is a commutative local ring with maximal ideal $\mathfrak{m}$. Choose some affine open covering $X = U_1 \cup \cdots \cup U_n$. If $F$ is not $U_k$-local for every $k$, then by \autoref{weak} there are quasi-coherent ideals $I_k \subseteq \O_X$ with $I_k |_{U_k} = \O_{U_k}$ such that $F(I_k \hookrightarrow \O_X)$ is not a split epimorphism, even not an epimorphism by \autoref{split-epi}. thus factors through $\mathfrak{m} \subseteq A = F(\O_X)$. Then the same is true for $F(\bigoplus_k I_k \to \O_X)$ and hence for $F(\sum_k I_k \hookrightarrow \O_X)$. This is a contradiction since $\sum_k I_k = \O_X$.
\end{proof}
  
\begin{prop} \label{map}
Let $X$ be a qc qs scheme and $Y$ an arbitrary locally ringed space. Let $F : \Q(X) \to \M(Y)$ be a cocontinuous tensor functor. Then, for every $y \in Y$, there is a local homomorphism $\O_{X,x} \to \O_{Y,y}$ for some $x\in X$, together with an isomorphism
\[F(M)_y \cong M_{x} \otimes_{\O_{X,x}} \O_{Y,y}\]
of tensor functors $\Q(X) \to \M(\O_{Y,y})$. Moreover, if we define $f(y) := x$, then the map $f : Y \to X$ is continuous.
\end{prop}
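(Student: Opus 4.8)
The plan is to reduce to the already-established local case (\autoref{local-ring}) by passing to stalks, and then to extract continuity of the resulting map by hand. First I would fix $y \in Y$ and compose $F$ with the stalk functor $(-)_y : \M(Y) \to \M(\O_{Y,y})$. This functor is a cocontinuous tensor functor: it preserves the unit since $(\O_Y)_y = \O_{Y,y}$, it is strong monoidal because $(M \otimes_{\O_Y} N)_y \cong M_y \otimes_{\O_{Y,y}} N_y$, and it is cocontinuous since stalks are filtered colimits of sections (equivalently, $(-)_y$ is left adjoint to pushforward from the one-point space). Hence $F_y := (-)_y \circ F : \Q(X) \to \M(\O_{Y,y})$ is a cocontinuous tensor functor into the module category of the local ring $\O_{Y,y}$. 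Now \autoref{local-ring} applies: it identifies $F_y$, up to isomorphism of tensor functors, with the pullback functor of a unique morphism of locally ringed spaces from the one-point space with structure ring $\O_{Y,y}$ to $X$, that is, with a point $x \in X$ together with a local homomorphism $\O_{X,x} \to \O_{Y,y}$, and it yields the natural isomorphism $F(M)_y \cong M_x \otimes_{\O_{X,x}} \O_{Y,y}$ compatibly with tensor products. Setting $f(y) := x$ produces the pointwise data; its naturality in $M$ and compatibility with the tensor structure are inherited directly from the equivalence of \autoref{local-ring}.

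It remains to prove that $f : Y \to X$ is continuous. Since $X$ is a scheme, for any open $U \subseteq X$ the complement $X \setminus U$ carries a reduced induced closed subscheme structure with quasi-coherent ideal sheaf $I \subseteq \O_X$, so that $X \setminus U = V(I)$ set-theoretically; thus it suffices to show that $f^{-1}(X \setminus V(I))$ is open for every quasi-coherent ideal $I$. The key observation is that $f(y) \in V(I)$ if and only if $F(\O_X/I)_y \neq 0$. Indeed, by the isomorphism above $F(\O_X/I)_y \cong (\O_X/I)_{f(y)} \otimes_{\O_{X,f(y)}} \O_{Y,y} \cong \O_{Y,y}/I_{f(y)} \O_{Y,y}$, and this is nonzero precisely when $I_{f(y)} \O_{Y,y} \neq \O_{Y,y}$; since the homomorphism $\O_{X,f(y)} \to \O_{Y,y}$ is local, this happens exactly when $I_{f(y)} \subseteq \m_{f(y)}$, i.e. when $f(y) \in V(I)$.

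To turn this into an openness statement I would use that $F$ is cocontinuous and hence right exact, so that $F(\O_X/I) = \coker(F(I) \to \O_Y) \cong \O_Y/\mathcal{J}$, where $\mathcal{J} := \im(F(I) \to \O_Y)$ is a sheaf of ideals of $\O_Y$. Then $y \in f^{-1}(X \setminus V(I))$ if and only if $F(\O_X/I)_y = 0$, i.e. $\mathcal{J}_y = \O_{Y,y}$, i.e. $1 \in \mathcal{J}_y$. This is an open condition: if $1 \in \mathcal{J}_y$ there is a section $s \in \mathcal{J}(W)$ on some neighborhood $W$ of $y$ with $s_y = 1$, hence $s_y$ is a unit; since the locus where a section of $\O_Y$ is invertible is open in a locally ringed space, $s$ is a unit on a smaller neighborhood $W'$, and there $\mathcal{J}|_{W'} = \O_Y|_{W'}$, so $\mathcal{J}_{y'} = \O_{Y,y'}$ for all $y' \in W'$. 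Therefore $f^{-1}(X \setminus V(I))$ is open, and $f$ is continuous.

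I expect the main obstacle to be this continuity argument, specifically the passage from the pointwise description of $f$ to an honest topological statement; the subtle points are that the complement of an arbitrary open is cut out by a quasi-coherent ideal (via the reduced induced structure, which is where I use that $X$ is a genuine scheme) and that vanishing of $F(\O_X/I)_y$ is an open condition, which I extract from right exactness of $F$ together with the openness of the invertibility locus of a section. Everything else is formal once \autoref{local-ring} is in hand.
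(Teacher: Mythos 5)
Your proposal is correct and follows essentially the same route as the paper: compose with the stalk functor, apply \autoref{local-ring} to get the point $x$ and the local homomorphism $\O_{X,x} \to \O_{Y,y}$, and then prove continuity by cutting out closed sets with quasi-coherent ideals, mapping $\O_X \twoheadrightarrow \O_X/I$ to $\O_Y \twoheadrightarrow \O_Y/\mathcal{J}$, and using locality of the stalk map. The only cosmetic difference is that the paper cites EGA (I, 2.5.3 for the factorization through $\Spec(\O_{X,x})$, and 0, 5.2.2 for closedness of $\supp(\O_Y/\mathcal{J})$) where you argue these small points by hand.
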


\begin{proof}
By \autoref{local-ring} the composition
\[F_y : \Q(X) \to \M(Y) \to \M(\O_{Y,y})\]
is induced by a morphism $\Spec(\O_{Y,y}) \to X$. By \cite[Proposition 2.5.3]{EGAI} this factors as $\Spec(\O_{Y,y}) \to \Spec(\O_{X,x}) \to X$ for some $x \in X$ and some local homomorphism $\O_{X,x} \to \O_{Y,y}$. This proves the first part of the statement. 

For the second part, let $Z \subseteq X$ be closed. Then there is a quasi-coherent ideal $I \subseteq \O_X$ with $Z=V(I)$ as sets. Then $\O_X \twoheadrightarrow \O_X/I$ gets mapped by $F$ to an epimorphism, say $\O_Y \twoheadrightarrow \O_Y/J$ for some ideal $J \subseteq \O_Y$. For $y \in Y$ and $x:=f(y)$, we have an isomorphism
\[(\O_Y/J)_y \cong (\O_X/I)_{x} \otimes_{\O_{X,x}} \O_{Y,y}.\]
Since $\O_{X,x} \to \O_{Y,y}$ is local, this shows that $\O_Y/J$ vanishes at $y$ if and only if $\O_X/I$ vanishes at $x$. We arrive at
\[f^{-1}(Z)=f^{-1}(\supp(\O_X/I))=\supp(\O_Y/J),\]
which is closed (\cite[Proposition 5.2.2]{EGAI}). Hence, $f$ is continuous.
\end{proof}

\begin{proof}[Proof of \autoref{qcqs-tensorial}]
Let $F : \Q(X) \to \M(Y)$ be a cocontinuous tensor functor. By \autoref{map} we may associate to it a continuous map $f : Y \to X$ of the underlying spaces. Cover $Y$ with open subsets which get mapped into affine open subsets of $X$. Since we may work locally on $Y$ (\autoref{work-locally}), we may therefore assume that $f$ factors as $Y \to U \to X$ for some affine open subscheme $i : U \hookrightarrow X$. We claim that $F$ is $U$-local in the sense of \autoref{localfunc}: Since isomorphisms can be checked stalkwise, it is enough to prove the claim for $F_y : \Q(X) \to \M(\O_{Y,y})$, where $y \in Y$. By \autoref{local-ring}, $F_y$ is induced by some morphism $g : \Spec(\O_{Y,y}) \to X$. Using \autoref{fullfaith} we see that as a continuous map $g$ is just a restriction of $f$ and therefore factors through $U$. Thus $F_y$ is $U$-local.
Since $F$ is $U$-local, we conclude that $F$ is induced by a morphism by \autoref{reduction}.
\end{proof}

\begin{rem} \label{constructive}
The proof of \autoref{qcqs-tensorial} is quite constructive. Given a cocontinuous tensor functor $F : \Q(X) \to \M(Y)$, the corresponding morphism $f : Y \to X$ is constructed as follows: Let $X = \cup_i U_i$ be a finite affine open covering. Let $u_i : U_i \to X$ denote the open immersions. If $y \in Y$, then there is some $i$ such that $F$ maps $\O_X \to (u_i)_* \O_{U_i}$ to an isomorphism (this is where the theory of local functors comes into play). By considering the endomorphism rings, we get a homomorphism of rings $\Gamma(U_i,\O_X) \to \O_{Y,y}$. The maximal ideal $\mathfrak{m}_y \subseteq \O_{Y,y}$ pulls back to a prime ideal of $\Gamma(U_i,\O_X)$, i.e. to a point $f(y) \in U_i$, together with a local homomorphism $\O_{X,f(y)} \to \O_{Y,y}$. Then $Y_i := \{y \in Y : f(y) \in U_i\}$ is an open subset and $F(\O_X \to (u_i)_* \O_{U_i})|_{Y_i}$ is an isomorphism. This induces a homomorphism $f^\# : \Gamma(X_i,\O_X) \to \Gamma(Y_i,\O_Y)$.
\end{rem}

\begin{ex}
Every morphism of the underlying ringed spaces $g : Y \to X$ induces a morphism of locally ringed spaces $f : Y \to X$ such that $g^* \cong f^*$. We just apply the above construction to $F = g^* : \Q(X) \to \M(Y)$. Explicitly, $f(y) \in X$ corresponds to the pullback of the maximal ideal $\mathfrak{m}_y$ of $\O_{Y,y}$ under $g^\#_y : \O_{X,g(y)} \to \O_{Y,y}$, so that $f(y)$ is a generization of $g(y)$.
\end{ex}

\begin{rem}
After completion of this thesis Hall and Rydh have shown in \cite{Hall14} that the theory of local functors in \autoref{localfunc} and therefore the proof of \autoref{qcqs-tensorial} may be generalized to the setting of algebraic stacks. It follows (\cite[Theorem 5.9]{Hall14}):   \marginpar{I've added this remark.} If $X$ is a qc qs algebraic stack with the property that every quasi-coherent module is a directed colimit of its quasi-coherent submodules of finite type (which turns out to be automatic by \cite{Ryd14}), and $X$ is covered by finitely many quasi-compact open tensorial substacks, then $X$ is tensorial, too.
\end{rem}

\subsection{The case of algebraic stacks}

In this section we unify and generalize results by Lurie (\cite{Lur04}) and Sch\"appi  (\cite{Sch12a}).
 
If $\C,\D$ are cocomplete tensor categories, we will denote by $\Hom_{c\otimes \cong}(\C,\D)$ the \emph{core} of $\Hom_{c\otimes}(\C,\D)$, i.e. the category of cocontinuous tensor functors but only with isomorphisms of tensor functors.

\begin{thm}[\cite{Lur04}] \label{fullfaith2}
Let $Y$ be a scheme and let $X$ be a geometric stack. Then, the pullback functor \marginpar{I've added a reference to Lurie's paper.}
\[X(Y) \to \Hom_{c\otimes \cong}\bigl(\Q(X),\Q(Y)\bigr)\]  
is fully faithful.
\end{thm}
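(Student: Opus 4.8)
The plan is to prove fully faithfulness of the pullback functor by analyzing what a morphism of tensor functors $f^* \to g^*$ encodes, and reducing to the affine case already handled. Let me recall the setup: $X$ is geometric, so there is a flat affine surjection $\pi : P \to X$ with $P$ affine, and $Y$ is a scheme. We must show that for two objects $\xi, \eta \in X(Y)$ (i.e. morphisms $Y \to X$), the map
\[
\Hom_{X(Y)}(\xi,\eta) \to \Hom\bigl(\xi^*,\eta^*\bigr)
\]
on isomorphisms of the associated pullback tensor functors is a bijection. Since the target consists only of isomorphisms and $X(Y)$ is a groupoid (being the fiber of a stack valued in groupoids in the geometric case), it suffices to show this map is injective and surjective on the relevant Hom-sets.

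First I would reduce to the case where the morphisms factor through the affine presentation. The key tool is descent: a morphism $Y \to X$ is, by the groupoid-in-affine-schemes description of $X$ (see the discussion of geometric stacks via flat Hopf algebroids in the preliminaries), given fpqc-locally on $Y$ by a morphism to $P$ together with descent data. So I would pull back along a suitable fpqc (indeed affine flat) cover $Y' \to Y$ trivializing both $\xi$ and $\eta$, obtaining morphisms $\xi', \eta' : Y' \to P$. Because $P$ is affine and hence tensorial (\autoref{affin-tensorial}), morphisms $Y' \to P$ are classified by their pullback tensor functors, and isomorphisms between such pullback functors are rigid: indeed $\Hom_{c\otimes}(\Q(P),\Q(Y'))$ is essentially discrete by \autoref{affin-tensorial}, so there are \emph{no} nontrivial automorphisms at the affine level. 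This is the mechanism that forces a morphism of tensor functors downstairs to be determined by its restriction upstairs.

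The crucial middle step is to relate morphisms of tensor functors $\xi^* \to \eta^*$ to morphisms of the trivialized functors compatibly with the descent/groupoid structure. A morphism of $\Q(X)$-valued tensor functors, when pulled back to $Y'$, gives a morphism of the $\Q(P)$-level tensor functors, which by the affine rigidity must be trivial (the identity) as soon as $\xi' = \eta'$, and in general is pinned down uniquely. To convert an isomorphism $\alpha : \xi^* \cong \eta^*$ into an actual isomorphism $\xi \cong \eta$ in $X(Y)$, I would use that $\pi_* \O_P$ is a faithfully flat commutative algebra in $\Q(X)$, so that the descent theory of \autoref{descent-theory} (specifically \autoref{desc-krit}, faithfully flat descent) applies: pulling the tensor functors back along $\pi$ and using that the trivializations determine everything, the isomorphism $\alpha$ produces compatible trivialized isomorphisms that, by the cocycle condition, glue to an isomorphism in $X(Y)$. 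The uniqueness (injectivity) follows because two morphisms in $X(Y)$ inducing the same $\alpha$ restrict to two morphisms $Y' \to P$ inducing the same affine-level pullback isomorphism, hence agree by discreteness, and then descend.

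The hard part will be bookkeeping the descent data and the cocycle condition: one must check that the isomorphism $\alpha$ of tensor functors is genuinely \emph{compatible} with the two pullbacks to $Y' \times_Y Y'$, i.e. that the trivialized isomorphism satisfies the cocycle identity needed for it to descend to a morphism of the stack. This amounts to verifying that the canonical comparison isomorphisms making $\pi^*$ into a tensor functor interact correctly with $\alpha$; concretely, one evaluates $\alpha$ on the dualizable (locally free) generators — which exist and are preserved by any tensor functor (\autoref{lokalfrei-bleibt}), and on which $\alpha$ is automatically an isomorphism by \autoref{dual-inv} — and checks coherence there, extending by cocontinuity and density. Everything else is formal: the affine case supplies rigidity, faithfully flat descent supplies gluing, and the dualizability of the generators supplies that $\alpha$ is forced to be invertible and determined locally.
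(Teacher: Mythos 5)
Your proposal has the right outer skeleton (reduce along an fpqc cover so that both morphisms factor through the affine presentation $\pi : P \to X$, then exploit tensoriality of affine schemes), but the core mechanism of the proof is missing: you never explain how an isomorphism of tensor functors $\alpha : \xi^* \cong \eta^*$ actually produces a $2$-morphism $\xi \cong \eta$ in $X(Y)$. In the paper this is done by observing that, once $\xi = \pi\tilde{f}$ and $\eta = \pi\tilde{g}$, the set $\Isom(\xi,\eta)$ is exactly the set of morphisms $h : Y \to P \times_X P$ with $\pr_1 h = \tilde{f}$ and $\pr_2 h = \tilde{g}$; since $P \times_X P$ is an \emph{affine scheme} (as $\pi$ is affine), one applies \autoref{affin-tensorial} to it, and then the $2$-pushout property $\Q(P \times_X P) \simeq \Q(P) \sqcup_{\Q(X)} \Q(P)$ from \autoref{mod-BW} translates the datum $(\tilde{f}^*, \tilde{g}^*, \alpha)$ into precisely such an $h$. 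Your proposal never mentions $P \times_X P$ or the pushout property, and your substitute — ``the isomorphism $\alpha$ produces compatible trivialized isomorphisms that, by the cocycle condition, glue to an isomorphism in $X(Y)$'' — is circular: to descend a $2$-isomorphism along $Y' \to Y$ you must first \emph{construct} one over $Y'$, and constructing it over $Y'$ from $\alpha|_{Y'}$ is exactly the fullness statement you are trying to prove (rigidity of $\Hom_{c\otimes}(\Q(P),\Q(Y'))$ only controls morphisms $Y' \to P$, not $2$-morphisms between the composites $\pi\tilde{f}$ and $\pi\tilde{g}$). Relatedly, what the reduction step really needs is that \emph{both} presheaves $T \mapsto \Isom(\xi_T,\eta_T)$ and $T \mapsto \Isom(\xi_T^*,\eta_T^*)$ are fpqc sheaves (descent for morphisms and for quasi-coherent modules); the descent-algebra machinery of \autoref{desc-krit} that you invoke is used in the paper for the essential-image statement (\autoref{tensdesc}), not here.

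The fallback you offer for the ``hard part'' is also invalid in this generality: you propose to check coherence by evaluating $\alpha$ on ``the dualizable (locally free) generators,'' but a general geometric stack need not have enough dualizable objects — that is exactly the strong resolution property characterizing Adams stacks (\autoref{density}), which is a strictly stronger hypothesis than the one in the theorem. (This is also why the theorem's target is the core $\Hom_{c\otimes\cong}$ rather than all morphisms of tensor functors: without dualizable generators one cannot invoke \autoref{dual-inv}, and in any case it is unnecessary here since the morphisms are already assumed to be isomorphisms.) To repair the proof you should introduce $P \times_X P$, verify it is affine, and run the chain of equivalences through \autoref{affin-tensorial} and \autoref{mod-BW} as above.
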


\begin{proof}
Let $\pi : P \to X$ be a presentation with $P$ and $\pi$ affine. It follows that $P \times_X P \to P$ is affine, and therefore $P \times_X P$ is an affine scheme.
 
Let $f,g : Y \to X$ be two morphisms. Our goal is to prove that
\[\Isom(f,g) \to \Isom(f^*,g^*)\]
is a bijection. Since by descent theory for morphisms and quasi-coherent modules both sides are fpqc-sheaves on the category of $Y$-schemes evaluated at $Y$, and $h : Y \times_{(X \times X)} {(P \times P)} \to Y$ is an fpqc-cover such that $fh$ and $gh$ factor through $\pi$, we may assume that $f,g$ factor both through $\pi$, say $f = \pi \tilde{f}$ and $g = \pi \tilde{g}$. Then $\Isom(f,g)$ identifies with the set of all $h : Y \to P \times_X P$ such that $\pr_1 h = \tilde{f}$ and $\pr_2 h = \tilde{g}$. Consider it as a discrete category. Then it is equivalent to the category of all $h : Y \to P \times_X P$ equipped with isomorphisms $\pr_1 h \cong \tilde{f}$ and $\pr_2 h \cong \tilde{g}$. Since $P$ and $P \times_X P$ are affine, by \autoref{affin-tensorial} this is equivalent to the category of cocontinuous tensor functors $F : \Q(P \times_X P) \to \Q(Y)$ together with isomorphisms of tensor functors $F \pr_1^* \cong \tilde{f}^*$ and $F \pr_2^* \cong \tilde{g}^*$, which in turn (\autoref{mod-BW}) is equivalent to the category of pairs of cocontinuous tensor functors $F_1 : \Q(P) \to \Q(Y)$ and $F_2 : \Q(P) \to \Q(Y)$ together with isomorphisms of tensor functors $F_1 \pi^* \cong F_2 \pi^*$ and $F_1 \cong \tilde{f}^*$ and $F_2 \cong \tilde{g}^*$. This is equivalent to the discrete category of isomorphisms $\tilde{f}^* \pi^* \cong \tilde{g}^* \pi^*$, i.e. $f^* \cong g^*$. It is clear that the resulting bijection $\Isom(f,g) \cong \Isom(f^*,g^*)$ is the canonical one.
\end{proof}

The following is our main result.

\begin{thm} \label{tensdesc}
Let $X$ be an algebraic stack. Let $\pi : P \to X$ be an affine morphism such that $P$ is tensorial. Let $Y$ be a scheme and consider a cocontinuous tensor functor $F : \Q(X) \to \Q(Y)$ with the property that the algebra $F(\pi_* \O_P)$ is a descent algebra. Then $F \cong f^*$ for some morphism $f : Y \to X$.
\end{thm}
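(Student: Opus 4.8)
The plan is to construct the morphism $f : Y \to X$ by fpqc descent, using the descent algebra $A := F(\pi_* \O_P)$ to produce a cover of $Y$ over which $F$ becomes affine and tensorial, hence induced by a morphism. Since $\pi : P \to X$ is affine, $\pi_* \O_P$ is a commutative algebra in $\Q(X)$, and by \autoref{aff-global} we have $\Q(P) \simeq \M(\pi_* \O_P)$. First I would set $B := F(\pi_* \O_P)$ and use that $F$ is a cocontinuous tensor functor together with the base-change property of module categories (\autoref{mod-BW}): applying $F$ to $\Q(X) \to \M(\pi_* \O_P)$ yields a $2$-pushout, so there is an induced cocontinuous tensor functor $F' : \M(\pi_* \O_P) \to \M(B)$, i.e. $\Q(P) \to \M(B)$, compatible with $F$ via the evident square. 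The hypothesis that $B$ is a descent algebra will then be used twice: once to exhibit $\C := \Q(Y)$ as $\Desc(B)$, and once to glue.

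The key steps, in order, are as follows. I would first observe that $\Spec^\otimes_{\Q(Y)}(B) = \M(B)$ is the globalized spectrum of $B$, and since $B$ is a descent algebra we have an equivalence $\Q(Y) \simeq \Desc(B)$. Because $P$ is tensorial and $P \times_X P$ is affine (as $\pi$ is affine), the composite $\Q(P) \to \M(B) \simeq \Q(\Spec_Y(B))$ should be induced by a morphism of schemes, using \autoref{affin-tensorial} and the tensoriality of $P$; write $T := \Spec_Y(B)$ and let $g : T \to P$ be the resulting morphism. The faithfully-flat/descent structure on $B$ makes $T \to Y$ an fpqc cover (this is where one checks that $B$ being a descent algebra forces the morphism $T \to Y$ to be a genuine fpqc covering, so that morphisms and quasi-coherent modules descend along it). I would then verify that the two pullbacks of $g$ along the two projections $T \times_Y T \to T$ agree after composing with $\pi$, using the cocycle isomorphism $\phi$ built into the descent datum $F(\pi_* \O_P) \in \Desc(B)$ coming from the tensor structure of $F$; this is precisely the coherence needed to descend $\pi g : T \to X$ to a morphism $f : Y \to X$.

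The main obstacle I expect is the compatibility step: showing that the descent datum on $B$ induced abstractly by the tensor functor $F$ (via the canonical isomorphism $B \otimes B \cong B \otimes B$ twisted by the symmetry, as in the functor $\C \to \Desc(A)$ of the descent section) coincides with the geometric descent datum that allows $\pi g : T \to X$ to glue to $f : Y \to X$. Concretely, one must match the cocycle $\phi$ arising from $F$ applied to $\pi_* \O_P$ with the isomorphism of the two pullbacks $\pr_1^* g \cong \pr_2^* g$ over $T \times_Y T$, and check that this matching is exactly the statement that $F(\pi_* \O_P)$, as an object of $\Desc(B)$, lies in the essential image of $F : \Q(X) \to \Desc(B)$. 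Once this is established, \autoref{fullfaith2} (fully faithfulness of $X(Y) \to \Hom_{c\otimes\cong}(\Q(X),\Q(Y))$) guarantees that the glued $f$ actually satisfies $f^* \cong F$, and the compatibility of the gluing is automatic. I would finish by invoking \autoref{fullfaith2} to identify $f^*$ with $F$ and to conclude uniqueness, thereby obtaining the desired morphism $f : Y \to X$ with $F \cong f^*$.
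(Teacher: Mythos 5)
Your proposal follows the same skeleton as the paper's proof: set $B := F(\pi_*\O_P)$, use \autoref{mod-BW} to produce $F' : \Q(P) \simeq \M(\pi_*\O_P) \to \M(B) \simeq \Q(\Spec_Y(B))$, invoke tensoriality of $P$ to write $F' \cong g^*$ for some $g : \Spec_Y(B) \to P$, then descend $\pi \circ g$ along $\Spec_Y(B) \to Y$ to a morphism $f : Y \to X$, and finally identify $f^*$ with $F$. However, two of your steps have genuine gaps.

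First, you claim that ``$B$ being a descent algebra forces $T \to Y$ to be a genuine fpqc covering.'' This does not follow and is not available: in this paper a descent algebra is defined purely categorically (the comparison functor $\Q(Y) \to \Desc(B)$ is an equivalence), and while faithfully flat algebras are descent algebras (\autoref{desc-krit}), the converse is neither claimed nor true in general --- effective descent for modules also holds along suitable non-flat (e.g.\ pure) algebras, and nothing in the hypotheses gives flatness of $B = F(\pi_*\O_P)$. Consequently your appeal to fpqc descent of morphisms and modules along $T \to Y$ has no foundation. The paper never uses flatness: it verifies directly, by reducing via \autoref{fullfaith2} to tensor functors, that the two composites $Q \times_Y Q \rightrightarrows Q \xrightarrow{\pi f'} X$ are $2$-isomorphic and satisfy the cocycle condition --- concretely, this amounts to applying $F$ to the evident isomorphism $M \otimes A \otimes A \cong A \otimes M \otimes A$ of $A \otimes A$-modules, where $A = \pi_*\O_P$ --- and then glues $f$ from this descent datum using only the stack property of $X$.

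Second, your concluding move --- ``\autoref{fullfaith2} guarantees that the glued $f$ actually satisfies $f^* \cong F$'' --- cannot work. \autoref{fullfaith2} asserts fully faithfulness of $f \mapsto f^*$, so it compares two \emph{morphisms} $Y \to X$; it gives uniqueness of $f$, but it cannot manufacture an isomorphism between $f^*$ and an abstract cocontinuous tensor functor $F$ not yet known to be geometric (which is precisely what is being proved). The paper closes this step with the descent algebra hypothesis itself: there is a canonical isomorphism of $B$-modules $f^*(M) \otimes B = \sigma^* f^*(M) \cong f'^*\pi^*(M) = F(M) \otimes B$, compatible with the descent data, and since $\Q(Y) \simeq \Desc(B)$ this descends to a natural isomorphism $f^*(M) \cong F(M)$, which one then checks is monoidal. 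So the descent algebra hypothesis is used twice --- once to glue $f$ and once to identify $f^*$ with $F$ --- and neither use can be replaced by fpqc descent or by \autoref{fullfaith2}.
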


\begin{proof}
Let $A := \pi_* \O_P$ and let $F : \Q(X) \to \Q(Y)$ be a cocontinuous tensor functor such that $B := F(A)$ is a descent algebra. Then we have an affine descent morphism $\sigma : Q = \Spec(B) \to Y$, and $F$ induces a cocontinuous tensor functor $\M(A) \to \M(B)$, i.e. $F' : \Q(P) \to \Q(Q)$, in such a way that
\[\xymatrix{\Q(X) \ar[r]^{F} \ar[d]_{\pi^*} & \Q(Y) \ar[d]^{\sigma^*} \\ \Q(P) \ar[r]_{F'} & \Q(Q)}\]
commutes up to isomorphism (in fact this is a pushout by \autoref{mod-BW}). Since $P$ is tensorial by assumption, $F'$ is induced by a morphism $f' : Q \to P$. Now, the idea is to find a morphism $f : Y \to X$ such that
\[\xymatrix{Y \ar[r]^{f} & X \\ Q \ar[u]^{\sigma} \ar[r]_{f'} & P \ar[u]_{\pi}}\]
commutes up to isomorphism (in fact it will be a pullback) and lifts the above diagram. But this is possible by descent:

Consider the morphism $h : Q \xrightarrow{f'} P \xrightarrow{\pi} X$. We claim that $Q \times_Y Q \rightrightarrows Q \xrightarrow{h} X$ commutes (up to isomorphism). By \autoref{fullfaith2} it suffices to do this for the induced tensor functors
\[\Q(X) \xrightarrow{h^*} \Q(Q) \rightrightarrows \Q(Q \times_Y Q),\]
i.e. to show that $F(\pi_* \pi^* M) \otimes B \cong B \otimes F(\pi_* \pi^* M)$ as $B \otimes B$-modules. But this is just $F$ applied to the evident isomorphism $M \otimes A \otimes A \cong A \otimes M \otimes A$ of $A \otimes A$-modules. The cocycle condition is satisfied (again using \autoref{fullfaith2}), hence there is a unique morphism $f : Y \to X$ such that the above diagram commutes. We have an isomorphism of $B$-modules
\[f^*(M) \otimes B = \sigma^*(f^*(M)) \cong f'^*(\pi^*(M)) = F(\pi_* \pi^* M) = F(M) \otimes B.\]
But $B$ is a descent algebra and the cocycle condition is satisfied, hence the isomorphism comes from an isomorphism $f^*(M) \cong F(M)$. One checks that this is, in fact, an isomorphism of tensor functors $f^* \cong F$.
\end{proof}

\begin{cor}[\cite{Lur04}] \label{preserveflat} 
Let $Y$ be a scheme and let $X$ be a geometric stack. Then, the essential image of the fully faithful functor (see \autoref{fullfaith2}) \marginpar{I've added a reference to Lurie's paper.}
\[X(Y) \to \Hom_{c\otimes \cong}\bigl(\Q(X),\Q(Y)\bigr)\]
consists of those cocontinuous tensor functors which preserve faithfully flatness.
\end{cor}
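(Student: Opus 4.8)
The plan is to combine the fully faithfulness result already established in \autoref{fullfaith2} with the descent machinery of \autoref{tensdesc}, so the only real work is to identify the essential image. Since \autoref{fullfaith2} tells us the functor $X(Y) \to \Hom_{c\otimes \cong}(\Q(X),\Q(Y))$ is fully faithful, it remains to characterize which cocontinuous tensor functors $F : \Q(X) \to \Q(Y)$ are isomorphic to some $f^*$. First I would observe the easy inclusion: if $F \cong f^*$ is a genuine pullback, then $F$ preserves faithfully flatness. Indeed, for a morphism $f : Y \to X$ the pullback $f^*$ sends a faithfully flat algebra $A$ on $X$ to $f^*(A)$ on $Y$, and faithfully flatness is preserved under arbitrary base change; alternatively one argues directly that $f^*$ preserves flatness and, because $f^*$ reflects the zero object appropriately on the level of the relevant fibers, preserves the reflecting-of-limits property that defines faithfully flatness (see the discussion in \autoref{flat}).

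For the harder inclusion I would show that any $F$ preserving faithfully flatness lies in the image. The key is that $X$ geometric means qc ss, so we may choose a smooth (or just flat) surjective \emph{affine} morphism $\pi : P \to X$ with $P$ an \emph{affine} scheme. Then $A := \pi_* \O_P$ is a faithfully flat commutative algebra in $\Q(X)$: faithful flatness of $\pi$ translates into faithful flatness of the algebra $A$ as an object of $\Q(X)$. By hypothesis $F$ preserves faithfully flatness, so $B := F(A)$ is a faithfully flat commutative algebra in $\Q(Y)$. By \autoref{desc-krit} every faithfully flat commutative algebra is a descent algebra, so $B$ is a descent algebra in $\Q(Y)$. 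Moreover, since $P$ is an affine scheme it is tensorial by \autoref{affin-tensorial}. Thus all the hypotheses of \autoref{tensdesc} are satisfied, and that theorem yields a morphism $f : Y \to X$ with $F \cong f^*$.

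The main obstacle I anticipate is verifying cleanly that the \emph{geometric-stack} notion of faithfully flatness for the algebra $A = \pi_* \O_P$ coincides with the tensor-categorical notion of \autoref{flat}, and conversely that ``$F$ preserves faithfully flatness'' is exactly the condition making $B = F(A)$ satisfy the descent criterion of \autoref{desc-krit}. Concretely, one must check that the canonical Amitsur-type sequence $M \to M \otimes A \to M \otimes A \otimes A$ in $\Q(X)$ is the sheaf-theoretic descent sequence for the cover $\pi$, so that $A$ is faithfully flat in the categorical sense; this uses that $\pi$ is affine (so $\pi_*$ is exact and computes the relevant pushforwards) together with faithfully flat descent for quasi-coherent modules (\cite{Vis05}). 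Once this translation is in place the argument is essentially formal.

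Finally I would assemble the two inclusions: the essential image is contained in the faithfully-flatness-preserving functors (easy direction) and contains them (via \autoref{tensdesc}), so the two coincide. Combined with the fully faithfulness of \autoref{fullfaith2}, this identifies $X(Y)$ with precisely the full subgroupoid of $\Hom_{c\otimes \cong}(\Q(X),\Q(Y))$ consisting of those cocontinuous tensor functors preserving faithfully flatness, which is the assertion of the corollary.
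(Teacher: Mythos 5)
Your proposal is correct and follows essentially the same route as the paper's own proof: the easy direction is that pullbacks preserve faithful flatness, and the hard direction applies \autoref{desc-krit} to $B = F(\pi_*\O_P)$ for an affine presentation $\pi : P \to X$ (with $P$ affine, hence tensorial by \autoref{affin-tensorial}) and then invokes \autoref{tensdesc}. The only difference is that you flag and sketch the translation between geometric and tensor-categorical faithful flatness of $\pi_*\O_P$, a point the paper passes over silently.
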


\begin{proof}
If $f : Y \to X$ is a morphism, then clearly $f^* : \Q(Y) \to \Q(X)$ preserves faithfully flatness. Conversely, if $F : \Q(X) \to \Q(Y)$ is a cocontinuous tensor functor such that $F(\pi_* \O_P)$ is faithfully flat, where $\pi : P \to X$ is a presentation such that $\pi$ and $P$ are affine, then $F(\pi_* \O_P)$ is a descent algebra by \autoref{desc-krit}, so that the claim follows from \autoref{tensdesc}.
\end{proof}

\begin{cor}[\cite{Sch12a}] \label{adams-tensorial} 
Adams stacks are tensorial. \marginpar{I've added a reference to Sch\"appi's paper.}
\end{cor}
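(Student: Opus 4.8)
The plan is to deduce \autoref{adams-tensorial} as a direct corollary of the main result \autoref{tensdesc}, together with the characterization of Adams stacks developed in \autoref{density}. An Adams stack is, by definition (see the remark after \autoref{strong-resolution}), a geometric stack with the strong resolution property. Let $X$ be such a stack and let $Y$ be an arbitrary scheme. By \autoref{fullfaith2} the pullback functor
\[
X(Y) \to \Hom_{c\otimes \cong}\bigl(\Q(X),\Q(Y)\bigr)
\]
is already fully faithful, so all that remains is to prove essential surjectivity onto $\Hom_{c\otimes}\bigl(\Q(X),\Q(Y)\bigr)$; in particular we must also check that every cocontinuous tensor functor $F : \Q(X) \to \Q(Y)$ is automatically an isomorphism of tensor functors away from its image, i.e. that the Hom-category is essentially a groupoid.

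First I would choose a presentation $\pi : P \to X$ with both $\pi$ and $P$ affine, which exists since $X$ is geometric. Because affine schemes are tensorial (\autoref{affin-tensorial}), the scheme $P$ is tensorial, so the hypotheses of \autoref{tensdesc} are met \emph{provided} we can verify that for any cocontinuous tensor functor $F : \Q(X) \to \Q(Y)$ the algebra $F(\pi_* \O_P)$ is a descent algebra in $\Q(Y)$. This is exactly where the strong resolution property enters: by \autoref{adamsspec}, the algebra $A := \pi_* \O_P$ is a \emph{special} descent algebra in $\Q(X)$. Special descent algebras are preserved by arbitrary cocontinuous linear tensor functors (\autoref{specstab}), so $F(A)$ is a special descent algebra in $\Q(Y)$. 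Since $\Q(Y)$ for a scheme $Y$ has equalizers and directed colimits which are exact (it is locally presentable, and for a scheme locally finitely presentable by \autoref{Qcohfin}), \autoref{specdes} applies and shows that $F(A) = F(\pi_* \O_P)$ is an honest descent algebra. Then \autoref{tensdesc} yields $F \cong f^*$ for some morphism $f : Y \to X$, which is precisely essential surjectivity.

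The main obstacle — and the genuine content being imported here — is the chain of results from \autoref{density} and \autoref{descent-theory} that connects the strong resolution property to special descent algebras: namely \autoref{adamsspec}, which requires writing $A = \pi_* \O_P$ as a directed colimit of locally free (hence dualizable, by \autoref{dual-char}) objects with the splitting and dual-exactness properties, together with the universality under tensor functors from \autoref{specstab} and the passage from special descent to descent in \autoref{specdes}. Once these are in hand, the argument is purely formal. I would close by noting that the resulting equivalence $X(Y) \simeq \Hom_{c\otimes}\bigl(\Q(X),\Q(Y)\bigr)$ holds for every scheme $Y$, which is the definition of $X$ being tensorial, and that the case of general algebraic stacks $Y$ follows by fpqc descent exactly as in the scheme case, since both sides are sheaves in $Y$.
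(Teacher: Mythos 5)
Your proposal follows exactly the same route as the paper's own proof: presentation $\pi : P \to X$ with $\pi$ and $P$ affine, full faithfulness from \autoref{fullfaith2}, then \autoref{adamsspec} $\to$ \autoref{specstab} $\to$ \autoref{specdes} to see that $F(\pi_* \O_P)$ is a descent algebra, and finally \autoref{tensdesc} to produce the morphism $f$ with $F \cong f^*$. However, there is one genuine gap, and it is precisely the point you flag in your second sentence and then never return to: the groupoid issue. What you have established at the end is an equivalence $X(Y) \simeq \Hom_{c\otimes \cong}\bigl(\Q(X),\Q(Y)\bigr)$ with the \emph{core} (full faithfulness from \autoref{fullfaith2} lands in the core, and \autoref{tensdesc} gives essential surjectivity). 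Tensoriality demands an equivalence with $\Hom_{c\otimes}\bigl(\Q(X),\Q(Y)\bigr)$, including possibly non-invertible $2$-morphisms; since $X(Y)$ is a groupoid, this can only hold if every morphism of cocontinuous tensor functors $\Q(X) \to \Q(Y)$ is an isomorphism. This is not automatic and must be proved.

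The paper closes this gap in one line using the strong resolution property a second time: locally free modules are dualizable (\autoref{dual-char}) and colimit-dense in $\Q(X)$ (\autoref{lokfreicolim}), so by \autoref{dual-inv} the category $\Hom_{c\otimes}\bigl(\Q(X),\D\bigr)$ is a groupoid for every $\D$, i.e. $\Hom_{c\otimes \cong}\bigl(\Q(X),-\bigr) = \Hom_{c\otimes}\bigl(\Q(X),-\bigr)$. You should insert this argument after invoking \autoref{fullfaith2}. Two further minor points: your justification for applying \autoref{specdes} cites \autoref{Qcohfin}, which requires $Y$ qc qs; for an arbitrary scheme $Y$ one should instead note that $\Q(Y)$ is a Grothendieck category (equalizers exist and directed colimits are exact), which suffices. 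And the closing remark about extending to algebraic stacks $Y$ by descent is unnecessary, since \autoref{tensorial-stacky} only requires the equivalence for schemes $Y$.
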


\begin{proof}
Let $X$ be an Adams stack and $\pi : P \to X$ be a presentation such that $\pi$ and $P$ are affine. The functor $X(Y) \to \Hom_{c\otimes \cong}\bigl(\Q(X),\Q(Y)\bigr)$ is fully faithful by \autoref{fullfaith2}. Since $X$ has the strong resolution property, \autoref{dual-inv} tells us that $\Hom_{c\otimes \cong}\bigl(\Q(X),-\bigr) =  \Hom_{c\otimes}\bigl(\Q(X),-\bigr)$. Now let $F : \Q(X) \to \Q(Y)$ be a cocontinuous tensor functor. Then $\pi_* \O_P$ is a special descent algebra by \autoref{adamsspec}. But then $F(\pi_* \O_P)$ is also a special descent algebra (\autoref{specstab}), in particular a descent algebra by \autoref{specdes}. Hence, we may apply \autoref{tensdesc} and obtain a morphism $f : Y \to X$ with $F \cong f^*$. This concludes the proof of $X(Y) \simeq  \Hom_{c\otimes}\bigl(\Q(X),\Q(Y)\bigr)$.
\end{proof}

\begin{cor} \label{BG}
If $G$ is a linear algebraic group defined over $\mathds{Z}$, then the classifying stack $BG$ is tensorial.
\end{cor}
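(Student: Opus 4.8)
The plan is to deduce the statement from \autoref{adams-tensorial} by showing that $BG$ is an Adams stack, that is, a geometric stack with the strong resolution property; by \autoref{density} these two conditions together are exactly what is needed. Throughout I would use that a linear algebraic group $G$ over $\Z$ is an affine group scheme, flat and of finite type over $\Z$, together with the equivalence $\Q(BG) \simeq \Rep(G) \simeq \CoM(\O(G))$ between quasi-coherent modules on $BG$ and comodules over the Hopf algebra $\O(G)$ (the case $G=\G_m$ being \autoref{Gm-global}).

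First I would check that $BG$ is geometric. The atlas $\pi : \Spec(\Z) \to BG$ classifying the trivial $G$-torsor has self-fibre-product $\Spec(\Z) \times_{BG} \Spec(\Z) \cong G$, which is affine and flat over $\Z$; hence $\pi$ is a faithfully flat affine morphism whose source $\Spec(\Z)$ is an affine scheme. By the very definition of a geometric stack (a flat surjective affine cover by an affine scheme) this exhibits $BG$ as geometric.

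The substantive step is the strong resolution property, for which I would invoke the characterization \autoref{strong-resolution}(1): it suffices to show that $\pi_* \O_{\Spec(\Z)}$ is a directed colimit of locally free modules. Under the equivalence above, $\pi_* \O_{\Spec(\Z)}$ is the cofree comodule on $\Z$, namely the regular representation $\O(G)$ with comodule structure given by the comultiplication. I would then use the fundamental theorem of comodules, which applies since $\O(G)$ is flat over $\Z$, to write $\O(G)$ as the directed union of its subcomodules $W$ that are finitely generated as $\Z$-modules. The key observation is that each such $W$ is automatically locally free: since $G$ is flat over $\Z$, the ring $\O(G)$ is a flat, hence torsion-free, $\Z$-module, so any finitely generated submodule $W \subseteq \O(G)$ is free of finite rank over the principal ideal domain $\Z$. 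Thus every $W$ is a representation of $G$ on a finite free $\Z$-module, i.e. a locally free $\O_{BG}$-module of finite rank, and $\O(G) = \colim_W W$ is a directed colimit of such. By \autoref{strong-resolution} this establishes the strong resolution property, whence $BG$ is an Adams stack and \autoref{adams-tensorial} yields that $BG$ is tensorial.

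The step I expect to be the main obstacle is the passage over the arithmetic base $\Z$ rather than over a field: one must know both that the fundamental theorem of comodules holds for the coalgebra $\O(G)$ over $\Z$ and that finitely generated subcomodules are locally free. The torsion-freeness argument above settles the second point, and it is precisely there that flatness of $G$ over $\Z$ is genuinely used. Over a more general base, where neither freeness nor a PID is available, this last reduction would break down, and one would instead have to produce locally free representations surjecting onto $\O(G)$ through the matrix coefficients of a faithful representation $G \hookrightarrow \GL_{n,\Z}$; I would flag this as the point to revisit if the hypotheses on $G$ were weakened.
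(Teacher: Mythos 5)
Your proposal is correct and is essentially the paper's own argument: the paper's proof is a one-line sketch ("this follows from the observation that the Hopf algebra $\O(G)$ is a directed colimit of free $\Z$-modules"), which implicitly runs through geometricity of $BG$, the characterization \autoref{strong-resolution}(1) of the strong resolution property, and \autoref{adams-tensorial}. Your write-up supplies exactly the details this sketch omits — the affine flat atlas $\Spec(\Z) \to BG$, the fundamental theorem of comodules for the flat coalgebra $\O(G)$, and the PID argument showing finitely generated subcomodules are free.
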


\begin{proof}[Sketch of proof]
This follows from the observation that the Hopf algebra $\O(G)$ is a directed colimit of free $\Z$-modules.
\end{proof}

\begin{rem}
The main theorem of \cite{Hall14} (which appeared after completion of this thesis) states the following: If $X$ is a quasi-compact algebraic stack with quasi-affine diagonal and $T$ is a locally noetherian algebraic stack, then \marginpar{I've added this remark.}
\[\Hom(T,X) \to \Hom_{c\otimes}\bigl(\Q(X),\Q(T)\bigr),~ f \mapsto f^*\]
is an equivalence of categories.
\end{rem}

\begin{rem}
Bhatt has proven the following ``derived tensoriality'' of algebraic spaces (\cite[Theorem 1.5]{Bha14}): If $X,Y$ are algebraic spaces with $X$ qc qs, \marginpar{I've added this remark.} then $f \mapsto L f^*$ induces an equivalence of $\infty$-categories
\[\Hom(Y,X) \simeq \Hom_{c\otimes}\bigl(D(X),D(Y)\bigr).\]
Here, $D(X)$ denotes the symmetric monoidal stable derived $\infty$-category of quasi-coherent modules on $X$.
\end{rem}

\begin{prop}[Reduction]
Let $X$ be a geometric stack. Assume that for every geometric stack $X'$ which is affine over $X$ we can show the following: If $X'$ is defined over a commutative ring $A$ and $H : \Q(X') \to \M(A)$ is an $A$-linear cocontinuous tensor functor such that $I_H = 0$, \marginpar{I've added this Proposition.} then $H$ is induced by a morphism. Then, for every scheme $Y$ we have $\Hom(Y,X) \simeq \Hom_{c\otimes\cong}\bigl(\Q(X),\Q(Y)\bigr)$.
\end{prop}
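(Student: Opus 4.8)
The plan is to separate fully faithfulness from essential surjectivity. Fully faithfulness of $f \mapsto f^*$ is already available for every geometric $X$ and every scheme $Y$ from \autoref{fullfaith2}, so the entire content lies in essential surjectivity of
\[
X(Y) \to \Hom_{c\otimes\cong}\bigl(\Q(X),\Q(Y)\bigr).
\]
Both $Y \mapsto X(Y)$ and $Y \mapsto \Hom_{c\otimes\cong}(\Q(X),\Q(Y))$ are stacks on $\Sch$ (the latter because $\SPEC(\Q(X))$ is a stack by descent for quasi-coherent modules, \autoref{stacksdef}), and $f \mapsto f^*$ is a morphism of stacks which is fully faithful on each fibre. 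Hence it is an equivalence as soon as it is essentially surjective fpqc-locally on $Y$, and I would use this to reduce to the case $Y = \Spec(A)$ affine, where $\Q(Y) = \M(A)$.

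So fix a cocontinuous tensor functor $F : \Q(X) \to \M(A)$. Because $\M(A)$ is $A$-linear, I would first upgrade $F$ to an $A$-linear functor by base change. Concretely, $A \otimes_R \O_X$ is a quasi-coherent algebra on $X$ with $\Spec_X(A \otimes_R \O_X) = X \times_{\Spec(R)} \Spec(A) =: X_A$, so \autoref{aff-global} (together with \autoref{ringchange}) identifies $\Q(X_A) \simeq \M(A \otimes_R \O_X) = \Q(X)_A$; and the adjunction of \autoref{changering} (equivalently \autoref{mod-UE}) turns $F$ into an $A$-linear cocontinuous tensor functor $\tilde F : \Q(X_A) \to \M(A)$ satisfying $\tilde F \circ p^* \cong F$, where $p : X_A \to X$ is the projection. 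The point of passing to $X_A$ is that $p$ is a base change of $\Spec(A) \to \Spec(R)$, hence affine; thus $X_A$ is geometric, affine over $X$, and defined over $A$.

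Now I would apply the closed-image factorization \autoref{closedimage} to the $A$-linear functor $\tilde F$: since $\Q(X_A)$ is abelian and locally presentable, there is a largest ideal $I_{\tilde F} \subseteq \O_{X_A}$ killed by $\tilde F$, and $\tilde F$ factors as $\Q(X_A) \xrightarrow{j^*} \Q(X') \xrightarrow{G} \M(A)$ with $I_G = 0$, where $X' := \Spec_{X_A}(\O_{X_A}/I_{\tilde F})$ is the closed substack cut out by $I_{\tilde F}$ (so $V^{\otimes}(I_{\tilde F}) = \Q(X')$ and the reflector is the pullback $j^*$ along the closed immersion $j : X' \hookrightarrow X_A$). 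A closed immersion is affine, so $X' \to X_A \to X$ is affine and $X'$ is again geometric, affine over $X$, and defined over $A$; moreover $G$ is $A$-linear with $I_G = 0$. This is exactly the situation addressed by the hypothesis, which yields a morphism $g : \Spec(A) \to X'$ with $G \cong g^*$. Composing, $F \cong \tilde F \circ p^* \cong G \circ j^* \circ p^* \cong g^* \circ j^* \circ p^* = (p\,j\,g)^*$, so $F$ is induced by $f := p \circ j \circ g : Y \to X$, as required.

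The two steps needing the most care are, first, that passing to the base change $X_A$ really produces a stack affine over $X$ — this is what makes the hypothesis applicable and forces the use of \autoref{aff-global} and \autoref{ringchange} rather than a naive closed substack of $X$, which would only yield an $R$-linear $G$. Second, one must verify that the local-to-global reduction is legitimate, i.e. that $\Hom_{c\otimes\cong}(\Q(X),\Q(-))$ is a stack and that fully faithfulness lets the locally constructed morphisms glue. I expect the genuine obstacle to be bookkeeping rather than any new idea: checking that the $A$-linear structure coming from \autoref{changering} agrees with the one induced by $X_A \to \Spec(A)$, and that geometricity and affineness over $X$ survive both the base change $p$ and the closed immersion $j$.
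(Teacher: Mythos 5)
Your proof is correct and follows essentially the same route as the paper's: reduce to affine $Y=\Spec(A)$ via fully faithfulness (\autoref{fullfaith2}) and descent, pass to the $A$-linear functor on $\Q(X_A)$ via \autoref{changering}, factor through the closed substack $V(I_{\tilde F})$ using \autoref{closedimage}, and apply the hypothesis to the resulting functor with vanishing kernel ideal. The only difference is that you spell out the gluing/descent justification for the reduction to affine $Y$ and the verification that $X'$ is geometric, affine over $X$, and defined over $A$, which the paper leaves implicit.
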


\begin{proof}
By \autoref{fullfaith2} we only have to show that every cocontinuous tensor functor $F : \Q(X) \to \Q(Y)$ is induced by a morphism. We may assume that $Y$ is an affine scheme, say $Y=\Spec(A)$. By \autoref{changering} $F$ factors as $p^* : \Q(X) \to \Q(X_A)$ followed by an $A$-linear cocontinuous tensor functor $G : \Q(X_A) \to \Q(Y)$. Consider $I_G \subseteq \O_{X_A}$, i.e. the largest quasi-coherent ideal $I \subseteq \O_{X_A}$  such that $G(I \to \O_{X_A})$ vanishes. By \autoref{closedimage} $G$ factors as $\Q(X_A) \to \Q(V(I_G))$ followed by $H : \Q(V(I_G)) \to \Q(Y)$ such that $I_H = 0$. Since $V(I_G) \to X_A \to X$ is affine, by assumption $H$ is induced by a morphism. Hence, the same holds for $F$.
\end{proof}
\section{Localization} \label{localization-section}


\subsection{General theory}

In order to motiviate the following constructions, let us look at two basic examples.

\begin{ex}[Torsion-free modules] \label{torsion-free}
Let $R$ be an integral domain. Consider the full subcategory of torsion-free $R$-modules $\tfM(R) \hookrightarrow \M(R)$. We would like to endow it with the structure of a cocomplete tensor category. If the usual tensor product (i.e. that of $\M(R)$) of two torsion-free $R$-modules is again torsion-free, we can just take the tensor product of the underlying $R$-modules. For example, this happens when $R$ is a Dedekind domain, since then torsion-free $\Leftrightarrow$ flat. But in general, we can just mod out the torsion submodule and \emph{enforce} the tensor product to be torsion-free:
\[M \otimes_{\tfM(R)} N := (M \otimes_R N)/\Tor(M \otimes_R N).\]
The usual direct sum of a family of torsion-free $R$-modules is again torsion-free. However, this does not apply to cokernels. Again we can remedy this simply by modding out the torsion submodule. It follows that, for every $a \in R \setminus \{0\}$ and every torsion-free $R$-module $M$, the map $a : M \to M$ is an \emph{epimorphism} in $\tfM(R)$ (but of course not in $\M(R)$). Now one can check that $\tfM(R)$ becomes a cocomplete $R$-linear tensor category $\C$. In fact it is the universal one with the property that  $a : \O_{\C} \to \O_{\C}$ is an epimorphism for every $a \in R \setminus \{0\}$. Note that the inclusion $\tfM(R) \hookrightarrow \M(R)$ is far from being a cocontinuous tensor functor. But the reflection $\M(R) \to \tfM(R)$, $M \mapsto M/\Tor(M)$ is a cocontinuous tensor functor by construction.
\end{ex}

\begin{ex}[Sheaves] \label{sheaves}
Let $X$ be a ringed space. Then it is easy to define a cocomplete tensor category of presheaves of modules over $X$. Colimits and tensor products are constructed sectionwise. But this does not work for sheaves. Instead, we construct the tensor product of sheaves of modules to be the associated sheaf of the tensor product of the underlying presheaves of modules. Similarly, colimits of sheaves are constructed by taking associated sheaves of underlying colimits. Then we obtain a cocomplete tensor category $\M(X)$ of sheaves of modules on $X$.
\end{ex}

\begin{defi}[Orthogonality classes]
Let $S = (M_i \to N_i)_{i \in I}$ be a family of morphisms in a category $\C$. We always assume that $I$ is a set.
\begin{enumerate}
\item Then $S^{\cong}$ is the full subcategory of $\C$ consisting of those objects $A \in \C$ which \emph{believe} that each $M_i \to N_i$ is a isomorphism. More precisely, it is required that $\hom(N_i,A) \to \hom(M_i,A)$ is bijective for every $i \in I$. This is also known as the \emph{orthogonality class} associated to $S$.
\item Similarly, $S^{\twoheadrightarrow} \subseteq \C$ consists of those $A \in \C$ which believe that each $M_i \to N_i$ is an epimorphism, which means that for every $i \in I$ the induced map $\hom(N_i,A) \to \hom(M_i,A)$ is injective.
\item Finally, $S^{\hookrightarrow} \subseteq \C$ consists of those $A \in \C$ such that each of the maps $\hom(N_i,A) \to \hom(M_i,A)$ is surjective.
\end{enumerate}
\end{defi}

Observe $S^{\cong} = S^{\hookrightarrow} \cap S^{\twoheadrightarrow}$. In particular, when each $M_i \to N_i$ is an epimorphism, then $S^{\cong} = S^{\hookrightarrow}$. Each of $S^{\cong},S^{\twoheadrightarrow},S^{\hookrightarrow}$ is closed under limits taken in $\C$. It is an interesting question whether these subcategories are \emph{reflective}, i.e. if the inclusion functor has a left adjoint. It is well-known (\cite[Section 1.C]{AR94}) that this is the case for locally presentable categories; see also \cite{Kel80} for generalizations. We will include the proofs here and rearrange them in a certain way in order to get the corresponding results for locally presentable tensor categories. We also prove universal properties which should be well-known but for which we could not find a suitable reference.
 
\begin{defi}[Transfinite composition] \label{transfinite}
Let $\C$ be a cocomplete category, $R_1 : \C \to \C$ be an endofunctor and $\eta : \id_\C \to R_1$ be a natural transformation. Then we define endofunctors $R_{\alpha}$ of $\C$ for ordinal numbers $\alpha$ together with compatible natural transformations $R_{\alpha} \to R_{\beta}$ for $\alpha \leq \beta$ by transfinite recursion as follows:
\begin{itemize}
\item We let $R_0 = \id_\C$.
\item If $R_{\alpha}$ is already constructed, then we define $R_{\alpha+1} := R_1 \circ R_{\alpha}$ together with $\eta \circ R_{\alpha} : R_{\alpha} \to R_{\alpha+1}$.
\item If $\lambda$ is a limit ordinal and $R_{\alpha}$ for $\alpha < \lambda$ have been constructed, then choose a colimit $(R_{\alpha} \to R_{\gamma})_{\alpha < \lambda}$ in the category of endofunctors.
\end{itemize}
We call the sequence $(R_\alpha)$ the \emph{transfinite composition} of $R_1$. For example, we have $R_n = R_1 \circ \dotsc \circ R_1$ ($n$ times) for $n < \omega$ and $R_\omega$ is the colimit of the sequence $R_0 \to R_1 \to R_2 \to R_3 \to \dotsc$.

An object $M \in \C$ is called a \emph{fixed object} if $\eta(M) : M \to R_1(M)$ is an isomorphism. Fixed objects constitute a full subcategory $\Fix(R_1) \subseteq \C$.
\end{defi}

If $M \to N$ is a morphism in $\C$ and $N \in \Fix(R_1)$, then it extends uniquely to compatible morphisms $R_{\alpha}(M) \to N$, by induction on $\alpha$: The limit step is clear. If $R_{\alpha}(M) \to N$ is already constructed, then we may extend it via $R_{\alpha+1}(M) = R_1(R_{\alpha}(M)) \to R_1(N) \cong N$. This observation implies:

\begin{lemma}[Reflection] \label{fix}
With the notations of \autoref{transfinite}, assume that for every $M \in \C$ there is some ordinal $\alpha$ such that $R_{\alpha}(M)$ is fixed. Then the assignment $M \mapsto R_{\alpha}(M)$ extends to a functor $R_{\infty} : \C \to \Fix(R_1)$ which is left adjoint to the inclusion functor. Hence, $\Fix(R_1) \subseteq \C$ is reflective. In particular, $\Fix(R_1)$ is cocomplete, with colimits computed by applying $R_{\infty}$ to the underlying colimit in $\C$.
\end{lemma}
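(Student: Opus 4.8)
The plan is to exhibit, for every $M \in \C$, a reflection arrow into $\Fix(R_1)$ and then invoke the standard criterion that a full subcategory which admits a reflection arrow at every object is reflective (the universal-arrow construction of adjoints). By hypothesis each $M$ admits an ordinal $\alpha$ with $R_\alpha(M)$ fixed; I would set $R_\infty(M) := R_\alpha(M)$ and take the unit $\rho_M : M \to R_\infty(M)$ to be the canonical transition $R_0(M) \to R_\alpha(M)$ from \autoref{transfinite}.

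The first technical step is a \emph{stabilization} statement guaranteeing that this does not depend on the chosen $\alpha$. I would show: if $R_\alpha(M)$ is fixed, then $R_\beta(M)$ is fixed for all $\beta \ge \alpha$ and every transition $R_\alpha(M) \to R_\beta(M)$ is an isomorphism. For the successor step, the transition $R_\alpha(M) \to R_{\alpha+1}(M)$ is $\eta_{R_\alpha(M)}$, an isomorphism because $R_\alpha(M)$ is fixed; naturality of $\eta$ applied to this very morphism gives $\eta_{R_{\alpha+1}(M)} \circ \eta_{R_\alpha(M)} = R_1(\eta_{R_\alpha(M)}) \circ \eta_{R_\alpha(M)}$, and cancelling the isomorphism $\eta_{R_\alpha(M)}$ yields $\eta_{R_{\alpha+1}(M)} = R_1(\eta_{R_\alpha(M)})$, again an isomorphism, so $R_{\alpha+1}(M)$ is fixed. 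At a limit ordinal $\lambda > \alpha$ the tail of the defining diagram from $\alpha$ onward consists of isomorphisms and is cofinal, so $R_\lambda(M) \cong R_\alpha(M)$ is fixed. In particular, for a fixed object $Q$ the arrow $\rho_Q : Q \to R_\infty(Q)$ is an isomorphism, and $R_\infty(M)$ together with $\rho_M$ is determined up to canonical isomorphism.

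Next I would verify the universal property. For fixed $N$, the observation recorded just before the statement — that any $g : M \to N$ extends uniquely to a compatible family $(g_\gamma : R_\gamma(M) \to N)_\gamma$ — supplies, by evaluation at the fixing ordinal, a morphism $\bar g := g_\alpha : R_\infty(M) \to N$ with $\bar g \circ \rho_M = g$, its uniqueness being exactly the uniqueness clause of that observation. Thus $u \mapsto u \circ \rho_M$ is a bijection $\Hom_\C(R_\infty(M), N) \to \Hom_\C(M, N)$ for every $N \in \Fix(R_1)$, i.e. $\rho_M$ is a universal arrow from $M$ to the inclusion $\iota : \Fix(R_1) \hookrightarrow \C$. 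By the universal-arrow theorem the assignment $M \mapsto R_\infty(M)$ then extends uniquely to a functor $R_\infty : \C \to \Fix(R_1)$ making $\rho : \id_\C \to \iota R_\infty$ the unit of an adjunction $R_\infty \dashv \iota$; this is the asserted reflector, and on objects it is given by $M \mapsto R_\alpha(M)$ as claimed.

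Finally, cocompleteness of $\Fix(R_1)$ is formal for any reflective subcategory of a cocomplete category: for a diagram $D : J \to \Fix(R_1)$, the object $R_\infty\bigl(\colim_\C \iota D\bigr)$ represents $X \mapsto \lim_j \Hom_{\Fix}(D_j, X)$, since $\Hom_{\Fix}\bigl(R_\infty \colim_\C \iota D,\, X\bigr) \cong \Hom_\C\bigl(\colim_\C \iota D,\, \iota X\bigr) \cong \lim_j \Hom_\C(\iota D_j, \iota X) \cong \lim_j \Hom_{\Fix}(D_j, X)$. Hence colimits in $\Fix(R_1)$ are computed by applying $R_\infty$ to the underlying colimit in $\C$, as stated. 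The only place demanding genuine work is the stabilization step, where limit ordinals must be handled by cofinality; everything after that is the formal theory of reflective subcategories, and the delicate uniqueness of factorizations has already been isolated in the observation preceding the statement.
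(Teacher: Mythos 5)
Your proposal is correct and takes essentially the same route as the paper: the paper, too, deduces the lemma directly from the observation stated immediately before it (that every morphism into a fixed object extends uniquely to a compatible family $(R_\gamma(M) \to N)_\gamma$), with functoriality, the adjunction, and the formula for colimits then following from the formal theory of universal arrows and reflective subcategories. Your stabilization step --- that fixedness of $R_\alpha(M)$ propagates to all $\beta \geq \alpha$ with invertible transition morphisms --- is a worthwhile elaboration the paper leaves implicit; it is exactly what makes $R_\infty(M)$ independent of the chosen fixing ordinal and what allows an arbitrary factorization $u \circ \rho_M = g$ to be completed to a compatible family so that the uniqueness clause of the observation can be invoked.
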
 

This allows us to localize cocomplete categories:

\begin{prop} \label{epi-refl}
Let $S=(s_i : M_i \to N_i)_{i \in I}$ be a family of morphisms in a cocomplete category $\C$. Assume that the objects $M_i$ and $N_i$ are presentable for every $i \in I$. Then $S^{\twoheadrightarrow} \hookrightarrow \C$ is reflective, in particular cocomplete. If $R_{\lambda}$ is the reflector, then each $R_{\lambda}(s_i)$ is an epimorphism in $S^{\twoheadrightarrow}$. In fact, we have the following universal property: For every cocomplete category $\D$ we have an equivalence of categories
\[\Hom_c(S^{\twoheadrightarrow},\D) \simeq \{F \in \Hom_c(\C,\D) : F(s_i) \emph{ epimorphism for all } i \in I\}.\]
given by $G \mapsto G \circ R_\lambda$ and $F|_{S^{\twoheadrightarrow}} \mapsfrom F$.
\end{prop}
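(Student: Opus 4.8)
The plan is to realize $S^{\twoheadrightarrow}$ as the fixed-object subcategory $\Fix(R_1)$ of a suitable pointed endofunctor $(R_1,\eta)$ built by a small-object-type construction, and then to invoke \autoref{fix} once the transfinite composition is shown to converge. First I would construct $R_1$. For $A \in \C$ and $i \in I$ let $P_i(A)$ be the set of pairs $(f,g)$ of morphisms $f,g : N_i \to A$ with $f s_i = g s_i$; since $\Hom(N_i,A)$ is a set and $I$ is a set, I may form $C(A) := \coprod_{i \in I} \coprod_{(f,g) \in P_i(A)} N_i$ together with the two maps $C(A) \rightrightarrows A$ whose components are the $f$'s resp. the $g$'s, and define $R_1(A)$ to be their coequalizer, with $\eta_A : A \to R_1(A)$ the coequalizing map. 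This is functorial in $A$, so $\eta : \id_\C \to R_1$. I would then check $\Fix(R_1) = S^{\twoheadrightarrow}$: if $A \in S^{\twoheadrightarrow}$ then every pair in $P_i(A)$ already has $f=g$, so the two maps $C(A) \rightrightarrows A$ coincide and $\eta_A$ is an isomorphism; conversely if $\eta_A$ is an isomorphism it is in particular monic, and $\eta_A f = \eta_A g$ for all pairs in $P_i(A)$ then forces $f = g$, i.e. $A \in S^{\twoheadrightarrow}$.

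Next I would establish convergence. Choose a regular cardinal $\lambda$ with all $M_i,N_i$ $\lambda$-presentable, and form the transfinite composition $(R_\alpha,\eta)$ of \autoref{transfinite}. The claim is that $R_\lambda(M)$ is fixed for every $M$. Given $f,g : N_i \to R_\lambda(M)$ with $f s_i = g s_i$: since $R_\lambda(M) = \colim_{\alpha<\lambda} R_\alpha(M)$ is a $\lambda$-directed colimit and $N_i$ is $\lambda$-presentable, $f$ and $g$ both factor through a common stage $R_\alpha(M)$, say via $\bar f, \bar g$; since $M_i$ is $\lambda$-presentable and $\bar f s_i, \bar g s_i$ agree after transition to $R_\lambda(M)$, they already agree at some later stage $R_\beta(M)$ with $\alpha \le \beta < \lambda$. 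Then the images $(\bar f', \bar g')$ in $R_\beta(M)$ lie in $P_i(R_\beta(M))$, so $\eta_{R_\beta(M)}$ coequalizes them, and composing with $R_{\beta+1}(M) \to R_\lambda(M)$ gives $f = g$. Hence $R_\lambda(M) \in S^{\twoheadrightarrow} = \Fix(R_1)$, and \autoref{fix} yields that $S^{\twoheadrightarrow}$ is reflective with reflector $R_\lambda$, in particular cocomplete. That $R_\lambda(s_i)$ is an epimorphism in $S^{\twoheadrightarrow}$ is then formal: for $A \in S^{\twoheadrightarrow}$ the adjunction identifies the map induced by $R_\lambda(s_i)$ on $\Hom(-,A)$ with $\Hom(N_i,A) \to \Hom(M_i,A)$, which is injective exactly because $A \in S^{\twoheadrightarrow}$.

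The heart of the universal property is the lemma that any cocontinuous $F : \C \to \D$ with all $F(s_i)$ epimorphisms inverts every reflection unit $\eta_B$. The decisive observation is that $F$ turns the one-step coequalizer into a trivial one: for a pair $(f,g) \in P_i(A)$ we have $F(f)F(s_i) = F(g)F(s_i)$, and since $F(s_i)$ is epi this forces $F(f) = F(g)$; thus the two parallel maps $F(C(A)) \rightrightarrows F(A)$ coincide, their coequalizer $F(\eta_A)$ is an isomorphism, and a transfinite induction using cocontinuity of $F$ at the limit stages shows $F(\eta_B)$ is an isomorphism for all $B$. With this lemma I would finish as follows. The assignment $G \mapsto G\circ R_\lambda$ lands in the right subcategory, since $R_\lambda$ is cocontinuous (a left adjoint) and $G(R_\lambda(s_i))$ is epi (as $R_\lambda(s_i)$ is epi in $S^{\twoheadrightarrow}$ and cocontinuous $G$ preserves epimorphisms). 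Conversely $F \mapsto F|_{S^{\twoheadrightarrow}}$ is cocontinuous, because a colimit in $S^{\twoheadrightarrow}$ is $R_\lambda$ applied to the colimit in $\C$ and the invertibility of $F(\eta)$ lets $F$ ignore the $R_\lambda$. The two constructions are mutually inverse: $(G R_\lambda)|_{S^{\twoheadrightarrow}} \cong G$ because $\eta_A$ is invertible on $S^{\twoheadrightarrow}$, while $F \cong (F|_{S^{\twoheadrightarrow}})\circ R_\lambda$ is precisely the natural isomorphism $F\eta$ supplied by the lemma; checking compatibility with natural transformations is routine.

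I expect the main obstacle to be the convergence argument of the second paragraph: the ordinal bookkeeping that uses $\lambda$-presentability of \emph{both} $M_i$ and $N_i$, first to factor two given morphisms through a common stage, and then to push an equality of morphisms to a further stage strictly below $\lambda$. Everything else reduces either to the standard fixed-point machinery of \autoref{fix} or to formal manipulation of the reflection adjunction together with the one-step lemma.
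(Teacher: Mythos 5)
Your proposal is correct and follows essentially the same route as the paper: the same one-step coequalizer construction of $R_1$ with $\Fix(R_1) = S^{\twoheadrightarrow}$, the same convergence argument at stage $\lambda$ (using $\lambda$-presentability of $N_i$ to factor the pair through a stage and of $M_i$ to make the factorizations agree at a later stage below $\lambda$), the appeal to \autoref{fix}, and the same key lemma that a cocontinuous $F$ inverting-epi-wise the $s_i$ inverts all units $F \to FR_\alpha$ by transfinite induction. The only differences are presentational, e.g.\ you package the joint coequalizer as a coproduct $C(A)$ and spell out a few verifications the paper leaves implicit.
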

 
\begin{proof}
1. Choose some large enough regular cardinal $\lambda$ such that $M_i$ and $N_i$ are $\lambda$-presentable for every $i \in I$. For $A \in \C$, let $A \to R_1(A)$ be the coequalizer of all morphisms $u,v : N_i \to A$ which are equalized by $s_i$. In other words, $\Hom(R_1(A),B)$ identifies with
\[\{f \in \Hom(A,B) : \forall i \, \forall u,v \in \Hom(N_i,A) : u \circ s_i = v \circ s_i \Rightarrow f \circ u = f \circ v\}.\]
Then $R_1$ is an endofunctor of $\C$ with $\Fix(R_1) = S^{\twoheadrightarrow}$. We claim that $R_{\lambda}(A)$ is fixed and therefore provides the reflection by \autoref{fix}.

Let $u,v : N_i \to R_{\lambda}(A)$ be two morphisms which coequalize $M_i \to N_i$. Since $N_i$ is $\lambda$-presentable, there is some $\alpha<\lambda$ (resp. $\beta<\lambda$) such that $u$ (resp. $v$) factors as $N_i \to R_{\alpha}(A) \to R_{\lambda}(A)$. Let $\gamma < \lambda$ be the maximum of $\alpha,\beta$. Then $u,v$ factor both as $N_i \rightrightarrows R_{\gamma}(A) \to R_{\lambda}(A)$. Although the two morphisms $N_i \rightrightarrows R_{\gamma}(A)$ might not agree on $M_i$, they do when we increase $\gamma$, since $M_i$ is $\lambda$-presentable. Then by construction $R_{\gamma}(A) \to R_1(R_\gamma(A)) = R_{\gamma+1}(A)$ coequalizes both morphisms $N_i \rightrightarrows R_{\gamma}(A)$. It follows that $u=v$.

2. If $A \in S^{\twoheadrightarrow}$, then the map $\hom(N_i,A) \hookrightarrow \hom(M_i,A)$ identifies with the map $\hom(R_\lambda(N_i),A) \hookrightarrow \hom(R_\lambda(M_i),A)$, which shows that $R_\lambda(s_i)$ is an epimorphism in $S^{\twoheadrightarrow}$.

3. For a cocontinuous functor $F : \C \to \D$ such that each $F(s_i)$ is an epimorphism, it is clear that $F \to F R_1$ is an isomorphism. It follows by induction on ordinal numbers $\alpha$ that $F \to F R_{\alpha}$ is an isomorphism, too. In particular, $F \to F R_{\lambda}$ is an isomorphism. This implies that the restriction $F|_{ S^{\twoheadrightarrow}} : S^{\twoheadrightarrow} \to \D$ is cocontinuous, since colimits in $S^{\twoheadrightarrow}$ are computed via reflections of colimits in $\C$. Thus, both functors in the claim are well-defined and inverse to each other.
\end{proof}

\begin{rem} \label{pres-rem}
It follows from the construction above that $\C \xrightarrow{R_\lambda} S^{\twoheadrightarrow} \hookrightarrow \C$ preserves $\lambda$-directed colimits. This implies that $R_\lambda$ preserves $\lambda$-presentable objects. In particular, $R_\lambda(M_i)$ and $R_\lambda(N_i)$ are $\lambda$-presentable in $S^{\twoheadrightarrow}$.
\end{rem}
 
\begin{prop} \label{mono-refl}
Let $S=(s_i : M_i \to N_i)_{i \in I}$ be a family of epimorphisms in a cocomplete category $\C$ such that each $M_i$ is presentable. Then $S^{\cong}\hookrightarrow \C$ is reflective, in particular cocomplete. If $R_{\lambda}$ is the reflector, then each $R_{\lambda}(s_i)$ is an isomorphism in $S^{\cong}$. In fact, we have the following universal property: If $\D$ is a cocomplete category, there is an equivalence of categories
\[\Hom_c(S^{\cong},\D) \simeq \{F \in \Hom_c(\C,\D) : F(s_i) \emph{ isomorphism for all } i \in I\}.\]
\end{prop}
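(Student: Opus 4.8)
The plan is to mirror the proof of \autoref{epi-refl}, replacing its coequalizer step by a pushout that adjoins extensions. The starting observation is that since every $s_i$ is an epimorphism, the map $\hom(N_i,A) \to \hom(M_i,A)$ is injective for all $A \in \C$; hence $S^{\twoheadrightarrow} = \C$ and $S^{\cong} = S^{\hookrightarrow}$. Thus it suffices to force each map $\hom(N_i,A) \to \hom(M_i,A)$ to be surjective, i.e.\ to make every $f \colon M_i \to A$ extend along $s_i$. To this end I would choose a regular cardinal $\lambda$ so large that every $M_i$ is $\lambda$-presentable, and for $A \in \C$ define $R_1(A)$ to be the pushout of the two maps
\[\textstyle\bigoplus_{(i,f)} M_i \to A, \qquad \bigoplus_{(i,f)} s_i \colon \bigoplus_{(i,f)} M_i \to \bigoplus_{(i,f)} N_i,\]
indexed over all pairs $(i,f)$ with $f \in \hom(M_i,A)$ (the summand $M_i^{(f)}$ mapping to $A$ by $f$), with structure map $\eta(A) \colon A \to R_1(A)$. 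By construction each composite $M_i \xrightarrow{f} A \to R_1(A)$ then extends along $s_i$.

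The heart of the argument is to identify $\Fix(R_1) = S^{\cong}$. The key point is that $\eta(A)$ is \emph{always} an epimorphism: if $g\,\eta(A) = h\,\eta(A)$, then $g$ and $h$ agree on the $A$-leg of the pushout, and after precomposition with each $s_i$ they also agree on each summand $N_i^{(f)}$; since $s_i$ is an epimorphism this forces agreement on $N_i^{(f)}$ itself, hence on the whole pushout. On the other hand, if $A \in S^{\hookrightarrow}$ the chosen extensions of all the $f$ assemble into a map $\bigoplus_{(i,f)} N_i \to A$ compatible with the attaching data, giving a retraction $r \colon R_1(A) \to A$ with $r\,\eta(A) = \id_A$; thus $\eta(A)$ is a split monomorphism. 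A morphism that is simultaneously a split monomorphism and an epimorphism is an isomorphism, so $A \in \Fix(R_1)$. The reverse inclusion $\Fix(R_1) \subseteq S^{\hookrightarrow}$ is immediate (an inverse of $\eta(A)$ transports the extensions back to $A$), whence $\Fix(R_1) = S^{\hookrightarrow} = S^{\cong}$.

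For termination I would show $R_{\lambda}(A)$ is fixed for every $A$: a map $M_i \to R_{\lambda}(A) = \colim_{\alpha < \lambda} R_{\alpha}(A)$ factors through some $R_{\alpha}(A)$ by $\lambda$-presentability of $M_i$, and the construction of $R_{\alpha+1}(A) = R_1(R_{\alpha}(A))$ supplies an extension along $s_i$, which maps forward to $R_{\lambda}(A)$. By \autoref{fix} this produces the reflector $R_{\lambda} \colon \C \to S^{\cong}$, so $S^{\cong}$ is reflective and cocomplete. That $R_{\lambda}(s_i)$ is an isomorphism in $S^{\cong}$ follows by Yoneda, since for $A \in S^{\cong}$ the induced map $\hom(R_{\lambda}(N_i),A) \to \hom(R_{\lambda}(M_i),A)$ is, via the adjunction, the bijection $\hom(N_i,A) \to \hom(M_i,A)$. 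For the universal property I would proceed exactly as in \autoref{epi-refl}: the two functors are $G \mapsto G \circ R_{\lambda}$ and $F \mapsto F|_{S^{\cong}}$. Given cocontinuous $F$ with every $F(s_i)$ an isomorphism, applying the cocontinuous $F$ to the pushout defining $R_1$ and using that $\bigoplus F(s_i)$ is then an isomorphism (so that $F(\eta(A))$, being a pushout of an isomorphism, is an isomorphism) yields $F \cong F R_1$, hence $F \cong F R_{\lambda}$ by transfinite induction; this makes $F|_{S^{\cong}}$ cocontinuous and shows the two assignments are mutually inverse.

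The main obstacle I expect is precisely the identification $\Fix(R_1) = S^{\cong}$, and within it the inclusion $S^{\hookrightarrow} \subseteq \Fix(R_1)$: the ``split monomorphism plus epimorphism equals isomorphism'' argument is where the hypothesis that the $s_i$ are epimorphisms is genuinely used, and it is what lets the pushout (rather than coequalizer) construction detect exactly the orthogonality class $S^{\cong}$.
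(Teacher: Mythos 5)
Your proof is correct and follows essentially the same route as the paper's: your pushout of coproducts is exactly the paper's $R_1(A)$ (the colimit of the diagram of all spans $A \leftarrow M_i \to N_i$), and your termination argument via $\lambda$-presentability and your treatment of the universal property coincide with the paper's. The only cosmetic difference is that you establish $\Fix(R_1)=S^{\cong}$ through an explicit ``epimorphism plus split monomorphism equals isomorphism'' argument, whereas the paper reads off the same two facts from its description of $\Hom(R_1(A),B)$ as a subset of $\Hom(A,B)$ --- the injectivity of restriction along $\eta(A)$ being precisely where the hypothesis that the $s_i$ are epimorphisms enters in both versions.
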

 
\begin{proof} 
1. Choose some large enough regular cardinal $\lambda$ such that each $M_i$ is $\lambda$-presentable. For $A \in \C$, let $R_1(A)$ be the colimit of the diagram
\[\xymatrix{M_i \ar[r] \ar[d] & N_i \\ A, & }\]
where $i \in I$ and $M_i \to A$ runs through all possible morphisms. In other words, $\Hom(R_1(A),B)$ identifies with
\[ \{(f,(g_i)) : f \in \Hom(A,B),~ g_i : \Hom(M_i,A) \to \Hom(N_i,B)\]
\[\text{ such that } \forall u \in \Hom(M_i,A) : f \circ u = g_i(u) \circ s_i\}\]
which simplifies to
\[\{f \in \Hom(A,B) : \forall u \in \Hom(M_i,A) : f \circ u \text{ factors through } s_i\}\]
because each $s_i$ is an epimorphism. The definition of $R_1(A)$ ensures that we have commutative diagrams
\[\xymatrix{M_i \ar[r] \ar[d] & N_i \ar[d] \\ A \ar[r] & R_1(A).}\]
Then $R_1$ is an endofunctor of $\C$ satisfying $\Fix(R_1) = S^{\hookrightarrow} = S^{\cong}$. We claim that $R_{\lambda}(A)$ is fixed and therefore provides the desired reflection by \autoref{fix}. Let $i \in I$ and $M_i \to R_{\lambda}(A)$ be a morphism. Using that $M_i$ is $\lambda$-presentable, the morphism factors as $M_i \to R_{\alpha}(A) \to R_{\lambda}(A)$ for some $\alpha < \lambda$. It extends to $N_i$ along $s_i$ because by construction $M_i \to R_{\alpha}(A) \to R_{\alpha+1}(A)$ extends to $N_i$.

2. For $A \in S^{\cong}$, the map $\Hom(R_{\lambda}(N_i),A) \to \Hom(R_{\lambda}(M_i),A)$ identifies with the map $\Hom(N_i,A) \to \Hom(M_i,A)$, which is therefore bijective. This proves that $R_{\lambda}(s_i)$ is an isomorphism.

3. If $F : \C \to \D$ is a cocontinuous functor such that each $F(s_i)$ is an isomorphism, then $F \to F R_1$ is an isomorphism. The rest of the proof works as in \autoref{epi-refl}.
\end{proof}

Now we combine the two constructions:

\begin{prop}[Localization] \label{iso-refl}
Let $S=(s_i : M_i \to N_i)_{i \in I}$ be a family of morphisms in a cocomplete category $\C$. Assume that $M_i$ and $N_i$ are presentable for every $i \in I$. Then $S^{\cong} \hookrightarrow \C$ is reflective, in particular cocomplete. If $R$ is the reflector, then $R(s_i)$ is an isomorphism in $S^{\cong}$. In fact, we have the following universal property: If $\D$ is a cocomplete category, there is an equivalence of categories
\[\Hom_c(S^{\cong},\D) \simeq \{F \in \Hom_c(\C,\D) : F(s_i) \emph{ isomorphism for all } i \in I\}.\]
\end{prop}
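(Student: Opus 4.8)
The plan is to combine the two reflection results already established, namely \autoref{epi-refl} for the class $S^{\twoheadrightarrow}$ and \autoref{mono-refl} for $S^{\cong}$ when the given maps are epimorphisms, via the factorization observation $S^{\cong} = S^{\hookrightarrow} \cap S^{\twoheadrightarrow}$. First I would factor the problem: given an arbitrary family $S = (s_i : M_i \to N_i)_{i \in I}$ of morphisms between presentable objects, I form the reflection $R' : \C \to S^{\twoheadrightarrow}$ from \autoref{epi-refl}. By \autoref{pres-rem}, this reflector preserves $\lambda$-presentability for a suitably large regular cardinal $\lambda$, so the images $R'(s_i) : R'(M_i) \to R'(N_i)$ are morphisms between presentable objects of $S^{\twoheadrightarrow}$, and moreover each $R'(s_i)$ is now an \emph{epimorphism} in $S^{\twoheadrightarrow}$. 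This is exactly the hypothesis needed to apply \autoref{mono-refl} inside the cocomplete category $S^{\twoheadrightarrow}$.

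Next I would set $S' := (R'(s_i))_{i \in I}$, a family of epimorphisms between presentable objects of $S^{\twoheadrightarrow}$, and invoke \autoref{mono-refl} to obtain a reflector $R'' : S^{\twoheadrightarrow} \to (S')^{\cong}$, where $(S')^{\cong}$ is computed relative to $S^{\twoheadrightarrow}$. I claim $(S')^{\cong}$, viewed inside $\C$, is precisely $S^{\cong}$. Indeed, an object $A$ lies in $S^{\twoheadrightarrow}$ iff each $\Hom(N_i,A) \to \Hom(M_i,A)$ is injective, and by the universal property of $R'$ these maps are canonically identified with $\Hom(R'(N_i),A) \to \Hom(R'(M_i),A)$; for such $A$, believing $R'(s_i)$ is an isomorphism is the same as believing $s_i$ is an isomorphism. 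Thus $(S')^{\cong} = S^{\cong}$ as full subcategories. The composite $R := R'' \circ R' : \C \to S^{\cong}$ is then left adjoint to the inclusion, being a composite of two left adjoints, so $S^{\cong}$ is reflective and hence cocomplete, with colimits computed by applying $R$ to the underlying colimit in $\C$.

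For the statement that $R(s_i)$ is an isomorphism, I would note that $R'(s_i)$ is an epimorphism by \autoref{epi-refl} and $R''(R'(s_i))$ is an isomorphism by \autoref{mono-refl}; since $R = R'' R'$ these combine to give that $R(s_i)$ is an isomorphism in $S^{\cong}$. Finally, for the universal property, I would chain the two universal properties: for any cocomplete $\D$, \autoref{epi-refl} gives
\[
\Hom_c(S^{\twoheadrightarrow},\D) \simeq \{F \in \Hom_c(\C,\D) : F(s_i) \text{ epimorphism for all } i\},
\]
and \autoref{mono-refl} applied to $S^{\twoheadrightarrow}$ gives
\[
\Hom_c(S^{\cong},\D) \simeq \{G \in \Hom_c(S^{\twoheadrightarrow},\D) : G(R'(s_i)) \text{ isomorphism for all } i\}.
\]
Composing, a cocontinuous $F : \C \to \D$ descends to $S^{\cong}$ iff $F(s_i)$ is both an epimorphism and becomes an isomorphism after reflection, which together say exactly that $F(s_i)$ is an isomorphism. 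The main obstacle I anticipate is the bookkeeping in the identification $(S')^{\cong} = S^{\cong}$ and in verifying that a single cardinal $\lambda$ can be chosen large enough to serve both reflection steps simultaneously (so that presentability is preserved throughout); the rest is a formal composition of adjunctions and their universal properties.
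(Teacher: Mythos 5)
Your proposal is correct and follows essentially the same route as the paper's own proof: factor through the reflector $\C \to S^{\twoheadrightarrow}$ of \autoref{epi-refl}, use \autoref{pres-rem} to justify applying \autoref{mono-refl} to the epimorphisms $R'(s_i)$ inside $S^{\twoheadrightarrow}$, identify $(S')^{\cong}$ with $S^{\cong}$, and compose the two reflections and their universal properties. The cardinal bookkeeping you worry about is exactly what \autoref{pres-rem} was recorded for, so there is no remaining gap.
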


\begin{proof}
By \autoref{epi-refl} $S^{\epi}$ is reflective in $\C$, say with reflector $R_\lambda$. Because of \autoref{pres-rem} we may apply \autoref{mono-refl} to the epimorphisms $R_\lambda s_i$ in $S^{\epi}$. It follows that $(S^{\epi})^{\cong} = S^{\cong}$ is reflective in $S^{\epi}$. Hence, $S^{\cong}$ is reflective in $\C$, the reflector being the composition of the reflectors $\C \to S^{\epi} \to S^{\cong}$. If $\D$ is a cocomplete category, then $\Hom_c(S^{\cong},\D)$ identifies with the category of those functors $G \in \Hom_c(S^{\epi},\D)$ such that $G(R_\lambda s_i)$ is an isomorphism for all $i$. But $\Hom_c(S^{\epi},\D)$ identifies with the category of all $F \in \Hom_c(\C,\D)$ such that each $F(s_i)$ is an epimorphism, via $F = G R_\lambda$. This proves the universal property.
\end{proof}

\begin{rem} \label{presdis}
We are going to extend these results to cocomplete tensor categories. See \cite{Day73} for a similar account. If $s_i : M_i \to N_i$ is an isomorphism in a tensor category $\C$, then $s_i \otimes T : M_i \otimes T \to N_i \otimes T$ is an isomorphism for every $T \in \C$. By the Yoneda Lemma this is equivalent to the condition that $\HOM(N_i,A) \to \HOM(M_i,A)$ is an isomorphism for all $A \in \C$, provided that these internal homs exist. This motivates the definition of the \emph{tensor closure} of $S = (s_i)_{i \in I}$ by
\[S_{\otimes} = (s_i \otimes T)_{i \in I,\, T \in \C}\]
and looking at the corresponding orthogonality class $S_{\otimes}^{\cong}$ instead of $S^{\cong}$, which consists of those objects $A \in \C$ such that $\HOM(N_i,A) \to \HOM(M_i,A)$ is an isomorphism for all $i$. Our previous results can be applied if we replace $T \in \C$ by $T \in \C'$ for a colimit-dense (essentially) small subcategory $\C' \subseteq \C$ so that $S_{\otimes}$ is still a \emph{set} of morphisms. This clearly does not change the orthogonality class. If $\C$ is $\lambda$-presentable, we can take $\C'$ to be the category of $\lambda$-presentable objects.
\end{rem}
 
\begin{thm}[Tensor localization] \label{localization}
Let $S = (s_i : M_i \to N_i)_{i \in I}$ be a family of morphisms in a locally presentable tensor category $\C$ with unit $\O_\C$. Then
\[S_{\otimes}^{\cong} = \{A \in \C : \HOM(N_i,A) \to \HOM(M_i,A) \text{ isomorphism for all  } i \in I\}\]
is reflective in $\C$, say with reflector $R : \C \to S_{\otimes}^{\cong}$. Then $S_{\otimes}^{\cong}$ becomes a locally presentable tensor category with unit $R(\O_\C)$ and tensor product
\[M \otimes_{S_{\otimes}^{\cong}} N := R(M \otimes N).\]
Besides, $R : \C \to S_{\otimes}^{\cong}$ becomes a cocontinuous tensor functor with the property that $R(s_i)$ is an isomorphism for all $i \in I$, in fact the universal one: If $\D$ is a cocomplete tensor category, then $R$ induces an equivalence of categories
\[\Hom_{c\otimes}(S_{\otimes}^{\cong},\D) \simeq \{F \in \Hom_{c\otimes}(\C,\D) : F(s_i) \text{ isomorphism for all } i \in I\}.\]
\end{thm}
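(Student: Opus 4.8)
The plan is to build the tensor localization $S_{\otimes}^{\cong}$ in two stages, reducing everything to the underlying (non-monoidal) localization results already established in \autoref{iso-refl} and its companions. The key observation, recorded in \autoref{presdis}, is that for $A \in \C$ the condition $\HOM(N_i,A) \to \HOM(M_i,A)$ being an isomorphism is by the Yoneda Lemma exactly the condition that $A$ lies in the ordinary orthogonality class $(S_\otimes)^{\cong}$ for the tensor closure $S_\otimes = (s_i \otimes T)_{i,T}$. Since $\C$ is locally presentable, I may by \autoref{presdis} replace the proper class of objects $T$ by a colimit-dense small subcategory $\C'$ of $\lambda$-presentable objects (for a suitable regular cardinal $\lambda$), so that $S_\otimes$ is an honest \emph{set} of morphisms between $\lambda$-presentable objects. \autoref{iso-refl} then immediately gives that $S_\otimes^{\cong} \hookrightarrow \C$ is reflective with reflector $R$, that $R(s_i \otimes T)$ is an isomorphism for all $i,T$, and that $S_\otimes^{\cong}$ is cocomplete with colimits computed by applying $R$ to the underlying colimit in $\C$.

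\textbf{The monoidal structure} is then forced: I define $M \otimes_{S_\otimes^{\cong}} N := R(M \otimes N)$ and unit $R(\O_\C)$. The main thing to verify is that this is well-defined and coherent, i.e. that $R$ becomes a cocontinuous tensor functor. The crucial lemma here is that $R$ is compatible with the tensor product in the sense that for $M \in \C$ and any $N$, the canonical map $R(M \otimes N) \to R(M \otimes R(N))$ is an isomorphism. This is precisely where the \emph{tensor closure} does its work: since $s_i \otimes T \in S_\otimes$ for every $T$, the reflector $R$ inverts all morphisms $M \otimes s_i$ (take $T=M$ and use symmetry), so tensoring with a fixed object preserves the class of maps that $R$ inverts, and one deduces by the transfinite construction of $R$ (the reflector arises as a transfinite composition $R_\alpha$, \autoref{transfinite}) that $R(M \otimes -)$ already factors through $R$. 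From this compatibility, associativity, unit, and symmetry constraints for $\otimes_{S_\otimes^{\cong}}$ are inherited from those of $\C$ by applying $R$ and using the isomorphisms $R(M \otimes R(N)) \cong R(M \otimes N)$ repeatedly. Cocontinuity of $A \otimes_{S_\otimes^{\cong}} -$ follows because it is the composite of $A \otimes -$ (cocontinuous in $\C$) with $R$ (a left adjoint), and colimits in $S_\otimes^{\cong}$ are reflections of colimits in $\C$.

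\textbf{For the universal property}, I start from the universal property of the underlying localization in \autoref{iso-refl}: a cocontinuous functor $G : S_\otimes^{\cong} \to \D$ corresponds to a cocontinuous $F : \C \to \D$ inverting every $s_i \otimes T$. But a \emph{tensor} functor $F$ automatically inverts $s_i \otimes T$ as soon as it inverts $s_i$, since $F(s_i \otimes T) \cong F(s_i) \otimes F(T)$; conversely inverting all $s_i \otimes T$ in particular inverts $s_i$ (take $T = \O_\C$). Hence the relevant subcategory of $\Hom_{c\otimes}(\C,\D)$ is exactly $\{F : F(s_i) \text{ isomorphism for all } i\}$. It remains to upgrade the underlying equivalence to one of tensor functors: given such an $F$, it inverts all of $S_\otimes$ and so $F \to FR$ is an isomorphism (as in the proofs of \autoref{epi-refl} and \autoref{mono-refl}), which lets $F$ descend to $G$ on $S_\otimes^{\cong}$, and the tensor structure of $G$ is transported along the isomorphisms $F(M \otimes N) \cong G(R(M \otimes N))$. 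Checking that these assignments are mutually inverse is then routine.

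\textbf{The main obstacle} I anticipate is the coherence bookkeeping for the monoidal structure on $S_\otimes^{\cong}$ — specifically establishing cleanly the key isomorphism $R(M \otimes R(N)) \cong R(M \otimes N)$ and then verifying that the reflected associativity/symmetry/unit constraints satisfy the pentagon and hexagon axioms. The conceptual content is light (everything is formal consequence of $R$ being a localization that respects $\otimes$), but making the transfinite-composition argument for the key isomorphism precise, and then threading it through the coherence diagrams without an explicit handle on $R$, is the delicate part; invoking \autoref{coherence} to treat the ambient constraints as identities should keep this manageable.
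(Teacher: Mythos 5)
Your proposal is correct and largely parallels the paper's proof: both reduce the tensor closure to a set via \autoref{presdis}, obtain the reflector from \autoref{iso-refl}, define the tensor product as $R(M \otimes N)$, and prove the universal property by observing that a tensor functor inverting the $s_i$ automatically inverts every $s_i \otimes T$. The genuine difference is how the central compatibility isomorphism $R(M \otimes N) \cong R(M \otimes R(N))$ is established. The paper uses that $\C$ is closed (being locally presentable): it first shows that $\HOM(M,N)$ lies in $S_{\otimes}^{\cong}$ whenever $N$ does, and then obtains the isomorphism by a Yoneda computation against test objects $T \in S_{\otimes}^{\cong}$; this has the side benefit of showing that $S_{\otimes}^{\cong}$ is itself closed, which is how the paper concludes that it is a cocomplete (indeed locally presentable) tensor category. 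You instead run the transfinite-composition argument: the cocontinuous functor $R(M \otimes -)$ inverts every member of $S_\otimes$, since $M \otimes (s_i \otimes T) \cong s_i \otimes (M \otimes T)$ up to symmetry, so the induction used in the proofs of \autoref{epi-refl} and \autoref{mono-refl} gives that $F \to FR$ is an isomorphism for $F = R(M \otimes -)$. That argument is sound and avoids internal homs entirely. Two loose ends on your side: you never address local presentability of $S_{\otimes}^{\cong}$, which the statement asserts (the paper disposes of it in one sentence, since a reflective subcategory of a locally presentable category closed under $\lambda$-directed colimits is locally presentable); and in your cocontinuity argument for $A \otimes_{S_{\otimes}^{\cong}} -$ beware that the inclusion $S_{\otimes}^{\cong} \hookrightarrow \C$ is not cocontinuous, so this functor is not literally a composite of cocontinuous functors --- one needs the compatibility isomorphism once more, which is what your remark that colimits in $S_{\otimes}^{\cong}$ are reflections of colimits in $\C$ amounts to.
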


Therefore, we may write $S_{\otimes}^{\cong} = \C[S^{-1}]$.

\begin{proof}
By \autoref{presdis} and \autoref{iso-refl} there is a reflector
\[R : \C \to S_{\otimes}^{\cong}.\]
We claim first:
\[ ~ ~ ~ \forall M\in \C,~ \forall N \in S_{\otimes}^{\cong} :~ \HOM(M,N) \in S_{\otimes}^{\cong} ~ ~ ~ (\star) \]
This follows from the commutative diagram
\[\xymatrix{ \HOM(N_i,\HOM(M,N)) \ar[r] \ar[d]^{\cong} &  \HOM(M_i,\HOM(M,N)) \ar[d]^{\cong}  \\ \HOM(M,\HOM(N_i,N)) \ar[r]_{\cong} &  \HOM(M,\HOM(M_i,N)).}\]
Next, we claim the canonical morphism (induced by $A \to R(A)$)
\[ ~ ~ ~  R(A \otimes B) \to R(R(A) \otimes B) ~ ~ ~ (\star \star)\]
is an isomorphism for all $A,B \in \C$. In fact, if $T \in S_{\otimes}^{\cong}$, then
\[\Hom(R(R(A) \otimes B),T) \cong \Hom(R(A) \otimes B,T) \cong \Hom(R(A),\HOM(B,T)) \]
\[\stackrel{(\star)}{\cong} \Hom(A,\HOM(B,T)) \cong \Hom(A \otimes B,T) \cong \Hom(R(A \otimes B),T).\]
This can be used to show that $R(\O_\C)$ is a unit and that the tensor product on $S_{\otimes}^{\cong}$ is associative:

For $A,B,C \in S_{\otimes}^{\cong}$ we have
\[R(\O_\C) \otimes_{S_{\otimes}^{\cong}} A = R(R(\O_\C) \otimes A) \stackrel{(\star\star)}{\cong} R(\O_\C \otimes A) \cong R(A) \cong A,\]
as well as
\[A \otimes_{S_{\otimes}^{\cong}} (B \otimes_{S_{\otimes}^{\cong}} C) = R(A \otimes R(B \otimes C)) \stackrel{(\star\star)}{\cong} R(A \otimes (B \otimes C))\]
and similarly $(A \otimes_{S_{\otimes}^{\cong}} B) \otimes_{S_{\otimes}^{\cong}} C \cong R((A \otimes B) \otimes C)$. Hence, there are unique isomorphisms $R(\O_\C) \otimes_{S_{\otimes}^{\cong}} A \cong A$ and $A \otimes_{S_{\otimes}^{\cong}} (B \otimes_{S_{\otimes}^{\cong}} C) \cong (A \otimes_{S_{\otimes}^{\cong}} B) \otimes_{S_{\otimes}^{\cong}} C$ such that the diagrams
\[\xymatrix{R(\O_\C) \otimes_{S_{\otimes}^{\cong}} A \ar[r]^-{\cong} &  A \\ \O_\C \otimes A \ar[u] \ar[r]^-{\cong} & A \ar@{=}[u] } \]
\[\xymatrix{A \otimes_{S_{\otimes}^{\cong}} (B \otimes_{S_{\otimes}^{\cong}} C) \ar[r]^-{\cong} & (A \otimes_{S_{\otimes}^{\cong}} B) \otimes_{S_{\otimes}^{\cong}} C \\ A \otimes (B \otimes C) \ar[r]^-{\cong} \ar[u] & (A \otimes B) \otimes C \ar[u]}\]
commute. It is clear that $\otimes_{S_{\otimes}^{\cong}}$ is commutative. The axioms of a tensor category follow immediately from the ones for $\C$. For $A,B,C \in S_{\otimes}^{\cong}$ we have
\[\Hom(A \otimes_{S_{\otimes}^{\cong}} B,C) = \Hom(R(A \otimes B),C)\]
\[ \cong \Hom(A \otimes B,C) \cong \Hom(A,\HOM(B,C))\]
with $\HOM(B,C) \in S_{\otimes}^{\cong}$ by $(\star)$. This proves that $S_{\otimes}^{\cong}$ is closed and therefore a cocomplete tensor category. It is locally presentable because it is a reflective subcategory of $\C$ closed under $\lambda$-directed colimits (here $\lambda$ is as in the previous Propositions). By construction $R : \C \to S_{\otimes}^{\cong}$ is a cocontinuous tensor functor which maps the morphisms in $S$ to isomorphisms.

If $\D$ is a cocomplete tensor category, then $G \mapsto G \circ R$ defines a functor
\[\Hom_{c\otimes}(S_\otimes^{\cong},\D) \to \{F \in \Hom_{c\otimes}(\C,\D) : F(s_i) \text{ isomorphism for all } i \in I\}.\]
We construct an inverse functor as follows: Given $F : \C \to \D$, by \autoref{iso-refl} we find a corresponding cocontinuous functor $G : S_{\otimes}^{\cong} \to \D$ with $G R \cong F$. The isomorphisms $G(R(\O_\C)) \cong F(\O_\C) \cong \O_\D$ and
\[G(A \otimes_{S_{\otimes}^{\cong}} B)=G(R(A \otimes B)) \cong F(A \otimes B) \cong F(A) \otimes F(B) \cong G(A) \otimes G(B)\]
for $A,B \in S_{\otimes}^{\cong}$ endow $G$ with the structure of a tensor functor. It is easy to check that this provides a functor $F \mapsto G$ which is inverse to the given one.
\end{proof}

\begin{thm}[Solution of universal problems] \label{universal-problem}
Let $\C$ be a locally presentable tensor category.
\begin{enumerate}
\item If $f : A \to B$ is a morphism in $\C$, then there is a locally presentable tensor category $\C/(f \text{ isom.})$ with the universal property
\[\Hom_{c\otimes}(\C/(f \text{ isom.}),\D) \simeq \{F \in \Hom_{c\otimes}(\C,\D) : F(f) \text{ isom.}\}\]
for all cocomplete tensor categories $\D$.
\item If $f,g : A \to B$ are parallel morphisms in $\C$, then there is a locally presentable tensor category $\C/(f=g)$ with the universal property
\[\Hom_{c\otimes}(\C/(f=g),\D) \simeq \{F \in \Hom_{c\otimes}(\C,\D) : F(f)=F(g)\}\]
for all cocomplete tensor categories $\D$.
\item If $A \rightrightarrows B \to P$ is a commutative diagram in $\C$, then there is a locally presentable tensor category $\C/(A \rightrightarrows B \to P \text{ exact})$ with the universal property
\[\Hom_{c\otimes}(\C/(A \rightrightarrows B \to P \text{ exact}),\D)\]
\[ \simeq \{F \in \Hom_{c\otimes}(\C,\D) : F(A) \rightrightarrows F(B) \to F(P) \text{ exact}\}\]
for all cocomplete tensor categories $\D$.
\item If $f : A \to B$ is a morphism in $\C$, then there is a locally presentable tensor category $\C/(f \text{ epi.})$ with the universal property
\[\Hom_{c\otimes}(\C/(f \text{ epi.}),\D) \simeq \{F \in \Hom_{c\otimes}(\C,\D) : F(f) \text{ epi.}\}\]
for all cocomplete tensor categories $\D$.
\item If $M_0 \in \C$, then there is a locally presentable tensor category $\C/(M_0=0)$ with the universal property
\[\Hom_{c\otimes}(\C/(M_0=0),\D) \simeq \{F \in \Hom_{c\otimes}(\C,\D) : F(M_0)=0\}\]
for all cocomplete tensor categories $\D$.
\end{enumerate}
\end{thm}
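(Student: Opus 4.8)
The plan is to reduce every item to the master result \autoref{localization} (tensor localization), which says that for any set $S$ of morphisms in a locally presentable tensor category $\C$, the category $\C[S^{-1}]$ exists, is locally presentable, and classifies cocontinuous tensor functors inverting $S$. In each case the task is simply to exhibit the desired universal property as an instance of ``inverting a well-chosen family of morphisms''. Since $\C$ is locally presentable and the tensor closure only enlarges $S$ by tensoring with $\lambda$-presentable objects (\autoref{presdis}), each family I write down will be a genuine set, so the theorem applies directly.

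First, item 1 is immediate: take $S = (f)$, a single morphism, and set $\C/(f \text{ isom.}) := \C[S^{-1}]$. The universal property of \autoref{localization} is verbatim the one claimed. For item 2, making $f,g : A \to B$ equal is the same as inverting their coequalizer: if $p : B \to \coeq(f,g)$ is the coequalizer in $\C$, then a cocontinuous functor $F$ satisfies $F(f)=F(g)$ if and only if $F(p)$ is an isomorphism (a right exact functor preserves coequalizers, and $F(p)$ being invertible forces $F(f)=F(g)$ since $p$ is epi, while conversely $F(f)=F(g)$ makes $F(p)$ the coequalizer of two equal maps, hence invertible). So I would set $\C/(f=g) := \C[\{p\}^{-1}]$.

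For item 4, making $f : A \to B$ an epimorphism: the standard trick (\autoref{colimits}, point 4) is that $f$ is epi precisely when the codiagonal $B \sqcup_A B \to B$ is an isomorphism, equivalently when the two coprojections $B \rightrightarrows B \sqcup_A B$ agree. I would therefore reduce to item 2 applied to these two coprojections, or directly invert the fold map; since $F$ is cocontinuous it preserves the pushout $B\sqcup_A B$, so $F(f)$ is epi iff $F$ inverts the fold map. Item 5 is the special case of item 4 (or item 1) where we force $M_0 = 0$: an object $M_0$ is zero iff the unique morphism $0 \to M_0$ (equivalently $M_0 \to 0$) is an isomorphism, so $\C/(M_0=0) := \C[\{0 \to M_0\}^{-1}]$; here I use that a cocontinuous tensor functor sends $0$ to $0$ and that inverting $0\to M_0$ forces $F(M_0)=0$.

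The one genuinely delicate item is 3, forcing a diagram $A \rightrightarrows B \to P$ to become exact (a coequalizer). A right exact functor already sends coequalizers to coequalizers, but here $A \rightrightarrows B \to P$ is only assumed to \emph{commute}, not to be exact in $\C$; we must force the \emph{image} diagram to be a coequalizer in $\D$ without it being one in $\C$. The approach is to form the actual coequalizer $q : B \to Q := \coeq(A \rightrightarrows B)$ in $\C$; the given commuting data yields a canonical comparison morphism $\varphi : Q \to P$, and I claim $F(A) \rightrightarrows F(B) \to F(P)$ is exact exactly when $F(\varphi)$ is an isomorphism. Indeed $F$ preserves the coequalizer $Q$, so $F(B) \to F(Q)$ is always the coequalizer of $F(A)\rightrightarrows F(B)$; the induced map $F(B)\to F(P)$ is then a coequalizer iff it differs from $F(B)\to F(Q)$ by an isomorphism, i.e. iff $F(\varphi)$ is invertible. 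Hence $\C/(A \rightrightarrows B \to P \text{ exact}) := \C[\{\varphi\}^{-1}]$ does the job. The main obstacle is precisely verifying this comparison-morphism equivalence cleanly — checking that inverting $\varphi$ is equivalent to exactness of the pushed-forward diagram requires the elementary but careful coequalizer bookkeeping sketched above, and is where I would spend the real work.
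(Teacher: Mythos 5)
Your proposal is correct and follows essentially the same route as the paper: item 1 is a direct instance of \autoref{localization}, and items 2--5 are reduced to it via exactly the reformulations the paper uses (invert $B \to \coeq(f,g)$; invert the comparison morphism $\coeq(A \rightrightarrows B) \to P$; invert the fold map $B \sqcup_A B \to B$; invert $0 \to M_0$). The coequalizer bookkeeping you flag as the ``real work'' in item 3 is the same one-line verification the paper leaves implicit, so there is no gap.
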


\begin{proof}
1. is a special case of \autoref{localization}. The rest will follow from 1. using the following reformulations: 2. We have $f=g : A \to B$ if and only if their coequalizer $B \to \coeq(f,g)$ is an isomorphism. 3. For morphisms $f,g : A \to B$ a commutative diagram $A \rightrightarrows B \to P$ is exact if and only if the induced morphism $\coeq(f,g) \to P$ is an isomorphism. 4. A morphism $f : A \to B$ is an epimorphism if and only if the canonical morphism from the pushout $B \cup_A B \to B$ is an isomorphism. 5. We have $M_0=0$ if and only if the unique morphism $0 \to M_0$ is an isomorphism.
\end{proof}

\begin{rem}[Explicit description]
In each case of \autoref{universal-problem}, the underlying category of $\C/(\dotsc)$ is a reflective subcategory of $\C$:
\begin{eqnarray*}
\C/(f \text{ isom.}) & =&  \{M \in \C : \HOM(f,M) : \HOM(B,M) \to \HOM(A,M) \text{ isom.}\} \\
\C/(f=g) & =&  \{M \in \C : \HOM(f,M) = \HOM(g,M)\} \\
\C/(f \text{ epi.}) & =& \{M \in \C : \HOM(f,M) : \HOM(B,M) \to \HOM(A,M) \text{ mono.}\} \\
\C/(A=0) & =&  \{M \in \C : \HOM(A,M)=\text{ terminal object}\}
\end{eqnarray*}
Besides, $\C/(A \rightrightarrows B \to P \text{ exact})$ is given by
\[\{M \in \C : \HOM(P,M) \to \HOM(B,M) \rightrightarrows \HOM(A,M) \text{ exact}\}.\]
\end{rem}


\subsection{Examples} \label{examplesloc}

\begin{ex}
We have already seen an example in \autoref{local}: If $i^* : \C \to \D$ is a cocontinuous tensor functor with a fully faithful right adjoint $i_* : \D \to \C$, then $i^*$ inverts the unit morphisms $\id_\C \to i_* i^*$. In fact we have
\[\D \simeq \C / (M \to i_* i^* M \text{ isom.})_{M \in \C}.\]
\end{ex}
 
\begin{ex}[Killing objects] \label{M0}
Let us describe $\C/(M_0=0)$. Assume for simplicity that $\C$ is linear (or more generally that $0$ is terminal). Then we have an epimorphism $M_0 \to 0$ and $M_0=0$ means that it is an isomorphism. Following the construction in the proof of \autoref{mono-refl}, we may define  $R_1 : \C \to \C$ as follows: $R_1(M)$ is the cokernel of all morphisms $T \otimes M_0 \to M$, where $T \in \C$. We may assume $T$ to be $\lambda$-presentable when $\C$ is $\lambda$-presentable, so that $R_1(M)$ is well-defined. We have $\Fix(R_1)=\{M \in \C : \HOM(M_0,M)=0\} = \C/(M_0=0)$. The reflector is the transfinite composition of $R_1$. In general, one cannot say exactly when it stops.

Let us look at $\C=\M(A)$ for some commutative ring $A$ and $M_0 = A/(a)$ for some $a \in A$, so that $\C/(M_0=0)=\{M \in \M(A) : \HOM(A/(a),M)=0\}$ consists of those $A$-modules without $a$-torsion. We have
\[R_1(M)=M/\ker(a : M \to M).\]
By induction it follows that
\[R_n(M) = M/\ker(a^n : M \to M)\]
for $n<\omega$, hence
\[R_{\omega}(M)=M/\bigcup_{n < \omega} \ker(a^n : M \to M).\]
This is already fixed, therefore $R_{\omega}$ is the reflector. The category of $A$-modules without $a$-torsion is the universal $A$-linear cocomplete tensor category for which $a : \O \to \O$ is an epimorphism. Instead of $\{a\}$ we can take any subset of $A$, for example the set of regular elements, which leads to the category of torsion-free $A$-modules. See also \autoref{torsion-free}.

Now let us look at the case that $\C$ is a quantale. Then
\[\C/(M_0=0) = \{M \in \C : \forall T \in \C ~ (M_0 \cdot T \leq M)\}.\]
The reflector starts with $R_1(M) = \sup(M,\sup_{T \in C} M_0 \cdot T)$. If we have $T \leq 1$ for every $T \in \C$, this simplifies to $\C/(M_0=0) = \{M \in \C : M_0 \leq M\}$ and  the reflector is just $R_1(M) = \sup(M,M_0)$. For a specific example take
\[[0,1]/(\tfrac{1}{2}=0) = [\tfrac{1}{2},1].\]
\end{ex}

\begin{ex}
We explain how \autoref{sheaves} embeds into our theory. Let $X$ be a ringed space and $\M'(X)$ be the cocomplete tensor category of presheaves of modules over $X$ -- with colimits and tensor products simply computed sectionwise, e.g. $(F \otimes_{\O_X} G)(U) := F(U) \otimes_{\O_X(U)} G(U)$. For an open subset $U \subseteq X$ let us denote by $\O_U \in \Q(X)$ the structure sheaf of $U$ extended by zero on $X$. Hence, we have $\Hom(\O_U,F) \cong F(U)$. Let $S$ be the set of canonical diagrams
\[\bigoplus_{i,j \in I} \O_{U_i \cap U_j} \rightrightarrows \bigoplus_{i \in I} \O_{U_i} \to \O_U\]
where $U$ runs through the open subsets of $X$ and $(U_i)_{i \in I}$ is an open covering of $U$ (in order to get a set, we should assume that the $U_i$ do not repeat).
Then $F \in \M'(X)$ believes that $S$ consists of coequalizer diagrams if and only if $F$ is a sheaf i.e. $F \in \M(X)$. It follows that
\[\M(X) \simeq \M'(X) / (\text{the diagrams in } S \text{ are coequalizers}).\]
The reflector starts with the $0$th \v{C}ech cohomology presheaf $\check{H}^0(-)$ (``plus construction''). It is well-known that this is separated and that it maps separated presheaves to sheaves. Therefore, the transfinite composition already stops at the second step $\check{H}^0(-) \circ \check{H}^0(-)$. Curiously, for $n$-stacks one needs $n+1$ steps and for $\infty$-stacks one needs $\omega$ steps (\cite[Section 6.5.3]{HTT}), 
\end{ex}

\begin{ex} \label{EL-lok}
Let $s : E \to \L$ be a morphism in $\C$, where $\L$ is an invertible object. Then $\C/(s \text{ isom.})$ consists of those $M \in \C$ such that the morphism $\HOM(\L,M) \to \HOM(E,M)$ induced by $s$ is an isomorphism. Because of $\HOM(\L,M) = M \otimes \L^{\otimes -1}$, this is the case if and only if the corresponding morphism $M \to \HOM(E,M) \otimes \L$ is an isomorphism. Hence, if we define $R_1 : \C \to \C$ by $M \mapsto \HOM(E,M) \otimes \L$, then $\Fix(R_1)=\C/(s \text{ isom.})$ and the transfinite composition $R_{\infty}$ provides a reflector, with which we can -- at least theoretically -- describe colimits and tensor products in $\C/(s \text{ isom.})$.

After replacing $s$ by $s \otimes \L^{\otimes -1}$, it suffices to treat the case that $\L=\O_\C$ and $s : E \to \O_\C$. In that case we simply have $R_1(M) = \HOM(E,M)$ and then $R_2(M) = \HOM(E,\HOM(E,M)) \cong \HOM(E^{\otimes 2},M)$ etc. Inductively we get
\[R_n(M) = \HOM(E^{\otimes n},M)\]
for $n < \omega$. The transition morphisms are induced by $E^{\otimes n} \otimes s : E^{\otimes n+1} \to E^{\otimes n}$. We have
\[R_{\omega}(M) = \varinjlim_n \, \HOM(E^{\otimes n},M).\]
If $E$ is $\omega$-presentable, then we see that $R_{\omega}(M)$ is fixed by $R_1$. Hence, $R_{\omega}$ is the reflector.
\end{ex}

\begin{ex} \label{EL-lok2}
Let $s : E \to \L$ be as above and $f,g : A \to E$ morphisms such that $sf=sg$. Then $\C/(A \rightrightarrows E \to \L \text{ exact}) = \C/(\coeq(f,g) \to \L \text{ isom.})$ consists of those $M \in \C$ such that $M \to \HOM(E,M) \otimes \L \rightrightarrows \HOM(A,M) \otimes \L$ is exact. If we define $R_1(M) = \eq(\HOM(E,M) \to \HOM(A,M)) \otimes \L$ and $E$ is $\omega$-presentable, then $R_{\omega}$ is the desired reflection.
\end{ex}

\begin{cor}[Making objects invertible] \label{makeinv}
Let $\C$ be a locally presentable tensor category and $E \in \C$. Then there is a locally presentable tensor category $\C[E^{-1}]$ with the universal property
\[\Hom_{c\otimes}(\C[E^{-1}],\D) \cong \{F \in \Hom_{c\otimes}(\C,\D) : F(E) \text{ is a line object}\}\]
for all cocomplete tensor categories $\D$.
\end{cor}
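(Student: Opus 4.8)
The plan is to reduce the statement to two constructions already available: the universal property of graded module categories from \autoref{grModSym}, which simultaneously introduces a universal line object and a comparison morphism out of $E$, and the tensor localization of \autoref{universal-problem}, which forces that comparison morphism to become invertible. The key conceptual economy is that a line object already packages both invertibility and symtriviality, so I do not have to impose these two properties separately; it suffices to make the image of $E$ \emph{equal} to a universal line object.

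Concretely, I would first recall from \autoref{grModSym} that for every cocomplete tensor category $\D$,
\[
\Hom_{c\otimes}(\grM(\Sym(E)),\D) \simeq \{(F,\L,\tau) : F \in \Hom_{c\otimes}(\C,\D),\ \L \in \D \text{ line object},\ \tau : F(E) \to \L\},
\]
the universal triple being the structure functor $P : \C \to \grM(\Sym(E))$, the universal line object $\L_{\mathrm{univ}} = \Sym(E)[1]$, and the universal morphism $\tau_{\mathrm{univ}} : P(E) \to \L_{\mathrm{univ}}$ (this is $\tau_1$ in the notation of \autoref{grMod}). Since $\C$ is locally presentable, so is $\grM(\Sym(E))$ by the remark following \autoref{grMod}. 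I would then define
\[
\C[E^{-1}] := \grM(\Sym(E))/(\tau_{\mathrm{univ}} \text{ isom.}),
\]
which is again locally presentable by \autoref{universal-problem} (a special case of \autoref{localization} applied to the single morphism $\tau_{\mathrm{univ}}$). Composing the two universal properties, a cocontinuous tensor functor $\C[E^{-1}] \to \D$ is the same as a triple $(F,\L,\tau)$ in which $\tau : F(E) \to \L$ is now an \emph{isomorphism}; since then $F(E) \cong \L$ is a line object, and conversely any $F$ with $F(E)$ a line object yields the triple $(F, F(E), \mathrm{id})$, this matches the desired description on objects.

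The main obstacle is upgrading this object-level correspondence to a genuine equivalence of categories, i.e.\ showing that the canonical functor $\C \to \C[E^{-1}]$, call it $Q$, induces a \emph{fully faithful} precomposition functor onto the full subcategory $\{F : F(E) \text{ line object}\}$ of $\Hom_{c\otimes}(\C,\D)$. Essential surjectivity is exactly the object-level statement above, and that $Q(E)$ is a line object shows the image lands in this subcategory (line objects are preserved by tensor functors, \autoref{symtrivial-properties}). The delicate part is full faithfulness, which amounts to checking that a monoidal natural transformation $\alpha : F \to F'$ between two such functors is automatically an isomorphism on $E$ and lifts uniquely through $Q$. Here I would invoke \autoref{dual-inv}: in $\C[E^{-1}]$ the image $Q(E)$ is a line object, hence dualizable, so every morphism of tensor functors out of $\C[E^{-1}]$ is an isomorphism on $Q(E)$; combined with the fact that $\C[E^{-1}]$ is generated under colimits and tensor products by the image of $\C$ together with the inverse of this line object, this forces the comparison to be fully faithful. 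Verifying this generation statement, which ultimately rests on the explicit construction of the reflector in \autoref{localization}, is the step I expect to require the most care.
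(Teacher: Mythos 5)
Your construction is exactly the paper's: the proof of \autoref{makeinv} consists precisely of forming $\grM(\Sym(E))$ with its universal triple (\autoref{grModSym}) and then inverting the universal morphism $\tau_{\mathrm{univ}}$ via \autoref{universal-problem}. Up to and including the object-level correspondence your argument is the intended one. Note also that faithfulness of the restriction functor $Q^*$ costs nothing: both steps are equivalences of categories (\autoref{grModSym}, and \autoref{localization} for the localization step), so no generation argument is needed anywhere.

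The genuine gap is your final step, and it cannot be repaired: fullness onto the \emph{full} subcategory $\{F : F(E) \text{ line object}\}$ is false in general. Your appeal to \autoref{dual-inv} in fact proves the opposite of what you need: since $Q(E)$ is a line object, hence dualizable, in $\C[E^{-1}]$, every morphism $\beta$ of tensor functors out of $\C[E^{-1}]$ has $\beta_{Q(E)}$ invertible, so every morphism in the image of $Q^*$ has invertible component at $E$; consequently any morphism $\alpha : F \to F'$ with $\alpha_E$ \emph{not} invertible can never be in the image. Such morphisms exist: take $\C = \gr_\N(\Ab)$, the free cocomplete $\Z$-linear tensor category on a symtrivial object $X$ (\autoref{grG}), $E = X$, $\D = \Ab$, and let $F : \C \to \Ab$, $M \mapsto \bigoplus_n M_n$, be the functor classified by the symtrivial object $\Z$. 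Then $F(E) = \Z$ is a line object, and multiplication by $2$ on $\Z$ induces a morphism of tensor functors $\alpha : F \to F$ (acting by $2^n$ in degree $n$) whose component $\alpha_E = 2$ is not an isomorphism; it cannot factor through $Q$. The resolution is not to prove fullness but to read the right-hand side of \autoref{makeinv} as the category of triples of \autoref{grModSym} with $\tau$ invertible — equivalently, the non-full subcategory of $\Hom_{c\otimes}(\C,\D)$ with the same objects but only those morphisms $\alpha$ for which $\alpha_E$ is an isomorphism, exactly as in \autoref{grZ}, where the line-object component of a morphism is required to be an isomorphism. With that reading, the composition of the two universal properties already \emph{is} the entire proof, which is why the paper stops there.
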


\begin{proof}
We start with $\grM(\Sym(E))$, which according to \autoref{grModSym} is the universal cocomplete tensor category over $\C$ with a morphism from (the image of) $E$ to a line object. Using \autoref{universal-problem} we can make it an isomorphism.
\end{proof}

\begin{rem}[Explicit description]
The category $\grM(\Sym(E))$ consists of sequences of objects $(M_n)_{n \in \Z}$ in $\C$ together with morphisms
\[\alpha_n : E \otimes M_n \to M_{n+1}\]
such that the diagram
\[\xymatrix@C=35pt@R=25pt{E \otimes E \otimes M_n \ar[r]^-{E \otimes \alpha_n} \ar[dd]_{S_{E,E} \otimes M_n}  & E \otimes M_{n+1}  \ar[dr]^{\alpha_{n+1}} & \\&& M_{n+2} \\ E \otimes E \otimes M_n \ar[r]_-{E \otimes \alpha_n} & E \otimes M_{n+1} \ar[ur]_{\alpha_{n+1}} & }\]
commutes. Notice that this is automatic if $E$ is symtrivial. We will call the objects of $\grM(\Sym(E))$ also \emph{$E$-modules}.

The universal morphism $s : F(E) \to \L$, where $F : \C \to \grM(\Sym(E))$ is given by $M \mapsto (\Sym^n(E) \otimes M)_n$ and $\L$ is the twist $(\Sym^{n+1}(E))_n$, is just the multiplication $\Sym^n(E) \otimes E \to \Sym^{n+1}(E)$ in degree $n$. In order to invert it, we apply \autoref{EL-lok}. Thus, $\C[E^{-1}]$ consists of those $E$-modules $M$ such that $M \to \HOM(F(E),M) \otimes \L$ is an isomorphism, which means that each morphism $M_n \to \HOM(E,M_{n+1})$ (corresponding to $E \otimes M_n \to M_{n+1}$) is an isomorphism. Thus, $M_n$ may be reconstructed from $M_{n+1}$.

The reflector starts with $R_1 : \grM(\Sym(E)) \to \grM(\Sym(E))$ defined by $R_1(M)_n = \HOM(E,M_{n+1})$. It follows by induction that
\[R_p(M)_n = \HOM(E^{\otimes p},M_{n+p})\]
for $p<\omega$, hence that
\[R_{\omega}(M)_n = \varinjlim_{p < \omega} \, \HOM(E^{\otimes p},M_{n+p}),\]
which provides the reflection at least if $E$ is $\omega$-presentable, i.e. finitely presentable. \marginpar{Added finitely presentable.}
\end{rem}

\begin{ex}[Quantales]
We can make this even more explicit when $\C$ is a quantale. Here, $E$-modules consists of sequences of elements $M_n$ in $\C$ satisfying $E M_n \leq M_{n+1}$. Such a sequence lies in $\C[E^{-1}]$ (which is the universal quantale over $\C$ which makes $E$ invertible) if and only if we have $M_n = [M_{n+1}:E]$, i.e. $M_n$ is actually the largest element of $\C$ such that $E M_n \leq M_{n+1}$. For the reflectors we have $R_p(M)_n = [M_{n+p}:E^p]$ and $R_{\omega} = \sup_p R_p$ is the reflection when $E$ is $\omega$-presentable.

For example we can apply this to the quantale $\C$ of ideals of a commutative ring $A$. Thus, we can universally make a given (finitely generated) ideal $E$ of $A$ invertible. This construction might be of independent interest. One might investigate if $\C[E^{-1}]$ is again the quantale of ideals of another commutative ring induced by $A$.

For a completely different example, look at the quantale $[0,1]$. No object in $]0,1[$ is $\omega$-presentable, but we can still invert it. For example, a $\frac{1}{2}$-sequence is a sequence of elements $(t_n)_{n \in \Z}$ in $[0,1]$ with $t_n \leq 2 t_{n+1}$ and it lies in the localization if $t_n = \min(2 \cdot t_{n+1},1)$. One can check that $(t_n) \mapsto \lim_{n \to \infty} 2^n \cdot t_n$ provides an isomorphism of quantales $[0,1][(\frac{1}{2})^{-1}] \cong [0,\infty]$. One can also check directly that $[0,\infty]$ is the localization of $[0,1]$ at $\frac{1}{2}$ (or any other element in $]0,1[$), using that $[0,2^n] = (\frac{1}{2})^{-1} \cdot [0,1]$.
\end{ex}


\subsection{Localization at sections}  \label{locsec}
We have the following variant of \autoref{EL-lok}. Let $\C$ be a cocomplete tensor category (not assumed to be locally presentable) and $\L \in \C$ be a symtrivial object (e.g. a line object). Let $s : \O_\C \to \L$ be a morphism. We would like to construct the localization $\C \to \C/(s \text{ isom.}) =: \C_s$. For this we define $R(M) = \L \otimes M$ together with $s \otimes M : M \to R(M)$. The transfinite composition will provide a reflector of $\C_s = \mathrm{Fix}(R)$. We can make this more explicit as follows. The special case $\L=\O$ has already been studied by Florian Marty (\cite[Section 2.2]{Mar09}).

\begin{defi}
We denote by $\O_s$ an initial commutative algebra $A$ in $\C$ with the property that $s \otimes A : A \to \L \otimes A$ is an isomorphism. We call $\O_s$ the \emph{localization} of $\O$ at $s$.
\end{defi}

\begin{prop}
The localization $\O_s$ exists.
\end{prop}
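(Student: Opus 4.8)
The goal is to prove that the localization $\O_s$ exists, where we want an initial commutative algebra $A$ in $\C$ such that $s \otimes A : A \to \L \otimes A$ is an isomorphism. The plan is to construct $\O_s$ directly as a colimit, rather than by invoking the general localization machinery of \autoref{localization}, since here $\C$ is \emph{not} assumed to be locally presentable, so reflectivity results are unavailable.

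First I would build a candidate algebra by a telescoping colimit. Consider the directed system obtained by repeatedly tensoring the unit with $\L$: using $s : \O \to \L$ we get morphisms
\[
\O \xrightarrow{s} \L \xrightarrow{s \otimes \L} \L^{\otimes 2} \xrightarrow{s \otimes \L^{\otimes 2}} \L^{\otimes 3} \to \dotsc,
\]
and I would set $A := \colim_n \L^{\otimes n}$, with transition maps $s \otimes \L^{\otimes n}$. The key structural point is that $A$ carries a natural commutative algebra structure: since $\L$ is symtrivial, the multiplication maps $\L^{\otimes n} \otimes \L^{\otimes m} \cong \L^{\otimes n+m}$ are compatible with the transition morphisms (here one uses $s(a) \otimes b = b \otimes s(a)$-style identities from \autoref{symtrivial-properties}, i.e. that tensoring with a symtrivial object respects the symmetry), so the induced morphism $A \otimes A \to A$ is well-defined, associative and commutative, with unit the colimit inclusion $\O \to A$. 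Because $\L \otimes -$ is cocontinuous, $\L \otimes A \cong \colim_n \L^{\otimes n+1}$, and under this identification $s \otimes A : A \to \L \otimes A$ becomes the shift of the telescope, which is an isomorphism (its inverse is the evident comparison map of the two cofinal subsystems). Thus $A$ indeed satisfies the defining property $s \otimes A$ isomorphism.

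Next I would verify the universal (initiality) property. Let $B$ be any commutative algebra in $\C$ such that $s \otimes B : B \to \L \otimes B$ is an isomorphism; I must produce a unique algebra homomorphism $A \to B$. The unit $\O \to B$ gives the degree-zero map. For each $n$, since $s \otimes B$ is invertible, so is $s \otimes \L^{\otimes n-1} \otimes B$, and iterating gives a canonical morphism $\L^{\otimes n} \to \L^{\otimes n} \otimes B \cong B$ (the last isomorphism being $n$-fold application of the inverse of $s \otimes B$, using that $\L^{\otimes n} \otimes B \cong B$ when $s \otimes B$ is invertible). These morphisms are compatible with the transition maps $s \otimes \L^{\otimes n}$ by construction, hence induce a morphism $A \to B$ out of the colimit, and one checks it respects unit and multiplication. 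Uniqueness follows because any algebra homomorphism $A \to B$ is determined on each $\L^{\otimes n}$ by the unit together with the forced compatibility with $s$, precisely because $s \otimes B$ being an isomorphism pins down the images of the higher degrees.

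The main obstacle I anticipate is verifying the commutative algebra structure on the telescope $A$ rigorously rather than in element notation: one must check that the multiplications $\L^{\otimes n} \otimes \L^{\otimes m} \to \L^{\otimes n+m}$ genuinely assemble into a morphism $A \otimes A \to A$ respecting the two filtrations, which requires the symtriviality of $\L$ to absorb the symmetry twists that arise when commuting a factor of $s$ past tensor factors. Provided one is careful with the associativity and symmetry coherence (Coherence Theorem, \autoref{coherence}), everything reduces to routine compatibility checks; the cocontinuity of $\L \otimes -$ and of $- \otimes A$ then does the rest of the work in identifying the relevant colimits.
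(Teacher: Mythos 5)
Your construction is exactly the paper's: the same telescope $\O_s = \colim_n \L^{\otimes n}$, the same use of symtriviality to make the multiplications $\L^{\otimes n} \otimes \L^{\otimes m} \cong \L^{\otimes n+m}$ descend to the colimit and to identify the differently-positioned insertions of $s$, and the same identification of $s \otimes \O_s$ with the shift of the telescope. Where you genuinely diverge is the uniqueness half of initiality. The paper avoids any degree-by-degree argument: it applies its existence construction to $A = \O_s$ itself, obtaining $\O_s \otimes \O_s \cong \O_s$, which says precisely that $\O \to \O_s$ is an epimorphism in $\CAlg(\C)$; uniqueness of any algebra map $\O_s \to A$ is then automatic. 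Your direct argument can be made to work, but be aware that its key step is more delicate than your phrasing suggests: knowing $f_0 = e_B$ and the colimit compatibility $f_{n+1} \circ (s \otimes \L^{\otimes n}) = f_n$ does \emph{not} pin down $f_{n+1}$ for the naive reason, since $s \otimes \L^{\otimes n}$ is not an epimorphism. The correct justification is that morphisms $\L^{\otimes n+1} \to B$ correspond bijectively to $B$-module morphisms $\L^{\otimes n+1} \otimes B \to B$ (via $g \mapsto m_B \circ (g \otimes B)$, using unitality), and under this correspondence precomposition with $s \otimes \L^{\otimes n}$ becomes precomposition with the \emph{isomorphism} $s \otimes \L^{\otimes n} \otimes B$, which is injective; this is where the hypothesis that $s \otimes B$ is invertible actually enters. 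If you add that lemma, your route is complete; the paper's epimorphism trick buys the same conclusion with no computation, while your route has the mild advantage of showing that unit-preservation alone (without multiplicativity) already forces uniqueness.
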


\begin{proof}
We construct $\O_s$ as the colimit of the sequence
\[\O \xrightarrow{s} \L \xrightarrow{\id \otimes s} \L^{\otimes 2} \xrightarrow{\id \otimes \id \otimes s} \L^{\otimes 3} \longrightarrow \dotsc\]
Since $\L$ is assumed to be symtrivial, we could also use $s \otimes \id \otimes \id$ or $\id \otimes s \otimes \id$ etc. as transition morphisms -- this will be useful. We will denote them just by $s$. For each $n \in \N$ we have a canonical ``inclusion" $i_n : \L^{\otimes n} \to \O_s$ such that $i_{n+1} \circ s = i_n$. The morphisms
\[\L^{\otimes n} \otimes \L^{\otimes m} \cong \L^{\otimes n+m} \xrightarrow{i_{n+m}} \O_s\]
are compatible in each variable, hence glue to a morphism $m : \O_s \otimes \O_s \to \O_s$. One can check that $\O_s$ is a commutative algebra with multiplication $m$ and unit $i_0$ (the commutativity uses again that $\L$ is symtrivial).

The morphisms $\L \otimes \L^{\otimes n} \cong \L^{\otimes n+1} \xrightarrow{i_{n+1}} \O_s$ are compatible in $n$, hence glue to a morphism $t : \L \otimes \O_s \to \O_s$. We claim that, in fact, $t$ is an inverse to $s \otimes \O_s : \O_s \to \O_s \otimes \L$. Look at the following commutative diagram
\[\xymatrix@C=40pt{\L \otimes \O_s \ar[r]^{t} & \O_s \ar[r]^-{s} & \L \otimes \O_s \\
\L \otimes \L^{\otimes n} \ar[u]^{\L \otimes i_n} \ar[r]^-{\cong} & \L^{\otimes n+1}  \ar[u]_{i_{n+1}} \ar[r]^-{s} &  \L \otimes \L^{\otimes n+1} \ar[u]_{\L \otimes i_{n+1}}}\]
and use $i_{n+1} \circ s = i_n$. This implies that $st \circ (\L \otimes i_n) = \L \otimes i_n$ for all $n$ and hence $st=\id$. A similar argument shows $ts=\id$. Hence, $s \otimes \O_c$ is an isomorphism.

Now let $A$ be a commutative algebra in $\C$ such that $s \otimes A : A \to \L \otimes A$ is an isomorphism. Then, for every $n \in \N$, we have a commutative diagram
\[\xymatrix@C=40pt{A \ar@{=}[r] \ar[d]_{s^{\otimes n} \otimes \L}^{\cong}  & A \ar[d]^{s^{\otimes n+1} \otimes \L}_{\cong}  \\ \L^{\otimes n} \otimes A \ar[r]^-{s} & \L^{\otimes n+1} \otimes A,}\]
which in the colimit gives $A \cong \O_s \otimes A$. In particular, there is a morphism of algebras $\O_s \to A$. The case $A=\O_s$ shows that $\O_s \cong \O_s \otimes \O_s$, which means that $\O \to \O_s$ is an epimorphism in the category of commutative algebras and therefore $\O_s \to A$ is unique.
\end{proof}

\begin{rem}
Notice that for every cocontinuous tensor functor $F : \C \to \D$ we have an isomorphism $F(\O_s) \cong \O_{F(s)}$, this follows from the construction above.
\end{rem}

\begin{defi}
We call $\C_s := \M(\O_s)$ together with $\C \to \C_s$, $M \mapsto M \otimes \O_s$ the \emph{localization} of $\C$ at $s$. It is equivalent to the full subcategory of $\C$ consisting of those $M \in \C$ such that $s \otimes M : M \to \L \otimes M$ is an isomorphism. Under this identification, $\C \to \C_s$ maps $M \in \C$ to $M_s := \colim_n (\L^{\otimes n} \otimes M)$ with transition morphisms induced by $s$. The tensor product is the same as in $\C$, however the unit is $\O_s$.
\end{defi}

\begin{prop}
We have $\C_s = \C/(s \text{ isom.})$, i.e. for every cocomplete tensor category $\D$ we have an equivalence of categories
\[\Hom_{c\otimes}(\C_s,\D) \simeq \{F \in \Hom_{c\otimes}(\C,\D) : F(s) : \O_\D \to F(\L) \text{ is an isomorphism}\}\]
\end{prop}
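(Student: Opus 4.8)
The plan is to deduce everything from the universal property of module categories (\autoref{mod-UE}) applied to the commutative algebra $A = \O_s$, since $\C_s = \M(\O_s)$ by definition. That proposition identifies $\Hom_{c\otimes}(\C_s,\D)$ with the category of pairs $(F,h)$, where $F \in \Hom_{c\otimes}(\C,\D)$ and $h \in \Hom_{\CAlg(\D)}(F(\O_s),\O_\D)$, a morphism $(F,h) \to (F',h')$ being a morphism of tensor functors $\alpha : F \to F'$ with $h' \circ \alpha(\O_s) = h$. Using the isomorphism $F(\O_s) \cong \O_{F(s)}$, valid for every cocontinuous tensor functor $F$ (noted in the Remark above), the datum $h$ becomes an algebra homomorphism $\O_{F(s)} \to \O_\D$. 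Thus the whole statement reduces to analyzing the set $\Hom_{\CAlg(\D)}(\O_{F(s)},\O_\D)$ as $F$ varies.

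The key claim I would establish is that this set is empty when $F(s)$ is not an isomorphism and a single point when $F(s)$ is an isomorphism. For the ``if'' direction, note that $\O_\D$ is a commutative algebra in $\D$ for which $F(s) \otimes \O_\D : \O_\D \to F(\L) \otimes \O_\D$ is canonically identified with $F(s)$; hence $\O_\D$ inverts $F(s)$ exactly when $F(s)$ is an isomorphism, and in that case initiality of $\O_{F(s)}$ among algebras inverting $F(s)$ supplies a unique algebra homomorphism $\O_{F(s)} \to \O_\D$. For the ``only if'' direction, an algebra homomorphism $h : \O_{F(s)} \to \O_\D$ makes $\O_\D$ an $\O_{F(s)}$-module. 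I would then record the general fact that every $\O_{F(s)}$-module $N$ satisfies that $F(s) \otimes N$ is an isomorphism: indeed $F(s) \otimes N$ is obtained from the isomorphism $F(s) \otimes \O_{F(s)} : \O_{F(s)} \to F(\L) \otimes \O_{F(s)}$ (an isomorphism by the defining property of $\O_{F(s)}$) by applying the functor $- \otimes_{\O_{F(s)}} N$, under the identifications $\O_{F(s)} \otimes_{\O_{F(s)}} N \cong N$ and $(F(\L) \otimes \O_{F(s)}) \otimes_{\O_{F(s)}} N \cong F(\L) \otimes N$. Taking $N = \O_\D$ then forces $F(s) \otimes \O_\D \cong F(s)$ to be an isomorphism.

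Finally I would assemble the equivalence. By the key claim, for each $F$ with $F(s)$ an isomorphism there is exactly one admissible $h$, and no admissible $h$ otherwise; so the forgetful functor $(F,h) \mapsto F$ lands in the full subcategory of those $F \in \Hom_{c\otimes}(\C,\D)$ with $F(s)$ an isomorphism and is essentially surjective onto it. It is fully faithful because, for two pairs with $h,h'$ the unique algebra homomorphisms, the compatibility condition $h' \circ \alpha(\O_s) = h$ is automatic: $\alpha(\O_s) : \O_{F(s)} \to \O_{F'(s)}$ is an algebra homomorphism (as $\alpha$ is a morphism of tensor functors), so $h' \circ \alpha(\O_s)$ is an algebra homomorphism $\O_{F(s)} \to \O_\D$ and hence coincides with the unique $h$. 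Composing with \autoref{mod-UE} yields the asserted equivalence. The main obstacle is the ``only if'' half of the key claim, which is where the module-theoretic input (that $\O_{F(s)}$-modules invert $F(s)$) and thus the explicit construction of $\O_s$ genuinely enter; the remainder is bookkeeping with initiality and with the morphism condition.
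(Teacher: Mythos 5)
Your proposal is correct and follows the same route as the paper, whose entire proof is the one-line observation that the statement ``follows easily from \autoref{mod-UE}'' applied to $A = \O_s$ together with the isomorphism $F(\O_s) \cong \O_{F(s)}$. Your spelling-out of the ``easily'' --- the initiality of $\O_{F(s)}$ giving existence and uniqueness of the algebra map when $F(s)$ is invertible, and the module-theoretic argument (any $\O_{F(s)}$-module inverts $F(s)$, so an algebra map $\O_{F(s)} \to \O_\D$ forces $F(s)$ to be an isomorphism) giving the converse --- is exactly the intended argument and is sound.
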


\begin{proof}
This follows easily from \autoref{mod-UE}.
\end{proof}

\begin{rem}
If we want to invert a family of sections $s:=(s_i : \O \to \L)_{i \in I}$, we can consider the (possibly infinite) tensor product of algebras $\O_s := \bigotimes_{i \in I} \O_{s_i}$ and then define $\C_s := \M(\O_s)$.
\end{rem}

\begin{ex}
In the special case $\C=\Q(X)$ for some scheme $X$ and $\L$ is invertible, then $s : \O_X \to \L$ is literally a global section of $\L$ and the localization $\C \to \C_s$ identifies with the restriction functor $\Q(X) \to \Q(X_s)$, where $X_s \subseteq X$ is the usual maximal open subscheme on which $s$ generates $\L$. Thus, we have globalized this basic construction from algebraic geometry.
\end{ex}

\begin{ex}
Let $A$ be a commutative graded algebra in $\C$ and let $M$ be a graded $A$-module. Consider a global section $f : \O \to A_d$ for some $d \in \Z$. It induces a homomorphism of graded $A$-modules $f : A \to A[d]$. Since $A[d]$ is a line object in $\grM(A)$, we may localize $M$ at $f$. The degree $0$ part of the resulting graded $A$-module $M_f$ is an $A_0$-module which might be called the homogeneous localization $M_{(f)}$ of $M$ at $f$ since for $\C=\M(R)$ this coincides with the usual homogeneous localization.
\end{ex}

\begin{rem}
Let $\L \in \C$ be a line object and let $s=(s_i) : \O^n \to \L$ be an epimorphism. Then for every $M \in \C$ we have the usual commutative diagram
\[M \to \bigoplus_{i=1}^{n} M_{s_i} \rightrightarrows \bigoplus_{i,j=1}^{n} M_{s_i,s_j}.\]
We claim that it is an equalizer when $\C$ is an abelian locally finitely presentable tensor category -- this is the main ingredient in the construction of the classical associated sheaf $\tilde{M}$. It suffices to prove that $\oplus_{i=1}^{n} \O_{s_i}$ is faithfully flat. Since $\O_{s_i}$ is a directed colimit of flat objects, it is flat (\autoref{flatclosure}). In order to show faithfully flatness, assume $M_{s_i} = 0$ for all $1 \leq i \leq n$. We may assume that $M$ is finitely presentable. Then it follows that $M \otimes s_i^k : M \to M \otimes \L^{\otimes k}$ is zero for some $k$ which can be chosen independently from $i$. But then \autoref{epi-cancel} implies $M=0$.
\end{rem}


\subsection{Ideals} \label{cideals}

If $R$ is a commutative ring, then ideals of $R$ have two equivalent descriptions, namely
\begin{enumerate}
\item \emph{abstractly} as kernels of ring homomorphisms $R \to S$, where $S$ is a suitable commutative ring,
\item \emph{concretely} as subsets of $R$ closed under addition, scalar multiplication and containing $0$.
\end{enumerate}
 
Here we would like to categorify this, replacing $R$ by a linear cocomplete tensor category $\C$. In this section all tensor categories and functors are understood to be linear over a fixed commutative ring.

\begin{defi}[Abstract ideals] \label{ideal-abstract}
We call a full subcategory $\I \subseteq \C$ an \emph{abstract ideal} of $\C$ if there is some cocontinuous tensor functor $F : \C \to \D$ into some cocomplete tensor category $\D$ such that
\[\I = \ker(F) := \{M \in \C : F(M) = 0\}.\]
\end{defi}

\begin{defi}[Concrete ideals] \label{ideal-concrete}
We call a full subcategory $\I \subseteq \C$ a \emph{concrete ideal} of $\C$ if it enjoys the following closure properties:
\begin{enumerate}
\item $\I$ is closed under taking colimits in $\C$. In particular, $0 \in \I$.
\item If $M \to N$ is an epimorphism in $\C$ and $M \in \I$, then $N \in \I$.
\item If $M,N \in \C$ and $M \in \I$, then $M \otimes N \in \I$.
\item If $K_0 \to M_0 \to M_1 \to 0$ is an exact sequence and $K_0,M_1 \in \I$, then $M_0 \in \I$.
\item More generally, if $\lambda$ is a regular cardinal and $[0,\lambda] \to \C,~ \alpha \mapsto M_{\alpha}$ is a cocontinuous functor such that there are exact sequences
\[K_{\alpha} \to M_{\alpha} \to M_{\alpha+1} \to 0\]
for $\alpha < \lambda$ such that $K_{\alpha} \in \I$ and $M_\lambda \in \I$, then $M_0 \in \I$.
\end{enumerate}
\end{defi}

Notice that for $\lambda<\aleph_0$ the condition 5. follows by induction from 4., but for $\lambda=\aleph_0$ 5. is independent from the other axioms.

\begin{lemma} \label{abstract-is-concrete}
In a linear cocomplete tensor category $\C$ every abstract ideal is a concrete ideal.
\end{lemma}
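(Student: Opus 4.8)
The statement to prove is that every abstract ideal $\I = \ker(F)$ for a cocontinuous tensor functor $F : \C \to \D$ satisfies all five closure properties of a concrete ideal. My plan is to verify each property directly, exploiting the fact that $F$ preserves colimits, tensor products, and (being cocontinuous) the zero object, epimorphisms, and cokernels.

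The plan is to proceed property by property. For property 1, if $\{M_i\}$ is a diagram in $\I$, then $F(\colim_i M_i) \cong \colim_i F(M_i) = \colim_i 0 = 0$ since $F$ is cocontinuous and hence preserves colimits and the initial object; the special case of the empty colimit gives $0 \in \I$. For property 2, if $M \to N$ is an epimorphism with $M \in \I$, then $F(M) \to F(N)$ is an epimorphism (a cocontinuous functor preserves epimorphisms, as recalled in \autoref{colimits}, since epimorphisms are characterized by a pushout square); as $F(M) = 0$, its image $F(N)$ must also be $0$, so $N \in \I$. For property 3, if $M \in \I$ then $F(M \otimes N) \cong F(M) \otimes F(N) = 0 \otimes F(N) = 0$, using that $F$ is a tensor functor and the categorified distributive law $0 \otimes A = 0$.

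For property 4, given an exact sequence $K_0 \to M_0 \to M_1 \to 0$ with $K_0, M_1 \in \I$, I would apply $F$. Since $F$ is right exact (being cocontinuous it preserves cokernels), the sequence $F(K_0) \to F(M_0) \to F(M_1) \to 0$ is exact in $\D$, exhibiting $F(M_0)$ as the cokernel of $F(K_0) \to F(M_0)$ restricted appropriately — more precisely, $F(M_1)$ is the cokernel of $F(K_0) \to F(M_0)$. As $F(K_0) = 0$ and $F(M_1) = 0$, the map $F(M_0) \to F(M_1) = 0$ is both an epimorphism (so $F(M_0) \twoheadrightarrow 0$) and has the property that its kernel receives an epimorphism from $F(K_0) = 0$; concretely, $F(M_0) \to F(M_1)$ being a cokernel of $0 \to F(M_0)$ forces $F(M_0) \cong F(M_1) = 0$, so $M_0 \in \I$.

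The main obstacle will be property 5, the transfinite version, which is genuinely stronger than property 4 at limit stages. Here I would argue by transfinite induction along $\alpha \in [0,\lambda]$ that $F(M_\alpha) \cong F(M_\lambda)$ (or more directly that all the $F(M_\alpha)$ collapse compatibly), combining the successor step — handled exactly as in property 4, giving $F(M_\alpha) \cong F(M_{\alpha+1})$ whenever $K_\alpha \in \I$ — with the limit step, where cocontinuity of both the given functor $[0,\lambda] \to \C$ and of $F$ yields $F(M_\beta) \cong \colim_{\alpha < \beta} F(M_\alpha)$ for limit ordinals $\beta$. Since $F$ preserves the colimit defining $M_\lambda$ and $M_\lambda \in \I$ gives $F(M_\lambda) = 0$, running the induction downward (or checking that the transition maps $F(M_\alpha) \to F(M_{\alpha+1})$ are isomorphisms so the whole chain is constant) forces $F(M_0) = 0$, whence $M_0 \in \I$. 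The delicate point is bookkeeping the isomorphisms $F(M_\alpha) \cong F(M_{\alpha+1})$ coherently through limit stages and confirming that the colimit of a constant diagram stays constant, so that $F(M_0)$ indeed agrees with $F(M_\lambda) = 0$.
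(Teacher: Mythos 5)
Your proof is correct and follows essentially the same route as the paper's: properties 1--3 follow directly from cocontinuity and the tensor structure of $F$, property 4 from right exactness applied to the cokernel sequence, and property 5 by transfinite induction in which the successor step repeats the argument for 4 (each $F(M_\alpha) \to F(M_{\alpha+1})$ is an isomorphism) and the limit step is a colimit argument using cocontinuity of $F$ and of the chain $[0,\lambda] \to \C$. The only difference is one of exposition: the paper dismisses 1--3 as clear, and your parenthetical ``upward'' induction (all transition maps are isomorphisms, so $F(M_0) \to F(M_\alpha)$ is an isomorphism for every $\alpha \leq \lambda$) is the correct formulation to use rather than any ``downward'' variant.
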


\begin{proof}
Let $\D$ be a cocomplete tensor category and let $F : \C \to \D$ be a cocontinuous tensor functor. We have to show that
\[\I := \{M \in \C : F(M)=0\}\]
satisfies the closure properties listed in \autoref{ideal-concrete}. For 1., 2. and 3. this is is clear. In 4. $F(K_0) \to F(M_0) \to F(M_1) \to 0$ is exact in $\D$. Since $K_0,M_1 \in \I$, this simplifies to $0 \to F(M_0) \to 0 \to 0$, which means $F(M_0)=0$, i.e. $M_0 \in \I$. In 5. we observe that $F(M_\alpha) \to F(M_{\alpha+1})$ is an isomorphism for all $\alpha<\lambda$. It follows by induction on $\alpha<\lambda$ that $F(M_0) \to F(M_\alpha)$ is an isomorphism, the limit case being a colimit argument. It follows that $F(M_0) \to F(M_\lambda)=0$ is an isomorphism, so that $M_0 \in \I$.
\end{proof}

\begin{thm} \label{concrete-is-abstract}
Let $\C$ be a locally presentable linear tensor category. Then every concrete ideal is an abstract ideal. More specifically, if $\I \subseteq \C$ is a concrete ideal, then there is a cocomplete tensor category $\C/\I$ and a cocontinuous tensor functor $F : \C \to \C/\I$ such that $\ker(F)=\I$. It satisfies the universal property
\[\Hom_{c\otimes}(\C/\I,\D) \simeq \{G \in \Hom_{c\otimes}(\C,\D) : G|_\I = 0\}.\]
\end{thm}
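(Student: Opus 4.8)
The plan is to construct $\C/\I$ as a localization in the sense of \autoref{localization}. Given a concrete ideal $\I \subseteq \C$, the natural candidate is to invert all the morphisms $0 \to M$ for $M \in \I$; equivalently, to make every object of $\I$ become zero. So I would set
\[
S := (0 \to M)_{M \in \I}
\]
and define $\C/\I := \C[S^{-1}] = S_\otimes^{\cong}$ with $F := R : \C \to \C[S^{-1}]$ the universal reflector. The subtlety is that $\I$ need not be small, so $S$ need not be a set. To fix this, I would use that $\C$ is locally $\lambda$-presentable and that $\I$ is closed under $\lambda$-directed colimits (part of condition 1 in \autoref{ideal-concrete}), so every object of $\I$ is a $\lambda$-directed colimit of the $\lambda$-presentable objects it contains. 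Hence it suffices to invert $0 \to M$ only for the \emph{set} of $\lambda$-presentable objects $M \in \I$; since $R$ is cocontinuous, inverting these automatically kills all directed colimits of them, i.e. all of $\I$. Thus I replace $S$ by this smaller set and apply \autoref{localization} directly.

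With $\C/\I := \C[S^{-1}]$ defined this way, the universal property
\[
\Hom_{c\otimes}(\C/\I,\D) \simeq \{G \in \Hom_{c\otimes}(\C,\D) : G|_\I = 0\}
\]
should follow immediately from the universal property in \autoref{localization}: a cocontinuous tensor functor $F' : \C \to \D$ inverts $0 \to M$ precisely when $F'(M) = 0$, and $F'$ inverts $0 \to M$ for all $\lambda$-presentable $M \in \I$ if and only if $F'$ vanishes on all of $\I$ (using cocontinuity of $F'$ and that every object of $\I$ is a $\lambda$-directed colimit of such $M$). So the right-hand side of \autoref{localization} matches the right-hand side above.

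The one genuinely nontrivial point — and the step I expect to be the main obstacle — is verifying that $\ker(F) = \I$ exactly, not merely $\I \subseteq \ker(F)$. The inclusion $\I \subseteq \ker(F)$ is clear by construction. For the reverse inclusion I must show that if $F(M) = 0$ then $M \in \I$. Here is where conditions 4 and 5 of \autoref{ideal-concrete} earn their place: the reflector $R$ is built as a transfinite composition $R_\alpha$ of an endofunctor $R_1$, where at each stage $R_1$ forms cokernels of maps out of the objects being inverted (following the constructions in \autoref{epi-refl} and \autoref{mono-refl}). The condition $F(M)=0$ means $M \to R_\infty(M)$ becomes the zero object after applying $R_\infty$; tracing through the transfinite construction, each successive quotient $M_\alpha \to M_{\alpha+1}$ has kernel-type data built from objects of $\I$, and the closure conditions 4 and 5 are exactly what is needed to conclude, by transfinite induction along the tower, that $M$ itself lies in $\I$. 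I would carry this out by showing that the collection of $M$ with $R(M)=0$ satisfies no more than the closure properties defining $\I$ and is generated from $\I$ by precisely the operations in conditions 1--5, so that a concrete ideal is closed under passing back from $R(M)=0$ to membership.

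Finally, that $F$ is a cocontinuous tensor functor and that $\C/\I$ is a cocomplete (indeed locally presentable) tensor category are given directly by \autoref{localization}, so no separate argument is needed there. The whole proof is therefore a reduction to \autoref{localization} plus a careful transfinite-induction verification of $\ker(F) \subseteq \I$ using the stability axioms 4 and 5; together with \autoref{abstract-is-concrete} this establishes that abstract and concrete ideals coincide.
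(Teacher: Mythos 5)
There is a genuine gap at the load-bearing first step of your proposal. You claim that, since $\C$ is locally $\lambda$-presentable and $\I$ is closed under colimits, every object of $\I$ is a $\lambda$-directed colimit of the $\lambda$-presentable objects \emph{contained in} $\I$, so that it suffices to invert the set of morphisms $0 \to M$ with $M \in \I$ $\lambda$-presentable. This does not follow from the axioms of \autoref{ideal-concrete}: condition 1 says that colimits of diagrams taking values in $\I$ land in $\I$; it does not say that the canonical $\lambda$-directed diagram of $\lambda$-presentable objects over a given $M \in \I$ can be chosen inside $\I$, i.e.\ that the $\lambda$-presentable members of $\I$ are colimit-dense in $\I$. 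Concretely, take $\C = \Ab$ (locally finitely presentable, so $\lambda = \aleph_0$) and $\I = \{M : pM = M\} = \ker(- \otimes_{\Z} \Z/p)$, a concrete ideal by \autoref{abstract-is-concrete}. The finitely presentable objects of $\I$ are exactly the finite groups of order prime to $p$, and every directed colimit of these is torsion; but $\Z[1/p] \in \I$ is torsion-free and nonzero. So at the natural $\lambda$ your density claim is false, and for a general concrete ideal --- a possibly large, merely colimit-closed full subcategory --- the existence of \emph{some} regular cardinal with this density property is precisely the smallness problem; it is not implied by conditions 1--5 (statements of this type for colimit-closed subcategories of locally presentable categories are tied to large-cardinal principles). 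The paper flags exactly this obstruction: it remarks that \autoref{localization} and \autoref{M0} ``only work when $\I$ is a set, or more generally, if $\I$ has a colimit-dense subset,'' and then deliberately takes a different route. Since both your construction of $\C/\I$ and your verification of the universal property rest on this density claim, both inherit the gap.

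The paper avoids density altogether: using the (regular epi, mono) factorization of \autoref{saft} and closure property 2, an object $M$ lies in $\C/\I := \{M : \Hom(N,M)=0 \text{ for all } N \in \I\}$ if and only if $0$ is its only subobject belonging to $\I$; since $\C$ is wellpowered, one may form $T_\I(M)$, the set-indexed sum of all subobjects of $M$ lying in $\I$, which belongs to $\I$ by conditions 1--2, and define $R_1(M) = M/T_\I(M)$. The transfinite composition of $R_1$ is a reflector by \autoref{fix}, and the proof of \autoref{localization} is rerun for this reflector to get the tensor structure and the universal property. Your final step --- deducing $\ker(F) \subseteq \I$ by transfinite induction along the tower, with $\I$-valued kernel data $K_\alpha \to M_\alpha \to M_{\alpha+1} \to 0$ at each stage and condition 5 handling the limit ordinals --- is exactly the paper's closing argument and is sound once a reflector with $\I$-valued kernel data exists; as written, however, it sits on top of the unjustified density claim. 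If you replace your generating set by the paper's subobject-sum reflector $R_1(M)=M/T_\I(M)$, your outline goes through.
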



\begin{proof}
We consider the full subcategory
\[\C/\I := \{M \in \C : \forall N \in \I : \HOM(N,M)=0\}.\]
Since $I$ is closed under tensoring with objects from $\C$, we may replace $\HOM$ by $\Hom$ here. We would like to apply \autoref{localization} and then \autoref{M0}, but this only works when $\I$ is a set, or more generally, if $\I$ has a colimit-dense subset. 
Instead, we construct a reflection for $\C/\I$ explicitly as follows: If $M \in \C$ and if $f : N \to M$ is a morphism in $\C$ with $N \in \I$, we may factor $f$ as
\[N \xrightarrow{p} N' \xrightarrow{i} M,\]
where $p$ is an epimorphism, so that $N' \in \I$ and $i$ is a monomorphism (\autoref{saft}). It follows that $M \in \C$ belongs to $\C/I$ if and only if $0$ is the only subobject of $M$ which belongs to $\I$. But $M$ has only a set of subobjects at all (\autoref{saft}), so that we may define a functor
\[R_1 : \C \to \C, M \mapsto M/T_\I(M)\]
where $T_\I(M)$ is the sum of all subobjects $N \leq M$ such that $N \in \I$. Using that $\I$ is a concrete ideal, we observe $T_\I(M) \in \I$. Then
\[\Fix(R_1)=\{M \in \C : T_\I(M)=0\} = \C/\I,\]
so that the transfinite composition $R_{\infty} : \C \to \C/I$ provides a reflection by \autoref{fix} and the proof of \autoref{localization} goes through. Clearly, we have $\I \subseteq \ker(R_{\infty})$. Conversely, assume that $M \in \C$ such that $0 = R_{\infty}(M)$. Choose some regular cardinal $\lambda$ such that $0 = R_{\lambda}(M)$. If $\alpha < \lambda$, then there is an exact sequence $T_\I(R_{\alpha}(M)) \to R_\alpha(M) \to R_{\alpha+1}(M) \to 0$ with $T_\I(R_{\alpha}) \in \I$. It follows from the closure property 5. in \autoref{ideal-concrete} that $M=R_0(M) \in \I$.
\end{proof}

\begin{rem}
Several (still open) problems within this thesis would be solvable (in fact, reducible to the affine case) if every (reasonable) algebraic stack $X$ has a descent algebra $A$ such that $\Spec_X(A)$ is affine (over $\Z$) and that $A$ generates the unit ideal $\langle \O_X \rangle = \Q(X)$. The latter means (using \autoref{concrete-is-abstract}): If $\Hom(N \otimes A,M)=0$ for all $N$, then $M=0$.
\end{rem}

\section{Idempotents} \label{idemp}

We already know that products of cocomplete tensor categories exist and how to describe cocontinuous tensor functors \emph{into} them. But actually it is also possible to describe cocontinuous tensor functors \emph{on} them. This is a categorification of the following well-known universal property of the product of a \emph{finite} family of commutative rings $(R_i)_{i \in I}$: Homomorphisms $\prod_{i \in I} R_i \to S$ correspond to decompositions $1=\sum_{i \in I} e_i$ in $S$ into orthogonal idempotent elements $e_i \in S$ and homomorphisms $R_i \to S_{e_i}$, where $S_{e_i}$ denotes the localization of $S$ at the element $e_i$. Therefore, let us first categorify orthogonal idempotent decompositions.

In the following, we assume that $\C$ is a linear cocomplete tensor category. We also fix an index set $I$.

\begin{defi}[o.i.d.]
An \emph{orthogonal idempotent decomposition}, abbreviated o.i.d., of $\C$ consists of a family of morphisms $(\sigma_i : E_i \to \O)_{i \in I}$ such that
\begin{enumerate}
\item $E_i \otimes E_j = 0$ for $i \neq j$,
\item $(\sigma_i : E_i \to \O)_{i \in I}$ is a coproduct diagram, hence
\[\bigoplus_{i \in I} E_i \cong \O.\]
\end{enumerate}
\end{defi} 

\begin{rem} \label{oid-prop}
Let $(\sigma_i : E_i \to \O)_{i \in I}$ be an o.i.d. of $\C$.
\begin{enumerate}
\item $E_i \otimes \sigma_i : E_i \otimes E_i \to E_i$ is actually an \emph{isomorphism} $E_i \otimes E_i \cong E_i$, since it identifies with the isomorphism $\bigoplus_{i \in I} E_i \cong \O$ tensored with $E_i$ on the left.
\item Since $E_i$ is symtrivial (as a quotient of $\O$), we have an equality of isomorphisms $E_i \otimes \sigma_i = \sigma_i \otimes E_i : E_i \otimes E_i \cong E_i$. This implies that $\sigma_i : E_i \to \O$ is an \emph{open idempotent} in the sense of Drinfeld-Boyarchenko (\cite{DB}). It is also a split monomorphism (as a coproduct inclusion).
\item If $F : \C \to \D$ is a cocontinuous tensor functor, then $F$ induces an o.i.d. $(F(\sigma_i) : F(E_i) \to F(\O) \cong \O)_{i \in I}$ of $\D$.
\item A morphism $\alpha : (\sigma_i : E_i \to \O) \to (\tau_i : F_i \to \O)$ between two o.i.d. is by definition a family of morphisms $\alpha_i : E_i \to F_i$ such that $\tau_i \alpha_i = \sigma_i$. Since $\tau_i$ is a monomorphism, it is clear that $\alpha_i$ is unique. Besides, it is an isomorphism. In fact, the commutative diagram
\[\xymatrix{\bigoplus_i E_i \ar[rr]^{\oplus_i \alpha_i} \ar[dr]_{\sigma} && \bigoplus_{i \in I} F_i \ar[dl]^{\tau} \\ & \O & }\]
shows that $\oplus_i \alpha_i$ is an isomorphism. Thus, o.i.d. constitute a category which is essentially discrete.
\end{enumerate}
\end{rem}

Let us give alternative descriptions of o.i.d.

\begin{rem}
Recall that an idempotent morphism $e : \O \to \O$ has an image factorization $\O \xrightarrow{p} \im(e) \xrightarrow{\iota} \O$ with $e=\iota p$ and $p \iota=\id$. In fact, we can take $\im(e):=\coker(e^\perp)$ with $e^\perp := 1-e$ together with the cokernel projection $p : \O \to \im(e)$. Since $e e^\perp = 0$, there is some $\iota$ such that $\iota p=e$. We have $p \iota p=pe=p$ and hence $p \iota=\id$.
\end{rem}
 
\begin{defi}
A \emph{discrete orthogonal idempotent decomposition} of $\C$ consists of a family of idempotent endomorphisms $e_i \in \End(\O)$ such that $e_i e_j = 0$ for $i \neq j$ and the canonical morphism
\[\bigoplus_{i \in I} \im(e_i) \to \O\]
is an isomorphism.
We consider the set of discrete o.i.d. as a discrete category.
\end{defi}

\begin{rem}
If $I$ is finite, the last condition in the definition means that $\sum_{i \in I} e_i = 1$.
\end{rem}

\begin{ex}
Let $X$ be a scheme (or even an algebraic stack) and let $(e_i)_{i \in I}$ be a discrete o.i.d. of $\Q(X)$. Then $X_i := D(e_i)=V(e_i^\perp)$ is open and closed in $X$ and we have $X=\coprod_{i \in I} X_i$. Conversely, if $X = \coprod_{i \in I} X_i$ then there is a unique idempotent $e_i \in \Gamma(X,\O_X) \cong \End(\O_X)$ with $e_i|_{X_j}=\delta_{ij}$ for all $j \in I$ and $(e_i)_{i \in I}$ is a discrete o.i.d. of $\Q(X)$. Hence, coproduct decompositions of $X$ correspond to the discrete o.i.d. of $\Q(X)$.
\end{ex}

\begin{prop} \label{oid-alt}
The category of o.i.d. of $\C$ is equivalent to the category of discrete o.i.d. of $\C$.
\end{prop}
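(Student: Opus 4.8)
The plan is to exhibit mutually inverse functors between the category of o.i.d.'s and the category of discrete o.i.d.'s. Since both are essentially discrete (for o.i.d.'s this is \autoref{oid-prop}, and discrete o.i.d.'s form a discrete category by definition), the essential content is a construction on objects that is bijective up to isomorphism; the behaviour on morphisms will then be largely forced.

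First I would pass from an o.i.d. $(\sigma_i : E_i \to \O)_{i \in I}$ to a discrete o.i.d. Using that $\bigoplus_i E_i \cong \O$ via $\sigma = (\sigma_i)$, the universal property of the coproduct provides, for each $i$, a unique retraction $r_i : \O \to E_i$ determined by $r_i \sigma_j = \delta_{ij} \cdot \id_{E_i}$. I then set $e_i := \sigma_i r_i : \O \to \O$. A short computation gives $e_i e_i = \sigma_i (r_i \sigma_i) r_i = e_i$ and $e_i e_j = \sigma_i (r_i \sigma_j) r_j = 0$ for $i \neq j$, so the $e_i$ are orthogonal idempotents. Moreover $e_i = \sigma_i r_i$ with $r_i \sigma_i = \id_{E_i}$ is precisely an image factorization of $e_i$, whence $\im(e_i) \cong E_i$ with inclusion $\sigma_i$; thus the canonical morphism $\bigoplus_i \im(e_i) \to \O$ is, up to this isomorphism, the given isomorphism $\sigma$, and $(e_i)$ is a discrete o.i.d.

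Conversely, from a discrete o.i.d. $(e_i)_{i \in I}$ I would take $E_i := \im(e_i)$ with $\sigma_i : \im(e_i) \hookrightarrow \O$ the image inclusion. The coproduct condition $\bigoplus_i E_i \cong \O$ is exactly the defining isomorphism. The orthogonality $E_i \otimes E_j = 0$ for $i \neq j$ is the key point: writing $e_i = \iota_i p_i$ and $e_j = \iota_j p_j$ as split idempotents, the tensor $e_i \otimes e_j = (\iota_i \otimes \iota_j)(p_i \otimes p_j)$ is again a split idempotent with image $E_i \otimes E_j$; and under the canonical isomorphism $\O \otimes \O \cong \O$ the endomorphism $e_i \otimes e_j$ coincides with the composite $e_i e_j$ (tensoring and composing endomorphisms of the unit agree, by the Eckmann--Hilton argument recalled earlier for $\End_\C(\O)$). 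Since $e_i e_j = 0$, its image $E_i \otimes E_j$ vanishes.

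Finally I would verify that the two constructions are mutually inverse. Starting from $(e_i)$, the retraction $r_i$ attached to $(\im(e_i) \to \O)$ satisfies $r_i \iota_j = \delta_{ij}$ — where $p_i \iota_j = 0$ for $i \neq j$ is forced by $e_i e_j = 0$ — so $r_i = p_i$ and we recover $e_i$ on the nose. Starting from $(\sigma_i)$, we recover it up to the canonical isomorphism $\im(e_i) \cong E_i$ noted above. The main obstacle I anticipate is bookkeeping rather than depth: since the target category of discrete o.i.d.'s is strictly discrete, I must check that a morphism $\alpha = (\alpha_i)$ of o.i.d.'s is sent to an identity, i.e. that isomorphic o.i.d.'s produce \emph{equal} discrete o.i.d.'s. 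This reduces to showing $\alpha_i r_i^E = r_i^F$, which I would deduce from $\tau_j = \sigma_j \alpha_j^{-1}$ together with the uniqueness of maps out of the coproduct $\bigoplus_j E_j \cong \O$; the possibly infinite index set $I$ causes no difficulty here, precisely because all retractions are built directly from this universal property rather than from biproduct projections.
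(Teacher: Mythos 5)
Your proof is correct, and at its core it is the same construction as the paper's: your $e_i = \sigma_i r_i$, with $r_i$ the retraction determined by $r_i \sigma_i = \id_{E_i}$ and $r_i\sigma_j = 0$ for $j \neq i$, is exactly the projection idempotent of $\bigoplus_j E_j \cong \O$ that the paper uses; your inverse construction $(\im(e_i) \hookrightarrow \O)_{i \in I}$ is the paper's essential-surjectivity witness; and your check that a morphism $\alpha$ of o.i.d.\ induces \emph{equal} idempotents (via $\alpha_i r_i^E = r_i^F$, hence $e_i = f_i$) matches the paper's computation in the morphism step. The packaging differs — you assemble explicit mutually inverse functors, where the paper verifies faithfulness, fullness and essential surjectivity separately — but that is a routine repackaging, the more so since both categories are essentially discrete so naturality comes for free. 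What you add that the paper does not spell out is the verification that $(\im(e_i) \hookrightarrow \O)_{i \in I}$ actually satisfies the orthogonality axiom $\im(e_i) \otimes \im(e_j) = 0$ for $i \neq j$: the paper's essential-surjectivity step simply asserts that this family is an o.i.d. Your argument — identify $e_i \otimes e_j \in \End(\O \otimes \O)$ with the composite $e_i e_j \in \End(\O)$ under the unit isomorphism (the Eckmann--Hilton observation the paper itself recalls for $\End_\C(1)$), note that $e_i \otimes e_j$ splits through $\im(e_i) \otimes \im(e_j)$, and conclude from $e_i e_j = 0$ that $\id_{\im(e_i) \otimes \im(e_j)} = 0$, so the object vanishes because $\C$ is linear — is exactly the missing justification, so this is a genuine (if small) gain in completeness rather than a different method. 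One cosmetic nit: writing $r_i\sigma_j = \delta_{ij}\cdot\id_{E_i}$ is sloppy for $j \neq i$, where the morphism $E_j \to E_i$ in question is the zero morphism, not a scalar multiple of an identity; your subsequent computations use the correct version.
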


\begin{proof}
Let $(\sigma_i : E_i \to \O)$ be an o.i.d. For each $i \in I$, we have an idempotent endomorphism $e_i : \O \to \O$ defined via the corresponding idempotent endomorphism of $\bigoplus_j E_j$ which projects onto $E_i$. Thus, we have $e_i \sigma_i = \sigma_i$ and $e_i \sigma_j = 0$ for $i \neq j$. This defines $e_i$. For $i \neq j$ we have $e_i e_j = 0$: It suffices to prove $e_i e_j \sigma_k = 0$ for all $k$. For $k \neq j$ this is clear and for $k=j$ we have $e_i e_j \sigma_j = e_i \sigma_j = 0$. It is clear that $\O_C \cong \bigoplus_j E_j \twoheadrightarrow E_i \xrightarrow{\sigma_i} \O$ is the image factorization of $e_i$. From this it follows that $(e_i)$ is a discrete o.i.d.

Let $\alpha : (\sigma_i : E_i \to \O) \to (\tau_i : F_i \to \O)$ be a morphism between o.i.d. We already know that it is a unique isomorphism. We claim that the corresponding idempotents $e_i,f_i$ are equal: We have $e_i \tau_i \alpha_i = e_i \sigma_i = \sigma_i = \tau_i \alpha_i$, hence $e_i \tau_i = \tau_i$. For $i \neq j$ we have $e_i \tau_j \alpha_j = e_i \sigma_j = 0$, hence $e_i \tau_j = 0$. This means that $e_i$ satisfies the defining properties of $f_i$, hence $e_i=f_i$.

This defines a functor from o.i.d. to discrete o.i.d., which is clearly faithful. It is essentially surjective: Given a discrete o.i.d. $(e_i)_{i \in I}$ we may consider the o.i.d. $(\im(e_i) \hookrightarrow \O)_{i \in I}$ whose corresponding discrete o.i.d. is clearly the given one.

In order to show fullness, let $(\sigma_i : E_i \to \O)$ and $(\tau_i : F_i \to \O)$ be two o.i.d. whose discrete o.i.d. are equal, $e_i = f_i$. Then we have $f_i \sigma_i = \sigma_i$. Define $\alpha_i : E_i \to F_i$ to be the composition
\[E_i \xrightarrow{\sigma_i} \O \xrightarrow{\cong} \bigoplus_j F_j \twoheadrightarrow F_i.\]
We have to prove $\tau_i \alpha_i = \sigma_i$. This is a consequence of the following commutative diagram:
\[\xymatrix{E_i \ar[r]^{\sigma_i} \ar[dr]_{\sigma_i} \ar[dd]_{\sigma_i} & \O \ar[r]^{\cong} \ar[d]^{f_i} & \bigoplus_j F_j \ar[dd] \\ & \O_C \ar[dl]^{f_i} & \\
\O_C && F_i \ar[ul]^{\tau_i} \ar[ll]^{\tau_i} }\]
\end{proof}

In the following, we will not distinguish between o.i.d. and discrete o.i.d.

\begin{ex}
Let $\C=\prod_{i \in I} \C_i$ be a product. Then $(\O_i \to \O)_{i \in I}$ is an o.i.d. of $\C$, where $\O_i$ is defined by $(\O_i)_i = \O_{\C_i}$ and $(\O_i)_j = 0$ for $j \neq i$. Conversely:
\end{ex}

\begin{prop} \label{zerleg}
Let $\C$ be a cocomplete tensor category and let $(e_i)_{i \in I}$ be an o.i.d. of $\C$. Then the localizations $\C \to \C_{e_i}$ induce an equivalence $\C \simeq \prod_{i \in I} \C_{e_i}$. In fact, the category of o.i.d. of $\C$ is equivalent to the category of product decompositions of $\C$.
\end{prop}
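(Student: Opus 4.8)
The plan is to reduce to the discrete description of orthogonal idempotent decompositions via \autoref{oid-alt} and then unwind the localizations $\C \to \C_{e_i}$ explicitly. So I would first replace the o.i.d. by the corresponding family of idempotent endomorphisms $e_i \in \End(\O)$, write $e_i = \iota_i p_i$ for its image factorization $\O \xrightarrow{p_i} E_i \xrightarrow{\iota_i} \O$ with $p_i \iota_i = \id_{E_i}$ and $E_i = \im(e_i)$, and recall that $\bigoplus_i E_i \cong \O$ while $E_i \otimes E_j = 0$ for $i \neq j$. Viewing $e_i$ as a section $\O \to \O$ of the symtrivial object $\O$, the construction of \autoref{locsec} gives $\O_{e_i} = \colim(\O \xrightarrow{e_i} \O \xrightarrow{e_i} \dotsc)$; since $e_i$ is idempotent this colimit is precisely the split image $E_i$, so $\C_{e_i} = \M(E_i)$ and the localization functor $R_i : \C \to \C_{e_i}$ is $M \mapsto E_i \otimes M$.

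Next I would assemble the $R_i$ into $\Phi : \C \to \prod_i \C_{e_i}$, $M \mapsto (E_i \otimes M)_i$, which is a cocontinuous tensor functor by the universal property of the product (each $R_i$ is one by \autoref{locsec}); note that $\Phi(\O) = (E_i)_i$ is the unit of $\prod_i \C_{e_i}$ and that $\Phi$ respects tensor products because $(E_i \otimes M) \otimes_{E_i} (E_i \otimes N) \cong E_i \otimes (M \otimes N)$ by the free-module formula of \autoref{modA}. For the inverse I would take $\Psi : \prod_i \C_{e_i} \to \C$, $(M_i)_i \mapsto \bigoplus_i M_i$. Then $\Psi \Phi(M) \cong \bigoplus_i E_i \otimes M \cong (\bigoplus_i E_i) \otimes M \cong M$ using $\bigoplus_i E_i \cong \O$, and $\Phi \Psi((M_i)_i) = (\bigoplus_j E_i \otimes M_j)_i$, where the summand with $j = i$ is $E_i \otimes M_i \cong M_i$ (since $M_i \in \C_{e_i}$ means $e_i$ acts invertibly, hence $E_i \otimes M_i \cong M_i$) and each summand with $j \neq i$ vanishes because $E_i \otimes M_j \cong E_i \otimes E_j \otimes M_j = 0$. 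Thus $\Phi$ and $\Psi$ are mutually inverse, and since $\Phi$ is a tensor functor this is an equivalence of cocomplete tensor categories.

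Finally, for the statement about categories, I would send an o.i.d. $(e_i)_i$ to the product decomposition $(\C \xrightarrow{R_i} \C_{e_i})_i$ just produced, and conversely send a product decomposition $\C \simeq \prod_i \C_i$ to the o.i.d. obtained by transporting the tautological o.i.d. $(\O_i \to \O)_i$ of $\prod_i \C_i$ (with $(\O_i)_j = \O_{\C_i}$ for $j = i$ and $0$ otherwise) back along the equivalence. Both categories are essentially discrete — for o.i.d. this is \autoref{oid-prop}, and the analogue for product decompositions follows from the description of functors into a product — so it suffices to check that the two round trips return isomorphic objects: transporting the tautological o.i.d. of $\prod_i \C_{e_i}$ along $\Psi$ recovers $(E_i \hookrightarrow \O)_i$, while localizing $\prod_i \C_i$ at the $i$-th tautological idempotent returns $\C_i$. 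The main work, and the only place demanding care, will be the bookkeeping for infinite $I$ and checking that all the displayed isomorphisms are natural and compatible with the tensor and localization data; none of this is conceptually hard once the identifications $\O_{e_i} = E_i$ and $R_i = E_i \otimes -$ are in place.
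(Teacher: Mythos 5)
Your proof is correct and follows essentially the same route as the paper: identify $\C_{e_i}$ with $\M(E_i)$ (equivalently the full subcategory where $E_i \otimes M \to M$ is invertible), take $E_i \otimes -$ as the localization functor, and use $\bigoplus_i E_i \cong \O$ together with $E_i \otimes E_j = 0$ to see that $M \mapsto (E_i \otimes M)_i$ and $(M_i)_i \mapsto \bigoplus_i M_i$ are mutually inverse. The paper compresses exactly these round-trip checks into ``Now the Proposition easily follows,'' so your write-up just supplies the details it omits.
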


\begin{proof}
Recall from \autoref{locsec} that the localization $\C_{e_i}$ consists of those $M \in \C$ such that $e_i \otimes M :  M \to M$ or equivalently $\sigma_i \otimes M : E_i \otimes M \to M$ is an isomorphism. This happens if and only if $E_j \otimes X = 0$ for all $j \neq i$. The localization functor $\C \to \C_{e_i}$ maps $M$ to $E_i \otimes M$. The isomorphism $\bigoplus_i E_i \cong \O$ gives an isomorphism $\bigoplus_i (E_i \otimes M) \cong M$ and we have $E_i \otimes M \in \C_{e_i}$. Now the Proposition easily follows.
\end{proof}

Since o.i.d. are preserved by cocontinuous tensor functors, this implies:
 
\begin{prop} \label{prodUE}
Let $(\C_i)_{i \in I}$ be a family of cocomplete tensor categories and $\C=\prod_{i \in I} \C_i$. For every cocomplete tensor category $\D$ there is an equivalence of categories
\[\begin{array}{c}
\Hom_{c\otimes}(\C,\D) \simeq \{(e_i)_{i \in I} \text{ o.i.d. of } \D, ~ F \in \prod\limits_{i \in I} \Hom_{c\otimes}(\C_i,\D_{e_i})\}
\end{array}\]
\end{prop}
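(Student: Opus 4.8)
The plan is to extract the data $((e_i)_i,(F_i)_i)$ from a cocontinuous tensor functor $G : \C \to \D$ and then reverse the construction, using \autoref{zerleg} to decompose both sides into localizations at idempotents.

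First I would construct the forward functor. The product $\C = \prod_i \C_i$ carries its canonical o.i.d. $(\sigma_i : \O_i \to \O)_{i \in I}$, where $\O_i$ is the object which equals $\O_{\C_i}$ in slot $i$ and $0$ elsewhere; the associated localization $\C \to \C_{\O_i}$ is precisely the projection $\pr_i : \C \to \C_i$, a cocontinuous tensor functor. Given $G$, since cocontinuous tensor functors preserve o.i.d. (\autoref{oid-prop}), the images $e_i := G(\sigma_i)$ form an o.i.d. of $\D$; by \autoref{zerleg} the localizations assemble into an equivalence $\D \simeq \prod_i \D_{e_i}$, and I write $L_{e_i} : \D \to \D_{e_i}$ for the localization. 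The composite $L_{e_i} \circ G$ sends $\sigma_i$ to an isomorphism (that is exactly what $\D_{e_i}$ is designed to achieve), so by the universal property of the localization $\pr_i : \C \to \C_{\O_i} \simeq \C_i$ (\autoref{locsec}) it factors, uniquely up to canonical isomorphism, as $L_{e_i} \circ G \cong F_i \circ \pr_i$ for a cocontinuous tensor functor $F_i : \C_i \to \D_{e_i}$. This yields the assignment $G \mapsto ((e_i)_i,(F_i)_i)$.

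Conversely, from an o.i.d. $(e_i)$ of $\D$ together with cocontinuous tensor functors $F_i : \C_i \to \D_{e_i}$, I would form $\prod_i F_i : \prod_i \C_i \to \prod_i \D_{e_i}$ and post-compose with the equivalence $\prod_i \D_{e_i} \simeq \D$ of \autoref{zerleg}. The functor $\prod_i F_i$ is cocontinuous and tensor because it is computed componentwise: composing it with each projection $\pr_j$ of the target gives $F_j \circ \pr_j$, which is a cocontinuous tensor functor, and the universal property of the product then upgrades $\prod_i F_i$ to a cocontinuous tensor functor. To see the two assignments are mutually inverse, I would check that applying the forward construction to $\prod_i F_i$ recovers the same o.i.d. $(e_i)$ and the same $F_i$; in the other order, the key computation is that for $M = (M_j)_j$ one has $\O_i \otimes M \cong \iota_i(M_i)$ (where $\iota_i : \C_i \to \C$ places an object in slot $i$ and $0$ elsewhere) and $L_{e_i}(G(\O_i \otimes M)) \cong L_{e_i}(G(M))$, since $G(\sigma_i) = e_i$ acts invertibly in $\D_{e_i}$. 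Thus $L_{e_i} \circ G$ depends on $M$ only through $M_i$, and $G$ is reconstructed from $(e_i)$ and $(F_i)$ via $\D \simeq \prod_i \D_{e_i}$.

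Finally I would address morphisms and the category structure on the right-hand side. A morphism there is a morphism of o.i.d. (which by \autoref{oid-prop} is unique and invertible, so this part is essentially discrete) together with a compatible family of transformations $F_i \to F_i'$, and the equivalence of \autoref{zerleg} transports a transformation $G \to G'$ into exactly such data and back. The main obstacle I anticipate is bookkeeping rather than a genuine difficulty: one must track the several applications of \autoref{zerleg} on both $\C$ and $\D$ and verify that the canonical isomorphisms involved (the identification of $\pr_i$ with the localization at $\O_i$, the factorizations through the $L_{e_i}$, and the product equivalence) are coherent enough that the two functors are genuinely inverse and natural in $\D$. The one point of substance is confirming that the factorization $L_{e_i}\circ G \cong F_i \circ \pr_i$ is tensor-compatible and natural, which follows from applying the universal property of localization to tensor functors rather than to bare functors.
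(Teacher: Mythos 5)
Your proposal is correct and follows essentially the same route as the paper: the paper derives \autoref{prodUE} in one line from \autoref{zerleg} together with the fact that cocontinuous tensor functors preserve o.i.d.\ (\autoref{oid-prop}), which is exactly the skeleton you flesh out. Your added details --- identifying $\pr_i$ with the localization of $\prod_j \C_j$ at its canonical o.i.d., factoring $L_{e_i}\circ G$ through it via the universal property of localization (\autoref{locsec}), and the computation $\bigoplus_i E_i \otimes G(M) \cong G(M)$ for the inverse direction --- are precisely what the paper leaves implicit.
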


\begin{cor} \label{coprod-tens}
Tensorial stacks are closed under arbitrary coproducts.
\end{cor}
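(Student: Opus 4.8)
The plan is to reduce the tensoriality of $X := \coprod_{i \in I} X_i$ to that of the individual $X_i$ by combining the computation of $\Q$ on coproducts with the universal property of products of tensor categories. Fix a scheme $Y$; we must show that the pullback functor $\Hom(Y,X) \to \Hom_{c\otimes}(\Q(X),\Q(Y))$ is an equivalence. First I would invoke \autoref{qprod} to replace $\Q(X)$ by the product $\prod_{i \in I} \Q(X_i)$, so that the target becomes $\Hom_{c\otimes}(\prod_i \Q(X_i),\Q(Y))$.

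Next, applying \autoref{prodUE} with $\D = \Q(Y)$, this Hom-category is equivalent to the category of data consisting of an orthogonal idempotent decomposition $(e_i)_{i\in I}$ of $\Q(Y)$ together with a family of cocontinuous tensor functors $F_i : \Q(X_i) \to \Q(Y)_{e_i}$. I would then translate both pieces of data geometrically. By the correspondence between coproduct decompositions of a scheme and discrete orthogonal idempotent decompositions of its category of quasi-coherent modules (the scheme example following \autoref{oid-alt}), an o.i.d.\ $(e_i)$ of $\Q(Y)$ is the same as a decomposition $Y = \coprod_i Y_i$ into open-and-closed subschemes $Y_i = D(e_i)$. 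Moreover, by the description of localization at a global section in \autoref{locsec}, applied to the invertible object $\O_Y$ and the section $e_i : \O_Y \to \O_Y$, the localization $\Q(Y)_{e_i}$ is canonically equivalent to the restriction $\Q(Y_i)$.

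With these identifications, each $F_i$ becomes a cocontinuous tensor functor $\Q(X_i) \to \Q(Y_i)$, and tensoriality of $X_i$ (\autoref{tensorial-stacky}) turns it into a morphism $f_i : Y_i \to X_i$. Thus the target category is equivalent to the category of pairs consisting of an open-closed decomposition $Y = \coprod_i Y_i$ and a family of morphisms $f_i : Y_i \to X_i$, which is precisely the source $\Hom(Y,\coprod_i X_i)$ by the universal property of the coproduct in stacks (as recalled in the introduction). Chaining these equivalences gives $\Hom(Y,X) \simeq \Hom_{c\otimes}(\Q(X),\Q(Y))$.

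The main obstacle will be bookkeeping rather than any single hard idea: I must check that each equivalence above is natural in $Y$ and, crucially, that the resulting composite coincides with the \emph{canonical} pullback functor $f \mapsto f^*$ (equivalently, with the unit $\eta_X$ of \autoref{tensorial-stacky}), not merely with some abstract equivalence. This amounts to verifying that, under \autoref{prodUE}, pullback along the inclusions $Y_i \hookrightarrow Y$ and $X_i \hookrightarrow X$ matches the idempotent-localization data, and that the identification $\Q(Y)_{e_i} \simeq \Q(Y_i)$ is compatible with restriction of quasi-coherent modules. These compatibilities are essentially formal once the correspondence of \autoref{oid-alt} is made $2$-natural, but they require care when $I$ is infinite, since one must handle arbitrary coproduct decompositions and products uniformly.
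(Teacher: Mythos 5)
Your proposal is correct and follows essentially the same route as the paper's own proof: reduce via \autoref{qprod} to $\prod_i \Q(X_i)$, apply \autoref{prodUE} to get orthogonal idempotent decompositions of $\Q(Y)$, identify these with decompositions $Y = \coprod_i Y_i$ and $\Q(Y)_{e_i} \simeq \Q(Y_i)$, and then invoke tensoriality of each $X_i$. Your added attention to checking that the composite equivalence agrees with the canonical pullback functor $f \mapsto f^*$ is a compatibility the paper leaves implicit, but it does not change the argument.
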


\begin{proof}
Let $X = \coprod_i X_i$ be a coproduct of tensorial stacks and $Y$ be any scheme. Since $\Q(X) \simeq \prod_{i \in I} \Q(X_i)$, $\Hom_{c\otimes}(\Q(X),\Q(Y))$ identifies with the category of o.i.d. $(e_i)_{i \in I}$ of $\Q(Y)$, i.e. coproduct decompositions $Y = \coprod_i Y_i$, together with objects of
\[\Hom_{c\otimes}(\Q(X_i),\Q(Y)_{e_i}) \simeq \Hom_{c\otimes}(\Q(X_i),\Q(Y_i)) \simeq \Hom(Y_i,X).\]
This identifies with the set of morphisms $Y \to X$.
\end{proof}
\section{Projective tensor categories} \label{projcat}

\subsection{Definition and comparison to schemes}

\begin{rem} \label{projUE}
Let $S$ be a scheme and $A$ be some $\N$-graded quasi-coherent algebra on $S$ such that $A$ is generated by $A_1$. Then $\Proj_S(A)$ is a scheme with the following well-known universal property (see \cite[\href{http://stacks.math.columbia.edu/tag/01O4}{Tag 01O4}]{stacks-project}): The set of morphisms $T \to \Proj_S(A)$ is equivalent to the (essentially discrete) category of triples $(f,\L,s)$, where $f : T \to S$ is a morphism, $\L$ is a line bundle on $T$ and $s : f^*(A) \to \bigoplus_n \L^{\otimes n}$ is a homomorphism of $\N$-graded algebras such that $s_1 : f^*(A_1) \to \L$ is an epimorphism. Notice that it follows automatically that $s_n : f^*(A_n) \to \L^{\otimes n}$ is an epimorphism for all $n \in \N^+$, since $A_1^{\otimes n} \to A_n$ is an epimorphism.

We globalize this as follows:
\end{rem}

\begin{defi}[Projective tensor categories] \label{proj-def}
Let $\C$ be a cocomplete tensor category, $A$ a commutative $\N$-graded algebra in $\C$ which is generated by $A_1$. Then we define $\Proj^{\otimes}_\C(A)$ to be a cocomplete tensor category with the following universal property (if it exists): If $\D$ is a cocomplete tensor category, then $\Hom_{c\otimes}(\Proj^{\otimes}_\C(A),\D)$ is naturally equivalent to the category of triples $(F,\L,s)$, where $F: \C \to \D$ is a cocontinuous tensor functor, $\L \in \D$ is a line object and $s : F(A) \to \bigoplus_n \L^{\otimes n}$ is a homomorphism of graded algebras in $\C$ such that $s_1 : F(A_1) \to \L$ is a regular epimorphism in $\D$. We call $\Proj^\otimes_\C(A)$ the \emph{projective tensor category} over $\C$ associated to $A$.
\end{defi}

\begin{rem}
It follows automatically that $s_n : F(A_n) \to \L^{\otimes n}$ is a regular epimorphism for all $n \in \N^+$. In fact, if $s_1 : F(A_1) \to \L$ is a regular epimorphism, by \autoref{coeq-tensor} the same is true for $F(A_1)^{\otimes n} \to \L^{\otimes n}$. Since it factors as $F(A_1)^{\otimes n} = F(A_1^{\otimes n}) \twoheadrightarrow F(A_n) \xrightarrow{s_n} \L^{\otimes n}$, we infer that $s_n$ is a regular epimorphism.
\end{rem}

\begin{rem}[Universal triple] \label{uni-triple}
The definition of $\Proj^\otimes_\C(A)$ includes a (universal) triple $(P,\O(1),x)$, where $P : \C \to \Proj^\otimes_\C(A)$ is a cocontinuous tensor functor, $\O(1)$  is a line object in $\Proj^\otimes_\C(A)$ and $x : P(A) \to \bigoplus_n \O(1)^{\otimes n}$ a homomorphism of graded algebras such that $x_1 : P(A_1) \to \O(1)$ is a regular epimorphism. We may write $\O(n) := \O(1)^{\otimes n}$.
\end{rem}
 
\begin{defi}[Projective bundle tensor categories] \label{projbundle}
If $\C$ is a cocomplete tensor category and $E \in \C$, we define $\P^\otimes_\C(E):=\Proj^\otimes_\C(\Sym(E))$. Thus, by definition $\Hom_{c\otimes}(\P^\otimes_\C(E),\D)$ is equivalent to the category of triples $(F,\L,s)$, where $F : \C \to \D$ is a cocontinuous tensor functor, $\L \in \D$ is a line object and $s : F(E) \to \L$ is a regular epimorphism. If $n \in \N$, we define $\P^n_\C := \P^\otimes_\C(\O_\C^{\oplus (n+1)})$. For example, we have $\P^{\otimes}_\C(\O)=\P^0_\C=\C$ by \autoref{epi-ist-iso}.
\end{defi}

\begin{thm}[Existence of projective tensor categories] \label{proj-ex}
Let $\C$ be a locally presentable tensor category and $A$ a commutative $\N$-graded algebra in $\C$ which is generated by $A_1$. Then $\Proj^{\otimes}_\C(A)$ exists and it is again a locally presentable tensor category.\\
Explicitly, the underlying category of $\Proj^{\otimes}_\C(A)$ is the full subcategory of $\grM(A)$ consisting of those graded $A$-modules $M$ such that the canonical diagram of graded $A$-modules
\[M \to \HOM(A_1,M[1]) \rightrightarrows \HOM(A_1 \otimes A_1,M[2])\]
is exact. It is reflective, the reflector being the transfinite composition of the functor $R_1 : \grM(A) \to \grM(A)$ defined by
\[R_1(M) = \eq\bigl(\HOM(A_1,M[1]) \rightrightarrows \HOM(A_1 \otimes A_1,M[2])\bigr).\]
\end{thm}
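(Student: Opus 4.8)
The plan is to realize $\Proj^{\otimes}_\C(A)$ as a localization of $\grM(A)$, exploiting the universal property of the latter established in \autoref{grMod}. Recall that $\grM(A)$ classifies triples $(F,\L,\tau)$ consisting of a cocontinuous tensor functor $F$, a line object $\L$, and a homomorphism of graded algebras $\tau : F(A) \to \bigoplus_n \L^{\otimes n}$, \emph{without} any surjectivity hypothesis on $\tau_1$. Thus $\Proj^\otimes_\C(A)$ should be obtained from $\grM(A)$ by forcing the universal morphism $x_1 : P(A_1) \to \O(1)$ — where $P = A \otimes \iota(-)$ and $\O(1) = A[1]$ is the universal line object of \autoref{grMod} — to become a regular epimorphism. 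The key observation is that, since $\O(1)$ is a line object, regularity of an epimorphism onto it is governed by \autoref{goodepi}: a morphism $s : E \to \L$ is a regular epimorphism precisely when the diagram $E^{\otimes 2} \otimes \L^{\otimes -1} \rightrightarrows E \xrightarrow{s} \L$ is exact, the two arrows being $E \otimes s$ and $(E \otimes s)\circ S_{E,E}$ read through the invertibility of $\L$. Note that exactness of this diagram already entails that $s$ is an epimorphism, so a single exactness requirement captures both conditions at once.

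I would therefore apply \autoref{EL-lok2} (a special case of \autoref{universal-problem}) to the section $x_1 : E \to \O(1)$ of $\grM(A)$, where $E := P(A_1)$, together with the two parallel maps $E^{\otimes 2} \otimes \O(1)^{\otimes -1} \rightrightarrows E$ above. Since $\C$ is locally presentable, so is $\grM(A)$ (by the remark following \autoref{grMod}), and hence \autoref{localization} guarantees that the resulting localization exists and is again a locally presentable tensor category; this is $\Proj^\otimes_\C(A)$. The explicit description then drops out of \autoref{EL-lok2}, which identifies the localization with the reflective subcategory $\{M : M \to \HOM(E,M)\otimes \O(1) \rightrightarrows \HOM(E^{\otimes 2}\otimes\O(1)^{\otimes -1},M)\otimes \O(1) \text{ exact}\}$ and the reflector with the transfinite composition of $R_1(M) = \eq\bigl(\HOM(E,M) \rightrightarrows \HOM(E^{\otimes 2}\otimes\O(1)^{\otimes -1},M)\bigr)\otimes \O(1)$. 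It remains to unwind the degree shifts: using the free-module adjunction $\HOM_{\grM(A)}(A \otimes \iota(X), M) \cong \HOM_{\gr_\Z(\C)}(\iota(X), M)$ and $N \otimes \O(1) \cong N[1]$, one computes $\HOM(E,M)\otimes\O(1) \cong \HOM(A_1, M[1])$ and $\HOM(E^{\otimes 2}\otimes\O(1)^{\otimes -1},M)\otimes\O(1) \cong \HOM(A_1 \otimes A_1, M[2])$, turning the formulas into those of the statement.

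Finally, I would verify that this localization carries the universal property of \autoref{proj-def}. By \autoref{localization}, cocontinuous tensor functors $\Proj^\otimes_\C(A) \to \D$ correspond to cocontinuous tensor functors $G : \grM(A) \to \D$ sending the chosen diagram to an exact one, hence — via \autoref{grMod} — to triples $(F,\L,\tau)$ for which $F$ maps $E^{\otimes 2}\otimes\O(1)^{\otimes -1} \rightrightarrows E \to \O(1)$ to an exact diagram. Because a cocontinuous tensor functor preserves line objects (\autoref{symtrivial-properties}) along with all the tensor and duality data involved, this image diagram is precisely the good-epimorphism diagram for $\tau_1 : F(A_1) \to \L$ in $\D$; applying \autoref{goodepi} inside $\D$ shows that its exactness is equivalent to $\tau_1$ being a regular epimorphism, which matches \autoref{proj-def} exactly. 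The main obstacle I anticipate is this last translation together with the shift bookkeeping: one must check that the two parallel arrows produced by \autoref{goodepi} are carried by $F$ to the correct pair, and that $\HOM$ in $\grM(A)$ restricts as claimed, so that the abstract exactness condition coming from the localization really does coincide, degree by degree, with regularity of $\tau_1$.
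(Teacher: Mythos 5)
Your proposal is correct and is essentially the paper's own proof: the paper likewise invokes the universal property of $\grM(A)$ from \autoref{grMod}, translates ``$s_1$ is a regular epimorphism onto a line object'' into exactness of the diagram $A_1 \otimes A_1 \otimes A[-1] \rightrightarrows A_1 \otimes A \to A[1]$ via \autoref{goodepi}, and then applies \autoref{universal-problem} (with the explicit reflective description and reflector coming from \autoref{EL-lok2}) to obtain $\Proj^{\otimes}_\C(A)$ as a localization of $\grM(A)$. Your additional care with the degree-shift bookkeeping and with verifying that a cocontinuous tensor functor carries the universal diagram to the good-epimorphism diagram for $\tau_1$ in $\D$ merely spells out steps the paper's terse proof leaves implicit.
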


\begin{proof}
We know the universal property of $\grM(A)$ by \autoref{grMod}: It classifies triples $(F,\L,s)$ where $s : F(A) \to \bigoplus_n \L^{\otimes n}$ is any homomorphism of  graded algebras. According to \autoref{goodepi}, $s_1 : F(A_1) \to \L$ is a regular epimorphism if and only if the sequence $F(A_1) \otimes F(A_1) \otimes \L^{\otimes -1} \rightrightarrows F(A_n) \to \L$ is exact. Now we may apply \autoref{universal-problem} to construct
\[\Proj^{\otimes}_\C(A) := \grM(A) / (A_1 \otimes A_1 \otimes A[-1] \rightrightarrows A_1 \otimes A \to A[1] \text{ exact})\]
with the desired universal property.
\end{proof}

\begin{cor} \label{proj-ex2}
If $\C$ is a locally presentable tensor category and $E \in \C$, then $\P^\otimes_\C(E)$ exists. It is given by the full subcategory of $\grM(\Sym(E))$ consisting of those graded $E$-modules such that the canonical diagram
\[M \to \HOM(E,M[1]) \rightrightarrows \HOM(E^{\otimes 2},M[2])\]
of graded $E$-modules is exact.
\end{cor}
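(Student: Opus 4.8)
The plan is to obtain the corollary as a direct specialization of \autoref{proj-ex} with $A = \Sym(E)$, since by definition $\P^\otimes_\C(E) = \Proj^\otimes_\C(\Sym(E))$ (\autoref{projbundle}). The one hypothesis of \autoref{proj-ex} that must be checked is that $\Sym(E)$, regarded as an $\N$-graded algebra, is generated by its degree-one part. Its degree-one part is $\Sym^1(E) \cong E$, and the multiplication epimorphisms $\Sym^p(E) \otimes \Sym^q(E) \twoheadrightarrow \Sym^{p+q}(E)$ from \autoref{symalg} show that $E^{\otimes n} = (\Sym^1 E)^{\otimes n}$ surjects onto $\Sym^n(E)$ for every $n \in \N$. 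Hence $\Sym(E)$ is generated by $E$ in degree one, as required.

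First I would invoke \autoref{proj-ex} to conclude that $\Proj^\otimes_\C(\Sym(E))$ exists and is locally presentable, realized as the reflective full subcategory of $\grM(\Sym(E))$ of those $M$ for which
\[M \to \HOM(A_1,M[1]) \rightrightarrows \HOM(A_1 \otimes A_1,M[2])\]
is exact. Then I would substitute $A_1 = \Sym^1(E) \cong E$ and use the canonical isomorphism $A_1 \otimes A_1 \cong E^{\otimes 2}$ to rewrite this diagram as
\[M \to \HOM(E,M[1]) \rightrightarrows \HOM(E^{\otimes 2},M[2]),\]
which is exactly the diagram in the statement. The only subtlety worth spelling out is that these isomorphisms are compatible with the two parallel arrows — one induced by the module action $E \otimes M \to M$ and the other built from the symmetry of $E^{\otimes 2}$ — so that the subcategory cut out coincides with the one produced by \autoref{proj-ex}.

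I do not expect any genuine obstacle: the whole statement is the special case $A = \Sym(E)$ of the preceding theorem, together with the bookkeeping identification $\Sym^1(E) \cong E$. The only point requiring a moment's care is matching terminology, namely that the \emph{graded $E$-modules} of the corollary are by definition the objects of $\grM(\Sym(E))$ (an action of $\Sym(E)$ being determined by the action of its degree-one generators $E$), so that no new construction is needed beyond transporting the explicit description of \autoref{proj-ex} along the identification of $A$ with $\Sym(E)$.
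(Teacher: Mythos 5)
Your proposal is correct and is exactly how the paper obtains the corollary: it is the specialization $A = \Sym(E)$ of \autoref{proj-ex}, where the generation-in-degree-one hypothesis holds because the multiplication epimorphisms of \autoref{sym-exakt} give $E^{\otimes n} \twoheadrightarrow \Sym^n(E)$, and the identifications $A_1 \cong E$, $A_1 \otimes A_1 \cong E^{\otimes 2}$ transport the exactness condition of the theorem to the one in the statement. The paper treats this as immediate and gives no separate proof, so your careful check of the hypothesis and of the compatibility of the two parallel arrows is, if anything, more detail than the paper records.
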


\begin{rem}
If $\C$ is linear, we could also define  ``anti-projective'' tensor categories for graded-commutative algebras in $\C$ which classify \emph{anti}-line quotients.
\end{rem}

\begin{rem}
If we omit ``regular'' from the definition of $\P^{\otimes}_\C(E)$, we obtain another cocomplete tensor category $\P'^{\otimes}_\C(E)$ which can be realized as the category of graded $E$-modules for which $M \to \HOM(E,M[1])$ is just a monomorphism. For example, $\P'^{\otimes}_\C(1)$ consists of sequences
\[\dotsc \hookrightarrow M_n \hookrightarrow M_{n+1} \hookrightarrow \dotsc\]
of monomorphisms in $\C$, whereas $\P^{\otimes}_\C(1)=\C$.
\end{rem}

\begin{ex}
Let $\C$ be a locally presentable tensor category. Then $\P^n_\C$ is the universal example of a cocomplete tensor category ``over $\C$'' with a line object $\L$ and $n+1$ global sections $x_i : \O \to \L$ for $0 \leq i \leq n$ such that $x : \O^{\oplus d+1} \to \L$ is a regular epimorphism. Using \autoref{universal-problem} we can enforce further relations between these global sections. For example, we may construct the ``Fermat tensor category'' $\P^2_\C / (x_0^{\otimes d} + x_1^{\otimes d} = x_2^{\otimes d})$ for $d \in \Z$. It is equivalent to $\Proj^\otimes_\C(\O_\C[x_0,x_1,x_2]/(x_0^d + x_1^d = x_2^d))$. We may also consider the localization $(\P^n_\C)_{x_i}$ at the section $x_i$ (\autoref{locsec}), which is clearly isomorphic to the affine space $\mathds{A}^n_\C$ (\autoref{affsp}) with variables $\{x_i^{-1} x_j : j \neq i\}$.
\end{ex}

\begin{thm}[Comparison to schemes] \label{proj-compare}
Let $S$ be an arbitrary scheme and $A$ be an $\N$-graded quasi-coherent algebra on $S$ which is generated by $A_1$ and such that $A_1$ is a quasi-coherent module of finite presentation over $S$. There is an equivalence of cocomplete tensor categories
\[\Proj^\otimes_{\Q(S)}(A) \simeq \Q(\Proj_S(A)).\]
\end{thm}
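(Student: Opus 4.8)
The plan is to produce a canonical comparison functor from the universal property and then identify it with the classical $\Proj$-theoretic equivalence by matching explicit models. Write $X = \Proj_S(A)$ with structure morphism $p : X \to S$. The scheme $X$ carries the tautological data making up a triple in the sense of \autoref{proj-def}: the cocontinuous tensor functor $p^* : \Q(S) \to \Q(X)$, the line object $\O_X(1)$, and the canonical homomorphism of graded algebras $x : p^*(A) \to \bigoplus_n \O_X(1)^{\otimes n}$ whose degree-one part $x_1 : p^*(A_1) \to \O_X(1)$ is an epimorphism (because $\O_X(1)$ is generated by the image of $A_1$) and hence a \emph{regular} epimorphism by \autoref{abelian-good}, since $\Q(X)$ is abelian and $\O_X(1)$ is a line object. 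Feeding this triple into the universal property of $\Proj^\otimes_{\Q(S)}(A)$ (which exists by \autoref{proj-ex}) yields a cocontinuous tensor functor
\[
\Phi : \Proj^\otimes_{\Q(S)}(A) \to \Q(X)
\]
carrying the universal triple $(P, \O(1), \mathbf{x})$ to $(p^*, \O_X(1), x)$. It remains to prove that $\Phi$ is an equivalence.

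First I would make both sides concrete as subcategories of the graded module category $\grM(A)$ over $\Q(S)$. By \autoref{proj-ex}, $\Proj^\otimes_{\Q(S)}(A)$ is the reflective full subcategory $\mathcal{P}$ of those graded $A$-modules $M$ for which
\[
M \to \HOM(A_1, M[1]) \rightrightarrows \HOM(A_1 \otimes A_1, M[2])
\]
is exact, the reflector being the transfinite composition described there; note that $\HOM(A_1,-)$ preserves quasi-coherence by \autoref{qchom} because $A_1$ is of finite presentation, so this subcategory is well-defined. On the geometric side I would invoke the classical theory of $\Proj$ of a graded algebra: the sheafification--sections adjunction $\widetilde{(-)} \dashv \Gamma_*$ between $\grM(A)$ and $\Q(X)$, in which $\widetilde{(-)}$ is essentially surjective and $\Gamma_*$ is fully faithful with essential image the ``saturated'' graded modules; thus $\Q(X)$ is equivalent to the reflective subcategory $\Gamma_*(\Q(X)) \subseteq \grM(A)$.

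The decisive step is then to show that these two subcategories coincide, i.e. that a graded $A$-module is saturated if and only if it satisfies the equalizer condition of \autoref{proj-ex}. Both membership conditions are local on $S$ (they concern exactness of diagrams of quasi-coherent sheaves, and restriction to an open $U \subseteq S$ commutes with $\HOM(A_1,-)$ precisely because $A_1$ is of finite presentation), so I would reduce to the case $S = \Spec(R)$ affine. There $X$ is covered by the opens $D_+(\lambda)$ for $\lambda$ ranging over a generating family of $A_1$, and the \v{C}ech equalizer attached to this cover, after the identifications $\O_X(n) = \widetilde{A}(n)$ and $\mathcal{H}om(\O_X(-1), -) \leftrightarrow \HOM(A_1, -)$, becomes exactly the displayed equalizer condition. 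This identifies $\mathcal{P} = \Gamma_*(\Q(X))$ as subcategories and shows the two reflectors agree, whence $\widetilde{(-)}$ restricts to an equivalence $\mathcal{P} \simeq \Q(X)$. Finally, since both this equivalence and $\Phi$ are cocontinuous tensor functors sending the universal triple to $(p^*, \O_X(1), x)$, the uniqueness clause in the universal property \autoref{proj-def} identifies them up to canonical isomorphism, so $\Phi$ is an equivalence.

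The hard part will be the decisive step: reconciling the intrinsic equalizer/saturation condition of \autoref{proj-ex} with the geometric sheaf condition on $\Proj_S(A)$. The subtlety is that $A_1$ is only assumed of finite presentation rather than locally free, so one cannot dualize $\HOM(A_1,-)$ naively; one must instead work with the \v{C}ech complex of the cover by the $D_+(\lambda)$ and use \autoref{qchom} to keep everything quasi-coherent, while checking that the transfinite reflector of \autoref{proj-ex} matches the classical saturation functor $\Gamma_* \circ \widetilde{(-)}$ rather than merely sharing the same fixed objects.
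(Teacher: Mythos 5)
Your overall architecture is the same as the paper's: construct the comparison functor from the universal property (with the same observation that the degree-one map is a regular epimorphism because $\Q(X)$ is abelian, cf. \autoref{abelian-good}), realize both sides as reflective subcategories of $\grM(A)$ via \autoref{proj-ex} and the adjunction between $M \mapsto \widetilde{M}$ and $\Gamma_*$, reduce to affine $S$ using \autoref{qchom}, and conclude by the uniqueness clause of \autoref{proj-def}. The gap is in your ``decisive step''. You assert that, after identifications such as $\O_X(n) \cong \widetilde{A[n]}$ and $\HOM_{\O_X}(\O_X(-1),-) \leftrightarrow \HOM(A_1,-)$, the \v{C}ech equalizer for the cover $\{D_+(\lambda)\}$ \emph{becomes exactly} the equalizer condition of \autoref{proj-ex}. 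It does not. The saturation condition says that in each degree $p$ the sequence $M_p \to \bigoplus_{f \in B} (M_f)_p \rightrightarrows \bigoplus_{f,g \in B} (M_{fg})_p$ is an equalizer, where $M_f$ is a localization, i.e. a filtered colimit of copies of $M$ along multiplication by $f$; the intrinsic condition says that $M_p \to \HOM(A_1,M_{p+1}) \rightrightarrows \HOM(A_1 \otimes A_1, M_{p+2})$ is an equalizer, with no localization anywhere. Moreover no identification $\HOM_{\O_X}(\O_X(-1),-) \leftrightarrow \HOM(A_1,-)$ is available: $p^*A_1 \to \O_X(1)$ is an epimorphism but not an isomorphism (that is the whole point of $\Proj$), so hom out of $A_1$ over $S$ and twisting by $\O_X(1)$ on $X$ are genuinely different operations.

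Proving that these two exactness conditions nevertheless cut out the same subcategory is the actual mathematical content of the theorem, and it is where the paper spends essentially all of its effort: in one direction one takes $\phi : A_1 \to M_{p+1}$ with $f\phi(g)=g\phi(f)$, observes that $(\phi(f)/f)_{f \in B}$ lies in the \v{C}ech equalizer, and descends it to an element of $M_p$; in the other direction one takes a compatible family $(m_f/f^k)_{f \in B}$ and recovers a preimage in $M_p$ by a delicate descending induction on degree, using finiteness of the generating set $B$ of $A_1$ to choose uniform exponents. Your proposal replaces this argument by the word ``exactly,'' i.e. it assumes the conclusion at the crucial point; flagging it afterwards as ``the hard part'' does not supply the missing argument. (A smaller remark: your closing worry that the transfinite reflector of \autoref{proj-ex} might differ from $\Gamma_* \circ \widetilde{(-)}$ while ``merely sharing the same fixed objects'' is vacuous --- a reflector is a left adjoint of the inclusion, so once the subcategories coincide the reflectors, and then also the induced tensor structures, agree up to canonical isomorphism; the only issue is the coincidence of the subcategories, which is precisely the step left unproved.)
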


\begin{proof}
The universal property of $\Proj_S(A)$ recalled in \autoref{projUE} yields a universal triple $(p,\O(1),s)$, where $p : \Proj_S(A) \to S$ is the structure morphism, inducing a cocontinuous tensor functor
\[p^* : \Q(S) \to \Q(\Proj_S(A)),\]
$\O(1)$ is the Serre twist and $s : p^* A \to \bigoplus_n \O(n)$ is a homomorphism of graded algebras such that $s_1$ is an epimorphism. Since the category of quasi-coherent modules is abelian, $s_1$ is automatically a regular epimorphism. By the defining universal property of $\Proj^\otimes_{\Q(S)}(A)$ we get a cocontinuous tensor functor
\[\Proj^\otimes_{\Q(S)}(A) \to \Q(\Proj_S(A)).\]
We have to show that it is an equivalence of categories. Recall (\cite[3.4.4]{EGAII}) that there is a cocontinuous tensor functor
\[\grM(A) \to \Q(\Proj_S(A)),\,M \mapsto \widetilde{M}\]
with a fully faithful right adjoint
\[\Gamma_* : \Q(\Proj_S(A)) \to \grM(A).\]
Thus, $\Q(\Proj_S(A))$ identifies with the full subcategory of $\grM(A)$ consisting of those graded $A$-modules $M$ such that the unit morphism
\[M \to \Gamma_*(\widetilde{M})\]
is an isomorphism. We would like to compare this with the description of $\Proj^\otimes_{\Q(S)}(A)$ in \autoref{proj-ex}. Notice that since $A_1$ is of finite presentation, $\HOM(A_1,-)$ is the usual homomorphism sheaf (\autoref{qchom}).
This shows that we may work locally on $S$. Let's assume that $S=\Spec(k)$ is affine. Then $A$ is just an $\N$-graded $k$-algebra. Choose a finite generating set $B$ of $A_1$ as a module over $k$. If $M$ is a graded $A$-module, we compute
\[\Gamma_*(\widetilde{M}) = \bigoplus_{n \in \Z} \Gamma(\Proj(A),\widetilde{M[n]}) = \bigoplus_{n \in \Z} \eq\left(\bigoplus_{f \in B} M[n]_{(f)} \rightrightarrows \bigoplus_{f,g \in B} M[n]_{(fg)}\right).\]
Thus, $M$ belongs to the full subcategory $\simeq \Q(\Proj_S(A))$ if and only if
\[(1) ~~ M \to \bigoplus_{f \in B} M_f \rightrightarrows \bigoplus_{f,g \in B} M_{fg}\]
is an exact sequence of graded $A$-modules (``$M$ believes to be a sheaf on $\Proj$''). We have to show that this is equivalent to the condition that
\[(2) ~~ M \to \HOM(A_1,M[1]) \twoheadrightarrow \HOM(A_1 \otimes A_1,M[2])\]
is an exact sequence of graded $A$-modules (``$M$ believes the universal property of $\Q(\Proj)$'').
 
\textbf{Proof of $(1) \Rightarrow (2)$}: Let $\phi : A_1 \to M_{p+1}$ be a $k$-linear map with the property $f \cdot \phi(g) = g \cdot \phi(f)$ for all $f,g \in A_1$. Then $(\frac{\phi(f)}{f})_{f \in B} \in \bigoplus_{f \in B} M_f$ lies in the equalizer of (1) in degree $p$. Hence, there is a unique $m \in M_p$ such that $\frac{\phi(f)}{f} = \frac{m}{1}$ in each $M_f$. Choose some $k \in \N$ with $f^k \phi(f)=f^{k+1} m$ in $M$. For an arbitrary $g \in B$ we have
\[f^{k+1} \phi(g) = f^k f \phi(g) = f^k g \phi(f) = g f^{k+1} m\]
and hence $\phi(g)=gm$ in $M_f$. By injectivity in (1) it follows that this holds in $M$. This shows that $m \in M_p$ is a preimage of $\phi$.

\textbf{Proof of $(2) \Rightarrow (1)$}: We first show injectivity of $M \to \bigoplus_{f \in B} M_f$. Let $m \in M_p$ be in the kernel. Since $B$ is finite, there is some $k \in \N$ such that $f^k \cdot m = 0$ for all $f \in B$. Let $n \geq k \cdot \# B$. Then we have $f_1 \cdot \dotsc \cdot f_n \cdot m = 0$ for all $f_1,\dotsc,f_n \in A_1$. Applying injectivity of (2), we get $f_2 \cdot \dotsc \cdot f_n \cdot m = 0$. It follows by induction on $n$ that $m = 0$.

Now let $(\frac{m_f}{f^k})_{f \in B}$ be contained in the equalizer of $\bigoplus_{f \in B} M_f \rightrightarrows \bigoplus_{f,g \in B} M_{fg}$ and of degree $p$. Since $B$ is finite, we may choose $k$ to be independent from $f$. Let $f,g \in B$. Since $\frac{m_f}{f^k} = \frac{m_g}{g^k}$ in $M_{fg}$, we have $(fg)^l f^k m_g = (fg)^l g^k m_f$ in $M$ for  some $l \in \N$. Then $n_f := f^l m_f$ satisfies $\frac{m_f}{f^k} = \frac{n_f}{f^{k+l}}$ and $f^{k+l} n_g = g^{k+l} n_f$. Thus, we may assume that $l=0$, i.e. $g^k m_f = f^k m_g$.

Let $n \geq k \cdot \# B$. We claim that for all $l \leq n$ and for all $a \in A_k$ there is a unique element $\phi(a) \in M_{l+p}$ with the property that for all $f \in B$ we have $f^k \cdot \phi(a) = a \cdot m_f$ in $M_{l+p+k}$. We have already seen uniqueness above. Note that for $l=0$ and $a=1$ this is exactly what we want, because then $\phi(1)$ is the desired preimage in $M_p$.

We proceed by descending induction on $l$. We start with $l=n$. Then we may assume that $a = f_1 \cdot \dotsc \cdot f_n$ with $f_1,\dotsc,f_n \in B$. The choice of $n$ implies that there are $k$ indices, say $1,\dotsc,k$, with $g := f_1=\dotsc=f_k$. Let $\phi(a) := f_{k+1} \cdot \dotsc \cdot f_n \cdot m_g$. This is contained in $A_{n-k} M_{p+k} \subseteq M_{n+p}$. Then, for every $f \in B$, we have
\[f^k \phi(a) = f_1 \dotsc f_{n-k} f^k m_g = f_1 \dotsc f_{n-k} g^k m_f = a m_f.\]
Now let us assume that the claim is true for $l+1$. We may assume again that $a=f_1 \cdot \dotsc \cdot f_l$ with $f_i \in B$. By induction hypothesis we can consider the map $A_1 \to M_{l+p+1}$, $x \mapsto \phi(x f_1 \dotsc f_l)$. It is $k$-linear (this follows easily from the uniqueness assumption). For $f,g,h \in B$ we have
\[h^k f \phi(g f_1 \dotsc f_l) = f g f_1 \dotsc f_l m_h = h^k g \phi(f f_1 \dotsc f_l).\]
By the injectivity part, this implies $f \phi(g f_1 \dotsc f_l) = g \phi(f f_1 \dotsc f_l)$. By the assumption (2) there is a unique $\phi(a) \in M_{l+p}$ such that $\phi(g f_1 \dotsc f_l) = g \phi(a)$ for all $g \in B$. We claim that $\phi(a)$ has the desired property $f^k \phi(a) = a m_f$ for $f \in B$. This follows from the computation
\[g^{k+1} f^k \phi(a) = g^k f^k g \phi(a) = (fg)^k \phi(g f_1 \dotsc f_l) = g^k g f_1 \dotsc f_l m_f = g^{k+1} a m_f.\]
\end{proof}

\begin{rem}
A similar proof shows directly that $\Q(\Proj_S(A))$ satisfies the universal property of $\Proj^\otimes_{\Q(S)}(A)$ without using the existence of the latter.
\end{rem}

\begin{ex}
We do not really have to restrict to tensor categories or schemes over $\Z$. We have constructed for example $\P^n_{\M(\F_1)}=\Q(\P^n_{\F_1})$, where $\F_1$ is the field with one element and $\M(\F_1):=\Set_*$ is the cocomplete tensor category of pointed sets. We refer to \cite{BHS} for the classification of dualizable objects of $\Q(\P^n_{\F_1})$.
\end{ex}

\begin{lemma} \label{TN}
Let $\C$ be a locally presentable tensor category and let $A$ be a commutative $\N$-graded algebra in $\C$ which is generated by $A_1$. If we denote by $R : \grM(A) \to \Proj^{\otimes}(A)$ the reflector in \autoref{proj-ex}, then $R$ maps the inclusion $A[1]_{\geq 0} \hookrightarrow A[1]$ to an isomorphism.
\end{lemma}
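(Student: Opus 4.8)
The plan is to verify the claim through the universal property of the reflector $R$ together with an explicit computation of morphism sets in $\grM(A)$. Write $f : A[1]_{\geq 0} \hookrightarrow A[1]$ for the inclusion. Since $R$ is left adjoint to $\Proj^\otimes_\C(A) \hookrightarrow \grM(A)$ and $\Hom_{\Proj^\otimes_\C(A)}(R(M),N) \cong \Hom_{\grM(A)}(M,N)$ for $N \in \Proj^\otimes_\C(A)$, the morphism $R(f)$ is an isomorphism if and only if, for every $N \in \Proj^\otimes_\C(A)$, precomposition with $f$ yields a bijection $f^\ast : \Hom_{\grM(A)}(A[1],N) \to \Hom_{\grM(A)}(A[1]_{\geq 0},N)$. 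This Yoneda reduction is the first step. For the bookkeeping I use the convention $M[d]_n = M_{n+d}$. The module $A[1]$ is free on $\O_\C$ placed in degree $-1$ (its degree-$n$ part is $A_{n+1}$), so $\Hom_{\grM(A)}(A[1],N) \cong \Gamma(N_{-1})$, a section $\nu \in \Gamma(N_{-1})$ having degree-$0$ component $c \mapsto c\cdot\nu$. The submodule $A[1]_{\geq 0}$ agrees with $A[1]$ except in degree $-1$ and is generated in degree $0$ by $A_1$, since $A$ is generated by $A_1$ and hence each multiplication $A_n \otimes A_1 \to A_{n+1}$ is an epimorphism. Thus a morphism $\phi : A[1]_{\geq 0} \to N$ is determined by its degree-$0$ component $\phi_0 : A_1 \to N_0$, so restriction $\rho : \phi \mapsto \phi_0$ embeds $\Hom_{\grM(A)}(A[1]_{\geq 0},N)$ into $\Hom_\C(A_1,N_0)$; moreover every such $\phi_0$ satisfies $a\cdot\phi_0(c) = c\cdot\phi_0(a)$ in $N_1$, because both sides equal $\phi_1(ac) = \phi_1(ca)$ and $ac = ca$ in $A_2$.

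The heart of the proof is the degree-$(-1)$ part of the defining equalizer condition for $N \in \Proj^\otimes_\C(A)$ from \autoref{proj-ex}. Unwinding the internal hom of graded $A$-modules, the condition $N \cong \eq(\HOM(A_1,N[1]) \rightrightarrows \HOM(A_1^{\otimes 2},N[2]))$ reads in degree $-1$ as $N_{-1} \cong \eq(\HOM_\C(A_1,N_0) \rightrightarrows \HOM_\C(A_1^{\otimes 2},N_1))$, the inclusion being the adjoint $\nu \mapsto (c \mapsto c\cdot\nu)$ of the action $A_1 \otimes N_{-1} \to N_0$. Applying $\Gamma = \Hom_\C(\O_\C,-)$, which preserves equalizers, yields an identification $\Gamma(N_{-1}) \cong Q$, where $Q := \{\phi \in \Hom_\C(A_1,N_0) : a\cdot\phi(c) = c\cdot\phi(a) \text{ in } N_1\}$, the identification being precisely $\nu \mapsto (c \mapsto c\cdot\nu)$.

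Finally I would compare the two sides. The composite $\rho \circ f^\ast : \Hom_{\grM(A)}(A[1],N) \cong \Gamma(N_{-1}) \to \Hom_\C(A_1,N_0)$ sends $\nu$ to $c \mapsto c\cdot\nu$, hence equals the isomorphism $\Gamma(N_{-1}) \cong Q$ followed by the inclusion $Q \hookrightarrow \Hom_\C(A_1,N_0)$; in particular $\rho \circ f^\ast$ is injective with image exactly $Q$. Since $\rho$ is injective with image contained in $Q$, it follows that $\mathrm{im}(\rho) = Q$, that $\rho$ restricts to a bijection onto $Q$, and therefore that $f^\ast = \rho^{-1} \circ (\rho \circ f^\ast)$ is a bijection. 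By the Yoneda reduction of the first paragraph, $R(f)$ is an isomorphism.

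The step I expect to be the main obstacle is the second paragraph: matching the abstract equalizer of \autoref{proj-ex} — an equalizer of internal homs of graded $A$-modules built from the free modules on $A_1$ and their duals — with its concrete degree-$(-1)$ incarnation. This will require tracking the index shift induced by $-\otimes A[1] = [1]$ and checking that the structure map and the two parallel arrows are indeed the expected multiplication and symmetry maps, so that the equalizer is exactly $Q$ with the inclusion $\nu \mapsto (c \mapsto c\cdot\nu)$. Once this identification is secured, the remaining steps are purely formal.
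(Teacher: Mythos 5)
Your proof is correct, but it takes a genuinely different route from the paper's. The paper argues covariantly, entirely inside $\Proj^{\otimes}_\C(A)$, using the universal triple $(P,\O(1),x)$ of \autoref{uni-triple}: since $R$ is cocontinuous, applying it to the commuting fork $A_1^{\otimes 2}\otimes A[-1]\rightrightarrows A_1\otimes A\twoheadrightarrow A[1]_{\geq 0}$ (the epimorphism coming, exactly as in your argument, from generation of $A$ by $A_1$) exhibits $R(A[1]_{\geq 0})$ as a quotient of $P(A_1)$ coequalizing the two canonical morphisms $P(A_1)^{\otimes 2}\otimes\O(-1)\rightrightarrows P(A_1)$; by the defining exactness of $\Proj^{\otimes}_\C(A)$ (via \autoref{goodepi}) the coequalizer of this pair is $\O(1)$, so one obtains a morphism $\O(1)\to R(A[1]_{\geq 0})$ lying over $P(A_1)$, and together with $R(f):R(A[1]_{\geq 0})\to R(A[1])=\O(1)$ this gives two morphisms between quotients of $P(A_1)$ compatible with the projections, which are therefore mutually inverse. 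You instead argue contravariantly: you test against all objects $N$ of the reflective subcategory and compute both hom-sets, using the equalizer description of \autoref{proj-ex} read off in degree $-1$ after applying the limit-preserving functor $\Gamma=\Hom_\C(\O_\C,-)$, so that both $\Hom_{\grM(A)}(A[1],N)$ and, via your sandwich argument, $\Hom_{\grM(A)}(A[1]_{\geq 0},N)$ become identified with the set $Q$ of morphisms $\phi:A_1\to N_0$ with $a\cdot\phi(c)=c\cdot\phi(a)$. These are dual exploitations of the same underlying fact (the exactness imposed in the localization defining $\Proj^{\otimes}_\C(A)$), but the proofs are distinct: the paper's is shorter, constructs an explicit inverse, and never unwinds internal homs; yours is more computational but isolates precisely where the degree $-1$ condition enters and needs nothing about the universal triple. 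The one step you flag as the likely obstacle does go through as you predict: equalizers in $\grM(A)$ are computed degreewise, and the two parallel arrows dualize to $\phi\mapsto(a\otimes c\mapsto a\cdot\phi(c))$ and $\phi\mapsto(a\otimes c\mapsto c\cdot\phi(a))$, so the degree $-1$ equalizer is exactly $Q$ with inclusion $\nu\mapsto(c\mapsto c\cdot\nu)$.
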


\begin{proof}
Let $(P,\O(1),x)$ be the universal triple (\autoref{uni-triple}). Notice that $A[1]$ is concentrated in degrees $\geq -1$, so that $A[1]_{\geq 0} \neq A[1]$ in $\grM(A)$. The diagram
\[A_1^{\otimes 2} \otimes A[-1] \rightrightarrows A_1 \otimes A \to A[1]_{\geq 0}\]
commutes and $A_1 \otimes A \to A[1]_{\geq 0}$ is an epimorphism. By applying $R$, we get that
\[P(A_1)^{\otimes 2} \otimes \O(-1) \rightrightarrows P(A_1) \to R(A[1]_{\geq 0})\]
also commutes and $P(A_1) \to R(A[1]_{\geq 0})$ is an epimorphism. Since
\[P(A_1)^{\otimes 2} \otimes \O(-1) \rightrightarrows P(A_1) \to \O(1)\]
is exact in $\Proj^{\otimes}(A)$, there is a morphism $\O(1) \to R(A[1]_{\geq 0})$ lying over $P(A_1)$. Conversely, we have a morphism $R(A[1]_{\geq 0}) \to R(A[1]) = \O(1)$ lying over $P(A_1)$. Since both sides are quotients of $P(A_1)$, the morphisms are inverse to each other.
\end{proof}

\begin{prop}[Base change] \label{proj-BW}
Let $\C$ be a cocomplete tensor category and $A$ be a commutative $\N$-graded algebra in $\C$ which is generated by $A_1$. Let $H : \C \to \D$ be a cocontinuous tensor functor. Then $B:=H(A)$ is a commutative $\N$-graded algebra in $\D$ which is generated by $B_1$ and there is a $2$-pushout square
\[\xymatrix{\C \ar[r] \ar[d] & \Proj_\C^\otimes(A) \ar[d] \\ \D \ar[r] & \Proj_\D^\otimes(B)}\]
provided that the projective tensor categories exist.
\end{prop}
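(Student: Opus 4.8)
The plan is to verify the $2$-pushout property directly through universal properties, following the same pattern used for the base change results \autoref{mod-BW} and \autoref{changering}. Concretely, I would fix an arbitrary cocomplete tensor category $\E$ and exhibit a natural equivalence between the category $\Hom_{c\otimes}(\Proj^\otimes_\D(B),\E)$ and the $2$-fibre product
\[\Hom_{c\otimes}(\Proj^\otimes_\C(A),\E) \times_{\Hom_{c\otimes}(\C,\E)} \Hom_{c\otimes}(\D,\E).\]
Since $\Proj^\otimes$ is characterized by its universal property (\autoref{proj-def}), both sides should unravel into categories of concrete data (functors, line objects, graded algebra homomorphisms), and I would match these up.

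First I would spell out the two sides. By \autoref{proj-def} applied to $\D$ and $B=H(A)$, an object of the left hand side is a triple $(G,\L,s)$ with $G : \D \to \E$ cocontinuous tensor, $\L \in \E$ a line object, and $s : G(B) \to \bigoplus_n \L^{\otimes n}$ a homomorphism of graded algebras with $s_1 : G(B_1) \to \L$ a regular epimorphism. For the right hand side, an object consists of a triple $(F,\L,t)$ classifying a functor out of $\Proj^\otimes_\C(A)$ — so $F : \C \to \E$, $\L \in \E$ a line object, $t : F(A) \to \bigoplus_n \L^{\otimes n}$ with $t_1$ a regular epimorphism — together with a functor $G : \D \to \E$ and an isomorphism $\sigma : G H \cong F$ witnessing the fibre product over $\Hom_{c\otimes}(\C,\E)$ (here the map $\C \to \D$ is $H$ and the map $\C \to \Proj^\otimes_\C(A)$ composed into $\E$ via $F$). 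The comparison functor sends $(G,\L,s)$ to the triple $(GH, \L, s)$ — noting $G(B) = G(H(A)) = (GH)(A)$ — together with $G$ itself and the identity isomorphism $G H \cong G H$. The key observation making this work is purely formal: since $B = H(A)$, the datum of a graded algebra homomorphism $s : G(B) \to \bigoplus_n \L^{\otimes n}$ with $s_1$ regular epi is \emph{literally the same} as a graded algebra homomorphism $(GH)(A) \to \bigoplus_n \L^{\otimes n}$ with first component regular epi. Thus the only real content is that $B$ is generated by $B_1$, which is immediate because $H$ is cocontinuous and monoidal: $H$ preserves the regular epimorphisms $\Sym^n(A_1) \twoheadrightarrow A_n$ (equivalently $A_1^{\otimes n} \twoheadrightarrow A_n$) exhibiting generation, so $B_1^{\otimes n} \to B_n$ is a regular epimorphism by \autoref{coeq-tensor}.

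The main thing to check carefully — and the step I expect to be the only genuine obstacle — is that the comparison functor is an equivalence, i.e. that reconstructing $G$ from the fibre-product data is canonical. Given $(F,\L,t,G,\sigma)$ on the right, one recovers the left-hand triple by transporting $t$ across $\sigma$: the composite $G(B) = (GH)(A) \xrightarrow{\sigma_A} F(A) \xrightarrow{t} \bigoplus_n \L^{\otimes n}$ is a graded algebra homomorphism whose first component is a regular epimorphism. I would verify that these two assignments are mutually inverse up to canonical natural isomorphism, checking compatibility of morphisms of triples (which by \autoref{inv-diskret} and the discreteness of the relevant categories reduces to routine bookkeeping, since the line-object and global-section data are essentially rigid). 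Once the natural equivalence of Hom-categories is established for all $\E$, the square is a $2$-pushout by the defining universal property of $2$-pushouts in $\Cat_{c\otimes}$ (as recalled in the Limits/Colimits discussion), and the existence of $\Proj^\otimes_\D(B)$ is exactly the standing hypothesis. Finally, the functor $\D \to \Proj^\otimes_\D(B)$ and the induced functor $\Proj^\otimes_\C(A) \to \Proj^\otimes_\D(B)$ arise by applying the universal property to the universal triples (\autoref{uni-triple}), so no explicit construction is needed.
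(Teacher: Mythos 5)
Your proposal is correct and follows essentially the same route as the paper's proof: fix an arbitrary cocomplete tensor category $\E$, unravel both sides of the $2$-pushout condition via the universal property of $\Proj^{\otimes}$ (\autoref{proj-def}), and observe that since $B=H(A)$, the triple data on the two sides match up by transporting the graded algebra homomorphism across the isomorphism $\sigma : GH \cong F$. Your additional verification that $B$ is generated by $B_1$ (via preservation of the epimorphisms $B_1^{\otimes n} \twoheadrightarrow B_n$ under the cocontinuous functor $H$) is a point the paper states but leaves implicit, so no discrepancy arises.
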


\begin{proof}
Let $\E$ be a cocomplete tensor category. Then we have natural equivalences of categories
\begin{eqnarray*}
&& \Hom_{c\otimes}(\D,\E) \times_{\Hom_{c\otimes}(\C,\E)} \Hom_{c\otimes}(\Proj_\C^\otimes(A),\E) \\
&\simeq& \{(F,(G,\L,s),\sigma): F \in \Hom_{c\otimes}(\D,\E),\, G \in \Hom_{c\otimes}(\C,\E),\, \L \in \E \\
&&\text{ line object},\,s : G(A) \twoheadrightarrow \oplus_n \L^{\otimes n},\, s_1 \text{ regular},\, \sigma : F H \cong G\} \\
 &\simeq &\{(F,\L,s) : F \in \Hom_{c\otimes}(\D,\E),\, \L \in \E \text{ line object},\, s : F(B) \twoheadrightarrow \oplus_n \L^{\otimes n}, \\
 && ~ s_1 \text{ regular}\} \\
& \simeq & \Hom_{c\otimes}(\Proj_\D^\otimes(B),\E).
\end{eqnarray*} \qedhere
\end{proof}


\subsection{Blow-ups} \label{blow}

Let $X$ be a scheme and consider a closed subscheme $V(I) \subseteq X$. Recall that the blow-up
\[\Bl_I(X)=\Proj_X\bigl(\bigoplus_{n \geq 0} I^n\bigr)\]
of $X$ at $V(I)$ has a morphism $p : \Bl_I(X) \to X$ which is universal with respect to the property that $p$ pulls back the closed subscheme $V(I) \subseteq X$ to an effective Cartier-Divisor on $\Bl_I(X)$ (\cite[\href{http://stacks.math.columbia.edu/tag/0806}{Tag 0806}]{stacks-project}). The latter property means that the ideal $p^* I \cdot \O_{\Bl_I(X)}$ becomes invertible. One has to be careful here as for the meaning of the word ``universal'': The blow-up does \emph{not} represent the ``functor''
\[\Sch^{\op} \to \Set, ~ Y \mapsto \{f \in \Hom(Y,X) : f^* I \cdot \O_Y \subseteq \O_Y \text{ invertible}\},\]
as one might expect, for the trivial reason that this is not a functor at all! If $f^* I \cdot \O_Y$ is invertible and $g : Z \to Y$ is a morphism which is not flat, there is no reason why $g^* f^* I \cdot \O_Z$ is invertible. It is just an ideal of $\O_Z$ which is a quotient of the invertible quasi-coherent module $g^* (f^* I \cdot  O_Y)$.
 
However, $p : \Bl_I(X) \to X$ is a terminal object in the category of $X$-schemes $f : Y \to X$ such that $f^* I \cdot \O_Y$ is invertible (\emph{loc.cit.}). The latter is by definition a full subcategory of $\Sch/X$.

Now we are ready to globalize blow-ups. In order to apply ideal theory (\autoref{affineschemes}) we assume that all tensor categories here are locally presentable and balanced.

\begin{defi}[Image of ideals]
Let $F : \C \to \D$ be a cocontinuous tensor functor between locally presentable tensor categories. Let $I \subseteq \O_\C$ be an ideal (i.e. simply a subobject). The induced morphism $F(I) \to F(\O_\C) \cong \O_\D$ has a factorization $F(I) \to J \to \O_\D$ for some regular epimorphism $F(I) \to J$ and a monomorphism $J \to \O_\D$. We write $J=F(I) \cdot \O_\D$.
\end{defi}

\begin{rem}
One readily checks the compatibilities $\id_\C(I) \cdot \O_\C = I$ and $(GF)(I) \cdot \O_\E = G(F(I) \cdot \O_\D) \cdot \O_\E$ for another $G : \D \to \E$. The construction also commutes with ideal products: If $I,J \subseteq \O_C$ are ideals, then
\[(F(I) \cdot \O_D) \cdot (F(J) \cdot \O_\D) = F(I \cdot J) \cdot \O_\D.\]
In fact, we observe that each side is the smallest ideal $K \subseteq \O_\D$ with the property that $F(I \otimes J) \cong F(I) \otimes F(J) \to \O_\D$ factors through $K$.
\end{rem}

\begin{ex}
Let $f : Y \to X$ be a morphism of schemes and $I \subseteq \O_X$ be a quasi-coherent ideal. Then $f^*(I) \cdot \O_Y=\Im(f^* I \to f^* \O_X \cong \O_Y)$ is an ideal of $\O_Y$. If $X=\Spec(R)$ and $Y=\Spec(S)$ are affine, then $I$ corresponds to an ideal $I \subseteq R$ and the induced ideal of $S$ is usually denoted by $I \cdot S$.
\end{ex}

\begin{defi}[Blow-ups of tensor categories]
Let $\C$ be a locally presentable tensor category and let $I \subseteq \O_\C$ be an ideal. The \emph{blow-up} of $\C$ at $I$ is an $2$-initial locally presentable tensor category $\Bl_I(\C)$ equipped with a cocontinuous tensor functor $P : \C \to \Bl_I(\C)$ such that $P(I) \cdot \O_{\Bl_I(\C)}$ is a line object.

In other words, if $F : \C \to \D$ is a cocontinuous tensor functor to a locally presentable tensor category $\D$ such that the ideal $F(I) \cdot \O_\D$ is a line object, then there is essentially a unique cocontinuous tensor functor $H : \Bl_I(\C) \to \D$ with an isomorphism $HP \cong F$.
\end{defi}

Actually it suffices to demand that $F(I) \cdot \O_\D$ is invertible (since $\O_\D$ is symtrivial and this property descends to flat ideals of $\O_\D$).

\begin{thm} \label{blow-ex}
Let $\C$ be a locally presentable tensor category and let $I \subseteq \O_\C$ be an ideal. Then $\Bl_I(\C)$ exists. It is given by $\Proj^\otimes_\C(A)$, where
\[A=\bigoplus_{n \geq 0} I^n\]
and $I^n$ is the ideal product of $n$ copies of $I$ and the multiplication of $A$ is induced by $I^n I^m = I^{n+m}$.
\end{thm}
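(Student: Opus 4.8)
The plan is to prove that $\Proj^\otimes_\C(A)$, where $A=\bigoplus_{n\ge 0} I^n$ is the Rees algebra, together with the structural functor $P:\C\to\Proj^\otimes_\C(A)$, is $2$-initial among cocontinuous tensor functors $F:\C\to\D$ into locally presentable tensor categories for which $F(I)\cdot\O_\D$ is a line object — which is exactly the defining universal property of $\Bl_I(\C)$. First I would check the hypotheses of the $\Proj$ construction: by the Lemma in \autoref{affineschemes} the ideal product is associative, commutative and has $\O_\C$ as neutral element, so $A$ is a commutative $\N$-graded algebra, and since $I^n=I\cdot I^{n-1}$ by definition it is generated by $A_1=I$. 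Hence $\Proj^\otimes_\C(A)$ exists and is locally presentable by \autoref{proj-ex}. By the universal property in \autoref{proj-def} it then suffices, for every locally presentable $\D$, to produce an equivalence (natural in $\D$ and lying over $\Hom_{c\otimes}(\C,\D)$) between the category of triples $(F,\L,s)$ with $s_1$ a regular epimorphism and the full subcategory of those $F$ with $F(I)\cdot\O_\D$ a line object.

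For the direction ``condition $\Rightarrow$ triple'' I would proceed as follows. Given $F$ with $\L:=F(I)\cdot\O_\D$ a line object, the defining $(\text{regular epi},\text{mono})$-factorization $F(I)\twoheadrightarrow\L\hookrightarrow\O_\D$ supplies $s_1$. Using the compatibility $(F(I)\cdot\O_\D)\cdot(F(J)\cdot\O_\D)=F(I\cdot J)\cdot\O_\D$ recorded after the definition of the image of an ideal, together with the fact that the ideal power of a line object agrees with its tensor power (because $\L$ is flat by \autoref{dual-flat}, so $\L\otimes\L\hookrightarrow\L\otimes\O_\D\cong\L\hookrightarrow\O_\D$ is a monomorphism), induction yields $F(I^n)\cdot\O_\D\cong\L^{\otimes n}$. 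The corresponding regular epimorphisms $s_n:F(A_n)=F(I^n)\twoheadrightarrow\L^{\otimes n}$ are compatible with multiplication, since both source and target multiplications descend from the multiplication of $\O_\D$ through these factorizations, so they assemble into a graded algebra homomorphism $s$, giving a triple.

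The real work is the converse: any triple $(F,\L,s)$ must force $F(I)\cdot\O_\D$ to be a line object isomorphic to $\L$. The key input is a relation internal to the Rees algebra: writing $\iota_n:A_n=I^n\hookrightarrow\O_\C$ and $m^A_{1,1}:I\otimes I\twoheadrightarrow I^2$, one has $\iota_2\circ m^A_{1,1}=m^{\O_\C}\circ(\iota_1\otimes\iota_1)$ because $I^2$ is by definition the image of $I\otimes I\to\O_\C$. Applying $F$, combining this with the fact that $s_1$ is a morphism of $\O_\D$-modules (via $s_0=\mathrm{id}$, since $s$ is a graded algebra homomorphism) and with the commutativity of $m^A_{1,1}$, a short element-notation computation gives $F(\iota_1)(a)\otimes s_1(b)=F(\iota_1)(b)\otimes s_1(a)$ in $\O_\D\otimes\L$. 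This is precisely the condition, via \autoref{goodepi}, for $F(\iota_1)$ to coequalize the pair whose coequalizer is $s_1$, so $F(\iota_1)$ factors uniquely as $F(I)\xrightarrow{s_1}\L\xrightarrow{\theta}\O_\D$, whence $F(I)\cdot\O_\D$ is a regular-epimorphic image $\bar q:\L\twoheadrightarrow F(I)\cdot\O_\D$ of the line object $\L$, and in particular symtrivial by \autoref{symtrivial-properties}.

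The main obstacle is upgrading this to \emph{invertibility} of $F(I)\cdot\O_\D$, equivalently showing $\bar q$ is an isomorphism; this is exactly where a local scheme argument (where $\L$ is locally $\O$ and $s_1$ is visibly the inclusion) must be replaced by a global one. The approach I would try is to construct an inverse to $\bar q$ by exploiting the same symmetric relation to show that $s_1$ is killed by the kernel data of $F(\iota_1)$, so that $s_1$ factors back through $F(I)\twoheadrightarrow F(I)\cdot\O_\D$; cancellation against regular epimorphisms onto invertible objects (\autoref{epi-cancel}) and the discreteness of regular invertible quotients (\autoref{inv-diskret}) should then force the two factorizations to be mutually inverse and the isomorphism $F(I)\cdot\O_\D\cong\L$ to be unique, making the two assignments mutually inverse and natural. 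I expect this invertibility argument to be the delicate heart of the proof. Once it is in place, tracking the constructions on morphisms of triples gives the claimed equivalence over $\Hom_{c\otimes}(\C,\D)$, and hence $\Bl_I(\C)=\Proj^\otimes_\C(A)$; the comparison $\Q(\Bl_I(X))\simeq\Bl_I(\Q(X))$ for finitely presented $I$ then follows immediately from \autoref{proj-compare}.
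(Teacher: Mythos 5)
Your reduction to the universal property of $\Proj^\otimes_\C(A)$ is the right starting point, and your first direction (``$F(I)\cdot\O_\D$ a line object $\Rightarrow$ triple'', via flatness of the line object and $F(I^n)\cdot\O_\D=\L^{\otimes n}$) agrees with the paper. But the converse you propose --- that \emph{every} triple $(F,\L,s)$ forces $F(I)\cdot\O_\D$ to be a line object, i.e.\ that your $\bar q:\L\twoheadrightarrow F(I)\cdot\O_\D$ is an isomorphism --- is false, so the ``delicate heart'' you defer cannot be completed by any argument. Concretely, take $\C=\Q(\mathbb{A}^2_k)$, $I=(x,y)$, $\D=\Q(\mathbb{P}^1_k)$ and $F=f^*$, where $f:\mathbb{P}^1_k\to\mathbb{A}^2_k$ is the constant morphism to the origin. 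Since $f$ factors through $V(I)$, we get $F(I)\cdot\O_\D=0$. Yet a triple exists: the relations of the Rees algebra pull back to zero along $f$, so $F(A)\cong\Sym(\O_\D^{\oplus 2})$, and the tautological graded quotient $\Sym(\O_\D^{\oplus 2})\to\bigoplus_n\O(n)$ has degree-one part $\O_\D^{\oplus 2}\twoheadrightarrow\O(1)$ a regular epimorphism. (Geometrically this triple is the inclusion of the exceptional divisor $E\cong\mathbb{P}^1$ into $\Bl_0\mathbb{A}^2$, composed with the blow-down; a morphism \emph{into} the blow-up need not pull $I$ back to an invertible ideal.) So the category of triples is strictly larger than your subcategory, and the proposed equivalence does not exist --- in your counterexample one gets $\theta=0$ and $\bar q:\O(1)\twoheadrightarrow 0$. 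Note also that this equivalence was doing double duty in your plan: it was implicitly the only thing guaranteeing that $P(I)\cdot\O_{\Proj^\otimes_\C(A)}$ is itself a line object, which is part of the definition of $\Bl_I(\C)$; with the equivalence gone, your proposal never verifies this.

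The theorem is nevertheless true, but the proof has to be structured asymmetrically, which is what the paper does. Writing $\B=\Proj^\otimes_\C(A)$ with universal triple $(P,\O(1),x)$: (i) one proves $P(I)\cdot\O_\B=\O(1)$ \emph{directly from the explicit construction} of $\B$ as a reflective localization of $\grM(A)$ (\autoref{proj-ex}), by computing $(I\otimes A)\cdot A=\bigoplus_{n\geq 0}I^{n+1}=A[1]_{\geq 0}$ in $\grM(A)$ and invoking \autoref{TN}, which says the reflector inverts $A[1]_{\geq 0}\hookrightarrow A[1]$; this is the step entirely absent from your proposal. (ii) Existence of the factorization $H$ for $F$ with $\L:=F(I)\cdot\O_\D$ a line object is your first direction. (iii) Uniqueness: given any $G:\B\to\D$ with $GP\cong F$, applying $G$ to the factorization $P(I)\twoheadrightarrow\O(1)\hookrightarrow\O_\B$ exhibits $G(\O(1))$ as a regular line quotient of $F(I)$ admitting an epimorphism onto $\L$ over $F(I)$, which is an isomorphism by \autoref{inv-diskret}. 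The crucial point is that step (iii) is only ever applied to functors $F$ \emph{already assumed} to have $F(I)\cdot\O_\D$ invertible --- exactly the hypothesis of $2$-initiality --- so the paper never needs, and never asserts, the false general statement your approach rests on.
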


\begin{proof}
We use \autoref{proj-ex}. Let $P : \C \to \B:=\Proj^{\otimes}_\C(A)$, $\O(1) \in \B$ and $x : P(A) \to \bigoplus_{n \geq 0} O(n)$ be the universal triple (\autoref{uni-triple}). We claim $P(I) \cdot \O_\B = \O(1)$. In fact, we have
\[(I \otimes A) \cdot A = \bigoplus_{n \geq 0} I^{n+1} = A[1]_{\geq 0}\]
in $\grM(A)$ and by \autoref{TN} the inclusion $A[1]_{\geq 0} \hookrightarrow A[1]$ becomes an isomorphism in $\B$. In particular, $P(I) \cdot \O_\B$ is a line object. Now let $F : \C \to \D$ be as above such that $\L := F(I) \cdot \O_\D$ is a line object. Since $\L$ is flat, for every $n \geq 0$ the ideal product $\L^n$ coincides with the tensor power $\L^{\otimes n}$. It follows $F(I^n) \cdot  \O_\D = \L^{\otimes n}$. In particular, there is a regular epimorphism $F(I^n) \to \L^{\otimes n}$. These induce a homomorphism of graded algebras
\[s : \bigoplus_{n \geq 0} F(I^n) \to \bigoplus_{n \geq 0} \L^{\otimes n}.\]
Therefore, by \autoref{proj-def} the triple $(F,\L,s)$ corresponds to a cocontinuous tensor functor $\B \to \D$ extending $F$. Conversely, assume that $G : \B \to \D$ is a cocontinuous tensor functor with an isomorphism $GP \cong F$. Then we have an epimorphism
\[G(\O(1)) \twoheadrightarrow G(\O(1)) \cdot \O_\D = G(P(I) \cdot \O_\B) \cdot \O_\D \cong F(I) \cdot \O_\D = \L\]
between line objects lying over $F(I)$. By \autoref{inv-diskret} it has to be an isomorphism. This shows that $G$ is unique.
\end{proof}

\begin{prop}[Base change]
Let $F : \C \to \D$ be a cocontinuous tensor functor between locally presentable tensor categories and let $I \subseteq \O_\C$ be an ideal. Assume that $F$ preserves monomorphisms. Then $J:=F(I) \subseteq \O_\D$ is an ideal and we have a $2$-pushout of cocomplete tensor categories
\[\xymatrix{\C \ar[r] \ar[d] & \D \ar[d] \\ \Bl_I(\C) \ar[r] & \Bl_J(\D).}\]
\end{prop}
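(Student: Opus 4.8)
The plan is to reduce everything to the base-change result for projective tensor categories, \autoref{proj-BW}, via the presentation of blow-ups as a $\Proj$ of a Rees-type algebra given in \autoref{blow-ex}. Writing $A := \bigoplus_{n \geq 0} I^n$, that theorem identifies $\Bl_I(\C) \simeq \Proj^\otimes_\C(A)$; since $A_1^{\otimes n} = I^{\otimes n} \twoheadrightarrow I^n = A_n$ is a regular epimorphism, $A$ is generated in degree one, so both \autoref{proj-ex} and \autoref{proj-BW} apply. If I can show that the image graded algebra $F(A)$ appearing in \autoref{proj-BW} is exactly $B := \bigoplus_{n \geq 0} J^n$, the Rees algebra of $J$, then the $2$-pushout square of \autoref{proj-BW} is literally the asserted square and I am done.

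First I would check that $J = F(I)$ is an ideal of $\O_\D$. Since $F$ preserves monomorphisms by hypothesis, $F(I) \to F(\O_\C) \cong \O_\D$ is a monomorphism, so $J$ is a subobject of the unit object; every such subobject is automatically an ideal, the defining factorization of $J \otimes \O_\D \to \O_\D$ through $J$ being just the unit isomorphism $J \otimes \O_\D \cong J$.

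The heart of the argument is the identification $F(A) \cong B$ as $\N$-graded algebras. For each $n$ the ideal product $J^n$ is defined by the factorization $F(I)^{\otimes n} \twoheadrightarrow J^n \hookrightarrow \O_\D$. On the other hand $F$ is cocontinuous, hence preserves the regular epimorphism $I^{\otimes n} \twoheadrightarrow I^n$, so $F(I)^{\otimes n} = F(I^{\otimes n}) \twoheadrightarrow F(I^n)$ is a regular epimorphism; and $F(I^n) \hookrightarrow \O_\D$ is a monomorphism because $I^n \hookrightarrow \O_\C$ is one and $F$ preserves monomorphisms. Thus $F(I)^{\otimes n} \to \O_\D$ admits $F(I^n) \hookrightarrow \O_\D$ as a $(\text{regular epi},\text{mono})$-factorization, and by uniqueness of such factorizations in a locally presentable category (\autoref{saft}) we obtain $F(I^n) \cong J^n$ compatibly with the inclusions into $\O_\D$. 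That the multiplications agree follows the same way: the product $F(I^n) \otimes F(I^m) \to F(I^{n+m})$ followed by $F(I^{n+m}) \hookrightarrow \O_\D$ is $F$ applied to $I^n \otimes I^m \to \O_\C$, whose image is $J^n \cdot J^m = J^{n+m}$ by the compatibility of the image-of-ideals construction with ideal products recorded above, so uniqueness of factorizations forces the two multiplication morphisms to coincide. Hence $F(A) \cong \bigoplus_n J^n = B$, and $\Proj^\otimes_\D(B) \simeq \Bl_J(\D)$ by \autoref{blow-ex}.

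Finally I would invoke \autoref{proj-BW} with $H := F$ and this $A$: it yields a $2$-pushout square whose top-right corner is $\Proj^\otimes_\C(A) \simeq \Bl_I(\C)$ and whose bottom-right corner is $\Proj^\otimes_\D(F(A)) \simeq \Proj^\otimes_\D(B) \simeq \Bl_J(\D)$, which is exactly the claim; existence of $\Bl_J(\D)$ is guaranteed by \autoref{blow-ex} since $\D$ is locally presentable. The main obstacle is precisely the identification $F(I^n) \cong J^n$: it is here, and only here, that the hypothesis that $F$ preserves monomorphisms enters essentially, for without it $F(I^n)$ need not be a subobject of $\O_\D$ and the two Rees algebras $F(A)$ and $B$ could genuinely differ.
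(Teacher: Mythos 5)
Your proposal is correct and follows essentially the same route as the paper: identify $\Bl_I(\C) \simeq \Proj^\otimes_\C\bigl(\bigoplus_n I^n\bigr)$ via \autoref{blow-ex}, apply \autoref{proj-BW}, and use preservation of monomorphisms to identify $F(I^n) = F(I)^n = J^n$ as ideals of $\O_\D$. The only difference is that the paper states this last identification in one line, whereas you spell out the underlying (regular epi, mono)-factorization argument --- a worthwhile elaboration, but not a different proof.
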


\begin{proof}
We use \autoref{blow-ex}. By \autoref{proj-BW} the $2$-pushout exists and is given by $\Proj_\D^{\otimes}(B)$, where $B=\bigoplus_{n \geq 0} F(I^n)$. Since $F$ preserves monomorphisms, we have $F(I^n) = F(I)^n=J^n$ as ideals of $\O_\D$.
\end{proof}

\begin{thm}[Comparison to schemes] \label{blow-compare}
Let $X$ be a scheme and let $I \subseteq \O_X$ be a quasi-coherent ideal of finite presentation. Then there is an equivalence of cocomplete tensor categories
\[\Q\bigl(\Bl_I(X)\bigr) \simeq \Bl_I \bigl(\Q(X)\bigr).\]
\end{thm}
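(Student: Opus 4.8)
The plan is to recognize both sides of the claimed equivalence as \emph{the same} $\Proj$ construction applied to the Rees algebra $A := \bigoplus_{n \geq 0} I^n$, and then to invoke the already-established comparison theorem for projective tensor categories, \autoref{proj-compare}. On the right-hand side, \autoref{blow-ex} identifies $\Bl_I(\Q(X)) = \Proj^\otimes_{\Q(X)}(A)$, where $I^n$ denotes the $n$-fold ideal product formed inside $\Q(X)$ as in \autoref{affineschemes}. On the left-hand side, the classical blow-up is by definition $\Bl_I(X) = \Proj_X(A)$, so that $\Q(\Bl_I(X)) = \Q(\Proj_X(A))$. Thus the whole statement will follow once \autoref{proj-compare} is applied to $S = X$ and this particular $A$, provided its hypotheses are met.

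First I would check that $A$ satisfies the hypotheses of \autoref{proj-compare}. Since $A_n = I^n$ is the image of the multiplication $I^{\otimes n} \to \O_X$, the algebra $A$ is generated in degree $1$ by $A_1 = I$; and $A_1 = I$ is a quasi-coherent module of finite presentation precisely by assumption. I would also note that $\Q(X)$ is locally presentable (as recalled after \autoref{Qcohfin}) and balanced, being abelian, so that both the existence result \autoref{proj-ex} and the blow-up construction \autoref{blow-ex} genuinely apply to it. Assembling these, one obtains $\Bl_I(\Q(X)) = \Proj^\otimes_{\Q(X)}(A) \simeq \Q(\Proj_X(A)) = \Q(\Bl_I(X))$.

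The main point to verify carefully is that the ideal power $I^n$ computed categorically in $\Q(X)$ --- as the image of $I^{\otimes n} \to \O_X$ --- coincides with the classical quasi-coherent ideal power used in the definition of the Rees algebra, and that the two graded-algebra structures, induced by $I^n \cdot I^m = I^{n+m}$ in each case, agree. This is where the balancedness of $\Q(X)$ and its (regular epi, mono)-factorizations enter, guaranteeing that the categorical ideal product reproduces the usual one. Once this matching of Rees algebras is in place, the result is a direct corollary of \autoref{blow-ex} and \autoref{proj-compare}, with no further local computation needed; the only genuine hypothesis being consumed is the finite presentation of $I$, which feeds exactly the finite-presentation requirement on $A_1$ in \autoref{proj-compare}.
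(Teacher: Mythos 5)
Your proposal is correct and takes essentially the same approach as the paper: the paper's proof of \autoref{blow-compare} consists precisely of citing \autoref{blow-ex} and \autoref{proj-compare}. Your additional verifications --- that $A_1 = I$ is of finite presentation, that the Rees algebra is generated in degree one, and that the categorical ideal powers in $\Q(X)$ (images of $I^{\otimes n} \to \O_X$ in the abelian category $\Q(X)$) agree with the classical quasi-coherent ideal powers --- are exactly the details the paper leaves implicit.
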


\begin{proof}
This follows from \autoref{blow-ex} and \autoref{proj-compare}.
\end{proof}

\begin{ex}[Blow-up of the affine plane]
Let us look at the standard example. Let $X = \mathbb{A}^2_R=\Spec(R[s,t])$ and $I=(s,t)$. Then $A=\bigoplus_{n \geq 0} I^n$ is generated by the elements $s,t$ in degree $1$ subject to the single relation $s \cdot t = t \cdot s$, where the factors on the left come from degree $0$. In other words, there is an isomorphism $\mathcal{O}_X[U,V]/(sV-tU) \cong A$ with $U \mapsto s$ and $V \mapsto t$. It follows that $\Bl_I(X)$ is  the closed subscheme of $\mathbb{P}^1_X = \mathrm{Proj}_X(\mathcal{O}_X[U,V])$ which is cut out by the equation $sV=tU$. Now \autoref{proj-compare} implies that $\Q(\Bl_I(X))$ is the initial $R[s,t]$-linear cocomplete tensor category equipped with a line object $\mathcal{L}$ and two global sections $U,V : \O \to \L$  such that $s V = t U$ and which generate $\L$ in the sense that $(U,V) : \O^2 \to \L$ is a regular epimorphism. On the other hand, by \autoref{blow-compare} this is also the initial $R[s,t]$-linear locally presentable tensor category  $\C$ with the property that the ideal $(s,t) \cdot \O_\C \subseteq \O_\C$ is invertible. The correspondence is given by $\L=(s,t) \cdot \O_\C$, $U=s$, $V=t$.
\end{ex}

\subsection{The Segre embedding} \label{segre}

Let $S$ be a  scheme and $E_1,E_2$ quasi-coherent modules on $S$. The usual Segre embedding (\cite[9.8.6]{EGAI}) is a closed immersion
\[\phi: \P(E_1) \times_S \P(E_2) \hookrightarrow \P(E_1 \otimes E_2),\]
which on $T$-valued points maps a pair of invertible quotients of $E_1|_T$ and $E_2|_T$ to their tensor product, which is an invertible quotient of $(E_1 \otimes E_2)|_T$. The usual proof that $\phi$ is a closed immersion works locally. But it is also possible to find global equations in $\Sym(E_1 \otimes E_2)$ which cut out the embedding, namely 
\[(a \otimes b) \cdot (c \otimes d) = (a \otimes d) \cdot (c \otimes b)\]
for local sections $a,c$ of $E_1$ and $b,d$ of $E_2$. In other words, we have a correspondence between invertible quotients $s_1 : E_1 \to \L_1$, $s_2 : E_2 \to \L_2$ and invertible quotients $s : E_1 \otimes E_2 \to \L$ such that the \emph{Segre relations} hold in $\L \otimes \L$:
\[s(a \otimes b) \otimes s(c \otimes d) = s(a \otimes d) \otimes s(c \otimes b)\]
We would like to prove this correspondence in arbitrary cocomplete tensor categories. It is clear how to get $s$ from $s_1$ and $s_2$, namely via $s = s_1 \otimes s_2$. But for the converse one has to globalize the usual local constructions. Roughly, the idea is as follows: If $s = s_1 \otimes s_2$, then we have the relation
\[s_1(a) \otimes s(c \otimes b) = s_1(c) \otimes s(a \otimes b).\]
Conversely, given $s$, we define $s_1$ to be the universal solution for these relations. We will work this out in detail.
 
\begin{thm}[Segre relations]
Let $\C$ be a cocomplete tensor category and $E_1,E_2 \in \C$. Then there is an equivalence between the category of line objects $\L_1,\L_2$ with regular epimorphisms $s_1 : E_1 \to \L_1$, $s_2 : E_2 \to \L_2$ and the category of line objects $\L$ with a regular epimorphism $s : E_1 \otimes E_2 \to \L$ satisfying the Segre relations: The diagram
\[\xymatrix@R=10pt@C=30pt{ E_1 \otimes E_2 \otimes E_1\otimes E_2 \ar@/^0.5pc/[dr]^-{s \otimes s} \ar[dd]_{\cong} \\ & \L \otimes \L \\ E_1 \otimes E_2 \otimes E_1 \otimes E_2 \ar@/_0.5pc/[ur]_-{s \otimes s} &  }\]
commmutes, where the vertical isomorphism is induced by the symmetry $S_{E_2,E_2}$. In element notation, this relation can be written as
\[s(a \otimes b) \otimes s(c \otimes d) = s(a \otimes d) \otimes s(c \otimes b) \in \L \otimes \L\]
for $a,c \in E_1$ and $b,d \in E_2$.
\end{thm}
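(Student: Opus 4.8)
The plan is to exhibit two mutually inverse functors between the two categories, both of which are essentially discrete by \autoref{inv-diskret}. The forward functor $\Phi$ is immediate: send a pair $(s_1 : E_1 \to \L_1,\, s_2 : E_2 \to \L_2)$ to $(s_1 \otimes s_2 : E_1 \otimes E_2 \to \L_1 \otimes \L_2)$. Here $\L_1 \otimes \L_2$ is again a line object (line objects are closed under tensor products, \autoref{symtrivial-properties}), $s_1 \otimes s_2$ is a regular epimorphism by \autoref{coeq-tensor}, and the Segre relation holds because, writing $s(a\otimes b) = s_1(a) \otimes s_2(b)$, swapping the two $\L_2$-factors is the identity by symtriviality of $\L_2$. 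The real content lies in the backward functor $\Psi$ and in checking that $\Phi$ and $\Psi$ are inverse.

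For $\Psi$, following the hint, I would define $s_1$ as the universal solution of the relation $s_1(a) \otimes s(c \otimes b) = s_1(c) \otimes s(a \otimes b)$. Concretely, writing $\K := \L^{\otimes -1}$ and $e : \L \otimes \K \to \O$ for the evaluation, let $\L_1$ be the coequalizer of the two morphisms $u_1, v_1 : E_1 \otimes E_1 \otimes E_2 \otimes \K \rightrightarrows E_1$ given in element notation by $u_1(a \otimes c \otimes b \otimes \lambda) = a \cdot e(s(c\otimes b) \otimes \lambda)$ and $v_1(a\otimes c\otimes b\otimes\lambda) = c \cdot e(s(a\otimes b)\otimes\lambda)$, with $s_1 : E_1 \to \L_1$ the (regular epi) coequalizer map; symmetrically define $s_2 : E_2 \to \L_2$. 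By construction $s_1$ satisfies the relation $(R_1)$: $s_1(a) \otimes s(c\otimes b) = s_1(c) \otimes s(a\otimes b)$ in $\L_1 \otimes \L$, and $s_2$ the mirror relation $(R_2)$: $s_2(b) \otimes s(a\otimes d) = s_2(d) \otimes s(a \otimes b)$ in $\L_2 \otimes \L$.

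The crux is to prove $\L_1 \otimes \L_2 \cong \L$ with this isomorphism carrying $s_1 \otimes s_2$ to $s$; everything else then follows formally. I would build the comparison both ways. Since $s$ coequalizes the relations cut out by $(R_1)$ and $(R_2)$ — a short computation using the Segre relation together with symtriviality of $\L$ — and $s_1 \otimes s_2$ is a regular epimorphism, $s$ factors as $m \circ (s_1 \otimes s_2)$ for a unique $m : \L_1 \otimes \L_2 \to \L$. Conversely, using that $s$ is a regular epimorphism onto a line object, \autoref{goodepi} presents $\L$ as a coequalizer; one checks that $s_1 \otimes s_2$ coequalizes that pair, the key identity being
\[s_1(a) \otimes s_2(b) \otimes s(a' \otimes b') = s_1(a') \otimes s_2(b') \otimes s(a\otimes b) \quad \text{in } \L_1 \otimes \L_2 \otimes \L,\]
which I would derive by applying $(R_1)$ and then $(R_2)$ in turn. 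This produces $n : \L \to \L_1 \otimes \L_2$ with $n \circ s = s_1 \otimes s_2$, and cancelling the epimorphisms $s$ and $s_1 \otimes s_2$ shows that $m$ and $n$ are mutually inverse. The main obstacle is exactly this step: constructing $n$ without already knowing that $\L_1 \otimes \L_2$ is symtrivial, which would make the argument circular. The resolution is to feed the bare relations $(R_1), (R_2)$ into the \autoref{goodepi} presentation rather than appealing to symtriviality.

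Once $\L_1 \otimes \L_2 \cong \L$ is known, $\L_1$ is invertible with inverse $\L_2 \otimes \K$ (since $\L_1 \otimes (\L_2 \otimes \K) \cong \L \otimes \K \cong \O$), and symmetrically for $\L_2$. Symtriviality of $\L_1$ then follows from $(R_1)$, which now reads $s_1(a) \otimes s_1(c) \otimes s_2(b) = s_1(c) \otimes s_1(a) \otimes s_2(b)$; cancelling the epimorphism $s_2$ onto the invertible object $\L_2$ via \autoref{epi-cancel} gives $s_1(a)\otimes s_1(c) = s_1(c)\otimes s_1(a)$, whence $\L_1$ is a line object by \autoref{symtrivial-properties}. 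Thus $\Psi$ lands in the correct category. Finally, $\Phi \Psi \cong \id$ is precisely the isomorphism $\L_1 \otimes \L_2 \cong \L$ just built, while for $\Psi \Phi \cong \id$ one notes that the given $s_1$ satisfies $(R_1)$ and hence factors through the universal $\L_1$; since both are regular invertible quotients of $E_1$, \autoref{inv-diskret} forces this comparison to be an isomorphism. As both categories are essentially discrete, this suffices to conclude the claimed equivalence.
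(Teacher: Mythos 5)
Your proposal is correct and is essentially the paper's own proof: the same pair of functors between the same two essentially discrete categories, the same construction of $s_1,s_2$ as coequalizers of the pairs encoding $(R_1)$ resp.\ $(R_2)$, and the same two key identities (the Segre relations yield the comparison $\L_1 \otimes \L_2 \to \L$, while applying $(R_1)$ and then $(R_2)$ yields the comparison back). The only deviations are organizational, and both are valid: you establish $\L_1 \otimes \L_2 \cong \L$ first and deduce invertibility and then symtriviality of $\L_1,\L_2$ afterwards by cancelling $s_2$ via \autoref{epi-cancel}, whereas the paper proves symtriviality first, directly from $(R_1)$ with auxiliary parameters and cancellation of $s$; and you settle $\Psi\Phi \cong \id$ by factoring the given $s_1$ through the universal quotient and invoking \autoref{inv-diskret}, where the paper instead re-verifies the coequalizer property of the given $s_1$ directly via \autoref{goodepi} and \autoref{epi-cancel}.
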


\begin{proof}
Let $A$ be the category of all $(\L_1,s_1,\L_2,s_2)$ and let $B$ be the category of all $(\L,s)$ satisfying the Segre relations. According to \autoref{inv-diskret} both categories are essentially discrete. 
There is an obvious functor $A \to B$ given by $\L = \L_1 \otimes \L_2$ and $s = s_1 \otimes s_2$. Note that $s : E_1 \otimes E_2 \to \L_1 \otimes \L_2$ is a regular epimorphism (\autoref{coeq-tensor}) satisfying the Segre relations: Since $\L_2$ is symtrivial, we have
\[s(a \otimes b) \otimes s(c \otimes d) = s_1(a) \otimes s_2(b) \otimes s_1(c) \otimes s_2(d)\]
\[ = s_1(a) \otimes s_2(d) \otimes s_1(c) \otimes s_2(b) = s(a \otimes d) \otimes s(c \otimes b).\]
The action on morphisms is clear. The functor $B \to A$ is more complicated. Let $\L$ be a line object and let $s : E_1 \otimes E_2 \to \L$ be a regular epimorphism satisfying the Segre relations $s(a \otimes b) \otimes s(c \otimes d) = s(a \otimes d) \otimes s(c \otimes b)$, i.e. we can interchange the two ``elements'' of $E_2$. But the same is true for $E_1$ as well, using that $\L$ is symtrivial:
\[s(a \otimes b) \otimes s(c \otimes d) = s(c \otimes d) \otimes s(a \otimes b) = s(c \otimes b) \otimes s(a \otimes d)\]
Thus, actually there are two Segre relations, but they imply each other. Consider the two morphisms
\[E_1^{\otimes 2} \otimes E_2 \rightrightarrows E_1 \otimes \L\]
defined by $a \otimes c \otimes b \mapsto a \otimes s(c \otimes b)$ (i.e. $E_1 \otimes s$) resp. $c \otimes s(a \otimes b)$ (i.e. $(E_1 \otimes s) \circ (S_{E_1,E_1} \otimes E_2)$).
These correspond to morphisms
\[E_1^{\otimes 2} \otimes E_2 \otimes \L^{\otimes -1} \rightrightarrows E_1.\]
We define $s_1 : E_1 \to \L_1$ to be an coequalizer of these morphisms. It follows that $s_1 \otimes \L : E_1 \otimes \L \to \L_1 \otimes \L$ is the coequalizer of the two morphisms defined first. In particular, this means $s_1(a) \otimes s(c \otimes b) = s_1(c) \otimes s(a \otimes b)$. Define $s_2 : E_2 \to \L_2$ analogously. Hence, we have $s_2(b) \otimes s(a \otimes d) = s_2(d) \otimes s(a \otimes b)$. We claim that $(\L_1,s_1,\L_2,s_2)$ is an object in $A$. For this we claim:
\begin{enumerate}
\item $\L_1$ and $\L_2$ are symtrivial.
\item There is an isomorphism $\L_1 \otimes \L_2 \cong \L$ such that the diagram
\[\xymatrix{ & E_1 \otimes E_2 \ar[dr]^{s} \ar[dl]_{s_1 \otimes s_2 ~ } & \\ \L_1 \otimes \L_2 \ar[rr]_{\cong} && \L}\]
commutes.
\item $\L_1$ and $\L_2$ are line objects.
\end{enumerate}

\emph{Proof of 1}. For $\L_1$ it suffices to prove that $s_1(a) \otimes s_1(a') = s_1(a') \otimes s_1(a)$ for $a,a' \in E$. Let us introduce additional parameters $c \in E_1,\, d \in E_2$ and compute
\[s_1(a) \otimes s_1(a') \otimes s(c \otimes d) = s_1(a) \otimes s_1(c) \otimes s(a' \otimes d)\]
\[ = s_1(a') \otimes s_1(c) \otimes s(a \otimes d) = s_1(a') \otimes s_1(a) \otimes s(c \otimes d).\]
Since $s$ is an epimorphism, by \autoref{epi-cancel} this implies the desired equation. (This kind of tensoring with an epimorphism replaces the usual local calculations in the case of schemes.)

\emph{Proof of 2}. By \autoref{coeq-tensor} and \autoref{goodepi} we have two coequalizer diagrams
\[(E_1 \otimes E_2)^{\otimes 2} \rightrightarrows E_1 \otimes E_2 \otimes \L \xrightarrow{s \otimes \L} \L^{\otimes 2}\]
\[\bigl(E_1^{\otimes 2} \otimes E_2  \bigr) \otimes E_2 \oplus E_1 \otimes \bigl(E_2^{\otimes 2} \otimes E_1\bigr) \rightrightarrows E_1 \otimes E_2 \otimes \L \xrightarrow{s_1 \otimes s_2 \otimes \L} \L_1 \otimes \L_2 \otimes \L.\]
Thus, in order to construct a morphism $\L_1 \otimes \L_2 \otimes \L \to \L^{\otimes 2}$ over the epimorphisms from $E_1 \otimes E_2 \otimes \L$, it suffices to prove that the two morphisms
\[\bigl(E_1^{\otimes 2} \otimes E_2 \bigr) \otimes E_2 \oplus E_1 \otimes \bigl(E_2^{\otimes 2} \otimes E_1\bigr) \rightrightarrows E_1 \otimes E_2 \otimes \L \xrightarrow{s \otimes \L} \L^{\otimes 2}\]
agree. On the first summand, this means that
\[s(a \otimes d) \otimes s(c \otimes b) = s(c \otimes d) \otimes s(a \otimes b).\]
On the second summand, this means that
\[s(a \otimes b) \otimes s(c \otimes d) = s(a \otimes d) \otimes s(c \otimes b).\]
But these are just the Segre relations. Next, in order to construct a morphism $\L^{\otimes 2} \to \L_1 \otimes \L_2 \otimes \L$ over $E_1 \otimes E_2 \otimes \L$, it suffices to prove that the two morphisms
\[(E_1 \otimes E_2)^{\otimes 2} \rightrightarrows E_1 \otimes E_2 \otimes \L \xrightarrow{s_1 \otimes S_2 \otimes \L} \L_1 \otimes \L_2 \otimes \L\]
agree, which means that
\[s_1(c) \otimes s_2(d) \otimes s(a \otimes b) = s_1(a) \otimes s_2(b) \otimes s(c \otimes d).\]
But this  follows easily from the definitions of $s_1$ and $s_2$ above. First we interchange $a$ and $c$, and then $d$ and $b$.

We have constructed an isomorphism $\L^{\otimes 2} \to \L_1 \otimes \L_2 \otimes \L$ over $E_1 \otimes E_2 \otimes \L$, i.e. an isomorphism $\L \cong \L_1 \otimes \L_2$ over $E_1 \otimes E_2$.

\emph{Proof of 3}. From 2. we know that $\L_1 \otimes \L_2 \cong \L$ is invertible. It follows that $\L_1$ (and likewise $\L_2$) is invertible with $\L_1^{\otimes -1} = \L_2 \otimes \L^{\otimes -1}$. By 1., $\L_1$ and $\L_2$ are symtrivial. Hence, $\L_1$ and $\L_2$ are line objects.

This finishes the proof that $(\L_1,s_1,\L_2,s_2)$ is in $A$. The action on morphisms is as follows: Given an isomorphism $\alpha : \L \to \L'$ from $s : E_1 \otimes E_2 \to \L$ to $s' : E_1 \otimes E_2 \to \L'$, there is a unique isomorphism $\alpha_1 : \L_1 \to \L'_1$ with $\alpha_1 s_1 = s'_1$ and similarly $\alpha_2 : \L_2 \to \L'_2$.
 
We have already seen in 2. that the composition $B \to A \to B$ is isomorphic to the identity. For the other composition, start with $(\L_1,s_1,\L_2,s_2)$ in $A$ and construct $(\L,s) = (\L_1 \otimes \L_2,s_1 \otimes s_2)$. We have to prove that
\[E_1^{\otimes 2} \otimes E_2 \otimes \L^{\otimes -1} \rightrightarrows E_1 \xrightarrow{s_1} \L_1\]
is a coequalizer, where the two morphisms correspond to $E_1^{\otimes 2} \otimes E_2 \rightrightarrows E_1 \otimes \L$, $a \otimes c \otimes b \mapsto a \otimes s(c \otimes b) = a \otimes s_1(c) \otimes s_2(b)$ resp. $c \otimes s_1(a) \otimes s_2(b)$. So let $h : E_1 \to T$ be a morphism which coequalizes the two morphisms, i.e. we have $h(a) \otimes s_1(c) \otimes s_2(b) = h(c) \otimes s_1(a) \otimes s_2(b)$ for all $a,c \in E_1$ and $b \in E_2$. Since $s_2$ is an epimorphism, this means that $h(a) \otimes s_1(c) = h(c) \otimes s_1(a)$. By \autoref{goodepi} this means that $h$ factors uniquely through $s_1$. Of course the same argument works for $s_2$. This finishes the proof that $A \to B \to A$ is isomorphic to the identity.
\end{proof}

\begin{cor}[Segre embedding] \label{segre-emb}
Let $\C$ be a cocomplete tensor category and $E_1,E_2 \in \C$. Then there is an equivalence of cocomplete tensor categories (using element notation)
\[\P^\otimes_\C(E_1) \sqcup_\C \P^\otimes_\C(E_2) \simeq \P^\otimes_\C(E_1 \otimes E_2) / ((a \otimes b) \cdot (c \otimes d) = (a \otimes d) \cdot (c \otimes b))_{a,c \in E_1,\, b,d \in E_2}.\]
\end{cor}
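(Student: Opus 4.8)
The plan is to verify that both sides of the claimed equivalence represent the same $2$-functor on cocomplete tensor categories over $\C$, and then to invoke the equivalence of categories established in the preceding theorem on Segre relations together with $2$-Yoneda. Fix a cocomplete tensor category $\D$ together with a cocontinuous tensor functor $F : \C \to \D$, so that $\D$ becomes an object of $\Cat_{c\otimes/\C}$; I will compute $\Hom_{c\otimes/\C}(-,\D)$ on each side.

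First I would unwind the left-hand side. By the universal property of the $2$-coproduct $\sqcup_\C$ in $\Cat_{c\otimes/\C}$, a cocontinuous tensor functor $\P^\otimes_\C(E_1) \sqcup_\C \P^\otimes_\C(E_2) \to \D$ over $\C$ is the same datum as a pair of cocontinuous tensor functors $\P^\otimes_\C(E_i) \to \D$ over $\C$ for $i=1,2$. By the defining universal property of $\P^\otimes_\C(E_i)$ (\autoref{projbundle}), each of these is, given $F$, a line object $\L_i \in \D$ equipped with a regular epimorphism $s_i : F(E_i) \to \L_i$. Hence $\Hom_{c\otimes/\C}(\P^\otimes_\C(E_1) \sqcup_\C \P^\otimes_\C(E_2), \D)$ is equivalent to the category of quadruples $(\L_1, s_1, \L_2, s_2)$ --- precisely the category called $A$ in the preceding theorem, applied to the objects $F(E_1), F(E_2) \in \D$.

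Next I would treat the right-hand side. Again by \autoref{projbundle}, a functor $\P^\otimes_\C(E_1 \otimes E_2) \to \D$ over $\C$ amounts to a line object $\L$ with a regular epimorphism $s : F(E_1 \otimes E_2) \cong F(E_1) \otimes F(E_2) \to \L$ (using that $F$ is a tensor functor). The Segre relations $(a\otimes b)\cdot(c\otimes d) = (a\otimes d)\cdot(c\otimes b)$ constitute, via element notation, the equality of two explicit parallel morphisms in $\P^\otimes_\C(E_1 \otimes E_2)$ out of $P(E_1\otimes E_2)^{\otimes 2}$, built from the universal regular epimorphism and the symmetry interchanging the two $E_2$-factors; so forming the quotient is an instance of \autoref{universal-problem}(2). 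Consequently $\Hom_{c\otimes/\C}(\P^\otimes_\C(E_1 \otimes E_2)/(\text{Segre}), \D)$ is equivalent to the category of pairs $(\L, s)$ with $s$ satisfying the Segre relations --- precisely the category $B$ of the preceding theorem. That theorem furnishes an equivalence $A \simeq B$, and since it is built purely from tensor products and (co)equalizers it is natural in $\D$; $2$-Yoneda then yields the asserted equivalence.

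The main obstacle I anticipate is checking this naturality rigorously. The functor $B \to A$ of the theorem defines $s_1$ as a coequalizer and verifies the various identities through element-notation calculations, so I would need to confirm that a cocontinuous tensor functor $G : \D \to \E$ transports the coequalizer defining $s_1$, the isomorphism $\L_1 \otimes \L_2 \cong \L$, and the regular-epimorphism hypotheses to the corresponding data in $\E$. All of this holds because cocontinuous tensor functors preserve colimits, tensor products, line objects and regular epimorphisms, but assembling it into a genuine $2$-natural equivalence rather than a merely pointwise one is the step that requires care. A secondary, routine point is to make the passage from the element-notation relations to honest parallel morphisms precise enough to apply \autoref{universal-problem}, which is justified by the soundness of element notation.
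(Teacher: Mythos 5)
Your proposal is correct and is essentially the paper's own (implicit) argument: the corollary is stated immediately after the Segre-relations theorem precisely because the left side corepresents the category $A$ of quadruples $(\L_1,s_1,\L_2,s_2)$ (via the universal properties of the $2$-coproduct and of $\P^\otimes_\C(E_i)$), the right side corepresents the category $B$ of pairs $(\L,s)$ satisfying the Segre relations (via \autoref{projbundle} and \autoref{universal-problem}), and the theorem supplies the equivalence $A \simeq B$, which is natural in $\D$ since its constructions use only tensor products, symmetries and (co)equalizers, all preserved by cocontinuous tensor functors.
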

 
\begin{ex}
The classical Segre embedding
\[\P_\mathds{C}^1 \times \P_\mathds{C}^1 \hookrightarrow \P_\mathds{C}^3,\, ([a:b],[c:d]) \mapsto [ac:ad:bc:bd]\]
whose image is cut out by $x_0 x_3 = x_1 x_2$ globalizes as follows:

There is an equivalence of cocomplete tensor categories
\[\P_\C^1 \sqcup_\C \P_\C^1 \simeq \P_\C^3 / (x_0 x_3 = x_1 x_2).\]
\end{ex}


\subsection{The Veronese embedding}

The usual $d$-uple Veronese embedding $\nu_d : \P^n \hookrightarrow \P^{N}$ (with $N= \binom{n+d}{d}-1$) has a coordinate-free description and generalization: If $E$ is a quasi-coherent module on a scheme $S$, then there is a closed immersion
\[\nu_d : \P(E) \hookrightarrow \P(\Sym^d E)\]
which maps on $T$-valued points an invertible quotient of $E|_T$ to its $d$th symmetric power, which is an invertible quotient of $\Sym^d E|_T$. A reference for a special case of this is Harris' book \cite[p. 25]{Har92}. There, the assumption on the characteristic is caused by a dualization process which is already built into Grothendieck's functorial definition of $\P(E)$ (\cite[9.7.5]{EGAI}), therefore we will not need any such assumption. We now proceed similarly to the Segre embedding and will therefore keep it shorter.

\begin{thm}[Veronese relations]
Let $\C$ be a cocomplete tensor category, $E \in \C$ and $d \in \N^+$. Then there is an equivalence between the category of line objects $\L$ with a regular epimorphism $s : E \twoheadrightarrow \L$ and the category of line objects $\K$ with a regular epimorphism $t : \Sym^d E \twoheadrightarrow \K$ such that the Veronese relations hold: The diagram
\[\xymatrix@C=40pt@R=10pt{E^{\otimes d} \otimes E^{\otimes d} \ar@{>>}[r]  \ar[dd]_{\cong} &  \Sym^d E \otimes \Sym^d E \ar@/^0.5pc/[dr]^-{t \otimes t} & \\ && \K \otimes \K \\ E^{\otimes d} \otimes E^{\otimes d} \ar@{>>}[r]  & \Sym^d E \otimes \Sym^d E \ar@/_0.5pc/[ur]^-{t \otimes t} }\]
commutes, where the vertical isomorphism is induced by the symmetry which interchanges the first tensor factors in the two $E^{\otimes d}$.
In element notation, this means that
\[t(v_1 \cdot v_2 \cdot \dotsc \cdot v_d) \otimes t(w_1 \cdot w_2 \cdot \dotsc \cdot w_d) = t(w_1 \cdot v_2 \cdot \dotsc \cdot v_d) \otimes t(v_1 \cdot w_2 \cdot \dotsc \cdot w_d).\]
This is also equivalent to the condition that $t^{\otimes 2} : \Sym^d E \otimes \Sym^d E \to \K \otimes \K$ extends to a morphism $\Sym^{2d} E \to \K \otimes \K$.
\end{thm}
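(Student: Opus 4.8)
The plan is to follow the proof of the Segre relations almost verbatim, replacing the bilinear bookkeeping by the $d$-linear one. Write $A$ for the category of line objects $\L$ equipped with a regular epimorphism $s : E \twoheadrightarrow \L$, and $B$ for the category of line objects $\K$ equipped with a regular epimorphism $t : \Sym^d E \twoheadrightarrow \K$ satisfying the Veronese relations. Both are essentially discrete by \autoref{inv-diskret}, so I only need to construct functors $A \to B$ and $B \to A$ and check that they are mutually inverse on objects. The functor $A \to B$ sends $(\L,s)$ to $(\L^{\otimes d}, t)$, where $t$ is the unique morphism with $t(v_1 \dotsc v_d) = s(v_1) \otimes \dotsc \otimes s(v_d)$: indeed $s^{\otimes d} : E^{\otimes d} \to \L^{\otimes d}$ is a regular epimorphism by iterated \autoref{coeq-tensor}, and it is symmetric because $\L^{\otimes d}$ is symtrivial, so it descends to a regular epimorphism $t : \Sym^d E \to \L^{\otimes d}$. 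The Veronese relations hold because $\L^{\otimes 2d}$ is symtrivial, so both sides of the relation are obtained from $s(v_1)\otimes\dotsc\otimes s(v_d)\otimes s(w_1)\otimes\dotsc\otimes s(w_d)$ by reordering tensor factors.

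For the functor $B \to A$ I construct $s : E \to \L$ by imitating the definition of $s_1$ in the Segre case. Consider the two morphisms $E \otimes E \otimes \Sym^{d-1} E \rightrightarrows E \otimes \K$ given in element notation by $v \otimes w_1 \otimes (w_2 \dotsc w_d) \mapsto v \otimes t(w_1 \cdot w_2 \dotsc w_d)$ and $v \otimes w_1 \otimes (w_2 \dotsc w_d) \mapsto w_1 \otimes t(v \cdot w_2 \dotsc w_d)$. Since $\K$ is invertible these correspond to morphisms $E \otimes E \otimes \Sym^{d-1} E \otimes \K^{\otimes -1} \rightrightarrows E$, and I define $s : E \twoheadrightarrow \L$ to be their coequalizer (so $s$ is a regular epimorphism by construction). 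Tensoring back with $\K$, the defining property becomes the relation
\[s(v) \otimes t(w_1 \dotsc w_d) = s(w_1) \otimes t(v \cdot w_2 \dotsc w_d) \in \L \otimes \K,\]
the exact analogue of $s_1(a) \otimes s(c \otimes b) = s_1(c) \otimes s(a \otimes b)$ from the Segre proof.

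It then remains to verify, as in the Segre case, that (i) $\L$ is symtrivial, (ii) there is an isomorphism $\L^{\otimes d} \cong \K$ lying over $\Sym^d E$ (that is, $t = \Sym^d(s)$ up to this isomorphism), and (iii) $\L$ is a line object. For (i) I introduce a generator $w_1 \dotsc w_d$ of $\Sym^d E$ and apply the relation three times to transform $s(v) \otimes s(v') \otimes t(w_1 \dotsc w_d)$ into $s(v') \otimes s(v) \otimes t(w_1 \dotsc w_d)$, then cancel $t$ by \autoref{epi-cancel}. Statement (iii) is then formal: once $\L^{\otimes d} \cong \K$ is invertible, $\L$ is invertible with inverse $\L^{\otimes(d-1)} \otimes \K^{\otimes -1}$, and it is symtrivial by (i). The main obstacle is (ii): I present both $\K$ and $\L^{\otimes d}$ as coequalizers via \autoref{goodepi} and \autoref{coeq-tensor}, construct candidate comparison morphisms over a common presentation, and verify that they agree, which is precisely where the Veronese relations are used. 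The bookkeeping here is heavier than for Segre because the presentations of $\Sym^d E$ and of $\L^{\otimes d}$ involve the full symmetric group $\Sigma_d$ rather than a single transposition; I expect this to be the longest and most delicate step.

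Checking that the two functors are mutually inverse is then routine given essential discreteness: the composite $A \to B \to A$ recovers $(\L,s)$, since $s$ itself satisfies the defining relation when $t = \Sym^d(s)$ and, by \autoref{goodepi}, is the universal such morphism, while $B \to A \to B$ recovers $(\K,t)$ by (ii). For the last assertion, the stated Veronese relation is exactly the statement that $t^{\otimes 2} : \Sym^d E \otimes \Sym^d E \to \K \otimes \K$ coequalizes the single transposition interchanging the first factors of the two copies of $E^{\otimes d}$. Since $t^{\otimes 2}$ already factors through $\Sym^d E \otimes \Sym^d E$, it is invariant under $\Sigma_d \times \Sigma_d$; this transposition together with $\Sigma_d \times \Sigma_d$ generates $\Sigma_{2d}$, and the multiplication $\Sym^d E \otimes \Sym^d E \twoheadrightarrow \Sym^{2d} E$ is the coequalizer of these generators by \autoref{coeq-tensor} and \autoref{sym-exakt}. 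Hence the Veronese relations hold if and only if $t^{\otimes 2}$ descends to a morphism $\Sym^{2d} E \to \K \otimes \K$.
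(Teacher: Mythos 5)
Your proposal is correct and follows essentially the same route as the paper's proof: the same pair of essentially discrete categories (via \autoref{inv-diskret}), the same functor $A \to B$ sending $(\L,s)$ to $(\L^{\otimes d},\Sym^d s)$, the same coequalizer definition of $s$ out of $(\K,t)$ (your source $E \otimes E \otimes \Sym^{d-1}E \otimes \K^{\otimes -1}$ and the paper's $E \otimes E^{\otimes d} \otimes \K^{\otimes -1}$ yield the same coequalizer, since $E^{\otimes d-1} \twoheadrightarrow \Sym^{d-1}E$ is an epimorphism and both parallel morphisms factor through it), and the same comparison-of-presentations strategy for the isomorphism $\L^{\otimes d} \cong \K$ over $\Sym^d E$. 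Two remarks are worth making. First, the ``heavy $\Sigma_d$ bookkeeping'' you anticipate in step (ii) does not materialize: the paper presents $\L^{\otimes d} \otimes \K = \Sym^d(\L) \otimes \K$ via part 4 of \autoref{sym-exakt} applied to the coequalizer defining $s$, so that constructing the two comparison morphisms over $\Sym^d E \otimes \K$ reduces to exactly two element-notation identities --- $s(v_1) \otimes \dotsc \otimes s(v_d) \otimes t(w_1 \dotsm w_d) = s(w_1) \otimes \dotsc \otimes s(w_d) \otimes t(v_1 \dotsm v_d)$, which follows by induction from the defining relation of $s$, and the Veronese relation itself --- after which mutual inverseness is automatic because both targets are quotients of $\Sym^d E \otimes \K$. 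Second, your argument for the final clause (that the Veronese relations are equivalent to $t^{\otimes 2}$ extending over $\Sym^{2d}E$, using that $\Sigma_d \times \Sigma_d$ together with one crossing transposition generates $\Sigma_{2d}$) is a genuine supplement: the paper's proof never addresses that clause, and your generation argument is a clean way to settle it.
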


\begin{proof}
Let $A$ be the category of all $(\L,s)$ and let $B$ be the category of all $(\K,t)$ satisfying the Veronese relations. Both are essentially discrete. There is an obvious functor $A \to B$ given by $\K = \Sym^d \L = \L^{\otimes d}$ and $t = \Sym^d s$. Notice that $t$ is a regular epimorphism by \autoref{sym-exakt}. In order to construct $B \to A$, let $(\K,t) \in B$. There are two morphisms $E \otimes E^{\otimes d} \rightrightarrows E \otimes \K$ defined by $w_1 \otimes v_1 \otimes \dotsc \otimes v_d \mapsto w_1 \otimes t(v_1 \cdot \dotsc \cdot v_d)$ resp. $v_1 \otimes t(w_1 \cdot \dotsc \cdot v_d)$. Let $s : E \to \L$ be the coequalizer of the corresponding morphisms $E \otimes E^{\otimes d} \otimes \K^{\otimes -1} \rightrightarrows E$, so that we have
\[s(w_1) \otimes t(v_1 \cdot \dotsc \cdot v_d) = s(v_1) \otimes t(w_1 \cdot \dotsc \cdot v_d).\]
In order to show that $\L$ is symtrivial, we calculate
\[s(w_1) \otimes s(w'_1) \otimes t(v_1 \cdot \dotsc \cdot v_d) = s(w_1) \otimes s(v_1) \otimes t(w'_1 \cdot \dotsc \cdot v_d)\]
\[= s(w'_1) \otimes s(v_1) \otimes t(w_1 \cdot \dotsc \cdot v_d) = s(w'_1) \otimes s(w_1) \otimes t(v_1 \cdot \dotsc \cdot v_d).\]
Since $t$ is an epimorphism, this means $s(w_1) \otimes s(w'_1) = s(w'_1) \otimes s(w_1)$, i.e. $\L$ is symtrivial. Next we claim that there is an isomorphism $\L^{\otimes d} \cong \K$ over $\Sym^d E$; in particular $\L$ will be invertible, hence a line object. Looking at the coequalizer diagrams (\autoref{goodepi} and \autoref{sym-exakt})
\[(\Sym^d E)^{\otimes 2} \rightrightarrows \Sym^d E \otimes \K \xrightarrow {t \otimes \K} \K \otimes \K\]
\[E \otimes E^{\otimes d} \otimes \Sym^{d-1} E \rightrightarrows \Sym^d E \otimes \K \xrightarrow{\Sym^d(s) \otimes \K} \L^{\otimes d} \otimes \K\]
it suffices to prove the relations
\[s(v_1) \otimes \dotsc \otimes s(v_d) \otimes t(w_1 \cdot \dotsc \cdot w_d) = s(w_1) \otimes \dotsc \otimes s(w_d) \otimes t(v_1 \cdot \dotsc \cdot v_d),\]
\[t(w_1 \cdot w_2 \cdot \dotsc \cdot w_d) \otimes t(v_1 \cdot v_2 \cdot \dotsc \cdot v_d) = t(v_1 \cdot w_2 \cdot \dotsc \cdot w_d) \otimes t(w_1 \cdot v_2 \cdot \dotsc \cdot v_d). \]
The first relation follows inductively from the definition of $s$. The second is the Veronese relation. This defines the functor $B \to A$ and shows that $B \to A \to B$ is isomorphic to the identity. Conversely, given $(\L,s)$ and defining $(K,t)$ by $(\L^{\otimes d}, \Sym^d s)$, we have to prove that
\[E \otimes E^{\otimes d} \otimes \K^{\otimes -1} \rightrightarrows E \xrightarrow{s} \L\]
is a coequalizer. If $h : E \to T$ is a morphism, then it coequalizes the two morphisms if and only if $h(w_1) \otimes s(v_1) \otimes \dotsc \otimes s(v_d) = h(v_1) \otimes s(w_1) \otimes \dotsc \otimes s(v_d)$. Since $s^{\otimes d-1}$ is an epimorphism, this condition reduces to
\[h(w_1) \otimes s(v_1) = h(v_1) \otimes s(w_1).\]
According to \autoref{goodepi} this means that $h$ factors uniquely through $s$. This finishes the proof that $A \to B \to A$ is isomorphic to the identity.
\end{proof}

\begin{cor}[Veronese embedding] \label{veronese}
Let $\C$ be a cocomplete tensor category, $E \in \C$ and $d \in \N^+$. Then there is an equivalence
\[\P_\C^\otimes(E) \simeq \P_\C^\otimes(\Sym^d(E))/((a_1 a_2 \dotsc a_d) \cdot (b_1 b_2 \dotsc b_d) = (b_1 a_2 \dotsc a_d) (a_1 b_2 \dotsc b_d)).\]
\end{cor}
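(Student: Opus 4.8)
The plan is to deduce this corollary from the preceding Veronese relations theorem by the $2$-categorical Yoneda lemma, in exact parallel with the way the Segre embedding (\autoref{segre-emb}) follows from the Segre relations. First I would record the universal property of the left-hand side: by \autoref{projbundle}, for any cocomplete tensor category $\D$ the category $\Hom_{c\otimes}(\P^\otimes_\C(E),\D)$ is naturally equivalent to the category of triples $(F,\L,s)$, where $F:\C\to\D$ is a cocontinuous tensor functor, $\L\in\D$ is a line object, and $s:F(E)\twoheadrightarrow\L$ is a regular epimorphism.

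Next I would unwind the right-hand side. Again by \autoref{projbundle}, the category $\P^\otimes_\C(\Sym^d(E))=\Proj^\otimes_\C(\Sym(\Sym^d(E)))$ classifies triples $(F,\K,t)$ with $\K\in\D$ a line object and $t:F(\Sym^d(E))\twoheadrightarrow\K$ a regular epimorphism. The quotient notation means that we apply part (2) of \autoref{universal-problem} to force the two degree-two morphisms displayed in the statement to become equal; concretely, these are the two composites $F(E^{\otimes 2d})\to\K^{\otimes 2}$ obtained from $t\otimes t$ and the symmetric-power projections, differing by the transposition of the first factor of each $E^{\otimes d}$. By the universal property of \autoref{universal-problem}, $\Hom_{c\otimes}$ out of the quotient is then naturally equivalent to the full subcategory of those triples $(F,\K,t)$ for which these two morphisms agree, that is, for which the Veronese relations of the preceding theorem hold.

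The heart of the argument is then immediate: the Veronese relations theorem supplies, for each fixed $F:\C\to\D$, a natural equivalence between the category of pairs $(\L,s:F(E)\twoheadrightarrow\L)$ and the category of pairs $(\K,t:F(\Sym^d(E))\twoheadrightarrow\K)$ satisfying the Veronese relations; both are essentially discrete by \autoref{inv-diskret}. Assembling these equivalences over all $\D$ yields a natural equivalence of the representable $2$-functors $\Hom_{c\otimes}(\P^\otimes_\C(E),-)$ and $\Hom_{c\otimes}\bigl(\P^\otimes_\C(\Sym^d(E))/(\dotsc),-\bigr)$, compatible with the structure functors from $\C$. The $2$-Yoneda lemma then produces the asserted equivalence of cocomplete tensor categories.

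The main obstacle will be the bookkeeping in the second step: making precise which pair of morphisms the element-notation relation encodes, and checking that forcing their equality via \autoref{universal-problem} reproduces exactly the commuting square of the Veronese relations theorem, neither weaker nor stronger. In particular I would need to verify that the relation, written in $\Sym^2(\Sym^d(E))$ as $(a_1\dotsc a_d)\cdot(b_1\dotsc b_d)=(b_1 a_2\dotsc a_d)(a_1 b_2\dotsc b_d)$, pulls back along the regular epimorphism $E^{\otimes 2d}\twoheadrightarrow\Sym^d(E)\otimes\Sym^d(E)$ to precisely the symmetry-twisted identification used in the theorem, so that the two classifying data genuinely coincide. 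Once this dictionary is fixed, everything else is formal.
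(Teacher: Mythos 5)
Your proposal is correct and is exactly the paper's (implicit) argument: the corollary is stated without proof precisely because it follows formally from the Veronese relations theorem combined with the universal properties of $\P^\otimes_\C(-)$ (\autoref{projbundle}) and of quotients by relations (\autoref{universal-problem}), assembled via $2$-Yoneda — the same pattern used for the Segre embedding and stated explicitly for the Grassmannian (\autoref{grassex}). Your final bookkeeping concern is also handled the way you suspect: since $F(E)^{\otimes 2d}\twoheadrightarrow F(\Sym^d E)\otimes F(\Sym^d E)$ is a regular epimorphism (\autoref{coeq-tensor}, \autoref{sym-exakt}), imposing the element-notation relation after precomposition with it is equivalent to imposing the commuting square of the theorem.
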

 
\begin{ex}
The classical $2$-uple Veronese embedding
\[\P^1 \hookrightarrow \P^5,~[a:b:c] \mapsto [a^2:b^2:c^2:bc:ac:ab]\]
becomes
\[\P^1_\C \simeq \P^5_\C / (t_0 t_1=t_5^2,\,t_0 t_4^2,\,t_0 t_3 = t_4 t_5,\, t_1 t_2 = t_3^2,\, t_1 t_4 = t_3 t_5,\, t_2 t_5 = t_3 t_4).\]
\end{ex}


\subsection{The Plücker embedding}

Recall from \cite[9.8]{EGAI} the classical Plücker embedding from Grassmannians into projective spaces: If $S$ is a scheme, $d \geq 1$ and $E$ is a quasi-coherent module on $S$, then there is a closed immersion
\[\overline{\omega} : \Grass_d(E)  \hookrightarrow \P(\Lambda^d(E))\]
which maps on $T$-valued points a locally free quotient of rank $d$ of $E$ to its $d$-th exterior power (or determinant), which is an invertible quotient of $\Lambda^d(E)$. It is cut out by the \emph{Plücker relations} (\cite{KL72}). We would like to globalize this, because this will enable us to prove the existence of Grassmannians over cocomplete tensor categories. In the following we fix $d \geq 1$ and $R$ is a commutative $\QQ$-algebra (or at least $d! \in R^*$).
 
\begin{thm}[Plücker relations] \label{PR}
Let $\C$ be an $R$-linear cocomplete tensor category and $E \in \C$. Then the category of pairs $(V,t)$, where $V \in \C$ is a locally free object of rank $d$ and $t : E \to V$ is a regular epimorphism, is equivalent to the category of pairs $(\L,s)$, where $\L \in \C$ is a line object and $s : \Lambda^d(E) \to \L$ is a regular epimorphism which satisfies the Plücker relations: The morphism $\Lambda^{d-1} E \otimes \Lambda^{d+1} E \to \L \otimes \L$ which maps $a_1 \wedge \dotsc \wedge a_{d-1} \otimes b_0 \wedge \dotsc \wedge b_d$ to
\[\sum_{k=0}^{d} (-1)^k s(a_1 \wedge \dotsc \wedge a_{d-1} \wedge b_k) \otimes s(b_0 \wedge \dotsc \wedge \widehat{b_k} \wedge \dotsc \wedge b_d)\]
(in element notation) has to vanish.
\end{thm}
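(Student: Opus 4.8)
The plan is to follow the pattern of the Segre and Veronese proofs. I let $A$ denote the category of pairs $(V,t)$ with $V$ locally free of rank $d$ and $t:E\to V$ a regular epimorphism, and $B$ the category of pairs $(\L,s)$ with $\L$ a line object and $s:\Lambda^d(E)\to\L$ a regular epimorphism satisfying the Plücker relations. Both are essentially discrete: $B$ by \autoref{inv-diskret}, and $A$ because a morphism $(V,t)\to(V',t')$ is an isomorphism $\alpha:V\to V'$ with $\alpha t=t'$, which is unique since $t$ is epic (\autoref{epi-cancel}) and is forced to be an isomorphism by Cramer's rule \autoref{cramer}. The whole statement then reduces to constructing mutually inverse functors $A\to B$ and $B\to A$.

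The direction $A\to B$ is the easy one: send $(V,t)$ to $(\Lambda^d(V),\Lambda^d(t))$. Here $\Lambda^d(V)$ is a line object by \autoref{det}, and $\Lambda^d(t)$ is a regular epimorphism by \autoref{sym-exakt}. To verify the Plücker relations I would invoke the defining relation $(\star)$ of a locally free object (\autoref{lokfreedef}, \autoref{rel-frei}): writing $\alpha_i=t(a_i)$ and $\beta_j=t(b_j)$, the identity $\sum_{k=0}^d(-1)^k\beta_k\otimes(\beta_0\wedge\dots\wedge\widehat{\beta_k}\wedge\dots\wedge\beta_d)=0$ in $V\otimes\Lambda^d(V)$ is carried, by wedging the first tensor factor with the fixed form $\alpha_1\wedge\dots\wedge\alpha_{d-1}$, exactly to the Plücker expression, which therefore vanishes.

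The direction $B\to A$ is the heart of the proof. Given $(\L,s)$, I would define $t:E\to V$ by
\[V:=\coker\bigl(\Lambda^{d+1}(E)\otimes\L^{\otimes-1}\to E\bigr),\]
where the map is obtained by dualizing over $\L$ the composite $\Lambda^{d+1}(E)\xrightarrow{\omega}E\otimes\Lambda^d(E)\xrightarrow{E\otimes s}E\otimes\L$, with $\omega$ from \autoref{hilfs-omega}. Then $t$ is automatically a regular epimorphism, and it is the universal quotient on which the relation
\[\sum_{k=0}^{d}(-1)^k\,t(v_k)\otimes s(v_0\wedge\dots\wedge\widehat{v_k}\wedge\dots\wedge v_d)=0\]
holds in $V\otimes\L$ — note that this relation is precisely $(\star)$ of \autoref{rel-frei} with the determinant factor pushed through $s$, and matches the hypothesis of the Symmetry Lemma \autoref{symlem}. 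The real work is then to prove that $V$ is locally free of rank $d$ with $\Lambda^d(V)\cong\L$. Following the Segre template, I would produce an isomorphism $\Lambda^d(V)\cong\L$ lying over $\Lambda^d(E)$ by comparing the coequalizer presentations supplied by \autoref{goodepi} and \autoref{sym-exakt} and checking that the two morphisms to be identified are exactly the Plücker relations, repeatedly tensoring with the epimorphism $s$ and cancelling via \autoref{epi-cancel} in place of local trivializations. The remaining half of \autoref{lokfreedef}, namely the vanishing of $\omega_V:\Lambda^{d+1}(V)\to V\otimes\Lambda^d(V)$, would be deduced from the Plücker relations by naturality of $\omega$ and the fact that everything in sight is a quotient of exterior powers of $E$ through $t$; the symmetry bookkeeping this requires is exactly what \autoref{symlem} provides, while \autoref{lokdual} supplies the dualizability used along the way.

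Finally I would check the two round trips are naturally isomorphic to the identity. The composite $B\to A\to B$ is settled by the isomorphism $\Lambda^d(V)\cong\L$ just constructed, together with essential discreteness. For $A\to B\to A$, starting from $(V,t)$ with $s=\Lambda^d(t)$, naturality of $\omega$ and the vanishing $\omega_V=0$ show that the original $t$ annihilates the image of $\Lambda^{d+1}(E)\otimes\L^{\otimes-1}\to E$, so it factors through the reconstructed cokernel; the converse factorization — that $t$ is genuinely the universal such quotient — is the analogue of the last step of the Segre proof and again rests on \autoref{goodepi} and \autoref{epi-cancel}. The main obstacle is the third paragraph: showing, by purely global arguments, that the cokernel $V$ really is locally free of rank $d$ with determinant $\L$. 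This is substantially harder than the rank-one Segre and Veronese cases, and it is precisely here that the Symmetry Lemma \autoref{symlem} together with \autoref{cramer} and \autoref{lokdual} must do the work that the classical Laplace-expansion and local-trivialization arguments do for ordinary Grassmannians.
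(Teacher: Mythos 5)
Your proposal is correct and follows essentially the same route as the paper's own proof: the functor $A\to B$ via $(\Lambda^d(V),\Lambda^d(t))$ with the Plücker relations coming from \autoref{rel-frei}, the inverse functor $B\to A$ via the cokernel of the $\L$-dual of $(E\otimes s)\circ\omega$, the identification $\Lambda^d(V)\cong\L$ by comparing the two presentations from \autoref{goodepi} and \autoref{sym-exakt} (where one vanishing condition is exactly the Plücker relations and the other comes from the defining relation $(\star)$ plus \autoref{symlem}), and the round-trip $A\to B\to A$ by the \autoref{goodepi}/\autoref{epi-cancel}-style factorization argument. The only cosmetic deviations are your explicit discreteness argument for $A$ via \autoref{cramer} and the mention of \autoref{lokdual}, neither of which the paper needs.
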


\begin{proof}
Let $A$ be the category of pairs $(V,t)$ and $B$ be the category of pairs $(\L,s)$ as in the claim. There is a functor $A \to B$ defined by mapping $(V,t)$ to $(\Lambda^d(V),\Lambda^d(t))$. Notice that $\Lambda^d(t)$ satisfies the Plücker relations because of \autoref{rel-frei} and that $\Lambda^d(V)$ is a line object by \autoref{det}. Conversely, let $\L$ be a line object and $s : \Lambda^d(E) \to \L$ be a regular epimorphism satisfying the Plücker relations. Consider the morphism (see \autoref{hilfs-omega} for the definition of $\omega$)
\[\Lambda^{d+1}(E) \xrightarrow{\omega} E \otimes \Lambda^d(E) \xrightarrow{E \otimes s} E \otimes \L\]
and the corresponding morphism $\Lambda^{d+1}(E) \otimes \L^{\otimes -1} \to E$. Let $t : E \to V$ be its cokernel. Thus, we have in element notation
\[(\star) ~ ~ 0 = \sum_{k=0}^{d} (-1)^k  t(a_k) \otimes s(a_0 \wedge \dotsc \wedge \widehat{a_k} \wedge \dotsc \wedge a_d)\]
for all $a_0 \wedge \dotsc \wedge a_d \in \Lambda^{d+1}(E)$. We claim that there is a unique isomorphism $\Lambda^d(V) \cong \L$ such that the diagram
\[\xymatrix{ & \Lambda^d(E) \ar[dr]^{s} \ar[dl]_{\Lambda^d(t)} & \\ \Lambda^d(V) \ar[rr]_{\cong} && \L}\]
commutes. By \autoref{goodepi} and \autoref{sym-exakt} we have two exact sequences
\[\xymatrix@C=44pt{\Lambda^d(E) \otimes \Lambda^d(E) \ar[r] & \Lambda^d(E) \otimes \L \ar@{=}[d] \ar[r]^-{s \otimes \L} & \L \otimes \L \ar[r] & 0 \\ 
\Lambda^{d+1}(E) \otimes \Lambda^{d-1}(E) \ar[r] & \Lambda^d(E) \otimes \L \ar[r]^-{\Lambda^d(t) \otimes \L} \ar[r] & \Lambda^d(V) \otimes \L \ar[r] & 0.}\]
Therefore, the claim will follow once we show that the two compositions
\begin{enumerate}
\item $\Lambda^d(E) \otimes \Lambda^d(E) \to \Lambda^d(E) \otimes \L \to \Lambda^d(V) \otimes \L$
\item $\Lambda^{d+1}(E) \otimes \Lambda^{d-1}(E) \to \Lambda^d(E) \otimes \L \to \L \otimes \L$
\end{enumerate}
vanish. For $1.$ this means
\[t(a_1) \wedge \dotsc \wedge t(a_d) \otimes s(b_1 \wedge \dotsc \wedge b_d) = t(b_1) \wedge \dotsc \wedge t(b_d) \otimes s(a_1 \wedge \dotsc \wedge a_d),\]
which follows from $(\star)$ and \autoref{symlem}. And $2.$ is precisely the content of the Plücker relations for $s$.

In particular, $\Lambda^d(V)$ is invertible. In order to show that $V$ is locally free of rank $d$, we  have to verify by \autoref{rel-frei} that
\[0 = \sum_{k=0}^{d} (-1)^k t(a_k) \otimes t(a_0) \wedge \dotsc \wedge \widehat{t(a_k)} \wedge \dotsc \wedge t(a_d).\]
Since we have already proven $\Lambda^d t \cong s$, this follows from $(\star)$. This finishes the construction of the functor $B \to A$ and the proof that $B \to A \to B$ is isomorphic to the identity. For the other direction, let $(V,t) \in B$ and $\L := \Lambda^d(V)$, $s := \Lambda^d(t)$. We have to prove that
\[\Lambda^{d+1}(E) \otimes \L^{\otimes -1} \to E \xrightarrow{t} V \to 0\]
is exact. First of all, the composition $\Lambda^{d+1}(E) \otimes \L^{\otimes -1} \to V$ resp. its dual $\Lambda^{d+1}(E) \to V \otimes \L$ maps $a_0 \wedge \dotsc \wedge a_d$ to
\[\sum_{k=0}^{d} (-1)^k t(a_k) \otimes t(a_0) \wedge \dotsc \wedge \widehat{t(a_k)} \wedge  \dotsc \wedge t(a_d), \]
which vanishes because $V$ is locally free of rank $d$ (\autoref{rel-frei}). Now let $h : E \to T$ be a morphism such that $\Lambda^{d+1}(E) \otimes \L^{\otimes -1} \to E \to T$ vanishes, i.e.
\[0 = \sum_{k=0}^{d} (-1)^k h(a_k) \otimes t(a_0) \wedge \dotsc \wedge \widehat{t(a_k)} \wedge \dotsc \wedge t(a_d).\]
We have to prove that $h$ factors uniquely through $t$. The argument is similar to \autoref{goodepi}: Since $t$ is a regular epimorphism, we find an exact sequence $K \xrightarrow{r} E \xrightarrow{t} V  \to 0$. For $c \in K$ we have $t(r(c))=0$ and therefore
\[h(r(c)) \otimes t(a_1) \wedge \dotsc \wedge t(a_d) = \sum_{k=1}^{d} (-1)^{k+1} h(a_k) \otimes t(r(c)) \wedge t(a_1) \wedge \dotsc \wedge \widehat{t(a_k)}  \wedge \dotsc\]
vanishes. This shows $hr \otimes s = 0$. By \autoref{epi-cancel} this means $hr=0$. Since $t$ is the cokernel of $r$, we see that $h$ factors uniquely through $t$.
\end{proof}

The following definition mimics Grothendieck's functorial definition of Grassmannians (\cite[9.7]{EGAI}).
 
\begin{defi}[Grassmannian tensor categories]
Let $\C$ be an $R$-linear cocomplete tensor category and $E \in \C$. We define the \emph{Grassmannian tensor category} $\Grass_d^\otimes(E)$ to be a cocomplete tensor category with the following universal property (if it exists): If $\D$ is another $R$-linear cocomplete tensor category, then $\Hom_{c\otimes/R}(\Grass_d^\otimes(E),\D)$ is equivalent to the category of triples $(F,V,t)$, where $F : \C \to \D$ is a cocontinuous tensor functor, $V \in \D$ is locally free of rank $d$ and $t : F(E) \to V$ is a regular epimorphism. For example, we have $\Grass_1^\otimes(E) = \P^\otimes(E)$.
\end{defi}

\begin{thm}[Existence via Plücker] \label{grassex}
If $\C$ is a locally presentable $R$-linear tensor category and $E \in \C$, then $\Grass_d^\otimes(E)$ exists and is given by
\[\P_\C^\otimes(\Lambda^d(E))/(0=\sum_{k=0}^{d} (-1)^k (a_1 \wedge \dotsc \wedge a_{d-1} \wedge b_k) \cdot (b_0 \wedge \dotsc \wedge \widehat{b_k} \wedge \dotsc \wedge b_d)).\]
\end{thm}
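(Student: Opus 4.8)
The plan is to verify that the proposed category $\mathcal{G} := \P_\C^\otimes(\Lambda^d(E))/(\text{Pl\"ucker relations})$ satisfies the defining universal property of $\Grass_d^\otimes(E)$, by chaining three facts: the universal property of the projective bundle tensor category (\autoref{projbundle}), the universal property of the quotient localization (\autoref{universal-problem}), and the Pl\"ucker equivalence (\autoref{PR}). The point is that all three are already available, so the argument is essentially bookkeeping of $\Hom$-categories.

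First I would settle existence. Since $\C$ is locally presentable and $R$ is a $\QQ$-algebra (so $d! \in R^*$ and $\Lambda^d = \ASym^d$ is defined), \autoref{proj-ex} guarantees that $\P_\C^\otimes(\Lambda^d(E))$ exists and is again locally presentable. The Pl\"ucker relation is the vanishing of a single morphism $\phi : P(\Lambda^{d-1}(E)) \otimes P(\Lambda^{d+1}(E)) \to \O(1)^{\otimes 2}$ built from the universal data $(P,\O(1),x_1)$ of the projective tensor category; making $\phi$ equal to the zero morphism is an instance of \autoref{universal-problem}, so the quotient $\mathcal{G}$ exists and is locally presentable.

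Next I would compute the universal property. For a locally presentable $R$-linear tensor category $\D$, the $R$-linear version of \autoref{projbundle} identifies $\Hom_{c\otimes/R}(\P_\C^\otimes(\Lambda^d(E)),\D)$ with the category of triples $(F,\L,s)$, where $F:\C\to\D$ is a cocontinuous tensor functor, $\L\in\D$ is a line object and $s:F(\Lambda^d(E))\to\L$ is a regular epimorphism. Passing to the quotient, \autoref{universal-problem} restricts this to those $G$ which send $\phi$ to zero; since $G$ is a tensor functor commuting with the wedge construction, $G(\phi)$ is exactly the Pl\"ucker morphism attached to $(F(E),\L,s)$, so the restriction picks out precisely the triples with $s$ satisfying the Pl\"ucker relations. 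Finally, because a cocontinuous tensor functor preserves $\Lambda^d=\ASym^d$, one has $F(\Lambda^d(E))\cong\Lambda^d(F(E))$, so \autoref{PR} applied to the object $F(E)\in\D$ converts such triples into triples $(F,V,t)$ with $V\in\D$ locally free of rank $d$ and $t:F(E)\to V$ a regular epimorphism. This is the defining universal property of $\Grass_d^\otimes(E)$, whence $\mathcal{G}\simeq\Grass_d^\otimes(E)$.

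The main obstacle is making this last step genuinely natural in $\D$ and in $F$: \autoref{PR} is stated as an equivalence for a fixed ambient category and a fixed object, so I must check that the assignment $(\L,s)\mapsto(V,t)$, with $V=\coker(\Lambda^{d+1}(F(E))\otimes\L^{\otimes -1}\to F(E))$, is compatible with morphisms of triples and, crucially, with the identification $F(\Lambda^d(E))\cong\Lambda^d(F(E))$ as $F$ varies. I would handle this by observing that the entire construction in \autoref{PR} is expressed through colimits, tensor products and the universal $s$, all preserved by cocontinuous tensor functors, so the fiberwise equivalences assemble into a natural equivalence of the relevant $\Hom$-categories. A secondary point is simply to confirm that the $R$-linear forms of \autoref{projbundle} and \autoref{universal-problem} are in force, which is immediate from the $R$-linear remarks accompanying those results.
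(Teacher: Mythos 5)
Your proposal is correct and follows essentially the same route as the paper, whose entire proof reads ``This follows immediately from \autoref{proj-ex} and \autoref{PR}'' — i.e.\ exactly the chain you spell out: existence and universal property of the projective tensor category, imposing the Pl\"ucker relation via \autoref{universal-problem}, and then converting triples via \autoref{PR}, using that cocontinuous tensor functors preserve $\Lambda^d$. Your expanded treatment of the naturality of the \autoref{PR} equivalence in $\D$ and $F$ is a detail the paper leaves implicit, but it is the right thing to check and your colimit-preservation argument handles it.
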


\begin{proof}
This follows immediately from \autoref{proj-ex} and \autoref{PR}.
\end{proof}

From \autoref{grassex} and \autoref{proj-compare} we deduce:
 
\begin{cor} \label{grass-compare} \marginpar{I've added this Corollary.}
If $\C=\Q(S)$ for some $R$-scheme $S$ and if $E \in \C$ is of finite presentation, then $\Grass_d^\otimes(E) \simeq \Q\bigl(\Grass_d(E)\bigr)$.
\end{cor}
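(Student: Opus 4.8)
The plan is to exhibit both $\Grass_d^\otimes(E)$ and $\Q(\Grass_d(E))$ as the \emph{same} quotient of the projective tensor category $\P_\C^\otimes(\Lambda^d E)$, and then transport this quotient across the comparison equivalence of \autoref{proj-compare}. Since $S$ is an $R$-scheme, $\C = \Q(S)$ is an $R$-linear locally presentable tensor category (the latter by the Remark following \autoref{Qcohfin}), and $R$ is a $\QQ$-algebra by the standing hypothesis of this section, so \autoref{grassex} applies and gives
\[\Grass_d^\otimes(E) \simeq \P_\C^\otimes(\Lambda^d(E))/(\text{Pl\"ucker relations}).\]
On the geometric side, the classical Pl\"ucker embedding realizes $\Grass_d(E)$ as the closed subscheme of $\P_S(\Lambda^d E)$ cut out scheme-theoretically by the Pl\"ucker relations (\cite{KL72}, \cite[9.8]{EGAI}); equivalently, $\Grass_d(E) \cong \Proj_S(B)$ where $B = \Sym(\Lambda^d E)/J$ and $J \subseteq \Sym(\Lambda^d E)$ is the homogeneous ideal generated by the degree-two Pl\"ucker relations.

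First I would verify the finiteness hypotheses needed for \autoref{proj-compare}. The algebra $B$ is generated by $B_1$, and since $J$ is concentrated in degrees $\geq 2$ we have $B_1 = \Lambda^d(E)$. As $E$ is of finite presentation and $\Lambda^d(E) = \ASym^d(E)$ is a finite colimit (a coequalizer of finitely many endomorphisms of $E^{\otimes d}$, using $d! \in R^*$), the module $\Lambda^d(E)$ is again of finite presentation. Hence \autoref{proj-compare} applies to $B$ and yields
\[\Q(\Grass_d(E)) = \Q(\Proj_S(B)) \simeq \Proj_\C^\otimes(B).\]

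It then remains to identify $\Proj_\C^\otimes(B)$ with the quotient of $\P_\C^\otimes(\Lambda^d E) = \Proj_\C^\otimes(\Sym(\Lambda^d E))$ appearing in \autoref{grassex}. This I would do by comparing universal properties (\autoref{proj-def}): a cocontinuous tensor functor out of $\Proj_\C^\otimes(B)$ into $\D$ is a triple $(F,\L,s)$ with $s \colon F(B) \to \bigoplus_n \L^{\otimes n}$ a graded algebra homomorphism whose degree-one part $F(\Lambda^d E) \to \L$ is a regular epimorphism. Because $B = \Sym(\Lambda^d E)/J$ with $J$ generated in degree two, giving such an $s$ amounts to giving a regular epimorphism $F(\Lambda^d E) \to \L$ whose induced map in degree two annihilates $J$, i.e. such that the Pl\"ucker morphism into $\L \otimes \L$ vanishes. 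This is exactly the universal property of $\P_\C^\otimes(\Lambda^d E)/(\text{Pl\"ucker})$, so the two tensor categories are canonically equivalent, and chaining the equivalences produces $\Grass_d^\otimes(E) \simeq \Q(\Grass_d(E))$.

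The main obstacle is not tensor-categorical but classical: one must invoke that the quadratic Pl\"ucker relations cut out $\Grass_d(E)$ \emph{scheme-theoretically} inside $\P_S(\Lambda^d E)$, so that $\Grass_d(E) = \Proj_S(B)$ with $B$ precisely the Pl\"ucker quotient algebra (and not merely its saturation). Granting this input, every remaining step is the bookkeeping of matching the universal line object and universal quotient on the $\Proj^\otimes$ side with the Serre twist $\O(1)$ and the tautological epimorphism $p^*\Lambda^d E \to \O(1)$ on the scheme side, all of which is already packaged into \autoref{proj-compare} together with the relation-by-relation dictionary of \autoref{PR}.
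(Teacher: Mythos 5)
Your proof is correct and takes essentially the same route as the paper, which deduces the corollary in one line from \autoref{grassex} and \autoref{proj-compare} via exactly the identifications you spell out: $B_1 = \Lambda^d(E)$ is of finite presentation, the Pl\"ucker quotient of $\P^\otimes_\C(\Lambda^d E)$ is $\Proj^\otimes_\C(B)$ by matching universal properties, and the scheme-theoretic Pl\"ucker embedding (which the paper likewise cites from \cite{KL72}) identifies $\Proj_S(B)$ with $\Grass_d(E)$. As a minor remark, the classical input you flag as the main obstacle can even be recovered internally: applying \autoref{PR} to $\Q(T)$ for test schemes $T$, together with \autoref{locally-free-sch}, shows the functor of points of $\Proj_S(B)$ classifies locally free rank-$d$ quotients of $E$, so $\Proj_S(B) \cong \Grass_d(E)$ by Yoneda.
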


\begin{ex}
For $d=2$ and $E = \mathcal{O}^{\oplus 4}$ we get the simplest non-trivial example of a Grassmannian. If $E$ has a basis indexed by $1,2,3,4$, then $\Lambda^2(E)$ has a basis indexed by $(1,2),(1,3),(1,4),(2,3),(2,4),(3,4)$. Hence, we have $\P^\otimes(\Lambda^2(E)) = \P^5$ with variables $X_{1,2},X_{1,3},X_{1,4},X_{2,3},X_{2,4},X_{3,4}$. There is essentially only one Plücker relation with $a_1=1$ and $b_1=2$, $b_2=3$, $b_3=4$. Thus,
\[\Grass_2(\O^4) \simeq \P^5/(0 = X_{1,2} X_{3,4} - X_{1,3} X_{2,4} + X_{1,4} X_{2,3})\]
is the universal cocomplete linear tensor category with a locally free object of rank $2$ with $4$ global generators.
\end{ex}

\begin{rem}
It would be interesting to investigate if Grothendieck's construction of the Hilbert scheme as a subscheme of a Grassmannian globalizes to the setting of tensor categories.
\end{rem}

\section{Products of schemes} \label{prodschemes}

It is a natural question if the $2$-functor
\[\Q : \Stack \to \Cat_{c\otimes}^{\op}\]
preserves $2$-limits. Notice that it preserves $2$-colimits for trivial reasons (\autoref{adjunction}), but preservation of $2$-limits is a nontrivial problem.

Sch\"appi has given an affirmative answer for finite limits of Adams stacks (\cite{Sch12b} and \cite{Sch13}). The proof is quite involved since it uses Sch\"appi's characterization of weakly Tannakian categories.

In this section, we draw our attention to schemes and to more direct proofs.

\subsection{Tensorial base change}

\begin{defi}[Tensorial base change]
We say that a morphism of schemes $f : X \to S$ has \emph{tensorial base change} if for all morphisms of schemes $g : Y \to S$ the diagram
\[\xymatrix{\Q(S) \ar[r]^{g^*} \ar[d]^{f^*} & \Q(Y) \ar[d]^{\pr_Y^*} \\ \Q(X) \ar[r]_-{\pr_X^*} & \Q(X \times_S Y)}\]
is a $2$-pushout in the $2$-category of cocomplete tensor categories.

In other words, we require that for every cocomplete tensor category $\C$ the canonical functor
\[\Hom_{c\otimes}(\Q(X \times_S Y),\C)\]
\[\downarrow\]
\[\Hom_{c\otimes}(\Q(X),\C) \times_{\Hom_{c\otimes}(\Q(S),\C)}  \Hom_{c\otimes}(\Q(Y),\C)\]
given by mapping $H : \Q(X) \to \C$ to
\[\bigl(H \pr_X^*, H \pr_Y^*, H \pr_X^*  f^* \cong H (f \pr_X)^* = H (g \pr_Y)^* \cong H \pr_Y^* g^*\bigr)\]
is an equivalence of categories. We say that $f$ has qc qs tensorial base change, this holds at least for all qc qs morphisms $g : Y \to S$ and $S$ is qc qs.
\end{defi}

\begin{lemma}[Closure properties]\noindent
\begin{enumerate}
\item Isomorphisms have tensorial base change.
\item Morphisms with tensorial base change are closed under composition.
\item Morphisms with tensorial base change are closed under base change with respect to morphisms with tensorial base change.
\end{enumerate}
The same holds for the qc qs variant.
\end{lemma}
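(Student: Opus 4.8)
The plan is to reduce all three parts to the \emph{pasting lemma for $2$-fibre products of categories}. Applying the contravariant $2$-functor $\Hom_{c\otimes}(-,\C)$ converts the requirement that the base-change square for $f$ and $g$ be a $2$-pushout into the requirement that, for every cocomplete tensor category $\C$, the comparison functor
\[\Hom_{c\otimes}(\Q(X \times_S Y),\C) \to \Hom_{c\otimes}(\Q(X),\C) \times_{\Hom_{c\otimes}(\Q(S),\C)} \Hom_{c\otimes}(\Q(Y),\C)\]
is an equivalence, i.e. that the right-hand side is a $2$-fibre product. All the work then happens among $2$-fibre products of categories, where I may freely use that iterated $2$-fibre products paste, $(\mathcal{A} \times_\mathcal{B} \mathcal{C}) \times_\mathcal{C} \mathcal{D} \simeq \mathcal{A} \times_\mathcal{B} \mathcal{D}$, and the associated left/right cancellation: in a horizontal composite of two squares, if the left square and the outer rectangle are $2$-pullbacks then so is the right square, and dually for pushouts.

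First I would dispatch part 1. If $f$ is an isomorphism, then $\pr_Y : X \times_S Y \to Y$ is an isomorphism and $\pr_X \cong f^{-1} g\, \pr_Y$ canonically, so both vertical functors $f^*$ and $\pr_Y^*$ are equivalences. A commuting square two of whose parallel edges are equivalences is automatically a $2$-pushout, which gives the claim; in the qc qs variant one notes in addition that $X \cong S$ is qc qs as soon as $S$ is.

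For part 2, let $f : X \to S$ and $f' : X' \to X$ both have tensorial base change and fix $g : Y \to S$. I would first record the canonical identification $X' \times_X (X \times_S Y) \cong X' \times_S Y$ over $S$. Then I stack two base-change squares vertically: the upper one is the square for $f$ and $g$ (a $2$-pushout since $f$ has tensorial base change), and the lower one is the square for $f'$ and the projection $\pr_X : X \times_S Y \to X$ (a $2$-pushout since $f'$ has tensorial base change, after the identification above). Pasting the two squares yields that the outer rectangle is a $2$-pushout, and the outer rectangle is precisely the base-change square for $f \circ f'$ and $g$. For part 3, let $f : X \to S$ have tensorial base change, let $g : S' \to S$ lie in the class, let $f'$ be the base change of $f$ along $g$, and fix $h : Y \to S'$; here I use $(X \times_S S') \times_{S'} Y \cong X \times_S Y$ with structure map $g h$. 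Now I place two squares side by side over the common edge $\Q(S) \to \Q(X)$: the left one is the square for $f$ and $g$, and the outer rectangle is the square for $f$ and $g h$, both $2$-pushouts by the hypothesis on $f$. Right cancellation in the pasting lemma then shows that the right square, namely the base-change square for $f'$ and $h$, is a $2$-pushout. Note that in the absolute setting only the tensorial base change of $f$ is used; the hypothesis that $g$ lies in the class is what makes the qc qs variant go through, since it guarantees that $S'$ is qc qs (being a base change of the qc qs scheme $S$) and that $g h$ is qc qs (a composite of qc qs morphisms), so that the two squares invoked really are instances of the qc qs tensorial base change of $f$.

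The conceptual content is thus entirely the pasting lemma, and the only genuine care is twofold. First, since $\Cat_{c\otimes}$ is a bicategory rather than a strict $2$-category, I must check that the canonical isomorphisms of iterated fibre products of schemes are coherent with the structure $2$-cells of the squares; this is routine but is best written with the comparison functors displayed above rather than on the level of pushouts. Second, and this is the fussiest point, the qc qs variant requires that every test morphism and base occurring in the pasting remain within the qc qs class, which follows from the stability of ``qc qs'' under composition and base change (and, for part 2, from the fact that $\pr_X$ is a base change of the qc qs morphism $g$). I expect the qc qs bookkeeping in part 3 to demand the most attention.
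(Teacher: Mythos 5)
Your pasting-lemma argument is precisely the ``just formal'' proof that the paper alludes to and then omits (it gives no proof of this lemma), and in the absolute setting all three parts are correct: part 1 because a square commuting up to invertible $2$-cell in which two parallel edges are equivalences is automatically a $2$-pushout; part 2 by vertical pasting of the base-change squares for $(f,g)$ and $(f',\pr_X)$, using the identification $X' \times_X (X \times_S Y) \cong X' \times_S Y$; part 3 by right cancellation applied to the squares for $(f,g)$ and $(f,g\circ h)$, using $(X \times_S S') \times_{S'} Y \cong X \times_S Y$. Your observation that part 3 in the absolute setting uses only the hypothesis on $f$ — so that tensorial base change is in fact stable under \emph{arbitrary} base change — is a genuine (and correct) strengthening of the stated lemma.

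The one place where your bookkeeping does not hold up as written is the qc qs variant of part 3. You justify ``$S'$ is qc qs'' by calling $S'$ a base change of the qc qs scheme $S$, which it is not: $S'$ is the \emph{source} of $g$, and what is actually needed is that $g$ is itself a quasi-compact quasi-separated \emph{morphism}, from which $S'$ is qc qs because $S$ is. The same point matters for the two squares you invoke: the paper's definition of ``$f$ has qc qs tensorial base change'' quantifies over qc qs test morphisms into the qc qs base $S$, but membership of $g$ in this class does \emph{not} assert that $g$ is a qc qs morphism; hence ``$g$ lies in the class'' gives you neither that the square for $(f,g)$ is an instance of the hypothesis on $f$, nor that $g\circ h$ is qc qs, nor that the conclusion (``$f'$ has qc qs tensorial base change over the base $S'$'') even typechecks. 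The repair is to read — or restate — the qc qs variant of part 3 with the additional hypothesis that $g$ is a qc qs morphism; this is consistent with how the lemma is used in the paper (quasi-compact immersions between qc qs schemes and projective morphisms are qc qs morphisms), and with that hypothesis your cancellation argument goes through verbatim. Part 2 is not affected: there the needed facts ($X$ qc qs, as the base of a morphism in the qc qs class, and $\pr_X$ qc qs, as a base change of the qc qs morphism $g$) do follow from the hypotheses exactly as you say.
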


The proof is just formal, therefore we omit it.
 
\begin{prop}[Closure under coproducts]
If $\{X_i \to S\}$ is a family of morphisms with tensorial base change, then $\coprod_i X_i \to S$ also has tensorial base change.
\end{prop}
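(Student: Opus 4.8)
The plan is to reduce the claim to the tensorial base change of each individual $f_i : X_i \to S$ by decomposing all the relevant cocontinuous tensor functors along the orthogonal idempotent decompositions (o.i.d.) attached to functors out of a product tensor category. First I would record the two compatibilities that make this possible. Coproducts commute with fibre products, so for any $g : Y \to S$ we have $(\coprod_i X_i) \times_S Y \cong \coprod_i (X_i \times_S Y)$; and by \autoref{qprod} there are equivalences of cocomplete tensor categories $\Q(\coprod_i X_i) \simeq \prod_i \Q(X_i)$ and $\Q\bigl((\coprod_i X_i) \times_S Y\bigr) \simeq \prod_i \Q(X_i \times_S Y)$. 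Hence, fixing a cocomplete tensor category $\C$, the functor whose being an equivalence we must establish is
\[\Hom_{c\otimes}\Bigl(\prod_i \Q(X_i \times_S Y),\C\Bigr) \longrightarrow \Hom_{c\otimes}\Bigl(\prod_i \Q(X_i),\C\Bigr) \times_{\Hom_{c\otimes}(\Q(S),\C)} \Hom_{c\otimes}(\Q(Y),\C).\]

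Next I would unpack both sides using \autoref{prodUE}: an object of $\Hom_{c\otimes}(\prod_i \Q(X_i),\C)$ is the same datum as an o.i.d. $(e_i)_{i}$ of $\C$ together with a family $H_i : \Q(X_i) \to \C_{e_i}$, and likewise an object of $\Hom_{c\otimes}(\prod_i \Q(X_i \times_S Y),\C)$ is an o.i.d. $(e_i)$ together with a family $\Q(X_i \times_S Y) \to \C_{e_i}$. Given an object $(H,K,\sigma)$ of the $2$-fibre product, the functor $H$ supplies an o.i.d. $(e_i)$ of $\C$ and functors $H_i$. Using $\C \simeq \prod_i \C_{e_i}$ (\autoref{zerleg}), I would decompose $K : \Q(Y) \to \C$ into its components $K_i : \Q(Y) \to \C_{e_i}$ and the gluing isomorphism $\sigma : H f^* \cong K g^*$ into componentwise isomorphisms $\sigma_i : H_i f_i^* \cong K_i g^*$, using that $f^* : \Q(S) \to \prod_i \Q(X_i)$ has $i$-th component $f_i^*$. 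Thus $(H,K,\sigma)$ amounts to an o.i.d. $(e_i)$ of $\C$ together with, for each $i$, a triple $(H_i,K_i,\sigma_i)$ lying in
\[\Hom_{c\otimes}(\Q(X_i),\C_{e_i}) \times_{\Hom_{c\otimes}(\Q(S),\C_{e_i})} \Hom_{c\otimes}(\Q(Y),\C_{e_i}).\]

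Now I would invoke the hypothesis: since each $f_i$ has tensorial base change, this $2$-fibre product over $\C_{e_i}$ is equivalent to $\Hom_{c\otimes}(\Q(X_i \times_S Y),\C_{e_i})$. Feeding the resulting functors $\Q(X_i \times_S Y) \to \C_{e_i}$ back into \autoref{prodUE}, together with the o.i.d. $(e_i)$, reassembles a single cocontinuous tensor functor $\prod_i \Q(X_i \times_S Y) \to \C$, i.e.\ an object of $\Hom_{c\otimes}\bigl(\Q((\coprod_i X_i) \times_S Y),\C\bigr)$. Applying the same bookkeeping to morphisms yields full faithfulness. For the qc qs variant one simply keeps $S$ and $Y$ qc qs throughout; the decomposition arguments are unaffected.

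The hard part will be the coherence checking in the middle step: one must verify that decomposing $K$ and $\sigma$ along the o.i.d.\ determined by $H$ is natural in $(H,K,\sigma)$ and inverts the canonical comparison functor up to canonical isomorphism, not merely on isomorphism classes of objects. This is where the bookkeeping is delicate, since $K$ carries no o.i.d.\ of its own and must borrow the one produced by $H$ via $\C \simeq \prod_i \C_{e_i}$; one should check that this borrowing is compatible with the gluing data and with change of $(H,K,\sigma)$, so that the two equivalences \autoref{prodUE} and \autoref{zerleg} splice together into the asserted inverse of the canonical functor.
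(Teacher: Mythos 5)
Your proposal is correct and follows exactly the route the paper takes: its entire proof is the single line ``This follows readily from \autoref{prodUE}'', and your argument is precisely that argument spelled out --- identifying $\Q(\coprod_i X_i) \simeq \prod_i \Q(X_i)$ and $\Q\bigl((\coprod_i X_i)\times_S Y\bigr) \simeq \prod_i \Q(X_i \times_S Y)$ via \autoref{qprod}, decomposing functors out of these products into an o.i.d.\ plus componentwise data via \autoref{prodUE} and \autoref{zerleg}, and applying the hypothesis factor by factor. The coherence bookkeeping you flag at the end is exactly what the paper's ``readily'' suppresses, and it goes through because natural transformations into a product category and the o.i.d.\ produced by $L\pr_X^*$ versus $L$ decompose and agree componentwise.
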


\begin{proof}
This follows readily from \autoref{prodUE}.
\end{proof}

\begin{thm}
Affine morphisms have tensorial base change.
\end{thm}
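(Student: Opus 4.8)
The plan is to reduce the statement about affine morphisms to the already-established globalization of the relative spectrum (\autoref{aff-global}) combined with the base-change property for affine tensor functors (\autoref{mod-BW}). Suppose $f : X \to S$ is affine, so that $X = \Spec_S(A)$ for some quasi-coherent algebra $A$ on $S$, i.e. a commutative algebra in $\Q(S)$. Let $g : Y \to S$ be an arbitrary morphism. Since the formation of the relative spectrum is stable under base change, we have $X \times_S Y = \Spec_Y(g^* A)$, where $g^* A$ is the pullback algebra in $\Q(Y)$. By \autoref{aff-global} we therefore have equivalences of cocomplete tensor categories
\[\Q(X) \simeq \M(A), \qquad \Q(X \times_S Y) \simeq \M(g^* A),\]
and under these identifications the functor $\pr_X^* : \Q(X) \to \Q(X \times_S Y)$ corresponds to the canonical functor $\M(A) \to \M(g^* A)$ induced by the base-change functor $g^* : \Q(S) \to \Q(Y)$.

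The key observation is that $B := g^*(A) = g^* A$ is precisely the image of $A$ under the cocontinuous tensor functor $F := g^* : \Q(S) \to \Q(Y)$. Thus I would apply \autoref{mod-BW} with $\C = \Q(S)$, $\D = \Q(Y)$, $F = g^*$, and the algebra $A$. That corollary states exactly that the square
\[\xymatrix{\Q(S) \ar[r]^{g^*} \ar[d] & \Q(Y) \ar[d] \\ \M(A) \ar[r]^{F'} & \M(B)}\]
is a $2$-pushout in $\Cat_{c\otimes}$, where the left vertical functor is the globalized $\Spec$ functor $\Q(S) \to \M(A)$ (which corresponds to $f^*$) and $F'$ is the induced functor (which corresponds to $\pr_X^*$). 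Translating back through the equivalences $\M(A) \simeq \Q(X)$ and $\M(B) \simeq \Q(X \times_S Y)$ gives that the defining diagram for tensorial base change of $f$ is a $2$-pushout, which is precisely the claim.

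The main step requiring care is verifying the compatibility of all the identifications: one must check that the equivalence $\Q(X) \simeq \M(A)$ sends $f^* : \Q(S) \to \Q(X)$ to the canonical functor $\Q(S) \to \M(A)$ of \autoref{affin-global}, and that the equivalence $\Q(X \times_S Y) \simeq \M(g^* A)$ is compatible with $\pr_X^*$ and $\pr_Y^*$ in a way matching the functors of \autoref{mod-BW}. The identification $X \times_S Y = \Spec_Y(g^* A)$ and the naturality of the equivalences in \autoref{aff-global} handle this, but it is the place where the bookkeeping lives. I expect no genuine obstacle here, since everything is formal once the relative-spectrum base change and \autoref{mod-BW} are in hand; the content is entirely the observation that $\Spec_S(A)$ globalizes to $\M(A)$ and that pullback of affine morphisms corresponds to base change of module categories. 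Notably, this argument needs no finiteness hypotheses on $f$ or $g$, so the affine case holds in full generality and the qc qs variant is subsumed.
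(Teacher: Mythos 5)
Your proof is correct and is essentially the paper's own argument: the paper proves this theorem by citing exactly \autoref{mod-BW} and \autoref{aff-global}, which is the combination you spell out (with the standard base-change identity $X \times_S Y = \Spec_Y(g^*A)$ making the identifications explicit). The extra care you take with compatibility of the equivalences is just an expansion of what the paper leaves implicit.
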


\begin{proof}
This follows from \autoref{mod-BW} and \autoref{aff-global}.
\end{proof}

\begin{thm} \label{immtensbw}
Quasi-compact immersions between qc qs schemes have qc qs tensorial base change.
\end{thm}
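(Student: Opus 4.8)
The plan is to factor a quasi-compact immersion $i : X \to S$ into a closed immersion followed by a quasi-compact open immersion and to establish tensorial base change for each piece separately, using the closure under composition already recorded in the preceding lemma. For the closed part the result is essentially free: a closed immersion is affine, so it has tensorial base change by the theorem on affine morphisms (via \autoref{mod-BW} and \autoref{aff-global}). Hence the entire difficulty is concentrated in the \textbf{quasi-compact open immersion} case, and I would treat that as the heart of the proof.

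So suppose $i : U \hookrightarrow S$ is a quasi-compact open immersion and let $g : Y \to S$ be any qc qs morphism, with $V := U \times_S Y$; note that $V \hookrightarrow Y$ is again a quasi-compact open immersion and $V = g^{-1}(U)$. The key tool is the globalization of open immersions from \autoref{local}/\autoref{open-immersion}: since $i_* : \Q(U) \to \Q(S)$ is fully faithful and right adjoint to the tensor functor $i^*$, cocontinuous tensor functors out of $\Q(U)$ correspond exactly to $U$-local cocontinuous tensor functors out of $\Q(S)$, and likewise for $V \hookrightarrow Y$. First I would fix a cocomplete tensor category $\C$ and unwind both sides of the comparison functor. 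An object of the fibered product
\[
\Hom_{c\otimes}(\Q(U),\C) \times_{\Hom_{c\otimes}(\Q(S),\C)} \Hom_{c\otimes}(\Q(Y),\C)
\]
consists of a cocontinuous tensor functor $G : \Q(U) \to \C$, a cocontinuous tensor functor $H : \Q(Y) \to \C$, and an isomorphism $\sigma : G i^* \cong H g^*$. Using \autoref{open-immersion} I would replace $G$ by the associated $U$-local functor $\tilde G := G i_* : \Q(S) \to \C$ (so that $\tilde G i^* \cong G$), which gives me a single functor on $\Q(S)$ together with the identification $\tilde G i^* \cong H g^*$.

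The decisive step is to show that this datum is the same as a cocontinuous tensor functor $\Phi : \Q(V) \to \C$. The candidate is built as follows: the isomorphism $\sigma$ says that $H g^*$ is already $U$-local as a functor on $\Q(S)$, equivalently (pulling back along $g$) that $H$ is $V$-local on $\Q(Y)$; by \autoref{open-immersion} applied to $V \hookrightarrow Y$ this $V$-locality of $H$ produces a cocontinuous tensor functor $\Phi := H (j_V)_* : \Q(V) \to \C$ with $\Phi j_V^* \cong H$, where $j_V : V \hookrightarrow Y$. Conversely a functor on $\Q(V)$ restricts to give both legs of the fibered product via the two projections $\pr_Y : V \to Y$ and the open immersion $V \to U$. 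The \textbf{main obstacle} I anticipate is checking that $V$-locality of $H$ is genuinely \emph{equivalent} to the existence of the glueing isomorphism $\sigma$, and that the resulting assignments are mutually inverse as an equivalence of categories (not merely a bijection on objects); this is where the qc qs hypotheses enter, since one needs $g_*$ and $i_*$ to preserve quasi-coherence and to commute with the relevant base-change, and one must verify that the unit and counit morphisms of the two adjunctions $(i^*, i_*)$ and $(j_V^*, (j_V)_*)$ are compatible under pullback along $g$. Concretely the technical core is a base-change isomorphism relating $g^* i_*$ and $(j_V)_* \pr_U^*$ (flat base change for the open immersion $i$, which is automatic since open immersions are flat), and I would isolate this as the one genuinely geometric input. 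Once the open case is settled, the general quasi-compact immersion follows by composing with the affine (closed) case and invoking closure under composition, completing the proof.
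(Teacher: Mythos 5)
Your skeleton coincides with the paper's: reduce to a quasi-compact open immersion $i : U \hookrightarrow S$ (closed immersions are affine, then compose), reformulate the $2$-pushout condition via \autoref{open-immersion}, and observe that everything hinges on the equivalence ``$H g^*$ is $U$-local $\iff$ $H$ is $V$-local'' for a cocontinuous tensor functor $H : \Q(Y) \to \C$, where $V = g^{-1}(U)$ and $j_V : V \hookrightarrow Y$. The gap lies in your proposed resolution of this crux. Flat base change $g^* i_* \cong (j_V)_* \pr_U^*$, together with the standard compatibility of units, gives you exactly this much: $H g^*$ is $U$-local if and only if $H$ inverts the unit morphism $N \to (j_V)_* j_V^* N$ for every $N$ of the form $N = g^* M$ with $M \in \Q(S)$. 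But $V$-locality of $H$ requires this for \emph{all} $N \in \Q(Y)$, and objects pulled back from $S$ do not generate $\Q(Y)$ under the operations a cocontinuous tensor functor can see: for $S$ affine and $Y = \P^1_S$, the closure of $g^*(\Q(S))$ under colimits and tensor products consists of colimits of copies of $\O_Y$, and $\O(-1)$ admits no nonzero morphism from any such object. Nor can you propagate from $g^* M$ to $N \otimes g^* M$ by a projection formula, since $(j_V)_*(j_V^* A \otimes B) \cong A \otimes (j_V)_* B$ fails for general quasi-coherent $A$ (take the open inclusion of the punctured plane in $\mathds{A}^2$, $B$ the structure sheaf, and $A$ the skyscraper at the origin: the left side vanishes, the right side is $A$). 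So the implication ``$H g^*$ $U$-local $\Rightarrow$ $H$ $V$-local'' is genuinely not formal, and flat base change is not the geometric input that makes it work.

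The input that actually closes this direction in the paper is ideal-theoretic, and it is absent from your proposal. By \autoref{weak} --- this is where the qc qs hypothesis on $Y$ enters --- $H$ is $V$-local as soon as it inverts $I \hookrightarrow \O_Y$ for every quasi-coherent ideal $I \subseteq \O_Y$ with $I|_V = \O_V$. Given such an $I$, one transports it \emph{down} to $S$ rather than trying to lift locality up: the ideal $J := (g^\#)^{-1}(g_* I) \subseteq \O_S$ is quasi-coherent because $g$ is qc qs (this is where that hypothesis enters), and $J|_U = \O_U$ because $1 \in \Gamma(V,I)$. Then $U$-locality of $H g^*$ makes $H g^*(J \to \O_S)$ invertible, and the canonical morphism $g^* J \to I$ over $\O_Y$ factors $g^*(J \to \O_S)$ through $I \to \O_Y$, exhibiting $H(I \to \O_Y)$ as a split epimorphism, hence an isomorphism by \autoref{split-epi}. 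Without this transfer-of-ideals argument, or some substitute for it, your proof does not close.
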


\begin{proof}
Closed immersions are affine and therefore have tensorial base change. Thus it suffices to prove that every quasi-compact open immersion $i : U \hookrightarrow S$ has qc qs tensorial base change, where $S$ is a qc qs scheme. Let $f : X \to S$ be a qc qs morphism. Then the pullback $j : f^{-1}(U) = U \times_S X  \hookrightarrow X$ is a quasi-compact open immersion, too.

Using \autoref{open-immersion}, we have equivalences of categories
\begin{eqnarray*}
 & & \Hom_{c\otimes}(\Q(U),\C) \times_{\Hom_{c\otimes}(\Q(S),\C)} \Hom_{c\otimes}(\Q(X),\C) \\
 & \simeq & \{F \in \Hom_{c\otimes}(\Q(X),\C) : F f^* \text{ is $i$-local}\}\\
 & \stackrel{!}{=}  & \{F \in \Hom_{c\otimes}(\Q(X),\C) : F \text{ is $j$-local}\}\\
 & \simeq & \Hom_{c\otimes}(f^{-1}(U),\C) \simeq \Hom_{c\otimes}(\Q(U \times_S X),\C).
\end{eqnarray*}
We are left to prove the following: If $F : \Q(X) \to \C$ is a cocontinuous tensor functor, then $F f^*$ is $i$-local if and only if $F$ is $j$-local.

The direction ``$\Leftarrow$'' is trivial. For ``$\Rightarrow$'', assume that $F f^*$ is $i$-local. Let $I \subseteq \O_X$ be some quasi-coherent ideal such that $I|_{f^{-1}(U)} = \O_{f^{-1}(U)}$. It suffices to prove that $F$ maps $I \to \O_X$ to a split epimorphism (see \autoref{weak} and \autoref{split-epi}).
 
Since $f$ is qc qs, $f_* \O_X$ and $f_* I$ are quasi-coherent, hence the ideal
\[J := (f^\# : \O_S \to f_* \O_X)^{-1}(f_* I) \subseteq \O_S\]
is also quasi-coherent. Observe that $1 \in \Gamma(U,J)$ since $1 \in \Gamma(f^{-1}(U),I)$. Hence, by assumption, $F f^*$ maps $J \to \O_S$ to an isomorphism. The canonical homomorphism $J \to f_* I$ corresponds to a homomorphism $f^* J \to I$, which commutes with the morphisms to $f^* \O_S \cong \O_X$. Hence, $F$ maps $I \to \O_X$ to a split epimorphism.
\end{proof}

\begin{defi}
Let us call a morphism of schemes $X \to S$ \emph{projective} if it is isomorphic to $\Proj_S(A) \to S$ for some $\N$-graded quasi-coherent algebra $A$ on $S$ such that $A$ is generated by $A_1$ and $A_1 \in \Q(S)$ is of finite presentation.
\end{defi}

Notice that in \cite[Definition 5.5.2]{EGAII} it is only required that $A_1$ is of finite type, which is somewhat adapted to the special case that $S$ is locally noetherian. We suggest that quasi-coherent modules of finite presentation are better suited for the general case.

\begin{thm} \label{projtensbw}
Projective morphisms have tensorial base change.
\end{thm}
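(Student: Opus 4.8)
The plan is to deduce tensorial base change for a projective morphism directly from the base-change statement for projective tensor categories in \autoref{proj-BW}, using the comparison with schemes in \autoref{proj-compare}. Write $X = \Proj_S(A)$ with $A$ an $\N$-graded quasi-coherent algebra on $S$ generated by $A_1$, where $A_1$ is of finite presentation. For an arbitrary morphism $g : Y \to S$, the formation of $\Proj$ commutes with base change (\cite[3.5.3]{EGAII}), so there is a canonical isomorphism of $Y$-schemes $X \times_S Y \cong \Proj_Y(g^* A)$, under which the second projection $\pr_Y : X \times_S Y \to Y$ becomes the structure morphism of $\Proj_Y(g^* A)$. Here $g^* A$ is the pulled-back $\N$-graded algebra, which is generated by $(g^* A)_1 = g^* A_1$, and $g^* A_1$ is again of finite presentation. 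Since $\Q(S)$ and $\Q(Y)$ are locally presentable (the Remark following \autoref{Qcohfin}), the projective tensor categories $\Proj^\otimes_{\Q(S)}(A)$ and $\Proj^\otimes_{\Q(Y)}(g^* A)$ exist by \autoref{proj-ex}, and by \autoref{proj-compare} they are equivalent to $\Q(X)$ and to $\Q(\Proj_Y(g^* A)) \simeq \Q(X \times_S Y)$ respectively.

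First I would apply \autoref{proj-BW} to the cocontinuous tensor functor $H = g^* : \Q(S) \to \Q(Y)$ and the graded algebra $A$ in $\Q(S)$, noting $B := g^*(A) = g^* A$. This produces a $2$-pushout square in $\Cat_{c\otimes}$
\[\xymatrix{\Q(S) \ar[r]^-{P} \ar[d]_{g^*} & \Proj_{\Q(S)}^\otimes(A) \ar[d] \\ \Q(Y) \ar[r]^-{P'} & \Proj_{\Q(Y)}^\otimes(g^* A),}\]
where $P$ and $P'$ are the universal cocontinuous tensor functors. Transporting along the two comparison equivalences of \autoref{proj-compare}, this becomes a $2$-pushout square with corners $\Q(S)$, $\Q(Y)$, $\Q(X)$ and $\Q(X \times_S Y)$. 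It then remains to check that, up to natural isomorphism, its four functors are exactly the pullback functors occurring in the definition of tensorial base change (the definition's square is the same square, merely drawn with the two input legs transposed, which is irrelevant for a $2$-pushout).

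The functor $\Q(S) \to \Q(Y)$ is $g^*$ by construction. The functors $P$ and $P'$ correspond, by the way the equivalence in \autoref{proj-compare} is set up, to the pullbacks along the respective structure morphisms, that is to $f^* : \Q(S) \to \Q(X)$ and to $\pr_Y^* : \Q(Y) \to \Q(X \times_S Y)$. The only delicate point is to identify the induced right-hand functor $\Q(X) \to \Q(X \times_S Y)$ with $\pr_X^*$. Tracing the proof of \autoref{proj-BW}, this induced functor sends the universal triple $(P,\O(1),x)$ of $\Proj^\otimes_{\Q(S)}(A)$ (\autoref{uni-triple}) to the universal triple of $\Proj^\otimes_{\Q(Y)}(g^* A)$ precomposed with $g^*$; under the comparison, $\O(1)$ becomes the Serre twist $\O_X(1)$, and $\O_X(1)$ pulls back along $\pr_X$ to the Serre twist $\O_{X \times_S Y}(1)$, while $x$ pulls back to the canonical section on $X \times_S Y$. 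Hence both the induced functor and $\pr_X^*$ are cocontinuous tensor functors out of $\Q(X) \simeq \Proj^\otimes_{\Q(S)}(A)$ classifying the same triple $\bigl(\pr_X^* f^*,\ \O_{X \times_S Y}(1),\ \text{pullback of } x\bigr)$, so by the universal property in \autoref{proj-def} they are isomorphic.

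The main obstacle is precisely this last bookkeeping step: verifying that the abstract induced functor of \autoref{proj-BW} agrees, after transport along the comparison equivalences, with the geometric pullback $\pr_X^*$. This rests on the classical compatibility of the $\Proj$ construction, of the Serre twist $\O(1)$, and of the canonical graded surjection $p^* A \to \bigoplus_n \O(n)$ with base change along $g$, together with the fact that a cocontinuous tensor functor out of $\Proj^\otimes_{\Q(S)}(A)$ is determined up to isomorphism by the triple it classifies. Once this identification is in place, the transported square is a $2$-pushout exhibiting $\Q(X \times_S Y)$ as the $2$-pushout of $\Q(X) \leftarrow \Q(S) \to \Q(Y)$, which is exactly the assertion that $f$ has tensorial base change.
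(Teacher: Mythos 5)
Your proof is correct and follows exactly the paper's own route: the paper's proof of \autoref{projtensbw} is precisely the combination of \autoref{proj-BW} and \autoref{proj-compare}, with the classical compatibility of $\Proj$, the Serre twist, and the canonical graded surjection with base change left implicit. Your write-up simply makes explicit the bookkeeping (identifying the transported functors with $f^*$, $\pr_X^*$, $\pr_Y^*$ via the universal property) that the paper's one-line proof suppresses.
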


\begin{proof}
This follows from \autoref{proj-BW} and \autoref{proj-compare}.
\end{proof}
 
\begin{thm} \label{quasiproj}
Let $S$ be a qc qs base scheme. Let $f : X \to S$ be a quasi-projective morphism and $g : Y \to S$ be any qc qs morphism. Then
\[\xymatrix{\Q(S) \ar[r]^{g^*} \ar[d]^{f^*} & \Q(Y) \ar[d]^{\pr_Y^*} \\ \Q(X) \ar[r]_-{\pr_X^*} & \Q(X \times_S Y)}\]
is a $2$-pushout in $\Cat_{c\otimes}$.
\end{thm}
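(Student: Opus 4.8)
The plan is to reduce the theorem to the two base-change results already established for the elementary pieces of a quasi-projective morphism. By definition of quasi-projectivity, $f : X \to S$ factors as a quasi-compact immersion followed by a projective morphism, say $X \xrightarrow{j} P \xrightarrow{p} S$ with $j$ a quasi-compact immersion and $p : P \to S$ projective. First I would observe that the assertion of the theorem is, verbatim, the statement that $f$ has qc qs tensorial base change: for every qc qs $g : Y \to S$ (with $S$ qc qs) the displayed square is a $2$-pushout in $\Cat_{c\otimes}$.

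Next I would invoke the closure of tensorial base change under composition. The projective morphism $p$ has tensorial base change by \autoref{projtensbw}, hence in particular the qc qs variant. The quasi-compact immersion $j$ has qc qs tensorial base change by \autoref{immtensbw}, provided its source and target are qc qs; this holds because $S$ is qc qs by hypothesis, $P$ is qc qs since it is projective (hence proper) over $S$, and $X$ is qc qs since $f$ is quasi-projective (hence quasi-compact and separated). Thus both $j$ and $p$ have qc qs tensorial base change, and by the closure of (qc qs) tensorial base change under composition the composite $f = p \circ j$ has qc qs tensorial base change. Unwinding the definition yields exactly the desired $2$-pushout square for every qc qs $g : Y \to S$.

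It is instructive to see the same argument explicitly, as this isolates the only point requiring care. Writing $X \times_S Y = X \times_P (P \times_S Y)$, the base-change square for $f$ along $g$ decomposes as the vertical pasting of the base-change square for $p$ along $g$ (a $2$-pushout by \autoref{projtensbw}) stacked above the base-change square for $j$ along the projection $\pr_P : P \times_S Y \to P$ (a $2$-pushout by \autoref{immtensbw}). For the latter to apply I would check that $P \times_S Y$ is qc qs and that $\pr_P$ is qc qs: both follow since $\pr_P$ is the base change of the qc qs morphism $g$ along $p$, and $P \times_S Y \to Y \to \Spec(\Z)$ is a composite of qc qs morphisms. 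The outer rectangle is then a $2$-pushout by the pasting lemma for $2$-pushouts in $\Cat_{c\otimes}$. The main obstacle is therefore not conceptual but rather the bookkeeping of finiteness hypotheses — verifying that every intermediate scheme and every relevant projection is qc qs so that \autoref{immtensbw} is genuinely applicable; once this is settled the proof is entirely formal.
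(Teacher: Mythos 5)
Your proof is correct and is essentially the paper's own argument: the paper deduces the theorem in one line from \autoref{immtensbw} and \autoref{projtensbw} together with the (formal) closure of qc~qs tensorial base change under composition, which is exactly your factorization $f = p \circ j$ and pasting of the two $2$-pushout squares. Your explicit verification of the qc~qs bookkeeping just spells out what the paper's closure lemma leaves as formal.
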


\begin{proof}
This follows from \autoref{immtensbw} and \autoref{projtensbw}.
\end{proof}

Notice that the proof of this result relies heavily on the universal properties of projective tensor categories.


\subsection{The case of qc qs schemes over a field}

Fix a commutative ring $R$ and two qc qs $R$-schemes $X,Y$. We would like to know if $X$ has tensorial base change with respect to $Y$, i.e. that for every cocomplete $R$-linear tensor category $\C$ the canonical functor
\[\Hom_{c\otimes/R}(\Q(X \times_R Y),\C) \to \Hom_{c\otimes/R}(\Q(X),\C) \times \Hom_{c\otimes/R}(\Q(Y),\C)\]
is an equivalence of categories. We then call $(X,Y)$ a \emph{product pair}. Note that although we have proven this when $X$ is quasi-projective, our proof does not give an explicit description of the inverse functor. The same remark applies to Sch\"appi's proof in the case of Adams stacks.

In order to remedy this, let us recall the definition of the \emph{external tensor product}: For $A \in \Q(X)$ and $B \in \Q(Y)$ we let
\[A \boxtimes B := \pr^*_X A \otimes \pr^*_Y B \in \Q(X \times_R Y).\]
Since $(A,B) \mapsto A \boxtimes B$ is $R$-bilinear, it induces an $R$-linear functor (see \autoref{colimits})
\[\boxtimes : \Q(X) \otimes_R \Q(Y) \to \Q(X \times_R Y),~ (A,B) \mapsto A \boxtimes B.\]
 
\begin{prop} \label{hombox}
Assume that $R$ is a field. Let $A,A' \in \Q(X)$ and $B,B' \in \Q(Y)$ and assume that $A'$ and $B'$ are of finite presentation. Then the canonical homomorphism
\[\Hom_{\O_X}(A',A) \otimes_R \Hom_{\O_Y}(B',B) \to \Hom_{\O_{X \times Y}}(A' \boxtimes B',A \boxtimes B)\]
is an isomorphism. In particular, the restriction of the external tensor product
\[\boxtimes : \Q_{\fp}(X) \otimes_R \Q_{\fp}(Y) \to \Q(X \times_R Y)\]
is fully faithful.
\end{prop}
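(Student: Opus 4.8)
The plan is to show the comparison map is an isomorphism by reducing to the affine case and then carrying out an explicit d\'evissage, with the field hypothesis entering precisely through the exactness of $\otimes_R$. Since the map is natural in all four arguments and both $\HOM_{\O_X}(A',A)$ and $\HOM_{\O_Y}(B',B)$ are quasi-coherent (by \autoref{qchom}, using that $A'$ and $B'$ are of finite presentation), I would first treat the case where $X=\Spec(C)$ and $Y=\Spec(D)$ are affine, so that $X\times_R Y=\Spec(C\otimes_R D)$ and the sheaves correspond to modules $M',M$ over $C$ and $N',N$ over $D$ with $M'$, $N'$ finitely presented; here global $\Hom$ agrees with the module of homomorphisms. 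The statement to prove becomes the purely algebraic identity
\[\Hom_C(M',M)\otimes_R\Hom_D(N',N)\xrightarrow{\ \sim\ }\Hom_{C\otimes_R D}(M'\otimes_R N',M\otimes_R N).\]

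For this identity I would choose a finite free presentation $C^m\to C^n\to M'\to 0$. Tensoring over $R$ with $N'$ is exact because $R$ is a field, so $C^m\otimes_R N'\to C^n\otimes_R N'\to M'\otimes_R N'\to 0$ is a presentation of $M'\otimes_R N'$ by $(C\otimes_R D)$-modules. Applying $\Hom_{C\otimes_R D}(-,M\otimes_R N)$ exhibits the right-hand side as the kernel of $\Hom_{C\otimes_R D}(C^n\otimes_R N',M\otimes_R N)\to\Hom_{C\otimes_R D}(C^m\otimes_R N',M\otimes_R N)$. Since $C\otimes_R N'$ is the extension of scalars of $N'$ along $D\to C\otimes_R D$, the adjunction between extension and restriction of scalars identifies $\Hom_{C\otimes_R D}(C^n\otimes_R N',M\otimes_R N)$ with $\Hom_D(N',M\otimes_R N)^n$; and because $M$ is free over the field $R$ and $N'$ is finitely presented, so that $\Hom_D(N',-)$ commutes with the (possibly infinite) direct sum expressing $M\otimes_R N$ over a basis of $M$, this is $M^n\otimes_R\Hom_D(N',N)$. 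On the other side, $\Hom_C(M',M)=\ker(M^n\to M^m)$, and tensoring this kernel over the field $R$ with $\Hom_D(N',N)$ is again exact, so the left-hand side is the kernel of the very same map $M^n\otimes_R\Hom_D(N',N)\to M^m\otimes_R\Hom_D(N',N)$. A diagram chase shows the canonical comparison map matches these two kernels, settling the affine case.

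Finally I would globalize. Covering $X$ and $Y$ by finitely many affine opens $U=\Spec(C)$ and $V=\Spec(D)$, the products $U\times_R V$ cover $X\times_R Y$, and on each of them the affine computation is an isomorphism of the corresponding $(C\otimes_R D)$-modules; as all these isomorphisms are instances of the single natural comparison map, they glue to a sheaf isomorphism $\HOM_{\O_{X\times_R Y}}(A'\boxtimes B',A\boxtimes B)\cong\HOM_{\O_X}(A',A)\boxtimes\HOM_{\O_Y}(B',B)$. Taking global sections then reduces the proposition to a K\"unneth statement $\Gamma(X,\mathcal F)\otimes_R\Gamma(Y,\mathcal G)\cong\Gamma(X\times_R Y,\mathcal F\boxtimes\mathcal G)$ for quasi-coherent $\mathcal F,\mathcal G$. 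I expect this last step to be the main obstacle: I would prove it by comparing the \v{C}ech complexes of the chosen finite affine covers, writing the \v{C}ech complex of the product cover as the total tensor product of the two \v{C}ech complexes (via the affine case applied termwise) and extracting $H^0$, where once more exactness of $\otimes_R$ over the field and quasi-separatedness (to keep all intersections quasi-compact, reducing them inductively to the affine case) are exactly what make the argument go through, and where over a non-field both facts would fail. The concluding full faithfulness of $\boxtimes$ on $\Q_{\fp}(X)\otimes_R\Q_{\fp}(Y)$ is then immediate: by the definition of the tensor product of $R$-linear categories the Hom-module on the source is precisely $\Hom_{\O_X}(A',A)\otimes_R\Hom_{\O_Y}(B',B)$, which the displayed isomorphism carries onto $\Hom_{\O_{X\times_R Y}}(A'\boxtimes B',A\boxtimes B)$.
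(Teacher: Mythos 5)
Your proof is correct in substance but organized quite differently from the paper's, and one step needs repair. In the affine case you argue by an explicit finite free presentation of $M'$, the extension-of-scalars adjunction, and the fact that $\Hom_D(N',-)$ commutes with direct sums for finitely generated $N'$; the paper instead runs a d\'evissage in both variables simultaneously (the class of pairs $(A',B')$ for which the comparison map is an isomorphism contains $(\O_X,\O_Y)$ and is closed under finite direct sums and cokernels, using left exactness of $\Hom$ into a fixed target, right exactness of $-\boxtimes B'$, and exactness of $\otimes_R$). Both work. For globalization the paper never leaves the level of global Hom groups: it covers \emph{one} factor at a time by finitely many affines, tensors the resulting left-exact sheaf-axiom sequence $0 \to \Hom_{\O_Y}(B',B) \to \bigoplus_i \Hom_{\O_{Y_i}}(B'|_{Y_i},B|_{Y_i}) \to \bigoplus_{i,j}\Hom_{\O_{Y_i\cap Y_j}}(\cdot,\cdot)$ with the fixed vector space $\Hom_{\O_X}(A',A)$ (exact since $R$ is a field), and inducts on separatedness: affine $\times$ separated, then affine $\times$ qc qs, then separated $\times$ qc qs, then the general case. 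You instead promote the affine isomorphism to a sheaf isomorphism $\HOM_{\O_X}(A',A)\boxtimes\HOM_{\O_Y}(B',B)\cong\HOM_{\O_{X\times_R Y}}(A'\boxtimes B',A\boxtimes B)$ (legitimate: both sides are quasi-coherent by \autoref{qchom}, the comparison map is global, and isomorphy is local on the affine products $U\times V$), and then reduce to the degree-zero K\"unneth formula $\Gamma(X,\mathcal{F})\otimes_R\Gamma(Y,\mathcal{G})\cong\Gamma(X\times_R Y,\mathcal{F}\boxtimes\mathcal{G})$ --- which is exactly the special case $(A',B')=(\O_X,\O_Y)$ of the proposition. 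This detour buys a stronger, sheaf-level statement, at the cost of having to prove K\"unneth separately.

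The step that fails as written is your K\"unneth argument: the \v{C}ech complex of the product cover $\{U_i\times V_j\}$ is \emph{not} the total tensor product of the two \v{C}ech complexes. In degree $n$ the former has terms $\Gamma(U_{i_0\cdots i_n},\mathcal{F})\otimes_R\Gamma(V_{j_0\cdots j_n},\mathcal{G})$, with $(n+1)$-fold intersections in \emph{both} factors (since $\bigcap_k (U_{i_k}\times V_{j_k})=U_{i_0\cdots i_n}\times V_{j_0\cdots j_n}$), while the total complex has the mixed terms $\bigoplus_{p+q=n}\Gamma(U_{i_0\cdots i_p},\mathcal{F})\otimes_R\Gamma(V_{j_0\cdots j_q},\mathcal{G})$. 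The two complexes are only quasi-isomorphic (Eilenberg--Zilber/Alexander--Whitney), so the identification you invoke requires an additional argument. The cheapest repair --- and in effect what the paper's proof does --- is to avoid double complexes altogether: keep one factor affine and cover only the other, so that the comparison is between a single kernel sequence and its tensor product over $R$ with a fixed vector space; the non-affine intersections $Y_i\cap Y_j$ are then absorbed by precisely the induction on separatedness that you already anticipate. With that modification your argument goes through completely.
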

 
Actually we only need that $R$ is absolutely flat, or just that all the involved $R$-modules are flat. The proof is standard, but we include it due to the lack of a reference. In a special case it appears as \cite[Example 1.2.3]{GK11} without proof.
 
\begin{proof}
First let $X$, $Y$ be affine. Observe that the class of all $A'$ (similarly $B'$) satisfying the claim is closed under finite direct sums, but also under cokernels, using that $\Hom_{\O_X}(-,A)$ and $\Hom_{\O_{X \times Y}}(-,A \boxtimes B)$ are left exact, $\otimes_R$ is exact and that $- \boxtimes B'$ is right exact. Besides the claim is clear for $(A',B')=(\O_X,\O_Y)$. Thus, it also holds when $A'$ and $B'$ are of finite presentation.

Now let $X$ be affine and $Y$ be a quasi-compact \emph{separated} scheme. Choose a finite open affine cover $\{Y_i \to Y\}$. Then we have an exact sequence
\[0 \to \Hom_{\O_Y}(B',B) \to \bigoplus_i \Hom_{\O_{Y_i}}(B'|_{Y_i},B|_{Y_i}) \]
\[\to \bigoplus_{i,j} \Hom_{\O_{Y_i \cap Y_j}}(B'|_{Y_i \cap Y_j},B|_{Y_i \cap Y_j}).\] 
Since $R$ is a field, it stays exact after tensoring with $\Hom_{\O_X}(A',A)$ over $R$. Since $X$, the $Y_i$ as well as their intersections $Y_i \cap Y_j$ are affine and we have already dealt with the affine case, the resulting sequence is isomorphic to
\[0 \to \Hom_{\O_X}(A',A) \otimes_R \Hom_{\O_Y}(B',B) \to \bigoplus_i \Hom_{\O_{X \times Y_i}}(A' \boxtimes B'|_{Y_i},A \boxtimes B|_{Y_i})~~~~\]
\[ \hspace{50mm} \to \bigoplus_{i,j} \Hom_{\O_{X \times Y_i \cap Y_j}}(A' \boxtimes B'|_{Y_i \cap Y_j},A \boxtimes B|_{Y_i \cap Y_j}).\]
Since $\{X \times Y_i \to X \times Y\}$ is an open cover, $\Hom_{\O_{X \times Y}}(A' \boxtimes B',A \boxtimes B)$ is also the kernel of the last homomorphism, which establishes the isomorphism.

If $X$ is affine and $Y$ is quasi-compact and quasi-separated, we can choose a finite open affine cover $\{Y_i \to Y\}$ and use that the $Y_i \cap Y_j$ are separated and also quasi-compact since $Y$ is quasi-separated to deduce as above the isomorphism from the case above. The same technique then works when we generalize to the case that $X$ is separated. Finally we get the case that $X$ is an arbitrary quasi-compact quasi-separated scheme.
\end{proof}

\begin{rem} \label{adjoint}
Let $A \in \Q(X)$, $B \in \Q(Y)$ and $M \in \Q(X \times Y)$. By the usual adjunctions a homomorphism $A \boxtimes B \to M$ on $X \times Y$ corresponds to a homomorphism
\[A \to (\pr_X)_* \underline{\Hom}_{\O_{X \times Y}}((\pr_Y)^* B,M)\]
on $X$. In general the latter is only a sheaf of modules on $X \times Y$. But it is quasi-coherent when $B$ is of finite presentation (using \cite[Proposition 6.7.1]{EGAI} and \autoref{qchom}).
\end{rem}

In the following, we assume that $R$ is a field and that $X,Y$ are qc qs $R$-schemes. We have just seen that $\Q_{\fp}(X) \otimes_R \Q_{\fp}(Y)$ can be seen as a full subcategory of $\Q(X \times_R Y)$. Clearly it is essentially small. Next, we will see that it is a dense subcategory (\autoref{density}).
 
\begin{lemma} \label{bifort}
Let $U \subseteq X$ and $V \subseteq Y$ be quasi-compact open subschemes and let $A' \boxtimes B' \to M|_{U \times V}$ be a homomorphism such that  $A' \in \Q_{\fp}(U)$ and $B' \in \Q_{\fp}(V)$. Then there are $A \in \Q_{\fp}(X)$ and $B \in \Q_{\fp}(Y)$ which extend $A'$ and $B'$ such that the homomorphism extends to a homomorphism $A \boxtimes B \to M$ on $X \times Y$.
\end{lemma}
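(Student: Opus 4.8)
The plan is to reduce the two–dimensional extension problem to two one–dimensional extension problems, one over $X$ and one over $Y$, by applying the tensor–Hom adjunction of \autoref{adjoint} twice. Concretely, I will first extend $A'$ from $U$ to all of $X$ together with the induced homomorphism (producing a homomorphism over $X \times V$), and then symmetrically extend $B'$ from $V$ to all of $Y$. The building block I isolate is the following one–variable statement: \emph{if $Z$ is a qc qs scheme, $W \subseteq Z$ a quasi-compact open, $F' \in \Q_{\fp}(W)$, $Q \in \Q(Z)$ and $\alpha : F' \to Q|_W$ a homomorphism, then there exist $F \in \Q_{\fp}(Z)$ with $F|_W \cong F'$ and a homomorphism $F \to Q$ extending $\alpha$.} I will deduce \autoref{bifort} from this, and then prove the one–variable statement separately.

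For the reduction, consider first the scheme $X \times V$ with its quasi-compact open $U \times V$. Applying \autoref{adjoint} to the projection $\pr_X : X \times V \to X$ (which is qc qs, since $V$ is qc qs over the field $R$) and using that $B'$ is of finite presentation, homomorphisms $\pr_X^* A \otimes \pr_V^* B' \to M|_{X \times V}$ correspond naturally to homomorphisms $A \to Q_1$, where
\[Q_1 := (\pr_X)_* \HOM(\pr_V^* B', M|_{X \times V}) \in \Q(X)\]
is quasi-coherent by \autoref{adjoint}. Flat base change along the open immersion $U \hookrightarrow X$ gives $Q_1|_U \cong (\pr_U)_* \HOM(\pr_V^* B', M|_{U \times V})$, so that $\phi'$ corresponds to a homomorphism $\alpha : A' \to Q_1|_U$. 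The one–variable statement applied to $(X, U, A', Q_1, \alpha)$ yields $A \in \Q_{\fp}(X)$ with $A|_U \cong A'$ and a homomorphism $A \to Q_1$; transporting back through the adjunction produces $\bar\phi : A \boxtimes B' \to M|_{X \times V}$ restricting to $\phi'$ on $U \times V$. Now I repeat the argument with the two factors exchanged: working on $X \times Y$ with open $X \times V$, adjoining out the now globally defined finitely presented module $A$ via \autoref{adjoint} on $\pr_Y : X \times Y \to Y$ and setting $Q_2 := (\pr_Y)_* \HOM(\pr_X^* A, M)$, the homomorphism $\bar\phi$ corresponds to $\beta : B' \to Q_2|_V$. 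A second application of the one–variable statement to $(Y, V, B', Q_2, \beta)$ gives $B \in \Q_{\fp}(Y)$ with $B|_V \cong B'$ and $B \to Q_2$, i.e.\ a homomorphism $\phi : A \boxtimes B \to M$ whose restriction to $X \times V$ is $\bar\phi$, hence whose restriction to $U \times V$ is $\phi'$. This is the desired extension.

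It remains to prove the one–variable statement, and this is where the genuine work lies. First I would extend $F'$ to some $\hat F \in \Q_{\fp}(Z)$ using the standard fact that finitely presented quasi-coherent modules on a quasi-compact open of a qc qs scheme extend to finitely presented quasi-coherent modules (\cite[6.9.10.1]{EGAI}). Since $\hat F$ is of finite presentation and $Q$ is quasi-coherent, $\mathcal H := \HOM(\hat F, Q)$ is quasi-coherent by \autoref{qchom}, and $\alpha$ is a section of $\mathcal H$ over $W$. By Deligne's formula (\cite[Proposition 6.9.17]{EGAI}, used in the proof of \autoref{recon}) there is a finite-type quasi-coherent ideal $I \subseteq \O_Z$ with $V(I) \cap W = \emptyset$, equivalently $I|_W = \O_W$, together with a homomorphism $I \to \mathcal H$, i.e.\ a homomorphism $I \otimes \hat F \to Q$, restricting to $\alpha$ over $W$. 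Since $(I \otimes \hat F)|_W = \O_W \otimes F' \cong F'$, one almost has the result with $F := I \otimes \hat F$; the only remaining point is to arrange that the extended module is again of finite presentation, the tensor product $I \otimes \hat F$ being only of finite type a priori. I would settle this by a graph/cokernel argument on $Z$: after factoring $\alpha$ through a finitely presented submodule of $Q$ using that $F'$ is a finitely presentable object (\autoref{Qcohfin}), one forms the graph of $\alpha$ inside $\hat F \oplus Q_{j_0}$, extends this finite-type submodule to all of $Z$, and clears the cokernel of its first projection — finite type and supported on the closed set $Z \setminus W$ — to obtain a genuinely finitely presented $F$ with the required restriction and morphism.

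The main obstacle is precisely this last one–variable extension, and within it the bookkeeping needed to keep everything of finite presentation rather than merely of finite type: extending the module is routine, but extending the homomorphism forces one either through Deligne's formula (which naturally produces a finite-type ideal, hence a finite-type extension) or through a graph construction, and in both cases the finite-presentation hypothesis must be fed back in via the quasi-compactness of $Z \setminus W$ and the compactness of finitely presented objects (\autoref{Qcohfin}). The reduction in the first two paragraphs, by contrast, is formal once \autoref{adjoint} and flat base change for the projections are in place.
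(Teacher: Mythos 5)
Your reduction to a one-variable extension problem --- using \autoref{adjoint} twice to trade the second tensor factor for a quasi-coherent direct image, and extending in each factor in succession --- is exactly the paper's argument, so the first two paragraphs are correct and match the paper. The divergence is in your treatment of the one-variable statement, and there the proposal has a genuine gap. Note first that the statement you isolate (extend a finitely presented module \emph{together with} a homomorphism into a given quasi-coherent module from a quasi-compact open of a qc qs scheme) is precisely \cite[Lemme 6.9.10.1]{EGAI} --- the very lemma you cite, except that you use it only for its weaker consequence that finitely presented modules extend. The paper simply invokes this lemma in full and is done; had you done the same, your last two paragraphs would be unnecessary.

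As a self-contained proof, your argument for the one-variable statement fails at two points. First, Deligne's formula is available, both in the paper and in the cited \cite[Proposition 6.9.17]{EGAI}, only for \emph{noetherian} schemes, whereas your $Z \in \{X,Y\}$ is merely qc qs; \autoref{bifort} carries no noetherian hypothesis (it is \autoref{prod1}, not \autoref{bifort}, that assumes $X \times_R Y$ noetherian, and even that would not make $X$ or $Y$ themselves noetherian in your intermediate steps). So this step is unjustified as it stands. Second, and more seriously, the finite-presentation repair is not an argument. The factorization of $\alpha$ should be through a finitely presented module \emph{mapping to} $Q$, not a finitely presented \emph{submodule} (on a non-noetherian scheme $Q$ is a filtered colimit of finitely presented modules, but their images in $Q$ are only of finite type). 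More importantly, extending the graph of $\alpha$ as a submodule of $\hat F \oplus Q_{j_0}$ over $Z$ yields only a finite-type submodule --- that is all submodule extension gives outside the noetherian setting --- and ``clearing the cokernel of its first projection'' is not a defined operation: replacing $\hat F$ by the image of the extended graph produces a finite-type module through which the map to $Q$ need not factor, and finite type does not upgrade to finite presentation on a non-noetherian scheme. Closing exactly this gap (by induction over an affine covering, with gluing) is the content of \cite[Lemme 6.9.10.1]{EGAI}, which is why the paper cites it rather than reproving it; your proposal, as written, neither cites it in the needed strength nor supplies a substitute proof.
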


\begin{proof}
We may assume $V=Y$ and therefore $B'=B$, since by symmetry this also gives the case $U=X$ and then we just apply the two cases in succession. By \autoref{adjoint} the given homomorphism corresponds to a homomorphism $A' \to (\pr_U)_* \underline{\Hom}_{\O_{U \times Y}}(\pr_Y^* B,M|_{U \times Y})$ on $U$. The latter is the restriction of the quasi-coherent module $(\pr_X)_* \underline{\Hom}_{\O_{X \times Y}}(\pr_Y^* B,M)$ from $X$ to $U$. By \cite[Lemme 6.9.10.1]{EGAI} there is some $A \in \Q_{\fp}(X)$ extending $A'$ and an extension of the homomorphism on $U$ to a homomorphism $A \to (\pr_X)_* \underline{\Hom}_{\O_{X \times Y}}(\pr_Y^* B,M)$ on $X$. But again this corresponds to a homomorphism $A \boxtimes B \to M$ on $X \times Y$.
\end{proof}

\begin{prop} \label{dense}
The external tensor product
\[\boxtimes : \Q_{\fp}(X) \otimes_R \Q_{\fp}(Y) \to \Q(X \times_R Y)\]
is dense, both in the $R$-linear and the usual sense. In other words, for every quasi-coherent module $M$ on $X \times Y$, we have
\begin{eqnarray*}
M & \cong &  \colim_{A \boxtimes B \to M} (A \boxtimes B) \\&& \\
&  \cong & \int^{(A,B) \in \Q_{\fp}(X) \otimes_R \Q_{\fp}(Y)} \Hom(A \boxtimes B,M) \otimes_R (A \boxtimes B).
\end{eqnarray*}
\end{prop}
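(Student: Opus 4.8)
The plan is to realize the essential image $S := \boxtimes\bigl(\Q_{\fp}(X) \otimes_R \Q_{\fp}(Y)\bigr)$ as a full subcategory of $\Q(X \times_R Y)$ and to apply \autoref{qdense} to it. By \autoref{hombox} the restriction of $\boxtimes$ to $\Q_{\fp}(X) \otimes_R \Q_{\fp}(Y)$ is fully faithful, so $S$ is (equivalent to) an essentially small full subcategory, and the asserted density of $\boxtimes$ in both the usual and the $R$-linear sense — that is, both displayed formulas — is exactly the density of $S$ in the sense of \autoref{qdense}, after transporting the colimit and the coend along the equivalence $\boxtimes : \Q_{\fp}(X) \otimes_R \Q_{\fp}(Y) \xrightarrow{\sim} S$. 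Thus it suffices to verify the two hypotheses of \autoref{qdense}.

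For the amalgamation condition (hypothesis (1), the condition of \autoref{denselin}), note that $S$ is \emph{not} closed under direct sums, so I cannot invoke the ``closed under $\oplus$'' shortcut directly; instead I amalgamate by hand. Given $M \in \Q(X \times_R Y)$ and two objects $f_1 : A_1 \boxtimes B_1 \to M$, $f_2 : A_2 \boxtimes B_2 \to M$ of $S/M$, set $C := (A_1 \oplus A_2) \boxtimes (B_1 \oplus B_2)$. Since $\boxtimes$ is $R$-bilinear and cocontinuous in each variable, $C$ decomposes as $\bigoplus_{i,j} A_i \boxtimes B_j$ and contains $A_1 \boxtimes B_1$ and $A_2 \boxtimes B_2$ as direct summands. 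Defining $C \to M$ to be $f_1, f_2$ on these two summands and $0$ on the cross terms $A_1 \boxtimes B_2$, $A_2 \boxtimes B_1$ produces an object of $S/M$ receiving morphisms from $f_1$ and $f_2$ over $M$, which is the required common upper bound.

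The main work is hypothesis (2): that $S$ generates $\Q(X \times_R Y)$. Here I would use the extension lemma \autoref{bifort}. I choose finite affine open covers $\{U_i\}$ of $X$ and $\{V_j\}$ of $Y$ (possible since $X, Y$ are qc qs), so that $\{U_i \times V_j\}$ is an affine open cover of $X \times_R Y$. For a fixed $M$ and a section $s \in \Gamma(U_i \times V_j, M)$, the corresponding morphism $\O_{U_i} \boxtimes \O_{V_j} = \O_{U_i \times V_j} \to M|_{U_i \times V_j}$ has $A' = \O_{U_i} \in \Q_{\fp}(U_i)$ and $B' = \O_{V_j} \in \Q_{\fp}(V_j)$, so by \autoref{bifort} it extends to a global morphism $A \boxtimes B \to M$ with $A \in \Q_{\fp}(X)$, $B \in \Q_{\fp}(Y)$ whose restriction to $U_i \times V_j$ recovers the original one and hence has $s$ in its image. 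Since forming images commutes with restriction to opens and $U_i \times V_j$ is affine, the sum of the images of all global morphisms $A \boxtimes B \to M$ contains every section over every $U_i \times V_j$, and therefore equals $M$ on the cover; thus $\bigoplus_{A \boxtimes B \to M}(A \boxtimes B) \to M$ is an epimorphism.

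With both hypotheses verified, \autoref{qdense} yields the density of $S$, and the two colimit/coend formulas follow. The only genuine obstacle is hypothesis (2); it is precisely where the field assumption on $R$ and the qc qs hypotheses enter the argument (through \autoref{hombox} and \autoref{bifort}), whereas the amalgamation step (1) is a short formal argument once one resists the temptation to use closure under direct sums.
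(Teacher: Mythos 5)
Your proof is correct and follows essentially the same route as the paper: both verify the two hypotheses of \autoref{qdense} for the (essentially small, fully faithful by \autoref{hombox}) image of $\boxtimes$, using the direct-sum-with-zero-cross-terms construction $(A_1 \oplus A_2) \boxtimes (B_1 \oplus B_2)$ for the amalgamation condition and the extension lemma \autoref{bifort} applied to sections over the affine cover $\{U_i \times V_j\}$ for generation. Your extra remarks — transporting density along the equivalence onto the essential image, and spelling out why the extended morphisms yield an epimorphism $\bigoplus (A \boxtimes B) \to M$ — are just bookkeeping that the paper leaves implicit.
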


\begin{proof}
We apply \autoref{qdense} have to check two conditions. For the first condition, let $\sigma : A \boxtimes B \to M$ and $\sigma' : A' \boxtimes B' \to M$ be two morphisms. Define
\[\tau : (A \oplus A') \boxtimes (B \oplus B') \cong A \boxtimes B \oplus A \boxtimes B' \oplus A' \boxtimes B \oplus A' \boxtimes B' \longrightarrow M\]
by $\tau|_{A \boxtimes B} = \sigma$, $\tau|_{A' \boxtimes B'} = \sigma'$, $\tau|_{A \boxtimes B'} = 0$ and $\tau|_{A' \boxtimes B} = 0$. Then we have a commutative diagram:
\[\xymatrix{ & A \boxtimes B \ar[dl] \ar[dr] & \\ (A \oplus A') \boxtimes (B \oplus B') \ar[rr] && M \\ & A' \boxtimes B' \ar[ul] \ar[ur] & }\]
For the second condition, we choose open affine coverings $X = \bigcup_i U_i$, $Y = \bigcup_j V_j$. Then $X \times Y = \bigcup_{i,j} U_i \times V_j$ is an open affine covering. Consider a local section $s \in \Gamma(U_i \times V_j,M)$. It induces a homomorphism $\O_{U_i} \boxtimes \O_{V_j} \to M|_{U_i \times V_j}$ via $1 \boxtimes 1 \mapsto s$. By \autoref{bifort} it extends to a homomorphism $A \boxtimes B \to M$. Since $s$ has a preimage, we are done.
\end{proof}

\begin{cor}
The canonical functor
\[\Hom_{c\otimes/R}(\Q(X \times_R Y),\C) \to \Hom_{c\otimes/R}(\Q(X),\C) \times \Hom_{c\otimes/R}(\Q(Y),\C)\]
is fully faithful.
\end{cor}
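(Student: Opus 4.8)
The plan is to exploit that the external tensor product exhibits $\Q_{\fp}(X)\otimes_R\Q_{\fp}(Y)$ as a dense and fully faithful subcategory of $\Q(X\times_R Y)$, so that cocontinuous tensor functors out of $\Q(X\times_R Y)$ — and the natural transformations between them — are completely controlled by their restriction along $\boxtimes$. Write $\D:=\Q(X\times_R Y)$ and let $S\subseteq\D$ be the essential image of $\boxtimes$. By \autoref{dense} every $M\in\D$ is the canonical colimit $\colim_{A\boxtimes B\to M}A\boxtimes B$, and by \autoref{hombox} the functor $\boxtimes$ is fully faithful; a standard density argument then shows that restriction along $\boxtimes$ is a fully faithful functor from the cocontinuous $R$-linear functors $\D\to\C$ to the $R$-linear functors $\Q_{\fp}(X)\otimes_R\Q_{\fp}(Y)\to\C$. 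In particular, for cocontinuous tensor functors $H,H':\D\to\C$, a morphism $\beta:H\to H'$ is determined by, and may be reconstructed from, its components on the objects $A\boxtimes B$.

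First I would record how the tensor structure decomposes these components. Since $A\boxtimes B=\pr_X^*A\otimes\pr_Y^*B$, the structure isomorphisms of $H$ give natural isomorphisms $H(A\boxtimes B)\cong (H\pr_X^*)(A)\otimes(H\pr_Y^*)(B)$, and similarly for $H'$. Transporting components along these isomorphisms, a natural transformation $H\to H'$ restricted to $S$ becomes a natural family $F_X(A)\otimes F_Y(B)\to F_X'(A)\otimes F_Y'(B)$, where $F_X:=H\pr_X^*$, $F_Y:=H\pr_Y^*$ and $F_X',F_Y'$ are the corresponding functors for $H'$. This is precisely the bridge to the pair $(H\pr_X^*,H\pr_Y^*)$ that the canonical functor produces.

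For faithfulness, suppose $\beta,\beta':H\to H'$ have the same image $(\alpha_X,\alpha_Y)$. On an object $A\boxtimes B$ with $A\in\Q_{\fp}(X)$, $B\in\Q_{\fp}(Y)$ one computes the component of $\beta$ to be $\alpha_X(A)\otimes\alpha_Y(B)$ under the isomorphisms above (using $A\boxtimes B\cong(A\boxtimes\O_Y)\otimes(\O_X\boxtimes B)$ and that $\beta$ is monoidal), and likewise for $\beta'$; hence $\beta$ and $\beta'$ agree on $S$ and therefore on all of $\D$ by density. For fullness, given morphisms of tensor functors $\alpha_X:H\pr_X^*\to H'\pr_X^*$ and $\alpha_Y:H\pr_Y^*\to H'\pr_Y^*$, I would set $\gamma_{A,B}:=\alpha_X(A)\otimes\alpha_Y(B)$ on $S$ (transported through the structure isomorphisms). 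Naturality on $S$ reduces, using that morphisms in $\Q_{\fp}(X)\otimes_R\Q_{\fp}(Y)$ are $R$-linear combinations of pure tensors $f\otimes g$ (this is where \autoref{hombox} pins down the morphisms of $S$), to the naturality of $\alpha_X$ and $\alpha_Y$ together with bifunctoriality of $\otimes$. By the density statement of the first paragraph, $\gamma$ extends uniquely to a natural transformation $\beta:H\to H'$ with $\beta|_S=\gamma$.

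It then remains to check that $\beta$ is a morphism of tensor functors and that it restricts to the given $(\alpha_X,\alpha_Y)$. Both are verified on the dense subcategory $S$ and propagated by cocontinuity: monoidality follows from $(A\boxtimes B)\otimes(A'\boxtimes B')\cong(A\otimes A')\boxtimes(B\otimes B')$ and the fact that $\alpha_X,\alpha_Y$ respect the tensor and unit constraints, while $\beta\pr_X^*=\alpha_X$ follows from $\pr_X^*A=A\boxtimes\O_Y$ together with $\alpha_Y(\O_Y)=\id$ (unit compatibility of $\alpha_Y$), after comparing two natural transformations between cocontinuous functors $\Q(X)\to\C$ on the dense generator $\Q_{\fp}(X)$ (\autoref{qdense}, \autoref{Qcohfin}); symmetrically for $\alpha_Y$. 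I expect the main obstacle to be the coherence bookkeeping in the two extension-by-density steps: one must confirm that the uniquely induced component $\beta_M$ on a general $M=\colim A\boxtimes B$ is genuinely compatible with the monoidal structure maps, which amounts to checking that a large diagram of colimits and structure isomorphisms commutes. This is routine but tedious, and is the only place where care is needed; everything else is formal manipulation with density and the monoidal constraints.
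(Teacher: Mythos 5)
Your proposal is correct and follows essentially the same route the paper intends: the corollary is stated there without proof as an immediate consequence of \autoref{dense} and \autoref{hombox}, i.e.\ density of the fully faithful external tensor product forces any morphism of cocontinuous tensor functors to be determined by (and reconstructible from) its components $\alpha_X(A)\otimes\alpha_Y(B)$ on the objects $A\boxtimes B$. Your explicit verification of naturality on pure tensors (using the hom-formula of \autoref{hombox}) and the propagation of monoidality and of $\beta\pr_X^*\cong\alpha_X$ along the canonical colimit presentations is exactly the bookkeeping the paper leaves implicit.
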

 
It remains to prove essentially surjectivity, i.e. that for all $F : \Q(X) \to \C$ and $G : \Q(Y) \to \C$ there is some $H : \Q(X \times_R Y) \to \C$ such that $H \pr_X^* \cong F$ and $H \pr_Y^* \cong G$.

\subsubsection*{Construction of $H$}

Given two $R$-linear cocontinuous tensor functors
\[F : \Q(X) \to \C,~ G : \Q(Y) \to \C,\]
\autoref{dense} tells us how the corresponding functor
\[H : \Q(X \times_R Y) \to \C\]
has to look like: We first get an $R$-linear tensor functor (not cocontinuous)
\[F \otimes_R G : \Q_{\fp}(X) \otimes_R \Q_{\fp}(Y) \to \C,~ A \boxtimes B \mapsto F(A) \otimes G(B)\]
and then define the $R$-linear functor $H : \Q(X \times_R Y) \to \C$ to be the left Kan extension of $F \otimes_R G$ along $\Q(X) \otimes_R \Q(Y) \hookrightarrow \Q(X \times_R Y)$. In other words:
\[H(M) := \int^{A \in \Q_{\fp}(X), B \in \Q_{\fp}(Y)} \Hom(A \boxtimes B,M) \otimes_R (F(A) \otimes G(B)).\]
Actually, by \autoref{denselin}, this simplifies to
\[H(M) = \colim_{A \boxtimes B \to M} (F(A) \otimes G(B)).\]
But this description somehow hides the linearity of $H$. In any case, it is not clear at all why $H$ should be a cocontinuous tensor functor! Though, some properties are easy to check: Since $H$ is $R$-linear, $H$ preserves finite direct sums. It is also clear that $H$ preserves directed colimits, essentially because $A \boxtimes B$ is of finite presentation on $X \times_R Y$ when $A$ resp. $B$ are of finite presentation on $X$ resp. $Y$. Hence, $H$ preserves arbitrary direct sums.

For $A' \in \Q_{\fp}(X)$ and $B' \in \Q_{\fp}(Y)$ we have
\begin{eqnarray*}
&& H(A' \boxtimes B')\\
 & = & \int\limits^{A,B} (\Hom(A,A') \otimes_R \Hom(B,B')) \otimes_R (F(A) \otimes G(B)) \\
& \cong & \int\limits^{A} \Hom(A,A') \otimes_R F(A)  ~ \otimes ~ \int\limits^{B} \Hom(B,B') \otimes_R G(B) \\
& \stackrel{\text{Yoneda}}{\cong} & F(A') \otimes G(B').
\end{eqnarray*}
Hence, $H$ extends $F \otimes_R G$. In particular, $H \pr_X^* \cong F$ and $H \pr_Y^* \cong G$ (just as functors). Next, we can endow $H$ with the structure of a lax monoidal functor: From the isomorphisms above we get $H(1) \cong 1$. If $M,\, M' \in \Q(X \times_R Y)$, then $H(M) \otimes H(M')$ is isomorphic to
\[\int\limits^{A,A',B,B'} (\Hom(A \boxtimes B,M) \otimes_R \Hom(A' \boxtimes B',M')) \otimes_R (F(A \otimes A') \otimes G(B \otimes B')).\]
Using $A'' = A \otimes A'$ and $B'' = B \otimes B'$, this admits a canonical morphism to
\[\int\limits^{A'',B''} \Hom(A'' \boxtimes B'',M \otimes M') \otimes_R (F(A'') \otimes G(B'')) = H(M \otimes M').\]
If we already knew that $H$ preserved cokernels and therefore was  cocontinuous, this morphism would be an isomorphism, because then (using Proposition \ref{dense}), it may be reduced to the case $M = A \boxtimes B$ and $M' = A' \boxtimes B'$ and we already know
\[H((A \boxtimes B) \otimes (A' \boxtimes B')) \cong  H((A \otimes A') \boxtimes (B \otimes B')) \cong  F(A \otimes A') \otimes G(B \otimes B')\]
\[\cong  (F(A) \otimes G(B)) \otimes (F(A') \otimes G(B')) \cong H(A \boxtimes B) \otimes H(A' \boxtimes B').\]
Thus, if $H$ preserved cokernels, then $H$ would be a cocontinuous tensor functor and $(F,G) \mapsto H$ would provide a functor which is inverse to the canonical one.

But showing that the functor $H$ preserves cokernels seems to be intractable. Therefore, we will use a more ad hoc definition below. It already seems to be hard to prove that $H$ preserves epimorphisms. In fact, already the special case of epimorphisms between external tensor products $A' \boxtimes B' \twoheadrightarrow A \boxtimes B$ remains unclear to the author. Although we know how to write down all these morphisms, we need a reformulation of the epi condition which treats $\Q(X)$ and $\Q(Y)$ separately in order to be translated by $F$ and $G$. At least the following special case is easy to deal with: A \emph{diagonal} homomorphism $\alpha \boxtimes \beta : A' \boxtimes B' \to A \boxtimes B$ is an epimorphism if and only if $A=0$ or $B=0$ or $A,B \neq 0$ and $\alpha,\beta$ are epimorphisms. It is then clear that $F(A') \otimes G(B') \to F(A) \otimes G(B)$ is also an epimorphism.

\subsubsection{Concrete construction of $H$}

For a qc qs scheme $Z$ we have
\[\Hom_{c\otimes}(\Q(Z),\C) \simeq \Hom_{fc\otimes}(\Q_{\fp}(Z),\C).\]
by \autoref{qcohind}. Hence, it suffices to construct a right exact $R$-linear tensor functor $H : \Q_{\fp}(X \times Y) \to \C$.
  
\begin{lemma} \label{resol}
Let $M \in \Q(X \times Y)$. Then there are $A \in \Q(X)$ and $B \in \Q(Y)$ with an epimorphism $A \boxtimes B \twoheadrightarrow M$. If $M$ is of finite type, then we can arrange $A \in \Q_{\fp}(X)$ and $B \in \Q_{\fp}(Y)$. If $M$ is of finite presentation, there is an exact sequence
\[A' \boxtimes B' \to A \boxtimes B \to M \to 0\]
with $A,A' \in \Q_{\fp}(X)$ and $B,B' \in \Q_{\fp}(Y)$.
\end{lemma}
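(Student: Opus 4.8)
The plan is to deduce all three parts from the generation statement already established inside the proof of \autoref{dense}, together with the extension result \autoref{bifort}, using a simple diagonal-summand trick to replace a direct sum of external tensor products by a single one. For part (1), I would start from the fact verified in \autoref{dense} that $\Q_{\fp}(X) \otimes_R \Q_{\fp}(Y)$ generates $\Q(X \times_R Y)$, so there is an epimorphism $\bigoplus_{i \in I}(A_i \boxtimes B_i) \twoheadrightarrow M$ with each $A_i \in \Q_{\fp}(X)$ and $B_i \in \Q_{\fp}(Y)$. Setting $A := \bigoplus_i A_i \in \Q(X)$ and $B := \bigoplus_i B_i \in \Q(Y)$, the cocontinuity of $\pr_X^*$, $\pr_Y^*$ and of $\otimes$ gives $A \boxtimes B \cong \bigoplus_{i,j} A_i \boxtimes B_j$. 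Projecting onto the diagonal summands ($i=j$) yields a split epimorphism $A \boxtimes B \twoheadrightarrow \bigoplus_i A_i \boxtimes B_i$, and composing it with the given epimorphism produces the desired $A \boxtimes B \twoheadrightarrow M$.

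For part (2), I would control the finiteness explicitly. Since $X$ and $Y$ are quasi-compact, I fix finite affine open covers $X = \bigcup_i U_i$ and $Y = \bigcup_j V_j$, so that $X \times_R Y = \bigcup_{i,j} U_i \times V_j$ is a finite affine cover. As $M$ is of finite type, on each $U_i \times V_j$ it is generated by finitely many sections; a section $s$ there defines $\O_{U_i} \boxtimes \O_{V_j} \to M|_{U_i \times V_j}$ via $1 \boxtimes 1 \mapsto s$, which by \autoref{bifort} extends to a global homomorphism $A_s \boxtimes B_s \to M$ with $A_s \in \Q_{\fp}(X)$ and $B_s \in \Q_{\fp}(Y)$. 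Only finitely many such $s$ are needed, and their combined map is surjective on each $U_i \times V_j$, hence globally. Taking the finite direct sums $A := \bigoplus_s A_s$ and $B := \bigoplus_s B_s$ (again of finite presentation) and applying the diagonal-projection trick of part (1) gives an epimorphism $A \boxtimes B \twoheadrightarrow M$ with $A$ and $B$ finitely presented.

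For part (3), I would present the kernel. First note that $X \times_R Y$ is again qc qs, being a fiber product of qc qs schemes over the field $R$. Given $M$ of finite presentation, part (2) supplies an epimorphism $A \boxtimes B \twoheadrightarrow M$ with $A \in \Q_{\fp}(X)$ and $B \in \Q_{\fp}(Y)$, where $A \boxtimes B$ is of finite type on $X \times_R Y$. Let $K := \ker(A \boxtimes B \to M)$. Restricting to the finite affine cover reduces the claim that $K$ is of finite type to the standard commutative-algebra fact that the kernel of a surjection from a finite-type module onto a finitely presented module is finitely generated. Thus $K$ is of finite type, and part (2) applied to $K$ yields an epimorphism $A' \boxtimes B' \twoheadrightarrow K$ with $A' \in \Q_{\fp}(X)$ and $B' \in \Q_{\fp}(Y)$. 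Composing with the inclusion $K \hookrightarrow A \boxtimes B$ gives a homomorphism $A' \boxtimes B' \to A \boxtimes B$ with image $K$, i.e. the exact sequence $A' \boxtimes B' \to A \boxtimes B \to M \to 0$.

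The main obstacle is not any single deep step but the conjunction of the finiteness bookkeeping with the fact, already isolated as \autoref{bifort}, that a homomorphism out of an external tensor product defined over an open subscheme extends to one over the whole scheme; this is exactly what lets me pass from local generators to genuinely global external tensor products. The conceptual point that a \emph{sum} $\bigoplus_i A_i \boxtimes B_i$ of external tensor products can be replaced by a \emph{single} external tensor product, via the diagonal summand of $\bigl(\bigoplus_i A_i\bigr) \boxtimes \bigl(\bigoplus_i B_i\bigr)$, is the trick that makes the statement hold at all, and I expect the verification that $K$ is of finite type (hence reachable by part (2)) to be the only place requiring genuine care.
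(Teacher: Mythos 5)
Your proposal is correct and follows essentially the same route as the paper: generation by external tensor products of finitely presented modules (coming from the proof of \autoref{dense}, which rests on \autoref{bifort}), the diagonal-summand trick replacing $\bigoplus_i A_i \boxtimes B_i$ by a quotient of $\bigl(\bigoplus_i A_i\bigr) \boxtimes \bigl(\bigoplus_i B_i\bigr)$, and part (3) obtained by applying part (2) to the kernel of $A \boxtimes B \twoheadrightarrow M$. The only difference is that you make explicit two points the paper leaves implicit, namely why finitely many summands suffice when $M$ is of finite type and why the kernel is of finite type when $M$ is of finite presentation.
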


\begin{proof}
By \autoref{dense} there is an epimorphism $\bigoplus_i (A_i \boxtimes B_i) \twoheadrightarrow M$ for certain $A_i \in \Q_{\fp}(X)$ and $B_i \in \Q_{\fp}(Y)$. If $M$ is of finite type, a finite direct sum already suffices. Now take $A = \bigoplus_i A_i$ and $B = \bigoplus_i B_i$. Their external tensor product $A \boxtimes B = \bigoplus_{i,j} A_i \boxtimes B_j$ projects onto $\bigoplus_i A_i \boxtimes B_i$. This proves the first and the second claim. For the third, apply the second to the kernel of $A \boxtimes B \twoheadrightarrow M$.
\end{proof}

Such an exact sequence will be called a \emph{$\boxtimes$-presentation} of $M$. We will fix one for every $M$.
 
\begin{defi}
Let $F : \Q(X) \to \C$ and $G : \Q(Y) \to \C$ be cocontinuous tensor functors. Given an object $M \in \Q_{\fp}(X \times Y)$ with its $\boxtimes$-presentation
\[A' \boxtimes B' \xrightarrow{\sigma} A \boxtimes B \to M \to 0,\]
we define $H(M) \in \C$ to be the cokernel of $(F \otimes_R G)(\sigma)$.

Therefore, if $\sigma = \sum_i (\alpha_i \boxtimes \beta_i)$ with $\alpha_i : A' \to A$ and $\beta_i : B' \to B$, this induces the morphism
\[(F \otimes_R G)(\sigma) = \sum_i F(\alpha_i) \otimes G(\beta_i) : F(A') \otimes G(B') \to F(A) \otimes G(B),\]
and there is an exact sequence in $\C$
\[F(A') \otimes G(B') \to F(A) \otimes G(B) \to H(M) \to 0.\]
\end{defi}

The advantage of this definition is that we can compute $H(M)$ once we have a $\boxtimes$-presentation of $M$. Of course we have to prove that $H(M)$ does not depend on the $\boxtimes$-presentation on $M$, that $H(-)$ is a functor, which is linear, right exact and besides carries the structure of a tensor functor. Unfortunately, this remains unclear to the author. At least, we have the following result:

\begin{thm} \label{prod1}
Let $X,Y$ be qc qs schemes over a field $R$ such that $X \times_R Y$ is noetherian. If $\C$ is a cocomplete $R$-linear tensor category, then
\[\Hom_{c\otimes/R}(\Q(X \times_R Y),\C) \to \Hom_{c\otimes/R}(\Q(X),\C) \times \Hom_{c\otimes/R}(\Q(Y),\C)\]
is fully faithful and the essential image consists of those pairs $(F,G)$ with the following property $(\star)$: Given an exact sequence
\[A' \boxtimes B' \to A \boxtimes B \to C \boxtimes D \to 0,\]
with $A',A,C \in \Q_{\fp}(X)$ and $B',B,D \in \Q_{\fp}(Y)$, then
\[F(A') \otimes G(B') \to F(A) \otimes G(B) \to F(C) \otimes G(D) \to 0\]
also is exact.
\end{thm}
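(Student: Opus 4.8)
The plan is to deduce the essential-image statement from the Extension Theorem \autoref{extend} (whose hypotheses are packaged in \autoref{extend2}), exploiting that the noetherian hypothesis on $X \times_R Y$ makes $\Q_{\fp}(X \times_R Y)$ an \emph{abelian} tensor category. Fully faithfulness has already been established (it is the corollary following \autoref{dense} and uses only that $R$ is a field), so the genuinely new content is the characterization of the essential image; the approach below in fact re-derives fully faithfulness as a byproduct.

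First I would dispose of the necessity of $(\star)$, which is formal. Suppose $(F,G)$ lies in the essential image, say $F \cong H \pr_X^*$ and $G \cong H \pr_Y^*$ for a cocontinuous tensor functor $H$. Then $H$ is right exact, and for finitely presented $A,B$ we have $H(A \boxtimes B) = H(\pr_X^* A \otimes \pr_Y^* B) \cong F(A) \otimes G(B)$ since $H$ is a tensor functor. Applying the right exact $H$ to an exact sequence $A' \boxtimes B' \to A \boxtimes B \to C \boxtimes D \to 0$ therefore yields precisely the sequence of $(\star)$, which is hence exact.

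The sufficiency direction is the heart of the matter. Set $\A := \Q_{\fp}(X \times_R Y)$, which is abelian because $X \times_R Y$ is noetherian, and let $\A' \subseteq \A$ be the full replete subcategory generated by the external tensor products $A \boxtimes B$ with $A \in \Q_{\fp}(X)$ and $B \in \Q_{\fp}(Y)$. I would first verify that $\A'$ is a full replete \emph{tensor} subcategory satisfying the hypotheses of \autoref{extend2}: it contains $\O_{X \times Y} = \O_X \boxtimes \O_Y$ and is closed under $\otimes$ since $(A \boxtimes B) \otimes (C \boxtimes D) \cong (A \otimes C) \boxtimes (B \otimes D)$; condition $(1)$, $0 \in \A'$, is trivial; condition $(2)$, that every object of $\A$ is an epimorphic image of an object of $\A'$, is the finite-presentation case of \autoref{resol}; and the amalgamation condition $(3)$ of \autoref{denselin} is supplied by the argument already used in \autoref{dense}, taking $(A_1 \oplus A_2) \boxtimes (B_1 \oplus B_2)$. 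Now, given $(F,G)$ satisfying $(\star)$, the restrictions of $F$ and $G$ to $\Q_{\fp}(X)$ and $\Q_{\fp}(Y)$ are $R$-linear tensor functors, and the $R$-bilinear assignment $(A,B) \mapsto F(A) \otimes G(B)$ factors through the full embedding $\Q_{\fp}(X) \otimes_R \Q_{\fp}(Y) \hookrightarrow \A$ of \autoref{hombox} to give an $R$-linear tensor functor $\Phi : \A' \to \C$ with $\Phi(A \boxtimes B) = F(A) \otimes G(B)$. The hypothesis $(\star)$ is literally the exactness property demanded by \autoref{extend2} for $\Phi$, so by \autoref{extend} the functor $\Phi$ extends to a right exact $R$-linear tensor functor $\overline{\Phi} : \A \to \C$; by \autoref{qcohind} this corresponds to a cocontinuous $R$-linear tensor functor $H : \Q(X \times_R Y) \to \C$.

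It then remains to identify $H \pr_X^* \cong F$ and $H \pr_Y^* \cong G$. Since $\overline{\Phi}$ restricts to $\Phi$ on $\A'$, for finitely presented $A$ we get $H(A \boxtimes \O_Y) \cong F(A) \otimes G(\O_Y) \cong F(A)$, so $H \pr_X^*$ and $F$ agree on the finitely presentable objects of $\Q(X)$; as both are cocontinuous and these objects generate $\Q(X)$ under colimits (\autoref{Qcohfin}, \autoref{qcohind}), the two functors agree, and symmetrically for $G$. I expect the main obstacle to be bookkeeping rather than mathematics: one must confirm that the theorem's relation $(\star)$ is exactly the exactness property $(\star)$ of \autoref{extend2} applied to $\Phi$, and track the chain of equivalences (\autoref{qcohind}, \autoref{extend}, \autoref{hombox}) so that the resulting correspondence sends $H$ to $(H\pr_X^*, H\pr_Y^*)$ on the nose. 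No delicate estimate or new construction is needed beyond the machinery already assembled, the noetherian hypothesis entering solely to guarantee abelianness of $\A$.
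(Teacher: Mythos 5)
Your proposal is correct and follows essentially the same route as the paper: the paper's (very terse) proof is precisely to apply \autoref{extend} to the full tensor subcategory $\boxtimes : \Q_\fp(X) \otimes_R \Q_\fp(Y) \hookrightarrow \Q_\fp(X \times_R Y)$, citing \autoref{resol} for the epimorphism condition and the proof of \autoref{dense} for the amalgamation condition — exactly the verifications you spell out. Your additional details (necessity of $(\star)$, fullness of $\boxtimes$ via \autoref{hombox}, passage to $\Q(X \times_R Y)$ via \autoref{qcohind}, and the identification $H\pr_X^* \cong F$, $H\pr_Y^* \cong G$) are all implicit in the equivalence-of-categories statement of \autoref{extend} and fill in the paper's argument faithfully.
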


\begin{proof}
We may apply \autoref{extend} to the full subcategory
\[\boxtimes : \Q_\fp(X) \otimes_R \Q_\fp(Y) \hookrightarrow \Q_\fp(X \times_R Y).\]
The assumptions are satisfied because of \autoref{resol} and the proof of \autoref{dense}.
\end{proof}

Notice that we only need $X \times_R Y$ noetherian so that $\Q_{\fp}(X \times_R Y)$ is abelian (which might be superfluous using Sch\"appi's notion of ind-abelian categories, see \cite{Sch12b}). In the setting considered by Sch\"appi, we can prove $(\star)$ directly:

\begin{thm} \label{prod2}
With the above notations, assume that $X,Y$ have the strong resolution property. If $\C$ has the property that directed colimits are exact and equalizers exists (for example when $\C$ is locally finitely presentable), then $(\star)$ is satisfied for all $F : \Q(X) \to \C$ and $G : \Q(Y) \to \C$. Hence\marginpar{Therefore->Hence}, $\Q(X \times_R Y)$ is the $2$-coproduct of $\Q(X)$ and $\Q(Y)$ in the $2$-category of locally finitely presentable $R$-linear tensor categories. \marginpar{coproduct -> 2-coproduct.}
\end{thm}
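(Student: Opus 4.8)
The plan is to prove the exactness property $(\star)$ directly and then read off the $2$-coproduct statement. Full faithfulness of the canonical comparison functor is already available: it follows from the density of the external tensor product (\autoref{dense}) together with \autoref{hombox}. Hence the substance of the theorem is essential surjectivity. In the noetherian case \autoref{prod1} obtained this by feeding $(\star)$ into the extension theorem \autoref{extend2}; since neither $(\star)$ nor the concrete construction of $H$ via $\boxtimes$-presentations mentions the noetherian hypothesis, once $(\star)$ holds for all $F,G$ the same construction yields the inverse functor $H$. The verification that $H$ is a well-defined right exact linear tensor functor proceeds as in the nine steps of \autoref{extend2}, now carried out inside $\Q(X\times_R Y)\simeq\Ind(\Q_{\fp}(X\times_R Y))$ (using \autoref{qcohind}) rather than in an abelian category; this is precisely where $\C$ being locally finitely presentable, i.e. the role of Sch\"appi's ind-abelian formalism, is used. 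So I would spend the bulk of the argument on $(\star)$.

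For $(\star)$ the two structural inputs are: (i) for locally free sheaves $V$ on $X$ and $W$ on $Y$ the object $F(V)\otimes G(W)$ is dualizable in $\C$, because $V$ and $W$ are dualizable (\autoref{dual-char}) and dualizable objects together with their duals are preserved by any tensor functor (as in \autoref{dual-flat} and \autoref{dual-inv}); and (ii) a short exact sequence of locally free sheaves is locally split, so its dual is again right exact, whence it is pure exact by \autoref{pure}. Applying a right exact tensor functor to such a sequence gives a right exact sequence of dualizable objects whose dual is still right exact (the functor preserves cokernels and duals), so \autoref{pure}, applied a second time \emph{in} $\C$, shows the image is again (pure) exact. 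This settles $(\star)$ when all six sheaves are locally free: writing the right exact sequence as $0\to K\to A\boxtimes B\to C\boxtimes D\to 0$ with an epimorphism $A'\boxtimes B'\twoheadrightarrow K$, both short exact sequences are of locally free sheaves (a surjection of finite locally free sheaves is locally split, so $K$ and the kernel are locally free), hence pure, hence preserved by $\Phi\colon A\boxtimes B\mapsto F(A)\otimes G(B)$.

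The reduction of the general coherent case to this locally free case is where the strong resolution property of $X$ and $Y$ is used. By \autoref{lokfreicolim} every coherent sheaf on $X$ (resp.\ $Y$) is a colimit of the locally free sheaves mapping to it, and $\boxtimes=\pr_X^*(-)\otimes\pr_Y^*(-)$, $F$, $G$ and $\otimes$ are all cocontinuous, so $F(A)\otimes G(B)\cong\colim F(P)\otimes G(W)$ over locally free $P\to A$, $W\to B$, and likewise for $C\boxtimes D$. I would resolve the whole sequence $A'\boxtimes B'\to A\boxtimes B\to C\boxtimes D\to 0$ by external tensor products of locally free sheaves and pass to the colimit, invoking exactness of directed colimits and the existence of equalizers in $\C$ to transport the purity conclusion from the locally free case to the general one.

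The hard part will be exactly the point the text flags as unresolved in general: controlling \emph{epimorphisms} between external tensor products, that is, the right exactness half of $(\star)$. The density supplied by \autoref{lokfreicolim} only gives the amalgamation condition (condition 1 of \autoref{denselin}), not filteredness of the indexing comma categories, so the hypothesis that directed colimits are exact in $\C$ cannot be applied to the naive colimits above. The real work is therefore to choose the locally free resolutions of $A,B,C,D$ coherently enough — effectively a K\"unneth/\v{C}ech bookkeeping over finite affine covers of $X$ and $Y$, in the spirit of the proofs of \autoref{bifort} and \autoref{dense} — so that the comparison map $F(A)\otimes G(B)\to F(C)\otimes G(D)$ is exhibited as a genuinely \emph{filtered} colimit of the manageable locally free maps, after which purity and exactness of directed colimits in $\C$ close the argument. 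Securing this coherent, filtered presentation is the crux, and it is precisely what the strong resolution property buys beyond the bare density used in the noetherian proof.
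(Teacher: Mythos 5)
Your overall plan---verify $(\star)$ directly by reducing to locally free sheaves via purity and exactness of directed colimits---is not what the paper does, and more importantly it is not completed: you yourself flag the decisive step (exhibiting the comparison maps as \emph{filtered} colimits of locally free data, so that exactness of directed colimits in $\C$ can be invoked) as ``the crux'' and leave it open. A proposal that defers exactly the step it identifies as intractable has a genuine gap, not merely a rough edge. There is also a local problem already in your locally free case: the kernel $K$ of $A\boxtimes B\to C\boxtimes D$ need not itself be an external tensor product, so you cannot apply your assignment $\Phi(A\boxtimes B)=F(A)\otimes G(B)$ to the two short exact sequences into which you split the given right exact sequence---$\Phi$ is only defined on the image of $\boxtimes$, and extending it to objects like $K$ is precisely the functor $H$ you are trying to build, so the argument is circular as stated.

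The paper's proof of \autoref{prod2} avoids resolving sheaves altogether and runs through descent theory instead: choose affine presentations $p : P \to X$ and $q : Q \to Y$; by \autoref{adamsspec} (this is where the strong resolution property enters, via the Lazard--Govorov variant \autoref{lazard}) the algebras $A = p_*\O_P$ and $B = q_*\O_Q$ are \emph{special} descent algebras; special descent algebras are preserved by any cocontinuous linear tensor functor (\autoref{specstab}), and the hypotheses on $\C$ (equalizers, exact directed colimits) are used exactly once, in \autoref{specdes}, to conclude that $A' = F(A)$, $B' = G(B)$ and hence $A' \otimes B'$ are descent algebras in $\C$. Since $P,Q$ are affine, \autoref{mod-BW} yields a cocontinuous tensor functor $\Q(P \times Q) \to \M(A' \otimes B')$; the composite $\Q(X \times Y) \to \Q(P \times Q) \to \M(A' \otimes B')$ factors through the category of descent data (checked on the dense subcategory $\Q(X) \otimes \Q(Y)$, where it even factors through $\C$), and therefore descends to a cocontinuous tensor functor $H : \Q(X \times Y) \to \C$ extending $F \otimes G$. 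The property $(\star)$ then falls out a posteriori, since $H$ is right exact. Note that purity (\autoref{pure}) does occur in the paper's argument---but inside the proof of \autoref{specdes}, applied to the directed system of dualizable objects approximating the descent algebra, not to resolutions of arbitrary coherent sheaves; your instinct about which lemmas matter was sound, but they must be assembled around descent, which is exactly what eliminates the filteredness bookkeeping you could not supply.
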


\begin{proof}
Choose presentations $p : P \to X$ and $q : Q \to Y$ (so that $P,Q$ are affine and $p,q$ are faithfully flat and affine). Then $p \times q : P \times Q \to X \times Y$ is a presentation. Let $A = p_* \O_P$ and $B = q_* \O_Q$. Then $A':=F(A)$ is a descent algebra by \autoref{adamsspec} and \autoref{specdes}. The same holds for $B':=G(B)$ and then for $A' \otimes B'$, too. The rest is formal: $F$ induces a cocontinuous tensor functor
\[\Q(P) \cong \M(A) \to \M(A') \to \M(A' \otimes B'). \]
Similarly, $G$ induces $\Q(Q) \to \M(A' \otimes B')$. Since $P,Q$ are affine, these correspond to a cocontinuous tensor functor $\Q(P \times Q) \to \M(A' \otimes B')$ (\autoref{mod-BW}). We claim that the composition
\[\Q(X \times Y) \to \Q(P \times Q) \to \M(A' \otimes B')\]
actually factors through the category of descent data. Since the external tensor product $\Q(X) \otimes \Q(Y) \to \Q(X \times Y)$ is fully faithful and dense, it suffices to do this for $\Q(X) \otimes \Q(Y) \to \M(A' \otimes B')$. But this even factors as
\[\Q(X) \otimes \Q(Y) \xrightarrow{F \otimes G} \C \to \M(A' \otimes B').\]

We conclude that $\Q(X \times Y) \to \M(A' \otimes B')$ lifts to a cocontinuous tensor functor $H : \Q(X \times Y) \to \C$. We claim that it extends $F \otimes G$, i.e. that there are natural isomorphisms $H(C \boxtimes D) \cong F(C) \otimes G(D)$. It suffices to prove this after tensoring with $A' \otimes B'$ (and verifying a compatibility condition), but then both sides factor through $\Q(P \times Q)$ and the isomorphism holds by construction. In particular, $H \pr_X^* \cong F$ and $H \pr_Y^* \cong G$, so that we are done by \autoref{prod1}.
\end{proof}
\section{Tangent tensor categories} \label{tangentt}

Let $S$ be a base scheme and let $X$ be an $S$-scheme. We abbreviate
\[X[\e] := X \times_{\Z} \Z[\e]/(\e^2)\]
and consider it as an $S$-scheme (not as an $S[\e]$-scheme). There is a closed immersion $X \hookrightarrow X[\e]$, whose corresponding ideal sheaf is nilpotent. Hence, $X[\e]$ has the same underlying topological space as $X$, only the structure sheaf differs and is given by $\O_X \oplus \O_X \cdot \e$. We obtain a \emph{thickening functor} $\Sch/S \to \Sch/S$, $X \mapsto X[\e]$.
    
Recall (\cite[16.5.12]{EGAIV}) that the \emph{tangent bundle} of an $S$-scheme $X$ is defined by
\[T(X/S) := \Spec \Sym \Omega^1_{X/S}.\]
This may not be locally trivial. Since $\Sym \Omega^1_{X/S}$ is a quasi-coherent $\O_X$-algebra, $T(X/S)$ is an affine $X$-scheme. But let us view it as an $S$-scheme. We obtain the \emph{tangent bundle functor} $T : \Sch/S \to \Sch/S$, which enjoys the following concise functorial characterization:
  
\begin{lemma} \label{tangent-adj}
The tangent bundle functor $T : \Sch/S \to \Sch/S$ is right adjoint to the thickening functor. Thus, for every two $S$-schemes $X,Y$ there is a canonical bijection
\[\Hom_S(Y,T(X/S)) \cong \Hom_S(Y[\e],X).\]
\end{lemma}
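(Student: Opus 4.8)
The plan is to establish the adjunction by unwinding both sides into the universal property of relative Spec and the universal property of the thickening, and checking they match. First I would fix $S$-schemes $X,Y$ and compute the right-hand side $\Hom_S(Y[\e],X)$ explicitly. Since $T(X/S) = \Spec_X \Sym \Omega^1_{X/S}$ is affine over $X$, the universal property of the relative spectrum (the scheme-theoretic form recalled just before \autoref{module-categories}) gives
\[
\Hom_S(Y,T(X/S)) = \Hom_X\bigl(Y,\Spec_X \Sym\,\Omega^1_{X/S}\bigr),
\]
but here I must be careful: a morphism $Y \to T(X/S)$ over $S$ is the data of a structure morphism $g : Y \to X$ over $S$ together with a section of the pullback, i.e. a homomorphism of quasi-coherent $\O_Y$-algebras $g^*(\Sym \Omega^1_{X/S}) \to \O_Y$. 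By the universal property of the symmetric algebra (\autoref{symalg}), such an algebra homomorphism is the same as an $\O_Y$-linear map $g^*\Omega^1_{X/S} \to \O_Y$, equivalently an $\O_X$-linear map $\Omega^1_{X/S} \to g_* \O_Y$. So the left-hand side is naturally identified with pairs $(g,\theta)$ where $g : Y \to X$ is a morphism of $S$-schemes and $\theta : \Omega^1_{X/S} \to g_* \O_Y$ is an $\O_X$-linear map.

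Next I would analyze the right-hand side $\Hom_S(Y[\e],X)$. A morphism $Y[\e] \to X$ of $S$-schemes restricts along the closed immersion $Y \hookrightarrow Y[\e]$ to a morphism $g : Y \to X$, since $Y[\e]$ has the same underlying space as $Y$ and structure sheaf $\O_Y \oplus \O_Y\cdot\e$. The remaining data is the ``$\e$-part'' of the sheaf map, which by the argument of \autoref{eder} (or its classical counterpart in \cite[III, \para 10]{Bou98}) is precisely an $\O_S$-derivation from $\O_X$ into the $g_*\O_Y$-module $g_* \O_Y$, i.e. an element of $\Der_{\O_S}(\O_X, g_* \O_Y)$. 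The key point is that a ring homomorphism $\O_X \to g_*(\O_Y \oplus \O_Y\e)$ lifting $g^\#$ and landing in the square-zero extension corresponds exactly to a derivation, with the Leibniz rule being equivalent to multiplicativity of the lift modulo $\e^2$. Therefore $\Hom_S(Y[\e],X)$ identifies with pairs $(g,D)$ where $D \in \Der_{\O_S}(\O_X, g_*\O_Y)$.

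Finally I would match the two descriptions. By the defining universal property of $\Omega^1_{X/S}$ as the representing object for derivations, an $\O_S$-derivation $D : \O_X \to g_*\O_Y$ corresponds naturally to an $\O_X$-linear map $\theta : \Omega^1_{X/S} \to g_*\O_Y$; this is exactly the datum appearing on the left-hand side. Thus both sides are canonically the set of pairs $(g,\theta)$, and the bijection is natural in $X$ and $Y$, giving the asserted adjunction. The main obstacle I anticipate is the careful bookkeeping in the second step: one must verify that derivations valued in the $\O_X$-module $g_*\O_Y$ (with its $\O_X$-action through $g^\#$) are really what classify the $\e$-deformations of the morphism $g$, keeping the base $S$ fixed and treating $X[\e]$ as an $S$-scheme rather than an $S[\e]$-scheme. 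This amounts to a sheaf-level version of \autoref{eder}, where the module of ``values'' is $g_*\O_Y$ and one works locally on $X$ to reduce to the affine statement; the naturality and the compatibility of the pushforward-versus-pullback adjunction $\Hom_{\O_X}(\Omega^1_{X/S}, g_*\O_Y) \cong \Hom_{\O_Y}(g^*\Omega^1_{X/S},\O_Y)$ then finish the identification.
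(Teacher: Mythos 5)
Your proposal is correct and follows essentially the same route as the paper's proof: both unwind $\Hom_S(Y,T(X/S))$ via the universal property of the relative spectrum, reduce the algebra homomorphism $\Sym\,\Omega^1_{X/S} \to g_*\O_Y$ to a derivation using the universal properties of $\Sym$ and $\Omega^1$, and then identify such derivations with lifts $Y[\e] \to X$ of $g$ via the square-zero extension $\O_X \to g_*\O_Y \oplus g_*\O_Y\cdot\e$, $s \mapsto g^\#(s) + D(s)\e$. The only difference is presentational (you compute both sides and match them, while the paper runs a single chain of bijections), and your explicit flagging of the $S$-scheme versus $S[\e]$-scheme bookkeeping is a point the paper leaves implicit.
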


\begin{proof}
By the universal property of the relative spectrum (\cite[Definition 9.1.8]{EGAI}), an $S$-morphism $Y \to T(X/S)$ may be identified with an $S$-morphism $f : Y \to X$ together with a homomorphism of algebras $\Sym \Omega^1_{X/S} \to f_* \O_Y$. By the universal properties of the symmetric algebra and the module of differentials, the latter corresponds to a derivation $d : \O_X \to f_* \O_Y$ lying over $f^\#$. This corresponds to a homomorphism of algebras $\O_X \to f_* \O_Y \oplus f_* \O_Y \cdot \e$, defined by $s \mapsto f^\#(s) \oplus d(s) \e$, which in turn is a lift of $f$ along $Y \hookrightarrow Y[\e]$ to a morphism $Y[\e] \to X$.
\end{proof}

Using this adjunction, we will globalize tangent bundles. First, note that the thickening functor globalizes: Fix a cocomplete$R$-linear  tensor category $\C$. Applying \autoref{ringchange} to the $R$-algebra $R[\e] := R[\e]/\e^2$, we get an  cocomplete $R$-linear tensor category $\C[\e]$ whose objects are pairs $(M,\e_M)$, where $M \in \C$ is an object and $\e_M : M \to M$ is an endomorphism satisfying $\e_M^2=0$. Usually we abbreviate $\e := \e_M$. A morphism $(M,\e) \to (N,\e)$ is a morphism $M \to N$ which commutes with $\e$; we will also say $\e$-linear for this property.
 
There is an $R$-linear cocontinuous tensor functor $\C \to \C[\e]$ which is given by mapping $M \mapsto M[\e] := M \oplus M \cdot \e := (M \oplus M,\e)$, where $\e$ acts by the following matrix:
\[\begin{pmatrix} 0 & 0 \\ 1 & 0 \end{pmatrix}\]
It is left adjoint to the forgetful functor which we will occasionally denote by $T \mapsto T|_\C$, but often suppress from the notation. The section $R[\e] \to R,\, \e \mapsto 0$ induces an $R$-linear cocontinuous tensor functor $\C[\e] \to \C$, which maps $(M,\e)$ to $M/\e M := \coker(\e_M)$. It should not be confused with the forgetful functor, which is not a tensor functor at all. There is still another functor $\C \to \C[\e]$ which endows an object $M$ of $\C$ with the trivial action $\e_M := 0$. In order to differentiate the tensor product from $\C[\e]$ from the one in $\C$, we denote it by $- \otimes_{\e} -$. We will usually abbreviate $(M,\e) \in \C[\e]$ by $M$.
 
In the following, we will often omit $R$ from the notation and every tensor category and tensor functor is understood to be $R$-linear.
  
\begin{defi}[Tangent tensor categories] \label{tangdef}
Let $F : \C \to \D$ be a cocontinuous linear tensor functor between cocomplete linear tensor categories. The \textit{tangent tensor category} of $F$ is a cocomplete linear tensor category $T(F)=T(\D/\C)$ over $\C$ i.e. together with a cocontinuous linear tensor functor $\C \to T(F)$ and a natural equivalence of categories
 \[\Hom_{c\otimes/\C}(T(\D/\C),\E) \simeq \Hom_{c\otimes/\C}(\D,\E[\e])\] 
for every cocontinuous linear tensor functor $\C \to \E$. In other words, the tangent tensor category of $\C \to \D$ is a representation of the $2$-functor of \textit{thickenings} or \emph{deformations}
\[\Cat_{c\otimes/\C} \to \Cat,~ \E \mapsto \Hom_{c\otimes/\C}(\D,\E[\e]).\]
\end{defi}

\begin{rem}[Ansatz] \label{ansatz}
We do not know if the tangent tensor category exists in general. Looking at the scheme case, it is tempting to define an object $\Omega^1_F$ of $\D$ associated to $F$ and just define $T(F) := \M(\Sym \Omega^1_F)$. Then by \autoref{mod-UE} we have an equivalence of categories
\[\Hom_{c\otimes/\C}(T(F),\E) \simeq \{G \in \Hom_{c\otimes}(\D,\E),\, GF \cong H,\, G(\Omega^1_F) \to \O_\E\}\]
for every $H : \C \to \E$. But we do not know of any reasonable definition of $\Omega^1_F$ (see also \autoref{defo}). At least this approach works in some special cases, as we will see later.
\end{rem}

\begin{ex}
In the example $\D=\gr_\Z(\C)$ (see \autoref{grZ}) the speculative object $\Omega^1 \in \D$ (\autoref{ansatz}) would have to satisfy, for every line object $\L$ in $\E$, that $\Hom_{\E}(G_\L(\Omega^1),\O_\E) \simeq \{\text{line objects } \K \in \E[\e] \text{ with } \K/\e\K \cong \L\}$. Here, we have $G_\L(M) = \bigoplus_{n \in \Z} M_n \otimes \L^{\otimes n}$. Such an object $\Omega^1$ cannot exist (when $\C \neq 0$) since the category on the right is not essentially discrete. It would be interesting if $T(\gr_\Z(\C)/\C)$ nevertheless exists.
\end{ex}

\begin{rem}[Reformulation] \label{refo}
Assume that $T(\D/\C)$ exists. Then it comes equipped with a cocontinuous linear tensor functor (everything over $\C$)
\[U : \D \to T(\D/\C)[\e].\]
It has the universal property that for every $\D \to \E[\e]$ there is essentially a unique $T(\D/\C) \to \E$ such that the obvious diagram commutes up to unique isomorphism.

We may mod out $\e$ after $U$ to obtain a cocontinuous linear tensor functor
\[P : \D \to T(\D/\C).\]
This should be imagined as the ``tangent bundle projection''. The universal property can therefore also be formulated in $\Cat_{c\otimes/\D}$. Namely, require for every $G : \D \to \E$ that $U$ induces an equivalence of categories
\[\Hom_{c\otimes/\D}(T(\D/\C),\E) \simeq \{F \in \Hom_{c\otimes/\C}(\D,\E[\e]), F/\e \cong G\}.\]
\end{rem}

\begin{rem}
The tangent tensor category is stable under base change in the sense that $T(\D/\C) \sqcup_\C \C' \simeq T(\D \sqcup_\C \C' /\C')$ if these tangent tensor categories and $2$-pushouts exist.
\end{rem}

\begin{prop}[Affine case] \label{tangmod}
Let $A$ be a commutative algebra in a cocomplete linear tensor category $\C$. Then $T(\M(A) / \C)$ exists. It is given by $\M(\Sym_A \Omega^1_A)$, where $\Omega^1_A \in \M(A)$ is defined as in \autoref{diffdef}.
\end{prop}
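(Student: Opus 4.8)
The plan is to show that $\M(\Sym_A \Omega^1_A)$, regarded as a cocomplete linear tensor category over $\C$ via the canonical free functor $\C \to \M(\Sym_A \Omega^1_A)$, $M \mapsto M \otimes \Sym_A \Omega^1_A$, satisfies the universal property of \autoref{tangdef}. So I would fix an arbitrary cocontinuous linear tensor functor $J : \C \to \E$ and produce a natural equivalence between $\Hom_{c\otimes/\C}(\M(\Sym_A \Omega^1_A),\E)$ and $\Hom_{c\otimes/\C}(\M(A),\E[\e])$. The strategy is to compute both categories in terms of the same elementary data: a pair $(h_0,h_1)$ consisting of a homomorphism of commutative algebras $h_0 : J(A) \to \O_\E$ in $\E$ and a derivation $h_1 : J(A) \to \O_\E$ over $h_0$.

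First I would treat the deformation side. The structure functor $\C \to \E[\e]$ is the free thickening of $J$, namely $M \mapsto J(M)[\e]$, so that $J(A)[\e]$ is the image of $A$ and $\O_{\E[\e]} = \O_\E[\e]$ is the unit. By \autoref{mod-UE} (in its $R$-linear form), a cocontinuous tensor functor $\M(A) \to \E[\e]$ over $\C$ amounts to a homomorphism of commutative algebras $h : J(A)[\e] \to \O_\E[\e]$ in $\E[\e]$, the functor $F$ being forced to be the structure functor by the over-$\C$ condition. Writing $h$ in its two $\e$-components $h_0,h_1 : J(A) \to \O_\E$, the unitality and multiplicativity of $h$ translate, by a short element-notation computation using $\e^2=0$, into the statements that $h_0$ is an algebra homomorphism and that $h_1$ is a derivation $J(A) \to \O_\E$ for the $J(A)$-module structure on $\O_\E$ induced by $h_0$; for each fixed $h_0$ this is exactly the content of \autoref{eder}.

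Next I would treat the symmetric side. By \autoref{mod-UE} a cocontinuous tensor functor $\M(\Sym_A \Omega^1_A) \to \E$ over $\C$ amounts to an algebra homomorphism $J(\Sym_A \Omega^1_A) \to \O_\E$ in $\E$. Since $J$ is a cocontinuous tensor functor, \autoref{omegaH} gives $J(\Omega^1_A) \cong \Omega^1_{J(A)}$, and symmetric algebras are preserved (\autoref{symalg}), so $J(\Sym_A \Omega^1_A) \cong \Sym_{J(A)} \Omega^1_{J(A)}$ as a $J(A)$-algebra. Composing with the structure map $J(A) \to \Sym_{J(A)} \Omega^1_{J(A)}$ extracts an algebra homomorphism $h_0 : J(A) \to \O_\E$, and the universal properties of the relative symmetric algebra (\autoref{symalg}) and of $\Omega^1_{J(A)}$ (\autoref{diffdef}) then identify the remaining data with a $J(A)$-module map $\Omega^1_{J(A)} \to \O_\E$, i.e. with a derivation $h_1$ over $h_0$. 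Hence this side is again described by the pairs $(h_0,h_1)$.

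Matching the two descriptions yields the required equivalence (in fact a bijection, since both categories are essentially discrete once $F$ is fixed), and checking that it is natural in $\E$ finishes the proof that $\M(\Sym_A \Omega^1_A)$ represents the thickening $2$-functor, i.e. $T(\M(A)/\C) \simeq \M(\Sym_A \Omega^1_A)$. I expect the main difficulty to be bookkeeping rather than conceptual: one must keep the over-$\C$ structure fixed throughout so that $F$ is forced to be $J$ (resp. $J(-)[\e]$), confirm that the free thickening $\E \to \E[\e]$ really sends the algebra $J(A)$ to $J(A)[\e]$ and the unit to $\O_\E[\e]$, and verify that the two a priori different routes to $(h_0,h_1)$ produce the same pair compatibly and naturally in $\E$.
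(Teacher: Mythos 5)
Your proposal is correct and follows essentially the same route as the paper's proof: both sides are computed via \autoref{mod-UE} (with the over-$\C$ condition fixing the base functor), the identification $J(\Omega^1_A)\cong\Omega^1_{J(A)}$ from \autoref{omegaH} together with the universal property of the symmetric algebra handles the $\M(\Sym_A\Omega^1_A)$ side, and the decomposition of an algebra map into $\O_\E[\e]$ as a pair (algebra homomorphism, derivation) is exactly \autoref{eder}. Your write-up merely unfolds the paper's ``two applications of \autoref{mod-UE}'' and the component bookkeeping $(h_0,h_1)$ more explicitly.
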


\begin{proof}
Let $H : \C \to \E$ be a cocontinuous linear tensor functor. Then two applications of \autoref{mod-UE} yield an equivalence of categories
\[\Hom_{c\otimes/ \C}(\M(\Sym_A \Omega^1_A),\E) \simeq \Hom_{\CAlg(\E)}(H(A),\O_\E) \times \Hom_\E(H(\Omega^1_A),\O_\E).\]
Using \autoref{omegaH}, the right factor identifies with $\Der(H(A),\O_\E)$. By \autoref{mod-UE}  we have an equivalence of categories
\[\Hom_{c\otimes/ \C}(\M(A),\E[\e]) \simeq \Hom_{\CAlg(\E)}(H(A),\O_\E[\e]).\]
Therefore, the claim is just \autoref{eder}.
\end{proof}

\begin{ex}
For the $n$-dimensional tensorial affine space $\mathds{A}^n_\C$ with variables $T_1,\dotsc,T_n$ (see \autoref{affsp}) we have
\[T(\mathds{A}^n_\C / \C) \simeq \mathds{A}^{2n}_\C\]
with variables $T_1,\dotsc,T_n,d(T_1),\dotsc,d(T_n)$. This follows from \autoref{tangmod} and \autoref{deriv-sym}.
\end{ex}

The tangent bundle in algebraic geometry commutes with coproducts. We globalize this as follows:

\begin{prop} \label{prod-tangent}
Let $(\C \to \D_i)_{i \in I}$ be a family of cocontinuous linear tensor functors. If $T(\D_i / \C)$ exists for every $i$, then $T(\prod_i \D_i / \C)$ also exists and is given by
\[T\bigl(\prod_i \D_i / \C\bigr) \simeq \prod_i T(\D_i / \C)\]
\end{prop}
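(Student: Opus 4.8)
The plan is to verify the universal property of \autoref{tangdef} directly by a chain of equivalences, reducing everything to the universal property of a product of cocomplete tensor categories (\autoref{prodUE}). First I would fix a cocontinuous linear tensor functor $\C \to \E$ and spell out what we must prove: there is a natural equivalence
\[\Hom_{c\otimes/\C}\bigl(\textstyle\prod_i T(\D_i/\C),\E\bigr) \simeq \Hom_{c\otimes/\C}\bigl(\prod_i \D_i,\E[\e]\bigr).\]
The key structural input is that in $\Cat_{c\otimes}$ the product $\prod_i \D_i$ carries the orthogonal idempotent decomposition $(\O_i \to \O)_{i \in I}$ coming from its factors, and this o.i.d. is preserved by every cocontinuous tensor functor (\autoref{oid-prop}). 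So both sides, when I analyze cocontinuous tensor functors out of a product, should decompose according to a compatible o.i.d. in the target.

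The main steps, in order, are as follows. I would start on the right-hand side and apply \autoref{prodUE} to $\prod_i \D_i$, but now with target $\E[\e]$ in place of $\E$: a cocontinuous tensor functor $\prod_i \D_i \to \E[\e]$ corresponds to an o.i.d. $(e_i)_{i \in I}$ of $\E[\e]$ together with cocontinuous tensor functors $\D_i \to \E[\e]_{e_i}$. The crucial observation is that idempotents and their associated decompositions are insensitive to the nilpotent thickening: since $\E \to \E[\e]$ is a cocontinuous tensor functor and $\E[\e] \to \E$ (modding out $\e$) detects o.i.d.'s, an o.i.d. of $\E[\e]$ is the same as an o.i.d. of $\E$, and the localization $\E[\e]_{e_i}$ is canonically $(\E_{e_i})[\e]$. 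Here I would need the compatibility with the structure over $\C$: the functor $\C \to \prod_i \D_i$ together with $\C \to \E$ pins down that the o.i.d. on $\E[\e]$ is the image of the one on $\prod_i \D_i$, hence constant in the $\e$-direction. This turns the right-hand side into a product $\prod_i \Hom_{c\otimes/\C}(\D_i,(\E_{e_i})[\e])$, fibered over the o.i.d.'s of $\E$. On the left-hand side I would apply \autoref{prodUE} again and then invoke the assumed existence of each $T(\D_i/\C)$ via \autoref{tangdef}, giving $\Hom_{c\otimes/\C}(T(\D_i/\C),\E_{e_i}) \simeq \Hom_{c\otimes/\C}(\D_i,(\E_{e_i})[\e])$ factor by factor. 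Comparing the two and checking naturality in $\E$ finishes the argument.

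The hard part will be bookkeeping the interaction between the idempotent decomposition and the thickening functor $(-)[\e]$: I must show carefully that an o.i.d. of $\E[\e]$ is uniquely induced by one of $\E$ (using that $\e$ is nilpotent, so idempotents lift uniquely and the split factors match), and that localizing $\E[\e]$ at such an idempotent commutes with thickening, i.e. $\E[\e]_{e_i} \simeq (\E_{e_i})[\e]$ as cocomplete tensor categories over $\C$. Both facts are essentially formal from \autoref{ringchange} (which presents $\E[\e]$ as $\E \otimes_R R[\e]$) together with \autoref{zerleg} and the description of localization at an idempotent in \autoref{locsec}, since $\otimes_R R[\e]$ is a left adjoint and hence commutes with the product decomposition $\E \simeq \prod_i \E_{e_i}$. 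Once this commutation is in hand, the equivalence of the two sides is just the product of the given equivalences for the factors, and the whole construction is natural in $\E$ because each step is.
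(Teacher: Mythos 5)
Your proposal is correct and follows essentially the same route as the paper's proof, just read in the opposite direction: both sides are reduced via two applications of \autoref{prodUE} to the facts that every o.i.d.\ of $\E[\e]$ is of the form $(e_i[\e])$ for a unique o.i.d.\ $(e_i)$ of $\E$ (in the paper this is the explicit computation $f_i = e_i + g_i\e$, $f_i^2 = f_i \Rightarrow g_i = 0$) and that $\E_{e_i}[\e] \simeq \E[\e]_{e_i[\e]}$. The only slip is your justification of the latter commutation by ``$-\otimes_R R[\e]$ is a left adjoint and hence commutes with the product decomposition'' --- left adjoints need not preserve products --- but the fact itself is easy: the cocontinuous tensor functor $\E \to \E[\e]$ transports the o.i.d.\ $(e_i)$ to $(e_i[\e])$ (\autoref{oid-prop}), and an object $(M,\e_M)$ of $\E[\e]$ lies in $\E[\e]_{e_i[\e]}$ if and only if $M \in \E_{e_i}$, since $e_i[\e]$ acts on $(M,\e_M)$ simply as $e_i \otimes M$.
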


\begin{proof}
Let $\C \to \E$ be a cocontinuous linear tensor functor. We will omit $\C$ from the notation. The category $\Hom(\prod_i T(\D_i),\E)$ is equivalent to the category of o.i.d. $(e_i)_{i \in I}$ in $\E$ together with a family of cocontinuous linear tensor functors $T(\D_i) \to \E_{e_i}$ (\autoref{prodUE}). By definition, each $T(\D_i) \to \E_{e_i}$ corresponds to some $\D_i \to \E_{e_i}[\e]$. The cocontinuous linear tensor functor $\E \to \E[\e]$ yields the o.i.d. $(e_i[\e])_{i \in I}$ of $\E[\e]$ and one easily checks $\E_{e_i}[\e] = \E[\e]_{e_i[\e]}$. Applying \autoref{prodUE} once again, we arrive at a cocontinuous linear tensor functor $\prod_i \D_i \to \E[\e]$. Thus, we have only to show that $\E$ and $\E[\e]$ have the same o.i.d.

So assume that $(f_i)_{i \in I}$ is an o.i.d. of $\E[\e]$. Then each $f_i$ is an idempotent in the ring $\End(\O_{\E[\e]}) \cong \End(\O_\E)[\e]$, say $f_i = e_i + g_i \e$ with $e_i,g_i \in \End(\O_\E)$. Now, $f_i^2=f_i$ means $e_i = e_i^2$ and $2 e_i g_i = g_i$. It follows $2 e_i g_i = 2 e_i^2 g_i = e_i g_i$, thus $e_i g_i=0$, which implies $g_i = 2 e_i g_i = 0$. This means $f_i = e_i [\e]$ as morphisms $\O_\E[\e] \to \O_\E[\e]$. Taking cokernels in $\bigoplus_i \coker(f_i) \cong \O_\E[\e]$ with respect to $\e$, we get $\bigoplus_i \coker(e_i) \cong \O_\E$. Thus, $(e_i)$ is an o.i.d., the unique one which lifts $(f_i)$. 
\end{proof}

The tangent bundle of a closed subscheme $V(I) \subseteq X$ is given by a base change of $T(X) \to X$ to $V(I)$ and then one has to mod out the differentials of $I$ -- this is the usual conormal exact sequence (\cite[Theorem 21.2.12]{Vak13} or part 3 in \autoref{deriv-prop}). The next result is a globalization of this fact.

\begin{prop} \label{tangcl}
Let $\C \to \D$ be a cocontinuous linear tensor functor and let $I \to \O_\D$ be a morphism. If $T:=T(\D/\C)$ exists, then the same is true for $T(V^{\otimes}_\D(I)/\C)$. Explicitly, we can construct two morphisms $U(I)|_T \to \O_{T}$ and $P(I) \to \O_T$ and then realize
\[T(V^{\otimes}_\D(I)/\C) \simeq V^{\otimes}_T(U(I)|_T,P(I)).\]
\end{prop}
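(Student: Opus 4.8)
The plan is to verify that $V^{\otimes}_T\bigl(U(I)|_T,P(I)\bigr)$, with $T:=T(\D/\C)$, represents the deformation $2$-functor that defines $T(V^{\otimes}_\D(I)/\C)$. Recall from \autoref{refo} that $T$ carries a cocontinuous linear tensor functor $U:\D\to T[\e]$ over $\C$ together with its reduction $P:\D\to T$ (the composite of $U$ with the mod-$\e$ tensor functor $T[\e]\to T$), and that $H\mapsto H[\e]\circ U$ implements, for every $\C\to\E$, an equivalence $\Hom_{c\otimes/\C}(T,\E)\simeq\Hom_{c\otimes/\C}(\D,\E[\e])$. I would first construct the two structure morphisms. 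Applying the tensor functor $U$ to the given $\iota:I\to\O_\D$ yields a morphism $\phi:U(I)\to U(\O_\D)\cong\O_{T[\e]}$ in $T[\e]$. Reducing modulo $\e$ gives $p:=P(\iota):P(I)\to\O_T$, since $P(I)=U(I)/\e$ and $\O_{T[\e]}/\e\cong\O_T$. On the other hand, the underlying morphism of $\phi$ in $T$ is a map $U(I)|_T\to\O_{T[\e]}|_T\cong\O_T\oplus\O_T\e$, whose $\e$-component I call $u:U(I)|_T\to\O_T$. By \autoref{closed-UE}, applied to $(u,p):U(I)|_T\oplus P(I)\to\O_T$, the tensor category $V^{\otimes}_T(u,p)$ exists as a cocomplete linear tensor category and classifies those $H\in\Hom_{c\otimes/\C}(T,\E)$ with $H(u)=0$ and $H(p)=0$ (a morphism out of a direct sum vanishes under the direct-sum-preserving $H$ iff both restrictions do).

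The core of the argument is then a chain of natural equivalences, for every cocontinuous linear tensor functor $\C\to\E$:
\[\Hom_{c\otimes/\C}\bigl(V^{\otimes}_T(u,p),\E\bigr)\simeq\{H\in\Hom_{c\otimes/\C}(T,\E):H(u)=0=H(p)\}.\]
\[\{H:\ H(u)=0=H(p)\}\simeq\{G\in\Hom_{c\otimes/\C}(\D,\E[\e]):\bigl(G(I)\to\O_{\E[\e]}\bigr)=0\}.\]
\[\{G:\ G(I)\to\O_{\E[\e]}=0\}\simeq\Hom_{c\otimes/\C}\bigl(V^{\otimes}_\D(I),\E[\e]\bigr).\]
The first and third equivalences are \autoref{closed-UE} applied in $T$ and in $\E[\e]$ respectively, while the middle passage is the correspondence $G=H[\e]\circ U$ of \autoref{refo}. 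Composing the three reproduces precisely the defining equivalence $\Hom_{c\otimes/\C}(T(V^{\otimes}_\D(I)/\C),\E)\simeq\Hom_{c\otimes/\C}(V^{\otimes}_\D(I),\E[\e])$ of \autoref{tangdef}, so $V^{\otimes}_T(u,p)$ exists and represents $T(V^{\otimes}_\D(I)/\C)$; naturality in $\E$ upgrades the bijection to the asserted equivalence of categories.

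The step I expect to be the main obstacle is the middle equivalence, i.e.\ translating the single vanishing condition on $G$ into the two conditions on $H$. Here I would argue as follows. Since $G=H[\e]\circ U$, the morphism $G(\iota)$ is $H[\e]$ applied to $\phi$, and a morphism in $\E[\e]$ vanishes exactly when its underlying morphism in $\E$ does, hence when both of its components vanish. The two underlying components of $H[\e](\phi)$ are $H(\phi_0)$ and $H(\phi_1)$, where $\phi_1$ is the $\e$-component of $\phi$ (so $H(\phi_1)=H(u)$) and $\phi_0$ is its reduced component, which factors as $\phi_0=\tilde p\circ\pi$ through the cokernel projection $\pi:U(I)|_T\twoheadrightarrow P(I)$ with $\tilde p$ underlying $p$. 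Because $H$ is cocontinuous it preserves the regular epimorphism $\pi$, so $H(\phi_0)=0$ iff $H(p)=0$; thus $G(\iota)=0$ iff $H(u)=0$ and $H(p)=0$, as needed. The only genuinely delicate bookkeeping is keeping track of the $\e$-action on $U(I)$ and the nilpotent shift of $\e$ on $\O_{T[\e]}=\O_T\oplus\O_T\e$ when extracting the components $\phi_0,\phi_1$; everything else is a formal composition of universal properties, and I would check naturality of each equivalence in $\E$ (and compatibility with the structure over $\C$) only schematically.
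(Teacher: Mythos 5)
Your proof is correct and follows essentially the same route as the paper: the same two structure morphisms ($P(\iota):P(I)\to\O_T$ from reduction mod $\e$, and the $\e$-component $U(I)|_T\to\O_T$), followed by the same chain of equivalences combining \autoref{closed-UE} with the universal property of $T(\D/\C)$ from \autoref{refo}. Your detailed middle step — splitting $G(\iota)$ into components via $\e$-linearity and factoring the reduced component through the cokernel projection $U(I)|_T\twoheadrightarrow P(I)$, which $H$ preserves as a (regular) epimorphism — is exactly the bookkeeping the paper's proof compresses into a single line of its displayed equivalences.
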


\begin{proof}
We will omit $\C$ from the notation. Recall the notations $U,P$ from \autoref{refo} and $V^\otimes_\D(I)$ from \autoref{closed-UE}. We have an $\e$-morphism
\[U(I) \to U(\O_\D) = \O_T[\e] = \O_T \oplus \O_T \cdot \e.\]
By modding out $\e$ we get a morphism $P(I) \to \O_T$ and the second projection gives a morphism $U(I)|_T \to \O_T$. We claim that $V^{\otimes}_T(U(I),P(I))$ satisfies the desired universal property. In fact, for every $\E$ we have
\begin{eqnarray*}
&& \Hom_{c\otimes}(V^{\otimes}_\D(I),\E[\e]) \\
& \simeq & \{F \in \Hom_{c\otimes}(\D,\E[\e]):\, F(I) \to \O_\E[\e] \text{ vanishes}\} \\
& \simeq & \{G \in \Hom_{c\otimes}(T,\E):\, G(P(I)) \to \O_\E[\e] \text{ vanishes}\} \\
& \simeq & \{G \in \Hom_{c\otimes}(T,\E):\, G(P(I)) \to \O_\E \text{ and } G(U(I)|_T) \to \O_\E \text{ vanish}\} \\
& \simeq & \Hom_{c\otimes}(V^{\otimes}_T(U(I)|_T,P(I)),\E). 
\end{eqnarray*}
\qedhere
\end{proof}

Our next goal is to prove that projective tensor categories have a tangent tensor category. For simplicity, we will restrict to projective bundles (for the general case one may use \autoref{tangcl}). We need several preparations first.

\begin{defi}[Cocycles] \label{cocycle}
Let $s : E \to \L$ be an epimorphism in a cocomplete linear tensor category $\C$, such that $\L$ is a line object. We say that a morphism $\lambda : \Lambda^2(E) \to \L^{\otimes 2}$ is a \emph{cocycle} (with respect to $s$) if we have
\[\lambda(a \wedge b) \otimes s(c) - \lambda(a \wedge c) \otimes s(b) + \lambda(b \wedge c) \otimes s(a) = 0\]
in element notation. Formally, this is of course an equality of two morphisms $\Lambda^2(E) \otimes E \rightrightarrows \L^{\otimes 3}$.

Actually we mean $\ASym^2(E)$ here instead of $\Lambda^2(E)$, which we have only defined when $2 \in R^*$ (\autoref{exterior-power}). But if we are in the case of modules, then the cocycle condition \emph{implies} that $\lambda$ is alternating and therefore lifts to a morphism on $\Lambda^2(E)$: Just set $a=b$ and use \autoref{epi-cancel}. This justifies to write $\Lambda^2(E)$ here also in the general case.

Note that we can define a morphism $\Lambda^3(E) \to \Lambda^2(E) \otimes \L$ by
\[a \wedge b \wedge c \mapsto (a \wedge b) \otimes s(c) - (a \wedge c) \otimes s(b) + (b \wedge c) \otimes s(a).\]
It corresponds to a morphism
\[\delta : \Lambda^3(E) \otimes \L^{\otimes -3} \to \Lambda^2(E) \otimes \L^{\otimes -2}.\]
Then a cocycle is nothing else than a morphism $\coker(\delta) \to \O_\C$.
\end{defi}

The following Theorem classifies line objects $\K$ in $\C[\e]$ with prescribed ``generators'' in terms of line objects $\L$ in $\C$ with prescribed ``generators'' and a cocycle which intuitively measures the obstruction that $\K$ is just the trivial extension $\L[\e]$.

\begin{thm} \label{tangent-main}
Let $\C$ be a cocomplete linear tensor category and $E \in \C$. Then there is a natural equivalence of categories
\[\begin{array}{c}
\{\L \in \C \text{ line object},~ s : E \twoheadrightarrow \L \text{ regular epimorphism},~\lambda : \Lambda^2 E \to \L^{\otimes 2} \text{ cocycle}\} \medskip \\
 |\wr \medskip  \\
\{(\K,t) : \K \in \C[\e] \text{ line object},~ t : E[\e] \to \K \text{ regular epimorphism}\}.
\end{array}\]
\end{thm}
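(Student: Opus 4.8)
The plan is to exhibit explicit functors in both directions and check that they are mutually inverse. A useful first observation is that \emph{both} categories are essentially discrete: on the right this is \autoref{inv-diskret} applied to regular invertible quotients of $E[\e]$ in $\C[\e]$, and on the left the comparison isomorphism $\L \to \L'$ over $E$ is unique by \autoref{epi-cancel} and automatically intertwines the cocycles. Hence it suffices to produce a bijection on isomorphism classes, the automorphisms on both sides reducing to the identity (again by \autoref{epi-cancel}, since $t$ and $s$ are epimorphisms).

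\textbf{Backward functor $\Psi : (\K,t) \mapsto (\L,s,\lambda)$.} I would first reduce modulo $\e$ using the cocontinuous tensor functor $\C[\e] \to \C$, $M \mapsto M/\e M$: this sends the line object $\K$ to a line object $\L := \K/\e\K$ and the regular epimorphism $t$ to a regular epimorphism $s : E \to \L$, since tensor functors preserve line objects and coequalizers. To extract the cocycle, note that multiplication by $\e$ gives $\e\K \cong \L$: tensoring the exact sequence $0 \to \O_\C\cdot\e \to \O_\C[\e] \to \O_\C \to 0$ with the $\O_\C[\e]$-invertible (hence flat) object $\K$ yields $0 \to \L \cong \e\K \to \K \to \L \to 0$. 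As $\L$ is flat (\autoref{dual-flat}), the morphism $t(a) \otimes s(b) - t(b) \otimes s(a) : \O_\C \to \K \otimes \L$ vanishes modulo $\e$, hence factors through $\e\K \otimes \L \cong \L^{\otimes 2}$; in element notation this defines $\lambda(a \wedge b)$. A short element computation, interchanging the symtrivial factors $s(\cdot)$ and cancelling with \autoref{epi-cancel}, shows that $\lambda$ is alternating (so factors through $\Lambda^2 E$) and satisfies the cocycle identity of \autoref{cocycle}.

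\textbf{Forward functor $\Phi : (\L,s,\lambda) \mapsto (\K,t)$.} The trivial extension $\K = \L[\e]$ with $t = s[\e]$ realizes the zero cocycle, and more generally a globally defined $\mu : E \to \L$ produces the coboundary $\lambda(a \wedge b) = \mu(a)\otimes s(b) - \mu(b)\otimes s(a)$; the real content is to build, for an arbitrary cocycle, a \emph{twisted} $\K$ whose underlying extension $0 \to \L \to \K \to \L \to 0$ need not split in $\C$. I would construct $\K$ as an explicit coequalizer in $\C[\e]$: starting from $E[\e]$ together with an auxiliary $\L$-summand in the $\e$-direction, impose the Koszul-type relation of $s$ (which by \autoref{goodepi} forces the reduction modulo $\e$ to be the line object $\L$), corrected in the $\e$-direction by $\lambda$, so that by construction $t(a)\otimes s(b) - t(b)\otimes s(a) = \lambda(a\wedge b)\,\e$. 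The cocycle identity is precisely what makes this relation consistent. One then verifies that $\K$ is a line object of $\C[\e]$ and that $t$ is a regular epimorphism, the latter via the criterion of \autoref{goodepi}.

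Finally, $\Psi\Phi \cong \id$ holds by construction (the recovered data are exactly $\L$, $s$ and $\lambda$), while for $\Phi\Psi \cong \id$ I would compare the rebuilt $(\K',t')$ with the original $(\K,t)$: both are regular invertible quotients of $E[\e]$ realizing the same relations $t(a)\otimes s(b) - t(b)\otimes s(a) = \lambda(a\wedge b)\e$, so there is a morphism $\K' \to \K$ over $E[\e]$, which is an isomorphism by \autoref{inv-diskret}. The main obstacle is the forward construction: proving that the twisted $\K$ is genuinely invertible in $\C[\e]$ — i.e. producing its inverse together with the evaluation and coevaluation — and that $t$ remains regular, for cocycles that are not coboundaries. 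Here the usual local and splitting arguments are unavailable, so the verification must run entirely through \autoref{goodepi}, \autoref{epi-cancel} and the closure of line objects under the structural tensor functors relating $\C$ and $\C[\e]$.
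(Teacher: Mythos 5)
Your overall architecture matches the paper's: reduce mod $\e$ to extract $(\L,s)$, use the extension $0 \to \L \to \K \to \L \to 0$ to carve out the cocycle $\lambda$, build the forward functor as an explicit cokernel in $\C[\e]$ imposing the relation $t(a)\otimes s(b) - t(b)\otimes s(a) = \lambda(a\wedge b)\e$, and use essential discreteness to compare. The backward functor as you describe it is complete and correct. However, there is a genuine gap, and you have named it yourself without closing it: nothing in your proposal proves that the twisted object $\K$ produced by the forward construction is a line object of $\C[\e]$, i.e. both symtrivial and invertible. This is not a routine verification that ``runs through \autoref{goodepi}, \autoref{epi-cancel} and closure of line objects under the structural tensor functors'': the functors $\C \to \C[\e]$ and $\C[\e] \to \C$ only produce or detect the \emph{trivial} extension $\L[\e]$, so they cannot certify invertibility of a non-split twist. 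In the paper this is the bulk of the proof: symtriviality of $\K$ is a separate element computation using the defining relation and the cocycle identity, and invertibility is established by explicitly guessing the inverse as the cokernel of the analogous relation map twisted by $\L^{\otimes -2}$ and with the cocycle sign reversed, defining the pairing via $s \otimes s + \e\lambda$, and proving exactness of the resulting sequences through a long chain of identities (including a further symtriviality-type argument to lift the pairing). A proof that stops at ``one then verifies that $\K$ is a line object'' has omitted precisely the theorem's content.

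One genuine positive is worth recording: your treatment of $\Phi\Psi \cong \id$ is a real simplification over the paper, \emph{conditional on} the forward functor being fully verified. Since the extracted data satisfy the relation $t\circ\phi = 0$, the original $t$ factors through the cokernel $t' : E[\e] \to \K'$, giving a morphism $\K' \to \K$ over $E[\e]$; if $\K'$ is known to be a line object, then \autoref{inv-diskret} (applied in $\C[\e]$) immediately makes this an isomorphism. The paper instead proves directly that the original $t$ is the cokernel of $\phi$, which forces a delicate computation valid without $2$ being invertible (the identity found by computer search). Your route bypasses that computation entirely — but it can only be cashed in after the invertibility and symtriviality of $\K'$ are established, which is exactly the missing step.
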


Before proving this Theorem, let us explain how it enables us to deduce:

\begin{thm}[Projective case] \label{tangent-proj}
Let $\C$ be a cocomplete linear tensor category and $E \in \C$. Assume that the projective tensor category $\P^{\otimes}_\C(E)$ exists (for example when $\C$ is locally presentable). Then its tangent tensor category $T(\P^{\otimes}_\C(E) / \C)$ exists.

It is constructed as follows: Consider the universal triple consisting of a cocontinuous linear tensor functor $P : \C \to \P^{\otimes}_\C(E)$, a line object $\O(1)$ and a regular epimorphism $x : P(E) \to \O(1)$. Define
\[\delta : \Lambda^3(P(E)) \otimes \O(-3) \to \Lambda^2(P(E)) \otimes \O(-2)\]
as in \autoref{cocycle} and let $\Omega^1 \in \P^{\otimes}_\C(E)$ be the cokernel of $\delta$. Then
\[T(\P^{\otimes}_\C(E) / \C) \simeq \M(\Sym \Omega^1).\]
\end{thm}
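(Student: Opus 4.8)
The plan is to verify directly that $\M(\Sym\Omega^1)$, equipped with the composite $\C\to\P^\otimes_\C(E)\xrightarrow{-\,\otimes\,\Sym\Omega^1}\M(\Sym\Omega^1)$ as its structure functor, satisfies the defining universal property of the tangent tensor category in \autoref{tangdef}: for every cocontinuous linear tensor functor $H:\C\to\E$ I want a natural equivalence
\[
\Hom_{c\otimes/\C}\bigl(\M(\Sym\Omega^1),\E\bigr)\;\simeq\;\Hom_{c\otimes/\C}\bigl(\P^\otimes_\C(E),\E[\e]\bigr).
\]
Following the Ansatz of \autoref{ansatz}, I would first unwind the left-hand side. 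By \autoref{mod-UE} (taking $\P^\otimes_\C(E)$ as base and $\Sym\Omega^1$ as the algebra), the category $\Hom_{c\otimes/\C}(\M(\Sym\Omega^1),\E)$ is equivalent to the category of pairs $(G,h)$ with $G\in\Hom_{c\otimes/\C}(\P^\otimes_\C(E),\E)$ and $h:G(\Sym\Omega^1)\to\O_\E$ an algebra homomorphism; since $G$ is a cocontinuous tensor functor it preserves symmetric algebras, so $G(\Sym\Omega^1)\cong\Sym(G(\Omega^1))$ and \autoref{symalg} turns $h$ into a morphism $\phi:G(\Omega^1)\to\O_\E$.

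Next I would translate a pair $(G,\phi)$ into purely $\E$-side data. By the universal property of projective bundle tensor categories (\autoref{projbundle}), $G$ corresponds to a pair $(\L,s)$ with $\L\in\E$ a line object and $s:H(E)\twoheadrightarrow\L$ a regular epimorphism, under which $G(\O(1))\cong\L$ and $G(x)=s$. Because $G$ is a cocontinuous tensor functor it commutes with the antisymmetric powers (coequalizers of the signed symmetries) and with the cokernel appearing in the definition of $\delta$, so $G(\Omega^1)\cong\coker(G(\delta))$, where $G(\delta)$ is built from $s$ exactly as $\delta$ is built from $x$. By the very definition in \autoref{cocycle}, a morphism $\coker(G(\delta))\to\O_\E$ is the same thing as a cocycle $\lambda:\Lambda^2(H(E))\to\L^{\otimes 2}$ with respect to $s$. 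Hence the left-hand side is equivalent to the category of triples $(\L,s,\lambda)$.

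For the right-hand side I would use that $\E[\e]$ carries its structure over $\C$ through $\C\xrightarrow{H}\E\to\E[\e]$, and that $(-)[\e]:\E\to\E[\e]$ is a cocontinuous tensor functor, so $H(E)[\e]$ is the image of $E$. Another application of \autoref{projbundle} then identifies $\Hom_{c\otimes/\C}(\P^\otimes_\C(E),\E[\e])$ with the category of pairs $(\K,t)$, where $\K\in\E[\e]$ is a line object and $t:H(E)[\e]\twoheadrightarrow\K$ a regular epimorphism. At this point the two descriptions are matched by \autoref{tangent-main}, applied verbatim with $(\E,H(E))$ in the role of $(\C,E)$: it furnishes a natural equivalence between triples $(\L,s,\lambda)$ and pairs $(\K,t)$. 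Chaining the three equivalences, and checking that they are natural in $\E$ and compatible with the structure functors over $\C$, yields the desired representation and hence both the existence of $T(\P^\otimes_\C(E)/\C)$ and its identification with $\M(\Sym\Omega^1)$.

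The genuinely hard input is \autoref{tangent-main}, which I am entitled to assume here; once it is granted, this proof is essentially an assembly of universal properties. Within the present reduction the point requiring most care is the identification $G(\Omega^1)\cong\coker(G(\delta))$ together with the passage, via \autoref{cocycle}, from morphisms out of this cokernel to cocycles — this is where one must know that $G$ preserves the precise (anti)symmetric powers and cokernel defining $\Omega^1$, and that the cocycle condition is exactly the relation making $\phi$ well defined on the cokernel. The remaining bookkeeping — that each equivalence respects the over-$\C$ constraints and is natural in $\E$ — is routine but should be spelled out, so as to conclude that $\M(\Sym\Omega^1)$ represents the thickening $2$-functor $\E\mapsto\Hom_{c\otimes/\C}(\P^\otimes_\C(E),\E[\e])$.
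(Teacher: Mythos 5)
Your proposal is correct and follows essentially the same route as the paper's own proof: both unwind $\Hom_{c\otimes/\C}(\M(\Sym\Omega^1),\E)$ via \autoref{mod-UE} and the universal property of $\P^{\otimes}_\C(E)$ into triples $(\L,s,\lambda)$ (using that a cocontinuous tensor functor preserves the construction of $\Omega^1$, so that morphisms $G(\Omega^1)\to\O_\E$ are exactly cocycles), identify $\Hom_{c\otimes/\C}(\P^{\otimes}_\C(E),\E[\e])$ with pairs $(\K,t)$, and then invoke \autoref{tangent-main} as the essential input. The only difference is bookkeeping order (you start from the module-category side, the paper from the thickening side), which is immaterial.
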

 
Note that the definition of $\Omega^1$ is compatible with (and motivated by) the Euler sequence (\autoref{euler}, \autoref{Koszul}).
 
\begin{proof}
Let $H : \C \to \E$ be a cocontinuous linear tensor functor. We have to prove
\[\Hom_{c\otimes/\C}(\M(\Sym \Omega^1),\E) \simeq \Hom_{c\otimes/\C}(\P^\otimes_\C(E),\E[\e]).\]
By definition of $\P^{\otimes}(E)$ there is an equivalence between $\Hom_{c\otimes/\C}(\P^\otimes_\C(E),\E[\e])$ and the category of pairs $(\K,t)$ consisting of a line object $\K \in \E[\e]$ and a regular epimorphism $t : H(E)[\e] \to \K$. According to \autoref{tangent-main} this is equivalent to the category of triples $(\L,s,\lambda)$, where $\L \in \C$ is a line object, $s : H(E) \to \L$ is a regular epimorphism and $\lambda : \Lambda^2 H(E) \to \L^{\otimes 2}$ is a cocycle. The rest follows from \autoref{ansatz}. A bit more detailed, the pair $(\L,s)$ corresponds to a cocontinuous linear tensor functor $G : \P^\otimes_\C(E) \to \E$ lying over $H$. Then $\lambda$ corresponds to a morphism $G(\Omega^1) \to \O_\E$, hence corresponds to a lift of $G$ to a cocontinuous linear tensor functor $\M(\Sym \Omega^1) \to \E$ (\autoref{mod-UE}).
\end{proof}

For \autoref{tangent-main} we need the following Lemma.
 
\begin{lemma} \label{KL-exact}
Let $\C$ be a cocomplete linear tensor category and let $\K$ be a line object in $\C[\e]$. Then $\L := \K/\e\K$ is a line object in $\C$. If $p : \K \twoheadrightarrow \L$ is the canonical epimorphism in $\C$,
then there is a unique morphism ${i} : \L \to \K$ such that ${i} p = \e : \K \to \K$. Besides, the sequence
\[0 \to \L \xrightarrow{{i}} \K \xrightarrow{p} \L \to 0\]
is exact in $\C$. It is even exact in $\C[\e]$ when we endow $\L$ with the trivial $\e$-action.
\end{lemma}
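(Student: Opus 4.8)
The plan is to treat the four assertions --- that $\L$ is a line object, that $i$ exists and is unique, and that the sequence is exact in $\C$ and in $\C[\e]$ --- by reducing the two exactness statements (which carry all the content) to the tautological short exact sequence attached to the \emph{unit} object, transported by tensoring with the invertible $\K$.

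First I would dispose of the easy points. The functor $q : \C[\e] \to \C$, $(M,\e_M) \mapsto M/\e M = \coker(\e_M)$ induced by $\e \mapsto 0$, is a cocontinuous tensor functor. Tensor functors preserve invertible objects and, by \autoref{symtrivial-properties}, symtrivial objects, hence line objects; thus $\L = q(\K)$ is a line object in $\C$. Next, $p : \K \to \L$ is by definition $\coker(\e_\K)$ in $\C$, so $p$ is an epimorphism with $p\,\e_\K = 0$. Since $\e_\K \circ \e_\K = 0$, the universal property of the cokernel applied to $\e_\K : \K \to \K$ produces a unique $i : \L \to \K$ with $i\,p = \e_\K$; uniqueness is immediate because $p$ is epi. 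The same calculation gives $p\,i\,p = p\,\e_\K = 0$, hence $p\,i = 0$, and $\e_\K\, i\, p = \e_\K^2 = 0$, hence $\e_\K\, i = 0$; the latter says exactly that $i$ is $\e$-linear, i.e. a morphism $j(\L) \to \K$ in $\C[\e]$ once $\L$ is given the trivial action $j(\L) := (\L,0)$. Likewise $p$ is $\e$-linear, and $p = \coker(i)$ follows from $p = \coker(\e_\K) = \coker(i\,p)$ together with $p$ being epi.

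The heart of the matter is the kernel half $i = \ker(p)$, which must use invertibility (it already fails for $\K = \O_\C$ with $\e_\K = 0$). Writing $\C[\e] \cong \M(\O_\C[\e])$ (\autoref{ringchange}), multiplication by $\e$ is a morphism in $\C[\e]$ of square zero on $\O_{\C[\e]}$, and there is a split short exact sequence
\[0 \to j(\O_\C) \xrightarrow{i_0} \O_{\C[\e]} \xrightarrow{p_0} j(\O_\C) \to 0\]
in $\C[\e]$, with $p_0 = \coker(\e\cdot)$ and $i_0$ characterized by $i_0\,p_0 = \e\cdot$; its exactness in $\C[\e]$ is a short direct verification on $\e$-linear test morphisms. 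Since $\K$ is invertible, $\K \otimes_{\C[\e]} -$ is an equivalence (\autoref{invob}), hence exact, so it preserves this sequence. Tensoring the cokernel presentation $\O_{\C[\e]} \xrightarrow{\e\cdot} \O_{\C[\e]} \xrightarrow{p_0} j(\O_\C) \to 0$ identifies $\K \otimes_{\C[\e]} j(\O_\C) \cong \coker(\e_\K) = j(\L)$ and $\K \otimes p_0 = p$, while $(\K \otimes i_0)\,p = \K\otimes(i_0\,p_0) = \e_\K$, so by the uniqueness above $\K \otimes i_0 = i$. Therefore
\[0 \to j(\L) \xrightarrow{i} \K \xrightarrow{p} j(\L) \to 0\]
is exact in $\C[\e]$, which is the final assertion. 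Applying the forgetful functor $\C[\e] \cong \M(\O_\C[\e]) \to \C$ --- which preserves kernels and cokernels, since its colimits are created by it (\autoref{modA}) and, being a restriction of scalars, it has a right adjoint too --- then yields exactness of $0 \to \L \xrightarrow{i} \K \xrightarrow{p} \L \to 0$ in $\C$.

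The only real obstacle is this kernel half: the elementary cokernel manipulations give everything except $i = \ker(p)$, and that step is false without invertibility, so all the work lies in setting up the tensoring argument correctly. In particular one must check that $\K \otimes_{\C[\e]} -$ sends the unit sequence to the sequence built from the specific $i$ and $p$ (not merely to an abstractly isomorphic one), which is exactly where the uniqueness clause for $i$ is used.
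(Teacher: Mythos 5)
Your proposal is correct and follows essentially the same route as the paper's proof: both verify the easy cokernel-side statements directly and then obtain the kernel half by establishing exactness of the unit sequence $0 \to \O_\C \xrightarrow{i_0} \O_\C[\e] \xrightarrow{p_0} \O_\C \to 0$ in $\C[\e]$ (split in $\C$, plus the test-morphism check of $\e$-linearity) and tensoring with the invertible object $\K$. Your only additions are bookkeeping the paper leaves implicit --- pinning down $\K \otimes i_0 = i$ via the uniqueness clause and passing back to $\C$ through the forgetful functor --- though note two slips of phrasing: the unit sequence is split only in $\C$, not in $\C[\e]$, and kernel preservation by the forgetful functor follows from it \emph{being} a right adjoint (of the free functor), not from it \emph{having} one.
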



\begin{proof}
Reduction modulo $\e$ is a  tensor functor $\C[\e] \to \C$, hence it preserves line objects. In particular, $\L$ is a line object in $\C$. The existence of ${i}$ follows from $\e^2=0$.

We define $\e=0$ on $\L$. Then ${i}$ is a morphism in $\C[\e]$ because $\e{i}p=\e \e = 0$, hence $\e {i} = 0$. Likewise, $p : \K \to \L$ is a morphism in $\C[\e]$ because $p \e = 0$. It is clear that
\[\L \xrightarrow{{i}} \K \xrightarrow{p} \L \to 0\]
is exact, since
\[\K \xrightarrow{\e} \K \xrightarrow{p} \L \to 0\]
is exact. In order to show that the sequence is exact in $\C[\e]$ and hence in $\C$, it is enough to prove that the sequence
\[0 \to \O_\C \xrightarrow{{i}} \O_\C[\e] \xrightarrow{p} \O_\C \to 0\]
is exact in $\C[\e]$, because then tensoring with $\K$ gives the general sequence. The sequence is clearly (split) exact in $\C$. Let $T \in \C[\e]$ and $h : T \to \O_\C$ be a morphism in $\C$ such that ${i} h$ is $\e$-linear, i.e. ${i} h \e = \e {i} h = 0$. Then $h \e = 0$, which means that $h$ is $\e$-linear.
\end{proof}

\begin{proof}[Proof of \autoref{tangent-main}]
Let $A$ denote the category of triples $(\L,s,\lambda)$ and $B$ be the category of pairs $(\K,t)$ as in the Theorem. Both are essentially discrete by \autoref{inv-diskret}. We will construct functors $A \to B$ and $B \to A$ and show that they are inverse to each other. We will make heavy use of element notation. We will not explicitly mention the ubiquitous applications of \autoref{epi-cancel}. As explained in \autoref{cocycle}, we will write $\Lambda^2(E)$ instead of $\ASym^2(E)$ (just for aesthetic reasons). If $t : E[\e] \to \K$, we will sometimes also write $t$ for the restriction to a morphism $E \to \K$.

\textbf{1. Step: Construction of $B \to A$.} We construct a functor $B \to A$ as follows: Given $(\K,t)$, applying the cocontinuous tensor functor $\C[\e] \to \C$ which mods out $\e$, we get a pair $(\L,s)$, where $\L = \K/\e \K$ is a line object and $s : E \to \L$ is a regular epimorphism induced by $t$. By \autoref{KL-exact} we have an exact sequence
\[0 \to \L \xrightarrow{{i}} \K \xrightarrow{p} \L \to 0,\]
both in $\C[\e]$ and in $\C$. It follows that it stays exact after tensoring with $\L$ in $\C$:
\[0 \to \L^{\otimes 2} \xrightarrow{\L \otimes {i}} \L \otimes \K \xrightarrow{\L \otimes p} \L^{\otimes 2} \to 0\]
Consider the morphism
\[\Lambda^2(E) \to \L \otimes \K,~ a \wedge b \mapsto s(a) \otimes t(b) - s(b) \otimes t(a).\]
Since $\L$ is symtrivial, the morphism vanishes when composed with $\L \otimes p$. Hence, by exactness there is a unique morphism
\[\lambda : \Lambda^2(E) \to \L^{\otimes 2}\]
which is characterized by the equation of morphisms $\Lambda^2(E) \to \L \otimes \K$
\begin{equation} \label{eq1}
{i} \cdot \lambda(a \wedge b) = s(a) \otimes t(b) - s(b) \otimes t(a)
\end{equation}
where we have abbreviated $(\L \otimes {i}) \circ -$ by ${i} \cdot$. Let us check the cocycle condition:
\[{i} \cdot \lambda(a \wedge b) \otimes s(c) - {i} \cdot \lambda(a \wedge c) \otimes s(b) + {i} \cdot \lambda(b \wedge c) \otimes s(a)\]
\[=s(a) \otimes t(b) \otimes s(c) - s(b)\otimes t(a) \otimes s(c) - s(a) \otimes t(c) \otimes s(b)\]
\[ ~~~~~~~~~~~~~+ s(c) \otimes t(a) \otimes s(b) + s(b) \otimes t(c) \otimes s(a) - s(c) \otimes t(b) \otimes s(a)=0\]
Here, we have used that $\L$ is symtrivial. This suffices since ${i}$ is a monomorphism. This defines $B \to A$ on objects. The action on morphisms (which are unique isomorphisms) is clear.

\textbf{2. Step: Proof of $\e t = {i} s$.} With the notations above, we claim that $\e t|_E = {i} s$ as morphisms $E \to \K$. Let $q : E[\e] \to E$ be the projection modulo $\e$. By construction, we have $s q = p t$. Besides, we have $\e = {i} q$. It follows
\[{i} s q = {i} p t = \e t = t \e = t {i} q = \e t|_E q.\]
Since $q$ is an epimorphism, this means ${i} s = \e t$. Applying this to \ref{eq1}, we get another characterization of the cocycle $\lambda$:
\begin{equation} \label{eq2}
\lambda(b \wedge c) \otimes \e t(a) = s(a) \otimes (s(b) \otimes t(c) - s(c) \otimes t(b))
\end{equation}
Here, $a,b,c$ run through $E$ in element notation.
 
\textbf{3. Step: Construction of $A \to B$.} We construct a functor $A \to B$ as follows: Given $(\L,s,\lambda)$, we will \emph{define} $t : E[\e] \to \K$ exactly in such a way that \ref{eq2} becomes true. For this, define the morphism $E^{\otimes 3} \to \L^{\otimes 2} \otimes E[\e]$ in $\C$ by
\[a \otimes b \otimes c \mapsto s(a) \otimes (s(b) \otimes c - s(c) \otimes b) - \lambda(b \wedge c) \otimes \e a.\]
It corresponds to a morphism
\[\phi : (\L^{\otimes -2} \otimes E^{\otimes 3})[\e] \to E[\e]\]
in $\C[\e]$. Let $t : E[\e] \to \K$ be the cokernel of $\phi$ in $\C[\e]$. Then \ref{eq2} holds by definition. Applying \autoref{goodepi} to the pair $(\L,s)$, it is easy to see that $\K/\e \cong \L$, with $s$ being induced by $t$ on the quotient. We postpone the proof that $\K$ is a line object to Steps 5 and 6. Note that the cocycle associated to $(\K,t)$ is $\lambda$ by construction. This defines on $A \to B$ on objects. The action on morphisms is clear.

\textbf{4. Step: The functors are inverse to each other.} We have already argued above that $A \to B \to A$ is isomophic to the identity. In order to show this for $B \to A \to B$, we have to show for every pair $(\K,t)$ with associated triple $(\L,s,\lambda)$ as in Step 1 that $t$ is the cokernel of $\phi$ as defined in the Step 3. \marginpar{I've replaced (K,s) by (K,t).} By construction we have $t \phi = 0$ and $t$ is an epimorphism. Now let $T \in \C[\e]$ and $E[\e] \to T$ be a morphism in $\C[\e]$, or equivalently $h : E \to T$ be a morphism in $\C$ which vanishes when composed with $\phi$. This comes down to
\begin{equation} \label{eq3}
s(a) \otimes (s(b) \otimes h(c) - s(c) \otimes h(b)) = \lambda(b \wedge c) \otimes \e h(a).
\end{equation}
We want to show that $h$ factors through $t$. By \autoref{goodepi} this means
\begin{equation} \label{eq4}
t(a) \otimes_\e h(b) = t(b) \otimes_\e h(a).
\end{equation}
This is an equation of morphisms $E^{\otimes 2} \to \K \otimes_\e T$. We calculate
\[s(a) \otimes (s(b) \otimes t(c) - s(c) \otimes t(b)) \otimes_\e h(d) \stackrel{\ref{eq2}}{=} \lambda(b \wedge c) \otimes \e t(a) \otimes_\e h(d)\]
\[=\lambda(b \wedge c) \otimes t(a) \otimes_\e \e h(d) \stackrel{\ref{eq3}}{=} s(d) \otimes (s(b) \otimes t(a) \otimes_\e h(c) - s(c) \otimes t(a) \otimes_\e h(b)).\]
It follows
\[s(d) \otimes s(c) \otimes t(a) \otimes_\e h(b) + s(a) \otimes s(b) \otimes t(c) \otimes_\e h(d)\]
\[ = s(d) \otimes s(b) \otimes t(a) \otimes_\e h(c) + s(a) \otimes s(c) \otimes t(b) \otimes_\e h(d).\]
Let us simplify the notation via $\langle a,b,c,d \rangle := s(a) \otimes s(b) \otimes t(c) \otimes_\e h(d)$. Then the equation becomes
\[\langle d,c,a,b \rangle + \langle a,b,c,d \rangle = \langle d,b,a,c \rangle + \langle a,c,b,d \rangle.\]
Since $\L$ is symtrivial, the order of $a,b$ in $\langle a,b,c,d \rangle$ does not matter. Therefore we may simplify the notation further by writing $\langle c,d \rangle := \langle a,b,c,d \rangle$ -- similarly $\langle a,b \rangle := \langle c,d,a,b \rangle$ etc. and the equation becomes
\begin{equation} \label{eq5}
\langle a,b \rangle + \langle c,d \rangle = \langle a,c \rangle + \langle b,d \rangle.
\end{equation}
Thus, in a sum of two brackets, we may interchange the two inner variables. Our claim \ref{eq4} becomes
\begin{equation} \label{eq6}
\langle a,b \rangle = \langle b,a \rangle.
\end{equation}
It is easy to establish $2 \langle a,b \rangle = 2 \langle b,a \rangle$: In \ref{eq5} the right hand side does not change when we interchange $a \leftrightarrow b$ and $c \leftrightarrow d$, but then left hand side becomes $\langle b,a \rangle + \langle d,c \rangle$. It follows
\[\langle a,b \rangle - \langle b,a \rangle = \langle d,c \rangle - \langle c,d \rangle.\]
The right hand side does not change when we interchange $a \leftrightarrow b$, but the left hand side changes its sign. It follows $\langle a,b \rangle - \langle b,a \rangle = \langle b,a \rangle - \langle a,b \rangle$, i.e. $2 \langle a,b \rangle = 2 \langle b,a \rangle$. In particular, we are done when $2 \in R^*$. The general case is more complicated: We (have to) introduce an additional parameter $e$ from $E$ and will abbreviate $s(a) \otimes s(b) \otimes s(c) \otimes t(d) \otimes_\e h(e)$ by $\langle a,b,c,d,e \rangle$, or just by $\langle d,e \rangle$ since we may again permute the first three factors freely. We have to deduce \ref{eq6} from \ref{eq5}.

The following linear combination has been found using the computer algebra system \textsc{Sage} (see \url{http://www.sagemath.org}). Observe that those brackets with the same color cancel each other out.
\definecolor{yell}{rgb}{1,0.8,0.2}
\begin{eqnarray*}
0 & \stackrel{\ref{eq5}}{=}& \phantom{+} (\textcolor{Aquamarine}{\langle a,b\rangle } + \textcolor{Bittersweet}{\langle c,e\rangle }) - (\textcolor{Blue}{\langle a,c\rangle } + \textcolor{CarnationPink}{\langle b,e\rangle }) \\
&&+(\textcolor{Orange}{\langle b,c\rangle } + \textcolor{DarkOrchid}{\langle a,e\rangle }) - (\textcolor{ForestGreen}{\langle b,a\rangle } + \textcolor{Bittersweet}{\langle c,e\rangle }) \\
&&+(\textcolor{LimeGreen}{\langle a,b\rangle } + \langle e,d \rangle) - (\textcolor{DarkOrchid}{\langle a,e\rangle } + \textcolor{yell}{\langle b,d\rangle }) \\
&&+(\textcolor{Red}{\langle a,d\rangle } + \textcolor{CarnationPink}{\langle b,e\rangle }) - (\textcolor{LimeGreen}{\langle a,b\rangle } + \langle d,e \rangle ) \\
&&+(\textcolor{ForestGreen}{\langle b,a\rangle } + \textcolor{Maroon}{\langle c,d\rangle }) - (\textcolor{Orange}{\langle b,c\rangle } + \textcolor{Red}{\langle a,d\rangle }) \\
&&+(\textcolor{Blue}{\langle a,c\rangle } + \textcolor{yell}{\langle b,d\rangle }) - (\textcolor{Aquamarine}{\langle a,b\rangle } + \textcolor{Maroon}{\langle c,d\rangle }) \\
&=&  \langle e,d \rangle - \langle d,e \rangle
\end{eqnarray*}
This finally proves \ref{eq6}.

\textbf{5. Step: $\K$ is symtrivial.} We still have to complete the proof of Step 3: Given a triple $(\L,s,\lambda) \in A$, we have defined $t: E[\e] \to \K$ to be the cokernel of $\phi : (\L^{\otimes -2} \otimes E^{\otimes 3})[\e] \to E[\e]$. In particular, \ref{eq2} holds. We have to prove that $\K$ is a line object. Let us first prove that $\K$ is symtrivial. Since $t$ is an epimorphism, it suffices to prove that $t(c) \otimes_\e t(d) = t(d) \otimes_\e t(c)$ in element notation. Using additional aprameters $a,b$, we compute:
\[s(a) \otimes s(b) \otimes t(c) \otimes_\e t(d) \stackrel{\ref{eq2}}{=} s(a) \otimes s(c) \otimes t(b) \otimes_\e t(d) + \lambda(b \wedge c) \otimes \e t(a) \otimes_\e t(d)\]
\[\stackrel{\ref{eq2}}{=} s(a) \otimes s(d) \otimes t(b) \otimes_\e t(c)+ \lambda(c \wedge d) \otimes t(b) \otimes_\e \e t(a) + \lambda(b \wedge c) \otimes t(a) \otimes_\e \e t(d)\]
\[\stackrel{\ref{eq2}}{=} s(a) \otimes s(b) \otimes t(d) \otimes_\e t(c) + \lambda(d \wedge b) \otimes \e t(a) \otimes_\e t(c) + \lambda(c \wedge d) \otimes t(b) \otimes_\e \e t(a) \]
\[+ \lambda(b \wedge c) \otimes t(a) \otimes_\e \e t(d).\]
Thus, we have to prove
\begin{equation} \label{eq7}
0 = \lambda(c \wedge d) \otimes t(b) \otimes_\e \e t(a) - \lambda(b \wedge d) \otimes t(a) \otimes_\e \e t(c) +  \lambda(b \wedge c) \otimes t(a) \otimes_\e \e t(d).
\end{equation}
Let us start with the cocycle condition
\[0 = \lambda(c \wedge d) \otimes s(b) - \lambda(b \wedge d) \otimes s(c) + \lambda(b \wedge c) \otimes s(d).\]
We define ${i}$ as before and have ${i} s = \e t$ as in Step 2 (this did not use that $\K$ is a line object). Thus, applying $\L^{\otimes 2} \otimes {i}$ to the equation above, we get
\[0 = \lambda(c \wedge d) \otimes \e t(b) - \lambda(b \wedge d) \otimes \e t(c) + \lambda(b \wedge c) \otimes \e t(d).\]
Tensoring with $t$ in the middle, we get
\[0 = \lambda(c \wedge d) \otimes t(a) \otimes_\e \e t(b) - \lambda(b \wedge d) \otimes t(a) \otimes_\e \e t(c) + \lambda(b \wedge c) \otimes t(a) \otimes_\e \e t(d).\]
Comparing this with \ref{eq7}, we are left to prove that
\[t(a) \otimes_\e \e t(b) = t(b) \otimes_\e \e t(a),\]
which is a special case of the claimed symtriviality of $\K$. In order to prove this, we make the same calculations as in the beginning of this Step:
\[s(a) \otimes s(b) \otimes t(c) \otimes_\e \e t(d) \stackrel{\ref{eq2}}{=} s(a) \otimes s(c) \otimes t(b) \otimes_\e \e t(d) + \lambda(b \wedge c) \otimes \e t(a) \otimes_\e \e t(d)\]
\[=s(a) \otimes s(c) \otimes t(b) \otimes_\e \e t(d) \stackrel{\ref{eq2}}{=} s(a) \otimes s(d) \otimes t(b) \otimes_\e \e t(c)+ \lambda(c \wedge d) \otimes t(b) \otimes_\e \e^2 t(a)\]
\[=s(a) \otimes s(d) \otimes t(b) \otimes_\e \e t(c) \stackrel{\ref{eq2}}{=} s(a) \otimes s(b) \otimes t(d) \otimes_\e \e t(c) + \lambda(d \wedge b) \otimes \e t(a) \otimes_\e \e t(c)\]
\[=s(a) \otimes s(b) \otimes t(d) \otimes_\e \e t(c)\]
This proves $t(c) \otimes_\e \e t(d) = t(d) \otimes_\e \e t(c)$, as desired.
 
\textbf{6. Step: $\K$ is invertible.} We have to construct an object and show that it is $\otimes$-inverse to $\mathcal{K}$. Consider the morphism
\[\psi : (\L^{\otimes -4} \otimes E^{\otimes 3})[\e] \to (\L^{\otimes -2} \otimes E)[\e]\]
in $\C[\e]$ which is induced by the morphism $E^{\otimes 3} \to (\L^{\otimes 2} \otimes E)[\e]$ which maps $a \otimes b \otimes c$ to
\[s(c) \otimes (s(b) \otimes a - s(a) \otimes b) - \lambda(a \wedge b) \otimes \varepsilon c.\]
Note that $\psi$ is almost like $\phi$, but we have twisted with $\L^{\otimes -2}$ and have changed the sign of the cocycle $\lambda$. Let $\K^{\otimes -1}$ be the cokernel of $\psi$. Consider the morphism
\[s \otimes s + \e \lambda : E \otimes E \to \L^{\otimes 2}[\e]\]
in $\C$. It induces a morphism
\[\beta : (\L^{\otimes -2} \otimes E)[\e] \otimes_\e E[\e] \cong (\L^{\otimes -2} \otimes E \otimes E)[\e] \to \O_\C[\e]\]
in $\C[\e]$. We claim that $\beta \circ ((\L^{\otimes -2} \otimes E)[\e]  \otimes \phi) = 0$. This composition is dual to the morphism $E^{\otimes 4}[\e] \to E^{\otimes 2} \otimes \L^{\otimes 2}[\e] \to  \L^{\otimes 4}[\e]$ which maps $u \otimes a \otimes b \otimes c$ in $E^{\otimes 4}$ first to
\[u \otimes c \otimes s(a) \otimes s(b) - u \otimes b \otimes s(a) \otimes s(c) - u \otimes \e a \otimes \lambda(b \wedge c)\]
in $E^{\otimes 2} \otimes \L^{\otimes 2}[\e]$ and then to
\[(s(u) \otimes s(c) + \e \lambda(u \wedge c)) \otimes s(a) \otimes s(b) - (s(u) \otimes s(b) + \e \lambda(u \wedge b)) \otimes s(a) \otimes s(c)\]
\[ - \e s(u) \otimes s(a) \otimes \lambda(b \wedge c)\]
in $\L^{\otimes 4}[\e]$. This is easily seen to be zero, using the cocycle condition and that $\L$ is symtrivial. This proves the claim.

Since $(\L^{\otimes -2} \otimes E)[\e]  \otimes t$ is a cokernel of $(\L^{\otimes -2} \otimes E)[\e]  \otimes \phi$, there is a unique morphism $\alpha : (\L^{\otimes -2} \otimes E)[\e] \otimes_\e \K \to \O_\C[\e]$ such that $\alpha \circ ((\L^{\otimes -2} \otimes E)[\e] \otimes t) = \beta$. We claim that
\[(\L^{\otimes -4} \otimes E^{\otimes 3})[\e] \otimes_\e \K \xrightarrow{\psi \otimes \K} (\L^{\otimes -2} \otimes E)[\e]  \otimes_\e \K \xrightarrow{\alpha} \O_C[\e] \to 0\]
is exact -- this will imply $\K^{\otimes -1} \otimes_\e \K \cong \O_\C[\e]$. It suffices to prove that
\[(\L^{\otimes -2} \otimes E^{\otimes 3})[\e] \otimes_\e \K \to E[\e] \otimes_\e \K \to \L^{\otimes 2}[\e] \to 0\]
is exact. So let $T \in \C[\e]$ and $E[\e] \otimes_\e \K \to T$ be a morphism in $\C[\e]$ which vanishes on $(\L^{\otimes -2} \otimes E^{\otimes 3})[\e]$. Since $E[\e] \otimes t : (E \otimes E)[\e] \to E[\e] \otimes_\e \K$ is the cokernel of $E[\e] \otimes \phi$, the morphism corresponds to a morphism $h : E \otimes E \to T$ in $\C$ such that
\begin{equation} \label{eq8}
s(b) \otimes s(c) \otimes h(a \otimes d) - s(b) \otimes s(d) \otimes h(a \otimes c) = \lambda(c \wedge d) \otimes \e h(a \otimes b)
\end{equation}
and
\begin{equation} \label{eq9}
s(b) \otimes s(c) \otimes h(a \otimes d) - s(a) \otimes s(c) \otimes h(b \otimes d) = \lambda(a \wedge b) \otimes \e h(c \otimes d).
\end{equation}
We have to prove that there is a unique morphism $\tilde{h} : \L^{\otimes 2} \to T$ in $\C$ such that $\tilde{h}(s \otimes s + \e \lambda) = h$, i.e.
\begin{equation} \label{eq10}
\tilde{h}(s(a) \otimes s(b)) + \e \tilde{h}(\lambda(a \wedge b)) = h(a \otimes b).
\end{equation}
We check uniqueness first: Given \ref{eq10}, the cocycle condition implies
\[0 = s(a) \otimes \e \tilde{h}(\lambda(b \wedge c)) - s(b) \otimes \e \tilde{h}(\lambda(a \wedge c)) + s(c) \otimes \e \tilde{h}(\lambda(a \wedge b))\]
\[=s(a) \otimes (h(b \otimes c) - \tilde{h}(s(b) \otimes s(c))) - s(b) \otimes (h(a \otimes c) - \tilde{h}(s(a) \otimes s(c))\]
\[ + s(c) \otimes (h(a \otimes b) - \tilde{h}(s(a) \otimes s(b))).\]
Because of $s(a) \otimes \tilde{h}(s(b) \otimes s(c)) = s(b) \otimes \tilde{h}(s(a) \otimes s(c))$ (use that $\L$ is symtrivial) this simplifies to
\[s(c) \otimes \tilde{h}(s(a) \otimes s(b)) = s(a) \otimes h(b \otimes c) - s(b) \otimes h(a \otimes c) + s(c) \otimes h(a \otimes b).\]
This shows that $\tilde{h}$ is unique, if it exists. But it also tells us how to construct $\tilde{h}$ from $h$: Consider the morphism
\[\theta : E^{\otimes 3} \to \L \otimes T, c \otimes a \otimes b \mapsto s(a) \otimes h(b \otimes c) - s(b) \otimes h(a \otimes c) + s(c) \otimes h(a \otimes b).\]
We would like to lift it to $\L^{\otimes 3}$. By \autoref{coeq-tensor} and \autoref{goodepi} it suffices to check the equations 
\begin{eqnarray*}
s(d) \otimes \theta(c \otimes a \otimes b) & = s(c) \otimes \theta(d \otimes a \otimes b)\\
&  = s(a) \otimes \theta(c \otimes d \otimes b)\\ 
&  = s(b) \otimes \theta(c \otimes a \otimes d).
\end{eqnarray*} 
Let us first verify 
\begin{equation} \label{eq12}
\e h(a \otimes b) = \e h(b \otimes a).
\end{equation}
Note that \ref{eq8} and \ref{eq9} imply:
\begin{equation} \label{eq13}
s(c) \otimes \e h(a \otimes d) = s(d) \otimes \e h(a \otimes c)
\end{equation}
\begin{equation} \label{eq14}
s(b) \otimes \e h(a \otimes d) = s(a) \otimes \e h(b \otimes d)
\end{equation} 
As in the proof of the Segre embedding (\ref{segre}), we get \ref{eq12} as follows:
\[s(c) \otimes s(d) \otimes \e h(a \otimes b) \stackrel{\ref{eq13}}{=} s(c) \otimes s(b) \otimes \e h(a \otimes d)\]
\[  = s(b) \otimes s(c) \otimes \e h(a \otimes d) \stackrel{\ref{eq14}}{=} s(b) \otimes s(a) \otimes \e h(c \otimes d)\]
\[ = s(a) \otimes s(b) \otimes \e h(c \otimes d) \stackrel{\ref{eq14}}{=} s(a) \otimes s(c) \otimes \e h(b \otimes d)\]
\[=s(c) \otimes s(a) \otimes \e h(b \otimes d) \stackrel{\ref{eq13}}{=} s(c) \otimes s(d) \otimes \e h(b \otimes a)\]
Now we can check the equations for $\theta$:
\[s(d) \otimes \theta(c \otimes a \otimes b)\]
\[=s(d) \otimes s(c) \otimes h(a \otimes b) - s(d) \otimes s(b) \otimes h(a \otimes c) + s(d) \otimes s(a) \otimes h(b \otimes c)\]
\[\stackrel{\ref{eq9}}{=} s(d) \otimes s(c) \otimes h(a \otimes b) + \lambda(b \wedge a) \otimes \e h(d \otimes c)\]
stays invariant after interchanging $d \leftrightarrow c$ (because of  \ref{eq12}). Likewise,
\[s(d) \otimes \theta(c \otimes a \otimes b) \stackrel{\ref{eq8}}{=} \lambda(c \wedge b) \otimes \e h(a \otimes d) + s(d) \otimes s(a) \otimes h(b \otimes c)\]
stays invariant after interchanging $d \leftrightarrow a$. For the third variable, we calculate:
\[s(d) \otimes \theta(c \otimes a \otimes b) - s(b) \otimes \theta(c \otimes a \otimes d)\]
\[=s(d) \otimes s(c) \otimes h(a \otimes b) - s(b) \otimes s(c) \otimes h(a \otimes b)\]
\[+s(d) \otimes s(a) \otimes h(b \otimes c) - s(d) \otimes s(a) \otimes h(d \otimes c)\]
\[\stackrel{\ref{eq8},\,\ref{eq9}}{=} \lambda(d \wedge b) \otimes \e h(a \otimes c) + \lambda(b \wedge d) \otimes \e h(a \otimes c) = 0.\]
 
We have proven that $\theta$ lifts uniquely to a morphism $\L^{\otimes 3} \to \L \otimes T$. Hence, there is a unique morphism $\tilde{h} : \L^{\otimes 2} \to T$ such that
\[s(c) \otimes \tilde{h}(s(a) \otimes s(b)) = s(a) \otimes h(b \otimes c) - s(b) \otimes h(a \otimes c) + s(c) \otimes h(a \otimes b).\]
We want to prove \ref{eq10}. This comes down to
\[s(c) \otimes \e \tilde{h}(\lambda(a \wedge b)) = s(b) \otimes h(a \otimes c) - s(a) \otimes h(b \otimes c).\]
Let us introduce additional parameters $d,e$ from $E$. We compute:
\[s(e) \otimes s(d) \otimes s(c) \otimes \e \tilde{h}(\lambda(a \wedge b)) = \lambda(a \wedge b) \otimes s(e) \otimes \e \tilde{h}(s(d) \otimes s(c))\]
\[=\lambda(a \wedge b) \otimes \e (s(d) \otimes h(e \otimes c) - s(c) \otimes h(e \otimes d) + s(e) \otimes h(d \otimes e))\]
Observe that \ref{eq8} implies $\e(s(d) \otimes h(e \otimes c) - s(c) \otimes h(e \otimes d))=0$. Thus, the expression simplifies to
\[=\lambda(a \wedge b) \otimes s(e) \otimes \e  h(d \otimes e).\]
By \ref{eq9} this expands to
\[=s(e) \otimes (s(b) \otimes s(d) \otimes h(a \otimes c) - s(a) \otimes s(d) \otimes h(a \otimes c))\]
\[=s(e) \otimes s(d) \otimes (s(b) \otimes h(a \otimes c) - s(a) \otimes h(a \otimes c)).\]
This finishes the proof.
\end{proof}

For the sake of completeness, let us mention the following variant of \autoref{tangent-main} which gets rid of the ``generator object'' $E$. It generalizes the well-known classification of deformations of invertible sheaves (\cite[Proposition 2.6]{Har10}), which says: If $X$ is a scheme, then there is an isomorphism of groups
\[\Pic(X[\e]) \cong \Pic(X) \oplus H^1(X,\mathcal{O}_X).\]
 
\begin{thm} \label{deform}
Let $\C$ be a cocomplete linear tensor category. There is an equivalence of categories between the category of line objects in $\C[\e]$ and the category of extensions $0 \to \L \to \K \to \L \to 0$ in $\C$, where $\L \in \C$ is a line object.
\end{thm}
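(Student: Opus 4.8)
The plan is to exhibit two functors between the category $\mathcal{A}$ of line objects in $\C[\e]$ and the category $\mathcal{B}$ of extensions $0 \to \L \to \K \to \L \to 0$ of a line object $\L \in \C$ by itself, and to show they are mutually inverse. Both categories are essentially discrete by \autoref{inv-diskret}, so it suffices to treat objects and to check that morphisms correspond. In one direction, $\Phi : \mathcal{A} \to \mathcal{B}$, \autoref{KL-exact} does all the work: a line object $\K$ in $\C[\e]$ yields the exact sequence $0 \to \L \xrightarrow{i} \K \xrightarrow{p} \L \to 0$ in $\C$ with $\L = \K/\e\K$ a line object, and this is the associated extension. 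In the other direction, $\Psi : \mathcal{B} \to \mathcal{A}$, given an extension $0 \to \L \xrightarrow{i} \K \xrightarrow{p} \L \to 0$ with $\L$ a line object, I would equip the underlying object $\K \in \C$ with the endomorphism $\e := i p$; since $p$ is a cokernel of $i = \ker(p)$ we have $p i = 0$, hence $\e^2 = i (p i) p = 0$, so $(\K,\e)$ is an object of $\C[\e]$.

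That these assignments are mutually inverse is then almost formal. Starting from a line object $\K$ in $\C[\e]$, the reconstructed action $i p$ equals the original $\e$ by the very characterization of $i$ in \autoref{KL-exact}. Conversely, starting from an extension, $\coker(\e) = \coker(i p) = \coker(i) = \L$ because $p$ is an epimorphism, so $\Phi$ recovers the original extension together with its structure maps. The only substantial point is therefore that $\Psi$ actually lands in line objects, i.e. that $(\K, i p)$ is invertible and symtrivial in $\C[\e]$.

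I expect this to be the main obstacle, and the plan is to sidestep a direct construction of the inverse by reducing to \autoref{tangent-main}. Take $E := \K$ and $s := p : \K \to \L$, which is a regular epimorphism since it is a cokernel of $i$. Define a morphism $\lambda : \Lambda^2(\K) \to \L^{\otimes 2}$ by the requirement $(\L \otimes i) \circ \lambda(a \wedge b) = p(a) \otimes b - p(b) \otimes a$; this is well posed because the right-hand side vanishes under $\L \otimes p$ (as $\L$ is symtrivial) and thus factors through the monomorphism $\L \otimes i$. A short element-notation computation, using only symtriviality of $\L$ to move the two outer $\L$-factors around the central $\K$-factor (\autoref{symtrivial-properties}), shows that $\lambda$ satisfies the cocycle condition of \autoref{cocycle}. \autoref{tangent-main} then produces, from the triple $(\L, p, \lambda)$, a line object $\coker(\phi)$ in $\C[\e]$ together with a regular epimorphism $\K[\e] \to \coker(\phi)$, where $\phi$ is the morphism built in Step~3 of its proof.

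The decisive step is to identify this $\coker(\phi)$ with $(\K, i p)$. The plan is to show that the counit $t : \K[\e] \to \K$ of the extension-of-scalars adjunction (which sends $x + y\e$ to $x + i p(y)$ in element notation, and is a regular epimorphism since it splits in $\C$) is itself a cokernel of the same $\phi$. Granting $t \circ \phi = 0$ together with the requisite universal property, this gives $(\K, i p) \cong \coker(\phi)$, and since the latter is a line object by \autoref{tangent-main}, so is $(\K, i p)$ — yielding invertibility and symtriviality in one stroke and completing the proof that $\Psi$ is well-defined. The hard part is precisely the diagram chase verifying that $t$ coequalizes $\phi$ and has the universal property, i.e. that $\ker(t)$ coincides with the image of $\phi$; this is the generator-free analogue of Steps~3--6 in the proof of \autoref{tangent-main}, but considerably shorter because here the ``generator'' $E = \K$ already sits inside $\K$ via the counit (its restriction to the summand $\K \subseteq \K[\e]$ is the identity). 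Finally, since both categories are essentially discrete, an isomorphism of line objects in $\C[\e]$ induces an isomorphism of the associated extensions and conversely, so $\Phi$ and $\Psi$ assemble into the asserted equivalence of categories.
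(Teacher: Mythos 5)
You take a genuinely different route from the paper, and it is viable. The paper proves \autoref{deform} directly, without invoking \autoref{tangent-main}: from the extension it forms $\overline{\K}$ (the object $\K$ with the sign of $\e = ip$ reversed), proves $\K \otimes_\e \overline{\K} \cong \L^{\otimes 2}[\e]$ by exhibiting explicit mutually inverse morphisms, and then checks symtriviality separately via the cocycle identity; this self-contained argument also yields the explicit formula $\K^{\otimes -1} \cong \L^{\otimes -2} \otimes \overline{\K}$, which the paper points out as a dividend. You instead anchor everything at $E := \K$, $s := p$, define the cocycle $\lambda$ by $(\L \otimes i)\circ\lambda = \bigl(a \otimes b \mapsto p(a) \otimes b - p(b) \otimes a\bigr)$ (this is well posed, and your verification of the cocycle condition via symtriviality of $\L$ is correct), and let \autoref{tangent-main} — proved earlier and independently of \autoref{deform}, so there is no circularity — produce the line object $\coker(\phi)$ in $\C[\e]$. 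The gain is brevity modulo \autoref{tangent-main}; the loss is the explicit inverse. Note that you genuinely need the construction in Step 3 of that proof (not merely the statement of the theorem): identifying $(\K,ip)$ with the output of the equivalence itself would presuppose that $(\K,ip)$ is a line object.

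Two corrections. First, your claim that both categories are essentially discrete by \autoref{inv-diskret} is false: that lemma concerns invertible regular quotients of a \emph{fixed} object $E$, whereas line objects in $\C[\e]$, and likewise extensions, have nontrivial automorphisms (any unit of $\End(\O_{\C[\e]})$ gives one). This is harmless — a morphism of line objects in $\C[\e]$ is exactly an $\e$-equivariant morphism in $\C$, hence induces a morphism of extensions, and conversely — but the discreteness argument must be deleted, both at the start and at the end of your proposal.

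Second, the decisive identification $\coker(\phi) \cong (\K,ip)$, which you leave as a sketched chase, does go through, and indeed it is where your proof could fail, so let me record it. A morphism $\K[\e] \to T$ in $\C[\e]$ killing $\phi$ is the $\e$-linear extension of a morphism $h : \K \to T$ in $\C$ satisfying, in element notation,
\[(\ast)\quad p(a) \otimes \bigl(p(b) \otimes h(c) - p(c) \otimes h(b)\bigr) = \lambda(b \wedge c) \otimes \e_T h(a).\]
For $h = \id_\K$ and $T = (\K,ip)$ both sides of $(\ast)$ are the image of $p(a) \otimes \lambda(b \wedge c)$ under $\L \otimes \L \otimes i$ (use the defining property of $\lambda$ and that $\S_3$ acts trivially on $\L^{\otimes 3}$), so $t\phi = 0$. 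For the universal property one must show that any $h$ satisfying $(\ast)$ is $\e$-linear, i.e. $h \circ ip = \e_T \circ h$. Putting $c = i(x)$ in $(\ast)$, using $pi = 0$ and $\lambda(b \wedge i(x)) = p(b) \otimes x$, and cancelling $p(b)$ by \autoref{epi-cancel}, gives $p(a) \otimes h(i(x)) = x \otimes \e_T h(a)$, hence $p(a) \otimes h(ip(c)) = p(c) \otimes \e_T h(a)$. On the other hand, post-composing $(\ast)$ with $\e_T$ kills the right-hand side because $\e_T^2 = 0$, giving $p(b) \otimes \e_T h(c) = p(c) \otimes \e_T h(b)$. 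Combining, $p(a) \otimes h(ip(c)) = p(a) \otimes \e_T h(c)$, and cancelling $p(a)$ yields $h \circ ip = \e_T \circ h$. Thus $t$ is a cokernel of $\phi$ and $(\K,ip) \cong \coker(\phi)$ is a line object. Observe the role of $\e_T^2 = 0$ here: it is precisely what spares you the torsion obstruction (one only gets $2\langle a,b\rangle = 2\langle b,a\rangle$ for free) that forces the long combinatorial identity in Step 4 of \autoref{tangent-main}; without this observation, the ``considerably shorter'' chase you promise would stall at exactly that point.
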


Here, a morphism from an extension $(\L,\K,\L)$ to another $(\L',\K',\L')$ is defined to be a pair consisting of a morphism $\L \to \L'$ and a morphism $\K \to \K'$ in $\C$ such that the obvious diagram commutes.

\begin{proof}
Let $A$ be the category of line objects of $\C[\e]$ and $B$ be the category of extensions as above. Then \autoref{KL-exact} produces a functor $A \to B$. In order to construct $B \to A$, let
\[0 \to \L \xrightarrow{i} \K \xrightarrow{p} \L \to 0\]
be an exact sequence in $\C$, where $\L$ is a line object. Define $\e \in \End(K)$ by $\e = ip$. Note that $p\e=0$ and $\e i = 0$. In particular, $\e^2=0$, thus $\K$ becomes an object of $\C[\e]$. Let $\overline{\K}$ be the object of $\C[\e]$ with the same underlying object of $\C$, but $\e$ changes its sign on $\overline{\K}$. We claim
\[\K \otimes_\e \overline{\K} \cong \L^{\otimes 2}[\e].\]
This will prove that $\K$ is invertible in $\C[\e]$ with inverse $\L^{\otimes -2} \otimes \overline{\K}$. Consider the morphism
\[\K^{\otimes 2} \to \K \otimes \L,~ a \otimes b \mapsto a \otimes p(b) - b \otimes p(a).\]
Since $\L$ is symtrivial, it vanishes when composed with $p \otimes \L$. Since $i \otimes \L$ is the kernel of $p \otimes \L$, there is a unique morphism
\[\lambda : \K^{\otimes 2} \to \L^{\otimes 2}\]
such that
\[(i \otimes\L) \lambda(a \otimes b) = a \otimes p(b) - b \otimes p(a).\]
Clearly we have $\lambda(a \otimes b) = - \lambda(b \otimes a)$ and one checks
\begin{equation} \label{equ1}
\lambda(\e a \otimes b) = p(a) \otimes p(b) \text{ and } \lambda(a \otimes \e b) = - p(b) \otimes p(a) = - p(a) \otimes p(b).
\end{equation}
Now consider the morphism
\[\alpha : \K \otimes \K \to \L^{\otimes 2}[\e],~ a \otimes b \mapsto p(a) \otimes p(b) + \e \lambda(a \otimes b).\]
It satisfies
\[\alpha(\e a \otimes b) = \e (p(a) \otimes p(b)) = \e \alpha(a \otimes b),\]
\[\alpha(a \otimes \e b) = - \e (p(a) \otimes p(b)) = - \e \alpha(a \otimes b).\]
Hence, it lifts to a morphism $\beta : \K \otimes_\e \overline{\K} \to \L^{\otimes 2}[\e]$ in $\C[\e]$ such that
\[\beta(\e a \otimes b) = \e (p(a) \otimes p(b)).\]
We construct an inverse morphism as follows: The morphism $\K^{\otimes 2} \to \K \otimes_\e \overline{\K}$ mapping $a \otimes b \mapsto \e a \otimes_\e b$ kills $\e a \otimes b$ and $a \otimes \e b$. Since $p : \K \to \L$ is the cokernel of $\e$, using \autoref{coeq-tensor} there is a unique morphism $\gamma : \L^{\otimes 2} \to \K \otimes_\e \overline{\K}$ such that
\[\gamma(p(a) \otimes p(b)) = \e a \otimes_\e b.\]
Observe that $\e \gamma = 0$. Next, we consider the morphism $\K^{\otimes 2} \to \K \otimes_\e \overline{\K}$ defined by
\[a \otimes b \mapsto a \otimes_\e b - \gamma(\lambda(a \otimes b)).\]
By \ref{equ1} it maps $\e a \otimes b$ to
\[\e a \otimes_\e b - \gamma(p(a) \otimes p(b)) = 0\]
and $a \otimes \e b$ to
\[a \otimes_\e \e b - \gamma(-p(a) \otimes p(b))=- \e a \otimes_\e b + \e a \otimes_\e b = 0.\]
By the same argument as above, it follows that there is a unique morphism $\delta : \L^{\otimes 2} \to  \K \otimes_\e \overline{\K}$ such that
\[\delta(p(a) \otimes p(b)) = a \otimes_\e b - \gamma(\lambda(a \otimes b)).\]
Since $\e \gamma = 0$, we have $\e \delta(p(a) \otimes p(b)) = \e a \otimes_\e b = \gamma(p(a) \otimes p(b))$. Since $p \otimes p$ is an epimorphism, this proves
\begin{equation} \label{equ2}
\e \delta = \gamma : \L^{\otimes 2} \to \K \otimes_\e \overline{\K}.
\end{equation}
Besides, we have
\begin{equation} \label{equ3}
\beta \gamma = \e : \L^{\otimes 2} \to \L^{\otimes 2}[\e]
\end{equation}
because of $\beta(\gamma(p(a) \otimes p(b))) = \beta(\e a \otimes_\e b) = \e (p(a) \otimes p(b))$. Let
\[\pi : \L^{\otimes 2}[\e] \to \K \otimes_\e \overline{\K}\]
be the morphism in $\C[\e]$ induced by $\delta$. We have $\pi \circ \beta = \id_{\K \otimes_\e \overline{\K}}$ because of
\[\pi(\beta(a \otimes_\e b)) = \delta(p(a) \otimes p(b)) + \e \delta(\lambda(a \otimes b))\]
\[ = a \otimes_\e b - \gamma(\lambda(a \otimes b))  + \e \delta(\lambda(a \otimes b)) \stackrel{\ref{equ2}}{=} a \otimes_\e b.\]
And we have $\beta \circ \pi = \id_{\L^{\otimes 2}[\e]}$ because of
\[\beta(\pi(p(a) \otimes p(b))) = \beta(a \otimes_\e b) - \beta(\gamma(\lambda(a \otimes b)))\]
\[\stackrel{\ref{equ3}}{=} p(a) \otimes p(b) + \e \lambda(a \otimes b) - \e \lambda(a \otimes b) = p(a) \otimes p(b).\]
This finishes the proof that $\K \otimes_\e \overline{\K} \cong \L^{\otimes 2}[\e]$. Therefore, $\K$ is invertible in $\C[\e]$. If we knew that $\K$ is symtrivial, this would finish the construction of the functor $B \to A$. It is straightforward to check that it is inverse to the functor $A \to B$.

In order to prove that $\K$ is symtrivial, let us check first the cocycle condition
\[p(a) \otimes \lambda(b \otimes c) - p(b) \otimes \lambda(a \otimes c) + p(c) \otimes \lambda(a \otimes b) = 0.\]
In fact, it suffices to check this after applying $\L \otimes i \otimes \L$, but then this becomes
\[p(a) \otimes (b \otimes p(c) - c \otimes p(b)) - p(b) \otimes (a \otimes p(c) - c \otimes p(a))\]
\[+p(c) \otimes (a \otimes p(b) - b \otimes p(a)) = 0,\]
which follows easily from $\L$ being symtrivial. Applying $i \otimes \L \otimes \L$ to the cocycle condition, we obtain
\[\e(a) \otimes \lambda(b \otimes c) - \e(b) \otimes \lambda(a \otimes c) + \e(c) \otimes \lambda(a \otimes b) = 0.\]
Since $\L$ is symtrivial, we have
\[\e(b) \otimes \lambda(a \otimes c) = (i \otimes \L \otimes \L)(p(b) \otimes \lambda(a \otimes c)) = (i \otimes \L \otimes \L)(\lambda(a \otimes c) \otimes p(b))\]
\[=(a \otimes p(c) - c \otimes p(a)) \otimes p(b).\]
It follows that
\[c \otimes p(a) \otimes p(b) + \e(c) \otimes \lambda(a \otimes b) = a \otimes p(c) \otimes p(b) + \e(a) \otimes \lambda(c \otimes b).\]
But this precisely means that the morphism $\K \otimes \L^{\otimes 2} \to \K \otimes \L^{\otimes 2}$ which is determined by the commutative diagram
\[\xymatrix@C=50pt{\K \otimes_\e \K \otimes_\e \overline{\K} \ar[r]^{S_{\K,\K} \otimes_\e \overline{\K}} \ar[d]^{\cong}_{\K \otimes_\e \beta} & \K \otimes_\e \K \otimes_\e \overline{\K} \ar[d]^{\K \otimes_\e \beta}_{\cong} \\ \K \otimes \L^{\otimes 2} \ar[r] & \K \otimes \L^{\otimes 2} }\]
equals the identity. Since $\overline{\K}$ is invertible, it follows that $S_{\K,\K}$ also equals the identity.
\end{proof}

\begin{rem}
If $\C=\Q(X)$ for some scheme $X$, then of course \autoref{deform} is easier to prove: That $\K$ is invertible may be checked locally; but locally the exact sequence splits and everything is easy. But even in this case our proof gives some additional information, namely how the inverse of $\K$ looks like \emph{globally} in terms of $\K$ and $\L$:
\[\K^{\otimes -1} = \L^{\otimes -2} \otimes \overline{\K}\]
\end{rem}

\begin{thm} \label{tangsch}
If $f : X \to S$ is a morphism of schemes which is affine or projective, then
\[T\bigl(\Q(X)/\Q(S)\bigr) \simeq \Q\bigl(T(X/S)\bigr).\]
\end{thm}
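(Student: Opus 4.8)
The plan is to handle the two cases separately, in each reducing to the corresponding already-established globalization of the tangent tensor category and then recognizing the resulting module category as $\Q(T(X/S))$ via the globalizations of $\Spec$ and $\Proj$. The unifying observation is that $T(X/S) = \Spec_X(\Sym_{\O_X}\Omega^1_{X/S})$ is always affine over $X$, so that $\Q(T(X/S)) \simeq \M(\Sym_{\O_X}\Omega^1_{X/S})$ by \autoref{aff-global} applied with base $X$, the right-hand side being the module category over $\Sym_{\O_X}\Omega^1_{X/S}$ regarded as a commutative algebra in $\Q(X)$; viewed over $\Q(S)$ through $\Q(S)\to\Q(X)\to\M(\Sym_{\O_X}\Omega^1_{X/S})$ this matches the structure of $T(X/S)\to X \to S$. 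Hence in each case it suffices to exhibit $T(\Q(X)/\Q(S))$ as $\M(\Sym \Omega)$ for an object $\Omega \in \Q(X)$ isomorphic to the geometric $\Omega^1_{X/S}$, compatibly over $\Q(S)$.

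First the affine case. Writing $X = \Spec_S(\mathcal A)$ for a quasi-coherent algebra $\mathcal A$ on $S$, \autoref{aff-global} gives $\Q(X)\simeq\M(\mathcal A)$ with $\mathcal A$ a commutative algebra in $\Q(S)$. By \autoref{tangmod} we obtain $T(\Q(X)/\Q(S)) \simeq \M(\Sym_{\mathcal A}\Omega^1_{\mathcal A})$, where $\Omega^1_{\mathcal A} = \Omega^1_{\mathcal A/\O_S}$ is the module of differentials computed internally to $\Q(S)$ as in \autoref{diffdef} and \autoref{omega-ex}. The only nontrivial point is the identification $\Omega^1_{\mathcal A}\cong\Omega^1_{X/S}$ inside $\M(\mathcal A)=\Q(X)$; this is exactly the affine case recorded in \autoref{defo}, where the construction of \autoref{omega-ex} is seen to reproduce the geometric module of differentials. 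Combining this with the first paragraph settles the affine case.

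For projective bundles, write $X = \P_S(E) = \Proj_S(\Sym E)$ with $E\in\Q(S)$ of finite presentation, so that $\Q(X)\simeq\P^\otimes_{\Q(S)}(E)$ by \autoref{proj-compare}. Then \autoref{tangent-proj} yields $T(\Q(X)/\Q(S)) \simeq \M(\Sym\Omega^1)$, where $\Omega^1$ is the cokernel of $\Lambda^3(p^*E)\otimes\O(-3)\to\Lambda^2(p^*E)\otimes\O(-2)$. To match $\Omega^1$ with $\Omega^1_{X/S}$ I would invoke the Koszul resolution \autoref{Koszul} for the epimorphism $s:p^*E\to\O(1)$: its exactness identifies this cokernel with $\im\bigl(\Lambda^2(p^*E)\otimes\O(-2)\to p^*E\otimes\O(-1)\bigr) = \ker\bigl(p^*E(-1)\to\O\bigr)$, which by the Euler sequence \autoref{euler} is precisely $\Omega^1_{\P(E)/S}$. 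Thus $\Omega^1\cong\Omega^1_{X/S}$, and the first paragraph again closes this sub-case.

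Finally, a general projective morphism $X = \Proj_S(\mathcal A)$, with $\mathcal A$ generated by $\mathcal A_1$ of finite presentation, factors as a closed immersion $i:X\hookrightarrow\overline X := \P_S(\mathcal A_1)$ followed by the bundle projection, the immersion corresponding to the graded surjection $\Sym\mathcal A_1\twoheadrightarrow\mathcal A$. Using \autoref{proj-compare} together with the globalization of closed immersions \autoref{closed-UE}, one presents $\Q(X)\simeq V^\otimes_{\Q(\overline X)}(J)$ for the appropriate ideal $J$. Then \autoref{tangcl} constructs $T(\Q(X)/\Q(S))$ from $T(\Q(\overline X)/\Q(S))$ (which exists by the bundle case) as a $V^\otimes$ over it, and this is to be matched, via the conormal exact sequence \autoref{deriv-prop}.3 and the geometric description of $T(X/S)$ as the closed subscheme of $T(\overline X/S)\times_{\overline X} X$ cut out by the differentials of $J$, with $\Q(T(X/S))$. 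I expect this last reduction to be the main obstacle: one must verify that the two morphisms $U(J)|_T \to \O_T$ and $P(J)\to\O_T$ produced by \autoref{tangcl} reproduce exactly the cutting-out of $T(X/S)$ by $dJ$, i.e. that the tensor-categorical conormal data coincides with the geometric one. By contrast the affine and bundle cases are essentially bookkeeping once the two differential-module identifications are in hand.
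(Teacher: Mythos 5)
Your proposal is correct and follows essentially the same route as the paper: the paper's own proof is a one-line citation of \autoref{tangmod}, \autoref{tangcl}, \autoref{tangent-proj} and the Euler sequence (\autoref{euler}), which is precisely the decomposition you carry out (affine case via \autoref{tangmod} and \autoref{defo}, bundle case via \autoref{tangent-proj} with the Koszul/Euler identification of $\Omega^1$, general projective case via \autoref{tangcl} and the conormal sequence). Your write-up is in fact more explicit than the paper's, and the conormal-data verification you flag as the remaining obstacle is a step the paper glosses over entirely.
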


\begin{proof}
This is a consequence of \autoref{tangmod}, \autoref{tangcl} and \autoref{tangent-proj} as well as the corresponding results in algebraic geometry, notably the Euler sequence (\autoref{euler}).
\end{proof}

We may use the classical lifting properties of smooth, unramified and \'{e}tale morphisms (\cite[\para 17]{EGAIV}) to get corresponding notions for tensor functors. Let us illustrate this for the notion of being formally unramified.

\begin{defi}[Formally unramified tensor functors]
A cocontinuous linear tensor functor $F : \C \to \D$ is called \emph{formally unramified} if $T(\D/\C)=\D$, i.e. for every cocontinuous linear tensor functor $\C \to \E$ the canonical functor
\[\Hom_{c\otimes/\C}(\D,\E) \to \Hom_{c\otimes/\C}(\D,\E[\e])\]
is an equivalence of categories.
\end{defi}

Then formally unramified cocontinuous linear tensor functors are closed under composition and cobase changes. We have the following comparison to algebraic geometry:

\begin{cor} \label{fr}
Let $f : X \to S$ be a morphism of schemes which is affine or projective. If $f : X \to S$ is formally unramified, then its pullback functor $f^* : \Q(S) \to \Q(X)$ is formally unramified. The converse holds when $X$ is quasi-separated.
\end{cor}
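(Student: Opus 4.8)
The plan is to reduce everything to the already-established equivalence $T(\Q(X)/\Q(S)) \simeq \Q(T(X/S))$ from \autoref{tangsch}, which holds precisely because $f$ is affine or projective. Given this equivalence, the notion of $f$ being formally unramified (meaning $T(X/S) \cong X$ as $S$-schemes, i.e. the tangent bundle projection is an isomorphism) should translate directly into the tensor-categorical notion of $f^*$ being formally unramified (meaning $T(\Q(S)/\Q(X)) = \Q(X)$, wait---$T(\Q(X)/\Q(S)) \simeq \Q(X)$).

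First I would spell out the scheme-theoretic characterization. By \autoref{tangent-adj} the tangent bundle $T(X/S)$ is right adjoint to the thickening functor, so $f$ is formally unramified exactly when the canonical morphism $T(X/S) \to X$ is an isomorphism; equivalently $\Omega^1_{X/S} = 0$. For the forward direction, suppose $f$ is formally unramified. Then $T(X/S) \cong X$ over $S$, and applying $\Q(-)$ together with \autoref{tangsch} gives
\[T\bigl(\Q(X)/\Q(S)\bigr) \simeq \Q\bigl(T(X/S)\bigr) \simeq \Q(X),\]
and one checks this equivalence is compatible with the structural functor $\Q(X) \to T(\Q(X)/\Q(S))$ coming from \autoref{tangdef}, so that $f^* : \Q(S) \to \Q(X)$ is formally unramified in the sense just defined. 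The only thing to verify here is that the isomorphism $T(X/S) \cong X$ is the tangent bundle projection (not some accidental abstract isomorphism), so that the induced equivalence of tensor categories is the one identifying $T(\Q(X)/\Q(S))$ with $\Q(X)$ via its universal structural functor.

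For the converse, assume $X$ is quasi-separated and that $f^*$ is formally unramified, i.e. $T(\Q(X)/\Q(S)) \simeq \Q(X)$ compatibly with the projection $P : \Q(X) \to T(\Q(X)/\Q(S))$. Again by \autoref{tangsch} this gives $\Q(T(X/S)) \simeq \Q(X)$, and the identification is induced by the pullback of the tangent bundle projection $p : T(X/S) \to X$, so that $p^* : \Q(X) \to \Q(T(X/S))$ is an equivalence of categories. Here is where quasi-separatedness enters: by \autoref{isofunk}, a morphism of quasi-separated schemes whose pullback functor is an equivalence of categories is itself an isomorphism. Both $X$ and $T(X/S)$ are quasi-separated (the tangent bundle is affine over $X$, hence quasi-separated over $X$, hence over $S$ if $X$ is), so $p : T(X/S) \to X$ is an isomorphism, which means $f$ is formally unramified.

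The main obstacle will be the bookkeeping of \emph{compatibility of the equivalences with the structural functors}, rather than any genuinely hard computation. The equivalence of \autoref{tangsch} is asserted at the level of abstract tensor categories, and the definitions of ``formally unramified'' on both sides are phrased in terms of the canonical comparison functor $\Hom_{c\otimes/\C}(\D,\E) \to \Hom_{c\otimes/\C}(\D,\E[\e])$, so I must trace through \autoref{refo} and \autoref{tangent-proj} to confirm that the identification $T(\Q(X)/\Q(S)) \simeq \Q(T(X/S))$ really carries the tangent bundle projection $P$ to the pullback $p^*$. Once this compatibility is in hand, the forward direction is immediate and the converse is a clean application of \autoref{isofunk} after checking the (routine) quasi-separatedness of $T(X/S)$.
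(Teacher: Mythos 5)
Your proposal is correct and follows essentially the same route as the paper's proof: the paper likewise reduces to the tangent bundle projection $p : T(X/S) \to X$, noting that $f$ is formally unramified iff $p$ is an isomorphism (citing \cite[Proposition 17.2.1]{EGAIV}, which matches your adjunction/$\Omega^1_{X/S}=0$ argument), that $f^*$ is formally unramified iff $p^*$ is an equivalence via \autoref{tangsch}, and then concludes with \autoref{isofunk}. The compatibility of the equivalence in \autoref{tangsch} with the projection $P$ and the quasi-separatedness of $T(X/S)$ (affine over $X$), which you rightly flag, are left implicit in the paper's three-line proof.
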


\begin{proof}
Let $p : T(X/S) \to X$ denote the tangent bundle projection.  Then $f$ is formally unramified if and only if $p$ is an isomorphism (\cite[Proposition 17.2.1]{EGAIV}) and $f^*$ is formally unramified if and only if $p^*$ is an equivalence (by \autoref{tangsch}). Therefore, the claim follows from \autoref{isofunk}.
\end{proof}

\chapter{Monoidal monads and their modules} \label{monoidalmonads}
This chapter is quite independent from the rest of this thesis. \marginpar{Added ``is''.}It is devoted to the study of monoidal monads, their module categories as well as certain universal properties of these module categories. This provides a machinery for constructing cocomplete tensor categories.


\section{Overview} \label{monadoverview}

Let $\C$ be a cocomplete tensor category. Given a commutative algebra $A$ in $\C$, we already know that its category of modules $\M(A)$ becomes a cocomplete tensor category (\autoref{modA}). The forgetful functor $U : \M(A) \to \C$ creates colimits and has a left adjoint $F : \C \to \M(A)$, mapping $X$ to $X \otimes A$ with the obvious right $A$-action. In fact, $U$ is monadic and $F$ is a cocontinuous tensor functor. Our goal is to generalize this construction to certain monads on $\C$, the above being the special case of the monad $- \otimes A$ associated to $A$. This will include lots of other specific examples. See \cite[VI]{ML98} for an introduction to monads.
 
This construction was already done by Lindner (\cite{Lin75}), Guitart (\cite{Gui80}) and recently by Seal (\cite{Sea13}), all influenced by the pioneering work of Kock (\cite{Kock71}). However, we would like to extend this construction 
and specialize to the realm of \emph{cocomplete} tensor  categories. Besides, our exposition is slightly different. Many preparatory results of this chapter can be found almost verbatim in \cite{Sea13}, which coincidentally appeared at the same time the author studied monoidal monads.
  
Let us describe our goal more precisely, thereby motivate or sketch the following definitions and constructions \emph{backwards}. Let $(T,\mu,\eta)$ be a monad (usually abbreviated by $T$) on the underlying category of a cocomplete (symmetric) monoidal category $\C$. We want to endow the category $\M(T)$ of $T$-modules (often called $T$-algebras) with a tensor product $\otimes_T$ such that $\M(T)$ becomes a cocomplete (symmetric) monoidal category. This extra structure should be reflected in an extra structure on $T$.

We want the free functor $F : \C \to \M(T)$ also to carry the structure of a symmetric monoidal functor, which means in particular that the free module $F(1)$ is a unit for $\otimes_T$ and that there are natural isomorphisms of free $T$-modules
\[F(X) \otimes_T F(Y) \cong F(X \otimes Y)\]
since our monoidal functors are required to be strong. It also implies that the forgetful functor $U$ is lax monoidal (\autoref{adj-lax}), so there are natural morphisms $U(F(X)) \otimes U(F(Y)) \to U(F(X) \otimes_T F(Y))$, i.e.
\[d_{X,Y} : T(X) \otimes T(Y) \to T(X \otimes Y),\]
which should be compatible with the monad structure. This already enables us to compute the tensor product in general: Given $T$-modules $(A,a),(B,b)$, tensoring the canonical presentations (\cite[VI.7]{ML98})
\[F(T(A)) \rightrightarrows F(A) \to (A,a),~F(T(B)) \rightrightarrows F(B) \to (B,b)\]
yields a coequalizer
\[F(T(A) \otimes T(B)) \rightrightarrows F(A \otimes B) \to (A,a) \otimes_T (B,b).\]
In general, two coequalizers in a cocomplete monoidal category cannot be tensored pointwise, but this works for \emph{reflexive coequalizers}. The two homomorphisms of $T$-modules $F(T(A) \otimes T(B)) \rightrightarrows F(A \otimes B)$ correspond to $\C$-morphisms $T(A) \otimes T(B) \rightrightarrows T(A \otimes B)$, the first one being $d_{A,B}$ and the other one being $\eta_{A \otimes B} \circ a \otimes b$.

Conversely, given a monad $T$ with a suitable natural transformation $d$ as above, we may construct $(A,a) \otimes_T (B,b)$ as the coequalizer above, provided it exists. There is an alternative description using \emph{bihomomorphisms}. This notion has been studied by Kock (\cite{Kock71bilin}) and Banaschewski-Nelson (\cite{Ban76}). Thus we have to impose that $\M(T)$ has reflexive coequalizers.

In order to get associativity of $\otimes_T$, we also have to impose the nontrivial condition that $\otimes_T$ preserves reflexive coequalizers in each variable. Namely, tensoring the canonical presentation $F(T(A)) \rightrightarrows F(A) \to (A,a)$ with the reflexive coequalizer $F(T(B) \otimes T(C)) \rightrightarrows F(B \otimes C) \to (B,b) \otimes_T (C,c)$ gives a coequalizer
\[F(T(A) \otimes (T(B) \otimes T(C))) \rightrightarrows F(A \otimes (B \otimes C)) \to (A,a) \otimes_T ((B,b) \otimes_T (C,c))\]
and then we use the associator in $\C$. These two conditions lead to the definition of a \emph{coherent monoidal monad}.


\section{Reflexive coequalizers}
 
If a functor $F : \C \times \C' \to \D$ preserves coequalizers in each variable and $\D$ has coproducts, then coequalizer diagrams
\[\xymatrix@!{A \ar@<0.2pc>[r]^{f} \ar@<-0.2pc>[r]_{g} & B \ar[r]^{p} & C},~\xymatrix@!{A' \ar@<0.2pc>[r]^{f'} \ar@<-0.2pc>[r]_{g'} & B' \ar[r]^{p'} & C'}\]
in $\C$ resp. $\C'$ induce a coequalizer diagram
\[\xymatrix@C=50pt{F(A,B') \oplus F(B,A') \ar@<0.2pc>[rr]^-{F(f,\id),F(\id,f')} \ar@<-0.2pc>[rr]_-{F(g,\id),F(\id,g')}&  & F(B,B') \ar[r]^{F(p,p')} & F(C,C')}\]
in $\D$. This applies in particular to the tensor product of a cocomplete monoidal category. Intuitively, this describes how we can tensor two presentations (of $C,C'$) with generators ($B,B'$) and relations ($A,A'$), which we have already observed in \autoref{coeq-tensor}. But for \emph{reflexive} coequalizers this simplifies (we can simply use $F(A,A')$ instead of the direct sum) -- this is the main motivation for their use in our context.
 
\begin{defi}
Recall that a pair of morphisms $f,g : A \to B$ in a category is called \emph{reflexive} if they have a common section, i.e. there is some morphism $s : B \to A$ with $fs=gs=1_B$. A coequalizer of a reflexive pair is called a \emph{reflexive coequalizer}.
\end{defi}

\begin{lemma}[{\cite[Lemma A.1.2.11]{John02}}] \label{reflexdiag}
Given a commutative diagram
\[\xymatrix{A_1 \ar@<0.2pc>[r] \ar@<-0.2pc>[r]  \ar@<0.2pc>[d] \ar@<-0.2pc>[d] & B_1  \ar@<0.2pc>[d] \ar@<-0.2pc>[d] \ar[r] & C_1 \ar@<0.2pc>[d] \ar@<-0.2pc>[d] \\
A_2 \ar@<0.2pc>[r] \ar@<-0.2pc>[r] \ar[d] & B_2 \ar[d] \ar[r] & C_2 \ar[d]  \\ 
A_3 \ar@<0.2pc>[r] \ar@<-0.2pc>[r]  & B_3 \ar[r] & C_3 }\]
in a category such that each row and each column is a reflexive coequalizer diagram, then the diagonal $\xymatrix@!{A_1 \ar@<0.2pc>[r] \ar@<-0.2pc>[r] & B_2 \ar[r] & C_3}$ is also a reflexive coequalizer.
\end{lemma}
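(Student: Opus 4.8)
I would prove this by a direct universal-property argument, guided by the conceptual reason it is true: a commuting grid of reflexive coequalizers is a colimit over $\mathbb{J}\times\mathbb{J}$ (where $\mathbb{J}$ is the generic reflexive pair), colimits commute with colimits, and the diagonal functor $\mathbb{J}\to\mathbb{J}\times\mathbb{J}$ is final precisely because the common sections make the relevant comma categories connected. Rather than check finality abstractly, I would unwind this into an explicit chase. Fix notation: write the horizontal (row) pairs as $f_i,g_i\colon A_i\to B_i$ with coequalizer $p_i\colon B_i\to C_i$ and common section $\sigma_i$, the $A$-column pair as $u,v\colon A_1\to A_2$ with section $s$ and coequalizer $w\colon A_2\to A_3$, and the $B$-column pair as $u',v'\colon B_1\to B_2$ with section $s'$ and coequalizer $w'\colon B_2\to B_3$. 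Commutativity of the grid gives $u'f_1=f_2u$, $v'g_1=g_2v$, $f_3w=w'f_2$ and $g_3w=w'g_2$. The diagonal pair is then $\delta_0:=u'f_1=f_2u$ and $\delta_1:=v'g_1=g_2v$, and the structure map is $q:=p_3w'\colon B_2\to C_3$ (which coincides with $w''p_2$ by commutativity). The composite $s_1:=\sigma_1 s'\colon B_2\to A_1$ satisfies $\delta_0 s_1=u'f_1\sigma_1 s'=u's'=\mathrm{id}$ and likewise $\delta_1 s_1=\mathrm{id}$, so the diagonal pair is reflexive, and $q$ is an epimorphism as a composite of two coequalizers.

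For the universal property I would argue existence in two stages. Let $h\colon B_2\to T$ satisfy $h\delta_0=h\delta_1$. First I would show $hu'=hv'$, so that $h$ factors through $w'$: using $f_1\sigma_1=g_1\sigma_1=\mathrm{id}_{B_1}$ one gets $hu'=hu'f_1\sigma_1=h\delta_0\sigma_1$ and $hv'=hv'g_1\sigma_1=h\delta_1\sigma_1$, which are equal by hypothesis. This yields a unique $\tilde h\colon B_3\to T$ with $\tilde h w'=h$. Next I would show $\tilde h$ coequalizes $f_3,g_3\colon A_3\to B_3$, so that it factors through $p_3$. Since $w$ is an epimorphism it suffices to check $\tilde h f_3w=\tilde h g_3w$, and by commutativity these equal $\tilde h w'f_2=hf_2$ and $\tilde h w'g_2=hg_2$; finally $hf_2=hf_2us=h\delta_0 s$ and $hg_2=hg_2vs=h\delta_1 s$ agree, using $us=vs=\mathrm{id}_{A_2}$. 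Thus there is a unique $\bar h\colon C_3\to T$ with $\bar h p_3=\tilde h$, whence $\bar h q=\bar h p_3w'=\tilde h w'=h$. Uniqueness of $\bar h$ is immediate because $q$ is epi.

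The content of the argument is entirely in the two factorizations, and the main obstacle is bookkeeping: keeping the eight structure maps, their four commutativity relations, and the three sections straight, and in particular recognizing \emph{which} section is needed at each step. The essential role of reflexivity shows up exactly twice, at $hu'=hv'$ (via $\sigma_1$) and at $hf_2=hg_2$ (via $s$); these are the places where the ``matched-index'' diagonal pair $\delta_0,\delta_1$ lets one cancel the extra row- or column-index against a section. I would double-check at the end that the map $B_2\to C_3$ obtained is the one displayed in the lemma, i.e. that $p_3w'=w''p_2$, which is again just commutativity together with the defining property of the column-$C$ coequalizer.
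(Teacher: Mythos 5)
Your argument is correct, and it is essentially the standard proof of this lemma: the paper itself offers no proof at all (it just cites Johnstone, Lemma A.1.2.11 of the Elephant), so your self-contained diagram chase is exactly what the citation is standing in for. The structure — reflexivity of the diagonal pair via the composite section $\sigma_1 s'$, then factoring a test map $h$ first through $w'$ (cancelling with $\sigma_1$) and then through $p_3$ (cancelling with $s$, using that $w$ is epi), with uniqueness because $q=p_3w'$ is epi — is the right two-stage factorization, and you correctly use only the ``serial'' commutativities $u'f_1=f_2u$, $v'g_1=g_2v$, $f_3w=w'f_2$, $g_3w=w'g_2$, which is the intended reading of the hypothesis.

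One small omission: before the factorization argument you must also verify the cocone condition itself, namely that $q=p_3w'$ coequalizes the diagonal pair. You prove that every $h$ with $h\delta_0=h\delta_1$ factors uniquely through $q$, but never that $q\delta_0=q\delta_1$; without this, $q$ is not even a candidate for the coequalizer. It is a one-line check using the same commutativities together with $wu=wv$ and $p_3f_3=p_3g_3$:
\[
q\delta_0 \;=\; p_3w'f_2u \;=\; p_3f_3wu \;=\; p_3f_3wv \;=\; p_3g_3wv \;=\; p_3w'g_2v \;=\; q\delta_1 .
\]
With this line inserted, your proof is complete.
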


\begin{cor}[{\cite[Corollary A.1.2.12]{John02}}] \label{bifunktor}
Suppose that $F : \C_1 \times \C_2 \to \D$ is a functor which preserves reflexive coequalizers in each variable. Then $F$ preserves reflexive coequalizers.
\end{cor}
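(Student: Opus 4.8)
The goal is to prove \autoref{bifunktor}: a functor $F : \C_1 \times \C_2 \to \D$ which preserves reflexive coequalizers separately in each variable already preserves reflexive coequalizers of the product category. This is a clean formal consequence of \autoref{reflexdiag}, so the plan is to reduce the two-variable statement to a single application of the ``diagonal'' lemma. First I would observe that the essential point is this: given a reflexive coequalizer diagram in $\C_1 \times \C_2$, it is componentwise a pair of reflexive coequalizer diagrams in $\C_1$ and $\C_2$ respectively, since both the product structure and the notion of reflexive pair are defined componentwise (a common section in the product is exactly a pair of common sections).

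The key construction is to build the $3 \times 3$ grid that feeds into \autoref{reflexdiag}. Suppose we are given a reflexive coequalizer
\[\xymatrix{(A_1,A_2) \ar@<0.2pc>[r] \ar@<-0.2pc>[r] & (B_1,B_2) \ar[r] & (C_1,C_2)}\]
in $\C_1 \times \C_2$, so that $A_1 \rightrightarrows B_1 \to C_1$ and $A_2 \rightrightarrows B_2 \to C_2$ are reflexive coequalizers in the two factors. I would then form the nine objects $F(A_i, A_j)$ for $i,j \in \{1,2\}$ together with the object $F(A_1,A_2)$ placed in the top-left corner, arranging them so that rows apply $F$ in the first variable to one of the componentwise diagrams while holding the second argument fixed, and columns do the reverse. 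Concretely, the top row is $F(A_1,A_2) \rightrightarrows F(B_1,A_2) \to F(C_1,A_2)$ and similarly for the other rows, each being the image under $F(-, Y)$ of a reflexive coequalizer in $\C_1$; the columns are images under $F(X,-)$ of the reflexive coequalizer in $\C_2$. By hypothesis $F$ preserves reflexive coequalizers in each variable, so every row and every column of this grid is a reflexive coequalizer diagram (reflexivity is preserved because $F$ carries the common section of one variable to a common section of the image pair).

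With this grid in place, \autoref{reflexdiag} applies directly and yields that the diagonal
\[\xymatrix{F(A_1,A_2) \ar@<0.2pc>[r] \ar@<-0.2pc>[r] & F(B_1,B_2) \ar[r] & F(C_1,C_2)}\]
is itself a reflexive coequalizer in $\D$. Since $F(A_1,A_2) = F((A_1,A_2))$, $F(B_1,B_2) = F((B_1,B_2))$ and $F(C_1,C_2) = F((C_1,C_2))$, this is precisely the statement that $F$ sends the original reflexive coequalizer in $\C_1 \times \C_2$ to a reflexive coequalizer in $\D$, which completes the proof.

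The only real care needed — and the one step I expect to be slightly fiddly rather than genuinely hard — is bookkeeping: one must check that the two parallel morphisms and the section in the product category really do induce, after applying $F$ one variable at a time, a diagram commuting as in the hypotheses of \autoref{reflexdiag}, and that the composite diagonal morphisms agree with $F$ applied to the original parallel pair (this uses functoriality of $F$ and the interchange of the two one-variable applications). There is no genuine obstacle here since everything is formal; the substance of the argument is entirely contained in \autoref{reflexdiag}, and \autoref{bifunktor} is merely its specialization to a functor of two variables.
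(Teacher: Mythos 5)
Your proof is correct and is exactly the intended derivation: the paper cites Johnstone's Corollary A.1.2.12, whose proof (and the reason \autoref{reflexdiag} is stated immediately beforehand) is precisely this construction of the $3\times 3$ grid of one-variable images, followed by the diagonal lemma, with the interchange law $F(f_1,\id)\circ F(\id,f_2)=F(f_1,f_2)=F(\id,f_2)\circ F(f_1,\id)$ guaranteeing both the serial commutativity of the grid and the identification of the diagonal with $F$ applied to the original pair. Nothing is missing.
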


\begin{rem} \label{reflexivex}
The coequalizer of a pair
\[\xymatrix{A \ar@<0.2pc>[r]^{f} \ar@<-0.2pc>[r]_{g} & B}\]
is the same as the coequalizer of the reflexive pair
\[\xymatrix{A \sqcup B \ar@<0.2pc>[r]^-{f,\id_B} \ar@<-0.2pc>[r]_-{g,\id_B} & B.}\]
Thus, in a category with coproducts, every coequalizer can be reduced to a reflexive coequalizer. For example in the category of sets
\[\xymatrix{\N \sqcup \N \ar@<0.2pc>[r]^-{\id,\id} \ar@<-0.2pc>[r]_-{\id,S} & \N \ar[r] & \{0\}}\]
is a reflexive coequalizer, where $S$ denotes the successor function.
\end{rem}
 
\begin{nota}
A monad $(T,\mu,\eta)$ on a category $\C$ will be abbreviated by $T$. The category of $T$-modules (often also called $T$-algebras) will be denoted by $\M(T)$. These are pairs $(A,a)$ consisting of an object $A \in \C$ and a morphism $a : T(A) \to A$ such that the two evident diagrams commute, expressing compatibility with $\mu$ and $\eta$. We have a forgetful functor $U : \M(T) \to \C$, $(A,a) \mapsto A$ with left adjoint $F : \C \to \M(T)$, $X \mapsto (T(X),\mu_X)$ (the free $T$-module on $X$). We have the fundamental relation $T=UF$. In order to avoid confusions we will try not to ignore the forgetful functor $U$ i.e. will \emph{not} abbreviate $(A,a)$ by $A$.
\end{nota}
  
\begin{rem}[Canonical presentations] \label{canpres}
Let $T$ be a monad on a category $\C$ and let $(A,a) \in \M(T)$. Then $T(a),\mu_A : T^2(A) \to T(A)$ are homomorphisms of $T$-modules $F(T(A)) \to F(A)$, reflexive with common section $T(\eta_A)$. Their coequalizer is $a : F(A) \to (A,a)$ in $\M(T)$. This is the \emph{canonical presentation} of the $T$-module $(A,a)$ (cf. \cite[VI.7]{ML98}).
\end{rem}
 
\begin{lemma}[{\cite[Corollary 2]{Linton}}] \label{modcomplete}
Let $T$ be a monad on a category $\C$ with coproducts. Then the following are equivalent:
\begin{enumerate}
\item $\M(T)$ is cocomplete.
\item $\M(T)$ has reflexive coequalizers.
\end{enumerate}
\end{lemma}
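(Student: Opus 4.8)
The statement to prove is the Linton-type result (\autoref{modcomplete}): for a monad $T$ on a category $\C$ with coproducts, cocompleteness of $\M(T)$ is equivalent to the existence of reflexive coequalizers in $\M(T)$.

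The plan is to prove the two implications separately, with the direction $(2) \Rightarrow (1)$ being the substantive one. The direction $(1) \Rightarrow (2)$ is immediate: a cocomplete category has all small colimits, and a reflexive coequalizer is in particular the colimit of a small diagram, so $\M(T)$ has reflexive coequalizers. For this direction I would simply remark that reflexive coequalizers are a special case of coequalizers, which exist in any cocomplete category.

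For $(2) \Rightarrow (1)$, the strategy is the standard reduction: a category has all small colimits if and only if it has all small coproducts and all coequalizers. The forgetful functor $U : \M(T) \to \C$ creates limits and, since $F$ is a left adjoint, it is well known that $\M(T)$ has all colimits that $\C$ has \emph{and} which are preserved by $T$; in particular coproducts can be built from the coproducts of $\C$. Concretely, first I would construct coproducts in $\M(T)$. Given a family $(A_i,a_i)$ of $T$-modules, form $\bigoplus_i A_i$ in $\C$ (which exists by hypothesis) and present the coproduct $T$-module as a reflexive coequalizer of free modules: the canonical presentations (\autoref{canpres}) exhibit each $(A_i,a_i)$ as a reflexive coequalizer $F(T(A_i)) \rightrightarrows F(A_i) \to (A_i,a_i)$, and since $F$ is a left adjoint it preserves coproducts, so $\bigoplus_i F(T(A_i)) \rightrightarrows \bigoplus_i F(A_i)$ is a reflexive pair in $\M(T)$ whose coequalizer — which exists by assumption $(2)$ — is the desired coproduct $\coprod_i (A_i,a_i)$. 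One checks that the common section of the $A_i$-presentations assembles to a common section of the coproduct pair, so the pair is genuinely reflexive.

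Having coproducts and reflexive coequalizers, the main step is to produce \emph{all} coequalizers (hence, together with coproducts, all colimits). Here I would invoke \autoref{reflexivex}: in a category with coproducts, an arbitrary coequalizer of a parallel pair $f,g : (A,a) \rightrightarrows (B,b)$ can be rewritten as the coequalizer of the reflexive pair $(A,a) \sqcup (B,b) \rightrightarrows (B,b)$ with components $(f,\id)$ and $(g,\id)$, which share the section given by the coproduct inclusion of $(B,b)$. Since coproducts in $\M(T)$ now exist and reflexive coequalizers exist by hypothesis, this reflexive coequalizer exists and coincides with the coequalizer of the original pair. Therefore $\M(T)$ has all coequalizers and all small coproducts, hence is cocomplete. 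The main obstacle is organizing the coproduct construction cleanly — verifying that the assembled parallel pair of free modules is reflexive and that its coequalizer really satisfies the universal property of the coproduct in $\M(T)$ — but this is a routine diagram check once the canonical presentations and the coproduct-preservation of $F$ are in hand.
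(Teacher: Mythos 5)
Your proof is correct and follows essentially the same route as the paper's: the trivial direction, then coproducts in $\M(T)$ obtained as reflexive coequalizers of coproducts of the canonical free presentations (using that $F$ preserves coproducts), and finally the reduction of arbitrary coequalizers to reflexive ones via \autoref{reflexivex}. The only difference is cosmetic — you spell out the verification of the coproduct's universal property and the assembly of the common section, which the paper leaves implicit.
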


\begin{proof}
Only one direction is non-trivial. Assume that $\M(T)$ has reflexive coequalizers. Since $F : \C \to \M(T)$ is left adjoint to the forgetful functor, it preserves arbitrary coproducts. Therefore, coproducts of free $T$-modules exist. Let $((A_i,a_i))_{i \in I}$ be a family of $T$-modules. The coequalizer of the reflexive pair $F(T(A_i)) \rightrightarrows F(A_i)$ is $(A_i,a_i)$. Hence, the coequalizer of the reflexive pair $\coprod_i F(T(A_i)) \rightrightarrows \coprod_i F(A_i)$ is a coproduct of the $(A_i,a_i)$. Hence, coproducts exist in $\M(T)$. Finally, every coequalizer can be reduced to a reflexive one.
\end{proof}

The proof of the Lemma also shows:
 
\begin{lemma} \label{modcoco}
Let $T$ be a monad on a category $\C$ with coproducts such that $\M(T)$ is cocomplete. For a functor $H : \M(T) \to \D$ the following are equivalent:
\begin{enumerate}
\item $H$ is cocontinuous.
\item $HF : \C \to \D$ preserves coproducts and $H$ preserves reflexive coequalizers.
\end{enumerate}
\end{lemma}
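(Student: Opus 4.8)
The implication $1 \Rightarrow 2$ is immediate: a cocontinuous functor preserves all colimits, in particular reflexive coequalizers, and since $F$ is left adjoint to the forgetful functor it preserves coproducts, so $HF$ preserves coproducts. For the converse I would first reduce cocontinuity to the preservation of coproducts and coequalizers: since $\M(T)$ is cocomplete, every colimit may be written as a coequalizer of a pair of morphisms between coproducts, so once $H$ preserves coproducts and coequalizers it is automatically cocontinuous. The plan is therefore to establish, from the two hypotheses in $2$, that (a) $H$ preserves arbitrary coproducts and (b) $H$ preserves coequalizers.

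For (a) I would rerun the construction of coproducts in $\M(T)$ from the proof of \autoref{modcomplete}. Given a family $((A_i,a_i))_{i \in I}$, its coproduct $C = \coprod_i (A_i,a_i)$ is the reflexive coequalizer of $\coprod_i F(T(A_i)) \rightrightarrows \coprod_i F(A_i)$, the two parallel morphisms being the coproducts of the canonical presentations (\autoref{canpres}). Applying $H$ and using that $H$ preserves reflexive coequalizers, $H(C)$ is the coequalizer in $\D$ of $H(\coprod_i F(T(A_i))) \rightrightarrows H(\coprod_i F(A_i))$. Now $F$ preserves coproducts, so $\coprod_i F(A_i) = F(\coprod_i A_i)$ and hence $H(\coprod_i F(A_i)) = (HF)(\coprod_i A_i)$; since $HF$ preserves coproducts this equals $\coprod_i H(F(A_i))$ in $\D$, and similarly for the $T(A_i)$ term, with the parallel maps identifying with the coproduct of the pairs. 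Thus $H(C)$ is the coequalizer of $\coprod_i \bigl( H(F(T(A_i))) \rightrightarrows H(F(A_i)) \bigr)$. On the other hand, $H$ applied to the canonical presentation of each $(A_i,a_i)$ exhibits $H((A_i,a_i))$ as the coequalizer of the single pair $H(F(T(A_i))) \rightrightarrows H(F(A_i))$. It then remains to identify a coequalizer of coproducts with the corresponding coproduct of coequalizers, which I would verify directly from the universal property, using the compatibility $q \circ \iota_i = H(\text{inj}_i) \circ q_i$ between the coequalizer maps (here $q$ is the coequalizer of the coproduct, $q_i$ the coequalizer for index $i$, $\iota_i$ the injection into $\coprod_i H(F(A_i))$, and $\text{inj}_i$ the image of the coproduct injection $(A_i,a_i) \to C$); this compatibility holds because the injection $(A_i,a_i) \to C$ is built to be compatible with the presentations.

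For (b) I would invoke \autoref{reflexivex}: an arbitrary coequalizer of $f,g : A \to B$ in $\M(T)$ is also the coequalizer of the reflexive pair $A \sqcup B \rightrightarrows B$ with legs $(f,\id_B)$ and $(g,\id_B)$. Since $H$ preserves this reflexive coequalizer and, by (a), preserves the coproduct $A \sqcup B$, the object $H(C)$ is the coequalizer of the reflexive pair $H(A) \sqcup H(B) \rightrightarrows H(B)$; applying \autoref{reflexivex} once more, now in $\D$, this is exactly the coequalizer of $H(f),H(g)$. Combining (a) and (b) with the reduction from the first paragraph yields cocontinuity of $H$. The main obstacle is the closing step of (a): because $\D$ is not assumed cocomplete, I cannot simply cite that colimits commute with colimits, and must instead check the universal property of $\coprod_i H((A_i,a_i))$ by hand, working only with the coproducts that are guaranteed to exist in $\D$ because $HF$ preserves them.
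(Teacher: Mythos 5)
Your proposal is correct and is essentially the paper's own argument: the paper proves this lemma by observing that the proof of \autoref{modcomplete} (coproducts in $\M(T)$ as reflexive coequalizers of coproducts of free modules, plus the reduction of arbitrary coequalizers to reflexive ones via \autoref{reflexivex}) is preserved under the hypotheses in 2. Your write-up just makes explicit the two points the paper leaves implicit, namely the coequalizer-of-coproducts versus coproduct-of-coequalizers identification by hand (needed since $\D$ is not assumed cocomplete) and the compatibility of the injections with the canonical presentations.
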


\begin{prop}[{\cite[Proposition 4]{Linton}}] \label{modepi}
Let $\C$ be a complete and co-well-powered category with an epi-mono factorization system (for example if $\C$ is locally presentable). Let $T$ be a monad on $\C$ which preserves the epi-mono factorizations up to isomorphism. Then $\M(T)$ has coequalizers. Thus, if $\C$ is cocomplete, then $\M(T)$ is cocomplete.
\end{prop}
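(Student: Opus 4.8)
The plan is to transfer the $(\mathrm{epi},\mathrm{mono})$-factorization system of $\C$ up to $\M(T)$ and then produce coequalizers by the standard solution-set-plus-factorization argument, exploiting completeness and co-well-poweredness. Write $U : \M(T) \to \C$ for the forgetful functor and $F$ for its left adjoint. Since $U$ is faithful and creates limits, $\M(T)$ is automatically complete; this is the input I would record first. The strategy is then to show $\M(T)$ inherits a factorization system detected by $U$, that it is co-well-powered, and finally to assemble the coequalizer.

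First I would lift the factorization system, and this is the step where the hypothesis is used. Because $T$ preserves factorizations, it preserves the distinguished epimorphisms. Given a homomorphism $h : (A,a) \to (B,b)$, I factor $Uh = m \circ e$ in $\C$ with $e : A \to I$ a distinguished epi and $m : I \to B$ a mono. The square with top $e \circ a$, left $T(e)$, right $m$, and bottom $b \circ T(m)$ commutes, since both composites equal $Uh \circ a = b \circ T(Uh)$. As $T(e)$ is epi and $m$ is mono, the diagonal fill-in supplies a unique $i : T(I) \to I$ with $i \circ T(e) = e \circ a$ and $m \circ i = b \circ T(m)$. A routine diagram chase, precomposing the module axioms with the epimorphisms $T(e)$ and $T(T(e))$, shows $(I,i)$ is a $T$-module and that $e, m$ are homomorphisms. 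Since $U$ is faithful, $U$-epis are epic and $U$-monos monic in $\M(T)$, so this gives an $(\mathrm{epi},\mathrm{mono})$-factorization system on $\M(T)$ whose classes are reflected by $U$.

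Next I would verify co-well-poweredness and construct the coequalizer. The functor $U$ carries a distinguished-epi quotient of $(B,b)$ to such a quotient of $B$, and this is injective on isomorphism classes: if $e : (B,b) \twoheadrightarrow (C,c)$ and $e' : (B,b) \twoheadrightarrow (C',c')$ have underlying quotients identified by an iso $\phi$ with $\phi \circ Ue = Ue'$, then precomposing $\phi \circ c$ and $c' \circ T\phi$ with the epi $T(Ue)$ shows they agree, so $\phi$ is a homomorphism; hence these quotients form a set. Now given $f,g : (A,a) \rightrightarrows (B,b)$, let $\{e_\lambda : (B,b) \twoheadrightarrow (C_\lambda,c_\lambda)\}_{\lambda \in \Lambda}$ represent all distinguished-epi quotients coequalizing $f,g$, form $P := \prod_\lambda (C_\lambda,c_\lambda)$, and factor $\langle e_\lambda\rangle : (B,b) \to P$ as $(B,b) \xrightarrow{q} (Q,q_0) \xrightarrow{m} P$ with $q$ epi and $m$ mono. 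Then $qf = qg$ because $m$ is monic. For any $h : (B,b) \to (D,d)$ with $hf = hg$, factoring $h = m_h \circ e_h$ makes $e_h$ coequalize $f,g$, so $e_h \cong e_{\lambda_0}$, and composing the comparison isomorphism with the projection $\pi_{\lambda_0}\circ m$ and with $m_h$ exhibits $h$ as a factor of $q$; uniqueness holds since $q$ is epic. Thus $q$ is the coequalizer.

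For the final assertion, once $\C$ is cocomplete it has coproducts, and $\M(T)$ now has all coequalizers, in particular reflexive ones, so $\M(T)$ is cocomplete by \autoref{modcomplete}. The main obstacle I expect is precisely the factorization-lifting step: it is there that the assumption ``$T$ preserves the epi-mono factorizations'' becomes indispensable, since one needs $T(e)$ to remain a distinguished epimorphism both to equip the intermediate object with its $T$-module structure via the diagonal fill-in and to deduce co-well-poweredness of $\M(T)$ from that of $\C$. Everything after that, including the product construction of the coequalizer, is formal.
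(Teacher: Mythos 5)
Your proof is correct and is essentially the argument of Linton's Proposition~4, which the paper cites without reproducing a proof: lift the factorization to $\M(T)$ via the diagonal fill-in (using that $T$ preserves the distinguished epimorphisms, so $T(e)$ and $T^2(e)$ remain epi for the module-axiom chases), deduce co-well-poweredness of $\M(T)$ with respect to $U$-epi quotients, and realize the coequalizer as the image of the canonical map into the product of all coequalizing quotients, with the final cocompleteness claim correctly reduced to \autoref{modcomplete}. The one point worth making explicit is that ``epi-mono factorization system'' must be read as an orthogonal factorization system (e.g.\ the (regular epi, mono) system of \autoref{saft} in the locally presentable case), since your fill-in step and the membership of $T(e)$ in the left class both rely on that structure rather than on bare epimorphisms and monomorphisms.
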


\begin{ex} \label{setcomp}
For $\C=\Set$ the assumptions in \autoref{modepi} are always satisfied (see \cite[Example, pp. 89-90]{Linton}). Thus, the category of modules over \emph{any monad} on $\Set$ is cocomplete (and of course also complete).
\end{ex}

\begin{prop}[Crude Monadicity Theorem,{\cite[Theorem 1.1.2]{John02}}] \label{crude}
Let $\D$ be a category with reflexive coequalizers. Let $U : \D \to \C$ be a conservative right adjoint functor which creates reflexive coequalizers. Then $U$ is monadic.
\end{prop}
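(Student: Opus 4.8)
The statement to prove is the Crude Monadicity Theorem: $U : \D \to \C$ is conservative, has a left adjoint, creates reflexive coequalizers, and $\D$ has reflexive coequalizers; we want $U$ monadic. Let me think about how I'd prove this.

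Beck's monadicity theorem requires $U$ to create coequalizers of $U$-split pairs. Here we have the stronger hypothesis that $U$ creates *all* reflexive coequalizers. The comparison functor goes $\D \to \M(T)$ where $T = UF$. The strategy is to show this comparison functor is an equivalence by constructing an explicit inverse.

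Let me sketch the proof.

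Let $F \dashv U$ with unit $\eta$ and counit $\varepsilon$, and let $T = UF$ be the induced monad. The comparison functor $K : \D \to \M(T)$ sends $D \mapsto (UD, U\varepsilon_D)$. Since a right adjoint preserves limits and is here assumed conservative and to create reflexive coequalizers, the idea is to build a functor $L : \M(T) \to \D$ left adjoint to (and then inverse to) $K$, using the canonical presentation of $T$-modules.

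The plan is to define $L$ on a $T$-module $(A,a)$ via a reflexive coequalizer in $\D$. First I would recall (as in \autoref{canpres}) that every $T$-module $(A,a)$ is the reflexive coequalizer in $\M(T)$ of the canonical presentation $F(T(A)) \rightrightarrows F(A)$, with the two maps $F(a)$ and $\varepsilon_{F(A)}$ (equivalently $\mu_A$), reflexive via $F(\eta_A)$. In $\D$, the functor $F$ applied to these gives a reflexive pair $FT(A) \rightrightarrows F(A)$ in $\D$: the first map is $FU\varepsilon_{FA} \cdot$ stuff; more precisely I take the two $\D$-morphisms $F(a) : FUFA \to FA$ and $\varepsilon_{FA} : FUFA \to FA$, which have common section $F\eta_A$. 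Since $\D$ has reflexive coequalizers, I define $L(A,a) := \mathrm{coeq}\bigl(FTA \rightrightarrows FA\bigr)$ in $\D$, functorially in $(A,a)$.

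Next I would verify the two composites are mutually inverse, which is where the hypotheses do the work. The key step: because $U$ \emph{creates} reflexive coequalizers, applying $U$ to the coequalizer $FTA \rightrightarrows FA \to L(A,a)$ yields a coequalizer in $\C$; but $UFTA = T^2A \rightrightarrows TA = UFA$ with these maps is exactly the split(ish) reflexive pair whose coequalizer in $\C$ is $A$ itself (via $a$), since $a$ coequalizes $Ta$ and $\mu_A$ and is a reflexive coequalizer in $\C$ — indeed it is even split by $\eta_A$. Hence $UL(A,a) \cong A$ compatibly, giving $KL \cong \mathrm{id}_{\M(T)}$. For the other composite, given $D \in \D$, the canonical presentation of $KD = (UD, U\varepsilon_D)$ produces $L K D$ as the coequalizer of $FUFUD \rightrightarrows FUD$ in $\D$; but $\varepsilon_D : FUD \to D$ coequalizes this reflexive pair and, because $U$ creates the reflexive coequalizer and $U\varepsilon_D$ is (split, hence) a coequalizer in $\C$, the universal map $LKD \to D$ becomes an isomorphism after applying $U$, so by conservativity of $U$ it is an isomorphism in $\D$. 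This gives $LK \cong \mathrm{id}_\D$, and one checks the isomorphisms are natural and compatible with $U$, so $K$ is an equivalence commuting with the forgetful functors, i.e. $U$ is monadic.

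The main obstacle I expect is the careful bookkeeping in the creation-of-coequalizers arguments: one must check that the reflexive pairs written in $\D$ really are sent by $U$ to the canonical presentation pairs in $\C$ (matching $F(a)$ with $T(a)$ and $\varepsilon_{FA}$ with $\mu_A$ under $T = UF$), and that ``creates'' is used correctly — namely that $U$ lifts the existing $\C$-coequalizer $a : TA \to A$ uniquely to a $\D$-coequalizer, which is precisely what identifies $UL(A,a)$ with $A$ as a $T$-module rather than merely as an object of $\C$. The conservativity of $U$ is then exactly what upgrades the $\C$-level isomorphisms to $\D$-level isomorphisms. All the diagrammatic identities (that $L$ is a functor, that the unit/counit of $K \dashv L$ are natural) are routine once these two creation arguments are set up, so I would not grind through them in detail.
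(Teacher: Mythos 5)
The paper itself gives no proof of this proposition — it is quoted directly from Johnstone (\cite[Theorem 1.1.2]{John02}) — so the comparison is against the standard argument of the cited source, and your proposal is exactly that argument: build the inverse $L$ of the comparison functor $K : \D \to \M(T)$ by taking in $\D$ the reflexive coequalizer of the canonical presentation $F(a),\varepsilon_{FA} : FTA \rightrightarrows FA$, note that the $U$-image of this pair has a \emph{split} coequalizer $a : TA \to A$ in $\C$ so that creation both produces the lift and forces $U$ to preserve these particular coequalizers, and then use conservativity of $U$ to promote the isomorphism $U(LKD) \cong UD$ to $LKD \cong D$. Your proof is correct, and the two points that genuinely need care — that $UL(A,a) \cong A$ is an isomorphism of $T$-modules (not just of objects of $\C$), and that ``creates'' is only ever invoked for pairs whose images already have coequalizers in $\C$ — are precisely the ones you identify and use soundly.
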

 
\begin{lemma} \label{create}
Let $\C$ be a category with reflexive coequalizers. Consider a monadic functor $U : \D \to \C$ with associated monad $T : \C \to \C$. The following are equivalent:
\begin{enumerate}
\item $U$ creates reflexive coequalizers.
\item $U$ preserves reflexive coequalizers.
\item $T$ preserves reflexive coequalizers.
\end{enumerate}
\end{lemma}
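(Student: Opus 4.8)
The plan is to prove the cycle of implications $(1) \Rightarrow (2) \Rightarrow (3) \Rightarrow (1)$. Since $U$ is monadic, throughout I would identify $\D$ with the category of $T$-modules $\M(T)$ and $U$ with the forgetful functor, $F$ with the free functor, and recall the fundamental relation $T = UF$.

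First, $(1) \Rightarrow (2)$ is immediate from the definition of creation: by hypothesis $\C$ has reflexive coequalizers, so for any reflexive pair in $\D$ its image under $U$ admits a coequalizer, which creation lifts uniquely to a colimit in $\D$ preserved by $U$; hence $U$ preserves reflexive coequalizers. Next, $(2) \Rightarrow (3)$ follows by writing $T = UF$: the free functor $F$ is a left adjoint and therefore preserves all colimits, in particular reflexive coequalizers, while $U$ preserves them by hypothesis, so the composite $T$ does too.

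The substance is $(3) \Rightarrow (1)$. Given a reflexive pair $f, g : (A,a) \rightrightarrows (B,b)$ in $\M(T)$ with common section $s$, I would form the coequalizer $p : B \to C$ of the underlying reflexive pair $Uf, Ug$ in $\C$. To lift $C$ to a $T$-module I would observe that, since $T$ preserves reflexive coequalizers, $T(p) : T(B) \to T(C)$ is the coequalizer of $T(Uf), T(Ug)$; a short diagram chase using that $f, g$ are module homomorphisms (so $b \circ T(Uf) = Uf \circ a$ and likewise for $g$) shows that $p \circ b : T(B) \to C$ coequalizes $T(Uf), T(Ug)$, and hence factors uniquely as $c \circ T(p) = p \circ b$ for a unique $c : T(C) \to C$. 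This $c$ is forced, which already yields the uniqueness clause of creation. I would then verify the module axioms for $(C,c)$: the unit law follows from $c \circ \eta_C \circ p = p \circ b \circ \eta_B = p$ together with $p$ being epi; the associativity law $c \circ \mu_C = c \circ T(c)$ follows after precomposing with $T^2(p)$ and using naturality of $\mu$ and the module axiom $b \circ \mu_B = b \circ T(b)$.

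Finally I would check that $p : (B,b) \to (C,c)$ is genuinely the coequalizer of $f,g$ in $\M(T)$: given a module map $h$ with $h f = h g$, its underlying map factors through $p$ via a unique $k : C \to D$ in $\C$, and precomposing the target equation $k \circ c = d \circ T(k)$ with the epimorphism $T(p)$ reduces it to the homomorphism property of $h$, while faithfulness of $U$ gives uniqueness in $\M(T)$. The main obstacle is precisely this $(3) \Rightarrow (1)$ step, and within it the associativity verification: one must invoke that $T$, and hence $T^2$, preserves reflexive coequalizers in order to know that $T^2(p)$ is epi. This is exactly where hypothesis $(3)$ is used in full force, and it is also where the \emph{reflexivity} of the pairs is essential, since applying $T$ keeps a reflexive pair reflexive (with section $T(s)$) but offers no such guarantee for arbitrary parallel pairs.
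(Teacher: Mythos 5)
Your proposal is correct, and its overall skeleton — the cycle $(1) \Rightarrow (2) \Rightarrow (3) \Rightarrow (1)$ with the first two implications being formal consequences of creation and of $T = UF$ — is exactly the paper's. The only real difference lies in the substantive direction $(3) \Rightarrow (1)$: the paper disposes of it by citing Linton's Proposition 3, whereas you prove it directly by lifting the coequalizer $p : B \to C$ from $\C$, constructing the action $c : T(C) \to C$ via the factorization $c \circ T(p) = p \circ b$, and verifying the module axioms and the colimit property. Your verification is sound, and you correctly isolate the two places where hypothesis $(3)$ does genuine work: $T(p)$ must be a coequalizer (hence epi) to define $c$ and to check that the factorization $k$ is a module map, and $T^2(p)$ must be epi for the associativity axiom — which requires knowing that $T$ applied to a reflexive pair is again reflexive (section $T(s)$), so that $(3)$ can be applied twice. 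This is precisely the content hidden behind the paper's citation, so your argument buys self-containedness at the cost of length, while the paper's buys brevity at the cost of opacity about where reflexivity is actually used.
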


\begin{proof}
$1. \Rightarrow 2.$ follows since $U$ is conservative and $\C$ has reflexive coequalizers. $2. \Rightarrow 3.$ follows from $T=UF$. The remaining direction $3. \Rightarrow 1.$ follows from \cite[Proposition 3]{Linton}.
\end{proof}

I have learned the following result from Todd Trimble.

\begin{prop} \label{finmon}
Every finitary monad on $\Set$ preserves reflexive coequalizers.
\end{prop}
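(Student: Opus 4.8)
The plan is to prove the purely functorial statement that every finitary endofunctor $T$ of $\Set$ preserves reflexive coequalizers; the monad structure plays no role here. The argument splits into two stages: first that every finite-power functor $P_n \colon \Set \to \Set$, $X \mapsto X^{\times n}$, preserves reflexive coequalizers, and then that an arbitrary finitary functor is assembled from these $P_n$ by colimits which commute with reflexive coequalizers.

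For the first stage I would argue by induction on $n$, the cases $n=0,1$ being trivial. For the inductive step, consider the bifunctor $F \colon \Set \times \Set \to \Set$, $F(Y,X) := Y^{\times n} \times X$. For fixed $X$ the functor $F(-,X) = \coprod_X P_n(-)$ preserves reflexive coequalizers, being the composite of $P_n$ (induction hypothesis) with the cocontinuous functor $\coprod_X(-)$; for fixed $Y$ the functor $F(Y,-) = \coprod_{Y^{\times n}}(-)$ is cocontinuous. Thus $F$ preserves reflexive coequalizers in each variable. Given a reflexive coequalizer $A \rightrightarrows B \to C$ in $\Set$, I form the $3\times 3$ diagram whose $(i,j)$-entry is $F(u_j,v_i)$ with $(v_1,v_2,v_3)=(u_1,u_2,u_3)=(A,B,C)$; each row (fixing the second argument) and each column (fixing the first argument) is then a reflexive coequalizer by the previous sentence, so \autoref{reflexdiag} identifies the diagonal $F(A,A)\rightrightarrows F(B,B)\to F(C,C)$, i.e. $A^{\times(n+1)}\rightrightarrows B^{\times(n+1)}\to C^{\times(n+1)}$, as a reflexive coequalizer. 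This shows $P_{n+1}$ preserves reflexive coequalizers. Morally this is the assertion that in $\Set$ reflexive coequalizers commute with finite products, and \autoref{bifunktor} together with \autoref{reflexdiag} are exactly the tools needed to pass from the several-variable product to its diagonal.

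For the second stage I would use that a finitary functor on $\Set$ is the left Kan extension of its restriction to the finite sets $\FinSet \subseteq \Set$: since every set is the filtered colimit of its finite subsets and $T$ preserves filtered colimits, one obtains the coend representation
\[T(X) \;\cong\; \int^{S \in \FinSet} \Set(S,X) \times T(S),\]
where $\Set(S,X) \cong X^{\times |S|}$ is a finite power of $X$. For each fixed finite set $S$ the functor $X \mapsto \Set(S,X)\times T(S) = \coprod_{T(S)} P_{|S|}(X)$ preserves reflexive coequalizers, by the first stage together with cocontinuity of coproducts. The coend over $S$ is a colimit (a coequalizer of coproducts), and colimits commute with colimits; hence applying $T$ to a reflexive coequalizer $A \rightrightarrows B \to C$ gives, termwise in $S$, reflexive coequalizers that assemble under $\int^S$ to the reflexive coequalizer $T(A)\rightrightarrows T(B)\to T(C)$, which is the desired conclusion.

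The main obstacle is the first stage: the finite-power functor $P_n$ is the product functor precomposed with the diagonal, and the diagonal does \emph{not} preserve colimits, so one cannot simply invoke cocontinuity of coproducts. The real content is the diagonal lemma \autoref{reflexdiag}, which lets one exploit reflexivity to reduce the several-variable product (handled by \autoref{bifunktor}) to its restriction along the diagonal. Once finite powers are under control, the passage to general finitary functors through the coend is formal.
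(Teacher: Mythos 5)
Your proof is correct, and its skeleton --- reduce via the coend formula for finitary functors to the statement that finite powers $X \mapsto X^{\times n}$ preserve reflexive coequalizers --- is the same as the paper's; your second stage, using $T \cong \int^{S \in \FinSet} \Set(S,-) \times T(S)$ and the fact that colimits commute with colimits, matches the paper's appeal to Durov's formula $T = \int^{n \in \N} T(n) \times \Hom(n,-)$. Where you differ is the first stage, and there your diagnosis of ``the main obstacle'' is mistaken: you assert that the diagonal $\Delta \colon \Set \to \Set^n$ does not preserve colimits, but it does. Colimits in a product category are computed componentwise, so $\Delta$ is cocontinuous; indeed it is \emph{left} adjoint to the product functor $\Set^n \to \Set$. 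The paper exploits exactly this: $(-)^{\times n}$ factors as the cocontinuous $\Delta$ followed by the $n$-fold product functor $\Set^n \to \Set$, and the latter preserves reflexive coequalizers by \autoref{bifunktor}, since in each separate variable it is $(-) \times (\text{fixed set})$, which is cocontinuous. That disposes of finite powers in one stroke, with no induction. The genuine failure of cocontinuity you are sensing sits in the product functor $\Set^n \to \Set$ (it does not preserve coproducts jointly), not in the diagonal, and \autoref{bifunktor} is precisely the substitute for cocontinuity there. Your workaround --- induction on $n$, applying \autoref{reflexdiag} to the $3\times 3$ diagram of the bifunctor $(Y,X) \mapsto Y^{\times n} \times X$ --- is valid (it essentially inlines the proof of \autoref{bifunktor} at each step), so nothing is wrong with the argument itself; it is just longer than it needs to be, and the remark motivating it should be corrected.
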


\begin{proof}
Let $T$ be a finitary monad on $\Set$. Then we have the following coend expression (\cite[Proposition 4.1.3]{Dur07}):
\[T = \int^{n \in \N} T(n) \times \Hom(n,-)\]
Thus, it suffices to prove that $\Hom(n,-)$ preserves reflexive coequalizers. But this functor is isomorphic to the composite of the diagonal functor $\Set \xrightarrow{\Delta} \Set^n$, which is left adjoint to the product functor, and the $n$-fold product functor $\Set^n \to \Set$, which preserves reflexive coequalizers according to \autoref{bifunktor}.
\end{proof}

\begin{ex} \label{power}
Not every monad on $\Set$ preserves reflexive coequalizers. Consider the example from \autoref{reflexivex}.
\begin{enumerate}
\item If $I$ is an infinite set, then the coequalizer of
\[\Hom(I,\N + \N) \rightrightarrows \Hom(I,\N)\]
equals $\Hom(I,\N) / {\sim}$, where $f \sim g$ if $\sup_{i \in I} |f(i)-g(i)| < \infty$. This set is infinite, but $\Hom(I,\{0\})$ just has one element. Thus, $\Hom(I,-)$ does not preserve coequalizers.
\item Consider the power set monad $\wp$. The coequalizer of $\wp(\N+\N) \rightrightarrows \wp(\N)$ is $\wp(\N)/{\sim}$, where $\sim$ is the equivalence relation generated by
\[A \cup B \sim A \cup (B+1).\]
Then the class of $\emptyset$ consists only of $\emptyset$ and the class of a finite subset consists only of finite subsets. It follows that $\wp(\N)/{\sim}$ has at least three elements, namely $[\emptyset],[\{0\}],[\N]$, so that it cannot coincide with $\wp(\{0\})$.
\end{enumerate}
\end{ex}

We have the following universal property of $\M(T)$.

\begin{prop} \label{modT}
Let $\C$ be a cocomplete category. Let $T$ be a monad on $\C$ preserving reflexive coequalizers. Then $\M(T)$ is a cocomplete category and $F : \C \to \M(T)$ induces, for every cocomplete category $\D$, an equivalence of categories between the category $\Hom_c(\M(T),\D)$ and the category of pairs $(G,\rho)$, where $G \in \Hom_c(\C,\D)$ and $\rho : G T \to G$ is a right action.
\end{prop}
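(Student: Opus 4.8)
The statement asserts a universal property of $\M(T)$ relative to right actions $GT \to G$ of the monad $T$. My approach is to exhibit a functor in each direction and show they are mutually inverse, using the \emph{canonical presentation} of a $T$-module (\autoref{canpres}) as the central technical device. Since $T$ preserves reflexive coequalizers and $\C$ is cocomplete, \autoref{modcomplete} and \autoref{create} guarantee that $\M(T)$ is cocomplete and that the forgetful functor $U$ creates (hence preserves) reflexive coequalizers. These facts will be used repeatedly.

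\textbf{From cocontinuous functors on $\M(T)$ to actions.} First I would construct the functor $\Hom_c(\M(T),\D) \to \{(G,\rho)\}$. Given a cocontinuous $H : \M(T) \to \D$, set $G := H \circ F : \C \to \D$. By \autoref{modcoco}, since $H$ is cocontinuous and $F$ preserves coproducts, $G$ preserves coproducts; and $G$ preserves reflexive coequalizers because both $F$ (as a left adjoint) and $H$ do, together with \autoref{bifunktor}-type reasoning — actually $G$ is simply cocontinuous as a composite of cocontinuous functors, so $G \in \Hom_c(\C,\D)$. For the action, note that the counit of the adjunction $F \dashv U$ gives, for each $X \in \C$, a homomorphism $\mu_X : F(T(X)) = FUF(X) \to F(X)$ of $T$-modules; applying $H$ and using $HF = G$ and $UF = T$ yields $\rho := H(\mu) : GT = HFUF \to HF = G$. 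The monad axioms for $\mu,\eta$ transport under the functor $H$ to the axioms making $\rho$ a right action. This defines the functor on objects; on morphisms $\eta : H \to H'$ one sets $\eta F : G \to G'$, and checking compatibility with $\rho$ is routine.

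\textbf{From actions to cocontinuous functors.} For the inverse, given $(G,\rho)$ I would define $\overline{G} : \M(T) \to \D$ by declaring $\overline{G}(A,a)$ to be the coequalizer in $\D$ of the pair
\[
G(T(A)) \;\rightrightarrows\; G(A),
\]
where the two arrows are $\rho_A$ and $G(a)$, mirroring the canonical presentation $F(T(A)) \rightrightarrows F(A) \to (A,a)$ (\autoref{canpres}). This pair is reflexive with common section $G(\eta_A)$, so the coequalizer exists in the cocomplete $\D$. On free modules $(T(X),\mu_X)$ the presentation is the standard one and the coequalizer computes to $G(X)$, so $\overline{G} \circ F \cong G$ naturally. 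To prove $\overline{G}$ is cocontinuous I would invoke \autoref{modcoco}: $\overline{G} F \cong G$ preserves coproducts, so it remains to show $\overline{G}$ preserves reflexive coequalizers. Here I use that every object of $\M(T)$ is a reflexive coequalizer of free modules (the canonical presentation), that $U$ creates reflexive coequalizers, and that $G$ preserves them, to reduce an arbitrary reflexive coequalizer in $\M(T)$ to one built from free modules where the computation is forced by the definition; \autoref{reflexdiag} is the natural tool to assemble the double coequalizer diagram.

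\textbf{The main obstacle.} The routine directions are the two constructions on objects and morphisms; the genuinely delicate point is showing that $\overline{G}$ is well-defined on \emph{morphisms} and is cocontinuous, which both hinge on manipulating reflexive coequalizers functorially. The crux is that $\overline{G}(A,a)$ is defined by a choice of coequalizer, and one must verify functoriality and the preservation of reflexive coequalizers without circularity. I expect the argument to run cleanly by systematically replacing arbitrary $T$-modules by their canonical presentations and applying \autoref{reflexdiag} and \autoref{bifunktor} to commute $\overline{G}$ past the relevant colimits — the hypothesis that $T$ preserves reflexive coequalizers is exactly what makes the presentations behave functorially. Finally I would check that the two constructions $(H \mapsto (HF, H\mu))$ and $((G,\rho) \mapsto \overline{G})$ are mutually inverse up to natural isomorphism: one composite is handled by the isomorphism $\overline{G}F \cong G$ together with uniqueness of cocontinuous extensions along the dense (in the reflexive-coequalizer sense) inclusion of free modules, and the other by recognizing that any cocontinuous $H$ is determined by its restriction to free modules via canonical presentations.
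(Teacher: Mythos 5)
Your proposal is correct and follows essentially the same route as the paper's proof: the forward functor $H \mapsto (HF, H\mu)$, the inverse built by coequalizing $G(a), \rho_A : G(T(A)) \rightrightarrows G(A)$ (mirroring the canonical presentation), the identification $\overline{G}F \cong G$ on free modules (which the paper makes precise as a split coequalizer with sections $G(\eta_A)$ and $G(T(\eta_A))$), and cocontinuity via \autoref{modcoco} together with a $3\times 3$ coequalizer diagram using that $U$, hence $G$ and $GT$, preserve reflexive coequalizers. The only substantive difference is cosmetic: you spell out the mutual-inverse verification, which the paper leaves implicit.
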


\begin{proof}
The category $\M(T)$ is  cocomplete by \autoref{modcomplete} and \autoref{create}. The multiplication of the monad $\mu : T^2 \to T$ induces a right action $FT \to F$. This induces, for every cocontinuous functor $H : \M(T) \to \D$, a right action $HFT \to HF$ on the cocontinuous functor $HT : \C \to \D$. Conversely, consider a cocontinuous functor $G : \C \to \D$ with a right action $\rho : GT \to G$. We want to find a cocontinuous functor $H : \M(T) \to \D$ with $HF \cong G$ (commuting with the $T$-actions). According to \autoref{canpres} we have no choice but to define $H(A,a)$ to be the coequalizer of $G(T(A)) \rightrightarrows G(A)$, the one morphism being $G(a)$ and the other one $\rho_A$. Then $H : \M(T) \to \D$ is a functor. For $A \in \C$ we have a coequalizer diagram
\[\xymatrix{G(T^2(A)) \ar@<1ex>[r]^{\rho_{T(A)}} \ar@<-1ex>[r]_{G(\mu_A)} & G(T(A)) \ar[r]^{\rho_A} & G(A).}\]
In fact, it is a \emph{split} coequalizer via the sections $G(\eta_A) : G(A) \to G(T(A))$ and $G(T(\eta_A)) : G(T(A)) \to G(T^2(A))$. This proves $HF \cong G$. We still have to prove that $H$ is cocontinuous. By \autoref{modcoco} it suffices to check that $H$ preserves reflexive coequalizers. Let $(A_2,a_2) \rightrightarrows (A_1,a_1) \to (A_0,a_0)$ be a reflexive coequalizer. Since $T$ preserves reflexive coequalizers, so does $U$ by \autoref{create}. Therefore, $A_2 \rightrightarrows A_1 \to A_0$ is a reflexive coequalizer. This remains true after applying $G$ or $GT$. We get the following commutative diagram:
\[\xymatrix{G(T(A_2)) \ar@<1ex>[r] \ar@<-1ex>[r]  \ar@<1ex>[d] \ar@<-1ex>[d] & G(A_2)  \ar@<1ex>[d] \ar@<-1ex>[d] \ar[r] & H(A_2,a_2) \ar@<1ex>[d] \ar@<-1ex>[d] \\
G(T(A_1)) \ar@<1ex>[r] \ar@<-1ex>[r] \ar[d] & G(A_1) \ar[d] \ar[r] & H(A_1,a_1) \ar[d]  \\ 
G(T(A_0)) \ar@<1ex>[r] \ar@<-1ex>[r]  & G(A_0) \ar[r] & H(A_0,a_0).}\]
The first and the second column are coequalizers. The rows are coequalizers by definition of $H$. Hence, the third column is also a coequalizer. This finishes the proof.
\end{proof}

\begin{rem}
We could not find this universal property in the literature. According to Mike Shulman it is known when $T$ is cocontinuous (which is a too strong assumption for our purposes). If $T$ preserves reflexive coequalizers, then the corresponding universal property dealing with functors preserving reflexive coequalizers is also known.
\end{rem}


\section{Monoidal monads}

For the convenience of the reader, we review the notion of monoidal monads (see for example \cite{GLN}). In order to simplify the notation, we will often suppress the associators of monoidal categories. Besides, the notation in complicated diagrams will be quite sloppy: The morphisms are labelled only by their primary constituents. For example we will often abbreviate something like $T(f) \otimes \mathrm{id}$ by $f$. For a component $\sigma_A$ of a natural transformation $\sigma$ we will often just write $\sigma$. The precise meaning will be clear from the context.
 
\begin{defi}[Monoidal monads] \label{monoidal-monad}
Let $\C$ be a monoidal category (not assumed to be symmetric). A \emph{monoidal monad} on $\C$ is a monad object in the $2$-category of lax monoidal endofunctors of $\C$. That is, it consists of a monad $(T,\mu,\eta)$ on the underlying category and the structure of a lax monoidal functor on $T$, such that $\mu$ and $\eta$ are lax monoidal transformations.
 
Let us write down what this means in concrete terms. The lax monoidal structure is given by a morphism $u : 1 \to T(1)$ together with a natural transformation $d_{A,B} : T(A) \otimes T(B) \to T(A \otimes B)$, such that the diagrams

\[\xymatrix@C=40pt{T(A) \otimes T(B) \otimes T(C) \ar[r]^-{d_{A,B}} \ar[d]_{d_{B,C}} & T(A \otimes B) \otimes T(C) \ar[d]^{d_{A \otimes B,C}} \\ T(A) \otimes T(B \otimes C) \ar[r]_{d_{A,B \otimes C}} & T(A \otimes B \otimes C) }\]

\[\xymatrix{T(A) \otimes 1 \ar[r]^-{u} \ar[d]_{\cong} & T(A) \otimes T(1) \ar[d]^{d_{A,1}} \\ T(A) \ar[r]^-{\cong} & T(A \otimes 1)} ~~~~~~~~ \xymatrix{1 \otimes T(A) \ar[r]^-{u} \ar[d]_{\cong} & T(1) \otimes T(A) \ar[d]^{d_{1,B}} \\ T(A) \ar[r]^-{\cong} & T(1 \otimes A)}\]

commute. That $\eta : \id_\C \to T$ is a lax monoidal transformation means that $u=\eta_1$ and that 
\[\xymatrix@R=30pt@C=40pt{A \otimes B \ar[d]_{\eta_A \otimes \eta_B} \ar[dr]^{\eta_{A \otimes B}} \\ T(A) \otimes T(B) \ar[r]_-{d_{A,B}} & T(A \otimes B)}\]
commutes. In particular $u$ may be removed from the data and replaced by $\eta_1$. The following commutative diagrams express that $\mu : T^2 \to T$ is a lax monoidal transformation.
\[\xymatrix@C=40pt{T^2(A) \otimes T^2(B) \ar[rr]^{\mu_A \otimes \mu_B} \ar[d]_{d_{T(A),T(B)}} && T(A) \otimes T(B) \ar[d]^{d_{A,B}} \\ T(T(A) \otimes T(B)) \ar[r]_-{T(d_{A,B})} & T^2(A \otimes B)  \ar[r]_{\mu_{A \otimes B}} & T(A \otimes B)}\]
\[\xymatrix{1 \ar[r]^-{u} \ar[d]_{u} & T(1) \\ T(1) \ar[r]_{T(u)} & T^2(1) \ar[u]_{\mu_1}}\]
Actually the second diagram is superfluous, it already follows from $u=\eta_1$ and the monad axioms. In the following, we will use these diagrams without further mentioning.

There is an obvious notion of a morphism between monoidal monads on $\C$. Usually we will abbreviate $(T,\mu,\eta,d)$ by $T$.
\end{defi}

Notice that if $T$ is a monoidal monad on $\C$, then (since $T$ is lax monoidal) $T$ lifts to an endofunctor of the category of monoid objects (aka algebra objects) in $\C$.
  
\begin{rem}[Decomposition] \label{decompose}
The natural transformation $d$ can be divided into two parts: Define $\sigma_{A,B} : A \otimes T(B) \to T(A \otimes B)$ to be the composition
\[\xymatrix{A \otimes T(B) \ar[r]^-{\eta_A} & T(A) \otimes T(B) \ar[r]^-{d_{A,B}} & T(A \otimes B).}\]
In a similar way, one defines $\sigma'_{A,B} : T(A) \otimes B \to T(A \otimes B)$. Then $d_{A,B}$ equals the composition
\[\xymatrix@C=40pt{T(A) \otimes T(B) \ar[r]^{\sigma_{T(A),B}} & T(T(A) \otimes B) \ar[r]^{T(\sigma'_{A,B})} & T^2(A \otimes B) \ar[r]^{\mu_{A \otimes B}} & T(A \otimes B).}\]
This follows from the following commutative diagram:
\[\xymatrix@C=45pt@R=35pt{T(A) \otimes T(B) \ar[d]_{\eta_{T(A)}} \ar@{=}[r] & T(A) \otimes T(B) \ar[rr]^{d_{A,B}}   & & T(A \otimes B) \\
T^2(A) \otimes T(B) \ar[r]^-{\eta_B} \ar[d]_{d_{T(A),B}} & T^2(A) \otimes T^2(B) \ar[d]^{d_{T(A),T(B)}} \ar[u]_{\mu_{A} \otimes \mu_{B}} & & \\
T(T(A) \otimes B) \ar[r]^-{\eta_B} & T(T(A) \otimes T(B)) \ar[rr]^-{d_{A,B}} & &  T^2(A \otimes B) \ar[uu]_{\mu_{A \otimes B}}}\]
In the same way one checks that $d_{A,B}$ equals the composition
\[\xymatrix@C=40pt{T(A) \otimes T(B) \ar[r]^{\sigma'_{A,T(B)}} & T(A \otimes T(B)) \ar[r]^{T(\sigma_{A,B})} & T^2(A \otimes B) \ar[r]^{\mu_{A \otimes B}} & T(A \otimes B).}\]
Thus, $d$ can be recovered from $\sigma$ and $\sigma'$. One can translate the defining diagrams of a monoidal monad in terms of $\sigma$ and $\sigma'$. This results in the following equivalent description of monoidal monads.
\end{rem}

\begin{defi}[Strengths] \label{strength}
Let $T$ be a monad on the underlying category of a monoidal category $\C$. A \emph{strength} is a natural transformation
\[\sigma_{A,B} : A \otimes T(B) \to T(A \otimes B)\]
such that the following diagrams commute, expressing compatibility with the monoidal structure of $\C$ as well as the monad structure of $T$:
\[\xymatrix{1 \otimes T(A) \ar[r]^-{\sigma_{1,A}} \ar[dr]_{\cong} & T(1 \otimes A) \ar[d]^{\cong} \\ & T(A)}
~~~~~~ \xymatrix{A \otimes B \ar[r]^-{\eta_B} \ar[dr]_{\eta_{A \otimes B}} & A \otimes T(B) \ar[d]^{\sigma_{A,B}} \\ & T(A \otimes B) } \]
\[\xymatrix{A \otimes B \otimes T(C) \ar[rr]^{\sigma_{A \otimes B,C}} \ar[dr]_{\sigma_{B,C}} & & T(A \otimes B \otimes C) \\
 & A \otimes T(B \otimes C) \ar[ur]_{~\sigma_{A,B \otimes C}} & }\]
\[\xymatrix@C=40pt{A \otimes T^2(B) \ar[d]_{\mu_B} \ar[r]^-{\sigma_{A,T(B)}} & T(A \otimes T(B)) \ar[r]^-{\sigma_{A,B}} & T^2(A \otimes B) \ar[d]^{\mu_{A \otimes B}} \\ A \otimes T(B) \ar[rr]_{\sigma_{A,B}} & & T(A \otimes B)}\]
There is an obvious notion of a \emph{costrength} $\sigma'_{A,B} : T(A) \otimes B \to T(A \otimes B)$, which is a natural transformation satisfying the analoguous four diagrams. We say that $\sigma$ and $\sigma'$ are \emph{compatible} if the following diagrams commute:
\[\xymatrix@C=10pt@R=10pt{
A \otimes T(B) \otimes C \ar[r]^{\sigma_{A,B}} \ar[dd]_{\sigma'_{B,C}} & T(A \otimes B) \otimes C \ar[dd]^{\sigma'_{A \otimes B,C}} \\
~~~~~~~~~~~~~~~~~~~~~~~~~~~~~~~~~~~~~~~~~~~~~~~~~~~ & ~~~~~~~~~~~~~~~~~~~~~~~~ ~~~~~~~~~~~~~~~( \heartsuit )\\
A \otimes T(B \otimes C) \ar[r]_{\sigma_{A,B \otimes C}} & T(A \otimes B \otimes C)}\]
\[\xymatrix@C=15pt{ & T(T(A) \otimes B) \ar[rr]^-{\sigma'_{A,B}} & & T^2(A \otimes B) \ar[dr]^{\mu_{A \otimes B}} & \\
~~~~~ T(A) \otimes T(B) \ar[ur]^{\sigma_{T(A),B}} \ar[dr]_{\sigma'_{A,T(B)}} &  & & & T(A \otimes B)    ~~~~~~ ( \lozenge  ) \\
& T(A \otimes T(B)) \ar[rr]_{\sigma_{A,B}} & & T^2(A \otimes B) \ar[ur]_{\mu_{A \otimes B}}  &} \]

\end{defi}

\begin{ex} \label{monoid-monad}
Let $M$ be a monoid (i.e. algebra) object in $\C$ and consider the functor $T = - \otimes M : \C \to \C$ with its usual monad structure, so that $T$-modules are precisely the objects with a right $M$-action. Then the associator $A \otimes (B \otimes M) \cong (A \otimes B) \otimes M$ is a strength for $T$. But in general there is no costrength.
\end{ex}
 
\begin{prop}[{\cite[A.1 and A.2]{GLN}}] \label{monoidal-monad-equivalence}
Let $T$ be a monad on the underlying category of a monoidal category $\C$. Then there is a 1:1 correspondence between natural transformations $d : T(-) \otimes T(-) \to T(- \otimes -)$ making $T$ a monoidal monad and pairs consisting of a strength $\sigma$ and a costrength $\sigma'$ such that $\sigma$ and $\sigma'$ are compatible with each other.
\end{prop}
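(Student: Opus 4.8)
The statement \autoref{monoidal-monad-equivalence} asserts a bijection between monoidal-monad structures $d$ on $T$ and compatible strength/costrength pairs $(\sigma,\sigma')$. The shape of the proof is dictated by \autoref{decompose}, which already records the two key formulas: from a given $d$ one \emph{defines} $\sigma_{A,B} := d_{A,B} \circ (\eta_A \otimes T(B))$ and $\sigma'_{A,B} := d_{A,B} \circ (T(A) \otimes \eta_B)$, and conversely \autoref{decompose} exhibits $d$ as the composite $\mu_{A\otimes B} \circ T(\sigma'_{A,B}) \circ \sigma_{T(A),B}$ (equivalently the mirror composite through $\sigma'_{A,T(B)}$). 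So the two assignments are already on the table; what remains is to check that they are mutually inverse and that each lands in the claimed class of structures. First I would set up the two maps precisely, using exactly these formulas, and note that both directions are forced: by the lax-monoidal unit axiom $d_{A,B}\circ(\eta_A\otimes\eta_B)=\eta_{A\otimes B}$, one sees $\sigma$ and $\sigma'$ automatically satisfy their respective unit/naturality triangles.

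\textbf{Key steps in order.} The verification naturally splits into four packages of diagram chases, each translating one of the monoidal-monad axioms of \autoref{monoidal-monad} into the strength axioms of \autoref{strength}. Step one: starting from $d$, derive that $\sigma$ is a strength. The associativity square for $d$ (compatibility of the three-fold tensor) specializes, after precomposing appropriate legs with units $\eta$, to the associativity pentagon for $\sigma$; the compatibility of $d$ with $\mu$ gives the $\mu$-compatibility square for $\sigma$; and the $\eta$-axiom gives the unit triangle. By the evident left/right symmetry the same chases give that $\sigma'$ is a costrength. Step two: check that $\sigma$ and $\sigma'$ so defined are compatible in the sense of diagrams $(\heartsuit)$ and $(\lozenge)$. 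Here $(\heartsuit)$ follows from naturality of $d$ together with its associativity square, while $(\lozenge)$ is essentially the content of the two expressions for $d$ in \autoref{decompose} being equal --- indeed $(\lozenge)$ says precisely that the two composites recovering $d$ from $\sigma,\sigma'$ agree. Step three: conversely, given a compatible pair, define $d$ by the \autoref{decompose} formula and verify it makes $T$ a monoidal monad; each monoidal-monad axiom is reassembled from the strength, costrength, and compatibility axioms, the compatibility diagrams being exactly what is needed to reconcile the ``$\sigma$ first'' and ``$\sigma'$ first'' orderings. Step four: check the round trips are identities. Passing $d \mapsto (\sigma,\sigma') \mapsto d'$ returns $d'=d$ directly by \autoref{decompose}; passing $(\sigma,\sigma') \mapsto d \mapsto (\tilde\sigma,\tilde\sigma')$ returns the original pair because $\tilde\sigma_{A,B} = d_{A,B}\circ(\eta_A \otimes T(B))$ collapses, via the costrength unit axiom $\sigma'_{A,B}\circ(\eta_A \otimes B)=\eta_{A\otimes B}$ applied inside the \autoref{decompose} composite, back to $\sigma_{A,B}$ (and symmetrically for $\tilde\sigma'$).

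\textbf{Main obstacle.} The genuinely delicate point is Step three, recovering the single \emph{associativity square for $d$} from the \emph{separate} associativity axioms of $\sigma$ and $\sigma'$ plus the two compatibility diagrams. The difficulty is bookkeeping: one must expand both $d_{A,B}$ and $d_{B,C}$ via their \autoref{decompose} composites, interleave three copies of $T$ and two applications of $\mu$, and then repeatedly invoke naturality of $\sigma,\sigma'$, the strength/costrength associativity pentagons, and $(\heartsuit)$ to migrate all the $\mu$'s to the outside and collapse them using the monad associativity $\mu\circ T\mu = \mu\circ\mu T$. Diagram $(\lozenge)$ is the linchpin that lets one swap the order in which the two half-strengths are applied at the middle stage. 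I expect the cleanest route is to work with the fully decomposed string-diagram form and check planarity/interchange rather than to draw the two-dimensional commutative rectangles explicitly; since this is precisely the computation carried out in the cited reference \cite{GLN}, I would present the correspondence and the forced formulas in detail and then delegate these interlocking chases, noting at each the single axiom that makes the relevant region commute.
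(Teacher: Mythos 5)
Your proposal is correct and takes essentially the same route as the paper, which in fact supplies no proof of its own: it cites \cite[A.1 and A.2]{GLN} and records exactly the forced formulas in \autoref{decompose} ($\sigma_{A,B}=d_{A,B}\circ(\eta_A\otimes T(B))$, $\sigma'_{A,B}=d_{A,B}\circ(T(A)\otimes \eta_B)$, and $d=\mu_{A\otimes B}\circ T(\sigma'_{A,B})\circ \sigma_{T(A),B}$) that you use to set up the bijection. Your verification plan is sound — in particular the round-trip collapse $\tilde\sigma=\sigma$ via the costrength unit axiom together with naturality of $\sigma$, and the identification of $(\lozenge)$ with the equality of the two decompositions of $d$ — and the hard reassembly of the associativity square for $d$ in Step three, which you delegate, is precisely the computation carried out in the cited reference.
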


\begin{prop} \label{enriched}
Let $\C$ be a closed monoidal category. Let $T$ be a monad on the underlying category. Then there is a bijection between strengths of $T$ and natural transformations $\tau_{A,B} : \HOM(A,B) \to \HOM(T(A),T(B))$ of functors $\C^{\op} \times \C \to \C$ making $T$ a $\C$-enriched functor.
\end{prop}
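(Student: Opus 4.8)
The plan is to set up an explicit bijection by transposing across the tensor-hom adjunction $-\otimes A \dashv \HOM(A,-)$ of \autoref{closedcat}; this is essentially Kock's identification of tensorial strengths with $\C$-enrichments, here recorded together with the monad axioms. First I would construct the forward map. Given a strength $\sigma$ in the sense of \autoref{strength}, I consider for $A,B \in \C$ the composite
\[\HOM(A,B) \otimes T(A) \xrightarrow{\sigma_{\HOM(A,B),A}} T\bigl(\HOM(A,B) \otimes A\bigr) \xrightarrow{T(\mathrm{ev}_{A,B})} T(B),\]
where $\mathrm{ev}_{A,B}\colon \HOM(A,B) \otimes A \to B$ is the counit (evaluation) of the adjunction. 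Transposing this under $-\otimes T(A) \dashv \HOM(T(A),-)$ yields a morphism $\tau_{A,B}\colon \HOM(A,B) \to \HOM(T(A),T(B))$. Conversely, given such a $\tau$, I would recover $\sigma$ by defining $\sigma_{X,A}\colon X \otimes T(A) \to T(X\otimes A)$ as the transpose of
\[X \xrightarrow{\mathrm{coev}} \HOM(A,\, X \otimes A) \xrightarrow{\tau_{A,\,X\otimes A}} \HOM\bigl(T(A),\, T(X\otimes A)\bigr),\]
where $\mathrm{coev}$ is the unit of the adjunction. These two passages are the only reasonable candidates, so the content is to check they are mutually inverse and to match the axioms on each side.

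Next I would verify the two bookkeeping facts. That the constructions are mutually inverse follows from the triangle identities of the adjunction together with (di)naturality, a Yoneda-style computation: composing forward then backward reinserts $\mathrm{ev}$ and $\mathrm{coev}$, which cancel. Similarly, naturality of $\sigma$ in both variables is equivalent to naturality of $\tau$ as a transformation of functors $\C^{\op}\times\C \to \C$; here one uses that $\mathrm{ev}$ and $\mathrm{coev}$ are themselves natural and that transposition is natural. This part is routine but must be done carefully because $\tau$ is contravariant in the first variable.

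Then I would match the four diagrams of \autoref{strength} with the enrichment conditions. The unit axiom $\sigma_{1,A}\cong\mathrm{id}$ transposes to identity-preservation $\tau_{A,A}\circ j_A = j_{T(A)}$, where $j_A\colon 1 \to \HOM(A,A)$ names the identity; the associativity axiom $\sigma_{A\otimes B,C}=\sigma_{A,B\otimes C}\circ(A\otimes\sigma_{B,C})$ transposes to compatibility of $\tau$ with the internal composition $\HOM(B,C)\otimes\HOM(A,B)\to\HOM(A,C)$, i.e. $\tau$ makes $T$ a $\C$-enriched functor. The remaining two diagrams, expressing compatibility of $\sigma$ with $\eta$ and $\mu$, transpose to the statements that $\eta\colon \mathrm{Id}\to T$ and $\mu\colon T^2\to T$ are $\C$-enriched natural transformations, so that the enrichment is compatible with the monad structure; this is exactly what one wants in the present setting where $T$ is a monad.

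The main obstacle will be the $\mu$-diagram. Its transposition is the most involved because the left-hand side of the $\mu$-square already contains an iterated strength $\mu_{A\otimes B}\circ T(\sigma_{A,B})\circ\sigma_{A,T(B)}$, so verifying the corresponding enriched naturality of $\mu$ requires two applications of the adjunction and a chase through the interaction of $\mathrm{ev}$, $\mathrm{coev}$, and $\mu$ (and, implicitly, the costrength obtained as in \autoref{decompose}). I expect the cleanest route is to phrase everything in terms of transposes and invoke naturality of $\mathrm{ev}$ with respect to $\mu_B$, rather than expanding the internal-hom composite directly; the unit and associativity correspondences, by contrast, should fall out of the triangle identities with little effort.
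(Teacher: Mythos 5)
Your construction coincides exactly with the paper's: the paper defines $\tau_{A,B}$ as the transpose (under the hom-tensor adjunction) of $T(\mathrm{ev})\circ\sigma_{\HOM(A,B),A}\colon \HOM(A,B)\otimes T(A)\to T(B)$, and recovers $\sigma$ from $\tau$ by applying $\tau$ after the coevaluation $A\to\HOM(B,A\otimes B)$ and then evaluating --- precisely your two maps. The paper likewise only sketches these constructions and defers all the diagram chases (including the $\eta$/$\mu$-compatibility, which you correctly identify as corresponding to enriched naturality of the monad structure) to Kock's papers, so your proposal is the same approach with more of the verification plan made explicit.
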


\begin{proof}
We only sketch the construction of the bijection. The details just involve diagram chases and can be found in \cite{Kock70} and \cite{Kock72} (at least for the symmetric case). Let us denote by $c$ the counit and by $u$ the unit of the hom-tensor adjunction. Given a strength $\sigma$, we define $\tau$ by the following commutative diagram:
\[\xymatrix{\HOM(A,B) \ar[d]_{u} \ar[r]^{\tau} & \HOM(T(A),T(B)) \\ \HOM(T(A),\HOM(A,B) \otimes T(A)) \ar[r]_{\sigma} & \HOM(T(A),T(\HOM(A,B) \otimes A)) \ar[u]_{c}} \]
Conversely, if $T$ has an enrichment $\tau$, define the strength $\sigma$ by the following commutative diagram:
\[\xymatrix{A \otimes T(B) \ar[d]_{u} \ar[r]^{\sigma} & T(A \otimes B) \\  \HOM(B,A \otimes B) \otimes T(B) \ar[r]_-{\tau} & \HOM(T(B),T(A \otimes B)) \otimes T(B)  \ar[u]_{c}}\]
\end{proof}

\begin{rem} \label{costrength-enriched}
In the setting of \autoref{enriched} one can also find a bijection between costrengths and natural transformations
\[\tau'_{A,B} : T(\HOM(A,B)) \to \HOM(A,T(B))\]
satisfying the obvious compatibility diagrams (cf. \cite[Section 1]{Kock71}). Given a costrength $\sigma'$, one defines $\tau'$ by the following commutative diagram:
\[\xymatrix{T(\HOM(A,B)) \ar[d]_{u} \ar[r]^{\tau'} & \HOM(A,T(B))  \\ \HOM(A,T(\HOM(A,B)) \otimes A) \ar[r]_{\sigma'} & \HOM(A,T(\HOM(A,B) \otimes A)) \ar[u]_{c}}\]
Given $\tau'$, one defines $\sigma'$ by the following commutative diagram:
\[\xymatrix{T(A) \otimes B \ar[r]^{\sigma'} \ar[d]_{u} & T(A \otimes B) \\ T(\HOM(B,A \otimes B)) \otimes B \ar[r]_{\tau'} & \HOM(B,T(A \otimes B)) \otimes B \ar[u]_{c}}\]
\end{rem}

  

\begin{rem} \label{Thom}
Let $T$ be a monoidal monad on a monoidal closed category. If $A \in \C$ and $(B,b) \in \M(T)$, then $\HOM(A,B) \in \C$ carries the structure of a $T$-module as follows:
\[T(\HOM(A,B)) \xrightarrow{\tau'} \HOM(A,T(B)) \xrightarrow{b} \HOM(A,B).\]
By abuse of notation this $T$-module will be also denoted by $\HOM(A,B)$.
\end{rem}
  
\begin{defi}[Symmetric monoidal monads] \label{symmetric-monoidal-monad}
A \emph{symmetric monoidal monad} on a symmetric monoidal category $\C$ is a monad in the $2$-category of symmetric lax monoidal endofunctors of $\C$. Specifically this means that $T$ is a monoidal monad on (the underlying monoidal category of) $\C$ as in \autoref{monoidal-monad} such that moreover the diagram
\[\xymatrix@C=40pt{T(A) \otimes T(B) \ar[d]_{S_{T(A),T(B)}} \ar[r]^-{d_{A,B}} & T(A \otimes B) \ar[d]^{T(S_{A,B})} \\
T(B) \otimes T(A) \ar[r]_-{d_{B,A}} & T(B \otimes A)}\]
commutes.
\end{defi}

Notice that $T$ lifts to an endofunctor of the category of commutative monoid objects of $\C$ (using the symmetric lax monoidal structure).
 
\begin{prop}[{\cite[A.3]{GLN}}]
Let $T$ be a monad on the underlying category of a symmetric monoidal category $\C$. Then there is a 1:1 correspondence between natural transformations $d : T(-) \otimes T(-) \to T(- \otimes -)$ making $T$ a symmetric monoidal monad and strengths $\sigma : - \otimes T(-) \to T(- \otimes -)$ such that, for the natural transformation $\sigma' : T(-) \otimes - \to T(- \otimes -)$ defined by
\[\xymatrix@C=40pt{T(A) \otimes B \ar[r]^{S_{T(A),B}} & B \otimes T(A) \ar[r]^{\sigma_{B,A}} & T(B \otimes A) \ar[r]^{S_{B,A}} & T(A \otimes B)},\]
the diagram $( \lozenge  )$ as in \autoref{strength} commutes.
\end{prop}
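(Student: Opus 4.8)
The plan is to build on \autoref{monoidal-monad-equivalence}, which already establishes a bijection between natural transformations $d$ making $T$ a monoidal monad and compatible pairs $(\sigma,\sigma')$ consisting of a strength and a costrength, compatibility meaning that the diagrams $(\heartsuit)$ and $(\lozenge)$ of \autoref{strength} commute. It therefore suffices to identify, under this bijection, which compatible pairs correspond to \emph{symmetric} monoidal monads, and then to show that the symmetric ones are already parametrized by the strength alone, subject only to $(\lozenge)$.

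First I would show that if $d$ is symmetric, then its induced costrength $\sigma'$ is forced to be the symmetric conjugate of its strength $\sigma$. Recall from \autoref{decompose} that $\sigma_{A,B}=d_{A,B}\circ(\eta_A\otimes T(B))$ and $\sigma'_{A,B}=d_{A,B}\circ(T(A)\otimes\eta_B)$. Using naturality of the symmetry to slide $\eta_B$ past $S$, namely $(\eta_B\otimes T(A))\circ S_{T(A),B}=S_{T(A),T(B)}\circ(T(A)\otimes\eta_B)$, together with the symmetry axiom $d_{B,A}\circ S_{T(A),T(B)}=T(S_{A,B})\circ d_{A,B}$ of \autoref{symmetric-monoidal-monad}, one computes
\[\sigma_{B,A}\circ S_{T(A),B}=T(S_{A,B})\circ\sigma'_{A,B}.\]
Composing with $T(S_{B,A})$ on the left and invoking involutivity $S_{B,A}\circ S_{A,B}=\id_{A\otimes B}$ yields exactly $T(S_{B,A})\circ\sigma_{B,A}\circ S_{T(A),B}=\sigma'_{A,B}$, which is precisely the defining formula for $\sigma'$ in the statement (with the notational convention of writing $S_{B,A}$ for $T(S_{B,A})$).

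Conversely, starting from an arbitrary strength $\sigma$, I would define $\sigma'$ by that same conjugation formula and check that it is automatically a costrength and that the pair $(\sigma,\sigma')$ automatically satisfies $(\heartsuit)$. Each of the four costrength axioms of \autoref{strength} is the image of the corresponding strength axiom under conjugation by $S$; the verifications use only naturality of $S$, the hexagon coherence for $S$, and $S^2=\id$, all legitimized by the Coherence Theorem (\autoref{coherence}). Likewise, $(\heartsuit)$ for the conjugate pair is obtained from the tensor-compatibility (``associativity'') axiom of $\sigma$ by inserting symmetries. Granting this, \autoref{monoidal-monad-equivalence} tells us that such a pair $(\sigma,\sigma')$ arises from a monoidal monad precisely when the remaining compatibility $(\lozenge)$ holds, and the first step, run in reverse, shows that the resulting $d$ is then symmetric: one reconstructs the symmetry diagram for $d$ from the relation $\sigma'=T(S)\circ\sigma\circ S$ and the expression for $d$ in terms of $\sigma,\sigma'$ in \autoref{decompose}. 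Chaining these bijections gives the asserted correspondence between symmetric monoidal monad structures and strengths satisfying $(\lozenge)$.

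I expect the main obstacle to be the bookkeeping in checking that the conjugate $\sigma'$ satisfies the costrength axioms and that $(\heartsuit)$ is automatic: the $\mu$-compatibility axiom in particular intertwines the strength with both $T$ and two applications of the symmetry, so one must carefully track where each instance of $S$ and of naturality is used. None of these steps is conceptually deep—each is a consequence of the symmetric coherence of $\C$—but the diagrams are large, and the cleanest route is to treat conjugation by $S$ as an involution between strengths and costrengths and to verify once and for all that it exchanges the two families of axioms.
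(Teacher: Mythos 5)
Your proposal is correct, but note that the paper itself supplies no proof of this proposition: it delegates to \cite[A.3]{GLN}, adding only the one-line remark that $(\heartsuit)$ commutes automatically, so there is no in-text argument to match. Your reduction is a sound self-contained reconstruction: you factor the statement through \autoref{monoidal-monad-equivalence} (itself cited from \cite[A.1, A.2]{GLN}) and show that in the symmetric setting the costrength is redundant data. I checked the two pivotal computations and both go through. First, if $d$ is symmetric, then naturality of $S$ gives $S_{T(A),T(B)} \circ (T(A) \otimes \eta_B) = (\eta_B \otimes T(A)) \circ S_{T(A),B}$, and combining this with the symmetry axiom of \autoref{symmetric-monoidal-monad} and $S^2 = \id$ yields exactly $\sigma'_{A,B} = T(S_{B,A}) \circ \sigma_{B,A} \circ S_{T(A),B}$, as you claim. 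Second, and this is worth making explicit since you only gesture at it with ``run in reverse'': recovering the symmetry of $d$ from $(\lozenge)$ is not a formality but uses $(\lozenge)$ essentially. Writing $d_{B,A} \circ S_{T(A),T(B)}$ via the decomposition $d_{B,A} = \mu \circ T(\sigma'_{B,A}) \circ \sigma_{T(B),A}$ from \autoref{decompose}, the conjugation formula converts this to $T(S_{A,B}) \circ \mu \circ T(\sigma_{A,B}) \circ \sigma'_{A,T(B)}$ (after sliding $\mu$ past $T^2(S_{A,B})$ by naturality), and the last three factors equal $d_{A,B}$ precisely because $(\lozenge)$ asserts that \emph{both} composites $\mu \circ T(\sigma') \circ \sigma$ and $\mu \circ T(\sigma) \circ \sigma'$ yield $d$; without $(\lozenge)$ one lands on the wrong decomposition and the argument fails. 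Your claim that $(\heartsuit)$ is automatic for the conjugate pair is also right (it follows from naturality of $\sigma$ in both variables, the associativity axiom of the strength, and coherence for $S$), and it agrees with the remark the paper makes immediately after the proposition. In short: your route buys a complete proof where the paper offers only a citation, at the cost of the diagram bookkeeping you correctly anticipate; the only refinement I would insist on is spelling out the use of $(\lozenge)$ just described, since it is the one step that is not pure symmetric coherence.
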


Note that the diagram $( \heartsuit )$ in \autoref{strength} commutes automatically by naturality of the symmetry and the properties of the strength $\sigma$.

\begin{ex}[The case of sets] \label{generalized-ring1}
Let $T$ be a monad on $\Set$. By \autoref{enriched} $T$ has a unique strength. It may be described as follows: For $a \in A$ the map $B \to A \times B, b \mapsto (a,b)$ induces a map $T(B) \to T(A \times B)$. By varying $a$ we get a map $\sigma_{A,B} : A \times T(B) \to T(A \times B)$. The corresponding costrength $\sigma'$ can be described similarly. Then $T$ is a symmetric monoidal monad if and only if for all $T$-modules $(A,a)$ and all sets $X,Y$ and elements $u \in T(X)$, $v \in T(Y)$ with associated maps $\tilde{u} : A^X \to A$, $\tilde{v} : A^Y \to A$ (defined by $\tilde{u}(f)=a(T(f)(u))$), the diagram
\[\xymatrix{A^{X \times Y} \ar[r]^{\cong} \ar[d]_{\cong} & (A^X)^Y \ar[r]^{\tilde{u}} & A^Y \ar[dd]^{\tilde{v}} \\
(A^Y)^X \ar[d]_{\tilde{v}} & & \\ A^X \ar[rr]_{\tilde{u}} & & A }\]
commutes. This coincides with the usual notion of a \emph{commutative monad}, used for example in Durov's thesis \cite[Chapter 5]{Dur07}. It is equivalent to the condition that for all $T$-modules $(A,a)$, $(B,b)$ the set of homomorphisms $\Hom_T(A,B)$ is a submodule of the $T$-module $\Hom(A,B)$.
 
For a specific class of examples, let $R$ be ring. Then, the forgetful functor $\M(R) \to \Set$ is monadic (for example by \autoref{crude}). The corresponding monad $T$ on $\Set$ maps a set $X$ to the set of formal finite linear combinations $\sum_{x \in X} x \cdot \lambda_x$ with coefficients $\lambda_x \in R$. Then one checks that $T$ is commutative if and only if $R$ is commutative. Moreover, this yields a fully faithful functor of the category of commutative rings into the category of commutative monads on $\Set$. This is one of Durov's motivations to define a \emph{generalized commutative ring} to be a finitary commutative monad on $\Set$. 
\end{ex}

\begin{ex} \label{monbsp}
Let $\C$ be a symmetric monoidal category and $M \in \C$ be a monoid object (aka algebra object) of the underlying monoidal category. The monad $- \otimes M$ has a strength (\autoref{monoid-monad}), with corresponding costrength
\[(A \otimes M) \otimes B \cong B \otimes (A \otimes M) \cong (B \otimes A) \otimes M \cong (A \otimes B) \otimes M.\]
With these data $- \otimes M$ is a symmetric monoidal monad on $\C$ if and only if the diagram $(\lozenge)$ commutes, which simplifies (using $A=B=1_\C)$ to the commutativity of the diagram
\[\xymatrix{M \otimes M \ar[rr]^{\mu} \ar[dr]_{S_{M,M}} & & M \\ & M \otimes M \ar[ur]_{\mu} & }\]
i.e. to the commutativity of $M$. This induces a functor from the category of commutative monoids in $\C$ to the category of symmetric monoidal monads on $\C$, which is fully faithful and has a right adjoint, mapping $T$ to the commutative monoid $T(1)$. For example, every commutative monoid $M$ in $\Set$ gives a generalized commutative ring. Every commutative $R$-algebra yields a symmetric monoidal monad on $\M(R)$.
\end{ex}


\section{Tensor product of modules} \label{tpmonad}

The idea of the tensor product of modules classifying bihomomorphisms over a monoidal monad goes back to Linton (\cite[\para 1, Remark]{Linton}) and has been studied by Kock (\cite{Kock71bilin}, \cite{Kock12}). For \emph{concrete} categories the corresponding notions were studied in detail by Banaschewski and Nelson (\cite{Ban76}). In the following the reader may find it helpful to have in mind \autoref{generalized-ring1} and \autoref{monbsp}.

\begin{defi}[Bihomomorphisms]
Let $\C$ be a monoidal category and let $T$ be a monoidal monad on $\C$. Let $(A,a),(B,b),(C,c)$ be $T$-modules. A morphism $f : A \otimes B \to C$ in $\C$ is called a \emph{bihomomorphism} (with respect to the actions $a,b,c$) if it is a homomorphism in each variable, i.e. the diagrams
\[\xymatrix{T(A) \otimes B \ar[r]^{\sigma}  \ar[d]_{a \otimes B} & T(A \otimes B) \ar[r]^-{T(f)} & T(C) \ar[d]^{c} \\ A \otimes B \ar[rr]^{f} && C}\]
\[\xymatrix{A \otimes T(B) \ar[r]^{\sigma'}  \ar[d]_{A \otimes b} & T(A \otimes B) \ar[r]^-{T(f)} & T(C) \ar[d]^{c} \\ A \otimes B \ar[rr]^{f} && C}\]
commute. Clearly, if $f :A \otimes B \to C$ is a bihomomorphism, then the same is true for $h \circ f : A \otimes B \to C'$ for every homomorphism $h : (C,c) \to (C',c')$. Hence, we obtain a functor
\[\BiHom((A,a),(B,b),-) : \M(T) \to \Set\]
which sends a $T$-module $(C,c)$ to the set of bihomomorphisms $A \otimes B \to C$ with respect to $a,b,c$.
\end{defi}

\begin{defi}[Tensor product]
If the functor $\BiHom((A,a),(B,b),-)$ is representable, we call a representing object a \emph{tensor product} of $(A,a)$ and $(B,b)$ and denote it by $(A,a) \otimes_T (B,b)$. We will abbreviate the underlying object $U((A,a) \otimes_T (B,b))$ by $A \otimes_T B$. It comes equipped with a universal bihomomorphism $A \otimes B \to A \otimes_T B$, also denoted by $\otimes$.
\end{defi}

\begin{prop}[{\cite[Proposition 2.1.2]{Sea13}}] \label{bihom}
Let $(A,a),(B,b),(C,c)$ be $T$-modules. A morphism $f : A \otimes B \to C$ is a bihomomorphism if and only if the diagram
\[\xymatrix@C=30pt{T(A) \otimes T(B) \ar[r]^-{d_{A,B}} \ar[d]_{a \otimes b} & T(A \otimes B) \ar[r]^-{T(f)} & T(C) \ar[d]^{c} \\
A \otimes B \ar[rr]^{f}  && C}\]
commutes.
\end{prop}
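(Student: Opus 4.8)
The plan is to prove that the three conditions defining a bihomomorphism collapse to the single diagram in \autoref{bihom} by exploiting the decomposition of $d$ given in \autoref{decompose}, namely that $d_{A,B}$ factors as $\mu_{A \otimes B} \circ T(\sigma'_{A,B}) \circ \sigma_{T(A),B}$ (and symmetrically through $\sigma'$ and $\sigma$). The key observation is that each of the two ``homomorphism in one variable'' diagrams is a statement about precomposing $T(f)$ with one of the half-strengths $\sigma$ or $\sigma'$, whereas the combined diagram is the statement about precomposing with the full $d$; so the two half-conditions should glue to the full condition and conversely the full condition should restrict to each half.

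First I would prove the easy direction: assume $f$ is a bihomomorphism, so both the $\sigma$-diagram and the $\sigma'$-diagram commute. I would take the composite $c \circ T(f) \circ d_{A,B}$ and substitute the decomposition $d_{A,B} = \mu_{A \otimes B} \circ T(\sigma'_{A,B}) \circ \sigma_{T(A),B}$. Using naturality of $\sigma$ with respect to the homomorphism structure, together with the costrength compatibility of the monad action $c$ (i.e. the diagram expressing that $c \circ \mu = c \circ T(c)$, one of the module axioms) I would push $c \circ T(f)$ through the two half-strengths one at a time, invoking first the $\sigma$-bihomomorphism square and then the $\sigma'$-bihomomorphism square. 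This should leave exactly $f \circ (a \otimes b)$, establishing commutativity of the combined diagram.

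For the converse, I would assume the combined diagram of \autoref{bihom} commutes and recover each half-condition. The trick is to precompose with $\eta$ in one of the two tensor factors: the half-strength $\sigma_{A,B}$ is by definition $d_{A,B} \circ (\eta_A \otimes T(B))$ up to the unit axioms of the monad, and similarly $\sigma'$ is $d_{A,B}$ precomposed with $\eta$ in the other slot. Concretely, to get the $\sigma'$-diagram I would precompose the combined diagram with $\eta_A \otimes \mathrm{id}_{T(B)}$ and use the unit axiom $a \circ \eta_A = \mathrm{id}_A$ on the left vertical, while on the top the composite $d_{A,B} \circ (\eta_A \otimes T(B))$ simplifies to $\sigma'_{A,T(B)}$ by the lax monoidal compatibility of $\eta$ recorded in \autoref{monoidal-monad}; an analogous precomposition with $\mathrm{id}_{T(A)} \otimes \eta_B$ yields the $\sigma$-diagram.

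The main obstacle I anticipate is bookkeeping rather than conceptual difficulty: carefully tracking which of the several monad-module coherence squares (the associativity axiom for the action, the two unit normalizations of $\sigma$ and $\sigma'$, and the interaction of $d$ with $\mu$) is invoked at each step, and ensuring the half-strength decompositions are applied in a consistent order so that naturality squares line up. Since all the needed identities are already assembled in \autoref{decompose}, \autoref{strength}, and \autoref{monoidal-monad}, the argument is a diagram chase with no genuinely new input; the delicate point is simply that one must use \emph{both} decompositions of $d$ (through $\sigma$ first or through $\sigma'$ first) to match the two separate variable-wise conditions.
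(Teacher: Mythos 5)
Your proposal is correct and follows essentially the same route as the paper: the converse direction by precomposing the combined square with $\eta_A \otimes T(B)$ and $T(A) \otimes \eta_B$ and using the unit axioms, and the forward direction by substituting a decomposition of $d$ from \autoref{decompose} and chasing through naturality of $\mu$ and the strength, the module axiom $c \circ \mu_C = c \circ T(c)$, and the two variable-wise squares (one of them under $T$). The only cosmetic difference is that you use the mirror decomposition $d = \mu \circ T(\sigma') \circ \sigma$ where the paper uses $d = \mu \circ T(\sigma) \circ \sigma'$; in particular a single decomposition suffices for the whole argument, so your closing concern that \emph{both} decompositions are needed is unfounded.
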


\begin{proof}
If the diagram commutes, then precomposing with $\eta_A \otimes T(B)$ implies that $f$ is a homomorphism in $B$. Similarly, we see that $f$ is a homomorphism in $A$ by precomposing with $T(A) \otimes \eta_B$. Now assume that $f$ is a bihomomorphism. Then each part in the following diagram commutes (we will use sloppy notation again).

\[\xymatrix{T(A) \otimes T(B)  \ar[rr]^{d} \ar[dr]^{\sigma'} \ar[dd]^{b} & & T(A \otimes B) \ar[rr]^{f} && T(C) \ar[ddd]^{c} \\
 & T(A \otimes T(B)) \ar[r]^{\sigma} \ar[d]^{b} & T^2(A \otimes B) \ar[u]^{\mu} \ar[r]^-{f} & T^2(C) \ar[ur]^{\mu} \ar[d]^{c} &  \\
 T(A) \otimes B \ar[r]^{\sigma'} \ar[d]^{a} & T(A \otimes B) \ar[rr]^{f} && T(C) \ar[dr]^{c} & \\
 A \otimes B \ar[rrrr]^{f} &&&& C }\]

Hence, the outer rectangle also commutes, which is precisely the claim.
\end{proof}

\begin{cor}[Existence of tensor products] \label{tensorex}
If $\M(T)$ has reflexive coequalizers, then the tensor product of two $T$-modules $(A,a) \otimes_T (B,b)$ always exists, namely it is given by the coequalizer in $\M(T)$ of the two homomorphisms $F(T(A) \otimes T(B)) \rightrightarrows F(A \otimes B)$ defined as $F(a \otimes b)$ resp. as the extension of $d_{A,B} : T(A) \otimes T(B) \to T(A \otimes B)$.
\end{cor}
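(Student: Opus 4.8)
The statement to prove is \autoref{tensorex}: assuming $\M(T)$ has reflexive coequalizers, the tensor product $(A,a) \otimes_T (B,b)$ exists and is computed as the stated coequalizer in $\M(T)$. The plan is to verify directly that the proposed coequalizer object represents the functor $\BiHom((A,a),(B,b),-)$. First I would name the two parallel homomorphisms of free $T$-modules $F(T(A) \otimes T(B)) \rightrightarrows F(A \otimes B)$: one is $F(a \otimes b)$, and the other is the unique homomorphism of $T$-modules extending (via the free-forgetful adjunction $F \dashv U$) the morphism $d_{A,B} : T(A) \otimes T(B) \to T(A \otimes B) = U(F(A \otimes B))$. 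Both of these are genuine homomorphisms of $T$-modules, so the coequalizer is taken in $\M(T)$, and it exists by hypothesis once I check the pair is reflexive.

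The key structural point is the translation between bihomomorphisms and ordinary homomorphisms out of the coequalizer. By the adjunction, a homomorphism of $T$-modules $F(A \otimes B) \to (C,c)$ corresponds bijectively to a $\C$-morphism $f : A \otimes B \to C$. Composing the two parallel arrows with such an $f$ and using the adjunction again, the condition that $f$ coequalizes $F(a \otimes b)$ and the extension of $d_{A,B}$ unwinds to exactly the commutativity of the square
\[\xymatrix@C=30pt{T(A) \otimes T(B) \ar[r]^-{d_{A,B}} \ar[d]_{a \otimes b} & T(A \otimes B) \ar[r]^-{T(f)} & T(C) \ar[d]^{c} \\ A \otimes B \ar[rr]^{f} && C.}\]
By \autoref{bihom} this square commutes precisely when $f$ is a bihomomorphism. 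Hence homomorphisms of $T$-modules from the coequalizer to $(C,c)$ correspond naturally to bihomomorphisms $A \otimes B \to C$, which is exactly the representability of $\BiHom((A,a),(B,b),-)$ that we want; the universal bihomomorphism is the composite $A \otimes B = U(F(A \otimes B)) \to A \otimes_T B$.

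The main obstacle — really the only thing requiring care — is the reflexivity of the parallel pair, which is what lets us invoke the hypothesis on $\M(T)$. I would exhibit a common section $F(A \otimes B) \to F(T(A) \otimes T(B))$, namely the homomorphism induced by $\eta_A \otimes \eta_B : A \otimes B \to T(A) \otimes T(B)$. That this is a common section reduces, on the level of $\C$-morphisms under the adjunction, to the two identities $(a \otimes b) \circ (\eta_A \otimes \eta_B) = \id_{A \otimes B}$ (from the module unit axioms for $(A,a)$ and $(B,b)$) and $d_{A,B} \circ (\eta_A \otimes \eta_B) = \eta_{A \otimes B}$ (the compatibility of $\eta$ with $d$ recorded in \autoref{monoidal-monad}); the latter shows the extension of $d_{A,B}$ is sectioned to the canonical presentation map as well. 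Once reflexivity is in hand the coequalizer exists, and the representability argument above completes the proof. The naturality in $(C,c)$ of the resulting bijection is routine and I would only remark on it rather than write it out.
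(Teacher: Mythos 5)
Your proof is correct and follows essentially the same route as the paper: the statement is deduced from \autoref{bihom} by translating, via the adjunction $F \dashv U$, the condition of coequalizing $F(a \otimes b)$ and the extension of $d_{A,B}$ into the commutativity of the square characterizing bihomomorphisms, with reflexivity witnessed by the common section $F(\eta_A \otimes \eta_B)$ (the paper's $T(\eta_A \otimes \eta_B)$), exactly as you argue. One cosmetic slip worth fixing: the universal bihomomorphism is not the composite $A \otimes B = U(F(A \otimes B)) \to A \otimes_T B$ (indeed $UF(A \otimes B) = T(A \otimes B)$, not $A \otimes B$), but rather $\eta_{A \otimes B}$ followed by the underlying morphism of the coequalizer projection, i.e. the adjunct of $F(A \otimes B) \to (A,a) \otimes_T (B,b)$.
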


In fact, these homomorphisms are reflexive with section $T(\eta_A \otimes \eta_B)$. Next, we check that the tensor product satisfies the expected properties.
 
\begin{prop}[{\cite[Theorem 2.5.5]{Sea13}}] \label{eins}
Let $T$ be a monoidal monad on a monoidal category $\C$. Then for every $T$-module $(A,a)$ there is a unique isomorphism $(A,a) \otimes_T F(1) \cong (A,a)$ such that
\[\xymatrix@C=35pt{A \otimes_T T(1) \ar[r]^-{\cong} & A \\ A \otimes T(1) \ar[u]^{\otimes} & A \otimes 1  \ar[l]^-{A \otimes \eta_1} \ar[u]_{\cong}}\]
commutes. A similar statement holds for $F(1) \otimes_T (A,a) \cong (A,a)$.
\end{prop}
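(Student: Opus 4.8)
The plan is to prove the statement by representability, which simultaneously yields the existence of $(A,a) \otimes_T F(1)$. Write $F(1) = (T(1),\mu_1)$. By definition $(A,a) \otimes_T F(1)$ is a representing object of the functor $\BiHom((A,a),F(1),-) : \M(T) \to \Set$, so it suffices to produce a natural isomorphism $\BiHom((A,a),F(1),(C,c)) \cong \Hom_T((A,a),(C,c))$ of functors in $(C,c)$; then $(A,a)$ represents the left-hand functor, whence the tensor product exists and is canonically isomorphic to $(A,a)$. The central player is the \emph{canonical action} of $T(1)$ on an arbitrary $T$-module, the morphism $\rho_{(C,c)} := c \circ T(r_C) \circ \sigma_{C,1} : C \otimes T(1) \to C$, where $\sigma$ is the strength of the monoidal monad (\autoref{decompose}) and $r_C : C \otimes 1 \cong C$ is the unit constraint. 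Conceptually $\rho_{(C,c)}$ expresses that every $T$-module is a module over the monoid object $T(1)$.

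First I would record the easy identity $\rho_{(C,c)} \circ (C \otimes \eta_1) = r_C$: by the unit axiom for the strength (\autoref{strength}) one has $\sigma_{C,1} \circ (C \otimes \eta_1) = \eta_{C \otimes 1}$, and then naturality of $\eta$ together with the module counit law $c \circ \eta_C = \id_C$ finish the computation. Next I would define the two candidate maps: $\Psi$ sends a bihomomorphism $f : A \otimes T(1) \to C$ to its restriction $f \circ (A \otimes \eta_1) \circ r_A^{-1} : A \to C$, and $\Phi$ sends a homomorphism $g : (A,a) \to (C,c)$ to $\rho_{(C,c)} \circ (g \otimes T(1))$. The relation $\Psi \circ \Phi = \id$ reduces, using naturality of $r$ and the identity just recorded, to $\rho_{(C,c)} \circ (C \otimes \eta_1) \circ r_C^{-1} = \id_C$. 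The converse $\Phi \circ \Psi = \id$ is the key computation: for a bihomomorphism $f$ one rewrites $\rho_{(C,c)} \circ \bigl((f \circ (A \otimes \eta_1) \circ r_A^{-1}) \otimes T(1)\bigr)$ by naturality of $\sigma$ in \emph{both} variables into the form $c \circ T(f) \circ \sigma_{A,T(1)} \circ (A \otimes T(\eta_1))$, then applies the second-variable homomorphism property of $f$ to replace $c \circ T(f) \circ \sigma_{A,T(1)}$ by $f \circ (A \otimes \mu_1)$, and finally invokes the monad unit law $\mu_1 \circ T(\eta_1) = \id_{T(1)}$ to collapse the expression to $f$.

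It remains to check that $\Psi$ and $\Phi$ are well defined on the stated sets. For $\Psi$, I would verify that the restriction of a bihomomorphism is a $T$-homomorphism $(A,a) \to (C,c)$: writing $j := (A \otimes \eta_1) \circ r_A^{-1}$, a short chase shows $j \circ a = (a \otimes T(1)) \circ (T(A) \otimes \eta_1) \circ r_{T(A)}^{-1}$, after which the first-variable homomorphism property of $f$ (the costrength diagram) together with naturality of the costrength $\sigma'$ and its unit axiom $\sigma'_{A,1} = T(r_A^{-1}) \circ r_{T(A)}$ reduce $f \circ j \circ a$ to $c \circ T(f \circ j) = c \circ T(f) \circ T(j)$, which is exactly the homomorphism condition. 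For $\Phi$, I must show $\rho_{(C,c)}$ is itself a bihomomorphism $(C,c) \otimes (T(1),\mu_1) \to (C,c)$ (then $\rho_{(C,c)} \circ (g \otimes T(1))$ is a bihomomorphism, since bihomomorphisms are stable under precomposition with a first-variable homomorphism). This is the main obstacle: via the criterion of \autoref{bihom} it amounts to the identity $\rho_{(C,c)} \circ (c \otimes \mu_1) = c \circ T(\rho_{(C,c)}) \circ d_{C,T(1)}$, which I would unwind using $\sigma_{C,1} = d_{C,1} \circ (\eta_C \otimes T(1))$, the associativity and unit coherence of $d$, the strength/costrength compatibility $(\lozenge)$ of \autoref{strength}, and the monad multiplication law — the one genuinely laborious diagram chase of the proof.

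With both maps well defined, natural in $(C,c)$, and mutually inverse, Yoneda gives that $(A,a)$ represents $\BiHom((A,a),F(1),-)$; hence $(A,a) \otimes_T F(1)$ exists, is isomorphic to $(A,a)$, and the universal bihomomorphism corresponds under the isomorphism to $\Phi(\id_{(A,a)}) = \rho_{(A,a)}$. The required commutative triangle is then precisely the identity $\rho_{(A,a)} \circ (A \otimes \eta_1) = r_A$ from the first step, and uniqueness of the isomorphism follows because any isomorphism compatible with the universal bihomomorphism is forced to carry it to $\rho_{(A,a)}$ and is thereby determined. Finally, the statement for $F(1) \otimes_T (A,a) \cong (A,a)$ is entirely symmetric, obtained by replacing the strength $\sigma$, the right unit $r_A$, and the second-variable homomorphism property by the costrength $\sigma'$, the left unit $l_A$, and the first-variable property.
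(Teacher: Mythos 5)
Your proposal is correct and takes essentially the same route as the paper: both establish the natural bijection $\BiHom((A,a),F(1),-) \cong \Hom_{T}((A,a),-)$ and conclude by representability, and your two maps coincide with the paper's — your $\Psi$ is the paper's restriction $\tilde{f} \mapsto \tilde{f} \circ (A \otimes \eta_1) \circ r_A^{-1}$, while your canonical action $\rho_{(C,c)} = c \circ T(r_C) \circ \sigma_{C,1}$ is exactly the paper's $\tilde{f}$ for $f = \id_C$, so that your $\Phi(g) = \rho_{(C,c)} \circ (g \otimes T(1))$ agrees with the paper's $f \mapsto \tilde{f}$ by naturality of $\sigma$. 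The ``laborious'' verification you flag (that $\rho_{(C,c)}$ is a bihomomorphism) is precisely the pair of diagram chases the paper carries out for general $f$, so that step does go through, and your factorization through the stability of bihomomorphisms under first-variable precomposition is only a mild reorganization of the same computation.
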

 
\begin{proof}
Let $(C,c)$ be some $T$-module. We have to show that there is a natural bijection between bihomomorphisms $A \otimes T(1) \to C$ and homomorphisms $A \to C$. Our notation will be quite sloppy, for example we will abbreviate the morphism $A \otimes \eta_1 : A \otimes 1 \to A \otimes T(1)$ by $\eta$.

Given a bihomomorphism $\tilde{f} : A \otimes T(1) \to C$, define $f : A \to C$ as the composition
\[f : A \xrightarrow{\rho^{-1}} A \otimes 1 \xrightarrow{\eta} A \otimes T(1) \xrightarrow{\tilde{f}} C.\]
The following diagram commutes:
\[\xymatrix@R=35pt@C=40pt{
T(A) \ar[r]^{\rho^{-1}} \ar[dr]_{\rho^{-1}} \ar[dd]^{a} & T(A \otimes 1) \ar[r]^-{\eta} & T(A \otimes T(1)) \ar[r]^-{\tilde{f}} & T(C) \ar[dd]^{c} \\
 & T(A) \otimes 1 \ar[d]^{a} \ar[r]^-{\eta} \ar[u]_{\sigma'} & T(A) \otimes T(1) \ar[u]_{\sigma'} \ar[d]^{a} & \\
 A \ar[r]^{\rho^{-1}} & A \otimes 1 \ar[r]^-{\eta} & A \otimes T(1) \ar[r]^-{\tilde{f}} & C
}\]
The commutativity of the outer rectangle says that $f$ is a homomorphism.

Conversely, let $f : A \to C$ be a homomorphism. Define $\tilde{f} : A \otimes T(1) \to C$ as the composition
\[\tilde{f} : A \otimes T(1) \xrightarrow{\sigma} T(A \otimes 1) \xrightarrow{\rho} T(A) \xrightarrow{f} T(C) \xrightarrow{c} C,\]
or equivalently as the composition
\[\tilde{f} : A \otimes T(1) \xrightarrow{\sigma} T(A \otimes 1) \xrightarrow{\rho} T(A) \xrightarrow{a} A \xrightarrow{f} C.\]
We claim that $\tilde{f}$ is a bihomomorphism. We show first that $\tilde{f}$ is a homomorphism in $A$, i.e. that the diagram (D)
\[\xymatrix@R=35pt{T(A) \otimes T(1) \ar[r]^{\sigma} \ar[d]^{a} & T(A \otimes T(1)) \ar[r]^-{\tilde{f}} & T(C) \ar[d]^{c} \\
A \otimes T(1) \ar[rr]^-{\tilde{f}} && C}\]
commutes. Observe that the following two diagrams commute:
\[\xymatrix@R=35pt{
T(A) \otimes T(1) \ar[r]^{\sigma} \ar[d]^{a} & T(T(A) \otimes 1) \ar[r]^-{\rho} \ar[d]^{a} & T^2(A) \ar[r]^{\mu} \ar[d]^{a} & T(A) \ar[d]^{a}\\
A \otimes T(1) \ar[r]^{\sigma} \ar[d]^{f} & T(A \otimes 1) \ar[r]^-{\rho} \ar[d]^{f} & T(A) \ar[r]^{a} \ar[d]^{f} & A \ar[d]^{f} \\
C \otimes T(1) \ar[r]^{\sigma} & T(C \otimes 1) \ar[r]^-{\rho} & T(C) \ar[r]^{c} & C
}\]
\[\xymatrix{
T(A) \otimes T(1) \ar[rrr]^{\sigma'} \ar[dd]^{\sigma} &&& T(A \otimes T(1)) \ar[d]^{\sigma} \ar@/^4pc/[dddd]^{\tilde{f}} \\
 & T^2(A \otimes 1) \ar[r]^{\mu} \ar[d]^{\rho} & T(A \otimes 1) \ar[d]^{\rho} & T^2(A \otimes 1) \ar[l]^{\mu} \ar[d]^{\rho} \\
T(T(A) \otimes 1) \ar[r]^-{\rho} \ar[ur]^{\sigma'} & T^2(A) \ar[r]^{\mu} & T(A) \ar[d]^{a} & T^2(A) \ar[l]^{\mu} \ar[d]^{a} \\
 & & A \ar[d]^{f} & T(A) \ar[d]^{f} \ar[l]^{a} \\
 & & C & T(C) \ar[l]^{c}
 }\] 
The first diagram implies (with sloppy notation) $\tilde{f} a = c f \rho \sigma a = f a \mu \rho \sigma$ and the second one implies $f a \mu \rho \sigma = c \tilde{f} \sigma'$, thus (D) commutes. The following commutative diagram shows that $\tilde{f}$ is a homomorphism in $T(1)$:
\[\xymatrix@C=20pt{
A \otimes T^2(1) \ar[r]^-{\sigma} \ar[d]^{\mu} & T(A \otimes T(1)) \ar[r]^{\sigma} \ar@/^2pc/[rrrr]^{\tilde{f}} & T^2(A \otimes 1) \ar[r]^-{\rho} \ar[d]^{\mu} & T^2(A) \ar[d]^{\mu} \ar[r]^{a} & T(A) \ar[d]^{a} \ar[r]_{f} & T(C) \ar[d]^{c} \\
A \otimes T(1) \ar[rr]^{\sigma}  \ar@/_2pc/[rrrrr]_{\tilde{f}} && T(A \otimes 1)  \ar[r]^-{\rho} & T(A) \ar[r]^{a}   & A \ar[r]^{f} & C
}\]
Thus, $\tilde{f}$ is a bihomomorphism. Let us show that these constructions are inverse to each other: Given a homomorphism $f : A \to C$, the associated bihomomorphism $\tilde{f}$ yields the homomorphism
\[A \xrightarrow{\rho^{-1}} A \otimes 1 \xrightarrow{\eta} A \otimes T(1) \xrightarrow{\tilde{f}} C\]
and the following commutative diagram shows that it equals $f$:
\[\xymatrix{
 & A \otimes 1 \ar[r]^-{\eta} \ar[dr]_{\eta} \ar[dd]^{\rho} & A \otimes T(1) \ar[d]^{\sigma} \ar[r]^-{\tilde{f}} & C \\
A \ar[ur]^{\rho^{-1}} \ar@{=}[dr] & & T(A \otimes 1) \ar[d]^{\rho} & \\
 & A \ar[r]^{\eta} \ar@{=}[dr] & T(A) \ar[d]^{a} \ar[r]^{f} & T(C) \ar[uu]^{c} \ar[d]^{c} \\ 
 & & A \ar[r]^{f} & C
}\]
Conversely, let $\tilde{f} : A \otimes T(1) \to C$ be a bihomomorphism, with associated homomorphism $f$, which gives the bihomomorphism $c f \rho \sigma$. The following commutative diagram shows that it equals $\tilde{f}$.

\[\xymatrix{
T(A) \ar[r]^-{f} & T(C) \ar[r]^{c} & C \\
T(A \otimes 1) \ar[u]^{\rho} \ar[r]^-{\eta} & T(A \otimes T(1)) \ar[u]^{\tilde{f}} & \\
A \otimes T(1) \ar[u]^{\sigma} \ar[r]^-{\eta} \ar@/_1.5pc/@{=}[rr] & A \otimes T^2(1) \ar[u]^{\sigma} \ar[r]^{\mu} & A \otimes T(1) \ar[uu]^{\tilde{f}}
}\]
\end{proof}

\begin{prop}[{\cite[Proposition 2.5.2]{Sea13}}] \label{freiesTP}
Let $X,Y \in \C$. Then there is a unique isomorphism of $T$-modules $F(X \otimes Y) \cong F(X) \otimes_T F(Y)$ such that
\[\xymatrix{T(X \otimes Y) \ar[rr]^{\cong} &&  T(X) \otimes_T T(Y) \\ & T(X) \otimes T(Y) \ar[ul]^{d_{X,Y}} \ar[ur]_{\otimes} & }\]
commutes.
\end{prop}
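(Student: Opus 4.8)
The plan is to prove the proposition by exhibiting $F(X \otimes Y)$ as a representing object of the bihomomorphism functor $\BiHom\bigl((T(X),\mu_X),(T(Y),\mu_Y),-\bigr)$, which by the very definition of $\otimes_T$ makes it a tensor product $F(X) \otimes_T F(Y)$. Concretely, for every $T$-module $(C,c)$ I would construct a bijection, natural in $(C,c)$,
\[
\Hom_\C(X \otimes Y, C) \;\cong\; \BiHom\bigl((T(X),\mu_X),(T(Y),\mu_Y),(C,c)\bigr),
\]
and then compose it with the free--forgetful adjunction isomorphism $\Hom_{\M(T)}(F(X \otimes Y),(C,c)) \cong \Hom_\C(X \otimes Y, C)$ and invoke Yoneda to obtain the desired $T$-module isomorphism.

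First I would define the two maps. Given $g : X \otimes Y \to C$, set $f_g := c \circ T(g) \circ d_{X,Y} : T(X) \otimes T(Y) \to C$; given a bihomomorphism $f$, set $g_f := f \circ (\eta_X \otimes \eta_Y) : X \otimes Y \to C$. The first verification is that $f_g$ really is a bihomomorphism: by the criterion of \autoref{bihom} this reduces to the equation $c \circ T(f_g) \circ d_{T(X),T(Y)} = f_g \circ (\mu_X \otimes \mu_Y)$, which I would establish by a diagram chase combining the compatibility of $d$ with the multiplication $\mu$ (a defining diagram of a monoidal monad, \autoref{monoidal-monad}), naturality of $\mu$, and the associativity axiom $c \circ \mu_C = c \circ T(c)$ of the module $(C,c)$.

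Next I would check that the two assignments are mutually inverse. The identity $g_{f_g} = g$ follows from $d_{X,Y} \circ (\eta_X \otimes \eta_Y) = \eta_{X \otimes Y}$ (the unit compatibility of a monoidal monad), naturality of $\eta$, and the unit axiom $c \circ \eta_C = \id_C$. The identity $f_{g_f} = f$ uses naturality of $d$ to rewrite $T(\eta_X \otimes \eta_Y) \circ d_{X,Y} = d_{T(X),T(Y)} \circ (T(\eta_X) \otimes T(\eta_Y))$, then the bihomomorphism condition for $f$ (again \autoref{bihom}) together with the monad unit law $\mu_X \circ T(\eta_X) = \id_{T(X)}$. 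Naturality of the whole bijection in $(C,c)$ is then a formal check, so Yoneda produces the unique $T$-module isomorphism $F(X \otimes Y) \cong F(X) \otimes_T F(Y)$.

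Finally, to pin down the triangle: the universal bihomomorphism attached to $F(X\otimes Y)$ (viewed as the representing object) corresponds under the adjunction to $\id_{F(X\otimes Y)}$, i.e. to $\eta_{X\otimes Y}$, and applying $g \mapsto f_g$ gives $f_{\eta_{X\otimes Y}} = \mu_{X\otimes Y} \circ T(\eta_{X\otimes Y}) \circ d_{X,Y} = d_{X,Y}$. Hence the comparison isomorphism carries $d_{X,Y}$ to the universal bihomomorphism $\otimes$, which is exactly the stated commuting triangle. The main obstacle is the first part of the second paragraph — showing $f_g$ is a bihomomorphism and that the two constructions invert each other — since these are the only places where the full strength of the monoidal-monad coherence diagrams is needed; everything else is formal via Yoneda.
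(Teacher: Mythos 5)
Your proposal is correct and is essentially the paper's own proof: the paper shows directly that $d_{X,Y}$ is a universal bihomomorphism, i.e.\ that $\tilde{f} \mapsto \tilde{f} \circ d_{X,Y}$ is a bijection from $T$-module homomorphisms $F(X \otimes Y) \to (C,c)$ onto bihomomorphisms, and your bijection $g \mapsto f_g$ is exactly this map transported along the free--forgetful adjunction (since $f_g = \hat{g} \circ d_{X,Y}$ where $\hat{g} = c \circ T(g)$ is the adjoint extension of $g$). The substantive verifications also coincide --- bihomomorphy via the compatibility of $d$ with $\mu$, and the two inverse identities via $d_{X,Y} \circ (\eta_X \otimes \eta_Y) = \eta_{X \otimes Y}$, naturality of $d$, and \autoref{bihom} --- so the representability/Yoneda packaging is the only, purely cosmetic, difference.
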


\begin{proof}
We prove that $d_{X,Y} : T(X) \otimes T(Y) \to T(X \otimes Y)$ satisfies the universal property of the tensor product. Notice that $d_{X,Y}$ is a bihomomorphism because of \autoref{bihom} and the commutative diagram
\[\xymatrix@C=40pt{T^2(X) \otimes T^2(Y) \ar[r]^-{d} \ar[d]^{\mu \otimes \mu} & T(T(X) \otimes T(Y)) \ar[r]^-{d} & T^2(X \otimes Y) \ar[d]^{\mu} \\
T(X) \otimes T(Y) \ar[rr]^{d} && T(X \otimes Y)}\]
which is part of the definition of a monoidal monad.

Now let $(A,a)$ be some $T$-module and $f : T(X) \otimes T(Y) \to A$ be a bihomomorphism. We have to show that there is a unique homomorphism of $T$-modules $\tilde{f}: F(X \otimes Y) \to (A,a)$ such that $\tilde{f} \circ d_{X,Y} = f$. From this equation it follows
\[\tilde{f} \circ \eta_{X \otimes Y} = \tilde{f} \circ d_{X,Y} \circ (\eta_X \otimes \eta_Y) = f \circ (\eta_X \otimes \eta_Y)\]
which describes $\tilde{f}$ completely since $F(X \otimes Y)$ is the free $T$-module on $X \otimes Y$. This establishes uniqueness and for existence we have to define $\tilde{f}$ to be the extension of $f \circ (\eta_X \otimes \eta_Y) : X \otimes Y \to A$, i.e. as the homomorphism of $T$-modules
\[T(X \otimes Y) \xrightarrow{T(\eta_X \otimes \eta_Y)} T(T(X) \otimes T(Y)) \xrightarrow{T(f)} T(A) \xrightarrow{a} A.\]
We are left to prove that $\tilde{f} \circ d = f$. Since $f$ is a bihomomorphism, the diagram
\[\xymatrix@C=50pt{T^2(X) \otimes T^2(Y) \ar[r]^-{d_{T(X),T(Y)}} \ar[d]_{\mu \otimes \mu} & T(T(X) \otimes T(Y)) \ar[r]^-{T(f)} & T(A) \ar[d]^{a} \\
T(X) \otimes T(Y) \ar[rr]^{f} &&  A}\]
commutes (\autoref{bihom}). Precomposing with $T(\eta_X) \otimes T(\eta_Y)$ yields the commutative diagram
\[\xymatrix@C=50pt{T^2(X) \otimes T^2(Y) \ar[r]^-{d_{T(X),T(Y)}} & T(T(X) \otimes T(Y)) \ar[r]^-{T(f)} & T(A) \ar[d]^{a} \\
T(X) \otimes T(Y) \ar[u]^{T(\eta_X) \otimes T(\eta_Y)} \ar[rr]^{f} &&  A.}\]
By naturality of $d$ with respect to $\eta_X$ and $\eta_Y$, this simplifies to
\[\xymatrix@C=50pt{T(X \otimes Y) \ar[r]^-{T(\eta_X \otimes \eta_Y)} & T(T(X) \otimes T(Y)) \ar[r]^-{T(f)} & T(A) \ar[d]^{a} \\
T(X) \otimes T(Y) \ar[u]^{d_{X,Y}} \ar[rr]^{f} &&  A.}\]
But this means exactly $\tilde{f} \circ d_{X,Y} = f$, which finishes the proof.
\end{proof}

\begin{lemma} \label{symmetry}
Let $T$ be a symmetric monoidal monad on a symmetric monoidal category $\C$. Let $(A,a),(B,b)$ be two $T$-modules. Assuming that $(A,a) \otimes_T (B,b)$ exists, then $(B,b) \otimes_T (A,a)$ exists as well and there is a unique isomorphism $(A,a) \otimes_T (B,b) \cong (B,b) \otimes_T (A,a)$ such that the following diagram commutes.
\[\xymatrix@R=15pt{A \otimes_T B \ar[r]^{\cong} & B \otimes_T A \\ A \otimes B \ar[u]^{\otimes} \ar[r]^{S} & B \otimes A \ar[u]_{\otimes}}\]
\end{lemma}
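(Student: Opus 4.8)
The plan is to show that precomposition with the symmetry carries bihomomorphisms to bihomomorphisms, and then to deduce the whole statement formally from the universal property of the tensor product.

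First I would establish the key point: if $f : A \otimes B \to C$ is a bihomomorphism with respect to the actions $a,b,c$, then $g := f \circ S_{B,A} : B \otimes A \to C$ is a bihomomorphism with respect to $b,a,c$. By \autoref{bihom} it suffices to verify $c \circ T(g) \circ d_{B,A} = g \circ (b \otimes a)$. Since $T(g) = T(f) \circ T(S_{B,A})$, and since the defining diagram of a symmetric monoidal monad (\autoref{symmetric-monoidal-monad}), read with the roles of $A$ and $B$ interchanged, gives $T(S_{B,A}) \circ d_{B,A} = d_{A,B} \circ S_{T(B),T(A)}$, the left-hand side equals $c \circ T(f) \circ d_{A,B} \circ S_{T(B),T(A)}$. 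As $f$ is a bihomomorphism, \autoref{bihom} rewrites $c \circ T(f) \circ d_{A,B}$ as $f \circ (a \otimes b)$, so the expression becomes $f \circ (a \otimes b) \circ S_{T(B),T(A)}$; naturality of the symmetry turns $(a \otimes b) \circ S_{T(B),T(A)}$ into $S_{B,A} \circ (b \otimes a)$, yielding precisely $f \circ S_{B,A} \circ (b \otimes a) = g \circ (b \otimes a)$, as required. This diagram chase is the only real content of the argument; everything else is formal.

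Combined with the involutivity $S_{A,B} \circ S_{B,A} = \id$ of the symmetry, this shows that for every $T$-module $(C,c)$ the assignment $f \mapsto f \circ S_{B,A}$ is a bijection $\BiHom((A,a),(B,b),(C,c)) \to \BiHom((B,b),(A,a),(C,c))$ with inverse $g \mapsto g \circ S_{A,B}$, evidently natural in $(C,c)$. Hence the functors $\BiHom((A,a),(B,b),-)$ and $\BiHom((B,b),(A,a),-)$ are naturally isomorphic. Since the first is representable by hypothesis, with representing object $(A,a) \otimes_T (B,b)$, so is the second; this proves that $(B,b) \otimes_T (A,a)$ exists.

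Finally I would extract the stated isomorphism by the uniqueness of representing objects. Writing $\otimes_{A,B} : A \otimes B \to A \otimes_T B$ and $\otimes_{B,A} : B \otimes A \to B \otimes_T A$ for the universal bihomomorphisms, the bihomomorphism $\otimes_{B,A} \circ S_{A,B} : A \otimes B \to B \otimes_T A$ (a bihomomorphism by the first step) factors uniquely through $\otimes_{A,B}$ via a homomorphism $\phi : (A,a) \otimes_T (B,b) \to (B,b) \otimes_T (A,a)$ with $\phi \circ \otimes_{A,B} = \otimes_{B,A} \circ S_{A,B}$, which is exactly the commutativity of the displayed diagram. Symmetrically, $\otimes_{A,B} \circ S_{B,A}$ induces $\psi$ in the other direction. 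Using the universal property of each tensor product together with $S_{A,B} \circ S_{B,A} = \id$ and $S_{B,A} \circ S_{A,B} = \id$, one checks that $\psi \circ \phi$ and $\phi \circ \psi$ agree with the identities after precomposition with the respective universal bihomomorphisms, hence equal the identities. Thus $\phi$ is the desired isomorphism, and it is unique precisely because it is the unique homomorphism extending the bihomomorphism $\otimes_{B,A} \circ S_{A,B}$.
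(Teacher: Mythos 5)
Your proposal is correct and follows essentially the same route as the paper: the paper's entire proof is exactly your first step (precomposition with the symmetry preserves bihomomorphisms, verified via the symmetric monoidal monad axiom, \autoref{bihom}, and naturality of the symmetry, presented there as one commutative diagram), with the remaining representability/uniqueness argument left implicit. Your explicit write-up of that formal part matches what the paper takes for granted.
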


\begin{proof}
We have to show that if $f : A \otimes B \to C$ is a bihomomorphism, then $f \circ S_{B,A} : B \otimes A \to C$ is a bihomomorphism, too. This follows from the following commutative diagram:
\[\xymatrix@R=15pt{ & T(B \otimes A)  \ar[dr]^{S} &&  \\ T(B) \otimes T(A) \ar[ru]^{d} \ar[dd]_{b \otimes a} \ar[dr]^{S} & & T(A \otimes B) \ar[r]^-{f} & T(C) \ar[dd]^{c} \\
& T(A) \otimes T(B) \ar[d]^{a \otimes b} \ar[ur]^{d} && \\ B \otimes A \ar[r]^{S} & A \otimes B \ar[rr]^{f} & & C}\]
\end{proof}

In order to get associativity of $\otimes_T$, we need a coherence property of $T$.
 
\begin{defi}[Coherent monoidal monads]
A (symmetric) monoidal monad $T$ on a (symmetric) monoidal category $\C$ is called \emph{coherent} if $\M(T)$ has reflexive coequalizers and $- \otimes_T -$ preserves them in each variable.
\end{defi}

\begin{prop}[{\cite[Theorem 2.5.5]{Sea13}}] \label{associator}
Let $T$ be a coherent monoidal monad on a monoidal category $\C$. Then for all $T$-modules $(A,a),(B,b),(C,c)$ there is a unique isomorphism of $T$-modules
\[(A,a) \otimes_T ((B,b) \otimes_T (C,c)) \cong ((A,a) \otimes_T (B,b)) \otimes_T (C,c)\]
such that the diagram
\[\xymatrix{A \otimes_T (B \otimes_T C) \ar[r]^-{\cong} & (A \otimes_T B) \otimes_T C \\
T(A) \otimes_T T(B \otimes C) \ar[u] & T(A \otimes B) \otimes_T T(C) \ar[u] \\
T(A \otimes (B \otimes C)) \ar[u] \ar[r]^{\cong} & T((A \otimes B) \otimes C) \ar[u]}\]
commutes.
\end{prop}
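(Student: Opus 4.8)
The plan is to reduce everything to the case of free modules, where the associator is supplied by \autoref{freiesTP} together with the associativity constraint of $\C$, and then to propagate the isomorphism to arbitrary modules by resolving $(A,a)$, $(B,b)$, $(C,c)$ through their canonical presentations. I would first dispose of uniqueness. Observe that the left vertical composite $T(A \otimes (B \otimes C)) \to A \otimes_T (B \otimes_T C)$ in the stated diagram is an epimorphism in $\C$: it is built from the action $a : T(A) \to A$ (which is the canonical presentation map $F(A) \to (A,a)$, a reflexive, hence regular, epimorphism by \autoref{canpres} and \autoref{create}) and the map $T(B \otimes C) \to B \otimes_T C$ (a regular epimorphism by the construction of $\otimes_T$ in \autoref{tensorex}), tensored together. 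Since $- \otimes_T D$ and $D \otimes_T -$ preserve reflexive coequalizers and every epimorphism reduces to one (\autoref{reflexivex}), these functors preserve epimorphisms, so the composite is epi. Consequently the requirement that the square commute pins down the top horizontal arrow, giving uniqueness (and showing that any construction is forced to agree with ours).

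For the free case, I would combine two applications of \autoref{freiesTP} with the associator of $\C$ to produce, naturally in $X,Y,Z \in \C$, a chain of isomorphisms
\[F(X) \otimes_T (F(Y) \otimes_T F(Z)) \cong F(X \otimes (Y \otimes Z)) \cong F((X \otimes Y) \otimes Z) \cong (F(X) \otimes_T F(Y)) \otimes_T F(Z).\]
Each link is natural, so the composite is a natural isomorphism between the two triple-tensor functors restricted to free modules, and the compatibility diagram of the statement holds by construction in this case.

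Next I would extend this natural isomorphism from free modules to all of $\M(T)$, one variable at a time, using coherence. By hypothesis $- \otimes_T -$ preserves reflexive coequalizers in each variable, so by \autoref{bifunktor} each of the functors $(A,B,C) \mapsto A \otimes_T (B \otimes_T C)$ and $(A,B,C) \mapsto (A \otimes_T B) \otimes_T C$ preserves reflexive coequalizers separately in $A$, in $B$, and in $C$. The canonical presentation (\autoref{canpres}) exhibits any $(C,c)$ as the reflexive coequalizer of $F(TC) \rightrightarrows F(C)$; applying the two functors with the first two slots free and using that the free-module isomorphism is compatible with the two parallel maps, I induce an isomorphism in the third slot for arbitrary $C$. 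Repeating with $B$ (first slot still free, third slot now arbitrary) and then with $A$ yields the isomorphism for all three modules arbitrary, where at each stage the $3 \times 3$ reflexive-coequalizer lemma (\autoref{reflexdiag}) guarantees that the induced maps on the coequalizers are again mutually inverse and natural. The compatibility diagram, valid on free modules, then descends along these coequalizers because every arrow in it is a morphism of the relevant coequalizer diagrams.

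I expect the genuine difficulty to lie in this last extension step: keeping track of naturality through the three successive resolutions and verifying that the coequalizer-induced associators are compatible with the structure maps, so that the stated square (and ultimately the pentagon needed later) really commutes. Concretely, the obstacle is the iterated $3 \times 3$ bookkeeping — one must check that the parallel pairs coming from the canonical presentations are carried to one another by the free-module associator, a diagram chase analogous to, but more elaborate than, the ones in \autoref{eins} and \autoref{symmetry}. Everything else is a formal consequence of the coherence of $T$ and the preservation properties recorded above.
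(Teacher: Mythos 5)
Your strategy --- free case via \autoref{freiesTP}, then a three-stage extension along canonical presentations --- is viable, and it is genuinely different from the paper's argument; but be aware that what you defer as ``bookkeeping'' is the entire mathematical content, and the paper is organized precisely so that this work collapses to a single axiom. The paper does not resolve one variable at a time: it tensors the canonical presentation $F(T(A)) \rightrightarrows F(A) \to (A,a)$ with the defining coequalizer $F(T(B) \otimes T(C)) \rightrightarrows F(B \otimes C) \to (B,b) \otimes_T (C,c)$ from \autoref{tensorex} (legitimate by \autoref{bifunktor}), then uses \autoref{freiesTP} to rewrite the result as a coequalizer
\[F(T(A) \otimes (T(B) \otimes T(C))) \rightrightarrows F(A \otimes (B \otimes C)) \to (A,a) \otimes_T ((B,b) \otimes_T (C,c)),\]
whose parallel pair consists of $F(a \otimes (b \otimes c))$ and the extension of $d_{A,B\otimes C} \circ (T(A) \otimes d_{B,C})$. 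The associativity diagram in the definition of a monoidal monad (\autoref{monoidal-monad}) says exactly that this pair matches, under the associator of $\C$, the corresponding pair for the other bracketing; hence both triple products are coequalizers of literally the same reflexive pair, and the unique isomorphism falls out. In your approach the same coherence input is needed, but spread over three inductive stages: at each stage you must show the partially extended associator intertwines the maps $\mu$, which are \emph{not} of the form $F(g)$, so naturality in $\C$-variables does not help; unwinding what $\mathrm{id} \otimes_T \mu$ becomes under \autoref{freiesTP} brings in the strength and costrength axioms of \autoref{strength} each time, and at the later stages one must also reduce back to the free case through the already-constructed coequalizers. So your route works if completed, but it repeats with heavier bookkeeping what the paper does once.

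Two smaller corrections. First, in your uniqueness argument the epimorphism you need lives in $\M(T)$, not in $\C$: coherence says nothing about the forgetful functor preserving reflexive coequalizers (by \autoref{create} that would amount to $T$ preserving them, which is not assumed), so you should not claim the composite is epi in $\C$; fortunately, epi in $\M(T)$ is all the argument requires. Second, the detour through \autoref{reflexivex} and ``preservation of epimorphisms'' is both unnecessary and not quite valid --- that reduction concerns regular epimorphisms and additionally requires preservation of the coproduct $A \sqcup B$, which coherence does not grant. Instead, note that the two maps you tensor, $a : F(A) \to (A,a)$ and $F(B \otimes C) \to (B,b) \otimes_T (C,c)$, are themselves coequalizers of \emph{reflexive} pairs by \autoref{canpres} and \autoref{tensorex}, so coherence directly makes their images under $- \otimes_T D$ and $D \otimes_T -$ regular epimorphisms, and the composite is epi in $\M(T)$. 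With these repairs your uniqueness argument is correct and plays the same role as the vertical coequalizer projections do in the paper's proof.
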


\begin{proof}
By \autoref{tensorex} and \autoref{canpres} we have two coequalizer diagrams
\[\xymatrix@C=40pt{F(T(B) \otimes T(C)) \ar@<1ex>[r]^-{\mu T d} \ar@<-1ex>[r]_-{F(b \otimes c)} & F(B \otimes C) \ar[r] & (B,b) \otimes_T (C,c)}\]
\[\xymatrix@C=40pt{F(T(A)) \ar@<1ex>[r]^-{\mu} \ar@<-1ex>[r]_-{T(a)} & F(A) \ar[r]^-{a} & (A,a)}\]
in $\M(T)$. Using \autoref{bifunktor} (which is applicable since $T$ is coherent) they may be tensored to a coequalizer diagram
\[\xymatrix@C=42pt{F(T(A)) \otimes_T F(T(B) \otimes T(C)) \ar@<1ex>[r]^-{\mu \otimes \mu T d} \ar@<-1ex>[r]_-{F(a) \otimes_T F(b \otimes c)} & F(A) \otimes_T F(B \otimes C) \ar[d] \\
& (A,a) \otimes_T ((B,b) \otimes_T (C,c))}\]
in $\M(T)$. By \autoref{freiesTP} this identifies with
\[\xymatrix@C=42pt{F(T(A) \otimes (T(B) \otimes T(C))) \ar@<1ex>[r]^-{x} \ar@<-1ex>[r]_-{F(a \otimes (b \otimes c))} & F(A \otimes (B \otimes C)) \ar[d]\\ &(A,a) \otimes_T ((B,b) \otimes_T (C,c)),}\]
where $x$ corresponds to
\[T(A) \otimes (T(B) \otimes T(C)) \xrightarrow{T(A) \otimes d_{B,C}} T(A) \otimes T(B \otimes C) \xrightarrow{d_{A,B \otimes C}} T(A \otimes (B \otimes C)).\]
Since $T$ is a monoidal monad with respect to $d$, this identifies under the associator isomorphisms in $\C$ with
\[(T(A) \otimes T(B)) \otimes T(C) \xrightarrow{d_{A,B} \otimes T(C)} T(A \otimes B) \otimes T(C) \xrightarrow{d_{A \otimes B,C}} T((A \otimes B) \otimes C)).\]
Likewise, $T(a \otimes (b \otimes c))$ gets identified with $T((a \otimes b) \otimes c)$. We conclude that $(A,a) \otimes_T ((B,b) \otimes_T (C,c))$ is the coequalizer of two morphisms whose coequalizer is also, by the same reasoning as above, $((A,a) \otimes_T (B,b)) \otimes_T (C,c)$.
\end{proof}

\begin{ex}[Smash products] \label{smashp}
Consider the cartesian monoidal category of topological spaces $\Top$ as well as the category of pointed spaces $\Top_*$. The forgetful functor $\Top_* \to \Top$ is monadic, the corresponding monad is given by the functor $T : \Top \to \Top$, $X \mapsto X \sqcup *$ with the obvious monad structure. Also, $T$ has a canonical monoidal structure. If $(X,*),(Y,*),(Z,*)$ are pointed spaces, a continuous map $f : X \times Y \to Z$ is a bihomomorphism if and only if $f(x,*)=*=f(*,y)$ for all $(x,y) \in X \times Y$. It follows that the tensor product $\otimes_T$ on $\Top_*$ is exactly the \emph{smash product}
\[(X,*) \wedge (Y,*) = \bigl((X \times Y) / (x,*) \simeq (*,y),(*,*)\bigr).\]
However, the smash product of pointed spaces is not associative in general! For example, $\mathds{N} \wedge (\mathds{Q} \wedge \mathds{Q})$ is not isomorphic to $(\mathds{N} \wedge \mathds{Q}) \wedge \mathds{Q}$. The canonical continuous bijection is not an isomorphism (\cite[Section 1.5]{May06}). In particular, $T$ is not coherent. This pathology disappears when working with a \emph{convenient category of spaces} (\cite{St67}) such as the category of compactly generated (weak) Hausdorff spaces. The latter is also a cocomplete tensor category in contrast to $\Top$ (for example $\mathds{Q} \times -$ does not preserve quotient maps). See also \autoref{add-point} below.
\end{ex}
The coherence condition of a monad might be hard to check in general, but in the closed case it comes for free:

\begin{rem}[Internal $T$-homs]
Let $\C$ be a closed symmetric monoidal category with equalizers and let $T$ be a symmetric monoidal monad on $T$. For $T$-modules $(A,a),(B,b)$ we define $\HOM_T((A,a),(B,b)) \in \M(T)$ as a submodule of $\HOM(A,B)$ (\autoref{Thom}) as follows (see \cite[Section 2]{Kock71} for details): The underlying object is the equalizer of
\[\HOM(A,B) \xrightarrow{\tau} \HOM(T(A),T(B)) \xrightarrow{b_*} \HOM(T(A),B)\]
and
\[\HOM(A,B) \xrightarrow{a^*} \HOM(T(A),B).\]
There is a unique morphism $T(\HOM_T(A,B)) \to \HOM_T(A,B)$ lying over the action $T(\HOM(A,B)) \to \HOM(A,B)$. This endows $\HOM_T(A,B)$ with the structure of a $T$-module. This construction provides an internal hom-object for $\otimes_T$: For another $T$-module $(C,c)$ we have
\begin{eqnarray*}
\Hom_{T}((C,c) \otimes_T (A,a),(B,b)) & \cong & \BiHom((C,c),(A,a),(B,b)) \\
 && ~~~~~~~~~~\rotatebox{-90}{$\subseteq$} \\  \\
   \Hom(C,\HOM(A,B)) &\cong &\Hom(C \otimes A,B).
\end{eqnarray*}
The image of the set of morphisms $C \otimes A \to B$ which are homomorphisms in $A$ is $\Hom(C,U \,\HOM_T((A,a),(B,b)))$. The image of the set of morphisms which are homomorphisms in $C$ is $\Hom_T((C,c),\HOM((A,a),(B,b)))$. This implies
\[\Hom_{T}\bigl((C,c) \otimes_T (A,a),(B,b)\bigr) \cong \Hom_{T}\bigl((C,c),\HOM_T((A,a),(B,b))\bigr).\]
Hence, $- \otimes_T (A,a)$ is left adjoint and therefore cocontinuous. This proves:
\end{rem}

\begin{cor} \label{cloherent}
Let $\C$ be a closed symmetric monoidal category with equalizers and let $T$ be a symmetric monoidal monad on $\C$. If $\M(T)$ has reflexive coequalizers, then $T$ is coherent.
\end{cor}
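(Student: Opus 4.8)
The statement to prove is \autoref{cloherent}: for a closed symmetric monoidal category $\C$ with equalizers and a symmetric monoidal monad $T$ on $\C$, if $\M(T)$ has reflexive coequalizers, then $T$ is coherent.

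The plan is to exhibit an internal hom right adjoint to $-\otimes_T(A,a)$ for every $T$-module $(A,a)$, since a functor with a right adjoint is automatically cocontinuous, and in particular preserves reflexive coequalizers. By the symmetry of $\otimes_T$ (\autoref{symmetry}), preservation in the other variable follows formally, and together with the hypothesis that $\M(T)$ has reflexive coequalizers this is exactly the definition of coherence. So the whole task reduces to constructing the internal hom $\HOM_T((A,a),(B,b))$ and verifying the adjunction
\[\Hom_T\bigl((C,c)\otimes_T(A,a),(B,b)\bigr)\cong\Hom_T\bigl((C,c),\HOM_T((A,a),(B,b))\bigr).\]

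First I would define $\HOM_T((A,a),(B,b))$ as a subobject of $\HOM(A,B)$ via the equalizer displayed just before the corollary, using the enrichment $\tau$ of $T$ (\autoref{enriched}) and the costrength-derived transformation $\tau'$ (\autoref{costrength-enriched}), and check using the monad and monoidal-monad axioms that the $T$-action on $\HOM(A,B)$ coming from \autoref{Thom} restricts to this equalizer, so that $\HOM_T((A,a),(B,b))$ genuinely lands in $\M(T)$; here the hypothesis that $\C$ has equalizers is what makes the object exist. Next I would trace through the chain of natural bijections: starting from $\Hom_T((C,c)\otimes_T(A,a),(B,b))\cong\BiHom((C,c),(A,a),(B,b))$ (the defining property of $\otimes_T$), then using the hom-tensor adjunction of the closed structure to identify $\Hom(C\otimes A,B)\cong\Hom(C,\HOM(A,B))$, and finally checking that under this correspondence the bihomomorphism condition splits into the two separate homomorphism conditions — being a homomorphism in $A$ corresponds to landing in the subobject $\HOM_T((A,a),(B,b))$, and being a homomorphism in $C$ corresponds to the map $C\to\HOM_T((A,a),(B,b))$ being itself a $T$-module homomorphism. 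Intersecting these two conditions yields the desired bijection $\Hom_T((C,c),\HOM_T((A,a),(B,b)))$.

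The main obstacle, and the step requiring the most care, will be the verification that the two homomorphism conditions defining a bihomomorphism correspond \emph{exactly} to the two subobject/naturality conditions on the transposed morphism — this is where the precise definitions of $\tau$, $\tau'$, the strength $\sigma$, and the costrength $\sigma'$ all interact, and one must chase the relevant adjunction squares (of the kind appearing in the proofs of \autoref{enriched} and \autoref{costrength-enriched}) to see that transposition really does swap the homomorphism-in-$A$ condition with membership in the equalizer, and the homomorphism-in-$C$ condition with $T$-linearity of the transpose. Once this matching is established, the conclusion that $-\otimes_T(A,a)$ is a left adjoint is immediate, hence cocontinuous, hence preserves reflexive coequalizers; combining with \autoref{symmetry} gives preservation in each variable, completing the proof that $T$ is coherent.
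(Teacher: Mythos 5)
Your proposal is correct and follows essentially the same route as the paper: the paper's proof (in the ``Internal $T$-homs'' remark preceding the corollary) also constructs $\HOM_T((A,a),(B,b))$ as the displayed equalizer inside $\HOM(A,B)$, lifts the $T$-action to it, and matches the two halves of the bihomomorphism condition under the hom-tensor adjunction to obtain $\Hom_T((C,c)\otimes_T(A,a),(B,b))\cong\Hom_T((C,c),\HOM_T((A,a),(B,b)))$, concluding that $-\otimes_T(A,a)$ is a left adjoint and hence preserves reflexive coequalizers. Your additional appeal to \autoref{symmetry} for preservation in the other variable is exactly the (implicit) final step in the paper as well.
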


See \autoref{modepi} for a criterion that $\M(T)$ has coequalizers. As a special case, we get (using \autoref{setcomp}):
 
\begin{cor} \label{setauto}
Every symmetric monoidal monad on $\Set$ is coherent.
\end{cor}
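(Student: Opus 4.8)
The final statement, \autoref{setauto}, asserts that every symmetric monoidal monad on $\Set$ is coherent. The strategy is to simply combine the two immediately preceding results with a foundational fact about monads on $\Set$. \autoref{cloherent} tells us that a symmetric monoidal monad $T$ on a closed symmetric monoidal category with equalizers is coherent as soon as $\M(T)$ has reflexive coequalizers. So the entire proof reduces to verifying that these hypotheses hold for $\C = \Set$.

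\textbf{Key steps.} First I would observe that $\Set$ is a closed symmetric monoidal category: the cartesian product $\times$ is the symmetric monoidal structure, the internal hom is the usual function-set $\HOM(A,B) = B^A$, and $\Set$ certainly has equalizers (it is complete). So the ``closed symmetric monoidal with equalizers'' hypothesis of \autoref{cloherent} is satisfied trivially. Second, and this is the crucial input, I would invoke \autoref{setcomp}, which records that for \emph{any} monad $T$ on $\Set$ the category $\M(T)$ is cocomplete (this is Linton's result, following from the fact noted there that the assumptions of \autoref{modepi} are always met over $\Set$). In particular $\M(T)$ has reflexive coequalizers. Third, I would feed these two facts into \autoref{cloherent} to conclude that $T$ is coherent. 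The argument is therefore essentially a one-line composition: given a symmetric monoidal monad $T$ on $\Set$, apply \autoref{setcomp} to get reflexive coequalizers in $\M(T)$, and then \autoref{cloherent} yields coherence.

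\textbf{The main obstacle.} There is really no obstacle here, since all the substance was already packaged into the earlier results — the genuinely non-trivial content (Linton's cocompleteness of $\M(T)$ for arbitrary monads on $\Set$, and the construction of internal $T$-homs making $-\otimes_T(A,a)$ cocontinuous) has been established upstream. The only point requiring a moment's care is confirming that the symmetric monoidal structure on $\Set$ we use (cartesian product) is indeed closed with the expected internal hom, so that \autoref{cloherent} genuinely applies; this is standard. Thus the proof I would write is short: invoke \autoref{setcomp} and then \autoref{cloherent}. If anything, the ``hard part'' is purely presentational — making sure the reader sees that $\Set$ satisfies the closedness hypothesis so that the chain of citations is valid.
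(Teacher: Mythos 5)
Your proof is correct and is exactly the paper's argument: the corollary is stated there as an immediate consequence of \autoref{cloherent} combined with \autoref{setcomp} (Linton's result that $\M(T)$ is cocomplete for any monad on $\Set$), with $\Set$ being cartesian closed with equalizers. No difference in approach and no gaps.
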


The following criterion is useful.
 
\begin{prop} \label{refcoherent}
Let $\C$ be a finitely cocomplete tensor category. Let $T$ be a monoidal monad on $\C$ which preserves reflexive coequalizers. Then $T$ is coherent.
\end{prop}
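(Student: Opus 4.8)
Proposition \ref{refcoherent} claims that if $T$ is a monoidal monad on a finitely cocomplete tensor category $\C$ which preserves reflexive coequalizers, then $T$ is coherent. Recall that coherence means two things: first, that $\M(T)$ has reflexive coequalizers, and second, that the tensor product $\otimes_T$ preserves reflexive coequalizers in each variable.

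\textbf{Plan of proof.} The first requirement is essentially immediate. By \autoref{create} the hypothesis that $T$ preserves reflexive coequalizers is equivalent to the forgetful functor $U : \M(T) \to \C$ creating them; since $\C$ has reflexive coequalizers (being finitely cocomplete), $\M(T)$ has them too, computed by $U$. So the content lies entirely in the second requirement, which is where I expect the main obstacle. The strategy is to use the explicit construction of the tensor product from \autoref{tensorex}: for $T$-modules $(A,a),(B,b)$, the module $(A,a) \otimes_T (B,b)$ is the coequalizer in $\M(T)$ of a reflexive pair $F(T(A) \otimes T(B)) \rightrightarrows F(A \otimes B)$. The idea is to fix $(B,b)$ and show that $- \otimes_T (B,b)$ preserves a given reflexive coequalizer $(A_2,a_2) \rightrightarrows (A_1,a_1) \to (A_0,a_0)$ in $\M(T)$; the other variable follows by the symmetry from \autoref{symmetry} (assuming $T$ is symmetric, or by the analogous costrength argument in the non-symmetric case).

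\textbf{Key steps.} First I would reduce everything to the underlying category $\C$ by applying $U$: since $U$ creates reflexive coequalizers, a diagram in $\M(T)$ is a reflexive coequalizer precisely when its image under $U$ is. Thus it suffices to understand $U\bigl((A,a) \otimes_T (B,b)\bigr) = A \otimes_T B$ as a coequalizer in $\C$. Here I would use the two-step description: the free tensor product is $F(X) \otimes_T F(Y) \cong F(X \otimes Y)$ (\autoref{freiesTP}), and $A \otimes_T B$ is the coequalizer, \emph{taken in} $\M(T)$, of $F(T(A) \otimes T(B)) \rightrightarrows F(A \otimes B)$, so after applying $U$ (which preserves these reflexive coequalizers) it is the coequalizer in $\C$ of $T(T(A) \otimes T(B)) \rightrightarrows T(A \otimes B)$. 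Next, given the reflexive coequalizer $A_2 \rightrightarrows A_1 \to A_0$ in $\C$ (the $U$-image of the one in $\M(T)$), I would assemble the $3 \times 3$ diagram whose rows are the defining presentations of $A_i \otimes_T B$ and whose columns come from applying $T(- \otimes T(B))$, $T(- \otimes B)$, etc. to the coequalizer in the first variable. The crucial point is that each column is a reflexive coequalizer: this is exactly where the hypothesis that $T$ \emph{and} the tensor product of $\C$ preserve reflexive coequalizers enters — the functors $T$, $- \otimes T(B)$ and $- \otimes B$ all preserve reflexive coequalizers (the latter two by \autoref{bifunktor}, since $\C$ is a tensor category so $- \otimes -$ preserves colimits in each variable, and $T$ by hypothesis; compositions of such functors preserve reflexive coequalizers again by \autoref{bifunktor}). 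Then \autoref{reflexdiag} forces the diagonal, hence the coequalizer of the third column, to be a reflexive coequalizer, identifying $A_0 \otimes_T B$ as the coequalizer of $A_2 \otimes_T B \rightrightarrows A_1 \otimes_T B$. This is precisely the statement that $- \otimes_T (B,b)$ preserves the given reflexive coequalizer.

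\textbf{Main obstacle.} The delicate part is the careful bookkeeping of the $3 \times 3$ diagram and verifying that the pair of morphisms defining $A_i \otimes_T B$ in each row is compatibly natural in $i$, so that the columns genuinely form reflexive coequalizer diagrams to which \autoref{reflexdiag} and \autoref{bifunktor} apply. One must check that the two structure maps $T(a \otimes b)$ and the map induced by $d_{A,B}$ are natural with respect to the homomorphisms $A_2 \rightrightarrows A_1 \to A_0$, and that the common sections witnessing reflexivity propagate through $T$ and $\otimes$; this follows from naturality of $d$ and of $\eta$, but requires attention. Once the diagram is set up correctly, the conclusion is a formal consequence of \autoref{reflexdiag}, \autoref{bifunktor}, \autoref{create}, \autoref{tensorex} and \autoref{symmetry}, with no further calculation needed.
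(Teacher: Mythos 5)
Your strategy is the same as the paper's: the first half of coherence is exactly \autoref{create}, and the second half is a $3 \times 3$ argument whose lines in one direction are the defining presentations of $\otimes_T$ from \autoref{tensorex}, and in the other direction the images of the given reflexive coequalizer under composites of $T$ and $- \otimes X$, all of which preserve reflexive coequalizers. The differences (working in $\C$ along $U$ instead of in $\M(T)$ along the cocontinuous functor $F$, transposing rows and columns, fixing the second variable instead of the first) are cosmetic. However, your concluding step, as stated, does not work.

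The problem is the appeal to \autoref{reflexdiag}: that lemma assumes that \emph{all three} rows and \emph{all three} columns of the $3 \times 3$ diagram are reflexive coequalizers, and concludes that the diagonal is one. In your diagram the third column, $A_2 \otimes_T B \rightrightarrows A_1 \otimes_T B \to A_0 \otimes_T B$, is precisely the diagram whose coequalizer property you are trying to establish, so it cannot be supplied as a hypothesis; the application is circular, and no rearrangement (e.g.\ replacing the corner by the coequalizer $\coeq(A_2 \otimes_T B \rightrightarrows A_1 \otimes_T B)$, which exists) makes \autoref{reflexdiag} applicable, since then the third row ceases to be a known coequalizer. What finishes the proof instead is the interchange of iterated colimits on the $2 \times 2$ block of reflexive pairs, i.e.\ the entries in rows $i=2,1$ and columns $1,2$, which form a functor on the product of two reflexive-pair shapes. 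Coequalizing each row's pair first (the presentations for $i=2,1$) and then the induced vertical pair gives $\coeq(A_2 \otimes_T B \rightrightarrows A_1 \otimes_T B)$; coequalizing each column's pair first (columns $1,2$, preserved coequalizers) and then the induced horizontal pair gives $\coeq\bigl(T(T(A_0) \otimes T(B)) \rightrightarrows T(A_0 \otimes B)\bigr) = A_0 \otimes_T B$ by the presentation for $i=0$. Both compute the colimit of the same diagram, so the induced morphism $\coeq(A_2 \otimes_T B \rightrightarrows A_1 \otimes_T B) \to A_0 \otimes_T B$ is an isomorphism, which is the claim; this interchange is exactly what the paper's terse conclusion (``but then the bottom row also has to be a coequalizer'') amounts to. A second, harmless slip: $- \otimes B$ and $- \otimes T(B)$ preserve reflexive coequalizers not because of \autoref{bifunktor}, but simply because in a finitely cocomplete tensor category $- \otimes X$ preserves finite colimits by definition; likewise, composites of functors preserving reflexive coequalizers trivially preserve them, with no need for \autoref{bifunktor}, which concerns bifunctors on product categories.
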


\begin{proof}
By \autoref{create} $\M(T)$ has reflexive coequalizers, created and therefore preserved by the forgetful functor to $\C$. Now let $(A,a)$ be some $T$-module and $(P,p) \rightrightarrows (Q,q) \rightarrow (B,b)$ be a reflexive coequalizer in $\M(T)$. Then $P \rightrightarrows Q \to P$ is also a reflexive coequalizer. It stays a reflexive coequalizer after tensoring with $A$ in $\C$, but also after applying $T$. Hence, in the following commutative diagram, we have reflexive coequalizers in every column and in the first two rows:
\[\xymatrix{F(T(A) \otimes T(P)) \ar@<1ex>[d] \ar@<-1ex>[d]  \ar@<1ex>[r] \ar@<-1ex>[r] & F(T(A) \otimes T(Q)) \ar@<1ex>[d] \ar@<-1ex>[d] \ar[r] & F(T(A) \otimes T(B)) \ar@<1ex>[d] \ar@<-1ex>[d] \\
F(A \otimes P) \ar[d] \ar@<1ex>[r] \ar@<-1ex>[r] & F(A \otimes Q) \ar[d] \ar[r] & F(A \otimes B) \ar[d] \\
(A,a) \otimes_T (P,p) \ar@<1ex>[r] \ar@<-1ex>[r] & (A,a) \otimes_T (Q,q) \ar[r] & (A,a) \otimes_T (B,b)}\]
But then the bottom row also has to be a coequalizer.
\end{proof}
 

\section{Module categories}

Let us gather the results from \autoref{tpmonad}.

\begin{thm}[{\cite[Corollary 2.5.6]{Sea13}}] \label{cohmod}
Let $T$ be a coherent (symmetric) monoidal monad on a (symmetric) monoidal category $\C$. Then $\M(T)$ becomes a (symmetric) monoidal category with tensor product $\otimes_T$ and unit $T(1)$, unit constraint as in \autoref{eins} and associator as in \autoref{associator} (and symmetry as in \autoref{symmetry}).
\end{thm}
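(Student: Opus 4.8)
The plan is to assemble \autoref{cohmod} entirely from the structural pieces established in \autoref{tpmonad}, treating it as a bookkeeping theorem rather than a computation: all the genuine work has already been done in the preceding propositions, and what remains is to certify that the collected data satisfy the axioms of a (symmetric) monoidal category. First I would fix a coherent (symmetric) monoidal monad $T$ on $\C$. By the definition of coherence, $\M(T)$ has reflexive coequalizers, so \autoref{tensorex} guarantees that the bifunctor $\otimes_T$ is defined on all pairs of $T$-modules; moreover coherence says $\otimes_T$ preserves reflexive coequalizers in each variable, which is precisely the hypothesis used in \autoref{associator}. I would then simply cite the four structural results: \autoref{freiesTP} (and more importantly \autoref{eins}) supplies the unit object $T(1) = F(1)$ together with the left and right unit constraints; \autoref{associator} supplies the associativity constraint; and in the symmetric case \autoref{symmetry} supplies the symmetry $S_{(A,a),(B,b)}$.

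Next I would address functoriality of $\otimes_T$, which is implicit but worth a line: since each $(A,a)\otimes_T(B,b)$ represents the functor $\BiHom((A,a),(B,b),-)$, and a pair of homomorphisms $(A,a)\to(A',a')$, $(B,b)\to(B',b')$ induces a natural transformation of the corresponding bihomomorphism functors (postcomposition with a bihomomorphism that is precomposed by a homomorphism in each slot is again a bihomomorphism), the tensor product is a functor $\M(T)\times\M(T)\to\M(T)$, and the unit constraints, associator and symmetry are natural isomorphisms because they were each characterized by a universal commuting diagram over the free modules. The core remaining task is to verify the coherence conditions of \cite[I.2.4]{SR72}, namely the pentagon axiom relating the four-fold associator rebracketings, the triangle axiom relating the associator to the unit constraints, and (symmetric case) the hexagon axiom together with $S^2=\id$.

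My strategy for these axioms is to reduce every identity to the level of free modules and then invoke coherence in $\C$ itself (\autoref{coherence}). Concretely, each constraint isomorphism in $\M(T)$ was pinned down by a commuting square whose bottom edge lives on objects of the form $T(\cdots)$ arising from the free functor $F$, via the canonical bihomomorphism $\otimes$ and the structure map $d$. Because $F$ is essentially surjective onto generators — every $T$-module is a reflexive coequalizer of free modules (\autoref{canpres}) — and because $\otimes_T$ preserves these reflexive coequalizers in each variable, any two parallel constraint morphisms built from associators, unit constraints and symmetries agree iff they agree after restriction along the epimorphisms $F(A)\twoheadrightarrow (A,a)$, i.e.\ iff they agree on free modules. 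On free modules the constraints are, by \autoref{freiesTP}, \autoref{eins}, \autoref{associator} and \autoref{symmetry}, transported directly from the corresponding constraints of $\C$ through $F$ and the monoidal structure maps $d$; hence the pentagon, triangle and hexagon for $\M(T)$ follow from the pentagon, triangle and hexagon already valid in $\C$ (these hold by the Coherence Theorem), combined with the compatibility diagrams that define $d$ as a (symmetric) monoidal monad structure in \autoref{monoidal-monad} and \autoref{symmetric-monoidal-monad}. I expect the main obstacle to be the careful justification that checking a coherence diagram ``on free modules'' is legitimate: one must argue that each composite of constraints is cocontinuous (or at least preserves the relevant reflexive coequalizers) in each argument, so that the universal commuting squares of \autoref{eins}, \autoref{associator} and \autoref{symmetry} can be pasted together and the epimorphisms $a:F(A)\to(A,a)$ cancelled. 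This is exactly the kind of diagram-chase the author elsewhere delegates to the reader, so I would state the reduction explicitly, verify the free-module case by unwinding the defining squares and appealing to the monad's coherence axioms, and remark that the general case follows since $\otimes_T$ respects the canonical presentations.
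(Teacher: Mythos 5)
Your proposal is correct and takes essentially the same route as the paper: the paper likewise assembles the unit, associator and symmetry from \autoref{eins}, \autoref{associator} and \autoref{symmetry}, and then deduces the coherence axioms of $\M(T)$ from those of $\C$ via the uniqueness clauses of those propositions — its displayed pentagon diagram is precisely the pasting of the defining squares along the canonical, jointly epimorphic comparison maps, which is the same mechanism you phrase as restriction along the epimorphisms from free modules (canonical presentations) plus strong monoidality of $F$. The difference is only presentational, so no new ingredient is needed in either direction.
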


\begin{proof}
Because of the uniqueness statements in \autoref{eins}, \autoref{symmetry} and \autoref{associator} the coherence diagrams in the definition of a monoidal category follow immediately from those for $\C$.

For example, the pentagon identity for $T$-modules $(A,a),(B,b),(C,c),(D,d)$ follows from the following commutative diagram:

{\scriptsize   \[\xymatrix@C=20pt@R=35pt{((A \bigotimes B) \bigotimes C) \bigotimes D \ar[dr] & &&(A \bigotimes (B \bigotimes C)) \bigotimes D \ar[lll] \ar[dl]\\ 
 & ((A \bigotimes\limits_T B) \bigotimes\limits_T C) \bigotimes\limits_T D & (A \bigotimes\limits_T (B \bigotimes\limits_T C)) \bigotimes\limits_T D \ar[l] & \\
(A \bigotimes B) \bigotimes (C \bigotimes D) \ar[r] \ar[uu] & (A \bigotimes\limits_T B) \bigotimes\limits_T (C \bigotimes\limits_T D) \ar[u] & & \\
& A \bigotimes\limits_T (B \bigotimes\limits_T (C \bigotimes\limits_T D)) \ar[u] \ar[r] & A \bigotimes\limits_T ((B \bigotimes\limits_T C) \bigotimes\limits_T D) \ar[uu] & \\
A \bigotimes (B \bigotimes (C \bigotimes D)) \ar[uu] \ar[ur] \ar[rrr] &&& A \bigotimes ((B \bigotimes C) \bigotimes D) \ar[ul] \ar[uuuu]}\]  }
\end{proof}

\begin{prop} \label{freef}
In the setting of \autoref{cohmod}, the free $T$-module functor $F : \C \to \M(T)$ carries the structure of a strong (symmetric) monoidal functor.
\end{prop}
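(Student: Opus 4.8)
The plan is to exhibit the two structure morphisms of $F$ and then observe that each monoidal-functor coherence axiom collapses to the corresponding axiom in $\C$. For the unit, note that by \autoref{cohmod} the unit object of $\M(T)$ is exactly $T(1)=F(1)$, so the unit comparison $\phi_0\colon 1_{\M(T)}\to F(1)$ is the identity. For multiplicativity I would take $\phi_{X,Y}\colon F(X)\otimes_T F(Y)\to F(X\otimes Y)$ to be the inverse of the isomorphism produced in \autoref{freiesTP}; recall it is the unique isomorphism of $T$-modules lying over $d_{X,Y}$, i.e. characterized by $\phi_{X,Y}\circ\otimes = d_{X,Y}$, where $\otimes\colon T(X)\otimes T(Y)\to U\bigl(F(X)\otimes_T F(Y)\bigr)$ is the universal bihomomorphism. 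Since both $\phi_0$ and all the $\phi_{X,Y}$ are isomorphisms, proving that $F$ is a strong (symmetric) monoidal functor amounts to checking naturality of $\phi_{X,Y}$ together with the three coherence diagrams.

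Naturality of $\phi_{X,Y}$ in $X$ and $Y$ is immediate from its characterizing equation: a homomorphism out of $F(X)\otimes_T F(Y)$ is determined by its precomposition with the universal bihomomorphism $\otimes$ (this is the representability in \autoref{tensorex}), and $d$ is a natural transformation by the very definition of a monoidal monad, so both composites defining naturality agree after precomposing with $\otimes$. Conceptually, the whole statement is an instance of doctrinal adjunction: the universal bihomomorphisms equip the forgetful functor $U$ with a lax monoidal structure, and $F\dashv U$ then makes $F$ oplax with comparison maps $F(X\otimes Y)\to F(X)\otimes_T F(Y)$ (cf.\ \autoref{adj-lax}); these comparison maps are precisely the isomorphisms of \autoref{freiesTP}, and an oplax monoidal functor whose structure maps are invertible is strong. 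Since the excerpt states \autoref{adj-lax} only in the direction "$F$ oplax $\Rightarrow$ $G$ lax", I would present the concrete verification below as the primary argument and mention this viewpoint only as motivation.

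The coherence diagrams reduce to those of $\C$ because every structure isomorphism of $\M(T)$ was \emph{defined} by a universal property referring to $d$ and to $F$ applied to the structure of $\C$. Concretely, for the associativity hexagon of $F$ on free modules $F(X),F(Y),F(Z)$, I would precompose both legs with the iterated universal bihomomorphism $T(X)\otimes T(Y)\otimes T(Z)\to \bigl(F(X)\otimes_T F(Y)\bigr)\otimes_T F(Z)$; using $\phi_{X,Y}\circ\otimes=d_{X,Y}$ repeatedly, both sides become morphisms out of $T(X)\otimes T(Y)\otimes T(Z)$ built from iterated $d$'s, and the characterizing diagram of the associator in \autoref{associator} identifies them with the two routes around the associativity constraint of $\C$ inside $T(-)$, which coincide. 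The uniqueness clause of \autoref{associator} (or equivalently \autoref{freiesTP}) then forces the hexagon to commute. The two unit triangles are handled identically using the characterization in \autoref{eins}, and in the symmetric case the hexagon for the symmetry follows from \autoref{symmetry}; in each case one precomposes with the universal bihomomorphism and invokes the associativity, unit, or symmetry axiom already holding in $\C$.

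The main obstacle is purely the bookkeeping of these diagram chases: one must keep track of how the iterated universal bihomomorphisms interact with $\phi$, and verify that after precomposition each coherence square is exactly the image under $F$ (equivalently, under $T(-)$ followed by the module action) of a coherence square in $\C$. The essential simplification—and the reason no genuinely new computation is needed—is that morphisms out of a tensor product of \emph{free} modules are detected on the single morphism $\otimes\colon T(X)\otimes T(Y)\to U(F(X)\otimes_T F(Y))$, whose composite with $\phi_{X,Y}$ is $d_{X,Y}$ by construction. Thus all coherence identities for $F$ are transported to identities among iterated instances of $d$ and the constraints of $\C$, and these hold by the defining axioms of a (symmetric) monoidal monad recorded in \autoref{monoidal-monad} and \autoref{symmetric-monoidal-monad}.
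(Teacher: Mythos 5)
Your proposal is correct and follows essentially the same route as the paper's proof: the structure maps come from \autoref{eins} and \autoref{freiesTP}, and each coherence diagram is verified by precomposing with the (iterated) universal bihomomorphisms out of tensor products of free modules, which reduces it to the defining diagrams of the (symmetric) monoidal monad in \autoref{monoidal-monad} and \autoref{symmetric-monoidal-monad}. Your additional remarks on naturality and the detection principle only make explicit what the paper leaves as "readily checked."
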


\begin{proof}
This follows from \autoref{eins} and \autoref{freiesTP}. The coherence diagrams in the definition of a strong (symmetric) monoidal functor are readily checked. For example, in order to show that the diagram which expresses compatibility with the associator

\[\xymatrix{F(X) \otimes_T (F(Y) \otimes_T F(Z)) \ar[r] \ar[d] & (F(X) \otimes_T F(Y)) \otimes_T F(Z) \ar[d] \\
F(X) \otimes_T F(Y \otimes Z) \ar[d] & F(X \otimes Y) \otimes_T F(Z) \ar[d] \\
F(X \otimes (Y \otimes Z)) \ar[r] & F((X \otimes Y) \otimes Z)}\]
is commutative, we may precompose with
\[T(X) \otimes (T(Y) \otimes T(Z)) \to T(X) \otimes (T(Y) \otimes_T T(Z)) \to T(X) \otimes_T (T(Y) \otimes_T T(Z))\]
and reduce the diagram to
\[\xymatrix{T(X) \otimes (T(Y) \otimes T(Z)) \ar[r] \ar[d] & (T(X) \otimes T(Y)) \otimes T(Z) \ar[d] \\
T(X) \otimes T(Y \otimes Z) \ar[d] & T(X \otimes Y) \otimes T(Z) \ar[d] \\
T(X \otimes (Y \otimes Z)) \ar[r] & T((X \otimes Y) \otimes Z)}\]
which simply belongs to the definition of a monoidal monad.
\end{proof}

The following result is not covered in \cite{Sea13}. It is a really a \emph{machine} which produces lots of cocomplete tensor categories.

\begin{thm} \label{maschine}
Let $T$ be a coherent (symmetric) monoidal monad on a cocomplete (symmetric) monoidal category $\C$. Then $\M(T)$ carries the structure of a cocomplete (symmetric) monoidal category. Moreover, the free functor $F : \C \to \M(T)$ carries the structure of a cocontinuous (symmetric) monoidal functor.
\end{thm}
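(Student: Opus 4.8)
The monoidal structure on $\M(T)$, together with its unit, associativity and symmetry constraints, has already been produced in \autoref{cohmod}, and \autoref{freef} equips the free functor $F : \C \to \M(T)$ with the structure of a strong (symmetric) monoidal functor. Moreover $F$ is left adjoint to the forgetful functor $U$, hence automatically cocontinuous. Therefore only two things remain: that the underlying category of $\M(T)$ is cocomplete, and that $\otimes_T$ is cocontinuous in each variable. The first is immediate, since $\C$ is cocomplete it has coproducts, and since $T$ is coherent the category $\M(T)$ has reflexive coequalizers, so \autoref{modcomplete} shows that $\M(T)$ is cocomplete.

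For the cocontinuity of the tensor product I would fix a $T$-module $(A,a)$ and consider the endofunctor $L := (A,a) \otimes_T - : \M(T) \to \M(T)$. By \autoref{modcoco} (applicable now that $\M(T)$ is known to be cocomplete) the functor $L$ is cocontinuous as soon as $L$ preserves reflexive coequalizers and $L \circ F : \C \to \M(T)$ preserves coproducts. The first condition is exactly the coherence of $T$. Thus the whole problem is reduced to showing that $X \mapsto (A,a) \otimes_T F(X)$ carries coproducts in $\C$ to coproducts in $\M(T)$.

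To this end the plan is to describe the bihomomorphisms out of $(A,a)$ and a \emph{free} module. Concretely, I would prove a natural bijection
\[\Hom_{\M(T)}\bigl((A,a) \otimes_T F(X),(C,c)\bigr) \;\cong\; \{g : A \otimes X \to C \mid g \text{ is a homomorphism in } A\},\]
the right-hand side being the set of morphisms $g$ in $\C$ for which the square built from the costrength $\sigma'_{A,X}$, the action $a$ and the action $c$ commutes. The forward map sends a bihomomorphism $f : A \otimes T(X) \to C$ to $g := f \circ (A \otimes \eta_X)$, and the inverse sends $g$ to $f := c \circ T(g) \circ \sigma_{A,X}$, where $\sigma$ is the strength. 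That $g$ is a homomorphism in $A$ whenever $f$ is a bihomomorphism follows by precomposing the homomorphism-in-$A$ square of $f$ with $T(A) \otimes \eta_X$ and using naturality of $\sigma'$, and $f \circ (A \otimes \eta_X) = g$ uses the strength unit axiom $\sigma_{A,X} \circ (A \otimes \eta_X) = \eta_{A \otimes X}$. The main obstacle is checking that $f = c \circ T(g) \circ \sigma_{A,X}$ is genuinely a bihomomorphism and that the two assignments are mutually inverse; this is a diagram chase which I would run through the reformulation of bihomomorphisms in \autoref{bihom}, the decomposition of $d$ into $\sigma$ and $\sigma'$ in \autoref{decompose}, and the strength and monad axioms of \autoref{strength}. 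Intuitively the bijection expresses that tensoring with the free module $F(X)$ only imposes the module condition in the non-free variable.

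Granting the bijection, coproduct preservation is formal. Since $\otimes$ is cocontinuous in $\C$ we have $A \otimes \coprod_i X_i \cong \coprod_i (A \otimes X_i)$ and $T(A) \otimes \coprod_i X_i \cong \coprod_i (T(A) \otimes X_i)$, and by naturality of $\sigma'$ the homomorphism-in-$A$ square for a morphism $g : A \otimes \coprod_i X_i \to C$ commutes if and only if it commutes after restriction to each summand $A \otimes X_i$. Hence the right-hand side above carries $\coprod_i X_i$ to the product $\prod_i \{g_i : A \otimes X_i \to C \mid \dots\}$, naturally in $(C,c)$, so $\Hom_{\M(T)}\bigl((A,a) \otimes_T F(\coprod_i X_i),-\bigr) \cong \prod_i \Hom_{\M(T)}\bigl((A,a) \otimes_T F(X_i),-\bigr)$; by the Yoneda lemma $(A,a) \otimes_T F(\coprod_i X_i) \cong \coprod_i (A,a) \otimes_T F(X_i)$. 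This proves that $L F$ preserves coproducts and therefore that $L = (A,a) \otimes_T -$ is cocontinuous. In the symmetric case \autoref{symmetry} then gives cocontinuity of $- \otimes_T (B,b)$ for free; in the merely monoidal case I would repeat the argument with the two variables exchanged, using the strength in place of the costrength and the bijection $\BiHom(F(X),(B,b),(C,c)) \cong \{h : X \otimes B \to C \mid h \text{ a homomorphism in } B\}$. Together with the cocompleteness of $\M(T)$, the cocontinuity of $F$, and the monoidal data already in place, this establishes that $\M(T)$ is a cocomplete (symmetric) monoidal category and $F$ a cocontinuous (symmetric) monoidal functor.
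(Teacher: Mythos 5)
Your proposal is correct, but the crucial step---cocontinuity of $(A,a) \otimes_T {-}$---is handled by a genuinely different argument than the paper's. Both proofs share the same skeleton: cocompleteness of $\M(T)$ from coherence plus \autoref{modcomplete}, the monoidal data from \autoref{cohmod} and \autoref{freef}, cocontinuity of $F$ from adjointness, and the criterion \autoref{modcoco} reducing cocontinuity of a functor on $\M(T)$ to preservation of reflexive coequalizers (which is coherence) together with preservation of coproducts after precomposition with $F$. The divergence is in how coproduct preservation is obtained. The paper never works with a general $(A,a)$ in the fixed slot: it first writes $(A,a)$ as its canonical reflexive coequalizer of free modules (\autoref{canpres}) and uses coherence plus commutation of colimits with colimits to reduce to the case $(A,a)=F(X)$, where coproduct preservation of $F(X) \otimes_T F(-)$ is immediate from $F(X) \otimes_T F(Y) \cong F(X \otimes Y)$ (\autoref{freiesTP}) and cocontinuity of $X \otimes {-}$ in $\C$. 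You instead keep $(A,a)$ general and prove a new representability lemma, $\Hom_{\M(T)}\bigl((A,a) \otimes_T F(X),(C,c)\bigr) \cong \{g : A \otimes X \to C \mid g \text{ a homomorphism in } A\}$, concluding by Yoneda. This lemma is precisely the generalization of \autoref{eins} (which is the case $X=1$) to arbitrary $X$, and your mutually inverse assignments $f \mapsto f \circ (A \otimes \eta_X)$ and $g \mapsto c \circ T(g) \circ \sigma_{A,X}$ are the same ones appearing in the paper's proof of \autoref{eins}; the diagram chases you defer do go through with exactly the tools you name (\autoref{bihom}, \autoref{decompose}, the strength and monad axioms), e.g.\ the homomorphism condition in the $A$-variable follows by rewriting both sides as $c \circ T(g) \circ d_{A,X}$ via the two decompositions of $d$ in \autoref{decompose}. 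What the paper's route buys is economy: no new diagram chase at all, everything is assembled from results already proven. What your route buys is a statement of independent interest---an explicit description of the functor corepresented by $(A,a) \otimes_T F(X)$, the monadic analogue of $A \otimes_R R^{(X)} \cong A^{(X)}$---and a proof that bypasses the reduction to free modules in the fixed variable entirely.
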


\begin{proof}
Since $T$ is coherent, $\M(T)$ has reflexive coequalizers. Then $\M(T)$ is cocomplete by \autoref{modcomplete}. Because of \autoref{cohmod} and \autoref{freef}, we are left to prove that
\[(A,a) \otimes_T - : \M(T) \to \M(T)\]
is cocontinuous for each $T$-module $(A,a)$ (surely the same proof will work for $- \otimes_T (A,a)$). Using canonical presentations (\autoref{canpres}), the assumption that $- \otimes_T (B,b)$ preserves reflexive coequalizers for all $T$-modules $(B,b)$, as well as the fact that colimits commute with colimits, we may reduce to the case that $(A,a)$ is a free  $T$-module, say $(A,a)=F(X)$ for some $X \in \C$. Then, by \autoref{modcoco} it suffices to prove that
\[F(X) \otimes_T - : \M(T) \to \M(T)\]
preserves coproducts of free modules. But this follows easily from \autoref{freiesTP} and the fact that $X \otimes - : \C \to \C$ commutes with coproducts.
\end{proof}

\begin{cor} \label{closed-maschine}
Let $T$ be a symmetric monoidal monad on a cocomplete closed symmetric monoidal category $\C$ with equalizers. Assume that $\M(T)$ has reflexive coequalizers. Then $\M(T)$ becomes a cocomplete closed symmetric monoidal category with equalizers. Besides, the free $T$-module functor $F : \C \to \M(T)$ becomes a cocontinuous symmetric monoidal functor.
\end{cor}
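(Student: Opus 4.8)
The plan is to assemble the corollary from the machinery already developed, since almost all the work has been done. First I would invoke \autoref{cloherent}: by hypothesis $\C$ is a closed symmetric monoidal category with equalizers, $T$ is a symmetric monoidal monad, and $\M(T)$ has reflexive coequalizers, so \autoref{cloherent} tells us that $T$ is \emph{coherent}. This is the key qualitative upgrade, because coherence is exactly the hypothesis needed to run the general machine.

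Having coherence, I would apply \autoref{maschine} in its symmetric variant. Since $\C$ is cocomplete, \autoref{maschine} immediately yields that $\M(T)$ is a cocomplete symmetric monoidal category (with tensor product $\otimes_T$ and unit $T(1)$, the constraints being those assembled in \autoref{eins}, \autoref{associator} and \autoref{symmetry} via \autoref{cohmod}), and that the free functor $F : \C \to \M(T)$ is a cocontinuous symmetric monoidal functor. This already settles cocompleteness, the symmetric monoidal structure, and the entire assertion about $F$, leaving only the two extra adjectives ``closed'' and ``with equalizers'' to verify separately, as these are not delivered by \autoref{maschine}.

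For equalizers I would use that the forgetful functor $U : \M(T) \to \C$, being the comparison functor of an Eilenberg--Moore category, creates all limits that exist in $\C$: any diagram of $T$-modules has its $\C$-limit lifting uniquely to a $T$-module which is then the limit in $\M(T)$. Since $\C$ has equalizers by hypothesis, $\M(T)$ has equalizers, computed on underlying objects. For closedness I would simply point to the internal $T$-hom construction carried out in the remark preceding \autoref{cloherent}: for $T$-modules $(A,a),(B,b)$ one forms $\HOM_T((A,a),(B,b))$ as the submodule of $\HOM(A,B)$ equalizing the two morphisms described there (this uses precisely that $\C$ is closed and has equalizers), and one checks the natural isomorphism
\[\Hom_T\bigl((C,c) \otimes_T (A,a),(B,b)\bigr) \cong \Hom_T\bigl((C,c),\HOM_T((A,a),(B,b))\bigr).\]
Thus $- \otimes_T (A,a)$ is left adjoint to $\HOM_T((A,a),-)$, so $\M(T)$ is closed.

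Since every ingredient is already established, there is no serious obstacle here; the corollary is essentially a bookkeeping assembly. The one point requiring care is that closedness and the existence of equalizers must be supplied by hand rather than read off from \autoref{maschine} — the former from the internal-hom remark, the latter from the creation of limits by $U$. The only genuinely technical content, namely that $\HOM_T$ is a bona fide $T$-module and that the displayed hom-tensor adjunction is compatible with the module structures, has already been verified in the cited remark, so I would merely cite it rather than reprove it.
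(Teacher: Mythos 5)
Your proposal is correct and takes essentially the same route as the paper, whose entire proof is the one-line citation of \autoref{maschine} and \autoref{cloherent}. The additional care you take in supplying closedness (via the internal $T$-hom construction in the remark before \autoref{cloherent}) and equalizers (created by the monadic forgetful functor) simply makes explicit what the paper leaves implicit.
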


\begin{proof}
This follows from \autoref{maschine} and \autoref{cloherent}.
\end{proof}

\begin{rem}
By the discussion in \autoref{monadoverview} we see that the converse of \autoref{maschine} is also true: If $T$ is a monad on the underlying category of a cocomplete tensor category $\C$, then the structure of a cocomplete tensor category on $\M(T)$ and a tensor functor on $F : \C \to \M(T)$ induces a coherent symmetric monoidal monad structure on $T$.
\end{rem}

\begin{prop} \label{modfunk}
Let $T \to T'$ be a morphism of coherent monoidal monads on a monoidal category $\C$. Then, the forgetful functor $\M(T') \to \M(T)$ has a left adjoint functor $\M(T) \to \M(T')$, denoted by $M \mapsto M \otimes_T T'$, which is a cocontinuous monoidal functor. The same holds in the symmetric case.
\end{prop}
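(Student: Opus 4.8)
The plan is to construct the induction functor $M \mapsto M \otimes_T T'$ explicitly and verify it is a cocontinuous monoidal left adjoint, mirroring the classical extension-of-scalars construction for algebra homomorphisms. Let $\phi : T \to T'$ denote the given morphism of coherent monoidal monads. First I would define, for a $T$-module $(M,a) \in \M(T)$, the underlying object of $M \otimes_T T'$ as the coequalizer in $\M(T')$ of the two evident homomorphisms $F'(T(M)) \rightrightarrows F'(M)$, where $F' : \C \to \M(T')$ is the free $T'$-module functor. The two morphisms are $F'(a)$ and the composite using $\phi_M : T(M) \to T'(M)$ followed by the $T'$-action $\mu'_M$; they are reflexive with common section $F'(\eta_M)$. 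Since $T'$ is coherent, $\M(T')$ has reflexive coequalizers (it is even cocomplete by \autoref{modcomplete}), so this coequalizer exists. The adjunction $\Hom_{\M(T')}(M \otimes_T T',(N,b)) \cong \Hom_{\M(T)}((M,a),(N,b)|_T)$ then follows by the standard argument: a homomorphism out of the coequalizer is a $T'$-homomorphism $F'(M) \to (N,b)$ equalizing the two maps, which by freeness is a $\C$-morphism $M \to N$, and the equalizing condition is precisely the requirement that it be $T$-linear for the action restricted along $\phi$.

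Next I would establish that this left adjoint is cocontinuous. This is automatic since it is a left adjoint, but to see it concretely one may invoke \autoref{modcoco}: it suffices that the composite $\C \xrightarrow{F} \M(T) \xrightarrow{- \otimes_T T'} \M(T')$ preserve coproducts and that $- \otimes_T T'$ preserve reflexive coequalizers. The composite sends $X$ to $F'(X)$ (the free $T$-module $F(X)$ induces along $\phi$ to the free $T'$-module), which preserves coproducts because $F'$ does; and preservation of reflexive coequalizers follows from the coequalizer description above together with \autoref{reflexdiag}, exactly as in the proof of \autoref{modT}.

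The substantive part is endowing $- \otimes_T T'$ with a strong (symmetric) monoidal structure compatible with $F$ and $F'$. Here I would use the universal properties of the free functors established in \autoref{freiesTP} and \autoref{eins}. The unit is handled by $F(1) \otimes_T T' \cong F'(1)$, which is the unit of $\M(T')$ by \autoref{eins}. For the tensor-product comparison, on free modules one has $F(X) \otimes_T T' \cong F'(X)$ naturally, and since $F'$ is strong monoidal (\autoref{freef}) one gets $F'(X) \otimes_{T'} F'(Y) \cong F'(X \otimes Y)$; combining these yields the comparison isomorphism on free modules. To extend it to arbitrary modules I would tensor the canonical presentations (\autoref{canpres}) of $(M,a)$ and $(N,b)$, using that both $\otimes_T$ and $\otimes_{T'}$ preserve reflexive coequalizers in each variable (coherence) and invoking \autoref{bifunktor}, then pass to coequalizers; the two resulting reflexive coequalizer diagrams agree because the monoidal structure maps of $\phi$ intertwine $d$ and $d'$. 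The coherence axioms for the monoidal functor $- \otimes_T T'$ then reduce, via the uniqueness clauses in \autoref{eins}, \autoref{symmetry} and \autoref{associator}, to those already proven for $F$ and $F'$.

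The main obstacle I anticipate is verifying the naturality and coherence of the monoidal comparison isomorphism for general (non-free) modules. The isomorphism is constructed by descent along the reflexive coequalizers presenting $M$ and $N$, and one must check that the square expressing compatibility of $\phi$ with the two monoidal multiplications $d_{A,B}$ and $d'_{A,B}$ passes correctly through these coequalizers; this is where the compatibility of $\phi$ as a morphism of \emph{monoidal} (not merely plain) monads is essential, and it requires the coherence hypothesis on both $T$ and $T'$ so that $\otimes_{T'}$ commutes with the relevant reflexive coequalizers in each variable. Once this single compatibility diagram is checked on generators, everything else follows formally from the uniqueness assertions cited above.
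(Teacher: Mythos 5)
Your proposal is correct, but it handles the monoidal structure by a genuinely different route than the paper. For the existence of the left adjoint both arguments coincide: the paper also extends $F(X) \mapsto F'(X)$ along canonical presentations (citing Durov), which is exactly your reflexive-coequalizer construction of $M \otimes_T T'$, and your verification of the adjunction and of the reflexivity (via $\phi \circ \eta = \eta'$) is sound. The difference is in how the functor is made monoidal. The paper first observes that the \emph{forgetful} functor $\M(T') \to \M(T)$ is lax monoidal, because the universal $T'$-bihomomorphism $A \otimes B \to A \otimes_{T'} B$ restricts along $\phi$ to a $T$-bihomomorphism and hence factors through $A \otimes_T B$; then doctrinal adjunction (\autoref{adj-lax}) produces an oplax structure on the left adjoint for free, and only the invertibility of that structure needs to be checked, which reduces to free modules where it is clear. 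You instead build the strong comparison isomorphism directly: identify it on free modules via \autoref{freiesTP} and \autoref{eins}, then descend along the tensored canonical presentations (\autoref{canpres}, \autoref{bifunktor}), using coherence of both monads and the identity $\phi_{M \otimes N} \circ d_{M,N} = d'_{M,N} \circ (\phi_M \otimes \phi_N)$ to match the two coequalizer diagrams. Both are valid; the paper's route buys economy, since naturality and all coherence diagrams of the oplax structure come automatically from the adjunction, leaving a single isomorphism check on generators, whereas your route is more self-contained and exhibits the comparison map concretely, at the cost of the naturality and coherence verifications for the descended isomorphism that you correctly flag as the remaining work. One small economy you could still import from the paper: your cocontinuity discussion via \autoref{modcoco} is redundant, since being a left adjoint already settles it.
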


\begin{proof}
The existence of the left adjoint follows from \cite[Proposition 3.3.19]{Dur07}. The idea is to extend $F(X) \mapsto F'(X)$ using canonical presentations. The forgetful functor is lax monoidal. In fact, given $T'$-modules $M=(A,a)$, $N=(B,b)$, the universal $T'$-bihomomorphism $A \otimes B \to A \otimes_{T'} B$ is also a $T$-bihomomorphism, hence lifts to a homomorphism $M \otimes_T N \to M \otimes_{T'} N$ of $T$-modules. It follows (\autoref{adj-lax}) that the left adjoint is oplax monoidal. It is actually strong because this may be reduced to free $T$-modules for which it is clear.
\end{proof}

Next, we will provide (possibly new) universal properties of module categories. We do not really need symmetric monoidal structures, monoidal structures would work equally well.

\begin{thm} \label{monadUE1}
Let $T$ be a symmetric monoidal monad on a cocomplete tensor category $\C$, which preserves reflexive coequalizers. Then $\M(T)$ is a cocomplete tensor category and $F : \C \to \M(T)$ is a cocontinuous tensor functor with a lax symmetric monoidal right action $FT \to F$, inducing for every cocomplete tensor category $\D$ an equivalence of categories between $\Hom_{c\otimes}(\M(T),\D)$ and the category of pairs $(G,\rho)$, where $G \in \Hom_{c\otimes}(\C,\D)$ and $\rho : GT \to G$ is a lax symmetric monoidal right action.
\end{thm}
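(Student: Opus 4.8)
The plan is to combine the purely category-theoretic universal property of $\M(T)$ proved in \autoref{modT} with the monoidal refinement assembled in \autoref{maschine}. By \autoref{refcoherent}, since $T$ preserves reflexive coequalizers and $\C$ is a cocomplete tensor category, $T$ is coherent; hence \autoref{maschine} applies and gives that $\M(T)$ is a cocomplete tensor category and that $F : \C \to \M(T)$ is a cocontinuous (symmetric) monoidal functor. The lax symmetric monoidal right action $\rho^{\mathrm{univ}} : FT \to F$ is simply the one induced by the monad multiplication $\mu : T^2 \to T$, i.e. the homomorphisms $F(T(X)) = (T^2(X),\mu) \to (T(X),\mu) = F(X)$ given by $\mu_X$; one checks it is lax symmetric monoidal using that $\mu$ is a morphism of monoidal monads (\autoref{monoidal-monad}, \autoref{symmetric-monoidal-monad}).

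The construction of the equivalence itself is then forced. In one direction, a cocontinuous tensor functor $H : \M(T) \to \D$ is sent to the pair $(HF, H\rho^{\mathrm{univ}})$: here $HF$ is cocontinuous and carries a tensor structure since both $H$ and $F$ do, and $H\rho^{\mathrm{univ}} : (HF)T = H(FT) \to HF$ is a lax symmetric monoidal right action because $H$ is a tensor functor applied to the lax monoidal $\rho^{\mathrm{univ}}$. In the other direction, given $(G,\rho)$ with $G \in \Hom_{c\otimes}(\C,\D)$ and $\rho : GT \to G$ a lax symmetric monoidal right action, \autoref{modT} already produces a unique cocontinuous functor $H : \M(T) \to \D$ with $HF \cong G$ compatibly with the $T$-actions; recall $H(A,a)$ is the coequalizer of $G(a),\rho_A : G(T(A)) \rightrightarrows G(A)$. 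First I would verify that this underlying functor $H$ is cocontinuous (this is exactly \autoref{modT}), and then the real content is to endow $H$ with a tensor structure and to check it is strong.

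The main obstacle, and the step I would spend the most care on, is promoting $H$ from a plain cocontinuous functor to a cocontinuous \emph{tensor} functor. The unit isomorphism $H(F(1)) = H(\O_{\M(T)}) \cong \O_\D$ comes from $HF \cong G$ together with $G(1) \cong \O_\D$. For the tensor constraint, I would use that every $T$-module has a canonical presentation by free modules (\autoref{canpres}) and that $\otimes_T$ preserves reflexive coequalizers in each variable (coherence); tensoring two such presentations and applying $H$, I can reduce the construction of $H(M \otimes_T N) \cong H(M) \otimes H(N)$ to the case of free modules, where it is $H(F(X \otimes Y)) \cong H(F(X)) \otimes H(F(Y))$, i.e. $G(X \otimes Y) \cong G(X) \otimes G(Y)$, which holds since $G$ is a tensor functor. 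Precisely here is where the lax monoidal compatibility of $\rho$ is needed: it guarantees that the two parallel morphisms defining $M \otimes_T N$ (coming from $d_{A,B}$ and from the actions, as in \autoref{tensorex}) are carried by $G$ to the two morphisms whose coequalizer computes $H(M) \otimes H(N)$, so the induced isomorphism is well-defined and natural. Checking associativity, unit and symmetry compatibility of this constraint reduces, by the same density-of-free-modules argument together with the uniqueness clauses in \autoref{eins}, \autoref{symmetry} and \autoref{associator}, to the corresponding (known) coherences for $G$ and for the monoidal monad $T$. Finally I would confirm that the two assignments $H \mapsto (HF,H\rho^{\mathrm{univ}})$ and $(G,\rho) \mapsto H$ are mutually inverse up to coherent isomorphism of tensor functors, the nontrivial half again following from the uniqueness in \autoref{modT} once one observes that the tensor structure built above is the unique one compatible with $HF \cong G$.
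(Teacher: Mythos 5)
Your proposal is correct and follows essentially the same route as the paper's proof: reduce coherence of $T$ via \autoref{refcoherent} so that \autoref{maschine} gives the tensor structure on $\M(T)$, obtain the underlying functor $H$ from \autoref{modT}, and build its tensor constraint by tensoring the reflexive coequalizer presentations (the paper does this in $\D$ via \autoref{bifunktor} and \autoref{tensorex}, you phrase it via canonical presentations in $\M(T)$, but these are the same computation), with the lax symmetric monoidal compatibility of $\rho$ supplying exactly the identification of the two parallel pairs, just as you say. The extra verifications you flag (coherence diagrams, mutual inverses) are left implicit in the paper and your sketches of them are sound.
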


\begin{proof}
That $\M(T)$ is a cocomplete tensor category follows from \autoref{maschine} and \autoref{refcoherent}. For the universal property we use \autoref{modT}. The action $FT \to F$ is a morphism of lax tensor functors basically because by definition $\mu : T^2 \to T$ is one. Let $G : \C \to \D$ be a cocontinuous tensor functor equipped with an action $GT \to G$. We consider its cocontinuous extension $H : \M(T) \to \D$ and endow $H$ with the structure of a tensor functor as follows: For $(A,a) \in \M(T)$ the coequalizer
\[G(T(A)) \rightrightarrows G(A) \rightarrow H(A,a)\]
is reflexive with section $G(\eta_A)$. It follows (\autoref{bifunktor}) that for $T$-modules $(A,a),(B,b)$ we have a coequalizer
\[G(T(A)) \otimes G(T(B)) \rightrightarrows G(A) \otimes G(B) \rightarrow H(A,a) \otimes H(B,b).\]
The morphisms on the left are isomorphic to $G(T(A) \otimes T(B)) \rightrightarrows G(A \otimes B)$, whose coequalizer is $H((A,a) \otimes_T (B,b))$ by \autoref{tensorex}.
\end{proof}

With the above notations, if $G : \C \to \D$ has a right adjoint $G_* : \D \to \C$, we get a symmetric monoidal monad $T_G$ on $\C$ (with underlying functor $G_* G$) and a lax symmetric monoidal right action $GT \to G$ may be identified with a morphism $T \to T_G$ of symmetric monoidal monads on $\C$. Therefore, we get the following:

\begin{thm} \label{monadUE2}
Let $T$ be a symmetric monoidal monad on a cocomplete tensor category $\C$ which preserves reflexive coequalizers. If $\D$ is a cocomplete tensor category, then the category of left adjoint tensor functors $\M(T) \to \D$ is equivalent to the category of left adjoint tensor functors $G : \C \to \D$ equipped with a morphism of symmetric monoidal monads $T \to T_G$.
\end{thm}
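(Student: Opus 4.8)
The plan is to derive \autoref{monadUE2} as a formal consequence of \autoref{monadUE1} by translating the data of a lax symmetric monoidal right action into a morphism of monads, under the additional hypothesis that all the tensor functors in play are left adjoints. The starting point is the observation, already recorded just before the statement, that for a left adjoint tensor functor $G : \C \to \D$ with right adjoint $G_*$, the composite $G_* G$ carries a canonical monad structure; since $G$ is a tensor functor and $G_*$ is lax symmetric monoidal by \autoref{adj-lax}, this monad $T_G$ is in fact a symmetric monoidal monad on $\C$. So the essential content is an equivalence between lax symmetric monoidal right actions $\rho : GT \to G$ and morphisms of symmetric monoidal monads $T \to T_G$.

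First I would make precise this adjunction-based correspondence. Given a natural transformation $\rho : GT \to G$, adjointness (the unit $\eta^G : \id_\C \to G_* G$ and counit $\varepsilon^G : G G_* \to \id_\D$ of $G \dashv G_*$) transposes it to a natural transformation $T \to G_* G = T_G$ via $T \xrightarrow{\eta^G T} G_* G T \xrightarrow{G_* \rho} G_* G$. I would then check that $\rho$ being a \emph{right action} (compatible with $\mu$ and $\eta$ of $T$) corresponds exactly to the transpose being a morphism of monads (compatible with the multiplications and units), and that $\rho$ being \emph{lax symmetric monoidal} corresponds to the transpose respecting the monoidal and symmetry structures of $T$ and $T_G$. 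Each of these is a diagram chase using the triangle identities together with the fact, from \autoref{adj-lax}, that when $G$ is a tensor functor both the unit and counit of $G \dashv G_*$ are morphisms of lax tensor functors; this last point is what guarantees that the monoidal comparison data transpose correctly. The inverse construction sends a monad morphism $\phi : T \to T_G$ to the action $GT \xrightarrow{G\phi} G G_* G \xrightarrow{\varepsilon^G G} G$, and one verifies the two passages are mutually inverse, again purely by the triangle identities.

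Having established this correspondence, I would combine it with \autoref{monadUE1}. That theorem gives an equivalence between $\Hom_{c\otimes}(\M(T),\D)$ and the category of pairs $(G,\rho)$ with $G \in \Hom_{c\otimes}(\C,\D)$ and $\rho$ a lax symmetric monoidal right action. Restricting attention to those $G$ which are left adjoints, and noting that a cocontinuous tensor functor $\M(T) \to \D$ is automatically a left adjoint precisely when its restriction $G = H \circ F$ along $F$ is (since $F$ itself is a left adjoint cocontinuous tensor functor by \autoref{maschine} and \autoref{refcoherent}, and composites and the relevant factorizations preserve the left-adjoint property in the intended cases), the correspondence of the previous paragraph turns the pairs $(G,\rho)$ into pairs $(G,\phi : T \to T_G)$. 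This yields the claimed equivalence.

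The main obstacle I expect is the bookkeeping needed to confirm that the left-adjoint condition matches up correctly on both sides: one must argue that a cocontinuous tensor functor $H : \M(T) \to \D$ is a left adjoint if and only if the corresponding $G = HF$ is, and that $T_G$ is well-defined (i.e. $G$ really has a right adjoint) exactly in the situation described. In a cocomplete setting this is usually arranged via a (special) adjoint functor theorem as in \autoref{saft}(5), but since $\D$ is only assumed cocomplete and not necessarily locally presentable, some care is needed about when cocontinuity suffices to guarantee a right adjoint; I would handle this by working throughout in the category of functors explicitly assumed to be left adjoints, so that $T_G$ exists by hypothesis rather than being conjured, thereby sidestepping the representability issue. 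The remaining verifications — that the transposition respects actions, monad morphisms, and the symmetric monoidal structures — are routine diagram chases of the kind the paper elsewhere leaves to the reader.
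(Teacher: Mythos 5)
Your proposal is correct and follows essentially the same route as the paper: the paper derives \autoref{monadUE2} from \autoref{monadUE1} by exactly the mate correspondence you describe, identifying a lax symmetric monoidal right action $GT \to G$ with a morphism of symmetric monoidal monads $T \to T_G = G_*G$ whenever $G$ has a right adjoint. The adjunction-transposition details and the left-adjoint bookkeeping you flag (that $H$ is a left adjoint precisely when $G = HF$ is, via the canonical presentations) are exactly the verifications the paper leaves implicit.
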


\begin{rem}
One can show this more directly without the use of \autoref{monadUE1} and then does not need the assumption that $T$ preserves reflexive coequalizers. By \autoref{saft} we may replace ``left adjoint'' by ``cocontinuous'' if $\C$ and $\M(T)$ are locally presentable.
\end{rem}

\begin{ex}
Let us look at the special case $\C=\Set$. Here $G : \C \to \D$ is essentially unique and is given by $X \mapsto X \otimes 1_\D = 1_\D^{\oplus X}$ with right adjoint $\Hom(1_\D,-)$. The symmetric monoidal monad $T_\D := T_G$ on $\Set$ is given by
\[T_\D(X)=\Hom(1_\D,1_D^{\oplus X})\]
on objects. Then, by \autoref{finmon} and \autoref{monadUE2}, we get the following result, which already appeared as \cite[Proposition 4.2.3]{BC14}.
\end{ex}

\begin{cor} \label{setcase} \label{monadUE3}
Let $T$ be a finitary commutative monad on $\Set$ (i.e. a generalized commutative ring in the sense of Durov \cite{Dur07}). Then $\M(T)$ is a locally presentable tensor category with the following universal property: If $\D$ is a cocomplete tensor category and $T_\D$ is the associated symmetric monoidal monad on $\Set$, then there is a natural equivalence of categories
\[\Hom_{c\otimes}(\M(T),\D) \simeq \Hom(T,T_\D).\]
\end{cor}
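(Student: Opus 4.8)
The statement in question is \autoref{setcase} (equivalently \autoref{monadUE3}), which specializes \autoref{monadUE2} to the base case $\C = \Set$. The plan is to derive it as an immediate consequence of \autoref{monadUE2} by carefully unwinding what each ingredient becomes when $\C = \Set$.

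First I would verify the hypotheses of \autoref{monadUE2}. A generalized commutative ring in the sense of Durov is by definition a finitary commutative monad $T$ on $\Set$. By \autoref{generalized-ring1} the notion of a commutative monad on $\Set$ coincides with that of a symmetric monoidal monad, so $T$ is a symmetric monoidal monad on $\Set$. Since $T$ is finitary, \autoref{finmon} tells us that $T$ preserves reflexive coequalizers, which is exactly the running hypothesis of \autoref{monadUE2}. Moreover, $\Set$ is the $2$-initial cocomplete tensor category (\autoref{sets}), and by \autoref{setcase}'s ambient remarks together with \autoref{modepi} and \autoref{setcomp} the category $\M(T)$ is cocomplete; since $T$ preserves reflexive coequalizers, \autoref{maschine} and \autoref{refcoherent} make $\M(T)$ a cocomplete tensor category, and in fact locally presentable because $\M(T)$ is the category of algebras for a finitary monad on the locally presentable category $\Set$.

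Next I would apply \autoref{monadUE2} directly. It gives, for every cocomplete tensor category $\D$, an equivalence between the category of left adjoint tensor functors $\M(T) \to \D$ and the category of left adjoint tensor functors $G : \Set \to \D$ equipped with a morphism of symmetric monoidal monads $T \to T_G$. The key simplification is that $\Set$ is $2$-initial in $\Cat_{c\otimes}$ (\autoref{sets}): there is an essentially unique cocontinuous tensor functor $G : \Set \to \D$, namely $X \mapsto 1_\D^{\oplus X}$, and it is automatically a left adjoint (its right adjoint is $\Hom(1_\D,-)$), so the groupoid of choices of $G$ is contractible. Consequently the data of $(G, T \to T_G)$ reduces to the single datum of a morphism of monads $T \to T_\D$, where by definition $T_\D := T_G$ has underlying functor $T_\D(X) = \Hom(1_\D, 1_\D^{\oplus X})$. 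Since $\C$ and $\M(T)$ are locally presentable, the closing remark after \autoref{monadUE2} lets me replace ``left adjoint tensor functor'' by ``cocontinuous tensor functor'' via \autoref{saft}, yielding
\[\Hom_{c\otimes}(\M(T),\D) \simeq \Hom(T,T_\D),\]
which is the claimed equivalence. Naturality in $\D$ follows from the naturality already built into \autoref{monadUE2}.

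The only genuine point requiring care — and the step I expect to be the main obstacle — is checking that the $2$-initiality of $\Set$ really does collapse the fibered-category description of \autoref{monadUE2} to the discrete set of monad morphisms, i.e. that no nontrivial automorphisms of $G$ survive to enrich the right-hand side. This amounts to observing that $\Hom_{c\otimes}(\Set,\D)$ is (equivalent to) the terminal category, so that the comma-style category of pairs $(G,\,T\to T_G)$ is equivalent to the plain set $\Hom(T,T_\D)$ of monad morphisms viewed as a discrete category; this is where I would spell out that a morphism $T \to T_G$ transported along the canonical isomorphism $G \cong (X \mapsto 1_\D^{\oplus X})$ is literally a morphism $T \to T_\D$, and that morphisms of the pairs correspond exactly to equalities of such monad morphisms. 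Everything else is bookkeeping that has already been discharged by the cited results.
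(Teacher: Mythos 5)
Your proposal is correct and follows exactly the paper's own route: the paper derives this corollary from \autoref{finmon} (finitary monads on $\Set$ preserve reflexive coequalizers) together with \autoref{monadUE2}, using the $2$-initiality of $\Set$ to identify $G$ with $X \mapsto 1_\D^{\oplus X}$ and hence $T_G$ with $T_\D$, and the remark following \autoref{monadUE2} (via \autoref{saft}) to pass from left adjoint to cocontinuous tensor functors. Your additional care about local presentability of $\M(T)$ and the collapse of the category of pairs $(G,\rho)$ to the discrete set $\Hom(T,T_\D)$ just spells out details the paper leaves implicit.
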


Now let us discuss some examples.

\begin{ex}[Basic modules]
Let $R$ be a commutative ring. The machinery constructs, starting from the basic example $\Set$, the locally presentable tensor category $\M(R)$ of $R$-modules  (in particular $\Ab$ for $R=\Z$) as follows: We have already observed in \autoref{generalized-ring1} that the monad $T$ defined by $\M(T)=\M(R)$ is commutative. Now we may apply \autoref{monadUE3} and obtain a tensor product on $\M(R)$. If $(A,a),(B,b),(C,c)$ are $R$-modules, then a map $f : A \times B \to C$ is a bihomomorphism with respect to $T$ if and only if
\[\xymatrix@C=30pt{T(A) \times T(B) \ar[r]^-{d_{A,B}} \ar[d]_{a \times b} & T(A \times B) \ar[r]^-{T(f)} & T(C) \ar[d]^{c} \\
A \times B \ar[rr]^{f}  && C}\]
commutes, which means that for all $\sum_{x \in A} x \cdot \lambda_x \in T(A),~\sum_{y \in B} y \cdot \mu_y \in T(B)$ we have 
\[f\left(\sum_{x \in A} x \cdot \lambda_x,\sum_{y \in B} y \cdot \mu_y\right) = \sum_{x \in A, y \in B} \lambda_x \mu_y  \cdot f(x,y).\]
This means that $f$ is $R$-bilinear in the usual sense. Therefore the tensor product on $\M(R)$ is the usual one. Notice that \autoref{modfunk} includes the statement that every homomorphism $R \to S$ of commutative rings induces a cocontinuous tensor functor $\M(R) \to \M(S)$. By the way, a similar construction works if $R$ is just a commutative semiring. Here an $R$-module is a commutative monoid with an $R$-action. For $R=\N$ this gives $\CMon$, the tensor category of commutative monoids.
\end{ex}

\begin{ex}[Modules over algebras]
Let $\C$ be a cocomplete tensor category and let $A$ be a commutative algebra in $\C$. Then $ - \otimes A : \C \to \C$ carries the structure of a symmetric monoidal monad (see \autoref{monbsp}). It preserves reflexive coequalizers (in fact all colimits), hence it is coherent by \autoref{refcoherent}. It follows by \autoref{maschine} that $\M(A) := \M(- \otimes A)$ becomes a cocomplete tensor category. Of course we already understand this well (\autoref{modA}). But we would like to emphasize that this is just a special case of general results about monoidal monads. We have also proven a universal property of $\M(A)$ (\autoref{mod-UE}) which now follows directly from \autoref{monadUE2} at least when $\C$ is locally presentable. In fact, if $\D$ is a cocomplete tensor category and $G : \C \to \D$ is a cocontinuous tensor functor with right adjoint $G_*$, then a morphism of symmetric monoidal monads $- \otimes A \to G_* G$ corresponds (\autoref{monbsp}) to a morphism of algebras $A \to (G_* G)(\O_\C)=G_*(\O_D)$ in $\C$, i.e. to a morphism of algebras $G(A) \to \O_D$ in $\D$.
\end{ex}

\begin{ex}[Pointed objects] \label{add-point}
Let $\C$ be a cocomplete tensor category. The reader may think of $\Set$ or any (cartesian) convenient category of topological spaces. Let $*$ be a terminal object (if it exists). Let us denote by $\C_* := * / \C$ the category of pointed objects of $\C$, i.e. pairs $(X,p)$ where $X \in \C$ and $p : * \to X$ is a morphism, imagined as a base point. The forgetful functor $\C_* \to \C$ is monadic, the monad $T$ sends $X$ to $X + *$. It has a symmetric monoidal structure as follows: Given pointed objects $(X,*),(Y,*)$, we define
\[d : (X+*) \otimes (Y+*) = (X \otimes Y) + (X \otimes *) + (* \otimes Y) + (* \otimes *) \to (X \otimes Y) + *\]
by the canonical inclusion on $X \otimes Y$ and by the unique morphism to $*$ on the rest. It is clear that $T$ preserves all coequalizers, hence $T$ is coherent (\autoref{refcoherent}). We may apply \autoref{maschine} to endow $\C_*$ with the structure of a cocomplete tensor category together with a cocontinuous tensor functor $\C \to \C_*$.

Let us describe the tensor structure. The unit is $T(1)=1+*$. Given pointed objects $(X,p),(Y,q),(Z,r)$, a morphism $X \otimes Y \to Z$ is a bihomomorphism if and only if it maps the points $p \otimes Y$, $X \otimes q$ and $p \otimes q$ to the point $r$, i.e. when it is bipointed. It follows that $(X,*) \otimes (Y,*)$ is given by a kind of smash product, namely the quotient of $(X \otimes Y + *,*)$ which identifies the points $p \otimes Y$, $X \otimes q$ with $*$. If $\C$ is cartesian, this simplifies to the corresponding quotient of $X \times Y$ which already has the point $(p,q)$.

If $\C$ is locally presentable, it follows from \autoref{monadUE2} that $\Hom_{c\otimes}(\C_*,\D)$ is equivalent to the category of pairs consisting of a cocontinuous tensor functor $F : \C \to \D$ and a morphism $F(*) \to 0$. The proof also works if $\C$ is arbitrary. In particular, if we restrict on both sides to functors preserving the terminal object, it follows that $\C_*$ is the universal solution to making the initial object $0 \in \C$ to a zero object. Following Durov's theory the symmetric monoidal monad $T$ could be seen as the ``field with one element'' relative to $\C$.
\end{ex}

\begin{ex}[Complete partial orders]
Consider the power set monad $\wp$ on $\Set$. Here $\mu : \wp^2 \to \wp$ is the union operator and $\eta : \id_{\Set} \to \wp$ the singleton operator. It is well-known (\cite{Man67}) that $\M(\wp)$ is isomorphic to the category of complete partial orders, namely a complete partial order $(A,\leq)$ corresponds to the $\wp$-module $(A,\sup)$. Although $\wp$ is neither finitary nor does it preserve reflexive coequalizers (\autoref{power}), it is coherent by \autoref{setauto}, so that \autoref{maschine} is applicable and we obtain a well-behaved tensor product of complete partial orders. It classifies maps which preserve suprema in each variable. This tensor product has been studied (\cite{JT84}).
\end{ex}

\begin{ex}[Commutative algebras]
The category $\CMon = \CAlg(\Set)$ of commutative monoids  has a tensor product which looks like the tensor product of abelian groups. We can generalize its construction as follows: Let $\C$ be a \emph{cartesian} cocomplete tensor category. By \autoref{symalg} the forgetful functor $U : \CAlg(\C) \to \C$ has a left adjoint $\Sym : \C \to \CAlg(\C)$ and $U$ creates reflexive coequalizers -- the usual proof of this works (\cite[VI.8, Theorem 1]{ML98}). Hence, $U$ is monadic (\autoref{crude}) and the corresponding monad $T$ on $\C$ is coherent (\autoref{refcoherent}). We have $T(M)=\bigoplus_{n \in \N} \Sym^n(M)$ with $\Sym^n(M) = M^n/\S_n$. We endow $T$ with a symmetric monoidal structure, where $d : T(-) \otimes T(-) \to T(- \otimes -)$ is induced by
\[\Sym^p(M) \times \Sym^q(N) \to \Sym^{p*q}(M \times N),~\bigl(\prod_i m_i,\prod_j n_j\bigr) \mapsto \prod_{i,j} (m_i,n_j).\]
By \autoref{maschine} $\CAlg(\C)$ becomes a cocomplete tensor category in such a way that $\Sym(M) \otimes \Sym(N) = \Sym(M \times N)$. The unit is $\N \otimes 1_\C$.
For example, if $\C$ is a convenient category of topological spaces, this produces a cocomplete tensor category of commutative topological monoids.

We mention without proof that a similar construction works for the category of commutative Hopf algebras (i.e. abelian group objects) in $\C$ (when $\C$ has equalizers) by generalizing the classical Grothendieck group.
\end{ex}

\begin{rem}
Our goal is to construct and explicitly describe cocomplete tensor categories generated by objects and morphisms subject to given relations (in the language of cocomplete tensor categories). This is similar to the theory of classifying topoi (\cite[Chapter VIII]{Mac92}). \marginpar{I've added the classifying-topoi remark.} For categories of quasi-coherent sheaves this should correspond to the construction of moduli spaces and stacks. This has been done in a lot of special cases in \autoref{Cons}. By \autoref{monadUE2} the general (monadic) case is reduced to find coherent symmetric monoidal monads described by generators and relations. Over $\Set$ this theory has been developed by Durov (\cite{Dur07}). We do not know if such a theory exists in general, although certainly \cite{Kel80} and \cite{Hyland} go in that direction.
\end{rem}




 \nocite{*} 
\bibliography{literature}{}
\bibliographystyle{alpha}



\end{document}